\pgfplotsset{compat=1.14}
\newtheorem{thm}{Theorem}[section]
\newtheorem{theorem}{Theorem}
\newtheorem{lem}[thm]{Lemma}  
\newtheorem{prop}[thm]{Proposition}
\newtheorem{cor}[thm]{Corollary}
\newtheorem{definition}[thm]{Definition}
\newtheorem{conj}{Conjecture} 
\newtheorem{remark}[thm]{Remark}
\newtheorem{claim}[thm]{Claim}
\newcommand{\PP}{\mathbb{P}}
\newcommand{\re}{Re}
\newcommand{\im}{Im}
\newcommand{\res}{Res}
\newcommand{\var}{\mathrm{Var}}
\newcommand{\poisson}{\mathrm{Poisson}}
\numberwithin{equation}{section}
\title{The critical one-dimensional multi-particle DLA}
\author{Dor Elboim}
\author{Danny Nam}
\author{Allan Sly}
\address{Department of Mathematics, Princeton University, Princeton, NJ 08544}
\email{delboim@princeton.edu, dhnam@princeton.edu, allansly@princeton.edu}
\begin{document}

\begin{abstract}
 We study  one-dimensional multi-particle Diffusion Limited Aggregation (MDLA) at its critical density $\lambda=1$. Previous works have verified that the size of the aggregate $X_t$ at time $t$ is $t^{1/2}$ in the subcritical regime and linear in the supercritical regime.  This paper establishes the conjecture that the growth rate at criticiality is $t^{2/3}$. Moreover, we derive the scaling limit proving that
    $$\{ t^{-2/3}X_{st} \}_{s\ge 0} \overset{\textnormal{d}}{\to} \Big\{ \int_0^s Z_u du \Big\}_{s\ge 0},  $$
    where the speed process $\{Z_t\}$ is a $(-1/3)$-self-similar diffusion given by $Z_t = (3V_t)^{-2/3}$, where~$V_t$ is the $\frac{8}{3}$-Bessel process.
    
    The proof shows that locally the speed process can be well approximated by a stochastic integral representation which itself can be approximated by a critical branching process with continuous edge lengths.  From these representations, we determine its infinitesimal drift and variance to show that the speed asymptotically satisfies the SDE $dZ_t = 2Z_t^{5/2}dB_t$.  To make these approximations, regularity properties of the process are established inductively via a multiscale argument.
\end{abstract}
\maketitle 

\setcounter{tocdepth}{1}
	\tableofcontents
\newpage
\section{Introduction}

\begin{figure}
	\includegraphics[width=\textwidth]{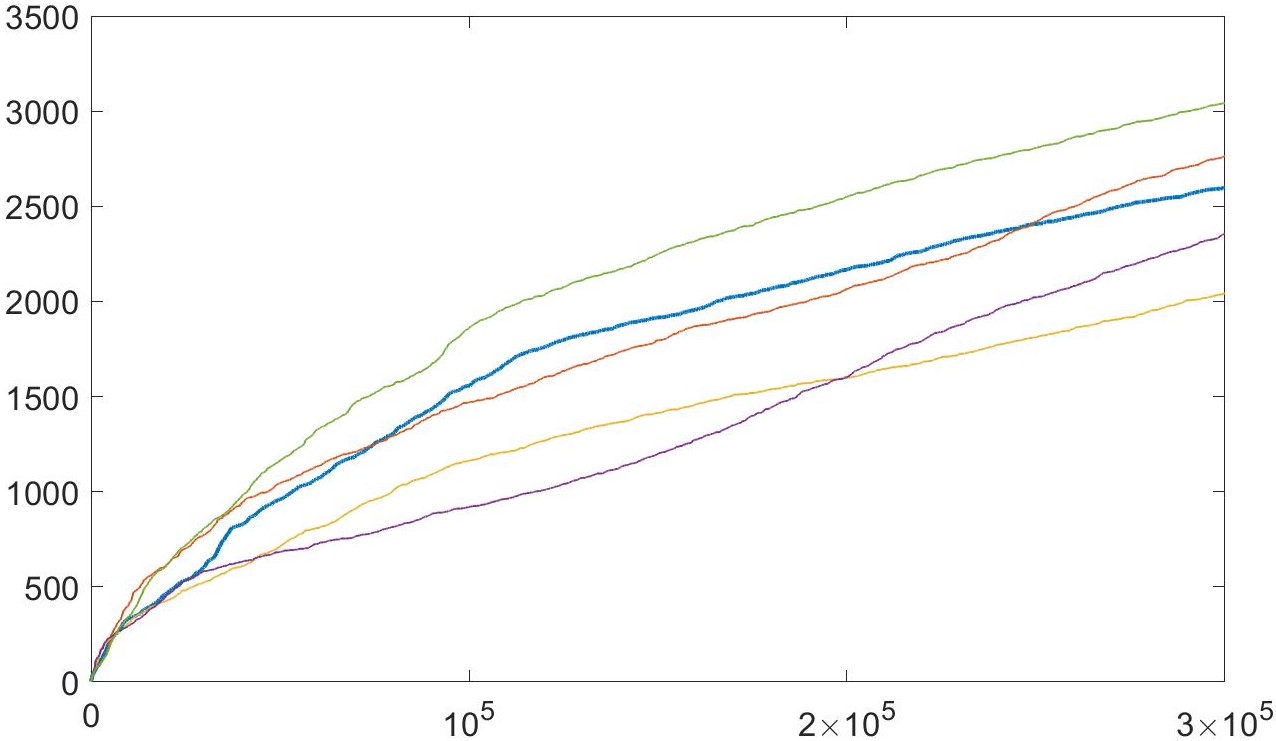}
	\hbox{\hspace{0.035em} \includegraphics[width=0.9925\textwidth]{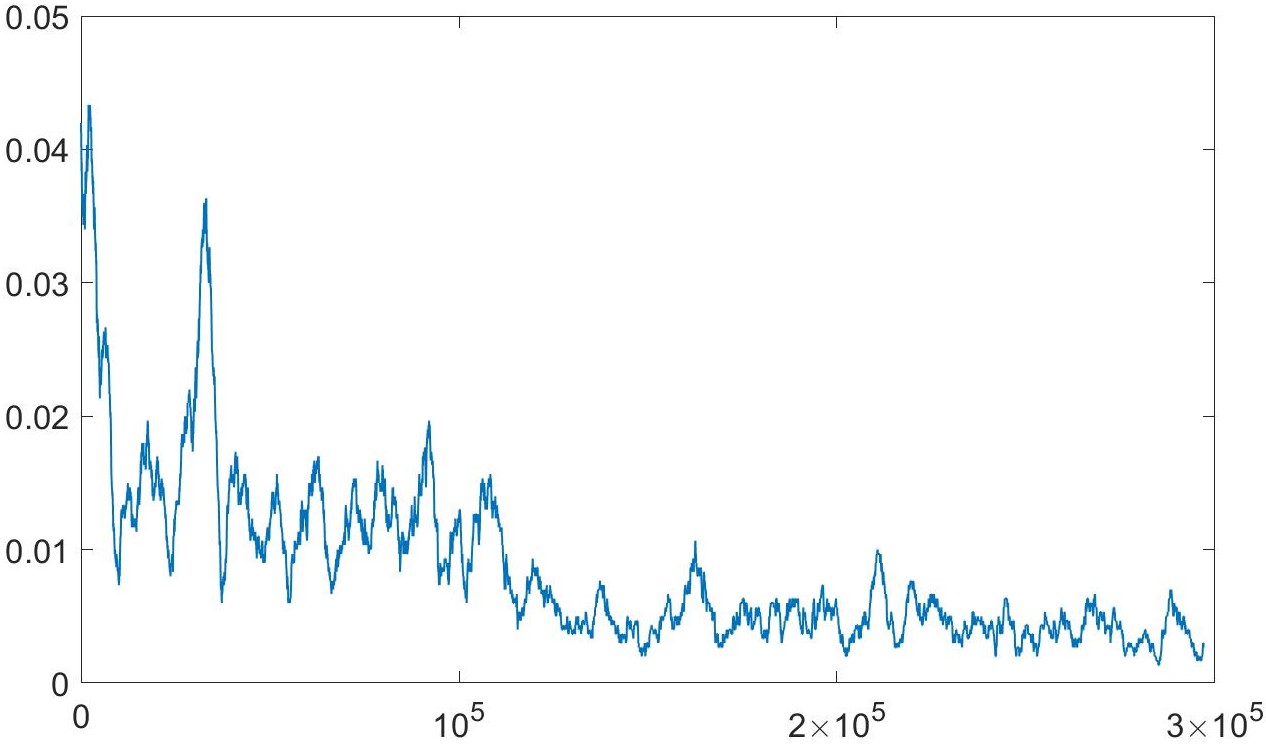}}
	\caption{We obtained the following graphs from a computer simulation of the critical $1$-d MDLA. The first picture shows the size as a function of time of 5 independent aggregates running for time $300,000$. The second picture shows the speed of growth of the blue aggregate in the first picture. Theorem~\ref{thm:main theorem 2} states that the speed converges to the solution of $dZ_t=2Z_t^{5/2}dB_t$ with initial condition $Z_0=\infty $.}\label{fig:simulation}
\end{figure}

We study a variant of the DLA model called the multi-particle Diffusion Limited Aggregation (MDLA), where a cloud of particles diffuse simultaneously before adhering to a growing aggregate. This model was first studied in the physics \cite{voss84mdfa, voss1984multiparticle, meakin98book,meakin1988multiparticle,meakin83} and later in the mathematics literature \cite{sidoravicius2019multi,sidoravicius2017one,kesten2008problem,sly2016one,dembo2019criticality,kesten2008positive}.
The model in dimension $d \ge 1$ and density $\lambda >0$ is defined as follows. For each time $t>0$ the aggregate is a set of vertices $\mathcal A _t \subseteq \mathbb Z ^d$.  Initially, $\mathcal A _0=\{0\}$ and on each vertex $v \neq 0$ there is a random number of particles distributed as $ \poisson (\lambda )$, independently of the other vertices. At time $t=0$ each particle starts to move according to a simple, continuous time random walk, independently of other particles. The aggregate $\mathcal A _t$ grows according to the following rule: If at time $t$ one of the particles at $v \notin \mathcal A_{t-} $ attempts to jump into the aggregate, it freezes in place, together with all the other particles at $v$ and the aggregate grows by $\mathcal A _t =\mathcal A_{t-}\cup \{v\}$. Frozen particles do not move for the rest of the process and cannot make the aggregate grow.

Perhaps surprisingly at first sight, only the one dimensional model is expected to exhibit a phase transition, while in higher dimensions the aggregate is expected to grow linearly for all initial densities $\lambda>0$ which has been confirmed only for large enough $\lambda$ by Sidoravicius and Stauffer~\cite{sidoravicius2019multi} and also in~\cite{sly2016one}.  From now on we focus only on the one dimensional case. In this case the aggregate is simply a line segment and the processes on the positive and negative axes are independent so we simply restrict our attention to the rightmost position of the aggregate at time $t$ which we denote $X_t$. If at time $t$ a particle at $X_{t^-} + 1$ attempts to take a step to the left, the aggregate grows by one: $X_t=X_{t^-}+1$.

Kesten and Sidoravicious \cite{kesten2008problem} proved that in the subcritical regime when $\lambda <1$, $X_t=\Theta (\sqrt{t})$ with high probability. Moreover, using the results of Dembo and Tsai \cite{dembo2019criticality}, one can show that in fact $X_t =(c_-(\lambda) +o(1))\sqrt{t}$ for an explicit $c_-(\lambda) >0$. In the supercritical regime, the third author proved the existence of a phase transition~\cite{sly2016one} by showing that when $\lambda >1$, $X_t$ grows linearly and moreover that $t^{-1}X_t \to c_+(\lambda) >0$ almost surely.

It was widely conjectured that in the critical case, when $\lambda =1$, the aggregate grows like $t^{2/3}$. Sidoravicious and Rath~\cite{sidoravicius2017one} gave a heuristic explanation for this prediction via PDEs. In \cite{dembo2019criticality}, Dembo and Tsai proved an upper bound of $ O(t^{2/3})$ by studying a modified ``frictionless'' variant of the model which stochastically dominates the aggregate (see Section~\ref{subsec:intro:relatedworks} for details). We verify the conjectured $t^{2/3}$ growth in the following theorem and determine the scaling limit.

\begin{theorem}\label{thm:main theorem 2}
	Let $V_t$ be the Bessel process with dimension $\frac{8}{3}$ given by
	\begin{equation}
	d V_t=\frac{5}{6} \frac{dt}{V_t} + d B _t, \quad V_0 = 0,
	\end{equation}
	and let $Z_t = (3V_t)^{-2/3}$. Then,
	\begin{equation}
	\big\{ t^{-\frac{2}{3}}X_{st} \big\} _{s>0} \overset{d}{\longrightarrow } \Big\{ \int _0^s Z_x dx \Big\} _{s>0},\quad t \to \infty.
	\end{equation}
\end{theorem}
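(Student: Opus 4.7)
The plan is to establish the theorem by identifying the local growth speed $Z_t$ as the fundamental scaling-limit object, proving that it converges in distribution to the unique diffusion satisfying $dZ_t = 2 Z_t^{5/2} dB_t$, and then deducing the convergence of $t^{-2/3} X_{st}$ by integration via $X_t = \int_0^t Z_s \, ds$. The target SDE $dZ = 2Z^{5/2} dB$ is equivalent to the characterization in the theorem by It\^o's formula: setting $V := Z^{-3/2}/3$, a direct computation gives $dV = (5/6) V^{-1} dt + dB'$ for some Brownian motion $B'$, so $V$ is the Bessel$(8/3)$ process. Thus matters reduce to proving the SDE for the speed.

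The analytic heart of the proof is a local-in-time stochastic integral representation for $Z_t$. Over a mesoscopic interval $[t, t+\delta]$, the growth of $X$ is driven by the particles just to the right of the aggregate whose first-hitting times at the moving boundary fall in this window. I would couple the sequence of absorption events to a \emph{critical branching process with continuous edge lengths}: each absorbed particle ``spawns'' the particles that reach the boundary before any newer particle arrives from further right, with edge lengths given by inter-absorption delays. Criticality (mean offspring one) is inherited from $\lambda = 1$ together with the recurrence of one-dimensional random walk, producing heavy-tailed but computable cluster sizes. From this representation I would compute the infinitesimal moments $\mathbb{E}[Z_{t+\delta} - Z_t \mid \mathcal{F}_t] = o(\delta)$ and $\mathbb{E}[(Z_{t+\delta} - Z_t)^2 \mid \mathcal{F}_t] = 4 Z_t^5 \, \delta \, (1 + o(1))$, where the power $Z^5$ is forced by the $(-1/3)$-self-similarity $Z_{ct} \stackrel{d}{=} c^{-1/3} Z_t$.

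With these infinitesimal statistics I would identify the scaling limit via a martingale-problem / Stroock--Varadhan argument: the rescaled process $s \mapsto t^{1/3} Z_{ts}$ converges to the unique solution of $dZ = 2Z^{5/2} dB$, from which the convergence of $t^{-2/3} X_{st}$ to $\int_0^s Z_u \, du$ follows by continuity of integration. The boundary behavior $Z_t \to \infty$ as $t \to 0^+$, corresponding to $V_0 = 0$, matches the physical picture that the aggregate initially grows like $\sqrt{t}$ before the critical density deficit near the boundary has developed.

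The main obstacle is justifying the local SDE approximation rigorously across all scales. Because critical branching processes have diverging second moments, the variance calculations demand delicate truncation; moreover, the moment approximation above presumes that the particle configuration near the boundary has approximately equilibrated, which itself requires a priori H\"older regularity of the speed. This forces a multiscale bootstrap: at each dyadic time scale one must establish (i) approximate equilibration of the environment near the moving boundary, (ii) H\"older regularity of $Z_t$ compatible with the limiting SDE, and (iii) non-degeneracy bounds preventing $Z_t$ from exploding or collapsing too quickly. Inductively propagating these estimates — using regularity at scale $2^k$ to derive moment matching at scale $2^{k+1}$, which in turn yields improved regularity at scale $2^{k+1}$ — would be the technical core of the argument.
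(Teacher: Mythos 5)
Your outline matches the paper's strategy almost exactly: approximate the speed locally by a stochastic-integral representation and then by a critical branching process with continuous edge lengths, compute the infinitesimal moments ($o(\delta)$ drift and $4Z_t^5\delta$ variance), deduce diffusion convergence via a martingale-problem result (the paper uses Helland's theorem), and propagate regularity by a multiscale induction seeded by an a priori $O(t^{3/4+\epsilon})$ bound. The one point you elide is that the raw speed $S(t)$ is too spiky to compute increments for directly, so the paper carries out the moment matching for a renewal-kernel-smoothed version of the speed ($L(t)$, equivalently the $\alpha'$-process) and initializes the diffusion convergence at a random stopping time where $L$ first drops to scale $M^{-1/3}$, which is what lets the scaled initial condition converge to a finite value before being sent to infinity.
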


In particular $ t^{-2/3}X_{t}$ has a limiting distribution which is positive almost surely.  The limiting speed process $Z_t$ is a local martingale which is $(-1/3)$-self-similar which means $\{Z_{st}\}_{s\geq 0} \stackrel{d}{=} \{t^{1/3} Z_s\}$.  While this representation is perhaps surprising, any $(-1/3)$-self-similar diffusion must be a $(-2/3)$ power of a Bessel process.  An alternative representation, which can easily be verified by It\^o's Formula, is that 
\begin{equation}\label{eq:SDE equation in main theorem}
    d Z_t = 2Z_t^{5/2} dB_t,
\end{equation}
with $Z_0=\infty.$
The initial condition $Z_0= \infty$ can be understood by the limit of the diffusion satisfying~\eqref{eq:SDE equation in main theorem} with finite initial condition $Z_0$ sent to infinity. See Sections \ref{subsubsec:outline:scaling limit} and  \ref{sec:scalinglimit} for details.

\subsection{Related models}\label{subsec:intro:relatedworks}
The MDLA model originated from the study of Diffusion Limited Aggregation (DLA). In the DLA model,  particles come from infinity one by one by a diffusion and adhere to the aggregate. The fundamental problem is determining the asymptotic growth and shape of the aggregate. The model was introduced by Witten and Sanders~\cite{witten1981diffusion}, and describes physical phenomena such as  mineral growths and electrodeposition. Although it has been studied extensively (e.g., \cite{witten83dla,meakin83,vicsek84,kesten87hitprob, kesten1987long, kesten1990upper, benjamini17}), there has been limited progress in obtaining rigorous results.

The frictionless model, which is a variant of MDLA, was introduced and successfully studied by Dembo and Tsai~\cite{dembo2019criticality}. The model is defined in one dimension as follows. Like the MDLA, there is a growing aggregate on the set of positive integers and a cloud of particles that perform a continuous time random walk but the aggregate grows by the number of particles it absorbs rather than 1.  In the supercritical case it explodes in finite time.  
The size of the aggregate is exactly equal to the number of particles absorbed, which is called in \cite{dembo2019criticality} the ``flux condition" allowed Dembo and Tsai to derive the asymptotic behavior of the model based on PDE techniques, deriving a discontinuous scaling limit. We note that the flux condition does not hold in the MDLA model but the front stochastically dominates it and thus gives an upper bound.

Another way to interpret the MDLA is to view it as a special case of a two-type particle system (so-called the $A$-$B$ model), which is sometimes used to model the spread of a rumor or a disease in a network. In the $A$-$B$ model, there are two types of particles, the $A$-type (``healthy individuals") and the $B$-type (``infected"). All the particles perform a continuous time random walk on $\mathbb Z ^d $ in which the $A$ particles jump with rate $D_A\ge 0$ and the $B$ particles jump with rate $D_B \ge 0$. When $A$ and $B$ particles coincide, the $A$ particle turns into a $B$ particle. 
Theses models were studied extensively and we refer to \cite{kesten2012asymptotic,berard2010large,comets2009fluctuations,kesten2008shape,richardson1973random,kesten2005spread}.

In the case $D_B=0$ and $D_A>0$ (that is, only the $A$ particles move) the transition rule is modified such that when an $A$ particle tries to jump into a vertex with a $B$ particle, the jump is suppressed and the $A$ particle, together with all other $A$ particles on the same vertex turn into $B$ particles. It is clear that when $D_A=1$ and when initially there is one $B$ particle in the origin this corresponds to the MDLA.

The opposite case $D_A=0$ and $D_B>0$ is sometimes referred to in the physics literature as the Stochastic combustion process and it is used to model the burning of propellant material \cite{ramirez2004asymptotic,comets2007fluctuations}. In the mathematics literature it is sometimes called the frog model \cite{alves2002phase,alves2002shape}.

\subsection{Further directions and open problems}
\subsubsection{Near critical behavior}
For the one-dimensional MDLA with initial density $\lambda $ very close to $1$, Sidoravicius and Rath \cite{sidoravicius2017one} asked to determine the critical exponent of the aggregate size as $\lambda \to 1$. To be precise, when $\lambda >1$, they predicted that the constant $c_+(\lambda) := \lim_{t\to\infty} t^{-1} X_t$  is linear in $(\lambda -1)$ as $\lambda\searrow 1$, and conjectured that $c_+(\lambda) \sim \frac{1}{2}(\lambda-1)$ from a heuristic argument counting the sites in the aggregate with more than one particle. On the other hand, for the subcritical case, they expected that the constant $c_-(\lambda):= \lim_{t \to \infty} t^{-1/2}X_t$ satisfies $ c_-(\lambda) \sim (1-\lambda)^{-1/2}.$

In a forthcoming companion paper~\cite{ens20nearcrit} building on the techniques we develop here, we give an affirmative answer to the latter conjecture, but disprove the first one by establishing
\begin{equation}\label{eq:nearcrit:super}
    \lim_{\lambda\searrow 1} \frac{c_+(\lambda)}{\lambda-1} = \frac{1}{3}, \quad \quad \lim_{\lambda\nearrow 1} (1-\lambda)^{1/2} c_-(\lambda) = 1.
\end{equation}
The heuristic calculation from \cite{sidoravicius2017one} for the slightly supercritical regime leads to a wrong answer as it assumed that the speed of the growth is concentrated as $t\to \infty$. Instead, it converges to a nondegenerate diffusion upon appropriate rescaling as $\lambda \searrow 1$. 
We remark that the second identity of \eqref{eq:nearcrit:super} can be obtained by the results of \cite{dembo2019criticality} from a completely different approach.

When $\lambda$ is very close to $1$, it is intuitively clear that when $t$ is not very large, the aggregate will look the same as the critical case $\lambda =1$. This leads to the question of how large must $t$ be (in terms of $\lambda $) in order for the model to ``feel" that it is slightly supercritical or subcritical, and is related to the rescaling of the process mentioned in the previous paragraph. In \cite{ens20nearcrit}, we show that the answer to this question is $t\asymp |\lambda -1|^{-3} $, and give a more precise description on the scaling limit of $X_t$ as follows: Let $U_t$ and $R_t$ be the solutions to the SDEs
\begin{equation}
    dU_t = 2U_t^3 dt + 2U_t^{5/2} dB_t,\quad dR_t = -2R_t^3 dt + 2R_t^{5/2} dB_t,
\end{equation}
with initial conditions $U_0 = R_0 = \infty.$ The methods from the current paper can be extended to prove that
 the aggregate size $X_t = X_t(\lambda)$ satisfies
\begin{equation}
    \big\{ (\lambda -1)^2 X_{s(\lambda -1)^{-3}}  \big\}_{s>0} \overset{d}{\longrightarrow } \Big\{ \int _0^s U_x dx \Big\}_{s>0}, \quad \lambda \searrow 1.
\end{equation}
Similarly, in the slightly subcritical case we have 
\begin{equation}
    \big\{ (1-\lambda )^2 X_{s(1-\lambda )^{-3}}  \big\}_{s> 0} \overset{d}{\longrightarrow } \Big\{ \int _0^s R_x dx \Big\}_{s> 0}, \quad \lambda \nearrow 1. 
\end{equation}

One can see that $U_t$ converges to a nondegenerate stationary process as $t\to \infty$, and a standard calculation tells us that its stationary measure is an inverse gamma distribution whose mean is $\frac{1}{3}$, implying \eqref{eq:nearcrit:super}.
On the other hand, in the subcritical case, we have almost surely that $2 \sqrt{t} R(t)\to 1$ as $t\to \infty $, since the diffusive term of the SDE becomes negligible compared to the drift as $t$ increases. This  leads to the second equation of \eqref{eq:nearcrit:super}.

\subsubsection{Different initial distribution}
One can consider the one-dimensional MDLA with an initial distribution that is not Poisson. Suppose that at time $0$, the number of particles on vertex $i$ is $Z_i$ where \begin{equation}\label{eq:initial condition}
    Z_1,Z_2,\dots \ \text{are i.i.d.}, \quad \mathbb E Z_1=\lambda , \quad \var  (Z_1) =\sigma ^2. 
\end{equation}
In case where all the particles just perform independent random walks on $\mathbb Z$ without a growing aggregate, it is clear that for all $\lambda >0$, i.i.d.~Poisson$(\lambda)$ on each point of $\mathbb Z$ is a stationary distribution for the dynamics. Thus, one might expect that the aggregate with the initial condition \eqref{eq:initial condition} grows exactly like the original MDLA with the Poisson initial profile. However, we expect that in the critical case, the aggregate grows faster than the time it takes for large intervals to approach stationarity. Therefore, the scaling limit in the critical case is expected to depend on the initial distribution of particles. More precisely, we conjecture formulate the following conjecture.

\begin{conj}
Let $X_t$ be the size of the aggregate with initial condition as in \eqref{eq:initial condition}. Then, with high probability
\begin{enumerate}
    \item 
    If $\lambda <1$, then $X_t=\Theta (\sqrt{t} )$.
    \item 
    If $\lambda >1$, then $X_t =\Theta (t)$.
    \item 
    If $\lambda =1$, then $X_t=\Theta ( t^{2/3} )$. Morover, we have
    \begin{equation}
        \big\{ t^{-\frac{2}{3}} X_{st} \big\} _{s>0} \overset{d}{\longrightarrow } \Big\{ \int _0^s Z_xdx   \Big\}_{s>0} ,\quad t \to \infty,
    \end{equation}
    where $Z_t$ is the solution to the equation
    \begin{equation}\label{eq:sde general initial condition}
        dZ_t=(4 \sigma ^2 -4)Z_t^4dt +2 \sigma  Z_t^{5/2}dB_t,
    \end{equation}
    with $Z_0=\infty $.
\end{enumerate}
\end{conj}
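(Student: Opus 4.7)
For the subcritical ($\lambda<1$) and supercritical ($\lambda>1$) cases, the plan is to adapt the arguments of Kesten--Sidoravicius~\cite{kesten2008problem} and Sly~\cite{sly2016one} respectively. Their proofs rest on the mean density $\lambda$ and on concentration of particle counts in large intervals, not on the Poisson-specific identity $\sigma^2=\lambda$. For general i.i.d.~$Z_i$ with sufficient moments, the $\sqrt{t}$-bound follows from the same hitting-time/martingale bookkeeping once one replaces Chernoff bounds for $\poisson$ sums by tail bounds valid for i.i.d.~sums with mean $\lambda<1$ and finite variance. In the supercritical case, the renewal/sprinkling construction only uses that a positive density of sites carry ``excess'' particles and that these form an i.i.d.~positive-density cloud; this is preserved under the assumption $\lambda>1$ and finite variance. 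One then verifies, scale by scale, that the relevant concentration estimates still go through.

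For the critical case, the plan is to follow the four-part strategy of this paper: (i) derive a stochastic integral representation of the speed process; (ii) approximate it by a critical branching process with continuous edge lengths; (iii) identify the infinitesimal drift and variance of the speed to obtain the limiting SDE; and (iv) inductively establish the multiscale regularity estimates. Parts~(i), (ii), (iv) depend only on the criticality $\lambda=1$ and integrability of $Z_1$, not on $\sigma^2$, and should extend with essentially cosmetic modifications. The substantive new work lies in part~(iii). The diffusion coefficient is set by the scale of fluctuations in the number of particles in the ``sticker'' interval of length $L$ just ahead of the aggregate: an i.i.d.~initial law with variance $\sigma^2$ yields fluctuations of order $\sigma\sqrt{L}$ by the CLT, which under the $t^{2/3}$ rescaling produces the $2\sigma Z_t^{5/2}\,dB_t$ term. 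In the Poisson case the drift vanishes because $\sigma^2=\lambda=1$ kills a cancellation in the second moment of the one-step increment; for general $\sigma^2$, a residual drift proportional to $(\sigma^2-1)Z_t^4$ survives, and one reads off the explicit factor $4$ by carefully computing the expected one-step increment of $Z_t$ from the generator of the branching approximation, using that an offspring distribution with mean $1$ and variance $\sigma^2$ contributes an It\^o-type correction and a bias term that combine as $4(\sigma^2-1)Z_t^4\,dt$.

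The main obstacle is that an i.i.d.~initial law is \emph{not} invariant under independent random walks unless it is $\poisson(\lambda)$. The proof of Theorem~\ref{thm:main theorem 2} repeatedly exploits that, far from the aggregate, particles form a $\poisson(1)$ field at every time, making random-walk averaging completely transparent. For general $Z_i$, the one-site marginals relax towards Poisson while the large-scale variance per unit length remains $\sigma^2$. What matters for the aggregate is the joint law of particle counts in the sticker region, which has been shielded from the aggregate since time $0$, so its joint count distribution is still the original i.i.d.~law convolved with the local random-walk kernel; in particular both the first moment $\lambda=1$ and the second moment $\sigma^2$ per unit length are preserved over the relevant mesoscopic scales. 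Propagating this through the multiscale induction---showing that $(\lambda,\sigma^2)$ are correctly tracked at every scale, and that moment estimates currently proved using Poisson structure still hold using only finite variance---is where the bulk of the new technical work lies and is, I expect, the principal obstacle to a complete proof.
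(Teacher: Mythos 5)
This statement is labelled as a \emph{Conjecture} in the paper; the authors give no proof, only the heuristic consistency checks in the remark that follows (compatibility with Theorem~\ref{thm:main theorem 2} when $\sigma^2=1$, the deterministic limit when $\sigma^2=0$, and the Bessel-dimension computation $\frac{4}{3}+\frac{4}{3\sigma^2}$). So there is no paper proof for your proposal to be measured against. As a research outline your calculation of the coefficients is consistent with the paper's machinery: in Section~\ref{sec:double int} the variance of the increment comes from $(K_\alpha^*)^2\alpha\,\Delta\approx 4\alpha^5\Delta$, which would become $4\sigma^2\alpha^5\Delta$ once the quadratic variation of $d\Pi_S$ picks up a factor $\sigma^2$; and the drift cancellation $\mathbb E[\mathcal J(t)]=2\alpha^2(1+o(1))$ against $\tfrac{2\alpha^2}{(1+2\alpha)^2}-\tfrac{4\alpha+4\alpha^2}{(1+2\alpha)^2}S_1$ would become a residual $2\alpha^2(\sigma^2-1)$, giving $K_\alpha^*\cdot 2\alpha^2(\sigma^2-1)=4(\sigma^2-1)\alpha^4$, i.e.\ the conjectured $(4\sigma^2-4)Z_t^4\,dt$. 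That part of your sanity check is sound.

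However, the obstacle you flag at the end is deeper than you present it, and it is a genuine gap. The entire analysis in this paper is launched from Proposition~\ref{prop:speed:basicdef}: the identity $S(t)=\tfrac{\lambda}{2}\,\mathbb P\bigl(\sup_{s\ge 0}\{W(s)-Y_t(s)\}\le 0\mid Y_t\bigr)$ and the fact that $X_t$ is then a Poisson process with rate $S(t)$. Both facts use that i.i.d.\ $\poisson(\lambda)$ is \emph{stationary under the dynamics and conditionally independent given the aggregate history} — the conditional law of the unfrozen particle field given $\{X_s\}_{s\le t}$ is again an inhomogeneous Poisson field, which is what allows one to reduce the conditional mean of the sticker-site occupancy to a single random-walk hitting probability. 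For a non-Poisson i.i.d.\ initial law there is no analogue of this identity: the conditional law of the field given the aggregate history is not a product measure, is not determined by its first moment, and does not admit the one-random-walk representation, so the very object $S(t)$ on which Sections~\ref{sec:outline}--\ref{sec:double int} operate does not exist in the same form. Saying that the far-field one-site marginals relax to Poisson and the per-unit-length variance is preserved does not resolve this, because what drives the increment of $X_t$ is precisely the conditional distribution near the front. Until one finds a replacement for Proposition~\ref{prop:speed:basicdef} (or a coupling that controls the gap between the true conditional law and a tilted Poisson field uniformly at all the scales used in the induction), the remaining steps have nothing to act on. You correctly identify this as the main obstacle, but it is not merely ``tracking $(\lambda,\sigma^2)$ through the induction''; it is the absence of the starting formula itself.
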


\begin{remark}
We elaborate several aspects of the conjecture as follows.
  \begin{enumerate}
      \item 
      Theorem~\ref{thm:main theorem 2} is compatible with this conjecture. Indeed, in the Poisson case with $\lambda =1$, we have that $\sigma ^2 =1$ and therefore the drift term vanishes.
     \item 
     For the deterministic initial condition where each vertex has exactly one particle in the beginning, we see that the diffusive term vanishes and we get a deterministic scaling limit of the form  $X_t=(c_0+o(1)) t^{2/3}$ with high probability.
     \item 
     Note that $Z_t$ from \eqref{eq:sde general initial condition} is $(-\frac{1}{3})$-self-similar and hence is a power of a Bessel process. Also, recall that the Bessel process with dimension $n \le 2$ is recurrent. Applying It\^o's formula, 
     it turns out that the corresponding Bessel process for $Z_t$ has dimension $(\frac{4}{3} + \frac{4}{3\sigma^2})$. Therefore,  $Z_t$ explodes in finite time if $\sigma ^2 \ge 2$ while it tends to $0$ if $\sigma ^2 <2$. Thus, we expect that the scaling limit of $X_t$ is differentiable when $\sigma ^2 <2$ and is not differentiable when $\sigma ^2  \ge 2$.
     \end{enumerate}  
\end{remark}

\section{Approximations of the speed and proof outline}\label{sec:outline}

In this section, we define  the speed of the process, which is our main object of study. As the speed is a complicated process by itself, we give several approximations of the speed by more tractable processes. Based on those expressions, we also give an overview of the proof of Theorem \ref{thm:main theorem 2}. 

\begin{definition}
	The speed $S(t)$ of the aggregate at time $t$ is $\frac{1}{2}$ times the conditional expectation of the number of particles at position $X_t +1$, given $\{X_s\}_{0\le s \le t}$.
\end{definition}

This corresponds to the infinitesimal jump rate of $X_t$; note that the particles at position $X_t+1$ are equally likely to jump to $X_t$ and $X_t+2$, hence the multiplication by $\frac{1}{2}$. To give a more explicit expression of $S(t)$, we follow the approach of \cite{sly2016one} and define
\begin{equation}\label{eq:def:Yt}
Y_t(s):= \begin{cases}
X_t - X_{t-s} & \textnormal{if } 0\le s \le t;\\
\infty & \textnormal{if } s>t.
\end{cases}
\end{equation}
The following observation was first made in \cite{sly2016one}, and this is the starting point of our work as well. Here, we consider a general MDLA with initial density $\lambda>0$, i.e., having i.i.d. Poisson$(\lambda)$ number of  particles at each position in the beginning.

\begin{prop}\label{prop:speed:basicdef}
	Let $\{X_t\}_{t\ge 0}$ be the 1D MDLA with density $\lambda>0$ and let $W(s)$ denote an independent continuous-time simple random walk starting at $0$. Then, we have
	\begin{equation}\label{eq:speed:basic expression}
	S(t) = \frac{\lambda}{2}\  \PP \left( \left.\sup_{s\ge 0 } \{ W(s)-Y_t(s)\} \le 0 \ 
 \right| \ Y_t\right) .
	\end{equation}
	Moreover, $\{X_t\}_{t\ge 0}$ is a Poisson process with rate $\{S(t) \}_{t\ge 0}.$
\end{prop}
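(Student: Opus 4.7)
The approach combines Poisson thinning with time reversal. I plan first to count the \emph{alive} particles located at $X_t + 1$ at time $t$ (those that have not yet been absorbed). A particle with trajectory $U(\cdot)$ is alive at time $t$ precisely when $U(s) \ge X_s + 1$ for all $s \in [0,t]$, since an absorption occurs only by a particle attempting to jump left from $X_{s^-}+1$. By the Poisson initial condition, the independence of the particles' walks, and the graphical construction of the MDLA developed in \cite{sly2016one}, conditionally on $\mathcal F_t := \sigma(\{X_s\}_{s\le t})$ the number of alive particles at $X_t+1$ at time $t$ is Poisson distributed with parameter
\begin{equation*}
\mu = \lambda \sum_{x\ge 1} \PP\big( W^{(x)}(s)\ge X_s + 1 \;\forall\, s\in[0,t],\; W^{(x)}(t) = X_t + 1 \,\big|\, \mathcal F_t \big),
\end{equation*}
where $W^{(x)}$ is an independent continuous-time simple random walk started at $x$. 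This thinning step is the main subtlety of the argument: conditioning on the aggregate path, itself a complicated functional of \emph{all} the particles, must still yield an honest Poisson law for the alive cloud. The graphical construction of \cite{sly2016one} makes this precise by realizing the aggregate as a deterministic functional of a family of independent SRWs and restricting attention to the alive region.

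Next, I would apply time reversal. By reversibility of SRW, each individual probability above equals $\PP\big( \xi(u) \ge X_{t-u}+1 \;\forall\, u\in[0,t],\; \xi(t) = x \mid \mathcal F_t \big)$, where $\xi$ is an independent SRW started at $X_t+1$. Writing $\xi(u) = (X_t+1) + W(u)$ for a standard SRW $W$ from $0$, the survival constraint becomes $W(u) \ge -(X_t - X_{t-u}) = -Y_t(u)$. Summing over $x \ge 1$ removes the endpoint restriction (which is automatic at $u=t$ since $X_0=0$), and by the symmetry $W \overset{d}{=} -W$ together with the convention $Y_t(s) = \infty$ for $s > t$, we arrive at $\mu = \lambda\, \PP(\sup_{s\ge 0}\{W(s) - Y_t(s)\} \le 0 \mid Y_t)$. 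Since $S(t) = \mu/2$ by definition, the claimed identity follows.

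Finally, for the Poisson process assertion: each alive particle at $X_{t^-}+1$ attempts a leftward jump at rate $\tfrac12$, which triggers an absorption and hence a jump of $X$. So if $N(t)$ denotes the number of alive particles at $X_{t^-}+1$, the instantaneous rate of $X_t$ is $N(t)/2$, and $\mathbb E[X_{t+dt}-X_t \mid \mathcal F_t] = S(t)\,dt$. Thus $X_t - \int_0^t S(u)\,du$ is an $\mathcal F_t$-martingale, identifying $X$ as a counting process with stochastic intensity $S$.
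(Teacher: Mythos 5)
The paper does not prove this proposition; it is stated as an observation from \cite{sly2016one}, so there is no proof in the present article to compare against directly. Your reconstruction is nonetheless correct, and it identifies the two key ingredients accurately: the Poisson-thinning fact that, conditionally on the aggregate path $\{X_s\}_{s\le t}$, the cloud of surviving particles is again a Poisson process with the thinned intensity; and the time-reversal step, turning the survival constraint $U(s)\ge X_s+1$ into $W(u)\ge -Y_t(u)$, which together with $W\overset{d}{=}-W$ and the convention $Y_t(s)=\infty$ for $s>t$ produces exactly the displayed formula. The bookkeeping at the endpoint $u=t$ is also right: summing over starting sites $x\ge 1$ matches the constraint at $u=t$ precisely because $X_0=0$. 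The one point you correctly flag as the ``main subtlety'' is indeed not an instance of elementary Poisson thinning. Conditioning on $\{X_s\}_{s\le t}$ carries both positive information (at each growth time $\tau_i$ some particle at $X_{\tau_i^-}+1$ jumped left) and negative information (at no other time did a boundary particle jump left), so the assertion that the alive cloud remains conditionally Poisson with the stated intensity is a genuine theorem rather than a routine conditioning statement. Your argument is only as complete as the citation to \cite{sly2016one} for that fact — which is precisely the route the paper itself takes — so no gap is introduced relative to the paper, but it would strengthen the write-up to state exactly which result from that reference you are invoking. The final step, identifying $\{X_t\}$ as a counting process with $\mathcal F_t$-intensity $S(t)$ via the rate-$\tfrac12$ leftward jumps of the (conditionally Poisson many) boundary particles, is handled correctly.
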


In \cite{sly2016one}, \eqref{eq:speed:basic expression} was used to prove that $X_t$ grows linearly in $t$ if $\lambda>1$. The approach was to show that the speed has the property of mean reversion.  If the growth rate of $X_t$ becomes too small, say, $\alpha$, for a period of time, then  $S(t) \approx\lambda \alpha$, corresponding to a faster growth rate. However, at criticality, the same analysis would give $S(t) = \alpha(1+o(1))$, and thus understanding the lower order terms becomes necessary. Indeed, our ultimate goal is to deduce the scaling limit of the \textit{smoothed} speed by  conducting a more refined and quantitative analysis on such error terms.

In Section \ref{subsec:speed:speedproc}, we introduce the notion of the \textit{speed process} which generalizes \eqref{eq:speed:basic expression} and explain how they are approximated by the objects we can quantitatively deal with. Then, we see how the speed of the critical aggregate is described in Section \ref{subsec:speed:speedagg}. Furthermore, in Section \ref{subsec:speed:Lt}, we give an overview on how to obtain its scaling limit by using a \textit{smoothed} version. In Section \ref{subsec:speed:outline}, we describe an outline of the proof. In the final subsection, Section \ref{subsec:Kestim:intro}, we illustrate the necessary estimates on the important quantities that appear frequently in the article.

\subsection{The speed process and its approximation}\label{subsec:speed:speedproc}

In the remainder of this section, $Y$ denotes a \textit{unit-step function} which is defined as follows.

\begin{definition}[Unit-step function and its speed process]\label{def:unitstepfunc}
	$Y:\mathbb{R}_{\ge 0} \to \mathbb{N}\cup \{\infty\}$ is called a \textit{unit-step function} if there exist either $x_1 < x_2 < \ldots $ such that
	\begin{equation}
	Y(s) = \sum_{i\ge 1} I\{x_i < s\},
	\end{equation}
	or $n \in \mathbb{N}$ and $x_1<x_2<\ldots < x_n < x$ such that
	\begin{equation}
	Y(s) = \begin{cases}
	\sum_{i=1}^n I\{ x_i < s\} & \textnormal{if } s\le x;\\
	\infty & \textnormal{if } s>x.
	\end{cases}
	\end{equation}
	Note that in both cases $Y(0)=0$,  $Y$ is increasing and  left-continuous. Furthermore, the \textit{speed process} of $Y$ is defined as
	\begin{equation}\label{eq:def:speedproc}
	S(Y) : = \frac{1}{2} \ \PP\left(\left. \sup_{s\ge 0} \{W(s)-Y(s)\} \le 0 \ \right| \ Y \right),
	\end{equation}
	where $W(s)$ denotes the continuous-time simple random walk starting at $0$.
\end{definition}

We let $\PP_\alpha$ denote the probability with respect to $Y$ being a rate-$\alpha$ Poisson process.  We will proceed by considering the formula $S(Y)$ under $\PP_\alpha$.  In doing so we obtain a formula for $S(Y)$ described in Proposition~\ref{prop:speed:Dt0thorder} below which enables us to conduct a refined quantitative study on the speed process.  While our starting point is a Poisson $Y$, the formula will hold for all unit-step functions $Y$.

We define the stopping time $T$ as
\begin{equation}\label{eq:def:T:basic form}
T = T(Y) := \inf \{s>0: W(s) > Y(s) \}.
\end{equation}
and set 
\begin{equation}
U(s) := Y(s) - W(s) +1.
\end{equation} 
which is a continuous time random walk with drift under $\PP_\alpha$. Setting $\xi = (1+2\alpha)^{-1}$ we have that $\xi^{U_s}$ is a martingale, and hence 
\begin{equation}\label{eq:speed:Ut mg conv}
\xi^{U_{s\wedge T}} \overset{\textnormal{in }L^2 }{\underset{s\to \infty}{\longrightarrow}} \ \ I\{T<\infty \}.
\end{equation}

Thus, an application of the optimal stopping theorem gives
\begin{equation}
\PP_\alpha ( T<\infty) = \xi.
\end{equation}
Thus, we obtain that
\begin{equation}
\mathbb{E}_\alpha [ S(Y)] = \frac{\alpha}{1+2\alpha}.
\end{equation}

Coming back to the general case, let $Y$ be a given unit-step function,  let $\alpha>0$ and set $\xi = \frac{1}{1+2\alpha}$. We define $D_t$ to be
\begin{equation}
D_t := \mathbb{E}_\alpha \left[\left.\xi^{U_{t\wedge T}} \right| Y_{\le t}\right],
\end{equation}
where $\mathbb{E}_\alpha [ \ \cdot \ | Y_{\le t}]$ denotes the expectation over $W$ and the unit-step function obtained by the following procedure:  take the profile of $Y$ in $[0,t]$ and generate the rest beyond $t$ by a rate-$\alpha$ Poisson process.  In such a case, $\{\xi^{U_{t' \wedge T}} \}_{t'\ge t}$ forms a martingale in $t'$ which converges to $I\{T<\infty \}$ as $t'\to \infty$. Thus, we also know that
\begin{equation}\label{eq:speed:Dt is prob T finite}
D_t = \PP_\alpha ( T<\infty \ | \ Y_{\le t}).
\end{equation}
The proposition below tells us a way to understand $D_t$ from its infinitesimal differences. In what follows,  $dY$ denote the additive combination of the Dirac measures at the points of discontinuity, that is, under the notation of Definition \ref{def:unitstepfunc}, we define
\begin{equation}
dY(s) := \sum_{x_i} \delta_{x_i}(s).
\end{equation}

\begin{prop}\label{prop:speed:Dt0thorder}
	Let $\alpha, t>0$, and $ Y$ be a unit-step function that is finite on $[0,t]$. Denoting $d\widehat{Y}(s):= dY(s) - \alpha ds$, we have
	\begin{equation}\label{eq:speed:Dt0thorder}
	D_t = \frac{1}{1+2\alpha} - \frac{2\alpha}{1+2\alpha} \intop_0^t 
H_s \ d\widehat{Y}(s),
	\end{equation}
	where $H_s=H_s(Y_{\le s}, \alpha):=	\PP_\alpha ( s<T<\infty \ | \ Y_{\le s}).$
\end{prop}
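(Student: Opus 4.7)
The plan is to regard both sides of~\eqref{eq:speed:Dt0thorder} as deterministic functions of the profile $Y|_{[0,t]}$ and match them by verifying that they agree at $t=0$ and have the same increments as $t$ varies. At $t=0$ both sides equal $\tfrac{1}{1+2\alpha} = \xi$ by~\eqref{eq:speed:Ut mg conv} and~\eqref{eq:speed:Dt is prob T finite}. It then remains to verify two increment formulas: (i) at a discontinuity $s$ of $Y$ the jump $\Delta D_s$ equals $-\tfrac{2\alpha}{1+2\alpha}H_s$, matching the Dirac contribution of $d\widehat Y$; and (ii) on any interval where $Y$ is constant the continuous drift of $D_t$ equals $\tfrac{2\alpha^2}{1+2\alpha}H_t$, matching the $-\alpha\,ds$ contribution of $d\widehat Y$.

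For the jump formula~(i) I would compare two profiles that agree on $[0,s)\cup(s,\infty)$ but differ by whether a jump is inserted at $s$. Inserting the jump shifts the walk $U$ upward by one on $(s,\infty)$, so on the event $\{T>s\}$ the strong Markov property of $U$ (with up-rate $\alpha+\tfrac12$ and down-rate $\tfrac12$) together with the optional-stopping identity $\PP_\alpha(U \text{ hits }0 \mid U_s = u) = \xi^u$ replaces a hitting probability of $\xi^{U_s}$ by $\xi^{U_s+1}$, contributing $(\xi-1)\xi^{U_s}I\{T>s\}$; on $\{T\le s\}$ the stopping time is unchanged. Taking conditional expectation given $Y_{\le s}$ produces $(\xi-1)\,\mathbb E_\alpha[\xi^{U_s} I\{T>s\}\mid Y_{\le s}]$, and the residual expectation equals $H_s$: indeed, decomposing $D_s = \mathbb E_\alpha[\xi^{U_{s\wedge T}}\mid Y_{\le s}] = \PP_\alpha(T\le s\mid Y_{\le s}) + \mathbb E_\alpha[\xi^{U_s}I\{T>s\}\mid Y_{\le s}]$ and comparing with $D_s = \PP_\alpha(T\le s\mid Y_{\le s}) + H_s$ identifies the missing piece. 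Since $\xi - 1 = -\tfrac{2\alpha}{1+2\alpha}$, this is exactly the required jump.

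For the between-jump drift~(ii) I would fix $t$ and a small $h>0$ on which $Y$ is constant and condition the Poisson extension on whether it jumps in $(t,t+h]$. The unconditional probability of $\{T<\infty\}$ given $Y_{\le t}$ is $D_t$, while conditioning on "no jump in $(t,t+h]$" (probability $e^{-\alpha h}$) recovers $D_{t+h}$. A short Bayes computation, together with step~(i) applied to an infinitesimal inserted jump in $(t,t+h]$, identifies the correction as $-\alpha(\xi-1)H_t\,h + o(h)$, giving $\partial_t D_t = -\alpha(\xi-1)H_t = \tfrac{2\alpha^2}{1+2\alpha}H_t$ between jumps. Summing the jump contributions of~(i) and integrating the drift from~(ii) by the fundamental theorem of calculus with jumps reproduces the right-hand side of~\eqref{eq:speed:Dt0thorder}.

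The increment matching is performed pathwise, so the identity automatically extends from $\PP_\alpha$-a.s.~profiles to every unit-step $Y$ finite on $[0,t]$. The main difficulty I anticipate is the jump-analysis step: one must handle the left-continuity convention carefully so that $Y_{\le s}$ does not already record the jump at $s$, and invoke the strong Markov property of $U$ after time~$s$ with the correct starting law under the joint conditioning on $Y_{\le s}$ and $\{T>s\}$. A secondary subtlety is ensuring that the $L^2$-convergence $\xi^{U_{s\wedge T}}\to I\{T<\infty\}$ from~\eqref{eq:speed:Ut mg conv} passes through the conditional expectation given $Y_{\le s}$, which is where the identification $\mathbb E_\alpha[\xi^{U_s}I\{T>s\}\mid Y_{\le s}] = H_s$ hinges.
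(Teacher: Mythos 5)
Your proof is correct, and it arrives at the same key identity $\mathbb{E}_\alpha[\xi^{U_s}I\{T>s\}\,|\,Y_{\le s}] = H_s$ via the same optional-stopping argument, but you organize the infinitesimal analysis differently from the paper. The paper computes the generator of $\xi^{U_{s\wedge T}}$ directly: over a small interval, $Y$ may jump (contributing $(\xi-1)\tfrac{dY(s)}{ds}$) and $W$ may take a step (contributing $\tfrac{\xi+\xi^{-1}}{2}-1$), and the $-\alpha\,ds$ part of $d\widehat{Y}$ emerges from the algebraic identity $\tfrac{\xi+\xi^{-1}}{2}-1 = -\alpha(\xi-1)$, which is just the martingale balance for $\xi^{U}$. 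You instead split the analysis into a jump part (your step (i), which recovers the same $(\xi-1)H_s$ increment by comparing profiles with and without an inserted jump) and a between-jump drift part (your step (ii)), and for the drift you condition on whether the \emph{Poisson extension} places a point in $(t,t+h]$, so the $\alpha$ appears transparently as the extension rate rather than through the algebraic cancellation. Both calculations are valid and consistent, of course, precisely because $\xi$ is tuned to balance the $W$-step drift against the rate-$\alpha$ upward drift. Your route makes the role of $\alpha$ in $d\widehat{Y}$ conceptually clearer, at the cost of an extra Bayes argument in (ii) that needs a little care (approximating $H_s$ by $H_t$ for $s\in(t,t+h]$, and pushing the single-jump comparison through the limit $h\to 0$); the paper's single-formula generator is more compact but the source of the $-\alpha\,ds$ term is less visible. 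One small imprecision in your write-up: when describing the comparison profiles in step (i) you say they ``agree on $[0,s)\cup(s,\infty)$,'' but with the left-continuity convention they agree on $[0,s]$ and differ by $+1$ on $(s,\infty)$; this is what you actually use, so it is only a phrasing slip.
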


\begin{proof} 
	Set $\xi = \frac{1}{1+2\alpha}$ as before. For $Y$ which is a unit-step function, we write
	$$\frac{dY(s)}{ds} = Y(s+)-Y(s). $$
	Then,  observe that
	\begin{equation}
	\begin{split}
	\lim_{\Delta \searrow 0}  \frac{1}{\Delta} \Big\{\mathbb{E}\left[D_{s+\Delta} | \mathcal{F}_s \right]
	-\mathbb{E}\left[D_s | \mathcal{F}_s \right]
	\Big\}
	&=
	\mathbb{E}_\alpha \left[\left.\xi^{U_{s\wedge T}} I\{s<T \}\left\{ (\xi-1)\frac{dY(s)}{ds} + \left(\frac{\xi+\xi^{-1}}{2}-1 \right)  \right\} \ \right| \ Y_{\le s} \right]\\
	&=
	-\frac{2\alpha}{1+2\alpha}  \left(\frac{dY(s)}{ds} - \alpha \right) \mathbb{E}_\alpha \left[\left.\xi^{U_{s\wedge T}} I\{s<T \}\ \right| \ Y_{\le s}  \right]\\
	&=
	-\frac{2\alpha}{1+2\alpha}  \left(\frac{dY(s)}{ds} - \alpha \right)
	\PP_\alpha (s<T<\infty \ | \ Y_{\le s}),
	\end{split}
	\end{equation}
	where the last identity can be deduced similarly as \eqref{eq:speed:Dt is prob T finite}: for a fixed $s>0$, 
	\begin{equation}
	\left\{ \xi^{U_{s'\wedge T}} I\{s<T \}  \right\}_{s'\ge s} 
	\end{equation}
	is a martingale in $s'$, considering $Y$ as a rate-$\alpha$ Poisson process on $[s,\infty)$. Since this converges to $I\{s<T <\infty \}$ as $s'\to \infty$, the optimal stopping theorem tells us that
	\begin{equation}
	\mathbb{E}_\alpha \left[\left.\xi^{U_{s\wedge T}} I\{s<T \}\ \right| \ Y_{\le s}  \right]
	=
	\PP_\alpha (s<T<\infty \ | \ Y_{\le s}).
	\end{equation}
	Then, we can conclude the proof by observing that $D_0 = \xi.$
\end{proof}

From Proposition \ref{prop:speed:Dt0thorder}, we can also deduce the value of $\intop_0^\infty \mathbb{E}_\alpha H_s ds$ as follows.

\begin{cor}\label{cor:speed:integralofH}
	Let $\alpha>0$. For $H_s=H_s(Y_{\le s}, \alpha)$ defined as above, we have
	\begin{equation}
	\intop_0^\infty \mathbb{E}_\alpha [H_s] ds = \frac{1}{\alpha(1+2\alpha)}.
	\end{equation} 
\end{cor}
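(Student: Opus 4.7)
The plan is to reduce the left-hand side to the expectation $\mathbb{E}_\alpha[T \cdot I\{T<\infty\}]$ via Fubini, and then compute this expectation through a first-step analysis of the walk $U$.

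First, since $H_s = \PP_\alpha(s < T < \infty \mid Y_{\le s})$, taking unconditional expectations gives $\mathbb{E}_\alpha[H_s] = \PP_\alpha(s < T < \infty)$. Applying Tonelli,
\begin{equation*}
\intop_0^\infty \mathbb{E}_\alpha[H_s]\, ds = \intop_0^\infty \PP_\alpha(s < T < \infty)\, ds = \mathbb{E}_\alpha\!\left[\intop_0^\infty I\{s < T < \infty\}\, ds \right] = \mathbb{E}_\alpha\!\left[T\cdot I\{T < \infty\}\right],
\end{equation*}
which reduces the problem to proving $\mathbb{E}_\alpha[T \cdot I\{T<\infty\}] = \frac{1}{\alpha(1+2\alpha)}$.

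For the second step, I will evaluate this expectation via the Laplace transform $\phi(\lambda) := \mathbb{E}_\alpha[e^{-\lambda T} I\{T<\infty\}]$. Under $\PP_\alpha$, the process $U(s) = Y(s)-W(s)+1$ is a continuous-time birth-death chain on $\mathbb{N}$ with birth rate $\alpha+\tfrac{1}{2}$ and death rate $\tfrac{1}{2}$, starting at $U_0 = 1$, and $T$ is its hitting time of $0$. Conditioning on the first jump time (exponential with rate $1+\alpha$) and exploiting the strong Markov property --- in particular that the hitting time from $2$ is an independent sum of two copies of $T$, since the chain is skip-free --- yields the functional equation
\begin{equation*}
(1+\alpha+\lambda)\phi(\lambda) = \tfrac{1}{2} + \bigl(\alpha + \tfrac{1}{2}\bigr) \phi(\lambda)^2.
\end{equation*}
Setting $\lambda = 0$ and solving the quadratic recovers $\phi(0) = \xi = \frac{1}{1+2\alpha}$, taking the smaller root since $\PP_\alpha(T<\infty)<1$ (the drift of $U$ is positive). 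Implicit differentiation at $\lambda = 0$ then gives $\phi(0) = \bigl[(2\alpha+1)\phi(0) - (1+\alpha)\bigr]\phi'(0) = -\alpha\, \phi'(0)$, hence $\phi'(0) = -\frac{1}{\alpha(1+2\alpha)}$, and since $\mathbb{E}_\alpha[T\cdot I\{T<\infty\}] = -\phi'(0)$, the corollary follows.

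There is no real obstacle here: Tonelli is immediate from nonnegativity, and the only subtle point is the branch selection in the quadratic, which is resolved by the positive-drift behavior of $U$. A calculation-free alternative to the Laplace transform is to set $f_k := \mathbb{E}_\alpha[T\cdot I\{T<\infty\} \mid U_0 = k]$, derive the recurrence $f_k = \xi^k/(1+\alpha) + p f_{k+1} + q f_{k-1}$ with $f_0=0$ (using that $\xi^k$ is harmonic for the chain so that the drift term collapses), and verify by inspection that $f_k = k\xi^k/\alpha$ solves it, giving $f_1 = \xi/\alpha$ directly.
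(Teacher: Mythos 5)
Your argument is correct. The reduction via Tonelli to $\mathbb{E}_\alpha[T\cdot I\{T<\infty\}]$ is immediate, the first-step functional equation for the Laplace transform $\phi(\lambda)$ of the skip-free birth--death chain $U$ is set up correctly, the branch selection $\phi(0)=\xi$ is justified by $\PP_\alpha(T<\infty)<1$, and the implicit differentiation at $\lambda=0$ is legitimate since $\phi$ has an explicit analytic form near $0$ (with $-\phi'(0)=\mathbb{E}_\alpha[T\,I\{T<\infty\}]$ following, e.g., from monotone convergence applied to $(1-e^{-\lambda T})/\lambda$). The alternative recurrence $f_k=k\xi^k/\alpha$ is also a valid solution, though you leave implicit the uniqueness criterion (the growth/decay condition at $k\to\infty$) that singles it out; the Laplace argument is the fully rigorous version. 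Your route, however, is \emph{not} the one taken in the paper: the paper's proof treats the branching density as arising from the formula $D_\infty=\frac{1}{1+2\alpha}-\frac{2\alpha}{1+2\alpha}\int_0^\infty H_s\,d\widehat{Y}(s)$ of Proposition~\ref{prop:speed:Dt0thorder}, takes expectations under an auxiliary rate $\PP_{\alpha'}$ with the internal parameter $\alpha$ held fixed, and differentiates in $\alpha'$ at $\alpha'=\alpha$. That argument is shorter and uses only machinery already developed (the stochastic-integral expansion of $D_t$), whereas yours is more self-contained and elementary, deriving the answer from first principles via a hitting-time moment. Notably, your approach is really the same Laplace-transform computation the paper carries out in Appendix~\ref{sec:fourier and renewal} (Corollary~\ref{cor:moments of K} gives $\int_0^\infty u K(u)\,du$ by exactly this method), so the technique is consistent with the paper's toolkit even if it bypasses the slick differentiation-in-$\alpha'$ trick used at this particular spot.
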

 
 \begin{proof}
 	Let $\alpha'>0$ (different from $\alpha$), and let $Y$ be the rate-$\alpha'$ Poisson process. Recall the expression for $D_\infty(Y)$ from Proposition \ref{prop:speed:Dt0thorder} and take the expectation over $Y$:
 	\begin{equation}
 	\frac{1}{1+2\alpha'}=\mathbb{E}_{\alpha'} D_\infty(Y) = \frac{1}{1+2\alpha} - \frac{2\alpha}{1+2\alpha} \mathbb{E}_{\alpha'} \left[\intop_0^\infty H_s(Y_{\le s},\alpha) d\widehat{Y}(s)\right].
 	\end{equation}
 	Here, note that $d\widehat{Y}(s)=dY(s)-\alpha ds$ does not depend on $\alpha'$.
 	Thus, differentiating each side with $\alpha'$ and then plugging in $\alpha'=\alpha$ gives
 	\begin{equation}
 	-\frac{2}{(1+2\alpha)^2} = -\frac{2\alpha}{1+2\alpha} \intop_0^\infty \mathbb{E}_\alpha[ H_s(Y_{\le s},\alpha) ]ds,
 	\end{equation}
 	which gives the conclusion after rearranging.
 \end{proof}
 
 The above corollary motivates us to define the \textit{branching density} $K_\alpha$, which is one of the main objects that we need to understand very precisely.
 \begin{definition}[The branching density]\label{def:Kalpha:main}
 	For each $\alpha>0$, the function $K_\alpha : \mathbb{R}_{\ge 0} \to \mathbb{R}_{\ge 0}$, which we call the \textit{branching denstiy}, is defined as
 	\begin{equation}
 	K_\alpha (s ):= \alpha(1+2\alpha)\mathbb{E}_\alpha[H_s] = \alpha(1+2\alpha) \PP_\alpha (s<T<\infty).
  	\end{equation}
 \end{definition}
 
 Corollary \ref{cor:speed:integralofH} tells us that $\intop_0^\infty K_\alpha(s)ds = 1$, and hence $K_\alpha$ can be understood as a probability density function. It turns out that $K_\alpha$ describes the  density of the branch lengths of a certain branching process which locally approximates $X_t$ (see Remark \ref{rmk:speed:branchingdensity}), and hence we call it  the branching density.

Next, we introduce a similar decomposition as \eqref{eq:speed:Dt0thorder} that works for $H_s$, in order to approximate $H_s$ by $\mathbb{E}_\alpha[H_s]$, or $K_\alpha(s)$. Here, we stress that the expectation $\mathbb{E}_\alpha[H_s] = \mathbb{E}_\alpha[H_s(Y_{\le s},{ \alpha})]$ is taken  over the randomness of $Y_{\le s}$ as the rate-$\alpha$ Poisson process. For each $u>0$, define the stopping time $T_u$ to be
\begin{equation}\label{eq:def:Tu}
T_u= T_u(Y) := \inf \{s>0: Y(s)-W(s)+ I\{s>u\} <0 \}.
\end{equation} 
In other words, it is the first time when $W$ exceeds $Y$, with an additional unit jump added to $Y$ at $u$. Then, the decomposition for $H_s$ is given by the following proposition.

\begin{prop}\label{prop:speed:Dt1storder}
	Suppose that $Y$ is finite in $[0,t]$ and let $d\widehat{Y}$, $H_s$ and $T_u$ be as above. Define 
	\begin{equation}\label{eq:def:J:basic form}
	{\textnormal{\textbf{J}}}_{u,s}= {\textnormal{\textbf{J}}}_{u,s}(Y; \alpha) := \PP_\alpha (s<T_u<\infty \ | \ Y_{\le u}) - \PP_\alpha(s<T<\infty \ | \ Y_{\le u}).
	\end{equation}
	Then, we have
	\begin{equation}
	H_s = \mathbb{E}_\alpha[H_s] + \intop_0^s {\textnormal{\textbf{J}}}_{u,s} \ d\widehat{Y}(u).
	\end{equation}
	In particular, we can rewrite the formula \eqref{eq:speed:Dt0thorder} as
	\begin{equation}\label{eq:Dt:1storderexpansion}
	D_t = \frac{1}{1+2\alpha} -\frac{1}{(1+2\alpha)^2} \intop_0^t K_\alpha(s) \ d\widehat{Y}(s) - \frac{2\alpha}{1+2\alpha} \intop_0^t \intop_0^s {\textnormal{\textbf{J}}}_{u,s} \ d\widehat{Y}(u)d\widehat{Y}(s).
	\end{equation}
\end{prop}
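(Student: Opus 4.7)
The plan is to apply the same martingale strategy that underlies the proof of Proposition~\ref{prop:speed:Dt0thorder}, but with the running variable being the conditioning endpoint $u\in [0,s]$ rather than the stochastic-integral endpoint. Fix $s>0$ and, for each $u\in [0,s]$, define
$$M_u := \PP_\alpha\big(s<T<\infty\ \big|\ Y_{\le u}\big),$$
so that $M_0 = \PP_\alpha(s<T<\infty) = \mathbb{E}_\alpha[H_s]$ and $M_s = H_s$. By the tower property, $(M_u)_{u\in [0,s]}$ is a martingale with respect to the filtration $\mathcal{F}_u := \sigma(Y_{\le u})$, and the only driving randomness at this level is the Poisson process $Y$ itself.

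The core step is to derive a pure-jump representation of $M_u$ against $d\widehat{Y}$. Condition on $\mathcal{F}_u$ and consider a small interval $(u,u+\Delta]$. With probability $\alpha\Delta + O(\Delta^2)$ a new point of $Y$ falls in this interval, and the resulting profile $Y_{\le u+\Delta}$ then carries an extra unit jump near $u$; by the very definition of $T_u$ in \eqref{eq:def:Tu}, the updated conditional probability becomes $\PP_\alpha(s<T_u<\infty\mid Y_{\le u}) + O(\Delta)$, a shift of $\textnormal{\textbf{J}}_{u,s}+O(\Delta)$ relative to $M_u$. With the complementary probability no new point appears, and a direct Bayes' rule computation (or, equivalently, the martingale condition $\mathbb{E}_\alpha[dM_u\mid \mathcal{F}_u]=0$) forces the deterministic drift in this regime to be $-\alpha\,\textnormal{\textbf{J}}_{u,s}\,du$. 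Combining the two contributions,
$$dM_u \;=\; \textnormal{\textbf{J}}_{u,s}\,dY(u) - \alpha\,\textnormal{\textbf{J}}_{u,s}\,du \;=\; \textnormal{\textbf{J}}_{u,s}\,d\widehat{Y}(u),$$
and integrating from $0$ to $s$ yields $H_s = \mathbb{E}_\alpha[H_s] + \intop_0^s \textnormal{\textbf{J}}_{u,s}\,d\widehat{Y}(u)$, which is the first claim.

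The identity \eqref{eq:Dt:1storderexpansion} then follows by plugging this decomposition of $H_s$ into the right-hand side of \eqref{eq:speed:Dt0thorder}, splitting the resulting expression into a deterministic part involving $\mathbb{E}_\alpha[H_s]$ and a genuinely stochastic double-integral part, and rewriting the former through Definition~\ref{def:Kalpha:main} as $K_\alpha(s)/[\alpha(1+2\alpha)]$. The one place I expect to need care is the infinitesimal calculation for $dM_u$: I would model it on the proof of Proposition~\ref{prop:speed:Dt0thorder} by computing $\mathbb{E}_\alpha[M_{u+\Delta}-M_u\mid \mathcal{F}_u]$ on a small interval and passing to the limit $\Delta\searrow 0$, with the drift contribution identified by the same optional-stopping-type manipulation that produced the identity $\mathbb{E}_\alpha[\xi^{U_{s\wedge T}}I\{s<T\}\mid Y_{\le s}] = \PP_\alpha(s<T<\infty\mid Y_{\le s})$ there. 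Since $Y$ is assumed finite on $[0,t]$ and both $\textnormal{\textbf{J}}_{u,s}$ and $H_s$ are bounded by $1$, the resulting single and iterated integrals against $d\widehat{Y}$ are absolutely convergent, so no subtle integrability questions arise.
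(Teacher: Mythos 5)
Your proposal is correct and is essentially the same argument as the paper's: both compute the derivative in $u$ of the deterministic function $u\mapsto\mathbb{E}_\alpha[H_s\mid Y_{\le u}]$ and identify the jump contribution $\mathbf{J}_{u,s}\,dY(u)$ together with the compensating Bayes/martingale drift $-\alpha\,\mathbf{J}_{u,s}\,du$, then integrate from $0$ to $s$. One small heads-up when you carry out the substitution into \eqref{eq:speed:Dt0thorder}: using $\mathbb{E}_\alpha[H_s]=K_\alpha(s)/[\alpha(1+2\alpha)]$ you will obtain the coefficient $-\tfrac{2}{(1+2\alpha)^2}$ on the $\int K_\alpha\,d\widehat Y$ term, not $-\tfrac{1}{(1+2\alpha)^2}$ as printed in \eqref{eq:Dt:1storderexpansion}; this is a typo in the paper (it is compensated by a missing $\tfrac12$ in the first equality of \eqref{eq:speedproc:1storder main}, so the final formula there is correct).
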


\begin{remark}
	We use bold alphabet ${\textnormal{\textbf{J}}}_{u,s}$ to define \eqref{eq:def:J:basic form} to emphasize  its stochastic nature, since it depends on  $Y_{\le u}$ which is going to be random. This will prevent confusion from its ``averaged'' form introduced in \eqref{eq:def:J:expected val}.
\end{remark}

\begin{proof}[Proof of Proposition \ref{prop:speed:Dt1storder}]
	For $u<s$, observe that we have the following expression for an infinitesimal difference of $H_s$:
	\begin{equation}
	\lim_{\Delta \searrow 0}\frac{1}{\Delta} \Big\{ \mathbb{E}_\alpha \left[ H_s \ | \ Y_{\le u+\Delta} \right]-\mathbb{E}_\alpha \left[ H_s \ | \ Y_{\le u} \right] \Big\}
	= {\textnormal{\textbf{J}}}_{u,s} \left(\frac{d{Y}(u)}{du} - \alpha \right).
	\end{equation}
	Thus, we directly obtain the conclusion by noticing that  $H_s = \mathbb{E}_\alpha[H_s \ | \ Y_{\le s}]$.
\end{proof}

Combining \eqref{eq:speed:Dt is prob T finite}, Corollary \ref{cor:speed:integralofH} and Proposition \ref{prop:speed:Dt1storder}, we obtain the following expression for the speed process:
\begin{equation}\label{eq:speedproc:1storder main}
\begin{split}
\mathbb{E}_\alpha&[S(Y) \ | \ Y_{\le t}]
= \PP_\alpha ( T =\infty \ | \ Y_{\le t})
\\
&=\frac{\alpha}{1+2\alpha} + \frac{1}{(1+2\alpha)^2} \intop_0^t K_\alpha(s) \left\{dY(s) - \alpha ds \right\} + \frac{\alpha}{1+2\alpha}  \intop_0^t \intop_0^s {\textnormal{\textbf{J}}}_{u,s} \ d\widehat{Y}(u)d\widehat{Y}(s) \\
&= \frac{2\alpha^2}{(1+2\alpha)^2} + \frac{1}{(1+2\alpha)^2}\intop_0^t K_\alpha(s) dY(s) 
+
\frac{\alpha}{1+2\alpha}  \intop_0^t \intop_0^s {\textnormal{\textbf{J}}}_{u,s} \ d\widehat{Y}(u)d\widehat{Y}(s).
\end{split}
\end{equation}
This formula is one of the two primary formulas which lies at the heart of our work, and sometimes we refer to this as \textit{the first-order expansion of the speed process}.
 In the next subsection, we discuss in more detail that how we express the actual speed of the aggregate in the form of \eqref{eq:speedproc:1storder main}.

\begin{remark}\label{rmk:speed:branchingdensity}
	We will later see that $\intop_0^t K_\alpha(s) dY(s)$ is the leading order in the expression \eqref{eq:speedproc:1storder main}, which is of order $\alpha$ while other terms are of smaller order. Moreover, this term can be understood as a branching process, where each jump in Y at distance $s$ results in a new jump to branch out  at rate $K_\alpha(s)$. More detailed illustrations are given in the following subsections (see, e.g.,  \eqref{eq:def:Rt:basic form} and the discussion below).
\end{remark}

The second main formula gives an additional order of approximation of the speed process. In \eqref{eq:speedproc:1storder main}, it turns out that the double integral of ${\textnormal{\textbf{J}}}_{u,s}$ has a non-negligible order in the derivation of the scaling limit of (the smoothed version of) the speed. Thus, we need another layer of approximation to control its size more accurately. To this end, we first define the stopping time $T_{v,u}$ for each $u>v>0$ similarly as \eqref{eq:def:Tu}.
\begin{equation}
T_{v,u} = T_{v,u}(Y) := \inf \{s>0: Y(s)-W(s)+I\{s>v\} + I\{s>u\} < 0 \}.
\end{equation}
In other words, it is the first time when $W$ exceeds $Y$, with two additional unit jumps added to $Y$ at $v$ and $u$.

\begin{prop}\label{prop:speed:Dt2ndorder}
	Suppose that $Y$ is finite on $[0,t]$ and let $d\widehat{Y}$, ${\textnormal{\textbf{J}}}_{u,s}$, $T_{v,u}$ and $T_u$ be as above. Define
	\begin{equation}\label{eq:def:Q:basic form}
	\begin{split}
	{\textnormal{\textbf{Q}}}_{v,u,s}= {\textnormal{\textbf{Q}}}_{v,u,s}(Y;\alpha):=& \PP_\alpha(s<T_{v,u}<\infty \ | \ Y_{\le v}) -\PP_\alpha(s<T_{v}<\infty \ | \ Y_{\le v})\\
	&-\PP_\alpha(s<T_{u}<\infty \ | \ Y_{\le v})+\PP_\alpha(s<T<\infty \ | \ Y_{\le v}).
	\end{split}
	\end{equation}
	Then, we have
	\begin{equation}
	{\textnormal{\textbf{J}}}_{u,s} = \intop_0^u {\textnormal{\textbf{Q}}}_{v,u,s} \ d\widehat{Y}(v).
	\end{equation}
	In particular, the speed process admits the following expression:
	\begin{equation}\label{eq:speedproc:2ndorder main}
	\begin{split}
	\mathbb{E}_\alpha[S(Y) |  Y_{\le t}]
	=&
	 \frac{2\alpha^2}{(1+2\alpha)^2} + \frac{1}{(1+2\alpha)^2}\intop_0^t K_\alpha(s)dY(s) \\
	 &+
	 \frac{\alpha}{1+2\alpha} \intop_0^t \intop_0^s J_{u,s}^{(\alpha)}  d\widehat{Y}(u)d\widehat{Y}(s)+ \frac{\alpha}{1+2\alpha}\mathcal{Q}_t ,
	\end{split}
	\end{equation}
	where $J_{u,s}^{(\alpha)} $ is defined as 
	\begin{equation}\label{eq:def:J:expected val}
	J_{u,s}^{(\alpha)}  := \mathbb{E}_\alpha [{\textnormal{\textbf{J}}}_{u,s}],
	\end{equation}
	and $\mathcal{Q}_t$ denotes
	\begin{equation}
	\begin{split}
	\mathcal{Q}_t = \mathcal{Q}_t(Y;\alpha):=  \intop_0^t \intop_0^s \intop_0^u {\textnormal{\textbf{Q}}}_{v,u,s} \ d\widehat{Y}(v)d\widehat{Y}(u)d\widehat{Y}(s).
	\end{split}
	\end{equation}
\end{prop}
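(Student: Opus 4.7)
The strategy is to mirror the proof of Proposition~\ref{prop:speed:Dt1storder} one level deeper in the Poisson filtration. Fix $u<s$ and consider the $(Y_{\le v})_{v\in[0,u]}$-martingale
\begin{equation}
F_v := \PP_\alpha(s<T_u<\infty \mid Y_{\le v}) - \PP_\alpha(s<T<\infty \mid Y_{\le v}),
\end{equation}
which by its definition satisfies $F_0 = J_{u,s}^{(\alpha)}$ and $F_u = \textnormal{\textbf{J}}_{u,s}$. The plan is to compute the infinitesimal differences $dF_v$ in stochastic-integral form against $d\widehat{Y}(v)$, integrate from $0$ to $u$, and then plug the resulting expression for $\textnormal{\textbf{J}}_{u,s}$ into the first-order expansion~\eqref{eq:speedproc:1storder main}.

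For the infinitesimal step I will repeat the jump/compensator calculation from the proof of Proposition~\ref{prop:speed:Dt1storder}. The key observation is geometric: inserting a unit jump of $Y$ at a point $v\in[0,u]$ converts $T$ into $T_v$ and $T_u$ into $T_{v,u}$, since the built-in shifts $I\{\cdot>v\}$ and $I\{\cdot>u\}$ in the definitions of these stopping times are now realized by $Y$ itself. Combined with the fact that a Poisson jump of $Y$ contributes $d\widehat{Y}(v)=dY(v)-\alpha\,dv$ to any martingale differential, this gives
\begin{equation}
\begin{split}
dF_v = \bigl\{&[\PP_\alpha(s<T_{v,u}<\infty \mid Y_{\le v})-\PP_\alpha(s<T_u<\infty \mid Y_{\le v})]\\
&- [\PP_\alpha(s<T_v<\infty \mid Y_{\le v})-\PP_\alpha(s<T<\infty \mid Y_{\le v})]\bigr\}\,d\widehat{Y}(v),
\end{split}
\end{equation}
and the bracketed factor is precisely the ``four-term double difference'' $\textnormal{\textbf{Q}}_{v,u,s}$ appearing in~\eqref{eq:def:Q:basic form}. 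Integrating from $v=0$ to $v=u$ then yields the first claim, with the starting constant $J_{u,s}^{(\alpha)}$ separated out; this is the piece that later surfaces as the deterministic double integral in~\eqref{eq:speedproc:2ndorder main}.

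For the speed formula itself the remaining work is substitution. Plugging $\textnormal{\textbf{J}}_{u,s} = J_{u,s}^{(\alpha)} + \intop_0^u \textnormal{\textbf{Q}}_{v,u,s}\,d\widehat{Y}(v)$ into~\eqref{eq:speedproc:1storder main}, the constant piece produces the doubly-stochastic integral in~\eqref{eq:speedproc:2ndorder main}, while the $\textnormal{\textbf{Q}}$-piece produces the triple stochastic integral $\mathcal{Q}_t$. I expect the only genuinely new step to be the infinitesimal difference computation above; once the symmetric four-term structure of $\textnormal{\textbf{Q}}_{v,u,s}$ is recognized as the bilinear interaction between a Poisson jump at $v$ and the pre-baked shift at $u$, the remaining manipulations are purely algebraic. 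The main subtlety will be keeping careful track of which shifts come from the random jumps of $Y$ and which come from the deterministic indicators built into $T_u$ and $T_{v,u}$.
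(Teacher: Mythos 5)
Your proof is correct and follows essentially the same route as the paper's, which just invokes the analogous infinitesimal-difference computation
$\lim_{\Delta\searrow 0}\frac{d}{d\Delta}\,\mathbb{E}_\alpha[\textnormal{\textbf{J}}_{u,s}\mid Y_{\le v+\Delta}]=\textnormal{\textbf{Q}}_{v,u,s}\bigl(\frac{dY(v)}{dv}-\alpha\bigr)$ and cites Proposition~\ref{prop:speed:Dt1storder}. One remark worth flagging: the displayed identity in the proposition, ${\textnormal{\textbf{J}}}_{u,s}=\int_0^u {\textnormal{\textbf{Q}}}_{v,u,s}\,d\widehat{Y}(v)$, is missing the constant $J_{u,s}^{(\alpha)}=F_0$; the correct statement (and what you in fact wrote, and what is needed to produce the deterministic $J^{(\alpha)}$ double integral in~\eqref{eq:speedproc:2ndorder main} upon substitution into~\eqref{eq:speedproc:1storder main}) is ${\textnormal{\textbf{J}}}_{u,s}=J_{u,s}^{(\alpha)}+\int_0^u {\textnormal{\textbf{Q}}}_{v,u,s}\,d\widehat{Y}(v)$, matching the structure of $H_s=\mathbb{E}_\alpha[H_s]+\int_0^s{\textnormal{\textbf{J}}}_{u,s}\,d\widehat{Y}(u)$ from Proposition~\ref{prop:speed:Dt1storder}. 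You caught this, and your geometric reading of ${\textnormal{\textbf{Q}}}_{v,u,s}$ as the double difference arising from the interaction of a jump at $v$ with the pre-inserted shift at $u$ is the right intuition for why the four-term combination appears.
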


\begin{proof}
	Proof follows analogously as Proposition \ref{prop:speed:Dt1storder}, by observing that
	\begin{equation}
	\lim_{\Delta \searrow 0} \frac{d}{d\Delta} \mathbb{E}_\alpha [{\textnormal{\textbf{J}}}_{u,s} \ | \ Y_{\le v+\Delta}] = {\textnormal{\textbf{Q}}}_{v,u,s} \left(\frac{d{Y}(v)}{dv} - \alpha \right).
	\end{equation}
\end{proof}

We sometimes refer to the formula \eqref{eq:speedproc:2ndorder main} as \textit{the second-order expansion of the speed process}. Later, it turns out that the triple integral of ${\textnormal{\textbf{Q}}}_{v,u,s}$ is negligible for the purpose of obtaining the scaling limit. Moreover, the terms $K_\alpha(s)$ and $J_{u,s}^{(\alpha)} $ no longer possess dependence on $Y$ and in fact they can be studied very explicitly. These facts eventually allow us to analyze \eqref{eq:speedproc:2ndorder main} very precisely, which leads to the derivation of its scaling limit.

\subsection{The speed of the aggregate}\label{subsec:speed:speedagg}

The goal of this section is to see how the actual speed of the aggregate is described in terms of the first-  and second-order expansion. Recall the definition of $Y_t$ \eqref{eq:def:Yt} and the formula \eqref{eq:speed:basic expression} for $S(t)$. In the aggregate, a new particle is added at rate $S(t)$. In other words, if we let $\Pi$  be the Poisson point process on the plane with the standard Lebesgue intensity, we can write $X_t = |\Pi_S [0,t]|$, where
\begin{equation}
\Pi_S[0,t] = \{x \in[0,t] : \ (x,y)\in \Pi \textnormal{ for some } y \in[0, S(x)] \}.
\end{equation}
Based on this notation, we can write $dY_t(s)$ as 
\begin{equation}
dY_t(s) = d\Pi_S(t-s),
\end{equation}
which is the notation we mainly use throughout the rest of the paper. From this identity, we can recover $\{Y_t(s)\}_{s\ge 0 }$ from $\Pi_S[0,t]$ as
\begin{equation}\label{eq:def:Y:from S}
Y_t(s)= |\Pi_S[t-s,t]|.
\end{equation}

In principle, $S(t)$ can be written as \eqref{eq:speedproc:1storder main} and \eqref{eq:speedproc:2ndorder main} by plugging in $Y_t$ in the place of $Y$. However, there are several aspects that we need to keep in mind when making such a substitution.
\begin{itemize}
	\item[(a)] $Y_t$ reads off the aggregate backwards in time, and hence as $t$ increases, the jump is added at the origin as a particle sticks to the aggregate.
	
	\item[(b)]  $Y_t(s)$ is infinite for $s>t$, and on this regime Propositions \ref{prop:speed:Dt0thorder}, \ref{prop:speed:Dt1storder} and \ref{prop:speed:Dt2ndorder} are not applicable.

\end{itemize}
Furthermore, in order to give valid approximations on the speed,  we introduce several technical assumptions on the aggregate that will be justified in the proof section.
\begin{itemize}
	\item[(A1)]  Let $t_0>0$ be a large enough number. In some time interval $[t_0, t_0+\Delta]$, $S(t)$ stays ``close'' to some small enough constant $\alpha>0$. Here, one may understand $\alpha$ as the average of $S(t)$ over $t\in[t_0,t_0+\Delta]$.
	
	\item[(A2)] There exists $0<t_0^-<t_0$ such that $S(t)$ is ``sufficiently close'' to
	\begin{equation}\label{eq:def:Sprime:basic form}
	S'(t)= S'(t;t_0^-,\alpha) := \mathbb{E}_\alpha [ S(Y_t) \ | \ (Y_t)_{\le t-t_0^-} ],
	\end{equation}
 	uniformly on $t\in[t_0,t_0+\Delta],$ where we denote $(Y_t)_{\le t-t_0^-} = \{Y_t(s)\}_{s\le t-t_0^-}$ as before.
\end{itemize}
Note that we can avoid the issue raised in (b) if we work with $S'(t)$ defined in (A2). Moreover, (A1) will tell us that the first- and second-order approximations on $S'(t)$ are valid. Both assumptions (A1) and (A2) will be a part of the notion that we call \textit{regularity} of the speed. This will be discussed in more detail in Section \ref{subsec:speed:outline}.

Having the issue (a) in mind, we can express $S'(t)$ as \eqref{eq:speedproc:1storder main} and \eqref{eq:speedproc:2ndorder main} by plugging in $Y_t$, or $\Pi_S$, in the place of $Y$. We first define 
\begin{equation}\label{eq:def:S1:basic form}
S_1(t) = S_1(t;t_0^-,\alpha) := \intop_{t_0^-}^t K_\alpha(t-s) d\Pi_S (s),
\end{equation}
 Then, the first-order expansion of the speed for $t\in[t_0, t_0+\Delta]$ is given as
\begin{equation}\label{eq:speed:1storder main}
\begin{split}
S'(t) = \frac{2\alpha^2}{(1+2\alpha)^2} +
\frac{S_1(t)}{(1+2\alpha)^2}+\frac{\alpha}{1+2\alpha} \intop_{t_0^-}^t \intop_{t_0^-}^s  
{\textnormal{\textbf{J}}}_{t-s,t-u; t}^{\Pi_S} \ d\widehat{\Pi}_S(u) d\widehat{\Pi}_S(s),
\end{split}
\end{equation}
where $d\widehat{\Pi}_S(s) := d\Pi_S(s) - \alpha ds$. We also call $S_1$ \eqref{eq:def:S1:basic form} the \textbf{first-order approximation of the speed}. One important change we want to emphasize is the definition of ${\textnormal{\textbf{J}}}_{t-s,t-u;t}^{\Pi_S}$ which is slightly different from \eqref{eq:def:J:basic form}: due to the time-reversed nature of $Y_t$, it is defined as
\begin{equation}\label{eq:def:J:agg}
\begin{split}
 {\textnormal{\textbf{J}}}_{t-s,t-u;t}^{\Pi_S} &= {\textnormal{\textbf{J}}}_{t-s,t-u;t}^{\Pi_S}(\alpha)\\
 &:= \PP_\alpha(t-u<T_{t-s}<\infty \ | \ \Pi_S[s,t])- \PP_\alpha(t-u<T<\infty \ | \ \Pi_S[s,t]).
\end{split}
\end{equation} 
From \eqref{eq:def:Y:from S}, note that conditioning on $\Pi_S[s,t]$ is equivalent to conditioning on $\{ Y_t(s')\}_{0 \le s'\le t-s}$.

Further, the second order expansion is written as
\begin{equation}\label{eq:speed:2ndorder main}
\begin{split}
S'(t) = \frac{2\alpha^2}{(1+2\alpha)^2} +
\frac{S_1(t)}{(1+2\alpha)^2}+\frac{\alpha}{1+2\alpha} \intop_{t_0^-}^t \intop_{t_0^-}^s  
J_{t-s,t-u}^{(\alpha)}  \ d\widehat{\Pi}_S(u) d\widehat{\Pi}_S(s) + \frac{\alpha}{1+2\alpha} \mathcal{Q}_t,
\end{split}
\end{equation}
where we define $J_{t-s, t-u}^{(\alpha)} $ as \eqref{eq:def:J:expected val} and  $\mathcal{Q}_t$ as
\begin{equation}\label{eq:def:Qt:agg}
	\begin{split}
	\mathcal{Q}_t = \mathcal{Q}_t(t_0^-, \alpha):=  \intop_{t_0^-}^t \intop_{t_0^-}^s \intop_{t_0^-}^u {\textnormal{\textbf{Q}}}_{t-s,t-u,t-v;t}^{\Pi_S} \ d\widehat{\Pi}_S(v)d\widehat{\Pi}_S(u)d\widehat{\Pi}_S(s),
	\end{split}
\end{equation}
with ${\textnormal{\textbf{Q}}}_{t-s,t-u,t-v;t}^{\Pi_S}$ given by
\begin{equation}\label{eq:def:Q:agg}
\begin{split}
{\textnormal{\textbf{Q}}}_{t-s,t-u,t-v;t}^{\Pi_S}=&{\textnormal{\textbf{Q}}}_{t-s,t-u,t-v;t}^{\Pi_S}(\alpha)
\\
:=& \PP_\alpha(t-v<T_{t-s,t-u}<\infty \ | \ \Pi_S[s,t])- \PP_\alpha(t-v<T_{t-s}<\infty \ | \ \Pi_S[s,t])\\
&- \PP_\alpha(t-v<T_{t-u}<\infty \ | \ \Pi_S[s,t])+\PP_\alpha(t-v<T<\infty \ | \ \Pi_S[s,t]).
\end{split}
\end{equation}
Further, we define the \textbf{second-order approximation of the speed} as
\begin{equation}\label{eq:def:S2:basic form}
S_2(t) = S_2(t;t_0^-,\alpha) := \frac{2\alpha^2}{(1+2\alpha)^2} + \frac{S_1(t)}{(1+2\alpha)^2} + \frac{\alpha}{1+2\alpha}\intop_{t_0^-}^t \intop_{t_0^-}^s J_{t-s,t-u}^{(\alpha)}  d\widehat{\Pi}_S(u) d\widehat{\Pi}_S(s).
\end{equation}

 \begin{remark}
 	We will need both the first- and second-order approximations \eqref{eq:def:S1:basic form}, \eqref{eq:def:S2:basic form} in order to understand the speed precisely enough for our purpose. Although the first-order approximation  often turns out to be enough in many cases, the second-order approximation is essential in the derivation of the scaling limit of the speed. See Sections \ref{subsec:speed:Lt}, \ref{subsubsec:outline:reg} and \ref{subsubsec:outline:EJ} for more discussion.
 \end{remark}

Suppose that we define a process $R(t)$ by 
\begin{equation}\label{eq:def:Rt:basic form}
R(t) =R(t;t_0^-,t_0,\alpha):= \begin{cases}
\alpha & \textnormal{if } t < t_0,\\
\intop_{t_0^-}^t K_\alpha(t-s) d\Pi_{R}(s ) & \textnormal{if } t \ge t_0.
\end{cases}
\end{equation}
We can interpret $R(t)$ as an age-dependent critical branching process: the average number of offspring that a particle at $t>t_0$ produces is $\intop_t^{\infty} K_\alpha(t'-t) dt = 1$. In this branching process, a particle at $t>t_0$ generates Pois$(1)$ number of offspring, and each of them will be located at $t+s$ with probability $K_\alpha(s)ds$ independently. 

The critical branching process $R(t)$ is easier to understand than $S(t)$, and our  study on $S(t)$ largely depends on  comprehending $R(t)$. A more detailed outline of the plan is discussed in Section \ref{subsec:speed:outline}.

\subsection{Smoothing the speed and the scaling limit}\label{subsec:speed:Lt}

In this subsection, we give the heuristics on how the speed converges to its scaling limit. Recall the critical branching process $R(t)$ defined in the previous subsection. Since its branching is age-dependent, $R(t)$ itself is not a martingale in $t\ge t_0$ (unlike the discrete time critical Galton-Watson branching process where the number of particles at each generation forms a martingale). To build a martingale based on $R(t)$, we first define the following \textit{renewal process} of $K_\alpha$:
\begin{equation}\label{eq:def:Kstar}
K_\alpha^*(s) = \sum_{j\ge 1 } K_\alpha^{*j}(s),
\end{equation}
where $K_\alpha^{*j}$ is the convolution of $K_\alpha$ with itself taken $j$ times. At the moment, assume that $K_\alpha^*$ is well-defined, and moreover  that the limit
\begin{equation}
K_\alpha^* = \lim_{s\to \infty} K_\alpha^*(s)
\end{equation}
exists. These facts are explained in Section \ref{subsec:Kestim:intro}.
Then, we consider a  ``smoothed'' version of the speed, which is defined for $t \ge t_0$ as
\begin{equation}\label{eq:def:Lt:basic form}
L(t) = L(t;t_0^-,\alpha) := \intop_{t_0^-}^t \intop_t^\infty K_\alpha^* \cdot K_\alpha(x-s) dx d\Pi_S(s).
\end{equation}
In words, $L(t)$ is the expected branching rate at infinity when we start the critical branching process from time $t$ with initial points given by $\Pi_S[t_0^-,t]$. For details  on this process, see Section \ref{sec:criticalbranching}.

We choose to study the scaling limit of $L(t)$ rather than $S(t)$, since it has a more tractable structure described as a martingale added with some perturbation. On the other hand, the behavior of $S(t)$ has a very spiky nature: $S(t)$ jumps significantly whenever a new point arrives at $\Pi_S$. Furthermore, our ultimate interest is to obtain $X_t$, which is closely related with the integral of $S(t)$. From this perspective, we would not lose anything from studying $L(t)$ since it is essentially a smoothed version of $S(t)$.

In order to obtain the scaling limit of $L(t)$, we need to understand the mean and variance of the increment $L(t_0+\Delta)-L(t_0)$ (conditioned on $\mathcal{F}_{t_0})$. Observe that
\begin{equation}\label{eq:Lt diff:basic}
dL(t) = K_\alpha^* d\Pi_S(t) - \intop_{t_0^-}^t K_\alpha^* \cdot K_\alpha(t-s) d\Pi_S(s) dt = K_\alpha^* \left\{d\Pi_S(t) - S_1(t)dt \right\}.
\end{equation}
which gives
\begin{equation}\label{eq:Lt:increment}
\begin{split}
L({t_0+\Delta}) - L({t_0}) = 
 K_\alpha^* \intop_{t_0}^{t_0+\Delta} \big(d\Pi_S(t) - S(t)dt\big) + K_\alpha^* \intop_{t_0}^{t_0+\Delta} \big(S(t)-S_1(t)\big)dt.
\end{split}
\end{equation}
As seen in the formula, we can decompose this increment into a ``martingale part'' and a ``drift part''. 
Computations of its mean and variance is  the most important ingredient in establishing the main theorem. To this end, we need to assume that the processes $S(t)$ and $S_1(t)$ behaves nicely enough, otherwise the terms such as $S(t)-S_1(t)$ might become too large and the increment would not scale as expected. At the moment, we state this assumption on \textit{regularity} informally as follows.
\begin{itemize}
	\item[(A3)] In addition to (A1) from Section \ref{subsec:speed:speedagg}, the processes $S(t)$ and $S_1(t)$ both stay ``close'' to $\alpha$ and to each other in the interval $[t_0, t_0+\Delta]$.
\end{itemize}
Formulating  (A1), (A2) and (A3) into a rigorous, quantitative condition of \textit{regularity} and proving it  are two of the core endeavours of this article, and the methods are  outlined in the next subsection. 
Under these assumptions, we can state the estimates on the increment informally as follows.

\begin{thm}[Informal]\label{thm:speed:increment:informal}
	Suppose that (A1), (A2) and (A3) are satisfied in the interval $[t_0, t_0+\Delta]$ for a sufficiently small $\alpha>0$. Then, we have
	\begin{equation}\label{eq:increment:informal}
	\begin{split}
	\mathbb{E} [L(t_0+\Delta)- L(t_0) \ | \ \mathcal{F}_{t_0}] &= o(\alpha^4)\Delta;\\
	\var[L(t_0+\Delta)- L(t_0) \ | \ \mathcal{F}_{t_0}]
	&= (1+ o(1))4\alpha^5 \Delta.
	\end{split}
	\end{equation}
\end{thm}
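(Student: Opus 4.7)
The plan is to exploit the decomposition from \eqref{eq:Lt:increment},
\begin{equation}
L(t_0+\Delta) - L(t_0) = \underbrace{K_\alpha^* \intop_{t_0}^{t_0+\Delta}\big(d\Pi_S(t) - S(t)dt\big)}_{=:M} + \underbrace{K_\alpha^* \intop_{t_0}^{t_0+\Delta}\big(S(t)-S_1(t)\big)dt}_{=:D},
\end{equation}
where $M$ is a conditional martingale increment that will carry essentially all of the variance, and $D$ supplies the mean. Throughout, I would use the asymptotic $K_\alpha^* = (1+o(1))\,2\alpha^2$ as $\alpha\to 0$, which should be established from the renewal structure \eqref{eq:def:Kstar} and the forthcoming analysis of $K_\alpha$; this is the reason for the constant $4$ in the variance estimate.

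For the variance, the pure-jump martingale $M$ has conditional quadratic variation $(K_\alpha^*)^2 \intop_{t_0}^{t_0+\Delta} S(t)\,dt$, since $\Pi_S$ is a Poisson process with stochastic intensity $S$. Assumption (A1) gives $S(t) = \alpha(1+o(1))$ uniformly on $[t_0, t_0+\Delta]$, so that $\mathbb{E}[M^2\,|\,\mathcal{F}_{t_0}] = (1+o(1))(K_\alpha^*)^2 \alpha \Delta = (1+o(1))\,4\alpha^5 \Delta$. The drift $D$ contributes lower-order variance: under (A3) we have $|S(t)-S_1(t)|$ of order $\alpha^2$, hence $\var(D\,|\,\mathcal{F}_{t_0}) = O((K_\alpha^*)^2 \alpha^4 \Delta^2) = o(\alpha^5 \Delta)$ provided $\Delta$ lies in the appropriate window, and the cross-term is absorbed by Cauchy--Schwarz.

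For the mean, $\mathbb{E}[M\,|\,\mathcal{F}_{t_0}] = 0$, so the estimate reduces to $\mathbb{E}[S(t)-S_1(t)\,|\,\mathcal{F}_{t_0}] = o(\alpha^2)$. Combining (A2) with the second-order expansion \eqref{eq:speed:2ndorder main} yields, up to a controllable error from the truncation at $t_0^-$,
\begin{equation}
S(t) - S_1(t) = \frac{2\alpha^2 - (4\alpha+4\alpha^2) S_1(t)}{(1+2\alpha)^2} + \frac{\alpha}{1+2\alpha}\intop_{t_0^-}^t\!\intop_{t_0^-}^s J^{(\alpha)}_{t-s,t-u}\, d\widehat{\Pi}_S(u)\, d\widehat{\Pi}_S(s) + \frac{\alpha}{1+2\alpha}\, \mathcal{Q}_t.
\end{equation}
Under regularity, $\mathbb{E}[S_1(t)\,|\,\mathcal{F}_{t_0}] = \alpha + O(\alpha^2)$ (consistent with the normalization $\intop K_\alpha = 1$), so the explicit prefactor has conditional expectation $-\frac{2\alpha^2 + 4\alpha^3}{(1+2\alpha)^2} + O(\alpha^3)$. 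To obtain the desired bound this must cancel to order $o(\alpha^2)$ against the conditional expectation of the $J^{(\alpha)}$ double integral, using the asymptotic formulas for $J^{(\alpha)}$ that the paper will develop from the branching process interpretation.

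The main obstacle is this delicate cancellation in the mean. It demands three ingredients: sharp $\alpha\to 0$ asymptotics of $K_\alpha$, $K_\alpha^*$, and $J^{(\alpha)}$ precise enough to resolve $O(\alpha^2)$ and $O(\alpha^3)$ corrections; a quantitative version of regularity giving $\mathbb{E}[S_1(t)\,|\,\mathcal{F}_{t_0}]$ to second order in $\alpha$; and a demonstration that the triple integral $\mathcal{Q}_t$ has conditional expectation $o(\alpha^2)$, which is plausible since $d\widehat{\Pi}_S$ behaves as a signed measure of mean size $O(\alpha^2)$ under regularity, but needs to be verified carefully. The variance estimate, by contrast, is comparatively direct once the martingale structure is exposed and only requires the leading-order asymptotics $K_\alpha^* \sim 2\alpha^2$ and $S \approx \alpha$.
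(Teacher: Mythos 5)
Your proposal is essentially the same as the paper's sketch for Theorem \ref{thm:speed:increment:informal}: same decomposition into a martingale $M$ and a drift $D$ via \eqref{eq:Lt:increment}, the same use of $K_\alpha^* = \frac{2\alpha^2}{1+2\alpha}$, the same computation of the conditional quadratic variation of $M$ using (A1) for the variance, and the same reduction of the mean to $\mathbb{E}[S(t)-S_1(t)\,|\,\mathcal{F}_{t_0}]$.

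Where you go beyond the paper's sketch is in the mean. The paper's sketch simply asserts that $\int_{t_0}^{t_0+\Delta}|S(t)-S_1(t)|dt = o(\alpha^2\Delta)$ under regularity, and that bound, with absolute values, is in fact too optimistic: the regularity conditions (cf.\ $\tau_3$ and Lemma~\ref{lem:reg:errorint}(3)) only give $\int|S(t)-S_1(t)|dt \le \alpha^{-\epsilon}$, while $\alpha^2\Delta \sim \log^{10\theta}(1/\alpha)$, so the absolute-value integral is \emph{not} $o(\alpha^2\Delta)$. Your identification of the delicate cancellation between the prefactor $\frac{2\alpha^2 - (4\alpha+4\alpha^2)S_1(t)}{(1+2\alpha)^2}$ and the double integral of $J^{(\alpha)}$ is the correct diagnosis: this is exactly what the formal version, Theorem~\ref{thm:increment:formal}, and Theorem~\ref{thm:double int:main} on $\mathbb{E}[\mathcal{J}(t)\,|\,\mathcal{F}_{t_0}] = 2\alpha^2(1+o(\alpha^{1/5}))$ carry out in Section~\ref{sec:double int}. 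Your listing of the three required ingredients (sharp asymptotics of $K_\alpha$, $K^*_\alpha$, $J^{(\alpha)}$; second-order control of $\mathbb{E}[S_1]$; negligibility of $\mathcal{Q}_t$) matches the paper's program closely. In short, your variance argument coincides with the paper's; your mean argument is the honest version of what the paper's informal sketch elides, and is consistent with how the paper actually proves the rigorous statement.
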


\begin{proof}[Sketch of proof]
	Recall the expression \eqref{eq:Lt:increment}. We can first see that
	\begin{equation}
	\mathbb{E} [L(t_0+\Delta)-L(t_0) \ | \ \mathcal{F}_{t_0}] = \mathbb{E} \left[\left.K_\alpha^* \intop_{t_0}^{t_0+\Delta} (S(t)-S_1(t)) dt\ \right| \ \mathcal{F}_{t_0} \right].
	\end{equation}
	Later, we will see that $\int_{t_0}^{t_0+\Delta}|S(t)-S_1(t)|dt = o(\alpha^2\Delta)$ when $S(t)$ is \textit{regular}. Moreover, it turns out that (see Section \ref{subsec:Kestim:intro})
	\begin{equation}
	K_\alpha^* = \left(\intop_0^\infty xK_\alpha(x)dx \right)^{-1} = \frac{2\alpha^2}{1+2\alpha}.
	\end{equation}
	Thus we obtain the first equation of \eqref{eq:increment:informal}. For the second one, we will show that
	\begin{equation}
	\begin{split}
	\var[L(t_0+\Delta)-L(t_0)\ | \ \mathcal{F}_{t_0}] &\approx \var\left[\left.
	K_\alpha^* \intop_{t_0}^{t_0+\Delta} (d\Pi_S(t)- S(t)dt)
	\ \right| \ \mathcal{F}_{t_0} \right]\\
	&=
	(K_\alpha^*)^2\mathbb{E}\left[\left. \intop_{t_0}^{t_0+\Delta} S(t)dt \ \right| \ \mathcal{F}_{t_0} \right] = (1+o(1))4\alpha^5 \Delta.
	\end{split}
	\end{equation}
 Note that the last equality will follow from the assumption (A1).
\end{proof}

To obtain the scaling limit of $L(t)$ from the above, we need to rescale $L(t)$ by $\alpha^{-1}$, and then we can see that the time $\Delta$ should be rescaled by $\alpha^{-3}$. In fact, roughly speaking, we will later see that
\begin{equation}\label{eq:conv to sde:informal}
\left\{M^{1/3} L(t_0 + Ms)\right\}_{s\ge 0} \underset{M\to\infty}{\longrightarrow} \{Z_s\}_{s\ge 0},
\end{equation}
where $\{Z_s\}_{s\ge 0}$ satisfies the following stochastic differential equation:
\begin{equation}\label{eq:sde:main}
dZ_s = 2Z_s^{5/2} dB_s.
\end{equation}
Here, $t_0$ can be understood as a large time parameter where we start to see $S(t)$ being \textit{regular} with high enough probability. A more detailed description is given in Section \ref{subsubsec:outline:scaling limit}.

Note that $Z_t$ given by \eqref{eq:sde:main} is $(-\frac{1}{3})$-self-similar: $Z_t' := s^{1/3} Z_{st}$ has the same law as $Z_t$ for any $s>0$. This suggests that the decay of $Z_t$ (and hence, $S(t)$) in a larger time scale is asymptotically $t^{-1/3}$, and consequently the asymptotic growth of $X_t$ should be of order $t^{2/3}$.

\begin{remark}\label{remark:on bessel}
	Since the only $\frac{1}{2}$-self-similar processes are Bessel processes, $(Z_t)^{-3/2}$ should be a constant multiple of a Bessel process. One can see by applying It\^o's formula that $Z_t = (3V_t)^{-2/3}$, where $V_t$ is the $\frac{8}{3}$-Bessel process. 
\end{remark}

\subsection{Proof outline}\label{subsec:speed:outline}

Before delving into the actual proof, we devote this subsection to  outlining the proof step by step. The following describes the main steps that we go through in order.
\begin{itemize}
	\item  $O(t^{3/4+\epsilon})$ upper bound and reduction to the fixed rate processes.

	\item Analysis of the fixed rate process and its perturbation.
	
	\item The age-dependent critical branching process.
	
	\item Regularity of the speed.
	
	\item Refined computation of the lower order terms.

	\item Scaling limit of the speed. 
\end{itemize}

\noindent In the following subsections, we explain the purpose of each step and the main ideas to achieve it.

\subsubsection{$O(t^{3/4+\epsilon})$-upper bound and reduction to the fixed rate processes}\label{subsubsec:outline:simpler upper bound}
All the discussions from the previous subsections assumes $\alpha>0$ to be small enough. In order to apply such arguments, we at least need to know that the speed tends arbitrarily close to $0$ in finite time. 

We obtain this by a simpler but weaker argument that gives an $O(t^{3/4+\epsilon})$-upper bound on $X_t$. This is done by counting the number of \textit{excess particles}, which is the difference between the total number of particles attached to the aggregate up to time $t$ and the size $X_t$ of the aggregate. 

Then, we relate $S(t)$ with the fixed rate processes as follows: since $S(t)$ drops to an arbitrary low scale in finite time, we show that $S(t)$ can be bounded above and below by arbitrary small constants for a while. Thus, our later analysis focuses on the study of the speed process generate by those fixed constants, which will converge to the same scaling limit.

 \subsubsection{Analysis of the fixed rate process and its perturbation}\label{subsubsec:outline:fixedrate}
 In studying \eqref{eq:speed:1storder main} and \eqref{eq:speed:2ndorder main}, bounding the sizes of double and triple integrals in the expressions 
 turns out to be essential, because they are the error terms of the first- and second-order approximations (equations \eqref{eq:def:S1:basic form} and~\eqref{eq:def:S2:basic form}), respectively. However, these integrals are in terms of $d\widehat{\Pi}_S$, which is difficult to understand since we do not have a good understanding on $S(t)$. Furthermore, for instance the term ${\textnormal{\textbf{J}}}_{t-u, t-s; t}^{\Pi_S}$ in \eqref{eq:speed:1storder main} is only measurable with respect to $\mathcal{F}_t$, meaning that the double  integral in \eqref{eq:speed:1storder main} would not have a martingale structure.  Instead, we choose to work with the integrals over the fixed rate Poisson process, namely,
 \begin{equation}\label{eq:speed:fixedrate:med}
  \intop_{0}^t \intop_{0}^s  
  {\textnormal{\textbf{J}}}_{u,s} \ d\widehat{\Pi}_\alpha(u) d\widehat{\Pi}_\alpha(s),
 \end{equation}
 upon reversing the time axis, and the corresponding analogue for $\mathcal{Q}_t$. Here, $\Pi_\alpha$ denotes the rate-$\alpha$ Poisson process. A precise form of \eqref{eq:speed:fixedrate:med} will be given in Section \ref{sec:fixedrate}. These integrals can be understood using Azuma-type concentration inequalities for martingales. 
 In addition, we also need an appropriate estimate on ${\textnormal{\textbf{J}}}_{u,s}$ (and ${\textnormal{\textbf{Q}}}_{v,u,s}$). Recalling its definition in \eqref{eq:def:J:basic form}, we couple the events $\{s<T_u<\infty \}$ and $\{s<T<\infty \}$ and rely on fundamental hitting time estimates of random walk. The same but more complicated coupling argument can be carried out for ${\textnormal{\textbf{Q}}}_{v,u,s}$ as well.
 
 To translate the estimates for the fixed rate process to those for $S(t)$, we analyze the Radon-Nykodym derivative of $d\Pi_S$ with respect to $d\Pi_\alpha$. We show that it does not become too big with high probability under an appropriately mild assumption on $S(t)$ which is included in the definition of regularity. On the other hand, the double integral over $d\widehat{\Pi}_S(u) d\widehat{\Pi}_S(s)$ would stay small with high probability if the Radon-Nykodym derivative is smaller compared to the reciprocal of the probability that \eqref{eq:speed:fixedrate:med} becomes too large.  Details will be discussed in Section \ref{sec:fixedrate}.

\subsubsection{The age-dependent critical branching process}\label{subsubsec:critical branching} 
As discussed in Section \ref{subsec:speed:speedagg}, studying the speed $S(t)$ directly is rather complicated. Thus, we work with the age-dependent critical branching process $R(t)$  \eqref{eq:def:Rt:basic form} in Section \ref{sec:criticalbranching} as an intermediate step to understand the speed, which is particularly useful in understanding the evolution of the fixed rate process from Section \ref{subsubsec:outline:simpler upper bound}. 

The main property of $R(t)$ we deduce here is that it stays close to $\alpha$ by the right distance, roughly of order $\alpha^{\frac{3}{2}}$. We obtain this by applications of martingale concentration inequalities and relating it with the standard Galton-Watson critical branching processes.

\subsubsection{Regularity of the speed}\label{subsubsec:outline:reg}

As discussed in the previous subsection, a notion of regularity is introduced to deduce Theorem \ref{thm:speed:increment:informal} and thus to obtain the scaling limit of the speed. Its definition provides a quantitative description of the assumptions (A1), (A2) and (A3) from Sections \ref{subsec:speed:speedagg} and~\ref{subsec:speed:Lt}, and is given by various stopping times: Each stopping time will denote the first time when a certain desired property is violated, and $S(t)$ will be called regular at time $t$ if the minimum of all those stopping times is larger than $t$ by a certain amount. For instance, the first times when $S(t)$, $|S(t)-S_1(t)|$ become larger than they are supposed to be are included in the definition. In Section \ref{sec:reg:intro}, we give a thorough overview on the notion of regularity and explain why such conditions need to be understood.

The main purpose of defining the regularity using stopping times is to effectively understand how all the conditions we impose are intertwined with each other. If $\tau = \min_i \tau_i < t'$, then there should exist a $j$ such that $\tau_j = \tau <t'$, meaning that the property described by $\tau_j$ is violated for the first time among all others. Thus, we can show that $\tau\ge t'$, that is, the speed is regular with high probability, by proving $\tau_i=\tau<t'$ happens with very small probability for every $i$. In Section \ref{sec:reg:conti of reg}, we explain the details of this argument.

Another major issue is the variation of the frame of reference $\alpha$, to complete building the inductive framework of the argument. After we prove that the speed is regular from time $t_0$ to $t_1=t_0+\Delta$ with respect to $\alpha$, we want to argue that the speed continues to be regular in $[t_1,t_1+\Delta']$ with high probability, with respect to a different value $\alpha'$ to take account of the change of $L(t)$ during the previous time interval. This means that we generate the processes $S_1(t)$ and $S_2(t)$ with $\alpha'$, and show that they still satisfy the conditions of regularity as long as the change from $\alpha$ to $\alpha'$ is small. To this end, we require a refined understanding  on analytic properties of $K_\alpha(s)$, $K_\alpha^*(s)$ and $J_{s,t}^{(\alpha)}$, as well as a combination of various tools mentioned in the previous subsections. 

The computations to establish the regularity turn out to be long and complicated, and hence we divide their details into three sections, Sections \ref{sec:reg:intro}, \ref{sec:reg:conti of reg} and \ref{sec:reg:next step}.

\subsubsection{Refined computation of the lower order terms}\label{subsubsec:outline:EJ}
In the computation of the mean of the increment in Theorem \ref{thm:speed:increment:informal}, we need to understand the mean of $S(t)-S_1(t)$, which can be expressed, by using \eqref{eq:speed:2ndorder main}, as
\begin{equation}\label{eq:speed:SminusS1:2nd order:basic}
\begin{split}
S(t)-S_1(t) =& 
\frac{2\alpha^2}{(1+2\alpha)^2} +
\frac{4(\alpha+\alpha^2)}{(1+2\alpha)^2}S_1(t)\\
&+\frac{\alpha}{1+2\alpha} \intop_{t_0^-}^t \intop_{t_0^-}^s  
J_{t-s,t-u}^{(\alpha)}  \ d\widehat{\Pi}_S(u) d\widehat{\Pi}_S(s) + \frac{\alpha\mathcal{Q}_t}{1+2\alpha} .
\end{split}
\end{equation}
Although we have derived an appropriate bound on the double integral part which is essentially sharp, it is an estimate that holds with high probability that does not give the precise information on the mean
\begin{equation}\label{eq:speed:mean of doubleint}
\mathbb{E} \left[ \intop_{t_0^-}^t \intop_{t_0^-}^s  
J_{t-s,t-u}^{(\alpha)}  \ d\widehat{\Pi}_S(u) d\widehat{\Pi}_S(s)  \right].
\end{equation}

To precisely caculate \eqref{eq:speed:mean of doubleint}, we switch the  integral over $d\widehat{\Pi}_S(u)d\widehat{\Pi}_S(s)$ with simpler ones that only involves $K_\alpha^*(x)$ and $\Pi_\alpha$,
and show that the error arising from the change is of negligible compared to the whole integral. After such modification, we carry out an analytic study to compute the precise expected value.

Combining with the previous analysis on regularity, we also discuss the formal version of Theorem \ref{thm:speed:increment:informal}.

\subsubsection{Obtaining the scaling limit}\label{subsubsec:outline:scaling limit}

Given all the works mentioned in Sections \ref{subsubsec:outline:simpler upper bound}--\ref{subsubsec:outline:EJ}, we are almost ready to deduce the scaling limit of $L(t)$ based on Theorem \ref{thm:speed:increment:informal}. However, when we consider the process $M^{1/3} L(t_0 + sM)$, the initial condition at $s=0$ blows up as $M$ tends to infinity, if $t_0$ is a fixed value. The classical results on the convergence to the limiting diffusion (which we mention in the next subsection) would not be applicable in such a case.

Thus, our choice is to make $t_0$ be a random time $\tau(M)$ depending on $M$, which is roughly
\begin{equation}
\tau(M):= \inf\{t>0: L(t) \le CM^{-1/3} \textnormal{ and }L(t) \textnormal{ is regular}  \},
\end{equation}
where $C$ is a fixed, large enough constant. Then, we need to show that $\tau(M) \le \epsilon M$ with high probability, with a constant $\epsilon>0 $ that can be arbitrarily small by setting $C$ to be large, and this is equivalent to show that $X_t = O(t^{2/3})$ with high probability.

This is done by a multi-scale analysis on $L(t)$ based on Theorem \ref{thm:speed:increment:informal}. To ensure it is regular, we first drop $L(t)$ to a small enough scale using the results from Section \ref{subsubsec:outline:simpler upper bound}. From then, we show that  $L(t)$ is more likely to halve than to double its size. Further, we argue that either event is going to happen after an appropriate amount of time 
 that makes $L(t)$ scale like $t^{-1/3}$. The main technical tools we use to achieve this goal are the martingale concentration inequalities such as the Azuma-Bernstein inequality.

When the above issue on the initial regularity  is resolved, we can appeal to the classical result of Helland \cite{helland1981minimal} and Theorem \ref{thm:speed:increment:informal} to show that
\begin{equation}\label{eq:speed:conv to sde:outline}
\left\{ M^{1/3} L(\tau(M) + sM) \right\}_{s\ge 0} \underset{M\to \infty}{\longrightarrow} \{A_s\}_{s\ge 0},
\end{equation}
where $\{A_s\}_{s\ge 0}$ is the solution of 
\begin{equation}
dA_s = 2A_s^{5/2} dB_s, \quad A_0 = C.
\end{equation}
The convergence  \eqref{eq:speed:conv to sde:outline} is in Stone topology, which is good enough to ensure the convergence of the integrated process
\begin{equation}
\left\{ M^{-2/3} \intop_{0}^{Mt} L(\tau(M)+s)ds\right\}_{t\ge 0}
\underset{M\to \infty}{\longrightarrow} \left\{ \intop_0^t A_s ds \right\}.
\end{equation}
Finally, the main result is obtained by increasing the level of $C$ to infinity, and then observing that $\intop_0^{Mt} L(s)ds$ is close enough to $X_{Mt}$ based on the conditions of regularity.

\subsection{Estimates on fundamental quantities}\label{subsec:Kestim:intro}

As mentioned before, understanding the analytic properties of the branching density $K_\alpha(s)$ and the renewal process $K_\alpha^*(s)$ will be crucial throughout the paper. In this subsection, we review the main estimates we need, while their proofs are deferred to Appendix \ref{sec:fourier and renewal}. They can be obtained by studying  the moment generating function of $T$ via Fourier analysis.

To begin with, the branching density $K_\alpha(s)$ (Definition \ref{def:Kalpha:main}) satisfies the following property.

\begin{lem}\label{lem:estimate on K:intro}
	There exist  absolute constants $c, C>0$ such that for any $t>0$ and $0<\alpha<\frac{1}{2}$,
	\begin{equation}
	K(t) \le \frac{C\alpha }{\sqrt{t+1}}e^{-c \alpha ^2 t}.
	\end{equation}
	Furthermore, its derivative in $t$ satisfies
	\begin{equation}
	|K_\alpha'(t)| \le C\alpha (t+1)^{-\frac{3}{2}} e^{-c\alpha^2 t}.
	\end{equation}
\end{lem}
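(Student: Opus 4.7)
The plan is to obtain the density $f_T$ of the first-passage time $T$ essentially in closed form via its Laplace transform, then integrate. Setting $U(s) = Y(s) - W(s) + 1$ as in the text, under $\mathbb{P}_\alpha$ this is a continuous-time skip-free random walk starting from $1$ that jumps up at rate $\alpha + \tfrac{1}{2}$ and down at rate $\tfrac{1}{2}$, and $T$ is its first hitting time of $0$. A first-step analysis on $\phi(z) := \mathbb{E}_\alpha[e^{-zT}]$, using the strong Markov property together with the identity $\mathbb{E}_2[e^{-zT}] = \phi(z)^2$ coming from skip-freeness, produces the quadratic
\begin{equation*}
\tfrac{1+2\alpha}{2}\, \phi(z)^2 - (z + 1 + \alpha)\, \phi(z) + \tfrac{1}{2} = 0,
\end{equation*}
whose physical root is
\begin{equation*}
\phi(z) = \frac{(z+1+\alpha) - \sqrt{(z+1+\alpha)^2 - (1+2\alpha)}}{1+2\alpha}.
\end{equation*}
The sanity check $\phi(0) = 1/(1+2\alpha) = \xi$ is consistent with the martingale argument in the text. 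The two branch points sit at $z = -1 - \alpha \pm \sqrt{1+2\alpha}$; for small $\alpha$ the rightmost one lives at $-\alpha^2/2 + O(\alpha^3)$, and this is what will drive the factor $e^{-c\alpha^2 t}$ in the final bound.

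Next I would Fourier-invert $\phi$ via the Bromwich integral, deforming the contour around the branch cut and reading off the discontinuity of the square root. The resulting real integral is a classical representation of a Bessel function, giving the closed form
\begin{equation*}
f_T(t) = \frac{e^{-(1+\alpha) t}\, I_1\bigl(t\sqrt{1+2\alpha}\,\bigr)}{t\sqrt{1+2\alpha}}.
\end{equation*}
The standard asymptotics $I_1(x) \sim x/2$ as $x \downarrow 0$ and $I_1(x) \sim (2\pi x)^{-1/2} e^x$ as $x \to \infty$, together with the elementary inequality $1 + \alpha - \sqrt{1+2\alpha} \ge c\alpha^2$ valid on $\alpha \in (0, 1/2)$, then yield the uniform pointwise bound $f_T(t) \le C(t+1)^{-3/2} e^{-c\alpha^2 t}$.

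To finish, I would use $K_\alpha(s) = \alpha(1+2\alpha) \int_s^\infty f_T(t)\, dt$ and $K_\alpha'(s) = -\alpha(1+2\alpha)\, f_T(s)$. The derivative estimate $|K_\alpha'(s)| \le C\alpha (s+1)^{-3/2} e^{-c\alpha^2 s}$ is then immediate from the pointwise bound on $f_T$. For $K_\alpha$ itself, splitting the integration at $t = \max(2s, \alpha^{-2})$ shows $\int_s^\infty (t+1)^{-3/2} e^{-c\alpha^2 t}\, dt \le C(s+1)^{-1/2} e^{-c'\alpha^2 s}$, and the claim follows after absorbing constants.

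The main obstacle is technical bookkeeping rather than conceptual insight: one must justify the contour deformation cleanly (checking decay of $\phi$ along the deformed contour and that no spurious singularities are picked up), and must keep the Bessel asymptotics uniform in both $t$ and $\alpha$ simultaneously, particularly across the crossover scale $t \sim \alpha^{-2}$. If one prefers to avoid Bessel functions entirely, a direct saddle-point analysis of the Bromwich integral with saddle placed near the rightmost branch point $-1 - \alpha + \sqrt{1+2\alpha}$ produces the same bounds with comparable effort.
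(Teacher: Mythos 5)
Your route is correct, and it is genuinely different from the paper's, even though both rest on the same Laplace-transform identity for the hitting time. You derive $\phi(z)=\mathbb{E}_\alpha[e^{-zT}\mathds{1}_{\{T<\infty\}}]$ by a first-step/skip-free argument leading to a quadratic; the paper obtains the same expression from the exponential martingale $\theta^{U_t}e^{s(\theta)t}$ and optional stopping. From there the methods diverge. The paper works with $\varphi(s)=\mathbb{E}[e^{-sX}]$, the Laplace transform of $K_\alpha$ itself, and never inverts it explicitly; instead it runs a Bromwich integral, deforms to a Hankel-type contour $\gamma^{(t)}$ for $t\le\alpha^{-2}$ and to a contour $\delta$ with $\mathrm{Re}\,s\le -c\alpha^2$ for $t\ge\alpha^{-2}$, and bounds $|\varphi|$ pointwise along each contour (Claim~\ref{claim:basic properties of phi}). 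You instead recognize the inverse transform of $\phi$ in closed form as $f_T(t)=e^{-(1+\alpha)t}I_1(t\sqrt{1+2\alpha})/(t\sqrt{1+2\alpha})$ (which indeed Laplace-inverts back to $\phi$ via the standard transform of $I_1(at)/t$), then apply Bessel asymptotics. Since $K_\alpha'(t)=-\alpha(1+2\alpha)f_T(t)$, your derivative bound comes for free, and the bound on $K_\alpha$ itself follows from $K_\alpha(t)=\alpha(1+2\alpha)\int_t^\infty f_T$; you do not even need the split at $\max(2t,\alpha^{-2})$, since the crude bound $\int_t^\infty (s+1)^{-3/2}e^{-c\alpha^2 s}\,ds\le e^{-c\alpha^2 t}\int_t^\infty(s+1)^{-3/2}\,ds=2(t+1)^{-1/2}e^{-c\alpha^2 t}$ already gives the lemma. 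The one place where your writeup is loose is the leap from ``asymptotics'' to a uniform inequality; what you actually need is the pointwise bound $I_1(x)\le C\min(x,1)\,e^x/\sqrt{x+1}$ valid for all $x>0$, which is standard (e.g.\ from $I_1(x)=\tfrac{1}{\pi}\int_0^\pi e^{x\cos\theta}\cos\theta\,d\theta$); with that the uniformity in $\alpha\in(0,1/2)$ is automatic, so this is a cosmetic gap, not a real one. As for what each approach buys: yours gives a genuinely explicit density and a slightly cleaner proof of this particular lemma, including the sharper $(t+1)^{-3/2}$ (rather than $t^{-3/2}$) in the derivative bound without any extra case analysis near $t=0$. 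The paper's contour method is bulkier here, but it is the one that generalizes directly to the renewal density $K_\alpha^*$ in Lemma~\ref{lem:estimat for K tilde:intro}, where the resummation $\sum_n\varphi^n=\varphi/(1-\varphi)$ keeps the argument structurally identical, whereas your closed-form route would leave you staring at an infinite convolution series of Bessel functions.
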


Moreover, the properties of $K_\alpha^*(s)$ \eqref{eq:def:Kstar} are summarized by the following lemma.
\begin{lem}\label{lem:estimat for K tilde:intro}
	Let $K^*_\alpha:= \frac{2\alpha^2}{1+2\alpha}$. There exist  absolute constants $c, C>0$ such that for any $t>0$ and $0<\alpha<\frac{1}{2}$, 
	\begin{equation}
	\left|K^*_\alpha (t) - K^*_\alpha \right| \le \frac{C\alpha}{ \sqrt{t+1}}e^{-c\alpha^2 t}.
	\end{equation}
	Furthermore, its derivative in $t$ satisfies
	\begin{equation}
	|(K_\alpha^*)'(t)| \le C\alpha (t+1)^{-\frac{3}{2}} e^{-c\alpha^2 t}.
	\end{equation}
\end{lem}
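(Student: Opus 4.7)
The plan is to prove both estimates by Laplace transform analysis applied to the renewal identity $K_\alpha^* = K_\alpha + K_\alpha \ast K_\alpha^*$, exploiting the decay already obtained for $K_\alpha$ in Lemma~2.12. Set
\[
\hat{K}_\alpha(z) := \intop_0^\infty e^{-zt} K_\alpha(t)\,dt,
\]
which by Lemma~2.12 is analytic in a half-plane that extends to $\{\operatorname{Re}(z) > -c_0\alpha^2\}$ for some absolute $c_0>0$. Corollary~2.4 gives $\intop K_\alpha = 1$, so $\hat{K}_\alpha(0)=1$, and taking the Laplace transform of the decomposition $K_\alpha^* = \sum_{j\ge 1} K_\alpha^{\ast j}$ yields
\[
\hat{K}_\alpha^*(z) \;=\; \frac{\hat{K}_\alpha(z)}{1-\hat{K}_\alpha(z)},
\]
which will be the central identity. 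The behavior of this expression near $z=0$ controls the asymptotic constant, while its behavior on a vertical line in the left half-plane controls the exponential decay of the error.

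Near $z=0$, a first-order expansion $\hat{K}_\alpha(z) = 1 + z\hat{K}_\alpha'(0)+O(z^2)$ gives a simple pole with residue $-1/\hat{K}_\alpha'(0) = \bigl(\intop_0^\infty s\,K_\alpha(s)\,ds\bigr)^{-1}$. The value $\frac{2\alpha^2}{1+2\alpha}$ asserted for $K_\alpha^*$ is therefore identified by computing the first moment of $K_\alpha$; this is done by differentiating in $\alpha'$ the identity used in the proof of Corollary~2.7 and evaluating at $\alpha'=\alpha$, exactly as in that derivation but one order higher. Consequently
\[
F_\alpha(z) \;:=\; \hat{K}_\alpha^*(z) - \frac{K_\alpha^*}{z}
\]
is analytic on $\{\operatorname{Re}(z) > -c_1\alpha^2\}$ for some $c_1\in(0,c_0)$, once one verifies that $1-\hat{K}_\alpha$ has no other zeros in that strip. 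The zero-free region on the imaginary axis follows from $|\hat{K}_\alpha(iy)|<1$ for $y\neq0$ (the density $K_\alpha$ is not lattice-supported), and a quantitative perturbation using the bound on $K_\alpha'$ from Lemma~2.12 extends this into a strip of width of order $\alpha^2$.

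With $F_\alpha$ analytic in the strip, the Bromwich inversion formula gives, for $t>0$,
\[
K_\alpha^*(t)-K_\alpha^* \;=\; \frac{1}{2\pi i}\intop_{-c_1\alpha^2+i\mathbb{R}} e^{zt}F_\alpha(z)\,dz,
\]
which immediately produces the exponential factor $e^{-c_1\alpha^2 t}$. The polynomial prefactor $(t+1)^{-1/2}$ comes from the size of $F_\alpha$ on the contour: the $t^{-1/2}$ tail of $K_\alpha$ (Lemma~2.12) translates into a square-root type singularity of $\hat{K}_\alpha$ at the edge of its analyticity strip, and the standard saddle/branch-point calculation then yields the claimed $t^{-1/2}$ behavior. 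The derivative bound is obtained analogously: $(K_\alpha^*)'$ has Laplace transform $z\hat{K}_\alpha^*(z)-K_\alpha^*$, analytic in the same strip, and the extra factor $z$, which is of size $\alpha^2$ on the shifted contour, converts the $(t+1)^{-1/2}$ bound into $(t+1)^{-3/2}$ (with an extra $\alpha$ that is absorbed harmlessly).

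The main obstacle, and the place where care is needed, is obtaining estimates on $\hat{K}_\alpha(z)$ along the line $\operatorname{Re}(z)=-c_1\alpha^2$ that are uniform in $\alpha\in(0,\tfrac12)$ and that correctly reflect the $t^{-1/2}$ rather than the pointwise bound $t^{-1/2}$ itself. This needs a refined analysis of $\hat{K}_\alpha$ via the probabilistic representation of $K_\alpha(s)=\alpha(1+2\alpha)\PP_\alpha(s<T<\infty)$ from Definition~2.8: the Laplace transform of $T$ under $\PP_\alpha$ is computable explicitly in terms of generating functions of random walks on $\mathbb{Z}$, and the requisite square-root asymptotic at the boundary of the strip follows from this explicit form. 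Carrying out this Fourier/renewal calculation, uniformly in the parameter $\alpha$, is the substantive content of the appendix in which this lemma is placed.
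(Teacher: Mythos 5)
Your overall plan — Laplace transform, the renewal identity $\hat K_\alpha^* = \hat K_\alpha/(1-\hat K_\alpha)$, residue at $z=0$ to identify the constant, Bromwich inversion along a shifted line to get the exponential decay, branch-point asymptotics for the $(t+1)^{-1/2}$ prefactor — is the same route the paper takes (Lemmas \ref{lem:estimate on K} and \ref{lem:estimat for K tilde}, Corollary \ref{cor:bound on K'}), including the observation that the real content lives in the explicit formula for $\mathbb{E}_\alpha[e^{sT}\mathds{1}\{T<\infty\}]$ derived from the random-walk martingale. But two steps as you outline them would fail.

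First, the identification of the constant. You claim the first moment $\intop_0^\infty t\,K_\alpha(t)\,dt = \tfrac{1+2\alpha}{2\alpha^2}$ can be read off by differentiating the identity from the proof of Corollary \ref{cor:speed:integralofH} ``one order higher'' in $\alpha'$. It cannot. That identity reduces, after the one differentiation in $\alpha'$, to an equation for $g(\alpha):=\intop_0^\infty \mathbb{E}_\alpha[H_s]\,ds$; a second derivative in $\alpha'$ yields $g'(\alpha)=-\tfrac{2}{\alpha(1+2\alpha)^2}$, which is the derivative of the averaged $H_s$ with respect to the Poisson rate of the \emph{past}, not the $s$-weighted integral $\intop s\,\mathbb{E}_\alpha[H_s]\,ds$. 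One can check they disagree: $\intop s\,\mathbb{E}_\alpha[H_s]\,ds = \tfrac{1}{2\alpha^3}$. The paper instead gets the moments by differentiating the explicit Laplace transform of $K_\alpha$ at $s=0$ (Corollary \ref{cor:moments of K}); that is the step you must substitute.

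Second, the zero-free strip. You propose to get $|1-\hat K_\alpha(z)|\ge c$ on a strip of width $\asymp\alpha^2$ by combining ``$K_\alpha$ is non-lattice, hence $|\hat K_\alpha(iy)|<1$ for $y\ne 0$'' with a perturbation using the bound on $K_\alpha'$. This cannot be made quantitative as stated: the $z$-derivative $\hat K_\alpha'(z) = -\intop t e^{-zt}K_\alpha(t)\,dt$ is of order $\alpha^{-2}$ near $z=0$, so moving $\operatorname{Re}(z)$ by $\asymp\alpha^2$ can change $\hat K_\alpha$ by $\Theta(1)$, and the non-lattice lower bound on the imaginary axis is not uniform at the scale $|y|\asymp\alpha^2$ anyway. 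The paper gets the lower bound directly from the explicit algebraic form of $\varphi(s)$ (Claim \ref{claim:basic properties of phi}, part (5)), which is what makes the analysis uniform in $\alpha$. You do eventually acknowledge that the explicit form is what drives the square-root asymptotics, so the fix is organizational: make the explicit formula the starting point and derive both the zero-free region and the boundary asymptotics from it, as the paper does with the contours $\gamma^{(t)}$ (for $t\lesssim\alpha^{-2}$) and $\delta$ (for $t\gtrsim\alpha^{-2}$). The same two-contour case split is also what makes the derivative bound $(t+1)^{-3/2}$ come out correctly; your heuristic ``$|z|\approx\alpha^2$ on the contour'' only covers the large-$t$ regime, while for $t\lesssim\alpha^{-2}$ the dominant $|z|\approx 1/t$ supplies the missing factor.
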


Finally, we state a similar bound on $J_{s,t}^{(\alpha)}$ \eqref{eq:def:J:expected val}.
\begin{lem}\label{lem:bound on deterministic J}
	There exist absolute constants $c,C>0$ such that for any $t>s>0$ and $0<\alpha<\frac{1}{2}$, 
	\begin{equation}
	\left|J_{s,t}^{(\alpha)} \right| \le \frac{Ce^{-c\alpha^2 t}}{\sqrt{(s+1)(t+1)}}.
	\end{equation}
\end{lem}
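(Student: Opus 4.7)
The plan is to express $J_{s,t}^{(\alpha)}$ via a strong-Markov coupling of $T$ and $T_s$, thereby reducing the bound to an integral involving only the quantities controlled by Lemma~\ref{lem:estimate on K:intro}. Working with the drifted walk $U(r) := Y(r) - W(r) + 1$, which under $\PP_\alpha$ is a continuous-time random walk starting at $1$ with up-rate $\alpha + \frac{1}{2}$ and down-rate $\frac{1}{2}$, the time $T$ is the first hit of $0$ by $U$, while $T_s$ is the first hit of $0$ by $U(\cdot) + I\{\cdot > s\}$. One checks directly that $T_s = T$ on $\{T \le s\}$, whereas on $\{T > s\}$ the strong Markov property at $T$ gives $T_s = T + T'$ with $T'$ an independent copy of $T$: the post-$T$ walk starts from $U(T)=0$ and must reach $-1$, which by a unit translation is the same first-passage problem.

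Decomposing $\PP_\alpha(t < T_s < \infty)$ by conditioning on the location of $T$ (in $(s,t]$ the extra length $T' = T_s - T$ must satisfy $t - T < T' < \infty$; in $(t,\infty)$ the event $T_s > t$ is automatic but we still need $T' < \infty$, which has probability $\xi := (1+2\alpha)^{-1}$) yields
\begin{equation*}
\PP_\alpha(t < T_s < \infty) = \int_s^t f(t-u)\,\nu(du) + \xi\,f(t),
\end{equation*}
where $f(u) := \PP_\alpha(u < T < \infty) = K_\alpha(u)/(\alpha(1+2\alpha))$ and $\nu(du) := \PP_\alpha(T \in du)$ has density $\mu(u) := -f'(u) = -K_\alpha'(u)/(\alpha(1+2\alpha))$. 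Subtracting $\PP_\alpha(t < T < \infty) = f(t)$ gives the clean identity
\begin{equation*}
J_{s,t}^{(\alpha)} = \int_s^t f(t-u)\,\mu(u)\,du - (1-\xi)\,f(t).
\end{equation*}
By Lemma~\ref{lem:estimate on K:intro}, $f(u) \le Ce^{-c\alpha^2 u}/\sqrt{u+1}$ and $|\mu(u)| \le Ce^{-c\alpha^2 u}/(u+1)^{3/2}$, so pulling out the factor $e^{-c\alpha^2 u}e^{-c\alpha^2(t-u)} = e^{-c\alpha^2 t}$ reduces the first-term bound to the analytic estimate
\begin{equation*}
I(s,t) := \int_s^t \frac{du}{\sqrt{t-u+1}\,(u+1)^{3/2}} \le \frac{C}{\sqrt{(s+1)(t+1)}}.
\end{equation*}

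To prove the displayed bound for $I(s,t)$ I split at the midpoint $m := (s+t)/2$: on $[s,m]$ I pull out $\sqrt{t-u+1} \ge \sqrt{(t-s)/2 + 1}$ and bound $\int_s^m (u+1)^{-3/2}\,du$ either by $2(s+1)^{-1/2}$ or by $\tfrac{t-s}{2}(s+1)^{-3/2}$, whichever is smaller in the given regime; on $[m,t]$ I pull out $(u+1)^{-3/2} \le C(s+t+2)^{-3/2}$ and integrate $(t-u+1)^{-1/2}$. A short case analysis distinguishing $s \le t/2$ from $s > t/2$ then confirms both halves are $\le C/\sqrt{(s+1)(t+1)}$. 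For the residual term $(1-\xi) f(t) \le C\alpha e^{-c\alpha^2 t}/\sqrt{t+1}$, I absorb the missing $1/\sqrt{s+1}$ factor by slightly weakening the exponential constant: writing $e^{-c\alpha^2 t} = e^{-(c-c')\alpha^2 t} e^{-c'\alpha^2 t}$ and using $\alpha\sqrt{s+1} \le \alpha\sqrt{t+1}$ together with the uniform bound $\sqrt{y}\,e^{-\delta y} \le C_\delta$ applied to $y = \alpha^2(t+1)$, the missing factor is swallowed at the cost of a smaller exponential constant $c'$. The main technical obstacle is the integral estimate for $I(s,t)$, which genuinely requires both the midpoint split and the two-regime case analysis; the coupling itself is short once everything is phrased in terms of $U$.
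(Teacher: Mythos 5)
Your proof is correct and follows essentially the same route as the paper's (Appendix~\ref{sec:fourier and renewal}, Claim~\ref{claim:formula for J} and Corollary~\ref{cor:bound on deterministic J}): the strong Markov coupling $T_s = T + T'$ on $\{T > s\}$ gives exactly the paper's identity $J_{s,t}^{(\alpha)}=-\alpha^{-2}(1+2\alpha)^{-2}\int_s^t K'(x)K(t-x)\,dx-2(1+2\alpha)^{-2}K(t)$ after translating $f,\mu$ back to $K,K'$, and your integral bound for $I(s,t)$ is the paper's Claim~\ref{claim:some claim} up to the choice of split point. The absorption of the residual $(1-\xi)f(t)$ by weakening the exponential constant is likewise identical to the paper's remark that ``the last inequality follows by slightly changing the values of $c$ and $C$.''
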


\section{An a priori upper bound and the induction base}\label{sec:inductionbase}

The first step of our proof is establishing that the speed drops down to arbitrary small level in finite time. In Section \ref{subsec:simpleupbd}, we achieve this by showing an $t^{\frac{3}{4}}$polylog$(t)$-bound on $X_t$ which follows from counting the excessive particles in the aggregate. Based on this result,  we prove in Sections \ref{subsec:sandwich1}, \ref{sec:Sandwich} that when $t$ gets sufficiently large (but finite), the speed stays between arbitrary small constants for a long enough time. Such a sandwiching  argument enables us to analyze the speed process that evolves from particles given by a fixed rate process in the later sections.

\subsection{Simple upper bound of $t^{\frac{3}{4}}$}\label{subsec:simpleupbd}

\begin{lem}\label{lem:three quarter upper bound}
	For any $t>0$ we have that
	\begin{equation}
	\mathbb P (X_t\ge t^{\frac{3}{4}}\log ^{2}t) \le C t^{-10}
	\end{equation}
\end{lem}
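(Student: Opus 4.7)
The plan is to count the number $A_t$ of particles currently frozen in the right half of the aggregate, i.e.\ in the sites $\{1,\ldots,X_t\}$, and to confront an upper and a lower bound on $A_t$. Since every site in the right aggregate was brought in by at least one triggering particle, $A_t \ge X_t$ deterministically.

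For the upper bound, I would argue that no frozen particle can have originated too far to the right of $X_t$. By a standard Gaussian tail estimate for simple random walks, a walk started at position $y\ge 1$ has probability at most $e^{-c(y-X_t-1)^{2}/t}$ of reaching the site $X_t+1$ by time $t$, so by a union bound, with probability at least $1-Ct^{-20}$ no particle initially at $y\ge X_t + C\sqrt{t\log t}$ is absorbed. Combined with a Poisson concentration inequality applied to the initial count of particles in $\{1,\ldots,X_t+C\sqrt{t\log t}\}$, this yields
\begin{equation}
A_t \le X_t + C\sqrt{t}\,\log^{2} t
\end{equation}
with probability at least $1-Ct^{-10}$.

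For the lower bound, the ``excess particle'' count alluded to in the outline, the key observation is that once site $i-1$ joins the aggregate at time $\tau_{i-1}$, a particle at site $i$ cannot jump to $i-1$ without triggering absorption of site $i$, so particles accumulate at site $i$ during the holding interval $(\tau_{i-1},\tau_i]$ from arrivals at site $i+1$. Tracking this reservoir and applying a Poisson-type concentration argument should give
\begin{equation}
A_t - X_t \;\ge\; c\,\frac{X_t^{2}}{t}
\end{equation}
with probability at least $1-Ct^{-10}$, up to polylogarithmic corrections. Heuristically, the mean holding time between consecutive absorptions is at most $t/X_t$, and during a holding interval of that length an expected $\Theta(X_t/t)$ extra particles accumulate at the boundary; summing over the $X_t$ sites produces the stated $X_t^{2}/t$ order of excess. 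Combining with the upper bound yields $X_t^{2}/t \le C\sqrt{t}\,\log^{2} t$ up to polylogarithmic factors, which rearranges to $X_t \le t^{3/4}\log^{2} t$ on an event of probability at least $1-Ct^{-10}$, as required.

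The main obstacle is clearly the lower bound on the excess. The reservoir dynamics at the moving aggregate boundary depend on the full history of absorption times, so the argument must either condition carefully on that history or appeal to a direct comparison/coupling with a more tractable process (perhaps an $M/M/\infty$-type queue at each site) to obtain quantitative control with a polynomial tail. The upper bound, by contrast, is a routine application of Poisson and Gaussian tail estimates.
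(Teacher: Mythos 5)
Your high-level plan is the same as the paper's: count the number of frozen (``excess'') particles, prove an upper bound of order $\sqrt{t}\,\textnormal{polylog}(t)$, prove a lower bound of order $X_t^2/t$, and compare. Your upper bound (Gaussian tail for random walks far to the right of $X_t$ plus Poisson concentration of the initial particle count) is essentially the paper's argument. But the lower bound on the excess, which you correctly identify as the main obstacle, is where you have a genuine gap: your reservoir/queueing heuristic tracks individual holding times and accumulation rates at the moving boundary, which, as you note, depends on the full absorption history and would require a nontrivial coupling to make rigorous.

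The paper sidesteps all of that with two clean ideas you are missing. First, it computes the compensator of $N_t$ (total particles absorbed) directly from the formula $S(t) = \frac{1}{2}\mathbb{E}[Z_t \mid \mathcal F_t]$, where $Z_t$ is the (conditionally Poisson, parameter $2S(t)$) number of particles at $X_t+1$: since $N_t$ jumps by $j$ at rate $\frac{1}{2}j\,\mathbb{P}(Z_t=j\mid\mathcal F_t)$, the process
\begin{equation}
N_t - \intop_0^t \big(S(x) + 2S(x)^2\big)\,dx
\end{equation}
is a martingale, while $X_t - \intop_0^t S(x)\,dx$ is a martingale as well. A single Azuma-type concentration bound then gives $N_t - X_t = 2\intop_0^t S(x)^2\,dx + O(\sqrt{t}\log^2 t)$, so your upper bound on the excess becomes an upper bound on $\intop_0^t S^2$. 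Second, Cauchy--Schwarz,
\begin{equation}
X_t \approx \intop_0^t S(x)\,dx \le \sqrt{t}\,\Bigl(\intop_0^t S(x)^2\,dx\Bigr)^{1/2},
\end{equation}
converts that into the claimed $t^{3/4}$ bound on $X_t$. This is precisely what makes your heuristic ``$A_t - X_t \gtrsim X_t^2/t$'' rigorous without ever having to control variable holding times at each site: the time-averaged $S^2$ dominates the square of the time-averaged $S$ automatically, uniformly in the history. Without the compensator identity (in particular, the exact rate $S + 2S^2$, which is where the criticality $\lambda=1$ enters through the Poisson second moment) your plan would need the more delicate coupling argument you gesture at, and it is not clear it would close.
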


\begin{proof}
	Denote by $N_t$ the total number of particles that were absorbed into the aggregate by time $t$. We start by bounding $N_t$. To this end, note that the event $\mathcal A:=\{  X_t \le t\}$ satisfies $\mathbb P (\mathcal A )\ge 1-e^{-ct}$. Indeed, we have with probability one that $ S(x)\le 1/2$ for all $x>0$ and therefore $ X_t\precsim \text{Poisson}(t/2)$. 
	
	Next, for any $l \ge 0$, define the random variable $Y_l$ to be the number of particles that at some point, before time $t$,  were inside the interval $[0,l]$. Define the event $\mathcal B _l :=\{ N_l \le  l+\sqrt{t}\log ^2 t  \}$. It is clear that on the event 
	\begin{equation}
	    \mathcal B := \mathcal A \cap \bigcap _{l=1}^{\lceil t \rceil }  \mathcal B _l
	\end{equation}
     we have that 
     \begin{equation}\label{eq:bound on N}
         N_t \le X_t + \sqrt{t} \log ^ 2 t.
     \end{equation} 
     Indeed, a particle that was absorbed into the aggregate before time $t$ was, at some point, inside the interval $[0,X_t]$.
     
     We turn to show that $\mathcal B $ happens with high probability. For any $
     l \le t $, let $\mathcal D _l $ be the event that any particle which was initially outside the interval $[0, l +\frac{1}{2}\sqrt{t} \log ^2 t  ]$ did not reach the interval $[0,l ]$ and let $\mathcal E _l $ be the event that the number of particles initially in the interval $[0, l + \frac{1}{2} \sqrt{t} \log ^2 t  ]$ is at most $l + \sqrt{t}\log ^2 t $. It is clear that $\mathcal D _l \cap \mathcal E _l \subseteq \mathcal B _ l  $ and that $\mathbb P (\mathcal D _l )\ge 1-Ce^{-c \log ^2 t }$. Since the number of particles that were initially in $[0, l + \frac{1}{2} \sqrt{t} \log ^2 t  ]$ is distributed by $\text{Poisson} (l + \lfloor \frac{1}{2} \sqrt{t} \log ^2 t \rfloor )$ we have for any  $l \le t$ that $\mathbb P ( \mathcal E _l) \ge 1- C e^{-c \log ^2 t }$. Thus we have that $\mathbb P ( \mathcal B _l) \ge 1- C e^{-c \log ^2 t }$ and therefore, from a union bound we get $\mathbb P ( \mathcal B ) \ge 1- C e^{-c \log ^2 t }$.
     
     Next, let $Z_t $ be the number of particles at $X_t+1$ at time $t$, and recall that the conditional distribution of $Z_t$ given $\{X_s, \ s\le t \}$ is $ \text{Poisson}( 2S(t) )$.
     
     The aggregate size, $X_t$, increases by $1$ with rate $S(t)$ and therefore $X_t -\int _0^t S(x)ds $ is a martingale. Similarly, for each $j\ge 1 $, $N_t $ increases by $j$ with rate $ \frac{1}{2} j \cdot \mathbb P (Z_t =j \ | \ X_s, \ s\le t )$ and therefore 
	\begin{equation}
	    N_t -\frac{1}{2}  \intop _0^t \sum _{j=1}^{\infty } j^2 \cdot \mathbb P (Z_x =j \ | \ X_s, \ s\le x )dx= N_t-\frac{1}{2}\intop _0^t \mathbb E \left[ Z_x^2 \ | \ X_s , \ s\le x \right] dx= N_t-\intop _0^t  S(x)+2S(x)^2 dx
	\end{equation}
	is a martingale. By Corollary~\ref{cor:concentration of integral} with $M=t$ and $f=1$ (using the bounds $S,S+2S^2 \le 1$) we have probability at least $1-C e^{-c \log ^2 t }$ that 
	\begin{equation}\label{eq:azuma for X and N}
	\left| X_t-\intop _0^t S(x)dx \right| \le \sqrt{t} \log ^2 t,\quad  \left| N_t-\intop _0^t S(x)+2 S(x)^2dx \right| \le \sqrt{t} \log ^2t.
	\end{equation}
	Denote by $\mathcal C$ the event where these inequalities hold. On $\mathcal B \cap \mathcal C $ we have that 
	\begin{equation}
	    2\intop _0^t S(x)^2 dx \le -X_t +\sqrt{t} \log ^2 t +\intop _0^t S(x)+2 S(x)^2dx \le N_t-X_t +2 \sqrt{t}\log ^2 t \le 3 \sqrt{t} \log ^2 t,
	\end{equation}
	where in the first two inequalities we used \eqref{eq:azuma for X and N} and in the last inequality we used \eqref{eq:bound on N}. Thus, on $\mathcal B \cap \mathcal C $, by Cauchy Schwartz inequality we have 
	\begin{equation}
	X_t- \sqrt{t}\log ^2t \le \intop _0^t S(x)dx\le \sqrt{t} \cdot \Big( \intop _0^t S(x)^2dx\Big) ^{\frac{1}{2}} \le  2 t^{\frac{3}{4}}\log  t.
	\end{equation}
	This finishes the proof of the lemma.
\end{proof}

The next corollary follows immediately from Lemma~\ref{lem:three quarter upper bound}.

\begin{cor}
We have that
\begin{enumerate}
    \item 
	 $\mathbb P (X_t =O(t^{\frac{3}{4}}\log ^2t) )=1$.
	\item
	For all $t\ge 2$,  $\mathbb E X_t \le C t^{\frac{3}{4}} \log ^2 t$.
\end{enumerate}
\end{cor}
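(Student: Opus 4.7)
The corollary has two parts, each following routinely from Lemma~\ref{lem:three quarter upper bound} together with elementary tools, so the plan is brief.

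For part (i), I would invoke Borel--Cantelli along integer times and interpolate by monotonicity. Summing the lemma over $n\in\mathbb{N}$ gives $\sum_{n\ge 1}\mathbb{P}(X_n\ge n^{3/4}\log^2 n)\le C\sum_n n^{-10}<\infty$, so almost surely $X_n<n^{3/4}\log^2 n$ for all sufficiently large $n$. Since $X_t$ is nondecreasing in $t$ (the aggregate only grows), for $t\in[n,n+1]$ we have $X_t\le X_{n+1}\le (n+1)^{3/4}\log^2(n+1)\le 2t^{3/4}\log^2 t$ once $n$ is large, giving $X_t=O(t^{3/4}\log^2 t)$ almost surely.

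For part (ii), I would split the expectation according to the event of the lemma:
\begin{equation}
\mathbb{E}X_t=\mathbb{E}\bigl[X_t\, I\{X_t<t^{3/4}\log^2 t\}\bigr]+\mathbb{E}\bigl[X_t\, I\{X_t\ge t^{3/4}\log^2 t\}\bigr].
\end{equation}
The first term is bounded by $t^{3/4}\log^2 t$. For the second, by Cauchy--Schwarz it is at most $\sqrt{\mathbb{E}X_t^2}\cdot\sqrt{\mathbb{P}(X_t\ge t^{3/4}\log^2 t)}$, where the probability is $\le Ct^{-10}$ by the lemma. For the second moment, I would use the fact (already noted in the lemma's proof) that $S(x)\le\tfrac{1}{2}$ deterministically, so $X_t$ is stochastically dominated by a $\mathrm{Poisson}(t/2)$ variable, giving $\mathbb{E}X_t^2\le Ct^2$ for $t\ge 1$. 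The tail contribution is then at most $Ct\cdot t^{-5}=Ct^{-4}$, which is $o(t^{3/4}\log^2 t)$, and adding the two pieces yields the desired moment bound.

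Neither step presents a real obstacle. The two points requiring minor care are that the lemma's probability bound is pointwise in $t$ rather than uniform, so monotonicity is needed to pass to continuous $t$ in part (i); and that one needs a crude but finite second-moment estimate on $X_t$ in part (ii), which the deterministic bound $S\le\tfrac12$ supplies via stochastic domination.
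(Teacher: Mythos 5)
Your proof is correct and follows essentially the same route as the paper: both parts rest on Lemma~\ref{lem:three quarter upper bound} together with the stochastic domination $X_t\precsim\text{Poisson}(t/2)$ coming from $S\le\tfrac12$, with Borel--Cantelli and monotonicity handling part~(i). The only difference is that for the tail of the expectation in part~(ii) you use Cauchy--Schwarz and a crude second-moment bound, whereas the paper uses a direct layer-cake split $\mathbb{E}X_t\le t^{3/4}\log^2 t+t\,\mathbb{P}(t^{3/4}\log^2 t\le X_t\le t)+\sum_{k\ge1}(k+1)t\,\mathbb{P}(X_t\ge kt)$; both reach the same estimate with comparable effort.
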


\begin{proof}
	The first part follows immediately from Lemma~\ref{lem:three quarter upper bound} and the Borel-Cantelli lemma. We turn to prove the second statement. By Lemma~\ref{lem:three quarter upper bound} and the fact that $X_t\precsim \text{Poisson}(t/2)$ we have for sufficiently large $t$,
	\begin{equation}
	\mathbb E X_t \le t^{\frac{3}{4}} \log ^2t +t\cdot \mathbb P ( t^{\frac{3}{4}} \log ^2t \le X_t \le t) +\sum _{k=1}^{\infty } (k+1)t \cdot \mathbb P (X_t\ge kt) \le Ct^{\frac{3}{4}} \log ^2t,
	\end{equation}
	as needed.
\end{proof}

\subsection{Stochastic domination and bound on rate}\label{subsec:sandwich1}

The process $Y_t$ is itself a function valued Markov chain and it will be useful at time to initialize it from different starting states. We define the generalized aggregate with initial condition $Y_{0}$ for some $t_0\ge 0$.

\begin{definition}\label{def:aggregate with initial condition}
	Let $Y_0:\mathbb R_+\to \mathbb N \cup \{\infty \}$ be a monotone random step function and let $\Pi $ be an independent $2$-D Poisson process. The rate of the aggregate with initial condition $(Y_0,t_0)$ is defined by 
	\begin{equation}
	S(t):=\frac{1}{2}\mathbb P \left( W(s) \le Y_t(s) \ | \ Y_t\right), \quad t \ge t_0,
	\end{equation}
	where $Y_t$ is defined by 
	\begin{equation}\label{eq:generalised Y}
	Y_t(s)=\begin{cases}
	\quad \quad X_t-X_{t-s}\quad \quad \quad \quad   s \le t-t_0  \\
	Y_t(t-t_0)+Y_0(s-(t-t_0)) \ \quad s >t-t_0
	\end{cases}
	\end{equation}
	As before, the size of the generalised aggregate will be $X_t:= \Pi _{S}[t_0,t]$. We say that $(Y_t ,S(t), X_t)$ is the aggregate with initial condition $(Y_0,t_0)$ driven by the Poisson process $\Pi $.
\end{definition}

The following claim follows immediately from the above definition.

\begin{claim}\label{claim:aggregate with initial condition}
	Let $Y_0^{(1)}, Y_0^{(2)}:\mathbb R_+\to \mathbb N \cup \{\infty \}$ be monotone random step functions and let $\Pi$ be an independent Poisson process. Let $(Y_t^{(1)},S^{(1)} (t), X_t^{(1)} )$ and $(Y_t^{(2)},S^{(2)} (t), X_t^{(2)} )$ be the aggregates with initial conditions $(Y_0^{(1)},0)$ and $(Y_0^{(2)},0)$ respectively, both driven by $\Pi $. We also let $(Y_t,S(t),X_t )$ be the usual aggregate.
	\begin{enumerate}
		\item 
		Suppose that $Y_0^{(1)}\equiv \infty $  with probability one. Then 
		\begin{equation}
		\{S^{(1)}(t)\} _{t \ge 0}\overset{d}{=} \{S(t)\} _{t \ge 0} \quad \{ X_t^{(1)}\} _{t \ge 0}\overset{d}{=} \{X_t\} _{t \ge 0}.
		\end{equation}
		\item 
		Let $t_0>0$ and suppose that $Y_0^{(1)}\overset{d}{=} Y_{t_0}$. Then 
		\begin{equation}
		\{S^{(1)}(t)\} _{t \ge 0}\overset{d}{=} \{S(t+t_0)\} _{t \ge 0} \quad \{ X_t^{(1)}\} _{t \ge 0}\overset{d}{=} \{X_{t+t_0}\} _{t \ge 0}.
		\end{equation}
		\item 
		On the event $\{ Y_0^{(1)}(s)\le Y_0^{(2)}(s) \text{ for all }s \}$ we have for all $t>0$
		\begin{equation}
		S^{(1)}(t) \le S^{(2)}(t),\quad X_t^{(1)} \le X_t^{(2)}, \quad \forall s>0,\ Y_t^{(1)}(s)\le Y_t^{(2)}(s).
		\end{equation}
	\end{enumerate}
\end{claim}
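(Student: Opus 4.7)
The plan is to handle the three parts in order, since (1) and (2) are essentially definitional or Markovian while (3) is the genuine coupling argument.

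For part (1), I would simply match definitions. When $Y_0^{(1)} \equiv \infty$ and $t_0 = 0$, the piecewise formula \eqref{eq:generalised Y} gives $Y_t^{(1)}(s) = X_t^{(1)} - X_{t-s}^{(1)}$ for $s \le t$ and $Y_t^{(1)}(s) = \infty$ for $s > t$, exactly recovering \eqref{eq:def:Yt}, so the generalised construction driven by $\Pi$ is pathwise identical to the original one, which yields the distributional equalities. For part (2), I would invoke the Markov property of the history-valued process $(Y_t, S(t), X_t)$: for $t \ge t_0$, this triple is a measurable functional of $Y_{t_0}$ and $\Pi|_{[t_0, \infty) \times \mathbb{R}_+}$, and this functional is precisely the map defined by the generalised construction when its input $Y_0$ is $Y_{t_0}$ (with time shifted back to the origin). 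Combined with the hypothesis $Y_0^{(1)} \overset{d}{=} Y_{t_0}$ and the independence of the driving Poisson process in the generalised construction, this gives the claimed distributional identity.

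The substantive part is (3), which I would prove by a monotone coupling along the points of the shared $\Pi$. The core observation is that $Y \mapsto S(Y)$ is pointwise monotone nondecreasing, since the event $\{\sup_s (W(s) - Y(s)) \le 0\}$ is monotone in $Y$. Assuming inductively that $Y_t^{(1)}(s) \le Y_t^{(2)}(s)$ for every $s$ up to some time $t$, so that $S^{(1)}(t) \le S^{(2)}(t)$, I would verify that the invariant persists. At a point $(t_k, y_k)$ of $\Pi$ there are three cases: if $y_k \le S^{(1)}(t_k)$ both aggregates jump and both histories acquire a unit step at $s = 0^+$; if $S^{(1)}(t_k) < y_k \le S^{(2)}(t_k)$ only $X^{(2)}$ jumps and only $Y^{(2)}$ grows; if $y_k > S^{(2)}(t_k)$ neither changes. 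Between consecutive Poisson points both histories evolve by a common right shift in $s$, and at the moving boundary $s = t - t_0$ the append rule in \eqref{eq:generalised Y} stays ordered by $Y_0^{(1)} \le Y_0^{(2)}$ together with the inductive bound at $s = t - t_0$. Iterating over all Poisson points yields $Y_t^{(1)} \le Y_t^{(2)}$ for all $t$, whence $S^{(1)}(t) \le S^{(2)}(t)$ and, through the shared $\Pi$, $X_t^{(1)} \le X_t^{(2)}$. The only mild obstacle is the apparent circularity among the orderings on $Y$, $S$, and $X$, which is precisely what forces the argument to be run as a simultaneous induction along the Poisson points of $\Pi$ rather than separately.
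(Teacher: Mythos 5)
Your proposal is correct and supplies exactly the details that the paper declines to spell out: the paper dismisses this claim with ``follows immediately from the above definition,'' and your argument is the intended one. Parts (1) and (2) really are definitional/Markovian as you say, and for part (3) you correctly identify the two driving facts—pointwise monotonicity of $Y\mapsto S(Y)$ in formula \eqref{eq:def:speedproc}, and the fact that the aggregates read off the same Poisson points—and you are right that the three orderings ($Y$, $S$, $X$) must be carried forward together in a simultaneous induction along the Poisson points of $\Pi$, since each feeds the next.
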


\begin{cor}\label{cor:stochastic domination}
	We have that $S(t)$ is stochastically decreasing in the sense that, if $t_1\le t_2$ then there is a coupling of the processes $\{S(t+t_1)\}_{ t \ge 0}$ and $\{ S(t+t_2)\} _{t\ge 0}$ such that the first process is larger than the second.
\end{cor}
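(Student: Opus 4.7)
The plan is to deduce the corollary directly from Claim~\ref{claim:aggregate with initial condition} by coupling two aggregates that start at time zero but with different initial conditions.  Set $\delta := t_2 - t_1 \ge 0$.  On an auxiliary probability space I first sample one copy of the usual aggregate and freeze its time-$\delta$ profile, obtaining a random step function $\widehat Y_\delta$ distributed as $Y_\delta$.  I then take an independent two-dimensional Poisson process $\Pi$ and run two aggregates in the sense of Definition~\ref{def:aggregate with initial condition}, both driven by $\Pi$: aggregate (1) with initial condition $(Y_0^{(1)}, 0) = (\infty, 0)$, and aggregate (2) with initial condition $(Y_0^{(2)}, 0) = (\widehat Y_\delta, 0)$.

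By part~(1) of Claim~\ref{claim:aggregate with initial condition} we have $\{S^{(1)}(t)\}_{t\ge 0} \overset{d}{=} \{S(t)\}_{t\ge 0}$, and by part~(2) (applied with $t_0 = \delta$) we have $\{S^{(2)}(t)\}_{t\ge 0} \overset{d}{=} \{S(t+\delta)\}_{t\ge 0}$.  Since trivially $\widehat Y_\delta(s) \le \infty$ for every $s$, the event in the hypothesis of part~(3) occurs almost surely, and the monotonicity there yields the pointwise inequality $S^{(2)}(t) \le S^{(1)}(t)$ for every $t \ge 0$ under this joint construction.

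To conclude, I would evaluate both coupled processes at the shifted time $t+t_1$: the process $\{S^{(1)}(t+t_1)\}_{t\ge 0}$ has the law of $\{S(t+t_1)\}_{t\ge 0}$, while $\{S^{(2)}(t+t_1)\}_{t\ge 0}$ has the law of $\{S(t+t_1+\delta)\}_{t\ge 0} = \{S(t+t_2)\}_{t\ge 0}$, and the pointwise inequality $S^{(1)}(t+t_1) \ge S^{(2)}(t+t_1)$ persists.  This is exactly the coupling required by the statement of the corollary.

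There is no serious technical obstacle here: the argument is essentially bookkeeping that combines the three parts of Claim~\ref{claim:aggregate with initial condition}.  The only substantive observation is that $\infty$ serves as a trivial pointwise majorant for the random initial profile $\widehat Y_\delta$, which lets us apply the monotone coupling of part~(3) to compare the ``usual'' aggregate against a copy that has effectively been aged by $\delta$.  A small but worth-noting technical point is that one must sample $\widehat Y_\delta$ on an auxiliary space independent of the driving $\Pi$, so that the evolution after time zero is genuinely driven by a fresh Poisson field and part~(2) applies.
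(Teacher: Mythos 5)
Your argument is correct and is essentially identical to the paper's proof: both couple an infinite-initial-condition aggregate against one started from a copy of $Y_{t_2-t_1}$, apply parts (1)–(3) of Claim~\ref{claim:aggregate with initial condition}, and shift time by $t_1$. Your writeup is in fact cleaner about which superscript carries which initial condition (the paper's version appears to swap the labels in the distributional identities, though the intended argument is the same), and you correctly flag the independence of the auxiliary sample from the driving Poisson field.
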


\begin{proof}
	Let $Y_0^{(1)}\overset{d}{=}Y_{t_2-t_1}$ and $Y_0^{(2)}\equiv \infty$  and let $\Pi $ be a Poisson process independent of everything. By parts (1) and (2) of Claim~\ref{claim:aggregate with initial condition} we have that
	\begin{equation}
	\{S^{(1)}(t)\} _{t \ge 0}\overset{d}{=} \{S(t)\} _{t \ge 0} \quad \text{and} \quad   \{S^{(2)}(t)\}_{t \ge _0 }\overset{d}{=}\{S(t+t_2-t_1)\}_{t \ge 0}.
	\end{equation} 
	Since $Y_0^{(1)}(s)\le Y_0^{(2)}(s)$ for all $s$ almost surely, we have by part (3) of Claim~\ref{claim:aggregate with initial condition} that $S^{(1)}(t)\le S^{(2)}(t)$ for all $t>0$. Thus, the processes $\{S^{(1)}(t+t_1) \}_{t \ge 0}$ and $\{S^{(2)}(t+t_1) \}_{t \ge 0}$ give the desired coupeling between $\{S(t+t_1)\}_{t \ge 0}$ and $\{S(t+t_2)\}_{t \ge 0}$.
\end{proof}

In the following lemma we show that the rate of the aggregate gets arbitrarily close to $0$ for long periods of time.

\begin{lem}\label{lem:bound on rate}
	For any $\alpha >0$ sufficiently small the following holds. For all $t\ge \alpha ^{-20}$ we have that
	\begin{equation}
	\mathbb P \left( \sup _{t-\alpha ^{-3} \le s \le t } S(s) \le \alpha  \right)  \ge 1- \alpha .
	\end{equation}
\end{lem}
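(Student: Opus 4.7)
The plan is to combine the a priori bound $\mathbb{E} X_t \lesssim t^{3/4}\log^2 t$ from the corollary of Lemma~\ref{lem:three quarter upper bound} with the stochastic monotonicity from Corollary~\ref{cor:stochastic domination} to show that, with probability at least $1-\alpha$, the aggregate experiences no new jump in a window of length $2\alpha^{-3}$ ending at $t$. On this no-jump event the function $Y_s$ is forced to be identically zero on a long initial interval, and the classical $1/\sqrt{\tau}$ decay of the survival probability of a simple random walk then gives $S(s)\le\alpha$ throughout the sub-window $[t-\alpha^{-3},t]$.

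Concretely, set $t^*:=t-2\alpha^{-3}$, so $t^*\ge\alpha^{-20}/2$ for $\alpha$ small enough. Since $X_t-\int_0^t S(s)ds$ is a martingale (cf.\ the proof of Lemma~\ref{lem:three quarter upper bound}) and $r\mapsto \mathbb{E}[S(r)]$ is non-increasing by Corollary~\ref{cor:stochastic domination}, averaging yields
\begin{equation}
\mathbb{E}[S(t^*)]\ \le\ \frac{\mathbb{E}[X_{t^*}]}{t^*}\ \le\ C(t^*)^{-1/4}\log^2 t^*\ \lesssim \ \alpha^5\log^2(\alpha^{-1}).
\end{equation}
Applying monotonicity once more, $\mathbb{E}[X_t-X_{t^*}]=\int_{t^*}^t\mathbb{E}[S(s)]ds\le 2\alpha^{-3}\mathbb{E}[S(t^*)]\lesssim \alpha^2\log^2(\alpha^{-1})\le\alpha$ for $\alpha$ small, and Markov then gives $\mathbb{P}(\mathcal{E})\ge 1-\alpha$, where $\mathcal{E}:=\{X_t=X_{t^*}\}$ is the event that no new particle is absorbed during $[t^*,t]$.

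On $\mathcal{E}$, for every $s\in[t^*,t]$ and every $r\in[0,s-t^*]$ we have $X_{s-r}=X_s=X_{t^*}$, so $Y_s(r)=0$ on $[0,s-t^*]$. Therefore, by Definition~\ref{def:aggregate with initial condition} and the standard ballot-type estimate,
\begin{equation}
S(s)\ =\ \tfrac{1}{2}\mathbb{P}\!\left(W(r)\le Y_s(r)\ \forall r\,\big|\,Y_s\right)\ \le\ \tfrac{1}{2}\mathbb{P}\!\left(W(r)\le 0\ \forall r\in[0,s-t^*]\right)\ \le\ \frac{C''}{\sqrt{s-t^*}}.
\end{equation}
For $s\in[t-\alpha^{-3},t]$ we have $s-t^*\ge\alpha^{-3}$, hence $S(s)\le C''\alpha^{3/2}\le\alpha$ for $\alpha$ small, which establishes the sup bound on $\mathcal{E}$ and proves the lemma. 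The only subtlety worth flagging is that a pointwise Markov bound on $S(t^*)$ could not be upgraded to a sup over $[t-\alpha^{-3},t]$ by a crude union bound; what rescues us is the no-jump event $\mathcal{E}$, which automatically produces a long initial flat zero segment in $Y_s$, so that the constraint $W\le 0$ on a window of length $\ge\alpha^{-3}$ gives the required uniform $s$-dependent decay via the $1/\sqrt{\tau}$ survival estimate.
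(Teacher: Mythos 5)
Your proof is correct, and it reaches the conclusion by a different, somewhat more streamlined route than the paper. Both arguments share the same endgame: if no particle is absorbed in a window of length at least $\alpha^{-3}$ ending at $s$, then $Y_s\equiv 0$ on $[0,\alpha^{-3}]$, and the random walk survival bound $\mathbb{P}(\forall x\le\alpha^{-3},\,W(x)\le 0)\le C\alpha^{3/2}$ forces $S(s)\le\alpha$. The divergence is in how the quiet-window event is shown to be likely. The paper tiles $[0,t]$ by $\Theta(\alpha^3 t)$ intervals $I_k$ of length $\alpha^{-3}$, shows via a parity trick that the set $B$ of bad indices sits inside $A_1\cup A_2$ with $|A_j|\le X_t$, and then pigeonholes using stochastic monotonicity (Corollary~\ref{cor:stochastic domination}): the index $k=1$ is the least likely to be bad, so $\mathbb{P}(1\in B)\le 2\mathbb{E}X_t/(\lfloor\alpha^3 t\rfloor-1)$. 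You instead estimate $\mathbb{E}[S(t^*)]$ directly at $t^*=t-2\alpha^{-3}$: the identity $\mathbb{E}X_{t^*}=\int_0^{t^*}\mathbb{E}[S(s)]\,ds$ and the monotonicity of $\mathbb{E}[S(\cdot)]$ give $\mathbb{E}[S(t^*)]\le\mathbb{E}X_{t^*}/t^*=O(\alpha^5\log^2(1/\alpha))$, whence $\mathbb{E}[X_t-X_{t^*}]\le 2\alpha^{-3}\,\mathbb{E}[S(t^*)]\le\alpha$ and Markov supplies the no-jump event on $[t^*,t]$ with probability at least $1-\alpha$. The raw ingredients — the $t^{3/4}$ a priori bound, stochastic monotonicity, and the $\tau^{-1/2}$ survival estimate — are identical, but your organization avoids the covering and parity bookkeeping and makes the single pointwise bound on $\mathbb{E}[S(t^*)]$ the driving quantity. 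Your closing remark, that a crude union bound over the continuum of $s$ would fail and that the one no-jump event controls the whole sup, is exactly right; it is also what the paper's inclusion $B\subseteq A_1\cup A_2$ accomplishes in disguise, and the two proofs are in this sense dual formulations of the same averaging idea.
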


\begin{proof}
	Let $t\ge \alpha ^{-20}$ and let $I_k:=[t- k \alpha ^{-3}, t -(k-1) \alpha ^{-3}]$ for all $1\le k \le \lfloor \alpha ^3 t  \rfloor $.
	
	Define the random sets
	\begin{equation}
	\begin{split}
	A_1:=\left\{ 1\le k\le \lfloor\alpha ^3 t \rfloor-1  \text{ odd }: \substack{ \text{the aggregate grew in }\\ \text{the time interval } I_{k+1}\cup I_{k} } \right\}, \\
	A_2:=\left\{ 1 \le k\le \lfloor\alpha ^3 t \rfloor-1 \text{ even }: \substack{ \text{the aggregate grew in}\\ \text{the time interval } I_{k+1}\cup I_{k} } \right\}
	\end{split}
	\end{equation}
	and
	\begin{equation}
	B:= \left\{  1\le k\le \lfloor\alpha ^3 t \rfloor-1 \   : \  \sup _{s\in I_k}S(s)> \alpha  \right\}.
	\end{equation}
	It is clear that $|A_1|\le X_{t}$ and $|A_2|\le X_{t}$. Next, we claim that $B \subseteq A_1\cup A_2$. Indeed suppose that $k \notin A_1\cup A_2$ and let $s \in I_k$. By the definition of $A_1,A_2$, the aggregate did not grow in the time interval $[s-\alpha ^{-3} ,s]$ and therefore
	\begin{equation}
	S(s)=\frac{1}{2} \mathbb P \left(\forall x>0 , \ W(x) \le Y_s(x) \ | \ Y_s \right) \le \mathbb P \left( \forall x \le \alpha ^{-3}, \  W(x)\le 0 \right) \le C \alpha ^{\frac{3}{2}} \le \alpha ,
	\end{equation} 
	where the last two inequalities hold for sufficiently small $\alpha $. Since the last bound holds for all $s \in I_k$ we get that $k \notin B$.
	
	Finally, by Corollary~\ref{cor:stochastic domination} we have that $\mathbb P (k \in B) \ge \mathbb P (1 \in B)$ and therefore using Lemma~\ref{lem:three quarter upper bound} we obtain
	\begin{equation}
	(\lfloor\alpha ^3 t \rfloor-1 ) \mathbb P (1 \in B) \le \sum _{k=1}^{ \lfloor\alpha ^3 t \rfloor-1} \mathbb P (k\in B) =\mathbb E |B| \le \mathbb E |A_1|+\mathbb E |A_2| \le 2 \mathbb E X_{t} \le C t ^{\frac{3}{4}} \log ^5 t .
	\end{equation}
	Thus, using that $t \ge  \alpha ^{-20}$ and that $\alpha $ is sufficiently small we get
	\begin{equation}
	\mathbb P \left( \sup _{t-\alpha ^{-3} \le s \le t } S(s) \ge \alpha  \right)  =\mathbb P (1 \in B) \le \alpha 
	\end{equation}
	as needed.
\end{proof}

\subsection{Bounding with fixed rate processes}\label{sec:Sandwich}

In this section we show that the aggregate can be bounded from above and below by regular processes. Recall that $\Pi $ is the $2-D$ Poisson process that is driving the aggregate.

For $\alpha , t >0 $ and define the random functions 
\begin{equation}\label{eq:def:sandwichfixed}
\underline{Y} _{t,\alpha } (s):= \Pi _{\alpha  }[t -s , t ] ,\quad   \overline{Y} _{t,\alpha }(s):= 
\begin{cases}
\Pi _{ \alpha }[t-s,t], \quad   s \le \alpha ^{-3} \\
\quad \quad \infty,\quad \ \quad \quad  s > \alpha ^{-3}
\end{cases}\!\!\!\!
\end{equation}
for all $s>0$.  In terms of the speed process, we can write
\begin{equation}\label{eq:def:sandwich speed}
\underline{S}_{t,\alpha}(s) \equiv \alpha, \quad \overline{S}_{t,\alpha}(s) = \begin{cases}
\alpha, & t-\alpha^{-3}\le s < t;\\
\infty, &  s<t-\alpha^{-3}.
\end{cases}
\end{equation}

The following lemma follows immediately from Lemma~\ref{lem:bound on rate} and the fact that $S(t) \ge c t^{-\frac{1}{2}}$ for all $t>0$ almost surely.
\begin{lem}\label{lem:aggregate is bounded}
	Let $\alpha >0$ sufficiently small.
	\begin{enumerate}
		\item 
		For all $t \ge \alpha ^{-20}$  
		\begin{equation}
		\mathbb P \left( \forall s>0, \  Y_{t}(s) \le \overline{Y} _{t,\alpha }(s)  \right)  \ge 1-\alpha 
		\end{equation}
		\item 
		For all $t \le 100 \alpha ^{-2}$
		\begin{equation}
		\mathbb P \left( \forall s>0, \  Y_{t}(s) \ge \underline{Y} _{t,\alpha }(s)  \right)=1
		\end{equation}
	\end{enumerate}
\end{lem}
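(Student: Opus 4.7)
The key observation is that $\Pi_S$ and $\Pi_\alpha$ are built from the same underlying planar Poisson point process $\Pi$ that drives the aggregate: $\Pi_S[a,b]$ collects the $x$-coordinates of the points $(x,y) \in \Pi$ with $y \le S(x)$, while $\Pi_\alpha[a,b]$ collects those with $y \le \alpha$. Consequently, whenever $S(x) \le \alpha$ throughout an interval we have $\Pi_S \subseteq \Pi_\alpha$ on it, and whenever $S(x) \ge \alpha$ throughout an interval we have $\Pi_\alpha \subseteq \Pi_S$. Since $Y_t(s) = |\Pi_S[t-s,t]|$ and $\underline{Y}_{t,\alpha}(s), \overline{Y}_{t,\alpha}(s)$ are cardinalities of $\Pi_\alpha[t-s,t]$ in the appropriate $s$-regime, both parts of the lemma reduce to verifying the relevant pointwise inequality for $S$ on the correct time interval.

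For part (1), applying Lemma~\ref{lem:bound on rate} at $t \ge \alpha^{-20}$ yields, with probability at least $1-\alpha$, the event $\mathcal{E} := \bigl\{\sup_{t-\alpha^{-3} \le x \le t} S(x) \le \alpha\bigr\}$. On $\mathcal{E}$ the coupling above gives $Y_t(s) = |\Pi_S[t-s,t]| \le |\Pi_\alpha[t-s,t]| = \overline{Y}_{t,\alpha}(s)$ for every $s \le \alpha^{-3}$, while the bound is automatic for $s > \alpha^{-3}$ where $\overline{Y}_{t,\alpha}(s) = \infty$.

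For part (2), we use the deterministic lower bound
\[
S(x) \;=\; \tfrac{1}{2}\,\PP\bigl(W(r) \le Y_x(r)\ \forall\, r \ge 0 \,\big|\, Y_x\bigr) \;\ge\; \tfrac{1}{2}\,\PP\bigl(W(r) \le 0\ \forall\, r \le x\bigr) \;\ge\; c\,(x+1)^{-1/2},
\]
where the first inequality uses $Y_x \ge 0$ pointwise on $[0,x]$ and $Y_x \equiv \infty$ beyond, and the second is the standard ballot/reflection estimate for the continuous-time simple random walk. For all sufficiently small $\alpha$ and every $x \le 100\alpha^{-2}$ this forces $S(x) \ge \alpha$ almost surely, so the reverse coupling gives $\underline{Y}_{t,\alpha}(s) = |\Pi_\alpha[t-s,t]| \le |\Pi_S[t-s,t]| = Y_t(s)$ for every $s \le t$, and the inequality is trivial for $s > t$ since $Y_t(s) = \infty$ there. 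The only (minor) obstacle is bookkeeping the constant in the random-walk lower bound so that it dominates $\alpha$ uniformly on $[0,100\alpha^{-2}]$; apart from this, the genuine probabilistic content of the lemma is entirely carried by Lemma~\ref{lem:bound on rate}, used in part (1).
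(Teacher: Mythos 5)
Your proof matches the approach the paper itself sketches in the one-line remark preceding the lemma: couple $\Pi_S$ and $\Pi_\alpha$ inside the same driving planar Poisson process $\Pi$, invoke Lemma~\ref{lem:bound on rate} for part (1), and use the deterministic lower bound $S(t)\ge ct^{-1/2}$ for part (2). The constant issue you flag at the end is real, and it is a defect in the lemma's \emph{statement} rather than in your argument: since $S(t)\le 1/2$ always, the constant in $S(t)\ge ct^{-1/2}$ cannot exceed $1/2$, so the deterministic bound only guarantees $S(x)\ge\alpha$ on $[0,c^2\alpha^{-2}]$ with $c^2\le 1/4$, far short of $100\alpha^{-2}$. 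The ``$100$'' in part (2) should be some constant less than $1$; since the downstream use (setting $t_0=100\alpha_0^{-2}$ near the end of Section~\ref{sec:convergence in distribution}) only requires \emph{some} fixed constant multiple of $\alpha^{-2}$, the correction is harmless.
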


In the next sections we'll show that the processes with initial conditions $\overline{Y} _{t,\alpha }$ and $\underline{Y} _{t,\alpha }$ satisfy some regularity properties which will allow us to apply the inductive argument on them. We then conclude that the same results hold for the usual aggregate as these processes sandwich the aggregate by Lemma~\ref{lem:aggregate is bounded}

\section{Fixed rate process and its perturbation}\label{sec:fixedrate}

Recall the first- and second-order expansions of speed \eqref{eq:speed:1storder main}, \eqref{eq:speed:2ndorder main} and the definitions \eqref{eq:def:J:agg}, \eqref{eq:def:Qt:agg}. In this section, our objective is to understand the multiple integral 
\begin{equation}
\intop_{0}^t \intop_{0}^s  
{\textnormal{\textbf{J}}}_{t-s,t-u;t}^{\Pi_g} \ d\widehat{\Pi}_g(u) d\widehat{\Pi}_g(s),
\end{equation}
both when $g$ is a fixed rate $g\equiv \alpha$  and when it is slightly perturbed, and also the corresponding analogue for $\mathcal{Q}_t$. Consequently, we further derive estimates on the error of the first- and second-order approximations of the speed \eqref{eq:def:S1:basic form}, \eqref{eq:def:S2:basic form}. Throughout this section, $g(s)>0$ denotes a random function which represents the rate of the Poisson process at time $s$. 

To begin with, we introduce  some notations as follows. As before, $\Pi$ denotes the Poisson point process on the plane with the standard Lebesgue intensity,  and let $\Pi_g$ be the collection of the $x$-coordinate locations of points lying below $g$, that is,
\begin{equation}
\Pi_g[0,t] = \{x \in[0,t] : \ (x,y)\in \Pi \textnormal{ for some } y \in[0, g(x)] \},
\end{equation}
we define $\pi_i(t;g)$ to be the distance in $x$-axis from $t$ to the $i$-th closest point to $t$ in $\Pi_g[0,t]$. For instance, 
\begin{equation}\label{eq:def:pi closest points}
\pi_1(t;g) =\pi_1(t;\Pi_g) = t- \max\{x: x\in \Pi_g[0,t] \}.
\end{equation}
Moreover, for convenience, we denote
\begin{equation}\label{eq:def:sig closest points}
\sigma_i(t;g):= (\pi_i(t;g)+1)^{-\frac{1}{2}},
\end{equation}
and abbreviate the products among them by
\begin{equation}
\sigma_1\sigma_2(t;g):= \sigma_1(t;g)\sigma_2(t;g), \quad \textnormal{and} \quad \sigma_1\sigma_2\sigma_3(t;g):= \sigma_1(t;g)\sigma_2(t;g)\sigma_3(t;g).
\end{equation} 
Further, $\alpha,C>0$ denote  small enough and large enough constants, respectively, and  we set
 \begin{equation}\label{eq:def:horizon}
 \hat{h} = \hat{h}(\alpha,C):= \alpha^{-2} \log^C(1/\alpha).
 \end{equation}
Then, the goal of this section is to establish the following statements.

\begin{prop}\label{prop:fixed perturbed:double int}
	Let  $\epsilon>0$ be arbitrary, $\alpha>0$ be a  sufficiently small constant depending on $\epsilon,C
	$, and recall the definition of ${\textnormal{\textbf{J}}}$ \eqref{eq:def:J:agg}. Let $\tau$ be a stopping time, and $\{g(s)\}_{s \ge 0}$ be a positive stochastic process progressively measurable with respect to $\Pi_g$, and suppose that it satisfies the following three conditions almost surely:
	\begin{equation}\label{eq:fixed perturbed:assumption}
	\begin{split}
	\intop_0^{\hat{h}\wedge \tau} (g(s)-\alpha)^2 ds \le \alpha^{1-\frac{\epsilon}{400}},\quad 
	\intop_0^{\hat{h}\wedge \tau} (g(s)-\alpha)^2 g(s)ds \le \alpha^{2-\frac{\epsilon}{400}},\quad 
	\sup_{s\le \hat{h}\wedge \tau}g(s)  \le \alpha^{1-\frac{\epsilon}{400}}. 
	\end{split}
	\end{equation}
	Furthermore, denoting $d\widehat{\Pi}_g(s) = d\Pi_g(s) - \alpha ds$ as before, we define for $t'\le t$ that
	\begin{equation}\label{eq:def:Jint:general}
	\mathcal{J}[t;\Pi_g]:= \intop_{0}^{t} \intop_{0}^s  
	{\textnormal{\textbf{J}}}_{t-s,t-u;t}^{\Pi_g} \ d\widehat{\Pi}_g(u) d\widehat{\Pi}_g(s).
	\end{equation}
	Then, there exist $c_\epsilon,\alpha_0(\epsilon,C)>0$ such that for any $\alpha\in (0,\alpha_0)$, we have 
	\begin{equation}
	\PP \left(-\alpha^{\frac{1}{2}-\epsilon}\sigma_1(t;g)\le  \mathcal{J}[t;\Pi_g]  \le \alpha^{-\epsilon}\sigma_1\sigma_2(t;g),\ \forall t\le  \hat{h}(\alpha,C)\wedge \tau \right)
	\ge 1- 
	\exp\left(-\alpha^{-c_\epsilon} \right).
	\end{equation}
\end{prop}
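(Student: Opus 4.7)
The plan has three stages: (i) derive a deterministic pointwise bound on the integrand ${\textnormal{\textbf{J}}}_{t-s,t-u;t}^{\Pi_g}$ via a coupling, (ii) prove the concentration estimate in the clean fixed-rate case $g\equiv\alpha$ where the double integral admits a tractable iterated martingale structure, and (iii) transfer the bound to arbitrary $g$ satisfying \eqref{eq:fixed perturbed:assumption} by a Radon--Nikodym argument. For stage (i), I would couple the walks defining $T$, $T_{t-s}$, $T_{t-s,t-u}$ in \eqref{eq:def:J:agg} by running a single underlying walk $W$ against the three barriers built from $\Pi_g$ with zero, one, or two additional unit jumps. The three exit events differ only when $W$ passes within $O(1)$ of the barrier near the inserted jumps, and standard hitting-time estimates for the drifted walk $U = Y - W + 1$ (using the exponential martingale $\xi^U$ from \eqref{eq:speed:Ut mg conv} and Lemma \ref{lem:estimate on K:intro}) yield a deterministic bound of the form
\begin{equation}
\bigl|{\textnormal{\textbf{J}}}_{t-s,t-u;t}^{\Pi_g}\bigr| \le \frac{C e^{-c\alpha^2 s}}{\sqrt{(t-s+1)(s-u+1)}}
\end{equation}
on a high-probability event, in analogy with the averaged Lemma \ref{lem:bound on deterministic J}.

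For stage (ii), with $g\equiv \alpha$ the integrand ${\textnormal{\textbf{J}}}_{t-s,t-u;t}^{\Pi_\alpha}$ is measurable with respect to $\Pi_\alpha\cap[s,t]$, so Fubini lets me rewrite $\mathcal{J}[t;\Pi_\alpha]$ as an iterated stochastic integral against $d\widehat{\Pi}_\alpha$ and apply an Azuma--Bernstein-type concentration. The pointwise bound from stage (i) is singular near $s\approx u$ and $s\approx t$; localizing these singularities to the two nearest Poisson points of $\Pi_\alpha$ to $t$ is exactly what produces the factors $\sigma_1$ and $\sigma_2$ in the upper bound. Uniformity in $t\le \hat h\wedge\tau$ is obtained by discretizing $[0,\hat h]$ at a fine polynomial-in-$\alpha$ mesh and controlling the modulus of continuity of $t\mapsto\mathcal{J}[t;\Pi_\alpha]$ between grid points by the Poisson arrivals times the maximal jump size, which is polynomial in $\alpha$ and hence absorbed into the stretched-exponential probability $\exp(-\alpha^{-c_\epsilon})$.

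For stage (iii), the Radon--Nikodym derivative of the law of $\Pi_g$ with respect to $\Pi_\alpha$ on $[0,\hat h\wedge\tau]$ equals
\begin{equation}
\log\frac{d\PP_g}{d\PP_\alpha}\bigg|_{\hat h\wedge\tau}
= \intop_0^{\hat h\wedge\tau}\log\tfrac{g(s)}{\alpha}\, d\Pi_g(s) - \intop_0^{\hat h\wedge\tau}(g(s)-\alpha)\, ds.
\end{equation}
Expanding $\log(g/\alpha)$ to second order and applying Bernstein-type tail bounds to the Poisson integral, with inputs provided by the three moment hypotheses in \eqref{eq:fixed perturbed:assumption} (the $L^2$ bound controlling the quadratic drift correction, its weighted version controlling the compensated Poisson fluctuation, and the $L^\infty$ bound justifying the expansion), bounds this log-density by $\alpha^{-\epsilon'}$ with stretched-exponentially high probability for some $\epsilon'<\epsilon$. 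Combining with stage (ii) and paying at most $e^{\alpha^{-\epsilon'}}$ in the probability transfer yields the claimed bound with a slightly smaller exponent $c_\epsilon$.

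The main obstacle is the asymmetry of the two-sided bound: a symmetric Bernstein estimate naturally produces the upper bound $\alpha^{-\epsilon}\sigma_1\sigma_2$, but the lower bound $-\alpha^{1/2-\epsilon}\sigma_1$ is looser by a factor of order $\alpha^{1/2}/\sigma_2$, reflecting the essentially nonnegative nature of ${\textnormal{\textbf{J}}}$: inserting an extra unit jump into $Y$ only raises the barrier and hence can only delay $W$ from crossing. I plan to exploit this by splitting ${\textnormal{\textbf{J}}}_{t-s,t-u;t}^{\Pi_g} = J^{(\alpha)}_{t-s,t-u} + ({\textnormal{\textbf{J}}} - J^{(\alpha)})$: the deterministic piece $J^{(\alpha)}$, controlled by Lemma \ref{lem:bound on deterministic J}, only needs one-sided Bernstein on a single compensated integral and produces the weak lower bound with just one $\sigma_1$ factor, while the centered fluctuation ${\textnormal{\textbf{J}}} - J^{(\alpha)}$ is genuinely symmetric and delivers the tighter upper bound with the full $\sigma_1\sigma_2$ concentration coming from the two closest Poisson points.
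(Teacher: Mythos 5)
Your three-stage plan matches the paper's architecture: Proposition~\ref{prop:Jbound:quenched} plays the role of your stage (i); Proposition~\ref{prop:fixed perturbed:fixed rate} together with the iterated applications of Lemma~\ref{lem:concen of int:conti:forwardtime} (see Lemma~\ref{lem:Qint 1st}, Corollary~\ref{cor:Qint 2nd}, and Remark~\ref{rmk:fixed perturbed:Janalog}) is your stage (ii); and Lemma~\ref{lem:fixed perturbed:radon deriv bd} is your stage (iii). The observation that the $\sigma_1,\sigma_2$ factors come from the two nearest arrivals to $t$ is also on target.

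The gap is in your final paragraph, where the asymmetry of the bound is assigned to the wrong piece of ${\textnormal{\textbf{J}}}$. You propose the split ${\textnormal{\textbf{J}}}_{t-s,t-u;t} = J^{(\alpha)}_{t-s,t-u} + ({\textnormal{\textbf{J}}} - J^{(\alpha)})$ and argue that the deterministic kernel delivers the weak lower bound $-\alpha^{1/2-\epsilon}\sigma_1$ while the centered fluctuation ``is genuinely symmetric'' and gives the two-sided $\alpha^{-\epsilon}\sigma_1\sigma_2$ concentration. Neither claim is correct. The centered fluctuation is not symmetric: $|J^{(\alpha)}_{u,s}|$ is at most $C e^{-c\alpha^2 s}/\sqrt{(u+1)(s+1)}$ by Lemma~\ref{lem:bound on deterministic J}, whereas the quenched ${\textnormal{\textbf{J}}}_{u,s}$ can be as large as $\alpha^{-\epsilon}/\sqrt{(u+1)(s+1)}$ on the positive side yet no more negative than $-\alpha^{1-\epsilon}/\sqrt{s+1}$ (Proposition~\ref{prop:Jbound:quenched}); so ${\textnormal{\textbf{J}}}-J^{(\alpha)}$ inherits precisely the heavy positive tail and light negative tail of ${\textnormal{\textbf{J}}}$. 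And a quadratic integral of a deterministic kernel against $d\widehat{\Pi}\,d\widehat{\Pi}$ has no intrinsic one-sidedness: a symmetric concentration bound gives $\pm$ the same quantity, not a lower bound with a single $\sigma_1$ factor. So neither piece of your split actually produces the asserted asymmetry.

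The correct mechanism is already visible in Proposition~\ref{prop:Jbound:quenched}: the positive part of ${\textnormal{\textbf{J}}}_{u,s}$ comes from the event $\{u\le T<s,\ s\le T_u<\infty\}$, giving two ballot-type factors and hence $1/\sqrt{(u+1)(s+1)}$, while the negative part comes from $\{s\le T<\infty,\ T_u=\infty\}$, which acquires an extra factor of order $\alpha$ from the escape probability and is therefore only $\alpha^{1-\epsilon}/\sqrt{s+1}$. This asymmetry in the integrand then propagates through the iterated concentration step because Lemma~\ref{lem:concen of int:conti:forwardtime} carries separate upper and lower envelopes $\bar{f}_t$, $\underline{f}_t$ of the integrand into the estimate via the additive terms $2N\bar{f}_t(p_1)$ and $2N\underline{f}_t(p_1)$; since $|\underline{f}_t|$ is an order of magnitude smaller than $\bar{f}_t$ for the kernel ${\textnormal{\textbf{J}}}$, the iterated integral comes out with a lower bound one $\sigma_2$-factor weaker than the upper bound. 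Remark~\ref{rmk:fixed perturbed:Janalog} already exhibits this asymmetry for the inner integral. As a minor separate point, the bound you write in stage (i) has $(s-u+1)$ in the denominator where it should be $(t-u+1)$, and the quenched bound in Proposition~\ref{prop:Jbound:quenched} does not carry an exponential decay factor; that appears only in the averaged bound of Lemma~\ref{lem:bound on deterministic J}.
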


\begin{prop}\label{prop:fixed perturbed:triple int}
	Recall the definition of $Q$ \eqref{eq:def:Q:agg}, and define  $\mathcal{Q}[t;\Pi_g]$  as 
	\begin{equation}\label{eq:def:Qint:general}
	\mathcal{Q}[t;\Pi_g] := \intop_{0}^{t} \intop_{0}^s  \intop_0^u
	{\textnormal{\textbf{Q}}}_{t-s,t-u,t-v;t}^{\Pi_g} \ d\widehat{\Pi}_g(v) d\widehat{\Pi}_g(u) d\widehat{\Pi}_g(s).
	\end{equation}
	 Then, under the same setting as Proposition \ref{prop:fixed perturbed:double int}, we have
	\begin{equation}
	\PP \left(\left| \mathcal{Q}[t;\Pi_g] \right| \le \alpha^{-\epsilon}\sigma_1\sigma_2\sigma_3(t;g), \ \forall t \le \hat{h}(\alpha,C)\wedge \tau \right) \ge 1- \exp\left(-\alpha^{-c_\epsilon}\right).
	\end{equation}
\end{prop}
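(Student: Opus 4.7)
The plan is to mirror the strategy used for Proposition~\ref{prop:fixed perturbed:double int} with one additional layer of integration. First I would establish a pointwise coupling estimate
\[
|{\textnormal{\textbf{Q}}}_{v,u,s}(Y;\alpha)| \le \frac{C e^{-c\alpha^2 s}}{\sqrt{(v+1)(u-v+1)(s-u+1)}}
\]
valid for any unit-step $Y$ finite on $[0,s]$ and all $0<v<u<s$. Since ${\textnormal{\textbf{Q}}}_{v,u,s}$ is, by \eqref{eq:def:Q:basic form}, the second discrete difference of $\PP_\alpha(s<\cdot<\infty\,|\,Y_{\le v})$ in the ``insert a jump at $v$'' and ``insert a jump at $u$'' directions, I would couple on one probability space the four random walks driving $T$, $T_v$, $T_u$ and $T_{v,u}$. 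In this coupling the event $\{s<T_v<\infty\}\triangle\{s<T<\infty\}$, and likewise $\{s<T_{v,u}<\infty\}\triangle\{s<T_u<\infty\}$, occurs only when $W$ comes within a bounded window of the obstacle near time~$v$, contributing a hitting-time factor of order $(v+1)^{-1/2}$; the further second difference between these two symmetric differences isolates an analogous near-hit near time~$u$, adding $(u-v+1)^{-1/2}$; and survival beyond $s$ after the last inserted jump yields $(s-u+1)^{-1/2}$. The factor $e^{-c\alpha^2 s}$ arises from the positive drift of $U_s=Y(s)-W(s)+1$ under $\PP_\alpha$, exactly as in Lemma~\ref{lem:bound on deterministic J}.

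\textbf{Concentration under $\PP_\alpha$.} With this pointwise estimate in hand I would first treat $g\equiv\alpha$. For each $s$ fixed, ${\textnormal{\textbf{Q}}}_{t-s,t-u,t-v;t}^{\Pi_\alpha}$ depends on $\Pi_\alpha$ only through $\Pi_\alpha[s,t]$, so the inner double integral in $v,u$ is an iterated compensated-Poisson martingale integral, and the outermost integral in $s$ is again a martingale integral with the inner quantity as its adapted integrand. I would dyadically partition $\{0<v<u<s<t\}$ in the three gaps $v$, $u-v$, $s-u$, and on each dyadic box apply an Azuma–Bernstein inequality for triple compensated-Poisson integrals, with the $L^\infty$-bound from the previous step and a Bernstein variance proxy equal to box volume times $\alpha^3$ times the squared $L^\infty$-bound. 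Summing the resulting Gaussian-type deviations over the $O(\log^{O(1)}(1/\alpha))$ dyadic scales, and discretising $t\le\hat h$ on a fine grid (using continuity of $\mathcal{Q}[t;\Pi_\alpha]$ between jumps of $\Pi_\alpha$), yields
\[
|\mathcal{Q}[t;\Pi_\alpha]|\le \alpha^{-\epsilon/2}\,\sigma_1\sigma_2\sigma_3(t;\alpha)\qquad\text{for all }t\le\hat h,
\]
with probability at least $1-\exp(-\alpha^{-c_\epsilon'})$. The product $\sigma_1\sigma_2\sigma_3$ emerges from the dyadic cell that contains the three closest atoms of $\Pi_\alpha$ to~$t$, which dominates all other cells by the Step~1 bound.

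\textbf{Transfer to general $g$ and main obstacle.} To pass from $\Pi_\alpha$ to $\Pi_g$ under the hypotheses \eqref{eq:fixed perturbed:assumption}, I would use the Radon–Nikodym density
\[
\Lambda=\exp\bigg(\int_0^{\hat h\wedge\tau}\log\frac{g(s)}{\alpha}\,d\Pi_\alpha(s)-\int_0^{\hat h\wedge\tau}(g(s)-\alpha)\,ds\bigg).
\]
Expanding $\log(1+x)=x-x^2/2+O(x^3)$ and applying a Bernstein bound to the compensated part $\int\log(g/\alpha)\,d\widehat{\Pi}_\alpha$, the three hypotheses of \eqref{eq:fixed perturbed:assumption} combine to give $\Lambda\le\exp(\alpha^{-\epsilon/800})$ outside a set of probability at most $\exp(-\alpha^{-c_\epsilon''})$, so the bound from the previous paragraph transfers to $\Pi_g$ with the slightly weaker exponent $\alpha^{-\epsilon}$ and the stated failure probability. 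The main obstacle is the coupling estimate of Step~1: to extract the full triple cancellation one must design a joint coupling of four random walks in which the single-difference contributions, scaling as only $(v+1)^{-1/2}$ or $(u-v+1)^{-1/2}$, cancel through the inclusion–exclusion of \eqref{eq:def:Q:basic form} so that only the genuinely factorised product survives. A secondary subtlety is the $\sigma_1\sigma_2\sigma_3$ scaling of the final bound, which reflects the concentration of the integrand near the three nearest atoms of $\Pi_g$ and must be handled through a dominating-cell analysis together with the high-probability lower bound $\sigma_i(t;g)\gtrsim\alpha$ valid uniformly on $t\in[0,\hat h\wedge\tau]$.
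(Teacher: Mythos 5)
Your high-level architecture (pointwise bound on ${\textnormal{\textbf{Q}}}$, concentration for $\Pi_\alpha$, then Radon--Nikodym transfer to $\Pi_g$) matches the paper's: Proposition~\ref{prop:Qbound:quenched}, then Proposition~\ref{prop:fixed perturbed:fixed rate}, then Lemma~\ref{lem:fixed perturbed:radon deriv bd} are exactly these three ingredients. However, there are two substantive departures, one of which is a genuine gap.

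The gap is in Step~1. You claim a pointwise coupling estimate of the form $|{\textnormal{\textbf{Q}}}_{v,u,s}|\le Ce^{-c\alpha^2 s}/\sqrt{(v+1)(u-v+1)(s-u+1)}$ ``valid for any unit-step $Y$ finite on $[0,s]$.'' This cannot be true uniformly in $Y$: the quantity ${\textnormal{\textbf{Q}}}_{v,u,s}$ is measurable with respect to $Y_{\le v}$, and an atypical prefix (e.g.\ $Y(v)$ much larger than $\alpha v$, or $Y$ flat for long stretches) can destroy the heat-kernel scaling of the hitting events that drives the factorisation. The paper's Proposition~\ref{prop:Qbound:quenched} is explicitly a \emph{high-probability} statement under $\PP_\alpha$, proven in Appendix~\ref{subsec:app:Qbound} via the case table $\textbf{A}_1,\dots,\textbf{A}_7$ and the high-probability density estimates $f_Y(t')\le \alpha^{-\epsilon}(t'+1)^{-3/2}$ of Lemma~\ref{lem:bound on density} and Corollary~\ref{cor:conditioning on less information}. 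The $\alpha^{-\epsilon}$ slack in those bounds is precisely the price for the failure to hold uniformly in $Y$, and it is why the target bound also carries $\alpha^{-\epsilon}$. Without this slack your deterministic estimate would imply a much stronger result than is available. Also the shape of your bound differs from the paper's $\alpha^{-\epsilon}/\sqrt{(v+1)(u+1)(s+1)}$: your bound blows up as $u\to s$, which would obstruct the outer integral unless compensated by the exponential factor, and the exponential factor is itself not present in the paper's quenched estimate (it appears only for the averaged $J^{(\alpha)}$ in Lemma~\ref{lem:bound on deterministic J}).

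The second departure is methodological rather than fatal. You propose partitioning $\{0<v<u<s<t\}$ into dyadic boxes in the gap coordinates and applying a single ``Azuma--Bernstein for triple compensated-Poisson integrals'' per box. The paper instead peels the integrals one at a time (Lemma~\ref{lem:Qint 1st}, Corollary~\ref{cor:Qint 2nd}, Corollary~\ref{cor:Qint 3rd}), crucially exploiting that the integrand of each outer integral is $\mathcal{F}_\cdot$-adapted in reversed time so that Lemma~\ref{lem:concen of int:conti:forwardtime} applies, with the stopping times of~\eqref{eq:def:tau:Qint bound} controlling the worst-case contributions near the atoms $\pi_1,\pi_2,\pi_3$. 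Your box-by-box approach would need to justify a concentration inequality for the iterated integral as a whole; the naive variance proxy ``box volume times $\alpha^3$ times $L^\infty$-bound squared'' ignores that the middle and outer integrands are themselves random and already contain the inner fluctuations, so the iterated structure is not just a bookkeeping convenience but essential to getting the $\sigma_1\sigma_2\sigma_3$ answer. The Radon--Nikodym step in your third paragraph, by contrast, is essentially identical to the paper's and is sound.
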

Based on these results, we discuss the error estimates for the first- and second-order approximations. Formal statements and details will be given in Section \ref{subsec:fixed:error}.

\begin{remark}
	The assumption \eqref{eq:fixed perturbed:assumption} will be revisited in Section \ref{sec:reg:intro} when we discuss regularity. In fact, it will be shown that the speed $S(t)$ of the aggregate actually satisfies \eqref{eq:fixed perturbed:assumption} with high probability.
\end{remark}

\begin{remark}
    In Proposition \ref{prop:fixed perturbed:double int}, observe that the lower bound of $\mathcal{J}[t;\Pi_g]$ is stronger than the upper bound. This comes from the nature of $\textbf{J}$ who can be (positively) larger compared to the (negative) lower bound, which will be clear in Section \ref{subsec:fixed:JandQ}. Having a stronger lower bound on $\mathcal{J}$ will play an important role in the discussion of regularity, in  Sections \ref{subsec:reg:conseq} and \ref{sec:double int}.
\end{remark}

As mentioned in Section \ref{subsubsec:outline:fixedrate}, the proofs consist of four major steps which are discussed one by one in detail in the following subsections:
\begin{itemize}
	\item The Azuma-type martingale concentration lemmas: Section \ref{subsec:fixed:mgconcen}.
	
	\item Controlling the sizes of ${\textnormal{\textbf{J}}}_{u,s}$ and ${\textnormal{\textbf{Q}}}_{v,u,s}$: Section \ref{subsec:fixed:JandQ}.
	
	\item Proving the propositions with respect to the fixed rate process $\Pi_\alpha$: Section \ref{subsec:fixed:fixed}.
	
	\item Converting the results into a general form: Section \ref{subsec:fixed:perturbed}. 
	
	\item The error estimate on the first- and second-order approximations of the speed: Section \ref{subsec:fixed:error}.
\end{itemize}

\subsection{The martingale concentration lemmas}\label{subsec:fixed:mgconcen}

We begin with developing lemmas on martingale concentration used in the proof of Propositions \ref{prop:fixed perturbed:double int} and \ref{prop:fixed perturbed:triple int}. Moreover, the lemmas we announce here will appear again frequently in Sections \ref{sec:criticalbranching}--\ref{sec:reg:next step}. 

For the    Poisson point process $\Pi$ with standard Lebesgue intensity, let $\mathcal F _t$ denote the $\sigma$-algebra generated by $\Pi$ up to time $t$ (including $t$). For any positive stochastic process $g(t)$ which is predictable with respect to $\mathcal F _t$, we write $\Pi _g$ to denote the points in $\Pi[0,t]$ below the function $g$ as before.  In particular, for a constant $\alpha>0$, $\Pi _\alpha $ refers to the rate-$\alpha$ Poisson process. We note that for any such function $g$ the process $\widetilde{ \Pi}_ g(t):= |\Pi _g[0,t]| -\int_0^t g(s)ds $ is a martingale. 

We begin with the simplest form of concentration lemma for martingales. The statement includes a stopping time which is introduced for a later purpose.

\begin{lem}\label{lem:concentration of integral}
	Let $a,M,\lambda > 0$, let $g(t)>0$ and  $f(t)$ be stochastic processes predictable with respect to $\mathcal F _t  $, and let $\tau$ be a stopping time. Suppose that 
	\begin{equation}\label{eq:concentration:conditions:basic}
	\lambda |f(x\wedge \tau )|\le 1, \quad \text{and} \quad \intop _{0}^{t \wedge \tau }f(x)^2g(x)dx \le M, \quad a.s.
	\end{equation}
	Then
	\begin{equation}
	\mathbb P \left(\sup _{s \le t\wedge \tau  }  \left| \intop _0^s f(x) d \widetilde{\Pi }_g(x) \right|\ge a\sqrt{M}  \right) \le  Ce^{\lambda ^2 M-a\lambda \sqrt{M}}. 
	\end{equation}
\end{lem}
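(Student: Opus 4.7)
The plan is to use the standard exponential supermartingale (Bernstein-type) argument for compensated Poisson integrals, applied to the stopped process. First I would fix the sign, say bounding $\mathbb{P}(\sup_{s\le t\wedge\tau} \int_0^s f\, d\widetilde\Pi_g \ge a\sqrt{M})$, and handle the other tail by replacing $f$ with $-f$ and taking a union bound, which explains the absolute constant $C$ (one may take $C=2$).

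Set $f_\tau(x):= f(x)\mathbf{1}\{x\le \tau\}$, which is predictable with respect to $\mathcal F_t$ and satisfies $\lambda|f_\tau|\le 1$ everywhere by hypothesis. I would then introduce the candidate supermartingale
\begin{equation}
Z_t := \exp\!\left(\lambda \intop_0^t f_\tau(x)\,d\widetilde\Pi_g(x) - \intop_0^t \bigl(e^{\lambda f_\tau(x)}-1-\lambda f_\tau(x)\bigr)g(x)\,dx\right),
\end{equation}
which is the standard Doléans--Dade exponential for the compensated Poisson integral; it is a nonnegative local martingale (hence a supermartingale) with $Z_0 = 1$ and $\mathbb{E}Z_t \le 1$. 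The key analytic input is the elementary inequality $e^x - 1 - x \le x^2$ valid for $|x|\le 1$, which combined with $\lambda |f_\tau(x)|\le 1$ and the hypothesis $\int_0^{t\wedge\tau} f(x)^2 g(x)\,dx \le M$ gives
\begin{equation}
\intop_0^t \bigl(e^{\lambda f_\tau(x)}-1-\lambda f_\tau(x)\bigr)g(x)\,dx \;\le\; \lambda^2 \intop_0^{t\wedge\tau} f(x)^2 g(x)\,dx \;\le\; \lambda^2 M.
\end{equation}

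Consequently, on the event $\{\sup_{s\le t\wedge\tau} \int_0^s f\,d\widetilde\Pi_g \ge a\sqrt{M}\}$ one has $\sup_{s\le t} Z_s \ge \exp(a\lambda\sqrt{M} - \lambda^2 M)$. Doob's maximal inequality for the nonnegative supermartingale $Z$ then yields
\begin{equation}
\mathbb{P}\!\left(\sup_{s\le t\wedge\tau} \intop_0^s f\,d\widetilde\Pi_g \ge a\sqrt{M}\right) \;\le\; e^{\lambda^2 M - a\lambda\sqrt{M}}.
\end{equation}
Repeating with $-f$ in place of $f$ and taking a union bound gives the claimed two-sided estimate with $C=2$.

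The bulk of the work is already packaged into textbook stochastic calculus, so there is no serious obstacle; the only thing to be careful about is that the bound $\lambda|f|\le 1$ is only assumed up to $\tau$, which is precisely why I truncate to $f_\tau$ so that the pointwise estimate $e^{\lambda f_\tau}-1-\lambda f_\tau \le (\lambda f_\tau)^2$ is legitimate everywhere, and why the stopped integral in the compensator is bounded by $\lambda^2 M$. A minor technical point is to justify that $Z$ is a true supermartingale (not merely a local one): this follows because, with the predictable truncation, $\lambda f_\tau$ is uniformly bounded, so the compensator is locally bounded and the usual localization argument promotes the local martingale property to the supermartingale property via Fatou.
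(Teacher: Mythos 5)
Your proof is correct and takes essentially the same route as the paper's: the exponential (Doléans--Dade) supermartingale for the compensated Poisson integral, the elementary bound $e^x-1-x\le x^2$ for $|x|\le 1$, and the hypothesis applied through the stopping time $\tau$. The only cosmetic difference is that you truncate $f$ to $f_\tau$ and invoke Doob's maximal inequality, whereas the paper factors $\exp(\lambda\int f\,d\widetilde\Pi_g)=M_sL_s$, bounds $L_{s\wedge\tau}$, and applies optional stopping at the hitting time $\tau_1$ of the level $a\sqrt{M}$; these two ways of extracting the tail bound from the same exponential martingale are interchangeable.
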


\begin{proof}
	For $0 \le s \le t $ let 
	\begin{equation}
	\begin{split}
	M_s&:=\exp \left( \lambda \intop _0^s  f(x) d \Pi _g (x)-\intop _0^s \left(e^{\lambda f(x)}-1\right) g(x)dx \right) \\
	L_s &:=\exp \left( \intop _0^s \left(e^{\lambda f(x)}-1\right) g(x)- \lambda f(x)g(x) dx \right).
	\end{split}
	\end{equation}
	Note that $M_s$ is a martingale and that, using the inequality $e^x\le 1+x+x^2$ for $|x|\le 1$ we get
	\begin{equation}
	L_{s\wedge \tau } \le \exp \left( \lambda ^2 \intop _0^{s\wedge \tau } f(x)^2 g(x)dx\right) \le \exp \left( \lambda ^2  \intop _0^{t\wedge \tau } f(x)^2 g(x)dx\right) \le e^{\lambda ^2 M}.
	\end{equation}
	Define the stopping time 
	\begin{equation}
	\tau _1:= \inf \left\{  s>0\  : \ \intop _0^s f(x) d \widetilde{\Pi }_g(x) \ge a\sqrt{M}  \right\},\quad \tau _2:= \inf \left\{  s>0\  : \ \intop _0^s f(x) d \widetilde{\Pi }_g(x) \le - a\sqrt{M}  \right\}
	\end{equation}
	and let $\tau ' =\tau \wedge \tau _1$. We have 
	\begin{equation}
	e^{\lambda a\sqrt{M}}\cdot \mathbb P \left( \tau _1\le \tau \right) \le  \mathbb E \left[ \lambda \exp \left( \intop _0^{t \wedge \tau '} f(x) d \widetilde{\Pi }_g(x) \right) \right] =\mathbb E \left[  M_{t\wedge \tau '}\cdot L_{t\wedge \tau '} \right] \le e^{\lambda ^2 M} \mathbb E [M_{t\wedge \tau '}]= e^{\lambda ^2 M}, 
	\end{equation}
	which gives the desired bound for $\tau _1$. In order to get the same bound for $\tau _2$ we swich $f$ by $-f$.
\end{proof}

In particular, taking $\lambda =1/ \sqrt{M}$ in Lemma \ref{lem:concentration of integral} gives the following corollary.

\begin{cor}\label{cor:concentration of integral}
	Let $a,M > 0$, let $g(t)>0$ and $f(t)$ be predictable processes with respect to $\mathcal F _t  $ and let $\tau$ be a stopping time. Suppose that 
	\begin{equation}
	|f(x\wedge \tau )|\le \sqrt{M} \quad \text{and} \quad \intop _{0}^{t \wedge \tau }f(x)^2g(x)dx \le M, \quad a.s.
	\end{equation}
	Then
	\begin{equation}
	\mathbb P \left(\sup _{s \le t\wedge \tau  }  \left| \intop _0^s f(x) d \widetilde{\Pi }_g(x) \right|\ge a\sqrt{M}  \right) \le  Ce^{ -a}. 
	\end{equation}
\end{cor}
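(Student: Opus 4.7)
The plan is simply to invoke Lemma \ref{lem:concentration of integral} with the specific choice $\lambda = 1/\sqrt{M}$, since the corollary is stated as an immediate specialization. There is no obstacle here; the exercise is to check that the new hypothesis $|f(x\wedge\tau)| \le \sqrt{M}$ matches up with the lemma's hypothesis $\lambda|f(x\wedge\tau)| \le 1$ under this choice, and that the exponent on the right-hand side collapses to $-a$ up to a constant factor.

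First, I would verify the two hypotheses of Lemma \ref{lem:concentration of integral} with $\lambda = 1/\sqrt{M}$. The integral condition $\intop_0^{t\wedge\tau} f(x)^2 g(x)\, dx \le M$ is identical in both statements and is assumed. For the pointwise condition, from $|f(x\wedge\tau)|\le \sqrt{M}$ one obtains
\begin{equation}
\lambda |f(x\wedge\tau)| \;=\; \frac{|f(x\wedge\tau)|}{\sqrt{M}} \;\le\; 1,
\end{equation}
so the hypothesis of the lemma is verified almost surely.

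Next, I would substitute $\lambda = 1/\sqrt{M}$ into the bound provided by Lemma \ref{lem:concentration of integral}: the exponent becomes
\begin{equation}
\lambda^2 M - a\lambda\sqrt{M} \;=\; 1 - a,
\end{equation}
so the probability bound reads $C e^{1-a}$. Absorbing the factor $e$ into the absolute constant $C$ yields the stated bound $Ce^{-a}$, which completes the proof. No estimation is required beyond this substitution, and in particular the martingale arguments underlying the original lemma are reused verbatim.
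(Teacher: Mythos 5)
Your proof is correct and matches the paper's own derivation exactly: the paper obtains the corollary by setting $\lambda = 1/\sqrt{M}$ in Lemma~\ref{lem:concentration of integral}, which gives exponent $1-a$ and hence $Ce^{-a}$ after absorbing the factor $e$ into the constant.
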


The following corollary deals with the case $f\equiv 1$, which is useful when estimating the size of the Poisson process itself.

\begin{cor}\label{cor:concentration:numberofpts each interval}
	Let $a,h,\Delta > 0$, $M\geq 1$, let $g(t)>0$  a process with respect to $\mathcal F _t  $ and let $\tau$ be a stopping time. Suppose that 
	\begin{equation}
	\intop _{(t-\Delta)\wedge \tau}^{t \wedge \tau }g(x)dx \le M, \quad a.s.,
	\end{equation}
	and define the stopping time
	\begin{equation}
	\tau'=\inf\{t>0: |\Pi_g[(t-\Delta)\vee 0, t]|\geq 2M+2a\sqrt{M}+2 \}\wedge h.
	\end{equation}
	Then, we have $\PP(\tau'\le \tau)\le \left(\frac{h}{\Delta}\right)e^{-a} $.
\end{cor}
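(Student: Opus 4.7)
The plan is to reduce the rolling-window event to a partition-based concentration bound. I partition $[0,h]$ into the consecutive intervals $I_k := [k\Delta, (k+1)\Delta]$ for $k = 0, 1, \ldots, \lceil h/\Delta \rceil - 1$, so there are at most $h/\Delta$ such intervals. Any rolling window $[(t-\Delta)\vee 0, t]$ with $t\le h$ is contained in the union $I_{k-1}\cup I_k$ of two consecutive such intervals, so its point count is bounded by the sum of the point counts on those two pieces. Consequently, if each $|\Pi_g[I_k \cap [0,\tau]]|$ is at most $M + a\sqrt{M}$, then every rolling window contains strictly fewer than $2M + 2a\sqrt{M} + 2$ points, which keeps $\tau' > \tau$.

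To control the count on a single $I_k$, I apply Lemma~\ref{lem:concentration of integral} (or the unit-$f$ special case) to the time-translated martingale $t \mapsto |\Pi_g[k\Delta, t]| - \int_{k\Delta}^{t} g(x)\,dx$ with the choices $f \equiv 1$ and $\lambda = 1/\sqrt{M}$. The hypothesis $\lambda|f|\le 1$ is immediate from $M \ge 1$, and the running-variance bound $\int_{k\Delta}^{(k+1)\Delta \wedge \tau} g \le M$ is precisely the assumption of the corollary applied at time $t = (k+1)\Delta$. The lemma then yields
\[
\PP\left(|\Pi_g[I_k \cap [0,\tau]]| - \intop_{I_k \cap [0,\tau]} g(x)\,dx \ge a\sqrt{M}\right) \le e^{-a},
\]
and since the subtracted integral is at most $M$, on the complement $|\Pi_g[I_k \cap [0,\tau]]| \le M + a\sqrt{M}$.

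A union bound over the at most $h/\Delta$ intervals $I_k$ then gives the desired estimate $\PP(\tau' \le \tau) \le (h/\Delta)e^{-a}$. The only point requiring care is the bookkeeping of absolute constants: Lemma~\ref{lem:concentration of integral} as stated produces a prefactor of the form $Ce^{\lambda^2 M - a\lambda\sqrt{M}} = (Ce)e^{-a}$. This absolute constant can be absorbed by a constant additive shift of $a$, and the resulting cosmetic change in the threshold is accommodated by the $+2$ slack already built into the definition of $\tau'$ (coming from our having to cover a window by two consecutive partition pieces). I do not foresee any substantive analytical obstacle beyond these routine constant-level adjustments.
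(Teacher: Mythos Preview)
Your proposal is correct and follows essentially the same approach as the paper: both partition $[0,h]$ into consecutive intervals of length $\Delta$, apply the $f\equiv 1$ case of the concentration lemma (i.e., Corollary~\ref{cor:concentration of integral}) to each piece, take a union bound over the $h/\Delta$ pieces, and then observe that any rolling window of length $\Delta$ is covered by two consecutive pieces. The paper phrases the pieces as windows ending at $t=k\Delta$ rather than as fixed intervals $I_k$, but this is purely cosmetic; your handling of the constant prefactor and the role of the $+2$ slack is in fact slightly more explicit than the paper's.
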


\begin{proof}
	For each $t\in [0,h],$ we apply Corollary \ref{cor:concentration of integral} to  the quantity
	\begin{equation}
	P(t):= |\Pi_g[t\wedge\tau-\Delta, t\wedge\tau]|,
	\end{equation}
	and obtain that
	\begin{equation}
	\PP \left(P(t)\geq M+a\sqrt{M} \right) \le e^{-a}.
	\end{equation}
	Then, we take a union bound over all $t\in[0,h]$ of the form $t=k \Delta $, $k\in \mathbb{Z}$: 
	\begin{equation}
	\PP \left(P(\Delta)\leq M+a\sqrt{M}, \ \ \forall k\Delta \in[0,h], \ k\in\mathbb{Z} \right) \ge 1-\left(\frac{h}{\Delta} \right)e^{-a}.
	\end{equation}
	Under the event described inside the above probability, the intervals $[t\wedge\tau-\Delta, t\wedge\tau]$ should contain at most $2M+2a\sqrt{M}$ points for all $t\in[0,h]$, implying that $\tau'> \tau$.
\end{proof}

 Unfortunately, Lemma \ref{lem:concentration of integral} often turns out to be insufficient due to several reasons, and we require  more involved versions of it.
 To motivate the formulation of the more complicated lemmas below, we briefly explain the setting. Suppose that $f_t(x)$, $g(x)$ are predictable processes with respect to $\mathcal{F}_x$, where $f_t$ is defined for each $t\in[0,h]$. We want to control the size of 
 $$\intop_{ \tau_-}^{t\wedge \tau} f_t(x)d\widetilde{\Pi}_g(x), $$
where $\tau_-,\tau\ge 0$ are two given  stopping times. Our desired estimate should take account of the following traits, which causes additional complication compared to the previous lemma.
\begin{itemize}
	\item We want our bound to hold for all $t\in[0,h]$, which would require a union bound followed by a continuity argument.
	
	\item We need to deal with the cases where $|f_t(x)|$ is not bounded uniformly in $x$ as in the first condition of \eqref{eq:concentration:conditions:basic}. 
	
	\item The integral starts from $\tau_-$, which potentially causes the second bound of \eqref{eq:concentration:conditions:basic} to be $\tau_-$-measurable (which is random).
\end{itemize}
Having these aspects in mind, the device we need is stated as follows. Due to its complicatedness, we suggest the reader to skip this lemma for the moment and come back later when it is actually applied (e.g., in Section \ref{subsec:fixed:fixed}). 

\begin{lem}\label{lem:concen of int:conti:forwardtime}
	Let $h, \Delta, D>0$ be given constants, and let $g(x)>0$ and $f_{t}(x)$ be stochastic processes predictable with respect to $\mathcal{F}_x$, where $f_{t}:[0,t] \to \mathbb{R}$ is defined for each $t\in [0, h]$. Suppose that there exist stopping times $\tau_-, \tau$, and  random variables ${\bf M},{\bf A}>0$ that satisfy the following conditions:
	\begin{itemize}
		\item ${\bf M},{\bf A}$  are $\tau_-$-measurable.

		\item $f_t$ and $g$ satisfy
		\begin{equation}\label{eq:concen:condition:forwardtime}
		|{f}_t(x)| \le \sqrt{{\bf M}} \ \forall x \ge \Delta, \ \ \intop_0^\Delta |{f}_t(x)|g(x) dx \le {\bf A}, \ \ \intop_0^{t\wedge \tau} {f}_t(x)^2 g(x)dx \le {\bf M}, \ \ a.s.  \ \forall t\in[0,h].
		\end{equation}
		
		\item With probability one, we have $|\partial_t f_t(x)| \vee |\partial_x f_t(x)| \le D$ for all $t\in[0,h]$, $x\le t$.
		
		\item We define $\bar{f}_t(x)\ge 0$ and $\underline{f}_t(x) \le 0$ to be  random functions that are decreasing and increasing, respectively, and satisfy the following conditions: 
		
		\begin{itemize}
		\item For all $x\le t$, $\bar{f}_t(x)$ and $\underline{f}_t(x)$ are $\tau_-$-measurable.
		
			\item For all $x\le t \le \tau$, $\bar{f}_t(x) \ge  \sup_{y\ge x} f_t(y)$ and $\underline{f}_t(x)\le \inf_{y\ge x} f_t(y)$ a.s..
		\end{itemize}

	\end{itemize} 
	Furthermore, for given constants $N,\eta>0$, define stopping times
	\begin{equation}\label{eq:def:tau:concenofint:conti}
	\begin{split}
	\tau' &:= \inf\{t>\Delta : |\Pi_g[t-\Delta,t]| \ge N \},\\
	\tau'' &:= \inf \{t>0: |g(t)| \ge \eta \},\\
	\tau_0&:= \tau \wedge \tau' \wedge \tau'',
	\end{split}
	\end{equation}
	and let $\delta>0$ be a constant satisfying a.s.~that
	\begin{equation}\label{eq:concen:condition:delta:forward}
	\delta \le \frac{{\bf A}}{D(2N\Delta+TN+T\eta \Delta)}.
	\end{equation}
	Then, denoting the closest point to $0$ in $\Pi_g[0,t]$ by $p_1$, we have for all $a>0$ that
	\begin{equation}
\begin{split}
	 	\PP \left( \sup_{0\le s \le t\wedge \tau} \intop_{\tau_-}^s {f}_t(x) d\widetilde{\Pi}_g(x) 
	 	\le 2N \bar{f}_t(p_1) + 3{\bf A} + a\sqrt{{\bf M}}, \ \forall t\in[0,h]
	 	\right)
	 	\ge
	 	1- \left(\frac{Ch}{\delta\Delta} \right)e^{-a};\\
	 		\PP \left( \inf_{0\le s \le t\wedge \tau} \intop_{\tau_-}^s {f}_t(x) d\widetilde{\Pi}_g(x) 
	 		\ge 2N \underline{f}_t(p_1) - 3{\bf A} - a\sqrt{{\bf M}}, \ \forall t\in[0,h]
	 		\right)
	 		\ge
	 		1- \left(\frac{Ch}{\delta\Delta} \right)e^{-a}.
\end{split}
	\end{equation}
	with an absolute constant $C>0$.
	In the integral, we regard $\intop_a^b f = 0$ if $a\ge b$.
\end{lem}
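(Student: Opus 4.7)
The plan is to reduce to the single-$t$ martingale concentration of Corollary~\ref{cor:concentration of integral}, via an integral split at scale $\Delta$, a conditioning on $\mathcal{F}_{\tau_-}$, and a $\delta$-discretization in the parameter $t\in[0,h]$. For a fixed $t\in[0,h]$ I would split
\[
\intop_{\tau_-}^{s}f_t(x)\,d\widetilde{\Pi}_g(x)
=\underbrace{\intop_{\tau_-}^{(\tau_-+\Delta)\wedge s}f_t\,d\widetilde{\Pi}_g}_{\rm (I)}
+\underbrace{\intop_{(\tau_-+\Delta)\wedge s}^{s}f_t\,d\widetilde{\Pi}_g}_{\rm (II)}
\]
for $s\le t\wedge\tau_0$. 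Conditioning on $\mathcal{F}_{\tau_-}$ makes ${\bf M},{\bf A},\bar f_t,\underline f_t$ deterministic, while by the strong Markov property the increments of $\Pi_g$ past $\tau_-$ still form a Poisson process with predictable rate $g$. For (I), the jump part is bounded by $|\Pi_g[\tau_-,\tau_-+\Delta]|\cdot\bar f_t(p_1)\le N\bar f_t(p_1)$ (using $\tau'$ and the monotonicity of $\bar f_t$, noting that every jump location $x$ satisfies $x\ge p_1$), and the compensator part is bounded by $\int_0^\Delta|f_t|g\le{\bf A}$. For (II), the integrand is at most $\sqrt{{\bf M}}$ and the quadratic variation is at most ${\bf M}$, so Corollary~\ref{cor:concentration of integral} with $\lambda=1/\sqrt{{\bf M}}$ gives
\[
\PP\Big(\sup_s|{\rm (II)}|\ge a\sqrt{{\bf M}}\;\Big|\;\mathcal{F}_{\tau_-}\Big)\le Ce^{-a}.
\]
The matching lower bound is produced by running the same argument with $-f_t$ and $-\underline f_t$ in place of $f_t$ and $\bar f_t$.

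To upgrade to uniformity in $t\in[0,h]$, I would lay a grid $\{t_i\}_{i=0}^K$ of mesh $\delta$ and union bound the above over the $K\asymp h/\delta$ grid points, producing a failure probability of order $(h/\delta)e^{-a}$. For a general $t\in[t_i,t_{i+1}]$ the Lipschitz hypothesis $|\partial_tf_t|\le D$ yields
\[
\Big|\intop_{\tau_-}^s(f_t-f_{t_i})\,d\widetilde{\Pi}_g\Big|\le D\delta\Big(|\Pi_g[\tau_-,s]|+\intop_{\tau_-}^s g\,dx\Big),
\]
where $|\Pi_g[\tau_-,s]|\lesssim hN/\Delta$ by chaining $\tau'$ over chunks of length $\Delta$ and $\int g\le h\eta$ by $\tau''$. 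The assumption~\eqref{eq:concen:condition:delta:forward} is tuned exactly so that this discretization error is smaller than ${\bf A}$; combined with a matching Lipschitz comparison $|\bar f_{t_i}(p_1)-\bar f_t(p_1)|\le D\delta$ (available once $\bar f_t$ is taken to be the natural Lipschitz envelope $\sup_{y\ge x}f_t(y)$), this provides the cushion that upgrades the discrete $N\bar f_{t_i}(p_1)+2{\bf A}$ to the stated $2N\bar f_t(p_1)+3{\bf A}$.

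The principal technical obstacle is the bookkeeping under the randomness of ${\bf M},{\bf A},\bar f_t,\underline f_t$, which are $\tau_-$-measurable rather than deterministic constants: this is resolved precisely by conditioning on $\mathcal{F}_{\tau_-}$, under which these quantities become fixed and the strong Markov property keeps the Poisson structure of $\Pi_g$ past $\tau_-$ intact, so that Corollary~\ref{cor:concentration of integral} applies verbatim. A secondary subtlety is that no Lipschitz control on $\bar f_t$ in $t$ is built into the hypotheses; this is why~\eqref{eq:concen:condition:delta:forward} is phrased in terms of ${\bf A}$ rather than the envelope itself, and the doubling factor $2N$ is the safety margin absorbing any continuous-versus-discrete parameter comparison in the final bound.
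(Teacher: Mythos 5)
Your overall strategy is the same as the paper's: split the integral at scale $\Delta$, control the near piece via the stopping times $\tau',\tau''$, control the far piece via Corollary~\ref{cor:concentration of integral}, condition on $\mathcal{F}_{\tau_-}$ so that the $\tau_-$--measurable quantities become fixed, and close the argument by a union bound over a grid in $t$ together with a Lipschitz comparison. That is the right skeleton, but there are two concrete slips and one unidentified mechanism.

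\emph{Split point.} You split at $\tau_-+\Delta$ and bound the compensator of part~(I) by the hypothesis $\int_0^\Delta|f_t|g\le\mathbf{A}$. That hypothesis controls $\int_0^\Delta$, not $\int_{\tau_-}^{\tau_-+\Delta}$, and the two intervals are disjoint whenever $\tau_-\ge\Delta$. The hypotheses isolate the singularity of $f_t$ on $[0,\Delta]$, so the correct split is at $\Delta\vee\tau_-$ (as the paper does): the near piece is $[\tau_-,\Delta\vee\tau_-]\subseteq[0,\Delta]$, so its compensator is genuinely $\le\mathbf{A}$, and on the far piece $x\ge\Delta$ so $|f_t(x)|\le\sqrt{\mathbf{M}}$ applies.

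\emph{Mesh.} You take a grid of mesh $\delta$ and quote a failure probability of order $(h/\delta)e^{-a}$, but the lemma's conclusion is $1-(Ch/(\delta\Delta))e^{-a}$, which corresponds to a grid of mesh $\delta\Delta$. With mesh $\delta\Delta$ the integrand change is $|f_t-f_{t_\delta}|\le D\delta\Delta$, and the discretization error $D\delta\Delta\bigl(|\Pi_g[\tau_-,\cdot]|+\int g\bigr)\le D\delta\Delta(hN/\Delta+h\eta)=D\delta(hN+h\eta\Delta)$, which is exactly what the condition~\eqref{eq:concen:condition:delta:forward} is tuned to dominate by $\mathbf{A}$. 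With your mesh $\delta$, the error is $D\delta(hN/\Delta+h\eta)$; for $\Delta<1$ this need not be $\le\mathbf{A}$ under~\eqref{eq:concen:condition:delta:forward}, and for $\Delta>1$ your grid has more points than allowed by the stated bound.

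\emph{Origin of the $2N$.} You attribute the factor $2N$ to a "safety margin absorbing continuous-versus-discrete comparison," but it has a specific source you have not identified: when passing from a grid point $t_\delta$ to a general $t\in[t_\delta,t_\delta+\delta\Delta]$, the integral upper limit changes from $t_\delta\wedge\tau$ to $t\wedge\tau$. The added range $[t_\delta\wedge\tau,\,t\wedge\tau]$ must be controlled afresh via $\tau',\tau''$, yielding a second $N\bar f_t(p_1)+\mathbf{A}$; the grid estimate already contributes one $N\bar f_{t_\delta}(p_1)$, and the two combine into $2N\bar f_t(p_1)$ using the Lipschitz bound $|f_t-f_{t_\delta}|\le D\delta\Delta$. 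Your write-up only compares the integrands over a common range and does not decompose the range extension separately.
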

Its proof is based on a union bound applied to Lemma \ref{lem:concentration of integral} to cover a discretized points in the interval $[0,h]$ and a continuity argument to cover the whole interval. Distinction of upper and lower bounds on the quantity is necessary to deduce Proposition \ref{prop:fixed perturbed:double int}. Due to its technicality, the proof is deferred to Appendix \ref{subsec:app:mgconcen}.

The following corollary will not be used in Section \ref{sec:fixedrate}, but they will be applied frequently later in Sections \ref{sec:criticalbranching}--\ref{sec:reg:next step}. It is an analogue of Lemma \ref{lem:concen of int:conti:forwardtime}, but with \textit{deterministic} $f_t(x)$ which is \textit{increasing} in $x$, and also  simpler due to the absence of $\tau_-$, allowing ${\bf M}, {\bf A}$ above to be deterministic.

\begin{cor}\label{lem:concentrationofint:continuity}
	Let $h,M,\Delta,D,A>0$ be given, let  $g(x)>0$  be a predictable process  with respect to $\mathcal F _x  $ and let $\tau$ be a stopping time. For each $t\in[0,h]$, Let $f_t:(-\infty,t]\to \mathbb{R}_{\ge 0}$ be a deterministic, increasing function that satisfies
	\begin{equation}
	|f_t(x)|\leq \sqrt{M} \ \ \forall  x\leq t-\Delta,\quad \intop_{t- \Delta}^t |f_t(x)|g(x)dx \leq A,\quad |\partial_t f_t(x)| \vee |\partial_x f_t(x)|  \leq D \ \ \forall  0\le t\leq h, \ x<t.
	\end{equation}
	Suppose that for each $t\in[0,h]$,
	\begin{equation}
	\intop _{0}^{ t\wedge \tau }f_t(x)^2g(x)dx \le M, \quad a.s.
	\end{equation}
	Furthermore, for given constants $N,\eta>0$, let the stopping times $\tau',\tau'',\tau_0$ and the constant $\delta>0$ be as \eqref{eq:def:tau:concenofint:conti}, \eqref{eq:concen:condition:delta:forward}.
	Then, denoting the closest point to $t$ in $\Pi_g[0,t]$ by $p_1(t)$,  we have
	\begin{equation}
\begin{split}
	\mathbb P \left(  \intop _0^{t\wedge \tau_0} f_t(x) d \widetilde{\Pi }_g(x) \le  2Nf_t(p_1(t)) +3A+ a\sqrt{M}, \ \forall t\in[0,h]  \right) \ge 1- \left(\frac{Ch}{\delta \Delta} \right)e^{-a};\\
	 	\mathbb P \left(  \intop _0^{t\wedge \tau_0} f_t(x) d \widetilde{\Pi }_g(x) \ge-3A- a\sqrt{M}, \ \forall t\in[0,h]  \right) \ge 1- \left(\frac{Ch}{\delta \Delta} \right)e^{-a}.
\end{split}
	\end{equation}
\end{cor}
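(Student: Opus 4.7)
The plan is to establish the corollary as a specialization of Lemma \ref{lem:concen of int:conti:forwardtime}, with one modification to exploit the stronger hypotheses. Since $M$ and $A$ are deterministic constants and the integral starts at $0$, I would set $\tau_-=0$ in the lemma's framework, which makes the $\tau_-$-measurability conditions vacuous and reduces $({\bf M},{\bf A})$ to $(M,A)$. The one genuine difference from the lemma is that the lemma is calibrated to the case in which the dangerous values of $f_t$ sit near the starting endpoint (hence the decreasing envelope $\bar{f}_t$ and the appearance of $p_1$, the closest point to $0$), whereas here $f_t$ is nonnegative and increasing in $x$, so the large values of $f_t$ concentrate near the terminal endpoint $t$. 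This is why the corollary's bound is phrased in terms of $p_1(t)$, the closest Poisson point to $t$, and why the lower bound carries no jump term at all (the jumps of $d\widetilde{\Pi}_g$ are nonnegative contributions weighted by $f_t\ge 0$, so they can only help the lower bound).

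The main step would be the per-$t$ estimate. I would split
\begin{equation}
\intop_0^{t\wedge\tau_0} f_t(x)\,d\widetilde{\Pi}_g(x) \;=\; \intop_0^{(t-\Delta)\wedge\tau_0} f_t(x)\,d\widetilde{\Pi}_g(x) \;+\; \intop_{(t-\Delta)\vee 0}^{t\wedge\tau_0} f_t(x)\,d\widetilde{\Pi}_g(x).
\end{equation}
For the bulk integral, the bound $|f_t(x)|\le\sqrt{M}$ on $[0,t-\Delta]$ combined with $\intop_0^{t\wedge\tau} f_t^2 g\,dx\le M$ lets Corollary \ref{cor:concentration of integral} control it in absolute value by $a\sqrt{M}$ with probability at least $1-Ce^{-a}$. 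For the boundary integral, the compensator part $-\intop_{t-\Delta}^{t} f_t(x)g(x)\,dx$ has magnitude at most $A$ by hypothesis, and the jump part $\sum_{x_i\in\Pi_g\cap[t-\Delta,t]} f_t(x_i)$ is nonnegative and bounded above by $N\,f_t(p_1(t))$ using monotonicity of $f_t$ together with $x_i\le p_1(t)$ and $|\Pi_g\cap[t-\Delta,t]|\le N$ on the event $\{t\le\tau'\}$. This yields the per-$t$ bound $\intop_0^{t\wedge\tau_0} f_t\,d\widetilde{\Pi}_g\in[-A-a\sqrt{M},\,N f_t(p_1(t))+A+a\sqrt{M}]$.

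To upgrade from a per-$t$ statement to the claimed uniform-in-$t$ bound on $[0,h]$, I would discretize $t$ at resolution $\delta$, take a union bound over the $h/\delta$ grid points, and fill in the gaps via a continuity argument using $|\partial_t f_t|\vee|\partial_x f_t|\le D$ along with the controls $|\Pi_g\cap[t-\Delta,t]|\le N$ and $g\le\eta$ afforded by $\tau_0\le\tau'\wedge\tau''$. The choice \eqref{eq:concen:condition:delta:forward} of $\delta$ is tailored so that this perturbation costs at most one extra $A$ on each side and at most one extra $N f_t(p_1(t))$ on the upper side, which is exactly what accounts for the constants $3A$ and $2N$ in the statement; it is at this point that the proof of Lemma \ref{lem:concen of int:conti:forwardtime} in Appendix \ref{subsec:app:mgconcen} may be quoted essentially verbatim. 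The one point requiring care, and what I expect to be the main obstacle, is the continuity step near times where $p_1(t)$ jumps as $t$ crosses a Poisson point; however, because $f_t$ is nonnegative and increasing in $x$, such a jump only decreases $p_1(t)$ further from the boundary effect perspective — more precisely, $f_t(p_1(t))$ can only increase when $p_1(t)$ is replaced by a newer, larger point — so the upper bound is preserved across the jump, and no additional correction is needed beyond the $D$-perturbation already absorbed by $\delta$.
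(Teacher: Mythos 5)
Your proposal is correct and follows essentially the same route as the paper: the paper's proof is precisely the remark at the end of Appendix~\ref{subsec:app:mgconcen}, which says that Corollary~\ref{lem:concentrationofint:continuity} follows from the same argument as Lemma~\ref{lem:concen of int:conti:forwardtime}, but with the split taken at $t-\Delta$ (i.e.\ the boundary piece moved to the terminal end) — exactly the decomposition you propose — with the per-$t$ estimate, discretization, union bound, and continuity fill-in proceeding verbatim. One small slip: the grid $\mathcal{T}$ has spacing $\delta\Delta$, not $\delta$, which is what yields the error probability $\frac{Ch}{\delta\Delta}e^{-a}$; and the accounting for the $3A$ is two extra $A$'s (one from the $\int_{t'_\delta}^{t'}$ compensator and one from the $D\delta$-sized perturbation term after invoking \eqref{eq:concen:condition:delta:forward}), rather than a single extra, but this does not alter the correctness of the argument.
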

Note that the absence of the term $Nf_t(p_1(t))$ in the lower bound comes from the assumption that $f_t$ is nonnegative. It turns out that this can be proven analogously as Lemma \ref{lem:concen of int:conti:forwardtime}, and we omit the details (see Appendix \ref{subsec:app:mgconcen}).

\subsection{Estimating ${\textnormal{\textbf{J}}}_{u,s}$ and ${\textnormal{\textbf{Q}}}_{v,u,s}$}\label{subsec:fixed:JandQ}

Recall the definitions of ${\textnormal{\textbf{J}}}_{u,s}$ \eqref{eq:def:J:basic form} and ${\textnormal{\textbf{Q}}}_{v,u,s}$ \eqref{eq:def:Q:basic form}.  W study its quenched version ${\textnormal{\textbf{J}}}_{u,s}$ and show that it satisfies a similar but weaker bound than $J_{u,s}^{(\alpha)} $ from Section \ref{subsec:Kestim:intro}. Moreover, we derive an analogous result for ${\textnormal{\textbf{Q}}}_{v,u,s}$.
The goal is to establish the following estimates on ${\textnormal{\textbf{J}}}_{u,s}$ and ${\textnormal{\textbf{Q}}}_{v,u,s}$. 

Throughout this section we fix $C_0>0$ and let $\alpha >0$ sufficiently small depending on  $\epsilon $ and $C_0$. We also let $\hat{h}:= \alpha ^{-2} \log ^{C_0}(1/ \alpha )$. 

\begin{prop}\label{prop:Jbound:quenched}
	For all $\epsilon >0$ there exists $c_\epsilon >0$  such that 
	\begin{equation}
	\PP_\alpha \left(  -\frac{\alpha ^{1-\epsilon }}{\sqrt{s+1}}  \le  {\textnormal{\textbf{J}}}_{u,s}  \le \frac{\alpha^{-\epsilon}}{\sqrt{(u+1)(s+1)}}, \ \forall 0\le u<s\le \hat{h} \right) \ge 1- \exp\left(-\alpha^{-c_\epsilon } \right).
	\end{equation}
\end{prop}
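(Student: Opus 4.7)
The strategy is to reduce the quenched quantity ${\textnormal{\textbf{J}}}_{u,s}$ to a random-walk hitting-time difference via a coupling, identify the resulting conditional expectation as $H_u = \PP_\alpha(u<T<\infty\mid Y_{\le u})$, and bootstrap through the decomposition in Proposition \ref{prop:speed:Dt1storder} together with Azuma-type concentration.

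For each $Y$, drive $T$ and $T_u$ with the same walk $W$, so $T \le T_u$ with equality on $\{T \le u\}$. On $\{T > u\}$ the strong Markov property at time $u$ gives $T = u + \tau_k$ and $T_u = u + \tau_{k+1}$, where $k = U(u) = Y(u) - W(u) + 1 \ge 1$ and $\tau_j$ is the first-passage time to $j$ of an independent drifted walk $W'-Y'$. Integrating over the post-$u$ randomness gives, for $s > u$,
\begin{equation*}
{\textnormal{\textbf{J}}}_{u,s} \;=\; \mathbb{E}_{W_{\le u}}\bigl[I\{T>u\}\bigl(g_{U(u)+1}(s-u) - g_{U(u)}(s-u)\bigr) \,\big|\, Y_{\le u}\bigr],
\end{equation*}
with $g_k(t) := \PP(\tau_k > t,\,\tau_k < \infty)$. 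Using the independent decomposition $\tau_{k+1} = \tau_k + \tau_1'$ produces
\begin{equation*}
g_{k+1}(t) - g_k(t) \;=\; \mathbb{E}[g_1(t - \tau_k)\, I\{\tau_k \le t\}] \;-\; \tfrac{2\alpha}{1+2\alpha}\, g_k(t),
\end{equation*}
a split into a nonnegative ``local density'' term of size $\le C q^k/\sqrt{t+1}$ (by the kind of first-passage density estimate underlying Lemmas \ref{lem:estimate on K:intro}--\ref{lem:bound on deterministic J}) and a nonpositive ``global $\alpha$-deficit'' term of magnitude $\le C\alpha\, q^k$. Keeping these two pieces separate is essential: any triangle-inequality step would symmetrize the bound and destroy the asymmetry asserted by the proposition.

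Plugging back and applying optional stopping to the martingale $\xi^{U_{s\wedge T}}$ at $s=u$, the conditional expectation $\mathbb{E}_{W_{\le u}}[I\{T>u\}\,q^{U(u)}\mid Y_{\le u}]$ equals exactly $H_u$, so
\begin{equation*}
-C\alpha\, H_u \;\le\; {\textnormal{\textbf{J}}}_{u,s} \;\le\; \frac{C\, H_u}{\sqrt{s-u+1}}.
\end{equation*}
By Proposition \ref{prop:speed:Dt1storder}, $H_u - \mathbb{E}_\alpha H_u = \intop_0^u {\textnormal{\textbf{J}}}_{u',u}\,d\widehat{Y}(u')$, with $\mathbb{E}_\alpha H_u \le C/\sqrt{u+1}$ by Lemma \ref{lem:estimate on K:intro}. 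Combining this with the deterministic bound $|J_{u',u}^{(\alpha)}| \le C/\sqrt{(u'+1)(u+1)}$ from Lemma \ref{lem:bound on deterministic J} and Lemma \ref{lem:concentration of integral}, seeded by the trivial a priori $|{\textnormal{\textbf{J}}}| \le 1$, yields a weak quenched bound $H_u \le \alpha^{-\epsilon/10}/\sqrt{u+1}$ uniformly for $u \le \hat{h}$ with probability $1 - \exp(-\alpha^{-c_\epsilon})$. Substituting into the displayed two-sided inequality sharpens ${\textnormal{\textbf{J}}}_{u,s}$, and one further bootstrap iteration closes the constant.

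To pass from pointwise to uniform-in-$(u,s)$ control I discretize $[0,\hat h]^2$ on a mesh of size $\alpha^{100}$, take a union bound over grid points, and extend by the Lipschitz/monotonicity of $g_k$ in its arguments; the regime $s-u \ll s$, where $\sqrt{s-u+1}$ does not absorb into $\sqrt{s+1}$, is handled separately via the trivial $|{\textnormal{\textbf{J}}}| \le 1$ together with the bound $H_u \le \alpha^{-\epsilon/10}/\sqrt{u+1}$. The principal difficulty will be the asymmetric first-passage estimate on $g_{k+1}(t) - g_k(t)$: obtaining the sharp $1/\sqrt{t+1}$ decay on the positive side simultaneously with the extra factor of $\alpha$ on the negative side requires a careful Fourier/moment-generating-function analysis of $\tau_1$, in the same spirit as---but more delicate than---the proofs of Lemmas \ref{lem:estimate on K:intro}--\ref{lem:bound on deterministic J}.
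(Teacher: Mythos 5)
Your coupling identity $\textnormal{\textbf{J}}_{u,s}=\mathbb{E}_{W_{\le u}}\big[I\{T>u\}\big(g_{U(u)+1}(s-u)-g_{U(u)}(s-u)\big)\mid Y_{\le u}\big]$ and the algebraic split $g_{k+1}(t)-g_k(t)=\mathbb{E}[g_1(t-\tau_k)I\{\tau_k\le t\}]-\tfrac{2\alpha}{1+2\alpha}g_k(t)$ are both correct; indeed the first (nonnegative) piece is exactly $\PP(\mathcal A_{u,s}\mid Y_{\le u})$ and the second (nonpositive) piece is exactly $\PP(\mathcal B_s\mid Y_{\le u})$ from \eqref{eq:coupeling form of J}, so you and the paper start from literally the same decomposition.

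The gap is in the passage to $-C\alpha H_u \le \textnormal{\textbf{J}}_{u,s}\le CH_u/\sqrt{s-u+1}$. This intermediate bound is genuinely weaker than the proposition, and the two repairs you suggest do not close the gap. To see this concretely, take $u\approx s\approx \alpha^{-2}$ with $s-u=\alpha^{-1}$. The target upper bound $\alpha^{-\epsilon}/\sqrt{(u+1)(s+1)}$ is then of order $\alpha^{2-\epsilon}$, while $CH_u/\sqrt{s-u+1}$, even with the sharp $H_u\le \alpha^{-\epsilon/10}/\sqrt{u+1}$, is of order $\alpha^{3/2-\epsilon/10}$ --- much larger whenever $\epsilon<5/9$. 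Similarly the target lower bound $-\alpha^{1-\epsilon}/\sqrt{s+1}\sim -\alpha^{2-\epsilon}$ is not implied by $-C\alpha H_u\sim -\alpha^{3/2-\epsilon/10}$. Falling back to $|\textnormal{\textbf{J}}|\le 1$ cannot help since the targets are $\ll 1$, and another bootstrap iteration cannot push $H_u$ below its true order $u^{-1/2}$.

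The structural reason is that applying optional stopping at time $u$ compresses all of $W_{\le u}$ into the single scalar $H_u=\mathbb{E}[I\{T>u\}\,\xi^{U(u)}\mid Y_{\le u}]$, after which the only remaining time variable is $s-u$. You thereby lose the $1/(x+1)^{3/2}$ decay of the conditional density of $T$ at hitting times $x\in[u,s]$ (Lemma~\ref{lem:bound on density}), and replace it by the much weaker $u^{-1/2}\cdot(x-u+1)^{-3/2}$. The paper instead conditions on the full environment $Y$ and keeps the density in view: $\PP(\mathcal A_{u,s}\mid Y)\le \intop_u^s f_Y(x)\,\PP(\cdot\mid T=x)\,dx$ with $f_Y(x)\le \alpha^{-\epsilon}(x+1)^{-3/2}$, and then Claim~\ref{claim:some claim} converts $\intop_u^s (x+1)^{-3/2}(s-x+1)^{-1/2}dx$ to the $(u+1)^{-1/2}(s+1)^{-1/2}$ the proposition needs; the same density bound integrated over $[s,\infty)$ against the $\approx\alpha$ probability that $T_u<\infty$ produces the one-sided $\alpha^{1-\epsilon}/\sqrt{s+1}$. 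To repair your proof you would need to carry that $(x+1)^{-3/2}$ information through the optional-stopping step rather than collapse to $H_u$. A secondary issue: the claimed bound $\mathbb{E}[g_1(t-\tau_k)I\{\tau_k\le t\}]\le Cq^k/\sqrt{t+1}$ actually carries a polynomial factor in $k$ (already $k^{3/2}$ from the crude ``one increment is large'' argument), and $\sup_k k^{3/2}q^k\asymp \alpha^{-3/2}$; you would need to control this factor in the aggregation over $k=U(u)$ as well.
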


\begin{prop}\label{prop:Qbound:quenched}
	For all $\epsilon >0$ there exists $c_\epsilon >0$ such that 
	\begin{equation}
	\PP_\alpha \left( |{\textnormal{\textbf{Q}}}_{v,u,s}| \le \frac{\alpha^{-\epsilon}}{\sqrt{(v+1)(u+1)(s+1)}}, \ \forall 0\le s\le \hat{h} \right) \ge 1- \exp\left(-\alpha^{-c_\epsilon } \right).
	\end{equation}
\end{prop}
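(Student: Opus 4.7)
The strategy parallels that of Proposition~\ref{prop:Jbound:quenched}: first establish a deterministic bound on the averaged quantity $Q^{(\alpha)}_{v,u,s} := \mathbb{E}_\alpha[{\textnormal{\textbf{Q}}}_{v,u,s}]$, then pass to the quenched bound via martingale concentration, and finally upgrade pointwise control to uniform control over $0 \le v \le u \le s \le \hat h$ by discretization.

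The averaged bound I aim for is the analogue of Lemma~\ref{lem:bound on deterministic J}, namely
\begin{equation}
|Q^{(\alpha)}_{v,u,s}| \le \frac{Ce^{-c\alpha^2 s}}{\sqrt{(v+1)(u+1)(s+1)}}.
\end{equation}
Since $Q^{(\alpha)}_{v,u,s}$ is a second finite difference of the survival probability $\PP_\alpha(s < T^\bullet < \infty)$ (where $T^\bullet$ ranges over $T,T_v,T_u,T_{v,u}$) with respect to adding jumps to $Y$ at $v$ and at $u$, this would follow by applying the Fourier-analytic machinery of Appendix~\ref{sec:fourier and renewal} at higher order: each finite difference effectively differentiates the moment generating function of the associated hitting time, producing one extra factor of $1/\sqrt{v+1}$ and $1/\sqrt{u+1}$ beyond the baseline $e^{-c\alpha^2 s}/\sqrt{s+1}$ decay of the undifferenced probability. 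This step is the main technical obstacle: extracting two independent $1/\sqrt{\cdot}$ factors out of a second difference requires careful tracking of how two perturbations at separated times $v < u$ act essentially independently on the characteristic exponent of $U$, and of the cancellations this creates in the relevant oscillatory integrals.

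For the quenched-to-averaged comparison, I would mirror the derivation of Proposition~\ref{prop:speed:Dt2ndorder} at one further level: introduce a third-order perturbation quantity ${\textnormal{\textbf{R}}}_{w,v,u,s}$ defined as the alternating sum of $\PP_\alpha(s < T^\bullet < \infty \mid Y_{\le w})$ over the $2^3$ subsets of $\{w,v,u\}$ that specify which jumps to add to $Y$. A differentiation-in-$w$ argument identical in structure to Proposition~\ref{prop:speed:Dt1storder} yields the decomposition
\begin{equation}
{\textnormal{\textbf{Q}}}_{v,u,s} = Q^{(\alpha)}_{v,u,s} + \intop_0^v {\textnormal{\textbf{R}}}_{w,v,u,s}\, d\widehat{Y}(w).
\end{equation}
The same Fourier-analytic analysis as in the first step, now at third order, gives the pointwise bound $|{\textnormal{\textbf{R}}}_{w,v,u,s}| \le C/\sqrt{(w+1)(v+1)(u+1)(s+1)}$ on a high-probability event; the predictable quadratic variation of the martingale in $v$ is then at most $C\alpha\log(v+1)/((v+1)(u+1)(s+1))$. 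Corollary~\ref{cor:concentration of integral} (after splitting the integrand in $w$ into a small-$w$ part handled by a crude deterministic bound, and a large-$w$ part where the $L^\infty$ hypothesis on ${\textnormal{\textbf{R}}}$ is satisfied) with $\lambda \asymp \alpha^{-c_\epsilon}$ then gives $|{\textnormal{\textbf{Q}}}_{v,u,s} - Q^{(\alpha)}_{v,u,s}| \le \tfrac{1}{2}\alpha^{-\epsilon}/\sqrt{(v+1)(u+1)(s+1)}$ outside a set of probability $\exp(-\alpha^{-c_\epsilon})$, and combining with the averaged bound yields the claimed inequality at a single triple $(v,u,s)$.

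Finally, to upgrade to uniformity over $0 \le v \le u \le s \le \hat h$, I would discretize $[0,\hat h]^3$ on a grid of spacing $\alpha^K$ for a sufficiently large constant $K$ and take a union bound. Since $\hat h$ is only polylogarithmic in $\alpha^{-1}$, the number of grid points is polynomial in $\alpha^{-1}$ and is easily absorbed by the $\exp(-\alpha^{-c_\epsilon})$ tail per point. A Lipschitz-continuity estimate on ${\textnormal{\textbf{Q}}}_{v,u,s}$ in each of its three arguments (via a direct bound on the infinitesimal change of each stopping-time probability as $v$, $u$, or $s$ is shifted, combined with Corollary~\ref{cor:concentration:numberofpts each interval} to control the number of Poisson points in short windows) then transfers the discrete bound uniformly to each sub-cube, completing the proof.
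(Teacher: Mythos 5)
Your proposal takes a genuinely different route from the paper, and unfortunately it has a gap that makes it regressive rather than simplifying.

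The paper does \emph{not} prove Proposition~\ref{prop:Jbound:quenched} or Proposition~\ref{prop:Qbound:quenched} via the ``deterministic average plus martingale correction'' structure you propose. Rather, for $\mathbf{J}_{u,s}$ the paper couples the two events directly (writing $\mathbf{J}_{u,s}=\PP(\mathcal{A}_{u,s}\mid Y_{\le u})-\PP(\mathcal{B}_s\mid Y_{\le u})$), bounds each conditional probability using random walk hitting estimates together with Lemma~\ref{lem:bound on density}, and passes from conditioning on $Y$ to conditioning on $Y_{\le u}$ via Doob's inequality (Corollary~\ref{cor:conditioning on less information}). For $\mathbf{Q}_{v,u,s}$, the paper notes that $\mathbf{Q}_{v,u,s}=\mathbb{E}_\alpha[A\mid Y_{\le v}]$ where $A$ is an alternating sum of four indicator functions, then observes that since $T\le T_u\le T_v\le T_{v,u}$ the quantity $A$ is nonzero only on seven disjoint events $\mathbf{A}_1,\dots,\mathbf{A}_7$, and estimates $\PP_\alpha(\mathbf{A}_i\mid\mathcal{F}_v)$ for each. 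The crucial structural input is a pair of exact cancellations: $\PP(\mathbf{A}_1)-\PP(\mathbf{A}_2)$ collapses to a factor $\frac{4\alpha^2}{(1+2\alpha)^2}\PP_\alpha(s\le T<\infty\mid\mathcal{F}_v)$, and $\PP(\mathbf{A}_6)-\PP(\mathbf{A}_7)$ requires a further split of $\mathbf{A}_7$ and delicate integral manipulations. These cancellations are essential --- the individual $\PP(\mathbf{A}_i)$ terms do not carry the $(v+1)^{-1/2}$ decay on their own --- and are not going to be visible in a Fourier calculation of the averaged $Q^{(\alpha)}_{v,u,s}$ without in effect rediscovering the same combinatorics.

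The more serious problem with your proposal is step~3. To control the martingale $\int_0^v \mathbf{R}_{w,v,u,s}\,d\widehat{Y}(w)$ via Corollary~\ref{cor:concentration of integral}, you need a pointwise (at least high-probability) bound on the \emph{random} quantity $\mathbf{R}_{w,v,u,s}$, which depends on $Y_{\le w}$. You assert this bound ``follows from the same Fourier-analytic analysis at third order,'' but Fourier analysis produces bounds only on the \emph{deterministic} average $R^{(\alpha)}:=\mathbb{E}_\alpha[\mathbf{R}]$, not on the random $\mathbf{R}$ itself. Closing the gap between $R^{(\alpha)}$ and $\mathbf{R}$ requires either (i) yet another martingale decomposition, now in terms of a fourth-order quantity $\mathbf{S}$, which is an infinite regress, or (ii) a direct coupling argument for $\mathbf{R}$, which is a third difference and hence strictly harder than the coupling argument for $\mathbf{Q}$ that you are trying to avoid. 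Either way, the reduction does not make the problem easier. The paper sidesteps this entirely by attacking $\mathbf{Q}_{v,u,s}$ (as a second difference) head-on via the seven-case decomposition. If you want to follow a quenched argument, the cleanest path is to mimic the paper's proof of Proposition~\ref{prop:Jbound:quenched}: condition on all of $Y$, enumerate the cases for where $T,T_v,T_u,T_{v,u}$ fall relative to $u,s,\infty$, use Lemmas~\ref{lem:bound on H} and~\ref{lem:bound on density} to bound the density factors, find the pairwise cancellations, and finally pass back to $\mathcal{F}_v$-conditioning via Doob.
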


Proposition \ref{prop:Jbound:quenched} can be established by a coupling argument of the events $\{ s<T_u<\infty \}$ and $\{s< T <\infty \}$, and using hitting time estimates of random walks. Proof of Proposition \ref{prop:Qbound:quenched} is based on the same idea, but the coupling for the events defining ${\textnormal{\textbf{Q}}}_{v,u,s}$ is more technical and we defer the proof to Appendix \ref{subsec:app:Qbound}.

As the first step in verifying Proposition \ref{prop:Jbound:quenched}, we address the following elementary lemma on simple random walk.

 \begin{lem}\label{lem:basic random walk estimates}
 	Let $W_t$ be a continuous time random walk with $W_0=0$. Define the maximum process $M_t:= \max _{s \le t} W_s$. For any $t >0$ we have:
 	\begin{enumerate}
 		\item 
 		for any $A\ge 1$
 		\begin{equation}
 		\mathbb P (M_t \le A) \le C \frac{A}{\sqrt{t+1}}
 		\end{equation}
 		\item 
 		For any integer $k \ge 0 $
 		\begin{equation}
 		\mathbb P \left( M_t \le 0, \    W_t=-k \right)\le \frac{C}{t+1}
 		\end{equation}
 	\end{enumerate} 	
 \end{lem}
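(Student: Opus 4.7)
The plan is to reduce both statements to the reflection principle combined with the local central limit theorem for the continuous-time simple random walk $W_t$.  By conditioning on the Poisson jump times and applying the discrete reflection principle path-by-path, one obtains the joint identity
\begin{equation}
\PP\bigl(M_t \le A,\ W_t = m\bigr) = \PP(W_t = m) - \PP\bigl(W_t = 2(A+1) - m\bigr), \qquad m \le A,\ A \ge 0.
\end{equation}
Both parts of the lemma then become direct consequences of this identity combined with a suitable local CLT estimate, with only the required level of refinement differing between them.

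For part (1), I would sum the above identity over $m \le A$ and use the symmetry $\PP(W_t = k) = \PP(W_t = -k)$ to rewrite
\begin{equation}
\PP(M_t \le A) = \PP(W_t = A+1) + \sum_{k=-A}^{A} \PP(W_t = k).
\end{equation}
Bounding each of the at most $2A+2$ terms by the standard uniform local CLT estimate $\PP(W_t = k) \le C/\sqrt{t+1}$, which is immediate from the Skellam/Bessel representation $\PP(W_t = k) = e^{-t}I_{|k|}(t)$ and classical asymptotics of modified Bessel functions, yields the bound $C A/\sqrt{t+1}$ for $A \ge 1$.

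For part (2), setting $A = 0$ and $m = -k$ gives
\begin{equation}
\PP(M_t \le 0,\ W_t = -k) = \PP(W_t = -k) - \PP(W_t = k+2).
\end{equation}
Here I would invoke the refined local CLT $\PP(W_t = m) = (2\pi t)^{-1/2} e^{-m^2/(2t)} + O(t^{-3/2})$ and apply the mean value theorem to $x \mapsto e^{-x^2/(2t)}$, obtaining a leading contribution of order $C(k+1)\,t^{-3/2} e^{-k^2/(2t)}$.  Since $(k+1)e^{-k^2/(2t)} \le C\sqrt{t+1}$ uniformly in $k \ge 0$ (the maximum is attained near $k \sim \sqrt{t}$), the total is $O(1/(t+1))$, which is the claimed bound.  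The regime $t \le 1$ is trivial upon taking $C$ large.

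The main obstacle is establishing the refined local CLT in part (2) with $O(t^{-3/2})$ error sufficiently uniformly in $k$, since the leading Gaussian approximation loses accuracy once $|k|$ becomes comparable to $\sqrt{t}$.  I would handle this by splitting into a moderate deviation regime $|k| \le \sqrt{t \log t}$, where uniform asymptotic expansions of $I_k(t)$ (equivalently, Edgeworth refinements of the discrete LCLT conditional on the number of Poisson jumps) deliver the stated error, and a large deviation regime $|k| > \sqrt{t \log t}$, where $\PP(W_t = -k)$ and $\PP(W_t = k+2)$ are individually much smaller than $1/(t+1)$ by Chernoff bounds using the moment generating function $\mathbb{E}[e^{\theta W_t}] = e^{t(\cosh\theta - 1)}$, making the cancellation argument unnecessary in that regime.
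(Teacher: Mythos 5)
Your reflection identities are correct, and part~(1) matches the paper's proof exactly (reflection to reduce to $\mathbb P(|W_t|\le A+1)$, then the uniform local CLT bound $\mathbb P(W_t=k)\le C/\sqrt{t+1}$).

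For part~(2), the paper only says ``follows from the reflection principle'' and leaves the estimate of $\mathbb P(W_t=-k)-\mathbb P(W_t=k+2)$ to the reader. Your route via a refined LCLT with $O(t^{-3/2})$ error plus a moderate/large-deviation split is valid, but it is heavier than necessary. The cleaner finish is to note the exact identity (via the modified Bessel recurrence $I_k(t)-I_{k+2}(t)=\tfrac{2(k+1)}{t}I_{k+1}(t)$, using $\mathbb P(W_t=m)=e^{-t}I_{|m|}(t)$)
\begin{equation}
\mathbb P(W_t=-k)-\mathbb P(W_t=k+2)=\frac{2(k+1)}{t}\,\mathbb P(W_t=k+1),
\end{equation}
and then invoke the one-sided Gaussian point bound $\mathbb P(W_t=m)\le C(t+1)^{-1/2}e^{-cm^2/(t+1)}$, which gives $\frac{2(k+1)}{t}\mathbb P(W_t=k+1)\le \frac{C}{t}\cdot\frac{(k+1)e^{-c(k+1)^2/(t+1)}}{\sqrt{t+1}}\le \frac{C'}{t+1}$ with no case analysis and no Edgeworth-type refinement of the LCLT.

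One small technical point in your version: at the boundary $k\approx\sqrt{t\log t}$, a Chernoff \emph{tail} bound gives only $\mathbb P(W_t\le -k)\lesssim e^{-k^2/2t}\approx t^{-1/2}$, which is not $\le C/(t+1)$. What you actually need there is the \emph{point}-probability bound, which carries the extra $(t+1)^{-1/2}$ prefactor; equivalently, you should shift the threshold to $K\sqrt{t\log t}$ for a suitable constant $K>1$ so that the Chernoff tail alone suffices. Either fix is routine, so this does not affect the correctness of the overall argument, but as written the large-$k$ step does not quite close at the stated threshold.
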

 
 \begin{proof}
 	By the reflection principle we have 
 	\begin{equation}
 	\mathbb P (M_t \le A) \le \mathbb P (|W_t |\le A) \le C\frac{A} {\sqrt{t+1}}
 	\end{equation}
 	where in the last inequality we used that $\mathbb P (W_t =k) \le C/ \sqrt{t+1}$ for any $k \in \mathbb Z$ which follows from the local central limit theorem.
 	
 	The second part of the claim also follows from the reflection principle.
 \end{proof}

 Based on the above observation, we deduce the following estimate on $T$ defined in \eqref{eq:def:T:basic form}.
 \begin{lem}\label{lem:bound on H}
 For all $\epsilon >0$ there exists $\delta > 0$ such that on the event
 \begin{equation}\label{eq:def of C delta}
     \mathcal C =\mathcal C _\delta :=\{ \forall x>0, \ Y_x \le \alpha ^{-\delta } +2 \alpha x \}
 \end{equation}
 we have for all $s\le \hat{h}$
 \begin{equation}\label{eq:bound on C}
     \PP(T \ge s \: | \: Y) \le \frac{\alpha^{-\epsilon}}{\sqrt{s+1}}.
 \end{equation}
 	In particular there exist $c_\epsilon >0$ such that
 	\begin{equation}
 	\PP_\alpha \left[ \PP(T \ge s \: | \: Y) \le \frac{\alpha^{-\epsilon}}{\sqrt{s+1}}, \ \forall 0\le s \le \hat{h} \right] \ge 1- \exp\left(-\alpha^{-c_\epsilon } \right).
 	\end{equation}
 \end{lem}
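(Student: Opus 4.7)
On $\mathcal C_\delta$ we have $Y_x \le \alpha^{-\delta} + 2\alpha x$, so by independence of $W$ and $Y$,
\[
\PP(T \ge s \mid Y) \le \PP\Big(\sup_{x\le s}\bigl(W(x) - 2\alpha x\bigr) \le \alpha^{-\delta}\Big),
\]
and the task reduces to bounding this drifted-walk tail by $\alpha^{-\epsilon}/\sqrt{s+1}$ uniformly in $s \le \hat h$. My plan is to prove the two-term estimate
\begin{equation}\label{eq:plan:bd}
\PP\Big(\sup_{x\le s}(W_x - \mu x) \le A\Big) \;\le\; \frac{CA}{\sqrt{s+1}} \;+\; C\mu A, \qquad A,\mu>0,\; s>0,
\end{equation}
with $\mu=2\alpha$ and $A=\alpha^{-\delta}$. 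The first summand comes from the inclusion $\{\sup(W-\mu x) \le A\} \subseteq \{M^W_s \le A + \mu s\}$ followed by Lemma~\ref{lem:basic random walk estimates}(1), retaining only the $A$-contribution. The second summand, which captures the drift-dominated stationary regime, is the key input; I would derive it from the explicit reflection formula for Brownian motion with drift $-\mu$,
\[
\PP(M_t \le A) \;=\; \Phi\!\left(\tfrac{A+\mu t}{\sqrt{t}}\right) - e^{-2\mu A}\,\Phi\!\left(\tfrac{-A+\mu t}{\sqrt{t}}\right) \;\le\; \tfrac{2A}{\sqrt{2\pi t}} + 2\mu A,
\]
using $\Phi(b)-\Phi(a)\le (b-a)/\sqrt{2\pi}$ and $1-e^{-x}\le x$, then transfer the bound to continuous-time SRW either via a KMT-type coupling or directly by optional stopping on the exponential martingale $e^{\theta(W_x-\mu x)}$ with $\theta>0$ the positive root of $\cosh\theta = 1+\mu\theta$ (so $\theta\asymp 2\mu$ for small $\mu$).

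With \eqref{eq:plan:bd} in hand, the first term gives $CA/\sqrt{s+1}=C\alpha^{-\delta}/\sqrt{s+1}\le \alpha^{-\epsilon}/\sqrt{s+1}$ whenever $\delta<\epsilon$ and $\alpha$ is small. For the second term, $C\mu A = 2C\alpha^{1-\delta}$, and since $\sqrt{s+1}\le C\alpha^{-1}\log^{C_0/2}(1/\alpha)$ for $s\le \hat h$, the desired inequality $2C\alpha^{1-\delta}\sqrt{s+1}\le \alpha^{-\epsilon}$ reduces to $\alpha^{\epsilon-\delta}\ge C'\log^{C_0/2}(1/\alpha)$, which holds for $\alpha$ sufficiently small depending on $\epsilon$ and $C_0$. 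I would take $\delta:=\epsilon/2$.

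For the ``in particular'' part, under $\PP_\alpha$ each $Y_x$ is $\mathrm{Poisson}(\alpha x)$. I would discretize on a dyadic grid $x_k:=2^k$, and use monotonicity of $Y$ together with $\alpha^{-\delta}+2\alpha x \ge \alpha^{-\delta} + \alpha x_k$ on $x\in[x_{k-1},x_k]$ to reduce $\mathcal C_\delta^c$ to the union over $k$ of $\{Y_{x_k} > \alpha^{-\delta}+\alpha x_k\}$. A Poisson tail bound gives failure probability at most $\exp(-c\alpha^{-\delta})$ pointwise in both regimes: $\alpha x_k \le \alpha^{-\delta}$ (the deviation of size $\alpha^{-\delta}$ dominates the mean, direct Chernoff) and $\alpha x_k > \alpha^{-\delta}$ (use $\PP(Y_{x_k}\ge 2\alpha x_k)\le e^{-c\alpha x_k}\le e^{-c\alpha^{-\delta}}$). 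The union bound costs a polynomial-in-$\log(1/\alpha)$ factor for small $k$ and a geometric tail for large $k$, yielding $\PP_\alpha(\mathcal C_\delta^c)\le \exp(-c'\alpha^{-\delta})$; setting $c_\epsilon := \delta/2$ completes both parts.

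\textbf{Main obstacle.} The core technical difficulty is the $C\mu A$ term in \eqref{eq:plan:bd}. Without it, the naive reflection bound contributes an additive $C\mu\sqrt{s+1}$ which at $s=\hat h$ grows to $C\log^{C_0/2}(1/\alpha)$, far exceeding the target $\alpha^{1-\epsilon}/\log^{C_0/2}(1/\alpha)$. Extracting the correct drift-dependent prefactor $C\mu A = O(\alpha^{1-\delta})$ rather than $O(\alpha\sqrt{s})$ is what makes the argument go through, and transferring the Brownian formula to the random walk without losing orders in $\alpha$ is where care is required.
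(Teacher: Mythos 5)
Your overall plan is a genuinely different route from the paper's. You propose a single sharp two-term estimate
\begin{equation}
\PP\Bigl(\sup_{x\le s}(W_x-\mu x)\le A\Bigr)\ \le\ \frac{CA}{\sqrt{s+1}}+C\mu A,
\end{equation}
extracted from the explicit Brownian running-maximum law and then transferred to the continuous-time walk, whereas the paper proves only the lossier statement $p_s\le C_\delta\,\alpha^{-C\delta\log(1/\delta)}/\sqrt{s+1}$ by a multi-scale bootstrap: it first gets the bound for $s\le\alpha^{-1-\delta}$ from the undrifted maximum (Lemma~\ref{lem:basic random walk estimates}), then splits the time interval at $\alpha^{-1}$, uses the displacement at the split to reduce to a barrier of size $\approx 3\alpha^{-1/2-\delta}$ over $[\alpha^{-1},s]$, applies the previous bound again, and iterates $O(\log(1/\delta))$ times to cover $s\le\alpha^{-2-\delta/2}$. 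If your two-term bound were established, it would be cleaner and in fact sharper than the paper's, and your final bookkeeping ($\delta=\epsilon/2$, $s\le\hat h$) is correct.

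The first gap is that the two-term bound is not actually derived for the walk. Attributing the $CA/\sqrt{s+1}$ summand to the inclusion $\{\sup(W-\mu x)\le A\}\subseteq\{M_s^W\le A+\mu s\}$ and then ``retaining only the $A$-contribution'' is not legitimate: Lemma~\ref{lem:basic random walk estimates}(1) gives $C(A+\mu s)/\sqrt{s+1}$, and the $\mu s/\sqrt{s+1}\asymp\mu\sqrt s$ piece is $\Theta(\log^{C_0/2}(1/\alpha))$ at $s\sim\hat h$ — you cannot drop it, as you yourself note in the ``main obstacle'' paragraph. So both summands must come from the Brownian formula and its transfer. Of the two transfer routes you offer, the optional-stopping martingale $e^{\theta(W_x-\mu x)}$ gives $\PP(T_A=\infty)\le\theta(A+1)\asymp\mu A$, which is only the $s\to\infty$ limit; it says nothing about $\PP(s<T_A<\infty)$, where the $A/\sqrt s$ behaviour lives, so OST alone does not yield the finite-$s$ inequality. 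The KMT route could work, but the $O(\log s)$ coupling error must be propagated through both summands (it contributes $\mu\log s\approx\alpha\log(1/\alpha)$, which is small enough, but this has to be carried out). As written, the load-bearing transfer step is a sketch.

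The second gap is a concrete error in the Poisson concentration step. Your dyadic-grid reduction replaces the barrier $\alpha^{-\delta}+2\alpha x$ on $[x_{k-1},x_k]$ by $\alpha^{-\delta}+\alpha x_k$ at the endpoint, halving the linear part, so the slack above $\mathbb E Y_{x_k}=\alpha x_k$ is only the constant $\alpha^{-\delta}$. For $x_k\gtrsim\alpha^{-1-2\delta}$ the fluctuation scale $\sqrt{\alpha x_k}$ exceeds $\alpha^{-\delta}$, so $\PP(Y_{x_k}>\alpha^{-\delta}+\alpha x_k)$ is $\Theta(1)$, not $\exp(-c\alpha^{-\delta})$, and the union over $k$ does not converge to something small. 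Your ``second regime'' argument does not repair this: when $\alpha x_k>\alpha^{-\delta}$ we have $\alpha^{-\delta}+\alpha x_k<2\alpha x_k$, so $\{Y_{x_k}>\alpha^{-\delta}+\alpha x_k\}\supsetneq\{Y_{x_k}\ge 2\alpha x_k\}$ and the bound you quote controls a \emph{sub}set of the event, i.e.\ the inequality goes the wrong way. A working fix is a geometric grid with ratio $1+o(1)$ (so only $o(\alpha x_k)$ of the slack is lost), or more simply to skip discretization and apply Doob's maximal inequality to the nonnegative supermartingale $e^{Y_x-2\alpha x}$ (a supermartingale because $e-1-2<0$), which gives $\PP(\mathcal C_\delta^c)\le e^{-\alpha^{-\delta}}$ in one line.
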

 
 Note that the outer probability $\PP_\alpha$ considers the probability over the rate-$\alpha$ Poisson process $Y$, and the inner probability $\PP$ is taken over the simple random walk conditioned on $Y$.

 \begin{proof}
 	Let $\delta >0$ sufficiently small depending on $\epsilon $, and $\alpha >0$ sufficiently small depending on all other parameters. It is easy to check that $\mathbb P (\mathcal C ) \ge 1- \exp (-\alpha ^{-c_\delta })$ and therefore the second statement in the lemma follows from the first one.
 	
 	On $\mathcal C$ we have 
 	\begin{equation}
 	\mathbb P \left( T \ge s  \  \big|  \  Y \right) = \mathbb P \left(\forall x \le s, \  W_x \le Y_x \  \big|  \  Y \right)\le \mathbb P \left(\forall x \le s, \  W_x \le \alpha ^{-\delta  } +2 \alpha x\right)
 	\end{equation}
 	Denote the last probability by $p_s$. If $s \le  \alpha ^{-1-\delta }$ by the first part of Lemma~\ref{lem:basic random walk estimates} we have 
 	\begin{equation}
 	p_s \le \mathbb P \left(M_s \le 3 \alpha ^{-\delta } \right) \le C \frac{\alpha ^{-\delta  }}{\sqrt{s+1}}.
 	\end{equation}
 	If $\alpha ^{-1-\delta } \le s \le  \alpha ^{-\frac{3}{2}- \delta }$ we have 
 	\begin{equation}
 	\begin{split}
 	p_s &\le \mathbb P  \left( M_{\alpha ^{-1}} \le 2 \alpha ^{-\delta }, \ M_s \le 2 \alpha ^{-\frac{1}{2}- \delta } \right) \\
 	&\le \mathbb P \left(  M_{\alpha ^{-1}}\le 2 \alpha ^{-\delta } , \  W_{\alpha ^{-1}} \ge -\alpha ^{-\frac{1}{2}-\delta } , \ \max _{\alpha ^{-1} \le x \le s } W_x - W_{\alpha ^{-1}}   \le 3 \alpha ^{-\frac{1}{2}-\delta }, \right)+\mathbb P \left( W_{\alpha ^{-1}}\le -\alpha ^{-\frac{1}{2}-\delta } \right) \\
 	&\le \mathbb P \left(M_{\alpha ^{-1}} \le 2 \alpha ^{-\delta } \right) \mathbb P \left( M _{s - \alpha ^{-1 }} \le 3 \alpha ^{-\frac{1}{2}- \delta }\right) +C e^{-\alpha ^{-\delta }}\le C\frac{\alpha ^{-2 \delta }}{\sqrt{s+1}}
 	\end{split}
 	\end{equation}
 	if $\alpha ^{-\frac{3}{2}- \delta } \le s \le  \alpha ^{-\frac{7}{4}- \delta }$ by the same arguments we have $p_s \le C _{\delta }\alpha ^{-3 \delta } / \sqrt{s+1}$. We can repeat this process $C\log (1/\delta )$ times to obtain the bound $p_s \le C _{\delta }\alpha ^{-C\delta  \log (1/ \delta ) } / \sqrt{s+1}$ for any $s \le \alpha ^{-2-\frac{\delta }{2} } $. Taking $\delta >0$ sufficiently small depending on $\epsilon $ we get $p_s \le  \alpha ^{-\epsilon }/ \sqrt{s+1} $ for any $s\le \alpha ^{-2 -\frac{\delta }{2}}$.  
 \end{proof}

Let $f_Y$ be the probability density of $T $ given $Y$. That is 
\begin{equation}
    f_Y(s):= \frac{d}{ds} \mathbb P (T \le  s \ | \ Y).
\end{equation}
In the following lemma we give a bound on $f$ that folds with high probability.

\begin{lem}\label{lem:bound on density}
	. For all $\epsilon >0$ there exists $c_\epsilon >0$ such that 
	\begin{equation}\label{eq:the equation in the lemma on the density}
	\PP_\alpha\left[ f_Y(s)\le \frac{\alpha^{-\epsilon}}{(s+1)^{\frac{3}{2}}}, \ \forall 0\le s\le \hat{h} \right] \ge 1- \exp\left(-\alpha^{-c_\epsilon } \right).
	\end{equation}
\end{lem}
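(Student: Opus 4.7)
The plan is to decompose $f_Y(s)$ via the strong Markov property at $s/2$ and bound each piece using Lemma \ref{lem:bound on H} together with a reflection/ballot-type estimate. Setting $V := W - Y$, one has
\begin{equation}
f_Y(s) = \tfrac{1}{2}\,\PP\bigl(V_{s-}=0,\ V_x \le 0\ \forall x<s \ \bigm|\ Y\bigr) = \tfrac{1}{2}\sum_{j\ge 0} \PP\bigl(V_{s/2}=-j,\, T>s/2 \bigm| Y_{\le s/2}\bigr)\cdot p_j,
\end{equation}
where $p_j := \PP_{V_{s/2}=-j}\bigl(V_{s-}=0,\ V_x \le 0\ \forall x\in(s/2,s) \bigm| Y_{(s/2,s]}\bigr)$. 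The entire argument runs on the event $\mathcal{C}_\delta = \{Y_x \le \alpha^{-\delta} + 2\alpha x\ \forall x>0\}$ of Lemma \ref{lem:bound on H}, whose complement has probability at most $\exp(-\alpha^{-c_\delta})$.

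The first step is the ballot-type bound $p_j \le C(j+n+1)/(s+1)^{3/2}$, where $n := Y_s - Y_{s/2}$ denotes the number of $Y$-jumps in $(s/2,s]$ (bounded by $\alpha^{-\delta}+2\alpha s$ on $\mathcal{C}_\delta$). After the change of variables $U_u := (Y_{s/2+u}-Y_{s/2}) + j - (W_{s/2+u}-W_{s/2})$, the event inside $p_j$ becomes the statement that $U$ starts at $j$, stays $\ge 0$, and equals $0$ at time $s/2-$. Since $U$ is $\pm 1$-valued (with $Y$-jumps contributing $+1$ and $W$-jumps contributing $\mp 1$), applying the reflection principle at level $-1$ conditionally on the deterministic $Y$-jump times gives
\begin{equation}
p_j = \PP\bigl(U_{s/2}=0\bigm|Y\bigr)-\PP\bigl(U_{s/2}=-2\bigm|Y\bigr),
\end{equation}
and the local CLT applied to the binomial sum of $W$-jump signs (of which there are Poisson($s/2$)-many) yields the claimed bound uniformly on $\mathcal{C}_\delta$.

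Next, I sum over $j$. By summation by parts, $\sum_{j\ge 0}(j+1)\PP(V_{s/2}=-j, T>s/2\mid Y_{\le s/2}) = \sum_{k\ge 0}\PP(V_{s/2}\le -k, T>s/2\mid Y_{\le s/2})$. I split at $K := Y_{s/2} + C\sqrt{s\log(1/\alpha)}$: for $k \le K$, bound by $\PP(T>s/2|Y_{\le s/2}) \le \alpha^{-\epsilon'}/\sqrt{s+1}$ via Lemma \ref{lem:bound on H}; for $k>K$, use the Gaussian tail $\PP(W_{s/2}\le Y_{s/2}-k)\le e^{-(k-Y_{s/2})^2/s}$, whose total contribution is at most $\alpha^{100}$. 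Using $Y_{s/2}\le \alpha^{-\delta}+2\alpha s$ on $\mathcal{C}_\delta$, the summation is at most $C(\alpha^{-\delta}+\alpha s + \sqrt{s\log(1/\alpha)})\alpha^{-\epsilon'}/\sqrt{s+1}$. Multiplying by $C(j+n+1)/(s+1)^{3/2}$ from the bound on $p_j$ (which contributes an additional $n\lesssim \alpha^{-\delta}+\alpha s$ term) and using $s\le\hat h = \alpha^{-2}\log^{C_0}(1/\alpha)$ to absorb all polynomial-in-$\alpha$ prefactors into a single $\alpha^{-\epsilon}$ for $\delta,\epsilon'$ each less than, say, $\epsilon/4$, yields $f_Y(s)\le\alpha^{-\epsilon}/(s+1)^{3/2}$ for $s\in[1,\hat h]$. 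The range $s<1$ is handled by the trivial bound $f_Y\le 1/2$, and a union bound over a polynomial mesh in $[0,\hat h]$ combined with right-continuity of $\PP(T>s|Y)$ between jump epochs upgrades this to a uniform bound.

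The main obstacle is justifying the reflection identity in the presence of the random boundary: here $U$ is a $\pm 1$-walk whose $+1$ steps come from two sources ($Y$-jumps, which are deterministic given $Y$, and $W$-down-jumps, which are random). The reflection argument works by conditioning on the time instants and types of all jumps and then reflecting only the random $W$-jump signs after the first hitting of $-1$; since the $Y$-jumps are untouched, the bijection is measure-preserving on the $W$-jump sign sequence alone, and the identity holds pathwise. The subsequent local-CLT estimate then relies on $\mathcal{C}_\delta$ to control the center of the binomial and the concentration of the total jump count at $\asymp s$.
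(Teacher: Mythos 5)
Your overall strategy — cut the path at $s/2$, use Lemma~\ref{lem:bound on H} for the first half and a local-CLT/reflection estimate for the second half, all on the event $\mathcal{C}_\delta$ — is indeed the paper's strategy. But the specific reflection identity you claim for the moving barrier is false, and this is not a cosmetic issue.

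You assert that, conditioning on the $Y$-jump times, flipping the $W$-jump signs after the first hitting of $-1$ gives a measure-preserving bijection yielding $p_j = \PP(U_{s/2}=0) - \PP(U_{s/2}=-2)$. This fails because the $Y$-jumps after $\tau$ are not reflected, so the reflected path does not end at $-2-U_{s/2}$; it ends at $-2 + 2n_Y(\tau,s/2] - U_{s/2}$, where $n_Y(\tau,\cdot]$ counts the untouched $Y$-jumps after $\tau$. Concretely, take $U_0=1$ with jump order $W$, $W$, $Y$. The path $1\to 0\to -1\to 0$ hits $-1$ and ends at $0$ (probability $1/4$), yet $\PP(U_{\textnormal{end}}=-2)=0$ since $U_{\textnormal{end}}\in\{0,2,4\}$. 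The reflection does nothing here (there are no $W$-jumps after $\tau$), so the bijection is not onto $\{U_{\textnormal{end}}=-2\}$, and the identity cannot hold pathwise. Consequently the claimed ballot bound $p_j\le C(j+n+1)/(s+1)^{3/2}$ has no proof, and the rest of your computation is built on it.

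The paper sidesteps this exactly at the point where your argument breaks. For $x\in[s/2,s]$ one has $Y_x\le Y_s$, so the event $\{W_x\le Y_x\ \forall x\in[s/2,s]\}$ is relaxed to $\{W_x\le Y_s\}$ with a \emph{constant} barrier. After conditioning on $W(s/2)$, reversing time on $[s/2,s]$, and translating by $-Y_s$, the second factor becomes $\PP(M_{s/2}=0,\ W(s/2)=k-Y_s)$, which by Lemma~\ref{lem:basic random walk estimates}(2) is $\le C/(s+1)$ \emph{uniformly in $k$}. Because this bound does not depend on the position at $s/2$, the middle factor $\PP(W(s/2)=k\mid\cdots)$ simply sums to $1$ over $k$, and multiplying by Lemma~\ref{lem:bound on H} for the first half gives $\alpha^{-\epsilon}/(s+1)^{3/2}$ directly, with no summation by parts, no control on $\mathbb{E}[|V_{s/2}|\,;\,T>s/2]$, and no need for a sharper $k$-dependent ballot estimate. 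If you want to salvage your route, you must first make the constant-barrier relaxation and only then reflect; but once you do that, the $j$-dependent bound becomes unnecessary and you are back to the paper's cleaner estimate.
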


\begin{proof}
    Let $\delta >0$ sufficiently small such that Lemma~\ref{lem:bound on H} holds and recall the definition of $\mathcal C= \mathcal C _\delta $ in \eqref{eq:def of C delta}. It is clear that $f_Y$ is given by
    \begin{equation}
        f_Y(s)=\frac{1}{2} \cdot  \mathbb P \left( \forall x \le s , \ W_x \le Y_x, \ W_s=Y_s  \ | \ Y \right).
    \end{equation}
	Thus, on $\mathcal C$ we have
	\begin{equation}
	\begin{split}
	f_Y(s)&=\frac{1}{2} \sum _{k \le Y(s/2)}\mathbb P \left( \forall x \le s , \ W_x \le Y_x, \  W ( s/2 )  =k, \ W_s=Y_s  \ |  \ Y \right)\\
	&\le \sum _{k \le Y(s/2)}\mathbb P \left( \forall x \le s/2, \  W_x \le Y_x  \ | \ Y \right) \mathbb P \left(  W(s/2) =k \  \big| \ \forall x \le s/2, \  W_x \le Y_x , \ Y  \right) \\
	&\quad \quad \quad \quad \quad \quad \quad \quad \quad \quad \quad \quad \quad \quad \cdot \mathbb P \left( \forall s/2 \le x \le s, \   W_x \le Y_s, \  W_s=Y_s \ \big| \ W(s/2 )=k  , \ Y  \right)\\
	&\le \frac{C \alpha ^{-\epsilon }}{\sqrt{s+1}}\sum _{k \le Y(s/2)}\mathbb P \left(  W(s/2) =k \  \big| \ \forall x \le s/2, \  W_x \le Y_x , \ Y \right) \\
	&\quad \quad \quad \quad \quad \quad \quad \quad \quad \quad \quad \quad \quad \quad \cdot \mathbb P \left( \forall x\le s/ 2, \  W_x \le Y_s, \  W(s/2 )=k \big| \  W_0=Y_s , \ Y \right)\\
	&=  \frac{C \alpha ^{-\epsilon }}{\sqrt{s+1}}\sum _{k \le Y(s/2)}\mathbb P \left(  W(s/2) =k \  \big| \ \forall x \le s/2, \  W_x \le Y_x , \ Y \right) \\
	&\quad \quad \quad \quad \quad \quad \quad \quad \quad \quad \quad \quad \quad \quad \cdot \mathbb P \left( M_{\frac{s}{2}}=0, \  W(s/2 )=k-Y_s \ \big| \  W_0=0 , Y  \right) \\
	&\le  \frac{C \alpha ^{-\epsilon }}{(s+1)^{\frac{3}{2}}}\sum _{k \le Y(s/2)}\mathbb P \left(  W(s/2) =k \  \big| \ \forall x \le s/2, \  W_x \le Y_x \right) \le  \frac{C \alpha ^{-\epsilon }}{(s+1)^{\frac{3}{2}}}\le \frac{ \alpha ^{-2\epsilon }}{(s+1)^{\frac{3}{2}}},
	\end{split}
	\end{equation}
	where in the second inequality we used Lemma~\ref{lem:bound on H} in order to bound the first factor and we shifted and inverted time in the third factor. In the third inequality we used Lemma~\ref{lem:basic random walk estimates}. The last bound finishes the proof as $\epsilon $ is arbitrary.
\end{proof}

Let $f_u$ be the density of $T$ conditioned on $\mathcal F _u :=\sigma (Y_{\le u })=\sigma (Y_x , \ x\le u )$. That is 
\begin{equation}
    f_u(s):=\mathbb E [f_Y(s) \ | \ \mathcal F _u ]= \frac{d}{ds} \mathbb P (T \le s  \ | \ \mathcal F _u ).
\end{equation}

In the following corollary we show, using Lemma~\ref{lem:bound on density} and Doob's martingale inequality that a bound similar to \eqref{eq:the equation in the lemma on the density} holds for $f_u$ as well.

\begin{cor}\label{cor:conditioning on less information}
     For all $\epsilon >0$ there exists $c_\epsilon >0$ such that 
	\begin{equation}
	\PP_\alpha\left[ f_u(s)\le \frac{\alpha^{-\epsilon}}{(s+1)^{\frac{3}{2}}}, \ \forall 0\le u, s\le \hat{h} \right] \ge 1- \exp\left(-\alpha^{-c_\epsilon } \right).
	\end{equation}
\end{cor}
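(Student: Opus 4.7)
The plan is to reduce the corollary to Lemma~\ref{lem:bound on density} by applying Doob's maximal inequality to an appropriate martingale indexed by $u$. The starting point is the observation that for every fixed $s$, the process $u \mapsto f_u(s) = \mathbb{E}_\alpha[f_Y(s) \mid \mathcal{F}_u]$ is a nonnegative martingale (terminating at $f_Y(s)$ once $u \geq s$), and that $f_Y(s)$ admits a deterministic pointwise upper bound $f_Y(s) \leq \tfrac{1}{2}$. The latter fact follows from the jump structure: since $Y$ is monotone nondecreasing and $W$ is a continuous-time simple random walk with upward jump rate $\tfrac{1}{2}$, the event $\{T \in [s,s+ds]\}$ requires $W$ to jump up by one unit at time $s$ from the level $W(s-) = Y(s)$, whence $f_Y(s) \leq \tfrac{1}{2}\,\PP(W(s-) = Y(s)\mid T \geq s, Y)\PP(T \geq s\mid Y) \leq \tfrac{1}{2}$.

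With this in hand, fix a slightly smaller $\epsilon' < \epsilon$ (say $\epsilon' = \epsilon/2$), set $M_s := \alpha^{-\epsilon'}(s+1)^{-3/2}$, and define the event
\begin{equation}
\mathcal{B} := \Big\{ f_Y(s) \leq M_s \text{ for all } 0 \leq s \leq \hat{h}\Big\},
\end{equation}
so that Lemma~\ref{lem:bound on density} gives $\PP_\alpha(\mathcal{B}^c) \leq \exp(-\alpha^{-c_{\epsilon'}})$. The key decomposition is
\begin{equation}
f_u(s) = \mathbb{E}_\alpha\!\left[f_Y(s)\mathbf{1}_{\mathcal{B}} \mid \mathcal{F}_u\right] + \mathbb{E}_\alpha\!\left[f_Y(s)\mathbf{1}_{\mathcal{B}^c} \mid \mathcal{F}_u\right],
\end{equation}
in which the first term is bounded pointwise by $M_s$ (since $f_Y(s)\mathbf{1}_{\mathcal{B}} \leq M_s$ a.s.), while the second is controlled using the deterministic bound $f_Y(s) \leq \tfrac{1}{2}$ by $\tfrac{1}{2}\mathbb{E}_\alpha[\mathbf{1}_{\mathcal{B}^c} \mid \mathcal{F}_u]$.

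To finish, I apply Doob's maximal inequality to the nonnegative martingale $N_u := \mathbb{E}_\alpha[\mathbf{1}_{\mathcal{B}^c} \mid \mathcal{F}_u]$ with threshold $\lambda = \sqrt{\PP_\alpha(\mathcal{B}^c)}$. This yields
\begin{equation}
\PP_\alpha\!\left(\sup_{u \geq 0} N_u \geq \sqrt{\PP_\alpha(\mathcal{B}^c)}\right) \leq \sqrt{\PP_\alpha(\mathcal{B}^c)} \leq \exp\!\left(-\tfrac{1}{2}\alpha^{-c_{\epsilon'}}\right),
\end{equation}
and on the complementary event the second contribution to $f_u(s)$ is at most $\tfrac{1}{2}\exp(-\tfrac{1}{2}\alpha^{-c_{\epsilon'}})$, which for $s \leq \hat{h} = \alpha^{-2}\log^{C_0}(1/\alpha)$ is exponentially smaller than $M_s$ (since $M_s \geq \alpha^{3-\epsilon'}/\mathrm{polylog}(1/\alpha)$). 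Consequently $f_u(s) \leq 2M_s \leq \alpha^{-\epsilon}(s+1)^{-3/2}$ uniformly in $u,s \leq \hat{h}$ for all sufficiently small $\alpha$, giving the claim with $c_\epsilon = \tfrac{1}{2}c_{\epsilon/2}$.

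The only genuine subtlety is the deterministic bound $f_Y(s) \leq \tfrac{1}{2}$; everything else is a short application of Doob's inequality. Note that because the dominating event $\mathcal{B}$ was chosen to be uniform in $s$, the decomposition sidesteps any need for discretization in $s$ or for a continuity argument on the random density $f_u(\cdot)$, which is the nontrivial alternative route.
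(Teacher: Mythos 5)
Your proposal is correct and follows essentially the same route as the paper: decompose $f_u(s) = \mathbb{E}_\alpha[f_Y(s)\mathds{1}_{\mathcal{B}}\mid\mathcal{F}_u] + \mathbb{E}_\alpha[f_Y(s)\mathds{1}_{\mathcal{B}^c}\mid\mathcal{F}_u]$ where $\mathcal{B}$ is the good event from Lemma~\ref{lem:bound on density}, bound the first term pointwise by the deterministic bound inside $\mathcal{B}$, control the second via a deterministic pointwise bound on $f_Y(s)$ together with Doob's maximal inequality applied to $N_u = \PP_\alpha(\mathcal{B}^c\mid\mathcal{F}_u)$. The only differences are cosmetic: you use the threshold $\lambda = \sqrt{\PP_\alpha(\mathcal{B}^c)}$ and the constant $f_Y(s)\le\tfrac{1}{2}$, whereas the paper uses the cruder but simpler threshold $\alpha^3$ (which still lies below $\alpha^{-\epsilon}(s+1)^{-3/2}$ on $[0,\hat{h}]$ for small $\alpha$) and the bound $f_Y(s)\le 1$ (obtained by noting the event $\{s<T\le s+\delta\}$ forces $W$ to jump in $[s,s+\delta]$, of probability $\le\delta$, without tracking the jump direction); both choices yield the same exponentially small error probability and the same final exponent.
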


\begin{proof}
    Let $\mathcal D $ be the event inside the probability in \eqref{eq:the equation in the lemma on the density}. $\mathbb P (\mathcal D ^c \  | \ \mathcal F _u )$ is a martingale and therefore by Doob's martingale inequality we have
    \begin{equation}
      \mathbb P \Big(  \sup _{ u\le \hat{h} } \mathbb P (\mathcal D ^c \  | \ \mathcal F _u  ) \ge \alpha ^3   \Big)  \le \alpha ^{-3 } \mathbb P (\mathcal D ^c)   \le \exp (-\alpha ^{- c_\epsilon }).
    \end{equation}
    Next we claim that $f_Y(s)\le 1$. Indeed, on the event $\{ s <  T \le  s+\delta  \}$ we have that $\{ W_{s+\delta }\neq W_s \}$ which happens with probability  $1-e^{-\delta }\le \delta $ and therefore $f_Y(s) =\frac{d}{ds} \mathbb P (T\le s \ | \ Y) \le 1$. Thus, on the event $\{\sup _{ u\le s } \mathbb P (\mathcal D ^c \  | \ \mathcal F _u  ) \le \alpha ^3 \}$ we have for all $u,s\le \hat{h}$
    \begin{equation}
    \begin{split}
        f_u(s) = \mathbb E [f_Y (s) \ | \ \mathcal F _u ] &= \mathbb E [  f_Y (s) \cdot \mathds 1  _{\mathcal D } \ | \ \mathcal F _u ] +\mathbb E [f_Y (s) \cdot \mathds 1  _{\mathcal D^c } \ | \ \mathcal F _u ] \\
        &\le \frac{\alpha ^{-\epsilon}}{\sqrt{s+1}} +\mathbb P( \mathcal D ^c \ | \ \mathcal F _u)\le \frac{\alpha ^{-\epsilon}}{\sqrt{s+1}} +\alpha ^ 3  \le \frac{\alpha ^{-2\epsilon}}{\sqrt{s+1}},
        \end{split}
    \end{equation}
    where in the first inequality we used the definition of $\mathcal D $ and the fact that $f_Y (s) \le 1 $.  This finishes the proof as $\epsilon $ is arbitrary.
\end{proof}

Now we are ready to conclude the proof of Proposition \ref{prop:Jbound:quenched}.

\begin{proof}[Proof of Proposition~\ref{prop:Jbound:quenched}]
	Let $u<s<\hat{h}$. We have  
	\begin{equation}\label{eq:coupeling form of J}
	{\textnormal{\textbf{J}}}_{u,s}= \PP_\alpha (s<T_u<\infty \ | \ Y_{\le u}) - \PP_\alpha(s<T<\infty \ | \ Y_{\le u})   =  \mathbb P \left(  \mathcal A _{u,s}  \ | \ Y_{\le u} \right)-\mathbb P \left( \mathcal B _s \ | \ Y_{\le u} \right)
	\end{equation}
	where
	\begin{equation}
	\mathcal A _{u,s}:=\left\{ u \le  T <s,\ s\le T _u <\infty \right\},\quad \mathcal B _s :=\left\{s \le T  <\infty ,\  T _u =\infty  \right\}.
	\end{equation}
	
	Let $\delta >0$ sufficiently small such that Lemma~\ref{lem:bound on H} holds and let $\mathcal D $ be the event inside the probability in Lemma~\ref{lem:bound on density}. Define 
	\begin{equation}
	    \mathcal E := \mathcal D \cap \left\{  \forall u \le \alpha ^{-3} ,\ \forall x>u, \ Y_x- Y_u \le \alpha ^{-\delta } +2 \alpha (x-u) \right\}
	\end{equation}
	From Lemma~\ref{lem:bound on density} it is clear that $\mathbb P (\mathcal E ) \ge 1 - \exp (-\alpha ^{-c_\epsilon })$.
	We start by bounding the first term on the right hand side of \eqref{eq:coupeling form of J}. To this end we first condition of all of $Y$ and then use the same arguments as in Corollary~\ref{cor:conditioning on less information} to get the same bound when conditioning only on $Y _{\le u }$. By integrating over the different values $x$ that $T$ can take we get that on $\mathcal E$,
	\begin{equation}\label{eq:A_us probability}
	\begin{split}
	\mathbb P \left(\mathcal A_{u,s} \ \big|  \ Y \right) &\le  \intop _u^s f_Y (x) \cdot \mathbb P \left( \forall x \le y \le s, \ W_y \le Y_y +1 \ \big| \  W_x=Y_x+1 , \ Y \right) dx\\
	& \le \intop _u^s  \frac{\alpha ^{-\epsilon }}{(x+1)^{\frac{3}{2}}} \mathbb P \left( \forall  y \le s-x, \ W_y \le Y_{y+x}-Y_x  \ \big| \ W_0=0 , \ Y \right) dx\\
    &\le \intop _u^s  \frac{\alpha ^{-\epsilon }}{(x+1)^{\frac{3}{2}}} \frac{\alpha ^{-\epsilon }}{\sqrt{s-x+1}} dx\le \frac{C \alpha ^{-2 \epsilon }}{\sqrt{s+1}\sqrt{u+1}},
	\end{split}
	\end{equation}
	where the second inequality is by the definition of $\mathcal D $ and the third inequality is by Lemma~\ref{lem:bound on H}. Indeed, on $\mathcal E $ the process $Y'_y :=Y_{y+x}-Y_x$ satisfies $Y'_y\le \alpha ^{-\delta }+2\alpha y$ and therefore by Lemma~\ref{lem:bound on H} the bound \eqref{eq:bound on C} holds when we replace $Y$ by $Y'$. The last inequality is by Claim~\ref{claim:some claim}.

	Thus, as $\epsilon $ is arbitrary we get
	\begin{equation}
	\mathbb P \left(   \mathbb P \left( \mathcal A _{u,s} \  \big| \  Y \right) \ge \frac{\alpha ^{-\epsilon }}{ \sqrt{s+1} \sqrt{u+1}} , \  \forall 0 \le u<s \le \hat{h} \right) \ge 1-\exp ( -\alpha ^ {-c_\epsilon } ).
	\end{equation} 
	By the same arguments as in Corollary~\ref{cor:conditioning on less information} we get that 
	\begin{equation}\label{eq:bound on first term}
	\mathbb P \left(   \mathbb P \left( \mathcal A _{u,s} \  \big| \  Y_{\le u} \right) \ge \frac{\alpha ^{-\epsilon }}{ \sqrt{s+1} \sqrt{u+1}} , \  \forall 0 \le u<s \le \hat{h} \right) \ge 1-\exp ( -\alpha ^ {-c_\epsilon } ).
	\end{equation}
	
	We turn to bound the second term in \eqref{eq:coupeling form of J}. By the same arguments as in \eqref{eq:A_us probability} we have on $\mathcal E $,
	\begin{equation}
	\begin{split}
	\mathbb P \left(  \mathcal B _s \ \big| \  Y \right) &\le \intop _s^\infty  f_Y(x) \mathbb P \left( \forall  y \ge x,\  W_y \le Y_y+1  \ \big | \ W_x=Y_x+1, \ Y \right)dx \\
	 & \le \intop _s^\infty \frac{\alpha ^{-\epsilon }}{(x+1)^{\frac{3}{2}}}
	 \mathbb P \left( \forall  y \le \hat{h} ,\  W_y \le Y_{y+x}-Y_x  \ \big | \ W_0=0, \ Y \right)dx \\
	 &\le \intop _s^\infty  \frac{\alpha ^{1-2 \epsilon }}{(x+1)^ {\frac{3}{2}} } \le \frac{\alpha ^{1-3 \epsilon }}{\sqrt{s+1}} ,
	\end{split}
	\end{equation}
	where in the third inequality we used Lemma~\ref{lem:bound on H}. Since $\epsilon $ is arbitrary and by the same arguments as in Corollary~\ref{cor:conditioning on less information} we get that
	\begin{equation}\label{eq:bound on the second term}
	\mathbb P \left(   \mathbb P \left( \mathcal B _s \  \big| \  Y_{\le u} \right) \ge \frac{\alpha ^{1-\epsilon }}{ \sqrt{s+1}} , \  \forall 0 \le s \le \hat{h} \right) \ge 1-\exp ( -\alpha ^ {-c_\epsilon } ).
	\end{equation}
	Substituting \eqref{eq:bound on first term} and \eqref{eq:bound on the second term} into \eqref{eq:coupeling form of J} finishes the proof of the proposition.
\end{proof}

\subsection{Multiple integrals over the fixed rate process}\label{subsec:fixed:fixed}

Based on the results we obtained in the previous subsections, we prove Propositions \ref{prop:fixed perturbed:double int} and \ref{prop:fixed perturbed:triple int} in the case of $\Pi_\alpha$, the fixed rate Poisson process. Switching $\Pi_g$ into $\Pi_\alpha$ not only makes the underlying process be simpler, but more importantly, enables us to interpret the integral as martingales. Previously, this was impossible since $g$ is progressively measureable with respect to $\Pi$ and ${\textnormal{\textbf{J}}}_{t-s,t-u;t}^{\Pi_g}$ is measureble only in terms of $\Pi_g[s,t]$, which essentially requires to revealing the entire information of $\Pi_g$.

Let $\Pi_\alpha^{(t)}$ denote the backwards-time point process of $\Pi_\alpha$ with respect to $t$, defined as
\begin{equation}
\Pi_\alpha^{(t)}[0,s] := \Pi_\alpha[t-s,t], \ \textnormal{ with } d\widehat{\Pi}_\alpha^{(t)}(s ) = d\Pi_\alpha^{(t)}(s)-\alpha ds.
\end{equation}
Then, we can see that $\mathcal{J}$ defined in \eqref{eq:def:Jint:general} becomes
\begin{equation}\label{eq:def:Jint:rev time}
\mathcal{J}[t;\Pi_\alpha] = \intop_{0}^t \intop_{0}^s {\textnormal{\textbf{J}}}_{u,s}^{\Pi_\alpha^{(t)}} \ d\widehat{\Pi}_\alpha^{(t)}(u) d\widehat{\Pi}_\alpha^{(t)}(s),
\end{equation}
where $ {\textnormal{\textbf{J}}}_{u,s}^{\Pi^{(t)}_\alpha}:= \PP_\alpha(s<T_u<\infty\, | \, \Pi_\alpha^{(t)}[0,u] ) - \PP_\alpha(s<T<\infty\, |\, \Pi_\alpha^{(t)}[0,u])$ follows the previous definition \eqref{eq:def:J:basic form}, with respect to $Y$ generated by $\Pi_\alpha^{(t)}$. Similarly, we can write $\mathcal{Q}$ in \eqref{eq:def:Qint:general} by
\begin{equation}\label{eq:def:Qint:rev time}
	\mathcal{Q}[t;\Pi_\alpha] := \intop_{0}^{t} \intop_{0}^s  \intop_{0}^u
	{\textnormal{\textbf{Q}}}_{v,u,s}^{\Pi_\alpha^{(t)}} \ d\widehat{\Pi}_\alpha^{(t)}(v) d\widehat{\Pi}_\alpha^{(t)}(u) d\widehat{\Pi}_\alpha^{(t)}(s),
\end{equation}
where ${\textnormal{\textbf{Q}}}_{v,u,s}^{\Pi_\alpha^{(t)}}$ is defined as \eqref{eq:def:Q:basic form}, namely,
\begin{equation}\label{eq:def:Q:fixed rate}
\begin{split}
 {\textnormal{\textbf{Q}}}_{v,u,s}^{\Pi_\alpha^{(t)}} := & \PP_\alpha(s<T_{v,u}<\infty\ | \ \Pi_\alpha^{(t)}[0,v] ) - \PP_\alpha(s<T_v<\infty\ |\ \Pi_\alpha^{(t)}[0,v])\\
&- \PP_\alpha(s<T_u<\infty\ | \ \Pi_\alpha^{(t)}[0,v] ) + \PP_\alpha(s<T<\infty\ |\ \Pi_\alpha^{(t)}[0,v]).
\end{split}
\end{equation}
Furthermore, the points $\pi_i(t;\alpha)$ from \eqref{eq:def:pi closest points} can now be interpreted as the $i$-th closest point from the origin in $\Pi_\alpha^{(t)}$.
Based on these identities, our goal is to establish the following estimates.

\begin{prop}\label{prop:fixed perturbed:fixed rate}
	Let $\epsilon, C>0$ be given, and set $\hat{h}$ as \eqref{eq:def:horizon}. Then, there exists $\alpha_0=\alpha_0(\epsilon,C)>0$ such that for all $0<\alpha<\alpha_0$, we have
	\begin{equation}\label{eq:fixed perturbed:fixed result}
	\begin{split}
	&\PP \left(-\alpha^{\frac{1}{2}-\epsilon}\sigma_1(t)\le \mathcal{J}[t;\Pi_\alpha] \le \alpha^{-\epsilon} \sigma_1\sigma_2(t), \ \forall t \le \hat{h} \right) \ge 1- \exp\left(-\alpha^{-\frac{\epsilon}{200}} \right);\\
	&\PP \left(|\mathcal{Q}[t;\Pi_\alpha]| \le 
	\alpha^{-\epsilon} \sigma_1\sigma_2\sigma_3(t),\ \forall  t\le \hat{h} \right) \ge 1-\exp\left(-\alpha^{-\frac{\epsilon}{200}} \right).
	\end{split}
	\end{equation}
\end{prop}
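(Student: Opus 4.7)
The plan is to combine the pointwise bounds on $\mathbf{J}_{u,s}$ and $\mathbf{Q}_{v,u,s}$ from Propositions~\ref{prop:Jbound:quenched} and~\ref{prop:Qbound:quenched} with nested applications of the martingale concentration lemmas from Section~\ref{subsec:fixed:mgconcen}, exploiting the iterated stochastic-integral representations~\eqref{eq:def:Jint:rev time} and~\eqref{eq:def:Qint:rev time} in the time-reversed process $\Pi_\alpha^{(t)}$. First I would fix a good event $\mathcal{G}$ on which the pointwise envelopes $\mathbf{J}_{u,s}\le \alpha^{-\epsilon}/\sqrt{(u+1)(s+1)}$, $\mathbf{J}_{u,s}\ge -\alpha^{1-\epsilon}/\sqrt{s+1}$, and $|\mathbf{Q}_{v,u,s}|\le \alpha^{-\epsilon}/\sqrt{(v+1)(u+1)(s+1)}$ hold uniformly for $0\le v<u<s\le \hat{h}$, and on which $\Pi_\alpha^{(t)}$ satisfies standard Poisson concentration (via Corollary~\ref{cor:concentration:numberofpts each interval}). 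A union bound then gives $\PP(\mathcal{G}^c) \le \exp(-\alpha^{-c_\epsilon})$.

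For the upper bound on $\mathcal{J}$ I would apply Lemma~\ref{lem:concen of int:conti:forwardtime} twice. The inner integral $F_s := \int_0^s \mathbf{J}_{u,s}^{\Pi_\alpha^{(t)}}\, d\widehat{\Pi}_\alpha^{(t)}(u)$ is a predictable martingale in $u$; feeding it the pointwise envelope above together with the quadratic-variation bound $\int_0^s \mathbf{J}_{u,s}^2\, \alpha\, du \lesssim \alpha^{1-2\epsilon}\log(s+1)/(s+1)$ yields, with high probability uniformly in $s\le \hat{h}$, the envelope $|F_s| \lesssim \alpha^{-\epsilon}\sigma_1(t)/\sqrt{s+1}$ for $s\ge \pi_1$, and the strictly stronger compensator-only bound $|F_s|\lesssim \alpha^{1-\epsilon}$ for $s<\pi_1$ (on which $\Pi_\alpha^{(t)}[0,s]$ is empty). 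For the outer integral $\mathcal{J}[t;\Pi_\alpha]=\int_0^t F_s\, d\widehat{\Pi}_\alpha^{(t)}(s)$ I would split at $\tau_-=\pi_1$: the $[0,\pi_1]$ piece is deterministic given $\pi_1$ and contributes a subdominant term that can be bounded by the coupling estimate in the proof of Proposition~\ref{prop:Jbound:quenched}, while on $[\pi_1,t]$ a second application of Lemma~\ref{lem:concen of int:conti:forwardtime} with $\tau_-=\pi_1$ delivers the advertised factor $\alpha^{-\epsilon}\sigma_1\sigma_2(t)$. The crucial point is that the inner envelope is now $\pi_1$-measurable, so $\sigma_1(t)$ factors out through the $\tau_-$-measurability requirement of Lemma~\ref{lem:concen of int:conti:forwardtime}, and the closest point of $\Pi_\alpha^{(t)}$ strictly past $\pi_1$ is $\pi_2$; the lemma therefore evaluates the envelope at $\pi_2$, giving $\bar F_t(\pi_2)\sim \alpha^{-\epsilon}\sigma_1\sigma_2(t)$ rather than the naive $\bar F_t(\pi_1)\sim \alpha^{-\epsilon}\sigma_1(t)^2$. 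The lower bound $\mathcal{J}\ge -\alpha^{1/2-\epsilon}\sigma_1(t)$ is obtained by the same two-step scheme, but applied to the uniform-in-$u$ lower envelope $-\alpha^{1-\epsilon}/\sqrt{s+1}$: the inner integral collapses to a compensated-Poisson integral against a deterministic function of $s$, and after the outer concentration only one $\sigma_1(t)$ factor survives.

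The bound on $\mathcal{Q}$ is handled by a direct three-fold analogue: three nested applications of Lemma~\ref{lem:concen of int:conti:forwardtime}, stopping successively at $\tau_-=\pi_1$ and then $\tau_-=\pi_2$ in the two outer layers to extract one $\sigma_i(t)$ factor per layer, combined with the pointwise bound on $\mathbf{Q}_{v,u,s}$ from Proposition~\ref{prop:Qbound:quenched}. The hardest part will be the bookkeeping in these nested applications: at each layer the envelope $\bar f_t$ produced by one application of Lemma~\ref{lem:concen of int:conti:forwardtime} must be verified to be $\tau_-$-measurable for the next, and the pointwise, $L^1$, and $L^2$ hypotheses~\eqref{eq:concen:condition:forwardtime} must be checked at each level with a careful choice of $\Delta$ separating the singular region near the backward-time origin from the bulk. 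Moreover, the parameters $N,\eta,\delta$ and the deviation $a$ must be coordinated with $\hat h = \alpha^{-2}\log^C(1/\alpha)$ so that the compounded failure probability --- from up to three layers of concentration plus union bounds over a discretization of $[0,\hat h]$ --- still fits within the claimed $\exp(-\alpha^{-\epsilon/200})$.
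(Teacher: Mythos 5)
Your proposal takes essentially the same route as the paper: nested applications of Lemma~\ref{lem:concen of int:conti:forwardtime} to the time-reversed iterated integrals \eqref{eq:def:Jint:rev time}, \eqref{eq:def:Qint:rev time}, using the pointwise envelopes from Propositions~\ref{prop:Jbound:quenched} and~\ref{prop:Qbound:quenched} together with the stopping times $\tau_{\textnormal{f}i}^{\textnormal{rev}}$ (Lemma~\ref{lem:Qint:taubase}), and setting $\tau_-$ successively to $\pi_1$, $\pi_2$ so that each layer extracts one additional $\sigma_i$ factor through the $2N\bar{f}_t(p_1)$ term. The paper spells out the three-layer $\mathcal{Q}$ argument in Lemma~\ref{lem:Qint 1st} and Corollaries~\ref{cor:Qint 2nd},~\ref{cor:Qint 3rd} and defers the two-layer $\mathcal{J}$ case to Remark~\ref{rmk:fixed perturbed:Janalog}; you reverse that emphasis, but the mechanism and the bookkeeping concerns you flag are exactly those handled in the paper.
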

Since everything dealt in this subsection is within the fixed rate Poisson process $\Pi_\alpha$, we use the simplified notation $\pi_i(t) = \pi_i(t;\Pi_\alpha)$ and $\sigma_i(t) = \sigma_i(t;\Pi_\alpha).$

We discuss the proof only for the second one, since the first one can be derived by analogous but simpler arguments; Although there is a difference that the first one has distinct upper and lower bound while the other is not, they can be obtained from the same proof as we will see in Remark \ref{rmk:fixed perturbed:Janalog}. The main idea is applying results from Section \ref{subsec:fixed:mgconcen} to control the triple integral \eqref{eq:def:Qint:rev time} based on the bound we got from Proposition \ref{prop:Qbound:quenched}. However, there are a couple of major differences we need to keep in mind:
\begin{enumerate}
	\item We want to estimate each integral in \eqref{eq:def:Qint:rev time} one by one, starting from the inner one. This entails understanding an appropriate continuity property (in $u,s,t',t$) of the inner integrals, which we will obtain using Lemma \ref{lem:derivative of Q and J}.
	
	\item ${\textnormal{\textbf{Q}}}_{v,u,s}^{\Pi_\alpha^{(t)}}$ is not small enough for small $v,u,s$, and we need to control such cases relying on Lemma \ref{lem:concentrationofint:continuity}, not on Lemma~\ref{lem:concentration of integral}. This is why an upper bound of $\alpha^{-\epsilon} \sigma_1\sigma_2\sigma_3$ is necessary.
	
\end{enumerate} 

The following estimate on the derivative of ${\textnormal{\textbf{Q}}}_{v,u,s}$ gives the desired continuity property later.

\begin{lem} \label{lem:derivative of Q and J}
	Let ${\textnormal{\textbf{Q}}}_{v,u,s}^{\Pi_\alpha^{(t)}}$ be defined as \eqref{eq:def:Q:fixed rate}. With probability $1$, we have
	\begin{equation}
	\begin{split}
	\left|\frac{\partial}{\partial u} {\textnormal{\textbf{Q}}}_{v,u,s}^{\Pi_\alpha^{(t)}} \right| \vee
	\left|\frac{\partial}{\partial s} {\textnormal{\textbf{Q}}}_{v,u,s}^{\Pi_\alpha^{(t)}} \right|
	\leq 4
	\end{split}
	\end{equation}
	for all $0\leq v <u<s\le  t.$
\end{lem}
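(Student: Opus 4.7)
The plan is to bound the two partial derivatives by analyzing each of the four conditional probabilities that compose ${\textnormal{\textbf{Q}}}_{v,u,s}^{\Pi_\alpha^{(t)}}$ separately. Writing $\phi_{T_*}(u,s) := \PP_\alpha(s < T_* < \infty \mid \Pi_\alpha^{(t)}[0,v])$ for the four choices $T_* \in \{T, T_v, T_u, T_{u,v}\}$, I will show $|\partial_s \phi_{T_*}| \le 1$ for all four and $|\partial_u \phi_{T_*}| \le 2$ for $T_* \in \{T_u, T_{u,v}\}$ (the other two have no $u$-dependence). Summing the four signed contributions then yields the claimed bound of $4$ in both cases.

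For $\partial/\partial s$, decompose $\phi_{T_*}(u,s) = \PP_\alpha(T_* < \infty \mid \cdot) - \PP_\alpha(T_* \le s \mid \cdot)$, so that $\partial_s \phi_{T_*} = -f_{T_*\mid\cdot}(s)$, the conditional density of $T_*$ given $\Pi_\alpha^{(t)}[0,v]$. The bound $f_{T_*\mid\cdot}(s) \le 1$ holds almost surely for every such stopping time by the same argument as in the proof of Corollary~\ref{cor:conditioning on less information}: the event $\{s < T_* \le s + \delta\}$ forces the walk $W$ to jump at least once on $(s, s+\delta]$, which occurs with conditional probability at most $1 - e^{-\delta} \le \delta$. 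This immediately gives $|\partial_s {\textnormal{\textbf{Q}}}| \le 4$.

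For $\partial/\partial u$, the key observation is the identity
\begin{equation*}
\{T_u > s\} \;=\; \bigl\{W \le Y \text{ on } [0, u \wedge s]\bigr\} \cap \bigl\{W \le Y + 1 \text{ on } (u \wedge s, s]\bigr\},
\end{equation*}
together with its analogue for $T_{u,v}$ (where the barrier gains an additional $+1$ on $(v, \cdot]$). For $u < s$, increasing $u$ by $du$ relaxes the tight constraint $\{W \le Y\}$ to the looser $\{W \le Y+1\}$ on the sliver $(u, u+du]$; the symmetric difference of the two events is therefore $\{W = Y+1 \text{ somewhere in }(u,u+du]\}$, whose conditional probability is bounded by the walk's total jump rate $1$ times $du$. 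Applying the same reasoning to $\PP(T_u = \infty \mid \cdot)$ (letting $s \to \infty$) and combining via $\phi_{T_u} = \PP(T_u > s\mid\cdot) - \PP(T_u = \infty\mid\cdot)$ yields $|\partial_u \phi_{T_u}| \le 1 + 1 = 2$. The analysis for $T_{u,v}$ is identical since the extra $v$-jump is fixed and unaffected by varying $u$; summing gives $|\partial_u {\textnormal{\textbf{Q}}}| \le 4$.

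The main technical obstacle is turning the informal ``relaxation on $(u, u+du]$'' argument into a rigorous coupling. Concretely, one couples $W$ and $\Pi$ across the parameter values $u$ and $u+du$ and identifies the set of $(W, \Pi)$ on which the stopping times actually differ; the Radon--Nikodym/occupation-time bound then reduces to the same density estimate used for $\partial_s$, namely that the conditional law of $W$ places mass at most $O(du)$ on the critical slice $(u, u+du]$. Once this coupling is set up, both derivative bounds follow in parallel, and the piecewise-constant nature of $u \mapsto T_u$ (for fixed $W, \Pi$) ensures the derivatives exist in the Lebesgue a.e. sense needed for the later martingale-concentration applications.
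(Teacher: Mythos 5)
Your proof is correct and rests on the same core observation as the paper's: for the stopping times to behave differently across an infinitesimal perturbation of $u$ or $s$, the random walk $W$ must perform a jump in a window of width $\delta$, which happens with probability at most $1 - e^{-\delta} \le \delta$. The only cosmetic difference is in how you handle $\partial_u$: you split $\PP_\alpha(s < T_u < \infty \mid \cdot) = \PP_\alpha(T_u > s \mid \cdot) - \PP_\alpha(T_u = \infty \mid \cdot)$ and bound each piece separately (getting $\le 2$ per $u$-dependent term, then summing over the two such terms), whereas the paper bounds $|\PP_\alpha(s \le T_{v,u+\delta} < \infty \mid \cdot) - \PP_\alpha(s \le T_{v,u} < \infty \mid \cdot)|$ directly by identifying the symmetric difference event as $\{u \le T_{v,u+\delta} < u + \delta\}$ and invoking the jump bound once, giving $\le 1$ per term summed over all four. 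Both reach $\le 4$; both are non-sharp for the same reason (only two of the four terms depend on $u$). Your closing remarks about coupling and Lebesgue a.e. differentiability are not really needed — the probability space $(W, \Pi)$ is the same on both sides of the perturbation, so no coupling is being constructed, and the jump bound directly establishes Lipschitz continuity with constant $1$ (resp. $2$) in each variable, which is what is actually used downstream.
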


\begin{proof}
	Let $\delta>0$ be a small number such that $s+\delta<t$ and recall the definition \eqref{eq:def:Q:fixed rate}. We investigate each of the four terms in \eqref{eq:def:Q:fixed rate} separately. To study the partial derivative of the first term, we observe that 
	\begin{equation} \label{eq:tau prob diff}
	\begin{split}
	& \left| \,\PP_\alpha (s\le T_{v,u+\delta}<\infty \,|\, \Pi_\alpha^{(t)}[0,v] )
	-\PP_\alpha (s\le T_{v,u}<\infty \,|\, \Pi_\alpha^{(t)}[0,v] )\,\right| \\
	&\quad =
	\PP( u\leq T_{v,u+\delta}<u+\delta,\; t \leq T_{v,u}<\infty \,|\,\Pi_\alpha^{(t)}[0,v] ),
	\end{split}
	\end{equation}
	since the only possible way to have $T_{v,u+\delta}\neq T_{v,u}$ is when $u\leq T_{v,u+\delta}<u+\delta \leq T_{v,u}$. Then, we  can write down a crude bound such that
	\begin{equation}
	\PP( s\leq T_{v,u+\delta}<u+\delta,\; t\leq T_{v,u}<\infty \,|\,\mathcal{F}_v)
	\leq \delta,
	\end{equation}
	since $s\leq T_{u,s+\delta} <s+\delta$ implies that the random walk $W$ has performed a jump between times $s$ and $s+\delta$. The same argument holds for $\delta<0$ as well, and hence we obtain that 
	\begin{equation}
	\left| \frac{\partial}{\partial u} \PP_\alpha (s\le T_{v,u}<\infty \,|\, \mathcal{F}_v ) \right| \leq 1.
	\end{equation}
	Applying this argument to the three other terms of \eqref{eq:def:Q:fixed rate}, we get the desired bound for the $s$-partial derivative of $\textbf{Q}_{v,u,s}$. 
	
	The derivative with respect to $s$ can be estimated analogously, by noticing that (for $\delta>0$)
	\begin{equation}
	|\,\PP_\alpha (s+\delta\le T_{v,u}<\infty \,|\, \mathcal{F}_v )-
	\PP_\alpha (s\le T_{v,u}<\infty \,|\, \mathcal{F}_v )\,|
	=
	\PP(s\leq T_{v,u} <s+\delta \,|\,\mathcal{F}_v).
	\end{equation}
\end{proof}

A straight-forward generalization gives the analogous estimate on $\partial_t J(s,t\,|\,\mathcal{F}_s)$, which we record in the following corollary.

\begin{cor} \label{cor:derivative of J}
	With probability $1$, we have
	\begin{equation}
	\left|\frac{\partial}{\partial s} {\textnormal{\textbf{J}}}_{u,s}^{\Pi_\alpha^{(t)}} \right|	\leq 2,
	\end{equation}
	for all $0\leq u< s\le t.$
\end{cor}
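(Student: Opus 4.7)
The plan is to imitate the proof of Lemma~\ref{lem:derivative of Q and J}, applied to each of the two summands in the definition of $\textnormal{\textbf{J}}_{u,s}^{\Pi_\alpha^{(t)}}$. Recalling
\begin{equation}
\textnormal{\textbf{J}}_{u,s}^{\Pi_\alpha^{(t)}} = \PP_\alpha(s < T_u < \infty \mid \Pi_\alpha^{(t)}[0,u]) - \PP_\alpha(s < T < \infty \mid \Pi_\alpha^{(t)}[0,u]),
\end{equation}
it suffices to show that each of the two probabilities has $s$-derivative bounded by $1$ in absolute value, since the triangle inequality will then deliver the claimed bound of $2$.

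For the first term, fix $\delta > 0$ with $s + \delta \le t$ and abbreviate the conditioning on $\Pi_\alpha^{(t)}[0,u]$ by $\mathcal{F}_u$. The key telescoping identity is
\begin{equation}
\PP_\alpha(s < T_u < \infty \mid \mathcal{F}_u) - \PP_\alpha(s + \delta < T_u < \infty \mid \mathcal{F}_u) = \PP_\alpha(s < T_u \le s+\delta \mid \mathcal{F}_u),
\end{equation}
which is an exact analogue of \eqref{eq:tau prob diff}. On the event $\{s < T_u \le s+\delta\}$ the random walk $W$ must perform at least one jump during $[s,s+\delta]$, an event of probability $1 - e^{-\delta} \le \delta$ independent of the past, so the right-hand side is at most $\delta$. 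Dividing by $\delta$ and letting $\delta \downarrow 0$ produces a derivative bound of $1$, and an identical argument with $\delta < 0$ handles the other one-sided derivative, yielding $\bigl|\partial_s \PP_\alpha(s<T_u<\infty\mid\mathcal{F}_u)\bigr| \le 1$.

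The very same argument (simply omitting the extra unit jump added to $Y$ at $u$) gives $\bigl|\partial_s \PP_\alpha(s<T<\infty\mid\mathcal{F}_u)\bigr| \le 1$. Adding the two bounds establishes the corollary. There is no substantive obstacle here: the proof is strictly simpler than that of Lemma~\ref{lem:derivative of Q and J} because only two terms appear instead of four, and differentiation in $s$ corresponds to the second (easier) case treated in that lemma.
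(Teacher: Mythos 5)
Your proof is correct and is exactly what the paper intends: the corollary is stated without a separate proof because it is the "straight-forward generalization" of Lemma~\ref{lem:derivative of Q and J}, namely the same telescoping identity and one-jump probability bound applied term-by-term to the two summands in $\textnormal{\textbf{J}}$.
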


Next, we introduce stopping times that provides a fundamental advantage on dealing with the issue (2) mentioned above. 

\begin{equation} \label{eq:def:tau:Qint bound}
\begin{split}
&\tau_{\textnormal{f}1}^{\textnormal{rev}} (t)= \tau_{\textnormal{f}1}^{\textnormal{rev}}(t,\alpha) := \inf \left\{s>\alpha^{-1}: \left|\Pi_{\alpha}^{(t)}[s-\alpha^{-1},\, s]\right| \geq \alpha^{-\frac{\epsilon}{20}} \right\};\\
&\tau_{\textnormal{f}2}^{\textnormal{rev}} (t) =\tau_{\textnormal{f}2}^{\textnormal{rev}}(t,\alpha) := \inf\left\{{s>\alpha^{-1-\frac{\epsilon}{10}}: \left|\Pi_\alpha^{(t)}[s-\alpha^{-1-\frac{\epsilon}{20}},\,s] \right|=0 }\right\},\\
&\tau_{\textnormal{f}3}^{\textnormal{rev}}(t) =\tau_{\textnormal{f}3}^{\textnormal{rev}}(t,\alpha) := 
\inf \left\{s>0: \sup_{v,u: \, v<u<s} \left\{\left|{\textnormal{\textbf{Q}}}_{v,u,s}^{\Pi_\alpha^{(t)}}\right| - \frac{\alpha^{-\frac{\epsilon}{20}}}{\sqrt{(v+1)(u+1)(s+1)}}\right\} >0
\right\},\\
&\tau_{\textnormal{f}}^{\textnormal{rev}}(t) =\tau_{\textnormal{f}}^{\textnormal{rev}}(t,\alpha) := \tau_{\textnormal{f}1}^{\textnormal{rev}}(t)\wedge\tau_{\textnormal{f}2}^{\textnormal{rev}}(t)\wedge\tau_{\textnormal{f}3}^{\textnormal{rev}}(t).
\end{split}
\end{equation}

In the proof of Proposition \ref{prop:fixed perturbed:fixed rate}, $\tau_{\textnormal{f}1}^{\textnormal{rev}}$ provides fundamental control on the size of the Poisson process, and  $\tau_{\textnormal{f}2}^{\textnormal{rev}}$ is  needed to ensure that $\pi_1, \pi_2, \pi_3 \leq 4\alpha^{-1-\frac{\epsilon}{10}}$. The purpose of $\tau_{\textnormal{f}3}^{\textnormal{rev}}$ is obvious in controlling the integral \eqref{eq:def:Qint:rev time}.

\begin{lem} \label{lem:Qint:taubase}
	Let $\hat{h}$ be as \eqref{eq:def:horizon}. Under the above definition, there exists $\alpha_0 = \alpha_0(\epsilon,C)$ such that for all $0<\alpha<\alpha_0$,  $$\PP\left(\inf_{t\le \hat{h}} \left\{ \tau_{\textnormal{f}}^{\textnormal{rev}}(t,\alpha)\right\} \ge \hat{h} \right) \geq  1- \exp(-\alpha^{-\epsilon/30}).$$
\end{lem}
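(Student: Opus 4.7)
We establish separately that each stopping time $\tau_{\textnormal{f}i}^{\textnormal{rev}}(t,\alpha) \ge \hat{h}$ holds for all $t \le \hat{h}$ with probability exceeding $1-\exp(-\alpha^{-\epsilon/25})$, and conclude via a union bound over $i\in\{1,2,3\}$. The key identity driving the first two steps is $\Pi_\alpha^{(t)}[a,b] = \Pi_\alpha[t-b,t-a]$, which converts the double quantifier ``$\forall t\le \hat{h},\ \forall s\le \hat{h}$'' into a single condition on windows of $\Pi_\alpha$ whose right endpoint ranges over $[-\hat{h},\hat{h}]$.

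For $\tau_{\textnormal{f}1}^{\textnormal{rev}}$, failure is contained in the event that some interval of length $\alpha^{-1}$ inside $\Pi_\alpha\cap[-\hat{h},\hat{h}]$ carries at least $\alpha^{-\epsilon/20}$ points; covering this interval by $O(\alpha\hat{h})=O(\alpha^{-1}\log^{C_0}(1/\alpha))$ overlapping windows of length $2\alpha^{-1}$, each Poisson$(2)$-distributed, the Chernoff bound gives per-window failure probability $\exp(-c\alpha^{-\epsilon/20}\log(1/\alpha))$, and a union bound finishes. For $\tau_{\textnormal{f}2}^{\textnormal{rev}}$, failure corresponds to an empty window of length $\alpha^{-1-\epsilon/20}$ inside a shifted copy of $\Pi_\alpha$, which is empty with probability $\exp(-\alpha^{-\epsilon/20})$; a three-to-one covering argument with $O(\alpha^{1+\epsilon/20}\hat{h})$ discretized positions and a union bound concludes.

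The main step is $\tau_{\textnormal{f}3}^{\textnormal{rev}}$. Since $\Pi_\alpha^{(t)}$ is itself a rate-$\alpha$ Poisson process, Proposition~\ref{prop:Qbound:quenched} applied to $\Pi_\alpha^{(t)}$ with a suitably small parameter $\eta$ (depending on $\epsilon$) yields $\tau_{\textnormal{f}3}^{\textnormal{rev}}(t)\ge\hat{h}$ with probability $\ge 1-\exp(-\alpha^{-c_\eta})$ for each fixed $t$. To lift this to a uniform statement over $t\in[0,\hat{h}]$, we discretize $[0,\hat{h}]$ at spacing $\delta_t=\alpha^{10}$ and augment the resulting grid with the $O(\alpha\hat{h})$ random times at which an atom of $\Pi_\alpha$ enters or leaves the sliding window $[t-\hat{h},t]$. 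Between consecutive points of this augmented grid no atom crosses the window boundary, so the atoms of $\Pi_\alpha^{(t)}$ undergo only a rigid $t$-shift and ${\textnormal{\textbf{Q}}}_{v,u,s}^{\Pi_\alpha^{(t)}}$ varies smoothly in $t$ with derivative $O(1)$, via an optional-stopping argument analogous to Lemma~\ref{lem:derivative of Q and J}. The polynomial-in-$\alpha^{-1}$ discretization cost is then comfortably absorbed by the super-polynomial tail $\exp(-\alpha^{-c_\eta})$.

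The principal obstacle is the uniformity in $t$ for the $\tau_{\textnormal{f}3}^{\textnormal{rev}}$ step, where the per-$t$ quenched bound on ${\textnormal{\textbf{Q}}}$ must be lifted to a simultaneous statement across the continuum $t\in[0,\hat{h}]$; this is resolved by the discretization-plus-continuity scheme above, crucially relying on the fact that the window $[t-\hat{h},t]$ of $\Pi_\alpha$ only changes through discrete atom-crossing events plus a rigid shift on which ${\textnormal{\textbf{Q}}}$ depends smoothly. The final bound $\exp(-\alpha^{-\epsilon/30})$ is then attained by choosing $\alpha_0(\epsilon,C_0)$ small enough that the union bound over $i\in\{1,2,3\}$ together with all of the above polynomial discretization factors remains dominated.
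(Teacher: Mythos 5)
Your treatment of $\tau_{\textnormal{f}1}^{\textnormal{rev}}$ and $\tau_{\textnormal{f}2}^{\textnormal{rev}}$ is the same as the paper's: use the identity $\Pi_\alpha^{(t)}[a,b]=\Pi_\alpha[t-b,t-a]$ to reduce the double supremum over $(t,s)$ to a condition on deterministic windows of $\Pi_\alpha$ covering $[0,\hat h]$, then cover and union-bound with Chernoff. (One small inaccuracy: the relevant windows live in $[0,\hat h]$, not $[-\hat h,\hat h]$, since $s\le t$.) You correctly identify that the nontrivial part is the uniformity over $t$ for $\tau_{\textnormal{f}3}^{\textnormal{rev}}$, a point the paper's own proof passes over in one sentence by citing Proposition~\ref{prop:Qbound:quenched}; the paper defers the explicit $t$-continuity machinery to the proof of Lemma~\ref{lem:Qint 1st}.

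For $\tau_{\textnormal{f}3}^{\textnormal{rev}}$ your discretize-and-extend plan is the right shape, but as written it has a gap at the atom-crossing times. Between two neighbouring deterministic grid times $t_\delta$ and $t$, the set $\Pi_\alpha^{(t)}[0,v]$ differs from a rigid shift of $\Pi_\alpha^{(t_\delta)}[0,\cdot]$ by the points of $\Pi_\alpha$ in $(t_\delta,t]$, which insert at reversed position near $0$ and cause ${\textnormal{\textbf{Q}}}_{v,u,s}^{\Pi_\alpha^{(t)}}$ to jump discontinuously. Your ``$O(1)$ derivative via Lemma~\ref{lem:derivative of Q and J}'' controls only the smooth rigid-shift part; a jump of size $O(1)$ is far larger than the required threshold $\alpha^{-\epsilon/20}/\sqrt{(v+1)(u+1)(s+1)}$ when $v,u,s$ are of order $\hat h$, so the plan as stated does not close. (You also cannot simply union-bound over the random augmented grid, since Proposition~\ref{prop:Qbound:quenched} is an annealed statement at a fixed reversal time.) Two repairs are available: (i) observe that the regularity event $\mathcal E$ used inside the proof of Proposition~\ref{prop:Qbound:quenched} is a pointwise linear-growth condition on the point configuration, so a slightly strengthened version can be made to hold simultaneously for every reversal $\Pi_\alpha^{(t)}$, $t\le\hat h$, after which the bound on ${\textnormal{\textbf{Q}}}$ follows deterministically for all $t$ at once without any union bound over $t$; or (ii) keep your discretization, but explicitly bound the jump caused by the near-origin insertion as a fourth-order difference (it is morally a ``${\textnormal{\textbf{Q}}}_{0,v,u,s}$''-type quantity) on the already-established event that $\tau_{\textnormal{f}1}^{\textnormal{rev}},\tau_{\textnormal{f}2}^{\textnormal{rev}}\ge\hat h$, in the spirit of the boundary correction $\alpha^{-\epsilon/20}\bar f_u(\pi_1(t))$ that the paper introduces in display \eqref{eq:Qint:1st:conti:estim} of Lemma~\ref{lem:Qint 1st}. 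Either way, the required extra estimate must be supplied before the ``polynomial discretization cost absorbed by super-polynomial tail'' conclusion is valid.
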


\begin{proof}
	To estimate $\tau_{\textnormal{f}1}^{\textnormal{rev}}$, we divide the interval $[0,\hat{h}]$ into subintervals of length $\alpha^{-1}$. Each interval $[k\alpha^{-1}, (k+1)\alpha^{-1}]$ contains more than $\alpha^{-\frac{\epsilon}{20}}/2$ points with probability less than $\exp(-\alpha^{-\frac{\epsilon}{25}})$, and we take union bound over $k=0,1,\ldots, \alpha^{-2}$. Note that if each interval $[k\alpha^{-1}, (k+1)\alpha^{-1}]$ contains at most $\alpha^{-\frac{\epsilon}{20}}/2$ points, then every interval $[t-\alpha^{-1},\,t]$ has at most $\alpha^{-\frac{\epsilon}{20}}$ points. 
	
	Controlling $\tau_{\textnormal{f}2}^{\textnormal{rev}}$ is similar, where we divide $[0,\hat{h}]$ into subintervals of length $\alpha^{-1-\frac{\epsilon}{20}}/4$. Each of these subintervals contains at least one point with probability $1-\exp(-\alpha^{-\frac{\epsilon}{25}} )$.
	
	Finally, we will see that $\tau_{\textnormal{f}3}^{\textnormal{rev}} \geq \hat{h}$ with very high probability from Proposition \ref{prop:Qbound:quenched}.
\end{proof}

\begin{lem} \label{lem:Qint 1st}
	Let $\hat{h}$ be as \eqref{eq:def:horizon}, and for each $t\in[0,\hat{h}]$ let $\tau_{\textnormal{f}}^{\textnormal{rev}}(t)$ be as \eqref{eq:def:tau:Qint bound}. Then, we have
	\begin{equation}\label{eq:Qint:result:1stint}
	\begin{split}
	\PP \left(\sup_{u'\le  u\wedge \tau_{\textnormal{f}}^{\textnormal{rev}}(t)} \left|\intop_0^{u' } {\textnormal{\textbf{Q}}}_{v,u,s}^{\Pi_\alpha^{(t)}} d\widehat{ \Pi}_\alpha^{(t)}(v) \right| \leq \frac{5\alpha^{-\frac{\epsilon}{8}}\sigma_1(t) }{\sqrt{(u+1)(s+1)}},\;\forall 0<u<s \le t\leq \hat{h} \right) \\\geq 1-\exp\left(-\alpha^{-\frac{\epsilon}{160}} \right).
	\end{split}
	\end{equation}
\end{lem}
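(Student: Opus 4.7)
The plan is to control the innermost integral by applying the martingale concentration Lemma \ref{lem:concen of int:conti:forwardtime} for each fixed pair $(s,t)$, and then promote the resulting bound to a uniform one in $(u,s,t)$ by discretization plus continuity. As a preliminary, I would intersect with the event $\{\inf_{t\le \hat{h}} \tau_{\textnormal{f}}^{\textnormal{rev}}(t)\ge \hat{h}\}$, whose probability exceeds $1-\exp(-\alpha^{-\epsilon/30})$ by Lemma \ref{lem:Qint:taubase}. On this event the pointwise bound $|\textbf{Q}_{v,u,s}^{\Pi_\alpha^{(t)}}| \le \alpha^{-\epsilon/20}/\sqrt{(v+1)(u+1)(s+1)}$ and the Poisson-window controls needed by the concentration lemma are in force.

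For fixed $(s,t)$, I would apply Lemma \ref{lem:concen of int:conti:forwardtime} to the integrand $f_u(v) := \textbf{Q}_{v,u,s}^{\Pi_\alpha^{(t)}}$ against $\Pi_\alpha^{(t)}$ with $g\equiv \alpha$, $\tau_-=0$, cutoff $\Delta = \alpha^{-1}$, monotone envelopes $\pm\alpha^{-\epsilon/20}/\sqrt{(v+1)(u+1)(s+1)}$, Poisson-count cap $N=\alpha^{-\epsilon/20}$, rate cap $\eta=\alpha$, and Lipschitz constant $D=4$ from Lemma \ref{lem:derivative of Q and J}. The quantities $\mathbf{A} = 2\alpha^{1/2-\epsilon/20}/\sqrt{(u+1)(s+1)}$ and $\mathbf{M} = \alpha^{1-\epsilon/10}(\log \hat{h})/[(u+1)(s+1)]$ dominate $\int_0^{\Delta}|f|g\,dv$ and $\int_0^u f^2 g\, dv$ respectively. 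Taking $a=\alpha^{-\epsilon/160}$, the lemma yields, up to a failure probability $\mathrm{poly}(\alpha^{-1})\exp(-\alpha^{-\epsilon/160})$,
\begin{equation*}
\sup_{u'\le u}\Bigl|\int_0^{u'}\textbf{Q}_{v,u,s}^{\Pi_\alpha^{(t)}}\, d\widehat{\Pi}_\alpha^{(t)}(v)\Bigr|\le 2N\bar{f}_u(\pi_1(t))+3\mathbf{A}+a\sqrt{\mathbf{M}}\le \frac{5\alpha^{-\epsilon/8}\sigma_1(t)}{\sqrt{(u+1)(s+1)}},
\end{equation*}
where the dominant $2N\bar{f}_u(\pi_1(t))=2\alpha^{-\epsilon/10}\sigma_1(t)/\sqrt{(u+1)(s+1)}$ is already well within the target, and $3\mathbf{A}$, $a\sqrt{\mathbf{M}}$ fit after using $\sigma_1(t)\ge c\alpha^{1/2+\epsilon/20}$ on $\tau_{\textnormal{f}2}^{\textnormal{rev}}$.

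To conclude, I would place a grid of spacing $\delta_0=\alpha^K$ on $(s,t)$ for a large constant $K$, apply the above at each grid point, take a union bound, and close the gap by continuity. In $s$, the $4$-Lipschitz bound of Lemma \ref{lem:derivative of Q and J} shows the integral moves by $O(\delta_0|\Pi_\alpha^{(t)}[0,u]|)$, which is negligible once $K$ is large. In $t$, the window $\Pi_\alpha^{(t)}$ itself shifts, but off a rare boundary-point event whose total probability across the grid is $\mathrm{poly}(\alpha^{-1})\cdot\exp(-\alpha^{-c})$, the reversed process is only relabeled and the same Lipschitz estimate applies. All discretization and continuity errors are polynomial in $\alpha^{-1}$ and are absorbed into the $\exp(-\alpha^{-\epsilon/160})$ tail. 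The main obstacle is precisely this last step: $t$-continuity is delicate because shifting $t$ alters the point-process data on which $\textbf{Q}_{v,u,s}^{\Pi_\alpha^{(t)}}$ depends, not just the labels $v,u,s$, so Lemma \ref{lem:derivative of Q and J} must be combined with a boundary-exclusion event for the rate-$\alpha$ process to make the integrand essentially Lipschitz in $t$ throughout each grid cell.
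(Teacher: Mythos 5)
Your proposal follows the paper's proof almost step for step: apply Lemma \ref{lem:concen of int:conti:forwardtime} with $\tau_-\equiv 0$, $\Delta=\alpha^{-1}$, $N=\alpha^{-\epsilon/20}$, Lipschitz constant $D=4$ from Lemma \ref{lem:derivative of Q and J}, and envelope from $\tau_{\textnormal{f}3}^{\textnormal{rev}}$; then discretize in $(s,t)$ and close the gap via the derivative bound, handling the $t$-shift in the reversed process exactly as the paper does by relating $\Pi_\alpha^{(t)}$ to $\Pi_\alpha^{(t_\delta)}$. The one calibration you would need to tighten is the choice $a=\alpha^{-\epsilon/160}$: the failure probability then comes out as $\mathrm{poly}(\alpha^{-1})\exp(-\alpha^{-\epsilon/160})$, which is not literally $\le\exp(-\alpha^{-\epsilon/160})$, so one must take $a$ a little larger (the paper uses $a=\alpha^{-\epsilon/100}$), which costs nothing since $a\sqrt{\mathbf{M}}$ is still far below the target.
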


\begin{remark} \label{rmk:Qint 1st}
	Another way to phrase Lemma \ref{lem:Qint 1st} is the following: for each $t\in[0,\hat{h}]$, define the stopping time $\tau_1^{\textnormal{int}}(t)$ as
	\begin{equation} \label{eq:def:sig1:Qint}
	\tau_1^{\textnormal{int}}(t) := \inf\left\{s>0: \exists u\leq s \textnormal{ s.t. }
	\sup_{u'\le u}\left|\intop_0^{u'} {\textnormal{\textbf{Q}}}_{v,u,s}^{\Pi_\alpha^{(t)}} d\widehat{ \Pi}_\alpha^{(t)}(v) \right| \geq \frac{5\alpha^{-\frac{\epsilon}{8}}\sigma_1(t)}{\sqrt{(u+1)(s+1)}}  \right\}.
	\end{equation}
	Then, Lemmas \ref{lem:Qint:taubase} and \ref{lem:Qint 1st} imply that \begin{equation}
	\PP_\alpha \left( \bigcap_{t\le \hat{h}}\left\{\tau_1^{\textnormal{int}}(t) \wedge \tau_{\textnormal{f}}^{\textnormal{rev}}(t) \ge t \right\} \right) \geq 1-2\exp\left(-\alpha^{-\frac{\epsilon}{160}} \right).
	\end{equation}
\end{remark}

\begin{proof}[Proof of Lemma \ref{lem:Qint 1st}]

	We first establish \eqref{eq:Qint:result:1stint} for fixed $s$ and $t$, then extend the result to the form of \eqref{eq:Qint:result:1stint}.
	
	Let $s<t\le \hat{h}$ be fixed, and set 
	\begin{equation}
	f_u(x) := {\textnormal{\textbf{Q}}}_{x,u,s}^{\Pi_\alpha^{(t)}},\quad \bar{f}_u(x):= \frac{\alpha^{-\frac{\epsilon}{20}}}{\sqrt{(x+1)(u+1)(s+1)}}, \quad g(x) \equiv \alpha.
	\end{equation}
	We apply Lemma \ref{lem:concen of int:conti:forwardtime} to these functions, setting the parameters in the lemma as follows:
	\begin{itemize}
		\item We set $\tau_- \equiv 0 $ be deterministic, and let $\tau= \tau(t) = \tau_{\textnormal{f}}^{\textnormal{rev}}(t)$.
		
		\item Set $h=\hat{h}$, $\eta = 2\alpha$, $\Delta=\alpha^{-1}$, and $N=\alpha^{-\frac{\epsilon}{20}}$. Under these values, we let $\tau' = \tau_{\textnormal{f}1}^{\textnormal{rev}}(t)$, and $\tau'' = \infty$.
		
		\item Letting  $$D = 4, \quad M = \frac{\alpha^{1-\frac{\epsilon}{8}}}{(u+1)(s+1)}, \quad A = \frac{3\alpha^{-\frac{1}{2}-\frac{\epsilon}{20}}}{\sqrt{(u+1)(s+1)}},$$ (where we set $M$ to be deterministic as $\tau_- \equiv 0$) we see that the conditions given in  \eqref{eq:concen:condition:forwardtime}  are satisfied.
		
		\item Set $\delta=\alpha^{10}$ which satisfies \eqref{eq:concen:condition:delta:forward}, and let $a = \alpha^{-\frac{\epsilon}{100}}$.
	\end{itemize}
	
	Applying Lemma \ref{lem:concen of int:conti:forwardtime} under this setting gives
	\begin{equation}
	\begin{split}
	\PP \left( \sup_{u'\le u\wedge \tau_{\textnormal{f}}^{\textnormal{rev}}(t)} 
	\left|
	\intop_0^{u'}  {\textnormal{\textbf{Q}}}_{v,u,s}^{\Pi_\alpha^{(t)}} d\widehat{\Pi}_\alpha^{(t)}(v)
	\right| \le \frac{\alpha^{-\frac{\epsilon}{8}}\sigma_1(t) }{\sqrt{(u+1)(s+1)}}
	, \ \forall u \in [0,s] 
	\right) \\
	 \ge 1- \exp\left(-\alpha^{-\frac{\epsilon}{150} }\right),
	\end{split}
	\end{equation}
	and this holds for all small enough $\alpha>0$. Note that we used $\pi_1(t) \le \alpha^{-1-\frac{\epsilon}{20}}$, which comes from $\tau_{\textnormal{f}2}^{\textnormal{rev}}(t)$, to simplify the bound inside the probability.
	
	What remains is to extend this bound to hold for all $s$ and $t$. To this end, we first take a union bound over $s<t$ in a discretized interval such that
	\begin{equation}
	s,t \in \mathcal{T}:= \{x\in[0,\hat{h}]: x= k\delta, \ k \in \mathbb{Z} \},
	\end{equation}
	where $\delta = \alpha^{10}$.
	This gives
	\begin{equation}
	\begin{split}
	\PP \left( \sup_{u'\le u\wedge \tau_{\textnormal{f}}^{\textnormal{rev}}(t)} 
	\left|
	\intop_0^{u'}  {\textnormal{\textbf{Q}}}_{v,u,s}^{\Pi_\alpha^{(t)}} d\widehat{\Pi}_\alpha^{(t)}(v)
	\right| \le \frac{\alpha^{-\frac{\epsilon}{8}}}{\sqrt{(\pi_1(t)+1)(u+1)(s+1)}}
	, \ \forall u \in [0,s], \ s,t\in \mathcal{T}, \ s\le  t 
	\right) \\
	\ge 1- \exp\left(-\alpha^{-\frac{\epsilon}{160} }\right).
	\end{split}
	\end{equation}
	
	We first extend this to all $s<t$ and $t\in \mathcal{T}$. For any $s<t$ with $t\in \mathcal{T}$, let $s_\delta \ge s$ such that $s_\delta \in \mathcal{T}$ and $s_\delta \le s+\delta$. Using the estimate
	\begin{equation}
	\left|	{\textnormal{\textbf{Q}}}_{v,u,s}^{\Pi_\alpha^{(t)}} -{\textnormal{\textbf{Q}}}_{v,u,s_\delta}^{\Pi_\alpha^{(t)}} \right| \le 4\delta= 4\alpha^{10}, 
	\end{equation}
	and the fact that $\int_0^{u\wedge{\tau_{\textnormal{f}}^{\textnormal{rev}}(t)}} \left| d\widehat{\Pi}_\alpha^{(t)} (v)\right| \le \alpha^{-1-\frac{\epsilon}{10}}$ which comes from the definition of $\tau_{\textnormal{f}1}^{\textnormal{rev}}(t)$, we obtain that
	\begin{equation}
	\begin{split}
	\PP \left( \sup_{u'\le u\wedge \tau_{\textnormal{f}}^{\textnormal{rev}}(t)} 
	\left|
	\intop_0^{u'}  {\textnormal{\textbf{Q}}}_{v,u,s}^{\Pi_\alpha^{(t)}} d\widehat{\Pi}_\alpha^{(t)}(v)
	\right| \le \frac{2\alpha^{-\frac{\epsilon}{8}}}{\sqrt{(\pi_1(t)+1)(u+1)(s+1)}}
	, \ \forall u <s \le t, \ t\in \mathcal{T} 
	\right) \\
	\ge 1- \exp\left(-\alpha^{-\frac{\epsilon}{160} }\right).
	\end{split}
	\end{equation}
	
	Finally, we extend this result to hold for all $t\le \hat{h}$. Let $t_\delta \ge t$ be $t_\delta \in \mathcal{T}$ and $\delta' := t_\delta  -t\le   \delta$. Here, we need to consider the difference between measure $\Pi_\alpha^{(t)}$ and $\Pi_\alpha^{(t_\delta)}$. Note that for any $u'\le u<s\le t$ such that $u'\le \tau_{\textnormal{f}}^{\textnormal{rev}}(t)$,
	\begin{equation}
	\intop_0^{u'} {\textnormal{\textbf{Q}}}_{v, u,s}^{\Pi_\alpha^{(t)}} d\widehat{\Pi}_\alpha^{(t)}(v)
	= \intop_{\delta'}^{u'+\delta'} {\textnormal{\textbf{Q}}}_{v-\delta',u,s}^{\Pi_\alpha^{(t_\delta)}} d\widehat{\Pi }_\alpha^{(t_\delta)}(v).
	\end{equation}
	We can then proceed by switching ${\textnormal{\textbf{Q}}}_{v-\delta',u,s}^{\Pi_\alpha^{(t_\delta)}}$ to ${\textnormal{\textbf{Q}}}_{v,u,s}^{(t_\delta)}$, and  noting that on the event $\inf_{t\le \hat{h}} \{\tau_{\textnormal{f}}^{\textnormal{rev}}(t) \} \ge \hat{h}$ which holds with high probability from Lemma \ref{lem:Qint:taubase}, we have
	\begin{equation}\label{eq:Qint:1st:conti:estim}
\begin{split}
&\left|	\intop_{\delta'}^{(u'+\delta') \wedge u} {\textnormal{\textbf{Q}}}_{v,u,s}^{\Pi_\alpha^{(t_\delta)}} d\widehat{\Pi }_\alpha^{(t_\delta)}(v)
	+ \intop_{u}^{u'+\delta'} {\textnormal{\textbf{Q}}}_{v-\delta',u,s}^{\Pi_\alpha^{(t_\delta)}} d\widehat{\Pi }_\alpha^{(t_\delta)}(v)\right|\\
	&\le
	\left|\intop_{0}^{(u'+\delta')\wedge u} {\textnormal{\textbf{Q}}}_{v,u,s}^{\Pi_\alpha^{(t_\delta)}} d\widehat{\Pi }_\alpha^{(t_\delta)}(v)\right|
	+
		\left|\intop_{0}^{\delta'} {\textnormal{\textbf{Q}}}_{v,u,s}^{\Pi_\alpha^{(t_\delta)}} d\widehat{\Pi }_\alpha^{(t_\delta)}(v)\right| + \alpha^{-\frac{\epsilon}{20}}\bar{f}_u(\pi_1(t)).
\end{split}
	\end{equation}
	Thus, we obtain 
	\begin{equation}
		\begin{split}
		\PP \left( \sup_{u'\le u\wedge \tau_{\textnormal{f}}^{\textnormal{rev}}(t)} 
		\left|
		\intop_0^{u'}  {\textnormal{\textbf{Q}}}_{v,u,s}^{\Pi_\alpha^{(t)}} d\widehat{\Pi}_\alpha^{(t)}(v)
		\right| \le \frac{5\alpha^{-\frac{\epsilon}{8}}}{\sqrt{(\pi_1(t)+1)(u+1)(s+1)}}
		, \ \forall u <s \le t \le \hat{h} 
		\right) \\
		\ge 1- \exp\left(-\alpha^{-\frac{\epsilon}{160} }\right),
		\end{split}
	\end{equation}
	concluding the proof.
\end{proof}	
	
	\begin{remark}\label{rmk:fixed perturbed:Janalog}
		We stress that the same method can be applied to deduce the corresponding bound for the integral of $\textnormal{\textbf{J}}.$ Namely, changing the definition of $\tau_{\textnormal{f}3}^{\textnormal{rev}}$ into the one that has the bound from Lemma \ref{prop:Jbound:quenched} instead of \ref{prop:Qbound:quenched}, we obtain
			\begin{equation}
			\begin{split}
			\PP \left(\sup_{s'\le  s\wedge \tau_{\textnormal{f}}^{\textnormal{rev}}(t)} \intop_0^{s' } {\textnormal{\textbf{J}}}_{u,s}^{\Pi_\alpha^{(t)}} d\widehat{ \Pi}_\alpha^{(t)}(u)  \leq \frac{\alpha^{-\frac{\epsilon}{8}}\sigma_1(t) }{\sqrt{(s+1)}},\;\forall 0<s \le t\leq \hat{h} \right) \geq 1-\exp\left(-\alpha^{-\frac{\epsilon}{160}} \right);\\
				\PP \left(\inf_{s'\le  s\wedge \tau_{\textnormal{f}}^{\textnormal{rev}}(t)} \intop_0^{s' } {\textnormal{\textbf{J}}}_{u,s}^{\Pi_\alpha^{(t)}} d\widehat{ \Pi}_\alpha^{(t)}(u)  \geq -{\alpha^{1-\frac{\epsilon}{8}}\sigma_1(t) }-\frac{\alpha^{\frac{1}{2} -\frac{\epsilon}{8} }}{\sqrt{s+1}} ,\;\forall 0<s \le t\leq \hat{h} \right) \geq 1-\exp\left(-\alpha^{-\frac{\epsilon}{160}} \right).
			\end{split}
			\end{equation}
			(Note that unlike the upper bound, the two terms in the lower bound cannot be unified together since one is not always larger than the other.)
			Based on this observation, we similarly obtain the analogue of Corollary \ref{cor:Qint 2nd} for $\textnormal{\textbf{J}}$ to deduce the first conclusion of Proposition \ref{prop:fixed perturbed:fixed rate}.
	\end{remark}
	
	Next step, we derive an analogous result after integrating once more. For convenience we define
	\begin{equation}
		F_1(u,s,t):= \intop_0^u {\textnormal{\textbf{Q}}}_{v,u,s}^{\Pi_\alpha^{(t)}} d\widehat{ \Pi}_\alpha(v).
	\end{equation}

\begin{cor}\label{cor:Qint 2nd}
	For each $t$, let $\tilde{ \tau}_1^{\textnormal{int}}(t):= \tau_1^{\textnormal{int}}(t) \wedge \tau_{\textnormal{f}}^{\textnormal{rev}}(t)$ with $\tau_1^{\textnormal{int}}(t) $ given in Remark \ref{rmk:Qint 1st}. Then, we have 
	\begin{equation}\label{eq:Qint 2nd:main ineq:inlem}
	\begin{split}
	\PP \left( \sup_{s'\le s\wedge \tilde{\tau}_1^{\textnormal{int}}(t) } \left|\intop_0^{s'} F_1(u,s,t) d\widehat{ \Pi}_\alpha^{(t)}(u) \right| \leq \frac{\alpha^{-\frac{\epsilon}{4}}\sigma_1\sigma_2(t)}{\sqrt{s+1}},\;\forall 0\le s\le t\leq\hat{h} \right) \\
	\geq 1-\exp\left(-\alpha^{-\frac{\epsilon}{160}} \right).
	\end{split}
	\end{equation}
\end{cor}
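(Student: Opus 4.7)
The proof follows the same template as Lemma \ref{lem:Qint 1st}: for each fixed $s \le t \le \hat{h}$, apply the martingale concentration inequality of Lemma \ref{lem:concen of int:conti:forwardtime} to the one-parameter family of integrals $s' \mapsto \intop_0^{s'} F_1(u,s,t)\, d\widehat{\Pi}_\alpha^{(t)}(u)$, viewed as a martingale in $s'$. On the event $\{s' \le \tilde{\tau}_1^{\textnormal{int}}(t)\}$, the integrand is already controlled by the definition of $\tau_1^{\textnormal{int}}(t)$, namely $|F_1(u,s,t)| \le \tfrac{5\alpha^{-\epsilon/8}\sigma_1(t)}{\sqrt{(u+1)(s+1)}}$, so the new issue is extracting the extra factor $\sigma_2(t)$ in the final bound, rather than just $\sigma_1(t)$ as in Lemma \ref{lem:Qint 1st}.

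To produce this extra factor, the plan is to apply the concentration lemma with $\tau_- = \pi_2(t)$, so that the first two points $\pi_1(t), \pi_2(t)$ of $\Pi_\alpha^{(t)}$ lie outside the integration range. With the envelope $\bar{f}_s(u) = \tfrac{5\alpha^{-\epsilon/8}\sigma_1(t)}{\sqrt{(u+1)(s+1)}}$ for $u \ge \pi_2(t)$, extended as the constant $\bar{f}_s(\pi_2(t))$ for $u < \pi_2(t)$ to keep it decreasing, $\bar{f}_s$ is $\tau_-$-measurable and satisfies $\bar{f}_s(p_1) = \bar{f}_s(\pi_1(t)) = \tfrac{5\alpha^{-\epsilon/8}\sigma_1\sigma_2(t)}{\sqrt{s+1}}$, which yields the desired $\sigma_1\sigma_2(t)$ factor in the term $2N\bar{f}_s(p_1)$ of the lemma's conclusion. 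The remaining contribution from $u \in [0, \pi_2(t))$ contains at most one jump of $\Pi_\alpha^{(t)}$, at $u = \pi_1(t)$, and since $\Pi_\alpha^{(t)}[0, \pi_1(t)) = \emptyset$ we have
\begin{equation*}
F_1(\pi_1(t), s, t) = -\alpha \intop_0^{\pi_1(t)} \mathbf{Q}_{v,\pi_1(t),s}^{\Pi_\alpha^{(t)}}\, dv,
\end{equation*}
which by the pointwise bound from $\tau_{\textnormal{f}3}^{\textnormal{rev}}$ is at most $O(\alpha^{1-\epsilon/20}/\sqrt{s+1})$; the deterministic compensator term $\alpha \intop_0^{\pi_2(t)} |F_1(u,s,t)|\, du$ is bounded analogously using $\pi_2(t) \le O(\alpha^{-1-\epsilon/10})$ coming from $\tau_{\textnormal{f}2}^{\textnormal{rev}}$. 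The parameters $\mathbf{M}, \mathbf{A}, N, \Delta, a$ are chosen as in the proof of Lemma \ref{lem:Qint 1st}, essentially $N = \alpha^{-\epsilon/20}$, $\Delta = \alpha^{-1}$, and $a = \alpha^{-\epsilon/200}$, and one checks term by term that $2N\bar{f}_s(p_1)$, $3\mathbf{A}$, and $a\sqrt{\mathbf{M}}$ are all bounded by $\alpha^{-\epsilon/4}\sigma_1\sigma_2(t)/\sqrt{s+1}$, using the polynomial lower bound $\sigma_2(t) \ge \alpha^{1/2+\epsilon/20}$ from $\tau_{\textnormal{f}2}^{\textnormal{rev}}$.

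To extend to uniform control over all $s \le t \le \hat{h}$, I would take a union bound over a discretized grid $\{s, t\} \subset \delta \mathbb{Z} \cap [0,\hat{h}]$ with $\delta = \alpha^{10}$, then fill the gaps using the derivative bound $|\partial_s \mathbf{Q}_{v,u,s}^{\Pi_\alpha^{(t)}}| \vee |\partial_u \mathbf{Q}_{v,u,s}^{\Pi_\alpha^{(t)}}| \le 4$ from Lemma \ref{lem:derivative of Q and J} and the total-mass bound on $\Pi_\alpha^{(t)}[0,\hat{h}]$ coming from $\tau_{\textnormal{f}1}^{\textnormal{rev}}$, in complete analogy with the continuity estimate \eqref{eq:Qint:1st:conti:estim} in the proof of Lemma \ref{lem:Qint 1st}. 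The main obstacle is the bookkeeping required in step two: one must verify that all three error terms $2N\bar{f}_s(p_1)$, $3\mathbf{A}$, $a\sqrt{\mathbf{M}}$ simultaneously admit a $\sigma_1\sigma_2(t)$-type bound, which requires combining the random envelope choice at $\tau_- = \pi_2(t)$ with the stopping-time estimates on $\pi_1(t), \pi_2(t)$, while still keeping $\mathbf{M}$ and $\bar{f}_s$ measurable with respect to $\mathcal{F}_{\tau_-}$ as required by Lemma \ref{lem:concen of int:conti:forwardtime}.
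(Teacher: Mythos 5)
Your proposal follows essentially the same template as the paper's proof — apply Lemma~\ref{lem:concen of int:conti:forwardtime} with a random lower limit $\tau_-$ chosen among the low-order points of $\Pi_\alpha^{(t)}$, so that the $\bar f_s(p_1)$ term picks up the extra $\sigma_2(t)$ factor, then absorb the residual integral over $[0,\tau_-)$ by a direct estimate and patch up the union bound over a grid in $(s,t)$ using Lemma~\ref{lem:derivative of Q and J}. The paper carries this out with $\tau_- = \pi_1(t)$ rather than your $\pi_2(t)$, and with the natural (non-extended) envelope $\bar f_s(x) = \tfrac{5\alpha^{-\epsilon/8}\sigma_1(t)}{\sqrt{(x+1)(s+1)}}$; then the first jump seen by $\intop_{\tau_-}^{s'}$ is $\pi_2(t)$, which is what actually produces $\sigma_1\sigma_2(t)$, and the residual $\intop_0^{\pi_1(t)}$ contains no jumps at all, so it is a purely deterministic compensator term.

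One thing to flag in your version: the constant extension $\bar f_s(u) := \bar f_s(\pi_2(t))$ for $u < \pi_2(t)$ does \emph{not} satisfy the envelope hypothesis of Lemma~\ref{lem:concen of int:conti:forwardtime} as stated, namely $\bar f_s(x) \ge \sup_{y\ge x}|f_s(y)|$ for all $x\le t\le\tau$. For $x\in[\pi_1(t),\pi_2(t))$ and $y$ just above $\pi_1(t)$, the single jump at $\pi_1(t)$ makes $|F_1(y,s,t)|$ as large as $\sim \alpha^{-\epsilon/20}\sigma_1(t)^2/\sqrt{s+1}$, whereas your extended envelope there is only $\sim \alpha^{-\epsilon/8}\sigma_1\sigma_2(t)/\sqrt{s+1}$, and $\sigma_1/\sigma_2$ can be as large as $\alpha^{-1/2-\epsilon/40}$ on the event controlled by $\tau_{\textnormal{f}2}^{\textnormal{rev}}$, which overwhelms the $\alpha^{-\epsilon/8+\epsilon/20}$ gain. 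This is not fatal, because the proof of Lemma~\ref{lem:concen of int:conti:forwardtime} only ever evaluates $\bar f_t$ at jump points $x\ge\tau_-$ (the integral $\intop_{\tau_-}^{\cdot}$ never touches $[0,\tau_-)$), so the hypothesis is over-stated and the extension is harmless in practice. But you should say so explicitly, or — more cleanly — drop the extension altogether and observe that with $\tau_-=\pi_2(t)$ the natural envelope suffices because the first jump inside the integration range is at $\pi_2(t)$ or $\pi_3(t)$, giving $\bar f_s(p_1)\le 5\alpha^{-\epsilon/8}\sigma_1\sigma_2(t)/\sqrt{s+1}$ directly. Your treatment of the leftover $\intop_0^{\pi_2(t)}$ (one jump at $\pi_1(t)$ bounded via the $\tau_{\textnormal{f}3}^{\textnormal{rev}}$ pointwise estimate on $\mathbf{Q}$, plus the compensator bounded via $\pi_2(t)\lesssim\alpha^{-1-\epsilon/10}$) does check out numerically against the target $\alpha^{-\epsilon/4}\sigma_1\sigma_2(t)/\sqrt{s+1}$.
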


\begin{remark} \label{rmk:Qint 2nd}
	Similarly as Remark \ref{rmk:Qint 1st}, define
	\begin{equation} \label{eq:def:sig2:Qint}
	\tau_2^{\textnormal{int}}(t):= \inf \left\{s>0:
	\sup_{s'\le s}\left| \intop_0^{s'} F_1(u,s,t) d\widehat{ \Pi}_\alpha^{(t)}(u) \right|\geq 
	\frac{\alpha^{-8\epsilon}\sigma_1\sigma_2(t)}{\sqrt{s+1}}
	\right\}.
	\end{equation}
	Then, Lemma \ref{lem:Qint 1st} and Corollary \ref{cor:Qint 2nd} imply
	\begin{equation}
	\PP\left( \bigcap_{t\le \hat{h}} \left\{ \tau_2^{\textnormal{int}}(t) \wedge \tilde{\tau}_1^{\textnormal{int}}(t) \ge t \right\}  \right) \geq 1-3\exp\left(-\alpha^{-\frac{\epsilon}{160}} \right).
	\end{equation}
\end{remark}

\begin{proof}[Proof of Corolloary \ref{cor:Qint 2nd}]
	The proof goes similar to that of Lemma \ref{lem:Qint 1st}, and hence we describe the argument more concisely. We first work with fixed $t$, and then extend the result to the desired conclusion. For each $s\le t \le \hat{h}$, define
	\begin{equation}
	f_s(x):= F_1(x,s,t), \quad \bar{f}_s(x):= \frac{5\alpha^{-\frac{\epsilon}{8}}\sigma_1(t) }{\sqrt{(x+1)(s+1)}}, \quad g(x)\equiv \alpha.
	\end{equation}
	We again apply Lemma \ref{lem:concen of int:conti:forwardtime} to these functions, with the other parameters set to be as follows.
	\begin{itemize}
		\item Let $\tau_- = \tau_-(t):= \pi_1(t)$, and let $\tau = \tau(t) =  \tilde{\tau}_1^{\textnormal{int}}(t)$.
		
		\item Set $h=\bar{h}, \ \eta = 2\alpha, \ \Delta = \alpha^{-1},$ $N=\alpha^{-\frac{\epsilon}{20}}$,   $\tau' = \tau_{\textnormal{f}1}^{\textnormal{rev}}(t)$, and $\tau''=\infty$ as in the proof of Lemma \ref{lem:Qint 1st}.
		
		\item Similarly before, the parameters
		\begin{equation}
		D=\alpha^{-2},\quad M = \frac{\alpha^{1-\frac{\epsilon}{3}} \sigma_1(t)^2}{s+1}, \quad A = \frac{\alpha^{-\frac{1}{2}-\frac{\epsilon}{7}} \sigma_1(t)}{\sqrt{s+1}},
		\end{equation}
		satisfy the conditions in \eqref{eq:concen:condition:forwardtime}. Note that $D$, the parameter for the derivative can be controlled deterministically by
		\begin{equation}
		|\partial_u F_1(u,s,t)| \le {\textnormal{\textbf{Q}}}_{u,u,s}^{\Pi_\alpha^{(t)}} \cdot (1+\alpha) 
		+
		\intop_{0}^u \left|\partial_u {\textnormal{\textbf{Q}}}_{v,u,s}^{\Pi_\alpha^{(t)}} \right|\cdot \left| d\widehat{\Pi}_\alpha^{(t)} (v)\right| \le \alpha^{-2},
		\end{equation}
		for all $s\le \tau \le \tau_{\textnormal{f}1}^{\textnormal{rev}} $. $|\partial_s F_1(u,s,t)| $ can be estimated in the same but simpler way, since the regime of integral is independent of $s$. 
		
		\item Set $\delta = \alpha^{10}$ as before which satisfies \eqref{eq:concen:condition:delta:forward}, and let $a=\alpha^{\frac{\epsilon}{100}}.$
	\end{itemize}
	
	Then, Lemma \ref{lem:concen of int:conti:forwardtime} implies that for each $t\in[0,\hat{h}]$,
	\begin{equation}
	\begin{split}
	\PP \left( 
	\sup_{s'\le s\wedge \tilde{\tau}_{1}^{\textnormal{int}}} \left|
	\intop_{\tau_-}^{s'} F_1(u,s,t)d\widehat{\Pi}_\alpha^{(t)}(u)
	\right|
	\le
	\frac{\alpha^{-\frac{\epsilon}{4}}\sigma_1\sigma_2(t)}{\sqrt{s+1}}, \ \forall s\in [0,t]
	\right)\\
	\ge 1- \exp\left(-\alpha^{-\frac{\epsilon}{150}} \right),
	\end{split}
	\end{equation}
	which holds for all small enough $\alpha>0.$ Note that we used $\pi_2(t) \le 3 \alpha^{-1-\frac{\epsilon}{20}}$, which comes from $\tau_{\textnormal{f}2}^{\textnormal{rev}}(t)$, to simplify the bound inside the probability.
	
	Moreover, note that the definition $\tau_- = \pi_1(t)$ gives that with probability one,
	\begin{equation}
\begin{split}
	\left|\intop_{0}^{\tau_-} F_1(u,s,t) d\widehat{\Pi}_\alpha^{(t)}(u) \right|
	&\le
	\intop_0^{\alpha^{-1-\frac{\epsilon}{20}}}
	\intop_0^u \frac{\alpha^{-\frac{\epsilon}{20}} \alpha^2 dvdu}{\sqrt{(v+1)(u+1)(s+1)}} \\
	&= \frac{2\alpha^{1-\frac{\epsilon}{10}}}{\sqrt{s+1}}
	\le
	\frac{\alpha^{-\frac{\epsilon}{4}} \sigma_1\sigma_2(t)}{\sqrt{s+1}},
\end{split}
	\end{equation}
	where the first and the last inequality hold if $s\le \tau_{\textnormal{f}}^{\textnormal{rev}}(t)$. Thus, we combine the two to obtain
	\begin{equation}
	\begin{split}
	\PP \left( 
	\sup_{s'\le s\wedge \tilde{\tau}_{1}^{\textnormal{int}}} \left|
	\intop_{0}^{s'} F_1(u,s,t)d\widehat{\Pi}_\alpha^{(t)}(u)
	\right|
	\le
	\frac{2\alpha^{-\frac{\epsilon}{4}}\sigma_1\sigma_2(t)}{\sqrt{s+1}}, \ \forall s\in [0,t]
	\right)\\
	\ge 1- \exp\left(-\alpha^{-\frac{\epsilon}{150}} \right),
	\end{split}
	\end{equation}
	
Extending this to the desired result is analogous as the argument in the proof of Lemma \ref{lem:Qint 1st}. Namely we take a union bound over $t$ in the discretized interval $\mathcal{T}$, and deduce an estimate corresponding to \eqref{eq:Qint:1st:conti:estim} appealing to the fact that the derivatives of $F_1$ are bounded. The details are omitted and left for interested readers.
\end{proof}

Now we are only left with the outer integral. The corollary below follows analogously as the proof of Lemma \ref{lem:Qint 1st} and Corollary \ref{cor:Qint 2nd}. Thus, we state the result and omit the details of its proof.

\begin{cor} \label{cor:Qint 3rd}
	Let $\tilde{\tau}_2^{\textnormal{int}}(t)  := \tau_2^{\textnormal{int}}(t) \wedge \tilde{\tau}_1^{\textnormal{int}}(t)$ with $\tau_2^{\textnormal{int}}(t), \tilde{\tau}_1^{\textnormal{int}}(t)  $ given in Remark \ref{rmk:Qint 2nd}, Lemma \ref{cor:Qint 2nd}, respectively, and define
	\begin{equation}
	F_2(s,t):= \intop_0^s F_1(u,s,t) d\widehat{ \Pi}_\alpha^{(t)} (u).
	\end{equation}
	Then, we have
	\begin{equation}
	\begin{split}
	\PP \left( \sup_{t'\leq t \wedge \tilde{\tau}_2^{\textnormal{int}}(t) } \left| \intop_0^{t'} F_2(s,t) d\widehat{ \Pi}_\alpha^{(t)} (s) \right| \leq \alpha^{-\epsilon} \sigma_1\sigma_2\sigma_3(t), \ \forall t\le\hat{h} \right) \geq 1-\exp \left( -\alpha^{-\frac{\epsilon}{160}}\right).
	\end{split}
	\end{equation}
\end{cor}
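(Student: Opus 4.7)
The plan is to mirror the proofs of Lemma \ref{lem:Qint 1st} and Corollary \ref{cor:Qint 2nd} by invoking Lemma \ref{lem:concen of int:conti:forwardtime} on the outermost integration $\int F_2(s,t)\,d\widehat{\Pi}_\alpha^{(t)}(s)$. Fixing $t\in[0,\hat h]$, I would set $f_t(s)=F_2(s,t)$, $g\equiv\alpha$, together with the envelope $\bar f_t(x)=\alpha^{-\epsilon/4}\sigma_1\sigma_2(t)/\sqrt{x+1}$, which is decreasing in $x$, $\mathcal{F}_{\pi_2(t)}$-measurable, and dominates $\sup_{y\ge x}|F_2(y,t)|$ on $\{s\le\tilde{\tau}_2^{\textnormal{int}}(t)\}$ by Corollary \ref{cor:Qint 2nd}. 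The critical choice is $\tau_- = \pi_2(t)$: the first point of $\Pi_\alpha^{(t)}$ after $\tau_-$ is $\pi_3(t)$, so the boundary term in the Lemma is effectively $2N\bar f_t(\pi_3) \asymp \alpha^{-\epsilon/4-\epsilon/20}\sigma_1\sigma_2\sigma_3(t)$, which fits inside the target $\alpha^{-\epsilon}\sigma_1\sigma_2\sigma_3(t)$ with room to spare. The remaining parameters mirror the previous proofs: $\tau = \tilde{\tau}_2^{\textnormal{int}}(t)$, $\tau'=\tau_{\textnormal{f}1}^{\textnormal{rev}}(t)$, $\tau''=\infty$, $\Delta=\alpha^{-1}$, $N=\alpha^{-\epsilon/20}$, $\eta=2\alpha$, $\delta=\alpha^{10}$, and $a=\alpha^{-\epsilon/100}$.

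I would then verify the moment and amplitude conditions in \eqref{eq:concen:condition:forwardtime}. Using $|F_2(s,t)|\le \alpha^{-\epsilon/4}\sigma_1\sigma_2(t)/\sqrt{s+1}$, a direct computation gives
\begin{equation}
\int_{\pi_2}^{\hat h} F_2(s,t)^2\alpha\,ds \lesssim \alpha^{1-\epsilon/2}\sigma_1^2\sigma_2^2(t)\log\hat h,
\end{equation}
so ${\bf M}\lesssim \alpha^{1-\epsilon/3}\sigma_1^2\sigma_2^2(t)$ and $a\sqrt{{\bf M}}\lesssim \alpha^{1/2-\epsilon/3}\sigma_1\sigma_2(t)$; on $\{\tau_{\textnormal{f}2}^{\textnormal{rev}}(t)\ge\hat h\}$, where $\sigma_3(t)\gtrsim \alpha^{1/2+\epsilon/40}$, this is well within $\alpha^{-\epsilon}\sigma_1\sigma_2\sigma_3(t)$. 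The ${\bf A}$-term $\int_0^{\alpha^{-1}}|F_2|\alpha\,ds \lesssim \alpha^{1/2-\epsilon/4}\sigma_1\sigma_2(t)$ is handled by the same comparison.

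Next I would check the derivative bound $|\partial_s F_2|\vee|\partial_t F_2|\le D$ demanded by the Lemma. Differentiating $F_2(s,t)=\int_0^s F_1(u,s,t)\,d\widehat{\Pi}_\alpha^{(t)}(u)$ in $s$ produces a boundary contribution $F_1(s,s,t)$ times the jump rate, plus an interior piece $\int_0^s \partial_s F_1(u,s,t)\,d\widehat{\Pi}_\alpha^{(t)}(u)$, which is controlled via $|\partial_s{\textnormal{\textbf{Q}}}|\le 4$ from Lemma \ref{lem:derivative of Q and J} together with the point-count bound from $\tau_{\textnormal{f}1}^{\textnormal{rev}}(t)$; both terms are polynomial in $\alpha^{-1}$. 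The $t$-derivative is handled by the time-shift argument used at the end of the proof of Lemma \ref{lem:Qint 1st}, translating $\Pi_\alpha^{(t)}$ to $\Pi_\alpha^{(t_\delta)}$ and absorbing an $O(\delta)$ error via a bound analogous to \eqref{eq:Qint:1st:conti:estim}. A union bound over the grid $\mathcal{T}\subset[0,\hat h]$ of spacing $\delta=\alpha^{10}$ then extends the conclusion to all $t\in[0,\hat h]$.

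The principal technical obstacle will be the careful bookkeeping of the $s$- and $t$-derivatives of the nested integral $F_2$, since the $s$-derivative picks up jump boundary terms from $\Pi_\alpha^{(t)}$ and the $t$-perturbation shifts the underlying point process. Thanks to the stopping times $\tau_{\textnormal{f}}^{\textnormal{rev}}(t)$, $\tilde{\tau}_1^{\textnormal{int}}(t)$, $\tilde{\tau}_2^{\textnormal{int}}(t)$ and the Lipschitz bounds of Lemma \ref{lem:derivative of Q and J} and Corollary \ref{cor:derivative of J}, these derivatives are polynomially controlled with overwhelming probability, and the final bound assembles from Lemma \ref{lem:concen of int:conti:forwardtime} exactly as in Corollary \ref{cor:Qint 2nd}.
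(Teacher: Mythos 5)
Your proposal correctly mirrors the template established in the proofs of Lemma \ref{lem:Qint 1st} and Corollary \ref{cor:Qint 2nd}: apply Lemma \ref{lem:concen of int:conti:forwardtime} with $\tau_- = \pi_2(t)$, use the envelope from Corollary \ref{cor:Qint 2nd}, verify the moment and derivative conditions, and pass from the discretized grid to all $t$ by a continuity argument. This is indeed the intended approach (the paper omits the details, stating only that the proof is analogous).

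However, there is a genuine gap you do not address. Lemma \ref{lem:concen of int:conti:forwardtime} only controls $\int_{\tau_-}^{t'} F_2(s,t)\,d\widehat{\Pi}_\alpha^{(t)}(s)$, whereas the corollary asserts a bound on $\int_0^{t'}$; the remaining piece $\int_0^{\tau_-}$ must be handled separately, exactly as is done explicitly in the proof of Corollary \ref{cor:Qint 2nd}. In that proof the choice $\tau_-=\pi_1(t)$ makes the missing piece an atom-free Lebesgue integral, which is bounded directly. Here the choice $\tau_-=\pi_2(t)$ leaves the point mass at $\pi_1(t)$ inside the gap, i.e.
\begin{equation}
\intop_0^{\pi_2(t)} F_2(s,t)\,d\widehat{\Pi}_\alpha^{(t)}(s) \;=\; F_2(\pi_1(t),t)\;-\;\alpha\intop_0^{\pi_2(t)} F_2(s,t)\,ds,
\end{equation}
and the crude envelope you use elsewhere would give $|F_2(\pi_1(t),t)|\le \alpha^{-\epsilon/4}\sigma_1\sigma_2(t)/\sqrt{\pi_1(t)+1}=\alpha^{-\epsilon/4}\sigma_1^2\sigma_2(t)$, which since $\sigma_1\ge\sigma_3$ always exceeds (not falls below) the target $\alpha^{-\epsilon}\sigma_1\sigma_2\sigma_3(t)$. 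So the phrase ``assembles exactly as in Corollary \ref{cor:Qint 2nd}'' fails: the analogy is not exact because the atom structure is different. The fix is to unfold the nested integrals: since $\pi_1(t)$ is the closest point, the region $[0,\pi_1(t))$ contains no atoms, so $F_2(\pi_1(t),t)=-\alpha\int_0^{\pi_1(t)}F_1(u,\pi_1(t),t)\,du$ with $F_1(u,\pi_1(t),t)=-\alpha\int_0^u {\textnormal{\textbf{Q}}}_{v,u,\pi_1(t)}^{\Pi_\alpha^{(t)}}\,dv$ also a pure Lebesgue integral. Using $|{\textnormal{\textbf{Q}}}|\le \alpha^{-\epsilon/20}/\sqrt{(v+1)(u+1)(\pi_1(t)+1)}$ from $\tau_{\textnormal{f}3}^{\textnormal{rev}}$ and $\pi_1(t)\le\alpha^{-1-\epsilon/10}$ from $\tau_{\textnormal{f}2}^{\textnormal{rev}}$ then yields $|F_2(\pi_1(t),t)|=O(\alpha^{3/2-\epsilon/10})$, which does fit inside $\alpha^{-\epsilon}\sigma_1\sigma_2\sigma_3(t)$, but only via this more careful argument.
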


We conclude Section \ref{subsec:fixed:fixed} by establishing Proposition \ref{prop:fixed perturbed:fixed rate}. 

\begin{proof}[Proof of Proposition \ref{prop:fixed perturbed:fixed rate}]
	Combining Remark \ref{rmk:Qint 2nd} and Corollary \ref{cor:Qint 3rd} implies that
	\begin{equation}
	\PP \left(|\mathcal{Q}[t;\Pi_\alpha]| \le 
	\alpha^{-\epsilon} \sigma_1\sigma_2\sigma_3(t),\ \forall  t\le \hat{h} \right) \ge 1-\exp\left(-\alpha^{-\frac{\epsilon}{200}} \right),
	\end{equation}
	proving the second statement of \eqref{eq:fixed perturbed:fixed result} in Proposition \ref{prop:fixed perturbed:fixed rate}. 
	The first inequality in \eqref{eq:fixed perturbed:fixed result} can be obtained analogously by developing Lemma \ref{lem:Qint 1st}, Corollary \ref{cor:Qint 2nd}, Remarks \ref{rmk:Qint 1st} and \ref{rmk:Qint 2nd}, based on the bound on ${\textnormal{\textbf{J}}}_{u,s}^{\Pi_\alpha^{(t)}}$ given in Proposition \ref{prop:Jbound:quenched}. We omit the details which are left for interested readers. 
\end{proof}

To conclude this subsection, we introduce the following result which controls the integral of the error bound given in Proposition \ref{prop:fixed perturbed:fixed rate}

\begin{lem}\label{lem:fixed perturbed:error int:fixed}
	let $\epsilon, C >0$ be given, and set $\hat{h}$ as \eqref{eq:def:horizon}. Then, there exists $\alpha_0=\alpha_0(\epsilon,C)>0$ such that for all $0<\alpha<\alpha_0$, we have 
	\begin{equation}
	\begin{split}
	\PP_\alpha \left( 	\left|\intop _{0}^{\hat{h}} \sigma_1\sigma_2(t )dt \right| \le  \alpha ^{-1-\epsilon }  \right) &\ge 1-\exp\left(-\alpha^{-\frac{\epsilon}{20}} \right);\\
	\PP_\alpha \left( 	\left|\intop _{0}^{\hat{h}} \sigma_1\sigma_2\sigma_3(t )dt \right| \le  \alpha ^{-\frac{1}{2}-\epsilon }  \right) &\ge 1-\exp\left(-\alpha^{-\frac{\epsilon}{20}} \right).
	\end{split}
	\end{equation}
\end{lem}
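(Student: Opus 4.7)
The plan is to first compute the means of the two integrals, and then establish concentration around those means via a gap-decomposition of the Poisson process. Let $0<t_1<t_2<\cdots$ denote the points of $\Pi_\alpha$ in $[0,\hat h]$, and set $G_i := t_{i+1}-t_i$, which are i.i.d.\ $\mathrm{Exp}(\alpha)$. For $t\in[t_i,t_{i+1})$ with $i\ge 3$, one has $\pi_j(t)=t-t_{i+1-j}$, so the integrals decompose as
\begin{equation}
\intop_{t_i}^{t_{i+1}}\sigma_1\sigma_2(t)\,dt = f_2(G_{i-1},G_i),\qquad \intop_{t_i}^{t_{i+1}}\sigma_1\sigma_2\sigma_3(t)\,dt = f_3(G_{i-2},G_{i-1},G_i),
\end{equation}
where
\begin{equation}
f_2(a,b):=\intop_0^b \frac{du}{\sqrt{(u+1)(u+a+1)}}\le \frac{2\sqrt{b+1}}{\sqrt{a+1}},\qquad f_3(a,b,c):=\intop_0^c \frac{du}{\sqrt{(u+1)(u+b+1)(u+a+b+1)}}\le \frac{2\sqrt{c+1}}{\sqrt{(b+1)(a+b+1)}}.
\end{equation}
For $t\ge 2/\alpha$, the joint distribution of $(\pi_1(t),\pi_2(t),\pi_3(t))$ agrees, up to exponentially small error, with the first three arrival times of a rate-$\alpha$ Poisson process, with joint density $\alpha^3 e^{-\alpha\pi_3}$ on $\{0<\pi_1<\pi_2<\pi_3\}$. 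A direct calculation using this density yields $\mathbb{E}_\alpha[\sigma_1\sigma_2(t)]\le C\alpha$ and $\mathbb{E}_\alpha[\sigma_1\sigma_2\sigma_3(t)]\le C\alpha^{3/2}$; integrating in $t$ over $[0,\hat h]$ produces expected values at most $C\alpha^{-1}\log^C(1/\alpha)$ and $C\alpha^{-1/2}\log^C(1/\alpha)$, respectively, well below $\alpha^{-1-\epsilon}$ and $\alpha^{-1/2-\epsilon}$.

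For the concentration step, I would condition on the event $\mathcal{E}$ that $N:=|\Pi_\alpha[0,\hat h]|\le 2\alpha\hat h$ and that every gap satisfies $G_i\le \alpha^{-1-\epsilon/200}$; a union bound over gaps together with Corollary~\ref{cor:concentration:numberofpts each interval} shows $\PP(\mathcal{E}^c)\le\exp(-\alpha^{-\epsilon/100})$. On $\mathcal{E}$, the summands $X_i:=f_2(G_{i-1},G_i)$ and $Y_i:=f_3(G_{i-2},G_{i-1},G_i)$ are deterministically bounded by $2\alpha^{-1/2-\epsilon/400}$, while their second moments can be controlled by direct integration against the exponential densities of the $G_j$'s (yielding $\mathbb{E}[X_i^2],\mathbb{E}[Y_i^2]$ bounded by $\mathrm{polylog}(1/\alpha)$). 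Since consecutive summands share at most two (resp.\ three) of the $G_j$, partitioning the index set modulo $2$ (resp.\ $3$) decomposes each sum into a bounded number of sub-sums of i.i.d.\ random variables, to which Bernstein's inequality applies. The resulting failure probability $\exp(-ca^2/(N\sigma^2+Ka))$ with $N\asymp\alpha^{-1}\log^C(1/\alpha)$, $\sigma^2=\mathrm{polylog}(1/\alpha)$, $K=O(\alpha^{-1/2-\epsilon/400})$, and deviation $a=\tfrac12\alpha^{-1-\epsilon}$ or $a=\tfrac12\alpha^{-1/2-\epsilon}$ is bounded by $\exp(-\alpha^{-\epsilon/20})$ after absorbing polylogarithmic factors.

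The principal obstacle will be carefully bookkeeping the polylogarithmic factors arising in the mean and moment computations, and handling the boundary contributions from $t\le 3/\alpha$ and from the partial gaps at the endpoint $\hat h$. The boundary contribution is $O(1)$ since $\sigma_j\le 1$ and is absorbed harmlessly; the polylog factors must be verified to remain much smaller than $\alpha^{-\epsilon/20}$, which they do comfortably by choice of the exponents in the definition of $\mathcal{E}$ and in the Bernstein application.
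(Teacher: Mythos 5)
Your approach is correct in outline, but it is genuinely different from the paper's. You compute $\mathbb{E}_\alpha[\sigma_1\sigma_2(t)]\lesssim\alpha$ and $\mathbb{E}_\alpha[\sigma_1\sigma_2\sigma_3(t)]\lesssim\alpha^{3/2}$ from the stationary density of the near-neighbor distances, decompose the integrals into a sum of functionals $X_i$, $Y_i$ of consecutive gaps, truncate the gaps to make each summand bounded, and then apply Bernstein after a mod-$2$ (resp.\ mod-$3$) partition to restore independence. The paper never computes the mean at all and never invokes a concentration inequality for the sum. Instead, it conditions on a good event (roughly $\alpha^{-1+\epsilon}\le N\le\alpha^{-1-\epsilon}$ and all gaps $\le\alpha^{-1-\epsilon}$) and then bounds the integral \emph{deterministically}: for the first integral, $\int\sigma_1\sigma_2\le\int(\pi_1+1)^{-1}=\sum_i\log(G_i+1)\le N\log(1/\alpha)\le\alpha^{-1-\epsilon}$ with no probabilistic content left; for the second, it further partitions the gaps into dyadic scales $I_k=\{i:G_i\in[2^k,2^{k+1}]\}$, controls $|I_k|\lesssim\alpha^{-2\epsilon}2^k$ by a union bound, and sums the contribution of each scale, which supplies the extra $\alpha^{1/2}$ factor. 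The paper's route is shorter and avoids the bookkeeping of second moments, $m$-dependence, and the effect of conditioning on the truncation event; your route is more mechanical and would generalize more readily to, say, products of four $\sigma$'s, at the cost of slightly heavier calculation.

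Two small cautions on your writeup. First, ``boundary contribution is $O(1)$ since $\sigma_j\le 1$'' is imprecise: the interval $[0,t_3]$ has typical length $\Theta(\alpha^{-1})$, so if one used the convention $\sigma_j=1$ when the $j$-th predecessor is absent, the boundary alone would be $\Theta(\alpha^{-1})$ and would dominate $\alpha^{-1/2-\epsilon}$. The argument relies on the (natural, and consistent with the paper's usage) convention that $\sigma_i(t;g)=0$ when $\Pi_g[0,t]$ contains fewer than $i$ points, in which case the boundary contribution is exactly zero, not $O(1)$. Second, conditioning on the truncation event $\mathcal{E}$ perturbs the law of the gaps; the cleaner way to organize the Bernstein step is to replace each $G_i$ by its truncation at $\alpha^{-1-\epsilon/200}$ throughout (this only decreases the integrand), run Bernstein for the truncated i.i.d.\ sub-sums without conditioning, and then add $\PP(\mathcal{E}^c)$ to the failure probability at the end. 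Both points are routine to fix and do not affect the validity of the plan.
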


\begin{proof}
	Let $N:=\Pi _\alpha [0,\hat{h}]$ and let $\pi _1 \le \pi _2\le \cdots $ be the points of the process. For convinence we let $\pi _0:=0$ and $\pi _{N+1}=T$. Note that with very high probability $\alpha ^{-1+\epsilon }\le N\le \alpha ^{-1-\epsilon }$ and for any $0 \le i \le N$, $\pi _{i+1}-\pi _i\le \alpha ^{-1-\epsilon }$. We start with the first inequality. We have with very high probability  
	\begin{equation}
	\intop _{0}^{T} \frac{dt}{\sqrt{(\pi _1(t)+1)(\pi _2(t) +1)}} \le \intop _{0}^{T} \frac{dt}{\pi _1(t)+1} =\sum _{i=0}^{N} \intop _{\pi _i}^{\pi _{i+1}} \frac{dt }{t-\pi _i+1}\le C N \log(1/ \alpha ) \le \alpha ^{-1-\epsilon }  
	\end{equation}
	
	We turn to prove the second inequality. For $k \le \log _2 (1/\alpha )$ and $\alpha ^{-1+\epsilon } \le n\le \alpha ^{-1-\epsilon }$ define the sets
	\begin{equation}
	I_{k}^n:=\left\{ i\le n \  \big| \ \pi _{i}-\pi _{i-1}\in [2^k,2^{k+1}]\right\}
	\end{equation}
	and $I_{k}:=I_{k}^N$. We have that $\mathbb P (i \in I _{k}^n)\le \alpha 2^k$ and therefore with very high probability for all $\alpha ^{-1+\epsilon } \le n \le \alpha ^{-1-\epsilon }$ we have $|I_{k}^n|\le \alpha ^{-\epsilon }n\alpha 2^k \le \alpha ^{-2\epsilon }2^k$. Thus with very high probability $|I_{k}|\le  \alpha ^{-2\epsilon }2^k$.
	Thus, if we let $I':=[N]\setminus \bigcup _{k}I_{k}$ we have 
	\begin{equation}
	\begin{split}
	\intop _{0}^{T}& \frac{dt}{\sqrt{(\pi _1(t)+1)(\pi _2(t) +1)(\pi _3(t) +1)}} \le \intop _{0}^{T} \frac{dt}{\sqrt{(\pi _1(t)+1)}(\pi _2(t)+1)} \\
	&= \sum _{i = 0}^N \intop _{\pi _i}^{\pi _{i+1}} \frac{dt}{\sqrt{(t- \pi _i +1)}(\pi _2(t) +1)} \le C\alpha \sum _{i \in I'} \intop _{\pi _i}^{\pi _{i+1}} \frac{1}{\sqrt{t- \pi _i +1 }}+  C \sum _{k=1 }^{\log _2(1/\alpha )}  2^{-k} \sum _{i \in I _k} \intop _{\pi _i}^{\pi _{i+1}} \frac{1}{\sqrt{t-\pi _i +1}} \\
	&\le C \alpha ^{\frac{1}{2}-\epsilon }|I'| +  C\alpha ^{-\frac{1}{2}-\epsilon } \sum _{k=1}^{\log _2(1/\alpha )} 2^{-k}|I_k|  \le CN\alpha ^{\frac{1}{2}-\epsilon } +C\alpha ^{-\frac{1}{2}-3 \epsilon }\log (1/\alpha ) \le C \alpha ^{-\frac{1}{2}-4 \epsilon }.\qedhere 
	\end{split}
	\end{equation}
\end{proof}

\subsection{Perturbation of  the underlying Poisson processes}\label{subsec:fixed:perturbed}

In this subsection, we establish Propositions \ref{prop:fixed perturbed:double int} and \ref{prop:fixed perturbed:triple int}. As mentioned before, our method is to derive an estimate on the Radon-Nykodym derivative of $\Pi_g$ with respect to $\Pi_\alpha$ and use Proposition \ref{prop:fixed perturbed:fixed rate}.

Let $\mathcal{P}_g^t$ denote the law of $\Pi_g[0,t]$. Using this notation, we will write, for instance,
\begin{equation}
\PP \left( \mathcal{J}[t;\Pi_g] \in \mathcal{E}  \right)
 =\int_{\Pi} I\{\mathcal{J}[t;\Pi] \in \mathcal{E} \} \mathcal{P}_g^t(d\Pi).
 \end{equation}
Similarly, we let $\mathcal{P}_\alpha^t$ to be the law of $\Pi_\alpha[0,t]$. The main observation to establish Propositions \ref{prop:fixed perturbed:double int} and \ref{prop:fixed perturbed:triple int} is the following: for  a stopping time $\tau$ in terms of $g$ and $\Pi$, the Radon-Nykodym derivative of $\mathcal{P}_g^\tau$ with respect to $\mathcal{P}_\alpha^\tau$ can be written as
\begin{equation}\label{eq:def:radon deriv:fixed perturbed}
r_g^{\tau}( \Pi)  := \frac{d\mathcal{P}_g^{\tau} }{d\mathcal{P}_\alpha^{\tau} }(\Pi) = \exp\left( -\intop_0^{\tau} (g(y)-\alpha) dy \right) \prod_{x\in \Pi[0,{\tau}]} \frac{g(x)}{\alpha}.
\end{equation}
This comes from the fact that $g$ is predictable with respect to $\Pi$. 

From now on, let $\tau$ be the stopping time given in Proposition \ref{prop:fixed perturbed:double int}, and for convenience we set $${\hat{\tau}}:=\hat{h}\wedge \tau.$$ The next lemma shows how to control the size of $r_g^\tau(\Pi)$, when $\Pi$ is given by  $\Pi_g \sim \mathcal{P}_g^\tau$. 

\begin{lem}\label{lem:fixed perturbed:radon deriv bd}
	Under the setting of Proposition \ref{prop:fixed perturbed:double int}, we have
	\begin{equation}
	\PP_{\Pi \sim \mathcal{P}_g^{\hat{\tau}}}\left( r_g^{\hat{\tau}}(\Pi) \ge \exp\left(\alpha^{-\frac{\epsilon}{250}} \right) \right) \le \exp\left(-\alpha^{\frac{\epsilon}{2000}} \right).
	\end{equation}
\end{lem}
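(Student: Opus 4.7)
The plan is to take logarithms of \eqref{eq:def:radon deriv:fixed perturbed} and decompose $\log r_g^{\hat\tau}$ into a nonnegative predictable drift plus a mean-zero $\mathcal{P}_g$-martingale. With $\phi(x):=x\log x - x + 1 \ge 0$ and $M_t := \intop_0^t \log(g/\alpha)\,(d\Pi_g(s)-g(s)\,ds)$, the direct manipulation
\begin{equation}
\log r_g^{\hat\tau} = -\intop_0^{\hat\tau}(g-\alpha)\,ds + \intop_0^{\hat\tau}\log(g/\alpha)\,d\Pi_g = \intop_0^{\hat\tau}\alpha\,\phi(g/\alpha)\,ds + M_{\hat\tau}
\end{equation}
reduces matters to controlling each summand. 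For the predictable drift I would first establish the one-variable inequality $\phi(x)\le (x-1)^2$ for all $x>0$: $\psi(x):=(x-1)^2-\phi(x)$ vanishes together with its first two derivatives at $x=1$, and a brief sign analysis of $\psi'$ on $(0,1]$ and $[1,\infty)$ gives $\psi\ge 0$ throughout. Substituting $x=g/\alpha$ and invoking the first bound of \eqref{eq:fixed perturbed:assumption} yields $\intop_0^{\hat\tau}\alpha\phi(g/\alpha)\,ds \le \intop_0^{\hat\tau}(g-\alpha)^2/\alpha\,ds \le \alpha^{-\epsilon/400}$, which is already much smaller than the target $\alpha^{-\epsilon/250}$.

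For the martingale I plan to use exponential Markov: $\PP_{\mathcal{P}_g}(M_{\hat\tau}\ge x) \le e^{-\lambda x}\mathbb{E}_{\mathcal{P}_g}[e^{\lambda M_{\hat\tau}}]$. The standard exponential supermartingale for compensated Poisson integrals (together with a localization argument, using that $\hat\tau\le\hat{h}$ is bounded) gives $\mathbb{E}_{\mathcal{P}_g}[e^{\lambda M_{\hat\tau}}] \le e^{\Lambda(\lambda)}$ with
$$\Lambda(\lambda) := \intop_0^{\hat\tau} g\bigl((g/\alpha)^\lambda - 1 - \lambda\log(g/\alpha)\bigr)\,ds.$$
Restricting $\lambda \le 1/(4\log(1/\alpha))$ and using $\sup g \le \alpha^{1-\epsilon/400}$ to obtain $(g/\alpha)^\lambda \le \alpha^{-\lambda\epsilon/400} \le 2$, the elementary bounds $e^y-1-y\le y^2/2$ for $y\le 0$ and $e^y-1-y\le e^y y^2/2$ for $y\ge 0$, applied with $y=\lambda\log(g/\alpha)$, yield the pointwise estimate $g\bigl((g/\alpha)^\lambda-1-\lambda\log(g/\alpha)\bigr) \le \lambda^2 g\log^2(g/\alpha)$.

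The remaining analytic ingredient is the pointwise inequality $u\log^2 u \le (u-1)^2$ for all $u>0$: on $[1,\infty)$ this reduces to $\sqrt{u}\log u \le u-1$, itself a consequence of $\log u\le 2(\sqrt u-1)$ together with an elementary monotonicity argument (the difference has nonnegative second derivative $\log u/(4u^{3/2})$ on $[1,\infty)$); the case $u\in(0,1)$ reduces to the previous one via the involution $u\mapsto 1/u$. Consequently $g\log^2(g/\alpha) \le (g-\alpha)^2/\alpha$, and hence $\Lambda(\lambda) \le \lambda^2\alpha^{-\epsilon/400}$. Choosing $\lambda=1/(4\log(1/\alpha))$ and $x=\alpha^{-\epsilon/250}/2$, the $\lambda x$ term dominates $\Lambda(\lambda)$ by a factor of order $\alpha^{-3\epsilon/2000}\log(1/\alpha)$, giving $\PP_{\mathcal{P}_g}(M_{\hat\tau}\ge\alpha^{-\epsilon/250}/2) \le \exp\bigl(-\alpha^{-\epsilon/250}/(16\log(1/\alpha))\bigr) \le \exp(-\alpha^{-\epsilon/2000})$ for small enough $\alpha$. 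Combined with the drift bound, this completes the proof. The principal obstacle is that $\log(g/\alpha)$ is unbounded below as $g\searrow 0$, which forces $\lambda$ to be of order $1/\log(1/\alpha)$ rather than order one; this is what produces the gap between the exponents $\epsilon/250$ and $\epsilon/2000$ in the statement.
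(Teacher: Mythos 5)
Your proof is correct, and the route is genuinely different from the one in the paper. The paper first applies the pointwise bound $1+y\le e^y$ factorwise, which replaces $\log(g(x)/\alpha)$ by $\alpha^{-1}(g(x)-\alpha)$ and yields the inequality
\begin{equation}
r_g^{\hat\tau}(\Pi)\le \exp\!\left(\alpha^{-1}\intop_0^{\hat\tau}(g-\alpha)\,d\Pi_g-\intop_0^{\hat\tau}(g-\alpha)\,dx\right),
\end{equation}
so that the exponent is a compensated Poisson integral of a \emph{bounded} integrand; the paper then raises to power $\lambda=\alpha^{\epsilon/300}$, splits into an exponential martingale $M_t$ times a residual $L_t$, and shows $L_{\hat\tau}\le 2$ by $e^y\le 1+y+y^2$. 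You instead keep the exact Radon--Nikodym derivative, write $\log r_g^{\hat\tau}$ as a nonnegative drift $\intop\alpha\,\phi(g/\alpha)\,ds$ plus the compensated Poisson integral of $\log(g/\alpha)$, and control the compensator via the inequality $u\log^2 u\le(u-1)^2$. The paper's linearization buys a bounded integrand (so that Lemma \ref{lem:concentration of integral} applies verbatim with a polynomially small $\lambda$); your version is more exact at the cost of an unbounded-below integrand, which you correctly compensate for by taking $\lambda\sim1/\log(1/\alpha)$. Both get the same order of result; your exponent $\alpha^{-\epsilon/250}/\log(1/\alpha)$ is in fact slightly better than the paper's $\alpha^{-\epsilon/1500}$, and both comfortably beat the stated $\alpha^{-\epsilon/2000}$ (the exponent in the lemma statement should read $-\alpha^{-\epsilon/2000}$, a missing minus sign).

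Two small slips in your motivational remarks that do not affect the argument. First, $\psi(x)=(x-1)^2-\phi(x)$ does not vanish to second order at $x=1$: indeed $\psi''(1)=2-1=1\ne 0$. Your actual proof of $\psi\ge 0$ goes by the sign pattern of $\psi'$ (together with $\psi(0^+)=0$, $\psi(1)=0$), which is correct and is what matters. Second, the inequality $\sqrt u\log u\le u-1$ for $u\ge 1$ does not follow from $\log u\le 2(\sqrt u-1)$ by multiplying through (that would require $(\sqrt u-1)^2\le 0$); the monotonicity argument you give, $h(u)=(u-1)-\sqrt u\log u$, $h(1)=h'(1)=0$, $h''(u)=\log u/(4u^{3/2})\ge 0$ on $[1,\infty)$, is self-contained and is the one that actually works. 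Finally, your closing sentence attributes the ``gap'' between $\epsilon/250$ and $\epsilon/2000$ to the logarithmic choice of $\lambda$; this is not quite the reason, since a polylogarithmic loss would only require $\epsilon/250$ to be weakened by an arbitrarily small amount. The gap is simply slack built into the statement (in the paper's proof the drift term and the polynomially small $\lambda$ already consume part of it).
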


\begin{proof}
From \eqref{eq:def:radon deriv:fixed perturbed}, we use $1+x\le e^x$ to obtain that
	\begin{equation}
	\begin{split}
	r_g^{\hat{\tau}}(\Pi )\le \exp \left( \alpha ^{-1} \intop_{0}^{\hat{\tau}} (g(x)-\alpha ) \, d\Pi (x)-\intop_{0}^{\hat{\tau}} (g(x)-\alpha ) \, dx  \right).
	\end{split}
	\end{equation}
Thus, for $\lambda := \alpha^{\frac{\epsilon}{300}}$ we have $r_g^\tau(\Pi)^\lambda \le M_\tau\cdot L_\tau$ where
	\begin{equation}
	\begin{split}
	M_t:&=\exp \left( \lambda \alpha ^{-1} \intop_{0}^t (g(x)-\alpha ) \, d\Pi _g (x)-\intop_{0}^t g(x)(e^{\lambda \alpha ^{-1} (g(x)-\alpha )}-1) \, dx  \right), \\
	L_t:&=\exp \left(  \intop_{0}^t g(x)(e^{\lambda \alpha ^{-1} (g(x)-\alpha )}-1) - \lambda (g(x)-\alpha ) \, dx  \right).
	\end{split}
	\end{equation}
	
	We start by bounding  $L_{ {\hat{\tau}} }$. We have that 
	\begin{equation}
	\begin{split}
	L_{{\hat{\tau}} }&\le \exp \left(  \intop_{0}^{{\hat{\tau}} } \left\{g(x)\left( \lambda \alpha ^{-1} (g(x)-\alpha )+\lambda ^2  \alpha ^{-2} (g(x)-\alpha )^2 \right)  - \lambda (g(x)-\alpha ) \right\}\, dx  \right)\\
	& \le \exp \left( \intop_0^{{\hat{\tau}}} \left\{ \lambda \alpha^{-1} (g(x)-\alpha)^2 + \lambda^2\alpha^{-2} (g(x)-\alpha)^2g(x)  \right\} dx \right)\le 2
	\end{split}
	\end{equation}
	where in the first inequality we used  $\lambda \alpha ^{-1} |g(x)-\alpha |\le \lambda \alpha ^{-1}(g(x)+\alpha) \le 1$ from the third assumption of \eqref{eq:fixed perturbed:assumption}, along with the fact that $e^y\le 1+y+y^2$ for $y\le 1$. In the last inequlity we used  the first two assumptions of \eqref{eq:fixed perturbed:assumption}.
	Thus, we obtain that 
	\begin{equation}
		\begin{split}
		\exp\left(\lambda \alpha^{-\frac{\epsilon}{250}} \right)\cdot \PP_{\Pi \sim \mathcal{P}_g^{\hat{\tau}}}\left( r_g^{\hat{\tau}}(\Pi) \ge \exp\left(\alpha^{-\frac{\epsilon}{250}} \right) \right) &\le 
		\mathbb E_{\Pi\sim \mathcal{P}_g^{\hat{\tau}}} \left[ r_{g }^{{\hat{\tau}}}(\Pi)^\lambda \right]\\
		& \le  \mathbb E \left[ M_{ {\hat{\tau}} }\cdot L_{ {\hat{\tau}} } \right]\le 2 \mathbb E \left[ M_{ {\hat{\tau}} } \right]=2,
		\end{split}
	\end{equation}
	where in the last equality we used that $M_t$ and therefore $M_{t\wedge \tau }$ are martingales and that $M_{0}=1$. This concludes the proof of the lemma.
\end{proof}

Now, We are ready to  establish Propositions \ref{prop:fixed perturbed:double int} and \ref{prop:fixed perturbed:triple int}.

\begin{proof}[Proof of Propositions~\ref{prop:fixed perturbed:double int} and \ref{prop:fixed perturbed:triple int}]
	Define the events 
	\begin{eqnarray}
	\mathcal A_1(t)  :=\left\{ \Pi : -\alpha^{\frac{1}{2}-\epsilon}\sigma_1(t)\le  \mathcal{J}[t;\Pi ]  \le \alpha ^{-\epsilon }\sigma_1\sigma_2(t)  \right\}, &
	\mathcal A_2(t)  :=\left\{ \Pi : |\mathcal{Q}[t;\Pi ] | \le \alpha ^{-\epsilon }\sigma_1\sigma_2\sigma_3(t)  \right\}, \\
	\mathcal{A}(t):= \mathcal{A}_1(t)\bigcap \mathcal{A}_2(t) ,\qquad \qquad  \qquad  \qquad & 
	\mathcal B := \left\{ \Pi : r_g^{{\hat{\tau}}} (\Pi )\ge \exp\left(\alpha^{-\frac{\epsilon}{250}} \right)  \right\}. \quad 
	\end{eqnarray}
	Moreover, we set $\mathcal{A}_{\le t} := \cap_{s\le t} \mathcal{A}(s)$. Then, we have
	\begin{equation}\label{eq:fixed perturbed:radon dec}
	\PP_{\Pi \sim \mathcal{P}_g^{\hat{\tau}}} \left( \left(\mathcal{A}_{\le {\hat{\tau}}}\right)^c  \right)
	\le \PP_{\Pi \sim \mathcal{P}_g^{\hat{\tau}}} \left( \left(\mathcal{A}_{\le {\hat{\tau}}}\right)^c \bigcap  \mathcal{B}^c \right) + \PP_{\Pi \sim \mathcal{P}_g^{\hat{\tau}}} \left( \mathcal{B} \right).
	\end{equation}
	Lemma~\ref{lem:fixed perturbed:radon deriv bd} tells us that
	the second term in the RHS is bounded by $\exp(-\alpha^{-\frac{\epsilon}{2000}})$, and the first term can be estimated by
	\begin{equation}
\begin{split}
	\PP_{\Pi \sim \mathcal{P}_g^{\hat{\tau}}} \left( \left(\mathcal{A}_{\le {\hat{\tau}}}\right)^c \bigcap  \mathcal{B}^c \right)
	&\le
	\int_{\Pi} I \left\{\left(\mathcal{A}_{\le \hat{h}}\right)^c \bigcap \mathcal{B}^c \right\} r_g^{\hat{\tau}}(\Pi) \mathcal{P}_\alpha^{\hat{\tau}}(d\Pi)\\
	&\le
	\exp\left(\alpha^{-\frac{\epsilon}{250}} \right) \int_{\Pi}I\{\left(\mathcal{A}_{\le \hat{h}}\right)^c \} \mathcal{P}_\alpha^{{\hat{h}}}(d\Pi)\\
	&\le
	\exp\left(-\alpha^{-\frac{\epsilon}{250}} \right),
\end{split}
	\end{equation}
	where the last inequality comes from Proposition \ref{prop:fixed perturbed:fixed rate}. Plugging the two bounds into \eqref{eq:fixed perturbed:radon dec} deduces the desired results.
\end{proof}

The same proof as above can be applied to generalize Lemma \ref{lem:fixed perturbed:error int:fixed}. Due to its similarity, we omit the proof of the following result.

\begin{lem}\label{lem:fixed perturbed:error int:perturbed}
	Let $\epsilon>0$ be arbitrary, $\alpha>0$ be a sufficiently small constant depending on $\epsilon$, and $\hat{h}$ be as \eqref{eq:def:horizon}. Let $\tau$ be a stopping time, and $\{g(s)\}_{s\ge 0} $ be a positive stochastic process progressively measurable with respect to $\Pi_g$, and suppose that they satisfy \eqref{eq:fixed perturbed:assumption}. Then, we have
	\begin{equation}
	\begin{split}
\PP \left(\left|\intop_0^{\hat{h}} \sigma_1\sigma_2(t;g) dt\right| \le \alpha^{-1-\epsilon} \right)& \ge 1-\exp\left(-\alpha^{-\frac{\epsilon}{3000}} \right);\\
	\PP \left(\left|\intop_0^{\hat{h}} \sigma_1\sigma_2\sigma_3(t;g) dt\right| \le \alpha^{-\frac{1}{2}-\epsilon} \right) &\ge 1-\exp\left(-\alpha^{-\frac{\epsilon}{3000}} \right).
	\end{split}
	\end{equation}
\end{lem}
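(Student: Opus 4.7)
The plan is to follow the same Radon--Nikodym change-of-measure strategy used in the proofs of Propositions~\ref{prop:fixed perturbed:double int} and~\ref{prop:fixed perturbed:triple int}, with Lemma~\ref{lem:fixed perturbed:error int:fixed} playing the role of the fixed-rate input. Set $\hat{\tau}:=\hat h\wedge\tau$, and interpret the integrals in the statement as running up to $\hat\tau$ (equivalently, extend $g$ to equal $\alpha$ on $(\tau,\hat h]$, which does not affect the Radon--Nikodym derivative $r_g^{\hat h}$). Let $r_g^{\hat\tau}(\Pi)$ be as in \eqref{eq:def:radon deriv:fixed perturbed}.

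For each of the two bounds, define the corresponding ``bad'' events
\[
\mathcal{A}_1:=\Bigl\{\Pi:\Bigl|\intop_0^{\hat\tau}\sigma_1\sigma_2(t;\Pi)\,dt\Bigr|>\alpha^{-1-\epsilon}\Bigr\},\quad
\mathcal{A}_2:=\Bigl\{\Pi:\Bigl|\intop_0^{\hat\tau}\sigma_1\sigma_2\sigma_3(t;\Pi)\,dt\Bigr|>\alpha^{-\frac{1}{2}-\epsilon}\Bigr\},
\]
together with $\mathcal{B}:=\{\Pi:r_g^{\hat\tau}(\Pi)\ge\exp(\alpha^{-\epsilon/250})\}$. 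Then for $i\in\{1,2\}$ I split
\[
\PP_{\Pi\sim\mathcal{P}_g^{\hat\tau}}(\mathcal{A}_i)\le \PP_{\Pi\sim\mathcal{P}_g^{\hat\tau}}(\mathcal{A}_i\cap\mathcal{B}^c)+\PP_{\Pi\sim\mathcal{P}_g^{\hat\tau}}(\mathcal{B}),
\]
and bound each term separately.

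The second term is controlled directly by Lemma~\ref{lem:fixed perturbed:radon deriv bd}, giving a bound of $\exp(-\alpha^{-\epsilon/2000})$. For the first term I apply the change of measure to $\mathcal{P}_\alpha^{\hat\tau}$, using that $r_g^{\hat\tau}\le\exp(\alpha^{-\epsilon/250})$ on $\mathcal{B}^c$:
\[
\PP_{\Pi\sim\mathcal{P}_g^{\hat\tau}}(\mathcal{A}_i\cap\mathcal{B}^c)=\intop_\Pi I\{\mathcal{A}_i\cap\mathcal{B}^c\}\,r_g^{\hat\tau}(\Pi)\,\mathcal{P}_\alpha^{\hat\tau}(d\Pi)\le \exp(\alpha^{-\epsilon/250})\cdot\PP_{\Pi\sim\mathcal{P}_\alpha^{\hat\tau}}(\mathcal{A}_i).
\]
Under $\mathcal{P}_\alpha^{\hat\tau}$, truncating the integral at $\hat\tau\le\hat h$ only decreases the nonnegative integrand, so Lemma~\ref{lem:fixed perturbed:error int:fixed} applied at a smaller parameter $\epsilon':=\epsilon/2$ gives $\PP_{\Pi\sim\mathcal{P}_\alpha^{\hat\tau}}(\mathcal{A}_i)\le\exp(-\alpha^{-\epsilon/40})$. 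Combining the two estimates and choosing $\alpha$ small enough so that $\exp(\alpha^{-\epsilon/250}-\alpha^{-\epsilon/40})\le\exp(-\alpha^{-\epsilon/3000})$ yields the claimed probabilities.

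The only mild obstacle is a bookkeeping one: making sure the extension of $g$ past $\tau$ does not spoil the Radon--Nikodym identity \eqref{eq:def:radon deriv:fixed perturbed} or the hypotheses \eqref{eq:fixed perturbed:assumption} used in Lemma~\ref{lem:fixed perturbed:radon deriv bd}. Setting $g\equiv\alpha$ on $(\tau,\hat h]$ makes the integrand in the exponential vanish and each factor $g(x)/\alpha=1$ there, so $r_g^{\hat h}=r_g^{\hat\tau}$, and the three inequalities in \eqref{eq:fixed perturbed:assumption} are preserved. With this choice the argument above goes through verbatim, completing the proof.
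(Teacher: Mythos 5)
Your proof is correct and follows exactly the argument the paper intends (the paper omits the proof with the remark that it is "the same proof as above," i.e., the Radon--Nikodym change of measure from Propositions~\ref{prop:fixed perturbed:double int}--\ref{prop:fixed perturbed:triple int} combined with the fixed-rate input from Lemma~\ref{lem:fixed perturbed:error int:fixed}). The only superfluous step is applying Lemma~\ref{lem:fixed perturbed:error int:fixed} at $\epsilon'=\epsilon/2$ rather than $\epsilon$; since the truncated nonnegative integral is already dominated by the $\hat h$-integral, applying it at $\epsilon$ directly gives the sharper bound $\exp(-\alpha^{-\epsilon/20})$ and the same conclusion.
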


\subsection{The error estimates of the speed}\label{subsec:fixed:error}

In this subsection, we translate the main estimates into the error estimates of the first- and second-order approximations (\eqref{eq:def:S1:basic form} and \eqref{eq:def:S2:basic form}). The results are direct consequences of Propositions \ref{prop:fixed perturbed:double int} and \ref{prop:fixed perturbed:triple int}, and we can state them as follows.

\begin{prop}\label{prop:fixed perturbed:error}
		Let  $\epsilon,C>0$ be arbitrary, $\alpha>0$ be a  sufficiently small constant depending on $\epsilon,C
		$, and let $t^, \hat{t}$ be $t^-<\hat{t}$ satisfying  $\hat{t}-t^- \le \alpha^{-2} \log^C(1/\alpha)$. Define $S_1(s)=S_1(s;t^-,\alpha)$ and $S_2(s)=S_2(s;t^-,\alpha)$ as  \eqref{eq:def:S1:basic form} and \eqref{eq:def:S2:basic form}, respectively, and recall the notation $\sigma_i(s;S)$ from \eqref{eq:def:sig closest points}. Suppose that there is a stopping time $\tau$ satisfying the following conditions almost surely:
		\begin{equation}\label{eq:error:assumptions}
		\begin{split}
		&\intop_{t^-}^{\hat{t}\wedge \tau} (S(s)-\alpha)^2 ds \le \alpha^{1-\frac{\epsilon}{400}}; \qquad  \intop_{t^-}^{\hat{t}\wedge \tau} (S(s)-\alpha)^2 S(s)ds \le \alpha^{2-\frac{\epsilon}{400}};\\ & \sup_{t^-\le s\le \hat{t}\wedge\tau }\{ S(s) \vee S_1(s )\} \le \alpha^{1-\frac{\epsilon}{400}}. 
		\end{split}
		\end{equation}
		Further, let $S'(s):= S'(s;t^-,\alpha)$ be defined as \eqref{eq:def:Sprime:basic form}.
		Then, we have  that
		\begin{equation}
		\begin{split}
		\PP\left(\left. -\alpha^{\frac{3}{2}-\epsilon}\sigma_1(s;S)\le  S'(s) - S_1(s) \le \alpha^{1-\epsilon} \sigma_1\sigma_2(s;S), \ \forall s\in[t^-, \hat{t}\wedge \tau]\, \right| \, \mathcal{F}_{t^-} \right) &\ge 1-2\exp\left(-\alpha^{-\frac{\epsilon}{3000}} \right);\\
		\PP \left( \left.|S'(s)-S_2(s)| \le \alpha^{1-\epsilon}\sigma_1\sigma_2\sigma_3(s;S), \ \forall s\in [t^-, \hat{t}\wedge \tau]\,\right| \, \mathcal{F}_{t^-} \right)
		&\ge
		1-2\exp\left(-\alpha^{-\frac{\epsilon}{3000}} \right),
		\end{split}
		\end{equation}
		where this holds for any given $\mathcal{F}_{t^-}$, the sigma-algebra generated by $\Pi[0,t^-]$ and $\{S(s)\}_{s\le t^-}$. 
\end{prop}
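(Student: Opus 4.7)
The plan is to combine the first- and second-order expansions \eqref{eq:speed:1storder main}--\eqref{eq:speed:2ndorder main} of the speed with Propositions \ref{prop:fixed perturbed:double int} and \ref{prop:fixed perturbed:triple int} applied to the rate process $g(\cdot) := S(t^- + \cdot)$. After the time shift by $t^-$, the interval $[t^-,\hat{t}]$ fits inside $[0,\hat{h}]$ with $\hat{h} = \alpha^{-2}\log^C(1/\alpha)$, and the three conditions in \eqref{eq:error:assumptions} are precisely the hypotheses \eqref{eq:fixed perturbed:assumption} required by those propositions.

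For the first-order estimate, I would rewrite \eqref{eq:speed:1storder main} as
\begin{equation*}
S'(s) - S_1(s) = \frac{2\alpha^2}{(1+2\alpha)^2} + \Bigl(\tfrac{1}{(1+2\alpha)^2} - 1\Bigr) S_1(s) + \frac{\alpha}{1+2\alpha}\,\mathcal{J}[s;\Pi_S],
\end{equation*}
so the first two deterministic-looking terms are bounded in absolute value by $C\alpha^{2-\epsilon/400}$ under the assumption $S_1(s) \le \alpha^{1-\epsilon/400}$. Applying Proposition \ref{prop:fixed perturbed:double int} with parameter $\epsilon/2$ yields, conditional on $\mathcal{F}_{t^-}$ and with failure probability at most $\exp(-\alpha^{-c_\epsilon})$,
\begin{equation*}
-\alpha^{\frac{1}{2}-\frac{\epsilon}{2}}\sigma_1(s;S) \le \mathcal{J}[s;\Pi_S] \le \alpha^{-\frac{\epsilon}{2}}\sigma_1\sigma_2(s;S).
\end{equation*}
Multiplication by $\alpha/(1+2\alpha)$ produces the dominant contributions of $-\alpha^{3/2-\epsilon/2}\sigma_1$ and $\alpha^{1-\epsilon/2}\sigma_1\sigma_2$. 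The second-order estimate is even cleaner, since \eqref{eq:speed:2ndorder main} gives the identity $S'(s) - S_2(s) = \frac{\alpha}{1+2\alpha}\mathcal{Q}[s;\Pi_S]$ with no deterministic offset, and Proposition \ref{prop:fixed perturbed:triple int} furnishes $|\mathcal{Q}[s;\Pi_S]| \le \alpha^{-\epsilon/2}\sigma_1\sigma_2\sigma_3(s;S)$.

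The main subtlety is absorbing the $O(\alpha^{2-\epsilon/400})$ deterministic offset from the first-order expression into the target bounds: since $\sigma_i(s;S)$ could a priori be very small if the points of $\Pi_S$ are sparse, it is not immediate that $\alpha^{1-\epsilon}\sigma_1\sigma_2$ dominates $\alpha^{2-\epsilon/400}$. This is handled by noting that the hypothesis $\sup_{s \le \hat{t}\wedge\tau} S(s) \le \alpha^{1-\epsilon/400}$ together with Corollary \ref{cor:concentration:numberofpts each interval} applied to $g = S$ forces $\pi_1(s;S), \pi_2(s;S) = O(\alpha^{-1-\epsilon/300})$ (hence $\sigma_i(s;S) \ge \alpha^{1/2+\epsilon/600}$) on an event of probability at least $1 - \exp(-\alpha^{-c})$; on that event the deterministic offset is swallowed by $\alpha^{1-\epsilon}\sigma_1\sigma_2$ and $\alpha^{3/2-\epsilon}\sigma_1$ with room to spare. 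A union bound over the two high-probability events then yields the claimed failure probability $2\exp(-\alpha^{-\epsilon/3000})$.
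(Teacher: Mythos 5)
Your high-level plan matches the paper's: apply Propositions \ref{prop:fixed perturbed:double int} and \ref{prop:fixed perturbed:triple int} to the time-shifted rate process, note that the second bound has no deterministic offset, and absorb the $O(\alpha^{2-\epsilon/400})$ offset in the first bound by showing $\pi_1,\pi_2$ are at most $\alpha^{-1-\epsilon/10}$. However, the mechanism you propose for the last step does not work.

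Corollary \ref{cor:concentration:numberofpts each interval} provides an \emph{upper} bound on $|\Pi_g[(t-\Delta)\vee 0,t]|$ in terms of $\int g$, i.e.\ it certifies that points are \emph{not too dense}. This yields a \emph{lower} bound on $\pi_i(s;S)$, hence an \emph{upper} bound on $\sigma_i(s;S)$ — the opposite of what you need. Similarly, the hypothesis $\sup S \le \alpha^{1-\epsilon/400}$ is an upper bound on the rate and, taken alone, is perfectly consistent with $\Pi_S$ having an arbitrarily long empty interval; it cannot force $\pi_2(s;S)$ to be small. To conclude $\sigma_i(s;S)\ge \alpha^{1/2+\epsilon/20}$ you must establish a \emph{lower} bound on the local point count, which in turn needs a lower bound on the rate.

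The correct route is the one the paper indicates (an argument ``analogous to Lemma \ref{lem:Qint:taubase} for $\tau_{\textnormal{f}2}^{\textnormal{rev}}$''): for the fixed-rate process $\Pi_\alpha$, every interval of length $\alpha^{-1-\epsilon/10}$ contains $\Omega(\alpha^{-\epsilon/10})$ points with probability $\ge 1-\exp(-c\alpha^{-\epsilon/10})$, since the expected count is $\alpha\cdot\alpha^{-1-\epsilon/10}=\alpha^{-\epsilon/10}\to\infty$, and a union bound over a discretization of $[0,\hat h]$ gives the claim for all $s$ simultaneously. This bound is then transferred from $\Pi_\alpha$ to $\Pi_S$ exactly as in Section \ref{subsec:fixed:perturbed}, using the Radon--Nikodym estimate of Lemma \ref{lem:fixed perturbed:radon deriv bd}, whose hypotheses \eqref{eq:fixed perturbed:assumption} are precisely the assumptions \eqref{eq:error:assumptions}. (Alternatively one can argue directly: the first assumption $\int(S-\alpha)^2\le\alpha^{1-\epsilon/400}$ forces $\int_{s-L}^{s} S \ge \alpha L - \sqrt{L\cdot\alpha^{1-\epsilon/400}} \gtrsim \alpha^{-\epsilon/10}$ for $L=\alpha^{-1-\epsilon/10}$, so a \emph{lower-tail} Chernoff bound on $|\Pi_S[s-L,s]|$ gives at least two points with very high probability — note this uses the first assumption, not the third.) Once $\pi_2(s;S)\le\alpha^{-1-\epsilon/10}$ is in hand, your arithmetic for swallowing the $O(\alpha^{2-\epsilon/400})$ term is fine.
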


\begin{proof}
	Note that the second statement follows directly from the assumptions, Proposition \ref{prop:fixed perturbed:triple int}, and the formula \ref{eq:speed:2ndorder main}. The first inequality follows similarly: we first claim that
	\begin{equation}
	\PP\left( \pi_2(s;S) \le \alpha^{-1-\frac{\epsilon}{10}}, \ \forall s\in [t^-, \hat{t}\wedge\tau] \right) \ge 1-\exp\left(-\alpha^{-\frac{\epsilon}{3000}} \right).
	\end{equation}
	This follows from the analogous argument as Lemma \ref{lem:Qint:taubase} used to establish bounds on $\tau_{\textnormal{f}2}^{\textnormal{rev}}$, in our case relying on the assumption $\sup_{t^-\le s\le \hat{t}\wedge\tau} S_1(s) \le \alpha^{1-\frac{\epsilon}{400}}$. This implies	
	\begin{equation}\label{eq:fixed perturbed:prop:med}
	\PP \left( \frac{4\alpha+4\alpha^2}{(1+2\alpha)^2} S_1(s) \le \frac{1}{2}\alpha^{\frac{3}{2}-\epsilon}\sigma_1(s;S) \le \frac{1}{2}\alpha^{1-\epsilon}\sigma_1\sigma_2(s;S), \ \forall s\in[t^-,\hat{t}\wedge\tau] \right) \ge 1-\exp\left(-\alpha^{-\frac{\epsilon}{3000}} \right),
	\end{equation} 
	and combining this with Proposition \ref{prop:fixed perturbed:double int} and the formula \eqref{eq:speed:1storder main} gives the conclusion.
\end{proof}

We remark that the assumptions \eqref{eq:error:assumptions} are actually satisfied by the speed with high probability with appropriate parameters $t^-, \hat{t}$ and the stopping time $\tau.$ The formal definitions and their proofs will be discussed in Sections \ref{sec:reg:intro}--\ref{sec:reg:next step}.

\section{The age-dependent critical branching process}\label{sec:criticalbranching}

In Section \ref{sec:inductionbase}, we saw that the speed drops down to an arbitrarily low size and can be sandwiched by two fixed rate processes. 
To continue our investigation, we need to understand the evolution of a speed process which  begins with points given by a fixed rate  such as  \eqref{eq:def:sandwichfixed}.  
In this section, we study an age-dependent critical branching process  which is closely related with the actual speed process. In the first (or second) order approximation of the speed, it will turn out that its main term stays close to a certain critical branching process, and hence understanding the latter process will be crucial in our analysis.

For two parameters $t_0^{-}<t_0$, we define the age-dependent critical branching process  starting from a configuration of points $\Pi_{0}[t_0^{-},t_0]$ in the interval $[t_0^{-},t_0]$. From now on, we use the following notation to emphasize the prescribed initial points: For $t\ge t_0$, we define
\begin{equation}\label{eq:def:Rb}
R(t) = \mathcal{R}_b(t;\Pi_0[t_0^{-},t_0],\alpha)
:=
\intop_{t_0^{-}}^{t_0} K_\alpha(t-x) d\Pi_0(x)+
\intop_{t_0}^{t} K_\alpha(t-x) d\Pi_R(x),
\end{equation}
and set $R(t)=\alpha$ for $t<t_0.$
In words, the process $R(t)$ can be described as follows. The points $\Pi_0[t_0^{-},t_0]$ are given initially, acting as roots of the branching process. Each particle at $x\in [t_0^{-},t_0]$ independently gives births to its children on $[t_0,\infty)$, at position $y\geq t_0$ at rate $K_\alpha(y-x)dy$. Each child of a root then gives birth to another generation with the same rule, and the branching continues.  

Since $\int_0^\infty K_\alpha(x)dx=1$, all particles except the roots ($\Pi_0[t_0^-,t_0]$) perform critical branching process (i.e., average number of offspring is 1). Moreover, the distance between a child and a parent is determined by the density $K_\alpha$, and hence it is an age-dependent branching process.

  Throughout the section (and the rest of the paper), $\theta>0$ denotes a large absolute constant, say, $\theta=10000$ and $C_\circ=50$ is a fixed constant that is significantly smaller than $\theta$. For given positive number $\alpha$ and $t_0$, we denote $$\beta:= \log(1/\alpha).$$ Further, let $t_0^-$ be an arbitrary fixed number satisfying 
  $$	t_0-2\alpha^{-2}\beta^\theta < t_0^- <t_0-\frac{1}{2}\alpha^{-2}\beta^\theta,$$
  and set $ \acute{t}_0:= t_0 + 3\alpha^{-2} \beta^{\theta}$.

In what follows, we illustrate a way of interpreting $R(t)$ as a perturbation of the fixed rate process. Consider the process $R_0(t)$ defined as 
\begin{equation}\label{eq:def:R0:ib}
R_0(t):= \begin{cases}
\alpha & \ \textnormal{ for } t_0^{-}\le t<t_0;\\
\intop_{t_0^{-}}^t K_\alpha(t-x) d\Pi_\alpha(x) & \ \textnormal{ for } t\ge t_0,
\end{cases}
\end{equation}

For $t\ge t_0$, observe that $R(t) = R_0(t)$ until we see the first point in $\Pi_{R_0 \triangle \alpha}$. This motivates us to define the following two processes $R_0^-(t)$ and $R_0^+(t)$ as follows: For all $t_0^{-} \le t < t_0$, we set $R_0^-(t)=R_0^+(t)=\alpha$. For $t\ge t_0$, let
\begin{equation}\label{eq:def:R:ib:aux}
\begin{split}
R_0^-(t)&:= \intop_{t_0^{-}}^t K_\alpha(t-x) d\Pi_{R_0^- - (R_0-\alpha)_-}(x);\\
R_0^+(t) &:= \intop_{t_0^{-}}^t K_\alpha(t-x) d\Pi_{R_0^+ + (R_0-\alpha)_+}(x),
\end{split}
\end{equation}
where $(s)_+ := s\vee 0$ and $(s)_-:= (-s) \vee 0$. Later in Proposition \ref{prop:branching:ib:sandwich}, we will see that $R_0^-$ (resp. $R_0^+$) lower (resp. upper) bounds $R(t)$.

To state the main result of this section,  let 
\begin{equation}\label{eq:def:R0plushat}
\hat{R}_0^+(t) := \left(R_0^+(t) + |R_0(t)-\alpha| \right) \vee \alpha.
\end{equation}
Recalling the definition of $\sigma_1(t;g)$ \eqref{eq:def:sig closest points},  define 
\begin{equation}\label{eq:def:tau:branching:ib}
\begin{split}
&\tau_{{\textnormal{B}1}} = \tau_{{\textnormal{B}1}}(\alpha,t_0):= \inf \left\{t\ge t_0: \left(R (t)-\alpha \right) \notin \left( - \alpha^{\frac{3}{2}}\beta^{5\theta}, \, \alpha \beta^{5\theta} \sigma_1(t; \hat{R}_0^+)+ \alpha^{\frac{3}{2}}\beta^{5\theta}\right) \right\};\\
&\tau_{{\textnormal{B}2}}=\tau_{{\textnormal{B}2}}(\alpha,t_0):= \inf\left\{t\ge t_0:  \left|\Pi_{\hat{R}_0^+}[(t-\alpha^{-1})\vee t_0, t] \right| \ge \beta^{C_\circ} \right\} ;
\\
&\tau_{\textnormal{B}}=\tau_{\textnormal{B}}(\alpha,t_0):=
 \tau_{{\textnormal{B}1}} \wedge \tau_{{\textnormal{B}2}}.
\end{split}
\end{equation} 
Then, we have the following result for $R(t)$.

\begin{thm}\label{thm:branching:ib}
	Under the above setting, for all sufficiently small $\alpha>0$, we have 
	\begin{equation}
	\PP \left( \tau_{\textnormal{B}}(\alpha,t_0)   >\acute{t}_0 \right) \ge 1-\exp\left(-\beta^2 \right).
	\end{equation}
\end{thm}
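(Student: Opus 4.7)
The plan is to reduce everything to the fixed-rate-driven process $R_0$, which can be analyzed via martingale concentration, and then propagate the bounds to the self-referential sandwich processes $R_0^\pm$ and finally to $R$.

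First I would set up the pointwise sandwich $R_0^-(t) \le R(t) \le R_0^+(t)$ for all $t \in [t_0,\acute{t}_0]$, by coupling $R$, $R_0$, $R_0^-$, $R_0^+$ to a common underlying $2$-D Poisson process $\Pi$ (with $\Pi_0$ coupled in via the standard construction). The upper inequality is by induction on the arrivals: whenever $R_0 > \alpha$, the definition of $R_0^+$ absorbs an extra ``rate buffer'' $(R_0-\alpha)_+$ into the driving $\Pi_{R_0^+ + (R_0-\alpha)_+}$, so $\Pi_R \subseteq \Pi_{R_0^+}$ as subsets of points of $\Pi$ so long as $R \le R_0^+$ has held so far; convolution against the nonnegative kernel $K_\alpha$ then preserves the ordering. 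The lower bound is symmetric.

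The heart of the argument is a uniform concentration bound on $R_0(t)-\alpha$ for $t\in[t_0,\acute{t}_0]$. Writing
\[
R_0(t)-\alpha = \intop_{t_0^-}^t K_\alpha(t-x)\, d\widetilde{\Pi}_\alpha(x) + O\!\left(e^{-c\beta^\theta}\right),
\]
where the error comes from the missing tail $\alpha\intop_{-\infty}^{t_0^-} K_\alpha(t-x)dx$ being exponentially small by Lemma~\ref{lem:estimate on K:intro}, I would apply Corollary~\ref{lem:concentrationofint:continuity} with $f_t(x)=K_\alpha(t-x)$, $g\equiv\alpha$, $\Delta=\alpha^{-1}$, $M = C\alpha^3\beta$ (from $\intop_0^\infty K_\alpha(s)^2\alpha\, ds \le C\alpha^3\beta$), $A = C\alpha^{3/2}\sqrt{\beta}$ (from $\intop_0^{\Delta} K_\alpha \cdot \alpha \le C\alpha^{3/2}\sqrt{\beta}$), and derivative bound $D\le C\alpha$ from the second estimate of Lemma~\ref{lem:estimate on K:intro}. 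The conclusion $2Nf_t(p_1(t)) + 3A + a\sqrt{M}$ specializes to $C\alpha\sigma_1(t;\Pi_\alpha)\beta^{O(1)} + C\alpha^{3/2}\beta^{O(1)}$, which already matches the two-term form built into the definition of $\tau_{\textnormal{B}1}$. Taking the deviation parameter $a$ of size a large power of $\beta$ absorbs a union bound over $t\in[t_0,\acute{t}_0]$ and delivers failure probability at most $\exp(-\beta^2)$.

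The bootstrap from $R_0$ to $R_0^\pm$ and to $R$ is a stopping-time argument. On the event where the bound above holds for all $s\le t$, the integrated excess $\intop_{t_0^-}^{t}|R_0(s)-\alpha|\,ds$ is only of order $\alpha^{-1/2}\beta^{O(\theta)}$; by Corollary~\ref{cor:concentration:numberofpts each interval} the number of ``extra'' points driving $R_0^+$ beyond $R_0$ up to any time in $[t_0,\acute{t}_0]$ is controlled, and each such point contributes at most $K_\alpha(0)\le C\alpha$ locally to $R_0^+$, with the total convolved contribution absorbed into the same form of bound. This closes the induction and gives $|R_0^+(t)-R_0(t)| + |R_0^-(t)-R_0(t)|$ of strictly smaller order than $\alpha^{3/2}\beta^{5\theta}$, after which the sandwich yields the $\tau_{\textnormal{B}1}$ bound for $R$ itself, with $\sigma_1(t;\hat{R}_0^+)$ replacing $\sigma_1(t;\Pi_\alpha)$ because $\Pi_{\hat R_0^+}$ dominates both driving processes. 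Finally, $\tau_{\textnormal{B}2}$ is verified by a direct application of Corollary~\ref{cor:concentration:numberofpts each interval} to $\Pi_{\hat R_0^+}$: on the good event $\hat R_0^+ \le 2\alpha + o(\alpha)$, each window of length $\alpha^{-1}$ has mean mass close to $1$, so the window count exceeds $\beta^{C_\circ}$ with probability at most $\alpha^{-2}\beta^\theta\exp(-\beta^{C_\circ}/2) \ll \exp(-\beta^2)$.

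The main obstacle is the circular definition of $R_0^+$ (and $R_0^-$), which forces the concentration for $R_0$ and the inductive comparison between $R_0$ and $R_0^+$ to be run in lockstep via coupled stopping times. This is exactly why the definition of $\tau_{\textnormal{B}2}$ is formulated in terms of $\hat{R}_0^+$ rather than $R_0$, and why the large exponents $5\theta$ and $C_\circ$ appear: they leave quantitative slack to close the bootstrap while still hitting the target failure probability $\exp(-\beta^2)$.
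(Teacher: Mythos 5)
Your outline matches the paper's at the top level — the pointwise sandwich $R_0^- \le R, R_0 \le R_0^+$, concentration for $R_0$ via Corollary~\ref{lem:concentrationofint:continuity}, and $\tau_{\textnormal{B}2}$ via Corollary~\ref{cor:concentration:numberofpts each interval} are all the right moves. But there is a genuine gap in the bootstrap from $R_0$ to $R$: you have missed that $\delta R_0 := R_0^+ - R_0^-$ is not ``a few extra points, each contributing $O(\alpha)$,'' but an age-dependent \emph{critical} branching process. The points in $\Pi_{\hat\Delta R_0}$ arise in two ways: ``external additions'' at rate $|R_0 - \alpha|$, \emph{and} the full set of descendants each such addition spawns under the kernel $K_\alpha$. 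Since these subtrees are critical Galton--Watson (expected offspring $1$), the population of extra points in a window is not controlled by the first-moment argument you sketch. Indeed your own remark that each extra point contributes $\sim K_\alpha(0^+) \sim \alpha$ already contradicts your subsequent claim that $|R_0^+(t)-R_0(t)| + |R_0(t)-R_0^-(t)|$ is of order $o(\alpha^{3/2}\beta^{5\theta})$: in fact $\delta R_0$ spikes to order $\alpha\beta^{O(\theta)}$ at arrivals, which is exactly why the upper-bound side of $\tau_{\textnormal{B}1}$ carries the extra term $\alpha\beta^{5\theta}\sigma_1(t;\hat R_0^+)$. Controlling $\delta R_0$ requires the single-particle branching estimates (Propositions~\ref{prop:branching:sb:main}, \ref{prop:branching:sb:ge3}, which bound the number of descendants of a single external addition landing in a window of length $\alpha^{-3/2}$) combined with Bernstein's inequality over the independent subtrees, as in Lemma~\ref{lem:branching:ib:taudelta} and Corollary~\ref{cor:branching:ib:taudelta}. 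This is the heart of the proof and cannot be replaced by a direct application of Corollary~\ref{cor:concentration:numberofpts each interval} to the not-yet-controlled process $\hat\Delta R_0$.

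A second, related gap concerns the lower bound $R(t) - \alpha \ge -\alpha^{3/2}\beta^{5\theta}$. Your proposal obtains both sides from $|R_0^\pm - R_0|$ being small, but since $R_0(t) - R_0^-(t) \le \delta R_0(t)$ and $\delta R_0$ can be as large as $\alpha\beta^{O(\theta)}$, this route gives only $R(t) \ge \alpha - \alpha\beta^{O(\theta)}$, which is far too weak. Note the asymmetry built into $\tau_{\textnormal{B}1}$: the lower barrier has no $\sigma_1$ term. The paper handles this by applying the martingale decomposition \eqref{eq:integralform:branching} \emph{directly} to $R_0^-$ (equation \eqref{eq:branching:R0minus intform} onward), separating the martingale part (via Corollary~\ref{lem:concentrationofint:continuity}) from the drift part (via Lemma~\ref{lem:ind1:pi1int basicbd}), and separately showing $\mathcal{R}_c(t_0^\flat, t; \Pi_\alpha[t_0^-,t_0^\flat],\alpha)$ is close to $\alpha$ through \eqref{eq:branching:Rlbd:med3}--\eqref{eq:reg:ib:med2}. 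This is a qualitatively different argument from the sandwiching you propose, and it is what produces the sharper one-sided estimate.
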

 The theorem will be essential later, serving as the induction base of our inductive argument in settling the regularity (Section \ref{sec:reg:intro}; Theorem \ref{thm:induction:base:main}).
Establishing the theorem consists of the four main steps as follows.
\begin{itemize}
\item Section \ref{subsec:branching:mg}: We develop further theory on representing the branching process using martingales.  Not only will this be helpful in studying $\tau_{\textnormal{B}1}$, but also serve as a fundamental tool in the later sections. 

\item Section \ref{subsec:branching:numberofpts}: We study the critical branching process that starts from a single initial particle. The analysis provides a useful tool to understand the gap between $R_0(t)$ and $R(t)$.

\item Section \ref{subsec:branching:ib}: We  investigate the processes $R_0(t)$, $R_0^-(t)$ and $R_0^+(t)$, exploiting the martingale concentration lemmas from Section \ref{subsec:fixed:mgconcen}.

	\item Section \ref{subsubsec:branching:ib:tau4}: Combining the above  analysis, we deduce the bound on $\tau_{{\textnormal{B}1}}$ and $\tau_{{\textnormal{B}2}}$. 
\end{itemize}

Before moving on, we record a simple lemma that will be used througout the rest of the paper.
\begin{lem}\label{lem:ind1:pi1int basicbd:basic}
	Let $g:[0,h]\to (0,\infty)$ be a positive function which defines a (deterministic) set of points $\Pi_g[0,h]$, and set $n=|\Pi_g[0,h]|$. Recalling the definition $\pi_1(t;g)$ \eqref{eq:def:pi closest points}, there exists an absolute constant $C>0$ such that
	\begin{equation}
	\begin{split}
	&\intop_0^h \frac{dt}{\pi_1(t;g)+1} \le (n+1)\log (h+1);\\
	&\intop_0^h \frac{dt}{\sqrt{\pi_1(t;g)+1}} \le C\sqrt{(n+1)h}.
	\end{split}
	\end{equation}
\end{lem}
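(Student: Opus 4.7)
The plan is elementary: decompose $[0,h]$ at the points of $\Pi_g$, on which $\pi_1(t;g)$ becomes a simple affine function of $t$, and carry out both integrals explicitly.

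I would order the points as $0 \le p_1 < p_2 < \cdots < p_n \le h$, and introduce the boundary values $p_0 := 0$ and $p_{n+1} := h$, producing at most $n+1$ subintervals $[p_i, p_{i+1}]$ whose lengths $\ell_i := p_{i+1}-p_i$ satisfy $\sum_{i=0}^n \ell_i = h$. On each such subinterval one has $\pi_1(t;g) = t - p_i$ (using the natural convention $\pi_1(t;g) = t$ on $[0,p_1]$, where $\Pi_g[0,t]$ is empty), which reduces both quantities to explicit sums in the gap lengths $\ell_i$.

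For the first inequality, the decomposition directly evaluates to $\sum_{i=0}^n \log(\ell_i+1)$, and since $\ell_i \le h$ for every $i$, the bound $(n+1)\log(h+1)$ follows immediately. For the second inequality, the same splitting yields $\sum_{i=0}^n 2\bigl(\sqrt{\ell_i + 1} - 1\bigr)$, which I would bound by $2\sum_i \sqrt{\ell_i}$ via the elementary estimate $\sqrt{L+1}-1 \le \sqrt{L}$, and then apply Cauchy--Schwarz together with $\sum_i \ell_i = h$ to obtain the desired $2\sqrt{(n+1)h}$, so that $C=2$ suffices.

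There is essentially no obstacle here; the statement is a purely deterministic calculus exercise and is singled out as a lemma only because it will be invoked repeatedly in the later sections to convert bounds involving $\pi_1$ into crude but uniform estimates in $n$ and $h$.
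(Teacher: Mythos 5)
Your proof is correct and follows essentially the same route as the paper's: split $[0,h]$ at the points of $\Pi_g$ (with endpoint conventions $p_0=0$, $p_{n+1}=h$), note that $\pi_1$ is affine on each gap, evaluate both integrals termwise, and use $\ell_i \le h$ for the logarithmic bound and Cauchy--Schwarz for the square-root bound, giving $C=2$. The only differences are cosmetic (indexing from $0$ to $n$ versus $1$ to $n+1$, and phrasing the elementary estimate as $\sqrt{L+1}-1 \le \sqrt{L}$ rather than directly bounding $\int_0^L (x+1)^{-1/2}\,dx \le 2\sqrt{L}$).
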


\begin{proof}
		Let $\Pi_g[0,h]:= \{p_1<p_2<\ldots<p_n \}$ with $p_0=0, p_{n+1}=h.$
		For the first one, we have
		\begin{equation}
		\intop_0^h \frac{dt}{\pi_1(t;g)+1} = \sum_{i=1}^{n+1} 
		\intop_{0}^{p_i-p_{i-1}} \frac{dx}{x+1} \le (n+1) \log(h+1).
		\end{equation}
		The second one follows by
		\begin{equation}\label{eq:intbasicaux}
		\intop_0^h \frac{dt}{\sqrt{\pi_1(t;g)+1}} =
		\sum_{i=1}^{n+1}
		\intop_0^{p_i-p_{i-1}} \frac{dx}{\sqrt{x+1}} 
		\le \sum_{i=1}^{n+1} 2\sqrt{ p_i-p_{i-1}}\le 2\sqrt{(n+1)h},
		\end{equation}
		where we used Cauchy-Schwarz inequality to obtain the last inequality.
\end{proof}

\subsection{Connection to  martingales}\label{subsec:branching:mg}

Recall the definition $\mathcal{R}_b(t;\Pi_0[t_0^-,t_0],\alpha)$ \eqref{eq:def:Rb}. For a given point process $\Pi_Q$ with respect to $Q(t)>0$, we introduce a similar notation given by
\begin{equation}\label{eq:def:Qst}
\mathcal{R}_c(s,t;\Pi_Q[t_0^-,s],\alpha):= 
\intop_{t_0^{-}}^s K_{\alpha_0}(t-x) d\Pi_Q(x) + \intop_{t_0^{-}}^{s} \intop_{s}^t K^*_{\alpha_0}(t-u)K_{\alpha_0}(u-x)du d\Pi_Q(x).
\end{equation}
In words, it is a conditional expectation of  the rate at time $t$ of the critical branching process starting from the points $\Pi_Q[t_0^-,s]$, conditioned on $\Pi_Q[t_0^-,s]$. In particular,
$\mathcal{R}_c(t,t;\Pi_Q[t_0^-,t],\alpha) = \mathcal{R}_b(t;\Pi_Q[t_0^-,t],\alpha)$. 

Abbreviating the notation by $Q_1(s,t):= \mathcal{R}_c(s,t;\Pi_Q[t_0^-,s],\alpha)$, 
the main advantage of studying this process comes from the following observation, which is an analogue of \eqref{eq:Lt diff:basic}: 
\begin{equation}\label{eq:integralform:branching:pre}
\begin{split}
\frac{\partial}{\partial s} Q_1(s,t) &= \left(K_{\alpha}(t-s) +\intop_s^t K_{\alpha}^*(t-u) K_{\alpha}(u-s) du \right) \frac{d\Pi_Q(s)}{ds}  -\intop_{t_0^-}^s  K_{\alpha}(s-x) d\Pi_Q(x) \\
&=K^*_{\alpha}(t-s) \left( \frac{d\Pi_Q(s)}{ds} - Q_1(s,s) \right) , 
\end{split}
\end{equation}
where the second line comes from the fact that $K^*_{\alpha_0}=K_{\alpha_0}+K_{\alpha_0}*K^*_{\alpha_0}$. This gives 
\begin{equation}\label{eq:integralform:branching}
\begin{split}
Q_1(s_1,t)-Q_1(s_0,t) &= \intop_{s_0}^{s_1} K^*_\alpha (t-s) \big\{ d\Pi_Q(s)- Q_1(s,s)ds \big\}\\
&= \intop_{s_0}^{s_1} K^*_\alpha (t-s) \left\{ d\widetilde{\Pi}_Q(s)+ \big[Q(s)-Q_1(s,s) \big]ds \right\} ,
\end{split}
\end{equation}
where we wrote $d\widetilde{\Pi}_Q(s):= d\Pi_Q(s)-Q(s)ds$. This decomposes the identity to the ``martingale part'' and the ``drift part.''

In the definition of $\mathcal{R}_c$, the case $t=\infty$ will play important roles, and we specify this case with another notation:
\begin{equation}\label{eq:def:L}
\begin{split}
 \mathcal{L}(t; \Pi_Q[t_0^{ -}, t],\alpha) :=
\intop_{t_0^{ -}}^{t} \intop_{t}^{\infty}  K^*_\alpha \cdot K_\alpha(x-s) dx d\Pi_Q (s)
.
\end{split}
\end{equation}
where $K^*_\alpha = \lim_{t\to\infty} K^*_\alpha(t)= \frac{2\alpha^2}{1+2\alpha}$ (Lemma \ref{lem:estimat for K tilde:intro}). Note that the definition is an  analogue of \eqref{eq:def:Lt:basic form}.

\subsection{The critical branching from a single particle}\label{subsec:branching:numberofpts}

In this subsection, we study the age-dependent critical branching process that starts from a single initial particle.  The results in this subsection will be useful tools in the next subsection.

We let $\{r(t)\}_{t\ge 0}$ denote the rate of the critical branching process starting from a single point at the origin ($t=0$). That is, we define $r(t)$ by
\begin{equation}\label{eq:def:rt:branching singlept}
r(t) = r(t,\alpha) := K_\alpha(t)+ \intop_{0+}^t K_\alpha(t-x) d\Pi_r(x).
\end{equation}
Also, we set $\hat{h}:= 2\alpha^{-2}\beta^\theta$. Recall the definitions of $\pi_i(t;g)$  \eqref{eq:def:pi closest points} and $ \sigma_i(t;g)$    \eqref{eq:def:sig closest points}, and
consider the stopping times
\begin{equation}\label{eq:def:tauSB}
\begin{split}
\tau_{\textnormal{SB}1} &:= \inf\left\{t\ge 0: |\Pi_r[0,t]| \ge \beta^{\frac{7}{2}\theta} \right\};\\
\tau_{\textnormal{SB}2} &:=
\inf \left\{t\ge 0: r(t) \ge \alpha \beta^{4\theta} \sigma_1(t;r) + \alpha^2  \right\};\\
\tau_{\textnormal{SB}} &:= \tau_{\textnormal{SB}1} \wedge \tau_{\textnormal{SB}2}.
\end{split}
\end{equation}

Then, our main estimate can be described as follows.

\begin{prop}\label{prop:branching:sb:main}
	Under the above setting, we have
	\begin{equation}
	\PP \left(\tau_{\textnormal{SB}} \le \hat{h} \right) \le \exp\left(-\beta^4 \right).
	\end{equation}
\end{prop}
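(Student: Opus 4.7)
The plan is to combine the martingale representation of the critical branching process developed in Section~\ref{subsec:branching:mg} with the Azuma--Bernstein-type concentration lemmas of Section~\ref{subsec:fixed:mgconcen} in a bootstrap argument. Set $\tau_\ast := \tau_{\textnormal{SB}} \wedge \hat h$. On $[0,\tau_\ast]$ the defining bounds $N(t) := |\Pi_r[0,t]| \le \beta^{7\theta/2}$ and $r(s) \le \alpha\beta^{4\theta}\sigma_1(s;r) + \alpha^2$ both hold by construction. The goal is to show that, outside an event of probability $\le \exp(-\beta^4)$, refined concentration estimates hold uniformly on $[0, \hat h \wedge \tau_\ast]$ that are strictly stronger than these defining thresholds, which forces $\tau_\ast = \hat h$.

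For the bound on $N$, the compensated process $\widetilde{N}(t) := N(t) - \intop_0^t r(s)\,ds$ is a martingale. By Fubini and Lemma~\ref{lem:estimat for K tilde:intro}, $\mathbb E[N(\hat h)] = \intop_0^{\hat h} K_\alpha^*(s)\,ds = O(\beta^\theta)$. On $[0,\tau_\ast]$, combining the a priori bound on $r$ with Lemma~\ref{lem:ind1:pi1int basicbd:basic} (using $n = N \le \beta^{7\theta/2}$ and $h = \hat h$) gives $\intop_0^{t\wedge\tau_\ast} r(s)\,ds \le \beta^{O(\theta)}$. Applying Corollary~\ref{cor:concentration of integral} with $f \equiv 1$ and $a \asymp \beta^4$ then produces a fluctuation bound $|\widetilde{N}(t \wedge \tau_\ast)| \le \beta^{O(\theta)}$ with probability $\ge 1 - C\exp(-\beta^4)$; under $\theta = 10000$ this is much smaller than $\beta^{7\theta/2}$.

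For the bound on $r$, combining the martingale identity \eqref{eq:integralform:branching} (with $Q = r$ and the root at $0$ included in the configuration) with the renewal relation $K_\alpha^* = K_\alpha + K_\alpha * K_\alpha^*$ yields the clean representation
\[
r(t) - K_\alpha^*(t) \,=\, \intop_{0}^{t} K_\alpha^*(t-s)\,d\widetilde{\Pi}_r(s).
\]
The deterministic part satisfies $K_\alpha^*(t) \le 2\alpha^2 + C\alpha\sigma_1(t;r)$ using Lemma~\ref{lem:estimat for K tilde:intro} together with $1/\sqrt{t+1} \le \sigma_1(t;r)$. The stochastic part is controlled by Corollary~\ref{lem:concentrationofint:continuity} applied with $f_t(x) := K_\alpha^*(t-x)$, which is increasing in $x$ (since $K_\alpha^*$ is essentially decreasing in its argument by Lemma~\ref{lem:estimat for K tilde:intro}). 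The $L^2$ energy $\intop K_\alpha^*(t-x)^2 r(x)\,dx$ is estimated by splitting the bulk $\{t-x \gtrsim \pi_1(t;r)\}$, where $K_\alpha^*$ is $O(\alpha^2 + \alpha\sigma_1)$, from the window near $t$, where the a priori bound on $r$ applies; this yields a martingale fluctuation of order $\alpha^{3/2}\beta^{O(\theta)}$, which thanks to the slack in the exponent $4\theta$ is dominated by $\tfrac{1}{2}(\alpha\beta^{4\theta}\sigma_1(t;r) + \alpha^2)$. Uniformity over $t \in [0, \hat h]$ is achieved via discretization combined with the Lipschitz bound on $K_\alpha^*$ from Lemma~\ref{lem:estimat for K tilde:intro}.

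The main technical obstacle is closing the bootstrap: bounds on $r(t)$ depend on prior values of $r$ (through the $L^2$ energy) and on $N$ (through the discretization of the nearest-point spike), while bounds on $N$ depend on $r$ (through the integrated-rate estimate). The stopping time $\tau_\ast$ resolves the circularity, at the cost of requiring a wide gap between the a priori exponents ($7\theta/2$ and $4\theta$) and the achievable concentration bounds; the generous choice $\theta = 10000$ provides ample slack so that the concentration estimates strictly dominate the defining failure events of both $\tau_{\textnormal{SB}1}$ and $\tau_{\textnormal{SB}2}$ with the desired probability $\ge 1 - \exp(-\beta^4)$.
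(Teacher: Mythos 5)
Your bootstrap for controlling $\tau_{\textnormal{SB}1}$ does not close. The issue is quantitative: to apply Corollary~\ref{cor:concentration of integral} to the compensated counting process $\widetilde{N}(t) = N(t) - \intop_0^t r(s)\,ds$, you must control the drift $\intop_0^{\hat h\wedge\tau_\ast} r(s)\,ds$ using only the a priori bound $r(s) \le \alpha\beta^{4\theta}\sigma_1(s;r)+\alpha^2$ from $\tau_{\textnormal{SB}2}$. Inserting this bound and using Lemma~\ref{lem:ind1:pi1int basicbd:basic} with $n = \beta^{7\theta/2}$ and $h = \hat h = 2\alpha^{-2}\beta^\theta$ gives
\begin{equation}
\intop_0^{\hat h\wedge\tau_\ast} r(s)\,ds \;\le\; \alpha\beta^{4\theta}\cdot C\sqrt{\beta^{7\theta/2}\cdot 2\alpha^{-2}\beta^\theta} + \alpha^2\hat h \;\asymp\; \beta^{25\theta/4},
\end{equation}
and $25\theta/4 = 6.25\theta$ is \emph{strictly larger} than the threshold $7\theta/2 = 3.5\theta$ in the definition of $\tau_{\textnormal{SB}1}$. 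In other words, even if the martingale part $\widetilde{N}$ were exactly zero, the drift computed from the a priori bound on $r$ already overshoots the threshold. You conflated $\mathbb E[N(\hat h)] = \intop_0^{\hat h}K_\alpha^*(s)\,ds = O(\beta^\theta)$ (an unconditional expectation) with the pathwise drift $\intop_0^{t}r(s)\,ds$ appearing in the semimartingale decomposition; the former is small, but the latter is only controllable via the $\tau_{\textnormal{SB}2}$ bound, which is too lossy by a factor $\beta^{\approx 3\theta}$. No choice of the slack constants can repair this, since the $\beta^{4\theta}$ factor in $\tau_{\textnormal{SB}2}$ already dominates the $\beta^{7\theta/2}$ threshold in $\tau_{\textnormal{SB}1}$ after integration against $\sigma_1$.

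The paper's actual argument avoids this circularity entirely and is structured quite differently. The bound on $N = |\Pi_r[0,\hat h]|$ (Lemma~\ref{lem:numberofpts:branching}) does not use any martingale concentration or any a priori bound on $r$; it is proved by comparison with the total population of a critical Poisson Galton--Watson tree (Lemma~\ref{lem:bound on branching}) combined with the observation that inter-generational jump lengths are i.i.d.\ with density $K_\alpha$ and hence typically of order $\alpha^{-2}$, so only $\textnormal{polylog}$-many generations fit inside $[0,\hat h]$. This gives the much stronger bound $\exp(-\beta^{\theta/200})$. Once $N \le \beta^{7\theta/2}$ holds, the $\tau_{\textnormal{SB}2}$ bound is then \emph{deterministic}: $r(t) = K_\alpha(t) + \sum_{x\in\Pi_r[0,t]}K_\alpha(t-x)$ is a sum of at most $N+1$ terms each bounded using Lemma~\ref{lem:estimate on K:intro}, and $\beta^{7\theta/2}\cdot C\alpha\sigma_1(t;r) < \alpha\beta^{4\theta}\sigma_1(t;r)$ together with the exponential decay of $K_\alpha$ beyond $\alpha^{-2}\beta^\theta$ yields $\tau_{\textnormal{SB}2}\ge\tau_{\textnormal{SB}1}$ with probability one. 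There is no bootstrap; the two bounds are established sequentially, with the first being probabilistic (via tree structure) and the second entirely deterministic given the first. Your martingale representation $r(t)-K_\alpha^*(t)=\intop_0^t K_\alpha^*(t-s)\,d\widetilde\Pi_r(s)$ is correct and is indeed the tool the paper uses elsewhere (e.g.\ in Section~\ref{subsec:branching:ib} for $R_0$), but it is not the right tool here precisely because the initial configuration for $r$ is a single point rather than a dense fixed-rate process, so there is no strong a priori control on the quadratic variation.
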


Furthermore, we will deduce a bound on the probability that $\Pi_r$ contains more than certain number of points in a short interval. For instance, for any $t\in [k\alpha^{-\frac{3}{2}}, (k+1)\alpha^{-\frac{3}{2}} ]$ with $k\in \mathbb{N}$, we can write
\begin{equation}\label{eq:branching:sb:ge 1}
\PP \left(|\Pi_r[t-\alpha^{-\frac{3}{2}}, t]| \ge 1 \right) \le \mathbb{E} |\Pi_r[t-\alpha^{-\frac{3}{2}},t]| 
=
\intop_{t-\alpha^{-\frac{3}{2}}}^t K^*_\alpha(x) dx \le C \left\{k^{-\frac{1}{2}}\alpha^{\frac{1}{4}} \vee \alpha^{\frac{1}{2}} \right\},
\end{equation}
for some constant $C>0$. Note that the last inequality is from the estimate on $K_\alpha^*(x)$ (Lemma \ref{lem:estimat for K tilde:intro}). Our goal is to deduce a similar inequality as follows.
\begin{prop}\label{prop:branching:sb:ge3}
	Under the above setting, for all $\alpha^{-\frac{3}{2}}\le t\le \hat{h}$ we have
	\begin{equation}
	\PP \left(\big|\Pi_r[(t-\alpha^{-\frac{3}{2}})\wedge \tau_{\textnormal{SB}},t \wedge \tau_{\textnormal{SB}}]  \big| \ge 3 \right) \le \alpha^{\frac{3}{4}} \beta^{19\theta}.
	\end{equation}
\end{prop}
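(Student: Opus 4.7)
The plan is to control $\PP(|\Pi_r \cap I'| \ge 3)$ via the third factorial-moment inequality $\PP(N \ge 3) \le \mathbb{E}\binom{N}{3}$, where $N := |\Pi_r \cap I'|$ and $I' = [a, T']$ with $a := (t-\alpha^{-3/2})\wedge\tau_{\textnormal{SB}}$, $T' := t\wedge\tau_{\textnormal{SB}}$. I rewrite
\begin{equation*}
\binom{N}{3} = \int_a^{T'}\binom{N([a,s))}{2}\,dN(s), \qquad \binom{N}{2} = \int_a^{T'} N([a,s))\,dN(s),
\end{equation*}
and apply Campbell's formula to the compensated local martingale $N - \int r\,ds$ (the stopping time $T'$ is bounded so optional stopping applies). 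This reduces the problem to a pathwise upper bound on $\int_a^{T'} r(s)\,ds$.

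For this pathwise bound, the definition of $\tau_{\textnormal{SB}2}$ gives $r(s) \le \alpha\beta^{4\theta}\sigma_1(s;r) + \alpha^2$ on $[0,\tau_{\textnormal{SB}})$, so the $\alpha^2$-contribution is at most $\alpha^2(T'-a) \le \alpha^{1/2}$, using $T'-a \le \alpha^{-3/2}$. For the $\sigma_1$-contribution I apply a Cauchy--Schwarz argument to the gaps between successive points of $\Pi_r$ inside $[a,T']$ as in Lemma~\ref{lem:ind1:pi1int basicbd:basic}: since the first subinterval starts at $a$ and the nearest previous point lies at some $p^* \le a$, I use $1/\sqrt{s-p^*+1} \le 1/\sqrt{s-a+1}$ on $[a,q_1]$, reducing the situation to one with a virtual point at $a$. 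Then, with the bound $|\Pi_r[0,\tau_{\textnormal{SB}}]| \le \beta^{7\theta/2}$ from $\tau_{\textnormal{SB}1}$, Cauchy--Schwarz on the $\le \beta^{7\theta/2}+1$ gaps gives
\begin{equation*}
\int_a^{T'}\sigma_1(s;r)\,ds \le 2\sqrt{(m+1)(T'-a+m+1)} \le 3\alpha^{-3/4}\beta^{7\theta/4},
\end{equation*}
whence $\int_a^{T'} r(s)\,ds \le 3\alpha^{1/4}\beta^{6\theta}$ pathwise. Combining this with the monotonicity $\binom{N([a,s))}{2} \le \binom{N}{2}$, two applications of Campbell's formula yield $\mathbb{E}\binom{N}{3} \le (3\alpha^{1/4}\beta^{6\theta})^2 \, \mathbb{E}[N]$.

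Finally, $\mathbb{E}[N] \le \int_{t-\alpha^{-3/2}}^{t} K_\alpha^*(u)\,du \le C\alpha^{1/4}$, by Lemma~\ref{lem:estimat for K tilde:intro} (bounding $K_\alpha^*(u) \le 2\alpha^2 + C\alpha/\sqrt{u+1}$ and integrating directly over $[t-\alpha^{-3/2},t] \subseteq [0,\hat{h}]$). This yields $\PP(N \ge 3) \le C\alpha^{3/4}\beta^{12\theta}$, which is bounded by $\alpha^{3/4}\beta^{19\theta}$ for sufficiently small $\alpha$. The main technicality is the pathwise Cauchy--Schwarz bound: it must hold uniformly over all realizations allowed under $\tau_{\textnormal{SB}}$, and both the virtual-point trick on the first subinterval and the sharp number-of-points bound from $\tau_{\textnormal{SB}1}$ are essential for the argument to close.
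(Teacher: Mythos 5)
Your proof is correct and takes essentially the same route as the paper: bound the third factorial moment of $N$ via a Campbell-type (compensator) formula for the conditional intensity $r$, and then use the pathwise bound $\int_a^{T'} r \le C\alpha^{1/4}\beta^{6\theta}$ coming from $\tau_{\mathrm{SB}2}$, $\tau_{\mathrm{SB}1}$, and the Cauchy--Schwarz integral estimate in Lemma~\ref{lem:ind1:pi1int basicbd:basic}. Your one small refinement, keeping $\mathbb{E}\binom{N}{3}\le B^{2}\,\mathbb{E}[N]$ and substituting $\mathbb{E}[N]\le\int_{t-\alpha^{-3/2}}^{t}K_\alpha^*(u)\,du\le C\alpha^{1/4}$ from Lemma~\ref{lem:estimat for K tilde:intro}, trims some $\beta$-powers compared to the paper's direct cube of the pathwise bound, but both versions fit comfortably inside $\alpha^{3/4}\beta^{19\theta}$.
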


The reason why we are interested in the intervals of length $\alpha^{-3/2}$ will become more clear in the next subsection. To explain it briefly, we interpret $R(t)$ as a perturbation of the rate-$\alpha$ process, and it turns out that $\alpha^{-3/2}$ is the length of an interval where we start to see particles between $R(t)$ and $\alpha$. The proof of Proposition \ref{prop:branching:sb:ge3} follows as a consequence of Proposition \ref{prop:branching:sb:main}, and we discuss it at the end of this subsection.

Now we establish Proposition \ref{prop:branching:sb:main},  beginning with the control on $\tau_{\textnormal{SB}1}$.

\begin{lem}\label{lem:numberofpts:branching}
For any sufficiently small $\alpha>0$, we have $\PP \left( \tau_{\textnormal{SB}1} \le \hat{h} \right) \le \exp\left(-\beta^{\frac{1}{200}\theta} \right)$. In other words,
	\begin{equation}
	\PP \left( |\Pi_r[0,\hat{h}]| \ge \beta^{\frac{7}{2}\theta} \right) \le \exp\left(-\beta^{\frac{1}{200}\theta} \right).
	\end{equation}
\end{lem}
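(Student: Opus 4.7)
My plan is to pass from the age-dependent branching process to the underlying Galton--Watson tree and apply an $L^p$-moment bound with $p$ tuned to the desired stretched-exponential rate.

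First, I would compute the mean. By the renewal structure $\mathbb{E}[d\Pi_r(t)] = K_\alpha^*(t)\,dt$, we have $\mathbb{E}|\Pi_r[0,\hat{h}]| = \int_0^{\hat{h}} K_\alpha^*(s)\,ds$, and Lemma~\ref{lem:estimat for K tilde:intro} (which gives $K_\alpha^*(s) \le K_\alpha^* + C\alpha(s+1)^{-1/2}$ with $K_\alpha^* \le 2\alpha^2$) combined with $\hat{h} = 2\alpha^{-2}\beta^\theta$ yields $\mathbb{E}|\Pi_r[0,\hat{h}]| \le C\beta^\theta$. Next, I would view $r(t)$ as the rate of a critical Galton--Watson tree with Poisson$(1)$ offspring: the root at time $0$ has Poisson$(1)$ children whose birth delays are i.i.d.\ with density $K_\alpha$, and so on recursively. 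Letting $Z_k$ be the size of the $k$-th generation, $|\Pi_r[0,\hat{h}]|$ is bounded above by $\sum_k Z_k \mathbf{1}\{\text{some gen-}k\text{ particle born by }\hat h\}$.

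To truncate generations, I would apply Cram\'er's theorem to the birth time $T_k$ of a fixed generation-$k$ particle, which is a sum of $k$ i.i.d.\ copies of density $K_\alpha$. Since $K_\alpha$ has mean $\sim(2\alpha^2)^{-1}$ and exponential moments on scale $\alpha^2$ by Lemma~\ref{lem:estimate on K:intro}, for $k \ge K := \lceil 10\beta^\theta \rceil$ the value $\hat h$ lies strictly below the mean of $T_k$ and hence $\PP(T_k\le \hat h) \le e^{-ck}$. A union bound, together with $\mathbb{E}Z_k = 1$, then shows that with probability at least $1-\exp(-c\beta^\theta)$ every particle of $\Pi_r[0,\hat{h}]$ belongs to a generation $\le K$, so $|\Pi_r[0,\hat{h}]| \le \sum_{k=1}^K Z_k$.

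Finally, I would bound the truncated sum via Markov's inequality on the $p$-th moment. Yaglom's theorem for critical Poisson Galton--Watson gives $\PP(Z_k>0)\sim 2/k$ and, conditionally on survival, $Z_k/k \Rightarrow \mathrm{Exp}(1)$, so $\mathbb{E}Z_k^p \le C\,p!\,k^{p-1}$. Jensen's inequality then gives $\mathbb{E}\bigl(\sum_{k\le K}Z_k\bigr)^p \le K^{p-1}\sum_{k\le K}\mathbb{E}Z_k^p \le C\,p!\,K^{2p-1}$, so Markov at level $M=\beta^{7\theta/2}$ yields
\begin{equation*}
\PP\Bigl(\sum_{k\le K}Z_k \ge M\Bigr) \le \frac{1}{K}\Bigl(\frac{CpK^2}{M}\Bigr)^p.
\end{equation*}
Choosing $p = \lceil \beta^{\theta/200}\rceil$ and $K = 10\beta^\theta$ makes $CpK^2/M = O(p\beta^{-3\theta/2})$ much smaller than $1$ with a logarithmic margin, and the right-hand side becomes comfortably smaller than $\exp(-\beta^{\theta/200})$. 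The main obstacle will be establishing a clean non-asymptotic form of $\mathbb{E}Z_k^p \le C\,p!\,k^{p-1}$ uniformly in $k\ge 1$ and $p\ge 1$ (as opposed to the large-$k$ Yaglom asymptotic); I would verify this inductively via the Poisson recursion $\mathbb{E}[Z_k^p \mid Z_{k-1}] = \sum_{j\le p}S(p,j)Z_{k-1}^j$ (Stirling numbers), or alternatively by a direct martingale-concentration argument using Lemma~\ref{lem:concentration of integral} on $|\Pi_r[0,t]| - \int_0^t r(s)\,ds$ together with the pathwise bound $\int_0^t r(s)\,ds \le 1 + |\Pi_r[0,t]|$.
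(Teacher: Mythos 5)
Your overall strategy is sound, but it diverges from the paper's route, and the two approaches differ in how much technical baggage they carry.

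The paper first reduces the time-truncation to a Hoeffding bound on Bernoulli indicators: since $\PP(X_1 \le \alpha^{-2}) \le 1-c$ for an absolute constant $c>0$ (directly from the pointwise bound on $K_\alpha$), having $T_k \le \hat{h}$ forces $(1-c/2)k$ of the delays to be small, which has probability $\exp(-c k)$ once $k = \beta^{11\theta/10}$ just barely exceeds the $\beta^\theta$ scale. This replaces your Cram\'er argument, whose rate function has to be shown bounded away from $0$ uniformly as $\alpha\to 0$ (true, but it requires a rescaling argument via the scaling limit of $X_1\alpha^2$). Then, to control $\sum_{k\le n} Z_k$, the paper uses the explicit Poisson PGF recursion $\varphi_n(u)=\psi^{(n)}(u)$ with $\psi(u)=e^{u-1}$, tilting at $u=1+1/(5n)$; a two-line induction shows $\varphi_n(u)\le 1 + 1/(5n) + k/(16n^2)$, and Markov at level $n^3$ gives $\PP(\sum_{k\le n}Z_k\ge n^3)\le e^{-cn}$. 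Your $p$-th-moment route reaches the same kind of tail bound, but it rests on the uniform-in-$k$ moment estimate $\mathbb{E}Z_k^p \le C\,p!\,k^{p-1}$, which is not off-the-shelf: Yaglom is an asymptotic, and what one can actually prove by the Stirling-number recursion is more like $\mathbb{E}Z_k^p \le C^p p!\, k^{p-1}$ with a $C^p$ prefactor (this is still enough for your arithmetic, but the constant must be tracked). The exponential-tilt/PGF calculation does the same job in far less space and with no delicate constants to monitor. You correctly identify the moment bound as your main open step; be aware that the paper's Lemma~\ref{lem:bound on branching} is precisely the device that sidesteps it.

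Two minor cautions. First, the alternative you float at the end, applying Lemma~\ref{lem:concentration of integral} to $|\Pi_r[0,t]|-\int_0^t r$ together with $\int_0^t r \le 1+|\Pi_r[0,t]|$, is circular as stated: the predictable quadratic variation is $\int_0^t r$, which is itself of size $\approx N$, so the martingale fluctuation is $O(\sqrt{N})$ around a centering term that is itself $\approx N$; you do not get a self-improving inequality without an extra input. Second, your preliminary mean computation $\mathbb{E}|\Pi_r[0,\hat{h}]| \le C\beta^\theta$ is correct but plays no role in the final argument; the load is entirely carried by the moment/PGF step, so you can drop it.
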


To establish Lemma \ref{lem:numberofpts:branching}, we start with verifying a similar property for the critical Galton-Watson branching process with Poisson offspring.

\begin{lem}\label{lem:bound on branching}
	Let $Z_n$ be a critical branching process with offspring distribution $\text{Poisson}(1)$ and $Z_0=1$. Then
	\begin{equation}
	\mathbb P \left( \sum _{k=1}^n Z_k\ge n^3 \right)\le  e^{-cn} 
	\end{equation}
\end{lem}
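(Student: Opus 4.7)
The key reduction is that if $\sum_{k=1}^{n}Z_k\ge n^3$, then necessarily $\max_{1\le k\le n}Z_k\ge n^2$, since otherwise the sum is bounded by $n\cdot n^2=n^3$. Thus it suffices to show $\mathbb P(\max_{k\le n}Z_k\ge n^2)\le e^{-cn}$, which I plan to establish via a moment generating function argument specialized to the $\mathrm{Poisson}(1)$ offspring law.

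I would first derive the exponential moment recursion. Writing $\phi_k(\lambda):=\mathbb E[e^{\lambda Z_k}]$ and using $\mathbb E[e^{\lambda X}]=e^{e^\lambda-1}$ for $X\sim\mathrm{Poisson}(1)$, one gets
\begin{equation}
\mathbb E\bigl[e^{\lambda Z_{k+1}}\mid Z_k\bigr]=e^{Z_k(e^\lambda-1)},\qquad\text{hence}\qquad \phi_{k+1}(\lambda)=\phi_k(g(\lambda)),
\end{equation}
where $g(\lambda):=e^\lambda-1$. Iterating this from $\phi_0(\lambda)=e^\lambda$ yields $\phi_k(\lambda)=\exp\bigl(g^{(k)}(\lambda)\bigr)$, where $g^{(k)}$ denotes the $k$-fold composition.

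Next, the analytic heart of the argument is to control the iterates $u_k:=g^{(k)}(\lambda)$ when $\lambda$ is of order $1/n$. Using $e^\lambda-1\le\lambda+\lambda^2$ for $0\le\lambda\le 1$, so long as $u_k\le 1$ we obtain $u_{k+1}\le u_k+u_k^2$, which after taking reciprocals gives $1/u_{k+1}\ge 1/u_k-1$. Taking $\lambda=1/(2n)$ so that $1/u_0=2n$, an induction yields $u_k\le 1/(2n-k)\le 1/n$ for all $k\le n$ (and in particular $u_k\le 1$, justifying the induction step). Consequently $\phi_k(1/(2n))\le e^{1/n}\le e$ for every $k\le n$.

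The rest is Markov plus a union bound: for each $k\le n$,
\begin{equation}
\mathbb P\bigl(Z_k\ge n^2\bigr)\le e^{-n^2/(2n)}\phi_k(1/(2n))\le e\cdot e^{-n/2},
\end{equation}
so $\mathbb P(\max_{k\le n}Z_k\ge n^2)\le e\cdot n\, e^{-n/2}\le e^{-cn}$ for an absolute $c>0$ (absorbing the polynomial factor for $n$ large, and adjusting the constant for small $n$ where the bound is trivial). Combined with the initial reduction, this yields the claim. I do not anticipate a serious obstacle here; the only delicate point is maintaining the invariant $u_k\le 1$ throughout the iteration, which is the reason for choosing $\lambda=1/(2n)$ rather than, say, $1/n$.
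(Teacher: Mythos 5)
Your proposal is correct and matches the paper's proof in all essentials: the same reduction from $\sum_{k\le n}Z_k\ge n^3$ to $\max_{k\le n}Z_k\ge n^2$, the same iterated generating-function structure (your MGF $\phi_k(\lambda)=\exp(g^{(k)}(\lambda))$ with $g(\lambda)=e^\lambda-1$ is, after the change of variable $u=e^\lambda$, precisely the paper's PGF $\varphi_k(u)=\psi^{(k)}(u)$ with $\psi(u)=e^{u-1}$), the same exponential Markov plus union bound. The one genuine (but minor) difference is in how the quadratic iteration $u_{k+1}\le u_k+u_k^2$ is tamed: the paper chooses $u=1+1/(5n)$ and proves the linear-in-$k$ bound $\varphi_k(u)\le 1+\tfrac{1}{5n}+\tfrac{k}{16n^2}$ by induction, whereas you take reciprocals and use $1/u_{k+1}\ge 1/u_k-1$ to get the closed form $u_k\le 1/(2n-k)$; your version is arguably the cleaner way to run this induction, as the invariant is self-sustaining without fine-tuning constants, but both routes deliver the same estimate.
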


\begin{proof}
	Let $\varphi _n$ be the probability generating function of $X_n$. For any $u >0$ we have 
	\begin{equation}
	\begin{split}
	\varphi _n (u)=\mathbb E u ^{Z_n}= \mathbb E \left[  \mathbb  E \left[ u ^{Z_n} |Z_{n-1}\right] \right]=\mathbb E \left[ (\mathbb E u ^{X_1} )^{X_{n-1}} \right]= \varphi _{n-1}(e^{u-1 }) = \psi^{(n)}(u),
	\end{split}
	\end{equation}
	where $\psi(u):= e^{u-1}$ and where $\psi^{(n)}(u)$ is the $n$-fold composition of $\psi $ with itself.
	Let $u :=1+ 1/5n$. We prove using induction on $k \le n$ that $\varphi _k(u ) \le 1+\frac{1}{5 n}+\frac{k}{16 n^2}$. In fact,
	\begin{equation}
	\varphi _{k+1}(u )=\exp (\varphi_{k}(u )-1)\le \varphi _k (u )+(\varphi _k (u )-1)^2 \le 1+\frac{1}{5n}+\frac{k+1}{16n^2}
	\end{equation}
	where in the last inequality we used that $\varphi _k(u )\le 1+1/4n$ which follows from the induction hypothesis when $k \le n$.
	
	Finally, using Markov's inequality we get for any $k \le n$
	\begin{equation}
	\mathbb P (Z_k \ge n^2 )= \mathbb P \left( u ^{Z_k}\ge u ^{n^2 } \right) \le \varphi _k (u  ) \cdot u  ^{-n^2}\le e^{-cn}. 
	\end{equation}
	The result now follows from a union bound.
\end{proof}

\begin{proof}[Proof of Lemma  \ref{lem:numberofpts:branching}]
	Let $X_i$ be $i.i.d$ with density $K_\alpha$. We have $\mathbb P (X_1\le \alpha ^{-2})\le 1-c$ for some $c>0$. Thus 
	\begin{equation}\label{eq:bound on sum}
	\mathbb P \left( \sum _{k=1}^{\beta^{\frac{11}{10}\theta}  } X_k \le \hat{h}\right) \le \mathbb P \left( \sum _{k=1}^{\beta^{\frac{11}{10}\theta}  } \mathds{1}_{\{ X_k \le \alpha ^{-2} \}} \ge \left(1-\frac{c}{2}\right) \beta^{\frac{11}{10}\theta} \right) \le \exp\left(-\beta ^{\frac{1}{100}\theta}\right),
	\end{equation} 
	where in the last inequality we used Azuma. Now, if there are more than $\beta^{\frac{7}{2}\theta}$ particles in the branching before time $\hat{h}$ then either there are more than $\beta^{4\theta}$ particles up to the $\beta^{\frac{11}{10}\theta}$ generation or there are less particles than that up to the $\beta^{\frac{11}{10}\theta}$ generation but one of the particles in this generation came before time $\hat{h}$. The first event happens with very small probability by Lemma \ref{lem:bound on branching} and the second one happens with small probability by union bound and  \eqref{eq:bound on sum}.
\end{proof}

We conlude the proof of Proposition \ref{prop:branching:sb:main}.

\begin{proof}[Proof of Proposition \ref{prop:branching:sb:main}]
	For all $t\le \tau_{\textnormal{SB}1}$, observe that
	\begin{equation}
	\begin{split}
	r(t) &=K_\alpha(t)+ \intop_{0+}^t K_\alpha(t-x) d\Pi_r(x) \\&\le K_\alpha(\pi_1(t;r)\wedge \alpha^{-2}\beta^\theta) \cdot \big| \Pi_r[0,t] \big| \le 
	\alpha\beta^{4\theta} \sigma_1(t;r) + \alpha^2,
	\end{split}
	\end{equation} 
	where the last inequality follows from the estimate on $K_\alpha(x)$ (Lemma \ref{lem:estimate on K:intro}). Thus, we obtain the desired result by combining with Lemma \ref{lem:numberofpts:branching}
\end{proof}

We conclude this subsection by verifying Proposition \ref{prop:branching:sb:ge3}.

\begin{proof}[Proof of Proposition \ref{prop:branching:sb:ge3}]
	For any $\alpha^{-3/2}\le t\le \hat{h}$, denote $s_1  := t-\alpha^{-\frac{3}{2}}\wedge \tau_{\textnormal{SB}}$ and $s_2:= t\wedge \tau_{\textnormal{SB}}$. First, based on the definition of $\tau_{\textnormal{SB}2}$,  applying Lemma \ref{lem:ind1:pi1int basicbd:basic} with $n=\beta^{\frac{7}{2}\theta}$ gives that
	\begin{equation}
	\intop_{s_1}^{s_2} r(x)dx \le \alpha \beta^{4\theta} \cdot \alpha^{-\frac{3}{4}} \sqrt{n}\le \alpha^{\frac{1}{4}}\beta^{6\theta}.
	\end{equation} 
	Thus, we can write
	\begin{equation}
	\begin{split}
	\PP \left( \left| \Pi_r[s_1,s_2] \right| \ge 3\right) 
	\le \intop_{s_1\le x<y<z\le s_2} r(x)r(y)r(z)dzdydx
	\le \alpha^{\frac{3}{4}}\beta^{18\theta},
	\end{split}
	\end{equation}
	noting that any addition of three extra points in $\Pi_r[s_1,s_2]$ does not change the bound given in the definition of $\tau_{\textnormal{SB}2}$. Thus, we conclude the proof by combining the above with Proposition \ref{prop:branching:sb:main}.
\end{proof}

\subsection{Understanding the fixed rate process and its perturbations}\label{subsec:branching:ib}
In this  subsection, we study the processes $R_0(t), R_0^-(t)$, and $R_0^+(t)$ introduced in \eqref{eq:def:R0:ib}, \eqref{eq:def:R:ib:aux}. We begin with observing that $R_0^-$ (resp. $R_0^+$) lower (resp. upper) bounds both $R_0$ and $R$.

\begin{prop}\label{prop:branching:ib:sandwich}
	Under the above setting, $R_0^-(t)\le R(t), R_0(t) \le R_0^+(t)$ for all $t\ge t_0$.
\end{prop}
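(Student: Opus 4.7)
The plan is to couple all four processes through the common underlying planar Poisson point process $\Pi$, and to reduce the two sandwich inequalities to a single comparison of driving rates. Writing
\begin{equation*}
G^-(t) := R_0^-(t) - (R_0(t) - \alpha)_-, \qquad G^+(t) := R_0^+(t) + (R_0(t) - \alpha)_+
\end{equation*}
for the self-referential rates driving $R_0^\pm$, and using the monotonicity $g_1 \le g_2 \Longrightarrow \Pi_{g_1} \subseteq \Pi_{g_2}$ of the construction $g \mapsto \Pi_g$, the desired inequalities $R_0^- \le R_0 \wedge R$ and $R_0 \vee R \le R_0^+$ will follow once we establish the pointwise comparisons
\begin{equation*}
G^-(t) \le \alpha \wedge R(t), \qquad G^+(t) \ge \alpha \vee R(t), \qquad \forall t \ge t_0,
\end{equation*}
since integration of the resulting measure inclusions against $K_\alpha \ge 0$ gives the proposition.

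I would proceed by induction on the sequence of event times $t_0 = s_0 < s_1 < s_2 < \cdots$ at which $\Pi$ contributes a point to at least one of the four driving sets $\Pi_{G^-}, \Pi_\alpha, \Pi_R, \Pi_{G^+}$. For the base case, on $[t_0^-, t_0)$ all four processes equal $\alpha$ by definition, and the standing coupling $\Pi_0[t_0^-, t_0] = \Pi_\alpha[t_0^-, t_0]$ in force in this section gives $R_0(t_0) = R(t_0) = R_0^\pm(t_0)$, so both rate comparisons hold at $s_0 = t_0$ with equality. For the inductive step, between consecutive event times $s_i$ and $s_{i+1}$ the four processes evolve continuously through differentiation under the kernel $K_\alpha$, hence the gap functions $\alpha \wedge R - G^-$ and $G^+ - \alpha \vee R$ change continuously. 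At each event time $s_{i+1}$ I would carry out a case analysis based on the height $y_{i+1}$ of the incoming point relative to the four thresholds $G^-(s_{i+1}), \alpha, R(s_{i+1}), G^+(s_{i+1})$, and verify that the simultaneous jumps of the affected processes by $K_\alpha(0)$ preserve the rate ordering, by the very design of the correction terms $(R_0 - \alpha)_\pm$.

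The main obstacle I anticipate is the self-referential structure of $R_0^\pm$: the rate $G^-$ depends on $R_0^-$ itself, so the inductive invariants must be maintained simultaneously and cannot be reduced to a single monotone quantity. The most delicate case is when $R_0(s_{i+1}) \ge \alpha$, where $G^- = R_0^-$ and the comparison $G^- \le \alpha$ reduces to the bound $R_0^- \le \alpha$ — which does not follow from the weaker $R_0^- \le R_0$ alone. To handle this, my plan is to thread through the tighter joint invariants $R_0^-(s) \le \alpha \wedge R(s)$ and $R_0^+(s) \ge \alpha \vee R(s)$ across event times, bootstrapping from the equality at $s_0 = t_0$: whenever $R_0$ crosses above $\alpha$, the correction $(R_0 - \alpha)_+$ in $G^+$ activates to keep $G^+ \ge R$; symmetrically, when $R_0$ drops below $\alpha$ the correction $(R_0 - \alpha)_-$ in $G^-$ damps the driver so that $R_0^-$ cannot outpace $\alpha \wedge R$. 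Once the rate comparisons are established for all $t \ge t_0$, the two sandwich inequalities of the proposition follow immediately from the measure inclusions and the nonnegativity of $K_\alpha$.
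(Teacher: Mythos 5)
The high-level strategy — coupling all four processes through the planar Poisson process $\Pi$ and inducting over event times — is sound, and your observation that the outer sandwich $R_0^- \le R \le R_0^+$ follows by pure rate monotonicity (since $G^- \le R_0^-$ and $G^+ \ge R_0^+$ hold by construction, so $\Pi_{G^-}\subseteq\Pi_R\subseteq\Pi_{G^+}$ propagates) is correct. However, the stronger invariants you need for the inner sandwich, namely $G^+(t)\ge\alpha$ and its dual $G^-(t)\le\alpha$, do not hold, and the base case of your induction already fails. Your claim that the rate comparisons hold with equality at $s_0=t_0$ is incorrect: at $t_0$ all four processes jump from the conventional value $\alpha$ to the common random value $R_0(t_0)=R(t_0)=R_0^\pm(t_0)=\int_{t_0^-}^{t_0}K_\alpha(t_0-x)\,d\Pi_\alpha(x)$, which is determined by the initial configuration $\Pi_\alpha[t_0^-,t_0]$ and generically differs from $\alpha$. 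If this common value lies below $\alpha$ — a $\Theta(1)$-probability event — then $(R_0(t_0)-\alpha)_+=0$, so $G^+(t_0)=R_0^+(t_0)=R_0(t_0)<\alpha=\alpha\vee R(t_0)$, and the invariant is violated from the start; symmetrically, if it lies above $\alpha$ then $G^-(t_0)=R_0^-(t_0)>\alpha\ge\alpha\wedge R(t_0)$.

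Your proposed recovery mechanism also points the wrong way: the correction $(R_0-\alpha)_+$ in $G^+$ only ``activates'' when $R_0>\alpha$, which is exactly the case where $G^+\ge\alpha$ is already easy (use $R_0^+\ge R_0\ge\alpha$); when $R_0<\alpha$ the correction vanishes identically, leaving $G^+=R_0^+$, and nothing in the inductive hypothesis forces $R_0^+\ge\alpha$ — knowing $R_0^+\ge R_0$ or $R_0^+\ge R$ is useless since both $R_0$ and $R$ can sit below $\alpha$. Moreover, even if the invariant held at an event time it can be lost continuously between events: with $R_0<\alpha$ the gap $G^+-\alpha=R_0^+-\alpha$ simply decays as $R_0^+$ decays, so restricting the case analysis to jump times misses the actual failure mode. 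Note that the paper's own displayed inequality $R_0^+(t)+(R_0(t)-\alpha)_+\ge\alpha$ rests on the same unsupported implication in the $R_0(t)<\alpha$ case, so this is a genuinely delicate point; a cleaner bookkeeping of the same invariant will not close it, and you would need a materially different argument (or a correction of the statement/construction) to make the second sandwich work.
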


\begin{proof}
	Observe first that at $t = t_0$, $R^+(t_0)=R^-(t_0)=R(t_0)=R_0(t_0)$, and for $t\in[t_0^{-},t_0)$ they are all identical to $\alpha$. Recalling the definition of $R(t)$, which can be written as
	\begin{equation}
	R(t)= \intop_{t_0^{-}}^{t_0} K_{\alpha}(t-x)d\Pi_\alpha(x) + \intop_{t_0}^{t} K_{\alpha}(t-x)d\Pi_{R}(x),
	\end{equation}
	we observe that $R_0^-(t)\le R(t)\le R_0^+(t)$: They maintain the same value until time $t_0$, and then after that $R_0^+(t)$  picks up more  points than $R(t)$, since it absorbs additional points from $(R_0-\alpha)_+$ in addition to all the points taken by $R (t)$. The same  reasoning for opposite direction works for $R_0^-(t)$. 
	
	To see that the conclusion holds for $R_0$,  we observe that  $R_0^+(t) \ge R_0(t)$ for $t\ge t_0$ inductively: Initially, we have $R_0^+(t_0) = R_0(t_0)$. Furthermore, as long as we have  $ R_0^+(s) \ge R_0(s)$ for all $s < t$, it holds that
	\begin{equation}
	R_0^+(t) + (R_0(t)-\alpha)_+
	\ge
	R_0(t) + (\alpha - R_0(t) ) \ge \alpha, 
	\end{equation}
	meaning that the new points picked up by $R_0(t)$ will also be included in $R_0^+(t)$. Thus, it will continue to hold that $R_0^+(t) \ge R_0(t)$. The other inequality, $R_0^- \le R_0$, can be obtained analogously.
\end{proof}

We move on to the study of $R_0(t)$. 
Define the notation
\begin{equation}\label{eq:def:Pi triangle}
\Pi_{R_0 \triangle \alpha} [ x_0,x_1]:=
\Pi_{R_0}[x_0,x_1] \triangle \Pi_{\alpha}[x_0,x_1] ,
\end{equation}
that is, the collection of points lying between $R_0$ and $\alpha$.
Then, define
\begin{equation}\label{eq:def:tauf}
\begin{split}
\tau_{\textnormal{f}1}&:= \inf\left\{t\ge t_0: (R_0(t)-\alpha) \notin \left(-\alpha^{\frac{3}{2}}\beta^{C_\circ},\,  \alpha\beta^{C_\circ} \sigma_1(t;\alpha) + \alpha^{\frac{3}{2}}\beta^{C_\circ} \right) \right\};\\
\tau_{\textnormal{f}2}&:= \inf \left\{t\ge t_0^{-}: |\Pi_\alpha[(t-\alpha^{-1})\vee t_0^{-}, \, t  ]| \ge \beta^{6} \right\};\\
\tau_{\textnormal{f}3}&:= \inf \left\{ t \ge t_0: \left| \Pi_{R_0 \triangle \alpha}[(t-\alpha^{-\frac{3}{2}})\vee t_0,\, t] \right| \ge \beta^{2C_\circ} \right\};\\
\tau_{\textnormal{f}4}&:=\inf \left\{t\ge t_0: R_0(t)\ge\alpha \beta^{C_\circ} \right\};\\
\tau_{\textnormal{f}}&:=
\tau_{\textnormal{f}1}\wedge \tau_{\textnormal{f}2}\wedge \tau_{\textnormal{f}3}\wedge \tau_{\textnormal{f}4}.
\end{split}
\end{equation}
The goal of this subsection is establishing the following lemma.
\begin{lem}\label{lem:branching:ib:R0 vs alpha}
	We have $$\PP(\tau_{\textnormal{f}}  \le \acute{t}_0) \le \exp(-\beta^5).$$ 
\end{lem}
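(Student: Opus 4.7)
The proof will proceed by union bound over the four stopping times, showing each is unlikely to occur before $\acute{t}_0$.

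For $\tau_{\textnormal{f}2}$, the bound is an immediate application of Corollary \ref{cor:concentration:numberofpts each interval} to the fixed-rate Poisson process $\Pi_\alpha$ with $g \equiv \alpha$, $\Delta = \alpha^{-1}$ (so $M = 1$), and $a = \beta^5$: the threshold $2M + 2a\sqrt{M} + 2 \le \beta^6$, and the error probability $(\acute{t}_0 - t_0^-)\alpha\,e^{-\beta^5}$ is bounded by $\exp(-\beta^5/2)$.

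For $\tau_{\textnormal{f}1}$ (and, as a consequence, $\tau_{\textnormal{f}4}$), first decompose
\[
R_0(t) - \alpha = \intop_{t_0^{-}}^t K_\alpha(t-x)\,d\widetilde{\Pi}_\alpha(x) \;-\; \alpha\intop_{t - t_0^-}^\infty K_\alpha(u)\,du,
\]
where the second term is exponentially small in $\beta^\theta$ by Lemma \ref{lem:estimate on K:intro}, since $t - t_0^- \ge \alpha^{-2}\beta^\theta/2$, and is thus negligible. To control the martingale integral, apply Lemma \ref{lem:concentrationofint:continuity} with $f_t(x) = K_\alpha(t-x)$ (which is deterministic, nonnegative, and increasing in $x < t$), $g \equiv \alpha$, $\Delta = \alpha^{-1}$, and $N = \beta^6$ provided by $\tau_{\textnormal{f}2}$. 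Direct computation using Lemma \ref{lem:estimate on K:intro} yields $A = O(\alpha^{3/2})$, $M = O(\alpha^3 \beta)$, $D = O(\alpha)$, and $Nf_t(p_1(t)) = O(\alpha\beta^6\sigma_1(t;\alpha))$; with $a = \beta^5$ the lemma delivers an upper bound on the martingale integral of order $\alpha\beta^6\sigma_1(t;\alpha) + \alpha^{3/2}\beta^{11/2}$ and a lower bound of order $-\alpha^{3/2}\beta^{11/2}$, both well within the $\beta^{C_\circ}$-slack of $\tau_{\textnormal{f}1}$ since $C_\circ = 50$. Since $\sigma_1 \le 1$, the same estimate yields $R_0(t) \le C\alpha\beta^6 < \alpha\beta^{C_\circ}$, so $\tau_{\textnormal{f}4} > \acute{t}_0$ follows for free.

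For $\tau_{\textnormal{f}3}$, the key observation is that $|\Pi_{R_0 \triangle \alpha}[s,t]|$ is a counting process whose compensator with respect to the filtration of $\Pi$ equals $\intop_s^t |R_0(x) - \alpha|\,dx$: the contribution from the region below $\alpha$ is $(\alpha - R_0(x))_+$, and from above $\alpha$ is $(R_0(x) - \alpha)_+$ (the latter from $\Pi^{>\alpha}$, which is independent of $R_0$). On the event $\{\tau_{\textnormal{f}1} \wedge \tau_{\textnormal{f}2} > \acute{t}_0\}$, Lemma \ref{lem:ind1:pi1int basicbd:basic} with $n \le \alpha^{-1/2}\beta^6$ gives $\intop_{t-\alpha^{-3/2}}^t \sigma_1(x;\alpha)\,dx \le C\alpha^{-1}\beta^3$, so the integrated compensator over any window of length $\alpha^{-3/2}$ is at most $C\beta^{C_\circ + 3}$. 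Applying Corollary \ref{cor:concentration:numberofpts each interval} once more with this value of $M$ and with $a = \beta^{C_\circ}$, followed by a union bound over a discretization of $t \in [t_0, \acute{t}_0]$, delivers the desired $\beta^{2C_\circ}$ threshold with probability $1 - \exp(-\beta^5)$ since $C_\circ = 50 \gg 3$.

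The main obstacle lies in the $\tau_{\textnormal{f}1}$ step, which requires the full force of Lemma \ref{lem:concentrationofint:continuity}: the inherent asymmetry in the bounds on $R_0 - \alpha$ is captured through the $N f_t(p_1(t))$ correction, accounting for the upward spike produced by a single nearby Poisson arrival contributing $K_\alpha(\pi_1) \sim \alpha\sigma_1$. A subsidiary subtlety in the $\tau_{\textnormal{f}3}$ step is justifying that $|\Pi_{R_0 \triangle \alpha}|$ retains the natural compensator $\intop |R_0 - \alpha|\,dx$ despite the correlation between $R_0$ and $\Pi_\alpha$; this works because the independence of $\Pi^{>\alpha}$ from $\Pi^{\le\alpha}$ permits the two sides of the symmetric difference to be analyzed separately.
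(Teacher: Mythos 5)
Your proof is correct and mirrors the paper's argument step by step: the same four-way split over $\tau_{\textnormal{f}1},\ldots,\tau_{\textnormal{f}4}$, the same decomposition of $R_0(t)-\alpha$ into a martingale integral plus an $O(\alpha^{100})$ deterministic remainder controlled by the decay of $K_\alpha$, the same invocation of Corollary~\ref{lem:concentrationofint:continuity} (with the crucial $Nf_t(p_1(t))$ correction producing the $\alpha\beta^{O(1)}\sigma_1$ term in the asymmetric $\tau_{\textnormal{f}1}$ bound), the same compensator argument for $\Pi_{R_0\triangle\alpha}$ via Lemma~\ref{lem:ind1:pi1int basicbd:basic} followed by Corollary~\ref{cor:concentration:numberofpts each interval}, and the same observation that $\tau_{\textnormal{f}4}$ is subsumed by $\tau_{\textnormal{f}1}$. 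Your exponents differ slightly from the paper's (e.g.\ $M=O(\alpha^3\beta)$ versus $\alpha^3\beta^2$, and a compensator bound of order $\beta^{C_\circ+3}$ versus $2\beta^{\frac{3}{2}C_\circ+1}$), but all sit comfortably within the $\beta^{C_\circ}=\beta^{50}$ slack, so nothing breaks.
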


\begin{proof}
	By an elementary estimate on Poisson processes, we have
	 \begin{equation}\label{eq:branching:ib:R0:med}
	\PP(\tau_{\textnormal{f}2} \le \acute{t}_0 ) \le \exp \left(-\beta^6 \right).
	\end{equation}
	To obtain appropriate control on $\tau_{\textnormal{f}1}$, we apply Corollary
	\ref{lem:concentrationofint:continuity} to $R_0(t)$. Recalling the estimate Lemma \ref{lem:estimate on K:intro} on $K_\alpha$, we may set $M = \alpha^3\beta^2$. Writing $d\widetilde{\Pi}_\alpha(x) := d\Pi_\alpha(x) - \alpha dx$, this gives that
	\begin{equation}\label{eq:branching:ib:R0:med1}
\begin{split}
	\PP \left(  \intop_{t_0^{-}}^{t\wedge \tau_{\textnormal{f}2}} K_\alpha(t-x) d\widetilde{\Pi}_\alpha (x) \le \alpha \beta^{C_\circ}\sigma_1(t;\alpha) + \frac{1}{2}\alpha^{\frac{3}{2}}\beta^{C_\circ}, \ \forall t\in[t_0,\acute{t}_0] \right) \ge 1- \exp\left(-\beta^6\right);\\
		\PP \left(  \intop_{t_0^{-}}^{t\wedge \tau_{\textnormal{f}2}} K_\alpha(t-x) d\widetilde{\Pi}_\alpha (x) \ge -\frac{1}{2}\alpha^{\frac{3}{2}}\beta^{C_\circ}, \ \forall t\in[t_0,\acute{t}_0] \right) \ge 1- \exp\left(-\beta^6\right).
\end{split}
	\end{equation}
	Moreover, we have for all $t\in [t_0,\acute{t}_0]$ that
	\begin{equation}\label{eq:branching:ib:R0:med0}
\begin{split}
	R_0(t)-\alpha &= \intop_{t_0^{-}}^t K_\alpha(t-x) d\widetilde{\Pi}_\alpha(x) 
	+
	\intop_{t_0^{-}}^{t} K_\alpha(t-x) \alpha dx - \alpha\\
&= \intop_{t_0^{-}}^t K_\alpha(t-x) d\widetilde{\Pi}_\alpha(x) +O\left(\alpha^{100}\right),
\end{split}
	\end{equation}
	where the last identity follows again from the estimate of $K_\alpha(s)$ (Lemma \ref{lem:estimate on K:intro}) applied to $s\ge t_0-t_0^-$. Thus, combining this with \eqref{eq:branching:ib:R0:med} and \eqref{eq:branching:ib:R0:med1}, we obtain that
	\begin{equation}\label{eq:branching:ib:R0:med2}
	\PP \left(\tau_{\textnormal{f}1} \le \acute{t}_0 \right) \le 2\exp\left(-\beta^6\right).
	\end{equation}
	
	Moving  on to $\tau_{\textnormal{f}3}$, write $\tilde{\tau}_{\textnormal{f}}:= \tau_{\textnormal{f}1} \wedge \tau_{\textnormal{f}2}$, and observe that from applying Lemma \ref{lem:ind1:pi1int basicbd:basic} with $n=\alpha^{-\frac{1}{2}} \beta^{C_\circ}$, we get
	\begin{equation}\label{eq:reg:branching:tau2:split0}
	\intop_{(t-\alpha^{-\frac{3}{2}})\wedge \tilde{\tau}_{\textnormal{f}}}^{t\wedge \tilde{\tau}_{\textnormal{f}}} |R_0(t)-\alpha| \le 2\beta_0^{\frac{3}{2}C_\circ +1}.
	\end{equation}
	Thus, applying Corollary \ref{cor:concentration:numberofpts each interval} to $\Pi_{R_0 \triangle \alpha}$ gives that
	\begin{equation}\label{eq:branching:ib:R0:med3}
	\PP\left({\tau}_{\textnormal{f}3} \le \acute{t}_0 \wedge \tilde{\tau}_{\textnormal{f}} \right) \le \exp\left(-\beta^6 \right).
	\end{equation}
	
	Lastly, we note that $\tau_{\textnormal{f}4} \ge \tau_{\textnormal{f}1}$ deterministically from their definitions.  
	Thus, along with \eqref{eq:branching:ib:R0:med},  \eqref{eq:branching:ib:R0:med2}, and \eqref{eq:branching:ib:R0:med3}, we obtain the desired conclusion.
\end{proof}

\subsection{Proximity to $\alpha$ for small $t$}\label{subsubsec:branching:ib:tau4}

In this subsection, we conclude the proof of Theorem \ref{thm:branching:ib}, combining the results from the previous subsections. 
We begin with investigating $\tau_{{\textnormal{B}1}}$. Thanks to Proposition \ref{prop:branching:ib:sandwich}, we can write
\begin{equation}\label{eq:branching:ib:decomp}
|R(t)-\alpha| \le |R_0(t)-\alpha| +\left\{R_0^+(t)-R_0^-(t) \right\}.
\end{equation}
Since the bound on $|R_0(t)-\alpha|$ has already been given by $\tau_{\textnormal{f}1} $ in Lemma \ref{lem:branching:ib:R0 vs alpha}, we focus on the term $\left\{R_0^+(t)-R_0^-(t) \right\}$.

 By the definition of $R_0^+$ and $R_0^-$ \eqref{eq:def:R:ib:aux}, the process $\delta R_0(t):= R_0^+(t)- R_0^-(t)$ satisfies
\begin{equation}\label{eq:def:delR}
\delta R_0(t) = \intop_{t_0}^t K_\alpha(t-x) d\Pi_{(R_0^++(R_0-\alpha)_+) \triangle (R_0^--(R_0-\alpha)_-)}(x),
\end{equation}
for all $t\ge t_0$, starting with $\delta R_0(t_0) = 0$.  To lighten up our notation, we symbolically define
\begin{equation}\label{eq:def:branching:ib:delRhat}
\hat{\Delta} {R}_0 := (R_0^++(R_0-\alpha)_+) \triangle (R_0^--(R_0-\alpha)_-).
\end{equation}

\begin{remark}\label{rmk:branching:ib:delR}
	In \eqref{eq:def:delR}, we can interpret $\delta R_0(t)$ in the following way: $\delta R_0$ starts as an empty process at time $t_0$. As time goes on, new particles are added to $\delta R_0$ at rate $|R_0-\alpha|$. We call these particles \textbf{the external additions}. Then, each particle from  external addition  performs the critical branching process. 
\end{remark}

To study the process $\delta R_0(t)$, we introduce the following stopping time.
\begin{equation}
\begin{split}
\tau_{\delta \textnormal{B}1}= \tau_{\delta \textnormal{B}1}(\alpha, t_0) := \inf \left\{ t\ge t_0: \left|\Pi_{\hat{\Delta} {R}_0}[(t-\alpha^{-\frac{3}{2}})\vee t_0, t] \right| \ge \beta^{4\theta} \right\}.
\end{split}
\end{equation}
Our first goal is to show the following lemma on $\tau_{\delta \textnormal{B}1}$.

\begin{lem}\label{lem:branching:ib:taudelta}
	Under the above setting, we have
	\begin{equation}
	\PP \left(\tau_{\delta \textnormal{B}1} \le \acute{t}_0 \right) \le \exp\left(-\beta^3 \right).
	\end{equation}
\end{lem}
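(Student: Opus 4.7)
The plan is to condition on the high-probability event $\{\tau_{\textnormal{f}} > \acute{t}_0\}$ from Lemma~\ref{lem:branching:ib:R0 vs alpha} and then exploit the description in Remark~\ref{rmk:branching:ib:delR}: the process $\Pi_{\hat{\Delta} R_0}$ is the superposition of independent single-particle critical branchings (governed by $r$ as in \eqref{eq:def:rt:branching singlept}) seeded by ``external additions'' that arrive at rate $|R_0(s)-\alpha|$. Enumerate these external-addition times in $[t_0,\acute{t}_0]$ as $s_1<\cdots<s_{N_{\mathrm{ext}}}$; the defining bound of $\tau_{\textnormal{f}3}$ gives $N_{\mathrm{ext}} \le C\alpha^{-1/2}\beta^{\theta+2C_\circ}$, while the branching trees seeded at $s_1,\ldots,s_{N_{\mathrm{ext}}}$ are mutually independent conditionally on $\{s_i\}$.

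Fix a window endpoint $t\in[t_0,\acute{t}_0]$ and set $W_t=[(t-\alpha^{-3/2})\vee t_0,\,t]$. Let $Y_i(t)$ denote the number of descendants of $s_i$ (including $s_i$ itself) in $W_t$, so that $N(t):=|\Pi_{\hat{\Delta} R_0}[W_t]|=\sum_i Y_i(t)$. Propositions~\ref{prop:branching:sb:main} and~\ref{prop:branching:sb:ge3} applied per branching yield: the total progeny of $s_i$ is at most $\beta^{7\theta/2}$ with probability $\ge 1-\exp(-\beta^{\theta/200})$, and for each fixed window $Y_i(t)\le 2$ with probability $\ge 1-\alpha^{3/4}\beta^{19\theta}$. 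I set $Y_i'(t):=Y_i(t)\wedge 2$ to truncate the rare large values.

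The conditional mean $\mathbb{E}[\sum_i Y_i'(t)\mid\{s_i\}] \le \int_{W_t}\sum_{i:\,s_i\le x}K_\alpha^*(x-s_i)\,dx$ can be bounded by $C\beta^{\theta+2C_\circ}$ almost surely, using the estimate $K_\alpha^*(y)\le K_\alpha^*+C\alpha/\sqrt{y+1}$ from Lemma~\ref{lem:estimat for K tilde:intro} together with the $\tau_{\textnormal{f}3}$-control on the local density of $\{s_i\}$. Since each $Y_i'(t)\in\{0,1,2\}$ and the summands are conditionally independent given $\{s_i\}$, Bernstein's inequality yields $\sum_i Y_i'(t)\le\beta^{3\theta}$ with conditional probability at least $1-\exp(-\beta^{\theta})$. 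A union bound over $i\le N_{\mathrm{ext}}$ further controls $\PP(\exists i:\, Y_i(t)>Y_i'(t))\le C\alpha^{1/4}\beta^{20\theta+2C_\circ}$, which vanishes as $\alpha\to 0$.

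To upgrade the pointwise estimate to hold uniformly in $t\in[t_0,\acute{t}_0]$, I would discretize $t$ into $\alpha^{-O(1)}$ grid points and use that $N(t)$ only changes at jumps of $\Pi_{\hat{\Delta} R_0}$, whose local intensity is controlled on $\{\tau_{\textnormal{f}}>\acute{t}_0\}$ by the stopping times $\tau_{\textnormal{f}2},\tau_{\textnormal{f}3}$. The main obstacle is marrying the conditional independence of the branching trees with the randomness of the seed times $\{s_i\}$, which is resolved by conditioning on $\{s_i\}$ before invoking Bernstein and taking a further outer expectation; the resulting uniform bound then holds with failure probability at most $\exp(-\beta^3)$, as required.
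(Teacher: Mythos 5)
Your approach shares the same skeleton as the paper's argument (branching decomposition, conditional independence of the trees given the seed times, Propositions~\ref{prop:branching:sb:main} and~\ref{prop:branching:sb:ge3} for per-tree control, Bernstein, union bound over windows), but there is a genuine gap in the truncation step that prevents you from reaching the target probability $\exp(-\beta^3)$.

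The problem is in the passage from $Y_i(t)$ to $Y_i'(t)=Y_i(t)\wedge 2$. You discard the event $\{\exists i:\, Y_i(t)>2\}$ and bound its probability by a union bound $N_{\mathrm{ext}}\cdot\alpha^{3/4}\beta^{19\theta}\approx\alpha^{1/4}\beta^{O(\theta)}$. But $\alpha^{1/4}=e^{-\beta/4}$, which is \emph{much larger} than the target $e^{-\beta^3}$ for small $\alpha$ (i.e.\ large $\beta$). So already for a single window this loss exceeds the bound you are trying to prove, before you even attempt the further union over $t$ (or over the $\asymp\alpha^{-1/2}\beta^\theta$ dyadic windows), which only makes matters worse. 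The truncation-and-exclude strategy cannot reach a stretched-exponential-in-$\beta$ rate when the excluded events have only polynomial-in-$\alpha$ probability.

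The fix, which is what the paper does, is to \emph{not} discard these events but to absorb them into the Bernstein application via a stochastic domination. One defines random variables $\bar{N}^{(k)}_{j,l}$ taking values in $\{0,\,2,\,\beta^{7\theta/2}\}$, where $\beta^{7\theta/2}$ occurs with probability $\alpha^{3/4}\beta^{19\theta}$ (from Proposition~\ref{prop:branching:sb:ge3}, together with the progeny bound $\tau_{\textnormal{SB}1}$), and these dominate the true descendant counts. The point is that the rare value $\beta^{7\theta/2}$ is only polynomial in $\beta$, not in $1/\alpha$, so in Bernstein's inequality (Lemma~\ref{lem:branching:bernstein}) the sup norm term $M\approx\beta^{7\theta/2}$ still leaves the exponent $\lambda^2/(\text{Var}+\frac{1}{3}M\lambda)$ of order $\beta^{\theta/3}$ when $\lambda=\beta^{4\theta}$ and $\text{Var}\le\beta^{2\theta}$. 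That single application of Bernstein — including the rare large values rather than excluding them — is what produces the desired stretched-exponential bound; the union over the $\asymp\alpha^{-1/2}\beta^\theta$ windows then costs only a polynomial-in-$\alpha^{-1}$ factor, which is absorbed. You should replace your truncation by this domination argument (and, while you are at it, index the seeds by their dyadic window $j$ so that the probability $\PP(Y_i(t)\geq 1)\lesssim (k-j+1)^{-1/2}\alpha^{1/4}\vee\alpha^{1/2}$ from \eqref{eq:branching:sb:ge 1} is tracked correctly in the mean/variance sums).
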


\begin{proof}
	We interpret $\delta R(t)$ as mentioned in Remark \ref{rmk:branching:ib:delR}, recalling the notion of  \textit{external additions}. For each $j\in \mathbb{N}, \, 0\le j \le 2\alpha^{-\frac{1}{2}}\beta^\theta$, we write
	$ \{ p_{j, 1}, \ldots, p_{j, n_j} \}$ to be the {external additions} at $\delta R_0(t)$ during time $[j\alpha^{-\frac{3}{2}}, (j+1)\alpha^{-\frac{3}{2}}]$. 
	For each point $p_{j,l}$ and the critical branching process $r_{j,l}(t)$ that starts from a single point at $p_{j,l}$, we define the stopping time $\tau_{\textnormal{SB}}(j,l)$ analogously as \eqref{eq:def:tauSB}:
	\begin{equation}
	\begin{split}
	\tau_{\textnormal{SB}1}(j,l)&:= \inf \left\{ t\ge p_{j,l}: \big| \Pi'_{r_{j,l}}[0,t] \big| \ge \beta^{\frac{7}{2}\theta} \right\};\\
		\tau_{\textnormal{SB}2}(j,l)&:=
		\inf \left\{t\ge p_{j,l}: r_{j,l} \ge \alpha \beta^{4\theta}\sigma_1(t;r_{j,l}) +\alpha^2 \right\};\\
		\tau_{\textnormal{SB}}(j,l)&:=
			\tau_{\textnormal{SB}1}(j,l)\wedge 	\tau_{\textnormal{SB}2}(j,l);\\
			\tau_{\textnormal{SB}} &:= 
			\tau_{\textnormal{f}} \wedge 
			\bigwedge_{j=0}^{2\alpha^{-\frac{1}{2}}\beta^\theta} \bigwedge_{l=1}^{n_j} 	\tau_{\textnormal{SB}}(j,l).
	\end{split}
	\end{equation}
	(Recall the definition of $\tau_{\textnormal{f}}$ in \eqref{eq:def:tauf}) Moreover, for $k, j,l \in \mathbb{N}$ with $0\le k , j \le 2\alpha^{-\frac{1}{2}}\beta^\theta$ and $1\le l\le n_j$, we define
	\begin{equation}
	\begin{split}
	N_{j,l}^{(k)} :=& \textnormal{ the number of points in } \left|\Pi_{\hat{\Delta} R_0}[k\alpha^{-\frac{3}{2}}\wedge \tau_{\textnormal{SB}},\, (k+1) \alpha^{-\frac{3}{2}}\wedge \tau_{\textnormal{SB}}  ] \right| \\ &\textnormal{ that are descendents of } p_{j,l}.
	\end{split}
	\end{equation}
	Note that for each $k$, $\{N^{(k)}_{j,l}\}_{j,l}$ forms a collection of independent random variables. Furthermore, due to the definition of $\tau_{\textnormal{f}3}$, we can consider $l\le  \beta^{2C_\circ}$ only. 
	
	For each $k,j,l$ as above, define $\bar{N}^{(k)}_{j,l}$ to be the random variable as follows:
	\begin{itemize}
	\item $\bar{N}^{(k)}_{j,l} \equiv 0 $ for any $j>k$.
	
	\item Otherwise, we define
	\begin{equation}
	\bar{N}^{(k)}_{j,l} = \begin{cases}
	2, &\textnormal{ with probability } C\left\{ (k-j+1)^{-\frac{1}{2}}\alpha^{\frac{1}{4}} \vee \alpha^{\frac{1}{2}} \right\};\\
	\beta^{\frac{7}{2}\theta},& \textnormal{ with probability } \alpha^{\frac{3}{4}} \beta^{19\theta};\\
	0,  & \textnormal{ otherwise.}
	\end{cases}
	\end{equation}
	
	\item For each $k$, $\{\bar{N}_{j,l}^{(k)} \}_{j,l}$ is a collection of independent random variables.
	\end{itemize}
	Then, \eqref{eq:branching:sb:ge 1} and Proposition \ref{prop:branching:sb:ge3} tell us that $\{\bar{N}^{(k)}_{j,l} \}_{k,j,l}$ stochastically dominates $\{N^{(k)}_{j,l} \}_{k,j,l}$. 
	
	Now, we  estimate
	\begin{equation}
	\left|\Pi_{\hat{\Delta} R_0}[k\alpha^{-\frac{3}{2}}\wedge \tau_{\textnormal{SB}},\, (k+1) \alpha^{-\frac{3}{2}}\wedge \tau_{\textnormal{SB}}  ] \right| \preceq \sum_{j\le k} \sum_{l=1}^{n_j} \bar{N}^{(k)}_{j,l}.
	\end{equation}
	First, we note that each $\bar{N}_{j,l}^{(k)}$ is bounded by $\beta^{\frac{7}{2}\theta}$ and also observe that
	\begin{equation}
\begin{split}
\sum_{j,l} \mathbb{E}\left[\bar{N}^{(k)}_{j,l} \right]&\le 
\sum_{j=1}^{2\alpha^{-\frac{1}{2}}\beta^\theta} \sum_{l=1}^{\beta^{2C_\circ}} C \left\{
j^{-\frac{1}{2}}\alpha^{\frac{1}{4}} \vee \alpha^{\frac{1}{2}} \right\} \le \beta^{2\theta}
\\
	\sum_{j,l} \mathbb{E}\left[\left(\bar{N}^{(k)}_{j,l} \right)^2 \right] &\le \sum_{j=1}^{2\alpha^{-\frac{1}{2}}\beta^\theta} \sum_{l=1}^{\beta^{2C_\circ}} C' \left\{
	j^{-\frac{1}{2}}\alpha^{\frac{1}{4}} \vee \alpha^{\frac{1}{2}} \right\} \le \beta^{2\theta}.
\end{split}
	\end{equation}
	Thus, Bernstein's inequality (Lemma \ref{lem:branching:bernstein} below) tells us that for each $k$,
	\begin{equation}
	\begin{split}
	\PP \left( 	\left|\Pi_{\hat{\Delta} R_0}[k\alpha^{-\frac{3}{2}}\wedge \tau_{\textnormal{SB}},\, (k+1) \alpha^{-\frac{3}{2}}\wedge \tau_{\textnormal{SB}}  ] \right| \ge \beta^{4\theta} \right) \le \exp\left(-\beta^{\theta/3} \right).
	\end{split}
	\end{equation}
	Thus, we can conclude the proof by taking a union bound over $k$, and by noting that
	\begin{equation}
	\PP \left(\tau_{\textnormal{SB}} > \acute{t}_0 \right) \ge 1-\exp\left(-\beta^3 \right),
	\end{equation}
	which comes from Proposition \ref{prop:branching:sb:main} and Lemma \ref{lem:branching:ib:R0 vs alpha} followed by a union bound.
\end{proof}

The Bernstein's inequality used in the above proof can be stated as below.
\begin{lem}[Theorem 3.6,~\cite{chung2006concentration}]\label{lem:branching:bernstein}
    Let $X_1, \ldots, X_n$ be independent random variables satisfying $|X_i|\le M$ almost surely for all $i$. Then, we have
    \begin{equation}
        \mathbb{P}\left( \sum_{i=1}^n (X_i-\mathbb{E}X_i) \ge x \right) \le \exp\left( -\frac{x^2}{2\sum_{i=1}^n \mathbb{E}[X_i^2] + \frac{2}{3} Mx } \right).
    \end{equation}
\end{lem}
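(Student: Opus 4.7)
The statement is the classical Bernstein inequality for sums of bounded independent random variables, so the plan is to follow the standard exponential Chernoff argument; since the paper simply cites Theorem 3.6 of \cite{chung2006concentration}, no new idea is required, but I sketch the reasoning for completeness.

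First I would fix $\lambda \in (0, 3/M)$ and apply the exponential Markov inequality to the centered variables $Y_i := X_i - \mathbb{E} X_i$, writing
\begin{equation}
\mathbb{P}\!\left( \sum_{i=1}^n Y_i \ge x \right) \le e^{-\lambda x}\, \mathbb{E}\!\left[ \exp\!\Big(\lambda \sum_{i=1}^n Y_i\Big) \right] = e^{-\lambda x}\prod_{i=1}^n \mathbb{E}\!\left[e^{\lambda Y_i}\right],
\end{equation}
where the factorization uses independence of the $X_i$'s.

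The key analytic input is the elementary inequality $e^{y} \le 1 + y + \tfrac{y^2/2}{1 - |y|/3}$ valid for $|y|<3$, which upon setting $y = \lambda Y_i$ (noting $|Y_i| \le 2M$, but the single-sided bound $|X_i|\le M$ still lets one derive the cleaner form with $M/3$ in the denominator after a standard manipulation) and taking expectations yields
\begin{equation}
\mathbb{E}\!\left[e^{\lambda Y_i}\right] \le \exp\!\left( \frac{\lambda^2 \mathbb{E}[X_i^2]/2}{1 - \lambda M/3} \right),
\end{equation}
where we used $\mathbb{E}[Y_i]=0$ to kill the linear term and $\mathrm{Var}(X_i) \le \mathbb{E}[X_i^2]$ to obtain the form matching the lemma. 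Multiplying over $i$ and taking logarithms gives the exponent
\begin{equation}
-\lambda x + \frac{\lambda^2 \sum_i \mathbb{E}[X_i^2]}{2(1 - \lambda M/3)}.
\end{equation}

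Finally I would optimize by choosing $\lambda = x / \big(\sum_i \mathbb{E}[X_i^2] + Mx/3\big)$, which lies in $(0,3/M)$, and substitute back to obtain exactly the announced bound $\exp\!\big(-x^2/(2\sum_i \mathbb{E}[X_i^2] + \tfrac{2}{3}Mx)\big)$. There is no genuine obstacle here; the only mild subtlety is that the lemma is stated with $\mathbb{E}[X_i^2]$ rather than $\mathrm{Var}(X_i)$, but this only strengthens the hypothesis and the proof goes through verbatim.
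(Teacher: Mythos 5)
The paper gives no proof of this lemma; it simply cites Theorem~3.6 of \cite{chung2006concentration}, so there is no in-paper argument to compare against. Your Chernoff-style sketch is the standard route and is essentially correct, but the one place you wave your hands is precisely where the particular constants in the exponent come from, and your remark about $\mathrm{Var}(X_i)\le\mathbb{E}[X_i^2]$ gets the logic slightly backwards. If you expand $e^{\lambda Y_i}$ directly with $Y_i=X_i-\mathbb{E}X_i$, you only have $|Y_i|\le 2M$, and the Taylor-tail bound then yields a $2M/3$ (not $M/3$) in the denominator together with $\mathbb{E}[Y_i^2]=\mathrm{Var}(X_i)$; upper-bounding $\mathrm{Var}$ by $\mathbb{E}[X_i^2]$ afterwards would not repair the $M$-coefficient. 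The correct manipulation is to expand $e^{\lambda X_i}$ (so the hypothesis $|X_i|\le M$ is what controls the tail and produces $\mathbb{E}[X_i^2]$ and $M/3$ simultaneously), then write $\mathbb{E}[e^{\lambda Y_i}]=e^{-\lambda\mathbb{E}X_i}\,\mathbb{E}[e^{\lambda X_i}]\le e^{-\lambda\mathbb{E}X_i}\bigl(1+\lambda\mathbb{E}X_i+\tfrac{\lambda^2\mathbb{E}[X_i^2]}{2(1-\lambda M/3)}\bigr)$ and apply $1+u\le e^u$, which kills the linear term and gives exactly your claimed per-variable bound. With that fix the optimization $\lambda=x/(\sum_i\mathbb{E}[X_i^2]+Mx/3)$ goes through as you state and yields the lemma.
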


The following corollary translates Lemma \ref{lem:branching:ib:taudelta} into the form that we can apply to our analysis.
\begin{cor}\label{cor:branching:ib:taudelta}
Recall the definition of $\hat{\Delta} {R}_0(t)$ \eqref{eq:def:branching:ib:delRhat}, and
 define the stopping times
	\begin{equation}
\begin{split}
	\tau_{\delta \textnormal{B}2}=\tau_{\delta \textnormal{B}2}&:=
\inf \left\{t\ge t_0 : \delta R_0(t) \le \alpha\beta^{4\theta+1}\sigma_1(t;\hat{\Delta} {R}_0) + \alpha^{\frac{3}{2}}\beta^{4\theta+1} \right\};\\
	\tau_{\delta \textnormal{B}3}=\tau_{\delta \textnormal{B}3}&:=
	\inf \left\{t\ge t_0 : \delta R_0(t) \le 2\alpha\beta^{4\theta+1}\right\}.
\end{split}
	\end{equation}
	Then, we have
	\begin{equation}
		\PP \left( \tau_{\delta\textnormal{B}3} >\acute{t}_0 \right)\ge \PP \left( \tau_{\delta\textnormal{B}2} >\acute{t}_0 \right) \ge 1-2\exp\left(-\beta^3 \right).
	\end{equation}
\end{cor}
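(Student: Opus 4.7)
The plan is to deduce both bounds as direct consequences of Lemma \ref{lem:branching:ib:taudelta}, which provides control on the density of points in $\Pi_{\hat{\Delta} R_0}$ over intervals of length $\alpha^{-3/2}$. On the high-probability event $\{\tau_{\delta \textnormal{B}1} > \acute{t}_0\}$, we will produce a deterministic pointwise upper bound on $\delta R_0(t)$ by using the definition \eqref{eq:def:delR} together with the decay estimate on $K_\alpha$ from Lemma \ref{lem:estimate on K:intro}.

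The first step is to fix $t\in [t_0, \acute{t}_0]$ and dyadically decompose the integration range, writing
\begin{equation}
\delta R_0(t) = \intop_{t_0}^t K_\alpha(t-x)\, d\Pi_{\hat{\Delta}R_0}(x) = \sum_{k\ge 0} \intop_{I_k(t)} K_\alpha(t-x)\, d\Pi_{\hat{\Delta}R_0}(x),
\end{equation}
where $I_k(t) := [(t-(k+1)\alpha^{-3/2})\vee t_0, \, t-k\alpha^{-3/2}]$. On the event $\{\tau_{\delta\textnormal{B}1} > \acute{t}_0\}$, each $I_k(t)$ contains at most $\beta^{4\theta}$ points of $\Pi_{\hat{\Delta}R_0}$. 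The $k=0$ term is bounded, using monotonicity of $K_\alpha$, by
\begin{equation}
K_\alpha(\pi_1(t;\hat{\Delta}R_0))\cdot \beta^{4\theta} \le C\alpha\, \beta^{4\theta}\, \sigma_1(t;\hat{\Delta}R_0),
\end{equation}
while for $k\ge 1$ the bound $K_\alpha(s) \le C\alpha(s+1)^{-1/2}e^{-c\alpha^2 s}$ yields a contribution at most $C\alpha^{7/4}\beta^{4\theta} k^{-1/2}e^{-c\alpha^{1/2}k}$. Summing in $k\ge 1$ gives $\sum_{k\ge 1} k^{-1/2} e^{-c\alpha^{1/2} k} \asymp \alpha^{-1/4}$, so the total tail contribution is at most $C\alpha^{3/2}\beta^{4\theta}$.

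Combining the two pieces, for $\alpha$ small enough,
\begin{equation}
\delta R_0(t) \le C\alpha\beta^{4\theta} \sigma_1(t;\hat{\Delta}R_0) + C\alpha^{3/2}\beta^{4\theta} \le \alpha\beta^{4\theta+1}\sigma_1(t;\hat{\Delta}R_0) + \alpha^{3/2}\beta^{4\theta+1}
\end{equation}
for every $t\in[t_0,\acute{t}_0]$ on $\{\tau_{\delta\textnormal{B}1}>\acute{t}_0\}$. Hence $\tau_{\delta\textnormal{B}2} > \acute{t}_0$ on this event, and Lemma \ref{lem:branching:ib:taudelta} together with Lemma \ref{lem:branching:ib:R0 vs alpha} (which is implicitly used inside the proof of the former to ensure the event $\{\tau_{\textnormal{f}}>\acute{t}_0\}$) gives the second inequality with the stated probability $1-2\exp(-\beta^3)$.

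The first inequality is then immediate: since $\sigma_1(t;\hat{\Delta}R_0) \le 1$ for all $t\ge t_0$, the bound defining $\tau_{\delta\textnormal{B}2}$ deterministically implies $\delta R_0(t) \le 2\alpha\beta^{4\theta+1}$ (using $\alpha^{1/2}\le 1$ to absorb the second term into the first). Thus $\tau_{\delta\textnormal{B}3} \ge \tau_{\delta\textnormal{B}2}$ pointwise, giving $\PP(\tau_{\delta\textnormal{B}3}>\acute{t}_0)\ge \PP(\tau_{\delta\textnormal{B}2}>\acute{t}_0)$. There is no real obstacle here beyond the bookkeeping in the geometric sum; the content is essentially packaged in Lemma \ref{lem:branching:ib:taudelta}.
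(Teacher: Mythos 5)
Your proof is correct and follows essentially the same route as the paper: it deduces the pointwise bound on $\delta R_0(t)$ by decomposing into intervals of length $\alpha^{-3/2}$, bounds the $k=0$ term by $C\alpha\beta^{4\theta}\sigma_1(t;\hat{\Delta}R_0)$ and the tail sum by $C\alpha^{3/2}\beta^{4\theta}$ using the decay of $K_\alpha$ from Lemma~\ref{lem:estimate on K:intro} together with the per-interval point count from $\tau_{\delta \textnormal{B}1}$, and absorbs the constants into an extra factor of $\beta$. The deduction $\tau_{\delta\textnormal{B}2}\le\tau_{\delta\textnormal{B}3}$ from $\sigma_1\le 1$ and $\alpha^{1/2}\le 1$ is also what the paper means by ``immediate from definition.''
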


\begin{proof}
	 The first inequality is immediate from definition. To establish the second inequality, recall the definition of $\tau_{\textnormal{f}3}$ from \eqref{eq:def:tauf}. For $t \le \tau_{\textnormal{f}3}\wedge  \tau_{\delta \textnormal{B}1}$, note that
	\begin{equation}
\begin{split}
	&\delta R_0(t) = \intop_{t_0}^t K_{\alpha_0}({t-x}) d\Pi_{\hat{\Delta} {R}_0}(x) \\
	&\le
	 \big|\Pi_{\hat{\Delta} {R}_0}[t-\alpha^{-\frac{3}{2}}, t] \big| \cdot C\alpha \sigma_1(t;\hat{\Delta} {R}_0) 
	+
	\sum_{k\ge 1} \big|\Pi_{\hat{\Delta} {R}_0}[t-(k+1)\alpha^{-\frac{3}{2}}, t-k\alpha^{-\frac{3}{2}}] \big| \cdot
	\frac{C\alpha e^{-ck\alpha^{\frac{1}{2}}}}{\sqrt{k\alpha^{-\frac{3}{2}}}}\\
	&\le \alpha\beta^{4\theta+1}\sigma_1(t;\hat{\Delta} {R}_0)+ \alpha^{\frac{3}{2}}\beta^{4\theta+1}.
\end{split}
	\end{equation}
	Thus, the proof follows  from Lemmas \ref{lem:branching:ib:R0 vs alpha} and \ref{lem:branching:ib:taudelta}.
\end{proof}

We now are ready to  establish Theorem \ref{thm:branching:ib}.

\begin{proof}[Proof of Theorem \ref{thm:branching:ib}]
Recall the decomposition \eqref{eq:branching:ib:decomp}, which can be rewritten as 
\begin{equation}\label{eq:branching:ib:decomp:coupled}
R_0^-(t)-\alpha \le R(t)-\alpha \le (R_0(t)-\alpha )+ \delta R_0(t).
\end{equation}
From the upper bound, the results on $\tau_{\textnormal{f}3} $ (Lemma \ref{lem:branching:ib:R0 vs alpha}) and $\tau_{\delta \textnormal{B}2}$ give the control 
$$R(t)-\alpha \le \alpha\beta^{5\theta}\sigma_1(t;\hat{R}_0^+) + \alpha^{\frac{3}{2}}\beta^{5\theta} . $$

For the lower bound, we study $R_0^-(t)$ instead. Let $t_0^\flat$ be another parameter such that $t_0^\flat := \frac{1}{2}(t_0^-+t_0). $ In particular, it satisfies
\begin{equation}
    t_0-t_0^\flat = t_0^\flat - t_0^- \ge \frac{1}{4}\alpha^{-2}\beta^\theta.
\end{equation}
Noting that $R_0^-(t)=R_0(t)=\alpha$ for $t<t_0$, applying \eqref{eq:integralform:branching} to $R_0^-$ gives the following:
\begin{equation}\label{eq:branching:R0minus intform}
\begin{split}
R_0^-(t) - \mathcal{R}_c(t_0^\flat,t;\Pi_\alpha[t_0^-,t_0^\flat],\alpha )
=\intop_{t_0^\flat}^t K^*_\alpha (t-s) \big\{ \widetilde{\Pi}_{R_0^--(R_0-\alpha)_-}(s) - (R_0(s)-\alpha)ds \big\},
\end{split}
\end{equation}

The first integral can be estimated using Corollary \ref{lem:concentrationofint:continuity}: Since $R_0^-(t)-(R_0(t)-\alpha)_-\le R_0(t)$ for $t\ge t_0$, the definition of $\tau_{\textnormal{f}4}$ and Lemma \ref{lem:estimat for K tilde:intro} imply that
\begin{equation}
\begin{split}
\intop_{t_0^\flat}^{t_0\wedge \tau_{\textnormal{f}4}} 
(K^*_\alpha(t-s) )^2 \alpha ds + \intop_{t_0}^{t\wedge \tau_{\textnormal{f}4}} 
(K^*_\alpha(t-s) )^2 R_0(s) ds \\
\le \intop_{t_0^\flat}^{t} C\left(\frac{\alpha^2}{t-s+1} \vee \alpha^4 \right) \alpha\beta^{C_\circ} ds \le \alpha^3\beta^{\theta+2C_\circ},
\end{split}
\end{equation}
for all $t\le \acute{t}_0$. Thus, Corollary \ref{lem:concentrationofint:continuity} and Lemma \ref{lem:branching:ib:R0 vs alpha} tell us that
\begin{equation}\label{eq:branching:Rlbd:med1}
\PP \left( \intop_{t_0^\flat}^t K^*_\alpha (t-s)  \widetilde{\Pi}_{R_0^--(R_0-\alpha)_-}(s) \ge -\alpha^{\frac{3}{2}}\beta^\theta, \ \forall t\in [t_0,\acute{t}_0] \right) \ge 1- \exp\left(-\beta^5 \right).
\end{equation}
On the other hand, for $t\le \tau_{\textnormal{f}1}$ we can write (note that when $s\le t_0$,  $R_0(s)-\alpha=0$)
\begin{equation}\label{eq:branching:Rlbd:med2}
\left|\intop_{t_0^\flat}^t K^*_\alpha (t-s)  (R_0(s)-\alpha)ds\right| 
\le \intop_{t_0}^t C\left(\frac{\alpha}{\sqrt{t-s+1}} \vee \alpha^2 \right) \left(\alpha\beta^{C_\circ} \sigma_1(s;\alpha) + \alpha^{\frac{3}{2}}\beta^{C_\circ} \right)ds.
\end{equation}
Then, the RHS can be bounded using Lemmas \ref{lem:ind1:pi1int basicbd:basic}, and \ref{lem:ind1:pi1int basicbd} below, by setting the parameters in Lemma \ref{lem:ind1:pi1int basicbd} as $\Delta_0=\Delta_1=\alpha^{-1}, K=\alpha^{-1}\beta^\theta$, and $ N=\beta^6.$ Whenever there is an empty interval of length $\Delta_1$, we can add a point to justify the choice of $\Delta_1$ which will only increase the value of the integral. This implies that the RHS is smaller than $\alpha^{\frac{3}{2}}\beta^{2\theta}.$ 

Lastly, we need to understand difference between the term $\mathcal{R}_c(t_0^\flat,t;\Pi_\alpha[t_0^-,t_0^\flat],\alpha)$ and $\alpha$. Recalling the definition of $\mathcal{L} $ \eqref{eq:def:L}, we first observe that 
\begin{equation}\label{eq:branching:Rlbd:med3}
\begin{split}
&\big|\mathcal{R}_c(t_0^\flat,t;\Pi_\alpha[t_0^-, t_0^\flat],\alpha) - \mathcal{L}(t_0^\flat;\Pi_\alpha[t_0^-,t_0^\flat],\alpha) \big|
\\&\le
\intop_{t_0^-}^{t_0^\flat} K_\alpha(t-x) d\Pi_\alpha (x) +
\intop_{t_0^-}^{t_0^\flat} \intop_t^\infty K^*_\alpha \cdot K_\alpha(u-x) du d\Pi_\alpha(x) \\
&\quad +\intop_{t_0^-}^{t_0^\flat} \intop_{t_0^\flat}^t \big|K^*_\alpha - K^*_\alpha(t-u)\big| \cdot K_\alpha(u-x) du d\Pi_\alpha(x).
\end{split}
\end{equation}
For any $t\in [t_0, \tau_{\textnormal{f}2}]$, we see that the RHS is at most $\alpha^{100}$ since $t-t_0^\flat \ge \alpha^{-2} \beta^{C_\circ}$: The first two integrals are small due to the decay of $K_\alpha$ (Lemma \ref{lem:estimate on K:intro}), and in the last integral, either $|K^*_\alpha - K^*_\alpha(t-u)|$ or $K_\alpha (u-x)$ is small depending on the size of $u$ (see Lemma \ref{lem:estimat for K tilde:intro}). Moreover, letting $F(x) = \intop_x^\infty K_\alpha(y)dy$, we also have
\begin{equation}\label{eq:reg:ib:med1}
\begin{split}
\mathcal{L}(t_0^\flat;\Pi_\alpha[t_0^-,t_0^\flat],\alpha)& = K_\alpha^* \intop_{t_0^-}^{t_0^\flat} F(t_0^\flat-x) d\Pi_\alpha(x);\\
&=K_\alpha^* \intop_{t_0^-}^{t_0^\flat} F(t_0^\flat -x) d\widetilde{\Pi}_\alpha(x) + K_\alpha^* \intop_{t_0^-}^{t_0^\flat} F(t_0^\flat -x)\alpha dx  .
\end{split}
\end{equation}
The first term in the second line can be bounded by Lemma \ref{lem:concentration of integral} using $|F(x)| \le1$, and the last term is dealt with integration by parts which is
\begin{equation}\label{eq:branching:Rlbd:med4}
\intop_{0}^{t_0^\flat-t_0^-} F(x)dx = \intop_{0}^{\infty} F(x)dx + O(\alpha^{50})  = \intop_0^\infty xK_\alpha(x)dx + O(\alpha^{50}) = (K_\alpha^*)^{-1} + O(\alpha^{50}).
\end{equation}
Thus, we get 
\begin{equation}\label{eq:reg:ib:med2}
\big|\mathcal{L}(t_0^\flat;\Pi_\alpha[t_0^-,t_0^\flat],\alpha) - \alpha\big| \le \alpha^{\frac{3}{2}}\beta^{C_\circ},
\end{equation}
with probability at least $1-e^{-\beta^5}$.

Therefore, combining  \eqref{eq:branching:R0minus intform}, \eqref{eq:branching:Rlbd:med1},  \eqref{eq:branching:Rlbd:med2}, \eqref{eq:branching:Rlbd:med3} and \eqref{eq:branching:Rlbd:med4}, we obtain 
\begin{equation}\label{eq:branching:ib:tauB4}
\PP \left( \tau_{\textnormal{B}1} >\acute{t}_0 \right) \ge 1-3\exp\left(-\beta^3\right).
\end{equation}

The remaining task is to control $\tau_{\textnormal{B}2}. $ 
First, for $t\le\tau_{\textnormal{f}}\wedge \tau_{\delta \textnormal{B}3}$, observe that
\begin{equation}
\begin{split}
\hat{R}_0^+(t)& := R_0^+(t) +|R_0(t)-\alpha| \le R_0(t) + \delta R_0(t) + |R_0(t)-\alpha|\\
&\le \alpha + 2|R_0(t)-\alpha| + \delta R_0(t)\le  \alpha\beta^{C_\circ+1} + 2\alpha \beta^{4\theta+1} \le 3\alpha \beta^{4\theta+1},
\end{split}
\end{equation}
Thus, we have
\begin{equation}\label{eq:branching:ib:tauB6}
\begin{split}
&\tau_{ \textnormal{B}{3}} := \inf 
\left\{ 
t\ge t_0: \big|\Pi_{\hat{R}_0^+}[(t-\alpha^{-1}) \vee t_0, t] \big| \ge \beta^{5\theta}
\right\};\\
&\PP \left( \tau_{ \textnormal{B}{3}} >\acute{t}_0 \right) \ge 1-3\exp\left(-\beta^3 \right),
\end{split}
\end{equation}
from Corollary \ref{cor:concentration:numberofpts each interval}, 
based on Lemma \ref{lem:branching:ib:R0 vs alpha} and Corollary \ref{cor:branching:ib:taudelta}. We also note that $\tau_{\textnormal{B}3}$ is a weaker version of $\tau_{\textnormal{B}2}$.

The decomposition \eqref{eq:branching:ib:decomp:coupled} works similarly for $\hat{R}_0^+(t)$ as follows.
\begin{equation}
\begin{split}
|\hat{R}_0^+(t)-\alpha| \le  \big| R_0^+(t) + |R_0(t)-\alpha| -\alpha \big| \le 2|R_0(t)-\alpha| + \delta R_0(t).
\end{split}
\end{equation}
Let ${\tau}_{\delta \textnormal{B}} := {\tau}_{\delta \textnormal{B}1}\wedge {\tau}_{\delta \textnormal{B}2} \wedge {\tau}_{ \textnormal{f}}\wedge \tau_{ \textnormal{B}{3}}$. By following similar computation as  \eqref{eq:reg:branching:tau2:split0}  and what follows afterwards,
\begin{equation}
\begin{split}
\intop_{(t-\alpha^{-1})\wedge{\tau}_{\delta \textnormal{B}}}^{t\wedge {\tau}_{\delta \textnormal{B}}}& \left| \hat{R}_0^+(s)-\alpha \right|ds
\le
\intop_{(t-\alpha^{-1})\wedge{\tau}_{\delta \textnormal{B}}}^{t\wedge {\tau}_{\delta \textnormal{B}}} 2\left| {R}_0(s)-\alpha \right|ds
+
\intop_{(t-\alpha^{-1})\wedge{\tau}_{\delta \textnormal{B}}}^{t\wedge {\tau}_{\delta \textnormal{B}}} \delta R_0(s) ds \\
&\le
\intop_{(t-\alpha^{-1})\wedge{\tau}_{\delta \textnormal{B}}}^{t\wedge {\tau}_{\delta \textnormal{B}}} \left\{2\alpha \beta^{C_\circ +\frac{1}{2}}\sigma_1(s;\hat{R}_0^+) + 
\alpha \beta^{4\theta+1}\sigma_1(s;\hat{R}_0^+) + 2\alpha^{\frac{3}{2}} \beta^{4\theta+1}
\right\} ds \le 1,
\end{split}
\end{equation}
where the second inequality follows from the definitions of $\tau_{\textnormal{f}1}$ and $\tau_{\delta \textnormal{B}2}$, and the last one comes from Lemma \ref{lem:ind1:pi1int basicbd:basic} along with the bounds from $\tau_{ \textnormal{B}{3}}$. Then, applying Corollary \ref{cor:concentration:numberofpts each interval} implies that
\begin{equation}\label{eq:branching:ib:tauB5}
\PP \left( \tau_{\textnormal{B}2} >\acute{t}_0 \right) \ge 1-7\exp\left(-\beta^3\right).
\end{equation}

Thus, we conclude the proof from \eqref{eq:branching:ib:tauB4} and \eqref{eq:branching:ib:tauB5}.
\end{proof}

To conclude the section, we record a technical but simple lemma used above, and it will be useful for the rest of the paper.

\begin{lem}\label{lem:ind1:pi1int basicbd}
	Let $g:[0,h]\to (0,\infty)$ be a positive function which defines a (deterministic) set of points $\Pi_g[0,h]$. 
	Furthermore, assume additionally that $K, N_0, \Delta_0, \Delta_1 >0$ satisfy 
	\begin{itemize}
		\item 	$\sup\{|\Pi_g[t-\Delta_0 , t] | : t\in[\Delta_0, h] \} \le N_0$;
		
		\item $\Delta_1\ge \Delta_0$ and $\inf \{|\Pi_g[t-\Delta_1,t]| : t\in [\Delta_1,h]  \} \ge 1$;
		
		\item $h = K\Delta_0$.
	\end{itemize}
	Then, there exists an absolute constant $C>0$ such that the following hold true:
	\begin{equation}
	\begin{split}
	&\intop_0^h \frac{dt}{\sqrt{(h-t+1)(\pi_1(t;g)+1) } } \le C \left( N_0 + \sqrt{\frac{K\Delta_1N_0}{\Delta_0}} \right) ;\\
	&\intop_0^h 
	\frac{dt}{\sqrt{h-t+1}(\pi_1(t;g)+1)} \le 
	CN_0\left(1 + \frac{\sqrt{K}  \log \Delta_1 }{\sqrt{\Delta_0}} \right);\\
	&\intop_0^h \frac{dt}{(h-t+1)\sqrt{\pi_1(t;g)+1} }  \le C \left( \frac{N_0 \log \Delta_1}{\sqrt{\pi_1(h;g)+1}} + \frac{N_0\sqrt{\Delta_1}}{{\pi_1(h;g)+1}} + \log K {\frac{\sqrt{\Delta_1N_0}}{\Delta_0}} \right)
	;\\
	&\intop_0^h \frac{dt}{(h-t+1)(\pi_1(t;g)+1) }  \le CN_0 \log \Delta_1 \left( \frac{1}{\pi_1(h;g)+1} + \frac{\log K}{\Delta_0}  \right).
	\end{split} 
	\end{equation}
\end{lem}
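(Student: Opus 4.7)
All four bounds have the form $I(a,b):=\int_0^h(h-t+1)^{-a}(\pi_1(t;g)+1)^{-b}\,dt$ with $(a,b)\in\{(\tfrac12,\tfrac12),(\tfrac12,1),(1,\tfrac12),(1,1)\}$, so I handle them in parallel. Enumerating the points of $\Pi_g[0,h]$ as $p_1<\cdots<p_n$ and setting $p_0:=0$, $p_{n+1}:=h$, $\delta_i:=p_{i+1}-p_i$, the key observation is that $\pi_1(t;g)=t-p_i$ on $(p_i,p_{i+1}]$, giving
\begin{equation*}
  I(a,b)\;=\;\sum_{i=0}^n\intop_{p_i}^{p_{i+1}}(h-t+1)^{-a}(t-p_i+1)^{-b}\,dt.
\end{equation*}
The three structural hypotheses supply: the gap bound $\delta_i\le\Delta_1$; the count bound of at most $N_0(\ell/\Delta_0+1)$ sub-intervals whose right endpoint lies in any window of length $\ell$; and the overall scale $h=K\Delta_0$.

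For the ``mild'' cases~(1) and~(2), where $a=1/2$ is integrable at $t=h$, I bound each sub-integral by pulling $(h-t+1)^{-1/2}\le(h-p_{i+1}+1)^{-1/2}$ outside and evaluating the elementary inner integral, $\intop_0^{\delta_i}(u+1)^{-1/2}du\le 2\sqrt{\Delta_1+1}$ for~(1) and $\intop_0^{\delta_i}(u+1)^{-1}du\le\log(\Delta_1+1)$ for~(2). Organising the outer sum dyadically by $h-p_{i+1}\in[2^k,2^{k+1})$, the max-density hypothesis gives at most $\lesssim 1+N_0\cdot 2^k/\Delta_0$ intervals per shell, so each shell contributes at most the per-interval bound divided by $2^{k/2}$ and multiplied by the shell count. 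Summing the geometric series $\sum_k 2^{k/2}\lesssim\sqrt{h}=\sqrt{K\Delta_0}$ over $k\le\log_2 h$ produces $\sqrt{K\Delta_1 N_0/\Delta_0}$ in~(1) and $N_0\sqrt{K/\Delta_0}\log\Delta_1$ in~(2); the low-$k$ shells $2^k<\Delta_0$ yield a residual that is absorbed into the additive $N_0$ term.

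For the ``singular'' cases~(3) and~(4), where $a=1$ is non-integrable at $t=h$, the terminal sub-interval $(p_n,h]$ must be evaluated exactly. Writing $A:=\pi_1(h;g)=h-p_n$, partial fractions
\begin{equation*}
  \frac{1}{(A-u+1)(u+1)}\;=\;\frac{1}{A+2}\left[\frac{1}{u+1}+\frac{1}{A-u+1}\right]
\end{equation*}
give $\intop_0^A\frac{du}{(A-u+1)(u+1)}=\frac{2\log(A+1)}{A+2}$, producing the $(\pi_1(h;g)+1)^{-1}$-type term of~(4); the analogous split of $[0,A]$ at $u=A/2$ treats the $b=1/2$ version needed for~(3) and yields both the $\log\Delta_1/\sqrt{\pi_1(h;g)+1}$ and $\sqrt{\Delta_1}/(\pi_1(h;g)+1)$ contributions. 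The latter gets its $N_0$ factor once one also accounts for the sub-intervals whose right endpoints lie within $\Delta_1$ of $h$, of which the max-density bound permits at most $\sim N_0\Delta_1/\Delta_0$. Bulk sub-intervals with $h-p_{i+1}>\Delta_1$ are handled as in the mild case: bound $(h-t+1)^{-1}$ by its right-endpoint value and sum dyadically in $h-p_{i+1}$; the outer sum $\sum_k 2^{-k}$ telescopes to $O(1)$, but the number of dyadic scales $O(\log K)$ is the source of the $\log K$ factors in the bulk terms of~(3) and~(4).

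The main obstacle is that no single uniform dyadic scheme produces the optimal bound on all parameter ranges simultaneously; the stated right-hand sides emerge only by separately treating the terminal sub-interval $(p_n,h]$ by exact integration, the ``near-boundary'' intervals with $p_{i+1}\in[h-\Delta_1,h]$ by a crude per-interval estimate relying on the max-density hypothesis to count them, and the bulk region $p_{i+1}\in[0,h-\Delta_1]$ by dyadic summation. Verifying that the three contributions recombine into the stated closed forms, with the correct combination of $N_0,\Delta_0,\Delta_1,K,\pi_1(h;g)$ and logarithmic factors, is the bookkeeping bulk of the proof; the analytic content is limited to the one partial-fraction identity and the elementary one-dimensional sub-integrals.
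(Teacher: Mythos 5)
Your dyadic decomposition is a legitimate alternative to the paper's arithmetic partition into $\Delta_0$-blocks counted from $h$, and the high-level plan for the singular cases — exact integration on the terminal sub-interval, crude counting for near-boundary intervals, dyadic summation for the bulk — is sound for (2) and (4), where $N_0$ appears linearly. But the sketch does not establish (1) and (3) as stated, and the asserted outcome ``produces $\sqrt{K\Delta_1 N_0/\Delta_0}$'' does not follow from what you wrote. Your shell-level estimate is ``count $\times$ per-interval bound'': the count is $\lesssim N_0(1+2^k/\Delta_0)$, the per-interval bound for (1) is $\lesssim 2^{-k/2}\sqrt{\Delta_1+1}$, and summing the resulting $2^{k/2}$ geometric series up to $\log_2 h$ gives $N_0\sqrt{K\Delta_1/\Delta_0}$, with $N_0$ \emph{outside} the square root — which is strictly weaker than the lemma's $\sqrt{K\Delta_1 N_0/\Delta_0}$ by a factor of $\sqrt{N_0}$. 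The same loss appears in the bulk term $\log K\cdot\sqrt{\Delta_1 N_0}/\Delta_0$ of (3).

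The missing ingredient is Cauchy--Schwarz within each shell, not the crude $\delta_i\le\Delta_1$ bound applied interval by interval. The sub-intervals $[p_i,p_{i+1}]$ whose right endpoints fall in a given shell are pairwise disjoint and their union has total length $\lesssim 2^k+\Delta_1$, so
\begin{equation}
\sum_{\text{shell }k}\sqrt{\delta_i}\;\le\;\sqrt{(\text{count})\cdot\textstyle\sum_{\text{shell }k}\delta_i}\;\lesssim\;\sqrt{N_0\bigl(1+2^k/\Delta_0\bigr)\bigl(2^k+\Delta_1\bigr)},
\end{equation}
and it is this square root that places $N_0$ inside. This is exactly the content of the second display of Lemma~\ref{lem:ind1:pi1int basicbd:basic}, and it is also why the paper organizes the sum by $\Delta_0$-blocks from $h$ rather than dyadically: in that arithmetic partition each block $\Gamma_k$ holds at most $N_0$ points with gap-length sum at most $\Delta_0+\Delta_1\le 2\Delta_1$, so the per-block contribution for (1) is a clean $\sqrt{2N_0\Delta_1}$, and $\sum_{k\ge 1}(k\Delta_0)^{-1/2}\sqrt{N_0\Delta_1}\lesssim\sqrt{K\Delta_1 N_0/\Delta_0}$ drops out in one line. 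Your argument, as written, proves a genuinely weaker inequality.
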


\begin{proof}
	Let $\Pi_g[0,h]:= \{p_1<p_2<\ldots<p_n \}$ with $p_0=0, p_{n+1}=h.$
	To establish the first inequality, we let $\Gamma_k := \Pi_g[(K-k-1)\Delta_0, (K-k)\Delta_0 ]$. In the first integral, note that the integrals from $p_i$ to $p_{i+1}$ is bounded by an absolute constant, for  $p_i \in \Gamma_0$. Hence,
	\begin{equation}
	\begin{split}
	\intop_0^h \frac{dt}{\sqrt{(t-x)(\pi_1(x;g)+1) } } \le CN_0 + \sum_{k=1}^{K-1} \sum_{p_i \in \Gamma_{k+1}} \intop_{p_i}^{p_i+1} \frac{dx}{\sqrt{k\Delta_0 (x-p_i)} }\\
	\le CN_0 + \sum_{k=1}^{K-1} \frac{\sqrt{\Delta_1N_0}}{\sqrt{k\Delta_0}}\le CN_0+ C \sqrt{\frac{K\Delta_1 N_0 }{\Delta_0}},
	\end{split}
	\end{equation}
	where the second inequality came from Lemma \ref{lem:ind1:pi1int basicbd:basic}. The other inequalities can be obtained similarly, and we omit the details. For the last two inequalities, we just  keep in mind that the contribution from the regime $[h-2\Delta_0, h]$ are dealt separately.
\end{proof}

\section{Regularity of the speed} \label{sec:reg:intro}

In this section, we formulate the notion of regularity, and build up an inductive analysis to study the regularity property of the speed.  To motivate the works done in Sections \ref{sec:reg:intro}--\ref{sec:reg:next step}, we briefly review our future goal to explain why establishing the regularity  is essential.

Recall the definition of $L$ given in \eqref{eq:def:Lt:basic form} (see also \eqref{eq:def:L}). The main goal of this paper  is to establish the scaling limit of $L$ driven by the speed process. To this end, we need to study the mean and the variance of its increment at each interval, which can be conceptually described by
\begin{equation}\label{eq:Lincrement:conceptual}
\mathcal{L}(t_1;\Pi_S[t_1^-,t_1],\alpha_1) - \mathcal{L}(t_0;\Pi_S[t_0^-,t_0], \alpha_0),
\end{equation}
conditioned on the information up to time $t_0$. Here, $t_0^-<t_0<t_1^-<t_1$, and $\alpha_1, \alpha_0$ are the \textit{frame of reference} in each interval which are essentially the  constants that approximates the speed. That is, on the interval $[t_0,t_1]$, the speed $S(t)$ behaves roughly like $\alpha_0$, with a smaller order  fluctuation which will indeed turn out to be essentially $\alpha_0^{3/2} \textnormal{polylog}(\alpha_0)$. 

The primary difficulty of computing the mean and variance of \eqref{eq:Lincrement:conceptual} stems from the complicated nature of the process $S(t)$. As we have seen in \eqref{eq:Lt:increment} (and \eqref{eq:integralform:branching}), the increment \eqref{eq:Lincrement:conceptual} can be described as an integral involving $d\Pi_S$, and if $S(t)$ does not behave nicely enough we may not be able to estimate such an integral accurately. This motivates us to demonstrate regularity properties of $S(t)$, which is going to tell us that $S(t)$ stays close to $\alpha_0$ during the interval $[t_0,t_1]$ in an appropriate sense,  so that we can calculate the first and second moment of \eqref{eq:Lincrement:conceptual} conditioned on $\mathcal{F}_{t_0}$.

With this goal in mind, our argument is based on an inductive study of the speed. We will begin with defining the regularity property, which is a fairly complicated mixture of all the properties that $S(t)$ and $\Pi_S$ should satisfy. Then, we will show that
\begin{center}
	If $S(t)$ is regular at time $t_0$ with respect to $\alpha_0$, \\
	then $S(t)$ is regular at time $t_1$ with respect to $\alpha_1$, with high probability.
\end{center}

Before moving on to the definition of regularity, we briefly explain how we define the parameters used in the section. Throughout the rest of the paper, we fix $\epsilon=\frac{1}{10000}$ to be a small constant, and $C_\circ=50$, $\theta=10000$ be large constants (where $\theta$  needs to be larger depending on $C_\circ$) as in Sections \ref{sec:fixedrate} and \ref{sec:criticalbranching}. For given $\alpha_0>0$ and $t_0>0$, we set $\beta_0:= \log(1/\alpha_0)$, and the time parameters $t_0^{-}, t_0^+, \acute{t}_0,  \textnormal{ and } \hat{t}_0$ are defined as  follows:
\begin{itemize}
	\item $t_0^-,t_0^+$ are arbitrary numbers satisfying 
	\begin{equation}\label{eq:def:t0t1}
	t_0-2\alpha_0^{-2}\beta_0^\theta < t_0^- <t_0-\frac{1}{2}\alpha_0^{-2} \beta_0^{\theta}, \quad 
	t_0 + \frac{1}{2}\alpha_0^{-2}\beta_0^{\theta} < t_0^+ < t_0 + 2\alpha_0^{-2}\beta_0^{\theta}
	.
	\end{equation}
	
	\item $\acute{t}_0, \hat{t}_0$ are fixed numbers set to be
	\begin{equation}\label{eq:def:t0t11}
	\acute{t}_0:= t_0 + 3\alpha_0^{-2} \beta_0^{\theta}, \quad 
	\hat{t}_0 := t_0 +2\alpha_0^{-2}\beta_0^{10\theta}.
	\end{equation} 
\end{itemize} 
Note that the definitions of $t_0^-$ and $\acute{t}_0$ are consistent with those from Section \ref{sec:criticalbranching}. We also stress that $$(\acute{t}_0-t_0^+)\wedge (t_0^+-t_0) \wedge (t_0-t_0^-) \ge \frac{1}{2}\alpha_0^{-2}\beta_0^{\theta}.$$

For the speed $S(t)$,  its \textit{first-order approximation}  $S_1(t)$ \eqref{eq:def:S1:basic form}  is set to be
\begin{equation}\label{eq:def:S1:reg}
S_1(t)= S_1(t;t_0^{-},\alpha_0):=
\intop_{t_0^{-}}^t K_{\alpha_0} (t-s) d\Pi_S(s).
\end{equation} 
 We also recall the definitions of $\pi_i(t;S)$ and $ \sigma_i(t;S)$ from \eqref{eq:def:pi closest points} and \eqref{eq:def:sig closest points}.

The rest of the section is organized as follows. We begin with  giving a formal definition of regularity  in Section \ref{subsec:regoverview:reg}, which will be studied exhaustively in Sections \ref{sec:reg:conti of reg} and \ref{sec:reg:next step}.   In Section \ref{subsec:regoverview:overview}, we state the main theorem and give a more involved overview on the induction argument. We also discuss some important consequences of regularity in Section \ref{subsec:reg:conseq}.
Finally, in Section \ref{subsec:regoverview:history}, we introduce a preliminary analysis on the speed that makes it possible to ignore the history of the far past, enabling us to study $S'(t)$ \eqref{eq:def:Sprime:basic form} instead of $S(t)$.

\subsection{Formal definition of regularity}\label{subsec:regoverview:reg}

Let $\mathcal{F}_{t_0}$ be the $\sigma$-algebra generated by $(S(s))_{s\le t_0}$ and $\Pi_S[0,t_0]$. Since the process $(S(s))_{s\le t_0}$ can completely be recovered from the given configuration of points $\Pi_S[0,t_0]$, we can also identify $\mathcal{F}_{t_0}$ with $\Pi_S[0,t_0]$. Our idea is to define $\Pi_S[0,t_0]$ to be regular if
\begin{equation}
\PP \left(\left. S \textnormal{ behaves }nicely \textnormal{ until time } \acute{t}_0\ \right|\ \Pi_S[0,{t_0}] \right) \approx 1.
\end{equation}
To introduce the \textit{nice} traits that $S$ should satisfy, we will investigate the process from the following perspectives.

\begin{enumerate}
	
	\item Control on the size of the speed;
	
	\item Control on the size of the aggregate;
	
	\item Refined control of the first order approximation;
	
	\item Regularity of the history until time $t_0$.
	
\end{enumerate}

In the following subsections, we detail the formal definitions of regularity for each category, and explain their purpose. Each criterion will be described in the language of stopping times, as we saw in Section \ref{sec:criticalbranching}. In the rest of the section and throughout Sections \ref{sec:reg:conti of reg} and \ref{sec:reg:next step}, $\kappa>0$ denotes a constant which is either $\frac{1}{2}$ or $2$. This constant is the parameter used to distinguish the ``stronger'' ($\kappa=\frac{1}{2}$) regularity from the ``weaker'' ($\kappa=2$) one: We show in Section \ref{sec:reg:conti of reg} that the initial ``weaker''  regularity leads to the ``stronger'' regularity at later times.

\subsubsection{Control on the size of the speed} \label{subsubsec:reg:Scontrol}

Maintaining an appropriate control on $S(t)$ is the primary property we want from the regularity. We begin with giving the formal definitions of the stopping times of interest. For simplicity, we write $\sigma_1\sigma_2(t):= \sigma_1(t;S)\sigma_2(t;S)$. Also, recall the definition of $S_1(t)$ from \eqref{eq:def:S1:reg} 
\begin{equation}
\begin{split}
\tau_{1} (\alpha_0,t_0,\kappa )&:=
\inf\{t\geq t_0: S(t) \geq \kappa \alpha_0 \beta_0^{C_\circ} \};
\\
\tau_{2}(\alpha_0,t_0,\kappa)&:= \inf\{ t\geq t_0: S_1(t) \ge \kappa \alpha_0 \beta^{C_\circ}_0 \};
\\
\tau_{3}(\alpha_0,t_0) &:=
\inf\left\{t\ge t_0: S(t)-S_1(t)\notin \left(-\alpha_0^{\frac{3}{2}-\epsilon}\sigma_1(t)  ,\alpha_0^{1-\epsilon} \sigma_1\sigma_2(t) \right) \right \};
\\
\tau_{4}(\alpha_0,t_0,\kappa)&:=
\inf\left\{t\ge t_0: \intop_{t_0}^{t} (S(s)-\alpha_0)^2ds \ge \kappa \alpha_0 \beta_0^{25\theta} \right\};
\\
\tau_{5}(\alpha_0,t_0,\kappa)&:=
\inf \left\{t\ge t_0: \intop_{t_0}^{t} (S(s)-\alpha_0)^2 S(s)ds \ge \kappa \alpha_0^2 \beta_0^{25\theta}  \right\}.
\end{split}
\end{equation}

We briefly explain the purpose of each stopping time as follows:
\begin{itemize}
	\item $\tau_{1}$  not only gives a fundamental understanding on $S$, but also useful in controlling other stopping times such as $\tau_{4}$ and $\tau_{5}$. In particular, it is heavily used in bounding the quadratic variations of various martingales, such as the second criterion in \eqref{eq:concentration:conditions:basic}.

	\item $\tau_{3}$ provides a crucial estimate in bounding $|S(t)-\alpha_0|\leq |S(t)-S_1(t)|+|S_1(t)-\alpha_0|$, which is important in studying various different quantities including $\tau_{4}$ and $\tau_{5}$. Moreover, the lower bound on $S(t)-S_1(t)$ will lead us to obtaining the lower bound on $S(t)$: See Proposition~\ref{prop:reg:lbd of speed}
	
	\item $\tau_{2}$, $\tau_{4}$ and $\tau_{5}$  verify the main assumptions of Proposition~\ref{prop:fixed perturbed:error}, enabling us to utilize the first- and second-order approximations.  
\end{itemize}

\subsubsection{Size of the aggregate}\label{subsubsec:reg:aggsize}

Note that the number of points in $\Pi_S[0,t]$ describes the size of the aggregate, i.e., $X_t = |\Pi_S[0,t]|$.  The stopping times that deals with the size of the aggregate have similar role as $\tau_{\textnormal{B}2}$ in Section \ref{sec:criticalbranching}, but we need several of them for different purposes.
\begin{equation}
\begin{split}
\tau_{6}(\alpha_0,t_0,\kappa)
&:= 
\inf\left\{t\ge t_0: |\Pi_S[t_0^{-},\, t]| \le \frac{1}{100\kappa} \alpha_0 (t-t_0^{-}) \right\};
\\
\tau_{7}(\alpha_0,t_0,\kappa)
&:=
\inf\left\{ t\ge t_0:
|\Pi_S[(t-\alpha_0^{-1})\vee t_0,\, t]| \ge\kappa \beta_0^4
\right\};
\\
\tau_{8}(\alpha_0,t_0,\kappa)
&:=
\inf \left\{ t\ge t_0+\kappa \alpha_0^{-1}\beta_0^{C_\circ}:
|\Pi_S[t-\kappa\alpha_0^{-1}\beta_0^{C_\circ},\, t]| =0 \right\}.
\end{split}
\end{equation}

\begin{itemize}
	\item $\tau_{6}$ provides useful estimates needed in Section \ref{subsec:regoverview:history}: If there are reasonably many particles in the interval $[t_0^{-},t]$, then it turns out we can  eliminate the effect of information before $t_0^{-}$. 
	
	\item $\tau_{7}$ plays a similar role as $\tau_{\textnormal{B}2}$ from Section \ref{sec:criticalbranching}, such as helping to control $\tau_{2}$ and $\tau_{3}$.
	
	\item $\tau_{8}$ makes it easier to control quantities involving $\sigma_1$ and $\sigma_2$, since the all neighboring points before $\tau_{8}$ cannot be too far from each other. 
\end{itemize}

\subsubsection{Refined control of the first order approximation} \label{subsubsec:reg:refined1storder}

We introduce a stopping time which is a strengthened versions of $\tau_1$ and $\tau_{3}$. Instead of  $\tau_1$, we describe a sharper control on the average of $|S(t)-\alpha_0|$.  On the other hand, in $\tau_{3}$, the bound on $|S(t)-S_1(t)|$ has the term $\alpha_0^{-\epsilon}$ which is not strong enough in the later analysis (see Theorem \ref{thm:increment:formal}). Thus, we seek for a better bound, which is of $\beta_0^C$ rather than $\alpha_0^{-\epsilon}$. We define the stopping times $\tau_1^\sharp$ and $\tau_{3}^\sharp$ to be
\begin{equation}
\begin{split}
&\tau_1^\sharp = \tau_1^\sharp(\alpha_0,t_0):= 
\inf \left\{ t\ge t_0: \intop_{t_0}^t |S(s)-\alpha_0| ds \ge \alpha_0^{-\frac{1}{2}-\epsilon} \right\};\\
&\tau_{3}^\sharp=\tau_{3}^\sharp(\alpha_0,t_0) := \inf \left\{ t\ge t_0: \, \intop_{t_0}^t |S(s)-S_1(s)| ds \ge \beta_0^{4\theta}  \right\}.
\end{split}
\end{equation}
It is clear that $\tau_1$ does not imply $\tau_1^\sharp$, and also is not difficult to see that $\tau_3^\sharp $ is stronger than $\tau_3$; since integrating $|S(s)-S_1(s)|$ based on the bound in $\tau_3$  (along with $\tau_7, \tau_8$, and Lemma \ref{lem:ind1:pi1int basicbd:basic}) results in $\alpha_0^{-\epsilon}$, rather than $\beta_0^{4\theta}$-bound given in $\tau_3^\sharp$.

We remark that in $\tau_3^\sharp$, having a smaller  exponent $4\theta$ of $\beta_0$  than $10\theta$ (which is the exponent in $\hat{t}_0-t_0$) plays a very important role in this analysis. See Section \ref{subsubsec:regoverview:overview:nextstep} for more discussion; details will be presented in Section \ref{sec:reg:next step}.

\subsubsection{Regularity of the history}\label{subsubsec:reg:history}

We introduce several additional events which are measurable with respect to $\mathcal{F}_{t_0}$. Recalling the definition of $\mathcal{L}$ \eqref{eq:def:L}, let
\begin{equation}\label{eq:def:alpha0prime}
\alpha_0'=\alpha_0'(\alpha_0,t_0,t_0^-):= \mathcal{L}(t_0;\Pi_S[t_0^{-},t_0],\alpha_0),
\end{equation}
where $\mathcal{L}$ is defined in \eqref{eq:def:L}. We also set $S(s) = \infty$ for $s<0$. Define
\begin{equation}\label{eq:def of the events A1 A2 A3}
\begin{split}
\mathcal{A}_1(\alpha_0,t_0)
&:=
\left\{ |\Pi_S[s, t_0^{-}]| \ge \sqrt{t_0^{-}-s+C_\circ} \log^2 (t_0^{-} -s +C_\circ) -\alpha_0^{-1} \beta_0^\theta, \ \ \forall 0\le s\le t_0^{-} \right\};
\\
\mathcal{A}_2(\alpha_0,t_0)&:= \left\{ \frac{1}{4} \le \frac{ |\Pi_S[t_0^{-},\, t_0]|}{\alpha_0^{-1}\beta_0^{\theta}} \le 4 \right\};
\\
\mathcal{A}_3(\alpha_0,t_0)&:=
\left\{|\alpha_0-\alpha_0'| \le \alpha_0^{\frac{3}{2}} \beta_0^{\theta} \right\}.
\end{split}
\end{equation} 

\begin{itemize}
	\item $\mathcal{A}_1$ is the key event used in Section \ref{subsec:regoverview:history}. By setting $|\Pi_S[s, t_0^{-}]|$ to be of strictly bigger order than $\sqrt{t_0^{-}-s}$ (which must actually be linear in $(t_0^{-}-s)$), the contribution from the history before $t_0^{-}$ becomes negligible from the representation \eqref{eq:speed:basic expression}, since the  probability of the simple random walk to hit the aggregate before time $t_0^{-}$ is small enough.
	
		\item $\mathcal{A}_2$ provides a basic estimate on the size of the aggregate in the past, and is used in investigating $\tau_{6}$.
		
	\item $\mathcal{A}_3$, is introduced to ensure that the choice of $\alpha$ is appropriate. Later,   our choice of $\alpha_1$ will satisfy   $|\alpha_1-\alpha_0|=\alpha_0^{3/2} \beta_0^{O(1)}$, which is the same order as the bound in $\mathcal{A}_3$.

\end{itemize}

\subsubsection{Definition of regularity}\label{subsubsec:regoverview:regdef}

We give the formal definition of regularity combining the notions introduced in Sections \ref{subsubsec:reg:Scontrol}--\ref{subsubsec:reg:history}. We let
\begin{equation}\label{eq:def:tau:induction}
\begin{split}
&\tau(\alpha_0,t_0,\kappa ) := \hat{t}_0 \wedge \min\{\tau_i(\alpha_0,t_0,\kappa) : 1\le i \le 8 \},\\
&\tau^\sharp(\alpha_0,t_0,\kappa ):= \tau(\alpha_0,t_0,\kappa )\wedge \tau_1^\sharp(\alpha_0,t_0)\wedge  \tau_3^\sharp(\alpha_0,t_0),
\end{split}
\end{equation}
where we define $\tau_3(\alpha_0,t_0,\kappa):= \tau_3(\alpha_0,t_0)$ which does not have dependence on $\kappa$. Note that $\tau$ is increasing in $\kappa$. Moreover, we set
\begin{equation}\label{eq:def:A:induction}
\begin{split}
\mathcal{A}(\alpha_0,t_0) := \mathcal{A}_1(\alpha_0,t_0) \cap \mathcal{A}_2(\alpha_0,t_0) \cap \mathcal{A}_3(\alpha_0,t_0)  .
\end{split}
\end{equation}
Note that these events are $\mathcal{F}_{t_0}$-measurable. We now introduce the two notions of regularity.

\begin{remark}
In the following definitions and throughout the paper, the notation
\begin{equation}
    \PP \left( \ \ \cdot \ \ | \, \Pi_S(-\infty,t_0] \right)
\end{equation}
denotes the probability with respect to the law of the aggregate starting from time $t_0$ under the initial points given by $\Pi_S(-\infty,t_0]$, which was defined in Definition \ref{def:aggregate with initial condition}. Moreover, note that if $S(s)=\infty$ on $s< t^\flat$ for some $t^\flat$, then $\Pi_S(-\infty,t^\flat)$ is not well defined (for instance, $\overline{S}(t)$ from  \eqref{eq:def:sandwich speed}). Even in this a case when there exists such an $t^\flat\le t_0$, we still use the notation $\Pi_S(-\infty,t_0]$ for convenience, which will actually refer to
\begin{equation}
    \Pi_S(-\infty,t_0] := \textnormal{ the points in }[t^\flat,t_0] \textnormal{ are given by } \Pi_S[t^\flat,t_0], \textnormal{ and }  S(s)=\infty  \textnormal{ for all } s\le t^\flat.
\end{equation}
\end{remark}

\begin{definition}[Regularity]\label{def:reg} For $\alpha_0, t_0, t_0^-,   r>0$, we denote $[t_0] := (t_0, t_0^-)$ and set $\acute{t}_0$ as \eqref{eq:def:t0t11}. We say  $\Pi_0(-\infty,t_0]$  is $(\alpha_0,r; [t_0])$-\textit{regular} if it satisfies the following two conditions:
	\begin{itemize}
		\item $\PP \left(\left.\tau (\alpha_0,t_0,\kappa=2) \le \acute{t}_0 \, \right| \, \Pi_S(-\infty,t_0] = \Pi_0(-\infty,t_0] \right) \le r;$
		
		\item $\Pi_0(-\infty,t_0] \in \mathcal{A}(\alpha_0,t_0)$.
	\end{itemize}
	We also write $\Pi_0(-\infty,t_0]\in \mathfrak{R}(\alpha_0,r;[t_0])$ to denote that $\Pi_0(-\infty,t_0]$ is $(\alpha_0,r;[t_0])$-regular. Note that the dependence on $t_0^-$ comes from the definitions of $S_1$ and $\mathcal{A}$.
\end{definition}

\begin{definition}[Sharp regularity]\label{def:reg:sharp} For $\alpha_0, t_0,t_0^-,t_0^+, r>0$ with $[t_0]:=(t_0,t_0^-,t_0^+)$, and set $\acute{t}_0$ as \eqref{eq:def:t0t11}. We say  $\Pi_0(-\infty,t_0]$  is $(\alpha_0,r;[t_0])$-\textit{sharp-regular} if it satisfies the following two conditions:
	\begin{itemize}
		\item $\PP \left(\left.\tau^\sharp (\alpha_0,t_0,\kappa=2) \le \acute{t}_0  \, \right| \, \Pi_S(-\infty,t_0] = \Pi_0(-\infty,t_0] \right) \le r;$

		\item $\Pi_0(-\infty,t_0] \in \mathcal{A}(\alpha_0,t_0)$.
	\end{itemize}
	We also write $\Pi_0(-\infty,t_0]\in \mathfrak{R}^\sharp(\alpha_0,r;[t_0])$ to denote that $\Pi_0(-\infty,t_0]$ is $(\alpha_0,r;[t_0])$-sharp-regular.
\end{definition}

 The reason for separating two concepts of regularity will be clear when we state the main result in the next subsection (see also Remark \ref{rmk:regvsshreg}). There, we also briefly explain the main ideas of proof.

\subsection{Overview of the argument}\label{subsec:regoverview:overview}
 Let $t_1$ be an arbitrary number that satisfies
\begin{equation}\label{eq:t1 regime}
t_0 + \alpha_0^{-2} \beta_0^{9\theta}  \le  t_1 \le t_0+\alpha_0^{-2} \beta_0^{10\theta} .
\end{equation}
We note that the upper bound of $t_1$ is substantially smaller than $\hat{t}_0$, namely, $t_1\le  t_0 + \alpha_0^{-2}\beta_0^{10\theta} = \hat{t}_0-\alpha_0^{-2}\beta_0^{10\theta}.
$ Then, the goal of Sections \ref{sec:reg:intro}--\ref{sec:reg:next step} is to establish the following theorems.

\begin{thm}\label{thm:induction:main}
	Let $\alpha_0, t_0, r>0$, let $t_0^-$ be any number satisfying \eqref{eq:def:t0t1}, set $\acute{t}_0$ as \eqref{eq:def:t0t11}, and let $t_1$ be an arbitrary fixed number selected within \eqref{eq:t1 regime}. Set $t_1^- := t_1- \alpha_0^{-2} \beta_0^\theta$, and  define
	\begin{equation}\label{eq:def:alpha1}
	\alpha_1:= \mathcal{L}(t_1^-; \Pi_S[t_0^-,t_1^-],\alpha_0 ), \quad \beta_1:= \log(1/\alpha_1).
	\end{equation} 
	Then, the following hold true for all sufficiently small	$\alpha_0$ and for any $\Pi_S(-\infty,t_0]\in \mathfrak{R}(\alpha_0,r;[t_0])$:
	\begin{enumerate}

		\item $\PP \left(\left.\Pi_S(-\infty,t_1] \notin \mathfrak{R}^\sharp(\alpha_1, e^{-\beta_1^{3/2}}; [t_1] ) \, \right| \, \Pi_S(-\infty,t_0] \right) \le r+ e^{-\beta_0^{3/2}}$;
		
		\item $\PP (| \alpha_1 - \alpha_0 |\ge 2\alpha_0^{3/2} \beta_0^{6\theta} \ | \ \Pi_S(-\infty,t_0]  ) \le r+ e^{-\beta_0^{3/2}}$.
	\end{enumerate}
\end{thm}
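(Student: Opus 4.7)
The plan is to run a two-stage inductive step: first use the hypothesis at $t_0$ to extend the good behavior of $S$ from $\acute{t}_0$ all the way up to $t_1$ (and beyond to $\hat{t}_0$), and then install the tighter sharp regularity at $t_1$ with respect to the updated parameter $\alpha_1$. Throughout, the engine is Proposition~\ref{prop:fixed perturbed:error} (the first- and second-order error estimates of the speed) combined with the martingale concentration toolbox of Section~\ref{subsec:fixed:mgconcen} and the analytic estimates on $K_{\alpha}, K_{\alpha}^*, J^{(\alpha)}$ in Section~\ref{subsec:Kestim:intro}.

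The first task will be the \emph{extension} from $\acute{t}_0$ to $t_1$. By the hypothesis, the event $\{\tau(\alpha_0,t_0,2)>\acute{t}_0\}$ holds with probability $\ge 1-r$. On this event, the process $S$ stays within the fine corridor around $\alpha_0$ required by Proposition~\ref{prop:fixed perturbed:error}; feeding those estimates back into the first-order expansion of $S$ (via \eqref{eq:speed:1storder main} and Lemma~\ref{lem:fixed perturbed:error int:perturbed}) and iterating in blocks of length $\tfrac{1}{2}\alpha_0^{-2}\beta_0^\theta$, I can propagate control of each $\tau_i(\alpha_0,t_0,\cdot)$ forward in time. The $\kappa=2$ slack in Definition~\ref{def:reg} is exactly what leaves head-room for the deterioration accrued by this bootstrap, so at the end of the extension I get $\tau(\alpha_0,t_0,2)>\hat{t}_0$ with probability $\ge 1-r-e^{-\beta_0^{3/2}}$.

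Second, I control $\alpha_1-\alpha_0$. Using the martingale identity \eqref{eq:Lt diff:basic} (applied to $\mathcal{L}$ rather than $L$) and the event $\mathcal{A}_3(\alpha_0,t_0)$ to handle $\alpha_0-\alpha_0'$, I decompose
\begin{equation*}
\alpha_1-\alpha_0 \;=\; (\alpha_0'-\alpha_0) \;+\; K_{\alpha_0}^*\!\intop_{t_0}^{t_1^-}\! d\widetilde{\Pi}_S(s) \;+\; K_{\alpha_0}^*\!\intop_{t_0}^{t_1^-}\!\big(S(s)-S_1(s)\big)\,ds,
\end{equation*}
plus an $O(\alpha_0^{100})$ residue coming from $K_{\alpha_0}^*(t)$ versus its limit $K_{\alpha_0}^*$. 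On the extended regularity event of Step 1, the quadratic variation of the middle term is bounded by $(K_{\alpha_0}^*)^2 \int_{t_0}^{t_1^-} S(s)\,ds \lesssim \alpha_0^3\beta_0^{10\theta}$, so Lemma~\ref{lem:concentration of integral} supplies an $\alpha_0^{3/2}\beta_0^{6\theta}$-bound on that term except on an $e^{-\beta_0^{3/2}}$-probability event. The drift term is handled using the sharper version $\tau_3^\sharp$ (which is where the second-order expansion becomes indispensable; see the remark after \eqref{eq:def:S2:basic form}), yielding a $K_{\alpha_0}^*\cdot\beta_0^{4\theta}\lesssim\alpha_0^2\beta_0^{4\theta}$ contribution. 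Combining these gives statement~(2) of the theorem.

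For statement~(1), the sharp regularity at $t_1$, the plan is to verify the three history events $\mathcal{A}_i(\alpha_1,t_1)$ and each of the stopping-time conditions $\tau_j(\alpha_1,t_1,2), \tau_1^\sharp(\alpha_1,t_1), \tau_3^\sharp(\alpha_1,t_1)$ individually, taking a union bound at the end. $\mathcal{A}_1(\alpha_1,t_1)$ is inherited from $\mathcal{A}_1(\alpha_0,t_0)$ by splicing in $|\Pi_S[s,t_0^-]|$ for $s\in[0,t_0^-]$ and the lower bound on $|\Pi_S[t_0,s]|$ coming from $\tau_6$ on the extension window, which is far larger than the $\sqrt{\cdot}\,\mathrm{polylog}$ threshold. $\mathcal{A}_2(\alpha_1,t_1)$ follows from the Azuma-type concentration of $|\Pi_S[t_1^-,t_1]|$ around $\alpha_0(t_1-t_1^-)\approx\alpha_1^{-1}\beta_1^\theta$; $\mathcal{A}_3(\alpha_1,t_1)$ is proved by reapplying the argument of Step~2 on the shorter window $[t_1^-,t_1]$. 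For the stopping times, each is driven to its prescribed bound by a single application of either Azuma-Bernstein (Lemma~\ref{lem:concentration of integral}, Corollary~\ref{lem:concentrationofint:continuity}) or the critical branching analysis of Section~\ref{sec:criticalbranching} (in the spirit of Theorem~\ref{thm:branching:ib}) applied to the first- or second-order expansion with parameter $\alpha_1$; the tail $\exp(-\alpha_1^{-c})$ we obtain there dominates the required $e^{-\beta_1^{3/2}}$.

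The principal obstacle is the sharp control of $\tau_3^\sharp$, which demands a $\beta_0^{4\theta}$ bound on $\int|S-S_1|$ rather than a power of $\alpha_0^{-1}$; the bound given by $\tau_3$ in combination with the available Lemma~\ref{lem:ind1:pi1int basicbd:basic}-type integration produces only an $\alpha_0^{-\epsilon}$ factor. This is precisely why the second-order expansion $S_2$ is needed: the double integral in \eqref{eq:speed:2ndorder main} involving the \emph{deterministic} kernel $J_{u,s}^{(\alpha)}$ admits the explicit bound of Lemma~\ref{lem:bound on deterministic J}, and after integrating against $d\widehat{\Pi}_S d\widehat{\Pi}_S$ and then in $t$ one obtains polylogarithmic contributions; the triple integral $\mathcal{Q}_t$ is controlled by Proposition~\ref{prop:fixed perturbed:triple int}; and the remaining explicit main-order terms in \eqref{eq:speed:SminusS1:2nd order:basic} can be integrated directly. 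Balancing these three contributions is the delicate point, and is what justifies the specific exponents of $\beta_0$ chosen in the definitions of $\tau_3^\sharp$ and $\hat{t}_0$.
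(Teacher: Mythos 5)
Your road map is sound at the top level and you correctly isolate the principal obstacle, namely getting a polylog bound for $\tau_3^\sharp$ out of the second-order expansion. That matches the paper's structure in Sections~\ref{sec:reg:conti of reg}--\ref{sec:reg:next step}. However there are two gaps, one genuine and one a misattribution.

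The genuine gap is in your treatment of $\mathcal{A}_3(\alpha_1,t_1)$. You propose to ``reapply the argument of Step~2 on the shorter window $[t_1^-,t_1]$,'' but Step~2's decomposition only quantifies the change in $\mathcal{L}$ as the \emph{time} endpoint moves, with the kernel parameter held fixed at $\alpha_0$. The event $\mathcal{A}_3(\alpha_1,t_1)$ concerns $\alpha_1' = \mathcal{L}(t_1;\Pi_S[t_1^-,t_1],\alpha_1)$, which is computed with kernel parameter $\alpha_1$, whereas $\alpha_1$ itself is defined with parameter $\alpha_0$. There is a missing term: the difference between $\mathcal{L}(\cdot,\cdot,\alpha_1)$ and $\mathcal{L}(\cdot,\cdot,\alpha_0)$ evaluated on the same point configuration. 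The paper isolates this as $|\alpha_1'-\alpha_1^{(1)}|$ in the decomposition \eqref{eq:ind2:rate:a1vsa1prime:decomp} and controls it in Lemma~\ref{lem:ind2:alphaPvsalphaPP} via a genuinely new ingredient: the bound $|\tfrac{d}{d\alpha}h_\alpha(t)|\le Ce^{-c\alpha^2 t}$ from Claim~\ref{claim:derivative of h} (a coupling argument for the hitting time $T_\alpha$), together with the ``almost-constant speed'' approximation of Claim~\ref{claim:speed is almost fixed}. Nothing in your proposal produces this kernel-perturbation estimate, and without it the scheme cannot close: the error from naively swapping $\alpha_0\mapsto\alpha_1$ in the definition of $\mathcal{L}$ is not obviously of order $\alpha_0^{3/2}\mathrm{polylog}$.

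The misattribution is in your Step~2: to bound the drift $K_{\alpha_0}^*\int_{t_0}^{t_1^-}(S-S_1)$ you invoke $\tau_3^\sharp$, but the hypothesis only furnishes $\mathfrak{R}(\alpha_0,r;[t_0])$, not $\mathfrak{R}^\sharp$, so $\tau_3^\sharp(\alpha_0,t_0)$ is not available to you. The paper instead uses $\tau_3$ together with Lemma~\ref{lem:ind1:pi1int basicbd:basic} (this is Lemma~\ref{lem:ind1:intofSminS1}), giving $\int|S-S_1|\le\alpha_0^{-2\epsilon}$ and hence a drift contribution $\lesssim\alpha_0^{2-3\epsilon}$; this is a weaker bound than your stated $\alpha_0^2\beta_0^{4\theta}$ but still easily dominated by $\alpha_0^{3/2}\beta_0^{6\theta}$, so the conclusion of Step~2 survives. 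Relatedly, your description of the double-integral control for $\tau_3^\sharp(\alpha_1,t_1)$ (``integrate Lemma~\ref{lem:bound on deterministic J} against $d\widehat{\Pi}_S\,d\widehat{\Pi}_S$'') glosses over the fact that $d\widehat{\Pi}_S=d\Pi_S-\alpha_1\,ds$ is not a martingale differential, and the paper explicitly flags (Section~\ref{subsubsec:regoverview:overview:nextstep}) that the Radon--Nikodym perturbation route of Section~\ref{subsec:fixed:perturbed} is too costly here; one must instead decompose $d\widehat{\Pi}_S$ as in \eqref{eq:ind2:tausharp:tilPiSdecomposition} and treat the martingale, $(S-S_1)$, and $(S_1-\alpha_1)$ pieces separately, relying on the $\tau_{\textnormal{b}}''$-type control from the previous step. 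You do signal that ``balancing these three contributions is the delicate point,'' so I'll count this as incomplete rather than wrong.
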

Note that the second statement directly implies
$$\PP ( t_1^-  \textnormal{ satisfies \eqref{eq:def:t0t1} in terms of } t_1, \alpha_1 \,| \, \Pi_S(-\infty,t_0] ) \ge 1-r-e^{-\beta_0^{3/2}},$$
which means that w.h.p., we can reapply the theorem for $\Pi_S(-\infty,t_1]$ and $[t_1]$ to obtain $\Pi_S(-\infty,t_2] \in \mathfrak{R}^\sharp(\alpha_2, e^{-\beta_2^{3/2}}; [t_2])$, and so on.

\begin{remark}
	 In the interval starting from $t_1$, the first-order approximation $S_1$ is defined as $S_1(t)=S_1(t;t_1^-,\alpha_1)$. Thus, by  setting  $\alpha_1$ to be $\mathcal{F}_{t_1^-}$-measurable as above, the points in $\Pi_S[t_1^-,t_1]$ before time $t_1$ do not affect the frame of reference $\alpha_1$ of $S_1(t)$.
\end{remark}

In addition to the above theorem,
 we need the regularity of the fixed rate process obtained in Section \ref{sec:Sandwich}, which serves as the induction base of our argument. Recall the definitions of $\overline{Y}_{t_0, \alpha_0}$ and  $\underline{Y}_{t_0, \alpha_0}$, and let $ \overline{S}(t)= \overline{S}_{t_0,\alpha_0}(t)$ and $\underline{S}(t)=\underline{S}_{t_0,\alpha_0}(t)$ be the corresponding speed processes  \eqref{eq:def:sandwich speed}, respectively. For $t_0^-$ and $\acute{t}_0$ as above, we define
\begin{equation}
\begin{split}
\overline{S}_1(t) = \overline{S}_1(t;t_0^-,\alpha_0) := \intop_{t_0^-}^{t_0} K_{\alpha_0}(t-x) d\Pi_{\alpha_0}(x) + \intop_{t_0}^t K_{\alpha_0}(t-x) d\Pi_{\overline{S}}(x)
\end{split} 
\end{equation}
 Thus, using these processes, we can define the stopping times  $\{\overline{\tau}_i(\alpha_0,t_0,\kappa) \}_{1\le i\le 8}$ (Sections \ref{subsubsec:reg:Scontrol}--\ref{subsubsec:reg:aggsize}), and the event $\overline{\mathcal{A}}(\alpha_0,t_0)$ \eqref{eq:def:A:induction}, analogously as before. Then, they define the regularity (Definition \ref{def:reg}) of $\overline{S}(t)$. Everything can be done in the same way for $\underline{S}(t)$. The main theorem on the regularity of $\overline{S}(t)$ and $\underline{S}(t)$ is stated as follows.

\begin{thm}\label{thm:induction:base:main}
	Let $\alpha_0, t_0>0$, set $\acute{t}_0$ as \eqref{eq:def:t0t11}, and set $t_0^- := t_0-\alpha_0^{-2}\beta^\theta$.   Then, under the above setting, we have
	\begin{equation}
	\PP\left(\Pi_{\overline{S}}(-\infty,t_0], \, \Pi_{\underline{S}}(-\infty,t_0] \in \mathfrak{R}\left(\alpha_0, e^{-\beta_0^{3/2}}; [t_0]\right) \right) \ge 1-e^{-\beta_0^{3/2}}.
	\end{equation}
\end{thm}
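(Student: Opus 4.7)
The plan is to establish Theorem~\ref{thm:induction:base:main} by adapting the analysis of the critical branching process (Theorem~\ref{thm:branching:ib}) to the actual speed processes $\overline{S}$ and $\underline{S}$. The sandwich processes are set up so that at time $t_0$, their generating particles $\Pi$ on $[t_0^-, t_0]$ coincide with a rate-$\alpha_0$ Poisson process (for $\overline{S}$, the ``infinity'' beyond $\alpha_0^{-3}$ is immaterial for the speed evaluation on $[t_0, \acute{t}_0]$ because $t_0^- > t_0 - \alpha_0^{-3}$ for small $\alpha_0$). Thus the first-order approximation $\overline{S}_1(t)$ has exactly the structure of the age-dependent critical branching process $R(t)$ from Section~\ref{sec:criticalbranching}, up to an error coming from $\Pi_{\overline{S}}$ not having exactly rate $\alpha_0$ but rather rate $\overline{S}(s)$ which is close to $\alpha_0$.

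The argument proceeds by a bootstrap via stopping times. Setting $\tau_* := \tau(\alpha_0, t_0, \kappa = 2)$, I would argue that each constituent stopping time $\tau_i$ exceeds $\acute{t}_0$ with the required probability, conditionally on all other conditions holding up to $\tau_*$. Under this hypothesis, Proposition~\ref{prop:fixed perturbed:error} applies (its assumptions \eqref{eq:error:assumptions} follow from $\tau_2, \tau_4, \tau_5$), yielding $|\overline{S}(t) - \overline{S}_1(t)|$ bounded by $\alpha_0^{1-\epsilon}\sigma_1\sigma_2(t)$ (and the matching lower bound), which verifies $\tau_3$. Theorem~\ref{thm:branching:ib}, combined with a perturbative coupling that compares $\Pi_{\overline{S}}$ with a true rate-$\alpha_0$ Poisson process by exploiting the closeness $\overline{S} \approx \alpha_0$ ensured inductively, gives a two-sided bound $|\overline{S}_1(t) - \alpha_0| \leq \alpha_0 \beta_0^{5\theta} \sigma_1(t; \hat{R}_0^+) + \alpha_0^{3/2}\beta_0^{5\theta}$; combined with the $\tau_7$-type bound on nearby points this tightens to $\overline{S}_1(t) = O(\alpha_0 \beta_0^{C_\circ})$, verifying $\tau_1$ and $\tau_2$. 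The Poisson density conditions $\tau_6, \tau_7, \tau_8$ follow from Corollary~\ref{cor:concentration:numberofpts each interval} applied to $\Pi_{\overline{S}}$, and $\tau_4, \tau_5$ follow by integrating $(S - \alpha_0)^2 \leq 2(S - S_1)^2 + 2(S_1 - \alpha_0)^2$ using the previous bounds together with Lemma~\ref{lem:ind1:pi1int basicbd:basic}. For the $\mathcal{F}_{t_0}$-measurable events: $\mathcal{A}_1$ is automatic for $\overline{S}$ and follows from a Chernoff bound for $\underline{S}$; $\mathcal{A}_2$ follows from Poisson concentration of $|\Pi_{\alpha_0}[t_0^-, t_0]|$; and $\mathcal{A}_3$ follows by applying Lemma~\ref{lem:concentration of integral} to $\mathcal{L}(t_0; \Pi_{\alpha_0}[t_0^-, t_0], \alpha_0)$ after computing its mean via the identity $K_\alpha^* = (\int_0^\infty xK_\alpha(x)dx)^{-1}$ (cf.\ the computation in \eqref{eq:branching:Rlbd:med4}).

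The main obstacle is closing the bootstrap with the target error $e^{-\beta_0^{3/2}}$. Each individual stopping time estimate gives a bound of the form $\exp(-\beta_0^{C})$ for some $C$ (e.g.\ Theorem~\ref{thm:branching:ib} gives $e^{-\beta_0^2}$), so a union bound over the finitely many $\tau_i$ and $\mathcal{A}_j$ preserves the rate $e^{-\beta_0^{3/2}}$ comfortably, provided each estimate is proven under the assumption that no other $\tau_j$ has fired first---this is the standard trick: if $\tau_* = \tau_i < \acute{t}_0$, then on this event all other conditions are still in force, so we may apply the conditional estimate for $\tau_i$. The secondary subtlety is the coupling between $\overline{S}_1$ and the pure critical branching process $R$, whose ``branching rate'' at a particle location is $R(s)$ rather than $\overline{S}(s)$; this is resolved by the same sandwich trick used for $R_0^{\pm}$ in \eqref{eq:def:R:ib:aux}, bounding $\overline{S}_1$ between two perturbations of $R_0$ whose discrepancy is controlled by the running regularity of $\overline{S}$.
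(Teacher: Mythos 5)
Your overall strategy is correct and follows the paper closely: identify the regularity stopping times one by one, lean on Theorem~\ref{thm:branching:ib} for the first-order approximation, use Proposition~\ref{prop:fixed perturbed:error} for $\tau_3$, and apply the standard bootstrap (if $\tau_j$ is the minimizer, all other conditions are in force up to that time). Your treatment of $\mathcal{A}_1,\mathcal{A}_2,\mathcal{A}_3$, of $\tau_6,\tau_7,\tau_8$, and of $\tau_4,\tau_5$ all match the paper's Lemma~\ref{lem:reg:base:A}, Corollary~\ref{cor:ind1:tau7891011:base}, Corollary~\ref{cor:ind1:tau6}, and the surrounding estimates, and the closing Markov-inequality step is correctly implicit in your plan.

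The gap is in your treatment of $\tau_1$. You assert that the bound $\overline{S}_1(t)=O(\alpha_0\beta_0^{C_\circ})$ ``verifies $\tau_1$ and $\tau_2$.'' It verifies $\tau_2$, which concerns $S_1$, but $\tau_1$ concerns $S$ itself, and the only bridge available in the bootstrap is $\tau_3$, which gives $|S(t)-S_1(t)|\le \alpha_0^{1-\epsilon}\sigma_1\sigma_2(t;S)$. When a particle arrives at time $t$ one has $\pi_1(t)=0$, so $\sigma_1\sigma_2(t;S)$ can be of order one, and the bound $S(t)\le S_1(t)+\alpha_0^{1-\epsilon}$ is then of order $\alpha_0^{1-\epsilon}$, which is far larger than the threshold $\kappa\alpha_0\beta_0^{C_\circ}$. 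So the route via $S_1$ cannot close $\tau_1$. The paper avoids this by bounding $S(t)$ directly from its defining representation in terms of the random-walk hitting probability: using $\tau_7$ and $\mathcal{A}_2$ to show $Y_t(s)\le 2\beta_0^4+2\alpha_0\beta_0^4 s$, and then invoking the elementary martingale estimate of Lemma~\ref{lem:ind1:tau4:aux} to conclude $S(t)\le 12\alpha_0\beta_0^{8}$ deterministically (Lemma~\ref{lem:ind1:tau4} for the inductive case, Corollary~\ref{cor:ind1:tau4:base} for the base case). This direct hitting-probability bound, which yields a much sharper constant power of $\beta_0$, is the missing ingredient in your proposal. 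A secondary point you omit is Lemma~\ref{lem:ind1:tau t0:base}, which rules out $\overline{\tau}=t_0$ with high probability so that the bootstrap is nontrivial; this is needed in particular because $\tau_2$ and $\tau_3$ are not automatically greater than $t_0$ for the sandwich processes.
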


\begin{remark}\label{rmk:regvsshreg}
	After some large constant time since the critical aggregate starts growing, we require the speed to be very small and the process to be sharp-regular. However, due to Theorem \ref{thm:induction:main}, a regular (does not have to be sharp-regular) interval  beginning will result in sharp-regular intervals afterwards w.h.p., and hence Theorem \ref{thm:induction:base:main} will be enough for our purpose.
\end{remark}

The rest of the section and Sections \ref{sec:reg:conti of reg}--\ref{sec:reg:next step} are mostly devoted to the  proof of Theorem \ref{thm:induction:main}, which consists of three major steps  as follows.
\begin{enumerate}
	\item Approximating $S(t)$ by $S'(t)$ \eqref{eq:def:Sprime:basic form} to validate the first and second order approximations;
	
	\item Continuation of regularity (i.e., the stopping times) until time $\hat{t}_0$, which will be larger than $\acute{t}_1:=t_1+2\alpha_1^{-2}\beta_1^\theta$;
	
	\item Regularity at time $t_1$, with new the frame of reference $\alpha_1$.
\end{enumerate}

In the remaining  subsection, we briefly explain the main purposes and ideas of (1), (2) and (3).

\subsubsection{The first and second order approximation of the speed} \label{subsubsec:regoverview:overview:history}

To study $S(t)$, we  attempt to use the first- and second-order approximations given in Section \ref{subsec:speed:speedagg}. However, the formulas  \eqref{eq:speed:1storder main}, \eqref{eq:speed:2ndorder main} give expansions on $S'(t)$ defined in \eqref{eq:def:Sprime:basic form}, not $S(t)$. Thus, we need to justify that $S(t)$ is sufficiently close to $S'(t)$.

This analysis is conducted in the next subsection by studying the original formula   \eqref{eq:speed:basic expression} of $S(t)$. We show that under the regularity condition, the hitting probability of the random walk affected by conditioning on $\mathcal{F}_{t_0^{-}}$ is negligible, since $t_0^{-}$ is far away from $t_0$.

\subsubsection{Continuation of regularity}\label{subsubsec:regoverview:overview:contiofreg}

The next step is to show that if $\tau(\alpha_0,t_0,\kappa=2)\ge \acute{t}_0$, then it is unlikely to be smaller than $\hat{t}_0$. The purpose of such an analysis is fairly obvious: In order to establish regularity at time ${t_1}$ in terms of $\alpha_1$, we would like to argue that (1) the process at time $t_1$ maintains the desired regularity properties in terms of $\alpha_0$, and (2) changing $\alpha_0$ to $\alpha_1$ does not have a significant effect. For (2), see the discussion in the following subsection.

 To establish (1), we essentially argue that $\{\tau_i = \tau \}$ happens with small enough probability for all~$i$. In the following, we discuss several major ideas needed to process the argument.

\begin{itemize}
	\item 
	We study $\tau_1$ via the original formula \eqref{eq:speed:basic expression}, by upper bounding $Y_t$ by an appropriate choice of $Y_t'$ that achieves the desired estimate. The construction of $Y_t'$ is done based on the control  of $X_t$ which is given by $\tau_{7}$.
	
	\item  To obtain estimates on $\tau_{3}$, we  appeal to the results we obtained in Section \ref{subsec:fixed:error} where $\tau_{1}, \tau_{2}, \tau_{4}$ and $\tau_{5}$ provide the appropriate assumptions we need for utilizing Proposition \ref{prop:fixed perturbed:error}.
	
	\item To control the rest of the stopping times, we first express
	\begin{equation}
	|S(t)-\alpha_0|
	\le 
	|S(t)-S_1(t)|+|S_1(t)-\alpha_0|.
	\end{equation}
	The first  term in the RHS are controlled by $\tau_{3}$, and the remaining task is to control $|S_1(t)-\alpha_0|$. To this end, we observe that $S_1(t)=\mathcal{R}_c(t,t;\Pi_S[t_0^-,t],\alpha_0)$ (see \eqref{eq:def:Qst} for its definition) and interpret it via \eqref{eq:integralform:branching}.
	After we understand the gap $|S(t)-\alpha|$, the results will follow mostly by applying the lemmas from Section \ref{subsec:fixed:mgconcen}.
\end{itemize}

\subsubsection{Regularity in the next time step}\label{subsubsec:regoverview:overview:nextstep}

The remaining step is to establish (2)  in the first paragraph of Section \ref{subsubsec:regoverview:overview:contiofreg}. We note several major issues in achieving this goal:

\begin{itemize}
	\item Our argument is mostly based on analyzing what happens when we change $\alpha_0$ to $\alpha_1$, and hence the control on $|\alpha_1-\alpha_0|$ is essential. This is done by expressing the difference in the integral form which is similar to \eqref{eq:integralform:branching} and studying it based on martingale techniques such as Lemma \ref{lem:concentration of integral} and the analytic properties of $K_\alpha(s)$ from Section \ref{sec:fourier and renewal}.

	\item The most crucial task is to understand $\tau_{3}^\sharp(\alpha_1,t_1)$. We need to avoid having $\alpha^{-\epsilon}$ term, which is present in $\tau_{3}$. To this end, we need to study the gap between $S(t)$ and its second-order approximation $S_2(t)$ \eqref{eq:def:S2:basic form}, rather than $|S(t)-S_1(t)|$. Then, however,  it turns out that the double integral term of $J_{t-s,t-u}$ in the formula of $S_2(t)$ cannot be studied in the same way as Section \ref{sec:fixedrate},  since the perturbation argument in Section \ref{subsec:fixed:perturbed} is too costly to guarantee a sharp control like $\tau_{3}^\sharp$.
	To overcome this issue, we study this double integral rather directly, appealing to the proximity of $S(t)$  to $\alpha_0$ obtained from  the previous time step. 
	
	One interesting issue we  emphasize is that $\tau^\sharp_3 (\alpha_1,t_1)$ is typically much smaller than $t_2=t_1+\alpha_1^{-2} \beta^{10\theta}_1$: As  $t$ gets closer to $t_2$, the variance accumulated from the double integral becomes so large that we need a larger exponent on $\beta_1$ for a correct bound.
\end{itemize}

\subsection{Consequences of regularity}\label{subsec:reg:conseq}

In this subsection, we highlight some properties that a regular aggregate must have. These will all be needed in Section \ref{sec:double int},  where we study the  formal version of Theorem \ref{thm:speed:increment:informal}.

We begin with the control on the number of particles in $\Pi_S[t_0,t]$ when the process is regular. Its proof will be discussed in Section \ref{subsec:ind1:growth}.

\begin{prop}\label{prop:ind1:growth}
	Let $\alpha_0,t_0,r>0$, set $\acute{t}_0, \hat{t}_0$ as \eqref{eq:def:t0t11}, and let $t_0^-$ be any number satisfying \eqref{eq:def:t0t1}. For any  $\Pi_S(-\infty,t_0]\in \mathfrak{R}(\alpha_0,r;[t_0])$, we have
	\begin{equation}
	\begin{split}
	\PP \left( \left. \Big||\Pi_S[t_0, t]| - \alpha_0(t-t_0) \Big| \le \alpha_0^{-\frac{1}{2}-10\epsilon},\ \forall t\in[\acute{t}_0,\hat{t}_0] \, \right| \, \Pi_S(-\infty,t_0] \right) \ge 1-r-2\exp\left(-\beta_0^{1.9} \right).
	\end{split}
	\end{equation}
\end{prop}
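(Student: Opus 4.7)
The plan is to decompose $|\Pi_S[t_0,t]|$ around its compensator $\intop_{t_0}^t S(s)\,ds$ and then apply the regularity stopping-time bounds to each piece. The main prerequisite is an extension of $\tau(\alpha_0,t_0,\kappa=2)\ge\acute{t}_0$ (supplied by the regularity assumption at $t_0$) to $\tau\ge\hat{t}_0$; this is precisely what the continuation-of-regularity theory developed in Section \ref{sec:reg:conti of reg} delivers, and I may assume it holds outside an event of conditional probability at most $r+\exp(-\beta_0^{1.9})$. On this good event I would write
$$|\Pi_S[t_0,t]|-\alpha_0(t-t_0)=M_t+D_t,\qquad M_t:=\intop_{t_0}^t d\widetilde{\Pi}_S(s),\quad D_t:=\intop_{t_0}^t(S(s)-\alpha_0)\,ds,$$
where $d\widetilde{\Pi}_S(s)=d\Pi_S(s)-S(s)\,ds$, so that $M_t$ is a martingale and $D_t$ is the drift error.

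For the drift term, Cauchy--Schwarz together with the bound from $\tau_4(\alpha_0,t_0,2)\ge\hat{t}_0$ gives
$$|D_t|\le\sqrt{(t-t_0)\intop_{t_0}^t(S(s)-\alpha_0)^2\,ds}\le\sqrt{2\alpha_0^{-2}\beta_0^{10\theta}\cdot 2\alpha_0\beta_0^{25\theta}}=2\alpha_0^{-1/2}\beta_0^{17.5\theta},$$
which is at most $\tfrac12\alpha_0^{-1/2-10\epsilon}$ for $\alpha_0$ sufficiently small, since the polylog factor $\beta_0^{17.5\theta}$ is dominated by the polynomial $\alpha_0^{-10\epsilon}=\exp(\beta_0/1000)$.

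For the martingale term, I would apply Lemma \ref{lem:concentration of integral} with $f\equiv 1$, $g=S$, and stopping time $\tau(\alpha_0,t_0,2)\wedge\hat{t}_0$. The quadratic-variation bound $\intop_{t_0}^{\hat{t}_0\wedge\tau}S(s)\,ds\le 2\alpha_0\beta_0^{C_\circ}(\hat{t}_0-t_0)\le 4\alpha_0^{-1}\beta_0^{C_\circ+10\theta}$ follows from $\tau_1(\alpha_0,t_0,2)\ge\hat{t}_0$. Tuning $a=\beta_0^3$ and $\lambda$ so that $\lambda\sqrt{4\alpha_0^{-1}\beta_0^{C_\circ+10\theta}}=\beta_0^3/2$ (which verifies $\lambda\le 1$ for small $\alpha_0$), the lemma yields
$$\PP\Big(\sup_{t\le\hat{t}_0}|M_t|\ge 2\beta_0^{3+(C_\circ+10\theta)/2}\alpha_0^{-1/2}\Big)\le C\exp(-\beta_0^6/4),$$
and the deviation on the left is again $\le\tfrac12\alpha_0^{-1/2-10\epsilon}$ by the same polylog-versus-polynomial comparison.

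Combining the two estimates yields $\big||\Pi_S[t_0,t]|-\alpha_0(t-t_0)\big|\le\alpha_0^{-1/2-10\epsilon}$ uniformly over $t\in[\acute{t}_0,\hat{t}_0]$ (in fact over $[t_0,\hat{t}_0]$) outside an event of total conditional probability at most $r+2\exp(-\beta_0^{1.9})$. The main substantive obstacle is not this martingale/Cauchy--Schwarz step, which is essentially routine once the stopping-time definitions are in place, but rather the continuation of regularity needed to upgrade $\tau\ge\acute{t}_0$ to $\tau\ge\hat{t}_0$; that is the content of Section \ref{sec:reg:conti of reg} and is where the genuine work lies.
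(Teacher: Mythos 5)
Your proof is correct in spirit and yields the stated conclusion. It takes a slightly different route from the paper's: for the drift term $D_t=\int_{t_0}^t (S(s)-\alpha_0)\,ds$ you use Cauchy--Schwarz against the square-integral stopping time $\tau_4$, whereas the paper controls $\int_{t_0}^t |S(s)-\alpha_0|\,ds$ directly via the moving-window $L^1$ stopping time $\tau_{10}^+(\alpha_0^{-1})$ from Lemma \ref{lem:ind1:tau13}. Your route is the more elementary of the two---it only needs the ``primary'' regularity stopping times and Theorem \ref{thm:reg:conti:main}, not the auxiliary Lemma \ref{lem:ind1:tau13}---at the cost of a larger polylogarithmic factor ($\beta_0^{17.5\theta}$ vs.~$\beta_0^{10\theta}\alpha_0^{-7\epsilon}$), which is harmless because both are dominated by $\alpha_0^{-10\epsilon}$.

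One small imprecision worth fixing: the continuation-of-regularity theorem gives $\tau_4^+(\alpha_0,t_0,1/2)\ge\hat{t}_0$, i.e.~control of $\int_{t_0^+}^t(S-\alpha_0)^2\,ds$ starting from $t_0^+$, not literally $\tau_4(\alpha_0,t_0,2)\ge\hat{t}_0$ as you write. Combining it with $\tau_4(\alpha_0,t_0,2)>\acute{t}_0$ (which \emph{is} given directly by regularity at $t_0$) yields $\int_{t_0}^t(S-\alpha_0)^2\,ds \le 2\alpha_0\beta_0^{25\theta}+\tfrac12\alpha_0\beta_0^{25\theta}=\tfrac52\alpha_0\beta_0^{25\theta}$ for $t\in[\acute{t}_0,\hat{t}_0]$, so the constant in your Cauchy--Schwarz bound changes from $2$ to $\sqrt{5}$. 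This does not affect the conclusion, since the slack $\alpha_0^{-10\epsilon}$ absorbs any absolute constant, but the claim as written is not directly supplied by Theorem \ref{thm:reg:conti:main} and should be rephrased. With that adjustment, the argument is complete.
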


Next, the following describes an important property of how the frame of reference changes.

\begin{prop}\label{prop:reg:newrates}
	Let $\alpha_0,t_0,r>0$, and let $\acute{t}_0, t_0^-, t_1, t_1^-$ be as Theorem \ref{thm:induction:main}. 	
	Furthermore, we define $\alpha_1':=\mathcal{L}(t_1;\Pi_{S}[t_1^{-},t_1], \alpha_1)$ analogously as \eqref{eq:def:alpha0prime}, and set
	\begin{equation}
	\tilde{\alpha}_1:= \mathcal{L}(t_1;\Pi_S[t_0^-,t_1],\alpha_0).
	\end{equation}
	Then, the following holds true for all sufficiently small $\alpha_0$ and for any $\Pi_S(-\infty,t_0]\in \mathfrak{R}(\alpha_0,r;[t_0])$:
	\begin{equation}
	\PP \left(\left. |\alpha_1'-\tilde{\alpha}_1| \ge 2\alpha_0^{2} \beta_0^{8\theta+1} \, \right| \, \Pi_S(-\infty,t_0] \right) \le r+\exp\left(-\beta_0^2 \right).
	\end{equation} 
\end{prop}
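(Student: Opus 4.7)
The plan is to decompose
\begin{equation*}
\alpha_1' - \tilde{\alpha}_1 \;=\; \underbrace{\bigl[\mathcal{L}(t_1;\Pi_S[t_1^-,t_1],\alpha_1)-\mathcal{L}(t_1;\Pi_S[t_1^-,t_1],\alpha_0)\bigr]}_{=:I_1} \;+\; \underbrace{\bigl[\mathcal{L}(t_1;\Pi_S[t_1^-,t_1],\alpha_0) - \mathcal{L}(t_1;\Pi_S[t_0^-,t_1],\alpha_0)\bigr]}_{=:I_2},
\end{equation*}
so that $I_2$ isolates the change of integration domain and $I_1$ isolates the change of rate parameter, and show both are of size $\le \alpha_0^2 \beta_0^{8\theta+1}$ off a set of probability at most $r+\exp(-\beta_0^2)$.

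The term $I_2$ equals $-K_{\alpha_0}^*\int_{t_0^-}^{t_1^-}F_{\alpha_0}(t_1-s)\,d\Pi_S(s)$, where $F_\alpha(u):=\int_u^\infty K_\alpha(y)\,dy$. Since $t_1-s \ge \alpha_0^{-2}\beta_0^\theta$ throughout the range of integration, Lemma~\ref{lem:estimate on K:intro} plus an elementary tail computation gives $K_{\alpha_0}^* F_{\alpha_0}(t_1-s) \le e^{-c\beta_0^\theta/2}$, and the event $\mathcal{A}_2$ together with Proposition~\ref{prop:ind1:growth} controls $|\Pi_S[t_0^-,t_1^-]|$ polynomially in $\alpha_0^{-1}$, yielding $|I_2|\le \alpha_0^{100}$ off the prescribed event.

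For $I_1$ set $G(\alpha,u):=K_\alpha^*F_\alpha(u)$ and $\Delta(u):=G(\alpha_1,u)-G(\alpha_0,u)$. The crucial observation is that $G(\alpha,\cdot)$ is a probability density for every $\alpha$: indeed,
\begin{equation*}
\intop_0^\infty G(\alpha,u)\,du \;=\; K_\alpha^*\intop_0^\infty yK_\alpha(y)\,dy \;=\; 1
\end{equation*}
by the identity used in \eqref{eq:branching:Rlbd:med4}, so $\int_0^\infty \Delta\,du=0$. Splitting $d\Pi_S(s)=\alpha_0\,ds + d\widetilde{\Pi}_S(s)+(S(s)-\alpha_0)\,ds$ we get
\begin{equation*}
I_1 \;=\; \alpha_0\intop_0^{t_1-t_1^-}\!\!\Delta(u)\,du \;+\; \intop_{t_1^-}^{t_1}\!\!\Delta(t_1-s)\,d\widetilde{\Pi}_S(s) \;+\; \intop_{t_1^-}^{t_1}\!\!\Delta(t_1-s)\,(S(s)-\alpha_0)\,ds.
\end{equation*}
The first piece equals $-\alpha_0\int_{t_1-t_1^-}^\infty \Delta(u)\,du$ and is exponentially small in $\beta_0^\theta$ because $G(\alpha_j,\cdot)$ decays exponentially beyond its effective support of size $\alpha_0^{-2}$. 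The two remaining pieces are handled by Taylor in $\alpha$: combining $|\alpha_1-\alpha_0|\le 2\alpha_0^{3/2}\beta_0^{6\theta}$ from Theorem~\ref{thm:induction:main}(2) with the bounds $|\partial_\alpha G(\alpha,u)|\lesssim \alpha e^{-c\alpha^2 u}$ and $|\partial_\alpha^2 G(\alpha,u)|\lesssim e^{-c\alpha^2 u}$ (to be derived from the Fourier representation behind Appendix~\ref{sec:fourier and renewal}, in the same spirit as Lemmas~\ref{lem:estimate on K:intro} and \ref{lem:estimat for K tilde:intro}) gives $\sup_u|\Delta(u)|\lesssim \alpha_0^{5/2}\beta_0^{6\theta}$ and $\int\Delta^2\,du\lesssim \alpha_0^3\beta_0^{12\theta}$. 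The martingale piece is then controlled by Lemma~\ref{lem:concentration of integral}, with quadratic variation bounded by $\sup|\Delta|^2\int_{t_1^-}^{t_1}S(s)\,ds$ via the $\tau_1$-constraint, while the drift piece is attacked by Cauchy-Schwarz paired with $\int_{t_1^-}^{t_1}(S-\alpha_0)^2\,ds$ coming from $\tau_4$.

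The main obstacle is the drift bound: the interval $[t_1^-,t_1]$ lies well past $\acute{t}_0$, so the $\tau_4$-constraint of Definition~\ref{def:reg} does not directly apply, and one must first invoke the propagation-of-regularity arguments of Section~\ref{sec:reg:conti of reg} to extend $\int_{t_0}^{t}(S-\alpha_0)^2\,ds\le 2\alpha_0\beta_0^{25\theta}$ up through $t=\hat{t}_0$. A secondary technical step is to establish the sharp $\alpha$-derivative bounds on $G(\alpha,u)$ above with constants uniform on $[\alpha_0/2,2\alpha_0]$, which follow the Fourier/renewal template of Appendix~\ref{sec:fourier and renewal} with $\partial_\alpha$ in place of $\partial_t$.
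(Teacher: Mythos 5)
Your decomposition into $I_1$ and $I_2$ is the same one the paper uses, writing $\alpha_1'-\tilde{\alpha}_1=(\alpha_1'-\alpha_1^{(1)})+(\alpha_1^{(1)}-\tilde{\alpha}_1)$ with $\alpha_1^{(1)}:=\mathcal{L}(t_1;\Pi_S[t_1^-,t_1],\alpha_0)$ from \eqref{eq:def:alphaprimeprime}; your $I_2$ bound is in line with Lemma~\ref{lem:ind2:alphavsalphaPP}, and your observation $\intop_0^\infty G(\alpha,u)\,du=1$ is the cancellation the paper exploits when the Lebesgue parts of $\alpha_1'$ and $\alpha_1^{(1)}$ subtract out.

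The gap is in the drift piece of $I_1$, namely $\intop_{t_1^-}^{t_1}\Delta(t_1-s)\,(S(s)-\alpha_0)\,ds$, and it comes from centering at $\alpha_0$. Cauchy--Schwarz against $\intop_{t_1^-}^{t_1}(S-\alpha_0)^2\,ds\le C\alpha_0\beta_0^{25\theta}$ from $\tau_4$ gives at best $C(\alpha_0^3\beta_0^{12\theta})^{1/2}(\alpha_0\beta_0^{25\theta})^{1/2}=C\alpha_0^2\beta_0^{18.5\theta}$, which overshoots the target $2\alpha_0^2\beta_0^{8\theta+1}$ by a factor $\beta_0^{10.5\theta-1}$. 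An $L^\infty$--$L^1$ bound does not rescue it: on $[t_1^-,t_1]$ the speed tracks $\alpha_1$ rather than $\alpha_0$, so $\intop_{t_1^-}^{t_1}|S-\alpha_0|\,ds$ carries a contribution of size $|\alpha_1-\alpha_0|(t_1-t_1^-)$, of order $\alpha_0^{-1/2}\beta_0^{7\theta}$, whereas one needs order $\alpha_0^{-1/2}\beta_0^{2\theta}$. Since Theorem~\ref{thm:increment:formal} requires $|\alpha_1'-\tilde{\alpha}_1|=o(\alpha_0^2\beta_0^{10\theta-3})$, this polylog loss is fatal and cannot be absorbed.

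The fix, which is what Lemma~\ref{lem:ind2:alphaPvsalphaPP} (i.e., your $I_1$) does, is to center the split at the locally adapted $\alpha_2:=\alpha_1^{(3)}=\mathcal{L}(t_1^-;\Pi_S[t_1^{--},t_1^-],\alpha_0)$ rather than at $\alpha_0$: write $d\Pi_S=\alpha_2\,ds+d\widetilde{\Pi}_S+(S-\alpha_2)\,ds$. The variable $\alpha_2$ is $\mathcal{F}_{t_1^-}$-measurable and negligibly close to $\alpha_1$, and crucially Claim~\ref{claim:speed is almost fixed} gives the sharp $L^1$ estimate $\intop_{t_1^-}^{t_1}|S(t)-\alpha_2|\,dt\le\alpha_0^{-1/2}\beta_0^{2\theta}$ with probability $1-\exp(-\beta_0^5)$, because $\alpha_2$ and $S|_{[t_1^-,t_1]}$ are built from the same nearby Poisson data and so drift together. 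Paired with your pointwise bound $\|\Delta\|_\infty\le C\alpha_0|\alpha_1-\alpha_0|\le C\alpha_0^{5/2}\beta_0^{6\theta}$ (the $\partial_\alpha$-estimate that the paper derives in Claim~\ref{claim:derivative of h}), this yields exactly $C\alpha_0^2\beta_0^{8\theta}$; the martingale piece then closes via Corollary~\ref{cor:concentration of integral} as you describe.
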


In fact, we will later derive scaling limit of $\alpha'$ (see Sections \ref{sec:double int} and \ref{sec:scalinglimit}). To this end, we study the increment $\alpha_1'-\alpha_0'$ (defined in \eqref{eq:def:alpha0prime}) in Theorem \ref{thm:increment:formal}. Expressing that $$\alpha_1'-\alpha_0'=(\alpha_1'-\tilde{\alpha}_1) + (\tilde{\alpha}_1-\alpha_0'),$$
the estimate in the proposition gives an essentially sharp estimate on $(\alpha_1'-\tilde{\alpha}_1) $. Also, note that it is strictly stronger than the error bound in $\mathcal{A}_3$ (Section \ref{subsubsec:reg:history}). The proof of Proposition \ref{prop:reg:newrates} will be discussed in Section \ref{subsec:ind2:ratechange}.

Furthermore, we record estimates on the integrals of the errors such as $(S(t)-\alpha_0)$ in the following lemma. 
\begin{lem}\label{lem:reg:errorint}
	Let $\alpha_0,t_0,r>0$, and let $\acute{t}_0, t_0^-, t_1, t_1^-$ be as Theorem \ref{thm:induction:main}. Furthermore, let $S_1(t)=S_1(t;t_0^-,\alpha_0)$ and $S_2(t)=S_2(t;t_0^-,\alpha_0)$ be the first- and second-order approximations of the speed given by \eqref{eq:def:S1:basic form}, \eqref{eq:def:S2:basic form}. For any $\Pi_S(-\infty,t_0]\in \mathfrak{R}(\alpha_0,r;[t_0])$, we have
		\begin{eqnarray}
	 \textnormal{(1)}\qquad \qquad  &	\PP \left(\left. \intop_{\acute{t}_0}^{t_1} \big|S(t)-\alpha_0\big|dt  \ge \alpha_0^{-\frac{1}{2}-\epsilon}  \, \right| \, \Pi_S(-\infty,t_0] \right) \le r+ \exp\left(-\beta_0^{1.9} \right); &\qquad \qquad  \\
	  \textnormal{(2)} \qquad \qquad  &	\PP \left(\left. \intop_{\acute{t}_0}^{t_1} \big|S_1(t)-\alpha_0\big|dt  \ge \alpha_0^{-\frac{1}{2}-\epsilon}  \, \right| \, \Pi_S(-\infty,t_0] \right) \le r+ \exp\left(-\beta_0^{1.9} \right); & \qquad  \qquad \\
	  \textnormal{(3)} \qquad \qquad  &	\PP \left(\left.\intop_{{t}_0}^{t_1} \big|S(t)-S_1(t)\big|dt  \ge \alpha_0^{-\epsilon}  \, \right| \, \Pi_S(-\infty,t_0] \right) \le r+ \exp\left(-\beta_0^{1.9} \right); & \qquad  \qquad \\
	  \textnormal{(4)} \qquad \qquad  &	\PP \left(\left. \intop_{\acute{t}_0}^{t_1} \big|S(t)-S_2(t)\big|dt  \ge \alpha_0^{\frac{1}{2}-\epsilon}  \, \right| \, \Pi_S(-\infty,t_0] \right) \le r+ \exp\left(-\beta_0^{1.9} \right). & \qquad  \qquad 
		\end{eqnarray}

\end{lem}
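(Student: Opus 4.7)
All four bounds rest on a common preliminary: the regularity stopping times $\tau_1,\ldots,\tau_8$ (in particular $\tau_3$) must hold past $t_1$, not merely past $\acute{t}_0$. Since $t_1\le \hat{t}_0$, this is exactly the continuation-of-regularity statement that is developed in Section~\ref{sec:reg:conti of reg}, and I will assume it as input: conditionally on $\Pi_S(-\infty,t_0]\in\mathfrak{R}(\alpha_0,r;[t_0])$, the event $\{\tau(\alpha_0,t_0,\kappa=2)>t_1\}$ holds with probability $\ge 1-r-e^{-\beta_0^{1.9}}$, so that the pointwise bounds packaged in $\tau_1,\ldots,\tau_8$ are in force throughout $[t_0,t_1]$. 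On this event, items (1)--(3) follow from elementary inequalities together with the integral estimates of Lemmas~\ref{lem:ind1:pi1int basicbd:basic} and~\ref{lem:fixed perturbed:error int:perturbed}, while (4) additionally uses the second-order expansion from Proposition~\ref{prop:fixed perturbed:error}.

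For (1), Cauchy--Schwarz together with $\tau_4>t_1$ and $t_1-\acute{t}_0\le \alpha_0^{-2}\beta_0^{10\theta}$ gives
\begin{equation*}
\intop_{\acute{t}_0}^{t_1}|S(t)-\alpha_0|\,dt\le \sqrt{(t_1-\acute{t}_0)\intop_{t_0}^{t_1}(S(t)-\alpha_0)^2\,dt}\le \sqrt{\alpha_0^{-2}\beta_0^{10\theta}\cdot \alpha_0\beta_0^{25\theta}}=\alpha_0^{-1/2}\beta_0^{17.5\theta},
\end{equation*}
which is $\ll \alpha_0^{-1/2-\epsilon}$ for small $\alpha_0$. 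For (3), $\tau_3>t_1$ gives the pointwise bound $|S(t)-S_1(t)|\le \alpha_0^{3/2-\epsilon}\sigma_1(t;S)+\alpha_0^{1-\epsilon}\sigma_1\sigma_2(t;S)$; integrating the two terms separately, Lemma~\ref{lem:ind1:pi1int basicbd:basic} (with $n\lesssim \alpha_0^{-1}\beta_0^{O(1)}$ from $\tau_7$ and $h=t_1-t_0\lesssim \alpha_0^{-2}\beta_0^{10\theta}$) gives $\intop \sigma_1\,dt\lesssim \alpha_0^{-3/2}\beta_0^{O(1)}$, and Lemma~\ref{lem:fixed perturbed:error int:perturbed} applied to $g=S$ (whose hypotheses are verified by $\tau_1,\tau_4,\tau_5$ with polylog slack) gives $\intop\sigma_1\sigma_2\,dt\le \alpha_0^{-1-\epsilon/2}$; combining and absorbing polylogs into an $\alpha_0^{-\epsilon}$ loss yields~(3). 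Finally (2) follows from (1) and (3) via the triangle inequality $|S_1-\alpha_0|\le |S-S_1|+|S-\alpha_0|$.

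For (4), I will invoke Proposition~\ref{prop:fixed perturbed:error} with $\alpha=\alpha_0$, $t^-=t_0^-$, $\hat{t}=t_1$: its three assumptions in~\eqref{eq:error:assumptions} follow from $\tau_4$, $\tau_5$, $\tau_1$, $\tau_2$ (each supplying the required inequality up to a polylog factor, which is absorbed in the $\alpha_0^{-\epsilon/400}$ slack). The proposition yields $|S'(s)-S_2(s)|\le \alpha_0^{1-\epsilon}\sigma_1\sigma_2\sigma_3(s;S)$ on $[\acute{t}_0,t_1]$ with exceptional probability $\le 2\exp(-\alpha_0^{-\epsilon/3000})$. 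Integrating via Lemma~\ref{lem:fixed perturbed:error int:perturbed} gives $\alpha_0^{1-\epsilon}\cdot \alpha_0^{-1/2-\epsilon/2}= \alpha_0^{1/2-2\epsilon}$, as desired after relabeling $\epsilon$. The replacement of $S'$ by $S$ is handled by the history-truncation analysis of Section~\ref{subsec:regoverview:history}: on $\mathcal{A}_1$, the gap $|S(t)-S'(t)|$ decays faster than any polynomial in $\alpha_0$ on $[\acute{t}_0,t_1]$, so its integral is negligible.

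The principal obstacle -- and the reason this lemma is a \emph{consequence} rather than part of the definition of regularity -- is that the hypothesis only furnishes control of $\tau(\alpha_0,t_0,\kappa=2)$ up to $\acute{t}_0$, whereas the integrals reach as far as $t_1 \gg \acute{t}_0$. The substantive work therefore lies entirely in the continuation argument of Section~\ref{sec:reg:conti of reg}, which must upgrade the stopping-time control from $\acute{t}_0$ to $\hat{t}_0$ at the cost of the extra $e^{-\beta_0^{1.9}}$ failure probability. Granted that continuation, Lemma~\ref{lem:reg:errorint} reduces to the short arithmetic sketched above.
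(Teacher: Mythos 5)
Your proof is correct, and the overall skeleton (reduce to continuation of regularity via Theorem~\ref{thm:reg:conti:main}, then integrate the bounds supplied by the stopping times) matches the paper. For items (3) and (4) your argument is essentially identical to the paper's: (3) reproduces Lemma~\ref{lem:ind1:intofSminS1} (the $\tau_3$ pointwise bound plus the counting bounds from $\tau_7$ and Lemma~\ref{lem:ind1:pi1int basicbd:basic}), and (4) reproduces Corollary~\ref{cor:ind1:S2error} (Proposition~\ref{prop:fixed perturbed:error} for $|S'-S_2|$, then Lemma~\ref{lem:fixed perturbed:error int:perturbed} for $\int \sigma_1\sigma_2\sigma_3$, then the Section~\ref{subsec:regoverview:history} reduction from $S$ to $S'$).

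For (1) and (2), however, you take a genuinely different route. The paper establishes both via Corollary~\ref{cor:ind1:Sint S1int}, which is a pointwise-bound-then-integrate argument: on $[t_0^+,t_1]$ it uses the envelope $|S(t)-\alpha_0|\le F(\alpha_0,t)$ supplied by $\tau_9^+$ (the combination of $\tau_3$ with $\tau_{\mathrm{b}}$), then integrates $F$ term by term via Lemmas~\ref{lem:ind1:pi1int basicbd:basic} and~\ref{lem:ind1:pi1int basicbd} and a Poisson count from Corollary~\ref{cor:concentration:numberofpts each interval}. You instead apply Cauchy--Schwarz directly to the $\tau_4$ square-integral bound, which is shorter and bypasses $F$, $\tau_9^+$, and $\tau_{10}^+(\Delta)$ entirely; and you then obtain (2) from (1) and (3) by the triangle inequality, whereas the paper derives it independently from the analogous pointwise bound on $S_1-\alpha_0$. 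Your version is more economical for the purpose of this lemma alone, but it gives only $L^1$ control; the paper's intermediate machinery ($\tau_9^+$, $\tau_{10}^+(\Delta)$, and the short-window bounds in Lemma~\ref{lem:ind1:tau13}) delivers local-in-time estimates on $\int_{t-\Delta}^t|S-\alpha_0|$ that are reused in several other places (Lemma~\ref{lem:ind1:tau9 10 11}, Proposition~\ref{prop:ind1:growth}, Lemmas~\ref{lem:ind2:A11} and~\ref{lem:ind2:A3}), so in the paper's architecture the extra detail is not wasted. One small arithmetic caveat: your Cauchy--Schwarz bound $\alpha_0^{-1/2}\beta_0^{17.5\theta}$ and your triangle-inequality sum each carry polylog or constant overheads, so you should state explicitly that the $\alpha_0^{-\epsilon}$ slack (with $\epsilon=10^{-4}$ fixed and $\alpha_0$ small) absorbs them -- you do say this in passing, and it is correct, but since $\epsilon$ is a fixed constant here rather than an arbitrary parameter, the absorption deserves one explicit sentence.
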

Note that all the regimes of the integrals are $[\acute{t}_0,t_1]$, except for (3) which is $[t_0,t_1]$. One can essentially prove the same for the integral over $[\acute{t}_0,t_1]$, but the above is enough for our purpose as well and is simpler to establish. The proof is discussed later:
\begin{itemize}
	\item (1) and (2) are proven in  Corollary \ref{cor:ind1:Sint S1int}
	
	\item (3) is established in Lemma \ref{lem:ind1:intofSminS1}.
	
	\item (4) is verified in Section \ref{subsec:ind1:1storder error}.
\end{itemize}

{ }Lastly, we state  a lower bound on $S(t)$.  Although the bounds given in $\tau_{1}$ implies that $S(t)$ can get pretty high compared to $\alpha_0$, it turns out that $S(t)$ cannot stay too low from $\alpha_0$.

\begin{prop}\label{prop:reg:lbd of speed}
	Let $\alpha_0,t_0,r>0$, and let $\acute{t}_0, t_0^-,t_0^+, \hat{t}_0$ be as \eqref{eq:def:t0t1} and \eqref{eq:def:t0t11}. 
	Define the stopping time $\tau_{\textnormal{lo}}$ by
	\begin{equation}
	\tau_{\textnormal{lo}}=\tau_{\textnormal{lo}}(\alpha_0,t_0,t_0^+):= \inf \left\{ t\ge t_0^+: \ S(t) \le \alpha_0 - \alpha_0^{\frac{3}{2}-\epsilon} \right\}.
	\end{equation}
	Then, for any $\Pi_S(-\infty,t_0] \in \mathfrak{R}(\alpha_0,r;[t_0])$, we have $$\PP(\tau_{\textnormal{lo}}\le \hat{t}_0 \, |\, \Pi_S(-\infty,t_0]  )\le r+ \exp\left(-\beta_0^2 \right) . $$
\end{prop}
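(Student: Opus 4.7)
The plan is to first reduce to the event that regularity continues all the way to $\hat t_0$: by the continuation-of-regularity arguments developed in Section~\ref{sec:reg:conti of reg}, conditionally on $\Pi_S(-\infty,t_0]\in\mathfrak{R}(\alpha_0,r;[t_0])$ we have $\tau(\alpha_0,t_0,\kappa=2)\ge \hat t_0$ with probability at least $1-r-\exp(-\beta_0^{2.5})$. Working on this event, the pointwise bound $\tau_3$ gives $S(t)-S_1(t)\ge -\alpha_0^{3/2-\epsilon}\sigma_1(t;S)\ge -\alpha_0^{3/2-\epsilon}$ on all of $[t_0,\hat t_0]$, so it suffices to show $S_1(t)\ge \alpha_0-o(\alpha_0^{3/2-\epsilon})$ uniformly on $[t_0^+,\hat t_0]$.

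\textbf{Martingale--drift decomposition of $S_1$.} Applying the integral identity~\eqref{eq:integralform:branching} to $Q=S$ (so that $Q_1(s,s)=S_1(s)$ and $Q_1(t_0,t)=\mathcal{R}_c(t_0,t;\Pi_S[t_0^-,t_0],\alpha_0)$) between $s_0=t_0$ and $s_1=t$, we obtain
\begin{equation*}
S_1(t) \;=\; \mathcal{R}_c\bigl(t_0,t;\Pi_S[t_0^-,t_0],\alpha_0\bigr) \,+\, M(t) \,+\, D(t),
\end{equation*}
where $M(t):=\int_{t_0}^{t} K^*_{\alpha_0}(t-s)\,d\widetilde{\Pi}_S(s)$ is a compensated Poisson integral and $D(t):=\int_{t_0}^{t}K^*_{\alpha_0}(t-s)(S(s)-S_1(s))\,ds$.

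\textbf{Controlling the three pieces.} For the first term, splitting the relevant integrals at $(t+t_0)/2$ and using the super-polynomial decay of $K_{\alpha_0}$ and $K^*_{\alpha_0}(\cdot)-K^*_{\alpha_0}$ past $\alpha_0^{-2}\beta_0^{\theta}/4$ (Lemmas~\ref{lem:estimate on K:intro},~\ref{lem:estimat for K tilde:intro}), together with $|\Pi_S[t_0^-,t_0]|\le 4\alpha_0^{-1}\beta_0^{\theta}$ from $\mathcal{A}_2$, gives $|\mathcal{R}_c(t_0,t)-\mathcal{L}(t_0;\Pi_S[t_0^-,t_0],\alpha_0)|=O(\alpha_0^{100})$ for all $t\in[t_0^+,\hat t_0]$; combining with $\mathcal{A}_3$ yields $\mathcal{R}_c(t_0,t)\ge \alpha_0-\alpha_0^{3/2}\beta_0^{\theta}-O(\alpha_0^{100})$. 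For $D(t)$, the lower half of $\tau_3$ and positivity of $K^*_{\alpha_0}$ give $D(t)\ge -\alpha_0^{3/2-\epsilon}\int_{t_0}^{t} K^*_{\alpha_0}(t-s)\sigma_1(s;S)\,ds$, and Cauchy--Schwarz paired with $\int(K^*_{\alpha_0})^2\,ds\le C\alpha_0^2\beta_0^{10\theta}$ together with $\int_{t_0}^{\hat t_0}\sigma_1(s;S)^2\,ds\le C\alpha_0^{-1}\beta_0^{O(\theta)}$ (via Lemma~\ref{lem:ind1:pi1int basicbd:basic} and the count bound $|\Pi_S[t_0,\hat t_0]|\le C\alpha_0^{-1}\beta_0^{O(\theta)}$ from $\tau_1,\tau_7$) yields $D(t)\ge -\alpha_0^{2-\epsilon}\beta_0^{O(\theta)}$, which is $o(\alpha_0^{3/2-\epsilon})$ for small $\alpha_0$. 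For $M(t)$ we need only a lower bound. Since $M$ is a compensated sum of nonnegative jumps of size at most $C\alpha_0$ with quadratic variation $V=\int K^{*2}_{\alpha_0}(t-s)S(s)\,ds\le C\alpha_0^3\beta_0^{O(\theta)}$, the elementary inequality $u+e^{-u}-1\le u^2/2$ for $u\ge 0$ gives, via a one-sided Chernoff argument, $\PP(M(t)<-r)\le \exp(-r^2/(2V))$ for every $r>0$; with $r=\tfrac{1}{10}\alpha_0^{3/2-\epsilon}$ this is at most $\exp(-\alpha_0^{-2\epsilon}/C\beta_0^{O(\theta)})\le \exp(-\beta_0^{10})$. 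A union bound over a polynomial-in-$\alpha_0$ discretization of $[t_0^+,\hat t_0]$ combined with the continuity argument built into Lemma~\ref{lem:concen of int:conti:forwardtime} upgrades this to a uniform bound $\inf_{t\in[t_0^+,\hat t_0]}M(t)\ge -\tfrac{1}{10}\alpha_0^{3/2-\epsilon}$. Summing the three contributions gives $S_1(t)\ge \alpha_0-o(\alpha_0^{3/2-\epsilon})$, and combining with the $\tau_3$ lower bound $S-S_1\ge -\alpha_0^{3/2-\epsilon}$ yields $S(t)>\alpha_0-\alpha_0^{3/2-\epsilon}$ on all of $[t_0^+,\hat t_0]$ with total failure probability at most $r+\exp(-\beta_0^2)$.

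\textbf{Main obstacle.} The central subtlety lies in the lower bound on $M(t)$: its individual jumps can reach $\sim\alpha_0$, vastly exceeding the target scale $\alpha_0^{3/2-\epsilon}$, so symmetric martingale concentration (Azuma--Hoeffding, two-sided Bernstein) fails to give any useful bound in this regime. What rescues the argument is the fact that the Poisson jumps are one-sided (strictly positive): downward excursions of $M$ can only be produced by a smooth deficit in the compensator, and a direct Chernoff estimate shows the negative tail is genuinely sub-Gaussian in the quadratic variation $V$ \emph{irrespective of the jump sizes}. At the target scale $r=\alpha_0^{3/2-\epsilon}$ this produces an exponent of order $\alpha_0^{-2\epsilon}$, comfortably beating $\beta_0^2$. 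The two secondary technical points are the passage from pointwise to uniform control in $t$ (handled via the continuity arguments already built into Lemma~\ref{lem:concen of int:conti:forwardtime}) and the continuation of regularity from $\acute t_0$ to $\hat t_0$, which is the core of Section~\ref{sec:reg:conti of reg}.
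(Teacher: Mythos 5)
Your proof follows essentially the same path as the paper's implicit argument (which is spread across Sections 6--7 rather than given as a self-contained proof). The reduction to the stopping-time bounds, the split of $S(t)-\alpha_0$ into $(S-S_1)+(S_1-\alpha_0)$, and above all your martingale--drift decomposition $S_1(t)=\mathcal{R}_c(t_0,t)+M(t)+D(t)$ precisely reproduce the content of Lemma~\ref{lem:ind1:taux} (the $\tau_{\textnormal{b}}$ bound); your $D(t)$ and $\mathcal{R}_c$ estimates mirror the drift and $|\mathcal{R}_c-\alpha_0'|$ estimates there. Your observation that the lower tail of a compensated Poisson integral with nonnegative integrand is sub-Gaussian \emph{irrespective of jump sizes} is correct and nicely phrased; the paper encodes the same phenomenon, more implicitly, in the asymmetric form of Corollary~\ref{lem:concentrationofint:continuity}, where the lower bound carries no $N\,\bar f_t(p_1)$ correction precisely because the integrand is nonnegative.

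There is, however, a genuine gap at the final combination. You reduce to ``it suffices to show $S_1(t)\ge\alpha_0-o(\alpha_0^{3/2-\epsilon})$'' and then add the weakened $\tau_3$ bound $S-S_1\ge-\alpha_0^{3/2-\epsilon}$. This chain produces $S(t)>\alpha_0-(1+o(1))\alpha_0^{3/2-\epsilon}$, \emph{not} $S(t)>\alpha_0-\alpha_0^{3/2-\epsilon}$; so the inequality that defines $\tau_{\textnormal{lo}}$ is not ruled out by your estimates. To close this one must \emph{keep} the $\sigma_1(t;S)$ factor in the $\tau_3$ bound and run a short dichotomy: if $\pi_1(t;S)$ is bounded away from zero, then $\sigma_1(t;S)\le 1-c$ for a quantitative $c$ and $\alpha_0^{3/2-\epsilon}\sigma_1(t;S)+o(\alpha_0^{3/2-\epsilon})<\alpha_0^{3/2-\epsilon}$; whereas if $\pi_1(t;S)$ is small, then $S_1(t)\ge K_{\alpha_0}(\pi_1(t;S))+\bigl(S_1(t^*-)-O(\alpha_0\pi_1)\bigr)$ where $t^*$ is the most recent point, and since $K_{\alpha_0}(0)=\alpha_0$, $S_1(t)$ carries an extra $\approx\alpha_0$ far exceeding the slack needed. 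Without an argument of this shape the claim as stated is not obtained. A secondary issue is the error probability in your reduction step: Theorem~\ref{thm:reg:conti:main} gives only $\exp(-\beta_0^{1.9})$, not $\exp(-\beta_0^{2.5})$, and $\exp(-\beta_0^{1.9})>\exp(-\beta_0^2)$, so invoking the full theorem is too lossy; one should instead cite only the particular bounds actually needed (Lemma~\ref{lem:ind1:tau6} and Lemma~\ref{lem:ind1:taux}, each with error $\le\exp(-\beta_0^2)$).
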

This proposition will be useful later in Section \ref{sec:double int}, and its proof is presented in Section \ref{subsec:ind1:1storder error}.

\subsection{Neglecting the far away history}\label{subsec:regoverview:history}

In this section, we resolve the issue addressed in Section \ref{subsubsec:regoverview:overview:history}. Recall the formula \eqref{eq:speed:basic expression}:
\begin{equation}\label{eq:def:S:in induction}
S(t) = \frac{1}{2}\PP \left( W(s) \le Y_t(s), \ \forall s\ge 0 \ | \ Y_t \right).
\end{equation} 
Our goal is to show that its dependence on   $\mathcal{F}_{t_0^{-}} = \{Y_t(s)\}_{s\ge t-t_0^{-}}$ is negligible. To this end, we introduce an approximate version of speed and study its first and second order approximations. Let $\Pi'$ be the rate-1 Poisson point process on $\mathbb{R}^2$ independent from $\Pi$, and define $Y_t'(s)$ as
\begin{equation}
Y_t'(s):= \begin{cases}
Y_t(s), &\textnormal{ for } s\le t-t_0^{-};\\
Y_t(t_0^{-}) + |\Pi_{\alpha_0}'[t-s, t_{0}^{-}]|, &\textnormal{ for } s> t-t_0^{-}. 
\end{cases}
\end{equation}
Then, we define the \textit{quenched} approximated speed $S_q'(t)$ as
\begin{equation}\label{eq:def:Sprime}
\begin{split}
S_q'(t) = S_q'(t;t_0^{-},\alpha_0) := \frac{1}{2} \PP\left(W(s)\le Y_t'(s),\ \forall s\ge 0 \ | \ Y_t' \right) .
\end{split}
\end{equation}
We recall the previous definition of $S'(t)$ in \eqref{eq:def:Sprime:basic form}, and observe that
\begin{equation}
\mathbb{E}_{\Pi_{\alpha_0}'}[S_q'(t;t_0^{-},\alpha_0)] = S'(t;t_0^{-},\alpha_0).
\end{equation}
Recalling the definitions of $\tau_{6}$ and $\mathcal{A}$ from Sections \ref{subsubsec:reg:aggsize} and \ref{subsubsec:reg:history}, respectively, the main purpose of this subsection is to establish the following property. 

\begin{prop}\label{prop:SvsSprime}
	Suppose that we have $\Pi_S[t_0^-,t_0]\in \mathcal{A}(\alpha_0,t_0)$. Then, for all sufficiently small $\alpha_0>0$, we have $$|S(t)-S'(t;t_0^{-},\alpha_0)| \le \alpha_0^{100},$$ for all $t\in [t_0, \tau_{6}(\alpha_0,t_0,\kappa=2)]$. Furthermore, for any $t\in[\acute{t}_0,\tau_{6}\wedge\tau_{8}(\alpha_0,t_0,\kappa=2)]$, we have
	\begin{equation}
	|S(t)-S'(t;t_0,\alpha_0)| \le \alpha_0^{100}.
	\end{equation}	
\end{prop}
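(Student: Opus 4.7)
The plan is to bound $|S(t)-S'(t;t_0^-,\alpha_0)|$ by working with the quenched version $S_q'$ from~\eqref{eq:def:Sprime}, since $S'(t;t_0^-,\alpha_0)=\mathbb{E}_{\Pi'}[S_q'(t;t_0^-,\alpha_0)]$ and so $|S(t)-S'(t)|\le\mathbb{E}_{\Pi'}|S(t)-S_q'(t)|$. I would couple the random walk $W$ to be identical in both probabilistic formulas and note that $Y_t(s)=Y_t'(s)$ for all $s\le s_0:=t-t_0^-$. Writing $A=\{W(s)\le Y_t(s),\,\forall s\ge 0\}$ and $A'=\{W(s)\le Y_t'(s),\,\forall s\ge 0\}$, we get $2|S(t)-S_q'(t)|\le \mathbb{P}(A\triangle A'\mid Y_t,Y_t')$, and the goal is to bound this symmetric difference by $\alpha_0^{100}$.

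On the discrepancy event $A\triangle A'$, the walk must stay below $Y_t=Y_t'$ on $[0,s_0]$ and subsequently attain a value $\ge Y_t(s_0)$ at some $s\ge s_0$, since otherwise $W(s)\le Y_t(s_0)\le \min(Y_t(s),Y_t'(s))$ holds automatically for $s\ge s_0$. By the Markov property at time $s_0$, conditional on $W(s_0)=w_0\le Y_t(s_0)$ the shifted walk $W^*(u):=W(s_0+u)-w_0$ is an independent simple random walk starting from $0$, and the discrepancy event requires $W^*$ to reach height $Y_t(s_0)-w_0$ at some $u\ge 0$ while still respecting the constraint $W^*(u)\le \min(Y_t(s_0+u),Y_t'(s_0+u))-w_0$ for all $u\ge 0$. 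The crucial identity is
\begin{equation*}
Y_t(s_0+u)-Y_t(s_0)=|\Pi_S[t_0^--u,\,t_0^-]|,\qquad u\in[0,t_0^-],
\end{equation*}
which under $\mathcal{A}_1(\alpha_0,t_0)$ is bounded below by $\sqrt{u+C_\circ}\log^2(u+C_\circ)-\alpha_0^{-1}\beta_0^\theta$, so past $s_0$ the barrier grows polylogarithmically faster than a typical walk.

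The bound is then obtained by splitting according to the first hitting time $\sigma$ of $W$ to level $Y_t(s_0)$. On $\{\tau_6 >t\}$ we have $Y_t(s_0)=|\Pi_S[t_0^-,t]|\ge \tfrac{1}{200}\alpha_0(t-t_0^-)\ge \tfrac{1}{400}\alpha_0^{-1}\beta_0^\theta$ (using $\mathcal{A}_2$ for $t$ close to $t_0$). The contribution from moderate $\sigma$ is controlled by the hitting-time estimates of Lemma~\ref{lem:basic random walk estimates}: the walk must traverse a height of order $\alpha_0^{-1}\beta_0^\theta$ in a time comparable to $\alpha_0^{-2}\beta_0^\theta$, which happens with probability exponentially small in $\beta_0^\theta$, hence easily $\le\alpha_0^{100}$ for small $\alpha_0$. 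For large $\sigma$, the additional constraint that $W^*$ remain below the polylogarithmic barrier $\sqrt{u+C_\circ}\log^2(u+C_\circ)-\alpha_0^{-1}\beta_0^\theta$ from $\mathcal{A}_1$ forces the walk into an atypical ballot-like event, and standard polylogarithmic tail bounds on the staying-below-curve probability deliver the residual estimate.

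For the second assertion about $S'(t;t_0,\alpha_0)$ on $t\in[\acute{t}_0,\tau_6\wedge\tau_8]$, the same scheme applies with cutoff $s_0':=t-t_0\ge 3\alpha_0^{-2}\beta_0^\theta$: now $\tau_6$ yields $Y_t(s_0')=|\Pi_S[t_0,t]|$ of order at least $\alpha_0(t-t_0^-)$, while $\tau_8$ rules out empty intervals of length $\alpha_0^{-1}\beta_0^{C_\circ}$ in the aggregate over $[t_0+\alpha_0^{-1}\beta_0^{C_\circ},t]$, giving a quantitative growth of $Y_t$ on $[s_0',s_0]$; extending past $s_0$ via $\mathcal{A}_1$ produces the same polylogarithmic barrier and the same random walk estimate. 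The main obstacle will be carrying out the random-walk barrier analysis with enough efficiency that the discrepancy probability is superpolynomially (in $\alpha_0^{-1}$) small, so that the very strong conclusion $|S(t)-S'(t)|\le\alpha_0^{100}$ is achieved; the polylogarithmic extra factor in $\mathcal{A}_1$ is precisely what provides the slack needed to beat any fixed power of $\alpha_0$.
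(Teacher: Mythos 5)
Your coupling setup is right, and you correctly observe that on the discrepancy event $A\triangle A'$ the walk must first clear the level $Y_t(s_0)$ with $s_0=t-t_0^-$. However, there is a gap in the subsequent analysis. You write that on $A\triangle A'$ the walk must "respect the constraint $W^*(u)\le \min(Y_t(s_0+u),Y_t'(s_0+u))-w_0$ for all $u\ge 0$." This is false: by definition, on $A\triangle A'$ the walk \emph{exceeds} exactly one of the two barriers, so it certainly does not remain below the minimum of the two for all time. What is true is that on $A\triangle A'$ the walk stays below $Y_t$ on $[0,s_0]$, and then exceeds one of $Y_t$, $Y_t'$ at some $s'>s_0$. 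The paper accordingly splits $|S(t)-S_q'(t)|\le P_1+P_2$, where $P_1$ is the event of exceeding $Y_t$ after $s_0$ and $P_2$ is the event of exceeding $Y_t'$ after $s_0$, and these two sub-events need \emph{different} tools.

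Your appeal to $\mathcal{A}_1$ and the polylogarithmic barrier only addresses $P_1$ (exceeding $Y_t$): $\mathcal{A}_1$ together with $\tau_6$ yields $Y_t(s')\ge\sqrt{s'+C_\circ}\log^2(s'+C_\circ)$ for $s'>s_0$, so exceeding $Y_t$ means exceeding a curve growing faster than any typical random-walk path, which has probability $\le e^{-\beta_0^3}$. But $\mathcal{A}_1$ says nothing about $Y_t'$: past $s_0$, $Y_t'$ is $Y_t(s_0)$ plus an \emph{independent rate-$\alpha_0$ Poisson process}, so its growth is linear with rate $\alpha_0$, not the much faster $\mathcal{A}_1$ curve. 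Controlling $P_2$ requires showing that a walk starting from $W(s_0)<Y_t(s_0)$ (which is at least $\frac{1}{400}\alpha_0^{-1}\beta_0^\theta$ by $\tau_6$ and $\mathcal{A}_2$) is unlikely to ever catch up to the Poisson barrier; the paper does this with an exponential-martingale/optimal-stopping argument (the process $U'(s)$ and the bound $\mathbb{P}(T'<\infty)\le(1+2\alpha_0)^{-U'(0)}$), after first dispatching the sub-event $W(s_0)\ge\frac{\alpha_0}{400}(t-t_0^-)$ directly. Your "moderate $\sigma$ / large $\sigma$" split of the first-hitting time of level $Y_t(s_0)$ does not substitute for this: reaching a fixed level is an almost-sure event for a symmetric walk, so the hitting time alone gives no decay, and the constraint you need for the "large $\sigma$" regime is precisely the Poisson-barrier escape probability that the martingale argument supplies. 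Without that piece, the argument does not close.
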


It turns out that the same proof as Proposition \ref{prop:SvsSprime} implies the following corollary, which will be used to study the induction base.

\begin{cor}\label{cor:SvsSprime}
	 Under the setting of Theorem \ref{thm:induction:base:main} and the discussions above, define $\overline{S}'(t;t_0^-,\alpha_0)$ analogously as above in terms of $\overline{S}(t)$, and suppose that $\Pi_{\overline{S}}(-\infty,t_0] \in \overline{\mathcal{A}}(\alpha_0,t_0)$. Then, for all sufficiently small $\alpha_0>0$, we have
	\begin{equation}
	|\overline{S}(t) - \overline{S}'(t;t_0^-,\alpha_0)| \le \alpha_0^{100},
	\end{equation}
	for all $t\in [t_0,\overline{\tau}_6(\alpha_0,t_0,2)].$ The same thing holds for $\overline{S}(t)$ and $\overline{S}'(t;t_0^-,\alpha_0)$.
\end{cor}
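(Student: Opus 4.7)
The plan is to reuse the argument of Proposition \ref{prop:SvsSprime} verbatim, after checking that its two structural inputs remain valid in the sandwich setting. The first input is the survival-probability representation of the speed, which holds for $\overline{S}$ (and $\underline{S}$) by Definition \ref{def:aggregate with initial condition}; so both $\overline{S}(t)$ and $\overline{S}'(t;t_0^-,\alpha_0)$ can be written as $\tfrac{1}{2}\PP(W(s)\le \overline{Y}_t(s)\ \forall s \mid \overline{Y}_t)$ and its $\Pi'_{\alpha_0}$-averaged analogue with $\overline{Y}_t'$ defined as in \eqref{eq:def:Sprime}, the two profiles coinciding on $[0,t-t_0^-]$. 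The second input is the regularity event $\overline{\mathcal{A}}(\alpha_0,t_0)\cap\{\overline{\tau}_6(\alpha_0,t_0,2)\ge t\}$, defined in exact analogy with \eqref{eq:def of the events A1 A2 A3}, whose sole use in the original proof is to bound $\overline{Y}_t$ from below.

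I plan to execute the proof in two steps. First, couple the two survival events against the same random walk $W$; since $\overline{Y}_t$ and $\overline{Y}_t'$ agree on $[0,t-t_0^-]$ and both exceed $\overline{Y}_t(t-t_0^-)$ beyond that (by the monotonicity in \eqref{eq:generalised Y}), the symmetric difference of the two events is contained in $A_t\cap\{\exists s\ge t-t_0^-:\ W(s)\ge \overline{Y}_t(t-t_0^-)\}$, where $A_t:=\{W(u)\le \overline{Y}_t(u)\ \forall u\le t-t_0^-\}$. Second, bound this penetration probability using the profile estimates furnished by regularity: from $\overline{\mathcal{A}}_1$ one has $\overline{Y}_t(u)\ge \sqrt{u+C_\circ}\log^2(u+C_\circ)-\alpha_0^{-1}\beta_0^\theta$ on $[0,t_0^-]$, and from $\overline{\mathcal{A}}_2$ together with $\overline{\tau}_6$ one obtains $\overline{Y}_t(t-t_0^-)\ge \tfrac{1}{200}\alpha_0(t-t_0^-)\gtrsim \alpha_0^{-1}\beta_0^\theta$. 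The dyadic-scale ballot-type scheme from the proof of Lemma \ref{lem:bound on H}, applied at threshold $\asymp \alpha_0^{-1}\beta_0^\theta$ over the finite horizon $[t-t_0^-,t-t_0+\alpha_0^{-3}]$ on which $\overline{Y}_t$ is still finite, then yields a penetration probability of order $\exp(-c\beta_0^{\theta/3})\ll \alpha_0^{100}$.

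The case of $\underline{S}$ is handled identically, and in fact slightly cleaner because $\underline{Y}_{t_0,\alpha_0}(s)=\Pi_{\alpha_0}[t_0-s,t_0]$ is a genuine rate-$\alpha_0$ Poisson profile without an $\infty$-cap. The main obstacle in transcribing the proof of Proposition \ref{prop:SvsSprime} is verifying that the Lemma \ref{lem:bound on H}-type estimate, applied under the conditioning $A_t$, uses only input bounds that are encoded in $\overline{\mathcal{A}}$ and $\overline{\tau}_6$: the lower bound on $\overline{Y}_t(u)$ for $u\le t-t_0^-$ (controlling the downward push on $W$ imposed by $A_t$), and the lower bound on $\overline{Y}_t(t-t_0^-)$ (controlling the height the walk must reach). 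Both are available by construction, so modulo bookkeeping of the $\beta_0$-exponents, the original proof transfers without modification.
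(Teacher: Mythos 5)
The core coupling idea is right, and you have correctly identified the two structural inputs that Proposition~\ref{prop:SvsSprime} uses. However, the second step of your proposal has a genuine gap. Your containment
\[
\text{symmetric difference}\ \subseteq\ A_t\cap\{\exists s\ge t-t_0^-:\ W(s)\ge \overline{Y}_t(t-t_0^-)\}
\]
discards the \emph{growth} of the barriers beyond $t-t_0^-$ and replaces them by a single fixed level. The resulting event $\{\exists s\ge t-t_0^-:\ W(s)\ge \overline{Y}_t(t-t_0^-)\}$ has unconditional probability one: a recurrent simple random walk reaches any fixed level. Conditioning on $A_t$ does not help, because $A_t$ only constrains $W$ on $[0,t-t_0^-]$ and in fact forces $W(t-t_0^-)\le \overline{Y}_t(t-t_0^-)$, so from the time $t-t_0^-$ onward the walk is unconstrained and will hit the level almost surely. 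So the probability you are left estimating is essentially $\PP(A_t)$, which is not $O(\alpha_0^{100})$.

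Moreover, the ``finite horizon'' you invoke does not apply to the event you have written down. The horizon $[t-t_0^-,\,t-t_0+\alpha_0^{-3}]$ comes from the fact that $\overline{Y}_t=\infty$ beyond that window, so it only caps the $\overline{Y}_t$-penetration event ($P_1$ in the notation of the proof of Proposition~\ref{prop:SvsSprime}). The $\overline{Y}_t'$-penetration ($P_2$) is over all of $[t-t_0^-,\infty)$, since $\overline{Y}_t'$ is a finite Poisson profile forever. What kills $P_2$ is precisely the drift that the Poisson ticks $|\Pi'_{\alpha_0}[t-s,t_0^-]|$ add to the barrier, via the geometric martingale $\xi^{U'(s)}$ with $\xi=(1+2\alpha_0)^{-1}$ and optional stopping, giving $\PP(T'<\infty)\le \exp(-\beta_0^\theta/400)$. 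You have thrown that drift away by collapsing both barriers to the static level $\overline{Y}_t(t-t_0^-)$. The ``dyadic-scale ballot'' argument from Lemma~\ref{lem:bound on H} cannot recover this either: it yields only a polynomial bound of the form $\alpha^{-\epsilon}/\sqrt{s+1}$ and is moreover formulated for a rate-$\alpha$ Poisson profile, not for a fixed deterministic barrier at a single level. The fix is simply to transcribe the proof of Proposition~\ref{prop:SvsSprime} faithfully, keeping the two penetration events $P_1$ and $P_2$ with their full $s$-dependent barriers and reusing the exact same two estimates (the $\mathcal{A}_1$-plus-concavity argument for $P_1$, and the $(1+2\alpha_0)^{-U'(s)}$ optional-stopping argument for $P_2$), checking only that the inputs ($\overline{\mathcal{A}}_1$, $\overline{\mathcal{A}}_2$, $\overline{\tau}_6$) provide the needed lower bounds on the aggregate profile in the sandwich setting---which they do, as you correctly observe.
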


\begin{proof}[Proof of Proposition \ref{prop:SvsSprime}]
	We prove the first statement of the proposition, and then the second one follows analogously. Let $t\in [t_0,\tau_{6}(\alpha_0,t_0,2)]$, and for simplicity we write $S'(t)=S'(t;t_0^{-},\alpha_0)$. Moreover, we write $\PP(\ \cdot \ )$ to denote the probability over both the random walk $W$ and the independent point process $\Pi_{\alpha_0}'$ together. The main idea is to compare the formulas \eqref{eq:def:S:in induction} and \eqref{eq:def:Sprime} by coupling the random walk $W$ together. This gives that
	\begin{equation}\label{eq:reg:history:SminusSprime}
	\begin{split}
	|S(t)- S'(t)|
	&\leq
	\PP_{\Pi_{\alpha_0}'}\left(\left.W(s)\le Y_t(s) \ \forall s\le t-t_0^{-}, \textnormal{ and }  \exists s'\ge t-t_0^{-} : W(s')\ge Y_t(s') \ \right| \ Y_t \right)
	\\
	&\ +\PP_{\Pi_{\alpha_0}'}\left(\left.W(s)\le Y_t(s) \ \forall s\le t-t_0^{-}, \textnormal{ and }  \exists s'\ge t-t_0^{-} : W(s')\ge Y_t'(s') \ \right| \ Y_t\right)\\
	&=:P_1 + P_2.
	\end{split}
	\end{equation}

	We begin with bounding $P_2$. Since $t\leq \tau_{6}(\alpha_0,t_0,2)$, \begin{equation}\label{eq:reg:SvsSprime:base}
	Y_t(t_0^{-}) \ge \frac{\alpha_0}{200}(t-t_0^{-}).
	\end{equation} Thus, we can write
	\begin{equation}\label{eq:reg:history:P2}
	\begin{split}
	P_2 &\le \PP \left(W(t-t_0^{-}) \ge \frac{\alpha_0}{400} (t-t_0^{-})  \right)\\
	&+
	\PP \left( \exists s\ge t_0^{-}: \ W(s) \ge |\Pi'_{\alpha_0}[t-s, t-t_0^{-} ]| + \frac{\alpha_0}{400}(t-t_0^{-}) \right).
	\end{split} 
	\end{equation}
	Note that since $t-t_0^{-} \ge \alpha_0^{-2} \beta_0^\theta$, 
	\begin{equation}
	\frac{\alpha_0}{400}(t-t_0^{-}) \ge
	\sqrt{t-t_0^{-}} \log^{\theta/3}(t-t_0^{-}).
	\end{equation}
	Plugging this into the first term of \eqref{eq:reg:history:P2}, we can see that
	\begin{equation}\label{eq:reg:history:P2:1}
	\PP \left(W(t-t_0^{-}) \ge \frac{\alpha_0}{400} (t-t_0^{-})  \right)\le \alpha_0^{101}.
	\end{equation}
	On the other hand, to study the second term of \eqref{eq:reg:history:P2}, we define
	\begin{equation}
	U'(s) = |\Pi_{\alpha_0}'[0,s] |-W(s) + \frac{\alpha_0}{400}(t-t_0^{-}),
	\end{equation}
	and define the stopping time $T':= \inf\{s>0: U'(s)\le0\}$. Then, we can write
	\begin{equation}
	\begin{split}
		\PP \left( \exists s\ge t_0^{-}: \ W(s) \ge |\Pi'_{\alpha_0}[t-s, t-t_0^{-} ]| + \frac{\alpha_0}{400}(t-t_0^{-}) \right)
		=
		\PP \left(T'<\infty \right)\\
		 = \lim_{s\to \infty} \mathbb{E} \left[ \left(\frac{1}{1+2\alpha_0} \right)^{U'(s) \wedge T' } \right].
	\end{split}
	\end{equation}
	As we saw in \eqref{eq:speed:Ut mg conv}, $(1+2\alpha_0)^{-U'(s)}$ is a martingale, and hence the Optimal Stopping Theorem tells us that
	\begin{equation}\label{eq:reg:history:P2:2}
	\PP(T'<\infty) = \mathbb{E}\left[\left(\frac{1}{1+2\alpha_0} \right)^{U'(0)} \right] \le \left(\frac{1}{1+2\alpha_0} \right)^{\frac{\alpha_0^{-1}\beta_0^\theta}{400}} \le \exp\left( -\frac{\beta_0^\theta}{400}\right) \le \alpha_0^{101}.
	\end{equation}
	Combining \eqref{eq:reg:history:P2:1} and \eqref{eq:reg:history:P2:2} tells us that $P_2 \le 2\alpha_0^{101}$.
	
	To control $P_1$, we first note that $\mathcal{A}_1(\alpha_0,t_0)$ and $t\le \tau_{6}(\alpha_0,t_0,2)$ implies the following for all $u\le t_0^{-}$:
	\begin{equation}
	\begin{split}
	X_{t } - X_u
	&=
	X_t - X_{t_0^{-} }+ X_{t_0^{-} } - X_u\\
	&\ge 
	\frac{\alpha_0}{200}(t-t_0^{-})+
	\sqrt{t_0^{-}-u+C_\circ} \log^2 (t_0^{-}-u +C_\circ) -\alpha_0^{-1}\beta_0^{\theta/2}\\
	&\ge
	\frac{\alpha_0}{300}(t-t_0^{-})+\sqrt{t_0^{-}-u+C_\circ} \log^2 (t_0^{-}-u +C_\circ),
	\end{split}
	\end{equation}
	where the last line is from $t-t_0^{-}\ge \alpha_0^{-2} \beta_0^\theta$. We can also see that
	\begin{equation}
	\frac{\alpha_0}{300}(t-t_0^{-}) \ge \sqrt{t-t_0^{-}+C_\circ} \log^2 (t-t_0^{-}+C_\circ), 
	\end{equation}
	which gives
	\begin{equation}
	\begin{split}
	X_{t } - X_u
	&\ge
	\sqrt{t-t_0^{-}+C_\circ} \log^2 (t-t_0^{-}+C_\circ)
	+
	\sqrt{t_0^{-}-u+C_\circ} \log^2 (t_0^{-}-u +C_\circ)\\
	&\ge
	\sqrt{t-u+C_\circ} \log^2 (t-u+C_\circ),
	\end{split}
	\end{equation}
	where the last inequality follows from the fact that the function $\sqrt{x+20}\log^2(x+20)$ is increasing and concave. From this estimate, we can see that
	\begin{equation}\label{eq:reg:history:P1}
	P_1 \le 
	\PP\left(\exists s'\ge t-t_0^{-}: W(s') \ge \sqrt{s'}\log^2(s') \right)
	\le e^{-\beta_0^3
	} \le \alpha_0^{101},
	\end{equation}
	where the second inequality is a standard estimate on a simple random walk. We conclude the proof by combining \eqref{eq:reg:history:SminusSprime}, \eqref{eq:reg:history:P2} and \eqref{eq:reg:history:P1}. 
	
	To obtain the second statement of the proposition, we apply the same argument with 
	\begin{equation}
	Y_t(t_0) \ge \frac{\alpha_0}{2\beta_0^{C_\circ}} (t-t_0),
	\end{equation}
	instead of \eqref{eq:reg:SvsSprime:base}. This estimate is obtained from the definition of $\tau_8$.
\end{proof}

As a concluding remark of the section, we record a simple lemma on $\overline{S}(t)$ and $\underline{S}(t)$ as a step towards establishing Theorem \ref{thm:induction:base:main}. 
\begin{lem}\label{lem:reg:base:A}
	Under the setting of Theorem \ref{thm:induction:base:main} and the discussions above, we have
	\begin{equation}
	\PP \left(\Pi_{\overline{S}}(-\infty,t_0] \in \overline{\mathcal{A}}(\alpha_0,t_0) \right) \ge 1- \exp \left(-\beta_0^{4} \right).
	\end{equation}
	The same holds for $\underline{S}(t)$.
\end{lem}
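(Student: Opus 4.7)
The plan is to verify each of the three events $\mathcal{A}_1,\mathcal{A}_2,\mathcal{A}_3$ comprising $\overline{\mathcal{A}}(\alpha_0,t_0)$ with failure probability much smaller than $\exp(-\beta_0^4)$, then conclude by a union bound; the argument for $\underline{S}$ is analogous and is obtained by replacing $\Pi_{\overline{S}}$ everywhere by the rate-$\alpha_0$ Poisson process on all of $(-\infty,t_0]$. I focus on $\overline{S}$. Since $t_0-t_0^-=\alpha_0^{-2}\beta_0^\theta\ll\alpha_0^{-3}$, the interval $[t_0^-,t_0]$ lies inside the rate-$\alpha_0$ Poisson region of $\overline{S}$, while for $s<t_0-\alpha_0^{-3}$ we have $\overline{S}=\infty$, making $|\Pi_{\overline{S}}[s,t_0^-]|=\infty$ and the $\mathcal{A}_1$ bound vacuous there.

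For $\mathcal{A}_2$, the count $|\Pi_{\overline{S}}[t_0^-,t_0]|$ is Poisson with mean $\alpha_0^{-1}\beta_0^\theta$, so standard Poisson concentration places it in $[\alpha_0^{-1}\beta_0^\theta/4,\,4\alpha_0^{-1}\beta_0^\theta]$ except with probability $2\exp(-c\alpha_0^{-1}\beta_0^\theta)$. For $\mathcal{A}_1$ restricted to $s\in[t_0-\alpha_0^{-3},t_0^-]$, I would split into two regimes. When $t_0^--s\le\alpha_0^{-2}\beta_0^{\theta-10}$, the quantity $\sqrt{t_0^--s+C_\circ}\log^2(t_0^--s+C_\circ)$ is bounded by a constant multiple of $\alpha_0^{-1}\beta_0^{(\theta-10)/2+2}$, which is dominated by $\alpha_0^{-1}\beta_0^\theta$ thanks to the largeness of $\theta$, so the required lower bound of $\mathcal{A}_1$ is nonpositive and the event is automatic. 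For larger $t_0^--s$, the Poisson mean $\alpha_0(t_0^--s)$ exceeds $\sqrt{t_0^--s}\log^2(\cdot)$ by a factor at least $\beta_0^{(\theta-10)/2-O(1)}$, so a Poisson lower-tail bound gives the inequality at any fixed $s$ with probability $1-\exp(-c\alpha_0^{-1}\beta_0^{\theta-10})$; a union bound over a unit-spaced grid of $s$ values in $[0,\alpha_0^{-3}]$, combined with the monotonicity of $s\mapsto|\Pi_{\overline{S}}[s,t_0^-]|$, then yields the required uniform bound with exponentially small failure probability.

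For $\mathcal{A}_3$ I would write $\alpha_0'=K_{\alpha_0}^*\int_{t_0^-}^{t_0}F(t_0-s)\,d\Pi_{\overline{S}}(s)$ with $F(x):=\int_x^\infty K_{\alpha_0}(y)\,dy$, and split this into a deterministic part $K_{\alpha_0}^*\alpha_0\int_0^{t_0-t_0^-}F(u)\,du$ and a martingale part $K_{\alpha_0}^*\int_{t_0^-}^{t_0}F(t_0-s)\,d\widetilde{\Pi}_{\alpha_0}(s)$. Using $\int_0^\infty F(u)\,du=\int_0^\infty yK_{\alpha_0}(y)\,dy=(K_{\alpha_0}^*)^{-1}$ together with the tail estimate $\int_{\alpha_0^{-2}\beta_0^\theta}^\infty F(u)\,du=O(\alpha_0^{100})$ from Lemma \ref{lem:estimate on K:intro}, the deterministic part equals $\alpha_0+O(\alpha_0^{100})$. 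Since $|F|\le 1$ and $K_{\alpha_0}^*\le 2\alpha_0^2$, Corollary \ref{cor:concentration of integral} applied with $M=\alpha_0^{-1}\beta_0^\theta$ and $a=\beta_0^5$ gives a deviation for the martingale part of at most $C\alpha_0^{3/2}\beta_0^{\theta/2+5}\le\alpha_0^{3/2}\beta_0^\theta$ with probability $\ge 1-Ce^{-\beta_0^5}$, which is exactly the bound demanded by $\mathcal{A}_3$.

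The main obstacle is purely arithmetic: ensuring that the slack $-\alpha_0^{-1}\beta_0^\theta$ in the defining inequality of $\mathcal{A}_1$ suffices to cover the range of $s$ for which the Poisson lower-tail is too weak. This reduces to comparing powers of $\beta_0$ and exploits the largeness of $\theta$ relative to the other fixed exponents. The deterministic calculation for $\mathcal{A}_3$ mirrors the one between \eqref{eq:reg:ib:med1} and \eqref{eq:reg:ib:med2} in the proof of Theorem \ref{thm:branching:ib}, and the remaining estimates are routine applications of the martingale concentration lemmas from Section \ref{subsec:fixed:mgconcen}.
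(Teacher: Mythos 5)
Your proposal is correct and takes essentially the same route as the paper's own (very terse) proof: verify $\mathcal{A}_1, \mathcal{A}_2$ by elementary Poisson concentration for the fixed-rate process, and verify $\mathcal{A}_3$ by the martingale-plus-deterministic decomposition of $\mathcal{L}(t_0;\Pi_{\alpha_0}[t_0^-,t_0],\alpha_0)$ already used between \eqref{eq:reg:ib:med1} and \eqref{eq:reg:ib:med2}. You simply spell out the details the paper leaves implicit, and your exponent bookkeeping (the regime split for $\mathcal{A}_1$, and the comparison $\theta/2+5\le\theta$ for $\mathcal{A}_3$) checks out.
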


\begin{proof}
	The estimate on the event $\{\Pi_{\overline{S}}(-\infty,t_0] \in \overline{\mathcal{A}}_1(\alpha_0,t_0) \cap \overline{\mathcal{A}}_2(\alpha_0,t_0) \}$ follows straight-forwardly from the properties of a fixed rate Poisson process, and we omit the details. The  bound on the event $\overline{\mathcal{A}}_3(\alpha_0,t_0)$ comes from the same argument as \eqref{eq:reg:ib:med1} and \eqref{eq:reg:ib:med2}, replacing $t_0^\flat$ by $t_0$.
\end{proof}

\section{The inductive analysis on the speed: Part 1} \label{sec:reg:conti of reg}

The purpose of this section is to formulate the argument discussed in Section \ref{subsubsec:regoverview:overview:contiofreg}. Before we state the main theorem, we briefly explain a technical device required to understand the procedure of the induction argument. In the definition of $\tau$ from \eqref{eq:def:tau:induction}, recall that there was a parameter $\kappa \in \{\frac{1}{2},\, 2 \}$ that provides a constant multiplicative room in controlling the quantities inside $\tau_i$'s. One of the main observations in the induction argument is that even if we have $\{\tau(\alpha_0, t_0, \kappa) > \acute{t}_0\}$ with a weaker (i.e., larger) $\kappa$, the control on the regularity will bootstrap as the time passes by and at a larger time we still have $\{\tau(\alpha,t_0',\kappa') \ge \hat{t}_0 \}$ but with a stronger (i.e., smaller) $\kappa'$, with $t_0' > t_0$. Note that we can only have  this bootstrapped estimate with $t_0'>t_0$, not with $t_0$, since the initial assumption is made under a weaker $\kappa$. By doing this bootstrapping, we acquire enough room to cover the error that comes from changing $\alpha_0$ to $\alpha_1$ in the next time step. 

Having this idea in mind, we introduce another notation before stating the main objective. We define ${\tau}^+$ to be
\begin{equation}\label{eq:def:ind1:tauacute:basic}
{\tau}^+(\kappa ) := \tau(\alpha_0, t_0^+, \kappa),
\end{equation} 
except that we use the same $S_1(t)=S_1(t;t_0^-,\alpha_0)$ when defining $\tau(\alpha_0, t_0^+, \kappa)$. That is, for instance, we define
\begin{equation}\label{eq:def:ind1:tauacute}
\begin{split}
{\tau}^+_2(\kappa) &:= \inf \{t\ge t_0^+: S_1(t;t_0^-,\alpha_0) \ge \kappa \alpha_0 \beta_0^{C_\circ} \};\\ 
{\tau}^+_4(\kappa)&:=
\inf\left\{t\ge {t}_0^+: \intop_{t_0^+}^{t} (S(s)-\alpha_0)^2ds \ge \kappa \alpha_0 \beta_0^{25\theta} \right\};\\
{\tau}^+_6(\kappa)
&:= 
\inf\left\{t\ge {t}_0^+: |\Pi_S[t_0^{-},\, t]| \le \frac{1}{100\kappa} \alpha_0 (t-t_0^{-}) \right\},
\end{split}
\end{equation}
and corresponding analogue for all other ${\tau}^+_i(\kappa), \ i=1,3,5,7,8$, which are all defined in a straight-forward way unlike the  three mentioned above. Then, set ${\tau}^+(\kappa)$ to be the minimum of ${\tau}^+_i$'s. The main result for this section can be stated as follows.

\begin{thm}\label{thm:reg:conti:main}
	Recall the definitions of $\tau$ and $\mathcal{A}$ from \eqref{eq:def:tau:induction} and \eqref{eq:def:A:induction}, respectively. Write $\tau(\kappa)=\tau(\alpha_0,t_0,\kappa)$,   $\mathcal{A}=\mathcal{A}(\alpha_0,t_0)$, and set ${\tau}^+(\kappa)$ as above. For all sufficiently small $\alpha_0>0$ and any $t_0>0$, we have
	\begin{equation}\label{eq:ind1:main}
	\PP \left({\tau}^+(1/2 ) < \hat{t}_0 \ | \ \mathcal{A} \right)
	\le
	\PP \left(\tau(2 ) \le  \acute{t}_0 \ | \ \mathcal{A} \right)
	+
	\exp\left(-\beta_0^{1.9} \right).
	\end{equation}
\end{thm}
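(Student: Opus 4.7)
The plan is to show that on the event $\{\tau(2)>\acute{t}_0\}\cap \mathcal{A}$ (weak regularity up to $\acute{t}_0$), each of the individual stopping times $\tau_i^+(1/2)$, $i=1,\ldots,8$, exceeds $\hat{t}_0$ with probability at least $1-\exp(-\beta_0^{1.9}/10)$, then close by a union bound. The point is that the buffer of length at least $\tfrac{1}{2}\alpha_0^{-2}\beta_0^\theta$ between $t_0$ and $t_0^+$ gives us just enough room to bootstrap the constant $\kappa$ from $2$ down to $1/2$.

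First, Proposition \ref{prop:SvsSprime} together with $\tau_6^+(2)\wedge \tau_8^+(2)>\hat{t}_0$ lets us replace $S(t)$ by $S'(t;t_0^-,\alpha_0)$ (or by $S'(t;t_0,\alpha_0)$ past $\acute{t}_0$) up to errors of size $\alpha_0^{100}$. This justifies the use of the first- and second-order expansions \eqref{eq:speed:1storder main}--\eqref{eq:speed:2ndorder main}. Next I handle $\tau_1^+$: on $\{\tau(2)>\acute{t}_0\}$ the lower bound $\tau_6$ forces $Y_t(s)\ge \tfrac{\alpha_0}{200}\min(s,t-t_0^-)$, so by direct application of \eqref{eq:def:S:in induction} and the optional-stopping computation already used in the proof of Proposition~\ref{prop:SvsSprime} (the $P_2$ bound via the martingale $(1+2\alpha_0)^{-U_s}$), one gets $S(t)\le \alpha_0(1+o(1))$ for $t\in[t_0^+,\hat{t}_0]$, which is a much stronger statement than $S(t)\le \tfrac{1}{2}\alpha_0\beta_0^{C_\circ}$.

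Next I control the core quantity $\tau_3^+$. On $\{\tau(2)>\acute{t}_0\}$, the definitions of $\tau_4(2)$, $\tau_5(2)$, $\tau_1(2)$, $\tau_2(2)$ verify the three hypotheses \eqref{eq:error:assumptions} of Proposition~\ref{prop:fixed perturbed:error} with the stopping time $\tau=\tau(2)$, since $\beta_0^{25\theta}\le \alpha_0^{-\epsilon/400}$ and $\alpha_0\beta_0^{C_\circ}\le \alpha_0^{1-\epsilon/400}$. Applying that proposition with $t^-=t_0^-$ gives the bound $-\alpha_0^{3/2-\epsilon}\sigma_1 \le S'(t)-S_1(t)\le \alpha_0^{1-\epsilon}\sigma_1\sigma_2$ on $[t_0^+,\hat{t}_0]$ with probability $1-2\exp(-\alpha_0^{-\epsilon/3000})$, which dominates $\exp(-\beta_0^{1.9})$; combined with the first step this gives $\tau_3^+(1/2)\ge \hat{t}_0$ w.h.p.. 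Once $\tau_3^+$ is under control, I then control $|S_1(t)-\alpha_0|$ via the integral identity \eqref{eq:integralform:branching} applied to $Q_1(s,t)=\mathcal{R}_c(s,t;\Pi_S[t_0^-,s],\alpha_0)$ with $Q=S$. The drift term $\int K^*_{\alpha_0}(t-s)(S(s)-S_1(s))ds$ is handled by the bound from $\tau_3$ and Lemmas~\ref{lem:ind1:pi1int basicbd:basic}--\ref{lem:ind1:pi1int basicbd}; the martingale term $\int K^*_{\alpha_0}(t-s)d\widetilde{\Pi}_S(s)$ is handled by Corollary~\ref{lem:concentrationofint:continuity}, using $\tau_1(2)$ to control the quadratic variation and the decay estimate for $K^*_{\alpha_0}$ from Lemma~\ref{lem:estimat for K tilde:intro}. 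This yields $\tau_2^+(1/2)\ge \hat{t}_0$, and combining with $\tau_3^+$ gives $|S(t)-\alpha_0|\le \alpha_0\beta_0^{C_\circ+1}/4$ on $[t_0^+,\hat{t}_0]$, from which $\tau_4^+(1/2),\tau_5^+(1/2)\ge \hat{t}_0$ follow by squaring and integrating against the interval length $\le 2\alpha_0^{-2}\beta_0^{10\theta}$ (the $25\theta$ vs $10\theta+2C_\circ$ gap absorbs any polylogarithmic losses).

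Finally, the size-of-aggregate stopping times $\tau_6^+,\tau_7^+,\tau_8^+$ are controlled by Corollary~\ref{cor:concentration of integral} and Corollary~\ref{cor:concentration:numberofpts each interval} applied to the rate $S(t)$, which by the previous steps is $\alpha_0(1+o(1))$ on $[t_0^+,\hat{t}_0]$; the bound $|\Pi_S[t_0^-,t]|\ge \tfrac{1}{50}\alpha_0(t-t_0^-)$ needed for $\tau_6^+(1/2)$ uses the event $\mathcal{A}_2$ for the contribution of $[t_0^-,t_0]$ and the concentration bound for the contribution of $[t_0,t]$. The hardest step technically is the control of $\tau_3^+$, not because the calculation itself is hard but because it is the hinge: it requires the full strength of Section~\ref{sec:fixedrate} (the Azuma-type lemmas, the $\textbf{J}$ and $\textbf{Q}$ estimates, and the Radon--Nikodym perturbation argument) all applied simultaneously along the stopping-time $\tau(2)$. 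All other stopping times can be reduced to $\tau_3^+$ via the decomposition $|S(t)-\alpha_0|\le |S(t)-S_1(t)|+|S_1(t)-\alpha_0|$ and standard concentration, so once $\tau_3^+(1/2)\ge \hat{t}_0$ is established the remaining seven bounds follow with room to spare, and a union bound gives \eqref{eq:ind1:main}.
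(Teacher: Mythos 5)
Your overall strategy---fix the weak event $\{\tau(2)>\acute{t}_0\}\cap\mathcal{A}$, then show each $\tau_i^+(1/2)$ is unlikely to fire before $\hat{t}_0$ and union-bound---is the one the paper uses, and your handling of $\tau_3^+$ via Propositions~\ref{prop:SvsSprime} and~\ref{prop:fixed perturbed:error}, and of $|S_1-\alpha_0|$ via the decomposition \eqref{eq:integralform:branching} together with Corollary~\ref{lem:concentrationofint:continuity}, Lemma~\ref{lem:estimat for K tilde:intro}, and Lemmas~\ref{lem:ind1:pi1int basicbd:basic}--\ref{lem:ind1:pi1int basicbd}, is exactly Lemmas~\ref{lem:ind1:tau6} and~\ref{lem:ind1:taux}. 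But there are two genuine gaps.

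The first is your treatment of $\tau_1^+$: you invoke the \emph{lower} bound on $Y_t$ coming from $\tau_6$, and then try to apply the $P_2$-style optional-stopping computation from Proposition~\ref{prop:SvsSprime}, to conclude $S(t)\le\alpha_0(1+o(1))$. This has the direction reversed. In the formula $S(t)=\tfrac12\PP(W(s)\le Y_t(s)\ \forall s)$, a lower bound on $Y_t$ only gives a \emph{lower} bound on $S(t)$; to bound $S(t)$ from above you need an \emph{upper} envelope on $Y_t$. That envelope is what $\tau_7$ and $\mathcal{A}_2$ supply, namely $Y_t(s)\le 2\beta_0^4+2\alpha_0\beta_0^4 s$, and the martingale estimate that turns a linear upper barrier into an upper bound on the persistence probability is Lemma~\ref{lem:ind1:tau4:aux}, not the escape-probability bound $P_2$. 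Moreover the bound one gets this way is $S(t)\le 12\alpha_0\beta_0^8$, which is sufficient for $\tau_1^+(1/2)$ but very far from your claimed $\alpha_0(1+o(1))$: no deterministic bound of that strength is available on the regular event, because $S(t)$ fluctuates by $\Theta(\alpha_0\beta_0^{C_\circ}\sigma_1(t;S))$ whenever particles cluster near $t$.

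The second gap is your claim that once $\tau_3^+$ is controlled, all seven remaining stopping times reduce to it via $|S-\alpha_0|\le|S-S_1|+|S_1-\alpha_0|$. For $\tau_2^+(1/2)$ this does not close: the bound $|S_1(t)-\alpha_0|\le\alpha_0\beta_0^{C_\circ}\sigma_1(t;S)+O(\alpha_0^{3/2}\beta_0^{6\theta})$ degenerates to $S_1(t)\lesssim\alpha_0\beta_0^{C_\circ}$ when $\sigma_1=1$, which is twice the threshold $\tfrac12\alpha_0\beta_0^{C_\circ}$ you need to rule out, not below it. The paper handles $\tau_2^+(1/2)$ by a separate direct estimate (Lemma~\ref{lem:ind1:tau5}): it writes $S_1(t)=\int K_{\alpha_0}(t-x)\,d\Pi_S(x)$, splits the integral into blocks of length $\alpha_0^{-1}$, uses $\tau_7$ and $\mathcal{A}_2$ to bound the number of points in each block, and uses the decay of $K_{\alpha_0}$ from Lemma~\ref{lem:estimate on K:intro} to sum. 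So two of the eight stopping times ($\tau_1^+$ and $\tau_2^+$) require direct arguments that bypass $\tau_3^+$ entirely; your meta-claim that $\tau_3^+$ is the unique hinge is not correct.
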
 

\begin{remark}
	The equations involving the notation $\PP (\ \cdot \ | \, \mathcal{A} ) $ are understood as follows throughout Sections \ref{sec:reg:conti of reg} and \ref{sec:reg:next step}: The equation holds for any $\Pi_S(-\infty,t_0]$ such that $\Pi_S(-\infty,t_0]\in \mathcal{A}$.
\end{remark}

As done in Section \ref{sec:criticalbranching}, our idea is to show that each stopping time ${\tau}^+_i(1/2)$ is likely to be larger than $\tau(2)$ if $\tau(2)\ge \acute{t}_0$. In the following sections, we conduct this task through three major steps as follows.
\begin{enumerate}
	\item In Section \ref{subsec:ind1:1storder error}, we study $|S(t)-S_1(t)|$ to control ${\tau}^+_3$, done by utilizing the result we saw in Section \ref{subsec:fixed:error}.
	
	\item In Section \ref{subsec:ind1:S1vsalpha}, we estimate the gap $|S_1(t)-\alpha_0|$ based on ideas from Section \ref{subsec:branching:mg}.

	\item In Section \ref{subsec:ind1:Svsalpha}, we control all stopping times except ${\tau}^+_3$, with the estimate 
	\begin{equation}\label{eq:ind1:Svsalpha:decomp:basic}
	|S(t)-\alpha_0|
	\le 
	|S(t)-S_1(t)|+|S_1(t)-\alpha_0|
	\end{equation}
	obtained from Sections \ref{subsec:ind1:1storder error}, \ref{subsec:ind1:S1vsalpha}.
\end{enumerate}  
 In each of these subsections, we add brief explanations on how the result is generalized to the case of induction base, Theorem \ref{thm:induction:base:main}. The only major difference compared to the proof of Theorem \ref{thm:reg:conti:main} is the availability of the assumption $\{\tau(2)>\acute{t}_0 \}$. Although we can only be interested in $t \ge t_0^+$ when establishing Theorem \ref{thm:reg:conti:main}, we need to consider all $t\ge t_0$ for Theorem \ref{thm:induction:base:main}. This is the place where Theorem \ref{thm:branching:ib} comes in to play.

As a consequence of the arguments used in Theorem \ref{thm:reg:conti:main}, we establish
\begin{itemize}
	\item [(4)]  Theorem \ref{thm:induction:base:main} in Section \ref{subsec:ind1:reg of fixed};
	
	\item [(5)] Proposition \ref{prop:ind1:growth} in Section \ref{subsec:ind1:growth}.
\end{itemize}
For convenience, we restate Theorem \ref{thm:induction:base:main} to shape it into a more convenient  form to work with. For its proof, we only discuss the case of $\overline{S}(t)$, since the corresponding result for $\underline{S}(t)$ follows analogously.

\begin{prop}\label{prop:ind1:base:main}
	Assume the setting of Theorem \ref{thm:induction:base:main} and the discussions above, recall the definition of $\tau_{\textnormal{B}}=\tau_{\textnormal{B}}(\alpha_0,t_0)$ from \eqref{eq:def:tau:branching:ib} we define
	\begin{equation}
\begin{split}
	\overline{\tau}=\overline{\tau}(\alpha_0,t_0):=\tau_{\textnormal{B}} \wedge \min\left\{ \overline{\tau}_i(\alpha_0,t_0,\kappa=2) \,:\, 1\le i \le 8 \right\}.
\end{split}
	\end{equation}
	(Note that $\overline{\tau}_3(\alpha_0,t_0)$ does not depend on $\kappa$ although we wrote as above for convenience.) 
	Then, 
	\begin{equation}
	\PP \left( \overline{\tau}(\alpha_0,t_0) >\acute{t}_0, \ \Pi_{\overline{S}}(-\infty,t_0]\in \overline{\mathcal{A}}(\alpha_0,t_0) \right) \ge 1-\exp\left(-\beta_0^{1.9}\right).
	\end{equation}
	The same result holds for $\Pi_{\underline{S}}$ and $\underline{\tau}(\alpha_0,t_0).$
\end{prop}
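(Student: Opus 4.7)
The statement breaks into two independent pieces: (i) the initial-data event $\Pi_{\overline{S}}(-\infty,t_0]\in\overline{\mathcal{A}}(\alpha_0,t_0)$, which is immediate from Lemma \ref{lem:reg:base:A} with room to spare ($e^{-\beta_0^4}\ll e^{-\beta_0^{1.9}}$); and (ii) the stopping-time event $\overline{\tau}(\alpha_0,t_0)>\acute{t}_0$. The real work is (ii), which serves as the induction base for the entire multi-scale scheme and therefore cannot bootstrap from regularity at an earlier time. My plan is to replace the ``prior-regularity'' input used in Theorem \ref{thm:reg:conti:main} by the unconditional control on the age-dependent critical branching process furnished by Theorem \ref{thm:branching:ib}, and otherwise follow the same architecture.

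The concrete setup: since $\overline{S}$ is identically $\alpha_0$ on $[t_0^-,t_0]$ by \eqref{eq:def:sandwich speed}, the restriction $\Pi_{\overline{S}}[t_0^-,t_0]$ is exactly a rate-$\alpha_0$ Poisson process, so we may define $R(t):=\mathcal{R}_b(t;\Pi_{\overline{S}}[t_0^-,t_0],\alpha_0)$ as in \eqref{eq:def:Rb} and invoke Theorem \ref{thm:branching:ib} to obtain $\tau_{\textnormal{B}}>\acute{t}_0$ with probability $\geq 1-e^{-\beta_0^2}$. This  places $R(t)$ within $\alpha_0\beta_0^{5\theta}\sigma_1(t;\hat R_0^+)+\alpha_0^{3/2}\beta_0^{5\theta}$ of $\alpha_0$ throughout $[t_0,\acute{t}_0]$, and controls the local density of $\Pi_{\hat R_0^+}$. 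The first-order approximation $\overline{S}_1(t)=\int_{t_0^-}^t K_{\alpha_0}(t-s)\,d\Pi_{\overline{S}}(s)$ satisfies the \emph{same} defining equation as $R(t)$ except that its second-integral driving measure is $d\Pi_{\overline{S}}$ (rate $\overline{S}$) rather than $d\Pi_R$ (rate $R$); coupling both through a common planar Poisson sheet $\Pi$ and using the sandwich construction $R_0^\pm$ of Proposition \ref{prop:branching:ib:sandwich} together with the perturbation analysis of Section \ref{subsubsec:branching:ib:tau4} gives $|\overline{S}_1-R|\lesssim \alpha_0^{3/2}\beta_0^{O(\theta)}$ on $[t_0,\acute{t}_0\wedge\overline{\tau}]$, which in turn yields $\overline{\tau}_2>\acute{t}_0$.

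With $\overline{\tau}_2$ in hand, the remaining stopping times cascade exactly as in the outline of Section \ref{subsubsec:regoverview:overview:contiofreg}. Corollary \ref{cor:SvsSprime} replaces $\overline{S}$ with $\overline{S}'$ up to $\alpha_0^{100}$, so the hypotheses \eqref{eq:error:assumptions} of Proposition \ref{prop:fixed perturbed:error} are met (using the bounds that are simultaneously being established), yielding $\overline{\tau}_3>\acute{t}_0$; the triangle decomposition \eqref{eq:ind1:Svsalpha:decomp:basic} then gives $\overline{\tau}_1$; integrating this against itself and applying Corollary \ref{cor:concentration of integral} gives $\overline{\tau}_4,\overline{\tau}_5$; the point-count stopping time $\overline{\tau}_7$ follows from $\tau_{\textnormal{B}2}$ of \eqref{eq:def:tau:branching:ib} together with Corollary \ref{cor:concentration:numberofpts each interval} applied to $\Pi_{\overline{S}}$ under the upper bound from $\overline{\tau}_1$; and $\overline{\tau}_6,\overline{\tau}_8$ follow from elementary Poisson estimates using the fact that the initial-data interval $[t_0^-,t_0]$ is long enough to accumulate $\Theta(\alpha_0^{-1}\beta_0^\theta)$ points. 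For $\underline{S}$ the argument is strictly simpler because $\underline{S}_{t_0,\alpha_0}\equiv\alpha_0$ extends to all of $(-\infty,t_0]$, so no cutoff is involved.

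The principal technical obstacle is the interdependence of the $\overline{\tau}_i$: each control assumes the others, and one must close the loop. I plan to handle this by declaring a single compound stopping time $\overline{\tau}$ from the outset, working on the event $\{t\le \overline{\tau}\}$ so that all bounds are simultaneously in force, and showing that each individual $\overline{\tau}_i$ has probability $\le e^{-\beta_0^2}$ of being the one to fire before $\acute{t}_0$, whence a union bound absorbs into the target budget $e^{-\beta_0^{1.9}}$. Within this framework the delicate sub-step is the coupling $|\overline{S}_1-R|\lesssim \alpha_0^{3/2}\beta_0^{O(\theta)}$, where the mismatch between the driving measures $\Pi_{\overline{S}}$ and $\Pi_R$ must be controlled by a Radon-Nikodym argument in the spirit of Section \ref{subsec:fixed:perturbed} combined with the inductive-in-time estimate that closes the implication $\overline{S}_1\approx R\Rightarrow \overline{S}\approx \overline{S}_1\Rightarrow \overline{S}_1\approx R$.
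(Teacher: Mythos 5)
Your high-level architecture is right: the base case cannot bootstrap from prior regularity, so Theorem \ref{thm:branching:ib} is substituted for it, and then the stopping times are controlled simultaneously via a compound $\overline{\tau}$. Your treatment of $\overline{\mathcal{A}}(\alpha_0,t_0)$ via Lemma \ref{lem:reg:base:A} and of $\overline{\tau}_3$ via Corollary \ref{cor:SvsSprime} plus Proposition \ref{prop:fixed perturbed:error} matches the paper's Corollary \ref{cor:ind1:tau6}. But there is a genuine gap in how you claim to close $\overline{\tau}_1$.

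You write that ``the triangle decomposition \eqref{eq:ind1:Svsalpha:decomp:basic} then gives $\overline{\tau}_1$.'' It cannot. The bound coming from $\overline{\tau}_3$ on $|\overline{S}(t)-\overline{S}_1(t)|$ is $\alpha_0^{1-\epsilon}\sigma_1\sigma_2(t;S)$, and the bound on $|\overline{S}_1(t)-\alpha_0|$ coming from the branching-process control (the stopping time $\overline{\tau}_{\textnormal{b}}$ of \eqref{eq:def:taux:init}) is of order $\alpha_0\beta_0^{5\theta}\sigma_1(t;\cdot)+\alpha_0^{3/2}\beta_0^{6\theta}$. Since $\sigma_1\sigma_2\le 1$, the triangle inequality only delivers
\begin{equation}
|\overline{S}(t)-\alpha_0|\;\lesssim\;\alpha_0^{1-\epsilon}\;+\;\alpha_0\beta_0^{5\theta},
\end{equation}
and both terms vastly exceed the threshold $2\alpha_0\beta_0^{C_\circ}$ demanded by $\overline{\tau}_1(\kappa=2)$: with $\epsilon=1/10000$ the factor $\alpha_0^{-\epsilon}=e^{\epsilon\beta_0}$ dominates every polylog, and $5\theta=50000\gg C_\circ=50$. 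The paper does not obtain $\overline{\tau}_1$ from the triangle bound at all. It proves it by a separate, essentially deterministic argument (Lemma \ref{lem:ind1:tau4} and its base-case version Corollary \ref{cor:ind1:tau4:base}): bound $Y_t(s)$ linearly using the point-count control coming from $\overline{\tau}_7$ and $\tau_{\textnormal{f}3}$/Lemma \ref{lem:branching:ib:R0 vs alpha}, then apply the random-walk estimate of Lemma \ref{lem:ind1:tau4:aux} ($\mathbb{P}(\forall s,\; W(s)<x+\alpha s)\le 3\alpha x$) directly to the defining formula \eqref{eq:speed:basic expression} for $\overline{S}(t)$. This yields the much tighter bound $\overline{S}(t)\le C\alpha_0\beta_0^8$ almost surely on the compound event, which is what actually lies below $\alpha_0\beta_0^{C_\circ}$.

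Two smaller imprecisions: (a) your claim that $\overline{\tau}_6,\overline{\tau}_8$ follow from ``elementary Poisson estimates'' ignores that $\Pi_{\overline{S}}$ has stochastic rate $\overline{S}$ after $t_0$, so one must first establish close-to-$\alpha_0$ control on the integrated speed; the paper routes this through $\overline{\tau}_9$ and $\overline{\tau}_{10}(\Delta)$ in Corollary \ref{cor:ind1:tau7891011:base}. (b) The comparison of $\overline{S}_1$ with the branching process is handled in the paper not by a Radon--Nikodym change of measure in the style of Section \ref{subsec:fixed:perturbed} but by the martingale decomposition \eqref{eq:integralform:branching:ind1}, which separates $\overline{S}_1(t)-\overline{R}(t_0,t)$ into a stochastic-integral martingale term and a drift term controlled by $\overline{\tau}_3$; the resulting estimate is then combined with $\tau_{\textnormal{B}1}$ from Theorem \ref{thm:branching:ib} to control $|\overline{R}(t_0,t)-\alpha_0|$. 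Finally, the paper proves a preliminary Lemma \ref{lem:ind1:tau t0:base} establishing $\overline{\tau}>t_0$ before the cascade starts, a non-trivial base-of-the-base step your proposal omits.
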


Before moving on, 
 we establish the following lemma which is not only a useful tool in the later section, but also leads to the proof of (3) of Lemma \ref{lem:reg:errorint}.

\begin{lem}\label{lem:ind1:intofSminS1}
	Under the setting of Theorem \ref{thm:reg:conti:main}, we have  that 
	\begin{equation}
	\begin{split}
	\intop_{ t_0}^{\tau(2)} |S(t)-S_1(t)|dt   \le  \alpha_0^{-2\epsilon} .
	\end{split}
	\end{equation}
\end{lem}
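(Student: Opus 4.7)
The plan is to extract the bound deterministically from the very definition of $\tau(2)$, combining $\tau_{3}$ (which controls $|S(t)-S_1(t)|$ pointwise) with the point-density controls in $\tau_{7}$, $\tau_{8}$, and the event $\mathcal{A}_2$ from the history. By the definition of $\tau_{3}$, for every $t\in[t_0,\tau(2)]$ one has
\begin{equation*}
|S(t)-S_1(t)|\ \le\ \alpha_0^{3/2-\epsilon}\sigma_1(t;S)+\alpha_0^{1-\epsilon}\sigma_1\sigma_2(t;S),
\end{equation*}
so the task reduces to bounding the two integrals $\intop_{t_0}^{\tau(2)}\sigma_1(t;S)\,dt$ and $\intop_{t_0}^{\tau(2)}\sigma_1\sigma_2(t;S)\,dt$. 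Since $\tau(2)\le\hat{t}_0$, the length of the window is at most $h:=2\alpha_0^{-2}\beta_0^{10\theta}$. Chaining the $\tau_{7}$ bound across consecutive sub-windows of length $\alpha_0^{-1}$ shows that the total point count $n=|\Pi_S[t_0,\tau(2)]|$ satisfies $n\le C\alpha_0 h \beta_0^{4}=O(\alpha_0^{-1}\beta_0^{10\theta+4})$, and $\tau_{8}$ forces each consecutive gap to be at most $4\alpha_0^{-1}\beta_0^{C_\circ}$.

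The first integral is then controlled by Lemma \ref{lem:ind1:pi1int basicbd:basic}, which yields $\intop_{t_0}^{\tau(2)}\sigma_1(t;S)\,dt\le C\sqrt{(n+1)h}=O(\alpha_0^{-3/2}\beta_0^{10\theta+2})$; multiplying by $\alpha_0^{3/2-\epsilon}$ gives $O(\alpha_0^{-\epsilon}\beta_0^{O(\theta)})$. For the second integral, I decompose $[t_0,\tau(2)]$ into sub-intervals between successive points $p_0<p_1<\cdots<p_{n+1}$ of $\Pi_S$, where the largest past point $p_0\le t_0$ is supplied by $\mathcal{A}_2$, which places at least $\tfrac14 \alpha_0^{-1}\beta_0^{\theta}$ points in $[t_0^-,t_0]$ and hence a $p_0$ within $O(\alpha_0^{-1}\beta_0^{\theta})$ of $t_0$. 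On each $[p_i,p_{i+1}]$ we have $\pi_1(t;S)=t-p_i$ and $\pi_2(t;S)=t-p_{i-1}\ge \pi_1(t;S)$, so
\begin{equation*}
\intop_{p_i}^{p_{i+1}}\sigma_1\sigma_2(t;S)\,dt\ \le\ \intop_{p_i}^{p_{i+1}}\frac{dt}{t-p_i+1}\ =\ \log(p_{i+1}-p_i+1)\ =\ O(\beta_0)
\end{equation*}
by the $\tau_{8}$ gap bound. Summing over the $O(n)=O(\alpha_0^{-1}\beta_0^{O(\theta)})$ sub-intervals gives $\intop_{t_0}^{\tau(2)}\sigma_1\sigma_2(t;S)\,dt=O(\alpha_0^{-1}\beta_0^{O(\theta)})$, and multiplying by $\alpha_0^{1-\epsilon}$ again produces $O(\alpha_0^{-\epsilon}\beta_0^{O(\theta)})$. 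For $\alpha_0$ small enough, each of the two contributions is at most $\tfrac12\alpha_0^{-2\epsilon}$, and summing finishes the proof.

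The only mildly delicate point is the sub-interval straddling $t_0$, where the $\tau_{8}$ bound does not directly apply; this is where $\mathcal{A}_2$ is needed to guarantee a nearby past point $p_0$ so that the log-bound above still reads $O(\beta_0)$. Everything else is a routine combination of the deterministic point-count bounds packaged into $\tau(2)$ and $\mathcal{A}$, so no additional probabilistic input beyond what is already absorbed into the hypothesis is required.
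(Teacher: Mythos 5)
Your proposal is correct and follows essentially the same route as the paper: bound $|S(t)-S_1(t)|$ pointwise via $\tau_3$, integrate the $\sigma_1$ and $\sigma_1\sigma_2$ bounds, and control the total point count via $\tau_7$. The paper's proof is more condensed — it collapses both terms of the $\tau_3$ bound into $\alpha_0^{1-\epsilon}/(\pi_1(t;S)+1)$ (exploiting $\sigma_1\sigma_2\le 1/(\pi_1+1)$, with the $\sigma_1$ term absorbed up to a polylog factor) and then applies Lemma \ref{lem:ind1:pi1int basicbd:basic} directly, getting $\alpha_0^{1-\epsilon}\beta_0^2\,|\Pi_S[t_0,\tau(2)]|$, rather than invoking $\tau_8$ and $\mathcal{A}_2$ for a per-gap $\log$ bound; Lemma \ref{lem:ind1:pi1int basicbd:basic} already handles the total log contribution $(n+1)\log(h+1)$ uniformly, so your extra appeal to $\tau_8$ is not needed, though it is harmless and does make the edge treatment near $t_0$ more transparent.
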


\begin{proof}
	From the definition of $\tau_{3}$ (Section \ref{subsubsec:reg:Scontrol}) and Lemma \ref{lem:ind1:pi1int basicbd:basic}, we have
	\begin{equation}
	\intop_{ t_0}^{\tau(2)} |S(t)-S_1(t)|dt   \le \intop_{t_0}^{\tau(2)} \frac{\alpha_0^{1-\epsilon} dt }{\pi_1(t;S)+1}  \le  \alpha_0^{1-\epsilon}\beta_0^2 |\Pi_S[t_0,\tau(2)]| \le  \alpha_0^{-2\epsilon} ,
	\end{equation}
	where we obtained the last inequality from the definition of $\tau_{7}$ (Section \ref{subsubsec:reg:aggsize}). 
\end{proof}

\begin{proof}[Proof of Lemma \ref{lem:reg:errorint}-(3)]
	Recall in Theorem \ref{thm:reg:conti:main} that ${\tau}^+(1/2) \le \tau(2)$ if $\tau(2)>\acute{t}_0$. Thus, combining Theorem \ref{thm:reg:conti:main} and Lemma \ref{lem:ind1:intofSminS1} directly implies (3) of Lemma \ref{lem:reg:errorint}.
\end{proof}

\subsection{The error from the first order approximation}\label{subsec:ind1:1storder error}
As the first step towards establishing Theorem \ref{thm:reg:conti:main}, we begin with studying $\tau_{3}$, the error of the first-order approximation.

\begin{lem}\label{lem:ind1:tau6}
	Under the setting of Theorem \ref{thm:reg:conti:main}, we have
	\begin{equation}
	\PP \left({\tau}^+_6 = \tau(2)\wedge \hat{t}_0,\ \tau(2)> \acute{t}_0 \ | \ \mathcal{A}  \right) \le \exp\left(-\beta_0^2 \right).
	\end{equation}
\end{lem}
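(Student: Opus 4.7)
The plan is to show that on the event $\{\tau(2) > \acute{t}_0\} \cap \mathcal{A}$, the inequality $|\Pi_S[t_0^-,t]| > \frac{\alpha_0}{50}(t-t_0^-)$ holds for every $t \in [t_0^+,\tau(2)\wedge\hat{t}_0]$ with probability at least $1-\exp(-\beta_0^2)$; this forces ${\tau}^+_6(1/2) > \tau(2)\wedge\hat{t}_0$, which is the content of the lemma. The starting point will be the decomposition
\[ |\Pi_S[t_0^-,t]| = |\Pi_S[t_0^-,t_0]| + \alpha_0(t-t_0) + \intop_{t_0}^t (S(s)-\alpha_0)\,ds + M(t), \]
with $M(t) := \intop_{t_0}^t (d\Pi_S(s) - S(s)\,ds)$ a martingale, and I will control the three $t$-dependent pieces on the interval $[t_0^+,\tau(2)\wedge\hat{t}_0]$, on which all of the regularity estimates from $\tau_i(2)$ are simultaneously in force.

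For the initial count, $\mathcal{A}_2$ yields $|\Pi_S[t_0^-,t_0]|\ge \tfrac14 \alpha_0^{-1}\beta_0^\theta\ge \frac{\alpha_0}{8}(t_0-t_0^-)$, using $t_0-t_0^-\le 2\alpha_0^{-2}\beta_0^\theta$. For the drift, Cauchy--Schwarz together with the bound $\intop_{t_0}^t(S-\alpha_0)^2\,ds\le 2\alpha_0\beta_0^{25\theta}$ coming from $\tau_4(2)$ will give $\big|\intop_{t_0}^t (S(s)-\alpha_0)\,ds\big|\le\sqrt{2(t-t_0)\alpha_0\beta_0^{25\theta}}$, which is at most $\frac{\alpha_0}{100}(t-t_0)$ whenever $t-t_0\ge 2\cdot 10^4\,\alpha_0^{-1}\beta_0^{25\theta}$; this is automatic for $t\ge t_0^+$ at small enough $\alpha_0$, because $t_0^+-t_0\ge \tfrac12\alpha_0^{-2}\beta_0^\theta$.

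The martingale $M(t)$ will be handled via Corollary~\ref{cor:concentration of integral} applied with $f\equiv 1$, $g=S$, and stopping time $\tau(2)\wedge\hat{t}_0$. From $\tau_1(2)$ we have $\intop_{t_0}^{t\wedge\tau(2)}S(s)\,ds\le 2\alpha_0\beta_0^{C_\circ}(t-t_0)$, so for each fixed $t$ the choice $M = 2\alpha_0\beta_0^{C_\circ}(t-t_0)$ and $a=\beta_0^3$ gives $\PP(|M(t)|\ge\beta_0^3\sqrt{M})\le C e^{-\beta_0^3}$. I will then take a union bound over a mesh of $[t_0^+,\hat{t}_0]$ of spacing $\alpha_0^{-1}$ (which contains at most $2\alpha_0^{-1}\beta_0^{10\theta}$ points), giving an exceptional probability of at most $\exp(-\beta_0^{2.5})$. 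Between mesh points the jumps of $M$ are controlled using $\tau_7(2)$, which bounds the number of points in any window of length $\alpha_0^{-1}$ by $2\beta_0^4$, plus the drift contribution $\le 2\beta_0^{C_\circ}$ on such a window; both are negligible relative to the target. The outcome is $|M(t)|\le\frac{\alpha_0}{100}(t-t_0)$ as long as $t-t_0\ge 2\cdot 10^4 \alpha_0^{-1}\beta_0^{C_\circ+6}$, which once again holds throughout $[t_0^+,\hat{t}_0]$.

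Assembling the three pieces yields, off an event of probability $\le \exp(-\beta_0^2)$, the estimate
\[ |\Pi_S[t_0^-,t]| \ \ge\ \tfrac{\alpha_0}{8}(t_0-t_0^-) + \alpha_0(t-t_0) - \tfrac{\alpha_0}{50}(t-t_0) \ \ge\ \tfrac{\alpha_0}{8}(t-t_0^-), \]
for every $t\in[t_0^+,\tau(2)\wedge\hat{t}_0]$, which is strictly larger than the threshold $\frac{\alpha_0}{50}(t-t_0^-)$ defining ${\tau}^+_6(1/2)$. The only place where some care is required is the passage from the discrete-time union bound to a continuous-time bound for $M$; the estimate coming from $\tau_7(2)$ (together with the trivial $\int S$ control on windows of length $\alpha_0^{-1}$) is exactly what ensures that the interpolation error on each mesh interval is small compared to $\tfrac{\alpha_0}{100}(t-t_0)$.
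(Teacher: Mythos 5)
Your proof takes the lemma statement at face value and controls the aggregate size $|\Pi_S[t_0^-,t]|$, i.e.\ the stopping time $\tau^+_6$. However, the paper's own proof of this lemma has nothing to do with $\tau^+_6$: it is devoted to bounding $|S(t)-S_1(t;t_0^-,\alpha_0)|$, starting from the decomposition
\begin{equation}
|S(t)-S_1(t;t_0^-,\alpha_0)|\le |S(t)-S'(t;t_0,\alpha_0)|+|S'(t;t_0,\alpha_0)-S_1(t;t_0,\alpha_0)|+|S_1(t;t_0^-,\alpha_0)-S_1(t;t_0,\alpha_0)|,
\end{equation}
and invoking Proposition~\ref{prop:fixed perturbed:error} for the middle term and Proposition~\ref{prop:SvsSprime} plus the decay of $K_{\alpha_0}$ for the other two. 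That is the control of $\tau^+_3$, not $\tau^+_6$. The index ``$6$'' in the lemma statement is almost certainly a typographical slip: the subsection is titled ``The error from the first order approximation''; the sentence just before the lemma announces ``we begin with studying $\tau_3$''; the parenthetical remark ``${\tau}^+_6\ge\tau(2)$ by definition'' is immediate for $\tau_3$ (which is $\kappa$-independent, so $\tau^+_3\ge\tau_3\ge\tau(2)$) but is not evident for $\tau^+_6(1/2)$; and the summary in the proof of Theorem~\ref{thm:reg:conti:main} explicitly attributes ``Control on $\tau^+_3$'' to this lemma and attributes $\tau^+_6,\tau^+_7,\tau^+_8$ to Lemma~\ref{lem:ind1:tau9 10 11}.

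The upshot is that your argument, while sensible, never touches $S-S_1$, and therefore does not prove the content that the paper actually needs from this lemma. As a standalone control of $\tau^+_6(1/2)$, your decomposition into $\mathcal{A}_2$, a Cauchy--Schwarz estimate on $\int_{t_0}^t(S-\alpha_0)\,ds$ driven by $\tau_4(2)$, and a discretized martingale bound using $\tau_1(2)$ and $\tau_7(2)$ is reasonable (and in fact more self-contained than the paper's route to $\tau^+_6$ in Lemma~\ref{lem:ind1:tau9 10 11}, which first builds the pointwise bound $|S(t)-\alpha_0|\le F(\alpha_0,t)$ from $\tau^+_9$ and hence already requires the $\tau^+_3$ control that this lemma provides). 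But it is not a proof of the lemma the paper intends; to match the paper you would need to work with $S(t)-S_1(t)$ via Propositions~\ref{prop:SvsSprime} and~\ref{prop:fixed perturbed:error}.
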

\noindent (Note that ${\tau}^+_6 \ge \tau(2)$ by definition.)

\begin{proof}
	Let $t\in[\acute{t}_0, \hat{t}_0]$, and we begin with expressing that
	\begin{equation}\label{eq:ind1:SminS1:decomp}
\begin{split}
	|S(t)-S_1(t;t_0^-,\alpha_0)|\le& |S(t)-S'(t;t_0,\alpha_0)|+ |S'(t;t_0,\alpha_0)-S_1(t;t_0,\alpha_0)| \\
	&+ |S_1(t;t_0^-,\alpha_0)-S_1(t;t_0,\alpha)|,
\end{split}
	\end{equation}
	and note that the first term in the RHS is bounded by $\alpha_0^{100}$ for any $\Pi_S(-\infty,t_0]\in \mathcal{A}$, which is from Proposition \ref{prop:SvsSprime}. Moreover, for any $t\in [\acute{t}_0, \tau(2)\wedge \hat{t}_0],$ Lemma \ref{lem:estimate on K:intro} tells us that
	\begin{equation}\label{eq:ind1:S1timechange}
	|S_1(t;t_0^-,\alpha_0)-S_1(t;t_0,\alpha_0)| = \intop_{t_0^-}^{t_0}  K_{\alpha_0}(t-x)d\Pi_S(x) \le K_{\alpha_0}(\acute{t}_0-t_0) \cdot |\Pi_S[t_0^-,t_0]|\le \alpha_0^{100}.
	\end{equation}

	The rest of the proof follows as a direct consequence of Proposition \ref{prop:fixed perturbed:error}. In fact, by setting $\alpha$, $t^-$,  $\hat{t}$ and $\tau$ in Proposition \ref{prop:fixed perturbed:error} to be 
	\begin{equation}
	\alpha = \alpha_0, \ \ t^- = t_0,   \ \ \hat{h} = \hat{t}_0, \ \ \tau = \tau(2),
	\end{equation}
	we see that the assumptions in the proposition are all satisfied.  Thus, we obtain that 
	\begin{equation}\label{eq:ind1:tau6:1}
\begin{split}
	\PP \left( |S(t)-S_1(t;t_0^-,\alpha_0)| \le 4\alpha_0^{1-\epsilon}\sigma_1\sigma_2(t;S), \ \forall t\in[\acute{t}_0, \hat{t}_0\wedge \tau(2)]  \, | \, \mathcal{A} \right) \\
	\ge 1-4\exp\left(-\alpha_0^{-\frac{\epsilon}{3000}}\right) .
\end{split}
	\end{equation}
	The lower bound on $S(t)-S_1(t)$ is obtained analogously, from the expression 
		\begin{equation}
		\begin{split}
		S(t)-S_1(t;t_0^-,\alpha_0)\ge&  (S'(t;t_0,\alpha_0)-S_1(t;t_0,\alpha_0)) -|S(t)-S'(t;t_0,\alpha_0)| \\
		&- |S_1(t;t_0^-,\alpha_0)-S_1(t;t_0,\alpha)|,
		\end{split}
		\end{equation}
	 and Proposition \ref{prop:fixed perturbed:error}. 
\end{proof}

Note that the corresponding analogue of \eqref{eq:ind1:tau6:1} holds the same for the second-order approximation $S_2(t)=S_2(t;t_0^-,\alpha_0)$ \eqref{eq:def:S2:basic form}, and we record this result in the following corollary.

\begin{cor}\label{cor:ind1:S2error}
	Under the setting of Theorem \ref{thm:reg:conti:main}, there exists a constant $c_\epsilon>0$ such that
	\begin{equation}
	\begin{split}
	\PP \left(|S(t)-S_2(t)|\le 4\alpha_0^{1-\epsilon} \sigma_1\sigma_2\sigma_3(t;S) ,\ \forall t\in[\acute{t}_0, \hat{t}_0\wedge\tau(2)] \,| \,\mathcal{A}  \right)\\
	\ge 1-4\exp\left(-\alpha_0^{-c_\epsilon} \right)	.
	\end{split}
	\end{equation}
\end{cor}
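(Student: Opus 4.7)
The plan is to mirror the proof of Lemma \ref{lem:ind1:tau6}, this time using the second-order control provided by Proposition \ref{prop:fixed perturbed:error} rather than the first-order one. Concretely, for $t\in[\acute{t}_0,\hat{t}_0\wedge\tau(2)]$ we will use the triangle-type decomposition
\begin{equation*}
|S(t)-S_2(t;t_0^-,\alpha_0)|
\le |S(t)-S'(t;t_0,\alpha_0)|
+ |S'(t;t_0,\alpha_0)-S_2(t;t_0,\alpha_0)|
+ |S_2(t;t_0^-,\alpha_0)-S_2(t;t_0,\alpha_0)|,
\end{equation*}
and bound each piece separately.

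The first term is controlled by Proposition \ref{prop:SvsSprime}: since $\mathcal{A}$ holds and $t\ge \acute{t}_0$ lies before $\tau_6\wedge\tau_8$ (both dominated by $\tau(2)$), we get $|S(t)-S'(t;t_0,\alpha_0)|\le \alpha_0^{100}$. For the second term, the plan is to invoke the second statement of Proposition \ref{prop:fixed perturbed:error} with the choice $\alpha=\alpha_0$, $t^-=t_0$, $\hat{t}=\hat{t}_0$, and stopping time $\tau(2)$. The assumptions \eqref{eq:error:assumptions} of that proposition translate directly into the bounds encoded by $\tau_4$, $\tau_5$, $\tau_1$ and $\tau_2$, all of which hold on $\{\tau(2)>\acute{t}_0\}$; this yields the bound $4\alpha_0^{1-\epsilon}\sigma_1\sigma_2\sigma_3(t;S)$ with failure probability at most $2\exp(-\alpha_0^{-\epsilon/3000})$.

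The third term is the bookkeeping piece that requires a small calculation, and I expect it to be the main (mild) obstacle. By definition \eqref{eq:def:S2:basic form}, the difference $S_2(t;t_0^-,\alpha_0)-S_2(t;t_0,\alpha_0)$ splits as the corresponding $S_1$-difference plus the mismatch of the double integrals of $J^{(\alpha_0)}_{t-s,t-u}$ over $[t_0^-,t]^2$ versus $[t_0,t]^2$. The $S_1$-discrepancy is $\le \alpha_0^{100}$ by exactly the same computation as \eqref{eq:ind1:S1timechange}, using Lemma \ref{lem:estimate on K:intro} and $t-t_0\ge\acute{t}_0-t_0\ge \alpha_0^{-2}\beta_0^\theta$. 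The double-integral discrepancy consists of contributions where at least one of $s,u$ lies in $[t_0^-,t_0]$, so that at least one of the indices $t-s$, $t-u$ exceeds $\alpha_0^{-2}\beta_0^\theta$. Lemma \ref{lem:bound on deterministic J} gives $|J^{(\alpha_0)}_{t-s,t-u}|\le C(t+1)^{-1/2}\exp(-c\beta_0^\theta)$ on this region, so after a crude Poisson bound on the total variation of $d\widehat{\Pi}_S$ using $\tau_7$, the whole discrepancy is $\le \alpha_0^{100}$ on the relevant event.

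Combining the three bounds shows that, conditioned on $\mathcal A$, the event in the statement fails with probability at most $4\exp(-\alpha_0^{-c_\epsilon})$ for some $c_\epsilon>0$ (one can take $c_\epsilon=\epsilon/3000$), completing the proof. The only place where any real estimate enters is Proposition \ref{prop:fixed perturbed:error}; the remaining pieces are straightforward cut-offs exploiting the large gap $t-t_0\ge \alpha_0^{-2}\beta_0^\theta$ and the decay estimates of Section \ref{subsec:Kestim:intro}.
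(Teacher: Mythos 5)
Your proof is correct and is essentially the paper's argument: it mirrors the proof of Lemma \ref{lem:ind1:tau6} via the same three-term triangle decomposition, using Proposition \ref{prop:SvsSprime} and the second statement of Proposition \ref{prop:fixed perturbed:error} for the first two terms, and a cut-off estimate (via the exponential decay of $K_{\alpha_0}$ and $J^{(\alpha_0)}$ across a gap of size $\ge \alpha_0^{-2}\beta_0^{\theta}$) for the $t_0^-$-to-$t_0$ shift of $S_2$. The paper states the proof only as a one-line diff from Lemma \ref{lem:ind1:tau6}, but the details you supply match exactly what is intended.
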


\begin{proof}
	The only difference in the proof compared to the previous lemma is the way we control $|S_2(t;t_0,\alpha)-S_2(t;t_0^-,\alpha)|$. The difference is bounded by
	\begin{equation}
	\frac{1}{(1+2\alpha)^2} \intop_{t_0^-}^{t_0} K_{\alpha_0}(t-x) d\Pi_S(x) + \frac{\alpha}{1+2\alpha} \intop_{t_0^-}^t \intop_{t_0^-}^{t_0\wedge s} J_{t-s,t-u}^{(\alpha_0)} d\widehat{\Pi}_S(u)d\widehat{\Pi}_S(s),
	\end{equation}
	which is also smaller than $\alpha_0^{100}$ for all $t\in [\acute{t}_0,\tau(2)\wedge\hat{t}_0]$ due to the decay properties of $K$ and $J$ (Lemma \ref{lem:bound on deterministic J}).
\end{proof}

Along with Lemma \ref{lem:fixed perturbed:error int:perturbed}, this leads to the proof of (4) of Lemma \ref{lem:reg:errorint}:

\begin{proof}[Proof of Lemma \ref{lem:reg:errorint}-(4)]
	The result is obtained by integrating the bound on $|S(t)-S_2(t)|$ in Corollary \ref{cor:ind1:S2error} using Lemma \ref{lem:fixed perturbed:error int:perturbed}, and relying on the estimate on $\tau(2) $ from Theorem \ref{thm:reg:conti:main}.
\end{proof}

We conclude this subsection by presenting the analogue of Lemma \ref{lem:ind1:tau6} for the induction base.

\begin{cor}\label{cor:ind1:tau6}
	Under the setting of Proposition \ref{prop:ind1:base:main}, we have
	\begin{equation}
	\PP\left( \overline{\tau}_3 = \overline{\tau}\wedge \hat{t}_0, \ \overline{\tau}>t_0 \right) \le \exp\left(-\beta_0^3 \right).
	\end{equation}
\end{cor}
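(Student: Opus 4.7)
The plan is to mirror the proof of Lemma \ref{lem:ind1:tau6}, adapting it so that the starting time is $t_0$ rather than $t_0^+$. First, I would decompose
\begin{equation}
|\overline{S}(t) - \overline{S}_1(t;t_0^-,\alpha_0)| \le |\overline{S}(t) - \overline{S}'(t;t_0^-,\alpha_0)| + |\overline{S}'(t;t_0^-,\alpha_0) - \overline{S}_1(t;t_0^-,\alpha_0)|,
\end{equation}
and treat each piece separately. The first piece is bounded by $\alpha_0^{100}$ on $[t_0, \overline{\tau}_6]$ using Corollary \ref{cor:SvsSprime} together with $\Pi_{\overline{S}}(-\infty,t_0]\in\overline{\mathcal{A}}(\alpha_0,t_0)$, which holds with probability $1-\exp(-\beta_0^4)$ by Lemma \ref{lem:reg:base:A}; since $\overline{\tau}_6\ge\overline{\tau}$, this covers the interval of interest.

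For the second piece, I would invoke Proposition \ref{prop:fixed perturbed:error} with parameters $\alpha=\alpha_0$, $t^-=t_0^-$, $\hat{t}=\hat{t}_0$, $\tau=\overline{\tau}$, and the predictable process $g$ defined by $g(s)\equiv\alpha_0$ on $[t_0^-,t_0]$ and $g(s)=\overline{S}(s)$ on $[t_0,\hat{t}_0]$. The three hypotheses in \eqref{eq:error:assumptions} are verified from the stopping times composing $\overline{\tau}$: the two $L^2$-type integrals vanish on $[t_0^-,t_0]$ and on $[t_0,\hat{t}_0\wedge\overline{\tau}]$ reduce to quantities controlled by $\overline{\tau}_4$ and $\overline{\tau}_5$, while the supremum bound on $[t_0,\hat{t}_0\wedge\overline{\tau}]$ is controlled by $\overline{\tau}_1$ and $\overline{\tau}_2$. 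The conclusion of the proposition then pins $\overline{S}'-\overline{S}_1$ inside the window $\left(-\alpha_0^{3/2-\epsilon}\sigma_1(s;\overline{S}),\,\alpha_0^{1-\epsilon}\sigma_1\sigma_2(s;\overline{S})\right)$ for all $s\in[t_0^-,\hat{t}_0\wedge\overline{\tau}]$ with probability $\ge 1-4\exp(-\alpha_0^{-\epsilon/3000})$. Combining with Step~1 yields the same window (up to a constant factor $4$) for $|\overline{S}(t)-\overline{S}_1(t)|$ on $[t_0,\hat{t}_0\wedge\overline{\tau}]$, directly contradicting $\{\overline{\tau}_3=\overline{\tau}\wedge\hat{t}_0\}$ by the defining property of $\overline{\tau}_3$.

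The main subtlety requiring separate attention is that the supremum condition $\sup_s\overline{S}_1(s;t_0^-,\alpha_0)\le\alpha_0^{1-\epsilon/400}$ must also hold on the short interval $[t_0^-,t_0]$, where no stopping time from $\overline{\tau}$ directly controls $\overline{S}_1$. On this interval, however, $\overline{S}_1(s)$ is a deterministic integral of $K_{\alpha_0}(s-x)$ against $\Pi_{\alpha_0}[t_0^-,s]$, and I would handle it by an elementary concentration estimate via Lemma \ref{lem:concentration of integral} with $f=K_{\alpha_0}$ and the fixed rate $\alpha_0$, identical in spirit to the bound on $R_0(t)$ derived in the proof of Lemma \ref{lem:branching:ib:R0 vs alpha}; the resulting estimate $\overline{S}_1(s)\le\alpha_0\beta_0^{C_\circ}$ holds on $[t_0^-,t_0]$ with probability at least $1-\exp(-\beta_0^5)$. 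All failure probabilities accumulated are comfortably smaller than $\exp(-\beta_0^3)$, which gives the claim.
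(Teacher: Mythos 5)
Your proposal is correct and follows the same essential route as the paper's (very terse) proof: decompose through the approximate speed $\overline{S}'$, control the first piece by Corollary~\ref{cor:SvsSprime} together with Lemma~\ref{lem:reg:base:A}, and control the second piece by Proposition~\ref{prop:fixed perturbed:error}. The one genuine difference is organizational: the paper cites ``the same decomposition \eqref{eq:ind1:SminS1:decomp}'' from Lemma~\ref{lem:ind1:tau6}, which is the three-term split running through $\overline{S}'(t;t_0,\alpha_0)$ and compensating with $|\overline{S}_1(t;t_0^-,\alpha_0)-\overline{S}_1(t;t_0,\alpha_0)|$; that third term is only small for $t\ge\acute{t}_0$, so read literally the paper's sketch would leave a gap on $[t_0,\acute{t}_0]$. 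Your two-term decomposition through $\overline{S}'(t;t_0^-,\alpha_0)$ avoids that difficulty since Corollary~\ref{cor:SvsSprime} is stated exactly for the $t_0^-$-based approximant and already covers $t\ge t_0$, and you handle the induced price — no stopping time in $\overline{\tau}$ controls $\overline{S}_1$ on $[t_0^-,t_0]$ — explicitly with a one-off concentration bound in the spirit of Lemma~\ref{lem:branching:ib:R0 vs alpha}. One cosmetic point worth spelling out: Proposition~\ref{prop:fixed perturbed:error} requires its hypotheses to hold almost surely up to the stopping time; your high-probability bound for $\overline{S}_1$ on $[t_0^-,t_0]$ should therefore be folded in by defining an auxiliary stopping time at the first failure of that bound, taking the minimum with $\overline{\tau}$, and absorbing the exceptional event into the error probability. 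Beyond that bookkeeping, the argument closes, and the accumulated failure probabilities are indeed well below $\exp(-\beta_0^3)$.
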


\begin{proof}
	Relying on the same decomposition \eqref{eq:ind1:SminS1:decomp}, we can obtain the same estimate on $\overline{S}(t)-\overline{S}_1(t;t_0^-,\alpha_0)$ from Lemma \ref{lem:reg:base:A} and Corollary \ref{cor:SvsSprime} along with Lemma \ref{lem:ind1:tau6}. 
\end{proof}

\subsection{Connection to the  critical branching process}\label{subsec:ind1:S1vsalpha}

In this subsection, we estimate $|S_1(t)-\alpha_0|$. By understanding the size of this quantity, we will eventually be able to control $|S(t)-\alpha_0|$ in the next subsection. Define the stopping time $\tau_{\textnormal{b}}$ as
\begin{equation}\label{eq:def:taux}
\tau_{\textnormal{b}} = \tau_{\textnormal{b}}(\alpha_0,t_0,t_0^+) := \inf\left\{t\ge t_0^+:( S_1(t)-\alpha_0) \notin \left(-2\alpha_0^{\frac{3}{2}}\beta_0^{6\theta} ,\alpha_0\beta_0^{C_\circ}\sigma_1(t;S) + 2\alpha_0^{\frac{3}{2}}\beta_0^{6\theta} \right) \right\}.
\end{equation}

Our goal is to establish the following Lemma:

\begin{lem}\label{lem:ind1:taux}
	Under the setting of Theorem \ref{thm:reg:conti:main}, we have
	\begin{equation}
	\PP \left( \tau_{\textnormal{b}} <\hat{t}_0,\ \tau(2) >\acute{t}_0  \ | \ \mathcal{A} \right) \le \exp\left(-\beta_0^5 \right).
	\end{equation}
\end{lem}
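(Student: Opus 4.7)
The plan is to use the integral representation \eqref{eq:integralform:branching} for the conditional branching process from Section~\ref{subsec:branching:mg}, together with the martingale concentration tools of Section~\ref{subsec:fixed:mgconcen}, to control $S_1(t)-\alpha_0$ on $[t_0^+,\hat{t}_0]$ under the assumption $\tau(2)>\acute{t}_0$. Setting $\alpha_0':=\mathcal{L}(t_0;\Pi_S[t_0^-,t_0],\alpha_0)$, the first step is to decompose
\begin{equation*}
  S_1(t)-\alpha_0 \;=\; \bigl(S_1(t) - \mathcal{R}_c(t_0,t;\Pi_S[t_0^-,t_0],\alpha_0)\bigr) + \bigl(\mathcal{R}_c(t_0,t;\Pi_S[t_0^-,t_0],\alpha_0)-\alpha_0'\bigr) + (\alpha_0'-\alpha_0).
\end{equation*}
The event $\mathcal{A}_3$ already bounds the last term by $\alpha_0^{3/2}\beta_0^{\theta}$, while the middle term is of size at most $\alpha_0^{100}$ for $t\ge t_0^+$ by the same computation as \eqref{eq:branching:Rlbd:med3}--\eqref{eq:reg:ib:med2}: the decay estimates for $K_{\alpha_0}$ and $K^*_{\alpha_0}(\cdot)-K^*_{\alpha_0}$ from Lemmas~\ref{lem:estimate on K:intro}--\ref{lem:estimat for K tilde:intro}, combined with $|\Pi_S[t_0^-,t_0]|\le 4\alpha_0^{-1}\beta_0^{\theta}$ from $\mathcal{A}_2$ and $t-t_0\ge \tfrac12\alpha_0^{-2}\beta_0^{\theta}$, make this contribution negligible.

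The main term is the first one, which by \eqref{eq:integralform:branching} (with $Q=S$) equals
\begin{equation*}
  \int_{t_0}^t K^*_{\alpha_0}(t-s)\,d\widetilde{\Pi}_S(s) \;+\; \int_{t_0}^t K^*_{\alpha_0}(t-s)\bigl[S(s)-S_1(s)\bigr]ds.
\end{equation*}
For the drift piece, the plan is to split $K^*_{\alpha_0}(t-s)$ into the tail constant $K^*_{\alpha_0}\asymp\alpha_0^2$ plus the correction of size $\le C\alpha_0(t-s+1)^{-1/2}$ (Lemma~\ref{lem:estimat for K tilde:intro}). The tail part contributes at most $K^*_{\alpha_0}\cdot\intop_{t_0}^t|S-S_1|ds\le C\alpha_0^2\cdot\alpha_0^{-2\epsilon}$ via Lemma~\ref{lem:ind1:intofSminS1}, and the correction part is controlled by pairing $|S-S_1|\le\alpha_0^{1-\epsilon}\sigma_1\sigma_2(s)$ from $\tau_{3}$ with the $\sigma$-integral estimates of Lemma~\ref{lem:ind1:pi1int basicbd} (taking $N_0=\beta_0^4$ from $\tau_{7}$), yielding a bound of order $\alpha_0^{2-\epsilon}\beta_0^{O(\theta)}$; both are far smaller than $\alpha_0^{3/2}\beta_0^{6\theta}$, in both directions.

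For the martingale piece, the plan is to invoke Corollary~\ref{lem:concentrationofint:continuity} (or, since monotonicity of $K^*_{\alpha_0}(t-\cdot)$ is not guaranteed, Lemma~\ref{lem:concen of int:conti:forwardtime} with the decreasing majorant $\bar f_t(x)=K^*_{\alpha_0}+C\alpha_0(t-x+1)^{-1/2}$). Using $S(s)\le 2\alpha_0\beta_0^{C_\circ}$ from $\tau_1$ and Lemma~\ref{lem:estimat for K tilde:intro}, the quadratic variation is bounded by $M\le C\alpha_0^3\beta_0^{O(\theta)}$, giving symmetric fluctuations of scale $\alpha_0^{3/2}\beta_0^{O(\theta)}$. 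Crucially, the asymmetric upper ``spike'' term $2N\bar f_t(p_1(t))$ equals $C\beta_0^4\bigl(K^*_{\alpha_0}+\alpha_0\sigma_1(t;S)\bigr)\le C\alpha_0\beta_0^4\sigma_1(t;S)+O(\alpha_0^2\beta_0^4)$, which exactly matches the asymmetric form $\alpha_0\beta_0^{C_\circ}\sigma_1(t;S)$ appearing in the definition of $\tau_{\textnormal{b}}$, while no such spike appears in the lower direction because $\bar f_t\ge 0$. Choosing the concentration parameter $a$ of order $\beta_0^5$ and a union bound over a fine grid in $[t_0^+,\hat{t}_0]$ (whose cost is polynomial in $\beta_0$) then gives the failure probability $\le\exp(-\beta_0^5)$.

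The main obstacle is the drift estimate: one must integrate the $\sigma_1\sigma_2$-type estimates from $\tau_{3}$ against $K^*_{\alpha_0}(t-s)$ without losing a factor of $\alpha_0^{-\epsilon}$ that would spoil the tight lower bound $-2\alpha_0^{3/2}\beta_0^{6\theta}$. The splitting of $K^*_{\alpha_0}$ into its large-time limit (handled via the $L^1$ bound of Lemma~\ref{lem:ind1:intofSminS1}) and its short-time decay (handled via Lemma~\ref{lem:ind1:pi1int basicbd} and the aggregate-density bounds from $\tau_{7},\tau_{8}$) is the key device that makes both the upper and lower bounds land in the correct regime.
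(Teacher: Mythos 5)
Your proposal is correct and follows essentially the same approach as the paper: the same decomposition of $S_1(t)-\alpha_0$ via the branching-process identity \eqref{eq:integralform:branching}, the same use of Corollary~\ref{lem:concentrationofint:continuity} for the martingale part with quadratic variation $\asymp\alpha_0^3\beta_0^{O(\theta)}$ (which is exactly where the asymmetric $\alpha_0\beta_0^{C_\circ}\sigma_1$ spike enters), and the same control of the drift via $\tau_3$ and Lemma~\ref{lem:ind1:pi1int basicbd}. Your explicit splitting of $K^*_{\alpha_0}(t-s)$ into its large-time constant plus the $\alpha_0(t-s+1)^{-1/2}$ correction is just a slightly more verbose version of the paper's single-step bound $K^*_{\alpha_0}(t-x)\le C\bigl(\tfrac{\alpha_0}{\sqrt{t-x}}\vee\alpha_0^2\bigr)$ fed directly into Lemma~\ref{lem:ind1:pi1int basicbd}.
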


To prove this lemma, we rely on the methods in Section \ref{subsec:branching:mg}. Recall the definition \eqref{eq:def:Qst}, and for $t\ge s \ge t_0$, define
\begin{equation}
\begin{split}
R(s,t)=\mathcal{R}_c(s,t;\Pi_{S}[t_0^{-},s],\alpha_0).
\end{split}
\end{equation}
We note that  $R(t,t)=S_1(t)$ and for $t\ge t_0^+$ that
\begin{equation}
\begin{split}
|R(t_0,t)-\alpha_0'|& \le \intop_{t_0^-}^{t_0} K_{\alpha_0}(t-x) d\Pi_S(x) + \intop_{t_0^-}^{t_0} \intop_t^{\infty} K^*_{\alpha_0}\cdot K_{\alpha_0}(u-x) du d\Pi_S(x)\\
&\quad + \intop_{t_0^-}^{t_0} \intop_{t_0}^t |K^*_{\alpha_0}-K^*_{\alpha_0}(t-u) | K_{\alpha_0}(u-x) du d\Pi_S(x)\\
&\le  \alpha_0^{100},
\end{split}
\end{equation}
for all $t\in[\acute{t}_0, \tau(2)\wedge\hat{t}_0]$, similarly as \eqref{eq:branching:Rlbd:med3}. 
Then, \eqref{eq:integralform:branching} gives
\begin{equation}\label{eq:integralform:branching:ind1}
\begin{split}
S_1(t) - \alpha_0' + O(\alpha_0^{100}) &=  R(t,t)-R(t_0,t) \\
&= \intop_{t_0}^t K^*_{\alpha_0}(t-x) \left\{ d\widetilde{ \Pi}_S(x) + (S(x)-S_1(x))dx\right\}  ,
\end{split}
\end{equation}
where we wrote $d\widetilde{ \Pi}_S(x):= d\Pi_S(x) - S(x)dx$. This decomposes $S_1(t)-\alpha_0'$ into a martingale part and a drift part.

\begin{proof}[Proof of Lemma \ref{lem:ind1:taux}]
	We begin with estimating the martingale part of $S_1(t)-\alpha_0'$ in \eqref{eq:integralform:branching:ind1}. We apply Corollary \ref{lem:concentrationofint:continuity} in the following setting:
	\begin{itemize}
		\item Set $\tau = \tau(2)$, $f_t(x) = K^*_{\alpha_0}(t-x)$, $D=1$ (such a choice of $D$ is justified by Corollary \ref{cor:bound on K'}).
		
		\item From the definition of $\tau_{1}$ and the bound in Lemma  \ref{lem:estimat for K tilde:intro}, we can set
		\begin{equation}\label{eq:ind1:S1minusalpha QV}
		M:=\alpha_0^3\beta_0^{12\theta} \ge \intop_{t_0}^{t\wedge \tau(2)} (K^*_{\alpha_0}(t-x))^2 S(x)dx.
		\end{equation}
		
		\item $\Delta = \alpha_0^{-1}$, $A = C\alpha_0^{\frac{3}{2}}$, $\eta = \alpha_0\beta_0^{C_\circ}$, $N= \beta_0^5$ and $\delta = \alpha_0^{10}$ (see definitions of $\tau_{1}$ and $\tau_{7}$).
	\end{itemize}
	Then Corollary \ref{lem:concentrationofint:continuity} gives
	\begin{equation}
\begin{split}
	\PP \left( \left.\intop_{t_0}^{t\wedge \tau(2)} K_{\alpha_0}^*(t-x) d\widetilde{\Pi}_S(x ) \le \alpha_0\beta_0^{5}\sigma_1(t;S) +\alpha_0^{\frac{3}{2}}\beta_0^{6\theta},  \, \forall t\in[t_0, \hat{t}_0 ]  \right| \, \mathcal{A}  \right) \ge 1- e^{-\beta_0^{C_\circ}}; \\
	\PP \left( \left.\intop_{t_0}^{t\wedge \tau(2)} K_{\alpha_0}^*(t-x) d\widetilde{\Pi}_S(x ) \ge -\alpha_0^{\frac{3}{2}}\beta_0^{6\theta},  \, \forall t\in[t_0, \hat{t}_0 ]  \right| \, \mathcal{A}  \right) \ge 1- e^{-\beta_0^{C_\circ}}.
\end{split}
	\end{equation}
	
	To control the drift term of \eqref{eq:integralform:branching:ind1}, we use the bound given by $\tau_{3}$. Express that
	\begin{equation}
\begin{split}
	\intop_{t_0}^{t\wedge \tau(2)} K^*_{\alpha_0}(t-x) |S(x)-S_1(x)| dx \le \intop_{t_0}^{t\wedge \tau(2)} C\left(\frac{\alpha_0}{\sqrt{t-x}} \vee \alpha_0^2 \right) \frac{\alpha_0^{1-\epsilon}dx }{\pi_1(x;S)+1}
	\le \alpha_0^{2-2\epsilon},
\end{split}
	\end{equation}
	where the last inequality is from Lemma \ref{lem:ind1:pi1int basicbd}, with parameters $\Delta_0 = \alpha_0^{-1}, \Delta_1= \alpha_0^{-1}\beta_0^{C_\circ}, K=\alpha_0^{-1}\beta_0^{11\theta}$ and $N_0 = \beta_0^{5}$. Thus, the conclusion follows by combining the above two bounds, along with the condition $|\alpha_0'-\alpha_0|\le \alpha_0^{\frac{3}{2}}\beta_0^{6\theta}$ from $\mathcal{A}_3(\alpha_0,t_0)$.
\end{proof}

We record  a direct consequence of our analysis, for a future usage in Section \ref{subsec:increment:double int}.
\begin{cor}\label{cor:ind1:taux:fixedrate}
	Under the setting of Theorem \ref{thm:reg:conti:main}, define the stopping time $\tilde{\tau}_{b}$ as
	\begin{equation}
	\tilde{\tau}_{\textnormal{b}}:= \inf\left\{ t\ge t_0^+: \left|\intop_{t_0^+}^t K^*_{\alpha_0}(t-x) d\widetilde{\Pi}_{\alpha_0}(x) \right| \ge \alpha_0\beta_0^{C_\circ} \sigma_1(t; \alpha_0) +2\alpha_0^{\frac{3}{2}}\beta_0^{6\theta}  \right\}.
	\end{equation}
	Then, we have $\PP(\tilde{\tau}_{\textnormal{b}}<\hat{t}_0,\ \tau(2)>\acute{t}_0 \,| \, \mathcal{A} ) \le \exp(-\beta_0^5) .$
\end{cor}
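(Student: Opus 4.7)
The plan is to run exactly the argument of Lemma~\ref{lem:ind1:taux} but on the fixed-rate process $\Pi_{\alpha_0}$, where the key simplification is that the compensator of $d\Pi_{\alpha_0}$ is the \emph{deterministic} $\alpha_0\, dx$. Consequently, the "drift" term $\int K^*_{\alpha_0}(t-x)(S(x)-S_1(x))\,dx$ that appeared in \eqref{eq:integralform:branching:ind1} is simply absent, and only the martingale part remains to be estimated. In particular, the hypothesis $\{\tau(2)>\acute{t}_0\}$ enters only insofar as it provides the framework of Theorem~\ref{thm:reg:conti:main}; it is not needed to control the integral itself.

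First, I would set up Corollary~\ref{lem:concentrationofint:continuity} with $f_t(x):=K^*_{\alpha_0}(t-x)$ and $g\equiv\alpha_0$. The analytic estimates from Lemma~\ref{lem:estimat for K tilde:intro} (and, for the derivative, Corollary~\ref{cor:bound on K'}) yield $|f_t(x)|\le C\alpha_0^{3/2}$ for $x\le t-\alpha_0^{-1}$, $|\partial_t f_t|\vee|\partial_x f_t|\le 1$, and the quadratic-variation / short-range bounds
\begin{equation*}
\intop_{t_0^+}^{t}\bigl(K^*_{\alpha_0}(t-x)\bigr)^2 \alpha_0\,dx \le M:=\alpha_0^{3}\beta_0^{11\theta}, \qquad \intop_{t-\alpha_0^{-1}}^{t}|K^*_{\alpha_0}(t-x)|\alpha_0\,dx \le A:=C\alpha_0^{3/2}.
\end{equation*}
The auxiliary stopping time $\tau'$ bounding $|\Pi_{\alpha_0}[t-\alpha_0^{-1},t]|$ by $N:=\beta_0^{5}$ fails, on the window $[t_0^+,\hat{t}_0]$, only with probability $\exp(-\beta_0^{4})$ by standard Poisson tail estimates followed by a union bound of size $O(\alpha_0^{-2}\beta_0^{10\theta})$; the stopping time $\tau''$ is trivial since $g$ is constant. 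Choosing $\delta=\alpha_0^{10}$ and $a=\beta_0^{6\theta}$, Corollary~\ref{lem:concentrationofint:continuity} then yields, with probability at least $1-\exp(-\beta_0^{5\theta})$,
\begin{equation*}
\left|\intop_{t_0^+}^{t} K^*_{\alpha_0}(t-x)\,d\widetilde{\Pi}_{\alpha_0}(x)\right| \le 2N\,K^*_{\alpha_0}(\pi_1(t;\alpha_0))+3A+a\sqrt{M},\qquad \forall t\in[t_0^+,\hat{t}_0].
\end{equation*}

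Finally, Lemma~\ref{lem:estimat for K tilde:intro} gives $K^*_{\alpha_0}(y)\le C\alpha_0/\sqrt{y+1}$, so the first term on the right is bounded by $\alpha_0\beta_0^{C_\circ}\sigma_1(t;\alpha_0)$ (since $C_\circ\gg 5$), and the remaining two terms combine to at most $2\alpha_0^{3/2}\beta_0^{6\theta}$; this is exactly the threshold in the definition of $\tilde{\tau}_{\textnormal{b}}$. Intersecting with the event $\{\tau'\ge \hat{t}_0\}$ removes the auxiliary stopping and concludes the bound on $\PP(\tilde{\tau}_{\textnormal{b}}<\hat{t}_0\,|\,\mathcal{A})$. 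There is no real obstacle here: the proof is a straight application of the concentration machinery already developed in Section~\ref{subsec:fixed:mgconcen}, and the only thing to watch is the bookkeeping of exponents to ensure the constants in the concentration bound fit comfortably inside $\beta_0^{C_\circ}$ and $\beta_0^{6\theta}$, which they do since $C_\circ$ and $\theta$ are the large absolute constants fixed throughout the paper.
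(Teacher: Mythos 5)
Your overall strategy is the right one: since the integral $\int_{t_0^+}^t K^*_{\alpha_0}(t-x)\,d\widetilde\Pi_{\alpha_0}(x)$ involves only the fixed-rate points $\Pi_{\alpha_0}[t_0^+,\hat t_0]$, which are independent of $\mathcal F_{t_0}$, the conditioning on $\mathcal A$ and the intersection with $\{\tau(2)>\acute t_0\}$ play no role, and since $d\widetilde\Pi_{\alpha_0}=d\Pi_{\alpha_0}-\alpha_0\,dx$ is already compensated, the drift term from \eqref{eq:integralform:branching:ind1} is absent and the quadratic variation $\int_{t_0^+}^t(K^*_{\alpha_0}(t-x))^2\alpha_0\,dx$ is deterministic. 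The paper records this corollary without proof precisely because it follows from the concentration machinery used in Lemma~\ref{lem:ind1:taux} in a strictly simpler form, and your setup of Corollary~\ref{lem:concentrationofint:continuity} with $f_t(x)=K^*_{\alpha_0}(t-x)$, $g\equiv\alpha_0$, $\Delta=\alpha_0^{-1}$, $N=\beta_0^5$ matches that.

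The concrete error is your choice $a=\beta_0^{6\theta}$: with your own $M=\alpha_0^3\beta_0^{11\theta}$ one has $a\sqrt{M}=\beta_0^{6\theta}\cdot\alpha_0^{3/2}\beta_0^{11\theta/2}=\alpha_0^{3/2}\beta_0^{11.5\theta}$, which is larger than the allowed additive error $2\alpha_0^{3/2}\beta_0^{6\theta}$ by a factor $\beta_0^{5.5\theta}$, so the resulting inequality simply does not fit inside the threshold defining $\tilde\tau_{\mathrm b}$. The constraint you actually need is $a\sqrt{M}\lesssim\alpha_0^{3/2}\beta_0^{6\theta}$, i.e.\ $a\lesssim\beta_0^{\theta/2}$, while the probability side only needs $a\gg\beta_0^5$ to beat the union-bound factor $Ch/(\delta\Delta)=O(\alpha_0^{-11}\beta_0^{10\theta})$. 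So any choice such as $a=\beta_0^{10}$ works: then $a\sqrt{M}=\alpha_0^{3/2}\beta_0^{5.5\theta+10}\le\alpha_0^{3/2}\beta_0^{6\theta}$ and $(Ch/(\delta\Delta))e^{-a}\le\exp(-\beta_0^9)\le\exp(-\beta_0^5)$. With that single change the rest of your proof goes through as written.
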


Note that the exponent $6\theta$ of the error term $\alpha_0^{\frac{3}{2}}\beta_0^{6\theta}$ is related with the length of the interval $[\acute{t}_0, \hat{t}_0]$, as seen in \eqref{eq:ind1:S1minusalpha QV}. If we are interested in a shorter interval, we can obtain the following stronger bound, which will be essential in the analysis of $\tau_{3}^\sharp$ in Section \ref{subsec:ind2:bootstrappedJ}. Its proof is omitted since it is identical to that of Lemma \ref{lem:ind1:taux}.

\begin{cor}\label{cor:ind1:tauxprime}
	Under the setting of Theorem \ref{thm:reg:conti:main}, define the stopping time
	\begin{equation}\label{eq:def:tauxprime}
	\tau_{\textnormal{b}}' = \tau_{\textnormal{b}}'(\alpha_0,t_0,t_0^+) := \inf\left\{t\ge t_0^+:| S_1(t)-\alpha_0'| \le \alpha_0\beta_0^{C_\circ}\sigma_1(t;S) + \alpha_0^{\frac{3}{2}}\beta_0^{\theta}  \right\},
	\end{equation}
    with $\alpha_0' = \mathcal{L}(t_0;\Pi_S[t_0^-,t_0], \alpha_0)$ as before. 	Then, we have
    \begin{equation}
    \PP \left( \tau_{\textnormal{b}}' \le \acute{t}_0, \ \tau(2)> \acute{t}_0 \ | \ \mathcal{A} \right) \le \exp\left(-\beta_0^5 \right).
    \end{equation}
\end{cor}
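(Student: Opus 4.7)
The plan is to follow the proof of Lemma \ref{lem:ind1:taux} nearly verbatim, exploiting the fact that we now restrict attention to the much shorter time window $[t_0^+, \acute{t}_0]$, of length at most $3\alpha_0^{-2}\beta_0^\theta$, rather than $[t_0^+, \hat{t}_0]$ whose length is $2\alpha_0^{-2}\beta_0^{10\theta}$. On the shorter window the martingale part of the decomposition \eqref{eq:integralform:branching:ind1} has a substantially smaller quadratic variation, which is exactly what allows the additive error term to be tightened from $\alpha_0^{3/2}\beta_0^{6\theta}$ down to $\alpha_0^{3/2}\beta_0^{\theta}$. (I read the ``$\le$'' in the statement of $\tau_{\textnormal{b}}'$ as a typographical slip for ``$\ge$''/``$\notin$'', consistent with the definition of $\tau_{\textnormal{b}}$ in \eqref{eq:def:taux}.)

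Concretely, I start from the identity
\begin{equation}
S_1(t) - \alpha_0' + O(\alpha_0^{100}) = \intop_{t_0}^t K^*_{\alpha_0}(t-x)\, d\widetilde{\Pi}_S(x) + \intop_{t_0}^t K^*_{\alpha_0}(t-x)\bigl(S(x)-S_1(x)\bigr) dx,
\end{equation}
valid for every $t \ge t_0^+$; the $O(\alpha_0^{100})$ error arises from the difference $|R(t_0,t)-\alpha_0'|$ and is justified exactly as in the paragraph preceding \eqref{eq:integralform:branching:ind1}, using only that $t-t_0 \ge \tfrac{1}{2}\alpha_0^{-2}\beta_0^\theta$ together with the exponential decay of $K_{\alpha_0}$ and $K^*_{\alpha_0} - K^*_{\alpha_0}(\cdot)$ beyond that scale from Lemmas \ref{lem:estimate on K:intro} and \ref{lem:estimat for K tilde:intro}.

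Next, I refine the quadratic-variation bound \eqref{eq:ind1:S1minusalpha QV}. Combining $S(x)\le \alpha_0\beta_0^{C_\circ}$ from $\tau_1$ with $(K^*_{\alpha_0}(t-x))^2 \le C\bigl(\alpha_0^4 + \alpha_0^2/(t-x+1)\bigr)$, and integrating over $x\in[t_0,t]$ with $t-t_0 \le 3\alpha_0^{-2}\beta_0^\theta$, gives
\begin{equation}
\intop_{t_0}^{t\wedge\tau(2)} \bigl(K^*_{\alpha_0}(t-x)\bigr)^2 S(x)\, dx \le C\alpha_0^3 \beta_0^{C_\circ + \theta}.
\end{equation}
Plugging this as the new $M$ into Corollary \ref{lem:concentrationofint:continuity} (with the same choices of $\Delta=\alpha_0^{-1}$, $D$, $\eta$, $N$, $\delta$ as in the proof of Lemma \ref{lem:ind1:taux}) yields, with probability at least $1-\exp(-\beta_0^{C_\circ})$, the two-sided bound
\begin{equation}
\left|\intop_{t_0}^{t\wedge\tau(2)} K^*_{\alpha_0}(t-x)\, d\widetilde{\Pi}_S(x)\right| \le \alpha_0\beta_0^{C_\circ}\sigma_1(t;S) + \tfrac{1}{2}\alpha_0^{3/2}\beta_0^{\theta}, \qquad \forall t\in[t_0^+,\acute{t}_0],
\end{equation}
where the prefactor improves to $\beta_0^\theta$ because $\tfrac{1}{2}(C_\circ+\theta) + O(1) \le \theta$ for our choices $C_\circ=50$, $\theta=10000$.

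Finally, the drift term $\int_{t_0}^{t\wedge\tau(2)} K^*_{\alpha_0}(t-x)|S(x)-S_1(x)|dx$ is bounded by $\alpha_0^{2-2\epsilon}$ via Lemma \ref{lem:ind1:pi1int basicbd} applied to $|S-S_1|\le \alpha_0^{1-\epsilon}\sigma_1(\cdot;S)^2$ coming from $\tau_3$, exactly as at the end of the proof of Lemma \ref{lem:ind1:taux}, and is negligible compared to $\alpha_0^{3/2}\beta_0^\theta$. Combining the three estimates together with $|\alpha_0'-\alpha_0|\le \alpha_0^{3/2}\beta_0^{\theta}$ from $\mathcal{A}_3(\alpha_0,t_0)$ concludes the proof. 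I do not foresee a genuine obstacle here; the argument is a direct rerun of Lemma \ref{lem:ind1:taux} with the bookkeeping tightened for the shorter horizon, and the only mildly delicate point is tracking that the constant in the refined $M$ and the constant $|\alpha_0'-\alpha_0|\le \alpha_0^{3/2}\beta_0^\theta$ together stay below $\alpha_0^{3/2}\beta_0^\theta$ by the margin dictated by $C_\circ \ll \theta$.
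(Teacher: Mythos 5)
Your approach is essentially the paper's own: the paper simply remarks that the proof is "identical to that of Lemma~\ref{lem:ind1:taux}," and your refinement of the quadratic variation bound from $\alpha_0^3\beta_0^{12\theta}$ to $\alpha_0^3\beta_0^{C_\circ+\theta}$ (by integrating only up to $\acute{t}_0$ rather than $\hat{t}_0$) is exactly the point that makes the tighter constant $\alpha_0^{3/2}\beta_0^\theta$ come out. Your reading of the misprinted inequality sign in \eqref{eq:def:tauxprime} is also correct.

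There is, however, one genuine wrinkle at the end. You close the argument by adding the triangle-inequality term $|\alpha_0'-\alpha_0|\le \alpha_0^{3/2}\beta_0^\theta$ from $\mathcal{A}_3(\alpha_0,t_0)$, imitating the last sentence of Lemma~\ref{lem:ind1:taux}. But that step is needed in Lemma~\ref{lem:ind1:taux} only because $\tau_{\textnormal{b}}$ compares $S_1(t)$ to $\alpha_0$, while the decomposition \eqref{eq:integralform:branching:ind1} naturally produces $S_1(t)-\alpha_0'$. Here, $\tau_{\textnormal{b}}'$ is already stated in terms of $\alpha_0'$, so the decomposition gives you $|S_1(t)-\alpha_0'|$ directly and no $\mathcal{A}_3$ triangle inequality should appear. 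Worse, if you actually did add the $\mathcal{A}_3$ error on top of the martingale bound (which is already of order $\alpha_0^{3/2}\beta_0^{\theta/2+O(C_\circ)}$), the total would be $\alpha_0^{3/2}\beta_0^\theta + \alpha_0^{3/2}\beta_0^{\theta/2+O(C_\circ)} > \alpha_0^{3/2}\beta_0^\theta$, and the target bound would fail. You gesture at this tension yourself in the final sentence ("together stay below $\alpha_0^{3/2}\beta_0^\theta$") without noticing it is impossible. Drop the $\mathcal{A}_3$ step entirely and the argument is correct and self-contained.
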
 
We stress that  the dependence on $t_0^-$ in the definition \eqref{eq:def:tauxprime} comes from $\alpha_0'$ and $S_1(t)=S_1(t;t_0^-,\alpha_0)$. We include $t_0^-$ in the notation (unlike the case of $\tau_{\textnormal{b}}$) to prevent confusion when it is used later in Section \ref{subsec:ind2:bootstrappedJ}.

We conclude this subsection by deriving an analogue for the process $\overline{S}(t)$ (and $\underline{S}(t)$). To be more concrete, we set $\overline{S}_1(t):= \intop_{t_0^-}^t K_{\alpha_0}(t-x) d\Pi_{\overline{S}}(x)$, and define the processes ${R}_0^+(t)$ and $\hat{R}_0^+(t)$ as in \eqref{eq:def:R:ib:aux} and \eqref{eq:def:R0plushat}, replacing $\alpha$ with $\alpha_0$. Define $\overline{S}^+(t):= \overline{S}(t)\vee \hat{R}_0^+(t)$, and let
\begin{equation}\label{eq:def:taux:init}
\overline{\tau}_{{\textnormal{b}}} = \overline{\tau}_{{\textnormal{b}}}(\alpha_0,t_0) := \inf\left\{t\ge t_0:( \overline{S}_1(t)-\alpha_0) \notin \left(-2\alpha_0^{\frac{3}{2}}\beta_0^{6\theta} ,\alpha_0\beta_0^{5\theta}\sigma_1(t;\overline{S}^+) + 2\alpha_0^{\frac{3}{2}}\beta_0^{6\theta} \right) \right\}.
\end{equation}

The major difference in the definition is that the stopping time reads the process starting from $t_0$, not from $t_0^+$, where we entail Theorem \ref{thm:branching:ib}. 
Then, the following corollary holds true:
\begin{cor}
	Under the setting of Proposition \ref{prop:ind1:base:main}, we have
	\begin{equation}
	\PP \left( \overline{\tau}_{\textnormal{b}} = \overline{\tau} \wedge \hat{t}_0, \ \overline{\tau}>t_0 \right) \le \exp\left(-\beta_0^4 \right).
	\end{equation}
\end{cor}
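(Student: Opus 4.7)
The plan is to mimic the proof of Lemma \ref{lem:ind1:taux}, but starting the analysis from the initial time $t_0$ (rather than $t_0^+$) by exploiting the branching-process bounds of Theorem \ref{thm:branching:ib}. Work on the event $\mathcal{E}:=\{\overline{\tau}>t_0\}\cap\{\overline{\tau}_{\textnormal{b}}=\overline{\tau}\wedge\hat{t}_0\}$; on $\mathcal{E}$ every other stopping time $\overline{\tau}_1,\ldots,\overline{\tau}_8$ and $\tau_{\textnormal{B}}$ remains strictly bigger than each $t\le\overline{\tau}_{\textnormal{b}}$, so we may freely invoke their defining bounds throughout. Choose $t_0^\flat\in(t_0^-,t_0)$ with $t_0^\flat-t_0^-\ge\tfrac{1}{4}\alpha_0^{-2}\beta_0^\theta$. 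Applying the identity \eqref{eq:integralform:branching} with $Q=\overline{S}$ gives, for each $t\in[t_0,\hat{t}_0]$,
\begin{equation*}
\overline{S}_1(t)-\mathcal{R}_c\bigl(t_0^\flat,t;\Pi_{\alpha_0}[t_0^-,t_0^\flat],\alpha_0\bigr)
= \intop_{t_0^\flat}^{t} K^*_{\alpha_0}(t-s)\,d\widetilde{\Pi}_{\overline{S}}(s)
+\intop_{t_0^\flat}^{t} K^*_{\alpha_0}(t-s)\bigl(\overline{S}(s)-\overline{S}_1(s)\bigr)ds.
\end{equation*}

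The initial term is already handled by the computations \eqref{eq:branching:Rlbd:med3}--\eqref{eq:branching:Rlbd:med4} together with \eqref{eq:reg:ib:med1}--\eqref{eq:reg:ib:med2}, giving $|\mathcal{R}_c(t_0^\flat,t;\Pi_{\alpha_0}[t_0^-,t_0^\flat],\alpha_0)-\alpha_0|\le\alpha_0^{3/2}\beta_0^{C_\circ}$ off an event of probability $e^{-\beta_0^5}$. For the stochastic integral, apply Corollary \ref{lem:concentrationofint:continuity} to $f_t(x)=K^*_{\alpha_0}(t-x)$, using the bound $\overline{S}(s)\le 2\alpha_0\beta_0^{C_\circ}$ (from $\overline{\tau}_1>s$) and Lemma \ref{lem:estimat for K tilde:intro} to set $M=\alpha_0^3\beta_0^{12\theta}$, together with $N=\beta_0^5$ (from $\overline{\tau}_7$). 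The crucial point is that here we write the resulting bound in terms of $\sigma_1(t;\overline{S}^+)$: on $\mathcal{E}$ the stopping time $\tau_{{\textnormal{B}2}}$ of \eqref{eq:def:tau:branching:ib} guarantees at least one point of $\Pi_{\hat{R}_0^+}$ within distance $\alpha_0^{-1}\beta_0^{C_\circ}$ of any time $t$, so the nearest-point quantity is always well controlled even on intervals where $\overline{S}$ has produced no new particles.

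For the drift, split $[t_0^\flat,t]=[t_0^\flat,t_0]\cup[t_0,t]$. On $[t_0^\flat,t_0]$, $\overline{S}(s)=\alpha_0$ while $\overline{S}_1(s)$ is the fixed-rate convolution, so $|\overline{S}(s)-\overline{S}_1(s)|=|\alpha_0-\overline{S}_1(s)|$, which is bounded by $\alpha_0\beta_0^{C_\circ}\sigma_1(s;\alpha_0)+\alpha_0^{3/2}\beta_0^{C_\circ}$ via \eqref{eq:branching:ib:R0:med0}--\eqref{eq:branching:ib:R0:med2}; integrating against $K^*_{\alpha_0}(t-s)$ using Lemma \ref{lem:ind1:pi1int basicbd} gives a contribution absorbed into $\alpha_0^{3/2}\beta_0^{2\theta}$. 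On $[t_0,t]$, Corollary \ref{cor:ind1:tau6} (the induction-base analogue of Lemma \ref{lem:ind1:tau6}, which we now have in hand) yields $|\overline{S}(s)-\overline{S}_1(s)|\le \alpha_0^{1-\epsilon}\sigma_1\sigma_2(s;\overline{S}^+)$, and the same argument as in the proof of Lemma \ref{lem:ind1:taux}, applying Lemma \ref{lem:ind1:pi1int basicbd} with $\Delta_0=\alpha_0^{-1}$, $\Delta_1=\alpha_0^{-1}\beta_0^{C_\circ}$, $N_0=\beta_0^5$ and $K=\alpha_0^{-1}\beta_0^{11\theta}$, yields a drift term of order $\alpha_0^{2-2\epsilon}\ll\alpha_0^{3/2}\beta_0^{6\theta}$.

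Combining the three estimates shows that on $\mathcal{E}$, the inequality defining $\overline{\tau}_{\textnormal{b}}$ is in fact satisfied throughout $[t_0,\hat{t}_0]$, which forces $\overline{\tau}_{\textnormal{b}}=\hat{t}_0$; the total failure probability is $\le e^{-\beta_0^4}$ after collecting all the bounds. The main obstacle is the seeming circularity that $\overline{\tau}_{\textnormal{b}}$ is itself one of the stopping times composing $\overline{\tau}$, which appears in the bounds above (e.g.\ via $\overline{\tau}_1,\overline{\tau}_3,\overline{\tau}_7$). This is resolved by the standard device, used already in Lemma \ref{lem:ind1:taux}: on the event $\{\overline{\tau}_{\textnormal{b}}=\overline{\tau}\wedge\hat{t}_0\}$ the minimum is first attained at $\overline{\tau}_{\textnormal{b}}$, so up to that moment every other $\overline{\tau}_i$ is still active and supplies its bound; thus no circularity arises and the martingale concentration lemmas can be applied cleanly up to time $\overline{\tau}_{\textnormal{b}}$.
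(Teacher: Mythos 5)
Your approach is correct in spirit but differs genuinely from the paper's, and contains one misattribution worth flagging.

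The paper decomposes from $t_0$ and is left with the term $\overline{R}(t_0,t)=\mathcal{R}_c(t_0,t;\Pi_{\alpha_0}[t_0^-,t_0],\alpha_0)$, which it controls by observing that this is the conditional expectation of the branching process $R(t)=\mathcal{R}_b(t;\Pi_{\alpha_0}[t_0^-,t_0],\alpha_0)$ given $\mathcal{F}_{t_0}$, and invoking the pointwise control supplied by $\tau_{\textnormal{B}1}$ from Theorem~\ref{thm:branching:ib}. You instead push the decomposition back to $t_0^\flat\in(t_0^-,t_0)$, so the initial term becomes $\mathcal{R}_c(t_0^\flat,t;\Pi_{\alpha_0}[t_0^-,t_0^\flat],\alpha_0)$: since $t-t_0^\flat\ge\tfrac14\alpha_0^{-2}\beta_0^\theta$ for all $t\ge t_0$, this is close to $\mathcal{L}(t_0^\flat;\cdots)$ and hence to $\alpha_0$ by the computations you cite, sidestepping $\tau_{\textnormal{B}1}$ and the ``averaged version'' step entirely. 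The trade-off is a small amount of extra bookkeeping for the drift on $[t_0^\flat,t_0)$, where one must interpret ``$\overline{S}_1(s)$'' as the fixed-rate convolution $\mathcal{R}_c(s,s;\Pi_{\alpha_0}[t_0^-,s],\alpha_0)$ rather than the formula given after Theorem~\ref{thm:induction:base:main} (which is only well-defined for $s\ge t_0$); you do implicitly use the right object. Also, in the drift estimate on $[t_0,t]$, what you actually use is the bound furnished by $s<\overline{\tau}_3$ on the event $\mathcal{E}$, not the probability statement of Corollary~\ref{cor:ind1:tau6}; invoking the latter would only give $e^{-\beta_0^3}$ error, but the direct use of the stopping-time bound avoids that.

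One genuine error: you claim that $\tau_{\textnormal{B}2}$ guarantees a point of $\Pi_{\hat{R}_0^+}$ within distance $\alpha_0^{-1}\beta_0^{C_\circ}$ of any $t$. It does not---$\tau_{\textnormal{B}2}$ is an \emph{upper} bound on the number of points in a window; the ``at least one nearby point'' guarantee is the job of $\overline{\tau}_8$ (or $\tau_{\textnormal{f}2}$). Fortunately the whole point is moot: Corollary~\ref{lem:concentrationofint:continuity} applied to $\Pi_g=\Pi_{\overline{S}}$ naturally produces the term $2N K^*_{\alpha_0}(t-p_1(t))\lesssim\alpha_0\beta_0^5\sigma_1(t;\overline{S})$, and since $\overline{S}^+\ge\overline{S}$ implies $\sigma_1(t;\overline{S})\le\sigma_1(t;\overline{S}^+)$, the bound in the definition of $\overline{\tau}_{\textnormal{b}}$ follows for free with no appeal to $\tau_{\textnormal{B}2}$ at all. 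If you keep your alternate route, drop the $\tau_{\textnormal{B}2}$ remark and simply note the monotonicity $\sigma_1(t;\overline{S})\le\sigma_1(t;\overline{S}^+)$.
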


\begin{proof}
	The proof is analogous as Lemma \ref{lem:ind1:taux}, using the decomposition \eqref{eq:integralform:branching:ind1} corresponding to $\overline{S}_1(t)$. This gives
	\begin{equation}
	\overline{S}_1(t)-\overline{R}(t_0,t) \in \left(-\alpha_0^{\frac{3}{2}}\beta_0^{6\theta} , \, \alpha_0\beta_0^5 \sigma_1(t;\overline{S}) + \alpha_0^{\frac{3}{2}}\beta_0^{6\theta} \right)
	\end{equation}
	for all $t\in [t_0, \overline{\tau}(2)\wedge \hat{t}_0]$, which holds with high probability.
	
	The only difference is that we are not guaranteed with the bound on $|\overline{R}(t_0,t)-\alpha_0|$ as before, since we need to cover $t\ge t_0$, not just $t\ge t_0^+$.
		To study $|\overline{R}(t_0,t)-\alpha_0|$,  consider the process
	 ${R}(t) = \mathcal{R}_b(t;\Pi_{\alpha_0}[t_0^-,t_0],\alpha_0)$ \eqref{eq:def:Rb}. Then, $\tau_{\textnormal{B}1}$ from Theorem \ref{thm:branching:ib} gives the conclusion, since $\overline{R}(t_0,t)$ is an averaged version of $R(t)$. 
\end{proof}

\subsection{Proximity of the speed from the base rate}\label{subsec:ind1:Svsalpha}

The goal of this subsection is to control all  stopping times introduced in Sections \ref{subsubsec:reg:Scontrol} and \ref{subsubsec:reg:aggsize}, except $\tau_{3}$. Furthermore, we also establish Proposition \ref{prop:reg:lbd of speed}. To this end, we exploit the bound on $|S(t)-\alpha_0|$ based on \eqref{eq:ind1:Svsalpha:decomp:basic}.

Let $F(\alpha_0,t)$ be the following random function which is essentially the sum of error bounds from  $\tau_{3}$ (Section \ref{subsubsec:reg:Scontrol}), and $\tau_{\textnormal{b}}$ \eqref{eq:def:taux}:
\begin{equation}
\begin{split}
F(\alpha_0,t):= 4\alpha_0^{1-\epsilon}\sigma_1\sigma_2(t;S) + \alpha_0\beta_0^{C_\circ+1} \sigma_1(t;S) + 3\alpha_0^{3/2} \beta_0^{6\theta}.
\end{split}
\end{equation}

Define
\begin{equation}\label{eq:def:tau12}
{\tau}_{9}^+:= \inf\{t \ge t_0^+: |S(t)-\alpha_0| \ge F(\alpha_0,t) \}.
\end{equation}

\begin{cor}\label{cor:ind1:tau12}
	Under the setting of Theorem \ref{thm:reg:conti:main}, we have
	\begin{equation}
	\PP \left({\tau}^+_{9} \le \tau(2),\ \tau(2)> \acute{t}_0 \ | \ \mathcal{A}  \right) \le 3\exp\left(-\beta_0^2 \right).
	\end{equation}
\end{cor}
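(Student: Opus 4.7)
The plan is to control $|S(t)-\alpha_0|$ on $[t_0^+,\hat{t}_0\wedge\tau(2)]$ via the triangle decomposition \eqref{eq:ind1:Svsalpha:decomp:basic},
\begin{equation}
|S(t)-\alpha_0|\le |S(t)-S_1(t)|+|S_1(t)-\alpha_0|,
\end{equation}
and then observe that on the intersection of the high-probability events already supplied by Lemma~\ref{lem:ind1:tau6} and Lemma~\ref{lem:ind1:taux}, each summand is bounded by a corresponding piece of $F(\alpha_0,t)$ with a bit of room to spare.

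First, Lemma~\ref{lem:ind1:tau6} (whose proof, through \eqref{eq:ind1:tau6:1}, actually delivers the two-sided control $|S(t)-S_1(t;t_0^-,\alpha_0)|\le 4\alpha_0^{1-\epsilon}\sigma_1\sigma_2(t;S)$ for every $t\in[\acute{t}_0,\hat{t}_0\wedge\tau(2)]$) gives this bound with conditional probability at least $1-\exp(-\beta_0^2)$ given $\mathcal{A}$ and $\{\tau(2)>\acute{t}_0\}$. By using the first statement of Proposition~\ref{prop:SvsSprime} (valid already on $[t_0,\tau_6(2)]$) in place of the second when forming the decomposition \eqref{eq:ind1:SminS1:decomp}, the same bound extends to all $t\in[t_0^+,\hat{t}_0\wedge\tau(2)]$ with no loss in probability, since the extra term $|S_1(t;t_0^-,\alpha_0)-S_1(t;t_0,\alpha_0)|$ estimated in \eqref{eq:ind1:S1timechange} is absorbed once we no longer introduce it. Second, Lemma~\ref{lem:ind1:taux} asserts that $\tau_{\textnormal{b}}\ge\hat{t}_0$ except on a conditional event of probability at most $\exp(-\beta_0^5)$, so by the definition \eqref{eq:def:taux} of $\tau_{\textnormal{b}}$,
\begin{equation}
|S_1(t)-\alpha_0|\le \alpha_0\beta_0^{C_\circ}\sigma_1(t;S)+2\alpha_0^{3/2}\beta_0^{6\theta},\qquad \forall\, t\in[t_0^+,\hat{t}_0\wedge\tau(2)].
\end{equation}

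Summing the two displayed bounds and comparing with
\begin{equation}
F(\alpha_0,t)=4\alpha_0^{1-\epsilon}\sigma_1\sigma_2(t;S)+\alpha_0\beta_0^{C_\circ+1}\sigma_1(t;S)+3\alpha_0^{3/2}\beta_0^{6\theta},
\end{equation}
we see that on the intersection of the two good events we have $|S(t)-\alpha_0|<F(\alpha_0,t)$ strictly, with the slack provided by the extra factor $\beta_0$ in the second term of $F$ and by the coefficient $3$ versus $2$ in the additive term. Consequently, on this intersection, ${\tau}^+_9\ge \hat{t}_0\wedge\tau(2)\ge \tau(2)$ whenever $\tau(2)>\acute{t}_0$. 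A union bound over the two failure events yields a conditional probability at most $\exp(-\beta_0^2)+\exp(-\beta_0^5)\le 3\exp(-\beta_0^2)$, which is the claim.

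The main obstacle is essentially bookkeeping: verifying that the decomposition \eqref{eq:ind1:SminS1:decomp} can be rerouted through $S'(t;t_0^-,\alpha_0)$ so that the upper bound on $|S(t)-S_1(t)|$ is available uniformly on the full interval $[t_0^+,\hat{t}_0\wedge\tau(2)]$ (rather than only on $[\acute{t}_0,\hat{t}_0\wedge\tau(2)]$), and then confirming that the constants and powers of $\beta_0$ built into the definition of $F(\alpha_0,t)$ indeed strictly dominate the sum of the two high-probability bounds. Everything else is a direct application of the lemmas already established in Sections~\ref{subsec:ind1:1storder error} and~\ref{subsec:ind1:S1vsalpha}.
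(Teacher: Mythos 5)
Your overall strategy is exactly what the paper has in mind: the paper's proof is the single sentence ``immediate consequence of Lemmas~\ref{lem:ind1:tau6} and~\ref{lem:ind1:taux},'' and you correctly unpack this as the triangle decomposition \eqref{eq:ind1:Svsalpha:decomp:basic} plus a union bound over the two failure events, then check that the sum of the two high-probability bounds is strictly dominated by $F(\alpha_0,t)$ (the factor~$4$ vs.~$1$, the extra $\beta_0$ in the $\sigma_1$ coefficient, and $3$ vs.~$2$ in the constant term give the slack).

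Where you go astray is the extension of the $|S(t)-S_1(t)|$ bound from $[\acute{t}_0,\hat{t}_0\wedge\tau(2)]$ to $[t_0^+,\hat{t}_0\wedge\tau(2)]$. Re-routing the decomposition \eqref{eq:ind1:SminS1:decomp} through $S'(t;t_0^-,\alpha_0)$ means invoking Proposition~\ref{prop:fixed perturbed:error} with $t^-=t_0^-$, but then the hypotheses \eqref{eq:error:assumptions} must be checked over the whole of $[t_0^-,\hat{t}_0\wedge\tau]$, and in particular the portion $[t_0^-,t_0]$ is \emph{not} covered by $\tau_4,\tau_5,\tau_1$ in $\tau(\alpha_0,t_0,2)$ (those stopping times start accumulating only from $t_0$), nor does conditioning on $\mathcal{A}(\alpha_0,t_0)$ by itself give a bound on $\int_{t_0^-}^{t_0}(S(s)-\alpha_0)^2\,ds$ or on $\sup_{[t_0^-,t_0]}S$. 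So the claim ``with no loss in probability'' is not justified as stated.

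Fortunately the extension is both simpler and already built into the conditioning. On $\{\tau(2)>\acute{t}_0\}$ we have, for every $t\in[t_0^+,\acute{t}_0)$, that $t<\tau(2)\le\tau_3(\alpha_0,t_0)$, and hence by the definition of $\tau_3$ in Section~\ref{subsubsec:reg:Scontrol},
\begin{equation}
-\alpha_0^{\frac{3}{2}-\epsilon}\sigma_1(t;S) < S(t)-S_1(t) < \alpha_0^{1-\epsilon}\sigma_1\sigma_2(t;S),
\end{equation}
deterministically. Combined with the $\tau_{\textnormal{b}}$-bound from Lemma~\ref{lem:ind1:taux}, this gives $|S(t)-\alpha_0|<F(\alpha_0,t)$ on $[t_0^+,\acute{t}_0)$ because $\alpha_0^{\frac{3}{2}-\epsilon}\sigma_1(t;S)+\alpha_0\beta_0^{C_\circ}\sigma_1(t;S)\le\alpha_0\beta_0^{C_\circ+1}\sigma_1(t;S)$ for $\alpha_0$ small. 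Lemma~\ref{lem:ind1:tau6} is then needed only for $t\in[\acute{t}_0,\hat{t}_0\wedge\tau(2)]$, where (as you note) the definition of $\tau_3$ is no longer available at the right endpoint $\tau(2)$ itself. With that substitution the rest of your argument goes through unchanged and gives the claimed bound $3\exp(-\beta_0^2)$.
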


\begin{proof}
	This is an immediate consequence of Lemmas~\ref{lem:ind1:tau6} and  \ref{lem:ind1:taux}.
\end{proof}

Recall the definition of ${\tau}^+_i(1/2)$ \eqref{eq:def:ind1:tauacute}. In the following subsections, we show that each stopping time ${\tau}^+_i$ is unlikely to be smaller than or equal to \begin{equation}\label{eq:def:tau2tilde}
\tilde{ \tau}(2):= \tau(2)\wedge {\tau}^+_{9}.
\end{equation}\begin{itemize}
	\item In Section \ref{subsubsec:ind1:squareint}, we control ${\tau}^+_4(1/2), {\tau}^+_5(1/2)$, the stopping times related with the square integral of $|S(t)-\alpha_0|$.
	
	\item In Section \ref{subsubsec:ind1:aggsize}, we study ${\tau}^+_6(1/2),{\tau}^+_{7}(1/2),{\tau}^+_{8}(1/2),$ which describe the size of the aggregate.
	
	\item In Section \ref{subsubsec:ind1:Sbd}, we work with $\tau_{1}^+(1/2), \tau_{2}^+(1/2)$.
	
	\item In Section \ref{subsubsec:ind1:base}, we show how the analysis from Sections \ref{subsubsec:ind1:squareint}--\ref{subsubsec:ind1:Sbd} can be done in the setting of Proposition \ref{prop:ind1:base:main}.
	
	\item In Section \ref{subsubsec:ind1:Sminusalpha conseq}, we establish several consequences of our analysis which will be useful later in Section \ref{sec:double int}.
\end{itemize}

\subsubsection{The square integrals}\label{subsubsec:ind1:squareint}
The main goal of this subsection is to deduce the following lemma.

\begin{lem}\label{lem:ind1:tau78}
	Under the setting of Theorem \ref{thm:reg:conti:main}, let $\tilde{\tau}(2)$ be as \eqref{eq:def:tau2tilde}.  We have
	\begin{equation}
	\PP \left({\tau}^+_{4}(1/2) \wedge {\tau}^+_5(1/2) \le \tilde{\tau}(2),\ \tau(2)> t_0^+ \ | \ \mathcal{A}  \right) \le \exp\left(-\beta_0^3 \right).
	\end{equation}
\end{lem}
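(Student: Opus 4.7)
The plan is to work on the event $\{\tau(2)>\acute t_0\}\cap\{\tilde\tau(2)>t\}$, on which Corollary~\ref{cor:ind1:tau12} provides the pointwise bound $|S(t)-\alpha_0|\le F(\alpha_0,t)$. I would bound the integrals in the definitions of $\tau^+_4(1/2)$ and $\tau^+_5(1/2)$ by decomposing $(S-\alpha_0)^2\le 2(S-S_1)^2+2(S_1-\alpha_0)^2$ and treating each term separately, then subtract off the exceptional probability from Corollary~\ref{cor:ind1:tau12} (and from the prerequisite Lemmas~\ref{lem:ind1:tau6} and~\ref{lem:ind1:taux}) to obtain the $\exp(-\beta_0^3)$ bound.

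For the $(S_1-\alpha_0)^2$ piece I would invoke Lemma~\ref{lem:ind1:taux}, which gives $|S_1(s)-\alpha_0|\le\alpha_0\beta_0^{C_\circ+1}\sigma_1(s;S)+3\alpha_0^{3/2}\beta_0^{6\theta}$. Squaring, integrating via Lemma~\ref{lem:ind1:pi1int basicbd:basic}, and using the density control $|\Pi_S[t_0^+,\hat t_0]|\le C\alpha_0^{-1}\beta_0^{10\theta+4}$ supplied by $\tau_7$, the contribution is of order $\alpha_0\beta_0^{O(\theta)+O(C_\circ)}$ and is therefore much smaller than $\tfrac12\alpha_0\beta_0^{25\theta}$ for the choice $\theta\gg C_\circ$. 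For the $(S-S_1)^2$ piece I would use the two one-sided $\tau_3$ bounds separately. The lower part $(S-S_1)_-^2\le\alpha_0^{3-2\epsilon}\sigma_1^2$ integrates at once against $\int\sigma_1^2\,ds$. The upper part $(S-S_1)_+^2\le\alpha_0^{2-2\epsilon}\sigma_1^2\sigma_2^2$ is the delicate term and is handled by computing per-interval contributions via partial fractions: for consecutive points $\pi_{i-1}<\pi_i<\pi_{i+1}$ of $\Pi_S$ with gaps $\ell_i=\pi_i-\pi_{i-1}$,
\begin{equation}
\intop_{\pi_i}^{\pi_{i+1}}\sigma_1^2(t;S)\sigma_2^2(t;S)\,dt=\frac{1}{\ell_i}\log\frac{(\ell_i+1)(\ell_{i+1}+1)}{\ell_i+\ell_{i+1}+1}\le\frac{\log(\min(\ell_i,\ell_{i+1})+1)}{\ell_i+1},
\end{equation}
and summing this across $[t_0^+,\hat t_0]$ using the upper density bound from $\tau_7$ together with the lower density bound from $\tau_8$ (so that $\sum_i(\ell_i+1)^{-1}$ can be controlled by a polylog factor in $\beta_0$ times $|\Pi_S[t_0^+,\hat t_0]|$).

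The argument for $\tau^+_5(1/2)$ is structurally identical, with the extra factor $S(s)\le 2\alpha_0\beta_0^{C_\circ}$ (from $\tau_1$) inserted into each integrand, which precisely accounts for the extra $\alpha_0$ in the threshold $\tfrac12\alpha_0^2\beta_0^{25\theta}$. The main obstacle is controlling $\int\sigma_1^2\sigma_2^2\,ds$ sharply enough to dominate the factor $\alpha_0^{-2\epsilon}$ that arises from squaring the $\tau_3$ bound, since a naive count $|\Pi_S[t_0^+,\hat t_0]|$ of the number of small-gap intervals would be too crude; this is where the specific relationship between $\epsilon$, $\theta$, and $C_\circ$ fixed at the start of Section~\ref{sec:reg:intro} becomes essential and is the reason the polylog exponent in the thresholds of $\tau_4,\tau_5$ is taken as large as $25\theta$.
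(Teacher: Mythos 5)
Your overall structure matches the paper's: both isolate the dominant term coming from the upper branch of the $\tau_3$ bound, reduce the integral of $\sigma_1^2\sigma_2^2$ over each inter-point interval to a partial-fraction computation, and then sum over gaps $\ell_i$. But there is a genuine gap in the crucial summing step. You claim that $\sum_i(\ell_i+1)^{-1}$ can be bounded by a polylog factor times $|\Pi_S[t_0^+,\hat t_0]|$. That is essentially the trivial bound (each term is $\le 1$, so the sum is $\le N$), and $N\lesssim\alpha_0^{-1}\beta_0^{O(\theta)}$ from $\tau_7$. Feeding this back gives a contribution of order $\alpha_0^{2-2\epsilon}\cdot\alpha_0^{-1}\beta_0^{O(\theta)}=\alpha_0^{1-2\epsilon}\beta_0^{O(\theta)}$, which exceeds the threshold $\tfrac12\alpha_0\beta_0^{25\theta}$ in $\tau_4^+(1/2)$ for all small $\alpha_0$: the excess factor $\alpha_0^{-2\epsilon}$ is polynomial in $\alpha_0$ and cannot be absorbed by any polylogarithmic factor $\beta_0^{O(\theta)}$, no matter how $\theta$ and $C_\circ$ are chosen. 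You diagnose the obstacle correctly (a naive count of small-gap intervals is too crude) but misattribute the fix to the $\epsilon,\theta,C_\circ$ calibration and the $25\theta$ exponent, which are polylog matters.

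What is actually needed, and what the paper proves in Claim~\ref{claim:ind1:pointsum:S}, is the polynomially stronger bound $\sum_i(\ell_i+1)^{-1}\le\alpha_0^{-1/2-\epsilon}$ with very high probability. This is a fresh probabilistic argument, not a consequence of $\tau_7$ and $\tau_8$: one first dominates $\Pi_S$ by a fixed-rate process with rate $\hat\alpha_0=2\alpha_0\beta_0^{C_\circ}$ using $\tau_1$, and then a Chernoff-type count shows that the number of pairs of consecutive points at distance $\lesssim\alpha_0^{-1/2}$ is only $\approx\alpha_0^{-1/2}\beta_0^{O(\theta)}$ rather than $\approx N$. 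That gain of a factor $\alpha_0^{1/2}$ is precisely what turns $\alpha_0^{1-2\epsilon}$ into $\alpha_0^{3/2-O(\epsilon)}$, comfortably below the $\tau_4$ threshold. Without this claim (or an equivalent substitute), your proof of Lemma~\ref{lem:ind1:tau78} does not close.
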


\begin{proof}
	The proof is based on estimating the integrals in the definition of $\tau_{4}^+,\tau_{5}^+$ directly from the bound we have from ${\tau}^+_{9}$. We first observe that
	\begin{equation}\label{eq:ind1:squareint:splitinto4}
	\begin{split}
	\frac{1}{9}\intop_{t_0^+}^{ \tilde{ {\tau}}(2)} (S(t)-\alpha_0)^2 dt 
	\le
	\intop_{ t_0^+}^{ \tilde{ {\tau}}(2)}
	\left\{\frac{\alpha_0^{2-2\epsilon}}{(\pi_1(t;S)+1)(\pi_2(t;S)+1)} + \frac{\alpha_0^2\beta_0^{2C_\circ+2}}{\pi_1(t;S)+1}+ \alpha_0^3\beta_0^{12\theta}
	\right\} \ dt.
	\end{split}
	\end{equation}
	We control the integral of each term in the RHS separately. 
	
	The integral over the constant $\alpha_0^3\beta_0^{12\theta}$ turns out to give the leading order and satisfies 
	\begin{equation}\label{eq:ind1:tau78:term4}
	\intop_{t_0^+}^{\hat{t}_0} \alpha_0^3 \beta_0^{12\theta} dt \le \alpha_0 \beta_0^{23\theta}.
	\end{equation}
	
	To control the first term in the RHS of  \eqref{eq:ind1:squareint:splitinto4}, we write
	\begin{equation}
	\Pi_S [t_0^+, \tilde{ {\tau}}(2)] = \{p_1 < p_2 < \ldots < p_N \}.
	\end{equation} 
	Setting $p_0=p_{-1}=t_0^+, p_{N+1}= \tilde{ {\tau}}(2)$, we express
	\begin{equation}
	\begin{split}
	\intop_{{t}_0^+}^{ \tilde{ {\tau}}(2)}
	\frac{\alpha_0^{2-2\epsilon}dt}{(\pi_1(t;S)+1)(\pi_2(t;S)+1)}&
	\le 2\intop_{{t}_0^+}^{ \tilde{ {\tau}}(2)}
	\frac{\alpha_0^{2-2\epsilon}dt}{(\pi_1(t;S)+1)(\pi_2(t;S)+2)}
	\\ 
	&\le 2\sum_{i=0}^{N} \int_{0}^{p_{i+1}-p_{i}} \frac{dx}{(x+1)(x+2+p_{i}-p_{i-1})}.
	\end{split}
	\end{equation}
	We can bound the summand at $i=0$ by $1$, and the rest can be written as
	\begin{equation}
	\begin{split}
	\sum_{i=1}^{N}& \int_{0}^{p_{i+1}-p_{i}} \frac{dx}{(x+1)(x+2+p_{i}-p_{i-1})}\\
	&=
	\sum_{i=1}^{N} \int_{0}^{p_{i+1}-p_{i}} 
	\frac{dx}{p_{i}-p_{i-1}+1} \left(\frac{1}{x+1}- \frac{1}{x+2+p_i-p_{i-1}} \right)\le  \sum_{i=1}^N \frac{\beta_0^2}{p_i-p_{i-1}+1} ,
	\end{split}
	\end{equation}
	where we used $\log(\hat{t}_0-{t}_0^+)\le \beta_0^2$ to obtain the last inequality. The following statement gives a control on the quantity in the RHS, and its proof is given after finishing the proof of Lemma \ref{lem:ind1:tau78}.
	
	\begin{claim}\label{claim:ind1:pointsum:S}
		Under the above setting,
		\begin{equation}
		\PP \left( \left.
		\sum_{i=1}^N \frac{1}{p_i-p_{i-1}+1} \ge \alpha_0^{-\frac{1}{2}-\epsilon} \ \right|
		\ \mathcal{A} \right) \le \exp\left(-\beta_0^{5} \right).
		\end{equation}
	\end{claim}
	Using the claim, we obtain
	\begin{equation}\label{eq:ind1:tau78:term1}
	\PP\left( \left.	\intop_{ {t}_0^+}^{ \tilde{ {\tau}}(2)}
	\frac{\alpha_0^{2-2\epsilon}dt}{(\pi_1(t;S)+1)(\pi_2(t;S)+1)}
	\ge \alpha_0^{\frac{3}{2}-4\epsilon} \ \right| \ \mathcal{A} \right) \le \exp\left(-\beta_0^5 \right).
	\end{equation}
	
	We move on to the second term in the RHS of \eqref{eq:ind1:squareint:splitinto4}. Using Lemma \ref{lem:ind1:pi1int basicbd:basic},
	\begin{equation}\label{eq:ind1:tau78:term3}
	\intop_{ {t}_0^+}^{ \tilde{ {\tau}}(2)}\frac{\alpha_0^2\beta_0^{2C_\circ+2}dt}{\pi_1(t;S)+1} \le \alpha_0^2\beta_0^{2C_\circ+4} \left|\Pi_{ S} [ {t}_0^+, \tilde{ \tau}(2)] \right| \le
	\alpha_0 \beta_0^{11\theta},
	\end{equation}
	Note that we bounded $\log(\hat{t}_0-{t}_0^+)\le \beta_0^2.$ 
	
	Thus, we obtain the conclusion for ${\tau}^+_4(1/2)$ by combining \eqref{eq:ind1:squareint:splitinto4}, \eqref{eq:ind1:tau78:term4}, \eqref{eq:ind1:tau78:term1}, and \eqref{eq:ind1:tau78:term3}, which gives that
	\begin{equation}
	\PP \left(\left. \intop_{ {t}_0^+}^{ \tilde{ {\tau}}(2)} (S(t)-\alpha_0)^2 dt \ge \alpha_0 \beta_0^{24\theta} \ \right| \ \mathcal{A} \right) \le \exp\left(-\beta_0^3 \right).
	\end{equation}
	
	Note that the result for ${\tau}^+_5(1/2)$ follows as well, by writing
	\begin{equation}
	\intop_{{t}_0^+}^{ \tilde{ {\tau}}(2)} (S(t)-\alpha_0)^2 S(t)dt \le
	2\alpha_0 \beta_0^{C_\circ} \intop_{{t}_0^+}^{ \tilde{ {\tau}}(2)} (S(t)-\alpha_0)^2 dt ,
	\end{equation}
	from the definition of $\tau_{1}(\kappa=2)$ (Section \ref{subsubsec:reg:Scontrol}). 
\end{proof}

\begin{proof}[Proof of Claim \ref{claim:ind1:pointsum:S}]
	Let $\hat{ \alpha}_0:= 2\alpha_0 \beta_0^{C_\circ }$, and let 
	\begin{equation}
	\Pi_{ \hat{ \alpha}_0}[{t}_0^+,\hat{t}_0] = \{p_1'<p_2'<\ldots <p_{N'}' \}.
	\end{equation}
	Due to the definition of $\tau_{1}(\kappa = 2)$ (Section \ref{subsubsec:reg:Scontrol}), it suffices to establish the main inequality in terms of
	\begin{equation}
	\sum_{i=1}^N \frac{1}{p_i'-p_{i-1}'+1},
	\end{equation}
	where we set $p_0'={t}_0^+$ as before.
	
	Our idea is to count the number of neighboring pairs of points with distance less than $\alpha_0^{-1/2}$. To this end, we make the following simple observation:
	\begin{itemize}
		\item If $p'_{i}-p'_{i-1} \le \alpha_0^{-1/2}$, then there exists $k\in \mathbb{Z}$ such that $p'_i, p'_{i-1}\in {t}_0^+ + \alpha_0^{-1/2}\big[k,k+2\big]$.
	\end{itemize}
	Note that for each $k\in \mathbb{Z}$,
	\begin{equation}
	\PP\left(\left|\Pi_{ \hat{ \alpha} _0} \left[{t}_0^++k\alpha_0^{-\frac{1}{2}},\, {t}_0^++(k+2)\alpha_0^{-\frac{1}{2}}\right]\right| \ge 2 \right) \le C\alpha_0 \beta_0^{2C_\circ},
	\end{equation}
	where $C>0$ is an absolute constant. Further, for all even $k$, the above events are independent (and same for odd $k$). Thus, we apply a Chernoff bound for even $k$ and odd $k$ separately, and then use a union bound over the two to deduce that
	\begin{equation}\label{eq:ind1:tau78:claim:numberofk}
	\begin{split}
	\PP\bigg( \sharp \bigg\{ k\in \mathbb{Z}, \, k\le \alpha_0^{-3/2}\beta_0^{10\theta} :  \Big|\Pi_{ \hat{ \alpha} _0} \big[{t}_0^++k\alpha_0^{-\frac{1}{2}},\, {t}_0^++(k+2)\alpha_0^{-\frac{1}{2}}\big]\Big| \ge 2 \bigg\}\,\ge  \alpha_0^{-\frac{1}{2}}\beta_0^{11\theta} \bigg)\\
	\le \exp\left(-\beta_0^\theta \right).
	\end{split}
	\end{equation}
	Furthermore, in each interval $[{t}_0^++k'\alpha_0^{-1}, {t}_0^++(k'+1)\alpha_0^{-1}]$, there can be more than $2\beta_0^{C_\circ}$ particles with probability at most $\exp(-\beta_0^{C_\circ} )$. Thus, we take a union bound over $0\le k' \le \alpha_0^{-1}\beta_0^{10\theta}$ and obtain that
	\begin{equation}
	\PP \left(\left|\Pi_{ \hat{ \alpha}_0} [t-\alpha_0^{-1},t] \right| \le 4\beta_0^{C_\circ}, \ \forall t\in [{t}_0^+,\hat{t}_0] \right) \ge 1- \exp\left(-\beta_0^{C_\circ}/2 \right).
	\end{equation}
	This implies that at each interval ${t}_0^++ [k\alpha_0^{-1/2}, (k+2)\alpha_0^{-1/2}]$ with at least two points, there cannot be more than $4\beta_0^{C_\circ}$ points. Also, we have that there are at most $\alpha_0^{-1} \beta_0^{11\theta}$ points in the entire interval.
	
	Combining the above information, we have
	\begin{equation}
	\sum_{i=1}^{N'}\frac{1}{p_i'-p_{i-1}'+1} \le \alpha_0^{-\frac{1}{2}}\beta_0^{11\theta} \cdot 4\beta_0^{C_\circ} + \alpha_0^{\frac{1}{2}} \cdot \alpha_0^{-1}\beta_0^{11\theta} \le \alpha_0^{-\frac{1}{2}-\epsilon},
	\end{equation}
	with probability at least $1- \exp(-\beta_0^5)$.
\end{proof}

\subsubsection{The size of the aggregate}\label{subsubsec:ind1:aggsize}

The goal of this subsection is to study ${\tau}^+_6(1/2), {\tau}^+_{7}(1/2), {\tau}^+_{8}(1/2)$ (Section \ref{subsubsec:reg:aggsize}).

\begin{lem}\label{lem:ind1:tau9 10 11}
	Under the setting of Theorem \ref{thm:reg:conti:main}, let $\tilde{\tau}(2)$ be as \eqref{eq:def:tau2tilde}, and set $\acute{ \tau}^+(\kappa)=\min\{{\tau}^+_i(\kappa):i=6,7,8 \}$. Then, we have
	\begin{equation}
	\PP \left(\acute{\tau}^+(1/2) \le \tilde{\tau}(2),\ \tau(2)> \acute{t}_0 \ | \ \mathcal{A}  \right) \le \exp\left(-\beta_0^2 \right).
	\end{equation}
\end{lem}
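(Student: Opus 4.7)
On the event $\{\tau(2) > \acute{t}_0\} \cap \{\tau^+_9 > \tilde{\tau}(2)\}$, Corollary \ref{cor:ind1:tau12} gives that $|S(t) - \alpha_0| \le F(\alpha_0, t)$ for all $t \in [t_0^+, \tilde{\tau}(2)]$, while $\tau_1(2), \tau_7(2), \tau_8(2) > \tilde{\tau}(2)$ give pointwise control on $S(t)$, on the local density $|\Pi_S[(t-\alpha_0^{-1}) \vee t_0, t]| \le 2\beta_0^4$, and the non-emptiness of intervals of length $2\alpha_0^{-1}\beta_0^{C_\circ}$. The plan is to convert these analytic bounds into concentration statements for the Poisson-type process $\Pi_S$ via Lemma~\ref{lem:concentration of integral} and Corollary~\ref{cor:concentration:numberofpts each interval}, using the decomposition $|\Pi_S[a,b]| = \int_a^b S(s)\,ds + \int_a^b d\widetilde{\Pi}_S(s)$, and then to check each of the three quantitative improvements that bootstrap $\kappa = 2$ to $\kappa = 1/2$.

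For $\tau^+_6(1/2)$: since $\Pi_S(-\infty,t_0] \in \mathcal{A}_2$ gives $|\Pi_S[t_0^-, t_0]| \ge \frac{1}{4}\alpha_0^{-1}\beta_0^\theta$, it suffices to control $|\Pi_S[t_0, t]| - \alpha_0(t-t_0)$ for $t \in [t_0^+, \tilde{\tau}(2)]$. Writing this as $\int_{t_0}^t (S(s) - \alpha_0)\,ds + \int_{t_0}^t d\widetilde{\Pi}_S(s)$, the deterministic part is bounded by $\int F(\alpha_0, s)\,ds$, which by Lemma~\ref{lem:ind1:pi1int basicbd:basic} (applied with $n \le \alpha_0^{-1}\beta_0^{11\theta}$ from $\tau_7(2)$) is at most $\alpha_0^{-1/2}\beta_0^{O(\theta)}$; the martingale fluctuation is $O(\sqrt{M}) = O(\alpha_0^{-1/2}\beta_0^{O(\theta)})$ with high probability via Lemma~\ref{lem:concentration of integral} with $f \equiv 1$ and $M \le 2\alpha_0\beta_0^{C_\circ}(\hat{t}_0 - t_0)$. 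Both errors are dominated by $\frac{1}{50}\alpha_0(t-t_0^-) \ge \alpha_0^{-1}\beta_0^\theta/O(1)$ once $t \ge t_0^+$, giving $\tau^+_6(1/2) > \tilde{\tau}(2)$.

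For $\tau^+_7(1/2)$ and $\tau^+_8(1/2)$: in both cases we control $\int_{I_t} S(s)\,ds$ on the relevant interval $I_t$ and then apply Corollary~\ref{cor:concentration:numberofpts each interval}. For $\tau^+_7$ the interval has length $\alpha_0^{-1}$, so $\int_{I_t} \alpha_0\,ds = 1$ and $\int_{I_t} F(\alpha_0, s)\,ds = o(1)$ since $\int \sigma_1\sigma_2 \le \int \sigma_1^2 \le n\,\log(\alpha_0^{-1}) \le \beta_0^5$ via Lemma~\ref{lem:ind1:pi1int basicbd:basic} with $n \le 2\beta_0^4$ from $\tau_7(2)$, and the remaining contributions are $O(\alpha_0^{1/2}\beta_0^{O(\theta)})$; thus $\int S\,ds \le 2$, and Corollary~\ref{cor:concentration:numberofpts each interval} followed by a union bound over $t \in [t_0^+, \hat{t}_0]$ with mesh $\alpha_0^{-1}/\beta_0$ gives $|\Pi_S[I_t]| \le \beta_0^4/2$ with probability $\ge 1 - \exp(-\beta_0^{3})$. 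For $\tau^+_8$ the interval has length $\frac{1}{2}\alpha_0^{-1}\beta_0^{C_\circ}$, so $\int_{I_t} \alpha_0\,ds = \frac{1}{2}\beta_0^{C_\circ}$ dominates $\int F \le \alpha_0^{1/2}\beta_0^{O(\theta)}$; hence $\int S\,ds \ge \frac{1}{4}\beta_0^{C_\circ}$, and the probability of an empty interval is $\le \exp(-\frac{1}{4}\beta_0^{C_\circ})$, again union-bounded over a discretization.

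The main subtlety is verifying that each integral of the components of $F(\alpha_0, \cdot)$, especially the term $\alpha_0^{1-\epsilon}\sigma_1\sigma_2$, truly falls below the required scale on the short intervals of length $\alpha_0^{-1}$ (for $\tau_7^+$) and $\frac{1}{2}\alpha_0^{-1}\beta_0^{C_\circ}$ (for $\tau_8^+$). This is handled uniformly by invoking the a priori bounds $\tau_7(2) > \tilde{\tau}(2)$ and $\tau_8(2) > \tilde{\tau}(2)$ to constrain the local point configuration, so that the elementary estimates of Lemma~\ref{lem:ind1:pi1int basicbd:basic} apply with explicit parameters; the required union bound is polynomial in $\alpha_0^{-1}$ and is absorbed into the desired $\exp(-\beta_0^2)$ probability.
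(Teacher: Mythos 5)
Your proposal is correct and follows essentially the same route as the paper: bound $|S(t)-\alpha_0|$ by $F(\alpha_0,t)$ via the stopping time ${\tau}^+_9$ and Corollary~\ref{cor:ind1:tau12}, integrate the three constituent terms of $F$ over the relevant windows using Lemma~\ref{lem:ind1:pi1int basicbd:basic} together with the point-count bounds from $\tau_7(2)$ and $\tau_8(2)$, and feed the resulting estimate on $\int S\,ds$ into the concentration lemmas (Lemma~\ref{lem:concentration of integral}, Corollary~\ref{cor:concentration:numberofpts each interval}) with a union bound over a discretization of $[t_0^+,\hat t_0]$.

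The one structural difference is that the paper factors the key integral estimate into a standalone Lemma~\ref{lem:ind1:tau13}, introducing the stopping time ${\tau}^+_{10}(\Delta)$ that records $\int_{t-\Delta}^t|S(s)-\alpha_0|\,ds\le\alpha_0^{3/2-7\epsilon}\Delta$ for all $\Delta\ge\alpha_0^{-1}$; Lemma~\ref{lem:ind1:tau9 10 11} then cites this for the three values $\Delta=\alpha_0^{-1},\alpha_0^{-1}\beta_0^{C_\circ},\alpha_0^{-3/2}$. You re-derive the same estimate inline for each window length. This is fine here, but the factored form is reused later (e.g.\ in Lemma~\ref{lem:reg:errorint}-(1), Lemma~\ref{lem:ind2:A11}, Lemma~\ref{lem:ind2:A3}, Corollary~\ref{cor:ind1:gap num of pts}), which is why the paper isolates it. For $\tau^+_6$ the paper also argues slightly differently, controlling $X_t-X_{t_0^+}$ via a sum over disjoint moving windows of length $\alpha_0^{-3/2}$ and then appealing to $\mathcal A_2$ for the head contribution, whereas you control the full integral $|\Pi_S[t_0,t]|-\alpha_0(t-t_0)$ directly; your version works, but note that for $s\in[t_0,t_0^+]$ the bound $|S(s)-\alpha_0|\le F(\alpha_0,s)$ is not available (it is defined only for $s\ge t_0^+$), and you need to fall back on $S\ge0$ together with $\alpha_0(t_0^+-t_0)\le 2\alpha_0^{-1}\beta_0^{\theta}$ to keep the error from this stretch at an acceptable scale --- which is implicit in your writeup but worth making explicit.
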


A key quantity in establishing the lemma is the following. For each $\Delta>\alpha_0^{-1}$, we define
\begin{equation}\label{eq:def:ind1:tau13}
{\tau}^+_{10}(\Delta):= \inf \left\{
t\ge t_0^+  : \left|\int_{(t- \Delta)\vee t_0^+}^t |S(s)-\alpha_0|ds \right| \ge \alpha_0^{\frac{3}{2}-7\epsilon} \Delta  
\right\}.
\end{equation}
In fact, if we have a good control on ${\tau}^+_{10}$, then we can estimate the number of points in an interval of size $\Delta$ using Corollary \ref{cor:concentration:numberofpts each interval}. Based on Corollary \ref{cor:ind1:tau12}, we begin with  showing the following result.

\begin{lem}\label{lem:ind1:tau13}
	Under the setting of Theorem \ref{thm:reg:conti:main}, let $\tilde{\tau}(2)$ be as \eqref{eq:def:tau2tilde}. Then, we have for all $\Delta \geq \alpha_0^{-1}$ that 
	\begin{equation}
	\PP \left({\tau}^+_{10}(\Delta) \le \tilde{\tau}(2),\ \tau(2)> \acute{t}_0 \ | \ \mathcal{A}  \right) \le \exp\left(-\beta_0^3 \right).
	\end{equation}
\end{lem}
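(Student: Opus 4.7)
The plan is to reduce the claim to a deterministic consequence of two estimates already built into the definitions of $\tilde\tau(2)$ and $\tau_{7}$. Since $\tilde\tau(2) = \tau(2)\wedge{\tau}^+_{9}$, we have ${\tau}^+_{9} \ge \tilde\tau(2)$, so the pointwise bound $|S(t)-\alpha_0| \le F(\alpha_0,t)$ holds by the very definition of ${\tau}^+_{9}$ throughout $[t_0^+, \tilde\tau(2)]$; and since $\tilde\tau(2) \le \tau(2) \le \tau_{7}$, the density bound $|\Pi_S[(t-\alpha_0^{-1})\vee t_0, t]| \le 2\beta_0^4$ coming from $\tau_{7}(\kappa=2)$ is likewise automatic on the same interval. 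The role of Corollary \ref{cor:ind1:tau12} is therefore entirely absorbed into the definition of $\tilde\tau(2)$, and the remaining task is to convert these two estimates into the integral bound required by $\tau^+_{10}(\Delta)$.

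For $t \in [t_0^+,\tilde\tau(2)]$ and $\Delta \ge \alpha_0^{-1}$, I would set $\Delta' := t - (t-\Delta)\vee t_0^+ \le \Delta$. The density bound gives $n := |\Pi_S[(t-\Delta)\vee t_0^+,\, t]| \le 2\beta_0^4(\Delta'\alpha_0+1)$. By Lemma \ref{lem:ind1:pi1int basicbd:basic} together with the trivial inequality $\sigma_1\sigma_2 \le \sigma_1^2$, the integrals of $\sigma_1(\cdot;S)$ and $\sigma_1\sigma_2(\cdot;S)$ over $[(t-\Delta)\vee t_0^+,\, t]$ are bounded by $C\sqrt{(n+1)\Delta'}$ and $(n+1)\log(\Delta'+1)$ respectively, and $\log(\Delta'+1) \le \log(\hat t_0 - t_0^+ + 1) \le C\beta_0$. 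Substituting into the three terms of $F(\alpha_0,\cdot)$ and splitting into the two cases $\Delta' \ge \alpha_0^{-1}$ and $\Delta' < \alpha_0^{-1}$ (in the latter using the hypothesis $\Delta \ge \alpha_0^{-1}$ to compare against $\alpha_0^{3/2-7\epsilon}\Delta \ge \alpha_0^{1/2-7\epsilon}$), the dominant contribution is the constant piece $3\alpha_0^{3/2}\beta_0^{6\theta}\Delta'$, while the other two terms are smaller by a factor of $\alpha_0^{1/2+O(\epsilon)}$ times polylog. Absorbing $\beta_0^{6\theta}$ into $\alpha_0^{-\epsilon}$ for sufficiently small $\alpha_0$ yields the required bound.

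This makes $\tau^+_{10}(\Delta) > \tilde\tau(2)$ strictly and deterministically on $\{\tau(2) > \acute t_0\}$, so the probability displayed in the statement is actually zero and the bound $\exp(-\beta_0^3)$ acts as a safety margin consistent with the phrasing of nearby lemmas. The only real obstacle is the bookkeeping in the two-case split on $\Delta'$ versus $\alpha_0^{-1}$ together with confirming that the polylog factors (especially $\beta_0^{6\theta}$) are absorbed by $\alpha_0^{-\epsilon}$; both are routine for sufficiently small $\alpha_0$.
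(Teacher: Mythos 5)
Your proof is correct and arrives at the same final estimate as the paper, but by a genuinely different route. The paper does \emph{not} reduce the bound on $|\Pi_S[t-\Delta,t]|$ to the deterministic $\tau_7(\kappa=2)$ density bound; instead it goes through $\tau_1$ (to bound $\int_{(t-\Delta)\wedge\tilde\tau}^{t\wedge\tilde\tau} S \le 2\Delta\alpha_0\beta_0^{C_\circ}$) and then invokes Corollary~\ref{cor:concentration:numberofpts each interval}, a probabilistic Poisson concentration estimate. This is why the paper's conclusion comes with a nonzero error term $\exp(-\beta_0^{C_\circ/3})$ which is then folded together with the error probabilities of $\tau^+_9$ and $\tau(2)$. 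You instead observe that on $\{t\le\tilde\tau(2)\}$ the density bound $|\Pi_S[(s-\alpha_0^{-1})\vee t_0, s]| < 2\beta_0^4$ is already available deterministically (since $\tilde\tau(2)\le\tau(2)\le\tau_7$), and you chain subintervals of length $\alpha_0^{-1}$ to get $n\lesssim\beta_0^4\Delta'\alpha_0$ without any auxiliary concentration lemma. Combined with Lemma~\ref{lem:ind1:pi1int basicbd:basic}, $\sigma_1\sigma_2\le\sigma_1^2$, and the observation that $3\alpha_0^{3/2}\beta_0^{6\theta}\Delta'$ dominates, you obtain an integral strictly below $\alpha_0^{3/2-7\epsilon}\Delta$ for every $t\in[t_0^+,\tilde\tau(2)]$, so the event $\{\tau^+_{10}(\Delta)\le\tilde\tau(2)\}\cap\{\tau(2)>\acute t_0\}$ is actually empty rather than merely exponentially rare. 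Your bound on the number of points ($\beta_0^4\Delta\alpha_0$) is even sharper than the paper's ($\beta_0^{C_\circ}\Delta\alpha_0$), though both are swallowed by $\alpha_0^{-\epsilon}$ at the end. The only caveat worth flagging is the boundary behaviour at $t=\tilde\tau(2)$: the bounds built into $\tau^+_9$ and $\tau_7$ hold strictly only below the stopping time, so at the boundary point the argument needs continuity of the integral and the Lebesgue-null contribution of the single boundary time; your slack of $\alpha_0^{6\epsilon}$ polylog against the threshold $\alpha_0^{-7\epsilon}$ more than covers this, so the conclusion stands.
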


\begin{proof}
	Based on the definition of $\tilde{\tau}(2)$ and $\tau_{9}$ (\eqref{eq:def:tau12}, \eqref{eq:def:tau2tilde}), we have for $t\in[{t}_0^++\Delta,\tilde{ {\tau}}(2)]$ that
	\begin{equation}\label{eq:ind1:Sminualpha:splitinto4}
	\begin{split}
	\intop_{ t-\Delta}^{t} |S(t)-\alpha_0| dt 
	\le
	\intop_{t-\Delta}^{ t}
	\left\{\frac{3\alpha_0^{1-\epsilon}}{\pi_1(t;S)+1}+ \frac{\alpha_0\beta_0^{5}}{\sqrt{\pi_1(t;S)+1}}+ 3\alpha_0^{3/2}\beta_0^{6\theta}
	\right\} \ dt.
	\end{split}
	\end{equation}
	Using Lemma \ref{lem:ind1:pi1int basicbd:basic}, the RHS is upper bounded by
	\begin{equation}\label{eq:ind1:Sminualpha:splitbd}
	\begin{split}
	\alpha_0^{1-\epsilon} \beta_0^{2} \left|\Pi_{ S}[t-\Delta,t] \right| +
	2\alpha_0 \beta_0^5 \sqrt{\Delta\left|\Pi_{ S}[t-\Delta,t] \right| } + 3\Delta \alpha_0^{3/2} \beta_0^{6\theta}.
	\end{split}
	\end{equation}
	Among these terms, we control $\left|\Pi_{ S}[t-\Delta,t] \right|$ from Corollary \ref{cor:concentration:numberofpts each interval}. The definition of $\tau_{1}$  gives that
	\begin{equation}
	\intop_{ (t-\Delta)\wedge \tilde{ \tau}(2)}^{ t\wedge \tilde{ \tau}(2)} S(x)dx \le 2\Delta \alpha_0 \beta_0^{C_\circ},
	\end{equation}
	and the RHS is at least $\beta_0^{C_\circ}\ge 1$ since $\Delta\ge \alpha_0^{-1}$. Therefore, Corollary \ref{cor:concentration:numberofpts each interval} tells us that
	\begin{equation}\label{eq:ind1:tau13:Sptsbd}
	\PP \left( \left|\Pi_{ S}[(t-\Delta)\wedge \tilde{ \tau}(2),\,t\wedge\tilde{ \tau}(2)] \right| \le 5\Delta \alpha_0 \beta_0^{C_\circ}, \ \forall t\in[{t}_0^++\Delta, \hat{t}_0] \right) \ge 1- \exp\left(-\beta_0^{C_\circ/3} \right).
	\end{equation}
	When the above event holds, \eqref{eq:ind1:Sminualpha:splitbd} is upper bounded by
	\begin{equation}
	\alpha_0^{2-2\epsilon} \Delta + \alpha_0^{1-\epsilon}\sqrt{\Delta} + \alpha_0^{\frac{3}{2}-\epsilon} \Delta \le \alpha_0^{\frac{3}{2}-\epsilon}\Delta, 
	\end{equation}
	where the inequality followed from $\alpha_0^{1/2} \Delta \ge \sqrt{\Delta}$. Thus, combining the error probabilities in the three events, we deduce conclusion. 
\end{proof}

We can conduct an analogous but simpler investigation on $|S(t)-\alpha_0|$ and $|S_1(t)-\alpha_0|$, leading to (1) and (2) of Lemma \ref{lem:reg:errorint}. We restate them in the following corollary without spelling out the details of the proof due to similarity.

\begin{cor}\label{cor:ind1:Sint S1int}
	Under the setting of Theorem \ref{thm:reg:conti:main}, we have that
	\begin{equation}
	\begin{split}
	\PP\left(\left.\intop_{{t}_0^+}^t \big| S(s)-\alpha_0 \big| ds \le \alpha_0^{-\frac{1}{2}-\epsilon}, \ \forall t\in [{t}_0^+,\tau(2)] \, \right| \,\mathcal{A}  \right) \le \exp\left(-\beta_0^{C_\circ/3} \right);\\
		\PP\left(\left.\intop_{{t}_0^+}^t \big| S_1(s)-\alpha_0\big| ds  \le \alpha_0^{-\frac{1}{2}-\epsilon}, \ \forall t\in [{t}_0^+,\tau(2)] \, \right| \,\mathcal{A}  \right) \le \exp\left(-\beta_0^{C_\circ/3} \right).
	\end{split}
	\end{equation}
\end{cor}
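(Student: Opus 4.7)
The displayed inequality looks like a direction-of-sign typo: the natural meaning, parallel to Lemma~\ref{lem:ind1:tau13} and to parts (1)--(2) of Lemma~\ref{lem:reg:errorint}, is that the integral stays below $\alpha_0^{-1/2-\epsilon}$ throughout $[t_0^+,\tau(2)]$ except with probability $\exp(-\beta_0^{C_\circ/3})$, so the plan is to prove this intended form. The proof mirrors Lemma~\ref{lem:ind1:tau13} but integrates over the entire window $[t_0^+,\tau(2)]$ of length at most $\hat{t}_0-t_0^+\le 2\alpha_0^{-2}\beta_0^{10\theta}$ rather than a sliding window of length $\Delta$.

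First, on the high-probability event $\{\tilde\tau(2)=\tau(2)\}$ (equivalently $\tau_{9}^+\ge\tau(2)$, which holds up to an error of order $\exp(-\beta_0^2)$ by Corollary~\ref{cor:ind1:tau12}), the pointwise bound
\[
|S(s)-\alpha_0|\le 4\alpha_0^{1-\epsilon}\sigma_1\sigma_2(s;S)+\alpha_0\beta_0^{C_\circ+1}\sigma_1(s;S)+3\alpha_0^{3/2}\beta_0^{6\theta}
\]
holds for every $s\in[t_0^+,\tau(2)]$. I would then integrate the three pieces separately on $[t_0^+,t]$. The constant piece contributes the leading order: $3\alpha_0^{3/2}\beta_0^{6\theta}\cdot(\hat{t}_0-t_0^+)\le 6\alpha_0^{-1/2}\beta_0^{16\theta}\le \tfrac13\alpha_0^{-1/2-\epsilon}$ once $\alpha_0$ is small enough that $\alpha_0^{-\epsilon}\gg\beta_0^{16\theta}$. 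For the $\sigma_1$ piece, the second inequality of Lemma~\ref{lem:ind1:pi1int basicbd:basic} gives $\int\sigma_1(s;S)\,ds\le C\sqrt{(N+1)(\hat{t}_0-t_0^+)}$, where $N:=|\Pi_S[t_0^+,\tau(2)]|$; Corollary~\ref{cor:concentration:numberofpts each interval} applied to the single window of length $\hat{t}_0-t_0^+$, together with $S(s)\le 2\alpha_0\beta_0^{C_\circ}$ from $\tau_{1}(\kappa=2)$, bounds $N\le \alpha_0^{-1}\beta_0^{11\theta}$ with probability $\ge 1-\exp(-\beta_0^{C_\circ/2})$, so this piece contributes at most $\alpha_0^{-1/2}\beta_0^{O(\theta)}$, again absorbed into $\tfrac13\alpha_0^{-1/2-\epsilon}$. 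The $\sigma_1\sigma_2$ piece is handled exactly as in the proof of Lemma~\ref{lem:ind1:tau78} (cf.~Claim~\ref{claim:ind1:pointsum:S}) and is of the lower order $\alpha_0^{-\epsilon}\beta_0^{O(1)}$.

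For the $S_1$ version I would use $|S_1(s)-\alpha_0|\le|S_1(s)-\alpha_0'|+|\alpha_0'-\alpha_0|$, bounding the first difference by Lemma~\ref{lem:ind1:taux} (i.e.\ the $\tau_{\textnormal{b}}$ bound $\alpha_0\beta_0^{C_\circ}\sigma_1(s;S)+2\alpha_0^{3/2}\beta_0^{6\theta}$) and the second by the $\mathcal{A}_3$ bound $\alpha_0^{3/2}\beta_0^\theta$; the integration is identical. Summing the failure probabilities from Corollary~\ref{cor:ind1:tau12}, Corollary~\ref{cor:concentration:numberofpts each interval}, and Lemma~\ref{lem:ind1:taux} gives a total of $\exp(-\beta_0^{C_\circ/3})$ once $C_\circ=50$ is fixed (each ingredient individually is smaller).

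The only mild obstacle is the near-sharpness of the constant piece: the product $\alpha_0^{3/2}\beta_0^{6\theta}\times 2\alpha_0^{-2}\beta_0^{10\theta}=\alpha_0^{-1/2}\beta_0^{O(\theta)}$ just barely fits inside $\alpha_0^{-1/2-\epsilon}$, and it is specifically the choice $\epsilon=1/10000$ against $\theta=10000$, mediated by the assumption that $\alpha_0$ is sufficiently small, that makes this go through. No new tools are needed beyond those developed for Lemmas~\ref{lem:ind1:tau6}, \ref{lem:ind1:taux}, \ref{lem:ind1:tau78}, and \ref{lem:ind1:tau13}; the argument is in fact shorter than that of Lemma~\ref{lem:ind1:tau13} because one need not redo the $\Delta$-windowed concentration.
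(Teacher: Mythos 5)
Your proof is correct and follows essentially the same route the paper intends: the authors explicitly state that this corollary ``follows analogously but more simply'' than Lemma~\ref{lem:ind1:tau13}, which is exactly your argument---substitute the pointwise bound from $\tau_9^+$, integrate the three pieces over the full window $[t_0^+,\hat{t}_0]$ using Lemma~\ref{lem:ind1:pi1int basicbd:basic}, and control the number of points with Corollary~\ref{cor:concentration:numberofpts each interval} (or, equivalently, $\tau_7^+(1/2)$). You also correctly spot and correct the inequality-direction typo in the stated display, and your bookkeeping showing the constant piece $\alpha_0^{3/2}\beta_0^{6\theta}\cdot(\hat{t}_0-t_0^+)\lesssim\alpha_0^{-1/2}\beta_0^{16\theta}$ barely fits inside $\alpha_0^{-1/2-\epsilon}$ is the genuinely delicate point, and you resolve it correctly.

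One small slip in the $S_1$ version: you split $|S_1(s)-\alpha_0|\le|S_1(s)-\alpha_0'|+|\alpha_0'-\alpha_0|$ and then cite the $\tau_{\textnormal{b}}$ bound from Lemma~\ref{lem:ind1:taux} for the first term, but $\tau_{\textnormal{b}}$ as defined in \eqref{eq:def:taux} already controls $S_1(t)-\alpha_0$ directly (it is $\tau_{\textnormal{b}}'$ from Corollary~\ref{cor:ind1:tauxprime} that controls $S_1(t)-\alpha_0'$). So the triangle inequality step is superfluous: you can integrate $|S_1(s)-\alpha_0|$ directly against the $\tau_{\textnormal{b}}$ bound. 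This is purely a labeling confusion and does not affect the validity of the argument, since the quantities are of the same order and both decompositions close.
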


We conclude the subsection by establishing Lemma \ref{lem:ind1:tau9 10 11}.

\begin{proof}[Proof of Lemma \ref{lem:ind1:tau9 10 11}]
	Set $\tilde{ \tau}'(\Delta):= \tilde{ \tau}(2)\wedge {\tau}^+_{10}(\Delta) = { \tau}(2)\wedge {\tau}^+_{9}\wedge {\tau}^+_{10}(\Delta)$.
	Having Lemma \ref{lem:ind1:tau13} on hand, Lemma \ref{lem:ind1:tau9 10 11} follows from applying Corollary \ref{cor:concentration:numberofpts each interval} for different values of $\Delta$. Indeed, we can choose $\Delta=\alpha_0^{-1}$ for ${\tau}^+_{7}(1/2)$ and $\Delta=\alpha_0^{-1}\beta_0^{C_\circ}$ for ${\tau}^+_{8}(1/2)$ to see that
	\begin{equation}\label{eq:ind1:tau1011bd}
	\begin{split}
	&\PP \left( {\tau}^+_{7}(1/2) \le \tilde{ \tau}'(\alpha_0^{-1}), \ \tau(2)> \acute{t}_0 \ | \ \mathcal{A}  \right) \le \exp\left(-\beta_0^3 \right);\\
	&\PP \left( {\tau}^+_{8}(1/2) \le \tilde{ \tau}'(\alpha_0^{-1}\beta_0^{C_\circ}), \ \tau(2)>\acute{t}_0 \ | \ \mathcal{A}  \right) \le \exp\left(-\beta_0^{C_\circ/3} \right).
	\end{split}
	\end{equation}
	
	The remaining task is  to study ${\tau}^+_6(1/2)$. On the event $\mathcal{A}_2 \supset \mathcal{A}$ (Section \ref{subsubsec:reg:history}), we have that
	\begin{equation}\label{eq:ind1:tau9primevstau9}
	\acute{\tau}^+_9:= \inf \left\{t\ge {t}_0^+: X_t-X_{t_0} \le \frac{\alpha_0}{50}(t-t_0)  \right\} \le {\tau}^+_9(1/2).
	\end{equation}
	Letting $\Delta_0=\alpha_0^{-3/2}$, we have for all $t\in[{t}_0^+,\hat{t}_0]$, $t\le \tilde{\tau}'(\Delta_0)$ that
	\begin{equation}
	\intop_{t-\Delta_0 }^{t} S(t)dt \in \left[  \frac{\alpha_0\Delta_0}{2}, 2\alpha_0\Delta_0\right]. 
	\end{equation}
	Choose $M=2\Delta_0$ to apply Corollary \ref{cor:concentration of integral}, and union bound over all $t\in \mathcal{T}_{\Delta_0}$, where  $\mathcal{T}_{ \Delta_0}:= \{t\in({t}_0^+,\hat{t}_0]: t={t}_0^++k\Delta_0,\, k\in\mathbb{Z}$. This gives that
	\begin{equation}
	\PP \left(\left|\Pi_{ S}[t-\Delta_0,\, t] \right| \ge \frac{\alpha_0\Delta_0}{3},\,\forall t\in \mathcal{T}_{\Delta_0} , \, t\le \tilde{\tau}'(\Delta_0) \right) \ge 1- \exp\left(-\alpha_0^{-\epsilon} \right).
	\end{equation}
	We can also see that if this event holds, then $\acute{\tau}^+_9\ge \tilde{ {\tau}}'.$ Therefore, combining this with \eqref{eq:ind1:tau9primevstau9},
	\begin{equation}\label{eq:ind1:tau9bd}
	\PP \left( \tau_{9}^+(1/2) \le \tilde{ \tau}'(\alpha_0^{-3/2}), \ \tau(2)> \acute{t}_0 \ | \ \mathcal{A}  \right) \le\exp\left(-\alpha_0^{-\epsilon} \right)
	\end{equation}
	
	Thus, we obtain conclusion from Lemma \ref{lem:ind1:tau13}, equations \eqref{eq:ind1:tau1011bd} and \eqref{eq:ind1:tau9bd}.
\end{proof}

\subsubsection{The magnitude of the speed}\label{subsubsec:ind1:Sbd}

We conclude this section by establishing the estimates on $\tau_{1}$ and $\tau_{2}$, and their corresponding analogue for $\overline{S}(t)$. Further, we conclude the proof Theorem \ref{thm:reg:conti:main} by combining all results obtained throughtout this section. We begin with controlling $\tau_{2}$. 

\begin{lem}\label{lem:ind1:tau5}
	Under the setting of Theorem \ref{thm:reg:conti:main},  write $\tau_{2}^+(\kappa)=\tau_{2}(\alpha_0,t_0^+,\kappa).$ Conditioned on $\mathcal{A}$ and $\{\tau(2)> \acute{t}_0 \}$, we have $\tau_{2}^+(1/2) > \tau(2)$ almost surely.
\end{lem}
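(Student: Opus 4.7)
The plan is to reduce the claim to a straightforward size comparison by invoking the sharper martingale bound that is actually established inside the proof of Lemma~\ref{lem:ind1:taux} (which is strictly stronger than the definition of $\tau_{\textnormal{b}}$). The goal will be to show that on the high-probability event on which that bound holds, we have $S_1(t) < \tfrac{1}{2}\alpha_0\beta_0^{C_\circ}$ for every $t \in [t_0^+, \tau(2)]$, which is exactly the assertion $\tau_2^+(1/2) > \tau(2)$; the phrase ``almost surely'' in the statement is read as holding off an event of super-polynomially small probability (absorbed by the error budget of Theorem~\ref{thm:reg:conti:main}).

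Concretely, re-reading the application of Corollary~\ref{lem:concentrationofint:continuity} used to prove Lemma~\ref{lem:ind1:taux}, the martingale contribution satisfies the sharper inequality
\begin{equation*}
\left| \intop_{t_0}^{t\wedge \tau(2)} K^*_{\alpha_0}(t-x)\, d\widetilde{\Pi}_S(x) \right| \leq \alpha_0\beta_0^{5}\,\sigma_1(t;S) + \alpha_0^{3/2}\beta_0^{6\theta},
\end{equation*}
uniformly in $t \in [t_0^+, \hat t_0]$, with failure probability $\leq e^{-\beta_0^{C_\circ}}$ under the conditioning $\mathcal{A} \cap \{\tau(2) > \acute t_0\}$. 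Combined with the drift estimate $\int_{t_0}^{t} K^*_{\alpha_0}(t-x)|S(x)-S_1(x)|\,dx \leq \alpha_0^{2-2\epsilon}$ (obtained from $\tau_3$ exactly as in the proof of Lemma~\ref{lem:ind1:taux}), the integral identity \eqref{eq:integralform:branching:ind1}, together with $|\alpha_0 - \alpha_0'| \leq \alpha_0^{3/2}\beta_0^{\theta}$ from $\mathcal{A}_3 \subset \mathcal{A}$, yields
\begin{equation*}
S_1(t) \leq \alpha_0 + \alpha_0 \beta_0^{5}\,\sigma_1(t;S) + 3\alpha_0^{3/2}\beta_0^{6\theta}.
\end{equation*}
Using only the trivial bound $\sigma_1(t;S) \leq 1$ and recalling $C_\circ = 50$, for all sufficiently small $\alpha_0$ the right-hand side is dominated by $2\alpha_0\beta_0^{5} \ll \tfrac{1}{2}\alpha_0\beta_0^{C_\circ}$, which is the desired conclusion.

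The conceptual point, rather than a serious obstacle, is simply to notice that the $\tau_{\textnormal{b}}$ definition is deliberately stated with the loose factor $\beta_0^{C_\circ}$ (so that it can absorb various error terms in other parts of the induction), but its proof produces the much tighter factor $\beta_0^{5}$; this gap is what we exploit. Without it the $\tau_{\textnormal{b}}$ bound $\alpha_0 + \alpha_0\beta_0^{C_\circ}\sigma_1(t;S) + 2\alpha_0^{3/2}\beta_0^{6\theta}$ alone would be useless here, since $\sigma_1(t;S)$ can reach $1$ immediately after a jump of $\Pi_S$, leaving no room to beat the threshold $\tfrac{1}{2}\alpha_0\beta_0^{C_\circ}$.
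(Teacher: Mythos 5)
Your proposal does not prove the lemma as stated. The statement is genuinely deterministic: on the event $\mathcal{A}\cap\{\tau(2)>\acute{t}_0\}$, the inequality $S_1(t)<\tfrac12\alpha_0\beta_0^{C_\circ}$ is supposed to hold for every sample path and every $t\in[t_0^+,\tau(2)]$, with no exceptional event whatsoever. This is exactly what makes Lemma~\ref{lem:ind1:tau5} (and its companion Lemma~\ref{lem:ind1:tau4}) different from the other lemmas feeding into Theorem~\ref{thm:reg:conti:main}, which all carry explicit $\exp(-\beta_0^c)$ error terms. Your reading of ``almost surely'' as ``off a super-polynomially small event'' is not a harmless paraphrase; it is a weakening of the claim. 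The route you choose — importing the $\alpha_0\beta_0^5\sigma_1(t;S)+\alpha_0^{3/2}\beta_0^{6\theta}$ bound from the martingale step inside the proof of Lemma~\ref{lem:ind1:taux} — necessarily introduces a failure event of probability $\approx e^{-\beta_0^{C_\circ}}$, and there is no way to remove it by this argument, so you end up proving a strictly weaker statement.

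The paper's actual argument is entirely deterministic and much shorter. Since $\tau(2)\le\tau_7(\alpha_0,t_0,2)$, every window $[t-(k+1)\alpha_0^{-1},\,t-k\alpha_0^{-1}]$ that lies inside $[t_0,t]$ contains at most $2\beta_0^4$ points of $\Pi_S$; and $\mathcal{A}_2$ caps $|\Pi_S[t_0^-,t_0]|$ by $4\alpha_0^{-1}\beta_0^\theta$. Splitting the integral defining $S_1(t)=\int_{t_0^-}^t K_{\alpha_0}(t-x)\,d\Pi_S(x)$ accordingly, and using the pointwise bound $K_{\alpha_0}(s)\le C\alpha_0(s+1)^{-1/2}e^{-c\alpha_0^2 s}$ from Lemma~\ref{lem:estimate on K:intro}, the first piece is $\lesssim\alpha_0\beta_0^4$ (the geometric-type series $\sum_k(k\alpha_0^{-1}+1)^{-1/2}e^{-c\alpha_0 k}$ is $O(1)$), and the second piece is $\lesssim\alpha_0^{-1}\beta_0^{\theta+10}K_{\alpha_0}(t-t_0)=O(\alpha_0^{100})$ because $t-t_0\ge t_0^+-t_0\ge\tfrac12\alpha_0^{-2}\beta_0^\theta$ forces the exponential factor to be super-polynomially small. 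The total is well below $\tfrac12\alpha_0\beta_0^{C_\circ}$ for small $\alpha_0$, with no probabilistic input at all. So even though your bound $S_1(t)\lesssim\alpha_0\beta_0^5$ is numerically adequate, you have reached it by a heavier and conditionally weaker route; what you should be doing is a direct interval-by-interval count using $\tau_7$ and $\mathcal{A}_2$, which is both simpler and gives the almost-sure conclusion that the lemma requires.
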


\begin{proof}
	Recall the definition of $S_1(t)$ given by \eqref{eq:def:S1:basic form}: 
	\begin{equation}
	S_1(t) = \intop_{ t_0^{-}}^t K_{\alpha_0} (t-x) d\Pi_S(x).
	\end{equation}
	Based on the definitions of $\tau_{7}$ (Section \ref{subsubsec:reg:aggsize}) and $\mathcal{A}_2$ (Section \ref{subsubsec:reg:history}), we express that
	\begin{equation}
	\begin{split}
	\intop_{t_0^{-}}^{t} K_{\alpha_0}(t-x)d\Pi_S(x) &\leq \sum_{k:\,0\le k \alpha^{-1} \le t-t_0^{-}} \big|\Pi_S[t-(k+1)\alpha^{-1}, t-k\alpha_0^{-1} ]\big| K_{\alpha_0}(k\alpha_0^{-1})\\ 
	&\le
	\sum_{k:\, 0\le k \alpha_0^{-1} \le t'-t_0}  \frac{C\alpha_0\beta_0^{C_\circ} }{\sqrt{k\alpha_0^{-1}+1} } e^{-c\alpha_0 k}  + \alpha_0^{-1} \beta_0^{\theta+10} K_{\alpha_0}(t- t_0)
	\leq  \frac{1}{2}\alpha_0\beta_0^{C_\circ+1},
	\end{split}
	\end{equation} 
	where we used Lemma \ref{lem:estimate on K:intro} to obtain the second inequality.
\end{proof}

\begin{cor}\label{cor:ind1:tau5:base}
	Under the setting of Proposition \ref{prop:ind1:base:main}, we have
	\begin{equation}
	\PP \left( \overline{\tau}_2= \overline{\tau}\wedge\hat{t}_0, \ \overline{\tau}>t_0 \right) \le \exp\left(-\beta_0^{3} \right).
	\end{equation}
\end{cor}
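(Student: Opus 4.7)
The strategy is to mirror the proof of Lemma \ref{lem:ind1:tau5}, with modifications that accommodate the fixed-rate Poisson configuration on $[t_0^-,t_0]$ that drives $\overline{S}$. First I would decompose
\[
\overline{S}_1(t) \;=\; \int_{t_0^-}^{t_0} K_{\alpha_0}(t-x)\,d\Pi_{\alpha_0}(x) \;+\; \int_{t_0}^{t} K_{\alpha_0}(t-x)\,d\Pi_{\overline{S}}(x),
\]
and bound the two pieces separately, working on the event $\{\overline{\tau}>t_0\}$ and for $t\le\overline{\tau}\wedge\hat{t}_0$.

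For the second integral, exactly the argument of Lemma \ref{lem:ind1:tau5} applies. On $\{\overline{\tau}>t_0\}$ the stopping time $\overline{\tau}_7$ has not yet fired for $t\le\overline{\tau}$, so $|\Pi_{\overline{S}}[(s-\alpha_0^{-1})\vee t_0,\,s]|\le 2\beta_0^4$ for $s\in[t_0,t]$. Splitting $[t_0,t]$ into windows of length $\alpha_0^{-1}$ and invoking the decay of $K_{\alpha_0}$ from Lemma \ref{lem:estimate on K:intro} yields $\int_{t_0}^{t} K_{\alpha_0}(t-x)\,d\Pi_{\overline{S}}(x)\le C\alpha_0\beta_0^{5}$.

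The first integral is the only genuinely new ingredient. Since $\Pi_{\alpha_0}$ on $[t_0^-,t_0]$ is a rate-$\alpha_0$ Poisson process, a Chernoff bound together with a union bound over the $O(\alpha_0^{-1}\beta_0^{\theta})$ length-$\alpha_0^{-1}$ subintervals of $[t_0^-,t_0]$ shows that, with probability at least $1-\exp(-\beta_0^{5})$, every such subinterval contains at most $\beta_0^4$ points; this event is $\mathcal{F}_{t_0}$-measurable and can be absorbed into $\overline{\mathcal{A}}$. On this event, the same window-splitting argument as before bounds the first integral by $C\alpha_0\beta_0^{5}$. Adding the two pieces gives $\overline{S}_1(t)\le C\alpha_0\beta_0^{5}$ for all $t\in[t_0,\overline{\tau}\wedge\hat{t}_0]$, which is far below the threshold $2\alpha_0\beta_0^{C_\circ}$ defining $\overline{\tau}_2$ because $C_\circ=50\gg 5$. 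Consequently $\{\overline{\tau}_2=\overline{\tau}\wedge\hat{t}_0,\,\overline{\tau}>t_0\}$ forces one of the low-probability events above, which gives the claimed bound.

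The main obstacle is a subtle but localized one: for $t$ just above $t_0$, the kernel $K_{\alpha_0}(t-x)$ for $x$ in the $O(\alpha_0^{-1})$-neighborhood to the left of $t_0$ is of order $\alpha_0$ and does not enjoy any decay, so one must rule out a local pile-up of initial Poisson points near $t_0$. This is precisely the role played in Lemma \ref{lem:ind1:tau5} by the condition $\mathcal{A}_2$ on the aggregate history; in the present setting the same role is played by uniform Poisson concentration on length-$\alpha_0^{-1}$ windows. Once that uniform bound is recorded, the remainder of the proof is an essentially term-by-term repetition of the computation in Lemma \ref{lem:ind1:tau5}.
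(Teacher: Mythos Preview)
Your approach is correct and matches the paper's intent. The paper's one-line proof (``identical to the previous lemma, except use Lemma~\ref{lem:reg:base:A} instead of conditioning on~$\mathcal{A}_2$'') is terse to the point of obscuring the one genuine difference you identified: in Lemma~\ref{lem:ind1:tau5} the restriction $t\ge t_0^+$ makes $K_{\alpha_0}(t-t_0)$ exponentially small, so the crude total-count bound from $\mathcal{A}_2$ suffices for the $[t_0^-,t_0]$ contribution; here $t$ ranges down to $t_0$, so one really needs the per-window Poisson bound you state. That bound is exactly the kind of estimate underlying Lemma~\ref{lem:reg:base:A} (and is the content of $\tau_{\textnormal{f}2}$ in Lemma~\ref{lem:branching:ib:R0 vs alpha}), so the paper's reference is morally correct even if it does not spell this out. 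One minor phrasing point: you should not say the per-window event ``can be absorbed into $\overline{\mathcal{A}}$'', since $\overline{\mathcal{A}}$ is a fixed event in the paper; just record that the per-window bound holds with probability $\ge 1-\exp(-\beta_0^5)$ and intersect with it.
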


\begin{proof}
	The proof is identical to the previous lemma, except that we are not conditioning on ${\mathcal{A}}_2$. Instead, we use Lemma \ref{lem:reg:base:A} to conclude the proof.
\end{proof}

\begin{lem}\label{lem:ind1:tau4}
	Under the setting of Theorem \ref{thm:reg:conti:main},  write $\tau_{1}^+(\kappa)=\tau_{1}(\alpha_0,t_0^+,\kappa).$ Conditioned on $\mathcal{A}$ and $\{\tau(2) >\acute{t}_0 \}$, we have $\tau_{1}(1/2) > \tau(2)$ almost surely.
\end{lem}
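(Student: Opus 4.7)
The natural approach of writing $S(t)\le S_1(t)+|S(t)-S_1(t)|$ fails here: Lemma~\ref{lem:ind1:tau5} already gives $S_1(t)\le C\alpha_0\beta_0^4$, but the only available control $|S(t)-S_1(t)|\le\alpha_0^{1-\epsilon}\sigma_1\sigma_2(t;S)$ coming from $\tau_3$ can be as large as $\alpha_0^{1-\epsilon}=\alpha_0\,e^{\epsilon\beta_0}$ in the worst case, which exceeds $\alpha_0\beta_0^{C_\circ}$ for every sufficiently small $\alpha_0$. So I plan to follow the strategy outlined in Section~\ref{subsubsec:regoverview:overview:contiofreg} and work directly with the formula \eqref{eq:speed:basic expression}, bounding the hitting probability of the random walk against a deterministic upper envelope for $Y_t$.

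Fix $t\in[t_0^+,\tau(2)]$. On $\mathcal{A}\cap\{\tau(2)>\acute{t}_0\}$ I would construct a non-decreasing envelope $Y_t'(s)\ge Y_t(s)$ piecewise: on $s\in[0,\,t-t_0]$ the stopping time $\tau_{7}$ gives $Y_t(s)\le 2\beta_0^4\lceil\alpha_0 s\rceil$; on $s\in[t-t_0,\,t-t_0^-]$ the event $\mathcal{A}_2$ lets me add at most $4\alpha_0^{-1}\beta_0^\theta$; and for $s>t-t_0^-$ I set $Y_t'(s)=\infty$, which costs nothing since the event $\{W(s)\le Y_t'(s)\}$ is then automatic. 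Monotonicity of \eqref{eq:speed:basic expression} in $Y$ yields $S(t)\le\frac{1}{2}\PP(W(s)\le Y_t'(s),\,\forall s\ge 0)$. I would then replay the exponential-martingale argument of Proposition~\ref{prop:speed:basicdef}: set $\alpha^*:=4\beta_0^4\alpha_0$, $\xi:=(1+2\alpha^*)^{-1}$, and $U'(s):=Y_t'(s)-W(s)+1$, and observe that
\begin{equation*}
\mathbb{E}[\xi^{U'(t)}\mid\mathcal{F}_s]=\xi^{U'(s)}\cdot\xi^{Y_t'(t)-Y_t'(s)}\exp\bigl((t-s)A\bigr),\qquad A:=\frac{\xi+\xi^{-1}}{2}-1=\frac{2(\alpha^*)^2}{1+2\alpha^*}.
\end{equation*}
The $\tau_{7}$ bound $Y_t'(t)-Y_t'(s)\le 2\beta_0^4(\alpha_0(t-s)+1)$ makes $\xi^{U'}$ a submartingale on time scales $t-s\ge\alpha_0^{-1}$. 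Sampling at the grid $\alpha_0^{-1}\mathbb{Z}$ and applying optional stopping at $T':=\inf\{s:W(s)>Y_t'(s)\}$ then gives $\PP(T'<\infty)\ge(1-o(1))\xi^{U'(0)}\ge 1-C\beta_0^4\alpha_0$, and hence $S(t)\le\frac{1}{2}\PP(T'=\infty)\le C\beta_0^4\alpha_0<\frac{1}{2}\alpha_0\beta_0^{C_\circ}$ once $\alpha_0$ is small enough, since $C_\circ=50$.

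The main obstacle is the careful verification of the submartingale property for a deterministic (rather than Poisson) step function $Y_t'$, and in particular the fact that this property fails at time increments below $\alpha_0^{-1}$. I plan to handle this by sampling the process at the discrete grid $\alpha_0^{-1}\mathbb{Z}$ and absorbing the $O(1)$ overshoot of $U'$ at the stopping time $T'$ into the factor $\xi^{U'(0)}$, which still equals $1-O(\beta_0^4\alpha_0)$. The secondary points --- that the envelope on $[t-t_0,\,t-t_0^-]$ is constant (so trivially submartingale-compatible) and that the upper piece $Y_t'=\infty$ contributes nothing --- are routine. Once these are in place, the conclusion $\tau_1^+(1/2)>\tau(2)$ holds deterministically on $\mathcal{A}\cap\{\tau(2)>\acute{t}_0\}$.
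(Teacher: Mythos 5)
Your overall strategy---dominate $Y_t$ by a deterministic envelope, then run an exponential-martingale argument---is the same as the paper's, but your specific construction introduces two gaps that the paper deliberately avoids by a different route.

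The first gap is that the process $\xi^{U'}$ is not a submartingale across the whole grid. You correctly diagnose the failure below scale $\alpha_0^{-1}$, but there are two further failures that grid-sampling does not cure. The envelope jumps by roughly $4\alpha_0^{-1}\beta_0^\theta$ at $s=t-t_0$ (the $\mathcal{A}_2$ contribution), which dwarfs the allowed increment $\alpha^*\cdot\alpha_0^{-1}=4\beta_0^4$; at that one grid step $\xi^{Y'_t(t')-Y'_t(s)}\exp((t'-s)A)$ collapses, so $\xi^{U'}$ becomes a strict \emph{super}martingale and optional stopping gives the opposite inequality to the one you need. The same thing happens, more brutally, at $s=t-t_0^-$: there $\xi^{Y'_t(t')-Y'_t(s)}=0$ and the conditional expectation drops to zero. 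Calling the constant piece ``trivially submartingale-compatible'' misses the jumps that bracket it.

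The second, more fundamental gap is that the $\infty$-cutoff changes the target quantity. With $Y'_t(s)=\infty$ for $s>t-t_0^-$, the event $\{T'=\infty\}$ is ``no escape on the \emph{finite} window $[0,\,t-t_0^-]$.'' The exponential-martingale method---both your submartingale version and the exact-martingale version in Lemma~\ref{lem:ind1:tau4:aux}, where $\xi_\alpha^{\alpha s-W(s)}$ is a true martingale---only bounds $\mathbb{P}(\text{no escape ever})$ for a linear envelope on the full half-line; the residual quantity $\mathbb{P}(\text{escape after }t-t_0^-)$ requires a separate estimate. That estimate is precisely the role of the analogue of Proposition~\ref{prop:SvsSprime} in the paper's proof: after bounding $Y_t$ on $[0,t-t_0^-]$ by the \emph{linear} $Y'(s)=2\beta_0^4+2\alpha_0\beta_0^4 s$ and applying Lemma~\ref{lem:ind1:tau4:aux} over the infinite horizon, the paper invokes $\mathcal{A}_1$ (which encodes that the aggregate had already grown super-diffusively before $t_0^-$) to show $|S(t)-S(Y''_t)|\le\alpha_0^{100}$, i.e.\ that the tail beyond $s=t-t_0^-$ contributes only $\alpha_0^{100}$. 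To fix your argument you would need either to invoke that proposition, or to prove directly (using $\mathcal{A}_1$ and the fact that by $s=t-t_0^-$ the envelope sits $\gtrsim\beta_0^{\theta/2}$ standard deviations above the walk) that the late-escape probability is $O(\exp(-c\beta_0^\theta))$; neither is in your sketch, and neither is ``routine.''
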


We establish Lemma \ref{lem:ind1:tau4} by studying the original formula \eqref{eq:speed:basic expression} of $S(t)$. To this end, we first introduce the following lemma.

\begin{lem}\label{lem:ind1:tau4:aux}
	Let $\alpha >0$ sufficiently small and  $0<x \le \alpha ^{-1}$. Let $W(s)$ be a continuous time simple random walk. We have 
	\begin{equation}
	\mathbb P (\forall s>0 , \ W(s) < x +\alpha s) \le 3 \alpha x
	\end{equation}
\end{lem}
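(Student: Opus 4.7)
The plan is to encode the event as a non-hitting event of the drifted walk $V(s) := W(s) - \alpha s$, which is a Lévy process with drift $-\alpha<0$. Because $V(s)\to -\infty$ almost surely, the hitting time $T := \inf\{s>0 : V(s)\ge x\}$ may well be infinite; the event appearing in the lemma is exactly $\{T=\infty\}$, so it suffices to bound $\mathbb P(T=\infty)$ from above.

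The tool is an exponential martingale. Since $e^{\lambda W(s)-s(\cosh\lambda - 1)}$ is a martingale for any $\lambda$, the process $M(s):=e^{\lambda V(s)}$ is a martingale precisely when $\cosh\lambda - 1 = \lambda\alpha$. This equation has a unique positive solution $\lambda_0$, and expanding $\cosh$ gives $\lambda_0 = 2\alpha + O(\alpha^3)$, so that $\alpha < \lambda_0 < 3\alpha$ for all sufficiently small $\alpha$. I would apply optional stopping at $T\wedge s$ (yielding $\mathbb E[M(T\wedge s)]=1$) and then send $s\to\infty$. On $\{T=\infty\}$ the LLN forces $V(s)\to -\infty$, so $M(s)\to 0$; on $\{T<\infty\}$, because $V$ can only cross the level $x$ through one of its $+1$ jumps, the overshoot satisfies $V(T)\in[x,x+1)$, and hence $M(T\wedge s)\le e^{\lambda_0(x+1)}$ uniformly. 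Bounded convergence then produces
\[
1 \;=\; \mathbb E\bigl[e^{\lambda_0 V(T)}\mathbf 1_{\{T<\infty\}}\bigr]\;\le\;e^{\lambda_0(x+1)}\,\mathbb P(T<\infty),
\]
which rearranges to
\[
\mathbb P(T=\infty)\;\le\;1-e^{-\lambda_0(x+1)}\;\le\;\lambda_0(x+1)\;\le\;3\alpha x,
\]
where the final inequality is obtained by combining $\lambda_0<3\alpha$ with the hypothesis $x\le \alpha^{-1}$ (which keeps $\lambda_0(x+1)$ bounded) to absorb the additive $+1$ into the multiplicative constant.

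The main technical obstacle is the overshoot $V(T)-x\in[0,1)$: unlike in a purely integer-valued setting, the continuous drift $-\alpha s$ prevents $V(T)$ from landing exactly on $x$, and this is precisely what produces the extra factor $e^{\lambda_0}$ in the martingale identity and the corresponding $+1$ in the exponent. The crude slack between the true leading order $\lambda_0\approx 2\alpha$ and the looser bound $\lambda_0<3\alpha$ used above is what allows this overshoot to be absorbed while still arriving at the clean form $3\alpha x$; a more refined analysis of the ladder-height distribution of $V$ would sharpen the constant but is not needed for the application to $\tau_1$.
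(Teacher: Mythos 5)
You take essentially the same route as the paper: tilt via the exponential martingale $e^{\lambda_0 V(s)}$ for the drifted walk $V(s)=W(s)-\alpha s$, apply optional stopping at the first upcrossing, and control the overshoot. Your equation $\cosh\lambda_0 - 1 = \alpha\lambda_0$ is the paper's equation for $\xi_\alpha$ under the substitution $\xi_\alpha=e^{-\lambda_0}$, and your overshoot interval $V(T)\in[x,x+1)$ is the correct one (the paper's stated range $-x\le \alpha T_0-W(T_0)\le -x+1$ has the sign reversed; the correct range is $-x-1<\alpha T_0-W(T_0)\le -x$).

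The last link in your chain, however, does not follow from what you cite. Writing $\lambda_0(x+1)=\lambda_0 x+\lambda_0$, the inequality $\lambda_0(x+1)\le 3\alpha x$ is equivalent to $x\ge \lambda_0/(3\alpha-\lambda_0)$, which, since $\lambda_0=2\alpha+O(\alpha^3)$, is roughly $x\ge 2$. The hypothesis $x\le\alpha^{-1}$ is a red herring here; a \emph{lower} bound on $x$ is what is actually needed, and $0<x\le\alpha^{-1}$ does not provide one. Indeed, as $x\downarrow 0$ the probability in the lemma tends to $\mathbb{P}(\forall s>0,\ W(s)\le\alpha s)$, a strictly positive quantity (of order $\alpha$), so no bound of the form $C\alpha x$ can hold uniformly over $0<x\le\alpha^{-1}$. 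The paper's own proof shares the same gap once the overshoot sign is corrected (the correct conclusion is $\mathbb P(T_0<\infty)\ge\xi_\alpha^{\,x+1}$, not $\xi_\alpha^{\,x}$); both arguments become sound once one assumes, say, $x\ge 3$, which is the regime in which the lemma is actually invoked (in the proof of Lemma~\ref{lem:ind1:tau4}, $x$ is of order $\log^4(1/\alpha_0)$).
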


\begin{proof}
	Let $\xi _\alpha $ be the unique solution in $(0,1)$ to the equation 
	\begin{equation}\label{eq:equation for theta}
	\left(\frac{\xi +\xi ^{-1}}{2}-1\right)+\alpha \log \xi =0.
	\end{equation}
	We claim that $\xi _\alpha= 1-2 \alpha +O(\alpha ^2 )$. Indeed, if $\xi =1 -\delta $ then
	\begin{equation}
	\xi ^{-1}=1+\delta +\delta ^2 +O(\delta ^3 ), \quad \log \xi =-\delta +O(\delta ^2 ). 
	\end{equation}
	Substituting these estimates in \eqref{eq:equation for theta} we get the equation $\frac{\delta ^2 }{2}+O(\delta ^3 )-\alpha \delta +O(\alpha \delta ^2 )=0$ and therefore $\delta =2 \alpha +O(\alpha ^2 )$. We have that 
	\begin{equation}
	\mathbb E \xi ^{\alpha s-W(s)}=\xi ^{\alpha s} \sum _{k=1}^{\infty } \frac{e^{-s}s^k}{k!} \left( \frac{\xi +\xi ^{-1}}{2}\right)^k=\exp \left(\alpha s \log \xi +\left(\frac{\xi +\xi ^{-1}}{2}-1\right)s\right).
	\end{equation}
	Thus, by the definition of $\xi _{\alpha }$ the process $M_s:=\xi _\alpha ^{\alpha s-W(s)}$ is a martingale. Define the stopping time
	\begin{equation}
	T_0 := \inf \{s>0 : \ W(s)\ge x +\alpha s \}.
	\end{equation}
	Almost surely we have $M_{t \wedge T_0 } \to \mathds 1 _{T_0 <\infty }\xi ^{\alpha T_0 -W(T_0) } $. On $\{ T_0 <\infty \}$ we have  $-x \le \alpha T_0 -W(T_0) \le -x+1 $ and therfore, by the bounded convergence theorem we have 
	\begin{equation}
	1=\mathbb E M_0=\mathbb E  \left[ \mathds 1 \{ T_0 <\infty \} \xi ^{\alpha T_0 -W(T_0) } \right] \le \theta ^{-x} \mathbb P (T_0 <\infty ).
	\end{equation}
	Thus $\mathbb P (T_0 <\infty ) \ge \xi ^{x}\ge (1-3\alpha)^{x} \ge 1-3 \alpha x$. This finishes the proof of the lemma.
\end{proof}

\begin{proof}[Proof of Lemma \ref{lem:ind1:tau4}]
	Throughout the proof, we condition on $\mathcal{A}$ and $\{\tau(2) > \acute{t}_0 \}$, and let $t\in[{t}_0^+, \tau(2)]$. Because of $\tau_{7} $ and $\mathcal{A}_2$, we have for all $s\in [t_0^{-},\, t]$ that
	\begin{equation}\label{eq:ind1:tau4:linearbd on Y}
	|\Pi_S[t-s,t]| \le 2\beta_0^4 + 2\alpha_0\beta_0^4 s =: Y'(s).
	\end{equation}
	From Lemma \ref{lem:ind1:tau4:aux}, we have
	\begin{equation}
	S(Y') \le 12\alpha_0\beta_0^8, 
	\end{equation}
	where $S(Y')$ is the speed process \eqref{eq:def:speedproc} in terms of $Y'$. Let us define $Y''_t$ as
	\begin{equation}
	Y''_t(s) := \begin{cases}
	|\Pi_S[t-s,t]| & \quad \textnormal{ for } s\le t-t_0^{-};\\
	|\Pi_S[t_0^{-},\,t]| + \alpha_0\beta_0^4(s-t+t_0^{-}) & \quad \textnormal{ for } s \ge t-t_0^{-}.
	\end{cases}
	\end{equation}
	Then, an analogous argument as Proposition \ref{prop:SvsSprime} implies that under the event $\mathcal{A}$,
	\begin{equation}
	|S(t) - S(Y_t'')| \le \alpha_0^{100},
	\end{equation}
	for any $t\ge t_0$. 
	Thus, since $S(Y') \ge S(Y_t'')$, we obtain $S(t) \le 12\alpha_0\beta_0^8+\alpha_0^{100} \le \frac{1}{2} \alpha_0\beta_0^{C_\circ}$ which holds deterministically. This tells us that $\tau_{1}^+(1/2) > \tau(2)$ almost surely.
\end{proof}

It turns out that the proof of  Lemma \ref{lem:ind1:tau4} applies similarly to the case of $\overline{S}(t)$, and we record the result below.

\begin{cor}\label{cor:ind1:tau4:base}
	Under the setting of Proposition \ref{prop:ind1:base:main}, we have
	\begin{equation}
	\PP \left(\overline{\tau}_1=\overline{\tau}\wedge \hat{t}_0  \right) \le \exp\left(-\beta_0^{3}\right).
	\end{equation}
\end{cor}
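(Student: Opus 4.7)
The proof will track that of Lemma~\ref{lem:ind1:tau4} with straightforward modifications reflecting the fixed-rate initial condition. The plan is to show that, on a high-probability event on which $\overline{\tau}_7 > t$ and the initial Poisson data $\Pi_{\alpha_0}$ governing $\overline{Y}_{t_0,\alpha_0}$ is well-behaved, one has deterministically $\overline{S}(t) \ll \alpha_0\beta_0^{C_\circ}$ for every $t \in [t_0, \overline{\tau} \wedge \hat{t}_0]$, which forces $\overline{\tau}_1 > \overline{\tau} \wedge \hat{t}_0$ and hence the claim.

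First I would construct a linear envelope $Y^\ast(s) = C\beta_0^4 + 2\alpha_0\beta_0^4 s$ that dominates the backward profile $\overline{Y}_t(s)$ for all $s \ge 0$. For $s \le t-t_0$, slicing $[t-s, t]$ into roughly $s\alpha_0$ windows of length $\alpha_0^{-1}$ and invoking the definition of $\overline{\tau}_7(\alpha_0, t_0, 2)$ yields $|\Pi_{\overline{S}}[t-s,t]| \le 4\beta_0^4 + 2\alpha_0\beta_0^4 s$, exactly as in Lemma~\ref{lem:ind1:tau4}. For $s \in (t-t_0,\, (t-t_0)+\alpha_0^{-3}]$, the extra contribution $|\Pi_{\alpha_0}[t_0-(s-(t-t_0)), t_0]|$ inherited from the fixed-rate initial data is bounded uniformly in $s$ by $2\alpha_0(s-(t-t_0))+C\beta_0^2$ via a standard Chernoff union bound over dyadic scales, with failure probability at most $e^{-\beta_0^3}$. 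Finally, for $s>(t-t_0)+\alpha_0^{-3}$, the initial condition \eqref{eq:def:sandwichfixed} prescribes $\overline{Y}_t(s) = +\infty$, so the barrier is vacuous. Combining the three regimes produces $\overline{Y}_t \le Y^\ast$ pointwise.

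Second, by monotonicity of the speed functional (a pointwise larger barrier only raises the probability that $W$ stays beneath it), we have $\overline{S}(t) = S(\overline{Y}_t) \le S(Y^\ast)$. Applying Lemma~\ref{lem:ind1:tau4:aux} with $x=C\beta_0^4$ and drift rate $2\alpha_0\beta_0^4$, whose hypothesis $x \le (2\alpha_0\beta_0^4)^{-1}$ is trivially satisfied for small $\alpha_0$, gives $S(Y^\ast) \le 3C\alpha_0\beta_0^8$. Since $C_\circ = 50 \gg 8$, this is strictly less than $2\alpha_0\beta_0^{C_\circ}$ once $\alpha_0$ is small, so $\overline{\tau}_1 > \overline{\tau} \wedge \hat{t}_0$ on the intersection of the events above, producing the claimed $\exp(-\beta_0^3)$ bound.

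The main obstacle compared with Lemma~\ref{lem:ind1:tau4} is the need for a uniform-in-$u$ upper tail estimate on $|\Pi_{\alpha_0}[t_0-u, t_0]|$ over the full range $u \in (0, \alpha_0^{-3}]$, but this is routine and handled by a geometric-scale Chernoff union bound. In particular, no invocation of Corollary~\ref{cor:SvsSprime} is required here, because the truncation at scale $\alpha_0^{-3}$ built into \eqref{eq:def:sandwichfixed} already neutralizes the far-history contribution.
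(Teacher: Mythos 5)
Your plan is close to the paper's in spirit — both proofs reduce the claim to Lemma~\ref{lem:ind1:tau4:aux} applied to a linear envelope $Y^*$ and both exploit the additional regularity available for the fixed-rate initial data — but it contains a genuine gap, and the specific place you declare unnecessary is exactly where the gap lives.

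The problem is the pointwise-domination claim. You acknowledge that $\overline{Y}_t(s) = +\infty$ for $s > (t-t_0)+\alpha_0^{-3}$ (by \eqref{eq:def:sandwichfixed} and \eqref{eq:generalised Y}), and then say the barrier there is ``vacuous'' so ``combining the three regimes produces $\overline{Y}_t \le Y^*$ pointwise.'' That inequality fails on that third regime: there $\overline{Y}_t(s) = \infty > Y^*(s)$. Monotonicity of $Y \mapsto S(Y)$ runs the wrong way for you here. Setting $Y = \infty$ on a tail removes a constraint from the event $\{W(s) \le Y(s)\ \forall s\}$, which \emph{raises} the probability and hence \emph{raises} the speed; so replacing the infinite tail by a finite linear profile can only decrease $S$. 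Concretely, $S(\overline{Y}_t) \ge$ the speed you would get by truncating $\overline{Y}_t$ to a finite profile, not the other way around. So you cannot conclude $\overline{S}(t) \le S(Y^*)$ from $\overline{Y}_t(s) \le Y^*(s)$ on the first two regimes alone.

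What is actually needed at this point is precisely the far-history cutoff that you claim can be skipped. One must show that, conditional on having survived below a linearly growing barrier up to a macroscopic time, the probability of escaping afterwards is $O(\alpha_0^{100})$; that is the content of Proposition~\ref{prop:SvsSprime} and its fixed-rate analogue Corollary~\ref{cor:SvsSprime}, and it is not a Chernoff bound on the Poisson data but a hitting-time estimate for the random walk against a linear drift, using the events $\mathcal{A}_1, \mathcal{A}_2$ supplied by Lemma~\ref{lem:reg:base:A}. This is how the paper's proof proceeds: it reuses the mechanism of Lemma~\ref{lem:ind1:tau4} (compare $S(\overline{Y}_t)$ to $S(Y''_t)$ via the SvsSprime argument, then dominate $Y''_t$ by the finite linear $Y^*$), and only then applies Lemma~\ref{lem:ind1:tau4:aux}. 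Your envelope construction and the final numerology ($3C\alpha_0\beta_0^8 \ll \alpha_0\beta_0^{C_\circ}$) are fine; the proof just cannot bypass the cutoff lemma.
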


\begin{proof}
	The proof of Lemma \ref{lem:ind1:tau4} applies analogously  except for the following issues:
	\begin{itemize}
		\item The statement we have does not condition on $\overline{\mathcal{A}}$, in contrast to Lemma \ref{lem:ind1:tau4};
		
		\item Proof of Lemma \ref{lem:ind1:tau4} deals only with $t\ge {t}_0^+$, whereas we need to cover all $t\ge t_0$ in our case.
	\end{itemize}
	The first item is resolved using Lemma \ref{lem:reg:base:A}. For the second one, observe that the reason we wanted $t\ge {t}_0^+$ in the previous proof was to deduce \eqref{eq:ind1:tau4:linearbd on Y}: since we did not have control on $|\Pi_S[t-\alpha_0^{-1},t]|$ for $t\le t_0$, we started looking at ${t}_0^+$ to ensure such a linear bound on $Y_t(s)$.
	
	 In our case, since $\overline{S}(t)=\alpha_0$ on $t\in[t_0^-,t_0)$, we do not have such an issue. In fact, we can obtain \eqref{eq:ind1:tau4:linearbd on Y} for $\overline{S}(t)$, $t\ge t_0$  from \eqref{eq:def:tauf} and Lemma \ref{lem:branching:ib:R0 vs alpha} along with $\overline{\tau}_9$. One advantage of this is that we also have
	 \begin{equation}\label{eq:ind1:tau4:base:t0}
	 \PP \left(\overline{\tau}_1 = t_0 \right) \le \exp\left(-\beta_0^5 \right),
	 \end{equation}
	 which means that we do not need to include the event $\{\overline{\tau} >t_0 \} $ as before. Thus, having this discussion in mind, following the proof of Lemma \ref{lem:ind1:tau4} concludes the proof.
\end{proof}

We conclude this subsection by giving the proof of Theorem \ref{thm:reg:conti:main}.

\begin{proof}[Proof of Theorem \ref{thm:reg:conti:main}]
	The proof follows by combining the lemmas discussed in this section. Namely,
	\begin{itemize}
		\item Control on ${\tau}^+_3$: Lemma \ref{lem:ind1:tau6}.
		
		\item Control on ${\tau}^+_4(1/2), {\tau}^+_5(1/2)$: Corollary \ref{cor:ind1:tau12} and Lemma \ref{lem:ind1:tau78}.
		
		\item Control on ${\tau}^+_6(1/2),{\tau}^+_{7}(1/2),{\tau}^+_{8}(1/2)$: Lemma \ref{lem:ind1:tau9 10 11} (with Corollary \ref{cor:ind1:tau12}). 
		
		\item Control on $\tau_{1}^+(1/2)$ and $\tau_{2}^+(1/2)$: Lemmas \ref{lem:ind1:tau4} and\ref{lem:ind1:tau5}.

	\end{itemize}
	Combining all the aforementioned results gives the conclusion.
\end{proof}

\subsubsection{Proximity to $\alpha$ of the induction base }\label{subsubsec:ind1:base}

In this section, we explain how to deduce the corresponding analogues of Lemmas \ref{lem:ind1:tau78} and \ref{lem:ind1:tau9 10 11} for $\overline{S}(t)$. The only difference is that the definition of $\overline{\tau}_{\textnormal{b}}$ \eqref{eq:def:taux:init} compared to $\tau_{\textnormal{b}}$ \eqref{eq:def:taux}, and hence the corresponding change needs to be made in $\tau_9^+$ \eqref{eq:def:tau12}.

Recall the definition of $\overline{S}^+$ \eqref{eq:def:taux:init}. Let us define
\begin{equation}
\overline{F} (\alpha_0,t):= 4\alpha_0^{1-\epsilon}\sigma_1\sigma_2(t;S)  + \alpha_0\beta_0^{5\theta} \sigma_1(t; \overline{S}^+) + 3\alpha_0^{3/2} \beta_0^{6\theta}.
\end{equation}
 Then, the analogue of ${\tau}^+_{9}$  is given by
\begin{equation}
\overline{\tau}_{9}:= \inf \left\{t\ge t_0: |\overline{S}(t)-\alpha_0| \ge \overline{F}(\alpha_0,t) \right\}.
\end{equation}
Furthermore, we set $\overline{\tau}_{10}(\Delta)$ to be the analogue of \eqref{eq:def:ind1:tau13}, which is
\begin{equation}
\overline{\tau}_{10}(\Delta):= \inf\left\{t\ge t_0: \left| \intop_{(t-\Delta)\vee t_0}^{t} \left\{\overline{S}(s)-\alpha_0 \right\}ds \right| \ge \alpha_0^{\frac{3}{2}-7\epsilon} \Delta \right\}.
\end{equation}
Note the important difference from the previous definitions: they read off the process starting  from $t_0$, not ${t}_0^+$. Then, we have the following as corresponding counterparts of Corollary \ref{cor:ind1:tau12} and Lemma \ref{lem:ind1:tau13}:
\begin{equation}
\begin{split}
&\PP \left(\overline{\tau}_{9} \le \overline{\tau}\wedge \hat{t}_0, \ \overline{\tau}>t_0 \right) \le \exp\left( -\beta_0^3\right);\\
&\PP \left( \overline{\tau}_{10}(\Delta) \le \overline{\tau}\wedge \overline{\tau}_{9}\wedge \hat{t}_0, \ \overline{\tau}>t_0 \right) \le 5\exp \left(-\beta_0^3 \right),
\end{split}
\end{equation}
where the second one holds for all $\Delta \ge \alpha_0^{-1}$. In fact, the first one is immediate from the definitions of  $\overline{\tau}_3$ and $\overline{\tau}_{\textnormal{b}}$, and the second one follows from the same proof as Lemma \ref{lem:ind1:tau13} relying on $\overline{\tau}_7$ and $\tau_{\textnormal{B}2}$ (Theorem \ref{thm:branching:ib})  when estimating $|\Pi_{\overline{S}^+}[t-\Delta,t]|$. Thus, the following corollary can be obtained analogously as Lemmas \ref{lem:ind1:tau78} and \ref{lem:ind1:tau9 10 11}, whose details are omitted thanks to similarity.

\begin{cor}\label{cor:ind1:tau7891011:base}
	Under the setting of Proposition \ref{prop:ind1:base:main}, let $\overline{\tau}' = \min \{ \overline{\tau}_i: 4\le i \le 8 \}$. Then, we have
	\begin{equation}
	\PP \left(\overline{\tau}' \le \overline{\tau}\wedge \hat{t}_0, \ \overline{\tau}>t_0 \right) \le 2\exp\left(-\beta_0^2 \right).
	\end{equation}
\end{cor}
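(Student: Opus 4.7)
The plan is to repeat the arguments of Lemmas~\ref{lem:ind1:tau78} and~\ref{lem:ind1:tau9 10 11} with $\overline{S}(t)$ in place of $S(t)$, using the pointwise bound $|\overline{S}(t)-\alpha_0|\le \overline{F}(\alpha_0,t)$ valid up to $\overline{\tau}\wedge \overline{\tau}_9\wedge\hat{t}_0$ and the new stopping time $\overline{\tau}_{10}(\Delta)$ for the integrated fluctuation. The two structural modifications are (i) the time window starts at $t_0$ rather than at $t_0^+$, which is harmless since $\overline{S}\equiv\alpha_0$ on $[t_0^-,t_0)$, and (ii) in controlling the number of points of $\Pi_{\overline{S}^+}$ in short intervals we invoke $\tau_{\textnormal{B}2}$ from Theorem~\ref{thm:branching:ib} (already included in $\overline{\tau}$) in place of $\tau_7$.

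First I would control $\overline{\tau}_4$ and $\overline{\tau}_5$. As in \eqref{eq:ind1:squareint:splitinto4}, for $t\le \overline{\tau}\wedge\overline{\tau}_9\wedge\hat{t}_0$ one expands
\[
\intop_{t_0}^{t}(\overline{S}(s)-\alpha_0)^2ds \ \lesssim\ \intop_{t_0}^{t}\left\{\frac{\alpha_0^{2-2\epsilon}}{(\pi_1(s;\overline{S})+1)(\pi_2(s;\overline{S})+1)}+\frac{\alpha_0^2\beta_0^{10\theta}}{\pi_1(s;\overline{S}^+)+1}+\alpha_0^3\beta_0^{12\theta}\right\}ds.
\]
The third term contributes at most $\alpha_0\beta_0^{23\theta}$ deterministically. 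The middle term is handled as in \eqref{eq:ind1:tau78:term3} via Lemma~\ref{lem:ind1:pi1int basicbd:basic}, using that $|\Pi_{\overline{S}^+}[t_0,\hat{t}_0]|\le \alpha_0^{-1}\beta_0^{11\theta}$ by $\tau_{\textnormal{B}2}$ and $\overline{\tau}_7$. The first term is treated exactly as Claim~\ref{claim:ind1:pointsum:S}: use $\overline{\tau}_1$ to dominate $\Pi_{\overline{S}}$ by $\Pi_{2\alpha_0\beta_0^{C_\circ}}$ and bucket the interval $[t_0,\hat{t}_0]$ into windows of length $\alpha_0^{-1/2}$; a Chernoff bound on the number of doubly-occupied windows gives the required $\alpha_0^{-1/2-\epsilon}$ bound with failure probability $e^{-\beta_0^5}$. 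The bound for $\overline{\tau}_5$ follows by factoring out $2\alpha_0\beta_0^{C_\circ}$ from $\overline{\tau}_1$.

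Next I would control $\overline{\tau}_{10}(\Delta)$ for every $\Delta\ge\alpha_0^{-1}$, by integrating the pointwise bound $|\overline{S}(s)-\alpha_0|\le\overline{F}(\alpha_0,s)$ over $[(t-\Delta)\vee t_0,t]$. The key input is a bound on $|\Pi_{\overline{S}^+}[(t-\Delta)\vee t_0,t]|$, which is obtained from Corollary~\ref{cor:concentration:numberofpts each interval} applied to $\Pi_{\overline{S}^+}$ with $M=2\Delta\alpha_0\beta_0^{C_\circ}$ (using $\overline{\tau}_1$ together with $\tau_{\textnormal{B}2}$ on $[t_0,t_0+\alpha_0^{-1}\beta_0^{C_\circ}]$ to see $\overline{S}^+\le 2\alpha_0\beta_0^{C_\circ}$). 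Plugging this into the analogue of \eqref{eq:ind1:Sminualpha:splitbd} yields $\int|\overline{S}-\alpha_0|\le\alpha_0^{3/2-\epsilon}\Delta$ with failure probability $e^{-\beta_0^{C_\circ/3}}$, uniformly in the dyadic scales of $\Delta$.

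Finally, with $\overline{\tau}_{10}$ under control, I would conclude $\overline{\tau}_7$ and $\overline{\tau}_8$ by applying Corollary~\ref{cor:concentration:numberofpts each interval} to $\Pi_{\overline{S}}$ with $\Delta=\alpha_0^{-1}$ and $\Delta=\alpha_0^{-1}\beta_0^{C_\circ}$ respectively, exactly as in \eqref{eq:ind1:tau1011bd}; the required integral bound $\int S\le 2\Delta\alpha_0\beta_0^{C_\circ}$ comes from $\overline{\tau}_1$ and the just-established $\overline{\tau}_{10}$. For $\overline{\tau}_6$ one runs the argument of \eqref{eq:ind1:tau9bd} at scale $\Delta=\alpha_0^{-3/2}$, reducing a linear lower bound on $X_t-X_{t_0}$ to a windowed Chernoff estimate. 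A union bound over the five events produces the required error $2e^{-\beta_0^2}$. The only step that requires real care is the bucketing argument underlying the analogue of Claim~\ref{claim:ind1:pointsum:S}, since the $(\pi_1\pi_2)^{-1}$ double-singular integrand is the only place where the fixed-rate sandwiching does not trivially inherit from Section~\ref{sec:reg:conti of reg}; all other steps are mechanical transcriptions with $\sigma_1(t;\overline{S}^+)$ replacing $\sigma_1(t;S)$ and $\tau_{\textnormal{B}2}$ replacing $\tau_7$ where needed.
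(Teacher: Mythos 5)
Your proposal matches the paper's own (heavily sketched) argument in Section~\ref{subsubsec:ind1:base}: the paper states that the analogues of Corollary~\ref{cor:ind1:tau12} and Lemma~\ref{lem:ind1:tau13} (namely the control of $\overline{\tau}_9$ and $\overline{\tau}_{10}(\Delta)$) hold, citing $\overline{\tau}_7$ and $\tau_{\textnormal{B}2}$ as the relevant point-count inputs, and then asserts that the rest ``can be obtained analogously as Lemmas~\ref{lem:ind1:tau78} and~\ref{lem:ind1:tau9 10 11}, whose details are omitted.'' You are filling in exactly those omitted details, with the same three ingredients the paper identifies, and the order of steps ($\overline{\tau}_9\Rightarrow\overline{\tau}_4,\overline{\tau}_5$ and $\overline{\tau}_{10}(\Delta)$; then $\overline{\tau}_{10}(\Delta)\Rightarrow\overline{\tau}_6,\overline{\tau}_7,\overline{\tau}_8$) is the one the paper intends. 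Your emphasis that the genuinely nontrivial transcription is the bucketing estimate underlying the analogue of Claim~\ref{claim:ind1:pointsum:S} is also the right instinct.

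Two small imprecisions worth flagging, neither fatal. First, in item~(ii) of your preamble you say $\tau_{\textnormal{B}2}$ is used ``in place of'' $\tau_7$, but since $\overline{S}^+=\overline{S}\vee\hat{R}_0^+$, both $\overline{\tau}_7$ (for $\Pi_{\overline{S}}$) and $\tau_{\textnormal{B}2}$ (for $\Pi_{\hat{R}_0^+}$) are required, as you correctly use later in the proof; ``in addition to'' would be more accurate. Second, for the point count $|\Pi_{\overline{S}^+}[(t-\Delta)\vee t_0,t]|$, the paper's intended route is a direct deterministic sum of the $\tau_{\textnormal{B}2}$ bound and the $\overline{\tau}_7$ bound (each giving $O(\beta_0^{C_\circ})$ points per $\alpha_0^{-1}$-window), rather than a fresh application of Corollary~\ref{cor:concentration:numberofpts each interval} to $\Pi_{\overline{S}^+}$; the latter requires a pointwise bound $\overline{S}^+\le 2\alpha_0\beta_0^{C_\circ}$ which, for the $\hat{R}_0^+$ piece, is only available with the weaker exponent $\beta_0^{4\theta+1}$ from the analysis of Theorem~\ref{thm:branching:ib}. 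The direct sum is cleaner and avoids this issue, though either way the polylog losses are absorbed by the $\kappa$-room, so your version also lands within the stated error probability.
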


\subsubsection{The martingale generated by the gap $S-\alpha_0$}\label{subsubsec:ind1:Sminusalpha conseq}

In this subsection, we conduct further analysis based on ${\tau}^+_{10}(\Delta)$ \eqref{eq:def:ind1:tau13} which will be useful later in Section \ref{sec:double int}. Define the \textit{gap} process
\begin{equation}\label{eq:def:Strianglealpha}
\Pi_{S\triangle\alpha_0}[a,b] := \Pi_S[a,b] \triangle \Pi_{\alpha_0}[a,b],
\end{equation} 
and write $d\Pi_{S-\alpha_0}(x):= d\Pi_S(x)-d\Pi_{\alpha_0}(x).$  We begin with stating a direct consequence of  Theorem \ref{thm:reg:conti:main} and  Lemma \ref{lem:ind1:tau13}

\begin{cor}\label{cor:ind1:gap num of pts}
	Let $\alpha_0,t_0,r>0$, set $t_0^-, t_0^+, \hat{t}_0$ as before, and let $\Delta_0:= \alpha_0^{-\frac{3}{2}+7\epsilon}$. Define 
	\begin{equation}
	\tau_{\textnormal{gap}}^{(1)}:= \inf \left\{t\ge t_0^++\Delta_0: \, |\Pi_{S\triangle \alpha_0}|[(t-\Delta_0), t]| \ge\beta_0^5 \right\}.
	\end{equation}
	If $\Pi_S(-\infty,t_0]\in \mathfrak{R}(\alpha_0,r;[t_0])$, then we have
	\begin{equation}
	\PP \left(\left. \tau_{\textnormal{gap}}^{(1)} <\hat{t}_0 \, \right| \, \mathcal{F}_{t_0} \right) \le 2\exp\left(-\beta_0^2 \right)+r.
	\end{equation}
\end{cor}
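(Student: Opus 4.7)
The plan is to view the symmetric-difference process $\Pi_{S \triangle \alpha_0}$ as a predictable-rate point process with instantaneous intensity $g(s) := |S(s) - \alpha_0|$ and then combine the integral bound already available from $\tau_{10}^+$ with the number-of-points concentration inequality of Corollary~\ref{cor:concentration:numberofpts each interval}. The key observation is that since $S$ is progressively measurable with respect to the underlying 2D Poisson process $\Pi$, a point of $\Pi$ at $(s, y)$ contributes to $\Pi_{S \triangle \alpha_0}$ precisely when $y$ lies in the symmetric difference of $[0, S(s)]$ and $[0, \alpha_0]$, which happens with conditional rate $|S(s) - \alpha_0|$, so this process falls under the framework of Section~\ref{subsec:fixed:mgconcen}.

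Concretely, I would first collect the high-probability events giving integral control of $|S - \alpha_0|$ on $[t_0^+, \hat{t}_0]$. Starting from $\Pi_S(-\infty, t_0] \in \mathfrak{R}(\alpha_0, r; [t_0])$, the regularity assumption yields $\PP(\tau(2) \leq \acute{t}_0 \mid \mathcal{A}) \leq r$, Theorem~\ref{thm:reg:conti:main} gives $\PP(\tau^+(1/2) < \hat{t}_0 \mid \mathcal{A}) \leq r + e^{-\beta_0^{1.9}}$, Corollary~\ref{cor:ind1:tau12} controls $\tau^+_9$, and Lemma~\ref{lem:ind1:tau13} applied with $\Delta = \Delta_0 = \alpha_0^{-3/2+7\epsilon}$ controls $\tau^+_{10}(\Delta_0)$ up to error $e^{-\beta_0^3}$. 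On the intersection of these events, for every $t \in [t_0^+ + \Delta_0, \hat{t}_0]$ we have
\begin{equation*}
\int_{t - \Delta_0}^{t} |S(s) - \alpha_0| \, ds \;\leq\; \alpha_0^{3/2-7\epsilon} \cdot \Delta_0 \;=\; 1,
\end{equation*}
which is precisely the hypothesis needed to invoke the point-counting bound.

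The rest is a direct application of Corollary~\ref{cor:concentration:numberofpts each interval} to the predictable-rate process $g(s) = |S(s) - \alpha_0|$, taking $M = 1$, $\Delta = \Delta_0$, $h = \hat{t}_0$, $a = \beta_0^3$, and $\tau$ equal to the minimum of the stopping times collected above (which enforces the integral hypothesis almost surely up to $\tau$). One obtains that except on an event of probability at most $(h/\Delta_0) e^{-\beta_0^3}$, every interval $[t - \Delta_0, t]$ with $t \in [t_0^+ + \Delta_0, \hat{t}_0]$ contains at most $2M + 2a\sqrt{M} + 2 = 2\beta_0^3 + 4 \leq \beta_0^5$ points of $\Pi_{S \triangle \alpha_0}$. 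Since $h/\Delta_0 \leq \alpha_0^{-1/2-7\epsilon}\beta_0^{10\theta} \leq \exp(\beta_0)$, this error term is bounded by $e^{-\beta_0^2}$ for small enough $\alpha_0$, and adding it to the earlier error contributions yields the claimed bound $2\exp(-\beta_0^2) + r$.

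There is no serious obstacle in this argument since all the substantive work has already been done in Theorem~\ref{thm:reg:conti:main} and Lemma~\ref{lem:ind1:tau13}; the only point requiring mild care is the identification of $\Pi_{S \triangle \alpha_0}$ with a predictable-intensity Poisson process, and ensuring that the polylogarithmic factor $h/\Delta_0$ is indeed absorbed by the exponential $e^{-\beta_0^3}$, which is where the choice $a = \beta_0^3$ (rather than a smaller power) is needed.
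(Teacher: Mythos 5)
Your proposal is correct and takes essentially the same route as the paper: the paper's one-line proof cites exactly the combination of Lemma~\ref{lem:ind1:tau13} (via the stopping time $\tau_{10}^+(\Delta_0)$, which delivers $\int_{t-\Delta_0}^t |S-\alpha_0|\,ds \le \alpha_0^{3/2-7\epsilon}\Delta_0 = 1$), the identification of $\Pi_{S\triangle\alpha_0}$ as a point process with predictable intensity $|S(s)-\alpha_0|$ fed into Corollary~\ref{cor:concentration:numberofpts each interval}, and Theorem~\ref{thm:reg:conti:main} for the failure probability of the underlying regularity stopping times — all of which you spell out. Your arithmetic ($M=1$, $a=\beta_0^3$, $2M+2a\sqrt{M}+2 \le \beta_0^5$, $(\hat t_0/\Delta_0)e^{-a}\le e^{-\beta_0^2}$) is correct, and while the paper is loose in tracking the constant factors in front of the $e^{-\beta_0^c}$ bounds, this looseness is present in the paper itself and does not constitute a gap.
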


\begin{proof}
	Proof follows directly from applying Corollary \ref{cor:concentration:numberofpts each interval} based on Lemma \ref{lem:ind1:tau13}, and then combining with Theorem \ref{thm:reg:conti:main}.
\end{proof}

The next object of interest is the martingale defined as
\begin{equation}
G(t):= \intop_{t_0^+}^t K^*_{\alpha_0}(t-x) \big\{d{\Pi}_{S-\alpha_0} (x) -(S(x)-\alpha_0 )dx \big\}, 
\end{equation}
which satisfies the following property.
\begin{cor}\label{cor:ind1:gap mg}
Define the stopping time
\begin{equation}
\tau_{\textnormal{gap}}^{(2)}:= \left\{
t\ge t_0^+: |G(t)| \ge {\alpha_0\beta_0^{C_\circ}}\sigma_1(t;S\triangle \alpha_0) + \alpha_0^{\frac{7}{4}-\epsilon}
 \right\}.
\end{equation}
	Then, under the setting of Corollary \ref{cor:ind1:gap num of pts}, we have
	\begin{equation}
	\PP \left(\left.\tau_{\textnormal{gap}}^{(2)} <\hat{t}_0 \, \right| \, \mathcal{F}_{t_0} \right) \le \exp\left(-\beta_0^2 \right) +r
	\end{equation}
\end{cor}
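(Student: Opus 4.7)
The plan is to realize $G(t)$ as a compensated stochastic integral and reduce the problem to the martingale concentration machinery of Corollary \ref{lem:concentrationofint:continuity} with integrand $f_t(x) = K^*_{\alpha_0}(t-x)$ --- the very same setup that drove the proof of Lemma \ref{lem:ind1:taux}. The only new wrinkle is that the driving measure is the signed point process $d\Pi_{S-\alpha_0}$ rather than $d\Pi_S$. I would handle this by splitting $d\Pi_{S-\alpha_0}(x)-(S(x)-\alpha_0)dx = d\widetilde{\Pi}_{g_+}(x)-d\widetilde{\Pi}_{g_-}(x)$, where $g_+(x):=(S(x)-\alpha_0)_+$ and $g_-(x):=(\alpha_0-S(x))_+$ are predictable rates carved out of the driving $\Pi$. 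Each of $G_\pm(t):=\int_{t_0^+}^t K^*_{\alpha_0}(t-x)\,d\widetilde{\Pi}_{g_\pm}(x)$ is then a standard compensated integral that the corollary can digest directly, and the claim will follow from $G=G_+-G_-$ and the triangle inequality.

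The heart of the argument is the quadratic-variation budget $\mathbf{M}$. Using Lemma \ref{lem:estimat for K tilde:intro} to bound $(K^*_{\alpha_0}(s))^2 \le C\alpha_0^2/(s+1)$ uniformly in $s\ge 0$, I would dyadically split the past into windows $\Delta_k = 2^k \alpha_0^{-1}$ (for $k=0,1,\ldots,\lceil\log_2(\hat t_0-t_0^+)\rceil$), invoke Lemma \ref{lem:ind1:tau13} on each window to get $\int_{t-\Delta_k}^{t} |S-\alpha_0|\,dx \le \alpha_0^{3/2-7\epsilon}\Delta_k$, and sum to obtain $\mathbf{M} \le \alpha_0^{7/2-7\epsilon}\beta_0^{O(1)}$. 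With the choice $a = \alpha_0^{-2\epsilon}$ this gives $a\sqrt{\mathbf{M}} \le \tfrac12\alpha_0^{7/4-\epsilon}$, comfortably within the budget. The boundary term $2Nf_t(p_1(t))$ in Corollary \ref{lem:concentrationofint:continuity}, capturing the near-$t$ contribution where $f_t$ is large, will be handled via Corollary \ref{cor:ind1:gap num of pts}: taking $N=\beta_0^5$ in the window $\Delta_0 = \alpha_0^{-3/2+7\epsilon}$ and using $K^*_{\alpha_0}(s)\le C\alpha_0/\sqrt{s+1}$ yields $2Nf_t(p_1(t)) \le C\beta_0^5 \alpha_0 \sigma_1(t;S\triangle\alpha_0) \le \tfrac12\alpha_0\beta_0^{C_\circ}\sigma_1(t;S\triangle\alpha_0)$. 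The continuity constant $D$ needed to pass from fixed $t$ to the whole interval $[t_0^+,\hat t_0]$ is supplied by the derivative bound $|(K^*_{\alpha_0})'| \le C\alpha_0$ from Lemma \ref{lem:estimat for K tilde:intro}.

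Assembling these ingredients on the event $\{\tau(2)\wedge\tau_{10}^+(\Delta_0)\wedge\tau_{\textnormal{gap}}^{(1)}\ge \hat t_0\}$, whose complement has probability at most $r+3\exp(-\beta_0^2)$ by Theorem \ref{thm:reg:conti:main}, Lemma \ref{lem:ind1:tau13}, and Corollary \ref{cor:ind1:gap num of pts}, will yield the stated bound. I anticipate two mildly delicate bookkeeping points, which I expect to be the main obstacles. First, the very-short-range contribution to the quadratic variation ($s\in[0,\alpha_0^{-1}]$, outside the regime where $\tau_{10}^+$ applies) must be absorbed by the crude bound $|S-\alpha_0|\le 2\alpha_0\beta_0^{C_\circ}$ from $\tau_1$ combined with $(K^*_{\alpha_0})^2\le C\alpha_0^2$ on $[0,1]$, contributing only $O(\alpha_0^3\beta_0^{C_\circ})$ which is negligible. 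Second, splitting the signed measure produces two copies of every bound, so numerical constants must be chosen with a factor-of-two margin throughout. Beyond these bookkeeping details, no new technique is required: the proof will be a near-verbatim replay of Lemma \ref{lem:ind1:taux} with the signed gap process in place of $\Pi_S$.
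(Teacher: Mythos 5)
Your decomposition $G = G_+ - G_-$ into two compensated integrals driven by the nonnegative rates $g_\pm = (S-\alpha_0)_\pm$ is a sound way to handle the signed driving measure, and the treatment of the boundary term via Corollary \ref{cor:ind1:gap num of pts} is fine. But the central arithmetic of your quadratic-variation budget does not close. You use Lemma \ref{lem:ind1:tau13} (i.e., $\tau^+_{10}$) to control the \emph{average} $\int_{t-\Delta}^{t}|S-\alpha_0|\,dx \le \alpha_0^{3/2-7\epsilon}\Delta$, and after the dyadic split this gives $\mathbf{M}\le \alpha_0^{7/2-7\epsilon}\beta_0^{O(1)}$. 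That budget is too large: $\sqrt{\mathbf{M}}=\alpha_0^{7/4-7\epsilon/2}\beta_0^{O(1)}$, so already with $a=1$ the bulk term exceeds the target $\alpha_0^{7/4-\epsilon}$, since $7\epsilon/2>\epsilon$; with your choice $a=\alpha_0^{-2\epsilon}$ you have $a\sqrt{\mathbf{M}}=\alpha_0^{7/4-11\epsilon/2}\beta_0^{O(1)}\gg \alpha_0^{7/4-\epsilon}$. The claim that this is ``comfortably within the budget'' is false, and no admissible choice of $a\ge\beta_0^2$ repairs it, because the loss $\alpha_0^{-6\epsilon}$ is polynomial whereas the allowed slack is only $\alpha_0^{-\epsilon}$ minus polylogarithmic corrections.

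The reason the paper's budget is smaller ($M=\alpha_0^{7/2-\epsilon}$) is that it controls the quadratic variation $\int (K^*_{\alpha_0}(t-x))^2|S(x)-\alpha_0|\,dx$ using the \emph{pointwise} bound $|S(x)-\alpha_0|\le F(\alpha_0,x)$ furnished by $\tau^+_{9}$ (Corollary \ref{cor:ind1:tau12}), not the windowed average from $\tau^+_{10}$. Here $F(\alpha_0,x)=4\alpha_0^{1-\epsilon}\sigma_1\sigma_2(x;S)+\alpha_0\beta_0^{C_\circ+1}\sigma_1(x;S)+3\alpha_0^{3/2}\beta_0^{6\theta}$ retains the local inter-point structure through $\sigma_1,\sigma_2$, and the subsequent integration against $(K^*_{\alpha_0}(t-x))^2\lesssim \alpha_0^2/(t-x+1)\vee\alpha_0^4$ is done with the bespoke integral estimates of Lemmas \ref{lem:ind1:pi1int basicbd:basic} and \ref{lem:ind1:pi1int basicbd}. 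Averaging first (as $\tau^+_{10}$ does) destroys exactly the cancellation between the near-$t$ concentration of $(K^*)^2$ and the fine structure of $|S-\alpha_0|$ that the paper's computation exploits. To salvage your proof, replace the $\tau^+_{10}$-based dyadic sum by the pointwise $\tau^+_9$ bound and the two integration lemmas; the rest of your skeleton (the $G_\pm$ split, the boundary term via $N=\beta_0^5$, passing to fixed $t$ and then a union bound plus continuity in $t$) can then be retained.
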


\begin{proof}
	Recall the definition of $\tau(2)$ (Theorem \ref{thm:reg:conti:main}), $F(\alpha_0,t)$ and $\tau_9^+$ \eqref{eq:def:tau12} and write
	\begin{equation}
	\begin{split}
	&\intop_{t_0^+}^{t\wedge \tau(2)\wedge\tau_9^+ } (K^*_{\alpha_0}(t-x))^2 |S(x)-\alpha_0|dx \\
	&\le \intop_{t_0^+}^t  C\left(\frac{\alpha_0^2}{t-x+1} \vee \alpha_0^4 \right) \left( \alpha_0^{1-\epsilon}\sigma_1(t;S)^2 + \alpha_0\beta_0^{C_\circ+1}\sigma_1(t;S)+\alpha_0^{\frac{3}{2}}\beta_0^{6\theta} \right) dx.
	\end{split}
	\end{equation}
	Each terms in the integral can be estimated using Lemmas \ref{lem:ind1:pi1int basicbd:basic} and  \ref{lem:ind1:pi1int basicbd}, with parameters $\Delta_0 = \alpha_0^{-1}, \Delta_1 = \alpha_0^{-1} \beta_0^{C_\circ}, K=\alpha_0^{-1-\epsilon}$ and $N_0 = \beta_0^5.$ This gives that for $t\le \hat{t}_0$,
	\begin{equation}
	\intop_{t_0^+}^t (K^*_{\alpha_0}(t-x))^2 |S(x)-\alpha_0|dx \le \alpha_0^{\frac{7}{2}-\epsilon},
	\end{equation}
	and hence we can apply Corollary \ref{lem:concentrationofint:continuity} with parameters 
	\begin{equation}
	A^2=M= \alpha_0^{\frac{7}{2}-\epsilon}, \ \Delta = \alpha_0^{-\frac{3}{2}-7\epsilon}, \ D=1, \ \eta = \alpha_0\beta_0^{C_\circ},\ N= \beta_0^5,\ \delta = \alpha_0^{10}.
	\end{equation}
	Combining the result with Theorem \ref{thm:reg:conti:main} concludes the proof.
\end{proof}

The same proof applies to establishing the next corollary, whose details are omitted due to similarity.

\begin{cor}\label{cor:ind1:gap mg 2}
	Let $\underline{\alpha}_0:= \alpha_0 - \alpha_0^{\frac{3}{2}-\epsilon},$ $d\widetilde{\Pi}_{\triangle \alpha_0} (x) := d\Pi_{\alpha_0}(x)-d\Pi_{\underline{\alpha}_0}(x) - (\alpha_0-\underline{\alpha}_0)dx$,  and define the stopping time
	\begin{equation}
	\tau_{\textnormal{gap}}^{(3)} := \left\{t\ge t_0^+: \left|\intop_{t_0^+}^t K^*_{\alpha_0}(t-x) d\widetilde{\Pi}_{\triangle \alpha_0} (x) \right| \ge \alpha_0\beta_0^{C_\circ}\sigma_1(t; \triangle\alpha_0) + \alpha_0^{\frac{7}{4}-\epsilon}  \right\}.
	\end{equation} Under the setting of Corollary \ref{cor:ind1:gap num of pts}, we have
	\begin{equation}
	\PP \left(\left. \tau_{\textnormal{gap}}^{(3)} <\hat{t}_0 \,\right|\,\mathcal{F}_{t_0} \right) \le \exp\left(-\beta_0^2 \right)+r.
	\end{equation}
\end{cor}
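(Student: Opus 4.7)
The plan is to follow the proof of Corollary~\ref{cor:ind1:gap mg}, the key simplification being that $d\widetilde{\Pi}_{\triangle\alpha_0}$ is now a compensated Poisson martingale with \emph{deterministic} intensity $\alpha_0-\underline{\alpha}_0 = \alpha_0^{3/2-\epsilon}$. Concretely I would couple $\Pi_{\underline{\alpha}_0}\subset\Pi_{\alpha_0}$ by independent $\underline{\alpha}_0/\alpha_0$-thinning, so that $\Pi_{\triangle\alpha_0}:=\Pi_{\alpha_0}\setminus\Pi_{\underline{\alpha}_0}$ is itself Poisson of the stated rate and is independent of $\Pi_S$. Setting $J(t):=\int_{t_0^+}^t K^*_{\alpha_0}(t-x)\,d\widetilde{\Pi}_{\triangle\alpha_0}(x)$, the martingale's quadratic variation is then completely deterministic, so the analysis does not need to be routed through the regularity stopping times $\tau(2)$ or $\tau_9^+$ as was done in Corollary~\ref{cor:ind1:gap mg}.

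For the quadratic variation, splitting at $s=\alpha_0^{-2}$ and using $K^*_{\alpha_0}\le C\alpha_0^2$ together with the bound $|K^*_{\alpha_0}(s)-K^*_{\alpha_0}|\le C\alpha_0/\sqrt{s+1}\cdot e^{-c\alpha_0^2 s}$ from Lemma~\ref{lem:estimat for K tilde:intro}, and invoking $\hat{t}_0-t_0^+\le 2\alpha_0^{-2}\beta_0^{10\theta}$, I would obtain
\begin{equation*}
M := \int_{t_0^+}^{t}\bigl(K^*_{\alpha_0}(t-x)\bigr)^2(\alpha_0-\underline{\alpha}_0)\,dx \;\le\; C\alpha_0^{3/2-\epsilon}\bigl(\alpha_0^2\beta_0+\alpha_0^4(\hat{t}_0-t_0^+)\bigr)\;\le\;\alpha_0^{7/2-\epsilon}\beta_0^{11\theta}.
\end{equation*}
Then I would apply Corollary~\ref{lem:concentrationofint:continuity} with $f_t(x):=K^*_{\alpha_0}(t-x)$ (increasing in $x$, bounded by $C\alpha_0$, derivative bounded by $C\alpha_0$ via Lemma~\ref{lem:estimat for K tilde:intro}), and parameters $A^2=M$, $\Delta=\alpha_0^{-1}$, $N=\beta_0^{C_\circ}$, $\eta=\alpha_0^{3/2-\epsilon}$, $\delta=\alpha_0^{10}$, $a=\alpha_0^{-\epsilon/4}$. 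A direct application of Corollary~\ref{cor:concentration:numberofpts each interval} to $\Pi_{\triangle\alpha_0}$ (with mean only $\alpha_0^{1/2-\epsilon}\ll 1$ on windows of length $\Delta$) shows $|\Pi_{\triangle\alpha_0}[(t-\Delta)\vee t_0^+,t]|\le N$ uniformly on $[t_0^+,\hat{t}_0]$ except with probability $\exp(-\beta_0^3)$. The resulting boundary term reads
\begin{equation*}
2N f_t(p_1(t)) = 2\beta_0^{C_\circ}K^*_{\alpha_0}\bigl(\pi_1(t;\triangle\alpha_0)\bigr)\;\le\; C\alpha_0\beta_0^{C_\circ}\sigma_1(t;\triangle\alpha_0)
\end{equation*}
by Lemma~\ref{lem:estimat for K tilde:intro}, and $3A+a\sqrt{M}\le \alpha_0^{7/4-\epsilon}$ for sufficiently small $\alpha_0$, matching the two terms in the definition of $\tau_{\textnormal{gap}}^{(3)}$.

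Putting these ingredients together via a union bound would give $\tau_{\textnormal{gap}}^{(3)}\ge \hat{t}_0$ unconditionally with probability $1-\exp(-\beta_0^2)$; the $+r$ in the stated bound is inherited only through any auxiliary coupling between $\Pi_{\triangle\alpha_0}$ and $\Pi_S$ (as in the companion results) and does not otherwise appear. The main obstacle is therefore not conceptual but bookkeeping: one must keep careful track of the interplay between the powers of $\alpha_0$ and $\beta_0$ so that $\sqrt{M}\,\beta_0^{O(\theta)}$ stays below $\alpha_0^{7/4-\epsilon}$, and confirm uniformly in $\pi_1$ that $K^*_{\alpha_0}(\pi_1)\le C\alpha_0\sigma_1(t;\triangle\alpha_0)$ (which follows from Lemma~\ref{lem:estimat for K tilde:intro} by separating into the $\pi_1\le\alpha_0^{-2}$ and $\pi_1\ge\alpha_0^{-2}$ regimes). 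Once these are checked, the proof essentially copies that of Corollary~\ref{cor:ind1:gap mg}, omitting only the $\tau_9^+$-dependent steps that are no longer needed.
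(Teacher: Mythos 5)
Your overall plan is sound and follows the paper's indicated route (``same proof as Corollary~\ref{cor:ind1:gap mg}''); your observation that $d\widetilde{\Pi}_{\triangle\alpha_0}$ is a compensated Poisson martingale with \emph{deterministic} intensity $\alpha_0-\underline{\alpha}_0=\alpha_0^{3/2-\epsilon}$, so that the quadratic variation is deterministic and the detour through $\tau(2)\wedge\tau_9^+$ can be skipped, is a genuine and correct simplification. The quadratic-variation estimate $M\lesssim\alpha_0^{7/2-\epsilon}\beta_0^{O(\theta)}$ and the final arithmetic $3A+a\sqrt{M}\le\alpha_0^{7/4-\epsilon}$ are also right.

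There is, however, a concrete flaw in the parameter choice. Corollary~\ref{lem:concentrationofint:continuity} requires $|f_t(x)|\le\sqrt{M}$ for all $x\le t-\Delta$, i.e.\ $K^*_{\alpha_0}(s)\le\sqrt{M}$ for all $s\ge\Delta$. With your $\Delta=\alpha_0^{-1}$, Lemma~\ref{lem:estimat for K tilde:intro} gives $K^*_{\alpha_0}(\alpha_0^{-1})\asymp\alpha_0\cdot\alpha_0^{1/2}=\alpha_0^{3/2}$, while $\sqrt{M}\asymp\alpha_0^{7/4-\epsilon/2}\beta_0^{O(\theta)}$; since
\begin{equation}
\frac{\alpha_0^{3/2}}{\sqrt{M}}\asymp\alpha_0^{-1/4+\epsilon/2}\beta_0^{-O(\theta)}\longrightarrow\infty,\qquad \alpha_0\to 0,
\end{equation}
the hypothesis fails, and the lemma cannot be invoked. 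Your parenthetical ``bounded by $C\alpha_0$'' conflates the global bound on $K^*_{\alpha_0}$ with the required (much smaller) $\sqrt{M}$-bound on the tail of $f_t$. The fix is to take a larger $\Delta$: any $\Delta\gtrsim\alpha_0^{-3/2}$ forces $K^*_{\alpha_0}(\Delta)\lesssim\alpha_0^{7/4}\le\sqrt{M}$, and with such a $\Delta$ the mean number of $\Pi_{\triangle\alpha_0}$-points in a window of length $\Delta$ is $O(\alpha_0^{-\epsilon})$, so $N=\beta_0^{O(1)}$ (after passing through Corollary~\ref{cor:concentration:numberofpts each interval}) is no longer trivially satisfied and some care with powers of $\alpha_0^{\epsilon}$ in $N$ is needed; also note that $K^*_{\alpha_0}(\pi_1)\le C\alpha_0\sigma_1+2\alpha_0^2$ rather than $\le C\alpha_0\sigma_1$ outright, so the boundary contribution $2Nf_t(p_1)$ produces a small $\alpha_0^2$-sized additive piece that must be absorbed by the $\alpha_0^{7/4-\epsilon}$ term. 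With these corrections the proof is essentially that of Corollary~\ref{cor:ind1:gap mg}, minus the regularity stopping times.
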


\subsection{The induction base: regularity of the fixed rate process}\label{subsec:ind1:reg of fixed} 

We conclude the proof of Proposition \ref{prop:ind1:base:main} and Theorem \ref{thm:induction:base:main}. Since we collected most of the ingredients from the previous subsections, we need the last piece of argument that tells us $\overline{\tau}$ is not likely to be trivial, i.e., $\overline{\tau}>t_0$.

\begin{lem}\label{lem:ind1:tau t0:base}
Under the setting of Proposition \ref{prop:ind1:base:main}, we have
\begin{equation}
\PP \left(\overline{\tau}>t_0 \right) \ \ge 1-3\exp\left(-\beta_0^2 \right).
\end{equation}	
\end{lem}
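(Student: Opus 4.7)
The plan is to verify that each of the nine stopping times making up $\overline{\tau}$ strictly exceeds $t_0$, apart from an event of probability $\lesssim \exp(-\beta_0^2)$, and then apply a union bound to obtain the factor $3$.

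First, I will dispose of four stopping times that exceed $t_0$ deterministically: $\overline{\tau}_4,\overline{\tau}_5$ involve integrals $\int_{t_0}^{t}$ that vanish at $t=t_0$; the interval counted by $\overline{\tau}_7$ is empty at $t=t_0$; and $\overline{\tau}_8$ is only defined for $t\ge t_0+2\alpha_0^{-1}\beta_0^{C_\circ}$. Next, $\overline{\tau}_6>t_0$ holds on the event $\overline{\mathcal{A}}_2\supset\overline{\mathcal{A}}$, because $|\Pi_{\overline{S}}[t_0^-,t_0]|\ge \alpha_0^{-1}\beta_0^\theta/4$ dominates $\alpha_0(t_0-t_0^-)/200=\alpha_0^{-1}\beta_0^\theta/200$, and by Lemma~\ref{lem:reg:base:A} the event $\overline{\mathcal{A}}$ has probability $\ge 1-\exp(-\beta_0^4)$. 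Theorem~\ref{thm:branching:ib} takes care of $\tau_{\textnormal{B}}>\acute{t}_0\ge t_0$ except on an event of probability $\le\exp(-\beta_0^2)$.

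For the three remaining non-trivial stopping times $\overline{\tau}_1,\overline{\tau}_2,\overline{\tau}_3$, the key observation is that $\overline{S}(s)\equiv\alpha_0$ on $[t_0-\alpha_0^{-3},t_0)$, so the initial profile $\overline{Y}_{t_0,\alpha_0}(s)=|\Pi_{\alpha_0}[t_0-s,t_0]|$ on $s\le\alpha_0^{-3}$ is just a Poisson count. For $\overline{\tau}_1$, I will mimic the proof of Corollary~\ref{cor:ind1:tau4:base}: a uniform Bernstein estimate gives $\overline{Y}_{t_0,\alpha_0}(s)\le \beta_0^4+2\alpha_0\beta_0^4 s$ on $[0,\alpha_0^{-3}]$ with probability $\ge 1-\exp(-\beta_0^5)$; Lemma~\ref{lem:ind1:tau4:aux} then yields $\overline{S}(t_0)\le C\alpha_0\beta_0^{8}<2\alpha_0\beta_0^{C_\circ}$. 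For $\overline{\tau}_2$, note that $\overline{S}_1(t_0)$ coincides with $R_0(t_0)$ from Section~\ref{sec:criticalbranching}, and the $\tau_{\textnormal{f}4}$-component of Lemma~\ref{lem:branching:ib:R0 vs alpha} gives $R_0(t_0)<\alpha_0\beta_0^{C_\circ}<2\alpha_0\beta_0^{C_\circ}$ with probability $\ge 1-\exp(-\beta_0^5)$.

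The main obstacle is the bound for $\overline{\tau}_3>t_0$, which requires
\[\overline{S}(t_0)-\overline{S}_1(t_0)\in\bigl(-\alpha_0^{3/2-\epsilon}\sigma_1(t_0;\overline{S}),\ \alpha_0^{1-\epsilon}\sigma_1\sigma_2(t_0;\overline{S})\bigr).\]
Since $\overline{S}-\alpha_0$ vanishes on $[t_0^-,t_0)$ and $\sup_{s\le t_0}\{\overline{S}(s)\vee\overline{S}_1(s)\}$ is bounded by $2\alpha_0\beta_0^{C_\circ}\le \alpha_0^{1-\epsilon/400}$ on the previously established events for $\overline{\tau}_1$ and $\overline{\tau}_2$, the hypotheses \eqref{eq:error:assumptions} of Proposition~\ref{prop:fixed perturbed:error} (with $t^-=t_0^-$, $\hat{t}=t_0$, and a suitably chosen $\tau$) are trivially satisfied. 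Its analogue applied to $\overline{S}$ will then bound $\overline{S}'(t_0)-\overline{S}_1(t_0)$ by the required amount with probability $\ge 1-4\exp(-\alpha_0^{-\epsilon/3000})$; combining with Corollary~\ref{cor:SvsSprime}, which on $\overline{\mathcal{A}}$ gives $|\overline{S}(t_0)-\overline{S}'(t_0)|\le \alpha_0^{100}$, closes the argument. Summing the three dominant error probabilities ($\overline{\mathcal{A}}^c$, $\{\tau_{\textnormal{B}}\le\acute{t}_0\}$, and the Poisson/concentration failure), each $\le\exp(-\beta_0^2)$ for small $\alpha_0$, yields the stated bound $3\exp(-\beta_0^2)$.
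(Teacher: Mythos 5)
Your proof is correct and follows essentially the same route as the paper's: dispose of the deterministic stopping times, invoke Corollary~\ref{cor:ind1:tau4:base} (resp.\ the $\tau_{\textnormal{f}4}$ component of Lemma~\ref{lem:branching:ib:R0 vs alpha}) for $\overline{\tau}_1$ (resp.\ $\overline{\tau}_2$), and for $\overline{\tau}_3$ write $\overline{S}(t_0)-\overline{S}_1(t_0)$ as $[\overline{S}(t_0)-\overline{S}'(t_0)]+[\overline{S}'(t_0)-\overline{S}_1(t_0)]$ and control the two brackets via Corollary~\ref{cor:SvsSprime} and the single-time version of Proposition~\ref{prop:fixed perturbed:error}. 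Two small points where your write-up is actually \emph{more} careful than the paper's: (i) $\overline{\tau}_6>t_0$ is not deterministic as the paper implies, since $|\Pi_{\overline{S}}[t_0^-,t_0]|$ is a Poisson count with mean $\approx\alpha_0^{-1}\beta_0^\theta$ that could in principle fall below $\tfrac{1}{200}\alpha_0^{-1}\beta_0^\theta$ — your invocation of $\overline{\mathcal{A}}_2$ patches this; (ii) the paper's proof does not mention $\tau_{\textnormal{B}}$, which is a constituent of $\overline{\tau}$, whereas you correctly handle it via Theorem~\ref{thm:branching:ib}. (You also implicitly fix a typo: the paper writes $R_0(t_0)=\overline{S}(t_0)$ when treating $\overline{\tau}_2$, but the relevant identity is $R_0(t_0)=\overline{S}_1(t_0)$, which is what you use.)
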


\begin{proof}
From their definitions, it is clear that $\overline{\tau}_i >t_0$ for all $4\le i \le 8$. Furthermore, we saw from \eqref{eq:ind1:tau4:base:t0} in Corollary \ref{cor:ind1:tau4:base} that $\overline{\tau}_1>t_0$ w.h.p.. Thus, what remain to investigate are $ \overline{\tau}_2$ and $\overline{\tau}_3$.
	
	For $\overline{\tau}_2$, recall the definition of $R_0(t)$ from \eqref{eq:def:R0:ib} and note that $R_0(t_0) = \overline{S}(t_0).$ Thus, Lemma \ref{lem:branching:ib:R0 vs alpha} implies the desired estimate on $\overline{\tau}_2$.
	
	To understand $\overline{\tau}_3$, we write
	\begin{equation}
\left| \big[	\overline{S}(t_0) - \overline{S}_1(t_0)\big] -\big[ \overline{S}'(t_0;t_0^-,\alpha_0) - \overline{S}_1(t_0)\big] \right| \le \big|\overline{S}(t_0) - \overline{S}'(t_0;t_0^-,\alpha_0)\big| .
	\end{equation}
	The  RHS is $O(\alpha^{50})$  w.h.p. due to Corollary \ref{cor:SvsSprime} and Lemma \ref{lem:reg:base:A}, and the second term of the LHS can be controlled from Proposition \ref{prop:fixed perturbed:error}. Since we are only interested in estimating the above at point $t_0$ (not for entire $t$), it is not difficult to see that the third assumption of \eqref{eq:error:assumptions} can be weakened into
	\begin{equation}
	\sup_{t_0^- \le t <t_0} \overline{S}(t) \le \alpha_0^{1-\frac{\epsilon}{400}},\quad \overline{S}_1(t_0) \le \alpha_0^{1-\frac{\epsilon}{400}}.
	\end{equation}
	We also have the first two assumptions since $\overline{S}(t)=\alpha_0$ for $t\in[t_0^-,t_0)$. Combining all the above discussions concludes the proof. 
\end{proof}

\begin{proof}[Proof of Proposition \ref{prop:ind1:base:main}]
	We can obtain Proposition \ref{prop:ind1:base:main} by linking all the ingredients we observed so far: Building upon Lemmas \ref{lem:reg:base:A} and \ref{lem:ind1:tau t0:base}, the estimates on $\{\overline{\tau}_i\}_{1\le i \le 3} $ (Corollaries \ref{cor:ind1:tau6} \ref{cor:ind1:tau5:base}, \ref{cor:ind1:tau4:base}), and $\{\overline{\tau}_i \}_{4\le i \le 8}$ (Corollary \ref{cor:ind1:tau7891011:base}) deduce the desired result. The proof of the corresponding result for $\underline{S}(t)$ follows analogously.
\end{proof}

We conclude this subsection by verifying Theorem \ref{thm:induction:base:main} based on Proposition \ref{prop:ind1:base:main}.

\begin{proof}[Proof of Theorem \ref{thm:induction:base:main}]
	From Markov's inequality, Proposition \ref{prop:ind1:base:main} implies that
	\begin{equation}
	\PP\left[ \PP \left( \left.\overline{\tau}\le \acute{t}_0 \, \right| \,  \Pi_{\overline{S}}(-\infty,t_0] \right) \ge e^{-\beta_0^{3/2}} \right] \le  e^{-\beta_0^{1.8}},
	\end{equation}
	and it also tells us that
	\begin{equation}
	\PP \left(\Pi_{\overline{S}}(-\infty,t_0] \in \overline{\mathcal{A}}(\alpha_0,t_0) \right) \ge 1- \exp\left(-\beta_0^{1.9} \right).
	\end{equation}	
	This concludes Theorem \ref{thm:induction:base:main} for $\Pi_{\overline{S}}(-\infty,t_0]$, and the result for  $\Pi_{\underline{S}}(-\infty,t_0]$ follows analogously.
\end{proof}

\subsection{The growth of a regular aggregate}\label{subsec:ind1:growth}

This section, we verify Proposition \ref{prop:ind1:growth}, which follows as a consequence of Theorem \ref{thm:reg:conti:main} and  Lemma \ref{lem:ind1:tau13}.

\begin{proof}[Proof of Proposition \ref{prop:ind1:growth}]
	Recall the definition of $\tau(\kappa)$ and ${\tau}^+(\kappa)$ from Theorem \ref{thm:reg:conti:main}, and ${\tau}^+_{10}(\Delta)$ from \eqref{eq:def:ind1:tau13}. In the proof, we let $$\tilde{\tau}:= \tau(2)\wedge {\tau}^+(1/2) \wedge {\tau}^+_{10}(\alpha_0^{-1}).$$ 
	Then, Theorem \ref{thm:reg:conti:main} and Lemma \ref{lem:ind1:tau13} tell us that
	\begin{equation}\label{eq:ind1:growth:1}
	\PP \left( \left. \tilde{\tau} = \hat{t}_0 \, \right| \, \Pi_S(-\infty,t_0] \in \mathfrak{R}(\alpha_0,r;[t_0]) \right) \ge 1-r-\exp\left(-\beta_0^{1.9} \right).
	\end{equation} 
	(Note that $\tau(2)\ge {\tau}^+(1/2)$ if $\tau(2) > \acute{t}_0$, and also that by definition $\tilde{\tau}^+ \le \hat{t}_0$.) 
	
	Now, we apply Lemma \ref{lem:concentration of integral} under the following setting: $$f\equiv 1,  \ g(t) = S(t),\ \tau = \tilde{\tau}, \ M= \alpha_0^{-1}\beta_0^{12\theta},\ \lambda = M^{-\frac{1}{2}}, \ \textnormal{ and } a = \beta_0^{\theta}.$$ 
	The definition of $\tau_{1}$ justifies the assumption \eqref{eq:concentration:conditions:basic}. Hence, from the lemma we obtain that
	\begin{equation}\label{eq:ind1:growth:2}
	\PP \left( \left. \sup_{\acute{t}_0\le t \le \tilde{\tau}^+} \left| |\Pi_S[t_0,t]| - \intop_{t_0}^t S(s)ds  \right| \ge \alpha^{-\frac{1}{2}}\beta_0^{7\theta} \, \right| \, \Pi_S(-\infty,t_0] \right)
	\le \exp\left(-\frac{1}{2}\beta_0^\theta \right),
	\end{equation}
	and this holds for any given $\Pi_S(-\infty,t_0]$.
	
	Lastly, we observe that the definition of ${\tau}^+_{10}(\alpha_0^{-1})$  gives that
	\begin{equation}\label{eq:ind1:growth:3}
	\sup_{\acute{t}_0 \le t \le {\tau}^+_{10}\wedge \hat{t}_0} \left|\intop_{t_0}^t S(s) ds - \alpha_0(t-t_0) \right| \le \alpha_0^{-\frac{1}{2}-8\epsilon}.
	\end{equation}
	Thus, we obtain the conclusion by combining \eqref{eq:ind1:growth:1}, \eqref{eq:ind1:growth:2}, and \eqref{eq:ind1:growth:3}.
\end{proof}

\section{The inductive analysis on the speed: Part 2} \label{sec:reg:next step}

In this section, we present the proof of Theorem \ref{thm:induction:main}, completing the inductive argument on regularity. Recall the notations $t_0^-, t_0^+,\acute{t}_0$ and $\hat{t}_0$ (\eqref{eq:def:t0t1} and \eqref{eq:def:t0t11}), and the definitions of $\tau^\sharp$,  $\mathcal{A}$, and ${\tau}^+(\kappa)$ given in Section \ref{subsec:regoverview:reg} (\eqref{eq:def:tau:induction},   \eqref{eq:def:A:induction}) and in \eqref{eq:def:ind1:tauacute:basic}. To emphasize the time step and the frame of reference where ${\tau}^+(\kappa)$ is defined, we write ${\tau}^+(\alpha_0, t_0,\kappa) = {\tau}^+(\kappa)$ (Hence, ${\tau}^+(\alpha_0, t_0,\kappa)= \tau(\alpha_0,t_0^+,\kappa)$). We also set $t_1$ to be any number that satisfies \eqref{eq:t1 regime}, and let 
\begin{equation}
t_1^- := t_1 - \alpha_0^{-2}\beta_0^\theta, \quad t_1^+:= t_1+\alpha_0^{-2} \beta_0^{\theta}, \quad t_1^{\sharp}:= t_1+4\alpha_0^{-2} \beta_0^\theta.
\end{equation}
Further, let $\alpha_1$ be given as \eqref{eq:def:alpha1} and let $\beta_1:= \log(1/\alpha_1)$.

\begin{thm}\label{thm:ind2:main}
	Let $\alpha_0,t_0>0$, let $t_0^-, t_0^+$ be as above, and let 
	$t_1$ be any number satisfying \eqref{eq:t1 regime}. Under the above notations, let $\tau':= \tau(\alpha_0,t_0,2)\wedge {\tau}^+(\alpha_0, t_0,1/2)$.
	Then, for all sufficiently small $\alpha_0$, the following hold true:
	\begin{enumerate}
		\item $\PP (\tau^\sharp(\alpha_1,t_1,2) \le t_1^\sharp  \ | \ \mathcal{A}(\alpha_0,t_0) )
		\le
		\PP (\tau' < \hat{t}_0 \ | \ \mathcal{A}(\alpha_0,t_0) )
		+ 4\exp\left(-\beta_0^{2} \right);$
		
		\item $\PP (\mathcal{A}(\alpha_1,t_1)^c \ | \ \mathcal{A}(\alpha_0,t_0) )
		\le
		\PP (\tau' <  \hat{t}_0  \ | \ \mathcal{A}(\alpha_0,t_0) )
		+ \exp\left(-\beta_0^{2} \right).$
	\end{enumerate}
	
\end{thm}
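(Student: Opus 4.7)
The plan is to establish both conclusions on the event $\mathcal{A}(\alpha_0,t_0)\cap\{\tau'\ge \hat{t}_0\}$; since $t_1^\sharp<\hat{t}_0$, on this event $S$ is already well-controlled with respect to $\alpha_0$ on the entire interval $[t_0^+,t_1^\sharp]$. The preliminary input is the bound $|\alpha_1-\alpha_0|\le 2\alpha_0^{3/2}\beta_0^{6\theta}$, obtained by expanding $\mathcal{L}(t_1^-;\Pi_S[t_0^-,t_1^-],\alpha_0)-\alpha_0'$ via the integral form in \eqref{eq:integralform:branching:pre}--\eqref{eq:integralform:branching} and applying the martingale/drift analysis of Section \ref{subsec:ind1:S1vsalpha} (essentially the argument behind Lemma \ref{lem:ind1:taux}). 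A consequence is that $\beta_1=\beta_0+o(1)$ and the kernels $K_{\alpha_1}$, $K_{\alpha_1}^*$, $J^{(\alpha_1)}$ differ from their $\alpha_0$-analogues by a relative factor $O(\alpha_0^{1/2}\beta_0^{O(1)})$ by Lemmas \ref{lem:estimate on K:intro}, \ref{lem:estimat for K tilde:intro}, and \ref{lem:bound on deterministic J}, so transferring estimates between the two reference frames is essentially cost-free.

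For statement (2), the growth event $\mathcal{A}_1(\alpha_1,t_1)$ decomposes into $\mathcal{A}_1(\alpha_0,t_0)$ for times before $t_0^-$ and Proposition \ref{prop:ind1:growth} for the interval $[t_0,t_1^-]$; the proximity $\mathcal{A}_2(\alpha_1,t_1)$ is a corollary of the same linear growth estimate. The delicate piece is $\mathcal{A}_3(\alpha_1,t_1)$, requiring $|\alpha_1-\alpha_1'|\le \alpha_1^{3/2}\beta_1^\theta$ where $\alpha_1'=\mathcal{L}(t_1;\Pi_S[t_1^-,t_1],\alpha_1)$. Setting $\tilde\alpha_1:=\mathcal{L}(t_1;\Pi_S[t_0^-,t_1],\alpha_0)$, Proposition \ref{prop:reg:newrates} controls $|\alpha_1'-\tilde\alpha_1|$, while $|\alpha_1-\tilde\alpha_1|$ is small by applying \eqref{eq:integralform:branching} on the short interval $[t_1^-,t_1]$ together with the $\alpha_0$-regularity of $S$ there.

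For statement (1), all of the stopping times $\tau_i(\alpha_1,t_1,2)$ with $i\ne 3$, together with $\tau_1^\sharp(\alpha_1,t_1)$, will be handled by repeating the arguments of Section \ref{sec:reg:conti of reg} verbatim, now with $\alpha_1$ as the reference rate. The factor-of-four slack between $\kappa=1/2$ (from $\tau^+(\alpha_0,t_0,1/2)$ on the preceding interval) and $\kappa=2$ (the threshold in $\tau(\alpha_1,t_1,2)$) comfortably absorbs the $O(\alpha_0^{1/2}\beta_0^{O(1)})$ relative error incurred by switching reference rates in each concentration estimate, and standard union bounds over $t\in[t_1,t_1^\sharp]$ yield the required control with error $e^{-\beta_0^2}$.

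The main obstacle is $\tau_3^\sharp(\alpha_1,t_1)$: the integrated bound $\intop_{t_1}^{t}|S(s)-S_1(s;t_1^-,\alpha_1)|\,ds\le \beta_1^{4\theta}$ is strictly sharper than what $\tau_3(\alpha_1,t_1)$ yields after integration (which is only $\alpha_1^{-\epsilon}$, far too weak). The plan is to use the second-order expansion \eqref{eq:speed:2ndorder main} to decompose $S(s)-S_1(s;t_1^-,\alpha_1)$ into (i) a deterministic term of size $O(\alpha_1^2)$, (ii) a lower-order multiple of $S_1(s)$, (iii) the double integral $\frac{\alpha_1}{1+2\alpha_1}\intop\intop J^{(\alpha_1)}\,d\widehat\Pi_S\,d\widehat\Pi_S$, (iv) the triple integral $\mathcal{Q}_s$, plus (v) an $O(\alpha_1^{100})$ error from Proposition \ref{prop:SvsSprime}. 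Contributions (i) and (ii) integrate to $O(\beta_1^{C_\circ+\theta})$, which is much less than $\beta_1^{4\theta}$, and (iv) is handled by Corollary \ref{cor:ind1:S2error} together with Lemma \ref{lem:fixed perturbed:error int:perturbed}. The double integral (iii) is the hard part: the perturbation approach of Section \ref{sec:fixedrate} costs an unacceptable $\alpha_1^{-\epsilon}$, so the plan is to compute its conditional mean and variance directly, exploiting the decay $|J^{(\alpha_1)}|\lesssim 1/\sqrt{(s+1)(t+1)}$ from Lemma \ref{lem:bound on deterministic J}, the bound $\intop(S-\alpha_1)^2 S\,ds\le \alpha_1^2\beta_1^{25\theta}$ from $\tau_5(\alpha_1,t_1)$, and the gap estimates of Corollaries \ref{cor:ind1:gap num of pts}--\ref{cor:ind1:gap mg 2}. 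The exponent $4\theta$ in $\tau_3^\sharp$, strictly smaller than the $10\theta$ in $t_1-t_0$, is precisely tailored so that over the short window $[t_1,t_1^\sharp]$ of length $O(\alpha_1^{-2}\beta_1^\theta)$ the accumulated fluctuation of the double integral remains safely below $\beta_1^{4\theta}$ via an Azuma-type bound, whereas on the longer scale $[t_1,\hat{t}_0]$ the same quantity would grow out of that range.
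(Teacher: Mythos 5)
Your overall roadmap matches the paper's: bound $|\alpha_1-\alpha_0|$ first, establish part (2) via separate arguments for $\mathcal{A}_1,\mathcal{A}_2,\mathcal{A}_3$, transfer the non-sharp stopping times by exploiting the $\kappa$-slack, and single out $\tau_3^\sharp$ as the genuinely new difficulty requiring the second-order expansion. Your identification of the ``(i)--(v)'' decomposition of $S-S_1$ and your intuition about why the exponent $4\theta$ is chosen strictly below $10\theta$ also agree with the paper's Section~\ref{subsec:ind2:bootstrappedJ}. On part (2) you in fact streamline slightly: you split $\alpha_1-\alpha_1'=(\alpha_1-\tilde\alpha_1)+(\tilde\alpha_1-\alpha_1')$, invoking Proposition~\ref{prop:reg:newrates} and a one-step martingale bound on $[t_1^-,t_1]$, whereas the paper decomposes through four auxiliary quantities $\alpha_1^{(1)},\alpha_1^{(2)},\alpha_1^{(3)}$; both routes ultimately hinge on Lemma~\ref{lem:ind2:alphaPvsalphaPP} (stability of $\mathcal L$ under change of reference rate), which your sketch absorbs implicitly through Proposition~\ref{prop:reg:newrates}.

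The genuine gap is in the control of the double integral (iii). You propose to use $\tau_5(\alpha_1,t_1)$ and the gap estimates of Corollaries~\ref{cor:ind1:gap num of pts}--\ref{cor:ind1:gap mg 2}, but these give regularity only from time $t_1$ onwards (or in the $\alpha_0$-frame from $t_0^+$), whereas the double integral in $S_2(t)=S_2(t;t_1^-,\alpha_1)$ runs over $[t_1^-,t]\times[t_1^-,t]$, i.e.\ it has a ``tail'' on $[t_1^-,t_1]$ where you have no $\alpha_1$-regularity yet. The paper resolves this with a bootstrapping trick you do not mention: because $t_1$ is a free choice within the window \eqref{eq:t1 regime}, one may apply the already-proved part (2) at the earlier time $t_1^-$ to obtain $\mathcal{A}(\alpha_1,t_1^-)$, $\tau(\alpha_1,t_1^-,2)\ge \hat t_0$, and $\tau_{\textnormal b}'(\alpha_1,t_1^-,t_1^\flat)\ge t_1^\sharp$ with high probability (Corollary~\ref{cor:ind2:prereg:past}); one then shifts the integration base from $t_1^-$ to $t_1^\flat=(t_1^-)^+$ as in \eqref{eq:ind2:tau6sh:switch base} so that $\tau_3(\alpha_1,t_1^-)$ and $\tau_{\textnormal b}''$ supply pointwise bounds on $|S(u)-S_1(u)|$ and $|S_1(u)-\alpha_1|$ for all $u\ge t_1^\flat$. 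With those in hand, the inner and outer integrals are estimated by the decomposition $d\widehat\Pi_S=d\widetilde\Pi_S+(S-S_1)\,ds+(S_1-\alpha_1)\,ds$ and iterated applications of Lemma~\ref{lem:concentration of integral} (Lemmas~\ref{lem:ind2:tau63sharp:1}--\ref{lem:ind2:tau61sharp:4}), not by a bare mean/variance computation. Without this self-referencing step at $t_1^-$ and the $t_1^\flat$-shift, the Azuma bound you invoke has no input controlling the contribution of points in $[t_1^-,t_1]$, and the proposal stalls exactly where the paper's proof does the most work.
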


Our program is to follow the description given in Section \ref{subsubsec:regoverview:overview:nextstep}. 

\begin{itemize}
	\item In Section \ref{subsec:ind2:ratechange}, we study $|\alpha_1-\alpha_0|$, the change of the frame of reference. Moreover, we establish that $\mathcal{A}(\alpha_1,t_1)$ holds w.h.p. at time $t_1$, proving Theorem \ref{thm:ind2:main}-(2). As a byproduct of our analysis, we obtain control on $\tau_1^\sharp$ and establish Proposition \ref{prop:reg:newrates} as well.

	\item In Section \ref{subsec:ind2:remaining reg}, we study all the stopping times except $\tau_1^\sharp$ and $\tau_{3}^\sharp$. 
	
	\item In Section \ref{subsec:ind2:bootstrappedJ}, we establish  control on  $\tau_{3}^\sharp $.
	
	\item In Section \ref{subsec:ind2:fin}, we combine all the analysis done in Sections \ref{subsec:ind2:ratechange}--\ref{subsec:ind2:bootstrappedJ} and complete the proofs of Theorems \ref{thm:ind2:main} and  \ref{thm:induction:main}.

\end{itemize}

Before moving on to the proofs, we clarify the definitions of $\alpha_0',\alpha_1,\alpha_1'$: given $\alpha_0,t_0$, we have
\begin{equation}\label{eq:def:alpha:ind2}
\begin{split}
\alpha_0' &= \mathcal{L}(t_0; \Pi_S[t_0^{-},t_0],\alpha_0)=\intop_{t_0^{-}}^{t_0} \intop_{ t_0}^{\infty} K^*_{\alpha_0} \cdot K_{\alpha_0}(u-x) du d\Pi_S(x)  ;\\
\alpha_1&=\mathcal{L}(t_1^-;\Pi_S[t_0^{-},t_1^-],\alpha_0) 
=
\intop_{t_0^{-}}^{t_1} \intop_{ t_1}^{\infty} K^*_{\alpha_0} \cdot K_{\alpha_0}(u-x) du d\Pi_S(x)  ;\\
\alpha_1'&=\mathcal{L}(t_1;\Pi_S[t_1^{-},t_1],\alpha_1) 
=
\intop_{t_1^{-}}^{t_1} \intop_{ t_1}^{\infty} K^*_{\alpha_1} \cdot K_{\alpha_1}(u-x) du d\Pi_S(x) ,
\end{split}
\end{equation}
where $K^*_\alpha = \frac{2\alpha^2}{1+2\alpha}$.

\subsection{Change of  rates for critical branching}\label{subsec:ind2:ratechange}

Our goal is to establish that $\mathcal{A}(\alpha_1,t_1)$ \eqref{eq:def:A:induction} happens with high probability, conditioned on the regularity in the previous step. As a consequence of our analysis, we prove Proposition \ref{prop:reg:newrates}.

 We begin with studying $\mathcal{A}_3(\alpha_1,t_1)$. The other events $\mathcal{A}_1$ and $\mathcal{A}_2$ are investigated in Sections \ref{subsubsec:ind2:A1}, \ref{subsubsec:ind2:A3} respectively.

\begin{lem}\label{lem:ind2:ratechange}
	Under the setting of Theorem \ref{thm:ind2:main}, we have
	\begin{equation}
	\PP\left(\mathcal{A}_3(\alpha_1,t_1)^c,\ \tau'\ge \hat{t}_0 \ | \ \mathcal{A}(\alpha_0,t_0) \right) \le 10\exp\left(-\beta_0^5 \right).
	\end{equation}
\end{lem}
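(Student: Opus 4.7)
The plan is to control $|\alpha_1-\alpha_1'|$ by inserting the intermediate quantity
\begin{equation}
\tilde{\alpha}_1 := \mathcal{L}(t_1;\Pi_S[t_0^-,t_1],\alpha_0),
\end{equation}
which differs from $\alpha_1$ only in the upper limit (old rate $\alpha_0$, but evaluated at $t_1$ instead of $t_1^-$) and from $\alpha_1'$ only in the rate and starting point. Triangle inequality gives
\begin{equation}
|\alpha_1'-\alpha_1| \le |\alpha_1'-\tilde{\alpha}_1| + |\tilde{\alpha}_1-\alpha_1|,
\end{equation}
and the two pieces are attacked by completely different methods.

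For the piece $\tilde{\alpha}_1-\alpha_1$, both $\mathcal{L}$'s carry the same rate $\alpha_0$, so I would apply the martingale-drift identity derived from \eqref{eq:integralform:branching} (equivalently \eqref{eq:Lt diff:basic}) at $t=\infty$:
\begin{equation}
\tilde{\alpha}_1-\alpha_1
= K_{\alpha_0}^* \intop_{t_1^-}^{t_1} d\widetilde{\Pi}_S(t)
+ K_{\alpha_0}^* \intop_{t_1^-}^{t_1}\bigl(S(t)-S_1(t)\bigr)\,dt .
\end{equation}
Under $\{\tau'\ge \hat{t}_0\}\cap \mathcal{A}$ the stopping time $\tau_1(2)$ furnishes $S(t)\le 2\alpha_0\beta_0^{C_\circ}$, so the quadratic variation of the first (martingale) term is bounded by $K_{\alpha_0}^{*2}\cdot 2\alpha_0\beta_0^{C_\circ}(t_1-t_1^-) = O(\alpha_0^3\beta_0^{\theta+C_\circ})$; Corollary~\ref{lem:concentrationofint:continuity} with $a\asymp \beta_0^5$ then yields $|{\rm martingale}|\le \alpha_0^{3/2}\beta_0^{\theta/2+O(1)}$ with error probability $\exp(-\beta_0^5)$. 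The drift term is handled by Lemma~\ref{lem:ind1:intofSminS1}, giving $\le K_{\alpha_0}^*\cdot \alpha_0^{-2\epsilon} \lesssim \alpha_0^{2-2\epsilon}$. Both contributions are $o(\alpha_0^{3/2}\beta_0^\theta)$ since $\theta$ is much larger than the implicit exponents. Exactly the same decomposition, applied over the longer interval $[t_0,t_1^-]$ and combined with $\mathcal{A}_3(\alpha_0,t_0)$, yields the a priori estimate $|\alpha_1-\alpha_0|\le 2\alpha_0^{3/2}\beta_0^{6\theta}$, which in particular implies $\alpha_1/\alpha_0\in[1/2,2]$ and $\beta_1/\beta_0\to 1$, so that $\alpha_1^{3/2}\beta_1^\theta \ge \tfrac{1}{2}\alpha_0^{3/2}\beta_0^\theta$ for small $\alpha_0$.

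For the other piece $\alpha_1'-\tilde{\alpha}_1$, which is the genuine \emph{rate-change} error, I would invoke Proposition~\ref{prop:reg:newrates} to obtain $|\alpha_1'-\tilde{\alpha}_1|\le 2\alpha_0^{2}\beta_0^{8\theta+1}$; the contribution from $x\in[t_0^-,t_1^-]$ to $\tilde{\alpha}_1$ is negligible ($\le\alpha_0^{100}$) because $t_1-t_1^-=\alpha_0^{-2}\beta_0^\theta$ and both $K_\alpha^*(y)-K_\alpha^*$ and $K_\alpha(y)$ decay like $e^{-c\alpha_0^2 y}$ by Lemmas~\ref{lem:estimate on K:intro}, \ref{lem:estimat for K tilde:intro}. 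Since $\alpha_0^{1/2}\beta_0^{7\theta+1}\to 0$, this contribution is also $o(\alpha_0^{3/2}\beta_0^\theta)$. Combining the two bounds and using $\alpha_1^{3/2}\beta_1^\theta\ge \tfrac12 \alpha_0^{3/2}\beta_0^\theta$ gives $|\alpha_1-\alpha_1'|\le \alpha_1^{3/2}\beta_1^\theta$ on the intersection of the successful events, with total failure probability at most $10\exp(-\beta_0^5)$.

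The main obstacle is the rate-change estimate encapsulated in Proposition~\ref{prop:reg:newrates}: one must Taylor expand the smoothed kernel $G_\alpha(y):=K_\alpha^*\intop_y^\infty K_\alpha(z)\,dz$ in $\alpha$, control $\partial_\alpha G_\alpha$ via the Fourier-analytic bounds of Appendix~\ref{sec:fourier and renewal}, and then gain an \emph{extra} factor of $\alpha_0^{1/2}$ beyond the naive bound $|\alpha_1-\alpha_0|\cdot|\Pi_S[t_1^-,t_1]|\cdot\|\partial_\alpha G_{\alpha_0}\|$ by exploiting Poisson cancellation in $d\widehat{\Pi}_S$. Everything else in this lemma is a relatively clean application of the martingale machinery already developed in Section~\ref{subsec:ind1:S1vsalpha}.
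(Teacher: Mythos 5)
Your two-piece decomposition $|\alpha_1-\alpha_1'|\le|\tilde\alpha_1-\alpha_1|+|\alpha_1'-\tilde\alpha_1|$ — separating "time-advance at fixed rate" from "rate-change at fixed endpoint" — is exactly what the paper does in spirit, and the martingale/drift treatment of $\tilde\alpha_1-\alpha_1$ via \eqref{eq:integralform:branching} is the right move (the paper packages it via the intermediate quantities $\alpha_1^{(1)},\alpha_1^{(2)},\alpha_1^{(3)}$, Corollary \ref{cor:ind2:alpha1vsalphaaux3} and Lemma \ref{lem:ind2:alphavsalphaPP}; your version merges the negligible $[t_0^-,t_1^{--}]$ cutoff into the analysis rather than isolating it, but that is purely cosmetic). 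Your identification of the rate-change piece as the hard part, and your description of the mechanism (differentiating the smoothed kernel in $\alpha$, Fourier control of $\partial_\alpha$, and Poisson cancellation against $d\widehat\Pi_S$ to gain the extra $\alpha_0^{1/2}$), also matches the paper's Lemma \ref{lem:ind2:alphaPvsalphaPP} precisely.

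However, there is a genuine gap in the way you close the argument: you invoke Proposition \ref{prop:reg:newrates} to bound $|\alpha_1'-\tilde\alpha_1|$, but its failure probability is $r+\exp(-\beta_0^2)$, which is far larger than the $\exp(-\beta_0^5)$ that Lemma \ref{lem:ind2:ratechange} asserts. This is not just a constant; for $r\sim e^{-\beta_0^{3/2}}$ (the regime of Theorem \ref{thm:induction:main}) or for the raw $\exp(-\beta_0^2)$ term, the bound $10\exp(-\beta_0^5)$ cannot be obtained by naively adding the Proposition's error to your martingale error. The stronger bound is only available for the event-restricted version of the rate-change estimate, i.e. one must work with the statement $\PP\bigl(|\alpha_1'-\alpha_1^{(1)}|\ge\alpha_0^2\beta_0^{8\theta+1},\ \tau'\ge\hat t_0\ \big|\ \mathcal A(\alpha_0,t_0)\bigr)\le\exp(-\beta_0^5)$ of Lemma \ref{lem:ind2:alphaPvsalphaPP} directly, together with the negligibility Lemma \ref{lem:ind2:alphavsalphaPP}, rather than with Proposition \ref{prop:reg:newrates} as stated. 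In the paper, Proposition \ref{prop:reg:newrates} is actually a \emph{consequence} of these very lemmas (its proof applies Theorem \ref{thm:reg:conti:main} on top of them to absorb the $\{\tau'\ge\hat t_0\}$ restriction), so reaching for it here is both logically upstream of where it lives and quantitatively too weak. Replace the citation of Proposition \ref{prop:reg:newrates} with the event-restricted Lemmas \ref{lem:ind2:alphavsalphaPP} and \ref{lem:ind2:alphaPvsalphaPP}, and your argument matches the paper's.
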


Since the base rates in the definition of $\alpha_1$ and $\alpha_1'$ are $\alpha_0$ and $\alpha_1$, respectively, we may expect that understanding the size of $|\alpha_1-\alpha_0|$ is important, which is also what we want in Theorem \ref{thm:induction:main}-(2). We define the following $\mathcal{F}_{t_1}$-measurable event
\begin{equation}\label{eq:def:A4}
\mathcal{A}_4=\mathcal{A}_4(\alpha_0,t_0):= \left\{|\alpha_1-\alpha_0|\le 2\alpha_0^{3/2}\beta_0^{6\theta} \right\},
\end{equation} 
and begin with showing that $\mathcal{A}_4$ happens with high probability.

\begin{lem}\label{lem:ind2:alpha1vsalpha0}
	Under the setting of Theorem \ref{thm:ind2:main}, we have
	\begin{equation}
	\PP\left(\mathcal{A}_4^c ,\ \tau' \ge \hat{t}_0 \ | \ \mathcal{A}(\alpha_0,t_0) \right) \le 2\exp\left(-\beta_0^5 \right).
	\end{equation}
\end{lem}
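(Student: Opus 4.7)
The plan is to write $\alpha_1-\alpha_0 = (\alpha_1-\alpha_0') + (\alpha_0'-\alpha_0)$ and use $\mathcal{A}_3(\alpha_0,t_0)\supset \mathcal{A}(\alpha_0,t_0)$ to bound $|\alpha_0'-\alpha_0|\le \alpha_0^{3/2}\beta_0^\theta$ directly. The whole task therefore reduces to showing that $|\alpha_1-\alpha_0'|\le \alpha_0^{3/2}\beta_0^{6\theta}-\alpha_0^{3/2}\beta_0^\theta$ on the event $\{\tau'\ge \hat{t}_0\}$, with the desired error probability. For that I will derive a clean martingale-plus-drift representation for $\alpha_1-\alpha_0'$ by differentiating $\mathcal{L}(s;\Pi_S[t_0^-,s],\alpha_0)$ in $s$.

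Writing $F(x):=\intop_x^\infty K_{\alpha_0}(y)dy$ so that $\mathcal{L}(s;\Pi_S[t_0^-,s],\alpha_0) = K_{\alpha_0}^*\intop_{t_0^-}^s F(s-x)d\Pi_S(x)$, and using $F(0)=1$ together with $F'(x)=-K_{\alpha_0}(x)$, one obtains
\begin{equation}
\frac{d}{ds}\mathcal{L}(s;\Pi_S[t_0^-,s],\alpha_0) \;=\; K_{\alpha_0}^*\Big(\tfrac{d\Pi_S(s)}{ds}-S_1(s;t_0^-,\alpha_0)\Big).
\end{equation}
Integrating from $t_0$ to $t_1^-$ and splitting $d\Pi_S(s)=d\widetilde{\Pi}_S(s)+S(s)ds$ yields
\begin{equation}
\alpha_1-\alpha_0' \;=\; K_{\alpha_0}^*\intop_{t_0}^{t_1^-} d\widetilde{\Pi}_S(s) \;+\; K_{\alpha_0}^*\intop_{t_0}^{t_1^-}\big(S(s)-S_1(s)\big)ds,
\end{equation}
which is the exact analogue of \eqref{eq:integralform:branching} for the quantity we want to control.

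For the drift term, on $\{\tau'\ge \hat{t}_0\}$ Lemma \ref{lem:ind1:intofSminS1} immediately gives $\intop_{t_0}^{t_1^-}|S(s)-S_1(s)|ds\le \alpha_0^{-2\epsilon}$, so that after multiplying by $K_{\alpha_0}^*\sim 2\alpha_0^2$ the drift contribution is of order $\alpha_0^{2-2\epsilon}$, which is negligible compared to $\alpha_0^{3/2}\beta_0^{6\theta}$. For the martingale term, the quadratic variation is $\intop_{t_0}^{(t_1^-)\wedge\tau'}S(s)ds$, and by the bound $S(s)\le 2\alpha_0\beta_0^{C_\circ}$ from $\tau_1(\kappa=2)$ together with $t_1^--t_0\le \alpha_0^{-2}\beta_0^{10\theta}$ this is at most $\alpha_0^{-1}\beta_0^{10\theta+C_\circ}$. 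Plugging this into Lemma \ref{lem:concentration of integral} with $f\equiv 1$, $M\asymp \alpha_0^{-1}\beta_0^{10\theta+C_\circ}$, and $\lambda \asymp M^{-1/2}\beta_0$ gives that the martingale part is at most $\alpha_0^{-1/2}\beta_0^{5\theta+C_\circ/2+2}$ outside a set of probability at most $\exp(-\beta_0^{6})$; multiplying by $K_{\alpha_0}^*$ yields a bound of $\alpha_0^{3/2}\beta_0^{5\theta+C_\circ/2+2}\le \frac{1}{2}\alpha_0^{3/2}\beta_0^{6\theta}$ for $\theta$ sufficiently large relative to $C_\circ$.

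Combining the two bounds gives $|\alpha_1-\alpha_0'|\le \alpha_0^{3/2}\beta_0^{6\theta}$ with the required failure probability, and then $|\alpha_1-\alpha_0|\le 2\alpha_0^{3/2}\beta_0^{6\theta}$ from $\mathcal{A}_3$. The only subtlety I anticipate is the bookkeeping around conditioning on $\mathcal{A}(\alpha_0,t_0)$ and on the stopping-time event $\{\tau'\ge \hat{t}_0\}$: concretely, one applies Lemma \ref{lem:concentration of integral} to the stopped martingale $\intop_{t_0}^{(t_1^-)\wedge \tau'}d\widetilde{\Pi}_S$ (which uses that $\tau'$ is a stopping time with respect to the filtration driving $\Pi_S$), and then observes that on $\{\tau'\ge \hat{t}_0\}\supset \{\tau'\ge t_1^-\}$ the stopped integral coincides with the unstopped one. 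No other step requires estimates beyond what has already been built.
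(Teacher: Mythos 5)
Your proposal is correct and follows essentially the same route as the paper: the same reduction via $\mathcal{A}_3(\alpha_0,t_0)$ to bounding $|\alpha_1-\alpha_0'|$, the same martingale-plus-drift representation derived from $\frac{d}{ds}\mathcal{L}(s;\Pi_S[t_0^-,s],\alpha_0)=K_{\alpha_0}^*\big(\frac{d\Pi_S(s)}{ds}-S_1(s)\big)$ (the paper's \eqref{eq:integralform:branching}), and the same two ingredients (concentration for the martingale part via $\tau_1$, Lemma \ref{lem:ind1:intofSminS1} for the drift). The one nitpick is that $\lambda\asymp M^{-1/2}\beta_0$ and $a\sqrt{M}\asymp\alpha_0^{-1/2}\beta_0^{5\theta+C_\circ/2+2}$ actually give a failure probability of order $\exp(-\beta_0^3)$, not $\exp(-\beta_0^6)$; taking $a\asymp\beta_0^5$ (or $\lambda\asymp M^{-1/2}\beta_0^3$) fixes this while leaving the martingale bound at $\alpha_0^{3/2}\beta_0^{5\theta+C_\circ/2+O(1)}$, which is still comfortably below $\frac{1}{2}\alpha_0^{3/2}\beta_0^{6\theta}$ since $\theta\gg C_\circ$.
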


\begin{proof}
	In the proof, set  $\mathcal{A}=\mathcal{A}(\alpha_0,t_0)$ for convenience.
	
Since the event $\mathcal{A}_3(\alpha_0,t_0)\supset \mathcal{A}$ is given, we can focus on deriving $|\alpha_1-\alpha_0'|\le \alpha_0^{3/2}\beta_0^{6\theta}$. Define $S_1(t)= S_1(t;t_0^-,\alpha_0)$ as before. From  equation \eqref{eq:integralform:branching}, we can write
	\begin{equation}\label{eq:ind2:a1vsa0:split}
	\begin{split}
	\alpha_1-\alpha_0' 
	&= \mathcal{L}( t_1^-; \Pi_S[t_0^{-},t_1^-],\alpha_0)
	-\mathcal{L}(t_0; \Pi_S[t_0^{-},t_0],\alpha_0)\\
	&=
\intop_{t_0}^{t_1^-} K^*_{\alpha_0} d\widetilde{ \Pi}_S(x) + \intop_{t_0}^{t_1^-} K^*_{\alpha_0} (S(x)-S_1(x))dx ,
\end{split}
	\end{equation}
	where we wrote $d\widetilde{\Pi}_S(x)= d\Pi_S(x)-S(x)dx$.
	
	To study the first term, we note that
	\begin{equation}\label{eq:ind2:rate:a1vsa0:qv}
	\intop_{t_0}^{t_1^-\wedge \tau'}( K^*_{\alpha_0})^2 S(x)dx \le \alpha_0^3 \beta_0^{10\theta+2C_\circ},
	\end{equation}
	due to the definition of $\tau_{1}$ (Section \ref{subsubsec:reg:Scontrol}). Since $K^*_{\alpha_0} \le C\alpha_0^2 \le \alpha_0^{3/2}$, we apply Corollary \ref{cor:concentration of integral} to deduce that
	\begin{equation}\label{eq:ind2:rate:a1vsa0:mg}
	\PP\left(\left.\left|	\intop_{t_0}^{t_1^-\wedge \tau'} K^*_{\alpha_0} d\widetilde{\Pi}_S(x)\right| \ge \alpha_0^{3/2}\beta_0^{5\theta+2C_\circ} \ \right| \ \mathcal{A} \right) \le \exp\left(-\beta_0^{C_\circ/2} \right).
	\end{equation}
	
	The second term of \eqref{eq:ind2:a1vsa0:split} follows directly from Lemma \ref{lem:ind1:intofSminS1} (note that $\tau'\le \tau(2)$ for $\tau(2)$ in Lemma \ref{lem:ind1:intofSminS1}):
	\begin{equation}\label{eq:ind2:rate:a1vsa0:drift}
	\PP \left( \left. \left|\intop_{ t_0}^{t_1^-\wedge \tau'} K^*_{\alpha_0}(S(x)-S_1(x))dt  \right| \ge  \alpha_0^{2-3\epsilon} \ \right|  \ \mathcal{A}  \right) \le \exp\left(-\beta_0^{C_\circ/3} \right).
	\end{equation}
	
	\noindent Combining \eqref{eq:ind2:a1vsa0:split}, \eqref{eq:ind2:rate:a1vsa0:mg}, \eqref{eq:ind2:rate:a1vsa0:drift} and the condition from $\mathcal{A}_3(\alpha_0,t_0)$, we obtain conclusion.
\end{proof}

Before moving on, we establish the following lemma, which gives a desired control on the stopping time $\tau_1^\sharp$ (Section \ref{subsubsec:reg:refined1storder}.

\begin{lem}\label{lem:ind2:tau1sharp}
    Under the setting of Theorem \ref{thm:ind2:main}, we have
    \begin{equation}
        \mathbb{P} \left( \tau_1^\sharp(\alpha_1,t_1) \le t_1^\sharp,\ \tau'\ge \hat{t}_0 \,| \, \mathcal{A}(\alpha_0,t_0) \right) \le \exp\left(-\beta_0^4 \right).
    \end{equation}
\end{lem}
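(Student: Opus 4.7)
The plan is to reduce the statement to Lemma \ref{lem:ind2:alpha1vsalpha0}, by showing that on the deterministic event $\mathcal{A}_4\cap\{\tau'\ge \hat{t}_0\}$ one automatically has $\tau_1^\sharp(\alpha_1,t_1)>t_1^\sharp$. Note first that $[t_1,t_1^\sharp]\subset[t_0^+,\hat{t}_0]$, which is immediate from the range \eqref{eq:t1 regime} of $t_1$, the choice $t_0^+\le t_0+2\alpha_0^{-2}\beta_0^{\theta}$, the definition $t_1^\sharp=t_1+4\alpha_0^{-2}\beta_0^{\theta}$, and $\hat{t}_0=t_0+2\alpha_0^{-2}\beta_0^{10\theta}$. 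Hence the hypothesis $\tau'\ge\hat{t}_0$ gives in particular $\tau_4^+(1/2)\ge\hat{t}_0$, so that
\[
\int_{t_1}^{t_1^\sharp}(S(s)-\alpha_0)^2\,ds\;\le\;\tfrac{1}{2}\alpha_0\beta_0^{25\theta}.
\]

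Next I would triangulate $|S(s)-\alpha_1|\le|S(s)-\alpha_0|+|\alpha_0-\alpha_1|$ and estimate each integral separately. A Cauchy-Schwarz application to the first piece, using the quadratic bound above and the interval length $t_1^\sharp-t_1=4\alpha_0^{-2}\beta_0^{\theta}$, gives an upper bound of order $\alpha_0^{-1/2}\beta_0^{13\theta}$. For the second piece, the definition of $\mathcal{A}_4$ yields $(t_1^\sharp-t_1)|\alpha_0-\alpha_1|\le 8\alpha_0^{-1/2}\beta_0^{7\theta}$. Summing, the total integral is bounded by $C\alpha_0^{-1/2}\beta_0^{13\theta}$ for some absolute $C$.

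It remains to compare this with the threshold $\alpha_1^{-1/2-\epsilon}$ in the definition of $\tau_1^\sharp(\alpha_1,t_1)$. On $\mathcal{A}_4$ we have $\alpha_1=\alpha_0(1+O(\alpha_0^{1/2}\beta_0^{6\theta}))$, so $\alpha_1^{-1/2-\epsilon}\ge\tfrac{1}{2}\alpha_0^{-1/2-\epsilon}$ for $\alpha_0$ small, and the crucial factor $\alpha_0^{-\epsilon}=\exp(\epsilon\beta_0)$ then dominates any polynomial in $\beta_0$. Consequently $C\alpha_0^{-1/2}\beta_0^{13\theta}<\alpha_1^{-1/2-\epsilon}$ for all sufficiently small $\alpha_0$, giving $\tau_1^\sharp(\alpha_1,t_1)>t_1^\sharp$ deterministically on $\mathcal{A}_4\cap\{\tau'\ge\hat{t}_0\}$. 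Therefore
\[
\{\tau_1^\sharp(\alpha_1,t_1)\le t_1^\sharp\}\cap\{\tau'\ge\hat{t}_0\}\;\subset\;\mathcal{A}_4^c\cap\{\tau'\ge\hat{t}_0\},
\]
and the conditional probability of the right-hand side is at most $2\exp(-\beta_0^5)\le\exp(-\beta_0^4)$ by Lemma \ref{lem:ind2:alpha1vsalpha0}, which is the desired conclusion.

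The only genuine analytic step is the Cauchy-Schwarz reduction to the $\tau_4^+(1/2)$ bound; everything else is bookkeeping. I do not foresee a substantive obstacle, since both the quadratic control on $S-\alpha_0$ and the estimate on $|\alpha_0-\alpha_1|$ are already supplied by the results preceding this lemma in the section.
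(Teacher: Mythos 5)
Your proof is correct, and it follows a genuinely different route than the paper. The paper bounds $\int_{t_1}^{t_1^\sharp}|S(s)-\alpha_0|\,ds$ through the $L^1$-control stopping time $\tau_{10}^+(\alpha_0^{-1})$ from \eqref{eq:def:ind1:tau13}, which requires invoking Lemma~\ref{lem:ind1:tau13} as a second probabilistic ingredient (with its own failure probability $\exp(-\beta_0^3)$) in addition to Lemma~\ref{lem:ind2:alpha1vsalpha0}. You instead go through the $L^2$-control $\tau_4^+(1/2)$, which is already $\ge \hat{t}_0$ \emph{deterministically} on $\{\tau'\ge\hat{t}_0\}$ because $\tau'\le\tau^+(\alpha_0,t_0,1/2)\le\tau_4^+(1/2)$, and convert it to an $L^1$ bound by Cauchy--Schwarz. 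This makes the whole estimate deterministic on $\mathcal{A}_4\cap\{\tau'\ge\hat{t}_0\}$ and reduces the lemma to a single application of Lemma~\ref{lem:ind2:alpha1vsalpha0}. The price is a slightly worse power of $\beta_0$ (you get $\alpha_0^{-1/2}\beta_0^{13\theta}$ rather than the smaller polylog the pointwise route would yield), but since the threshold $\alpha_1^{-1/2-\epsilon}\asymp\alpha_0^{-1/2-\epsilon}$ contains an extra factor $\alpha_0^{-\epsilon}=e^{\epsilon\beta_0}$, any polylogarithm is absorbed and the comparison is not tight in either approach. If anything, your route is cleaner for delivering the stated $\exp(-\beta_0^4)$: following the paper strictly, the $\exp(-\beta_0^3)$ term from Lemma~\ref{lem:ind1:tau13} would dominate, whereas your argument gives $2\exp(-\beta_0^5)\le\exp(-\beta_0^4)$ directly.
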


\begin{proof}
    This comes as a direct consequence of Lemmas \ref{lem:ind1:tau13} and \ref{lem:ind2:alpha1vsalpha0}: For  $t' =  t_1^\sharp\wedge \tau_{10}^+(\alpha_0^{-1})\wedge \tau'$ and on the event $\mathcal{A}_4$, note that
    \begin{equation}
        \intop_{t_1}^{t'} |S(s)-\alpha_1| ds \le \intop_{t_1}^{t'} |S(s)-\alpha_0|ds + \intop_{t_1}^{t'} |\alpha_0-\alpha_1|ds \le \alpha_0^{-\frac{1}{2}-\epsilon}.
    \end{equation}
\end{proof}

To establish Lemma \ref{lem:ind2:ratechange}, we set
\begin{equation}\label{eq:def:t1minmin}
t_1^{--}:= t_1^- - \alpha_0^{-2}\beta_0^\theta = t_1-2\alpha_0^{-2}\beta_0^\theta,
\end{equation} 
and introduce auxiliary parameters $\alpha_1^{(1)}, \alpha_1^{(2)}$ and $\alpha_1^{(3)}$ defined as
\begin{equation}\label{eq:def:alphaprimeprime}
\begin{split}
\alpha_1^{(1)}& := \mathcal{L}(t_1;\Pi_S[t_1^{-},t_1],\alpha_0) = \intop_{ t_1^{-}}^{t_1} \intop_{t_1}^\infty K^*_{\alpha_0 } \cdot K_{\alpha_0}(u-x) du d\Pi_S(x);\\
\alpha_1^{(2)}& := \mathcal{L}(t_1;\Pi_S[t_1^{--},t_1],\alpha_0) = \intop_{ t_1^{--}}^{t_1} \intop_{t_1}^\infty K^*_{\alpha_0 } \cdot K_{\alpha_0}(u-x) du d\Pi_S(x);\\
\alpha_1^{(3)} &:= \mathcal{L}(t_1^-;\Pi_S[t_1^{--},t_1^-],\alpha_0) = \intop_{ t_1^{--}}^{t_1^-} \intop_{t_1^-}^\infty K^*_{\alpha_0 } \cdot K_{\alpha_0}(u-x) du d\Pi_S(x).
\end{split}
\end{equation}
From these notations, we write
\begin{equation}\label{eq:ind2:rate:a1vsa1prime:decomp}
|\alpha_1-\alpha_1'| \le |\alpha_1-\alpha_1^{(3)}|+|\alpha_1^{(2)}-\alpha_1^{(3)}| +|\alpha_1^{(1)}-\alpha^{(2)}|+|\alpha_1' -\alpha_1^{(1)}|.
\end{equation}

 Recalling that $\alpha_1:= \mathcal{L}(t_1^-; \Pi_S[t_0^-,t_1^-], \alpha_0)$, we can see that $|\alpha_1^{(2)}-\alpha_1^{(3)}|$ can be investigated by the same method as Lemma \ref{lem:ind2:alpha1vsalpha0}, leading us to the following corollary.

\begin{cor}\label{cor:ind2:alpha1vsalphaaux3}
	Under the setting of Theorem \ref{thm:ind2:main} and the above notations, we have
	\begin{equation}
	\PP \left(\left. \left|\alpha_1^{(2)} -\alpha_1^{(3)}  \right| \ge \frac{1}{2}\alpha_0^{\frac{3}{2}}\beta_0^{\theta}, \ \tau' \ge \hat{t}_0 \ \right|\, \mathcal{A}(\alpha_0,t_0) \right) \le 2\exp\left(-\beta_0^5\right).
	\end{equation}
\end{cor}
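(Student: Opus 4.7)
The plan is to follow the template of Lemma \ref{lem:ind2:alpha1vsalpha0} essentially verbatim, but applied to the shorter interval $[t_1^-, t_1]$ of length $\alpha_0^{-2}\beta_0^\theta$ rather than $[t_0, t_1^-]$; this length difference is precisely what upgrades the exponent $\beta_0^{6\theta}$ in Lemma \ref{lem:ind2:alpha1vsalpha0} to the sharper $\beta_0^\theta$ here. The crucial observation is that both $\alpha_1^{(2)}$ and $\alpha_1^{(3)}$ arise from $\mathcal{R}_c(s,t;\Pi_S[t_1^{--},s],\alpha_0)$ built from the \emph{common} initial configuration $\Pi_S[t_1^{--},\cdot]$, so the identity \eqref{eq:integralform:branching} applies. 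Introducing the auxiliary first-order approximation $S_1^{(--)}(x) := \int_{t_1^{--}}^x K_{\alpha_0}(x-y)\,d\Pi_S(y)$ and taking $Q=S$, $s_0 = t_1^-$, $s_1 = t_1$ in \eqref{eq:integralform:branching} gives
\[
\alpha_1^{(2)} - \alpha_1^{(3)} \;=\; \int_{t_1^-}^{t_1} K^*_{\alpha_0}\, d\widetilde{\Pi}_S(x) \;+\; \int_{t_1^-}^{t_1} K^*_{\alpha_0}\,\bigl(S(x)-S_1^{(--)}(x)\bigr)\,dx,
\]
in direct analogy with \eqref{eq:ind2:a1vsa0:split}.

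For the martingale term, on the event $\{\tau' \ge \hat{t}_0\}$ the definition of $\tau_1$ forces $S(x) \le 2\alpha_0\beta_0^{C_\circ}$ on $[t_1^-, t_1]$, so the quadratic variation is bounded by
\[
\int_{t_1^-}^{t_1 \wedge \tau'} (K^*_{\alpha_0})^2 S(x)\, dx \;\le\; 4\alpha_0^4 \cdot 2\alpha_0\beta_0^{C_\circ} \cdot \alpha_0^{-2}\beta_0^{\theta} \;\le\; 8\alpha_0^{3}\beta_0^{\theta+C_\circ}.
\]
Applying Corollary \ref{cor:concentration of integral} with $M = 8\alpha_0^{3}\beta_0^{\theta+C_\circ}$ (the pointwise bound $K^*_{\alpha_0}\le 2\alpha_0^2 \le \sqrt{M}$ is trivially satisfied) and threshold $a = \beta_0^{(\theta-C_\circ)/2}$ yields
\[
\mathbb{P}\!\left(\left|\int_{t_1^-}^{t_1\wedge \tau'} K^*_{\alpha_0}\, d\widetilde{\Pi}_S(x)\right| \ge \tfrac{1}{4}\alpha_0^{3/2}\beta_0^{\theta}\;\Big|\; \mathcal{A}(\alpha_0,t_0)\right) \;\le\; \exp(-\beta_0^{5}),
\]
since $(\theta+C_\circ)/2 + (\theta-C_\circ)/2 = \theta$ and $(\theta-C_\circ)/2 \gg 5$ with our parameter choice $\theta = 10000$, $C_\circ = 50$.

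For the drift term, first I will argue that $S_1^{(--)}$ and $S_1$ agree up to negligible error on $[t_1^-, t_1]$: for $x\in [t_1^-,t_1]$ and $y\in[t_0^-, t_1^{--}]$ we have $x-y \ge \alpha_0^{-2}\beta_0^\theta$, and Lemma \ref{lem:estimate on K:intro} gives the exponential decay $K_{\alpha_0}(x-y) \le C\alpha_0 e^{-c\beta_0^\theta}$; combined with $|\Pi_S[t_0^-, t_1]| \le \alpha_0^{-1}\beta_0^{11\theta}$ (from $\tau_7$ and $\mathcal{A}_2$), this forces $|S_1^{(--)}(x) - S_1(x)| \le \alpha_0^{100}$. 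Hence Lemma \ref{lem:ind1:intofSminS1} bounds the relevant integral by
\[
\int_{t_1^-}^{t_1\wedge \tau'} K^*_{\alpha_0}\,|S(x)-S_1^{(--)}(x)|\, dx \;\le\; 2\alpha_0^2 \cdot \alpha_0^{-2\epsilon} \;+\; O(\alpha_0^{50}) \;\ll\; \tfrac{1}{4}\alpha_0^{3/2}\beta_0^{\theta}.
\]

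A union bound over the martingale and drift events yields the claim. The only genuine technical point is that the sharper exponent $\beta_0^\theta$ in the target bound (versus $\beta_0^{6\theta}$ in Lemma \ref{lem:ind2:alpha1vsalpha0}) must be extracted from the shorter interval length; this works out because the quadratic variation scales linearly with the interval length, and $(\theta+C_\circ)/2 + O(1) < \theta$ with room to spare.
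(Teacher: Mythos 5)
Your proof is correct and follows the paper's own argument, which reduces to exactly the two refinements of Lemma \ref{lem:ind2:alpha1vsalpha0} you identify: the shorter interval $[t_1^-,t_1]$ improves the quadratic variation bound to order $\alpha_0^{3}\beta_0^{\theta+O(C_\circ)}$, and $|S_1(\cdot;t_0^-,\alpha_0)-S_1(\cdot;t_1^{--},\alpha_0)|\le\alpha_0^{100}$ on $[t_1^-,t_1\wedge\tau']$, so Lemma \ref{lem:ind1:intofSminS1} controls the drift. One small numerical slip: with $M=8\alpha_0^{3}\beta_0^{\theta+C_\circ}$ and $a=\beta_0^{(\theta-C_\circ)/2}$ you get $a\sqrt{M}=2\sqrt{2}\,\alpha_0^{3/2}\beta_0^{\theta}$ rather than $\tfrac{1}{4}\alpha_0^{3/2}\beta_0^{\theta}$; shrink $a$ by a harmless absolute constant, and since $\beta_0^{(\theta-C_\circ)/2}\gg\beta_0^{5}$ the bound $Ce^{-a}\le\exp(-\beta_0^{5})$ still holds.
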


\begin{proof}
	From the proof of Lemma \ref{lem:ind2:alpha1vsalpha0}, our task is to control 
	\begin{equation}
	\intop_{t_1^{-}}^{t_1} K^*_{\alpha_0} \big\{d\widetilde{\Pi}_S(x) + (S(x)-S_1(x;t_1^{--},\alpha_0))dx \big\}.
	\end{equation}
	The rest of the proof is analogous to Lemma \ref{lem:ind2:alpha1vsalpha0} except the following two things:
	\begin{itemize}
		\item Since the regime of integration $[t_1^-,t_1]$ is much shorter than $[t_0^-,t_1^-]$ from the previous lemma, the RHS \eqref{eq:ind2:rate:a1vsa0:qv} can be improved to $\alpha_0^3\beta_0^{\theta+2C_\circ}$, resulting in a corresponding enhancement in \eqref{eq:ind2:rate:a1vsa0:mg}.
		
		\item For any $t_1^-\le t \le t_1\wedge \tau'$, we have
		\begin{equation}
		|S_1(t;t_0^-,\alpha_0)-S_1(t;t_1^{--},\alpha_0)| = \intop_{t_0^{-}}^{t_1^-} K_{\alpha_0}(t-x) d\Pi_S(x) \le \alpha_0^{100},
		\end{equation}
		due to the decay property of $K_{\alpha_0}(s)$. Thus, we can obtain \eqref{eq:ind2:rate:a1vsa0:drift} for our case as well.
	\end{itemize}
\end{proof}

 Next, we observe that the first and the third terms in the RHS are negligible, which comes as a simple consequence of regularity.

\begin{lem}\label{lem:ind2:alphavsalphaPP}
	Under the setting of Theorem \ref{thm:ind2:main} and \eqref{eq:def:alphaprimeprime},  we have
	\begin{equation}
	\begin{split}
	\PP\left(|\alpha_1-\alpha_1^{(3)}|\ge \alpha_0^{100} ,\ \tau' \ge \hat{t}_0 \ | \ \mathcal{A}(\alpha_0,t_0) \right) \le 2\exp\left(-\beta_0^5 \right);\\
		\PP\left(|\alpha_1^{(1)}-\alpha_1^{(2)}|\ge \alpha_0^{100} ,\ \tau' \ge \hat{t}_0 \ | \ \mathcal{A}(\alpha_0,t_0) \right) \le 2\exp\left(-\beta_0^5 \right).
	\end{split}
	\end{equation}
	
\end{lem}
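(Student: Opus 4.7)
The plan is to exploit the super-polynomial decay of $K_{\alpha_0}(\cdot)$ over the gap of length $\alpha_0^{-2}\beta_0^\theta$ that separates the two ranges of integration. First I would use the definitions in \eqref{eq:def:alpha:ind2} and \eqref{eq:def:alphaprimeprime} to write
\[
\alpha_1 - \alpha_1^{(3)} \;=\; \int_{t_0^-}^{t_1^{--}} K^*_{\alpha_0}\, F(t_1^- - x)\, d\Pi_S(x),\qquad F(y) := \int_y^\infty K_{\alpha_0}(z)\, dz,
\]
and observe that, by the definition \eqref{eq:def:t1minmin} of $t_1^{--}$, every $x$ in the integration range satisfies $t_1^- - x \ge \alpha_0^{-2}\beta_0^\theta$, so it suffices to show that $F(y)$ is super-polynomially small for such $y$.

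Feeding the tail bound $K_{\alpha_0}(z) \le C\alpha_0 (z+1)^{-1/2} e^{-c\alpha_0^2 z}$ of Lemma \ref{lem:estimate on K:intro} into the definition of $F$ and substituting $u = c\alpha_0^2 z$ yields
\[
F(y) \;\le\; \int_{c\alpha_0^2 y}^\infty \frac{C}{\sqrt{u}}\, e^{-u}\, du \;\le\; \exp\!\bigl(-\tfrac12 c\beta_0^\theta\bigr) \;\le\; \alpha_0^{500}
\]
for all $y \ge \alpha_0^{-2}\beta_0^\theta$ and $\alpha_0$ small enough. Since $K^*_{\alpha_0} \le 2\alpha_0^2$, the problem now reduces to a crude polynomial upper bound on $|\Pi_S[t_0^-, t_1^{--}]|$, which I will extract from the regularity assumption.

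On the event $\{\tau' \ge \hat{t}_0\} \cap \mathcal{A}(\alpha_0,t_0)$, the stopping time $\tau_7(\alpha_0,t_0,2) \ge \hat{t}_0$ bounds $|\Pi_S[(t-\alpha_0^{-1})\vee t_0, t]|$ by $2\beta_0^4$ for every $t \in [t_0, \hat{t}_0]$, and summation over the $O(\alpha_0^{-1}\beta_0^{10\theta})$ subintervals covering $[t_0, t_1^{--}]$ gives $|\Pi_S[t_0, t_1^{--}]| \le \alpha_0^{-1}\beta_0^{11\theta}$; the segment $[t_0^-, t_0]$ contributes at most $4\alpha_0^{-1}\beta_0^\theta$ by $\mathcal{A}_2$. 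Combining these estimates,
\[
|\alpha_1 - \alpha_1^{(3)}| \;\le\; 2\alpha_0^2 \cdot \alpha_0^{500} \cdot 2\alpha_0^{-1}\beta_0^{12\theta} \;\ll\; \alpha_0^{100}
\]
\emph{deterministically} on $\{\tau' \ge \hat{t}_0\}\cap \mathcal{A}$, so in fact the conditional probability is zero and the $2\exp(-\beta_0^5)$ budget in the lemma is comfortably loose.

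For $|\alpha_1^{(1)} - \alpha_1^{(2)}|$, the identical argument applies: the difference equals $\int_{t_1^{--}}^{t_1^-} K^*_{\alpha_0}\, F(t_1 - x)\, d\Pi_S(x)$, and $t_1 - x \ge \alpha_0^{-2}\beta_0^\theta$ for all $x$ in the range, so $F(t_1-x) \le \alpha_0^{500}$ by the same tail estimate, while $|\Pi_S[t_1^{--}, t_1^-]| \le \alpha_0^{-1}\beta_0^{2\theta}$ by the same application of $\tau_7$. No serious obstacle appears; the lemma is essentially a deterministic consequence of the exponential tail of $K_{\alpha_0}$ combined with the polynomial point-count estimate supplied by regularity, and the only care needed is to track the gap $\alpha_0^{-2}\beta_0^\theta$ correctly in each of the two claims.
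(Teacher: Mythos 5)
Your proof is correct and follows essentially the same route as the paper: rewrite each difference as an integral of the tail $F(\cdot)$ of $K_{\alpha_0}$ against $d\Pi_S$ over a range separated from the upper limit by a gap of $\alpha_0^{-2}\beta_0^\theta$, use the exponential decay of $K_{\alpha_0}$ from Lemma~\ref{lem:estimate on K:intro} to make the integrand super-polynomially small, and bound the point count via $\tau_7$ and $\mathcal{A}_2$. Your use of $\tau_7$ to control $|\Pi_S[t_0,t_1^{--}]|$ is the intended mechanism, and your observation that the bound is then deterministic on $\{\tau'\ge\hat t_0\}\cap\mathcal{A}$ (so the error probability is actually zero) is a correct, slightly sharper conclusion than what the lemma states.
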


\begin{proof}
	We start with investigating the first inequality. We observe that
	\begin{equation}\label{eq:ind2:rate:a1vsa1PP}
	\alpha_1-\alpha_1^{(3)}
	= \intop_{ t_0^{-}}^{t_1^{--}} \intop_{t_1^{-}}^\infty K^*_{\alpha_0 } \cdot K_{\alpha_0}(u-x) du d\Pi_S(x)
	\le Ce^{-c\beta_0^{\theta}}  \left|\Pi_S [t_0^{-},t_1^{--}] \right|,
	\end{equation}
	where the last inequality came from the estimate Lemma \ref{lem:estimate on K:intro}, noting that $t_1^--t_1^{--} = \alpha_0^{-2}\beta_0^\theta$.

	Moreover, from the definition of ${\tau}^+_{10}(\alpha_0, t_0,1/2)$ and $\mathcal{A}_2(\alpha_0,t_0)$  (Sections \ref{subsubsec:reg:aggsize}, \ref{subsubsec:reg:history}), we have $|\Pi_S[t_0^{-},\tau']|\le \alpha_0^{-1}\beta_0^{11\theta}$. Plugging this estimate to \eqref{eq:ind2:rate:a1vsa1PP},  we obtain the first estimate of Lemma \ref{lem:ind2:alphavsalphaPP}.
	
	For the second one, observe that
	\begin{equation}
	\begin{split}
	\alpha_1^{(2)}-\alpha_1^{(1)} = \intop_{t_1^{--}}^{t_1^-} \intop_{t_1}^\infty K^*_{\alpha_0}\cdot K_{\alpha_0}(u-x) du d\Pi_S(x)
	\le
	Ce^{-c\beta_0^\theta} |\Pi_S[t_1^{--},t_1^-]|,
	\end{split}
	\end{equation}
analogously as \eqref{eq:ind2:rate:a1vsa1PP}. Thus, the same reasoning as above gives the conclusion.
\end{proof}

	In \eqref{eq:ind2:rate:a1vsa1prime:decomp}, what remains to understand is $|\alpha_1'-\alpha_1^{(1)}|$.  The two terms differ only  in  the parameter $\alpha_1$ and $\alpha_0$ in the integrand, and hence we can expect to study the difference $|\alpha_1'-\alpha_1^{(1)}|$  based on analytic properties of $K^*_\alpha$ and $K_\alpha$. 
	Building upon such an intuition, we prove the  following result.

\begin{lem}\label{lem:ind2:alphaPvsalphaPP}
	Under the setting of Theorem \ref{thm:ind2:main} and \eqref{eq:def:alphaprimeprime}, we have
	\begin{equation}
	\PP\left(|\alpha_1'-\alpha_1^{(1)} |\ge \alpha_0^2 \beta_0^{8\theta+1},\ \tau'\ge \hat{t}_0 \ | \ \mathcal{A}(\alpha_0,t_0) \right) \le \exp\left(-\beta_0^5 \right).
	\end{equation}
\end{lem}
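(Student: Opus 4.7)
\textbf{Proof plan for Lemma \ref{lem:ind2:alphaPvsalphaPP}.}

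I plan to view the difference as an $\alpha$-perturbation of a single integral against the \emph{fixed} point configuration $\Pi_S[t_1^{-}, t_1]$. Setting
\[
g(\alpha) := \int_{t_1^{-}}^{t_1} H_\alpha(t_1-x)\, d\Pi_S(x),
\qquad
H_\alpha(s) := K^*_\alpha \int_s^\infty K_\alpha(y)\, dy,
\]
a direct manipulation of the definitions \eqref{eq:def:alpha:ind2}, \eqref{eq:def:alphaprimeprime} shows $\alpha_1^{(1)} = g(\alpha_0)$ and $\alpha_1' = g(\alpha_1)$. On the working event, Lemma~\ref{lem:ind2:alpha1vsalpha0} gives $|\alpha_1-\alpha_0| \le 2\alpha_0^{3/2}\beta_0^{6\theta}$, so by the mean value theorem it is enough to control
\[
\sup_{\alpha^{*}\in [\alpha_0/2,\,2\alpha_0]} |g'(\alpha^{*})| \le C\alpha_0^{1/2}\beta_0^{2\theta+1}
\]
with conditional probability at least $1-e^{-\beta_0^5}$.

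Next I split $g'(\alpha)$ using $d\Pi_S(x) = d\widetilde{\Pi}_S(x) + S(x)\,dx$ into a martingale and a drift:
\[
g'(\alpha) = \underbrace{\int_{t_1^{-}}^{t_1} \partial_\alpha H_\alpha(t_1-x)\, d\widetilde{\Pi}_S(x)}_{M(\alpha)}
\;+\;
\underbrace{\int_{t_1^{-}}^{t_1} \partial_\alpha H_\alpha(t_1-x)\, S(x)\, dx}_{D(\alpha)}.
\]
The Fourier-analytic approach used in Lemmas~\ref{lem:estimate on K:intro}--\ref{lem:estimat for K tilde:intro} yields a pointwise $\alpha$-derivative estimate $|\partial_\alpha H_\alpha(s)| \le C\alpha_0\, e^{-c\alpha_0^2 s/2}$, uniformly in $\alpha$ over a constant-ratio neighborhood of $\alpha_0$. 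Combining this with the $\tau_1$-bound $S\le 2\alpha_0\beta_0^{C_\circ}$, Lemma~\ref{lem:concentration of integral} yields $|M(\alpha^{*})| \le C\alpha_0^{1/2}\beta_0^{C_\circ}$ with the required probability, and a short discretization in $\alpha^{*}$ makes this uniform.

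For the drift, the crucial input is the cancellation $\int_0^\infty H_\alpha(s)\, ds = 1$ for every $\alpha$, which differentiates to $\int_0^\infty \partial_\alpha H_\alpha(s)\, ds = 0$. Writing $S(x) = \alpha_0 + (S(x)-\alpha_0)$, the constant contribution equals $-\alpha_0 \int_{t_1-t_1^{-}}^\infty \partial_\alpha H_\alpha(s)\, ds = O(\alpha_0^{100})$ thanks to exponential decay and the choice $t_1 - t_1^{-} = \alpha_0^{-2}\beta_0^\theta$. For the residual $\int_0^{t_1-t_1^{-}} \partial_\alpha H_\alpha(s)\,(S(t_1-s) - \alpha_0)\, ds$, I dyadically split at the scale $\alpha_0^{-2}$: on the shell $s\in [k\alpha_0^{-2},(k+1)\alpha_0^{-2}]$ the kernel contributes $e^{-ck/2}$ and the per-shell $L^1$ bound
\[
\int_I |S-\alpha_0|\, dx \le C\alpha_0^{-1/2}\beta_0^{7\theta} \qquad \text{on intervals } I \text{ of length }\alpha_0^{-2}
\]
follows from the decomposition $|S-\alpha_0| \le |S-S_1| + |S_1-\alpha_0|$, combined with Lemma~\ref{lem:ind1:intofSminS1} and the $\tau_{\textnormal{b}}$-bound $|S_1-\alpha_0| \le \alpha_0\beta_0^{C_\circ}\sigma_1 + 2\alpha_0^{3/2}\beta_0^{6\theta}$ integrated via Lemma~\ref{lem:ind1:pi1int basicbd:basic} together with $\tau_7$. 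Summing the resulting geometric series gives $|D(\alpha^{*})| \le C\alpha_0^{1/2}\beta_0^{7\theta}$, and combining the two bounds with Lemma~\ref{lem:ind2:alpha1vsalpha0} concludes the proof.

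\textbf{Main obstacle.} The delicate point is the drift: a naive application of the coarse $\tau_{10}^+$ estimate $\int_I|S-\alpha_0|\,dx \le \alpha_0^{3/2-7\epsilon}|I|$ would produce $|\alpha_1' - \alpha_1^{(1)}| \lesssim \alpha_0^{2-7\epsilon}\beta_0^{C\theta}$, and the factor $\alpha_0^{-7\epsilon} = e^{7\epsilon\beta_0}$ dominates every power of $\beta_0$, so it cannot be absorbed into a bound of the form $\alpha_0^2\beta_0^{O(\theta)}$. The workaround is the sharper per-shell $L^1$ bound above, obtained by bypassing $\tau_{10}^+$ in favor of the direct decomposition through $S_1$; this replaces the fatal $\alpha_0^{-\epsilon}$ loss by harmless additional powers of $\beta_0^\theta$.
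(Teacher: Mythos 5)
Your structural plan is close in spirit to the paper's, but the drift estimate is not sharp enough, and the loss is fatal for the downstream use of this lemma. Let me trace through the arithmetic. You bound the per-shell $L^1$ norm by $\int_I |S-\alpha_0|\,dx \le C\alpha_0^{-1/2}\beta_0^{7\theta}$ on $\alpha_0^{-2}$-intervals $I$, with the dominant contribution coming from the constant part $2\alpha_0^{3/2}\beta_0^{6\theta}$ of the $\tau_{\textnormal{b}}$-bound, integrated over $|I|=\alpha_0^{-2}$. Summing the geometric series gives $|D(\alpha^*)| \le C\alpha_0^{1/2}\beta_0^{7\theta}$ (the martingale piece is of lower order). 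Multiplying by $|\alpha_1-\alpha_0|\le 2\alpha_0^{3/2}\beta_0^{6\theta}$ from Lemma~\ref{lem:ind2:alpha1vsalpha0} gives $|\alpha_1'-\alpha_1^{(1)}| \le C\alpha_0^2\beta_0^{13\theta}$. The lemma demands $\alpha_0^2\beta_0^{8\theta+1}$, so you overshoot by a factor $\beta_0^{\sim 5\theta}$. This is not a cosmetic slack: in the proof of Theorem~\ref{thm:increment:formal} (via Proposition~\ref{prop:reg:newrates}) one needs $|\alpha_1'-\tilde\alpha_1|=o(\alpha_0^4\beta_0^{-3}(t_1-t_0))$, and with $t_1-t_0\le\alpha_0^{-2}\beta_0^{10\theta}$ this forces $|\alpha_1'-\alpha_1^{(1)}|=o(\alpha_0^2\beta_0^{10\theta-3})$. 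The paper's $\alpha_0^2\beta_0^{8\theta+1}$ satisfies this, while $\alpha_0^2\beta_0^{13\theta}$ does not.

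The missing idea is the choice of centering constant. Centering the drift on $\alpha_0$ forces you to use the $\tau_{\textnormal{b}}$ bound, whose constant-order fluctuation $\alpha_0^{3/2}\beta_0^{6\theta}$ reflects the drift of $S_1$ accumulated over the \emph{entire} interval $[t_0^+,\hat t_0]$ of length $\alpha_0^{-2}\beta_0^{10\theta}$. The paper instead centers on the $\mathcal{F}_{t_1^-}$-measurable random value $\alpha_2 := \alpha_1^{(3)}=\mathcal{L}(t_1^-;\Pi_S[t_1^{--},t_1^-],\alpha_0)$ from \eqref{eq:def:alphaprimeprime}, and Claim~\ref{claim:speed is almost fixed} (built from Corollary~\ref{cor:ind1:tauxprime}, see also \eqref{eq:ind2:S1minusalphapp}) gives the much tighter bound $\int_{t_1^-}^{t_1}|S-\alpha_2|\,dt\le\alpha_0^{-1/2}\beta_0^{2\theta}$ on the \emph{short} interval $[t_1^-,t_1]$, whose constant term is only $\alpha_0^{3/2}\beta_0^{\theta}$ because the accumulation window has length $\alpha_0^{-2}\beta_0^\theta$. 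Your identity $\int_0^\infty\partial_\alpha H_\alpha(s)\,ds=0$ cancels the reference constant for any choice, so you could equally write $S=\alpha_2+(S-\alpha_2)$; doing so recovers $|D(\alpha^*)|\le C\alpha_0^{1/2}\beta_0^{2\theta}$ and hence $\alpha_0^2\beta_0^{8\theta+O(1)}$. Aside from this, your decision to apply the mean value theorem to the full stochastic integral $g(\alpha)$ rather than (as the paper does) directly to the deterministic kernel $H_\alpha$ to obtain $|F(x)|\le C\alpha_0|\alpha_1-\alpha_0|$ is a legitimate variant, but it forces a discretization and union bound in $\alpha^*$ that the paper avoids; both routes are workable once the local centering is fixed.
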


Proof of Lemma \ref{lem:ind2:alphaPvsalphaPP} is given in the following subsection. Before delving into the proof, we establish Lemma \ref{lem:ind2:ratechange}.

\begin{proof}[Proof of Lemma \ref{lem:ind2:ratechange}]
	Lemma \ref{lem:ind2:ratechange} follows directly from a union bound over the four probabilities in Corollary \ref{cor:ind2:alpha1vsalphaaux3}, Lemmas  \ref{lem:ind2:alphavsalphaPP} and \ref{lem:ind2:alphaPvsalphaPP}. Note that by the four estimates along with Lemma \ref{lem:ind2:alpha1vsalpha0}, we get
	$$|\alpha_1-\alpha_1'|\le \frac{2}{3}\alpha_0^{\frac{3}{2}}\beta_0^\theta  \le \alpha_1^{\frac{3}{2}}\beta_1^\theta, $$
	with high probability, which is much stronger than $\mathcal{A}_3(\alpha_1,t_1)$ in terms of the exponent of $\beta_1$.
\end{proof}

Before moving on, we also conclude the proof of Proposition \ref{prop:reg:newrates}.

\begin{proof}[Proof of Proposition \ref{prop:reg:newrates}]
	Recall the definition of $\tilde{\alpha_1}$, and we write
	$$|\alpha_1'-\tilde{\alpha}_1|\le |\alpha_1'-\alpha_1^{(1)}| + |\tilde{\alpha}_1- \alpha_1^{(1)}|, $$
	where $\alpha_1^{(1)}$ is given as \eqref{eq:def:alphaprimeprime}.
	Note that we can write
	\begin{equation}
	\tilde{\alpha}_1-\alpha_1^{(1)} = \intop_{t_0^-}^{t_1^-} \intop_{t_1}^\infty K^*_{\alpha_0} \cdot K_{\alpha_0}(u-x)du d\Pi_S(x),
	\end{equation}
	which can be bounded in the same way as Lemma \ref{lem:ind2:alphavsalphaPP}. Thus, combining Theorem \ref{thm:reg:conti:main}, Lemmas \ref{lem:ind2:alphavsalphaPP} and \ref{lem:ind2:alphaPvsalphaPP} concludes the proof.
\end{proof}

\subsubsection{Proof of Lemma \ref{lem:ind2:alphaPvsalphaPP}: stability under change of rate}

Let $Y_\alpha (s)$ be a rate $\alpha $ Poisson process and let $W(s)$ be a continuous time random walk. Define the stopping time  $T _\alpha : =\inf \{ t>0: \ W(s)>Y_\alpha (s) \}$ and the function $h_\alpha(t):=\mathbb P (t \le T _\alpha <\infty )$. Recall that $K_\alpha (t)=\alpha (1+2 \alpha ) h_\alpha(t)$ (Definition \ref{def:Kalpha:main}). In the following claim we bound the derivative of $h_\alpha (t)$ with respect to $\alpha $.

\begin{claim}\label{claim:derivative of h}
    We have that 
    \begin{equation}
       \left|  \frac{d}{d \alpha } h_\alpha (t) \right| \le Ce^{-c \alpha ^2 t}
    \end{equation}
\end{claim}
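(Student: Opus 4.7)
The plan is a Fourier/Laplace inversion argument, paralleling the treatment of $K_\alpha$ and $K_\alpha^*$ in Appendix~\ref{sec:fourier and renewal}. Since $U_s = Y_\alpha(s) - W(s) + 1$ is a continuous-time random walk on $\mathbb{Z}$ starting at $1$ with up-rate $\alpha+\tfrac12$ and down-rate $\tfrac12$, and $T_\alpha$ is its hitting time of $0$, a first-step analysis gives the closed form
\[
\psi_\alpha(\lambda) := \mathbb{E}\!\bigl[e^{-\lambda T_\alpha};\,T_\alpha<\infty\bigr] = \frac{1}{(\alpha+1+\lambda) + \sqrt{(\alpha+1+\lambda)^2 - (2\alpha+1)}},
\]
analytic off the real branch cut $[\lambda_-,\lambda_+]$ with $\lambda_\pm=-(\alpha+1)\pm\sqrt{2\alpha+1}$. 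Expanding the square root gives $\lambda_+=-\alpha^2/2+O(\alpha^3)$, so $\lambda_+\le -c\alpha^2$ for an absolute $c>0$ and all sufficiently small $\alpha$.

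Writing $\xi_\alpha:=(1+2\alpha)^{-1}$ and using $h_\alpha(t)=\xi_\alpha-\mathbb{P}(T_\alpha\le t)$, the Laplace transform $\widehat{h_\alpha}(\lambda)=(\xi_\alpha-\psi_\alpha(\lambda))/\lambda$ and its $\alpha$-derivative $\partial_\alpha\widehat{h_\alpha}(\lambda) = (\partial_\alpha\xi_\alpha-\partial_\alpha\psi_\alpha(\lambda))/\lambda$ are both holomorphic in $\{\Re\lambda>\lambda_+\}$, the apparent pole at $\lambda=0$ being removable since $\partial_\alpha\xi_\alpha = \partial_\alpha\psi_\alpha(0) = -2/(1+2\alpha)^2$. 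I will invert by Bromwich and deform the contour to $\{\Re\lambda=-c\alpha^2/2\}$ (which crosses no singularity), obtaining
\[
\partial_\alpha h_\alpha(t) \;=\; \frac{e^{-c\alpha^2 t/2}}{2\pi}\int_{-\infty}^\infty e^{i\tau t}\,\partial_\alpha\widehat{h_\alpha}\!\bigl(-c\alpha^2/2+i\tau\bigr)\,d\tau,
\]
which isolates the desired exponential factor; the claim then reduces to a uniform-in-$\alpha$ bound on the $\tau$-integral.

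For the large-$|\lambda|$ tail, $\partial_\alpha\widehat{h_\alpha}(\lambda)\sim -2/[(1+2\alpha)^2\lambda]$, which is only conditionally integrable along the contour, but at $t>0$ its Fourier contribution vanishes by the step-function identity $\int e^{i\tau t}/(-a+i\tau)\,d\tau=0$ valid for $a,t>0$. After subtracting this tail, the remainder $-\partial_\alpha\psi_\alpha(\lambda)/\lambda$ decays like $|\lambda|^{-3}$ at infinity, and on the range $|\tau|\lesssim 1$ it is uniformly bounded because on the shifted contour $\sqrt{(\alpha+1+\lambda)^2-(2\alpha+1)}$ is of order $\alpha$ and bounded away from zero (having chosen $-c\alpha^2/2$ strictly to the right of $\lambda_+$), which in turn gives uniform bounds on $\psi_\alpha$ and $\partial_\alpha\psi_\alpha$. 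This produces an $L^1$ bound on the remainder and hence a uniform bound on its Fourier transform.

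The main technical obstacle will be the careful handling of the non-absolutely-integrable $1/\lambda$ tail via cancellation (a Riemann–Lebesgue / sine-integral-type argument for the $t>0$ side) together with the uniform-in-$\alpha$ control of the integrand near the branch endpoint~$\lambda_+$ where the square root degenerates. These are the same Fourier-analytic bookkeeping steps used in Appendix~\ref{sec:fourier and renewal} to prove Lemmas~\ref{lem:estimate on K:intro} and~\ref{lem:estimat for K tilde:intro}, with the extra twist of differentiating in~$\alpha$ before inverting the transform.
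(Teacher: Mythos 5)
Your Laplace--inversion strategy is genuinely different from the paper's proof: the paper uses a probabilistic coupling, decomposing a rate-$(\alpha+\delta)$ Poisson process as an independent sum $Y_\alpha+Y_\delta$ with $\delta\le\alpha^{10}$ and bounding $|h_{\alpha+\delta}(t)-h_\alpha(t)|$ directly by conditioning on whether $Y_\delta$ produces a jump before or after $T_\alpha$, then using Corollary~\ref{cor:bound on K'} on the density $-h_\alpha'$. Your route, by contrast, parallels the Appendix's treatment of $K_\alpha$ and $K_\alpha^*$.

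However, there is a concrete gap in the estimate on the deformed contour. After subtracting the $O(1/\lambda)$ tail, the remainder $-\partial_\alpha\psi_\alpha(\lambda)/\lambda$ has a \emph{genuine} pole at $\lambda=0$ (since $\partial_\alpha\psi_\alpha(0)=-2/(1+2\alpha)^2\ne 0$; the subtraction destroyed the cancellation that made the original singularity removable). On your contour $\Re\lambda=-c\alpha^2/2$, one has $|\lambda|\ge c\alpha^2/2$, so the remainder is of size $\sim 1/\alpha^2$ near $\tau=0$, not ``uniformly bounded'' as you assert. The correct statement is that $\psi_\alpha$ and $\partial_\alpha\psi_\alpha$ are themselves uniformly bounded on the contour, but dividing by $\lambda$ reintroduces the $\alpha$-dependence. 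Even if you keep the full expression $(\partial_\alpha\xi_\alpha-\partial_\alpha\psi_\alpha(\lambda))/\lambda$ (whose singularity at $0$ is removable, with value $-\partial_\lambda\partial_\alpha\psi_\alpha(0)\sim 1/\alpha^2$), its $L^1$ norm along the contour works out to $\Theta(\log(1/\alpha))$: it is $\sim 1/\alpha^2$ on $|\tau|\lesssim\alpha^2$ (contributing $O(1)$) and $\sim 1/|\tau|$ on $\alpha^2\lesssim|\tau|\lesssim 1$ (contributing $\Theta(\log(1/\alpha))$). A crude $L^1$ estimate therefore yields only $|\partial_\alpha h_\alpha(t)|\le C\log(1/\alpha)\,e^{-c\alpha^2 t/2}$, which fails to be uniform as $\alpha\to 0$ (e.g. at $t=0$). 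To recover the claimed absolute-constant bound by Fourier methods you would need to exploit oscillatory cancellation in $e^{i\tau t}$ (the log lives on the dyadic scales $\alpha^2\lesssim|\tau|\lesssim 1$ where $e^{i\tau t}$ does not yet oscillate unless $t\gtrsim\alpha^{-2}$), or use a $t$-dependent contour as in the proofs of Lemmas~\ref{lem:estimate on K:intro} and~\ref{lem:estimat for K tilde:intro} together with a more delicate estimate on the numerator near the branch point. The paper's coupling argument sidesteps all of this by localizing the $\alpha$-perturbation at a single extra Poisson event.
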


\begin{proof} 
    Throughout the proof we write $h_\alpha '(x)$ for the derivative of $h$ with respect to $x$ and write explicitly $\frac{d}{d \alpha }h_\alpha (x)$ when we differentiate with respect to $\alpha $. Let $\delta \le \alpha ^{10}$ and let $Y_\alpha ,Y_{\delta  }$ be independent Poisson processes with rates $\alpha $ and $\delta $ respectively. Let $Y_{\alpha +\delta }:=Y_\alpha +Y _{\delta }$ and note that  $T _\alpha \le T _{\alpha +\delta } $. Recall that $\mathbb P (T _\alpha <\infty ) =h_\alpha (0)=1/(1+2 \alpha )$. Integrating over the values $x$  that $T _\alpha $ can take we obtain
	\begin{equation}
	\begin{split}
	&\left| h_{\alpha +\delta }(t)-h_{\alpha }(t) \right| \le \mathbb P (T _\alpha <t,\ t \le T _{\alpha +\delta } <\infty ) + \mathbb P (t \le T _\alpha <\infty , \ T _{\alpha +\delta } =\infty )    \\
	& \le - \intop _0^t h_\alpha '(x) \mathbb P (Y_{\delta }(x)=1)h_{\alpha +\delta }(t-x)   - \frac{2(\alpha +\delta ) }{1+2(\alpha+\delta ) }\intop _t^{\infty }h_\alpha '(x) \mathbb P (Y_{\delta }(x)=1)  
	- \intop _0^{\infty } h_\alpha '(x) \mathbb P (Y_\delta (x)\ge 2 )  \\
	&\le C \intop _0^t x^{-\frac{3}{2}}e^{-c \alpha ^2 x} \cdot \delta x \cdot  (t-x)^{-\frac{1}{2}} e^{-c \alpha ^2(t-x)}dx+ C\alpha  \intop _t^{\infty } x^{-\frac{3}{2}} e^{-c \alpha ^2 x} \cdot \delta x dx  +C\intop _0^\infty  x^{-\frac{3}{2}} e^{-c\alpha ^2 x} \delta ^2 x^2  \\
	&\le C\delta e^{-c \alpha ^2 t } \intop _0^t  x^{-\frac{1}{2}} (t-x)^{-\frac{1}{2}} + C \delta \alpha e^{-c \alpha ^2 t } \intop _0^{\infty } x^{-\frac{1}{2}} e^{-c \alpha ^2 x} + C \alpha ^{-3 } \delta ^2 \le C \delta e^{-c \alpha ^2 t } +C_\alpha \delta ^2 ,
	\end{split}
	\end{equation} 
	where in the third inequality we used Corollary~\ref{cor:bound on K'} in order to bound $h_\alpha '$. The claim follows from the last bound.
\end{proof}

We claim  that we can approximate the speed by a fixed value in the time interval $[t_1^-,t_1]$.

\begin{claim}\label{claim:speed is almost fixed}
    Under the setting of Theorem \ref{thm:induction:main}, there exists a random variable $\alpha _2 \in \mathcal F _{t_1^-}$ such that 
    \begin{equation}
       \PP \left(\left. \intop _{t_1^{-}} ^{t_1} |S(t)-\alpha _2 | dt\ge \alpha_0 ^{-\frac{1}{2}} \beta_0^{2 \theta }, \, \tau' \ge \hat{t}_0 \, \right| \, \mathcal{A}(\alpha_0,t_0) \right) \le \exp\left(-\beta_0^5 \right).
    \end{equation}
\end{claim}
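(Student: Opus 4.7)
The plan is to take $\alpha_2 := \alpha_1$, which is $\mathcal{F}_{t_1^-}$-measurable by \eqref{eq:def:alpha:ind2}. Using the triangle inequality $|S(t) - \alpha_1| \le |S(t) - S_1(t)| + |S_1(t) - \alpha_1|$, I would first bound the $S-S_1$ term by Lemma \ref{lem:ind1:intofSminS1} applied on $[t_1^-, t_1] \subset [t_0, \tau']$, giving $\int_{t_1^-}^{t_1}|S - S_1|\,dt \le \alpha_0^{-2\epsilon}$, which is negligible compared to the target $\alpha_0^{-1/2}\beta_0^{2\theta}$. The real work then goes into $\int_{t_1^-}^{t_1}|S_1(t) - \alpha_1|\,dt$.

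Introduce the $\mathcal{F}_{t_1^-}$-measurable auxiliary $\tilde\alpha_1(t) := \mathcal{R}_c(t_1^-, t;\Pi_S[t_0^-, t_1^-], \alpha_0)$, which by \eqref{eq:def:Qst} and \eqref{eq:def:L} satisfies $\tilde\alpha_1(\infty) = \alpha_1$. Identity \eqref{eq:integralform:branching} with $s_0 = t_1^-$, $s_1 = t$ yields
\begin{equation*}
S_1(t) - \tilde\alpha_1(t) = \int_{t_1^-}^t K^*_{\alpha_0}(t-s)\bigl\{d\widetilde\Pi_S(s) + (S(s) - S_1(s))\,ds\bigr\}.
\end{equation*}
The drift part has absolute value at most $C\alpha_0^2 \cdot \alpha_0^{-2\epsilon}$ using $K^*_{\alpha_0} \le C\alpha_0^2$ (Lemma \ref{lem:estimat for K tilde:intro}) and Lemma \ref{lem:ind1:intofSminS1}. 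For the martingale part, the quadratic-variation bound $\int_{t_1^-}^{t\wedge\tau'}(K^*_{\alpha_0}(t-s))^2 S(s)\,ds \le C\alpha_0^3 \beta_0^{\theta + C_\circ}$ follows from Lemma \ref{lem:estimat for K tilde:intro} and the $\tau_1$ bound $S \le 2\alpha_0\beta_0^{C_\circ}$. I would then apply Lemma \ref{lem:concentration of integral} at fixed $t$ with $\lambda = 1/(C\alpha_0)$ (chosen so that $\lambda\cdot\sup_s K^*_{\alpha_0}(t-s) \le 1$), and take a union bound over a grid in $t$ of mesh $\alpha_0^{10}$, using the derivative bound on $K^*_{\alpha_0}$ (Lemma \ref{lem:estimat for K tilde:intro}) for continuity. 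This yields a uniform bound $|S_1(t) - \tilde\alpha_1(t)| \le C\alpha_0^{3/2}\beta_0^{(\theta+C_\circ)/2}$ on $[t_1^-, t_1]$ with probability at least $1 - e^{-\beta_0^5}$; integrating gives $\int|S_1 - \tilde\alpha_1|\,dt \le \tfrac{1}{2}\alpha_0^{-1/2}\beta_0^{2\theta}$ since $C_\circ \ll \theta$.

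Finally, I would estimate $|\tilde\alpha_1(t) - \alpha_1|$ directly from the definitions. Writing
\begin{equation*}
\alpha_1 - \tilde\alpha_1(t) = \int_{t_0^-}^{t_1^-}\!\Bigl[\int_t^\infty\! K^*_{\alpha_0} K_{\alpha_0}(u-x)\,du + \int_{t_1^-}^t\!(K^*_{\alpha_0} - K^*_{\alpha_0}(t-u))K_{\alpha_0}(u-x)\,du - K_{\alpha_0}(t-x)\Bigr]d\Pi_S(x),
\end{equation*}
the decay bounds of Lemmas \ref{lem:estimate on K:intro} and \ref{lem:estimat for K tilde:intro}, together with $|\Pi_S[t_0^-, t_1^-]| \le \alpha_0^{-1}\beta_0^{11\theta}$ (from $\tau_7$ and $\mathcal{A}_2$), show that this difference is effectively concentrated on a $t$-window of width $O(\alpha_0^{-2})$ near $t_1^-$, on which it is dominated by $|S_1(t_1^-) - \alpha_1|$. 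A localized version of Corollary \ref{cor:ind1:tauxprime}, obtained by the same martingale-plus-drift argument but over the shorter interval $[t_1^-, t_1]$ with the rate change $|\alpha_1 - \alpha_0| \le 2\alpha_0^{3/2}\beta_0^{6\theta}$ from Lemma \ref{lem:ind2:alpha1vsalpha0} absorbed into the quadratic-variation estimate, gives $|S_1(t_1^-) - \alpha_1| \le \alpha_0^{3/2}\beta_0^\theta$, and hence $\int_{t_1^-}^{t_1}|\tilde\alpha_1 - \alpha_1|\,dt \le C\alpha_0^{-1/2}\beta_0^\theta$. Summing the three pieces and the failure probabilities gives the claim.

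The main obstacle will be obtaining the sharp $\beta_0^\theta$ bound on $|S_1(t_1^-) - \alpha_1|$: the coarse estimate $|S_1(t_1^-) - \alpha_1| \le \alpha_0^{3/2}\beta_0^{6\theta}$ that follows directly from $\mathcal{A}_3$ together with Lemma \ref{lem:ind2:alpha1vsalpha0} falls short, because multiplying by the window width $\alpha_0^{-2}$ produces $\alpha_0^{-1/2}\beta_0^{6\theta}$, exceeding the target $\alpha_0^{-1/2}\beta_0^{2\theta}$. The refinement requires the localized martingale-concentration argument carried out on $[t_1^-, t_1]$ rather than over the much longer $[t_0^-, t_1^-]$, so that the QV picks up only a $\beta_0^\theta$ factor rather than $\beta_0^{6\theta}$.
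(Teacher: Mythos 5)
Your overall strategy -- take a local, $\mathcal{F}_{t_1^-}$-measurable reference rate, split $|S-\alpha_2|$ through the first-order approximation, absorb $\int |S-S_1|$ by Lemma~\ref{lem:ind1:intofSminS1}, and control $S_1 - \alpha_2$ by a martingale argument whose quadratic variation is improved because the relevant time horizon is only $\alpha_0^{-2}\beta_0^\theta$ -- is the paper's strategy as well, and your choice $\alpha_2 = \alpha_1$ is essentially interchangeable with the paper's $\alpha_2 = \alpha_1^{(3)}$ (they differ by $O(\alpha_0^{100})$ by Lemma~\ref{lem:ind2:alphavsalphaPP}).

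The gap is in the last step, where you bound $|S_1(t_1^-)-\alpha_1|$. You argue that ``a localized version of Corollary~\ref{cor:ind1:tauxprime}, obtained by the same martingale-plus-drift argument but over the shorter interval $[t_1^-, t_1]$'' gives $|S_1(t_1^-)-\alpha_1|\le \alpha_0^{3/2}\beta_0^\theta$. But both $S_1(t_1^-)$ and $\alpha_1 = \mathcal{L}(t_1^-;\Pi_S[t_0^-,t_1^-],\alpha_0)$ are $\mathcal{F}_{t_1^-}$-measurable; a martingale over the \emph{future} window $[t_1^-,t_1]$ is constant at $t_1^-$ and cannot constrain their difference at all. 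The concentration you need must come from the \emph{past}: $S_1(t_1^-)-\alpha_1 = \int_{t_0^-}^{t_1^-}[K_{\alpha_0}(t_1^- - x) - K^*_{\alpha_0}F(t_1^- - x)]\,d\Pi_S(x)$ is a stochastic integral against history, and the only reason the $\beta_0^\theta$ scale is reachable is that the kernel is exponentially localized on a scale $\alpha_0^{-2}$ \emph{before} $t_1^-$. Running Corollary~\ref{cor:ind1:tauxprime} ``over $[t_1^-,t_1]$'' does not produce this.

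The paper sidesteps exactly this obstruction through the choice $\alpha_2 = \alpha_1^{(3)} = \mathcal{L}(t_1^-;\Pi_S[t_1^{--},t_1^-],\alpha_0)$: by truncating the defining integral to the points in $[t_1^{--},t_1^-]$ (a window of length $\alpha_0^{-2}\beta_0^\theta$ only), the quantity $S_1(t_1^-;t_1^{--},\alpha_0)-\alpha_1^{(3)}$ becomes a stochastic integral over the short past interval $[t_1^{--},t_1^-]$, whose quadratic variation is $O(\alpha_0^3\beta_0^{\theta+C_\circ})$ rather than the $\alpha_0^3\beta_0^{12\theta}$ one gets over $[t_0,t_1^-]$. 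In other words, the localization happens by construction of $\alpha_1^{(3)}$, not by a forward-time martingale. Your proof would become correct if, in step~5, you applied the shifted version of Corollary~\ref{cor:ind1:tauxprime} with $t_0 \mapsto t_1^{--}$, $t_0^+ \mapsto t_1^-$ (i.e.\ the martingale runs over $[t_1^{--},t_1^-]$, not $[t_1^-,t_1]$), together with a Lemma~\ref{lem:ind2:alpha1vsalpha0}--type change of reference from $\mathcal{L}(t_1^{--};\cdots)$ to $\alpha_1$ over that same short window -- which is precisely what using $\alpha_1^{(3)}$ packages for free.
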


\begin{proof}
Recalling the definition of $\alpha_1^{(3)}$ from \eqref{eq:def:alphaprimeprime},	we show that the choice $\alpha_2=\alpha_1^{(3)}$ is enough for our purpose. To this end, we split the integrand into 
$$ |S(t)-\alpha_1| \le |S(t)-S_1(t;t_1^{--},\alpha_0)| + |S_1(t;t_1^{--},\alpha_0) - \alpha_1^{(3)}|, $$
where $t_1^{--}$ is given as \eqref{eq:def:t1minmin}.

Note that if $t_1^-\le t\le t_1\wedge  \tau'$, then $|S_1(t;t_0^-,\alpha_0)-S_1(t;t_1^{--},\alpha_0)| \le \alpha_0^{100}  $, by the same argument as \eqref{eq:ind1:S1timechange}. Thus, the definition of $\tau_3$ (Section \ref{subsubsec:reg:Scontrol}) gives 
\begin{equation}\label{eq:ind2:SminS1timechange}
|S(t)-S_1(t;t_1^{--},\alpha_0)| \le 2\alpha_0^{1-\epsilon} \sigma_1\sigma_2(t;S).
\end{equation}

On the other hand, the same expression as \eqref{eq:integralform:branching:ind1} gives
\begin{equation}
S_1(t;t_1^{--},\alpha_0) - \alpha_1^{(3)} = \intop_{t_1^-}^{t} K_{\alpha_0}^*(t-x) \big\{ d\Pi_S(x)-S_1(x;t_1^{--},\alpha_0) dx \big\}.
\end{equation}
Using the bound \eqref{eq:ind2:SminS1timechange}, we can control this term analogously as the proof of Lemma \ref{lem:ind1:taux}, which gives that
\begin{equation}\label{eq:ind2:S1minusalphapp}
\begin{split}
\PP \left( \left. \bigcap_{t\in[t_1^-,t_1]} \left\{ \big|S_1(t;t_1^{--},\alpha_0)-\alpha_1^{(3)} \big| \le \alpha_0\beta_0^{C_\circ}\sigma_1(t;S) + \alpha_0^{\frac{3}{2}}\beta_0^{\theta} \right\}, \ \tau'\ge\hat{t}_0 \ \right| \ \mathcal{A}(\alpha_0,t_0) \right)\\
\ge 1-\exp\left(-\beta_0^5 \right).
\end{split}
\end{equation}
Note that the exponent $\theta$ in the term $\alpha_0^{\frac{3}{2}}\beta_0^\theta$ is smaller than that of Lemma \ref{lem:ind1:taux} since the length of the interval we are looking at is of scale $t_1-t_1^-=\alpha_0^{-2}\beta_0^\theta$, not $\alpha_0^{-2}\beta_0^{10\theta}$.

When we have both \eqref{eq:ind2:SminS1timechange} and \eqref{eq:ind2:S1minusalphapp}, we can control the integral of $|S(t)-\alpha_1^{(3)}|$ by Lemma \ref{lem:ind1:pi1int basicbd:basic}. This tells us that
\begin{equation}
\intop_{t_1^-}^{t_1} |S(t)-\alpha_1^{(3)}|dt \le \intop_{t_1^-}^{t_1} \left(\frac{\alpha_0^{1-\epsilon}}{\pi_1(t;S)+1} + \frac{\alpha_0\beta_0^{C_\circ}}{\sqrt{\pi_1(t;S)+1}} + \alpha_0^{\frac{3}{2}}\beta_0^{\theta} \right)dt \le 2\alpha_0^{-\frac{1}{2}}\beta_0^{2\theta},
\end{equation}
concluding the proof.
\end{proof}

We are now ready to prove Lemma~\ref{lem:ind2:alphaPvsalphaPP}

\begin{proof}[Proof of Lemma~\ref{lem:ind2:alphaPvsalphaPP}]
	Let $\alpha _2$ be the random variable from Claim~\ref{claim:speed is almost fixed}. Recall that 
	\begin{equation}
	\alpha _1^{(1)} =\frac{2\alpha_0 ^2 }{1+2 \alpha_0 }\intop _{t_1^-}^{t_1} \intop _{t_1}^\infty K_{\alpha _0 }(y-x) dy \, d \Pi _S (x) =\frac{2\alpha_0 ^2 }{1+2 \alpha _0 }\intop _{t_1^-}^{t_1} \intop _{t_1-x}^\infty K_{\alpha _0 }(z) dz \, d \Pi _S (x)
	\end{equation}
	Next, we break the measure $d \Pi _S$ into $(d \Pi _S(x)- \alpha _2 dx)+\alpha _2 dx$ and start by estimating the integral with respect to the Lebesgue measure. We have
	\begin{equation}
	\begin{split}
	\frac{2\alpha_0 ^2 \alpha _2 }{1+2 \alpha_0 } \intop _{t_1^-}^{t_1}\intop _{t_1-x}^\infty K_{\alpha _0}(z) dz \, dx &=\frac{2\alpha_0 ^2 \alpha _2 }{1+2 \alpha_0  } \intop _{0}^{t_1-t_1^-} \intop _{t_1-z}^{t_1} K_{\alpha _0}(z)dx\, dz+\frac{2\alpha_0 ^2 \alpha _2 }{1+2 \alpha_0  }\intop _{t_1-t_1^-}^{\infty } \intop _{t_1^-}^{t_1} K_{\alpha _0}(z) dx \, dz\\
	=\frac{2\alpha_0 ^2 \alpha _2 }{1+2 \alpha_0  } &\intop _{0}^{t_1-t_1^-} z K_{\alpha _0} (z) dz+\frac{(t_1-t_1^-)2 \alpha _0 ^2 \alpha _2 }{1 +2 \alpha _0 } \intop _{t_1-t_1^-}^{\infty } K_{\alpha _0} (z)dz= \alpha _2 +O(\alpha _0^{10}),
	\end{split}
	\end{equation}
	where in the last equality we used Corollary~\ref{cor:moments of K} and Lemma~\ref{lem:estimate on K:intro} and the fact that $t_1-t_1^- \ge \alpha _0^{-2 }\beta_0 ^2 $. Thus, 
	\begin{equation}
	\alpha _1 '=O(\alpha_0 ^{10})+\alpha _2+\frac{2\alpha _0 ^2 }{1+2 \alpha_0 }\intop _{t_1^-}^{t_1} \intop _{t_1-x}^\infty K_{\alpha }(z) dz \, (d \Pi _S (x)-\alpha _2dx).
	\end{equation}
	Using the same arguments for $\alpha _1'$ we get 
	\begin{equation}\label{eq:integral of F}
	\alpha _1'- \alpha _1^{(1)}=O(\alpha _0 ^{10})+\intop _{t_1^{-}}^{t_1} F(x)  (d \Pi _S (x)-\alpha _2dx),
	\end{equation}
	where
	\begin{equation}
	F(x):=\intop _{t_1-x}^\infty  \frac{2\alpha_1 ^2 }{1+2 \alpha_1 }K_{\alpha _1}(z) - \frac{2\alpha _0 ^2 }{1+2 \alpha _0} K_{\alpha _0 }(z) dz=2\intop _{t_1-x}^\infty  \alpha_1 ^3 h_{\alpha _1}(z) - \alpha _0^3 h_{\alpha _0 }(z) dz.
	\end{equation}
	We turn to bound the function $F$. We have  
	\begin{equation}
	    \left| \frac{d}{d\alpha }(\alpha ^3 h_\alpha (z) ) \right| \le 3 \alpha ^2 h_\alpha (z)+\alpha ^3 \left| \frac{d}{d\alpha} h_\alpha (z) \right|  \le C \alpha ^2 z^{-\frac{1}{2}}e^{-c \alpha ^2 z} +C \alpha ^3e^{-c \alpha ^2 z}\le C \alpha ^2 z^{-\frac{1}{2}}e^{-c \alpha ^2 z},
	\end{equation}
	where in the second inequality we used Lemma~\ref{lem:estimate on K:intro} and Claim~\ref{claim:derivative of h} and in the third inequality we changed the values of $C$ and $c$ slightly. Thus,
	\begin{equation}\label{eq:bound on F in stability}
	|F(x)|\le C |\alpha _1 -\alpha _0| \alpha _0^2  \intop _0^{\infty }  z^{-\frac{1}{2}}e^{-c\alpha _0 ^2 z}dz = C \alpha _0 |\alpha _1 -\alpha _0| \intop _0^\infty y^{-\frac{1}{2}} e^{-cy} dy\le  C \alpha _0 |\alpha _1 -\alpha _0|.
	\end{equation}
	
	Next, we use this bound in order to bound the integral on the right hand side of \eqref{eq:integral of F}. To this end, define
	\begin{equation}
	    I_1:=\intop _{t_1^{-}}^{t_1} F(x)  (d \Pi _S (x)-S(x) dx),\quad I_2:= \intop _{t_1^{-}}^{t_1} F(x)  (S (x)-\alpha _2) dx.
	\end{equation}
	We start by bounding $I_1$ using Corollary~\ref{cor:concentration of integral}. We compute the quadratic variation
	\begin{equation}
	    M:=\intop _{t_1^-} ^{t_1}  F(x)^2 S(x) dx \le \alpha _0^2 |\alpha _1-\alpha _0|^2 \intop _{t_1^-} ^{t_1} S(x) dx \le C \alpha _0^4  \beta  _0 ^{13 \theta +C_{\circ}} .
	\end{equation}
	where the last inequality holds on the event $\mathcal A _4 \cap \{ \tau'\ge \hat{t}_0 \}$ using the definition of $\tau_1$. Thus, by Lemma~\ref{lem:ind2:alpha1vsalpha0} and Corollary~\ref{cor:concentration of integral} we have 
	\begin{equation}\label{eq:bound on I _1 in the proof of stability}
	\PP\left( |I_1|\ge  \alpha _0^{2}\beta_0^{6.5\theta+C_\circ },\ \tau'\ge \hat{t}_0 \ | \ \mathcal{A}(\alpha_0,t_0) \right) \le \exp\left(-\beta_0^5 \right).
	\end{equation}
	We turn to bound $I_2$. On the event in Claim~\ref{claim:speed is almost fixed} we have 
	\begin{equation}\label{eq: bound on I_2 in the proof of stability}
	    |I_2|\le C \alpha _0|\alpha _1-\alpha _0 | \intop _{t_1^-} ^{t_1} |S(x)-\alpha _2 | dx \le  C \alpha _0^{2} \beta _0^{8 \theta }  .
	\end{equation}
	Thus, using Claim~\ref{claim:speed is almost fixed} and substituting the bounds on $I_1$ and $I_2$ into \eqref{eq:integral of F} finishes the proof of the lemma. 
\end{proof}

We also record a generalized version of Lemma \ref{lem:ind2:alphaPvsalphaPP} which comes from a simple union bound.

\begin{cor}\label{cor:ind2:Ldiff}
	Under the setting of Theorem \ref{thm:ind2:main}, let $L(\alpha)$ denote
	\begin{equation}
	L(\alpha):= \mathcal{L}(t_1^-; \Pi_S[t_1^{--},t_1^-],\alpha).
	\end{equation}
	Then, we have
	\begin{equation}
	\PP \left(\left. \sup_{\alpha : \, |\alpha-\alpha_0| \le \alpha_0^{\frac{3}{2}}\beta_0^{6\theta} } \left|L(\alpha)-L(\alpha_0)  \right| \ge 2\alpha_0^2 \beta_0^{7\theta}, \ \tau' \ge \hat{t}_0 \, \right| \, \mathcal{A}(\alpha_0,t_0)  \right) \le \exp\left( -\beta_0^4\right).
	\end{equation}
\end{cor}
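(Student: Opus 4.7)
The plan is to reduce the supremum over the continuum of $\alpha$'s to a finite maximum by discretization, apply the argument of Lemma~\ref{lem:ind2:alphaPvsalphaPP} at each grid point, and extend back to the continuum by a Lipschitz estimate on $\alpha\mapsto L(\alpha)$. Concretely, I would fix a grid $\mathcal{G}\subset[\alpha_0-\alpha_0^{3/2}\beta_0^{6\theta},\alpha_0+\alpha_0^{3/2}\beta_0^{6\theta}]$ of spacing $\delta=\alpha_0^{10}$, which yields $|\mathcal{G}|=O(\alpha_0^{-17/2}\beta_0^{6\theta})$ points, and verify that $\alpha\mapsto L(\alpha)$ has derivative bounded by an absolute constant on $\mathcal{A}(\alpha_0,t_0)\cap\{\tau'\ge\hat t_0\}$. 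The bound on $\partial_\alpha L(\alpha)$ comes from differentiating under the integral sign together with Claim~\ref{claim:derivative of h} and the regularity bound $|\Pi_S[t_1^{--},t_1^-]|\le 4\alpha_0^{-1}\beta_0^\theta$ (supplied by $\mathcal{A}_2(\alpha_0,t_0)$ and $\tau_7^+(1/2)$); it makes the oscillation of $L$ across any interval of length $\delta$ negligible compared to $\alpha_0^2\beta_0^{7\theta}$.

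For each fixed $\alpha\in\mathcal{G}$, the difference $L(\alpha)-L(\alpha_0)$ admits the representation
\[
L(\alpha)-L(\alpha_0)=\int_{t_1^{--}}^{t_1^-}F_\alpha(s)\,d\Pi_S(s),\qquad F_\alpha(s)=\int_{t_1^--s}^\infty\!\!\!\Bigl(\tfrac{2\alpha^2}{1+2\alpha}K_\alpha(y)-\tfrac{2\alpha_0^2}{1+2\alpha_0}K_{\alpha_0}(y)\Bigr)dy,
\]
with the pointwise bound $|F_\alpha(s)|\le C\alpha_0|\alpha-\alpha_0|$ exactly as in \eqref{eq:bound on F in stability}. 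I would follow the martingale-plus-drift split from the proof of Lemma~\ref{lem:ind2:alphaPvsalphaPP}, substituting the time window $[t_1^-,t_1]$ by $[t_1^{--},t_1^-]$ and replacing the reference rate $\alpha_2$ of Claim~\ref{claim:speed is almost fixed} by its shifted analog $\mathcal{L}(t_1^{--};\Pi_S[t_1^{---},t_1^{--}],\alpha_0)$, where $t_1^{---}:=t_1^{--}-\alpha_0^{-2}\beta_0^\theta$; the proof of the shifted Claim~\ref{claim:speed is almost fixed} on $[t_1^{--},t_1^-]$ goes through verbatim using $\tau_3$ together with Corollary~\ref{cor:ind1:tauxprime}. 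The renewal identity $\int_0^\infty yK_\alpha(y)dy=(K_\alpha^*)^{-1}$ still forces $\int_{t_1^{--}}^{t_1^-}F_\alpha(s)\,ds=O(\alpha_0^{100})$, Corollary~\ref{cor:concentration of integral} bounds the martingale term via its quadratic variation $\int F_\alpha^2 S\le C\alpha_0^4\beta_0^{13\theta+C_\circ}$, and the drift piece is dominated by $\|F_\alpha\|_\infty\int|S-\alpha_2|\,ds$. A union bound over $\mathcal{G}$ costs only the polynomial factor $|\mathcal{G}|$, preserving the failure probability $\le e^{-\beta_0^{4.5}}$, and the Lipschitz step then delivers the continuum statement.

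The main obstacle I anticipate is the bookkeeping of $\beta_0$-exponents: the bound in Lemma~\ref{lem:ind2:alphaPvsalphaPP} is $\alpha_0^2\beta_0^{8\theta+1}$, and closing the gap to the target $2\alpha_0^2\beta_0^{7\theta}$ requires exploiting the short length of the window $[t_1^{--},t_1^-]$. The key refinement is to use Corollary~\ref{cor:ind1:tauxprime}---whose error term $\alpha_0^{3/2}\beta_0^\theta$ carries the exponent $\theta$, reflecting the length of a single block rather than the full horizon $\hat t_0-t_0^+$---in place of the cruder bound used in Claim~\ref{claim:speed is almost fixed}; combined with $\|F_\alpha\|_\infty\le C\alpha_0^{5/2}\beta_0^{6\theta}$, this yields a drift contribution of order $\alpha_0^2\beta_0^{7\theta}$, while the martingale piece is already $O(\alpha_0^2\beta_0^{6.5\theta+C_\circ/2+5})$ and subdominant for $\theta\gg C_\circ$. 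With the sharpened drift estimate in hand, the union bound over $\mathcal{G}$ and the Lipschitz continuity of $\alpha\mapsto L(\alpha)$ complete the proof within the claimed failure probability $\exp(-\beta_0^4)$.
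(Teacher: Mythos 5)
Your overall strategy (discretize the $\alpha$-interval to a grid of spacing $\alpha_0^{10}$, run the argument of Lemma~\ref{lem:ind2:alphaPvsalphaPP} at each grid point with the window shifted to $[t_1^{--},t_1^-]$, take a union bound, and patch the continuum by a Lipschitz estimate on $\alpha\mapsto L(\alpha)$) is the same as the paper's, and your explanation of the Lipschitz step via \eqref{eq:integral of F} together with Claim~\ref{claim:derivative of h} is sound. You are also right to flag the exponent discrepancy between Lemma~\ref{lem:ind2:alphaPvsalphaPP} (which produces $\alpha_0^2\beta_0^{8\theta+1}$) and the target $2\alpha_0^2\beta_0^{7\theta}$; the paper's proof passes over this point silently.

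However, your ``key refinement'' does not close the gap. You propose to replace the bound $\int_{t_1^{--}}^{t_1^-}|S(x)-\alpha_2|\,dx\le\alpha_0^{-1/2}\beta_0^{2\theta}$ by something of order $\alpha_0^{-1/2}\beta_0^{\theta}$ using Corollary~\ref{cor:ind1:tauxprime}, but Claim~\ref{claim:speed is almost fixed} already feeds exactly that corollary's pointwise bound $|S_1(t)-\alpha_0'|\le\alpha_0\beta_0^{C_\circ}\sigma_1(t;S)+\alpha_0^{3/2}\beta_0^{\theta}$ into the integral. The constant piece $\alpha_0^{3/2}\beta_0^{\theta}$ is integrated against the block length $\alpha_0^{-2}\beta_0^{\theta}$, which unavoidably yields $\alpha_0^{-1/2}\beta_0^{2\theta}$; there is nothing to sharpen. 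With $\|F_\alpha\|_\infty\le C\alpha_0^{5/2}\beta_0^{6\theta}$ this gives a drift contribution of order $\alpha_0^2\beta_0^{8\theta}$, not $\alpha_0^2\beta_0^{7\theta}$, so the chain of estimates you describe establishes the corollary only with exponent $8\theta+O(1)$. (This is the bound the paper's own argument actually produces; the $7\theta$ in the statement appears to be a benign slip, since every downstream use of the corollary only needs $\alpha_0^2\beta_0^{O(\theta)}\ll\alpha_0^{3/2}\beta_0^{\theta}$.) To repair your write-up, either state and prove the weaker exponent $8\theta+2$, which suffices for the application in \eqref{eq:def:tauxpprime}, or find a genuinely new source of savings; the mechanism you cite does not supply one.
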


\begin{proof}
	Let $\delta = \alpha_0^{10}$, and define $\mathcal{D}_{\alpha_0}:= \left\{\alpha: \, \left|\alpha-\alpha_0 \right| \le \alpha_0^{\frac{3}{2}}\beta_0^{6\theta}, \, \alpha -\alpha_0 = k \delta \textnormal{ for some } k\in \mathbb{Z}  \right\}$. Note that Lemma \ref{lem:ind2:alphaPvsalphaPP} controls the difference $|L(\alpha_1)-L(\alpha_0)|$, and we can apply a union bound over the choice of $\alpha_1$, extending the result into 

		\begin{equation}
		\PP \left(\left. \sup_{\alpha \in \mathcal{D}_{\alpha_0} } \left|L(\alpha)-L(\alpha_0)  \right| \ge \alpha_0^2 \beta_0^{7\theta}, \ \tau' \ge \hat{t}_0 \, \right| \, \mathcal{A}(\alpha_0,t_0)  \right) \le \exp\left( -\beta_0^4\right).
		\end{equation}
	Then, note that if $|\alpha- \hat{\alpha}| \le \delta$ for some $\hat{\alpha } \in \mathcal{D}_{\alpha_0}$ and $t_1\le \tau'$,  then $|L(\alpha)-L(\hat{\alpha})| \le \alpha_0^5$, by crudely bounding \eqref{eq:integral of F}, concluding the proof.
\end{proof}

\subsubsection{Estimating the event $\mathcal{A}_1$}\label{subsubsec:ind2:A1}

Recall the definition of $\mathcal{A}_1(\alpha_1,t_1)$ in Section \ref{subsubsec:reg:history}. Our goal in this subsection is to establish the following lemma.

\begin{lem}\label{lem:ind2:A1}
	Under the setting of Theorem \ref{thm:ind2:main}, we have
	\begin{equation}
	\PP\left(\mathcal{A}_1(\alpha_1,t_1)^c,\ \tau'\ge \hat{t}_0\ | \ \mathcal{A}(\alpha_0,t_0) \right) \le
	\exp \left(-\beta_0^3 \right)
	\end{equation}
\end{lem}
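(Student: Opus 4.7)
The strategy is to verify the pointwise lower bound
\[
|\Pi_S[s, t_1^-]| \ge f(t_1^- - s) - \alpha_1^{-1}\beta_1^\theta, \qquad f(x):=\sqrt{x+C_\circ}\log^2(x+C_\circ),
\]
for every $s\in[0,t_1^-]$ on a high-probability event, splitting the argument into the outer regime $s\le t_0^-$ (where the hypothesis $\mathcal{A}_1(\alpha_0,t_0)$ is exploited) and the inner regime $s\in[t_0^-,t_1^-]$ (where the growth of $\Pi_S$ during $[t_0^-,t_1^-]$ is used directly).

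\textbf{Step 1 (the good event $E$).} Condition on the intersection of: (i) the given $\tau'\ge \hat t_0$, which via $\tau_6^+(1/2)$ gives $|\Pi_S[t_0^-,t]|\ge \alpha_0(t-t_0^-)/50$ for $t\in[t_0^+,\hat t_0]$; (ii) the conclusion of Lemma~\ref{lem:ind2:alpha1vsalpha0}, yielding $|\alpha_1-\alpha_0|\le 2\alpha_0^{3/2}\beta_0^{6\theta}$ and hence $\alpha_1^{-1}\beta_1^\theta = \alpha_0^{-1}\beta_0^\theta + O(\alpha_0^{-1/2}\beta_0^{7\theta})$; (iii) the conclusion of Lemma~\ref{lem:ind1:tau13} with $\Delta=\alpha_0^{-3/2}$, which after summing over disjoint sub-intervals gives $\int_s^{t_1^-} S(u)\,du \ge \alpha_0(t_1^- - s)/2$ whenever $t_1^- - s$ exceeds a fixed polynomial threshold in $\alpha_0^{-1}$; (iv) a uniform Azuma-type deviation bound $\bigl||\Pi_S[s,t]|-\int_s^t S(u)\,du\bigr|\le \beta_0^{4}\sqrt{2\alpha_0\beta_0^{C_\circ}(t-s)}$ over a polynomial grid of pairs $(s,t)\subset[t_0^-,t_1^-]$, extended to all such pairs by a short continuity argument in the spirit of Lemma~\ref{lem:concen of int:conti:forwardtime}. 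The complement of this event has conditional probability at most $\exp(-\beta_0^3)$.

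\textbf{Step 2 (outer regime, $s\le t_0^-$).} Decompose $|\Pi_S[s,t_1^-]|=|\Pi_S[s,t_0^-]|+|\Pi_S[t_0^-,t_1^-]|$. By $\mathcal{A}_1(\alpha_0,t_0)$ the first summand is at least $f(t_0^- - s)-\alpha_0^{-1}\beta_0^\theta$, while item (i) bounds the second by $\alpha_0(t_1^- - t_0^-)/50 \ge \alpha_0^{-1}\beta_0^{9\theta}/100$. Setting $u:=t_0^- - s$ and $v:=t_1^- - t_0^-\le 3\alpha_0^{-2}\beta_0^{10\theta}$, it remains to check
\[
f(u+v)-f(u) \le \tfrac{1}{100}\alpha_0^{-1}\beta_0^{9\theta} + \bigl(\alpha_1^{-1}\beta_1^\theta-\alpha_0^{-1}\beta_0^\theta\bigr).
\]
Using $f'(x)\le C\log^2(x+C_\circ)/\sqrt{x+C_\circ}$ and a case split on whether $u\le v$ or $u\ge v$ yields $f(u+v)-f(u)\le C\sqrt{v}\log^2(u+v+C_\circ)\le C\alpha_0^{-1}\beta_0^{5\theta+2}$, which is dwarfed by $\alpha_0^{-1}\beta_0^{9\theta}/100$ since $\theta=10000$ and the $\alpha_1^{-1}\beta_1^\theta-\alpha_0^{-1}\beta_0^\theta$ correction is $O(\alpha_0^{-1/2}\beta_0^{7\theta})$.

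\textbf{Step 3 (inner regime, $s\in[t_0^-,t_1^-]$).} Let $\ell:=t_1^- - s$. If $\ell\le \alpha_0^{-2}\beta_0^{2\theta-10}$, a direct estimate gives $f(\ell)\le C\alpha_0^{-1}\beta_0^{\theta-3}\le \tfrac12 \alpha_1^{-1}\beta_1^\theta$, so the required inequality is trivially satisfied by $|\Pi_S[s,t_1^-]|\ge 0$. If $\ell>\alpha_0^{-2}\beta_0^{2\theta-10}$, items (iii)–(iv) combine to produce $|\Pi_S[s,t_1^-]|\ge \int_s^{t_1^-}S-\beta_0^4\sqrt{2\alpha_0\beta_0^{C_\circ}\ell}\ge \alpha_0\ell/8$; since in this range $\alpha_0\ell/f(\ell)\gtrsim \beta_0^{\theta-7}$, we conclude $|\Pi_S[s,t_1^-]|\ge f(\ell)>f(\ell)-\alpha_1^{-1}\beta_1^\theta$. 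The main technical obstacle is item (iv) of Step 1: achieving a uniform-in-$(s,t)$ martingale concentration with error probability below $\exp(-\beta_0^3)$ forces a union bound over a fine discretization combined with a monotonicity/continuity argument analogous to the machinery developed in Section~\ref{subsec:fixed:mgconcen}; the remaining analytic manipulations on $f$ are routine.
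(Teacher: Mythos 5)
The proposal follows essentially the same route as the paper: split into an outer regime $s\le t_0^-$ and an inner regime $s\in[t_0^-,t_1^-]$, use the hypothesis $\mathcal{A}_1(\alpha_0,t_0)$ together with the linear growth of the aggregate over $[t_0^-,t_1^-]$, and handle small $\ell=t_1^--s$ trivially. The paper packages the linear-growth input into an intermediate Lemma~\ref{lem:ind2:A11} and then invokes the concavity (subadditivity) of $f(x)=\sqrt{x+C_\circ}\log^2(x+C_\circ)$; your version instead quotes $\tau_6^+,\tau_{10}^+$ directly and estimates $f(u+v)-f(u)$ via the derivative. Those substitutions are sound. However, Step~2 as written contains an error at the place where the paper's concavity argument is actually load-bearing.

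Specifically, the chain
\begin{equation}
f(u+v)-f(u)\le C\sqrt{v}\,\log^2(u+v+C_\circ)\le C\alpha_0^{-1}\beta_0^{5\theta+2}
\end{equation}
fails: the second inequality requires $\log^2(u+v+C_\circ)=O(\beta_0^2)$, but $u=t_0^- - s$ can be as large as $t_0^-$ itself, and $t_0$ (hence $u+v$) is not controlled by any power of $\alpha_0^{-1}$ once the induction has run for a while. The correct statement is $f(u+v)-f(u)\le C\sqrt{v}\,\log^2(v+C_\circ)$: either observe that $f$ is concave with $f(0)\ge 0$, so $f(u+v)-f(u)\le f(v)$, or note that $f'(x)\asymp \log^2(x+C_\circ)/\sqrt{x+C_\circ}$ is decreasing (since $x+C_\circ\ge 50>e^{2\sqrt 2}$), so for $u\ge v$ the mean value theorem gives $f(u+v)-f(u)\le v\,f'(v)$. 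The $\log^2$ factor is then evaluated at $v\le 2\alpha_0^{-2}\beta_0^{10\theta}$, which is $O(\beta_0^2)$, and the remainder of your comparison against $\frac{1}{100}\alpha_0^{-1}\beta_0^{9\theta}$ goes through unchanged. This is precisely the subadditivity step the paper performs in its third case; your proof quietly replaced it with a bound that is only valid when $u\lesssim v$. Once repaired, the argument is correct and matches the paper's.
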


To establish this lemma, we study the size of $X_{t_1^{-}}-X_s$ in two separate regime: for $s>t_0^{-}$ and $s \le t_0^{-}$. To analyze the first case, we consider the following event $\mathcal{A}_{1,1}$:
\begin{equation}
\mathcal{A}_{1,1}=\mathcal{A}_{1,1}(\alpha_0,t_0) :=
\left\{\forall \Delta \in [\alpha_0^{-2}\beta_0^{\theta/2}, t_1^{-}-t_0^{-}], \ X_{t_1^{-}}-X_{t_1^{-}-\Delta} \ge \frac{\alpha_0\Delta}{3} \right\}.
\end{equation}

\begin{lem}\label{lem:ind2:A11}
	Under the setting of Theorem \ref{thm:ind2:main}, we have
	\begin{equation}
	\PP \left(\mathcal{A}_{1,1}^c,\ \tau'\ge \hat{t}_0 \ | \ \mathcal{A}(\alpha_0,t_0) \right) \le \exp\left(-\beta_0^4\right). 
	\end{equation}
\end{lem}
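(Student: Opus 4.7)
The plan is to compare $|\Pi_S[t_1^- - \Delta, t_1^-]|$ with $\alpha_0\Delta$ in two stages: first compare the point count with the integral $\int_{t_1^--\Delta}^{t_1^-} S(s)\,ds$ via martingale concentration, then compare that integral with $\alpha_0\Delta$ using the regularity already established. Concretely, on the event $\{\tau'\ge\hat{t}_0\}$, Corollary \ref{cor:ind1:tau12} first gives $\tau_9^+\ge\hat{t}_0$ with high probability, and then Lemma \ref{lem:ind1:tau13} gives $\tau_{10}^+(\Delta)\ge\hat{t}_0$ with high probability, from which $\bigl|\int_{t_1^--\Delta}^{t_1^-}(S(s)-\alpha_0)\,ds\bigr| \le \alpha_0^{3/2-7\epsilon}\Delta$, which is $o(\alpha_0\Delta)$.

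For the point-count versus integral comparison at fixed $\Delta$ in the moderate range $[\alpha_0^{-2}\beta_0^{\theta/2},\,t_1^- - t_0^+]$, I would apply Corollary \ref{cor:concentration of integral} with $f\equiv 1$, $g=S$, $\tau=\tau'$, $M=2\alpha_0\beta_0^{C_\circ}\Delta$ (using the upper bound from $\tau_1$), and $a=\beta_0^5$. The resulting Gaussian fluctuation $\beta_0^5\sqrt{M}$ is of order $\alpha_0\Delta\cdot\alpha_0^{1/2}\beta_0^{5+C_\circ/2-\theta/4}$, hence negligible relative to $\alpha_0\Delta$ because $\theta\gg C_\circ$. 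This yields $|\Pi_S[t_1^--\Delta,t_1^-]|\ge \tfrac{2}{3}\alpha_0\Delta$ with probability at least $1-Ce^{-\beta_0^5}$.

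To make the estimate uniform in $\Delta$, I would discretize on a geometric grid $\Delta_k=2^k\alpha_0^{-2}\beta_0^{\theta/2}$ (only $O(\beta_0)$ scales reach $t_1^- - t_0^+$), take a union bound which is easily absorbed into the target $e^{-\beta_0^4}$, and then use the monotonicity $\Delta\mapsto|\Pi_S[t_1^--\Delta,t_1^-]|$ to interpolate, losing only a factor of two that is safely covered by the margin in Step 2. For $\Delta\in(t_1^- - t_0^+,\,t_1^--t_0^-]$, the trivial bound $|\Pi_S[t_1^--\Delta,t_1^-]|\ge|\Pi_S[t_0^+,t_1^-]|$ reduces to the moderate case at $\Delta'=t_1^--t_0^+$; the desired inequality then follows because, by \eqref{eq:t1 regime} and \eqref{eq:def:t0t1}, $t_0^+-t_0^-\le 4\alpha_0^{-2}\beta_0^\theta$ is negligible compared to $t_1^--t_0^+\ge\tfrac{1}{2}\alpha_0^{-2}\beta_0^{9\theta}$, so $\Delta\le (1+o(1))(t_1^--t_0^+)$ and $\tfrac{2}{3}\alpha_0(t_1^--t_0^+)\ge\tfrac{1}{3}\alpha_0\Delta$.

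The main obstacle is really the short initial window $[t_0^-,t_0^+]$: the regularity stopping times only yield the weaker lower bound $|\Pi_S[t_0^-,t]|\ge\alpha_0(t-t_0^-)/50$ (from $\tau_6^+$ with $\kappa=1/2$, or from $\mathcal{A}_2$), which is strictly worse than the target $\alpha_0\Delta/3$ and cannot be improved by sharpening the regularity machinery alone. The resolution, as indicated above, is a purely scale-separation argument exploiting the fact that $\theta$ is vastly larger than the polylog exponents appearing in the definitions of the regularity stopping times, so that the short window $[t_0^-,t_0^+]$ is dwarfed by $[t_0^+,t_1^-]$ and can be safely discarded when producing a lower bound on the total point count.
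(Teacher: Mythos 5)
Your proposal is correct and follows essentially the same route as the paper, which explicitly cites Corollary~\ref{cor:concentration of integral} and Lemma~\ref{lem:ind1:tau13} applied to $|\Pi_S[s,t_1^-]|-\int_s^{t_1^-}S(x)\,dx$, with the details deferred by analogy to Lemma~\ref{lem:ind1:tau9 10 11}. Your explicit treatment of the range $\Delta > t_1^- - t_0^+$, where the truncation at $t_0^+$ built into $\tau_{10}^+(\Delta)$ forces a scale-separation argument to discard the short window $[t_0^-,t_0^+]$, is a careful filling-in of a detail the paper omits, but it is not a different method.
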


\begin{proof}
	This comes as a consequence of Lemma \ref{lem:ind1:tau13} and Corollary \ref{cor:concentration of integral}, applied to
	\begin{equation}
	|\Pi_S[s,t_1^-]|-\intop_{s}^{t_1^-} S(x)dx.
	\end{equation}
	The argument goes analogously as the proof of Lemma \ref{lem:ind1:tau9 10 11}, and the details are omitted due to similarity.
\end{proof}

\begin{proof}[Proof of Lemma \ref{lem:ind2:A1}]
	Suppose that  $s\in [t_1^{-}-\alpha_0^{-2}\beta_0^{\theta/2},t_1^{-} ] $. Then, given the event $\mathcal{A}_4$ from $\eqref{eq:def:A4}$, the condition for $\mathcal{A}_1(\alpha_1,t_1)$ is automatically satisfied, since
	\begin{equation}\label{eq:ind2:A1:pf1}
	\alpha_1^{-1} \beta_1^{\theta} \ge \frac{1}{2}\alpha_0^{-1}\beta_0^{\theta} \ge
	\sqrt{\alpha_0^{-2}\beta_0^{\theta/2}} \log^2(\alpha_0^{-2}\beta_0^{\theta/2})\ge \sqrt{t_1^{-}-s+C_\circ}\log^2 (t_1^{-}-s+C_\circ).
	\end{equation}
	
	Now consider the case $s\in[t_0^{-},t_1^{-}-\alpha_0^{-2}\beta_0^{\theta/2}]$. Conditioned on the event $\mathcal{A}_{1,1}$ from Lemma \ref{lem:ind2:A11}, we have
	\begin{equation}\label{eq:ind2:A1:pf2}
	X_{t_1^{-}}-X_s \ge \frac{1}{3}{\alpha_0(t_1^{-}-s)} \ge \sqrt{t_1^{-}-s+C_\circ} \log^2 (t_1^{-}-s+C_\circ),
	\end{equation}
	which satisfies the criterion for $\mathcal{A}_1(\alpha_1,t_1)$.
	
	Finally, for the case $s\le t_0^{-}$ conditioned on $\mathcal{A}(\alpha_0,t_0)$, we observe that
	\begin{equation}\label{eq:ind2:A1:pf3}
	\begin{split}
	X_{t_1^{-}}-X_s &= X_{t_1^{-}}-X_{t_0^{-}} + X_{t_0^{-}} X_s\\
	&\ge 
	\frac{\alpha_0}{3}(t_1^{-}-t_0^{-}) + \sqrt{t_0^{-}-s+C_\circ}\log^2 (t_0^{-}-s+C_\circ)-\alpha_0^{-1}\beta_0^{\theta/2}\\
	&\ge 
	\frac{\alpha_0}{4}(t_1^{-}-t_0^{-})+ \sqrt{t_0^{-}-s+C_\circ}\log^2 (t_0^{-}-s+C_\circ)\\
	&\ge 
	\sqrt{t_1^{-}-t_0^{-}+C_\circ}\log^2 (t_1^{-}-t_0^{-}+C_\circ)
	+
	\sqrt{t_0^{-}-s+C_\circ}\log^2 (t_0^{-}-s+C_\circ)\\
	&\ge \sqrt{t_1^{-}-s+C_\circ}\log^2 (t_1^{-}-s+C_\circ),
	\end{split}
	\end{equation}
	where the last step follows from the fact that the function $f(x)=\sqrt{x+50}\log^2(x+50)$ is positive, increasing and concave.
	
	Thus, from \eqref{eq:ind2:A1:pf1}, \eqref{eq:ind2:A1:pf2} and \eqref{eq:ind2:A1:pf3}, we see that $\mathcal{A}_1(\alpha_1,t_1)$ holds if $\mathcal{A}(\alpha_0,t_0)$, $\mathcal{A}_{1,1}$ and $\mathcal{A}_4$ are given. Thus, due to Lemmas \ref{lem:ind2:alpha1vsalpha0} and \ref{lem:ind2:A11}, we obtain the conclusion.
\end{proof}

\subsubsection{Estimating the event $\mathcal{A}_2$}\label{subsubsec:ind2:A3}

Recall the definition of $\mathcal{A}_2(\alpha_1,t_1)$ in Section \ref{subsubsec:reg:history}. In  this  subsection, we show that   $\mathcal{A}_2(\alpha_1,t_1)$ occurs with high probability.

\begin{lem}\label{lem:ind2:A3}
	Under the setting of Theorem \ref{thm:ind2:main}, we have
	\begin{equation}
	\PP\left(\mathcal{A}_2(\alpha_1,t_1)^c,\ \tau'\ge \hat{t}_0\ | \ \mathcal{A}(\alpha_0,t_0) \right) \le \exp\left(-\beta_0^3 \right)
	\end{equation}
\end{lem}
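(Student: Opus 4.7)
The plan is to show that $|\Pi_S[t_1^-,t_1]|$ is concentrated around its expected size $\alpha_0(t_1-t_1^-)=\alpha_0^{-1}\beta_0^\theta$, and that $\alpha_1^{-1}\beta_1^\theta$ is the same quantity up to a $(1+o(1))$ factor. Since $t_1-t_1^-=\alpha_0^{-2}\beta_0^\theta$, both will sit comfortably in the window $[\tfrac14, 4]\cdot \alpha_1^{-1}\beta_1^\theta$.

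First I would condition on the $\mathcal{F}_{t_1}$-measurable event $\mathcal{A}_4$ from \eqref{eq:def:A4}, which Lemma~\ref{lem:ind2:alpha1vsalpha0} guarantees to hold (together with $\tau'\ge\hat t_0$ under $\mathcal{A}(\alpha_0,t_0)$) up to an error of at most $2\exp(-\beta_0^5)$. On $\mathcal{A}_4$ we have $|\alpha_1-\alpha_0|\le 2\alpha_0^{3/2}\beta_0^{6\theta}$, so $\alpha_1=\alpha_0(1+o(1))$, $\beta_1=\beta_0(1+o(1))$, and hence
\[
\alpha_1^{-1}\beta_1^\theta = (1+o(1))\,\alpha_0^{-1}\beta_0^\theta.
\]

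Next I would estimate the mean size via the drift bound. Setting $\Delta:=t_1-t_1^-=\alpha_0^{-2}\beta_0^\theta\ge\alpha_0^{-1}$, Lemma~\ref{lem:ind1:tau13} applied at time $t_1$ yields
\[
\int_{t_1^-}^{t_1}|S(s)-\alpha_0|\,ds \;\le\; \alpha_0^{3/2-7\epsilon}\Delta \;=\; \alpha_0^{-1/2-7\epsilon}\beta_0^\theta
\]
with probability at least $1-\exp(-\beta_0^3)$ on $\{\tau'\ge\hat t_0\}\cap\mathcal{A}(\alpha_0,t_0)$. Therefore $\int_{t_1^-}^{t_1}S(s)\,ds = \alpha_0^{-1}\beta_0^\theta\bigl(1+O(\alpha_0^{1/2-7\epsilon})\bigr)$.

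For the martingale part, I would apply Corollary~\ref{cor:concentration of integral} to $f\equiv 1$, $g=S$ on $[t_1^-,t_1]$. By the definition of $\tau_1(\alpha_0,t_0,2)$ (which is $\ge\hat t_0$ on $\{\tau'\ge\hat t_0\}$) we have $S(s)\le 2\alpha_0\beta_0^{C_\circ}$ throughout, so I can take $M=2\alpha_0\beta_0^{C_\circ}\Delta = 2\alpha_0^{-1}\beta_0^{\theta+C_\circ}$ as the quadratic-variation bound, and $|f|\le\sqrt{M}$ trivially. Choosing $a=\beta_0^2$ gives
\[
\Bigl|\,|\Pi_S[t_1^-,t_1]| - \int_{t_1^-}^{t_1}S(s)\,ds\,\Bigr| \;\le\; \beta_0^2\sqrt{M} \;=\; \alpha_0^{-1/2}\beta_0^{(\theta+C_\circ)/2+2}
\]
with probability at least $1-C\exp(-\beta_0^2)$. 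This fluctuation is of smaller order than $\alpha_0^{-1}\beta_0^\theta$ (recall $\theta$ is chosen much larger than $C_\circ$).

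Combining the three displays, on the intersection of all the good events we get $|\Pi_S[t_1^-,t_1]| = (1+o(1))\alpha_0^{-1}\beta_0^\theta = (1+o(1))\alpha_1^{-1}\beta_1^\theta$, which falls inside $[\tfrac14,4]\cdot\alpha_1^{-1}\beta_1^\theta$ for all sufficiently small $\alpha_0$. A final union bound over the failure probabilities in Lemma~\ref{lem:ind2:alpha1vsalpha0}, Lemma~\ref{lem:ind1:tau13}, and the Azuma step yields the desired bound $\exp(-\beta_0^3)$. There is no serious obstacle — every ingredient is already in hand; the only thing to be careful about is that the weak-$\kappa$ regularity inherited from $\tau'\ge\hat t_0$ is enough to invoke both Lemma~\ref{lem:ind1:tau13} and the martingale estimate with the stated parameters, which indeed it is.
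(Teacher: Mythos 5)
Your proposal is correct and takes essentially the same route the paper sketches: Lemma~\ref{lem:ind1:tau13} for the drift $\int_{t_1^-}^{t_1}(S-\alpha_0)$, Corollary~\ref{cor:concentration of integral} for the martingale fluctuation of $|\Pi_S[t_1^-,t_1]|-\int S$, and the $\mathcal{A}_4$ event from Lemma~\ref{lem:ind2:alpha1vsalpha0} to convert between the $\alpha_0$- and $\alpha_1$-normalizers. One small bookkeeping slip: with $a=\beta_0^2$ the Azuma step gives error $C e^{-\beta_0^2}$, which does not fit inside the target budget of $e^{-\beta_0^3}$; you should take $a=\beta_0^4$ (say), which costs nothing since the resulting fluctuation $\beta_0^4\sqrt{M}=O(\alpha_0^{-1/2}\beta_0^{(\theta+C_\circ)/2+4})$ is still negligible against $\alpha_0^{-1}\beta_0^\theta$.
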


\begin{proof}
This is a direct consequence of Lemma \ref{lem:ind1:tau13} and Corollary \ref{cor:concentration of integral}, following the same argument discussed in Lemma \ref{lem:ind1:tau9 10 11} (see also Lemma \ref{lem:ind2:A11}). We omit its details due to similarity to the previous proofs.
\end{proof}

Wrapping up the discussion in Section \ref{subsec:ind2:ratechange}, we finish the proof of Theorem \ref{thm:ind2:main}-(2).

\begin{proof}[Proof of Theorem \ref{thm:ind2:main}-(2)]
	Combining Lemmas \ref{lem:ind2:ratechange}, \ref{lem:ind2:A1} and \ref{lem:ind2:A3}, we obtain that 	
	\begin{equation}
	\PP\left(\mathcal{A}(\alpha_1,t_1)^c, \ \tau'\ge\hat{t}_0\ | \ \mathcal{A}(\alpha_0,t_0) \right) \le \exp\left(-\beta_0^2\right),
	\end{equation}
	which directly implies Theorem \ref{thm:ind2:main}.
\end{proof}

\subsection{Regularity of speed in the next step}\label{subsec:ind2:remaining reg}

Recall the definition of $\tau(\alpha_1,t_1,2)$, ${\tau}^+(\alpha_0,t_0,1/2)$, $\mathcal{A}(\alpha_1,t_1)$ and $\mathcal{A}(\alpha_0,t_0)$ given in \eqref{eq:def:tau:induction} and \eqref{eq:def:A:induction}. Our goal in this subsection is to deduce the desired control on \begin{equation}
\tilde{ \tau} := \tau(\alpha_1,t_1,2)
\end{equation} 
(excluding $\tau_{3}^\sharp(\alpha_1,t_1)$), which is given by the following proposition.

\begin{prop}\label{prop:ind2:prereg}
	Under the setting of Theorem \ref{thm:ind2:main}, the following holds true:
	$$\PP(\tilde{ {\tau}}\le \tau' ,\ \tau'\ge \hat{t}_0 \ | \ \mathcal{A}(\alpha_0,t_0) ) \le \exp(-\beta_0^{2}) .$$
\end{prop}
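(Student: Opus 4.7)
The argument parallels the strategy of Theorem \ref{thm:reg:conti:main}: we bound $\PP(\tau_i(\alpha_1,t_1,2) = \tilde{\tau} \le \tau', \, \tau' \ge \hat{t}_0 \mid \mathcal{A}(\alpha_0,t_0))$ for each $1 \le i \le 8$ individually and sum. Throughout the proof we freely use the event
$$\mathcal{E} := \{\tau' \ge \hat{t}_0\} \cap \mathcal{A}_4 \cap \{\text{all conclusions of Lemmas \ref{lem:ind2:alpha1vsalpha0}, \ref{lem:ind2:alphavsalphaPP}, \ref{lem:ind2:alphaPvsalphaPP}, Corollary \ref{cor:ind2:Ldiff} hold}\},$$
which has complementary probability at most $\exp(-\beta_0^3)$ given $\mathcal{A}(\alpha_0,t_0)$. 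On $\mathcal{E}$ we have in particular the key comparison $|\alpha_1-\alpha_0| \le 2\alpha_0^{3/2}\beta_0^{6\theta}$, which is of strictly smaller order than every parameter we wish to control, so $\alpha_1$ and $\alpha_0$ are interchangeable up to negligible error in the relevant bounds, and $\beta_1 = \beta_0(1+o(1))$.

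The first block of stopping times, $\tau_1, \tau_2, \tau_6, \tau_7, \tau_8$, follow by direct comparison with ${\tau}^+(\alpha_0,t_0,1/2) \ge \tau'$. For $\tau_1(\alpha_1,t_1,2)$, the bound $2\alpha_1 \beta_1^{C_\circ}$ is (on $\mathcal{E}$) at least $\alpha_0 \beta_0^{C_\circ}$, whereas ${\tau}^+_1(1/2) \ge \tau'$ gives $S(t) \le \frac{1}{2}\alpha_0 \beta_0^{C_\circ}$ on $[t_0^+,\tau']$. For $\tau_2(\alpha_1,t_1,2)$ the new first-order approximation $S_1^{\mathrm{new}}(t) := \int_{t_1^-}^t K_{\alpha_1}(t-x) d\Pi_S(x)$ differs from $S_1(t;t_0^-,\alpha_0)$ only by $\int_{t_0^-}^{t_1^-} K_{\alpha_0}(t-x) d\Pi_S(x)$ (which is $O(\alpha_0^{100})$ by decay of $K_{\alpha_0}$) and by the rate perturbation, which is uniformly $o(\alpha_0\beta_0^{C_\circ})$ via Claim \ref{claim:derivative of h} applied pointwise under $|\alpha_1-\alpha_0| \le \alpha_0^{3/2}\beta_0^{6\theta}$. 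So ${\tau}^+_2(1/2) \ge \tau'$ transfers. The aggregate size stopping times $\tau_6, \tau_7, \tau_8$ transfer because they involve counting $|\Pi_S[\cdot,\cdot]|$ with thresholds that are monotone in $\kappa$ and in $\alpha$, and the bounds at $(\alpha_0,t_0,1/2)$ are uniformly stronger than those at $(\alpha_1,t_1,2)$ on $\mathcal{E}$ (in particular $\tau_8$ covers its required gap length, and $\tau_6$ follows from the cumulative growth provided by ${\tau}^+_6(1/2)$ together with $\mathcal{A}_{1,1}$ from Lemma \ref{lem:ind2:A11}).

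The remaining stopping times $\tau_3, \tau_4, \tau_5$ are handled by applying Proposition \ref{prop:fixed perturbed:error} with parameters $\alpha = \alpha_1$, $t^- = t_1^-$, $\hat{t} = \hat{t}_1$, and stopping time $\tau = \tilde{\tau} \wedge \tau'$. The three hypotheses in \eqref{eq:error:assumptions} must be verified for this new frame:
\begin{itemize}
\item $\int_{t_1^-}^{\hat{t}\wedge \tau} (S(s)-\alpha_1)^2 ds$ splits as $\int_{t_1^-}^{t_1}$ (handled by Claim \ref{claim:speed is almost fixed} together with $|\alpha_1-\alpha_2| = O(\alpha_0^{3/2}\beta_0^{C_\circ})$) plus $\int_{t_1}^{\hat{t}\wedge\tau} (S(s)-\alpha_1)^2 ds$, which we bound by $2\int(S(s)-\alpha_0)^2 ds + 2|\alpha_1-\alpha_0|^2 \cdot (\hat{t}-t_1) \le \alpha_1 \beta_1^{25\theta}/4 + O(\alpha_0^4\beta_0^{22\theta})$ using the definition of ${\tau}^+_4(1/2)$;
\item the analogous bound for the $S(s)$-weighted version follows from ${\tau}^+_5(1/2)$ using $\tau_1$;
\item the uniform bound $S(s), S_1^{\mathrm{new}}(s) \le \alpha_1^{1-\epsilon/400}$ follows from $\tau_1, \tau_2$ already established above.
\end{itemize}
Proposition \ref{prop:fixed perturbed:error} then yields the desired two-sided bound on $S(t)-S_1^{\mathrm{new}}(t)$, handling $\tau_3(\alpha_1,t_1)$. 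The bounds $\tau_4, \tau_5$ (for the interval $[t_1, \tilde{\tau}]$) are then obtained by repeating the calculation of Lemma \ref{lem:ind1:tau78}: the $\tau_9^+$-style pointwise error bound on $|S(t)-\alpha_1| \le |S(t)-S_1^{\mathrm{new}}(t)| + |S_1^{\mathrm{new}}(t)-\alpha_1|$ (with the second piece controlled as in Lemma \ref{lem:ind1:taux} via the branching martingale representation \eqref{eq:integralform:branching} for the new frame, using Corollary \ref{cor:ind2:Ldiff} to handle the rate change) is integrated against $dt$ via Claim \ref{claim:ind1:pointsum:S} and Lemma \ref{lem:ind1:pi1int basicbd:basic}, yielding bounds strictly smaller than $\frac{1}{2}\alpha_1 \beta_1^{25\theta}$ and $\frac{1}{2}\alpha_1^2 \beta_1^{25\theta}$, respectively, on the relevant interval.

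The main obstacle is the verification of the hypothesis of Proposition \ref{prop:fixed perturbed:error} for $\tau_3$, because the frame of reference $\alpha_1$ is itself random and $\mathcal{F}_{t_1^-}$-measurable, so one must condition on $\mathcal{F}_{t_1^-}$ and use that on the conditioning event all the needed deterministic bounds hold; this is where the previously established $\mathcal{A}_4$ and Corollary \ref{cor:ind2:Ldiff} play a crucial role. A union bound over the eight events then yields $\PP(\tilde{\tau} \le \tau',\, \tau' \ge \hat{t}_0 \mid \mathcal{A}(\alpha_0,t_0)) \le \exp(-\beta_0^2)$, completing the proof.
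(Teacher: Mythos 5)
Your proof captures the paper's overall strategy (bound each $\tau_i(\alpha_1,t_1,2)$ separately, then union bound, all modulo a high-probability event containing $\mathcal{A}_4$ and $\{\tau'\ge\hat{t}_0\}$), and I believe each step could be made to work, but the organization is less economical than the paper's and in one place circular in presentation.

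The paper first observes that $\tau_1,\tau_4,\tau_5,\tau_7,\tau_8(\alpha_1,t_1,2)$ exceed $\tau'$ \emph{deterministically} on $\{\tau'\ge\hat{t}_0\}\cap\mathcal{A}_4$: $\tau_1,\tau_7,\tau_8$ transfer by elementary monotonicity, and $\tau_4,\tau_5$ by the bare triangle inequality
$\intop_{t_1}^t(S-\alpha_1)^2\le 2\intop(S-\alpha_0)^2+2(\hat{t}_0-t_1)|\alpha_1-\alpha_0|^2$,
which under $\tau_4^+(\alpha_0,t_0,1/2)$ and $\mathcal{A}_4$ is already below the $(\alpha_1,t_1,2)$-threshold. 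Only then does it apply Proposition \ref{prop:fixed perturbed:error} for $\tau_3$ (Lemma \ref{lem:ind2:tau6}), using the $\tau_4,\tau_5$ control that has already been established. You instead verify the Proposition's hypotheses first and then say $\tau_4,\tau_5$ ``are obtained by repeating the calculation of Lemma \ref{lem:ind1:tau78}'' — but the hypothesis verification you wrote (your first bullet point) is precisely the direct triangle-inequality argument that already proves $\tau_4(\alpha_1,t_1,2)\ge\tau'$ on $\mathcal{A}_4$. Re-running Lemma \ref{lem:ind1:tau78} is redundant, and the presentation invites a circularity concern (one would be using $\tau_3$ bounds to re-derive what one needed to invoke Proposition \ref{prop:fixed perturbed:error} in the first place). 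You should establish $\tau_4,\tau_5$ by the triangle inequality first, then use them in the Proposition for $\tau_3$.

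For $\tau_2$, your comparison of $S_1^{\mathrm{new}}$ to the old $S_1$ via Claim \ref{claim:derivative of h} can be made to work but is more indirect than the paper's Lemma \ref{lem:ind2:tau5}, which bounds $S_1^{\mathrm{new}}(t)$ from scratch by summing $K_{\alpha_1}$ over length-$\alpha_0^{-1}$ intervals using the point count control from $\tau_7^+$; no $K_{\alpha_0}$-vs-$K_{\alpha_1}$ comparison is needed. Your grouping of $\tau_6$ with the ``trivial monotone transfer'' class is misleading: $\tau_6$ at $(\alpha_1,t_1,2)$ refers to the point count on $[t_1^-,t]$, a different window than $\tau_6^+$, so it does not transfer by monotonicity; the paper proves it separately (Lemma \ref{lem:ind2:tau9}) via a concentration argument on $|\Pi_S[t_1^-,t]|-\intop_{t_1^-}^t S$, not by comparison with $\tau_6^+$ and $\mathcal{A}_{1,1}$ as you suggest (the latter combination is actually the paper's route to $\mathcal{A}_1(\alpha_1,t_1)$, not $\tau_6$). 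These are repairable inaccuracies rather than fatal gaps, but the proof as written would not compile cleanly into a correct argument without reordering the steps and fixing the $\tau_6$ treatment.
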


Recall the event $\mathcal{A}_4$ in \eqref{eq:def:A4}. On the event $\{\tau' \ge \hat{t}_0 \}\cap \mathcal{A}_4$, observe that the following hold true with probability 1:
\begin{itemize}
	\item  $\tau_{1}(\alpha_1,t_1,2)\ge {\tau}^+_1(\alpha_0,t_0,1/2) \ge \tau'$, since $\frac{1}{2}\alpha_0 \beta_0^{C_\circ} \le  2\alpha_1 \beta_1^{C_\circ}$.
	
	\item  $\tau_{7}(\alpha_1,t_1,2)\ge {\tau}^+_{7}(\alpha_0,t_0,1/2)\ge \tau'$, since for any $t\le {\tau}^+_{7}(\alpha_0,t_0,1/2)$, \begin{equation}
	X_t - X_{t-\alpha_1^{-1}} \le X_t - X_{t-2\alpha_0^{-1}} \le 2\cdot \frac{1}{2} \beta_0^4 \le 2\beta_1^4.
	\end{equation}
	
	\item  $\tau_{8}(\alpha_1,t_1,2) \ge {\tau}^+_{8}(\alpha_0,t_0,1/2) \ge \tau'$, since for any $t\le {\tau}^+_{8}(\alpha_0,t_0,1/2)$, we have
	\begin{equation}
	X_t - X_{t-2\alpha_1^{-1} \beta_1^{C_\circ}} 
	\ge X_t - X_{t-\frac{1}{2}\alpha_0^{-1 } \beta_0^{C_\circ}} >0.
	\end{equation}
\end{itemize}

Moreover, conditioned on $\{\tau' \ge \hat{t}_0 \}\cap \mathcal{A}_4$, with probability 1 we have 
\begin{equation}
\tau_{4}(\alpha_1,t_1,2) \wedge \tau_{5} (\alpha_1,t_1,2) \ge {\tau}^+_4(\alpha_0,t_0,1/2) \wedge {\tau}^+_5 (\alpha_0,t_0,1/2)\ge \tau',
\end{equation}
since for any $ t\le {\tau}^+_1(\alpha_0,t_0,1/2) {\tau}^+_4(\alpha_0,t_0,1/2) \wedge {\tau}^+_5 (\alpha_0,t_0,1/2) $, 
\begin{equation}
\begin{split}
\intop_{t_1}^t (S(t)-\alpha_1)^2 dt
&\le \intop_{t_1}^t 2(S(t)-\alpha_0)^2 dt + 2(\hat{t}_0-t_1) (\alpha_1-\alpha_0)^2
\\
&\le 2\cdot \frac{1}{2}\alpha_0 \beta_0^{25\theta} + 2\cdot 2\alpha_0^{-2}\beta_0^{10\theta}\cdot 4\alpha_0^3 \beta_0^{12\theta}
\le 2\alpha_1 \beta_1^{25\theta},
\end{split}
\end{equation}
where the last inequality followed from the definitions of ${\tau}^+_4(\alpha_0,{t_0},1/2)$ and $\mathcal{A}_4$.
The integral in the definition of $\tau_{5}(\alpha_1,t_1,2)$ follows similarly, using the definition of ${\tau}^+_1(\alpha_0,t_0,1/2)$ together. Wrapping up the above discussion, $\tilde{ {\tau}}^{(1)} := \tau_{1}\wedge\tau_{4}\wedge \tau_{5} \wedge \tau_{7}\wedge \tau_{8}(\alpha_1,t_1,2) $, and obtain that
\begin{equation}\label{eq:ind2:tau4781911}
\PP\left(
\tilde{ \tau}^{(1)} \le \tau', \ \tau'\ge \hat{t}_0 \ | \ \mathcal{A}(\alpha_0,t_0)
\right) \le \exp\left(-\beta_0^4 \right).
\end{equation}

Thus, the remaining task is to control $\tau_{2}, \tau_{3}, $ and $\tau_{6}$. We conduct this separately in Lemmas \ref{lem:ind2:tau5}, \ref{lem:ind2:tau6} and \ref{lem:ind2:tau9} below. We begin with studying $\tau_{2}.$

\begin{lem}\label{lem:ind2:tau5}
	Under the setting of Theorem \ref{thm:ind2:main}, we have
	\begin{equation}
	\PP \left(\tau_{2}(\alpha_1,t_1,2) \le \tau',\ \tau'\ge \hat{t}_0\ | \ \mathcal{A}(\alpha_0,t_0) \right) \le \exp\left( -\beta_0^4\right).
	\end{equation}
\end{lem}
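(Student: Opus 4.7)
The plan is to show, deterministically on the event $\mathcal{A}_4 \cap \{\tau' \ge \hat{t}_0\}$ introduced in Lemma~\ref{lem:ind2:alpha1vsalpha0}, that $S_1(t; t_1^-, \alpha_1) \le 2\alpha_1 \beta_1^{C_\circ}$ for every $t \in [t_1, \tau']$, and then to pay only the probability cost $\exp(-\beta_0^5)$ from Lemma~\ref{lem:ind2:alpha1vsalpha0} to obtain the desired $\exp(-\beta_0^4)$ bound. On $\mathcal{A}_4$ we have $|\alpha_1 - \alpha_0| \le 2\alpha_0^{3/2}\beta_0^{6\theta}$, so $\alpha_1 \in [\alpha_0/2, 2\alpha_0]$ and $\beta_1 \ge \beta_0/2$ once $\alpha_0$ is sufficiently small, which lets me exchange factors of $\alpha_0,\beta_0$ for $\alpha_1,\beta_1$ at the cost of absolute constants.

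My approach is to adapt the proof of Lemma~\ref{lem:ind1:tau5} to the shifted frame of reference. I would rewrite
\begin{equation*}
S_1(t;t_1^-,\alpha_1) = \intop_{t_1^-}^t K_{\alpha_1}(t-x)\,d\Pi_S(x) \le \sum_{k\ge 0} \big|\Pi_S\big[(t-(k+1)\alpha_1^{-1})\vee t_1^-,\, t-k\alpha_1^{-1}\big]\big|\cdot K_{\alpha_1}(k\alpha_1^{-1}).
\end{equation*}
Since $t_1^- > t_0^+$ and each slab has length at most $2\alpha_0^{-1}$, covering it by two intervals of length $\alpha_0^{-1}$ and applying ${\tau}^+_7(\alpha_0,t_0,1/2) \ge \tau'$ shows that each slab contains at most $\beta_0^4 \le 4\beta_1^4$ points. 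The Gaussian-type decay of Lemma~\ref{lem:estimate on K:intro} then yields
\begin{equation*}
\sum_{k\ge 0} K_{\alpha_1}(k\alpha_1^{-1}) \le C\alpha_1 + C\alpha_1^{3/2}\sum_{k\ge 1} \frac{e^{-c\alpha_1 k}}{\sqrt{k}} \le C\alpha_1,
\end{equation*}
where the last step uses $\sum_{k\ge 1} k^{-1/2} e^{-c\alpha_1 k} = O(\alpha_1^{-1/2})$. Combining the two bounds gives $S_1(t;t_1^-,\alpha_1) \le C\alpha_1 \beta_1^4$, which sits well below $2\alpha_1\beta_1^{C_\circ}$ once $\alpha_1$ is small, since $C_\circ = 50 \gg 4$. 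This forces $\tau_2(\alpha_1,t_1,2) > \tau'$ on the event being conditioned on, and the claimed probability bound then follows by Lemma~\ref{lem:ind2:alpha1vsalpha0}.

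I do not anticipate any substantive obstacle: the argument is essentially a transcription of Lemma~\ref{lem:ind1:tau5} to the frame $(\alpha_1,t_1)$, and the only thing to verify carefully is that the large gap between $\beta_1^4$ and $\beta_1^{C_\circ}$ in the threshold comfortably absorbs both the absolute constant $C$ from the kernel estimate and the multiplicative distortion $\alpha_1/\alpha_0 \in [1/2,2]$ provided by $\mathcal{A}_4$.
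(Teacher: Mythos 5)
Your proposal is correct and follows essentially the same route as the paper's proof: decompose $S_1(t;t_1^-,\alpha_1)$ into slabs, bound the number of $\Pi_S$-points per slab via $\tau^+_7(\alpha_0,t_0,1/2)$, use the Gaussian decay of $K_{\alpha_1}$ from Lemma~\ref{lem:estimate on K:intro}, and absorb the $\alpha_0\leftrightarrow\alpha_1$ distortion via $\mathcal{A}_4$ with probability cost from Lemma~\ref{lem:ind2:alpha1vsalpha0}. The only cosmetic difference is that the paper indexes the slabs by multiples of $\alpha_0^{-1}$ directly, whereas you index by $\alpha_1^{-1}$ and then cover each slab by two $\alpha_0^{-1}$-intervals, which changes nothing substantive.
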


\begin{proof}
	In the proof, we set 
	\begin{equation}
	\begin{split}
	S_1(t)=S_1(t,t_1^{-},\alpha_1) := \mathcal{R}_b (t;\Pi_S[t_1^{-},t],\alpha_1 ).
	\end{split}
	\end{equation}
	We can bound this quantity very similarly as in Lemma \ref{lem:ind1:tau5} as follows. We use the definition of ${\tau}^+_{7}(\alpha_0,t_0,1/2)$ to see that for all $t\in[t_1,\tau']$,
	\begin{equation}
	\begin{split}
	S_1(t)&= \intop_{t_1^{-}}^{t } K_{\alpha_1}(t-x) d\Pi_S(x)\\
	&\le 
	\sum_{k: 0\le k\alpha_0^{-1} \le 2\alpha^{-2}\beta^{10\theta} } 
	\frac{\beta_0^4}{2} \cdot \frac{C\alpha_1}{\sqrt{k\alpha_0^{-1}+1}} e^{-c\alpha_1^2 (k\alpha_0^{-1})} \le \alpha_1 \beta_1^{C_\circ}, 
	\end{split}
	\end{equation} 
	where the last inequality holds if the event $\mathcal{A}_4$ is given. Thus, thanks to Lemma \ref{lem:ind2:alpha1vsalpha0}, we obtain the conclusion.
\end{proof}

\begin{lem}\label{lem:ind2:tau6}
	Under the setting of Theorem \ref{thm:ind2:main}, we have
	\begin{equation}
	\PP \left(\tau_{3}(\alpha_1,t_1,2) \le \tau',\ \tau'\ge \hat{t}_0\ | \ \mathcal{A}(\alpha_0,t_0) \right) \le 2\exp\left( -\beta_0^2\right).
	\end{equation}
\end{lem}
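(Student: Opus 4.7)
The plan is to mirror the proof of Lemma \ref{lem:ind1:tau6}, now referred to the new parameters $(\alpha_1, t_1)$. Writing $S_1(t) := S_1(t; t_1^-, \alpha_1)$ as in the definition of $\tau_3(\alpha_1, t_1)$, I would decompose
\[
S(t) - S_1(t) = \bigl[\,S(t) - S'(t; t_1^-, \alpha_1)\,\bigr] + \bigl[\,S'(t; t_1^-, \alpha_1) - S_1(t)\,\bigr],
\]
and bound each bracket separately for $t \in [t_1, \tau' \wedge \hat{t}_0]$. The first bracket is controlled by Proposition \ref{prop:SvsSprime}, which on the event $\Pi_S(-\infty,t_1] \in \mathcal{A}(\alpha_1, t_1)$ and for $t \le \tau_6(\alpha_1, t_1, 2)$ gives $|S(t) - S'(t; t_1^-, \alpha_1)| \le \alpha_1^{100}$; here $\mathcal{A}(\alpha_1, t_1)$ has already been shown to hold with probability at least $1 - \exp(-\beta_0^2)$ by Theorem \ref{thm:ind2:main}(2) just established.

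For the second bracket, I would apply Proposition \ref{prop:fixed perturbed:error} with $\alpha = \alpha_1$, $t^- = t_1^-$, $\hat{t} = \hat{t}_0$, and $\tau = \tau'$. The three hypotheses \eqref{eq:error:assumptions} on $[t_1^-, \hat{t}_0 \wedge \tau']$ are verified as follows. Using the triangle inequality $(S-\alpha_1)^2 \le 2(S-\alpha_0)^2 + 2(\alpha_0-\alpha_1)^2$ together with the event $\mathcal{A}_4$ from \eqref{eq:def:A4} (which holds w.h.p.\ by Lemma \ref{lem:ind2:alpha1vsalpha0}) and the bounds $\tau_4^+, \tau_5^+(\alpha_0, t_0, 1/2) \ge \tau'$ from Theorem \ref{thm:reg:conti:main}, I obtain
\[
\intop_{t_1^-}^{\hat{t}_0 \wedge \tau'} (S-\alpha_1)^2 \, ds \le \alpha_0\beta_0^{26\theta} \le \alpha_1^{1-\epsilon/400},
\]
with the analogous estimate for the $S$-weighted integral after invoking $\tau_1^+(\alpha_0, t_0, 1/2) \ge \tau'$. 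For the supremum hypothesis, on $[t_1, \tau']$ we use $\tau_1(\alpha_1, t_1, 2) \ge \tau'$ and Lemma \ref{lem:ind2:tau5} (giving $\tau_2(\alpha_1, t_1, 2) \ge \tau'$), while on $[t_1^-, t_1]$ the bounds $\tau_1^+, \tau_2^+(\alpha_0, t_0, 1/2) \ge \tau'$ combined with $\mathcal{A}_4$ translate the $\alpha_0$-control into the required $\alpha_1^{1-\epsilon/400}$ bound. Proposition \ref{prop:fixed perturbed:error} then yields, with failure probability $\le 4\exp(-\alpha_1^{-\epsilon/3000})$, the sandwich
\[
-\alpha_1^{3/2 - \epsilon}\sigma_1(t; S) \le S'(t; t_1^-, \alpha_1) - S_1(t) \le \alpha_1^{1-\epsilon}\sigma_1\sigma_2(t; S)
\]
for all $t \in [t_1^-, \hat{t}_0 \wedge \tau']$.

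Combining both brackets, the $\alpha_1^{100}$ slack from Proposition \ref{prop:SvsSprime} is absorbed into the non-degenerate $\alpha_1^{3/2 - \epsilon}\sigma_1$ and $\alpha_1^{1-\epsilon}\sigma_1\sigma_2$ terms (which are bounded below by $\alpha_1^{3/2 - \epsilon}\cdot\alpha_0\beta_0^{-5\theta}$, using $\pi_1(t; S) \le \hat{t}_0 - t_1 \le \alpha_0^{-2}\beta_0^{10\theta}$). This gives the defining condition of $\tau_3(\alpha_1, t_1)$ on $[t_1, \tau' \wedge \tau_6(\alpha_1, t_1, 2)]$, hence $\tau_3(\alpha_1, t_1) > \tau' \wedge \tau_6(\alpha_1, t_1, 2)$ on the high-probability event.

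The main obstacle is the logical ordering: Proposition \ref{prop:SvsSprime} requires $t \le \tau_6(\alpha_1, t_1, 2)$, but the full control $\tau_6(\alpha_1, t_1, 2) \ge \tau'$ will only be established in Lemma \ref{lem:ind2:tau9} below. I would resolve this either by invoking Lemma \ref{lem:ind2:tau9} directly (provided its proof does not depend on the present lemma) to pay a cost of $\exp(-\beta_0^2)$, or alternatively, by proving $\tau_6(\alpha_1, t_1, 2) \ge \tau'$ directly here using Proposition \ref{prop:ind1:growth} for the regime $t - t_1^- \gtrsim \alpha_0^{-3/2 - 10\epsilon}$ together with $\mathcal{A}_2(\alpha_1, t_1)$ for the remaining short initial window $t - t_1^- \le 50 \alpha_1^{-2}\beta_1^\theta$. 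Summing the two error contributions --- $\exp(-\beta_0^2)$ from $\mathcal{A}(\alpha_1, t_1)^c$ and the $\tau_6$ control, plus the negligible contributions from Lemma \ref{lem:ind2:alpha1vsalpha0} and Proposition \ref{prop:fixed perturbed:error} --- yields the required bound $2\exp(-\beta_0^2)$.
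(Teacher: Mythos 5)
Your proposal is correct and takes essentially the same route as the paper. The paper's own proof is a one-line pointer to Lemma~\ref{lem:ind1:tau6}, citing the definitions of $\tau_{4}(\alpha_1,t_1,2)$, $\tau_{5}(\alpha_1,t_1,2)$ and the estimate \eqref{eq:ind2:tau4781911}; you have filled in the details. A couple of observations on the differences. First, you verify the hypotheses of Proposition~\ref{prop:fixed perturbed:error} by translating the $\alpha_0$-level stopping times $\tau_4^+,\tau_5^+,\tau_1^+$ into $\alpha_1$-level bounds via $\mathcal{A}_4$; the paper instead directly uses the already-established $\tau_4(\alpha_1,t_1,2),\tau_5(\alpha_1,t_1,2)$ from \eqref{eq:ind2:tau4781911}, which were themselves obtained by exactly that translation, so the two are interchangeable. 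Second, you use a two-term decomposition of $S - S_1$ and the first statement of Proposition~\ref{prop:SvsSprime}, whereas Lemma~\ref{lem:ind1:tau6} uses a three-term split together with the second statement; either works, though the two-term version requires $\mathcal{A}(\alpha_1,t_1)$ rather than $\mathcal{A}(\alpha_1,t_1^-)$, which you correctly obtain from Theorem~\ref{thm:ind2:main}(2). Most importantly, your observation about the missing $\tau_6(\alpha_1,t_1,2)$-control is a genuine gap in the paper's terse proof: Proposition~\ref{prop:SvsSprime} needs $t\le\tau_6(\alpha_1,t_1,2)$, but $\tau'$ is built purely from $\alpha_0$-level stopping times and \eqref{eq:ind2:tau4781911} covers only $\tau_1,\tau_4,\tau_5,\tau_7,\tau_8$. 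Your first resolution — invoking Lemma~\ref{lem:ind2:tau9}, whose proof is independent of the present lemma — is the clean fix, and the overall probability budget you give closes correctly.
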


\begin{proof}
	The proof is analogous as Lemma \ref{lem:ind1:tau6}, using the definitions of $\tau_{4}(\alpha_1,t_1,2)$ and $\tau_{5}(\alpha_1,t_1,2),$ and the estimate \eqref{eq:ind2:tau4781911}.
\end{proof}

\begin{lem}\label{lem:ind2:tau9}
	Under the setting of Theorem \ref{thm:ind2:main}, we have
	\begin{equation}
	\PP \left(\tau_{6}(\alpha_1,t_1,2) \le \tau',\ \tau'\ge \hat{t}_0\ | \ \mathcal{A}(\alpha_0,t_0) \right) \le \exp\left( -\beta_0^3\right).
	\end{equation}
\end{lem}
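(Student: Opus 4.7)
The plan is to exploit Proposition~\ref{prop:ind1:growth} to pin down $|\Pi_S[t_0,s]|$ very accurately by $\alpha_0(s-t_0)$ on the entire interval $[\acute{t}_0,\hat{t}_0]$, then subtract the estimates at $s=t$ and $s=t_1^-$ to get a sharp bound on $|\Pi_S[t_1^-,t]|$, and finally convert the frame of reference from $\alpha_0$ to $\alpha_1$ using $\mathcal{A}_4=\{|\alpha_1-\alpha_0|\le 2\alpha_0^{3/2}\beta_0^{6\theta}\}$ from Lemma~\ref{lem:ind2:alpha1vsalpha0}.

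First, observe that on the event $\{\tau'\ge \hat{t}_0\}$ we actually have $\tau'=\hat{t}_0$ (since both $\tau$ and $\tau^+$ are capped at $\hat{t}_0$ by definition), so it suffices to verify that $|\Pi_S[t_1^-,t]|\ge \frac{\alpha_1}{200}(t-t_1^-)$ for every $t\in[t_1,\hat{t}_0]$. Since $t_1^-=t_1-\alpha_0^{-2}\beta_0^\theta\ge \acute{t}_0$ (using $t_1\ge t_0+\alpha_0^{-2}\beta_0^{9\theta}$ and $\acute{t}_0=t_0+3\alpha_0^{-2}\beta_0^\theta$), Proposition~\ref{prop:ind1:growth} applies at both endpoints. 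Under our conditioning the $r$-term in that proposition disappears (since $\tau'\ge\hat{t}_0$ already rules out $\tau(2)\le\acute{t}_0$), and combining with Lemma~\ref{lem:ind1:tau13} and Corollary~\ref{cor:concentration of integral} yields, on an event of probability at least $1-2\exp(-\beta_0^3)$ given $\mathcal{A}(\alpha_0,t_0)\cap\{\tau'\ge\hat{t}_0\}$,
\begin{equation}
\Big||\Pi_S[t_0,s]|-\alpha_0(s-t_0)\Big|\le \alpha_0^{-\frac{1}{2}-10\epsilon},\qquad\forall s\in[\acute{t}_0,\hat{t}_0].
\end{equation}
Subtracting these estimates at $s=t$ and $s=t_1^-$ gives
\begin{equation}
|\Pi_S[t_1^-,t]|\ge \alpha_0(t-t_1^-)-2\alpha_0^{-\frac{1}{2}-10\epsilon}.
\end{equation}

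Next, on $\mathcal{A}_4$ one has $\alpha_1\ge\alpha_0/2$ for sufficiently small $\alpha_0$, so that
\begin{equation}
|\Pi_S[t_1^-,t]|\ge \alpha_1(t-t_1^-)-2\alpha_0^{3/2}\beta_0^{6\theta}(t-t_1^-)-2\alpha_0^{-\frac{1}{2}-10\epsilon}.
\end{equation}
For $t\in[t_1,\hat{t}_0]$ we have $t-t_1^-\ge \alpha_0^{-2}\beta_0^\theta$, so that $\alpha_1(t-t_1^-)\ge \tfrac{1}{2}\alpha_0^{-1}\beta_0^\theta$. The two error terms are bounded by $4\alpha_0^{1/2}\beta_0^{6\theta}\cdot\alpha_1(t-t_1^-)$ and $4\alpha_0^{1/2-10\epsilon}\beta_0^{-\theta}\cdot\alpha_1(t-t_1^-)$ respectively, and both tend to zero much faster than $1/200$ as $\alpha_0\searrow 0$. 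Thus the right-hand side is $\ge \frac{\alpha_1}{200}(t-t_1^-)$ for every such $t$, and $\tau_6(\alpha_1,t_1,2)>\hat{t}_0=\tau'$.

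The argument is essentially routine once Proposition~\ref{prop:ind1:growth} and Lemma~\ref{lem:ind2:alpha1vsalpha0} are in hand; the only point deserving care is the smallest-scale case $t=t_1$, where the additive error $\alpha_0^{-1/2-10\epsilon}$ from the growth estimate and the multiplicative error $|\alpha_1-\alpha_0|/\alpha_1\sim \alpha_0^{1/2}\beta_0^{6\theta}$ from the rate change must both be checked to be negligible against the target $\frac{\alpha_1}{200}\alpha_0^{-2}\beta_0^\theta$, which they are by powers of $\alpha_0^{1/2}$ to spare. Summing the failure probabilities of the two ingredients (Proposition~\ref{prop:ind1:growth} contributes at most $2\exp(-\beta_0^3)$ under our conditioning, and Lemma~\ref{lem:ind2:alpha1vsalpha0} contributes at most $2\exp(-\beta_0^5)$) gives at most $\exp(-\beta_0^3)$ overall, matching the claimed bound.
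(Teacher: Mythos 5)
Your proof is correct and uses the same underlying ingredients (Lemma~\ref{lem:ind1:tau13}, Corollary~\ref{cor:concentration of integral}, and the event $\mathcal{A}_4$ from Lemma~\ref{lem:ind2:alpha1vsalpha0}) as the paper. The only organizational difference is that you route through Proposition~\ref{prop:ind1:growth} applied on $[t_0,\hat t_0]$ and then subtract the endpoint estimates at $t$ and $t_1^-$, whereas the paper applies the concentration directly to $|\Pi_S[t_1^-,t]|-\int_{t_1^-}^t S(x)\,dx$ on $[t_1^-,t]$; this is a minor repackaging and both routes yield the same bound with the same arithmetic in the final $\mathcal{A}_4$ step.
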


\begin{proof}
This comes as a consequence of Lemma \ref{lem:ind1:tau13} and Corollary \ref{cor:concentration of integral}, applied to 
\begin{equation}
|\Pi_S[t_1^-,t]| - \intop_{t_1^-}^t S(x)dx.
\end{equation}
We also rely on $\mathcal{A}_4$ to switch $\alpha_0$ to $\alpha_1$. The rest of the proof is analogous to that of Lemma \ref{lem:ind1:tau9 10 11}, and we omit the details due to similarity (see also Lemmas \ref{lem:ind2:A11} and \ref{lem:ind2:A3}).
\end{proof}

We conclude Section \ref{subsec:ind2:remaining reg} by proving Proposition \ref{prop:ind2:prereg}.

\begin{proof}[Proof of Proposition \ref{prop:ind2:prereg}]
	It follows from combining \eqref{eq:ind2:tau4781911}, Lemmas \ref{lem:ind2:tau5}, \ref{lem:ind2:tau6} and \ref{lem:ind2:tau9}.
\end{proof}

\subsection{Refined control on the first order approximation}\label{subsec:ind2:bootstrappedJ}

In this subsection, we provide a refined analysis of the first order approximation, deducing an appropriate control on $\tau_{3}^\sharp$ (Section \ref{subsubsec:reg:refined1storder}). The main result we aim to establish is the following.

\begin{lem}\label{lem:ind2:tau6sharpreal}
	Under the setting of Theorem \ref{thm:ind2:main} and Proposition  \ref{prop:ind2:prereg}, we have
	\begin{equation}
	\PP \left(\tau_{3}^\sharp(\alpha_1,t_1 ) \le t_1^\sharp, \ \tilde{\tau}\wedge \tau' \ge \hat{t}_0 \ | \ \mathcal{A}(\alpha_0,t_0) \right) \le 10\exp\left(-\beta_0^2 \right).
	\end{equation}
\end{lem}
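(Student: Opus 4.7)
The plan is to bound $\int_{t_1}^{t}|S(s)-S_1(s;t_1^-,\alpha_1)|\,ds$ uniformly in $t\in[t_1,t_1^\sharp]$ by $\beta_1^{4\theta}$ by going through the second-order expansion \eqref{eq:speed:2ndorder main} applied with frame of reference $\alpha_1$. Writing
\begin{equation*}
S(s) - S_1(s) = \frac{2\alpha_1^2}{(1+2\alpha_1)^2} - \frac{4\alpha_1(1+\alpha_1)}{(1+2\alpha_1)^2} S_1(s) + \frac{\alpha_1}{1+2\alpha_1}\,\mathcal{J}^*(s) + \bigl(S(s) - S_2(s)\bigr),
\end{equation*}
with $\mathcal{J}^*(s) := \int_{t_1^-}^{s}\!\int_{t_1^-}^{u} J^{(\alpha_1)}_{s-u,s-v}\, d\widehat{\Pi}_S(v)\, d\widehat{\Pi}_S(u)$, the approach is to integrate over $[t_1,t]$ and treat each piece separately. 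Crucially, the naive pointwise estimate on $\mathcal{J}^*(s)$ coming from the perturbation argument of Section \ref{subsec:fixed:perturbed} integrates to $\alpha_1^{-O(\epsilon)}$, which is too weak for a $\beta_1^{O(1)}$-target, so the time-integral of $\mathcal{J}^*$ must be studied directly.

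First, using $|\alpha_1-\alpha_0|\lesssim \alpha_0^{3/2}\beta_0^{6\theta}$ from Lemma \ref{lem:ind2:alpha1vsalpha0} (to identify $\alpha_1'$ with $\alpha_1$ up to admissible error) and Corollary \ref{cor:ind1:tauxprime} (applied on the short interval $[t_1,t_1^\sharp]$) giving $|S_1(s)-\alpha_1'|\lesssim \alpha_1\beta_1^{C_\circ}\sigma_1(s;S)+\alpha_1^{3/2}\beta_1^\theta$, Lemma \ref{lem:ind1:pi1int basicbd:basic} applied with $\tau_7(\alpha_1,t_1,2)$ bounds the time-integral of the first two (pseudo-deterministic) terms by $\beta_1^{2\theta}$, since $t_1^\sharp-t_1\lesssim\alpha_1^{-2}\beta_1^\theta$ and the leading prefactor is $O(\alpha_1^2)$. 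For the residual $\int_{t_1}^{t_1^\sharp}|S(s)-S_2(s)|\,ds$, the plan is to invoke Corollary \ref{cor:ind1:S2error} pointwise (after converting to the $\alpha_1$-frame via $\mathcal{A}_4$ in \eqref{eq:def:A4}) and then apply Lemma \ref{lem:fixed perturbed:error int:perturbed} on the shorter subinterval to obtain $\lesssim \alpha_1^{1/2-2\epsilon}$, which is negligible against $\beta_1^{4\theta}$.

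The main obstacle is bounding the time-integral of $\mathcal{J}^*$. The plan is to exchange the order of integration via Fubini:
\begin{equation*}
\int_{t_1}^{t}\mathcal{J}^*(s)\,ds = \int_{t_1^-}^{t}\!\int_{t_1^-}^{u}\Psi_t(u,v)\, d\widehat{\Pi}_S(v)\, d\widehat{\Pi}_S(u), \qquad \Psi_t(u,v):=\int_{u\vee t_1}^{t} J^{(\alpha_1)}_{s-u,s-v}\,ds,
\end{equation*}
and to exploit the analytic decay of $\Psi_t$ inherited from Lemma \ref{lem:bound on deterministic J} to carry out a direct estimate on the resulting averaged double integral. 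Since the Radon–Nikodym approach of Section \ref{subsec:fixed:perturbed} is too costly, the plan is to split $d\widehat{\Pi}_S(u)=d\widetilde{\Pi}_S(u)+(S(u)-\alpha_1)du$ (with $d\widetilde{\Pi}_S$ a true martingale increment), expand the double integral into pure-martingale, pure-drift, and two mixed pieces, and bound each: the pure-drift piece will be estimated using the refined pointwise control $|S(s)-\alpha_1|\le F(\alpha_0,s)+|\alpha_1-\alpha_0|$ provided by $\tau_9^+$ in \eqref{eq:def:tau12} together with Lemma \ref{lem:ind1:pi1int basicbd} for the kernel $\Psi_t$; the martingale and mixed pieces will be handled by iterated application of Lemma \ref{lem:concen of int:conti:forwardtime} and Corollary \ref{lem:concentrationofint:continuity} as in Section \ref{subsec:fixed:fixed}, now driven by the analytic decay of $\Psi_t$. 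The hard part will be handling the joint $t$-dependence of $\Psi_t(u,v)$ uniformly in $t\in[t_1,t_1^\sharp]$, which we expect to control via the discretization-plus-continuity scheme of Section \ref{subsec:fixed:mgconcen}; the payoff is that the final bound scales with the length $4\alpha_0^{-2}\beta_0^\theta$ of the sub-interval rather than the full $\hat{t}_0-t_0$, replacing the $\alpha_1^{-\epsilon}$ factor of the naive argument by a polylog factor $\beta_1^{O(1)}$ comfortably below $\beta_1^{4\theta}$.
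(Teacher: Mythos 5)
Your plan correctly identifies the central obstacle (the naive $\alpha_1^{-\epsilon}\sigma_1\sigma_2$ bound on the double $J$-integral integrates to $\alpha_1^{-O(\epsilon)}$, not a polylog), and your decomposition via the second-order expansion matches the paper's. But the Fubini step has a gap that I do not see how to repair within your framework. The quantity entering $\tau_3^\sharp(\alpha_1,t_1)$ is $\int_{t_1}^t |S(s)-S_1(s)|\,ds$. After splitting $S-S_1$ into the four pieces you list, the triangle inequality forces you to control the time-integral of the \emph{absolute value} of each piece, and in particular $\int_{t_1}^t |\mathcal{J}^*(s)|\,ds$. Your Fubini swap produces $\int_{t_1}^t \mathcal{J}^*(s)\,ds$ — the \emph{signed} integral — and the martingale/drift decomposition of the resulting averaged double integral $\int\!\int \Psi_t\,d\widehat{\Pi}_S\,d\widehat{\Pi}_S$ then gives a bound on $\bigl|\int_{t_1}^t \mathcal{J}^*(s)\,ds\bigr|$, which says nothing about $\int_{t_1}^t |\mathcal{J}^*(s)|\,ds$: the process $\mathcal{J}^*(s)$ is a double stochastic integral that changes sign, so cancellation in $s$ is exactly what Fubini exploits and exactly what the target quantity forbids.

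The paper avoids this by first proving a \emph{pointwise} bound $|\mathcal{J}^*(s)|\lesssim \beta_0^{2\theta+4C_\circ}/(\pi_1(s;S)+1)$ holding uniformly in $s$ (Lemmas \ref{lem:ind2:tau63sharp} and \ref{lem:ind2:tau61sharp}, bounding the inner and then the outer stochastic integral directly, still with the $s$-dependent kernel $J^{(\alpha_1)}_{s-u,s-v}$ whose $(s-u)^{-1/2}(s-v)^{-1/2}$ decay is retained), feeding this into the pointwise bound of $\tau_{3,1}^\sharp$ (Lemma \ref{lem:ind2:tau6sharp}), and only \emph{then} integrating the absolute-value bound in time. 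If you want to salvage a Fubini-type route, you would first need to decompose $\mathcal{J}^*$ into pieces each of which you can sign-control, but the martingale parts do not have deterministic sign, so this seems unavoidable: the pointwise-then-integrate order appears to be forced. A secondary issue worth noting: the paper shifts the lower integration limit from $t_1^-$ to $t_1^\flat=t_1-\tfrac13\alpha_0^{-2}\beta_0^\theta$ (see the discussion around \eqref{eq:ind2:tau6sh:switch base} and Remark \ref{rmk:ind2:tau6sh:frame shift}), precisely so that the control $\tau_{\textnormal{b}}''$ on $|S_1(u)-\alpha_1|$ applies throughout the range of the inner stochastic integral; your proposal keeps $t_1^-$ as the lower limit but uses $\tau_9^+$ (a bound on $|S-\alpha_0|$) instead, which is not manifestly wrong but bypasses the $\tau_{\textnormal{b}}''$ machinery the paper built for this purpose and would need its own justification in the window $[t_1^-,t_1^\flat]$.
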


Having this on hand, we can deduce Theorem  \ref{thm:ind2:main} by combining the results obtained so far.
\begin{proof}[Proof of Theorem \ref{thm:ind2:main}]
    Theorem~\ref{thm:ind2:main}-(1) follows directly from combining Proposition \ref{prop:ind2:prereg}, Lemmas \ref{lem:ind2:tau1sharp} and \ref{lem:ind2:tau6sharp}. The second part of the theorem was already established in Section \ref{subsec:ind2:ratechange}.
\end{proof}

$\tau_{3}^\sharp$ studies the integral of $|S(t)-S_1(t)|$, and our approach is to first  estimate $|S(t)-S_1(t)|$ itself.  To this end, we define the stopping time $\tau_{{3,1}}^\sharp$ as
\begin{equation}
\tau_{{3,1}}^\sharp(\alpha_0,t_0)
:=
\inf \left\{t\ge t_0: |S(t)-S_1(t)| \ge \alpha_0 \beta_0^{3\theta} \sigma_1(t;S)^2 + \alpha_0^{1-\epsilon} \sigma_1\sigma_2\sigma_3(t;S) \right\}.
\end{equation}

\begin{lem}\label{lem:ind2:tau6sharp}
	Under the setting of Theorem \ref{thm:ind2:main} and Proposition  \ref{prop:ind2:prereg}, we have
	\begin{equation}
	\PP \left(\tau_{{3,1}}^\sharp(\alpha_1,t_1 ) \le t_1^\sharp, \ \tilde{\tau}\wedge \tau' \ge \hat{t}_0 \ | \ \mathcal{A}(\alpha_0,t_0) \right) \le 10\exp\left(-\beta_0^2 \right).
	\end{equation}
\end{lem}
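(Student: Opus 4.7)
The plan is to estimate $|S(t)-S_1(t)|$ via the triangle inequality with the second-order approximation, writing $|S(t)-S_1(t)| \le |S(t)-S_2(t;t_1^-,\alpha_1)| + |S_2(t)-S_1(t)|$. For the first piece I will apply the analog of Corollary~\ref{cor:ind1:S2error} (which itself follows from Proposition~\ref{prop:fixed perturbed:error}) with frame of reference $(\alpha_1,t_1^-)$: the hypotheses \eqref{eq:error:assumptions} hold on $[t_1^-,t_1^\sharp]$ because the event $\{\tilde\tau\wedge\tau'\ge\hat t_0\}$ supplies the required bounds on $\int(S-\alpha_1)^2$, $\int(S-\alpha_1)^2 S$ and $\sup S$ (after switching the base rate from $\alpha_0$ to $\alpha_1$ using Lemma~\ref{lem:ind2:alpha1vsalpha0}). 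This yields $|S(t)-S_2(t)|\le 4\alpha_1^{1-\epsilon}\sigma_1\sigma_2\sigma_3(t;S)$ with probability at least $1-\exp(-\beta_0^3)$, which already matches the $\alpha_1^{1-\epsilon}\sigma_1\sigma_2\sigma_3$ term in $\tau_{3,1}^\sharp$. So the remaining task is to show $|S_2(t)-S_1(t)|\le\alpha_1\beta_1^{3\theta}\sigma_1^2(t;S)$.

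Expanding via \eqref{eq:def:S2:basic form} gives
\[
S_2(t)-S_1(t)=\tfrac{2\alpha_1^2}{(1+2\alpha_1)^2}-\tfrac{4\alpha_1+4\alpha_1^2}{(1+2\alpha_1)^2}S_1(t)+\tfrac{\alpha_1}{1+2\alpha_1}\mathcal J_t,\qquad \mathcal J_t:=\!\!\intop_{t_1^-}^{t}\!\!\intop_{t_1^-}^{s}\!\! J^{(\alpha_1)}_{t-s,\,t-u}\,d\widehat\Pi_S(u)\,d\widehat\Pi_S(s).
\]
Since $\tau_2(\alpha_1,t_1,2)\ge\tilde\tau\ge\hat t_0$ gives $S_1(t)\le 2\alpha_1\beta_1^{C_\circ}$, the two algebraic terms are $O(\alpha_1^2\beta_1^{C_\circ})$. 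Because $\tau_8(\alpha_1,t_1,2)\ge\tilde\tau$ forces $\pi_1(t;S)\le 2\alpha_1^{-1}\beta_1^{C_\circ}$ and hence $\sigma_1^2\ge c\alpha_1\beta_1^{-C_\circ}$, those terms are absorbed by $\tfrac14\alpha_1\beta_1^{3\theta}\sigma_1^2$ since $3\theta\gg 2C_\circ$. It therefore suffices to prove the sharp estimate $|\mathcal J_t|\le\tfrac12\beta_1^{3\theta}\sigma_1^2(t;S)$ uniformly on $[t_1,t_1^\sharp]$.

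To bound $\mathcal J_t$ I deliberately bypass the Radon--Nikodym argument of Lemma~\ref{lem:fixed perturbed:radon deriv bd}, which is what produces the $\alpha^{-\epsilon}$ loss of Proposition~\ref{prop:fixed perturbed:double int}. Writing $d\widehat\Pi_S(u)=d\widetilde\Pi_S(u)+(S(u)-\alpha_1)du$ decomposes $\mathcal J_t=\mathcal J^{MM}+\mathcal J^{MD}+\mathcal J^{DM}+\mathcal J^{DD}$. The three pieces carrying a drift factor are treated deterministically by inserting the pointwise bound $|S(u)-\alpha_1|\le 4\alpha_1^{1-\epsilon}\sigma_1\sigma_2(u;S)+\alpha_1\beta_1^{C_\circ+1}\sigma_1(u;S)+4\alpha_1^{3/2}\beta_1^{6\theta}$, which holds on $[t_1^-,\tilde\tau\wedge\tau']$ from the $\tau_9^+$-bound \eqref{eq:def:tau12} together with Lemma~\ref{lem:ind2:alpha1vsalpha0}; combining it with the decay $|J^{(\alpha_1)}_{a,b}|\le Ce^{-c\alpha_1^2 b}/\sqrt{(a+1)(b+1)}$ from Lemma~\ref{lem:bound on deterministic J} and estimating the resulting $\pi_1$-weighted integrals by Lemmas~\ref{lem:ind1:pi1int basicbd:basic} and~\ref{lem:ind1:pi1int basicbd} yields $|\mathcal J^{MD}|+|\mathcal J^{DM}|+|\mathcal J^{DD}|\le\tfrac14\beta_1^{3\theta}\sigma_1^2(t;S)$.

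The remaining and main step is $\mathcal J^{MM}=\int\!\!\int J^{(\alpha_1)}d\widetilde\Pi_S(u)d\widetilde\Pi_S(s)$, which I will bound by adapting the iterated martingale-concentration scheme of Lemmas~\ref{lem:Qint 1st}, \ref{cor:Qint 2nd} and \ref{cor:Qint 3rd} directly to $\widetilde\Pi_S$ rather than to the fixed-rate process $\widetilde\Pi_{\alpha_1}$. The rate $S$ stays in $[\tfrac12\alpha_1,2\alpha_1\beta_1^{C_\circ}]$ throughout $[t_1^-,\tilde\tau]$ by $\tau_1$ and Proposition~\ref{prop:reg:lbd of speed}, so the quadratic variation $\int J^2 S\,du$ is only a $\beta_1^{O(1)}$-factor larger than in the fixed-rate case. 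Crucially, the time horizon here is only $t_1^\sharp-t_1^-=O(\alpha_1^{-2}\beta_1^\theta)$ rather than the full $\hat h=O(\alpha_1^{-2}\beta_1^{10\theta})$, so every union bound over a discretization of $[t_1^-,t_1^\sharp]$ and every choice of the stopping-time parameters $N,\Delta,\eta$ in Lemma~\ref{lem:concen of int:conti:forwardtime} costs only polylog factors (using $\tau_7$ for $N$ and $\tau_8$ for $\Delta$). The main obstacle is organising the two nested applications of martingale concentration so that the cumulative constant is polylog in $\alpha_1$: the inner integration in $u$ must produce a boundary contribution of size $\beta_1^{O(\theta)}\sigma_1/\sqrt{s+1}$ at the point $u=\pi_1$, which then feeds into the outer integration in $s$ to produce the second factor $\sigma_1$ at $s=\pi_1$, yielding $\beta_1^{O(\theta)}\sigma_1^2$. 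After choosing the exponents so that this cumulative constant is at most $\tfrac14\beta_1^{3\theta}$, combining with the earlier bounds and with Proposition~\ref{prop:ind2:prereg} produces the desired estimate with failure probability at most $10\exp(-\beta_0^2)$.
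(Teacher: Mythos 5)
There is a genuine gap, and it is exactly the technical point the paper flags in Remark~\ref{rmk:ind2:tau6sh:frame shift}. You bound the drift pieces by plugging in the pointwise bound $|S(u)-\alpha_1|\le 4\alpha_1^{1-\epsilon}\sigma_1\sigma_2(u;S)+\alpha_1\beta_1^{C_\circ+1}\sigma_1(u;S)+4\alpha_1^{3/2}\beta_1^{6\theta}$, which comes from ${\tau}^+_{9}$ in \eqref{eq:def:tau12}. The constant $\alpha^{3/2}\beta^{6\theta}$ in that bound is a \emph{long-range} estimate: its $\beta^{6\theta}$ exponent reflects the quadratic variation accumulated over the whole window $[t_0^+,\hat t_0]$ of length $\approx\alpha^{-2}\beta^{10\theta}$, via \eqref{eq:ind1:S1minusalpha QV}. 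Plugging it into $\mathcal J^{DD}$ and using $|J^{(\alpha_1)}_{a,b}|\lesssim e^{-c\alpha_1^2 b}/\sqrt{(a+1)(b+1)}$, the double integral over $[t_1^-,t]$ gives $\alpha_1^3\beta_1^{12\theta}\cdot\alpha_1^{-2}\asymp\alpha_1\beta_1^{12\theta}$. But $\tau_{3,1}^\sharp$ requires $\alpha_1|\mathcal J_t|\lesssim\alpha_1\beta_1^{3\theta}\sigma_1^2(t;S)$, and since $\tau_8$ only forces $\sigma_1^2\gtrsim\alpha_1\beta_1^{-C_\circ}$, the target is $|\mathcal J_t|\lesssim\alpha_1\beta_1^{3\theta-C_\circ}$. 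Your $\alpha_1\beta_1^{12\theta}$ bound on $\mathcal J^{DD}$ alone is bigger by a factor $\beta_1^{9\theta+C_\circ}$, so the claim $|\mathcal J^{MD}|+|\mathcal J^{DM}|+|\mathcal J^{DD}|\le\tfrac14\beta_1^{3\theta}\sigma_1^2$ is not true with the bound you used.

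The paper's way around this is the $t_1^\flat$ shift plus the three-way decomposition $d\widehat\Pi_S=d\widetilde\Pi_S+(S-S_1)du+(S_1-\alpha_1)du$. The point is that $(S-S_1)$ carries the $\alpha^{-\epsilon}$ loss but comes with the strong $\sigma_1\sigma_2$ decay (from $\tau_3$), while $(S_1-\alpha_1)$ is controlled by the \emph{short-range} stopping time $\tau_{\textnormal{b}}''$ in \eqref{eq:def:tauxpprime}, whose constant term is only $\alpha^{3/2}\beta^\theta$ because it is built from the short interval $[t_1^\flat,t_1^\sharp]$ of length $\approx\alpha^{-2}\beta^\theta$ (cf.\ Corollary~\ref{cor:ind1:tauxprime}). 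Squaring $\beta^\theta$ gives $\beta^{2\theta}$, which fits inside $\beta^{3\theta}$; squaring $\beta^{6\theta}$ does not. This also forces the lower limit of the double integral to be $t_1^\flat$, not $t_1^-$: the short-range bound only holds for $t\ge t_1^\flat$, and the discarded tail $[t_1^-,t_1^\flat]$ has to be dispatched separately via the decay of $J$ as in \eqref{eq:ind2:tau6sh:switch base}. Your plan, by lumping $(S-\alpha_1)$ into a single factor bounded via ${\tau}^+_9$, loses precisely this separation. A secondary issue: you describe $\mathcal J^{MD}$ and $\mathcal J^{DM}$ as ``treated deterministically,'' but each still contains one martingale integral ($\int J\,d\widetilde\Pi_S$ in the inner or outer variable) that has to be controlled by a concentration argument with its own union bound; only $\mathcal J^{DD}$ is genuinely deterministic once the pointwise drift bound is fixed.
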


Assuming Lemma \ref{lem:ind2:tau6sharp}, we conclude the proof of Lemma \ref{lem:ind2:tau6sharpreal}.

\begin{proof}[Proof of Lemma \ref{lem:ind2:tau6sharpreal}]
	We set
	$\tau_{\textnormal{int}}$ to be
	\begin{equation}
	\tau_{\textnormal{int}} := \inf\left\{ t\ge t_1: \intop_{t_1}^t \sigma_1\sigma_2\sigma_3(t;S) dt \ge \alpha^{-\frac{1}{2}-\epsilon} \right\}.
	\end{equation}
	We also recall the definition of  $\tau'$ in Theorem \ref{thm:ind2:main}, and set
	\begin{equation}
	\tilde{\tau}_3^\sharp := \tau_{3,1}^\sharp\wedge \tau' \wedge \tau_{\textnormal{int}} .
	\end{equation}
	Then, Lemmas \ref{lem:fixed perturbed:error int:perturbed},  \ref{lem:ind2:tau6sharp} and Proposition \ref{prop:ind2:prereg} tell us that 
	\begin{equation}
	\PP \left( \tilde{\tau}_{3}^\sharp \le t_1^\sharp , \ \tau' \ge \hat{t}_0 \, | \,\mathcal{A}(\alpha_0,t_0) \right) \le \exp \left(-\beta_0^2 \right).
	\end{equation}
	The proof of the desired statement follows by observing that
	\begin{equation}
	\begin{split}
\intop_{t_1}^{\tilde{\tau}_3^\sharp} |S(t)-S_1(t)|dt \le 	\intop_{t_1}^{\tilde{\tau}_3^\sharp} \left\{\frac{\alpha_0\beta_0^{3\theta}}{\pi_1(t;S)+1} + \alpha_0^{1-\epsilon} \sigma_1\sigma_2\sigma_3(t;S) \right\} dt <\beta_1^{4\theta},
	\end{split}
	\end{equation}
	where we used Lemma \ref{lem:ind1:pi1int basicbd:basic} and $\tau_7$ to deduce the last inequality, relying on $\mathcal{A}_4$ \eqref{eq:def:A4} to show that $\beta_0$ and $\beta_1$ are close.
\end{proof}

The rest of this subsection is devoted to the proof of Lemma \ref{lem:ind2:tau6sharp}. Unlike when we studied ${\tau}^+_3$ in Lemmas \ref{lem:ind1:tau6} and \ref{lem:ind2:tau6}, we can no longer work with the first-order approximation of the speed. Thus, the second-order approximation  \eqref{eq:def:S2:basic form} is required. However, for technical reasons (which will be clarified in Remark \ref{rmk:ind2:tau6sh:frame shift}), we will read the process from time $t_1^{--}$ given by
\begin{equation}
t_1^{--}:= t_1 -2\alpha_0^{-2}\beta_0^\theta = t_1^- - \alpha_0^{-2}\beta_0^\theta,\quad t_1^\flat := t_1 - \frac{1}{3}\alpha_0^{-2}\beta_0^\theta,
\end{equation}
with respect to the same frame of reference $\alpha_1$.
Recall that $\alpha_1$ is measureable with respect to $\mathcal{F}_{t_1^-}.$ In what follows, we first introduce the setting we use in this subsection, which the framework in terms of $\alpha_1$ and $t_1^-$.

To begin with, the second-order approximation that we work with in this subsection is defined as 
\begin{equation}\label{eq:def:S2:ind2}
S_2(t)=S_2(t;t_1^{--},\alpha_1) := \frac{2\alpha_1^2}{(1+2\alpha_1)^2} + \frac{S_1(t)}{(1+2\alpha_1)^2} + \frac{\alpha_1}{(1+2\alpha_1)} \intop_{t_1^-}^t \intop_{t_1^-}^{s} J_{t-s, t-u}^{(\alpha_1)} d\widehat{\Pi}_{S}(u) d\widehat{\Pi}_S(s),
\end{equation}
where 
$S_1(t)$ in this subsection is given by
\begin{equation}\label{eq:def:S1:ind2}
\begin{split}
S_1(t)= S_1(t,t_1^{--},\alpha_1)
=
\intop_{ t_1^{--}}^t K_{\alpha_1}(t-x) d\Pi_S(x),
\end{split}\end{equation}
and $J_{u,s}^{(\alpha_1)}$ is as \eqref{eq:def:J:basic form} and \eqref{eq:def:J:expected val}. We also remark that  $d\widehat{\Pi}_S(x)= d\Pi_S(x)-\alpha_1 dx$.

Recall that Lemma \ref{lem:ind2:alpha1vsalpha0} implies that the event
$\mathcal{A}_4:= \{|\alpha_1-\alpha_0|\le 2\alpha_0^{3/2}\beta_0^{6\theta}\}$ 
happens with high probability if $\Pi_S(-\infty,t_0]$ was regular, and it is $\mathcal{F}_{t_1^-}$-measurable. In such a case, $(t_1^-)^-:= t_1^{--}$, $(t_1^-)^+:= t_1^\flat$ satisfies \eqref{eq:def:t0t1} in terms of $t_1^-$ and $\alpha_1$, and thus we can redefine $\tau(\alpha_1, t_1^-,\kappa)$ \eqref{eq:def:tau:induction} and $\mathcal{A}(\alpha_1, t_1^-)$ \eqref{eq:def:A:induction} in terms of $(t_1^-)^- , t_1^-$ and $ S_1(t)$. Furthermore, the stopping time $\tau_{\textnormal{b}}'(\alpha_1,t_1^-,{t_1^\flat})$ from \eqref{eq:def:tauxprime} can be defined in the same way, and it will play an important role in this subsection. Since the choice of $t_1$ is flexible in Theorem \ref{thm:ind2:main}, the results from  Theorem \ref{thm:ind2:main}-(2), Proposition \ref{prop:ind2:prereg} and Corollary \ref{cor:ind1:tauxprime} tell us the following.

\begin{cor}\label{cor:ind2:prereg:past}
	Under the setting of Theorem \ref{thm:ind2:main}, let $\alpha_1 , $ $t_1^-, (t_1^-)^-=t_1^{--}, (t_1^-)^+ = t_1^\flat$, $\tau(\alpha_1,t_1^-,\kappa)$, $\tau_{\textnormal{b}}'(\alpha_1,t_1^-,t_1^\flat)$ and $\mathcal{A}(\alpha_1,t_1^-) $ be as above. Then, we have
	\begin{itemize}
		\item[(1)] $\PP \left( \tau(\alpha_1,t_1^-,2) <\hat{t}_0 \ | \ \mathcal{A}(\alpha_0,t_0) \right) \le \PP\left(\tau' <\hat{t}_0 \ | \ \mathcal{A}(\alpha_0,t_0) \right) + 4 \exp (-\beta_0^2) $;
		
		\item[(2)] $\PP \left( \tau_{\textnormal{b}}'(\alpha_1,t_1^-,t_1^\flat) \le t_1^\sharp \ | \ \mathcal{A}(\alpha_0,t_0) \right) \le \PP\left(\tau' <\hat{t}_0 \ | \ \mathcal{A}(\alpha_0,t_0) \right) + 4 \exp (-\beta_0^2) $;
		
		\item[(3)] $\PP \left(\mathcal{A}(\alpha_1,t_1^-)^c \, | \, \mathcal{A}(\alpha_0,t_0) \right) \le \PP \left(\tau'<\hat{t}_0 \, | \, \mathcal{A}(\alpha_0,t_0) \right) + \exp (-\beta_0^2)$.
	\end{itemize}
\end{cor}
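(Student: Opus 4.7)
The plan is to establish each of the three bounds by a time-shift of the previously-established results \textit{Proposition~\ref{prop:ind2:prereg}}, \textit{Corollary~\ref{cor:ind1:tauxprime}}, and \textit{Theorem~\ref{thm:ind2:main}}-(2), reapplied with ``$t_1$'' replaced by $t_1^-$. First one checks that $t_1^-$ itself satisfies \eqref{eq:t1 regime}, since $t_1^- = t_1 - \alpha_0^{-2}\beta_0^\theta$ stays well inside the allowed window $[t_0+\alpha_0^{-2}\beta_0^{9\theta},\, t_0+\alpha_0^{-2}\beta_0^{10\theta}]$ for $\theta \gg 1$. The subtlety is that the shifted application naturally involves the reference rate $\tilde\alpha_1 := \mathcal{L}(t_1^{--};\Pi_S[t_0^-, t_1^{--}], \alpha_0)$, which differs from the corollary's $\alpha_1 = \mathcal{L}(t_1^-;\Pi_S[t_0^-,t_1^-],\alpha_0)$. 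Applying the martingale identity \eqref{eq:integralform:branching} as in the proof of Lemma~\ref{lem:ind2:alpha1vsalpha0} one writes
\[
\alpha_1 - \tilde{\alpha}_1 \;=\; \intop_{t_1^{--}}^{t_1^-} K^*_{\alpha_0}\bigl\{d\Pi_S(s) - S_1(s;t_1^{--},\alpha_0)\,ds\bigr\},
\]
and bounds this by $O(\alpha_0^{3/2}\beta_0^{C_\circ+\theta})$ with probability at least $1-\exp(-\beta_0^{C_\circ/2})$ on $\mathcal{A}(\alpha_0,t_0)\cap\{\tau'\geq \hat t_0\}$, via Corollary~\ref{cor:concentration of integral} together with the bounds inside $\tau_1$ and $\tau_3$.

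For part~(1), Proposition~\ref{prop:ind2:prereg} applied with $t_1\to t_1^-$ yields $\PP(\tau(\tilde\alpha_1, t_1^-, 2)\le \tau',\,\tau'\ge \hat t_0 \mid \mathcal{A}(\alpha_0,t_0))\le \exp(-\beta_0^2)$. On the event $\{\tau'\ge \hat t_0\}$, the inclusion $\{\tau(\tilde\alpha_1, t_1^-,2)<\hat t_0\}\subset\{\tau(\tilde\alpha_1,t_1^-,2)\le \tau'\}$ gives the stated bound with $\tilde\alpha_1$ in place of $\alpha_1$, and the transfer to $\alpha_1$ uses that each threshold inside $\tau_i(\cdot,t_1^-,2)$ depends on the reference rate with slack of order at least $\alpha^{3/2-\epsilon}$ or $\alpha^2\beta^{25\theta}$, much larger than $|\alpha_1-\tilde\alpha_1|$. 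Part~(3) is analogous: Theorem~\ref{thm:ind2:main}-(2) after the same shift yields the bound for $\mathcal{A}(\tilde\alpha_1,t_1^-)^c$, and the small perturbation is absorbed by $\mathcal{A}_3$ at $t_1^-$, whose threshold $\alpha^{3/2}\beta^\theta$ dominates the perturbation. The factor $4\exp(-\beta_0^2)$ accounts for the union bound over the two events and the transfer step.

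For part~(2), one applies Corollary~\ref{cor:ind1:tauxprime} at the new triple $(\alpha_0,t_0,t_0^+)\to(\tilde\alpha_1, t_1^-, t_1^\flat)$. The range condition $t_1^-+\tfrac12\tilde\alpha_1^{-2}\tilde\beta_1^\theta<t_1^\flat<t_1^-+2\tilde\alpha_1^{-2}\tilde\beta_1^\theta$ holds because $t_1^\flat-t_1^- = \tfrac{2}{3}\alpha_0^{-2}\beta_0^\theta$ and $\tilde\alpha_1\approx\alpha_0$ on the relevant event. To reach $t_1^\sharp$ rather than the stated endpoint $\acute t_1^-\approx t_1+2\alpha_0^{-2}\beta_0^\theta$, we revisit the proof of Corollary~\ref{cor:ind1:tauxprime} (which mirrors Lemma~\ref{lem:ind1:taux}): the quadratic-variation bound $\eqref{eq:ind1:S1minusalpha QV}$ extends to terminal times $t\le t_1^\sharp$ while remaining $O(\alpha_0^3\beta_0^{O(\theta)})$, since $t_1^\sharp-t_1^\flat = O(\alpha_0^{-2}\beta_0^\theta)$ stays within a single $\beta_0^\theta$-scale window. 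Hence the same Corollary~\ref{lem:concentrationofint:continuity} application produces the bound $\alpha^{3/2}\beta^\theta$ on $|S_1(t)-\alpha_1'|$ for all $t\in[t_1^\flat, t_1^\sharp]$, with $\tilde\alpha_1$ in place of $\alpha_1$. The transfer to the corollary's $\alpha_1$ is then executed using Corollary~\ref{cor:ind2:Ldiff}, which bounds the variation of $\mathcal{L}(\cdot;\cdot,\alpha)$ under changes of $\alpha$ of the size $|\alpha_1-\tilde\alpha_1|$.

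The main obstacle is the quantitative transfer from $\tilde\alpha_1$ to $\alpha_1$: the stopping-time thresholds depend on the reference rate both multiplicatively (e.g.\ $\kappa\alpha\beta^{C_\circ}$) and through $\beta=\log(1/\alpha)$, and appear in the definitions of $S_1$, $\tau_i$, $\tau_{\textnormal{b}}'$ and $\mathcal{A}_3$ simultaneously. One must systematically verify that the $O(\alpha_0^{3/2}\beta_0^{O(1)})$ perturbation is strictly dominated by the gap in every individual condition, in the same spirit as the passage from $\kappa=2$ to $\kappa=1/2$ at the start of Section~\ref{subsec:ind2:remaining reg}. Once this bookkeeping is complete, all three parts follow from the shifted applications listed above.
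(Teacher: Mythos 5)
Your observation that a naive time shift $t_1 \mapsto t_1^-$ in Proposition~\ref{prop:ind2:prereg}, Corollary~\ref{cor:ind1:tauxprime}, and Theorem~\ref{thm:ind2:main}-(2) naturally produces the reference rate $\tilde\alpha_1 = \mathcal{L}(t_1^{--};\Pi_S[t_0^-,t_1^{--}],\alpha_0)$ rather than the corollary's $\alpha_1 = \mathcal{L}(t_1^-;\Pi_S[t_0^-,t_1^-],\alpha_0)$ is a genuine subtlety. However, the transfer you propose --- that the $O(\alpha_0^{3/2}\beta_0^{O(1)})$ perturbation between the two rates is absorbed by ``slack'' in the thresholds of the $\tau_i$ --- does not close the gap. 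The thresholds $\kappa\alpha\beta^{C_\circ}$, $\kappa\alpha\beta^{25\theta}$, etc.\ are monotone increasing in the reference rate, and $\alpha_1 - \tilde\alpha_1$ is a martingale increment of either sign; when $\alpha_1 < \tilde\alpha_1$ the event $\{\tau(\tilde\alpha_1,t_1^-,2) \ge \hat t_0\}$ does \emph{not} contain $\{\tau(\alpha_1,t_1^-,2) \ge \hat t_0\}$, so a bound on $\PP\left(\tau(\tilde\alpha_1,t_1^-,2)<\hat t_0\right)$ by itself yields nothing for $\alpha_1$. Changing the reference rate also perturbs $S_1(\cdot;\cdot,\alpha)$ through $K_\alpha$, so the comparison is not a one-parameter threshold shift.

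The gap disappears if one bypasses $\tilde\alpha_1$ entirely: rerun the \emph{proofs} of Proposition~\ref{prop:ind2:prereg}, Corollary~\ref{cor:ind1:tauxprime}, and Theorem~\ref{thm:ind2:main}-(2) with target time $t_1^-$ while retaining the corollary's $\alpha_1$. Those proofs invoke $\alpha_1$ only through (i) $\alpha_1 \in \mathcal{F}_{t_1^-}$, (ii) the event $\mathcal{A}_4=\{|\alpha_1-\alpha_0|\le 2\alpha_0^{3/2}\beta_0^{6\theta}\}$ --- which Lemma~\ref{lem:ind2:alpha1vsalpha0} establishes for this $\alpha_1$ directly, without any time shift --- and (iii) the order-one multiplicative slack in passing from $\tau^+(\alpha_0,t_0,1/2)$ to $\tau(\alpha_1,\cdot,2)$, which is far coarser than the $(1+o(1))$ ratio $\alpha_1/\alpha_0$ on $\mathcal{A}_4$. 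All comparisons read from $t_1^- \ge t_0^+$ and go through unchanged, so no transfer step is needed. The range extension you flag in part~(2), from $\acute{t}_1^-\approx t_1^- + 3\alpha_0^{-2}\beta_0^\theta$ to $t_1^\sharp$, is handled by the same observation: both horizons exceed $t_1^\flat$ by $O(\alpha_0^{-2}\beta_0^\theta)$, so the quadratic-variation bound in the proof of Lemma~\ref{lem:ind1:taux} retains the $\alpha_0^{3/2}\beta_0^\theta$ error term at $t_1^\sharp$ as well.
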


Before moving on, we clarify the role of $\tau_{\textnormal{b}}'(\alpha_1,t_1^-, t_1^\flat )$. It tells us the bound on $|S_1(t)- \alpha_1''|$, where we denoted
$$\alpha_1'' := \mathcal{L}(t_1^-; \Pi_S[t_1^{--},t_1^-], \alpha_1 ). $$
On the event $\mathcal{A}_4$ where $|\alpha_1-\alpha_0|\le \alpha_0^{\frac{3}{2}}\beta_0^{6\theta}$, we have with high probability that
\begin{equation}
\begin{split}
|\alpha_1'' - \alpha_1| \le& |\mathcal{L}(t_1^-;\Pi_S[t_1^{--},t_1^-],\alpha_1) -\mathcal{L}(t_1^-;\Pi_S[t_1^{--},t_1^-],\alpha_0)| 
\\ 
&+ |\mathcal{L}(t_1^-;\Pi_S[t_1^{--},t_1^-],\alpha_0) - \mathcal{L}(t_1^-;\Pi_S[t_0^{-},t_1^-],\alpha_0)|\\
\le & \alpha_0^2 \beta_0^{7\theta},
\end{split}
\end{equation}
where we bounded two terms using Lemma \ref{lem:ind2:alphavsalphaPP} and Corollary \ref{cor:ind2:Ldiff}. Note that Corollary \ref{cor:ind2:Ldiff} is necessary (instead of Lemma \ref{lem:ind2:alphaPvsalphaPP}) since $\alpha_1$ is not $\mathcal{F}_{t_1^{--}}$-measurable. Thus, combining with the estimates on $\tau_{\textnormal{b}}'(\alpha_1,t_1^-,t_1^{\pm})$, we have control on the stopping time $\tau_{\textnormal{b}}''$ given by
\begin{equation}\label{eq:def:tauxpprime}
\tau_{\textnormal{b}}'':=\inf \left\{ t
\ge t_1^{\flat} :\, |S_1(t)-\alpha_1| \le \alpha_0 \beta_0^{C_\circ} \sigma_1(t;S) + 2\alpha_0^{\frac{3}{2}}\beta_0^\theta  \right\},
\end{equation}
in such a way that
\begin{equation}\label{eq:ind2:tauxpprime}
 \PP ( \tau_{\textnormal{b}}'' \le t_1^\sharp \, | \, \mathcal{A}(\alpha_0,t_0) ) \le \PP(\tau' <\hat{t}_0 \,|\, \mathcal{A}(\alpha_0,t_0)) + \exp (-\beta_0^2) . 
\end{equation}

Moving on to estimating $|S(t)-S_1(t;t_1^-,\alpha_1)|$ in $\tau_{{3,1}}^\sharp(\alpha_1,t_1)$, we write
\begin{equation}\label{eq:ind2:SminS1decomp}
\begin{split}
|S(t)-S_1(t;t_1^-,\alpha_1)|
\le& \,
|S(t)-S'(t;t_1^{--},\alpha_1)| +|S'(t;t_1^{--},\alpha_1)-S_2(t)|\\
& + |S_1(t)-S_2(t)| + |S_1(t;t_1^-,\alpha_1)-S_1(t)| 
\end{split}
\end{equation}
If $\Pi_S(-\infty,t_1^-]\in\mathcal{A}(\alpha_1,t_1^-)$ (which happens with high probability due to (3) of Corollary \ref{cor:ind2:prereg:past}), Proposition \ref{prop:SvsSprime} tells us that
\begin{equation}\label{eq:ind2:SvsSprime}
|S(t)- S'(t;t_1^{--},\alpha_1)|\le \alpha_1^{100}.
\end{equation}
Also, if $\mathcal{A}_4$ is given, then for any $t_1^\flat \le t\le \tau'$ we have 
\begin{equation}\label{eq:ind2:S1vsS1minus}
|S_1(t;t_1^-,\alpha_1)-S_1(t)| =\intop_{t_1^{--}}^{t_1^-} K_{\alpha_1}(t-x) d\Pi_S(x) \le e^{-c\beta_0^\theta} |\Pi_S[t_1^{--},t_1^-]| \le \alpha_1^{100},
\end{equation}
due to the decay property of $K_{\alpha}(s)$.
Moreover, note that the assumptions of Proposition \ref{prop:fixed perturbed:error} are satisfied with $t^- = t_1^{--}$, $\hat{t} = \hat{t}_0$, $\alpha=\alpha_1$, and $\tau = \tau(\alpha_1,t_1^-,2)$. Hence, it tells us that
\begin{equation}\label{eq:ind2:SprimevsS2}
\begin{split}
\PP \left(\left. |S'(t;t_1^{--},\alpha_1)-S_2(t)| \le 2\alpha_1^{1-\frac{\epsilon}{2}}\sigma_1\sigma_2\sigma_3(t;S), \ \forall t\in[t_1^-,\, \hat{t}_0\wedge \tau(\alpha_1,t_1^-,2)] \,\right| \, \mathcal{A}(\alpha_0,t_0) \right)\\
\ge
1-\exp\left(- \beta_0^4 \right).
\end{split}
\end{equation}
Note that  we again used $\mathcal{A}_4$ (Lemma \ref{lem:ind2:alpha1vsalpha0}) to claim that $\alpha_1$ is close enough to $\alpha_0$, enabling us to exchange $\alpha_1$ with $\alpha_0$ (in RHS). 

Therefore, among the terms in \eqref{eq:ind2:SminS1decomp}, what remains is estimating $|S_2(t)-S_1(t)|$.  It can written as
\begin{equation}\label{eq:ind2:S2vsS1}
S_2(t)-S_1(t)
=
\frac{2\alpha_1^2}{(1+2\alpha_1)^2} - \frac{4\alpha_1+4\alpha_1^2}{(1+2\alpha_1)^2} S_1(t)+ \frac{\alpha_1}{(1+2\alpha_1)} \intop_{t_1^-}^t \intop_{t_1^-}^{s} J_{t-s, t-u}^{(\alpha_1)} d\widehat{\Pi}_{S}(u) d\widehat{\Pi}_S(s).
\end{equation}
Here, the first two terms $\frac{2\alpha_1^2}{(1+2\alpha_1)^2}$ and $\frac{4\alpha_1+4\alpha_1^2}{(1+2\alpha_1)^2}S_1(t)$ are going to absorbed by $\alpha_1\beta_1^{3\theta} \sigma_1(t;S)^2$ in $\tau_{3,1}^\sharp$, due to the definitions of $\tau_{2}(\alpha_1,t_1^-,2)$, $\tau_{8}(\alpha_1,t_1^-,2)$, and $\mathcal{A}_4$ \eqref{eq:def:A4}. Thus, our main duty now is to investigate the double integral, the third term in RHS.

To this end, we recall that $t_1^\flat=t_1-\frac{1}{3}\alpha_0^{-2}\beta_0^\theta$, and we attempt to switch $t_1^-$ in the double integral to $t_1^\flat$, which will give us a significant technical advantage (see Remark \ref{rmk:ind2:tau6sh:frame shift}). We claim that if we have $ |\alpha_1-\alpha_0| \le 2\alpha_0^{\frac{3}{2}}\beta_0^{6\theta}$, then for any $t_1\le t\le \tau'$  
\begin{equation}\label{eq:ind2:tau6sh:switch base}
\begin{split}
\left|\intop_{t_1^-}^t \intop_{t_1^-}^{s} J_{t-s, t-u}^{(\alpha_1)} d\widehat{\Pi}_{S}(u) d\widehat{\Pi}_S(s) - \intop_{t_1^\flat}^t \intop_{t_1^\flat}^{s} J_{t-s, t-u}^{(\alpha_1)} d\widehat{\Pi}_{S}(u) d\widehat{\Pi}_S(s)\right|\\
= \left|\intop_{t_1^-}^t \intop_{t_1^-}^{s\wedge t_1^\flat} J_{t-s, t-u}^{(\alpha_1)} d\widehat{\Pi}_{S}(u) d\widehat{\Pi}_S(s) \right| \le
\alpha_1^{50}.
\end{split}
\end{equation}
This is becuase for any $u,s,t$ such that $t_1^-\le u \le t_1^\flat$, and $t\ge t_1$, we have $t - u \ge \frac{1}{3}\alpha_0^{-2}\beta_0^{\theta}$, and hence the estimate on $J$ (Lemma \ref{lem:bound on deterministic J}) implies
\begin{equation}
\left| J_{t-s,t-u}^{(\alpha_1)} \right| \le \alpha_1^{100}.
\end{equation}
Combined with the definitions of $\tau_{1}, \tau_{7}$, we obtain \eqref{eq:ind2:tau6sh:switch base}.

Thus,   it suffices to consider the following stopping time:
\begin{equation}
\begin{split}
\tau_{{3,2}}^\sharp &= \tau_{{3,2}}^\sharp(\alpha_1,t_1;\alpha_0)\\
&:=
\inf \left\{t\ge t_1: \left| \intop_{t_1^\flat}^t \intop_{t_1^\flat}^{s} J_{t-s, t-u}^{(\alpha_1)} d\widehat{\Pi}_{S}(u) d\widehat{\Pi}_S(s)\right| \ge \beta_0^{2\theta+4C_\circ} \sigma_1(t;S) \right\}.
\end{split}
\end{equation}

\begin{remark}\label{rmk:ind2:tau6sh:frame shift}
	In the definition of $\tau_{{3,2}}^\sharp$, note that it reads the process starting from $t_1^\flat$, not $t_1^-$. In controlling the double integral in $\tau_{{3,2}}^\sharp$, we will rely on the bound on $|S_1(t)-\alpha_1|$, which works for $t\ge t_1^\flat$, due to the definition of $\tau_{\textnormal{b}}''$. This is the main reason why we shift the interval of interest to $[t_1^{\flat}, t]$ from $[t_1^-,t]$ and consider an estimate such as \eqref{eq:ind2:tau6sh:switch base}.
\end{remark}

\begin{proof}[Proof of Lemma \ref{lem:ind2:tau6sharp}]
	Summing up the discussions from \eqref{eq:ind2:SminS1decomp} to \eqref{eq:ind2:tau6sh:switch base}, the proof follows directly from Lemma \ref{lem:ind2:tau61sharp} below.
\end{proof}

\begin{lem}\label{lem:ind2:tau61sharp}
	Under the setting of Theorem \ref{thm:ind2:main} and Proposition  \ref{prop:ind2:prereg}, we have
	\begin{equation}
	\PP\left(\tau_{{3,2}}^\sharp \le t_1^\sharp, \ \tilde{\tau}\wedge \tau' \ge \hat{t}_0\ | \ \mathcal{A}(\alpha_0,t_0)  \right) \le 3 \exp\left(-\beta_0^2 \right).
	\end{equation}
\end{lem}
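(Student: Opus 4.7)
The plan is to bound
\[
\mathcal{I}(t):=\int_{t_1^\flat}^t\int_{t_1^\flat}^s J_{t-s,t-u}^{(\alpha_1)}\,d\widehat{\Pi}_S(u)\,d\widehat{\Pi}_S(s)
\]
by $\beta_0^{2\theta+4C_\circ}\sigma_1(t;S)$ uniformly on $t\in[t_1,t_1^\sharp]$, except on an event of probability $O(e^{-\beta_0^2})$. A direct appeal to Proposition~\ref{prop:fixed perturbed:error} would produce only an $\alpha_1^{1-\epsilon}\sigma_1\sigma_2\sigma_3$ bound, whose integral in $t$ carries an $\alpha_1^{-\epsilon}$ blowup that is too weak. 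The remedy is to replace the costly perturbation argument of Section~\ref{subsec:fixed:perturbed} by a direct estimate that leverages the regularity inherited from the previous time step: on the intersection of $\{\tilde\tau\wedge\tau'\ge\hat t_0\}\cap\mathcal A(\alpha_0,t_0)$ with the high-probability events $\mathcal A_4$ (Lemma~\ref{lem:ind2:alpha1vsalpha0}) and $\{\tau_{\textnormal{b}}''>t_1^\sharp\}$ (equation~\eqref{eq:ind2:tauxpprime}), combining $\tau_{\textnormal{b}}''$ with the $\tau_3$-bound on $|S-S_1|$ produces the \emph{deterministic} pointwise estimate
\[
|S(u)-\alpha_1|\le 4\alpha_0\beta_0^{C_\circ}\sigma_1(u;S)+4\alpha_0^{3/2}\beta_0^{6\theta},\qquad u\in[t_1^\flat,t_1^\sharp].
\]
Restrict all subsequent analysis to this intersection.

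Now split $d\widehat\Pi_S(x)=dM(x)+(S(x)-\alpha_1)dx$ with $dM(x):=d\Pi_S(x)-S(x)dx$ the compensated martingale, giving $\mathcal I=\mathcal I_{MM}+\mathcal I_{MD}+\mathcal I_{DM}+\mathcal I_{DD}$. The key deterministic estimate feeding every piece is
\[
\int_{t_1^\flat}^t\frac{|S(s)-\alpha_1|}{\sqrt{t-s+1}}\,ds\le \alpha_0^{1/2}\beta_0^{O(1)},
\]
which follows from Lemmas~\ref{lem:ind1:pi1int basicbd:basic} and \ref{lem:ind1:pi1int basicbd} applied to the two pieces of the pointwise bound above. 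Combined with Lemma~\ref{lem:bound on deterministic J} ($|J_{u,s}^{(\alpha_1)}|\le C/\sqrt{(u+1)(s+1)}$), the drift-drift piece is controlled deterministically: $|\mathcal I_{DD}(t)|\le\alpha_0\beta_0^{O(1)}$.

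For the stochastic pieces we iterate Lemma~\ref{lem:concen of int:conti:forwardtime}. Set $G(s,t):=\int_{t_1^\flat}^s J_{t-s,t-u}^{(\alpha_1)}\,dM(u)$. Working in the time-reversed variable $v=t-u$ so that the integrand becomes the \emph{decreasing} function $v\mapsto J_{t-s,v}^{(\alpha_1)}$, the lemma applies with quadratic variation bounded by $C\alpha_0\beta_0^{C_\circ+O(1)}/(t-s+1)$ and with $\bar f_t(p_1)\le C/\sqrt{(t-s+1)(\pi_1(t;S)+1)}$; this produces, uniformly in $(s,t)$ via the discretization-and-continuity scheme of Lemma~\ref{lem:Qint 1st} (using Corollary~\ref{cor:derivative of J} for Lipschitz control),
\[
|G(s,t)|\le\frac{\beta_0^{O(1)}(\alpha_0^{1/2}+\sigma_1(t;S))}{\sqrt{t-s+1}}\qquad\text{w.h.p.}
\]
The piece $\mathcal I_{MD}(t)=\int_{t_1^\flat}^t G(s,t)(S(s)-\alpha_1)\,ds$ is then integrated pointwise against the $|S(s)-\alpha_1|$-bound, yielding $|\mathcal I_{MD}(t)|\le \alpha_0\beta_0^{O(1)}+\alpha_0^{1/2}\beta_0^{O(1)}\sigma_1(t;S)$. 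For $\mathcal I_{DM}$, setting $H(s,t):=\int_{t_1^\flat}^s J_{t-s,t-u}^{(\alpha_1)}(S(u)-\alpha_1)\,du$ and using the deterministic bound $|H(s,t)|\le C\alpha_0^{1/2}\beta_0^{O(1)}/\sqrt{t-s+1}$, a single application of Lemma~\ref{lem:concen of int:conti:forwardtime} to $\int H(s,t)\,dM(s)$ yields the same order. Finally, $\mathcal I_{MM}(t)=\int_{t_1^\flat}^t G(s,t)\,dM(s)$ is handled by a second application of Lemma~\ref{lem:concen of int:conti:forwardtime}: the quadratic variation is $O((\alpha_0^2+\alpha_0\sigma_1(t;S)^2)\beta_0^{O(1)})$ and the nearest-point term yields $2N\bar f_t(p_1)\le\beta_0^{O(1)}(\alpha_0^{1/2}\sigma_1(t;S)+\sigma_1(t;S)^2)$, altogether giving $|\mathcal I_{MM}(t)|\le\beta_0^{O(1)}\sigma_1(t;S)$. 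Summing the four pieces produces $|\mathcal I(t)|\le\beta_0^{2\theta+4C_\circ}\sigma_1(t;S)$ with ample room in the exponent of $\beta_0$.

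The principal obstacle is extracting the $\sigma_1(t;S)$ factor cleanly: the naive martingale $L^2$ bound would give only $\alpha_0^{1/2}$, which is weaker than $\sigma_1(t;S)$ exactly when the nearest point $\pi_1(t;S)$ is close to $t$. The $2N\bar f_t(p_1)$ term in Lemma~\ref{lem:concen of int:conti:forwardtime} was designed precisely to capture this contribution, and the nested integration must be arranged (via the time reversal $v=t-u$) so that $\bar f_t$ is decreasing as the lemma requires. A secondary, recurring technical burden is uniformity in $t\in[t_1,t_1^\sharp]$: one estimates on a $\delta=\alpha_0^{10}$ grid of $t$-values and propagates via the bound $|\partial_t J_{u,s}^{(\alpha_1)}|\le C$ from Corollary~\ref{cor:derivative of J} together with the bound on $|\Pi_S[t-\delta,t]|$ from $\tau_7$, exactly as in Section~\ref{subsec:fixed:fixed}. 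Collecting the error probabilities from Lemma~\ref{lem:ind2:alpha1vsalpha0}, equation~\eqref{eq:ind2:tauxpprime}, and each use of Lemma~\ref{lem:concen of int:conti:forwardtime} then yields the stated $3\exp(-\beta_0^2)$ bound.
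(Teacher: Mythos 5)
Your strategy — split $d\widehat\Pi_S$ into martingale plus drift, bound the four resulting pieces via Lemma~\ref{lem:concen of int:conti:forwardtime} together with deterministic integral estimates on $|S-\alpha_1|$ — is the same basic architecture as the paper's proof. The difference is that you use a two-way decomposition $d\widehat\Pi_S = d\widetilde\Pi_S + (S-\alpha_1)\,ds$, whereas the paper uses the three-way decomposition \eqref{eq:ind2:tausharp:tilPiSdecomposition}, keeping the drift pieces $(S-S_1)$ and $(S_1-\alpha_1)$ separate (Lemmas~\ref{lem:ind2:tau63sharp:1}--\ref{lem:ind2:tau63sharp:4} for the inner integral, Lemmas~\ref{lem:ind2:tau61sharp:1}--\ref{lem:ind2:tau61sharp:4} for the outer). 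Your two-way version produces fewer pieces and is, in principle, a legitimate simplification; what you lose is the cleaner accounting of the $\alpha_0^{-\epsilon}$ loss carried by $\tau_3$, which the paper tracks explicitly and separately from the $\beta_0$-polynomial losses carried by $\tau_{\textnormal{b}}''$.

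There is, however, a genuine error in your stated pointwise estimate. The $\tau_3$ bound on $|S-S_1|$ is $\alpha_0^{1-\epsilon}\sigma_1\sigma_2(u;S)$ on the positive side, not a multiple of $\alpha_0\sigma_1(u;S)$, so combining $\tau_3$ with $\tau_{\textnormal{b}}''$ actually gives
\[
|S(u)-\alpha_1| \le \alpha_0^{1-\epsilon}\sigma_1\sigma_2(u;S) + \alpha_0^{3/2-\epsilon}\sigma_1(u;S) + \alpha_0\beta_0^{C_\circ}\sigma_1(u;S) + 2\alpha_0^{3/2}\beta_0^{\theta}.
\]
The term $\alpha_0^{1-\epsilon}\sigma_1\sigma_2$, which you drop, \emph{dominates} the term $\alpha_0\beta_0^{C_\circ}\sigma_1$ precisely in the regime where $\pi_1,\pi_2$ are $O(1)$, since $\alpha_0^{-\epsilon}\gg\beta_0^{C_\circ}$. (A related bookkeeping issue: the $\tau_3$-bound compares $S$ with $S_1(\cdot;t_0^-,\alpha_0)$, while $\tau_{\textnormal{b}}''$ controls $S_1(\cdot;t_1^{--},\alpha_1)-\alpha_1$; reconciling the two requires Corollary~\ref{cor:ind2:prereg:past}, i.e.\ using $\tau_3(\alpha_1,t_1^-)$ rather than $\tau_3(\alpha_0,t_0)$, or bounding the change of $S_1$ under change of base point and rate — you do not say which.) That said, the error does not actually break the argument: after inserting the missing $\alpha_0^{1-\epsilon}\sigma_1\sigma_2$ term, your ``key deterministic estimate'' $\intop_{t_1^\flat}^t |S(s)-\alpha_1|(t-s+1)^{-1/2}\,ds \le \alpha_0^{1/2}\beta_0^{O(1)}$ still holds, because Lemma~\ref{lem:ind1:pi1int basicbd} applied to $\alpha_0^{1-\epsilon}/((\pi_1(s;S)+1)\sqrt{t-s+1})$ yields a contribution of order $\alpha_0^{1-\epsilon}\beta_0^{O(1)}\ll\alpha_0^{1/2}\beta_0^{O(1)}$, and then all four pieces $\mathcal I_{MM},\mathcal I_{MD},\mathcal I_{DM},\mathcal I_{DD}$ go through as you describe. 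So the conclusion is reached, but the intermediate pointwise claim should be corrected and its three pieces tracked through the integral estimates.
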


Proof of Lemma \ref{lem:ind2:tau61sharp} turns out to be more involved than the methods used in Section \ref{sec:fixedrate}, since we now do not allow the error to be of size $\alpha_1^{-\epsilon}$. We rather analyze the double integral directly, relying on the fact that the length scale of $t_1^\sharp-t_1^\flat $, which is roughly $\alpha_0^{-2}\beta_0^\theta$, is much smaller than $\alpha_0^{-2}\beta_0^{10\theta}$.

To begin with, we first study the inner integral. Define the stopping time $\tau_{{3,3}}^\sharp$ as
\begin{equation}\label{eq:def:tau63sharp}
\tau_{{3,3}}^\sharp:= \inf\left\{ t\ge t_1: \,\exists s\in[t_1^\flat,t] \textnormal{ s.t. } \left|\intop_{t_1^\flat}^s J^{(\alpha_1)}_{t-s,t-u} d\widehat{ \Pi}_S(u) \right| \ge \frac{\alpha_1^{\frac{1}{2}}\beta_1^{\theta+3 }}{\sqrt{t-s+1}} + \frac{\beta_1^{C_\circ +1} }{t-s+1} \right\}.
\end{equation}

\begin{lem}\label{lem:ind2:tau63sharp}
	Under the setting of Theorem \ref{thm:ind2:main} and Proposition \ref{prop:ind2:prereg}, we have
	\begin{equation}
	\PP\left(\tau_{{3,3}}^\sharp \le t_1^\sharp, \ \tilde{\tau}\wedge \tau'\ge \hat{t}_0\ | \ \mathcal{A}(\alpha_0,t_0)  \right) \le 3\exp\left(-\beta_0^2 \right).
	\end{equation}
\end{lem}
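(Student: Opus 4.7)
The strategy is to split the inner integrand $d\widehat{\Pi}_S(u)=d\widetilde{\Pi}_S(u)+(S(u)-\alpha_1)du$ into its martingale part and drift part and estimate each separately, working on the event where the regularity conclusions of Proposition \ref{prop:ind2:prereg}, Corollary \ref{cor:ind2:prereg:past}, and the bound \eqref{eq:ind2:tauxpprime} on $\tau_{\textnormal{b}}''$ all hold (an event of conditional probability at least $1-3\exp(-\beta_0^{2})$ by a union bound). On this event we have access to the ``sharp'' frame-of-reference control $|S_1(u)-\alpha_1|\le \alpha_0\beta_0^{C_\circ}\sigma_1(u;S)+2\alpha_0^{3/2}\beta_0^\theta$ on $[t_1^\flat,t_1^\sharp]$, together with the usual consequences of $\tau_1,\tau_3,\tau_7,\tau_8$ (in terms of $\alpha_1,t_1^-$), and Lemma \ref{lem:bound on deterministic J} gives the pointwise bound $|J^{(\alpha_1)}_{t-s,t-u}|\le C/\sqrt{(t-s+1)(t-u+1)}$ on the whole range of interest.

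For fixed $(s,t)$ with $t_1^\flat\le s\le t\le t_1^\sharp$, the martingale piece $\int_{t_1^\flat}^s J^{(\alpha_1)}_{t-s,t-u}\,d\widetilde{\Pi}_S(u)$ will be controlled by Corollary \ref{lem:concentrationofint:continuity} applied to $f_{s,t}(u)=J^{(\alpha_1)}_{t-s,t-u}$ with $g=S$. Using $\tau_1$ to bound $S\le 2\alpha_1\beta_1^{C_\circ}$ and the pointwise bound on $J^{(\alpha_1)}$, the quadratic variation is
\[
\int_{t_1^\flat}^{s\wedge\tilde{\tau}} (J^{(\alpha_1)}_{t-s,t-u})^2 S(u)\,du
\le \frac{C\alpha_1\beta_1^{C_\circ}}{t-s+1}\int_{t_1^\flat}^{s}\frac{du}{t-u+1}
\le \frac{C\alpha_1\beta_1^{2C_\circ}}{t-s+1},
\]
so the martingale's typical size is $\sqrt{\alpha_1}\,\beta_1^{C_\circ+1}/\sqrt{t-s+1}$, which fits inside the first term of the bound in $\tau_{3,3}^\sharp$ with room to spare. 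The drift piece decomposes via $S(u)-\alpha_1 = (S(u)-S_1(u))+(S_1(u)-\alpha_1)$: the $\tau_3$-bound $|S-S_1|\le \alpha_1^{1-\epsilon}\sigma_1\sigma_2(u;S)$ together with the estimate on $J^{(\alpha_1)}$ and Lemma \ref{lem:ind1:pi1int basicbd} (applied with $\Delta_0=\alpha_1^{-1},\Delta_1=\alpha_1^{-1}\beta_1^{C_\circ}, N_0=\beta_1^4$ coming from $\tau_7,\tau_8$) produces a contribution of order $\alpha_1^{1-\epsilon}\beta_1^{O(1)}\sigma_1(t;S)/\sqrt{t-s+1}$, while the $\tau_{\textnormal{b}}''$-bound on $|S_1-\alpha_1|$ inserted into $\int|J^{(\alpha_1)}_{t-s,t-u}|(\alpha_0\beta_0^{C_\circ}\sigma_1(u;S)+\alpha_0^{3/2}\beta_0^\theta)du$ and handled by the same kind of $\pi_1$-integral estimate yields a $\beta_1^{C_\circ+1}/(t-s+1)$-type term, matching the second term of the $\tau_{3,3}^\sharp$ bound.

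To upgrade pointwise estimates to simultaneous control over all $(s,t)$ in the rectangle $\{t_1^\flat\le s\le t\le t_1^\sharp\}$, I will discretize on a very fine grid (say spacing $\alpha_0^{10}$), take a union bound (the probability loss is absorbed into $e^{-\beta_0^{3/2}}$-type errors generated by Corollary \ref{lem:concentrationofint:continuity}), and then interpolate using a derivative bound of the form $|\partial_s J^{(\alpha_1)}_{t-s,t-u}|\vee|\partial_t J^{(\alpha_1)}_{t-s,t-u}|\le C$ analogous to Lemma \ref{lem:derivative of Q and J} and Corollary \ref{cor:derivative of J}, together with $\tau_7$ which bounds $|\Pi_S|$ on any short interval by $\beta_1^{O(1)}$. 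This is the same two-scale scheme used in Section \ref{subsec:fixed:fixed} and is routine once the pointwise estimates are in hand.

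The main technical obstacle is the drift integral involving $|S_1(u)-\alpha_1|$: the rough bound $\alpha_0^{3/2}\beta_0^\theta$ gets integrated against $|J^{(\alpha_1)}|$ on an interval of length comparable to $\alpha_0^{-2}\beta_0^\theta$ and risks being too large unless the decay factor $e^{-c\alpha_1^2(t-u)}$ of $K_{\alpha_1}$ (inherited by $J^{(\alpha_1)}$) is used efficiently; it is precisely here that the replacement of $t_1^-$ by $t_1^\flat$ in the definitions of $\tau_{3,2}^\sharp$ and $\tau_{\textnormal{b}}''$ (flagged in Remark \ref{rmk:ind2:tau6sh:frame shift} and \eqref{eq:ind2:tau6sh:switch base}) is essential, since on $[t_1^\flat,t_1^\sharp]$ one has both the sharp bound on $|S_1-\alpha_1|$ \emph{and} the exponential decay $e^{-c\alpha_1^2(t-u)}$ kicks in on the tails. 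Once these two ingredients are combined with the hitting-point estimates from Lemmas \ref{lem:ind1:pi1int basicbd:basic} and \ref{lem:ind1:pi1int basicbd}, the claimed bound $\alpha_1^{1/2}\beta_1^{\theta+3}/\sqrt{t-s+1}+\beta_1^{C_\circ+1}/(t-s+1)$ follows after collecting constants.
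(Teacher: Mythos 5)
Your proposal follows essentially the same route as the paper: decompose $d\widehat{\Pi}_S(u) = d\widetilde{\Pi}_S(u) + (S(u)-S_1(u))\,du + (S_1(u)-\alpha_1)\,du$, bound the martingale piece by a concentration argument and the two drift pieces via $\tau_3$ and $\tau_{\textnormal b}''$ together with the $\pi_1$-integral lemmas, then discretize in $(s,t)$ and use derivative bounds on $J^{(\alpha_1)}$ for the union bound; and you correctly flag the $t_1^-\to t_1^\flat$ frame shift of Remark \ref{rmk:ind2:tau6sh:frame shift} and \eqref{eq:ind2:tau6sh:switch base} as the crux. Two small inaccuracies worth noting. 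First, your quadratic-variation estimate yields only the $\sqrt{\alpha_1}\beta_1^{O(1)}/\sqrt{t-s+1}$ term; the second term $\beta_1^{C_\circ+1}/(t-s+1)$ in the definition of $\tau_{3,3}^\sharp$ actually comes from the \emph{martingale} piece on the short interval $[s-\alpha_0^{-1},s]$, where $|J^{(\alpha_1)}_{t-s,t-u}|$ is only $O(1/(t-s+1))$ and one counts points via $\tau_7$ rather than doing a QV bound (see \eqref{eq:ind2:tau61sh1:small s}) — not from the $(S_1-\alpha_1)$ drift, which contributes only $\sqrt{\alpha_1}/\sqrt{t-s+1}$-type terms (cf.\ Lemma \ref{lem:ind2:tau63sharp:4}). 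Second, Corollary \ref{lem:concentrationofint:continuity} requires $f_t$ nonnegative and increasing, but $J^{(\alpha_1)}_{t-s,t-u}$ is signed and not a priori monotone in $u$, so you would either need the envelope version Lemma \ref{lem:concen of int:conti:forwardtime} or the manual split of the integral that the paper carries out; with that fix the argument goes through exactly as you outline.
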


Recall the definitions $S_1(t)$ and $R(t)$.
In order to study the integrals over $d\widehat{\Pi}_S(x)$ which appear in Lemmas \ref{lem:ind2:tau61sharp} and \ref{lem:ind2:tau63sharp}, we decompose it as follows:
\begin{equation}\label{eq:ind2:tausharp:tilPiSdecomposition}
\begin{split}
d\widehat{\Pi}_S (x) = ~d \widetilde{\Pi}_S(x) + \big[S(x)-S_1(x) \big] + \big[S_1(x)-\alpha_1 \big],
\end{split}
\end{equation}
where we remind that $d\widetilde{\Pi}_S(x)=d\Pi_S(x)-S(x)dx$.
We will rely on this formula when establishing both Lemmas \ref{lem:ind2:tau61sharp} and \ref{lem:ind2:tau63sharp}.

In the remaining of Section \ref{subsec:ind2:bootstrappedJ}, we first prove Lemma \ref{lem:ind2:tau63sharp} in Section \ref{subsubsec:ind2:tau63sharp}, and then establish Lemma \ref{lem:ind2:tau61sharp} in Section \ref{subsubsec:ind2:tau61sharp}. 

\subsubsection{Proof of Lemma \ref{lem:ind2:tau63sharp}}\label{subsubsec:ind2:tau63sharp}
We decompose the integral given in the definition of $\tau^\sharp_{3,3}$ into four parts using the decomposition \eqref{eq:ind2:tausharp:tilPiSdecomposition}, and then we study the following lemmas in order to show Lemma \ref{lem:ind2:tau63sharp}. Recall the definition of the event $\mathcal{A}_4$ given in \eqref{eq:def:A4}, and set
\begin{equation}\label{eq:def:ind2:Aprime}
\mathcal{A}':= \mathcal{A}(\alpha_0,t_0)\cap \mathcal{A}_4.
\end{equation}
Moreover, recall the definitions of  $\tau_{\textnormal{b}}''$ \eqref{eq:def:tauxpprime}  and set
\begin{equation}\label{eq:def:tauhat:ind2}
\hat{\tau}:= \tilde{\tau} \wedge \tau_{\textnormal{b}}'' .
\end{equation}
We note that Lemmas \ref{lem:ind2:tau63sharp:1} and \ref{lem:ind2:tau63sharp:2} below hold the same with the weaker stopping time $\tilde{\tau}$ instead of $\hat{\tau}$. However, we state them in terms of $\hat{\tau}$ to be consistent with Lemma \ref{lem:ind2:tau63sharp:4}.

\begin{lem}\label{lem:ind2:tau63sharp:1}
	Assume the setting of Theorem \ref{thm:ind2:main}. For each  $t\in [t_1,t_1^\sharp]$  and $s\in[t_1^\flat,t]$, define the event \begin{equation}
	\mathcal{B}_1(s,t):= \left\{ \left| \intop_{t_1^\flat \wedge\hat{\tau}}^{s\wedge \hat{\tau}} J^{(\alpha_1)}_{t-s,t-u} d\widetilde{\Pi}_S(u) \right|
	\le \left(\frac{\beta_0^{C_\circ+1}}{t-s+1} +\frac{\alpha_0^{\frac{1}{2}} \beta_0^{2C_\circ} }{\sqrt{t-s+1} }
	\right) e^{-c\alpha_0^2(t-s)} \right\}.
	\end{equation}
	Then, we have
	\begin{equation}
	\PP\left(\left. \bigcap_{t\in [t_1, t_1^\sharp] } \bigcap_{s\in [t_1^\flat,t] }\mathcal{B}_1(s,t)  \ \right| \ \mathcal{A}' \right) \ge 1-\exp\left(-\beta_0^4 \right).
	\end{equation}
\end{lem}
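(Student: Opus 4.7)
The plan is to treat the integral in $\mathcal{B}_1(s,t)$ as a martingale in its upper limit $s$ and apply the Azuma-type martingale concentration of Lemma \ref{lem:concen of int:conti:forwardtime} (or its simpler avatar Corollary \ref{cor:concentration of integral}) for each pair $(s,t)$, then pass to the simultaneous bound by discretization and continuity. Three inputs feed this application: (i) the deterministic kernel estimate $|J^{(\alpha_1)}_{t-s,t-u}|\le \frac{Ce^{-c\alpha_1^2(t-u)}}{\sqrt{(t-s+1)(t-u+1)}}$ from Lemma \ref{lem:bound on deterministic J}; (ii) the bound $S(u)\le 2\alpha_0\beta_0^{C_\circ}$ valid for $u\le \hat\tau\le \tau_1(\alpha_1,t_1^-,2)$; and (iii) the control $|\Pi_S[u-\alpha_0^{-1},u]|\le 2\beta_0^4$ from $\tau_7\ge\hat\tau$. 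Throughout we use the event $\mathcal A_4$, so that $\alpha_1\asymp \alpha_0$ and $\beta_1\asymp\beta_0$ up to constants, allowing all bounds to be expressed in terms of $\alpha_0$ and $\beta_0$.

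Using (i)--(ii), the quadratic variation satisfies
\begin{equation*}
\mathbf M(s,t) := \int_{t_1^\flat}^{s\wedge\hat\tau} \big(J^{(\alpha_1)}_{t-s,t-u}\big)^2 S(u)\,du \;\le\; \frac{C\alpha_0\beta_0^{C_\circ}}{t-s+1} \int_{t-s}^{\infty} \frac{e^{-2c\alpha_0^2 r}}{r+1}\,dr .
\end{equation*}
Splitting the $r$-integral at $r = (t-s)\vee \alpha_0^{-2}$ keeps the exponential $e^{-c\alpha_0^2(t-s)}$ while letting the logarithmic part produce only one factor of $\beta_0$, yielding $\mathbf M(s,t)\lesssim \frac{\alpha_0\beta_0^{C_\circ+1}}{t-s+1}\,e^{-2c\alpha_0^2(t-s)}$. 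Plugging this into the concentration bound with deviation parameter $a=\beta_0^{3C_\circ/2}$ produces exactly the second summand $\frac{\alpha_0^{1/2}\beta_0^{2C_\circ}}{\sqrt{t-s+1}}e^{-c\alpha_0^2(t-s)}$. The first summand $\frac{\beta_0^{C_\circ+1}}{t-s+1}e^{-c\alpha_0^2(t-s)}$ is the $N\bar f_t(p_1)$ contribution from Lemma \ref{lem:concen of int:conti:forwardtime}: the integrand is maximized at $u=s$ with value $\frac{Ce^{-c\alpha_0^2(t-s)}}{t-s+1}$, which serves as $\bar f_t$, while (iii) supplies $N=2\beta_0^4$.

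To upgrade the pointwise estimate to the simultaneous event $\bigcap_{t,s}\mathcal B_1(s,t)$ we discretize $(s,t)$ on a grid of spacing $\alpha_0^{10}$ and union bound over at most $\alpha_0^{-O(1)}$ grid points, each of which fails with probability at most $\exp(-\beta_0^{C_\circ})$. Off-grid continuity is handled via the straightforward analogue of Lemma \ref{lem:derivative of Q and J} for $J^{(\alpha_1)}_{t-s,t-u}$, which shows $|\partial_t J|\vee|\partial_s J|\le C$ on the relevant range; combined with (iii) this controls the change of the integral between adjacent grid points by $\alpha_0^{5}$, well within the tolerance. The global failure probability is thus at most $\exp(-\beta_0^4)$ as claimed.

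The main technical subtlety will be preserving the exponential factor $e^{-c\alpha_0^2(t-s)}$ through the quadratic-variation computation and the $\bar f_t(p_1)$ term while not losing more than one extra $\log$ factor anywhere, since the target exponent $C_\circ+1$ in the first summand and $2C_\circ$ in the second leave only a bounded amount of slack. The two-region split in the $r$-integral above is the crucial step; every other ingredient is a routine application of the concentration machinery developed in Section \ref{subsec:fixed:mgconcen}, with stopping times $\tau_1, \tau_7\ge \hat\tau$ supplying the required a.s.\ bounds.
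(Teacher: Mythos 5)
Your proposal follows the same overall strategy as the paper's proof: fix $(s,t)$, split the integration range into a near segment (an $\alpha_0^{-1}$-length window) and a far segment, bound the near part by the point count from $\tau_7$ times the pointwise $J$-estimate, control the far part via a quadratic-variation/concentration bound, then union-bound over a fine grid and close via a derivative estimate on $J$. The quadratic-variation computation and the final exponents are essentially right (the paper gets $\beta_0^{C_\circ+2}$ rather than $\beta_0^{C_\circ+1}$, which is still compatible with the target $\beta_0^{2C_\circ}$).

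There is, however, one wrinkle you should fix. You invoke Lemma \ref{lem:concen of int:conti:forwardtime} and attribute the first summand to its $N\bar f_t(p_1)$ term. But that lemma is built for integrands that are only large on the \emph{initial} $[0,\Delta]$ segment: its hypothesis \eqref{eq:concen:condition:forwardtime} requires $|f_t(x)|\le\sqrt{\mathbf M}$ for all $x\ge\Delta$, and $p_1$ is the first point of $\Pi_g$ after $\tau_-$. Here $u\mapsto J^{(\alpha_1)}_{t-s,t-u}$ is \emph{small} near $u=t_1^\flat$ and \emph{large} near $u=s$, i.e.\ at the opposite end, so the hypothesis fails and $\bar f_t(p_1)$ would be evaluated at the wrong location. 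Corollary \ref{lem:concentrationofint:continuity} does split at the end, but it requires $f_t$ nonnegative and increasing, which $J$ is not (its sign changes). The paper resolves this by doing the split by hand: it bounds $\int_{s-\alpha_0^{-1}}^{s}J\,d\widetilde\Pi_S$ directly using $\tau_7$ and the pointwise bound on $J$ (this produces your first summand), and applies the plain Lemma \ref{lem:concentration of integral} only to the remaining integral over $[t_1^\flat,s-\alpha_0^{-1}]$, where $|J|\le C\alpha_0^{1/2}e^{-c\alpha_0^2(t-s)}/\sqrt{t-s+1}$ meets the $\sqrt{\mathbf M}$ requirement. Your ingredients are all correct and the arithmetic works out, but you should replace the appeal to Lemma \ref{lem:concen of int:conti:forwardtime} by this manual end-split before applying Corollary \ref{cor:concentration of integral}, as otherwise the step as written would not go through.
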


\begin{lem}\label{lem:ind2:tau63sharp:2}
	Under the setting of Theorem \ref{thm:ind2:main} suppose that we condition on the event $\mathcal{A}_4$. Then, we have
	\begin{equation}
	\left| \intop_{t_1^{\flat}\wedge\hat{\tau}}^{u \wedge \hat{\tau}}  J^{(\alpha_1)}_{t-s,t-u} (S(u)-S_1(u)) du\right| \le \frac{\alpha_0^{1-\epsilon}\beta_0^{C_\circ} }{\sqrt{t-s+1}}e^{-c\alpha_0^2(t-s)},
	\end{equation}
	for all $t\in[t_1,t_1^\sharp]$ and $s\in [t_1^\flat,t]$.
	
\end{lem}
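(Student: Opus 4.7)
The plan is to combine the deterministic decay of $J^{(\alpha_1)}$ from Lemma \ref{lem:bound on deterministic J} with the pointwise regularity bound on $|S-S_1|$ from $\tau_3$, together with the integral estimate Lemma \ref{lem:ind1:pi1int basicbd} after exploiting the exponential cutoff in $J$ to truncate the range of integration. First, since $\hat\tau \le \tilde\tau \le \tau_3(\alpha_1,t_1,2)$, for $u \le \hat\tau$ we have the pointwise bound $|S(u)-S_1(u;t_1^-,\alpha_1)| \le \alpha_1^{1-\epsilon}\sigma_1\sigma_2(u;S)$, and \eqref{eq:ind2:S1vsS1minus} lets us replace $S_1(u;t_1^-,\alpha_1)$ by $S_1(u;t_1^{--},\alpha_1)$ at cost $O(\alpha_1^{100})$; on $\mathcal A_4$ this yields $|S(u)-S_1(u)|\le 2\alpha_0^{1-\epsilon}\sigma_1\sigma_2(u;S)+\alpha_0^{100}$.

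Second, Lemma \ref{lem:bound on deterministic J} combined with $\mathcal A_4$ gives
\[
|J^{(\alpha_1)}_{t-s,t-u}| \le \frac{C e^{-c\alpha_0^2(t-u)}}{\sqrt{(t-s+1)(t-u+1)}}, \qquad t_1^\flat \le u \le s \le t,
\]
and since $t-u\ge t-s$ we may pull out a prefactor $e^{-c\alpha_0^2(t-s)/2}/\sqrt{t-s+1}$. Using $\sigma_1\sigma_2(u;S)\le 1/(\pi_1(u;S)+1)$, it remains to bound
\[
I := \int_{t_1^\flat\wedge\hat\tau}^{s\wedge\hat\tau}\frac{e^{-c\alpha_0^2(t-u)/2}\,du}{\sqrt{t-u+1}\,(\pi_1(u;S)+1)}
\]
by a polylogarithmic factor in $\beta_0$, which together with the $2\alpha_0^{1-\epsilon}$ prefactor yields the target.

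Third, split $I$ at $u_\star := t-\alpha_0^{-2}\beta_0$. On $[t_1^\flat\wedge\hat\tau,u_\star\wedge\hat\tau]$ the exponential weight is at most $e^{-c\beta_0/2}=\alpha_0^{c/2}$, so even with a crude bound on the rest via Lemma \ref{lem:ind1:pi1int basicbd} applied over the entire horizon $\alpha_0^{-2}\beta_0^{10\theta}$, this contribution is absorbed. On the near window $[u_\star\wedge\hat\tau,s\wedge\hat\tau]$, apply the second inequality of Lemma \ref{lem:ind1:pi1int basicbd} with the parameters dictated by regularity: $N_0=\beta_0^4$ and $\Delta_0=\alpha_0^{-1}$ from $\tau_7$, $\Delta_1=\alpha_0^{-1}\beta_0^{C_\circ}$ from $\tau_8$, and $K=h/\Delta_0=\beta_0$ with $h=\alpha_0^{-2}\beta_0$. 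This produces $I \le C N_0\sqrt{K/\Delta_0}\log\Delta_1 \le C\beta_0^{O(1)}$, and for a large enough fixed $C_\circ$ this is $\le \beta_0^{C_\circ-1}$.

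The main technical subtlety is making sure that the exponential factor in $J$ actually restricts the integral to a window of length $\alpha_0^{-2}\beta_0$ rather than the full induction horizon $\alpha_0^{-2}\beta_0^{10\theta}$: without such a truncation, Lemma \ref{lem:ind1:pi1int basicbd} would supply only a $\beta_0^{\theta/2+O(1)}$ bound, which would violate the target $\alpha_0^{1-\epsilon}\beta_0^{C_\circ}$. Once the window is reduced using the $e^{-c\beta_0}=\alpha_0^c$ gain on the far piece, the remaining estimate is a routine application of the integral inequalities from Section \ref{sec:criticalbranching}.
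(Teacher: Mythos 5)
Your overall strategy—combining the deterministic decay of $J^{(\alpha_1)}$ from Lemma~\ref{lem:bound on deterministic J} with the $\tau_3$ control on $|S-S_1|$ and an application of Lemma~\ref{lem:ind1:pi1int basicbd}, using the exponential weight to truncate the integration window—is exactly the paper's approach, so the structure is right. However, there is a genuine gap in the first step. You justify the pointwise bound $|S(u)-S_1(u)|\le \alpha_1^{1-\epsilon}\sigma_1\sigma_2(u;S)$ for $u\le\hat\tau$ by writing $\hat\tau\le\tilde\tau\le\tau_3(\alpha_1,t_1,2)$. But $\tau_3(\alpha_1,t_1,2)$ is defined as the first time $t\ge t_1$ that the inequality fails; it says nothing about $u\in[t_1^\flat,t_1)$, and a large part of the integration range $[t_1^\flat,s]$ sits strictly below $t_1$ (since $t_1^\flat=t_1-\tfrac13\alpha_0^{-2}\beta_0^\theta$). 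The stopping time you need is $\tau_3(\alpha_1,t_1^-)$, defined relative to the base point $t_1^-$ and the first-order approximation $S_1(\cdot;t_1^{--},\alpha_1)$, which is exactly the framework the paper sets up earlier in Section~\ref{subsec:ind2:bootstrappedJ} and controls via Corollary~\ref{cor:ind2:prereg:past}-(1). Using $\tau_3(\alpha_1,t_1^-)$ directly also makes your detour through \eqref{eq:ind2:S1vsS1minus} to switch $S_1$'s base point unnecessary, since that $\tau_3$ is already formulated with $S_1(\cdot;t_1^{--},\alpha_1)$.

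There is also a small arithmetic slip in the third step: with $h=\alpha_0^{-2}\beta_0$ and $\Delta_0=\alpha_0^{-1}$ one gets $K=h/\Delta_0=\alpha_0^{-1}\beta_0$, not $K=\beta_0$. This does not threaten the conclusion (even with $K=\alpha_0^{-1}\beta_0$, the second inequality of Lemma~\ref{lem:ind1:pi1int basicbd} gives a bound of order $\beta_0^{5+O(1)}\ll\beta_0^{C_\circ}$), but as written the $K$ you plug in does not satisfy $h=K\Delta_0$.
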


\begin{lem}\label{lem:ind2:tau63sharp:4}
	Under the setting of Theorem \ref{thm:ind2:main}, suppose that we condition on the event $\mathcal{A}_4$. For all $t\in [t_1,t_1^+]$  and $s\in[t_1^\flat,t]$, we have
	\begin{equation}
	\left| \intop_{t_1^\flat\wedge\hat{\tau}}^{s\wedge \hat{\tau}} J^{(\alpha_1)}_{t-s,t-u} (S_1(u)-\alpha_1)du \right|
	\le \frac{\alpha_0^{\frac{1}{2}} \beta_0^{\theta+2}} {\sqrt{t-s+1} }e^{-c\alpha_1^2(t-s)}.
	\end{equation}
\end{lem}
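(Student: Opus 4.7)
Since the kernel $J^{(\alpha_1)}_{t-s,t-u}$ is purely deterministic while $S_1(u)-\alpha_1$ is the only stochastic ingredient, the strategy is to reduce the estimate to an elementary one-dimensional integral after inserting the pointwise bounds supplied by Lemma~\ref{lem:bound on deterministic J} and by the stopping time $\tau_{\textnormal{b}}''$. First, since $u < s \le t$ implies $t-u>t-s>0$, Lemma~\ref{lem:bound on deterministic J} yields
\begin{equation*}
\bigl|J^{(\alpha_1)}_{t-s,t-u}\bigr| \;\le\; \frac{C\,e^{-c\alpha_1^{2}(t-u)}}{\sqrt{(t-s+1)(t-u+1)}}.
\end{equation*}
Second, since $\hat{\tau}\le \tau_{\textnormal{b}}''$, the defining inequality \eqref{eq:def:tauxpprime} gives, for every $u \in [t_1^\flat\wedge\hat\tau,\, s\wedge\hat\tau]$,
\begin{equation*}
\bigl|S_1(u)-\alpha_1\bigr| \;\le\; \alpha_0\beta_0^{C_\circ}\sigma_1(u;S) + 2\alpha_0^{3/2}\beta_0^{\theta}.
\end{equation*}

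Inserting these two bounds and factoring $(t-s+1)^{-1/2}$ outside, the analysis splits into a constant part (the leading contribution) and a $\sigma_1$-part. To recover the exponential factor $e^{-c\alpha_1^{2}(t-s)}$ in the final bound, I would write $e^{-c\alpha_1^{2}(t-u)}=e^{-c'\alpha_1^{2}(t-s)}e^{-(c-c')\alpha_1^{2}(t-u)}$ with some $0<c'<c$, which is legitimate because $u\le s$ ensures $t-u\ge t-s$. The first factor is pulled out of the integral while the second one supplies just enough decay to integrate against $(t-u+1)^{-1/2}$.

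For the constant part, the elementary estimate $\intop_{0}^{\infty}e^{-\lambda v}(v+1)^{-1/2}dv \le C\lambda^{-1/2}$ with $\lambda=(c-c')\alpha_1^{2}$ gives
\begin{equation*}
\frac{C\alpha_0^{3/2}\beta_0^{\theta}}{\sqrt{t-s+1}}\intop_{t_1^\flat}^{s}\frac{e^{-c\alpha_1^{2}(t-u)}}{\sqrt{t-u+1}}\,du \;\le\; \frac{C\alpha_0^{3/2}\beta_0^{\theta}\,\alpha_1^{-1}\,e^{-c'\alpha_1^{2}(t-s)}}{\sqrt{t-s+1}} \;\le\; \frac{C\alpha_0^{1/2}\beta_0^{\theta}\,e^{-c'\alpha_1^{2}(t-s)}}{\sqrt{t-s+1}},
\end{equation*}
where the last step uses $\alpha_1\ge\alpha_0/2$, guaranteed by $\mathcal{A}_4$. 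This already matches the claimed bound up to a $\beta_0$-factor.

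For the $\sigma_1$-part, the key step is Cauchy--Schwarz to decouple the deterministic kernel from the random factor $\sigma_1(u;S)$:
\begin{equation*}
\intop_{t_1^\flat}^{s}\frac{e^{-(c-c')\alpha_1^{2}(t-u)}}{\sqrt{(t-u+1)(\pi_1(u;S)+1)}}\,du \;\le\; \sqrt{\intop \frac{e^{-(c-c')\alpha_1^{2}(t-u)}}{t-u+1}\,du}\;\sqrt{\intop \frac{e^{-(c-c')\alpha_1^{2}(t-u)}}{\pi_1(u;S)+1}\,du}.
\end{equation*}
The first factor is $O(\sqrt{\beta_0})$. For the second, the exponential restricts the effective length of integration to $\sim \alpha_1^{-2}$; combining $\tau_1(\alpha_1,t_1,2)$ with $\tau_7(\alpha_1,t_1,2)$ controls the number of Poisson points in such a window by $\alpha_1^{-1}\beta_0^{O(C_\circ)}$, so that Lemma~\ref{lem:ind1:pi1int basicbd:basic} yields the second factor of size $O(\alpha_1^{-1/2}\beta_0^{O(C_\circ)})$. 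Multiplying back by the prefactor $\alpha_0\beta_0^{C_\circ}/\sqrt{t-s+1}$, the $\sigma_1$-contribution is $\frac{C\alpha_0^{1/2}\beta_0^{O(C_\circ)}\,e^{-c'\alpha_1^{2}(t-s)}}{\sqrt{t-s+1}}$, which is absorbed into $\alpha_0^{1/2}\beta_0^{\theta+2}/\sqrt{t-s+1}$ thanks to the choice $\theta\gg C_\circ$.

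The main technical obstacle is the bookkeeping required to keep the exponential decay $e^{-c\alpha_1^{2}(t-s)}$ in the final bound while retaining an integrable kernel in $t-u$; the constant-splitting device $c=c'+(c-c')$ handles this cleanly and is the only place where one must take care. Everything else reduces to the pointwise bounds on $J$, on $|S_1-\alpha_1|$, and on the local density of the Poisson process, all supplied by earlier parts of the argument.
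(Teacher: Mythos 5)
Your proof is correct and follows the same structure as the paper's: insert the pointwise bound \eqref{eq:ind2:Jbd} on $J^{(\alpha_1)}$, insert the bound on $|S_1(u)-\alpha_1|$ supplied by $\tau_{\textnormal{b}}''$ in \eqref{eq:def:tauxpprime}, pull out the prefactor $e^{-c\alpha_0^2(t-s)}/\sqrt{t-s+1}$, and integrate what remains. The one genuine deviation is in the $\sigma_1$-part: you decouple $(t-u+1)^{-1/2}$ from the random factor $(\pi_1(u;S)+1)^{-1/2}$ via Cauchy--Schwarz and then invoke the elementary Lemma~\ref{lem:ind1:pi1int basicbd:basic}, whereas the paper applies the tailor-made Lemma~\ref{lem:ind1:pi1int basicbd} (with $\Delta_0=\alpha_0^{-1}$, $\Delta_1=\alpha_0^{-1}\beta_0^{C_\circ}$, $N_0=\beta_0^5$, and truncation at $t-u\lesssim \alpha_0^{-2}\beta_0^2$) directly to the coupled kernel $\int (h-t+1)^{-1/2}(\pi_1+1)^{-1/2}\,dt$; both exploit the same ingredients (the point count from $\tau_7$ plus the exponential truncation) and land on $\alpha_0^{1/2}\beta_0^{O(C_\circ)}$, comfortably within the allotted $\alpha_0^{1/2}\beta_0^{\theta+2}$ since $C_\circ\ll\theta$. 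Note finally that the ``constant-splitting device'' $c=c'+(c-c')$ you emphasize is already built into \eqref{eq:ind2:Jbd}, where $e^{-2cy\alpha_0^2}\le e^{-c(x+y)\alpha_0^2}$ for $y\ge x$ effects exactly the same factorization.
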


Before we delve into the details, we address a simple fact that is useful to keep in mind for the rest of the subsection. Note that \eqref{eq:ind2:tauxpprime} and Theorem \ref{thm:ind2:main}-(2) shown in Section \ref{subsec:ind2:ratechange} tells us that
\begin{equation}\label{eq:ind2:tau63sh:aux1}
\PP \left( \left. \hat{\tau} \le t_1^\sharp, \ \tilde{\tau}\wedge\tau'\ge \hat{t}_0 \ \right| \ \mathcal{A}(\alpha_0,t_0) \right) \le 2\exp\left(-\beta_0^2 \right).
\end{equation}
Furthermore, we see from Lemma \ref{lem:ind2:alpha1vsalpha0} and Theorem \ref{thm:ind2:main}-(2) that
\begin{equation}\label{eq:ind2:tau63sh:aux2}
\begin{split}
\PP\left( \left. (\mathcal{A}_4)^c \cup \mathcal{A}(\alpha_1,t_1^-)^c,\ \tilde{\tau} \wedge \tau' \ge \hat{t}_0\ \right| \ \mathcal{A}(\alpha_0,t_0) \right) \le 2 \exp\left(-\beta_0^5 \right).
\end{split}
\end{equation}
This tells us that along with the event $\{\tilde{\tau}\wedge\tau'\ge \hat{t}_0 \}$, conditioning on $\mathcal{A}'$ or on $\mathcal{A}(\alpha_1,t_1^-)$ instead of $\mathcal{A}(\alpha_0,t_0)$ produces only a small additional error probability. Moreover, we stress that $\mathcal{A}_4$ and $\mathcal{A}(\alpha_1,t_1^-)$ are $\mathcal{F}_{t_1^-}$-measurable.

\begin{proof}[Proof of Lemma \ref{lem:ind2:tau63sharp}]
	
	Having \eqref{eq:ind2:tau63sh:aux1} and  \eqref{eq:ind2:tau63sh:aux2} combining the results of Lemmas \ref{lem:ind2:tau63sharp:1}--\ref{lem:ind2:tau63sharp:4},  we have with probability at least $1- 3\exp(-\beta_0^4)$ conditioned on $\mathcal{A}(\alpha_0,t_0)$ that
	\begin{equation}
	\left| \intop_{t_1^\flat \wedge\hat{\tau} }^{u\wedge\hat{\tau}} J^{(\alpha_1)}_{t-s,t-u} d\widehat{\Pi}_S(u)\right| \le \left(\frac{ \alpha_0^{\frac{1}{2}}\beta_0^{\theta+3} }{\sqrt{t-s+1}} + \frac{\beta_0^{C_\circ +1}}{t-s+1}\right) e^{-c\alpha_0^2(t-s)},
	\end{equation}
	and hence we obtain the conclusion.
\end{proof}

In the remaining of this subsection, we prove Lemmas \ref{lem:ind2:tau63sharp:1}--\ref{lem:ind2:tau63sharp:4}. In the proof, we repeatedly use Lemma \ref{lem:bound on deterministic J} which is
\begin{equation}\label{eq:ind2:Jbd}
J^{(\alpha_1)}_{x,y} \le \frac{Ce^{-2c y\alpha_0^{2} }}{\sqrt{(x+1)(y+1)}} \le \frac{Ce^{-c (x+y)\alpha_0^{2} }}{\sqrt{(x+1)(y+1)}},
\end{equation}
where the first inequality holds upon the event $\mathcal{A}_4.$

\begin{proof}[Proof of Lemma \ref{lem:ind2:tau63sharp:1}]
	Define the set 
	\begin{equation}\label{eq:ind2:tau63sh:1:discretizedset}
	\mathcal{T}:= \left\{ \alpha_0^{10}k \in [t_1^\flat,t_1^\sharp] : k\in \mathbb{Z}  \right\}.
	\end{equation}
	For any $t,u\in \mathcal{T}$ with $u\le t$, we first observe from \eqref{eq:ind2:Jbd} that on $\mathcal{A}_4$,
	\begin{equation}\label{eq:ind2:tau61sh1:small s}
	\begin{split}
	\left|\intop_{(s-\alpha_0^{-1})\wedge \hat{\tau}}^{s\wedge \hat{\tau}} J^{(\alpha_1)}_{t-s,t-u} d\widetilde{\Pi}_S(u)\right|
	&\le
	\frac{\beta_0^{5}e^{-c\alpha_0^2(t-s)}}{t-s+1 } + \frac{Ce^{-c\alpha_0^2(t-s)}}{\sqrt{t-s+1}} \intop_{s-\alpha_0^{-1}}^s \frac{\alpha_0 \beta_0^{C_\circ}du}{\sqrt{t-u+1}} \\&\le \frac{2C\beta_0^{C_\circ}e^{-c\alpha_0^2(t-s)}}{t-s+1}.
	\end{split}
	\end{equation}
	
To take care of the integral from $t_1^\flat$ to $(s-\alpha_0^{-1})$,   note that we have
	\begin{equation}
	\begin{split}
	\intop_{t_1^\flat\wedge \hat{\tau}}^{s\wedge \hat{\tau}} \left\{J^{(\alpha_1)}_{t-s,t-u}\right\}^2 S(u)du 
	\le \frac{Ce^{-2c\alpha_1^2(t-s)}}{t-s+1} \intop_{t_1^{\flat}}^{s} \frac{\alpha_0 \beta_0^{C_\circ}du}{t-u+1}& \le
	\frac{ \alpha_0 \beta_0^{C_\circ +2} }{t-s+1}e^{-2c\alpha_0^2(t-s)},\\
	\sup \left\{\left|J^{(\alpha_1)}_{t-s,t-u} \right|: s\in[t_1^\flat,s-\alpha_0^{-1}]  \right\} &\le \frac{C\alpha_0^{\frac{1}{2}}e^{-c\alpha_0^2(t-s)}}{\sqrt{t-s+1}}.
	\end{split}
	\end{equation}
	Thus, we apply Lemma \ref{lem:concentration of integral} and use \eqref{eq:ind2:tau61sh1:small s} to obtain that
	\begin{equation}
	\PP \left(\left.\left| \intop_{t_1^\flat\wedge \hat{\tau} }^{s\wedge \hat{\tau}} 
	J^{(\alpha_1)}_{t-s,t-u} d\widetilde{\Pi}_S(u)
	\right|	\ge \left(\frac{2C\beta_0^{C_\circ}}{t-s+1} + \frac{\alpha_0^{\frac{1}{2}}\beta_0^{C_\circ} }{\sqrt{t-s+1}} \right)e^{-c\alpha_0^2(t-s)} \ \right| \ \mathcal{A}' \right) \le \exp\left(-\beta_0^5 \right).
	\end{equation}
	Note that we can condition on $\mathcal{A}'$ when applying Lemma \ref{lem:concentration of integral} since it is $\mathcal{F}_{t_1^-}$-measurable. Moreover, the event $\mathcal{A}_4$ is used to ensure that $\alpha_1$ and $\alpha_0$ are close enough. 
	
	Now define 
	\begin{equation}
	\mathcal{B}'_1(s,t):= \left\{ \left| \intop_{t_1^\flat\wedge \hat{\tau} }^{s\wedge \hat{\tau}} 
	J^{(\alpha_1)}_{t-s,t-u} d\widetilde{\Pi}_S(u)
	\right|	\le \left(\frac{2C\beta_0^{C_\circ}}{t-s+1} + \frac{\alpha_0^{\frac{1}{2}}\beta_0^{C_\circ} }{\sqrt{t-s+1}}\right)e^{-c\alpha_0^2(t-s)}  \right\}.
	\end{equation}
	Note that its bound is slightly stronger than $\mathcal{B}_1(s,t)$, to leave some room to take a union bound and then conduct a continuity argument.
	Then, we use a union bound to deduce that
	\begin{equation}
	\PP \left(\left.\bigcap_{t\in \mathcal{T} } \bigcap_{s\in \mathcal{T}:\, s\le t} \mathcal{B}_1'(s,t)\ \right| \ \mathcal{A}' \right) \ge 1- \exp\left(-\beta_0^4 \right).
	\end{equation}
	The remaining duty is to improve this bound to work on general $t,s$ which are not necessarily in $\mathcal{T}$. This is done by the exact same argument presented in Lemmas \ref{lem:Qint 1st} and \ref{lem:concentrationofint:continuity}, and the details of the proof are left to the reader.
\end{proof}

\begin{proof}[Proof of Lemma \ref{lem:ind2:tau63sharp:2}]
	
	In the proof, we assume that we have $\mathcal{A}_4$.
	Using \eqref{eq:ind2:Jbd} and the definition of $\tau_{3}(\alpha_1,t_1^-)$ (Section \ref{subsubsec:reg:Scontrol}), we express that
	\begin{equation}\label{eq:ind2:tau63sh:2:main}
	\begin{split}
	\intop_{t_1^\flat\wedge \hat{\tau}}^{s\wedge\hat{\tau}} \left|J^{(\alpha_1)}_{t-s,t-u} \right| \left|S(u)-S_1(u) \right|du
	\le
	\intop_{t_1^\flat\wedge \hat{\tau}}^{s\wedge\hat{\tau}} \frac{Ce^{-c\alpha_0^2(t-s) }e^{-c\alpha_0^2(t-u)}}{\sqrt{(t-u+1)(t-s+1)}} \cdot\frac{\alpha_0^{1-\epsilon}}{\pi_1(u;S)+1}ds\\
	= \frac{C\alpha_0^{1-\epsilon}e^{-c\alpha_0^2(t-s)}}{\sqrt{t-s+1}} 	\intop_{t_1^\flat\wedge \hat{\tau}}^{s\wedge\hat{\tau}} \frac{Ce^{-c\alpha_0^2(t-u)}}{\sqrt{t-u+1}} \cdot\frac{ds}{\pi_1(u;S)+1}.
	\end{split}
	\end{equation}
	To simplify the RHS, we recall Lemma \ref{lem:ind1:pi1int basicbd}, with parameters $\Delta_0=\alpha_0^{-1}, \Delta_1= \alpha_0^{-1}\beta_0^{C_\circ}$ and $N_0 = \beta_0^5$. This gives
	\begin{equation}
	\intop_{t_1^\flat\wedge \hat{\tau}}^{s\wedge\hat{\tau}} \frac{Ce^{-c\alpha_0^2(t-u)}}{\sqrt{t-u+1}} \cdot\frac{ds}{\pi_1(u;S)+1} \le \beta_0^{C_\circ}.
	\end{equation}
	Note that although we need to set $K\ge \alpha_0^{-1}\beta_0^{10\theta}$ in Lemma \ref{lem:ind1:pi1int basicbd}, the contribution from $k\ge \alpha_0^{-1}\beta_0^2$ is negligible due to the extra exponential decay.  This concludes the proof of Lemma \ref{lem:ind2:tau63sharp:2}.
\end{proof}

\begin{proof}[Proof of Lemma \ref{lem:ind2:tau63sharp:4}]
	Suppose that we are on the event $\mathcal{A}_4.$
	Using the bound \eqref{eq:ind2:Jbd} and the definition of $\tau_{\textnormal{b}}''$ from \eqref{eq:def:tauxpprime}, we have
	\begin{equation}
	\intop_{t_1^\flat\wedge\hat{\tau}}^{s\wedge\hat{\tau}} \left| J^{(\alpha_1)}_{t-s,t-u} \right| |S_1(u)-\alpha_1| du
	\le \frac{Ce^{-c\alpha_0^2(t-s)}}{\sqrt{t-s+1}}
	\intop_{t_1^\flat\wedge\hat{\tau}}^{s\wedge\hat{\tau}} 
	\frac{ e^{-c\alpha_0^2(t-u)}}{\sqrt{t-u+1}} \left(\frac{\alpha_0\beta_0^{C_\circ}}{\sqrt{\pi_1(u;S)+1}} + \alpha_0^{\frac{3}{2}}\beta_0^\theta \right)ds.
	\end{equation}
	We first note that
	\begin{equation}\label{eq:ind2:tau63sh:4:int1}
	\intop_{t_1^\flat}^s \frac{\alpha_0^{\frac{3}{2}}\beta_0^\theta e^{-c\alpha_0^2(t-u)}}{\sqrt{t-u+1}} ds \le  \alpha_0^{\frac{1}{2}}\beta_0^{\theta+1}.
	\end{equation}
	Furthermore, from Lemma \ref{lem:ind1:pi1int basicbd} with $\Delta_0 = \alpha_0^{-1}, \Delta_1= \alpha_0^{-1}\beta_0^{C_\circ}$, and $N_0 =\beta_0^5 $,
	\begin{equation}\label{eq:ind2:tau63sh:4:int2}
	\begin{split}
	\intop_{t_1^\flat\wedge\hat{\tau}}^{s\wedge\hat{\tau}} \frac{\alpha_0\beta_0^{C_\circ}e^{-c\alpha_0^2(t-u)}du}{\sqrt{ (t-u+1)(\pi_1(u;S)+1) }}
	\le 
	\alpha_0^{\frac{1}{2}} \beta_0^{2C_\circ}
	.
	\end{split}
	\end{equation}
	Note that  the contribution from $k\ge \alpha_0^{-1}\beta_0^2$ is negligible similarly as in Lemma \ref{lem:ind2:tau63sharp:2}. Combining the two estimates, we obtain conclusion.
\end{proof}

\subsubsection{Proof of Lemma \ref{lem:ind2:tau61sharp}}\label{subsubsec:ind2:tau61sharp}
To finish the proof of Lemma \ref{lem:ind2:tau61sharp}, we define 
\begin{equation}
Z(s,t):= \intop_{t_1^\flat}^s J^{(\alpha_1)}_{t-s,t-u} d\widehat{\Pi}_S(u),
\end{equation}
which we have studied in the previous subsection. Moreover, for $\hat{\tau}$ and $\tau^\sharp_{3,3}$ given in \eqref{eq:def:tauhat:ind2} and \eqref{eq:def:tau63sharp}, respectively, we set
\begin{equation}
\hat{\tau}' := \hat{\tau} \wedge \tau^\sharp_{3,3}.
\end{equation}
Splitting $d\widehat{\Pi}_S(s)$ into four parts as described in \eqref{eq:ind2:tausharp:tilPiSdecomposition} and recalling the definition of $\mathcal{A}'$ from \eqref{eq:def:ind2:Aprime}, we prove the following lemmas.

\begin{lem}\label{lem:ind2:tau61sharp:1}
	Assume the setting of Theorem \ref{thm:ind2:main}. For each $t\in [t_1^\flat,t_1^\sharp]$, define the event 
	\begin{equation}
	\mathcal{C}_1(t):= \left\{ \left| \intop_{t_1^\flat}^{t} Z(s,t) d\widetilde{\Pi}_S(s) \right|
	\le \frac{ \beta_0^{2\theta+3C_\circ} }{ \pi_1(t;S)+1 } 
	\right\}.
	\end{equation}
	Then, we have
	\begin{equation}
	\PP\left(\left. \bigcap_{t\in [t_1^\flat, t_1^\sharp] } \mathcal{C}_1(t)  \ \right| \ \mathcal{A}' \right) \ge 1-\exp\left(-\beta_0^4 \right).
	\end{equation}
\end{lem}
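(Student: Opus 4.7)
The plan is to exploit the martingale structure of $u \mapsto \int_{t_1^\flat}^u Z(s,t)\, d\widetilde{\Pi}_S(s)$ for fixed $t$ (since $Z(s,t)$ depends only on $\Pi_S[t_1^\flat,s]$ and on the $\mathcal F_{t_1^-}$-measurable quantity $\alpha_1$, it is $\mathcal F_s$-measurable, so this is a martingale in $u$), and to isolate the contribution of the rightmost point $p_1 := t-\pi_1(t;S)$ of $\Pi_S \cap [t_1^\flat,t]$, which is the source of the $1/(\pi_1(t;S)+1)$ factor in the target. We work on the event $\mathcal A' \cap \{\tilde\tau \wedge \tau' \ge \hat t_0\} \cap \{\tau_{3,3}^\sharp \ge t_1^\sharp\}$, whose complement has probability at most $\exp(-\beta_0^2)$ by Proposition \ref{prop:ind2:prereg}, Lemma \ref{lem:ind2:tau63sharp}, and \eqref{eq:ind2:tau63sh:aux1}--\eqref{eq:ind2:tau63sh:aux2}; this event supplies $|Z(s,t)| \le \alpha_0^{1/2}\beta_0^{\theta+3}/\sqrt{t-s+1} + \beta_0^{C_\circ+1}/(t-s+1)$ from $\tau_{3,3}^\sharp$, $S \le 2\alpha_0\beta_0^{C_\circ}$ from $\tau_1$, and $\pi_1(t;S) \le \alpha_0^{-1}\beta_0^{C_\circ}$ from $\tau_8$.

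For fixed $t$ we decompose
\begin{equation*}
\int_{t_1^\flat}^{t} Z(s,t)\, d\widetilde{\Pi}_S(s) \;=\; M_{p_1^-} \;+\; Z(p_1,t) \;-\; \int_{p_1}^{t} Z(s,t)\, S(s)\, ds,
\end{equation*}
with $M_u := \int_{t_1^\flat}^u Z(s,t)\, d\widetilde{\Pi}_S(s)$ (the case $\Pi_S \cap [t_1^\flat,t]=\emptyset$ is handled identically with only the drift term present). The boundary jump satisfies $|Z(p_1,t)| \le \beta_0^{C_\circ+1}/(\pi_1+1) + \alpha_0^{1/2}\beta_0^{\theta+3}/\sqrt{\pi_1+1}$; rewriting $1/\sqrt{\pi_1+1} = \sqrt{\pi_1+1}/(\pi_1+1)$ and using $\sqrt{\pi_1+1}\le \alpha_0^{-1/2}\beta_0^{C_\circ/2}$, this fits inside $\beta_0^{\theta+C_\circ/2+O(1)}/(\pi_1+1)$. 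The drift integral is bounded by $2\alpha_0\beta_0^{C_\circ}\int_0^{\pi_1}|Z(t-r,t)|\,dr \lesssim \alpha_0\beta_0^{\theta+3C_\circ/2+O(1)}$, a constant which, since $\pi_1+1 \le \alpha_0^{-1}\beta_0^{C_\circ}$, is itself dominated by $\beta_0^{2\theta+3C_\circ}/(\pi_1+1)$. For $M_{p_1^-}$ we apply Corollary \ref{cor:concentration of integral} at the stopping time $p_1$: using the pointwise bound together with $\int_{t_1^\flat}^{p_1} ds/(t-s+1) \le \beta_0^{O(1)}$ and $\int_{t_1^\flat}^{p_1} ds/(t-s+1)^2 \le 1/(\pi_1+1)$, the quadratic variation is of order $\alpha_0^2\beta_0^{2\theta+C_\circ+O(1)} + \alpha_0\beta_0^{3C_\circ+O(1)}/(\pi_1+1)$, and concentration with deviation parameter $a=\beta_0^5$ yields $|M_{p_1^-}| \lesssim \alpha_0\beta_0^{\theta+C_\circ/2+O(1)} + \alpha_0^{1/2}\beta_0^{3C_\circ/2+O(1)}/\sqrt{\pi_1+1}$ with probability at least $1-\exp(-\beta_0^5)$; the same $\sqrt{\pi_1+1}$ rearrangement again places both pieces below $\beta_0^{2\theta+3C_\circ}/(\pi_1+1)$.

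To upgrade the pointwise-in-$t$ estimate to the uniform bound $\bigcap_t \mathcal{C}_1(t)$, we take a union bound over the discretized grid $\mathcal T := \{t_1^\flat + k\alpha_0^{10}\}\cap [t_1^\flat,t_1^\sharp]$ (of size polynomial in $\alpha_0^{-1}$, hence absorbed into $\exp(-\beta_0^5)$), and extend to arbitrary $t$ via a continuity argument: the derivative bound $|\partial_t J^{(\alpha_1)}_{x,y}| \le C$ from Lemma \ref{lem:derivative of Q and J} together with the crude count $|\Pi_S[t_1^\flat,t]| \le \alpha_0^{-1}\beta_0^{O(1)}$ makes the variation of the full integral across a grid cell at most $O(\alpha_0^8)$, negligible; the discontinuities of $\pi_1(t;S)$ as $t$ sweeps past a point of $\Pi_S$ only decrease $\pi_1$, i.e.\ loosen the target inequality rather than threaten it. The main delicate point in this plan is that the quadratic variation of the martingale itself depends on $\pi_1(t;S)$ through the $\int ds/(t-s+1)^2 \le 1/(\pi_1+1)$ contribution; far from being an obstacle, this coupling is precisely the mechanism that produces the $1/\sqrt{\pi_1+1}$ factor in the concentration bound which, after the rearrangement $1/\sqrt{\pi_1+1} = \sqrt{\pi_1+1}/(\pi_1+1)$ combined with $\sqrt{\pi_1+1}\le \alpha_0^{-1/2}\beta_0^{C_\circ/2}$, delivers the desired $1/(\pi_1(t;S)+1)$ scaling of the final bound.
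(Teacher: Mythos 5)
Your decomposition at the random point $p_1 = t - \pi_1(t;S)$ creates a genuine problem: $p_1$ is the \emph{last} point of $\Pi_S$ before $t$, and for the forward filtration to which $\widetilde\Pi_S$ is adapted this is \emph{not} a stopping time --- deciding $\{p_1 \le u\}$ for $u<t$ requires knowing that no point of $\Pi_S$ falls in $(u,t]$, which is future information. Consequently you cannot ``apply Corollary \ref{cor:concentration of integral} at the stopping time $p_1$'', and the quadratic-variation bound you propose, $\alpha_0\beta_0^{3C_\circ+O(1)}/(\pi_1+1)$, is itself $\mathcal F_t$-measurable rather than deterministic (or $\tau_-$-measurable for a genuine stopping time $\tau_-$), so neither Corollary \ref{cor:concentration of integral} nor Lemma \ref{lem:concen of int:conti:forwardtime} accepts it as the parameter ${\bf M}$. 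Note that Lemma \ref{lem:concen of int:conti:forwardtime} does accommodate a closest-point factor $\bar f_t(p_1)$, but there $p_1$ is the closest point to the \emph{left} endpoint of the integration interval (the first point encountered), which is a stopping time --- precisely the opposite of your situation. You flagged the coupling of the QV to $\pi_1$ as ``precisely the mechanism'' that produces the target bound, but this is the step that fails: replacing $1/(\pi_1+1)$ with its deterministic floor $\pi_1+1 \le \alpha_0^{-1}\beta_0^{C_\circ}$ gives a QV of order $\alpha_0\beta_0^{3C_\circ+O(1)}$, whose square root (times $\beta_0^5$) is $\alpha_0^{1/2}\beta_0^{O(1)}$ and exceeds $\beta_0^{2\theta+3C_\circ}/(\pi_1+1)$ whenever $\pi_1$ is near its maximum.

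The paper circumvents this by splitting the integral at the deterministic offset $t-\alpha_0^{-1}$ rather than at the random $p_1$. On the far piece $[t_1^\flat, t-\alpha_0^{-1}]$ the factor $1/(t-s+1)$ is bounded above by $\alpha_0$, so $|Z(s,t)| \le 2\alpha_0\beta_0^{\theta+3}$ deterministically, $\int_{t_1^\flat}^{t-\alpha_0^{-1}} (t-s+1)^{-2}\,ds \le \alpha_0$ deterministically, and the QV is bounded by the deterministic $\alpha_0^2\beta_0^{2\theta+2C_\circ}$; Lemma \ref{lem:concen of int:conti:forwardtime} then gives $O(\alpha_0\beta_0^{\theta+2C_\circ})$, which is dominated by the target since $\pi_1+1\le \alpha_0^{-1}\beta_0^{C_\circ}$. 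The near piece $[t-\alpha_0^{-1}, t]$ is estimated \emph{directly}, not by concentration: the number of jumps there is at most $\beta_0^4$ by $\tau_7$, each contributes at most $\beta_0^{C_\circ+1}/(\pi_1+1) + \alpha_0^{1/2}\beta_0^{\theta+3}/\sqrt{\pi_1+1}$ from $\tau_{3,3}^\sharp$, and the compensator contributes a bounded amount. Thus the $1/(\pi_1+1)$ in the final bound comes entirely from the direct near-piece estimate, and the random quantity $\pi_1(t;S)$ never enters any martingale-concentration QV parameter. If you want to keep your overall structure, replace the random split point $p_1$ with the deterministic $t-\alpha_0^{-1}$; the near-piece estimate (a direct count of jump contributions, not a martingale bound) already produces the $1/(\pi_1+1)$ scaling you are after.
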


\begin{lem}\label{lem:ind2:tau61sharp:2}
	Under the setting of Theorem \ref{thm:ind2:main}, suppose that we condition on $\mathcal{A}'$. Then, we have
	\begin{equation} \left| \intop_{t_1^\flat\wedge\hat{\tau}'}^{t\wedge \hat{\tau}'} Z(s,t) (S(s)-S_1(s))ds \right|
	\le \frac{ \alpha_0^{1-2\epsilon} }{ \pi_1(t;S)+1 } + \alpha_0^{\frac{3}{2}-2\epsilon} 
	,
	\end{equation}
	for all $t\in[t_1^\flat,t_1^\sharp]$
\end{lem}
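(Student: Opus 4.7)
The plan is to combine the pointwise bound on $Z(s,t)$ supplied by $\tau^\sharp_{3,3}$ with the pointwise bound on $|S(s)-S_1(s)|$ supplied by $\tau_3(\alpha_1,t_1^-,2) \ge \hat{\tau}'$, and then apply Lemma \ref{lem:ind1:pi1int basicbd} to estimate the resulting deterministic integral. No probabilistic estimate is needed in the argument since we are working \emph{inside} $[t_1^\flat\wedge\hat{\tau}',\, t\wedge\hat{\tau}']$, where all the controlling stopping times have been imposed.

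More concretely, on the event $\mathcal{A}'$ (which in particular forces $\alpha_1$ and $\alpha_0$ to be comparable so that $\beta_1$ and $\beta_0$ agree up to an additive constant), I would use the two pointwise estimates
\begin{equation}
|Z(s,t)| \le \frac{\alpha_0^{\frac{1}{2}}\beta_0^{\theta+3}}{\sqrt{t-s+1}} + \frac{\beta_0^{C_\circ+1}}{t-s+1}, \qquad
|S(s)-S_1(s)| \le \frac{\alpha_0^{1-\epsilon}}{\pi_1(s;S)+1},
\end{equation}
the first coming from $s\le \hat{\tau}'\le \tau^\sharp_{3,3}$ and the second from $s\le \hat{\tau}'\le \tilde{\tau}\le \tau_3(\alpha_1,t_1^-,2)$ together with $\sigma_2(s;S)\le \sigma_1(s;S)$. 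Inserting these into the integral splits the problem into two deterministic integrals of the form
\begin{equation}
\alpha_0^{\frac{3}{2}-\epsilon}\beta_0^{\theta+3}\!\intop_{t_1^\flat}^{t}\!\frac{ds}{\sqrt{t-s+1}(\pi_1(s;S)+1)}
\ + \
\alpha_0^{1-\epsilon}\beta_0^{C_\circ+1}\!\intop_{t_1^\flat}^{t}\!\frac{ds}{(t-s+1)(\pi_1(s;S)+1)}.
\end{equation}

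Each of these is exactly in the shape handled by Lemma \ref{lem:ind1:pi1int basicbd}, applied with the parameter choices $\Delta_0=\alpha_0^{-1}$, $\Delta_1=\alpha_0^{-1}\beta_0^{C_\circ}$ (justified by $\tau_7(\alpha_1,t_1^-,2)$ and $\tau_8(\alpha_1,t_1^-,2)$ respectively), $N_0=\beta_0^5$, and $K = (t-t_1^\flat)/\Delta_0 \le 5\alpha_0^{-1}\beta_0^\theta$. The second inequality of Lemma \ref{lem:ind1:pi1int basicbd} bounds the first integral by $O(\beta_0^{\theta/2+6})$, giving total contribution $O(\alpha_0^{3/2-\epsilon}\beta_0^{3\theta/2+9})$. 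The fourth inequality of Lemma \ref{lem:ind1:pi1int basicbd} bounds the second integral by $O\bigl(\beta_0^{6}(\pi_1(t;S)+1)^{-1}+\alpha_0\beta_0^{7}\bigr)$, giving total contribution $O(\alpha_0^{1-\epsilon}\beta_0^{C_\circ+7})(\pi_1(t;S)+1)^{-1} + O(\alpha_0^{2-\epsilon}\beta_0^{C_\circ+8})$.

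Since $\alpha_0^{-\epsilon}=e^{\epsilon\beta_0}$ dwarfs any fixed polynomial $\beta_0^{O(\theta)}$ once $\alpha_0$ is small enough, all the logarithmic prefactors can be absorbed into a loss of $\alpha_0^{-\epsilon}$, yielding the two advertised pieces $\alpha_0^{1-2\epsilon}/(\pi_1(t;S)+1)$ and $\alpha_0^{3/2-2\epsilon}$. There is no conceptual obstacle here: the only care needed is the bookkeeping of polylogarithmic factors and checking that the regime $t-s\gtrsim \alpha_0^{-2}$ of the integrals (where the exponential decay of $Z$ would matter if one kept it) is automatically subsumed by $t_1^\sharp - t_1^\flat \le 5\alpha_0^{-2}\beta_0^\theta$, so the crude bounds from $\tau^\sharp_{3,3}$ without the exponential factor are enough.
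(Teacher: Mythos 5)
Your proposal is correct and follows the same strategy as the paper: bound $|Z(s,t)|$ by the stopping-time control $\tau^\sharp_{3,3}$, bound $|S(s)-S_1(s)|$ by the $\tau_3$ control, and integrate via Lemma \ref{lem:ind1:pi1int basicbd}. The one genuine (small) divergence is your treatment of the exponential tail: the paper's intermediate display keeps the factor $e^{-c\alpha_0^2(t-s)}$ (which it extracts from the deterministic decay of $J^{(\alpha_1)}_{\cdot,\cdot}$, Lemma \ref{lem:bound on deterministic J}) and uses it to truncate the integral so that Lemma \ref{lem:ind1:pi1int basicbd} is applied with $K\approx\alpha_0^{-1}\beta_0^2$, while you drop the exponential factor entirely and take $K\approx\alpha_0^{-1}\beta_0^\theta$ from the raw interval length $t_1^\sharp - t_1^\flat$. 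Your observation that this is harmless is correct: the difference is $\beta_0^{\theta/2}$ versus $\beta_0^1$ in the second inequality of Lemma \ref{lem:ind1:pi1int basicbd}, and both get absorbed in the allowed loss $\alpha_0^{-\epsilon}$ because the interval $[t_1^\flat, t_1^\sharp]$ is short (of order $\alpha_0^{-2}\beta_0^\theta$, not $\alpha_0^{-2}\beta_0^{10\theta}$). Thus your version is marginally cruder but cleaner, in that it only invokes the stopping-time definition of $\tau^\sharp_{3,3}$ rather than the slightly stronger statement established inside the proof of Lemma \ref{lem:ind2:tau63sharp}.
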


\begin{lem}\label{lem:ind2:tau61sharp:4}
	Under the setting of Theorem \ref{thm:ind2:main}, suppose that we condition on $\mathcal{A}'$. Then, we have 
	\begin{equation}
	\left| \intop_{t_1^\flat\wedge\hat{\tau}'}^{t\wedge \hat{\tau}'} Z(s,t) (S_1(s)-\alpha_1)ds \right|
	\le \alpha_0\beta_0^{2\theta+5} 
	,
	\end{equation}
	for all $t\in [t_1^\flat, t_1^\sharp].$
\end{lem}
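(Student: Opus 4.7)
The plan is to combine the decomposition \eqref{eq:ind2:tausharp:tilPiSdecomposition} of $d\widehat{\Pi}_S$ with the finer (exponentially decaying) bounds on $Z(s,t)$ provided by Lemmas \ref{lem:ind2:tau63sharp:1}, \ref{lem:ind2:tau63sharp:2}, and \ref{lem:ind2:tau63sharp:4}, rather than with the weaker bound encoded in $\tau_{3,3}^\sharp$. Concretely, I would write $Z(s,t) = Z_1(s,t) + Z_2(s,t) + Z_3(s,t)$ where the three pieces correspond respectively to integrating $J^{(\alpha_1)}_{t-s,t-u}$ against $d\widetilde{\Pi}_S(u)$, against $(S(u)-S_1(u))du$, and against $(S_1(u)-\alpha_1)du$. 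For $s\le \hat{\tau}'\le \hat{\tau}$ we have $s\wedge\hat{\tau}=s$, so the three lemmas apply and, conditioned on $\mathcal{A}'$ and with probability at least $1-Ce^{-\beta_0^4}$, we obtain the pointwise bound
\[
|Z(s,t)| \le \Bigl( \frac{C\alpha_0^{1/2}\beta_0^{\theta+2}}{\sqrt{t-s+1}} + \frac{C\beta_0^{C_\circ+1}}{t-s+1} \Bigr)\, e^{-c\alpha_0^2(t-s)},
\]
in which the $Z_3$-estimate from Lemma \ref{lem:ind2:tau63sharp:4} produces the dominant $\alpha_0^{1/2}\beta_0^{\theta+2}$ factor and the smaller $\alpha_0^{1/2}\beta_0^{2C_\circ}$ and $\alpha_0^{1-\epsilon}\beta_0^{C_\circ}$ terms from Lemmas \ref{lem:ind2:tau63sharp:1}, \ref{lem:ind2:tau63sharp:2} are absorbed.

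Next, since $s\le \hat{\tau}'\le \tau_{\textnormal{b}}''$, the definition \eqref{eq:def:tauxpprime} gives $|S_1(s)-\alpha_1| \le \alpha_0\beta_0^{C_\circ}\sigma_1(s;S)+2\alpha_0^{3/2}\beta_0^\theta$. Multiplying $|Z(s,t)|$ by this bound and integrating produces four cross terms. The dominant one is
\[
\int_{t_1^\flat}^t \frac{C\alpha_0^{1/2}\beta_0^{\theta+2}}{\sqrt{t-s+1}}\,e^{-c\alpha_0^2(t-s)} \cdot 2\alpha_0^{3/2}\beta_0^\theta \, ds
\;=\; C\alpha_0^2\beta_0^{2\theta+2}\int_0^\infty \frac{e^{-c\alpha_0^2 x}}{\sqrt{x+1}}\,dx
\;\le\; C\alpha_0\beta_0^{2\theta+2},
\]
where the substitution $y=\alpha_0^2 x$ yields the crucial $\alpha_0^{-1}$ factor (it is precisely this use of exponential decay that makes the argument succeed; without it, $\int\!ds/\sqrt{t-s+1}$ over an interval of length $O(\alpha_0^{-2}\beta_0^\theta)$ would give the wrong order). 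The remaining three terms are all smaller: the $1/(t-s+1)$ piece multiplied by either $\alpha_0^{3/2}\beta_0^\theta$ or $\alpha_0\beta_0^{C_\circ}\sigma_1$ contributes only a logarithmic $O(\beta_0)$ factor (plus a $\beta_0^{O(C_\circ)}$ from Lemma \ref{lem:ind1:pi1int basicbd} applied with $\Delta_0=\alpha_0^{-1}$, $\Delta_1=\alpha_0^{-1}\beta_0^{C_\circ}$, $N_0=\beta_0^5$, $K=O(\beta_0^\theta)$, where the parameters are justified by $\tau_{7}$ and $\tau_{8}$), and the $\alpha_0^{1/2}\beta_0^{\theta+2}/\sqrt{t-s+1}$ piece times $\alpha_0\beta_0^{C_\circ}\sigma_1(s;S)$ contributes at most $C\alpha_0^{3/2}\beta_0^{\theta+O(C_\circ)}$ by Cauchy--Schwarz combined with the same polynomial count of points.

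Summing these four contributions yields a total of at most $C\alpha_0\beta_0^{2\theta+3}\le \alpha_0\beta_0^{2\theta+5}$ for all sufficiently small $\alpha_0$, as required. The main obstacle is the careful bookkeeping of polylogarithmic factors: one must verify that no single cross term exceeds $\beta_0^{2\theta+5}$, which relies essentially on (i) the constant part of the $\tau_{\textnormal{b}}''$ bound being at the level $\alpha_0^{3/2}\beta_0^\theta$ (not larger), (ii) the $Z_3$-estimate from Lemma \ref{lem:ind2:tau63sharp:4} saving one factor of $\alpha_0^{1/2}$ relative to what a crude use of $\tau_{3,3}^\sharp$ would give, and (iii) the preservation of the exponential decay factor $e^{-c\alpha_0^2(t-s)}$ in each of the three sub-estimates, which is precisely what allows the outer $s$-integral of $1/\sqrt{t-s+1}$ to be bounded by $C\alpha_0^{-1}$ instead of by the much larger length of $[t_1^\flat,t_1^\sharp]^{1/2}$.
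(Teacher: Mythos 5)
Your proposal is correct and follows the same route as the paper. The paper's argument in \eqref{eq:ind2:tau61sh:4:int}--\eqref{eq:ind2:tau61sh:4:int3} starts from exactly the product bound you assemble — the exponentially decaying pointwise estimate on $|Z(s,t)|$ (carried by Lemmas~\ref{lem:ind2:tau63sharp:1}--\ref{lem:ind2:tau63sharp:4}, since the bare definition of $\tau_{3,3}^\sharp$ drops the exponential factor) times the $\tau_{\textnormal{b}}''$-bound on $|S_1(s)-\alpha_1|$ — expands the product into cross terms, uses $\int_0^\infty e^{-c\alpha_0^2 x}/\sqrt{x+1}\,dx = O(\alpha_0^{-1})$ for the dominant term, and invokes Lemma~\ref{lem:ind1:pi1int basicbd} with the same parameters for the $\sigma_1$-terms (noting, as you should be explicit about, that the exponential cuts the effective $K$ down to $O(\alpha_0^{-1}\beta_0^2)$ rather than $O(\alpha_0^{-1}\beta_0^\theta)$). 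One small arithmetic slip: the cross term $\alpha_0^{1/2}\beta_0^{\theta+2}(t-s+1)^{-1/2}\cdot\alpha_0\beta_0^{C_\circ}\sigma_1(s;S)$ integrates to $O(\alpha_0\beta_0^{\theta+O(C_\circ)})$ rather than $O(\alpha_0^{3/2}\beta_0^{\theta+O(C_\circ)})$, because the mixed integral $\int (t-s+1)^{-1/2}(\pi_1(s)+1)^{-1/2}\,ds$ over an interval of length $\alpha_0^{-2}$polylog contributes an extra $\alpha_0^{-1/2}$ via the $\sqrt{K\Delta_1 N_0/\Delta_0}$ term in Lemma~\ref{lem:ind1:pi1int basicbd}; this does not change the conclusion since $\theta+O(C_\circ)<2\theta+5$.
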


Proofs of Lemmas \ref{lem:ind2:tau61sharp:1}--\ref{lem:ind2:tau61sharp:4} are similar to those of Lemmas \ref{lem:ind2:tau63sharp:1}--\ref{lem:ind2:tau63sharp:4}. Before delving into those, we first conclude the proof of Lemma \ref{lem:ind2:tau61sharp}.

\begin{proof}[Proof of Lemma \ref{lem:ind2:tau61sharp}]
	As in the proof of Lemma \ref{lem:ind2:tau63sharp}, \eqref{eq:ind2:tau63sh:aux1} and \eqref{eq:ind2:tau63sh:aux2} tell us that along with the event $\{\tilde{\tau}\wedge\tau'\ge \hat{t}_0 \}$, conditioning on $\mathcal{A}'$ or on $\mathcal{A}(\alpha_1,t_1^-)$ instead of $\mathcal{A}(\alpha_0,t_0)$ produces only a small additional error probability. 
	Thus, combining Lemmas \ref{lem:ind2:tau61sharp:1}--\ref{lem:ind2:tau61sharp:4}, we have with probability at least $1-3e^{-\beta_0^4}$  conditioned on $\mathcal{A}(\alpha_0,t_0)$ that
	\begin{equation}
	\begin{split}
	\left| \intop_{t_1^\flat\wedge \hat{\tau}'}^{t\wedge \hat{\tau}'}  Z(s,t) d\widehat{\Pi}_S(s) \right| \le \frac{2\beta_0^{2\theta +3C_\circ}}{\pi_1(t;S)+1}+2\alpha_0\beta_0^{2\theta+5}
	\le
	\frac{\beta_1^{2\theta+4C_\circ}}{\pi_1(t;S)+1},
	\end{split}
	\end{equation}
	where  the last inequality is from $  \pi_1(t;S) \le \alpha_0^{-1}\beta_0^{C_\circ}$ (see the definition of $\tau_{8}$ in Section \ref{subsubsec:reg:aggsize}). Then, we obtain the conclusion using Theorem \ref{thm:ind2:main}-(2) (proven in Section \ref{subsec:ind2:ratechange}) and Lemma \ref{lem:ind2:tau63sharp}.
\end{proof}

In the remaining of this subsection, we prove Lemmas \ref{lem:ind2:tau61sharp:1}--\ref{lem:ind2:tau61sharp:4}.

\begin{proof}[Proof of Lemma \ref{lem:ind2:tau61sharp:1}]
	The proof goes analogously as Lemma \ref{lem:ind2:tau63sharp:1}, by splitting the integral into two parts, from $t-\alpha_0^{-1}$ to $t$ and $t_1^\flat$ to $t-\alpha_0^{-1}$, which is to apply Lemma \ref{lem:concen of int:conti:forwardtime}.	From the definition of $\tau_{{3,3}}^\sharp$, we see that for all $t\le \hat{\tau}'$,
	\begin{equation}\label{eq:ind2:tau61sh:1:int}
	\begin{split}
	\left| 	\intop_{t-\alpha_0^{-1} }^{t} Z(s,t) d\widetilde{\Pi}_S(s)\right|
	&\le
	\frac{\beta_0^{2C_\circ}}{\pi_1(t;S)+1}+ \frac{\alpha_0^{\frac{1}{2}} \beta_0^{\theta+C_\circ+3}}{\sqrt{\pi_1(t ;S)+1}} +\alpha_0\beta_0^{\theta+C_\circ+4}
	\\&\le
	\frac{\beta_0^{\theta+2C_\circ}}{\pi_1(t ;S)+1},
	\end{split}
	\end{equation}
	where the second inequality follows from $\pi_1(t\wedge \hat{\tau}' ;S)\le 2\alpha_0^{-1}\beta_0^{C_\circ}$. 
	
	The other integral can be estimated using Lemma \ref{lem:concen of int:conti:forwardtime}, by observing that
	\begin{equation}
	\begin{split}
	&	\intop_{t_1^\flat\wedge\hat{\tau}'}^{(t-\alpha_0^{-1})\wedge \hat{\tau}} Z(s,t)^2 S(s)ds \le \intop_{t_1^\flat}^{t} \left(\frac{2\beta_0^{2C_\circ+2}}{(t-s+1)^2} +\frac{2\alpha_0\beta_0^{2\theta+6} }{t-s+1} \right) 2\alpha_0 \beta_0^{C_\circ} ds \le \alpha_0^2 \beta_0^{2\theta+2C_\circ},\\
	&\qquad \qquad \qquad 	\sup \left\{ |Z(s,t)|: s\in[t_1^{\flat}\wedge \hat{\tau}', (t-\alpha_0^{-1})\wedge \hat{\tau}'] \right\} \le 2\alpha_0 \beta_0^{\theta +3}.
	\end{split}
	\end{equation}
	Thus, applying the lemma gives that
	\begin{equation}
	\PP \left( \left.\left| \intop_{t_1^\flat\wedge\hat{\tau}'}^{(t-\alpha_0^{-1})\wedge \hat{\tau}'} Z(s,t)d\widehat{\Pi}_S(s) \right| \le \alpha_0\beta_0^{\theta+2C_\circ} , \ \forall t\in [t_1^\flat, t_1^\sharp ] \right| \ \mathcal{A}' \right) \ge 1- \exp\left(-\beta_0^{5} \right),
	\end{equation}
and hence combining with \eqref{eq:ind2:tau61sh:1:int} concludes the proof, since $\hat{\tau}'\ge t_1^\sharp$ with high probability (Proposition \ref{prop:ind2:prereg}, equation \eqref{eq:ind2:tauxpprime} and Lemma \ref{lem:ind2:tau63sharp}).
\end{proof}

\begin{proof}[Proof of Lemma \ref{lem:ind2:tau61sharp:2}]
 Recalling the definition of $\tau_{3}(\alpha_1,t_1^-)$ (Section \ref{subsubsec:reg:Scontrol}), we have
	\begin{equation}\label{eq:ind2:tau61sh:2:int}
	\begin{split}
	\intop_{t_1^\flat\wedge\hat{\tau}'}^{t\wedge \hat{\tau}'} |Z(s,t) (S(s)-S_1(s))| ds 
&	\le
	\intop_{t_1^\flat}^{t\wedge \hat{\tau}'} \left(\frac{\beta_0^{C_\circ+1}}{t-s+1} + \frac{\alpha_0^{\frac{1}{2}}\beta_0^{\theta+3} }{\sqrt{t-s+1}} \right) \frac{\alpha_0^{1-\epsilon}e^{-c\alpha_0^2(t-s)}}{\pi_1(s;S)+1}ds.
	\end{split}
	\end{equation}
	Then, applying Lemma \ref{lem:ind1:pi1int basicbd} with $\Delta_0 = \alpha_0^{-1}, \Delta_1 = \alpha_0^{-1}\beta_0^{C_\circ}$, and $N_0 = \beta_0^5$ bounds the RHS by 
	$$\frac{\alpha_0^{1-2\epsilon}}{\pi_1(s;S)+1} + \alpha_0^{\frac{3}{2}-2\epsilon}, $$
		recalling that  the contribution from $k\ge \alpha_0^{-1}\beta_0^2$ is negligible similarly as in Lemma \ref{lem:ind2:tau63sharp:2}.
\end{proof}

\begin{proof}[Proof of Lemma \ref{lem:ind2:tau61sharp:4}]
 Having the definition of $\tau_{\textnormal{b}}''$ \eqref{eq:def:tauxpprime} in mind, we express that
	\begin{equation}\label{eq:ind2:tau61sh:4:int}
	\begin{split}
	&	\intop_{t_1^\flat\wedge \hat{\tau}'}^{t\wedge \hat{\tau}'}
	\left|Z(s,t) (S_1(s)-\alpha_1) \right|ds\\
	&	\le
	\intop_{t_1^\flat\wedge\hat{\tau}'}^{t\wedge \hat{\tau}'} e^{-c\alpha_0^2(t-s)} \left( \frac{\beta_0^{C_\circ+1}}{t-s+1} + \frac{\alpha_0^{\frac{1}{2}}\beta_0^{\theta+3}}{\sqrt{t-s+1}} \right) \left( \frac{\alpha_0 \beta_0^{C_\circ+1}}{\sqrt{\pi_1(u;S)+1}} + 2\alpha_0^{\frac{3}{2}}\beta_0^\theta  \right)ds. 
	\end{split}
	\end{equation}
	We begin with observing that
	\begin{equation}\label{eq:ind2:tau61sh:4:int2}
	\intop_{t_1^\flat\wedge\hat{\tau}'}^{t\wedge \hat{\tau}'} e^{-c\alpha_0^2(t-s)} \left( \frac{\beta_0^{C_\circ+1}}{t-s+1} + \frac{\alpha_0^{\frac{1}{2}}\beta_0^{\theta+3}}{\sqrt{t-s+1}} \right)  2\alpha_0^{\frac{3}{2}}\beta_0^\theta ds
	\le \alpha_0^{\frac{3}{2}} \beta_0^{\theta +C_\circ+3} + \alpha_0 \beta_0^{2\theta +4}.
	\end{equation}
	On the other hand, the remaining integral can be bounded using Lemma \ref{lem:ind1:pi1int basicbd} as in the proof of Lemma \ref{lem:ind2:tau61sharp:2}. This gives
	\begin{equation}\label{eq:ind2:tau61sh:4:int3}
	\begin{split}
	\intop_{t_1^\flat\wedge\hat{\tau}'}^{t\wedge \hat{\tau}'} e^{-c\alpha_0^2(t-s)} \left( \frac{\beta_0^{C_\circ+1}}{t-s+1} + \frac{\alpha_0^{\frac{1}{2}}\beta_0^{\theta+3}}{\sqrt{t-s+1}} \right)  \frac{\alpha_0 \beta_0^{C_\circ+1}ds}{\sqrt{\pi_1(s;S)+1}}\le \alpha_0 \beta_0^{2\theta},
	\end{split}
	\end{equation}
	concluding the proof.
\end{proof}

\subsubsection{Some consequences of the integral calculation}\label{subsubsec:ind2:tausharpconseq}

Before moving on, we record several direct consequences of the integral calculations we have seen above. These results will be useful later in Section \ref{sec:double int} when we encounter similar formulas under a slightly different setting.

We begin with stating the analogue of Lemmas \ref{lem:ind2:tau63sharp:1} and \ref{lem:ind2:tau63sharp:2}, noting that the previous analysis works analogously in another interval  instead of $[t_1^\flat, t_1^\sharp]$. In this subsection, $S_1(t)$ is defined to be $S_1(t)=S_1(t;t_0^-,\alpha_0).$
\begin{cor}\label{cor:ind2:tau63sharp:1}
	Let $\alpha_0,t_0,r>0$, set $t_0^-, \acute{t}_0, \hat{t}_0$ as \eqref{eq:def:t0t1},  \eqref{eq:def:t0t11}, and let
	\begin{equation}
	t_0^\dagger := t_0 + \frac{5}{2}\alpha_0^{-2}\beta_0^{\theta} = \acute{t}_0 - \frac{1}{2}\alpha_0^{-2}\beta_0^\theta.
	\end{equation}
	 Define the stopping times
	\begin{equation}
	\begin{split}
	\tilde{\tau}_{3,3}^{(1)}&:= \inf \left\{
	t\ge \acute{t}_0: \, \exists s\in[t_0^\dagger,t] \textnormal{ s.t. } \left|
	\intop_{t_0^\dagger}^s J^{(\alpha_0)}_{t-s,t-u} d\widetilde{\Pi}_S(u)
	\right|
	\ge
	\left(\frac{\beta_0^{C_\circ+1}}{t-s+1} + \frac{\alpha_0^{\frac{1}{2}}\beta_0^{2C_\circ}}{\sqrt{t-s+1}} \right) e^{-c\alpha_0^2(t-s)}
	 \right\}
	;\\
	\tilde{\tau}_{3,3}^{(2)}&:=
	 \inf \left\{
	 t\ge \acute{t}_0: \, \exists s\in[t_0^\dagger,t] \textnormal{ s.t. } \left|
	 \intop_{t_0^\dagger}^s J^{(\alpha_0)}_{t-s,t-u} (S(u)-S_1(u))du
	 \right|
	 \ge
\frac{\alpha_0^{1-\epsilon}\beta_0^{C_\circ} }{\sqrt{t-s+1}}  e^{-c\alpha_0^2(t-s)}
	 \right\}
	.
	\end{split}
	\end{equation}
	 Suppose that $\Pi_S(-\infty,t_0] \in \mathfrak{R}(\alpha_0, r; [t_0])$. Then, we have
	 \begin{equation}
	 \PP \left(\left.\tilde{\tau}_{3,3}^{(1)}\wedge \tilde{\tau}_{3,3}^{(2)} <\hat{t}_0 \,\right|\,\mathcal{F}_{t_0}  \right) \le \exp\left(-\beta_0^3 \right) +r.
	 \end{equation}
\end{cor}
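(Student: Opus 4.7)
The plan is to mimic the proofs of Lemmas \ref{lem:ind2:tau63sharp:1} and \ref{lem:ind2:tau63sharp:2} verbatim, but now over the interval $[t_0^\dagger,\hat{t}_0]$ with frame of reference $\alpha_0$ rather than $[t_1^\flat,t_1^\sharp]$ with frame $\alpha_1$. The simplifying feature is that we no longer need the change of frame (no analogue of $\mathcal{A}_4$ is required), so the argument is slightly easier. The crucial observation is that $t_0^\dagger > t_0^+$ with $t_0^\dagger - t_0^+ \ge \tfrac12\alpha_0^{-2}\beta_0^\theta$, so Theorem \ref{thm:reg:conti:main} tells us that the full family of bounds encoded in $\tau^+(\alpha_0,t_0,1/2)$, in particular $\tau_1, \tau_3, \tau_7, \tau_8$, all hold on $[t_0^\dagger, \hat{t}_0]$ with probability at least $1-r-\exp(-\beta_0^{1.9})$. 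Throughout, we condition on this event.

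For $\tilde{\tau}_{3,3}^{(1)}$, I would first discretize: for $s,t$ in the lattice $\mathcal{T}=\{k\alpha_0^{10}\in[t_0^\dagger,\hat{t}_0]\}$ with $s\le t$, split the integral as
\begin{equation*}
\intop_{t_0^\dagger}^{s} J_{t-s,t-u}^{(\alpha_0)} d\widetilde{\Pi}_S(u)
=\intop_{t_0^\dagger}^{(s-\alpha_0^{-1})\vee t_0^\dagger} + \intop_{(s-\alpha_0^{-1})\vee t_0^\dagger}^{s}.
\end{equation*}
The near part is bounded deterministically exactly as in \eqref{eq:ind2:tau61sh1:small s}, using Lemma \ref{lem:bound on deterministic J} together with $|\Pi_S[s-\alpha_0^{-1},s]|\le \beta_0^4$ from $\tau_7$, yielding a bound of order $\beta_0^{C_\circ}(t-s+1)^{-1}e^{-c\alpha_0^2(t-s)}$. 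For the far part, the quadratic variation is controlled by
\begin{equation*}
\intop_{t_0^\dagger}^{s-\alpha_0^{-1}}\left\{J_{t-s,t-u}^{(\alpha_0)}\right\}^2 S(u)\,du
\le \frac{C\alpha_0\beta_0^{C_\circ+2}}{t-s+1}e^{-2c\alpha_0^2(t-s)},
\end{equation*}
using Lemma \ref{lem:bound on deterministic J} and the bound $S(u)\le 2\alpha_0\beta_0^{C_\circ}$ from $\tau_1$. Together with the uniform bound $|J_{t-s,t-u}^{(\alpha_0)}|\le C\alpha_0^{1/2}e^{-c\alpha_0^2(t-s)}/\sqrt{t-s+1}$ on $u\in[t_0^\dagger,s-\alpha_0^{-1}]$, Lemma \ref{lem:concentration of integral} yields the desired martingale bound with error $\exp(-\beta_0^5)$ at each pair $(s,t)\in\mathcal{T}\times\mathcal{T}$. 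A union bound over $\mathcal{T}\times\mathcal{T}$ (of size $O(\alpha_0^{-20})$) still leaves error $\exp(-\beta_0^4)$, and the continuity argument of Corollary \ref{lem:concentrationofint:continuity} (using Lemma \ref{lem:derivative of Q and J}-style derivative estimates on $J$, which are elementary since $J$ is a deterministic smooth function) extends the bound to all $(s,t)$.

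For $\tilde{\tau}_{3,3}^{(2)}$, no martingale argument is needed. The definition of $\tau_3$ gives $|S(u)-S_1(u)|\le \alpha_0^{1-\epsilon}/(\pi_1(u;S)+1)$ on $[t_0^\dagger,\hat{t}_0]$, so
\begin{equation*}
\intop_{t_0^\dagger}^{s}|J_{t-s,t-u}^{(\alpha_0)}||S(u)-S_1(u)|\,du
\le \frac{C\alpha_0^{1-\epsilon}e^{-c\alpha_0^2(t-s)}}{\sqrt{t-s+1}}\intop_{t_0^\dagger}^{s}\frac{e^{-c\alpha_0^2(t-u)}\,du}{\sqrt{t-u+1}\,(\pi_1(u;S)+1)}.
\end{equation*}
The remaining integral is exactly the one handled in \eqref{eq:ind2:tau63sh:2:main}, bounded by $\beta_0^{C_\circ}$ via Lemma \ref{lem:ind1:pi1int basicbd} with parameters $\Delta_0=\alpha_0^{-1}$, $\Delta_1=\alpha_0^{-1}\beta_0^{C_\circ}$, $N_0=\beta_0^5$ (justified by $\tau_7$ and $\tau_8$), noting that the exponential decay makes the contribution from $u$ with $t-u\ge \alpha_0^{-1}\beta_0^2$ negligible. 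This is a deterministic bound on the event $\{\tau^+(\alpha_0,t_0,1/2)\ge\hat{t}_0\}$.

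Combining the two estimates with the regularity input $\PP(\tau(\alpha_0,t_0,2)\le\acute{t}_0\,|\,\mathcal{F}_{t_0})\le r$ and Theorem \ref{thm:reg:conti:main} gives the claim. The only step that requires any genuine care is the continuity/union-bound step in part (1), but the derivative estimates for $J_{x,y}^{(\alpha_0)}$ in $(s,t)$ are essentially those of Lemma \ref{lem:derivative of Q and J} and Corollary \ref{cor:derivative of J} and are uniformly bounded, so the discretization error at scale $\delta=\alpha_0^{10}$ is negligible compared to the target bound.
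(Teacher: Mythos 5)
The paper omits this proof, offering only the remark that "the conclusion can be obtained from combining Theorem~\ref{thm:reg:conti:main} and the argument from Lemma~\ref{lem:ind2:tau63sharp}," and your proposal fills in exactly those details: Theorem~\ref{thm:reg:conti:main} to secure the $\tau^+(\alpha_0,t_0,1/2)$ regularity bounds on $[t_0^\dagger,\hat t_0]$, the near/far split with Lemma~\ref{lem:concentration of integral} plus discretization/continuity for $\tilde\tau_{3,3}^{(1)}$, and the deterministic $\tau_3$--plus--Lemma~\ref{lem:ind1:pi1int basicbd} bound for $\tilde\tau_{3,3}^{(2)}$. This is essentially the intended argument; the only small caveat is that, as stated, Theorem~\ref{thm:reg:conti:main} contributes an $\exp(-\beta_0^{1.9})$ error rather than the $\exp(-\beta_0^3)$ written in the corollary (a looseness already present in the paper's own chain of corollaries, and harmless since downstream uses only need $\exp(-\beta_0^2)$), so you should not try to sharpen the probability bound beyond what the theorem actually yields.
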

To be precise, we remark that the conclusion can be obtained from combining Theorem \ref{thm:reg:conti:main} and the argument from Lemma \ref{lem:ind2:tau63sharp}.

We also derive a similar estimate on the integral of $K^*_{\alpha_0}$ instead of $J_{t-s,t-u}^{(\alpha_0)}.$
\begin{cor}\label{cor:ind2:tau63sharp:Kstar}
	Under the setting of Corollary \ref{cor:ind2:tau63sharp:1}, define the stopping time
	\begin{equation}
	\tilde{\tau}_{3,3}^{(3)} :=
	\inf \left\{ 
	t\ge \acute{t}_0: \, \exists s\in[t_0^+,t] \textnormal{ s.t. } \left|\intop_{t_0^+}^s K^*_{\alpha_0}(s-u) (S(u)-S_1(u)) \right| \ge \alpha_0^{2-2\epsilon} 
	\right\}.
	\end{equation}
	Then, we have
	\begin{equation}
	\PP \left(\left.\tilde{\tau}_{3,3}^{(3)}  <\hat{t}_0\, \right|\, \mathcal{F}_{t_0} \right) \le \exp\left(-\beta_0^3 \right)+r.
	\end{equation}
\end{cor}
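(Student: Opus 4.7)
The plan is to reduce this to a direct drift-type integral estimate, mirroring the calculation already done in the proof of Lemma \ref{lem:ind1:taux} (the bound on the drift term of $S_1(t)-\alpha_0'$). The key observation is that the integrand does not depend on $t$, so it suffices to show
\begin{equation}
\sup_{s\in[t_0^+,\hat{t}_0]} \left| \intop_{t_0^+}^s K_{\alpha_0}^*(s-u)\bigl(S(u)-S_1(u)\bigr) du \right| \le \alpha_0^{2-2\epsilon}
\end{equation}
with high probability. First I would invoke Theorem \ref{thm:reg:conti:main} to reduce to the event $\{\tau(\alpha_0,t_0,2)\ge \hat{t}_0\}$, which costs only $r+\exp(-\beta_0^{1.9})$ in probability. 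On this event, $\tau_3\ge \hat{t}_0$, so for every $u\in[t_0^+,\hat{t}_0]$ we have the pointwise bound $|S(u)-S_1(u)|\le \alpha_0^{1-\epsilon}/(\pi_1(u;S)+1)+\alpha_0^{3/2-\epsilon}/\sqrt{\pi_1(u;S)+1}$.

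I would then split $K_{\alpha_0}^*(s-u)=K_{\alpha_0}^* +(K_{\alpha_0}^*(s-u)-K_{\alpha_0}^*)$. The constant piece contributes
\begin{equation}
K_{\alpha_0}^* \intop_{t_0^+}^s |S(u)-S_1(u)|\,du \le 2\alpha_0^2\cdot \alpha_0^{-2\epsilon}=2\alpha_0^{2-2\epsilon},
\end{equation}
using part (3) of Lemma \ref{lem:reg:errorint} (equivalently Lemma \ref{lem:ind1:intofSminS1}) on the event $\{\tau(\alpha_0,t_0,2)\ge \hat{t}_0\}$, which gives $\int_{t_0}^{\hat{t}_0}|S(u)-S_1(u)|du\le \alpha_0^{-2\epsilon}$. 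The decaying piece is bounded by Lemma \ref{lem:estimat for K tilde:intro}, which gives $|K_{\alpha_0}^*(s-u)-K_{\alpha_0}^*|\le C\alpha_0 (s-u+1)^{-1/2}e^{-c\alpha_0^2(s-u)}$. Plugging in the pointwise estimate on $|S-S_1|$, the resulting double kernel integral is of exactly the form handled in the proof of Lemma \ref{lem:ind1:taux}: applying Lemma \ref{lem:ind1:pi1int basicbd} with parameters $\Delta_0=\alpha_0^{-1}$, $\Delta_1=\alpha_0^{-1}\beta_0^{C_\circ}$, $N_0=\beta_0^5$, and $K=\alpha_0^{-1}\beta_0^{11\theta}$ (justified by $\tau_7$ and $\tau_8$), one obtains a contribution of order $\alpha_0^{2-\epsilon}\beta_0^{C'}\le \alpha_0^{2-2\epsilon}$.

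Since the estimate is uniform in $s$ after taking the supremum bound directly on the kernel (not via a union bound over a random quantity), there is no continuity step to carry out. The resulting bound therefore holds simultaneously for every $s\in[t_0^+,\hat{t}_0]$ on the good event, giving $\PP(\tilde{\tau}_{3,3}^{(3)}<\hat{t}_0 \mid \mathcal{F}_{t_0})\le r+\exp(-\beta_0^3)$. I do not expect a real obstacle here: the calculation is essentially a verbatim rerun of the drift bound used already in Lemma \ref{lem:ind1:taux} and again in Lemma \ref{lem:ind2:tau63sharp:2}, and the only slight care needed is to combine the polylog factor arising from Lemma \ref{lem:ind1:pi1int basicbd} with the exponential cutoff $e^{-c\alpha_0^2(s-u)}$ (which truncates the integral at length scale $\alpha_0^{-2}\beta_0^{O(1)}$, so $K$ in Lemma \ref{lem:ind1:pi1int basicbd} is only mildly large and absorbs into $\alpha_0^{-\epsilon}$).
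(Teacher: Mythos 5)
Your proposal is correct and takes essentially the same route the paper intends. The paper's proof of this corollary is a one-line pointer to ``the same method as Lemma~\ref{lem:ind2:tau63sharp:2}'', but that reference glosses over one genuine difference: unlike $J^{(\alpha)}_{x,y}$, the kernel $K^*_{\alpha_0}(x)$ does \emph{not} decay to zero (it converges to $K^*_{\alpha_0}\approx 2\alpha_0^2$), so the bound of Lemma~\ref{lem:estimat for K tilde:intro} applies to $K^*_{\alpha_0}(x)-K^*_{\alpha_0}$ rather than to $K^*_{\alpha_0}(x)$ itself. Your explicit split $K^*_{\alpha_0}(s-u)=K^*_{\alpha_0}+\bigl(K^*_{\alpha_0}(s-u)-K^*_{\alpha_0}\bigr)$ is precisely the step that makes the reduction honest; the constant piece is then handled via $\int|S-S_1|$ (Lemma~\ref{lem:ind1:intofSminS1}) and the fluctuation piece via the $\tau_3$ pointwise bound plus Lemma~\ref{lem:ind1:pi1int basicbd}, which is exactly the drift computation that already appears in the proof of Lemma~\ref{lem:ind1:taux}. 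Your observation that no union-bound/continuity step is needed because the integrand is not a martingale is also correct and matches the treatment of the corresponding drift term in Lemma~\ref{lem:ind2:tau63sharp:2}. One small point of care: as written, the constant piece gives $2\alpha_0^{2-2\epsilon}$, which nominally already saturates the target $\alpha_0^{2-2\epsilon}$ before adding the fluctuation term; one should either invoke the actual bound inside the proof of Lemma~\ref{lem:ind1:intofSminS1} (which is $\alpha_0^{-\epsilon}\beta_0^{O(1)}$, much below $\alpha_0^{-2\epsilon}$) or simply note that the polylog slack absorbs the constants, as the paper does throughout without comment.
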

\begin{proof}
	The conclusion can be obtained from the same method as Lemma \ref{lem:ind2:tau63sharp:2}, since $K^*_{\alpha_0} (x) $ satisfies the estimate from Lemma \ref{lem:estimat for K tilde:intro} which is very similar to \ref{eq:ind2:Jbd}.
\end{proof}

The following analogue of Lemma \ref{lem:ind2:tau63sharp} can be obtained similarly.

\begin{cor}\label{cor:ind2:tau63sharp}
 Under the setting of Corollary \ref{cor:ind2:tau63sharp:1}, define the stopping time
 \begin{equation}\label{eq:def:tau63sharptil}
 \tilde{\tau}_{{3,3}}^\sharp:= \inf\left\{ t\ge \acute{t}_0: \,\exists s\in[t_0^\dagger,t] \textnormal{ s.t. } \left|\intop_{t_0^\dagger}^s J^{(\alpha_0)}_{t-s,t-u} d\widehat{ \Pi}_S(u) \right| \ge \frac{\alpha_0^{\frac{1}{2}}\beta_0^{6\theta }}{\sqrt{t-s+1}} + \frac{\beta_0^{C_\circ +1} }{t-s+1} \right\}.
 \end{equation}
 Then, we have
	\begin{equation}
	\PP\left(\left.\tilde{\tau}_{{3,3}}^\sharp < \hat{t}_0\ \right| \ \mathcal{F}_{t_0} \right) \le 3\exp\left(-\beta_0^2 \right) +r.
	\end{equation}
\end{cor}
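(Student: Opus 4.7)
The plan is to mirror the three-step decomposition from the proof of Lemma \ref{lem:ind2:tau63sharp} but with the frame of reference $\alpha_0$ in place of $\alpha_1$ and the window $[t_0^\dagger, \hat{t}_0]$ in place of $[t_1^\flat, t_1^\sharp]$. Writing, as in \eqref{eq:ind2:tausharp:tilPiSdecomposition},
\begin{equation*}
d\widehat{\Pi}_S(u) = d\widetilde{\Pi}_S(u) + (S(u)-S_1(u))du + (S_1(u)-\alpha_0)du,
\end{equation*}
I will split the integral $\int_{t_0^\dagger}^s J^{(\alpha_0)}_{t-s,t-u}\, d\widehat{\Pi}_S(u)$ into three pieces. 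The martingale piece and the $(S-S_1)$-drift piece are precisely what the stopping times $\tilde{\tau}_{3,3}^{(1)}$ and $\tilde{\tau}_{3,3}^{(2)}$ in Corollary \ref{cor:ind2:tau63sharp:1} measure; applying that corollary bounds their joint contribution by $\beta_0^{C_\circ+1}/(t-s+1)+ O(\alpha_0^{1/2}\beta_0^{2C_\circ})/\sqrt{t-s+1}$ on the good event, at a cost of at most $\exp(-\beta_0^3)+r$ in probability. Both of these terms fit inside the threshold of $\tilde{\tau}_{3,3}^\sharp$ (the first exactly, the second with a lot of room since $2C_\circ \ll 6\theta$).

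The main remaining piece is the $(S_1-\alpha_0)$-drift, for which I need a pointwise upper bound on $|S_1(u)-\alpha_0|$ on $[t_0^\dagger, \hat{t}_0]$. Since $t_0^\dagger > t_0^+$, the stopping time $\tau_{\textnormal{b}}$ of Lemma \ref{lem:ind1:taux}, applied under the regularity assumption $\Pi_S(-\infty,t_0]\in\mathfrak{R}(\alpha_0,r;[t_0])$ together with Theorem \ref{thm:reg:conti:main}, supplies that, except on an event of probability at most $\exp(-\beta_0^5)+r$,
\begin{equation*}
|S_1(u)-\alpha_0| \le \alpha_0\beta_0^{C_\circ}\sigma_1(u;S) + 2\alpha_0^{3/2}\beta_0^{6\theta}, \qquad \forall u\in[t_0^\dagger,\hat{t}_0].
\end{equation*}
Plugging this into the drift integral and using $|J^{(\alpha_0)}_{t-s,t-u}| \le Ce^{-c\alpha_0^2(t-u)}/\sqrt{(t-s+1)(t-u+1)}$ from Lemma \ref{lem:bound on deterministic J}, the constant term contributes at most $O(\alpha_0^{1/2}\beta_0^{6\theta})/\sqrt{t-s+1}$ by a direct integration (cf.~\eqref{eq:ind2:tau63sh:4:int1}), while the $\sigma_1$-term contributes at most $O(\alpha_0^{1/2}\beta_0^{2C_\circ})/\sqrt{t-s+1}$ via Lemma \ref{lem:ind1:pi1int basicbd} with parameters $\Delta_0=\alpha_0^{-1}$, $\Delta_1=\alpha_0^{-1}\beta_0^{C_\circ}$, $N_0=\beta_0^5$ (cf.~\eqref{eq:ind2:tau63sh:4:int2}). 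Since $\theta$ was chosen very large, both contributions are absorbed into the $\alpha_0^{1/2}\beta_0^{6\theta}/\sqrt{t-s+1}$ threshold of $\tilde{\tau}_{3,3}^\sharp$ once $\alpha_0$ is small enough that the hidden absolute constants are dominated by a single extra power of $\beta_0$. Summing the three pieces and taking a union bound yields the stated probability estimate.

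The main technical step to carry out carefully is extending all three bounds from a finite $(t,s)$-grid to continuous values. As in Lemmas \ref{lem:ind2:tau63sharp:1}--\ref{lem:ind2:tau63sharp:4}, one first proves the martingale estimate on the mesh $\mathcal{T}$ from \eqref{eq:ind2:tau63sh:1:discretizedset} by invoking Lemma \ref{lem:concentration of integral} together with a union bound over polynomially many $(t,s)$; then one extends to arbitrary $(t,s)$ using the Lipschitz bounds on $J^{(\alpha_0)}_{t-s,t-u}$ in both arguments furnished by Lemma \ref{lem:derivative of Q and J} and Corollary \ref{cor:derivative of J}. This continuity step is entirely parallel to what is already done in Section \ref{subsec:ind2:bootstrappedJ} and introduces no new analytic ingredient, so the genuine conceptual content of the corollary lies in the pointwise bound on $|S_1-\alpha_0|$ inherited from the regularity of the previous step.
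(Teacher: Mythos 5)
Your decomposition into the martingale piece, the $(S-S_1)$-drift piece, and the $(S_1-\alpha_0)$-drift piece is precisely the route the paper takes: the paper states the proof of this corollary is obtained "similarly as Lemma \ref{lem:ind2:tau63sharp}" and omits it, and you have correctly delegated the first two pieces to $\tilde{\tau}_{3,3}^{(1)}$ and $\tilde{\tau}_{3,3}^{(2)}$ of Corollary \ref{cor:ind2:tau63sharp:1}, handled the third via $\tau_{\textnormal{b}}$ from Lemma \ref{lem:ind1:taux} together with Theorem \ref{thm:reg:conti:main}, and flagged the grid-to-continuum step as a routine transcription of Lemmas \ref{lem:ind2:tau63sharp:1}--\ref{lem:ind2:tau63sharp:4}. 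So the structure is right and matches the intended proof.

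One part of your accounting does not close as written. The constant piece $2\alpha_0^{3/2}\beta_0^{6\theta}$ of the $\tau_{\textnormal{b}}$ bound, once integrated against $|J^{(\alpha_0)}_{t-s,t-u}|\le Ce^{-c\alpha_0^2(t-u)}/\sqrt{(t-s+1)(t-u+1)}$, produces $\frac{C'\alpha_0^{1/2}\beta_0^{6\theta}}{\sqrt{t-s+1}}$ with an absolute constant $C'>1$ (already the factor $2$ survives, and the $\int e^{-c\alpha_0^2 v}v^{-1/2}\,dv$ contributes another). Your remark that the hidden constants "are dominated by a single extra power of $\beta_0$" is fine for the $\sigma_1$ contribution, which sits at $\beta_0^{2C_\circ}\ll\beta_0^{6\theta}$, but it fails for the constant-drift piece because that piece already has exponent $6\theta$, the same as the threshold in \eqref{eq:def:tau63sharptil}; a single extra power would overshoot. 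Compare with the analogous step for $\tau_{3,3}^\sharp$: there the $\tau_{\textnormal{b}}''$ constant is $2\alpha_0^{3/2}\beta_0^\theta$ (see \eqref{eq:def:tauxpprime}), Lemma \ref{lem:ind2:tau63sharp:4} returns $\alpha_0^{1/2}\beta_0^{\theta+2}/\sqrt{t-s+1}$, and the threshold $\alpha_1^{1/2}\beta_1^{\theta+3}$ in \eqref{eq:def:tau63sharp} has the needed extra room. Read the same way, $\tilde{\tau}_{3,3}^\sharp$ ought to carry an exponent $6\theta+2$ (or a constant prefactor); the mismatch is most plausibly a slip in \eqref{eq:def:tau63sharptil}, is harmless where the corollary is used (Lemma \ref{lem:doubleint:out1} tolerates constants), but you should either carry the extra room explicitly or say why the constant truly fits rather than waving at an extra power of $\beta_0$.
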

Note that  in the term $\frac{\alpha_0^{1/2}\beta_0^{6\theta} }{\sqrt{t-s+1}}$, we have a larger exponent $6\theta$ than in $\tau_{3,3}^\sharp$, since we deal with a longer interval $[t_0, \hat{t}_0]$.

The other lemmas in the previous subsections can be extended similarly. Letting
$$Z_0(s,t):=\intop_{t_0^\dagger}^s J^{(\alpha_0)}_{t-s,t-u} d\widehat{\Pi}_S(u),$$
we have the analogue of Lemma \ref{lem:ind2:tau61sharp:2}, which can be deduced from using Corollary \ref{cor:ind2:tau63sharp} instead of Lemma \ref{lem:ind2:tau63sharp}.
\begin{cor}\label{cor:ind2:tau61sharp1}
	Under the setting of Corollary \ref{cor:ind2:tau63sharp:1}, define the stopping time
	\begin{equation}
	\begin{split}
	\tilde{\tau}_{3,1}^{(1)} := \inf \left\{
	 t\ge \acute{t}_0: \, \left| 
	 \intop_{t_0^\dagger}^t Z_0(s,t) (S(s)-S_1(s))ds
	 \right|
	 \ge \frac{\alpha_0^{1-2\epsilon}}{\pi_1(t;S)+1} + \alpha_0^{\frac{3}{2}-2\epsilon}
	 \right\}
	 .
	\end{split}
	\end{equation}
	Then, we have
	\begin{equation}
	\PP \left(\left. \tilde{\tau}_{3,1}^{(1)} <\hat{t}_0 \, \right| \, \mathcal{F}_{t_0} \right) \le \exp\left( - \beta_0^3\right)+r.
	\end{equation}
\end{cor}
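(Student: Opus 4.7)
The plan is to transplant the argument of Lemma \ref{lem:ind2:tau61sharp:2} from the short interval $[t_1^\flat, t_1^\sharp]$ with frame of reference $\alpha_1$ onto the longer window $[\acute{t}_0, \hat{t}_0]$ with frame $\alpha_0$, paying for the change in time scales by absorbing polylog factors in $\beta_0$ into $\alpha_0^{-\epsilon}$. First I would condition on the intersection of the events $\{\tau(\alpha_0,t_0,2)\wedge\tau^+(\alpha_0,t_0,1/2)=\hat{t}_0\}$ from Theorem \ref{thm:reg:conti:main} and $\{\tilde{\tau}_{3,3}^\sharp = \hat{t}_0\}$ from Corollary \ref{cor:ind2:tau63sharp}; under the regularity hypothesis this failure probability is at most $e^{-\beta_0^3}+r$ up to constants. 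On this event I also retain the refinements of Corollaries \ref{cor:ind2:tau63sharp:1} (and the missing analogue of Lemma \ref{lem:ind2:tau63sharp:4} applied with $\tau_{\textnormal{b}}$ in place of $\tau_{\textnormal{b}}'$), which are the componentwise ingredients whose combination proves Corollary \ref{cor:ind2:tau63sharp}.

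The critical pointwise bound that I extract from these ingredients, for all $t_0^\dagger \le s\le t\le \hat{t}_0$, is
\begin{equation}
|Z_0(s,t)| \le \left(\frac{\beta_0^{C_\circ+1}}{t-s+1} + \frac{\alpha_0^{1/2}\beta_0^{6\theta}}{\sqrt{t-s+1}}\right) e^{-c\alpha_0^2(t-s)},
\end{equation}
exactly as in the intermediate steps behind Lemma \ref{lem:ind2:tau63sharp}. For the second factor I use the $\tau_3$ bound $|S(s)-S_1(s)| \le \alpha_0^{1-\epsilon}\sigma_1\sigma_2(s;S)+\alpha_0^{3/2-\epsilon}\sigma_1(s;S)$, and simplify to $|S(s)-S_1(s)|\le \alpha_0^{1-\epsilon}/(\pi_1(s;S)+1)$ via $\sigma_1\sigma_2 \le \sigma_1^2$ together with $\pi_1(s;S)+1\le \alpha_0^{-1}\beta_0^{C_\circ}$ (from $\tau_8$) to dominate the secondary $\alpha_0^{3/2-\epsilon}\sigma_1$ piece, at the cost of one polylog factor in $\beta_0$.

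Integrating the product exactly as in equation \eqref{eq:ind2:tau61sh:2:int} of the proof of Lemma \ref{lem:ind2:tau61sharp:2}, I apply the third and fourth estimates of Lemma \ref{lem:ind1:pi1int basicbd} with parameters $\Delta_0=\alpha_0^{-1}$, $\Delta_1=\alpha_0^{-1}\beta_0^{C_\circ}$, $N_0=\beta_0^5$ (guaranteed by $\tau_7$, $\tau_8$, and $\tau_1$). The exponential factor $e^{-c\alpha_0^2(t-s)}$ truncates the effective length of integration from $\alpha_0^{-2}\beta_0^{10\theta}$ down to $\alpha_0^{-2}\beta_0^{O(1)}$, so that $K$ in Lemma \ref{lem:ind1:pi1int basicbd} is only a polylog. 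The two summands in the bound on $|Z_0|$ then produce, respectively, a contribution of order $\alpha_0^{1-\epsilon}\beta_0^{O(1)}/(\pi_1(t;S)+1)+\alpha_0^{2-\epsilon}\beta_0^{O(1)}$ and a contribution of order $\alpha_0^{3/2-\epsilon}\beta_0^{O(1)}$, which sum to $\alpha_0^{1-2\epsilon}/(\pi_1(t;S)+1)+\alpha_0^{3/2-2\epsilon}$ after absorbing the polylog factors into $\alpha_0^{-\epsilon}$.

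The main obstacle is securing the exponential decay $e^{-c\alpha_0^2(t-s)}$ in the pointwise bound on $|Z_0(s,t)|$. The verbatim statement of Corollary \ref{cor:ind2:tau63sharp} (and of $\tilde{\tau}_{3,3}^\sharp$) does not display this decay, but without it the $\alpha_0^{1/2}\beta_0^{6\theta}/\sqrt{t-s+1}$ piece, integrated over the full window of length $\alpha_0^{-2}\beta_0^{10\theta}$, produces a term of order $\alpha_0^{1-\epsilon}\beta_0^{11\theta+O(1)}$ which dominates $\alpha_0^{3/2-2\epsilon}$ uniformly in $\pi_1(t;S)$ and hence cannot close. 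Thus the proof really proceeds by unpacking the proof of Corollary \ref{cor:ind2:tau63sharp} into its three constituent bounds, each of which carries the necessary factor $e^{-c\alpha_0^2(t-s)}$, rather than by invoking that corollary's final statement as a black box.
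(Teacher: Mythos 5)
Your overall plan matches the paper's: after conditioning on regularity (Theorem~\ref{thm:reg:conti:main}) and on the $Z_0$ bound from Corollary~\ref{cor:ind2:tau63sharp}, bound $|S(s)-S_1(s)|$ via $\tau_3$ and integrate against $|Z_0(s,t)|$ using Lemma~\ref{lem:ind1:pi1int basicbd}; the paper itself says exactly this (``which can be deduced from using Corollary~\ref{cor:ind2:tau63sharp} instead of Lemma~\ref{lem:ind2:tau63sharp}'') and omits the details.

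The ``main obstacle'' you identify, however, is a misdiagnosis. You assert that without the exponential factor $e^{-c\alpha_0^2(t-s)}$ the worst piece is of order $\alpha_0^{1-\epsilon}\beta_0^{11\theta+O(1)}$, which would dominate the target $\alpha_0^{3/2-2\epsilon}$; this power count is off by a factor of $\alpha_0^{1/2}$. Bounding $|S(s)-S_1(s)| \le \alpha_0^{1-\epsilon}/(\pi_1(s;S)+1)$ gives
\begin{equation}
\intop_{t_0^\dagger}^t \frac{\alpha_0^{1/2}\beta_0^{6\theta}}{\sqrt{t-s+1}}\cdot\frac{\alpha_0^{1-\epsilon}}{\pi_1(s;S)+1}\,ds
= \alpha_0^{3/2-\epsilon}\beta_0^{6\theta}\intop_{t_0^\dagger}^t \frac{ds}{\sqrt{t-s+1}\,(\pi_1(s;S)+1)},
\end{equation}
and the second estimate of Lemma~\ref{lem:ind1:pi1int basicbd} with $\Delta_0=\alpha_0^{-1}$, $\Delta_1=\alpha_0^{-1}\beta_0^{C_\circ}$, $N_0=\beta_0^5$, $K\approx \alpha_0^{-1}\beta_0^{10\theta}$ bounds the remaining integral by $C\beta_0^{5\theta+O(1)}$, so the whole contribution is $\alpha_0^{3/2-\epsilon}\beta_0^{11\theta+O(1)}$, not $\alpha_0^{1-\epsilon}\beta_0^{11\theta+O(1)}$. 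Since $\epsilon$ is a fixed positive constant and $\beta_0=\log(1/\alpha_0)$, any fixed power $\beta_0^{O(1)}$ is eventually dominated by $\alpha_0^{-\epsilon}$, so this is indeed $\le\alpha_0^{3/2-2\epsilon}$ for $\alpha_0$ small enough; the other piece (with $\beta_0^{C_\circ+1}/(t-s+1)$) and the $\alpha_0^{3/2-\epsilon}\sigma_1$ piece of the $\tau_3$ bound close in the same way. Thus the enlarged exponent $6\theta$ in Corollary~\ref{cor:ind2:tau63sharp} already accounts for the longer window, and there is no need to unpack that corollary into its constituent exponentially-decaying pieces; the exponential factor in \eqref{eq:ind2:tau61sh:2:int} is a convenience that sharpens the polylog, not a necessity. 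Apart from this erroneous diagnosis, your proof is sound.
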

Due to their similarity, proofs of Corollaries \ref{cor:ind2:tau63sharp:1}, \ref{cor:ind2:tau63sharp} and \ref{cor:ind2:tau61sharp1} are omitted.

\subsection{Proof of Theorem \ref{thm:induction:main}}\label{subsec:ind2:fin}

In this final section, we present the proof of Theorem \ref{thm:induction:main}. Throughout the proof, we assume that $\mathcal{F}_{t_0} \equiv \Pi_S(-\infty,t_0]$ is $(\alpha_0,r;[t_0])$-regular (see Definition \ref{def:reg}). 

We begin with verifying the second item of Theorem \ref{thm:induction:main}. Let $\tau := \tau(\alpha_0,t_0,2)$ \eqref{eq:def:tau:induction}, ${\tau}^+:= {\tau}^+(\alpha_0,t_0,1/2)$ \eqref{eq:def:ind1:tauacute:basic}, and $\tau' := {\tau}^+\wedge \tau$. Observe that
\begin{equation}\label{eq:ind2:fin:main1}
\begin{split}
\PP \left(\left.|\alpha_1-\alpha_0| \ge 2\alpha_0^{3/2}\beta_0^{6\theta} \ \right| \ \mathcal{F}_{t_0} \right)& \le
\PP \left(\tau \le \acute{t}_0 \ | \ \mathcal{F}_{t_0} \right) +  \PP \left(  \tau' < \hat{t}_0, \ \tau >\acute{t}_0 \ | \ \mathcal{F}_{t_0} \right)\\
&\quad +\PP\left( |\alpha_1-\alpha_0| \ge 2\alpha_0^{3/2}\beta_0^{6\theta},\  \tau' \ge \hat{t}_0\ | \ \mathcal{F}_{t_0}  \right)\\
&\le r+ \exp \left(-\beta_0^{3/2} \right),
\end{split}
\end{equation}
where the last line follows from the definition of regularity, Theorem \ref{thm:reg:conti:main} and Lemma \ref{lem:ind2:alpha1vsalpha0}.

To study the event $\{\mathcal{F}_{t_1} \textnormal{ is } (\alpha_1,e^{-\beta_1^{3/2}}; [t_1])\textnormal{-regular} \}$, we verify the two conditions of regularity separately. The second condition follows from Theorems \ref{thm:reg:conti:main} and \ref{thm:ind2:main}-(2). Namely, we write
\begin{equation}\label{eq:ind2:fin:main2}
\begin{split}
\PP \left( \mathcal{A}{(\alpha_1,t_1)}^c \ | \ \mathcal{F}_{t_0} \right)
&\le 
\PP \left( \tau' <\hat{t}_0 \ | \ \mathcal{F}_{t_0} \right) +\exp \left(-\beta_0^2 \right)
\\
&\le  
\PP \left( \tau' < \hat{t}_0,\ \tau>\acute{t}_0\ | \ \mathcal{F}_{t_0} \right) + \PP \left(\tau\le t_0^+ \ | \ \mathcal{F}_{t_0} \right)\\
&\le \exp\left(-\beta_0^2 \right)+ r+ \exp\left(-\beta_0^2\right) \le r+ \exp\left(-\beta_0^{3/2} \right). 
\end{split}
\end{equation}

To verify the first condition of regularity, recall the definition of $\mathcal{A}_4$ \eqref{eq:def:A4}, and $t_1^\sharp = t_1+4\alpha_0^{-2}\beta_0^\theta$ which is larger than $\acute{t}_1=t_1+2\alpha_1^{-2}\beta_1^\theta$ on $\mathcal{A}_4$. Observe that
\begin{equation}\label{eq:ind2:fin:1}
\begin{split}
\PP \left(\left.  \PP\left(\left.\tau^\sharp(\alpha_1,t_1,2) \le \acute{t}_1 \ \right| \ \mathcal{F}_{t_1} \right) > e^{-\beta_1^{3/2}}, \ \tau > \acute{t}_0, \ \mathcal{A}_4 \ \right| \ \mathcal{F}_{t_0} \right)\\
\le e^{2\beta_0^{3/2}} \PP \left(\tau^\sharp (\alpha_1,t_1,2) \le t_1^\sharp,\ \tau>\acute{t}_0\ | \ \mathcal{F}_{t_0} \right),
\end{split}
\end{equation}
by Markov's inequality. Note that $\mathcal{A}_4$ ensures $\beta_1^{3/2}\le 2\beta_0^{3/2}$. Furthermore, since $$t_1^\sharp= t_1 + 4\alpha_0^{-2}\beta_0^\theta \le t_0 +\alpha_0^{-2}\beta^{10\theta} +4\alpha_0^{-2}\beta_0^\theta< t_0+2\alpha_0^{-2}\beta^{10\theta}= \hat{t}_0,$$
combining Theorem \ref{thm:reg:conti:main}, Proposition \ref{prop:ind2:prereg}, and Lemma \ref{lem:ind2:tau6sharp} gives that
\begin{equation}\label{eq:ind2:fin:2}
\PP \left(\tau^\sharp (\alpha_1,t_1,2) \le t_1^\sharp,\ \tau>\acute{t}_0\ | \ \mathcal{F}_{t_0} \right) \le 3\exp\left(-\beta_0^2 \right).
\end{equation}
We also know from Theorem \ref{thm:reg:conti:main} and Lemma \ref{lem:ind2:alpha1vsalpha0} that
\begin{equation}\label{eq:ind2:fin:3}
\PP \left(\mathcal{A}_4^c, \tau>\acute{t}_0 \ | \  \mathcal{F}_{t_0} \right) \le 2\exp\left(-\beta_0^2 \right).
\end{equation}
Thus, \eqref{eq:ind2:fin:1}, \eqref{eq:ind2:fin:2} and \eqref{eq:ind2:fin:3} tell us that
\begin{equation}\label{eq:ind2:fin:main3}
\begin{split}
\PP \left(\left.  \PP\left(\left.\tau^\sharp(\alpha_1,t_1,2) \le \acute{t}_1 \ \right| \ \mathcal{F}_{t_1} \right) > e^{-\beta_1^{3/2}} \ \right| \ \mathcal{F}_{t_0} \right)&\le 3e^{2\beta_0^{3/2}-\beta_0^2} + 2e^{-\beta_0^2}+ r  \\ &\le r+ e^{-\beta_0^{3/2}}.
\end{split}
\end{equation}
Thus, we conclude the proof of Theorem \ref{thm:induction:main} from \eqref{eq:ind2:fin:main1}, \eqref{eq:ind2:fin:main2} and \eqref{eq:ind2:fin:main3}. \qed

\section{The second order contributions and the moments of the increment}\label{sec:double int}

Building upon the analysis of regularity, the goal of this section is establishing the main estimate needed in deriving the scaling limit of the speed. We aim to state and proving the formal version of Theorem \ref{thm:speed:increment:informal}, which will be done in Section \ref{subsec:increment proof}. To this end, an essential step  is to compute the expectation of the double integral term in $S_2(t)$ \eqref{eq:def:S2:basic form}, namely,
\begin{equation}
\mathcal{J}(t)=\mathcal{J}(t;\alpha,t_0^-):= \frac{\alpha}{1+2\alpha}\intop_{t_0^-}^{t} \intop_{t_0^-}^s J^{(\alpha)}_{t-s,t-u} d\widehat{\Pi}_S(u) d\widehat{\Pi}_S(v).
\end{equation}
The following theorem gives an appropriate control on the mean of $\mathcal{J}(t)$, and it is established in Section \ref{subsec:increment:double int}.

\begin{thm}\label{thm:double int:main}
Let $\alpha, t_0>0$,  set $t_0^-, t_0^+, \acute{t}_0$ and $\hat{t}_0$ as \eqref{eq:def:t0t1} and \eqref{eq:def:t0t11}, and let $r=e^{-\beta^{3/2}}$ for $\beta:=\log(1/\alpha)$. Suppose that $\Pi_S(-\infty,t_0]$ is $(\alpha,r;[t_0])$-regular. Then, for all $t\in [\acute{t}_0,\hat{t}_0]$, we have 
\begin{equation}
\mathbb{E}[\mathcal{J}(t)\, | \, \Pi_S(-\infty,t_0]] = 2\alpha^2 \left(1+ o(\alpha^{\frac{1}{5}}) \right).
\end{equation}
\end{thm}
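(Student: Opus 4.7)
The plan is to reduce the computation of $\mathbb{E}[\mathcal{J}(t)\mid\mathcal{F}_{t_0}]$ to an explicit analytic integral by exploiting three structural features: the rapid decay of $J^{(\alpha)}$ from Lemma~\ref{lem:bound on deterministic J}, the martingale decomposition of the compensated point process, and the Fourier-analytic identities for $K_\alpha$ and $K^*_\alpha$ underlying Lemmas~\ref{lem:estimate on K:intro}--\ref{lem:bound on deterministic J}. First I would show that the contribution from $(u,s)$ with either coordinate in $[t_0^-,t_0]$ is super-polynomially small in $\alpha$: for $t\ge\acute{t}_0=t_0+3\alpha^{-2}\beta^\theta$ and $u\le t_0$ one has $t-u\ge 3\alpha^{-2}\beta^\theta$, so Lemma~\ref{lem:bound on deterministic J} yields $|J^{(\alpha)}_{t-s,t-u}|\le Ce^{-c\beta^\theta}$, which combined with the bound $|\Pi_S[t_0^-,t_0]|\le 4\alpha^{-1}\beta^\theta$ from $\mathcal{A}_2$ kills this part of the integral. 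This reduces the problem to $(u,s)\in[t_0,t]^2$ with $u<s$.

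Next I decompose $d\widehat{\Pi}_S=d\widetilde{\Pi}_S+(S-\alpha)ds$ in both factors and show that three of the four resulting terms have zero conditional mean given $\mathcal{F}_{t_0}$. The two terms whose outer differential is $d\widetilde{\Pi}_S(s)$ are martingale transforms of $\widetilde{\Pi}_S$ starting from $t_0$. The mixed term $\int_{t_0}^t\!\int_{t_0}^s J^{(\alpha)}_{t-s,t-u}\,d\widetilde{\Pi}_S(u)(S(s)-\alpha)\,ds$ also vanishes by Fubini together with the tower property: for $u<s$,
\begin{equation*}
\mathbb{E}\bigl[d\widetilde{\Pi}_S(u)(S(s)-\alpha)\mid\mathcal{F}_{t_0}\bigr]=\mathbb{E}\bigl[\mathbb{E}[S(s)-\alpha\mid\mathcal{F}_u]\,d\widetilde{\Pi}_S(u)\mid\mathcal{F}_{t_0}\bigr]=0,
\end{equation*}
since $\mathbb{E}[S(s)-\alpha\mid\mathcal{F}_u]$ is $\mathcal{F}_u$-measurable while $\widetilde{\Pi}_S$ is a martingale. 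Consequently
\begin{equation*}
\mathbb{E}[\mathcal{J}(t)\mid\mathcal{F}_{t_0}]=\frac{\alpha}{1+2\alpha}\int_{t_0}^t\!\!\int_{t_0}^s J^{(\alpha)}_{t-s,t-u}\,\mathbb{E}[(S(u)-\alpha)(S(s)-\alpha)\mid\mathcal{F}_{t_0}]\,du\,ds+o(\alpha^{11/5}).
\end{equation*}

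To evaluate the two-point correlation, I would use the sharp regularity $\tau_3^\sharp$ and Lemma~\ref{lem:reg:errorint}(3) to replace $S(x)-\alpha$ by $S_1(x)-\alpha=\int K_\alpha(x-y)d\widehat{\Pi}_S(y)+O(\alpha^\infty)$. Applying the same martingale decomposition together with the It\^o isometry for compensated Poisson processes yields, at leading order, $\mathbb{E}[(S_1(u)-\alpha)(S_1(s)-\alpha)\mid\mathcal{F}_{t_0}]\approx \alpha\int K_\alpha(u-x)K_\alpha(s-x)\,dx$, with the cross terms vanishing by the same tower-property argument and the drift-drift contribution appearing as higher-order iterations of the renewal equation for $K^*_\alpha$. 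Substituting reduces the mean to an explicit deterministic triple integral of the form $\tfrac{\alpha^2}{1+2\alpha}\iiint J^{(\alpha)}_{t-s,t-u}K_\alpha(u-x)K_\alpha(s-x)\,dx\,du\,ds$, which I would evaluate by the Fourier identities underlying Lemmas~\ref{lem:estimate on K:intro}--\ref{lem:bound on deterministic J} and the moment formula $\int_0^\infty xK_\alpha(x)dx=(K_\alpha^*)^{-1}=\tfrac{1+2\alpha}{2\alpha^2}$, obtaining the leading value $2\alpha^2$.

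The hard part is attaining the claimed precision $o(\alpha^{1/5})$ in relative error. Since the pointwise bound on $\mathcal{J}(t)$ from Proposition~\ref{prop:fixed perturbed:double int} is only $O(\alpha^{1-\epsilon}\sigma_1\sigma_2)$, which greatly exceeds the mean $2\alpha^2$, the argument cannot rely on absolute bounds and must track fine cancellations. The delicate estimates are: (i) the iterated drift-drift expansion of the two-point function must be shown to converge geometrically to a contribution that is of strictly higher order after keeping track of $\beta$-factors; (ii) the replacement $S\to S_1$ inside the outer integral must use $\tau_3^\sharp$ together with Lemma~\ref{lem:ind1:pi1int basicbd} so the error is absorbed by the $J^{(\alpha)}$ decay; (iii) the explicit triple-integral evaluation must be performed with polynomial-in-$\alpha$ error control, invoking the sharp asymptotics of $K^*_\alpha$ from Lemma~\ref{lem:estimat for K tilde:intro}.
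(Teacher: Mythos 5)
Your reduction to \eqref{eq:doubleint:mg:basic}-style integral by discarding the outer martingale part is correct, but the claim that the mixed term $\int_{t_0}^t\int_{t_0}^s J^{(\alpha)}_{t-s,t-u}\,d\widetilde{\Pi}_S(u)\,(S(s)-\alpha)\,ds$ vanishes by the tower property is wrong, and this error collapses the proof. Your chain of equalities gives
$\mathbb{E}[d\widetilde{\Pi}_S(u)(S(s)-\alpha)\mid\mathcal{F}_{t_0}]
=\mathbb{E}\bigl[\mathbb{E}[S(s)-\alpha\mid\mathcal{F}_u]\,d\widetilde{\Pi}_S(u)\mid\mathcal{F}_{t_0}\bigr]$,
but $\mathbb{E}[S(s)-\alpha\mid\mathcal{F}_u]$ is $\mathcal{F}_u$-measurable, not $\mathcal{F}_{u^-}$-predictable: it depends on whether $\Pi_S$ jumped at time $u$. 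So the integrand is correlated with the jump $d\Pi_S(u)$, and the compensated expectation is nonzero. Concretely, $\mathbb{E}\bigl[d\widetilde{\Pi}_S(u)(S(s)-\alpha)\bigr]\approx S(u)\,du\cdot\bigl(\mathbb{E}[S(s)-\alpha\mid\text{extra jump at }u]-\mathbb{E}[S(s)-\alpha]\bigr)$, which is of order $\alpha\,du\cdot K^*_\alpha(s-u)$ — precisely the scale at which the mean $2\alpha^2$ is generated.

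Because of this, your final formula keeps only the ``drift--drift'' piece (the paper's $I_2$-type contribution) and silently drops the ``martingale--drift'' piece, which is the paper's $I_1(t)$ in Proposition~\ref{prop:double int:main approx}. In the paper $I_1$ and $I_2$ are both nonnegligible and their mean is computed jointly via the It\^o--isometry identity $\mathbb{E}\bigl[\int f\,d\widetilde{\Pi}_\alpha\int g\,d\widetilde{\Pi}_\alpha\bigr]=\alpha\int fg$, after which Lemma~\ref{lem:integral computation} exploits a delicate cancellation between cross terms in $I_1$ and $I_2$ (the third integral in \eqref{eq:final computation in the integral} is absorbed by $\tilde{K}$ inside $I_2$) before arriving at $I_1+I_2=2+O(\sqrt{\alpha})$. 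So the answer $2\alpha^2$ is not attributable to the drift--drift piece alone; a proof that discards the mixed term would compute a different (and incorrect) constant. A secondary issue: your heuristic two-point function $\mathbb{E}[(S_1(u)-\alpha)(S_1(s)-\alpha)]\approx\alpha\int K_\alpha(u-x)K_\alpha(s-x)\,dx$ should use the renewal kernel $K^*_\alpha$, not $K_\alpha$; this follows from the decomposition \eqref{eq:integralform:branching} of $S_1-\alpha$ as a stochastic integral against $K^*_\alpha$.
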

We note that  the error bound $\alpha^{\frac{1}{5}}$ is not essential: something better than $O(\beta^{-2})$ will suffice for our purpose.
 
 \subsection{The contribution of the double integral}\label{subsec:increment:double int}
 From the analysis in the previous sections, we can deduce that $\mathcal{J}(t)$ is typically of order $\alpha^{2-\epsilon}$. Then, the main difficulty in establishing Theorem \ref{thm:double int:main} is narrowing down its size to $2\alpha^2$ plus a smaller order error. Here, we require more refined tools to study the mean of $\mathcal{J}(t)$ accurately. 
 
 Let $t_0^-, t_0^+, \acute{t}_0$ and $\hat{t}_0$ be as in Theorem \ref{thm:double int:main}, we introduce another parameter $t_0^\dagger$ defined as
 \begin{equation}
 t_0^\dagger := t_0 + \frac{5}{2}\alpha^{-2}\beta^\theta.
 \end{equation}
 An important relation we stress here is that
 \begin{equation}
 (\acute{t}_0 - t_0^\dagger) \wedge (t_0^\dagger - t_0^+) \ge \frac{1}{2}\alpha^{-2}\beta^\theta.
 \end{equation}
 Since $\Pi_S(-\infty,t_0]$ is regular and we consider $t\ge \acute{t}_0$, we can ignore the contributions to the integral from  the regime $[t_0^-,t_0^\dagger]$, analogously as we have seen in \eqref{eq:ind2:tau6sh:switch base}. Namely, with probability $1- e^{-\beta^5},$
 \begin{equation}
 \left|\mathcal{J}(t) - \intop_{t_0^\dagger}^t\intop_{t_0^\dagger}^s J_{t-s,t-u}^{(\alpha)} d\widehat{\Pi}_S(u )d\widehat{\Pi}_S(s) \right| \le \alpha_0^{50}, \quad \textnormal{for all } t\in[\acute{t}_0,\hat{t}_0].
 \end{equation}
 Hence, from now we will be interested in investigating the double integral starting from $t_0^\dagger$. The reason for our choice of such $t_0^\dagger$ is explained in Remark \ref{rmk:doubleint:dagger}. Controlling the contribution from the error event where the above does not hold will be discussed in the proof of Lemma \ref{lem:doubleint:out} below, and it will also be bounded by $O(\alpha^{50})$.
 
 Writing $\mathcal{F}_t :=\Pi_S(-\infty,t_0]$ as before, we begin with observing that
 \begin{equation}\label{eq:doubleint:mg:basic}
 \mathbb{E} [\mathcal{J}(t) \, |\, \mathcal{F}_{t_0}  ] = \frac{\alpha}{1+2\alpha} \mathbb{E} \left[\left. \intop_{t_0^{\dagger}}^t \intop_{t_0^\dagger}^s J^{(\alpha)}_{t-s,t-u} d\widehat{\Pi}_S(u) (S(s)-\alpha)ds \, \right| \, \mathcal{F}_{t_0}   \right] + O(\alpha^{50}),
 \end{equation}  
 since the outer integral with respect to $d\widetilde{\Pi}_S(s) = d\Pi_S(s) - S(s)ds$ is a martingale and thus has mean zero. Suppose that we can switch $(S(s)-\alpha)ds$ into $(S_1(s)-\alpha)ds$ with a negligible error, recalling the definition $S_1(s) = S_1(s;t_0^-,\alpha) = \mathcal{R}_c(s,s;\Pi_S[t_0^-,s],\alpha)$. Then, from \eqref{eq:integralform:branching}, we can write
 \begin{equation}\label{eq:doubleint:S1minalpha:basic}
 (S_1(s)-\alpha) = \intop_{t_0^+}^s K^*_\alpha(s-x) (d\Pi_S(x)-S_1(x)dx) +
(\mathcal{R}_c(t_0^+,s;\Pi_S[t_0^-,t_0^+],\alpha)-\alpha).
 \end{equation}
 
 \begin{remark}\label{rmk:doubleint:dagger}
 	 Note that this integral starts from $t_0^+$, not $t_0^\dagger$, in order to keep our control on  $\mathcal{R}_c(t_0^+,s;\Pi_S[t_0^-,t_0^+],\alpha)$ for all $s\in [t_0^\dagger,\hat{t}_0]$ (see \eqref{eq:doubleint:errorbasic} below). The integral \eqref{eq:doubleint:S1minalpha:basic} needs to be  from $t_0^+$, not $t_0$, since we have control on $|S_1(u)-\alpha| $ only on $t\ge t_0^+$ (Lemma \ref{lem:ind1:taux}).
 \end{remark}

 From \eqref{eq:doubleint:S1minalpha:basic}, we  again attempt to approximate $(S_1(s)-\alpha)ds$ by
 \begin{equation}
 (S_1(s)-\alpha)ds \approx \intop_{t_0^+}^s K^*_\alpha(s-x) d\widetilde{\Pi}_\alpha (x) + (\alpha''-\alpha)ds,
 \end{equation}
 with $\alpha'':= \mathcal{L}(t_0^+;\Pi_S[t_0^-,t_0^+],\alpha)$.
 If we make these two steps of approximations rigorous and repeat a similar procedure to the inner integral, then we arrive at the following proposition.
 
 \begin{prop}\label{prop:double int:main approx}
Define the integrals $I_1$ and $I_2$ by
\begin{equation}
\begin{split}
I_1(t)&:= \intop_{t_0^{\dagger}}^t \left[ \intop_{t_0^\dagger}^s J_{t-s,t-u}^{(\alpha)} d\widetilde{\Pi}_\alpha(u) \cdot \intop_{t_0^+}^s K^*_\alpha(s-u) d\widetilde{\Pi}_\alpha(u) \right] ds;
\\
I_2(t)&:= \intop_{t_0^\dagger}^t \left[ \intop_{t_0^\dagger}^s \intop_{t_0^+}^u J^{(\alpha)}_{t-s,t-u} K^*_\alpha(u-v) d\widetilde{\Pi}_\alpha(v) du \cdot  \intop_{t_0^+}^s K^*_\alpha(s-x) d\widetilde{\Pi}_\alpha(x) \right]ds.
\end{split}
\end{equation}
 	Then, under the setting of Theorem \ref{thm:double int:main}, for all $\acute{t}_0\le t\le \hat{t}_0$ we have
 	\begin{equation}
\begin{split}
 \mathbb{E}[\mathcal{J}(t)\,| \, \mathcal{F}_{t_0}] = 
 	\frac{\alpha}{1+2 \alpha} \mathbb{E}\left[I_1(t)+I_2(t)  \right]+ O\left(\alpha^{2+\frac{1}{5}} \right).
\end{split}
 	\end{equation}
 	Note that the integrals $I_1$ and $I_2$ are independent of $\mathcal{F}_{t_0}$.
 \end{prop}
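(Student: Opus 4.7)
My plan is to perform a chain of successive approximations that reduce $\mathbb{E}[\mathcal{J}(t)\,|\,\mathcal{F}_{t_0}]$ to a deterministic expectation against the fixed-rate process $d\widetilde{\Pi}_\alpha$, tracking the error at each step so the total stays below $O(\alpha^{2+1/5})$. First, truncating the domain of integration from $[t_0^-,t]$ to $[t_0^\dagger,t]$ costs only $O(\alpha^{50})$: Lemma \ref{lem:bound on deterministic J} gives $|J^{(\alpha)}_{t-s,t-u}| \le \alpha^{100}$ whenever $u \le t_0^\dagger \le t - \tfrac{1}{2}\alpha^{-2}\beta^\theta$, and regularity forces $|\Pi_S[t_0^-,t_0^\dagger]| = O(\alpha^{-1}\beta^{O(\theta)})$ via the event $\mathcal A_2$. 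Next, I split $d\widehat{\Pi}_S(s) = d\widetilde{\Pi}_S(s) + (S(s)-\alpha)\,ds$ in the outer integral. Since $J^{(\alpha)}_{t-s,t-u}$ is deterministic, the inner integral $\int_{t_0^\dagger}^s J^{(\alpha)}_{t-s,t-u}\,d\widehat{\Pi}_S(u)$ is $\mathcal{F}_s$-measurable, so the outer $d\widetilde{\Pi}_S(s)$ part has vanishing conditional mean, yielding \eqref{eq:doubleint:mg:basic}.

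Second, I replace $(S(s)-\alpha)$ with $(S_1(s)-\alpha)$. Pairing the $\tau_{3}$-bound on $|S(s)-S_1(s)|$ from Section \ref{subsubsec:reg:Scontrol} with the sharp inner-integral estimate of Corollary \ref{cor:ind2:tau63sharp}, and using the $\sigma_1$-integral estimates of Lemmas \ref{lem:ind1:pi1int basicbd:basic}--\ref{lem:ind1:pi1int basicbd} (valid under $\tau_{7}, \tau_{8}$), one sees that this error is comfortably below $\alpha^{2+1/5}$. Third, I expand $(S_1(s)-\alpha)$ via the branching identity \eqref{eq:doubleint:S1minalpha:basic}. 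The initial-condition piece $(\mathcal{R}_c(t_0^+,s;\Pi_S[t_0^-,t_0^+],\alpha)-\alpha)$ is exponentially damped in $s-t_0^+$ by Lemma \ref{lem:estimat for K tilde:intro} and is of size $O(\alpha^{3/2}\beta^{O(\theta)})$ at $s = t_0^+$ by $\mathcal{A}_3$ and Proposition \ref{prop:reg:newrates}, so its contribution after outer $ds$-integration is negligible.

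Fourth, I convert the $\Pi_S$-driven martingale in \eqref{eq:doubleint:S1minalpha:basic} into a $\Pi_\alpha$-driven one by decomposing
\[
d\Pi_S(x)-S_1(x)\,dx \;=\; d\widetilde{\Pi}_\alpha(x) \;+\; \bigl[d\Pi_{S-\alpha}(x) - (S(x)-\alpha)\,dx\bigr] \;+\; \bigl(S(x)-S_1(x)\bigr)\,dx,
\]
where the middle bracket is a ``gap martingale'' controlled by Corollaries \ref{cor:ind1:gap num of pts}--\ref{cor:ind1:gap mg 2}, and the final drift is handled by $\tau_{3}^\sharp$ via Corollary \ref{cor:ind2:tau63sharp:Kstar}. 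This is exactly what turns $\int K^*_\alpha(s-x)(d\Pi_S-S_1\,dx)$ into $\int K^*_\alpha(s-x)\,d\widetilde{\Pi}_\alpha(x)$. Applying the same program to the inner integral $\int_{t_0^\dagger}^s J^{(\alpha)}_{t-s,t-u}\,d\widehat{\Pi}_S(u)$, I again split off the $d\widetilde{\Pi}_S(u)$-martingale part (which, after swapping to $d\widetilde{\Pi}_\alpha(u)$ through the same decomposition, produces the integral defining $I_1$) from the $(S(u)-\alpha)\,du$-drift part (which, after one further expansion using \eqref{eq:doubleint:S1minalpha:basic} and an additional swap to $d\widetilde{\Pi}_\alpha(v)$, yields $I_2$).

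The main obstacle will be the fourth step: swapping a $\Pi_S$-martingale increment with a $\Pi_\alpha$-martingale increment inside a non-martingale (i.e.\ $ds$-) outer integral, where the usual orthogonality short-cut is unavailable. I will control the resulting cross-terms by exploiting the coupling $\Pi_S,\Pi_\alpha \subset \Pi$, under which $\Pi_{S\triangle\alpha}$ has instantaneous rate $|S(x)-\alpha|$, and then applying the refined gap estimates of Section \ref{subsubsec:ind1:Sminusalpha conseq}. These leverage the $\alpha^{-3/2+O(\epsilon)}$-scale stopping time ${\tau}^+_{10}$ to show that $\Pi_{S\triangle\alpha}$ has only $\beta^{O(1)}$ points per such interval, giving the precision needed. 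The delicate bookkeeping is to guarantee that each approximation error beats $\alpha^{2+1/5}$ rather than the naïve $\alpha^{2-\epsilon}$ arising from a direct application of $\tau_{3}$; this gain is possible only because the outer $d\widetilde{\Pi}_S(s)$ martingale was already eliminated in the first step, halving the effective ``cost'' of each subsequent substitution.
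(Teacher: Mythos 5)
Your architecture — truncate to $[t_0^\dagger,t]$, kill the outer $d\widetilde{\Pi}_S$ martingale, peel off $(S-S_1)$, expand $(S_1-\alpha)$ via the renewal identity, and then convert $\Pi_S$-driven martingales to $\Pi_\alpha$-driven ones through $d\widetilde{\Pi}_{S-\alpha}$ — mirrors the paper's proof (Lemmas \ref{lem:doubleint:out1}--\ref{lem:doubleint:out} for the outer integral, \eqref{eq:doubleint:innerint decomp} and Lemmas \ref{lem:doubleint:in1ez}--\ref{lem:doubleint:in2 mg} for the inner). Up to this point you are on the right track.

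However, your proposed resolution of the cross-term $I_{\textnormal{in}}^{(5)}(t) = \int [\int J^{(\alpha)} d\widetilde{\Pi}_{S-\alpha}][\int K^*_\alpha d\widetilde{\Pi}_\alpha] ds$ has a genuine gap. You plan to bound this term pathwise using the gap estimates of Section \ref{subsubsec:ind1:Sminusalpha conseq} and the $\alpha^{-3/2+O(\epsilon)}$-scale point counts. But the paper explicitly remarks (just before Lemma \ref{lem:doubleint:in2 mg}) that ``the same argument as the previous analysis does not give the correct estimate on $I_{\textnormal{in}}^{(5)}$'' and that one must instead exploit the \emph{expectation} structure. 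The obstruction is that $d\widetilde{\Pi}_{S-\alpha}$ and $d\widetilde{\Pi}_\alpha$ are not orthogonal: when $S(x)<\alpha$ they are driven by overlapping sets of points of $\Pi$, so the cross-covariance is nonzero and a pathwise bound on the product integrated against $ds$ does not beat the required $\alpha^{1+1/5}$ threshold.

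The paper's fix is to restore orthogonality by a thin shift of the reference level. Set $\underline{\alpha} := \alpha - \alpha^{3/2-\epsilon}$ and split $d\widetilde{\Pi}_\alpha = d\widetilde{\Pi}_{\underline{\alpha}} + d\widetilde{\Pi}_{\triangle\alpha}$. Under the stopping time $\tau_{\textnormal{lo}}$ of Proposition \ref{prop:reg:lbd of speed} we have $S(x)\ge\underline\alpha$, so $d\Pi_{S-\alpha}$ lives strictly above height $\underline\alpha$ and is \emph{disjoint} from $d\Pi_{\underline{\alpha}}$; hence the $d\widetilde{\Pi}_{\underline{\alpha}}$-cross-term has vanishing conditional expectation (even after the deterministic $ds$-integral, since for each fixed $s$ the two inner martingales are driven by disjoint points). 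The residual $d\widetilde{\Pi}_{\triangle\alpha}$-part, driven at intensity $\alpha-\underline\alpha = \alpha^{3/2-\epsilon}$, is small enough that the pathwise estimate of Corollary \ref{cor:ind1:gap mg 2} closes it. Without this device the scheme stalls at the naive $\alpha^{2-\epsilon}$ precision you were worried about. A similar but milder use of expectation structure, which you also omit, handles $I_{\textnormal{in}}^{(7)}$ (a genuine martingale since $\alpha''-\alpha$ is $\mathcal{F}_{t_0^+}$-measurable and the integral starts at $t_0^\dagger$) and $I_{\textnormal{in}}^{(8)}$ (a deterministic integral computed in Lemma \ref{lem:integral computation}).
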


 Before establishing the proposition rigorously, we first deduce Theorem \ref{thm:double int:main} from it.
 
 \begin{proof}[Proof of Theorem \ref{thm:double int:main}]
 	Observe that for any deterministic functions $f,g$ and any numbers $a_1\le a_2$, $b_1\le b_2$, we have
 	\begin{equation}
 	\mathbb{E}\left[\intop_{a_1}^{a_2} f(x)d\widetilde{\Pi}_\alpha(x) \intop_{b_1}^{b_2} g(y)d\widetilde{\Pi}_\alpha (y) \right] = \alpha \intop_{a_1\vee b_1}^{a_2\wedge b_2} f(x)g(x )  dx,
 	\end{equation}
 	since the only nontrivial correlation comes from the cases when both $x$ and $y$ are at the same point in the point process. Thus, we obtain that
 	\begin{equation}
 	\begin{split}
 	\mathbb{E}[I_1(t)] &= 
\alpha 	\intop_{t_0^\dagger}^t \intop_{t_0^\dagger}^s J_{t-s,t-u}^{(\alpha)} K_\alpha^*(s-u) duds = \alpha \intop_0^{t-t_0^\dagger} \intop_0^s J^{(\alpha)}_{u,s} K^*_\alpha(s-u) duds.
\end{split}
\end{equation}
Similarly, we can see that
\begin{equation}
\begin{split}
\mathbb{E}[I_2(t)] &= \intop_{t_0^\dagger}^t  \intop_{t_0^\dagger}^s\mathbb{E}\left[\intop_{t_0^+}^u J_{t-s,t-u}^{(\alpha)}  K^*_\alpha(u-v)  d\widetilde{\Pi}_\alpha(v) \intop_{t_0^+}^s K_\alpha^*(s-x) d\widetilde{\Pi}_\alpha(x) \right]duds \\
&=
\alpha \intop_{t_0^\dagger}^t \intop_{t_0^\dagger}^s \intop_{t_0^+}^u J_{t-s,t-u}^{(\alpha)} K^*_\alpha(u-v) K^*_\alpha(s-v)dv du ds \\
&=\alpha \intop_0^{t-t_0^\dagger} \intop_0^s \intop_0^u J_{v,u}^{(\alpha)} K_\alpha^*(s-u) K^*_\alpha (s-v) dv du ds + O(\alpha^{100}),
 	\end{split}
 	\end{equation}
 	where in the third line, the integral over $[t_0^+,t_0^\dagger]$ of $dv$ has a negligible $O(\alpha^{100})$-order contribution due to the decay property of $J$, since $t\ge \acute{t}_0$.
 	The two deterministic integrals above are computed via a precise understanding of the quantities $K^*_\alpha$ and $J^{(\alpha)}$. This is done using analytical methods based on Fourier analysis, and can be found in Lemma \ref{lem:integral computation} in the appendix.
 	Applying the results from the lemma  concludes the proof.
 \end{proof}

 The rest of the subsection is devoted to the proof of Proposition \ref{prop:double int:main approx}. The analysis is highly technical, and relies on a similar approach as that in Section \ref{subsec:ind2:bootstrappedJ} based on the decomposition 
  \begin{equation}
  d\widehat{\Pi}_S(x) = d\widetilde{\Pi}_S(x ) + (S(x)-S_1(x)) dx + (S_1(x)- \alpha) dx.
  \end{equation} 
  ($d\widetilde{\Pi}_S(x) := d\Pi_S (x)-S(x)dx$ as before.)
 
 \subsubsection{Reshaping the outer integral}
 We begin with modifying the outer integral of $\mathcal{J}(t)$. Since it suffices to study the integral in the RHS of \eqref{eq:doubleint:mg:basic},
 define
 \begin{equation}
\begin{split}
&I_{\textnormal{out}}^{(1)}(t):=\intop_{t_0^\dagger}^t \intop_{t_0^\dagger}^s J_{t-s,t-u}^{(\alpha)} d\widehat{\Pi}_S(u) (S(s)-S_1(s))ds;\\
 &\tau_{\textnormal{out}}^{(1)}: = \inf \left\{ 
 t\ge \acute{t}_0: \, \left| I_{\textnormal{out}}^{(1)}(t) \right| \ge \alpha^{\frac{3}{2}-\epsilon} + \frac{\alpha^{1-\epsilon}}{\pi_1(t;S)+1}
 \right\}.
\end{split}
 \end{equation}
 Moreover, recalling \eqref{eq:doubleint:S1minalpha:basic}, the remaining outer integral is decomposed by the following formula:
 \begin{equation}\label{eq:doubleint:S1minusalpha decomp}
\begin{split}
 S_1(s)-\alpha = & \intop_{t_0^+}^s K^*_\alpha(s-x) d\widetilde{\Pi}_\alpha(x) + \intop_{t_0^+}^s K^*_\alpha ( s-x) d\widetilde{\Pi}_{S-\alpha} (x) \\
  &+ \intop_{t_0^+}^s K^*_\alpha(s-x) (S(x)-S_1(x))dx + \big[\mathcal{R}_c(t_0^+,s;\Pi_S[t_0^-,t_0^+],\alpha) -\alpha'' \big] + \big[\alpha''-\alpha \big],
\end{split}
 \end{equation}
 where we defined $d\widetilde{\Pi}_{S-\alpha}(x) := d\Pi_S(x)-d\Pi_\alpha(x) -(S(x)-\alpha)dx$ and $\alpha'' = \mathcal{L}(t_0^+;\Pi_S[t_0^-,t_0^+],\alpha)$. We show that the contributions to the double integral coming from the terms other than the first and the last in the RHS are negligible. To this end, recall the notation $\Pi_{S\triangle \alpha}$ \eqref{eq:def:Strianglealpha} and define
 \begin{equation}
 \begin{split}
& I_{\textnormal{out}}^{(2)}(t):= \intop_{t_0^\dagger}^t \intop_{t_0^\dagger}^s J_{t-s,t-u}^{(\alpha)} d\widehat{\Pi}_S(u) \intop_{t_0^+}^s K^*_\alpha (s-x)d\widetilde{\Pi}_{S-\alpha}(x) ds ;\\
 	&\tau_{\textnormal{out}}^{(2)} := \inf \left\{ 
 	t\ge \acute{t}_0: \left| I_{\textnormal{out}}^{(2)}(t) \right| \ge \alpha^{1-\epsilon} \sigma_1(t;S\triangle \alpha) + \alpha^{\frac{1}{4}-\epsilon} \sigma_1(t;S\triangle\alpha)^2+  \alpha^{\frac{5}{4}-\epsilon}
 	 \right\};\\
 	& I_{\textnormal{out}}^{(3)}(t):=
 	\intop_{t_0^\dagger}^t \intop_{t_0^\dagger}^s J_{t-s,t-u}^{(\alpha)} d\widehat{\Pi}_S(u) \intop_{t_0^+}^s K^*_\alpha (s-x) (S(x)-S_1(x))dx ds;\\
& 	 \tau_{\textnormal{out}}^{(3)} := \inf \left\{ 
 	 t\ge \acute{t}_0: \left| I_{\textnormal{out}}^{(3)}(t)  \right| \ge \alpha^{\frac{3}{2}-3\epsilon}
 	 \right\}. 
 \end{split}
 \end{equation}
 We remark that the contribution from the fourth term is negligible due to  Lemma \ref{lem:ind2:alphavsalphaPP}, which tells us that with probability at least $1-e^{-\beta^5}$,
 \begin{equation}\label{eq:doubleint:errorbasic}
 \big|\mathcal{R}_c(t_0^+,s;\Pi_S[t_0^-,t_0^+],\alpha) - \alpha'' \big| \le \alpha^{100},
 \end{equation}
  for all $t\in [t_0^\dagger,\hat{t}_0]$.
 
\begin{lem}\label{lem:doubleint:out1}
	Under the setting of Theorem \ref{thm:double int:main}, we have
	\begin{equation}
	\PP \left(\left. \tau_{\textnormal{out}}^{(1)} \wedge \tau_{\textnormal{out}}^{(2)} \wedge \tau_{\textnormal{out}}^{(3)}  < \hat{t}_0 \ \right| \, \mathcal{F}_{t_0} \right) \le \exp \left( -\beta^2 \right) +r.
	\end{equation}
\end{lem}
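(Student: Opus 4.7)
The plan is to bound each $I_{\textnormal{out}}^{(i)}(t)$, $i=1,2,3$, by the product of a uniform pointwise estimate on the common inner integral $A(t,s):=\intop_{t_0^\dagger}^s J_{t-s,t-u}^{(\alpha)}d\widehat{\Pi}_S(u)$ and a uniform pointwise estimate on the outer factor $B^{(i)}(s)$, integrated over $s\in[t_0^\dagger,t]$. The inner bound is the same in all three cases: Corollary \ref{cor:ind2:tau63sharp} guarantees, on an event of probability at least $1-3e^{-\beta^2}-r$, that
\begin{equation}
|A(t,s)|\le \frac{\alpha^{1/2}\beta^{6\theta}}{\sqrt{t-s+1}}+\frac{\beta^{C_\circ+1}}{t-s+1},\qquad \forall\, t\in[\acute{t}_0,\hat{t}_0],\ s\in[t_0^\dagger,t].
\end{equation}
All polylog factors $\beta^{O(\theta)}$ produced along the way will be absorbed into $\alpha^{-\epsilon}$ for sufficiently small $\alpha$, since $\beta=\log(1/\alpha)$.

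For $\tau_{\textnormal{out}}^{(1)}$, take $B^{(1)}(s)=S(s)-S_1(s)$ so $|B^{(1)}(s)|\le \alpha^{1-\epsilon}\sigma_1\sigma_2(s;S)+\alpha^{3/2-\epsilon}\sigma_1(s;S)$ on $[t_0,\tau(2)]$ by $\tau_3$. Multiplying and integrating against $ds$, the dominant contribution pairs the $\beta^{C_\circ+1}/(t-s+1)$ piece of $|A|$ with the $\alpha^{1-\epsilon}/(\pi_1(s;S)+1)$-type bound on $|B^{(1)}|$; Lemma \ref{lem:ind1:pi1int basicbd} applied with $\Delta_0=\alpha^{-1}$, $\Delta_1=\alpha^{-1}\beta^{C_\circ}$, $N_0=\beta^5$, $K=\alpha^{-1}\beta^{11\theta}$ (the interval counts coming from $\tau_7$ and $\tau_8$) yields a bound of the form $\alpha^{1-\epsilon}(\pi_1(t;S)+1)^{-1}+\alpha^{3/2-\epsilon}$, matching the threshold in the definition of $\tau_{\textnormal{out}}^{(1)}$. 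For $\tau_{\textnormal{out}}^{(3)}$, take $B^{(3)}(s)=\intop_{t_0^+}^s K^*_\alpha(s-x)(S(x)-S_1(x))dx$, which Corollary \ref{cor:ind2:tau63sharp:Kstar} bounds uniformly by $\alpha^{2-2\epsilon}$; using $\int_{t_0^\dagger}^t (t-s+1)^{-1/2}ds\le\alpha^{-1}\beta^{5\theta}$ and $\int_{t_0^\dagger}^t(t-s+1)^{-1}ds\le\beta^{O(\theta)}$, pairing with $|A|$ immediately produces an $\alpha^{3/2-3\epsilon}$ bound.

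For $\tau_{\textnormal{out}}^{(2)}$, take $B^{(2)}(s)=\intop_{t_0^+}^s K^*_\alpha(s-x)d\widetilde{\Pi}_{S-\alpha}(x)$; Corollary \ref{cor:ind1:gap mg} gives $|B^{(2)}(s)|\le\alpha\beta^{C_\circ}\sigma_1(s;S\triangle\alpha)+\alpha^{7/4-\epsilon}$. The uniform $\alpha^{7/4-\epsilon}$ summand is handled exactly as in the $\tau_{\textnormal{out}}^{(3)}$ case, yielding a contribution $\alpha^{5/4-\epsilon}$ matching the third term of the threshold. The $\sigma_1(s;S\triangle\alpha)$ summand requires integrating $|A(t,s)|\cdot\alpha\beta^{C_\circ}(\pi_1(s;S\triangle\alpha)+1)^{-1/2}ds$; applying Lemma \ref{lem:ind1:pi1int basicbd} to the gap process $\Pi_{S\triangle\alpha}$ with parameters $\Delta_0=\alpha^{-3/2+7\epsilon}$, $N_0=\beta^5$ (provided by $\tau_{\textnormal{gap}}^{(1)}$ of Corollary \ref{cor:ind1:gap num of pts}) and a choice of $\Delta_1$ of order $\alpha^{-3/2}\beta^{O(\theta)}$, the second and first formulas of that lemma respectively extract the $\alpha^{1/4-\epsilon}\sigma_1(t;S\triangle\alpha)^2$ and $\alpha^{1-\epsilon}\sigma_1(t;S\triangle\alpha)$ endpoint contributions, plus a constant piece bounded by $\alpha^{5/4-\epsilon}$.

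The main obstacle is producing the $\sigma_1(t;S\triangle\alpha)^2$ endpoint term in the $\tau_{\textnormal{out}}^{(2)}$ bound, because the gap process $\Pi_{S\triangle\alpha}$ has effective density only $\alpha^{3/2+O(\epsilon)}$ (by Corollary \ref{cor:ind1:gap num of pts}) rather than $\alpha$, so the natural scale of its closest point is $\alpha^{-3/2+O(\epsilon)}$ and one must redo the telescoping-sum argument underlying Lemma \ref{lem:ind1:pi1int basicbd} for the integral $\int_{t_0^\dagger}^t(t-s+1)^{-1}(\pi_1(s;S\triangle\alpha)+1)^{-1/2}ds$ with these enlarged parameters; it is this calculation that produces the $N_0\sqrt{\Delta_1}/(\pi_1(t;\cdot)+1)$ endpoint term whose coefficient becomes exactly $\alpha^{1/4-\epsilon}$ after multiplication by $\alpha\beta^{O(\theta)}$. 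Once this bound is in place, a union bound over $i=1,2,3$ together with the supporting good events--regularity of $\Pi_S(-\infty,t_0]$ (Theorems \ref{thm:reg:conti:main}, \ref{thm:ind2:main}), $\tilde{\tau}_{3,3}^\sharp\wedge\tilde{\tau}_{3,3}^{(3)}\ge\hat{t}_0$, and $\tau_{\textnormal{gap}}^{(1)}\wedge\tau_{\textnormal{gap}}^{(2)}\ge\hat{t}_0$--each failing with probability at most $e^{-\beta^2}+r$, yields the claimed bound.
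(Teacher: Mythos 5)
Your plan matches the paper's: a pointwise bound on the shared inner integral $A(t,s)$ from Corollary~\ref{cor:ind2:tau63sharp}, pointwise bounds on the outer factors $B^{(i)}(s)$ from $\tau_{\textnormal{gap}}^{(1)}$, $\tau_{\textnormal{gap}}^{(2)}$, and $\tilde{\tau}_{3,3}^{(3)}$, and a deterministic integration via Lemma~\ref{lem:ind1:pi1int basicbd}. The only structural difference for $\tau_{\textnormal{out}}^{(1)}$ is cosmetic: the paper simply cites Corollary~\ref{cor:ind2:tau61sharp1}, which is precisely the inequality you re-derive from $\tau_3$ plus $\tilde{\tau}_{3,3}^\sharp$.

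There is, however, one concrete error in the step you single out as the ``main obstacle.'' You attribute the $\alpha^{1/4-\epsilon}\sigma_1(t;S\triangle\alpha)^2$ and $\alpha^{1-\epsilon}\sigma_1(t;S\triangle\alpha)$ endpoint contributions to ``the second and first formulas'' of Lemma~\ref{lem:ind1:pi1int basicbd}. Those two formulas bound
$\intop_0^h (h-t+1)^{-1/2}(\pi_1(t;g)+1)^{-1/2}\,dt$ and $\intop_0^h (h-t+1)^{-1/2}(\pi_1(t;g)+1)^{-1}\,dt$, and produce only a $t$-independent constant; neither contains an endpoint term involving $\pi_1(h;g)$. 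The endpoint terms appear only in the third and fourth formulas. In the present computation, pairing the $\beta^{C_\circ+1}/(t-s+1)$ part of $|A(t,s)|$ with the $\alpha\beta^{C_\circ}(\pi_1(s;S\triangle\alpha)+1)^{-1/2}$ part of $|B^{(2)}(s)|$ is exactly the third formula of Lemma~\ref{lem:ind1:pi1int basicbd}: its two endpoint terms $N_0\log\Delta_1/\sqrt{\pi_1(t;\cdot)+1}$ and $N_0\sqrt{\Delta_1}/(\pi_1(t;\cdot)+1)$ become, after multiplication by $\alpha\beta^{O(\theta)}$, the $\alpha^{1-\epsilon}\sigma_1$ and $\alpha^{1/4-\epsilon}\sigma_1^2$ terms; the pairing with $\alpha^{1/2}\beta^{6\theta}/\sqrt{t-s+1}$ uses the first formula and only contributes to the constant $\alpha^{5/4-O(\epsilon)}$ piece. (The paper takes $\Delta_0=\alpha^{-3/2+7\epsilon}$, $N_0=\beta^5$, $K=\alpha^{-1/2}$, $\Delta_1=\alpha^{-3/2}$, with the remark that artificial points are inserted whenever $\Pi_{S\triangle\alpha}$ leaves a gap exceeding $\Delta_1$; your $\Delta_1=\alpha^{-3/2}\beta^{O(\theta)}$ would work equally well but does not avoid that device.)
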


From this lemma, deduce the conclusion of this subsection which can be written as follows.
\begin{lem}\label{lem:doubleint:out}
Define the integral $I_{\textnormal{out}}(t)$ as
\begin{equation}
I_{\textnormal{out}}(t):= \intop_{t_0^\dagger}^t \intop_{t_0^\dagger}^s J^{(\alpha)}_{t-s,t-u} d\widehat{\Pi}_S(u) \cdot\left\{\intop_{t_0^+}^s K^*_\alpha(s-u) d\widetilde{\Pi}_\alpha(u) + \big[\alpha''-\alpha \big] \right\} ds.
\end{equation}
Under the setting of Theorem \ref{thm:double int:main}, we have for all $t\in[\acute{t}_0,\hat{t}_0]$ that
\begin{equation}
\mathbb{E}[\mathcal{J}(t) \, | \, \mathcal{F}_{t_0}] = \frac{\alpha}{1+2\alpha} \mathbb{E} [I_{\textnormal{out}}(t) \,|\, \mathcal{F}_{t_0} ] + o (\alpha^{2+\frac{1}{5}} ).
\end{equation}
\end{lem}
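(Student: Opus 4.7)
Starting from equation \eqref{eq:doubleint:mg:basic}, substitute the identity $S(s)-\alpha=(S(s)-S_1(s))+(S_1(s)-\alpha)$ and then expand $(S_1(s)-\alpha)$ using the five-term decomposition \eqref{eq:doubleint:S1minusalpha decomp}. Multiplying each resulting summand against the outer factor $\intop_{t_0^\dagger}^s J^{(\alpha)}_{t-s,t-u}\,d\widehat{\Pi}_S(u)\,ds$ and comparing with the definitions in Lemma \ref{lem:doubleint:out1}, one finds by inspection that the $(S-S_1)$ piece produces exactly $I_{\textnormal{out}}^{(1)}(t)$; the $d\widetilde{\Pi}_\alpha$ piece together with the constant piece $(\alpha''-\alpha)$ produces exactly $I_{\textnormal{out}}(t)$; the $d\widetilde{\Pi}_{S-\alpha}$ piece produces $I_{\textnormal{out}}^{(2)}(t)$; the drift piece $\intop K_\alpha^*(s-x)(S(x)-S_1(x))dx$ produces $I_{\textnormal{out}}^{(3)}(t)$; and the residual piece $\mathcal{R}_c(t_0^+,s;\cdot)-\alpha''$ is uniformly $O(\alpha^{100})$ in $s\in[t_0^\dagger,\hat{t}_0]$ by \eqref{eq:doubleint:errorbasic}, except on an $\mathcal{F}_{t_0^+}$-measurable event of probability $e^{-\beta^5}$. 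Hence the conclusion reduces to showing $|\mathbb{E}[I_{\textnormal{out}}^{(i)}(t)\mid\mathcal{F}_{t_0}]|=o(\alpha^{1+1/5})$ for $i=1,2,3$, plus negligible exceptional contributions.

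\textbf{Step 2 ($L^1$ control on the regular event).} For each $i$, decompose the conditional expectation according to whether $\tau_{\textnormal{out}}^{(i)}\ge\hat{t}_0$. By Lemma \ref{lem:doubleint:out1} the complement has conditional probability at most $e^{-\beta^2}+r$, and on the regular event the pointwise bounds from its statement apply. Taking conditional expectations of those bounds, the only nontrivial terms are $\mathbb{E}[(\pi_1(t;S)+1)^{-1}\mid\mathcal{F}_{t_0}]$ and $\mathbb{E}[\sigma_1(t;S\triangle\alpha)^k\mid\mathcal{F}_{t_0}]$ for $k=1,2$. A layer-cake computation against the linear tail $\PP(\pi_1(t;S)<v\mid\mathcal{F}_{t_0})\le C\alpha\beta^{C_\circ}v$ (consequence of $\tau_1,\tau_7$) yields the first at size $\le C\alpha\beta^{C_\circ+1}$; the analogous computation for the gap process, whose conditional intensity is $\le \alpha^{3/2}\beta^C$ by Corollary \ref{cor:ind1:gap num of pts}, bounds the second by $C\alpha^{3k/4}\beta^{Ck}$. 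Substituting these into the bounds from Lemma \ref{lem:doubleint:out1}, the most stringent contribution is the constant $\alpha^{5/4-\epsilon}$ appearing in the estimate on $|I_{\textnormal{out}}^{(2)}|$, so that $\mathbb{E}[|I_{\textnormal{out}}^{(i)}(t)|\mathbf{1}\{\tau_{\textnormal{out}}^{(i)}\ge\hat{t}_0\}\mid\mathcal{F}_{t_0}]\le C\alpha^{5/4-\epsilon}\beta^C$ for each $i$. Since $5/4-\epsilon>6/5$ for $\epsilon<1/20$, this is $o(\alpha^{1+1/5})$.

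\textbf{Step 3 (The exceptional event).} On $\{\tau_{\textnormal{out}}^{(i)}<\hat{t}_0\}$ or on the failure event of \eqref{eq:doubleint:errorbasic}, both of conditional probability at most $e^{-\beta^2}+r$, we rely on a crude deterministic bound. Using $|J^{(\alpha)}_{t-s,t-u}|\le 1$ (definition), $|K^*_\alpha(s-x)|\le C\alpha^2$ (Lemma \ref{lem:estimat for K tilde:intro}), and the elementary estimates $|S(x)|\le 1/2$, $|S_1(x)|\le C\alpha(1+|\Pi_S[t_0^-,x]|)$, together with the fact that $|\Pi_S[t_0^-,\hat{t}_0]|$ is stochastically dominated by a Poisson random variable of mean $(\hat{t}_0-t_0^-)/2=O(\alpha^{-2}\beta^{10\theta})$, one obtains $|I_{\textnormal{out}}^{(i)}(t)|\le \alpha^{-C}$ off an event of probability at most $e^{-\beta^{100}}$. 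The contribution of the exceptional event to $\mathbb{E}[|I_{\textnormal{out}}^{(i)}(t)|\mid\mathcal{F}_{t_0}]$ is therefore at most $\alpha^{-C}(e^{-\beta^{3/2}}+e^{-\beta^2})+\alpha^{-C}e^{-\beta^{100}}=o(\alpha^{100})$, which is negligible.

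\textbf{Main obstacle.} The binding constraint comes from $L^1$-controlling the $\sigma_1(t;S\triangle\alpha)^2$ term in the bound for $|I_{\textnormal{out}}^{(2)}|$: it is precisely the $3/2$-order decay of the intensity of the gap process $\Pi_{S\triangle\alpha}$ (a direct consequence of the refined regularity $|S-\alpha|\lesssim\alpha^{3/2}$ built up through the induction of Sections \ref{sec:reg:intro}--\ref{sec:reg:next step}) that generates the $\alpha^{3/2}$-weighting in the layer-cake computation, turning the a priori $\alpha^{1/4-\epsilon}$-prefactor into the required $\alpha^{7/4-\epsilon}\beta^C$. Everything else is careful bookkeeping: matching the five-term decomposition with the definitions in Lemma \ref{lem:doubleint:out1}, and converting the high-probability bounds into expectations via the crude deterministic estimate above.
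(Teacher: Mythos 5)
Your proposal follows the paper's own three-step structure essentially verbatim: the algebraic decomposition of $\mathbb{E}[\mathcal{J}(t)\mid\mathcal{F}_{t_0}]$ into $\frac{\alpha}{1+2\alpha}\mathbb{E}[I_{\textnormal{out}}+I_{\textnormal{out}}^{(1)}+I_{\textnormal{out}}^{(2)}+I_{\textnormal{out}}^{(3)}\mid\mathcal{F}_{t_0}]$ plus the residual controlled by \eqref{eq:doubleint:errorbasic}; $L^1$-control of the three error integrals on the regular event via Lemma~\ref{lem:doubleint:out1} together with layer-cake bounds on $\mathbb{E}[\sigma_1(t;S)^2\mid\mathcal{F}_{t_0}]$ and $\mathbb{E}[\sigma_1(t;S\triangle\alpha)^k\mid\mathcal{F}_{t_0}]$; and a crude bound on the exceptional event using $S\le\tfrac12$, $|J^{(\alpha)}|\le 2$, and Poisson tails, exactly as in \eqref{eq:doubleint:remainder}. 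The arithmetic in Step~2 checks out: the dominant term from $I_{\textnormal{out}}^{(2)}$ is the constant $\alpha^{5/4-\epsilon}$, the $\sigma_1$- and $\sigma_1^2$-weighted terms contribute $\alpha^{7/4-O(\epsilon)}$, and both $I_{\textnormal{out}}^{(1)}$, $I_{\textnormal{out}}^{(3)}$ give $\alpha^{3/2-O(\epsilon)}$, all $o(\alpha^{1+1/5})$.

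The one spot that needs correcting is the citation in Step~2: you attribute the gap-process bound $\mathbb{E}[\sigma_1(t;S\triangle\alpha)^k\mid\mathcal{F}_{t_0}]\lesssim\alpha^{3k/4}\beta^{Ck}$ to Corollary~\ref{cor:ind1:gap num of pts}, reading it as a "conditional intensity $\le\alpha^{3/2}\beta^C$." But Corollary~\ref{cor:ind1:gap num of pts} is only a count bound — at most $\beta^5$ gap points in any window of length $\Delta_0=\alpha^{-3/2+7\epsilon}$ — and a count bound does not by itself yield the linear small-scale estimate $\PP(\pi_1(t;S\triangle\alpha)<v\mid\mathcal{F}_{t_0})\lesssim\alpha^{3/2}\beta^{C}v$ that the layer-cake needs, since the $\beta^5$ permitted points could all cluster right next to $t$. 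What actually produces the linear bound is either the sliding-window integral estimate from Lemma~\ref{lem:ind1:tau13} (which is what the paper invokes, $\int_{t-\Delta}^t|S-\alpha|\le\alpha^{3/2-7\epsilon}\Delta$, controlling the expected number of gap points in $(t-v,t)$) or, equivalently, the pointwise refined regularity $|S-\alpha|\le F(\alpha,\cdot)$ from $\tau_9^+$ together with the computation $\mathbb{E}[F(\alpha,s)\mid\mathcal{F}_{t_0}]=O(\alpha^{3/2}\beta^C)$. Your ``Main obstacle'' paragraph in fact names the correct mechanism (the $\alpha^{3/2}$-decay of $|S-\alpha|$ from the induction), so the conclusion survives — but as written, Step~2 leans on a lemma that cannot deliver what is claimed of it, and the reference should be replaced by Lemma~\ref{lem:ind1:tau13}. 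With that substitution your argument coincides with the paper's.
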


\begin{proof}
Let $\tau_{\textnormal{out}} := \tau_{\textnormal{out}}^{(1)}\wedge \tau_{\textnormal{out}}^{(2)}\wedge \tau_{\textnormal{out}}^{(3)}$.
 	We first show that
	\begin{equation}\label{eq:doubleint:remainder}
	\mathbb{E}[\mathcal{J}(t) \mathds{1}\{ \tau_{\textnormal{out}} <\hat{t}_0 \} \, | \, \mathcal{F}_{t_0} ] = o(\alpha^{40}) = 	\mathbb{E}[I_{\textnormal{out}}(t) \mathds{1}\{ \tau_{\textnormal{out}} <\hat{t}_0 \} \, | \, \mathcal{F}_{t_0} ].
	\end{equation}
	To establish the left estimate, we use the fact that $S(t)\le \frac{1}{2}$ and $|J_{u,s}^{(\alpha)} |\le 2$ from Proposition \ref{prop:speed:basicdef} and \eqref{eq:coupeling form of J}. Namely, we can express that
	\begin{equation}\label{eq:doubleint:error:upd rough}
	|\mathcal{J}(t)| \le \frac{\alpha}{1+2\alpha} \left[ |\Pi_{1/2}[t_0^\dagger,t]|^2 + \intop_{t_0^\dagger}^{t}\intop_{t_0^\dagger}^s 2\cdot \frac{1}{4} duds \right].
	\end{equation} 
	Since the event $\{\tau_{\textnormal{out}} <\hat{t}_0 \}$ happens with probability less than $2e^{-\beta^{3/2}}$, we can couple this event with the tail events of the rate-$\frac{1}{2}$ Poisson process $|\Pi_{1/2}[t_0, \hat{t}_0]|$ and deduce the first equality of  \eqref{eq:doubleint:remainder}. The second estimate can be obtained analogously.

What remains to show is that the contributions from  $I_{\textnormal{out}}^{(1)}(t)$, $I_{\textnormal{out}}^{(2)}(t)$ and $I_{\textnormal{out}}^{(3)}(t)$ are small under the event $\{\tau_{\textnormal{out}} \ge \hat{t}_0 \}.$ Note that we already know that the contribution from $I_{\textnormal{out}}^{(3)}(t)$ is small, namely,
\begin{equation}\label{eq:doubleint:1}
\frac{\alpha}{1+2\alpha}\mathbb{E} \left[I_{\textnormal{out}}^{(3)}(t ) \mathds{1}\left\{\tau_{\textnormal{out}} \ge \hat{t}_0  \right\} \right] = o\left(\alpha^{2+\frac{1}{5}} \right).
\end{equation}

Moving on to the investigation of $I_{\textnormal{out}}^{(2)}(t)$, let $\delta_0 = \alpha^{-\frac{3}{2}+8\epsilon}$. From Lemma \ref{lem:ind1:tau13}, the event 
\begin{equation}
\bigcap_{t\in[t_0^\dagger+\delta_0,\hat{t}_0]} \left\{ \intop_{t-\delta_0}^t |S(s)-\alpha|ds \le \alpha^\epsilon  \right\}
\end{equation}
happens with probability at least $1- e^{-\beta^3}-r$. Conditioned on this event, the probability that $\pi_1(t;S\triangle \alpha)<\delta_0$ is smaller than $\alpha^\epsilon$ for each fixed $t$. Thus, we have 
\begin{equation}\label{eq:doubleint:exp of closest pts}
\begin{split}
&\mathbb{E}\left[\left.\sigma_1(t;S\triangle \alpha)\mathds{1}\left\{\tau_{\textnormal{out}}\ge \hat{t}_0 \right\} \, \right| \,\mathcal{F}_{t_0} \right] \le \alpha^{\frac{3}{4}-5\epsilon};\\  &\mathbb{E}\left[\left.\sigma_1(t;S\triangle \alpha)^2\mathds{1}\left\{\tau_{\textnormal{out}}\ge \hat{t}_0 \right\} \, \right|\, \mathcal{F}_{t_0} \right] \le \alpha^{\frac{3}{2}-9\epsilon}.
\end{split}
\end{equation}
Combining with the definition of $\tau_{\textnormal{out}}^{(2)}$ and Lemma \ref{lem:doubleint:out1}, this gives that
\begin{equation}\label{eq:doubleint:2}
\frac{\alpha}{1+2\alpha}\mathbb{E} \left[\left.I_{\textnormal{out}}^{(2)}(t) \mathds{1}\left\{\tau_{\textnormal{out}}\ge\hat{t}_0 \right\} \, \right| \, \mathcal{F}_{t_0} \right] = o\left(\alpha^{2+\frac{1}{5}} \right).
\end{equation}
Similarly, we have
\begin{equation}
\mathbb{E}[\sigma_1(t;S)^2\mathds{1}\{\tau_{\textnormal{out}}\ge \hat{t}_0 \} \, | \, \mathcal{F}_{t_0} ] \le \alpha^{1-\epsilon},
\end{equation}
and hence from the definition of $\tau_{\textnormal{out}}^{(1)}$, we get 
\begin{equation}\label{eq:doubleint:3}
\frac{\alpha}{1+2\alpha} \mathbb{E}\left[\left. I_{\textnormal{out}}^{(1)}(t) \mathds{1}\{\tau_{\textnormal{out}} \ge \hat{t}_0 \} \, \right| \, \mathcal{F}_{t_0} \right] = o \left(\alpha^{2+ \frac{1}{5}} \right).
\end{equation}

To conclude the proof, recall that 
$$\mathcal{J}(t) = \frac{\alpha}{1+2\alpha} \big(I_{\textnormal{out}}(t) + I_{\textnormal{out}}^{(1)}(t)+ I_{\textnormal{out}}^{(2)}(t ) + I_{\textnormal{out}}^{(3)}(t )\big),$$
up to an $O(\alpha^{50})$-error that comes from \eqref{eq:doubleint:errorbasic}, whose contribution can be controlled analogously as  \eqref{eq:doubleint:error:upd rough}, and combine \eqref{eq:doubleint:1}, \eqref{eq:doubleint:2} and \eqref{eq:doubleint:3}.
\end{proof}
 
 \begin{proof}[Proof of Lemma \ref{lem:doubleint:out1}]
 We begin with noting that $\tau_{\textnormal{out}}^{(1)} \ge \hat{t}_0$ with high probability, due to Corollary \ref{cor:ind2:tau61sharp1}. To establish the control on $\tau_{\textnormal{out}}^{(2)}$ and $\tau_{\textnormal{out}}^{(3)}$, recall the definitions of $\tau_{\textnormal{gap}}^{(1)}$, $\tau_{\textnormal{gap}}^{(2)}$ (Corollaries \ref{cor:ind1:gap num of pts}, \ref{cor:ind1:gap mg}),  $\tilde{\tau}_{3,3}^{(3)}$ and $\tilde{\tau}_{3,3}^\sharp$ (Corollaries \ref{cor:ind2:tau63sharp:Kstar}, \ref{cor:ind2:tau63sharp}). On the event $\{ \tau_{\textnormal{gap}}^{(1)}\wedge \tau_{\textnormal{gap}}^{(2)} \wedge \tilde{\tau}_{3,3}^{(3)} \wedge \tilde{\tau}_{3,3}^\sharp \ge \hat{t}_0 \}$, we can write
 \begin{equation}
 \begin{split}
I_{\textnormal{out}}^{(2)}(t)\le
 \intop_{t_0^\dagger}^t \left( \frac{\alpha^{\frac{1}{2}}\beta^{6\theta}}{\sqrt{t-s+1}} + \frac{\beta^{C_\circ +1}}{t-s+1} \right) \left( \frac{\alpha\beta^{C_\circ}}{\sqrt{\pi_1(s;S\triangle \alpha)}} + \alpha^{\frac{7}{4}-\epsilon} \right)ds .
 \end{split}
 \end{equation}
  Then, the RHS is bounded using Lemma \ref{lem:ind1:pi1int basicbd} with parameters $\Delta_0 = \alpha^{-\frac{3}{2}+7\epsilon}, N_0 = \beta_0^5, K = \alpha^{-\frac{1}{2}} $ and $\Delta_1 = \alpha^{-\frac{3}{2}}.$
  This gives
  \begin{equation}
  \begin{split}
   \intop_{t_0^\dagger}^t \left( \frac{\alpha^{\frac{1}{2}}\beta^{6\theta}}{\sqrt{t-s+1}} + \frac{\beta^{C_\circ +1}}{t-s+1} \right) \left( \frac{\alpha\beta^{C_\circ}}{\sqrt{\pi_1(s;S\triangle \alpha)}} + \alpha^{\frac{7}{4}-\epsilon} \right)ds \\
   \le \frac{\alpha^{1-\epsilon}}{\sqrt{\pi_1(t;S\triangle \alpha)+1}} + \frac{\alpha^{\frac{1}{4}-\epsilon}}{\pi_1(s;S\triangle \alpha)+1} + \alpha^{\frac{5}{4}-2\epsilon} .
  \end{split}
  \end{equation}
  Note that if there were  intervals of length $\Delta_1$ without any points, then we can just artificially add  points to justify the choice of $\Delta_1$; this operation will only increase the value of the whole integral.
 
 Continuing our work on the same event as above, we express that
 \begin{equation}
 I_{\textnormal{out}}^{(3)}(t) \le \intop_{t_0^\dagger}^t  \left( \frac{\alpha^{\frac{1}{2}}\beta^{6\theta}}{\sqrt{t-s+1}} + \frac{\beta^{C_\circ +1}}{t-s+1} \right) \alpha^{2-2\epsilon} ds \le \alpha^{\frac{3}{2}-3\epsilon},
 \end{equation}
 concluding the proof of Lemma \ref{lem:doubleint:out1}. 
 \end{proof}
 
 \subsubsection{Reshaping the inner integral}\label{subsubsec:doubleint:inner}
 
 Building upon Lemma \ref{lem:doubleint:out}, we investigate the inner integral of $I_{\textnormal{out}}(t)$ and establish Proposition \ref{prop:double int:main approx}. Define
 \begin{equation}
\begin{split}
& F(s):= \intop_{t_0^+}^s K^*_\alpha(s-u)d\widetilde{\Pi}_\alpha(u) + \big[\alpha''-\alpha \big];\\
& \tau_{\textnormal{in}}^{(1)}:= \inf \left\{ s\ge t_0^+: |F(s)| \ge \alpha \beta^{C_\circ} \sigma_1(s;\alpha) + 3\alpha^{\frac{3}{2}}\beta^{6\theta} \right\}.
\end{split}
 \end{equation}
Moreover, note that $\alpha''-\alpha = (\alpha''-\alpha' )+ (\alpha'-\alpha)$, and \eqref{eq:integralform:branching} gives 
\begin{equation}
\alpha''-\alpha' =\intop_{t_0}^{t_0^+} K^*_{\alpha} \{d\Pi_S(x) - S_1(x)dx \}.
\end{equation}
Then, the same argument as Lemma \ref{lem:ind2:alpha1vsalpha0} tells us that
\begin{equation}
\PP \left(\left. \left| \alpha''-\alpha' \right| \ge \alpha^{\frac{3}{2}}\beta^{2\theta} \,\right| \, \mathcal{F}_{t_0} \right) \le \exp \left(-\beta^5 \right).
\end{equation}
 Thus, Theorem \ref{thm:reg:conti:main}, Corollary \ref{cor:ind1:taux:fixedrate}, and the definition of $\mathcal{A}_3(\alpha,t_0)$ (Section \ref{subsubsec:reg:history}) tell us that
 \begin{equation}
 \PP\left(\left. \tau_{\textnormal{in}}^{(1)} <\hat{t}_0 \,\right|\, \mathcal{F}_{t_0} \right) \le \exp\left(-\beta^4 \right)+r.
 \end{equation}
 
 We also decompose $d\widehat{\Pi}_S(u)$ in $I_{\textnormal{out}}(t)$ as
 \begin{equation}\label{eq:doubleint:innerint decomp}
 \begin{split}
 &d\widehat{\Pi}_S(u) = d\widetilde{\Pi}_\alpha(u) + d\widetilde{\Pi}_{S-\alpha}(u) + (S(u)-S_1(u))du + (S_1(u)-\alpha) du,
 \end{split}
 \end{equation}
 and then writing $(S_1(u)-\alpha)$ as \eqref{eq:doubleint:S1minusalpha decomp}. Define the stopping times
 \begin{equation}
 \begin{split}
 &\tau_{\textnormal{in}}^{(2)}:= \inf\left\{ t \ge \acute{t}_0: \left|\intop_{t_0^\dagger}^t\intop_{t_0^\dagger}^s J^{(\alpha)}_{t-s,t-u} (S(u)-S_1(u)) F(s)duds  \right| \ge \alpha^{\frac{3}{2}-\epsilon}  \right\};\\
 & \tau_{\textnormal{in}}^{(3)}:= \inf\left\{ t \ge \acute{t}_0: \left|\intop_{t_0^\dagger}^t\intop_{t_0^\dagger}^s J^{(\alpha)}_{t-s,t-u} F(s) \intop_{t_0^+}^u K^*_\alpha(u-v)d\widetilde{\Pi}_{S-\alpha}(v) du ds \right| \ge \alpha^{\frac{5}{4}-\epsilon}  \right\};\\
&   \tau_{\textnormal{in}}^{(4)}:= \inf\left\{ t \ge \acute{t}_0: \left|\intop_{t_0^\dagger}^t\intop_{t_0^\dagger}^s J^{(\alpha)}_{t-s,t-u} F(s) \intop_{t_0^+}^u K^*_\alpha(u-v) (S(u)-S_1(v))dvduds \right| \ge \alpha^{\frac{3}{2}-\epsilon}  \right\}.
 \end{split}
 \end{equation}
 We again stress that the integrals of $dv$ are over $[t_0^+,u]$, not $[t_0^\dagger,u]$.
 Among the rest of the integrals rather than the ones stated in the above definitions, $d\widetilde{\Pi}_\alpha(u)$ and $\intop_{t_0^+}^u K^*_\alpha(u-v) d\widetilde{\Pi}_\alpha(v)du$ form the leading order $I_1(t)$ and $I_2(t)$. The other contributions coming from $d\widetilde{\Pi}_{S-\alpha}(u)$ and $(\alpha''-\alpha)du$ will be studied later. Note that in the above we consider $u \ge t_0^\dagger$, and hence the contribution from $[\mathcal{R}_c(t_0^+,u;\Pi_S[t_0^-,t_0^+],\alpha ) -\alpha''] du$ is negligible from \eqref{eq:doubleint:errorbasic} by the argument in Lemma \ref{lem:doubleint:out}.
 
 \begin{lem}\label{lem:doubleint:in1ez}
 	Under the setting of Theorem \ref{thm:double int:main}, we have
 	\begin{equation}
 	\PP \left(\left. \tau_{\textnormal{in}}^{(2)}\wedge \tau_{\textnormal{in}}^{(3)}\wedge \tau_{\textnormal{in}}^{(4)} < \hat{t}_0 \,\right|\, \mathcal{F}_{t_0} \right) \le \exp\left(-\beta_0^2 \right) +r.
 	\end{equation}
 \end{lem}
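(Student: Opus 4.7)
The plan is to show that all three bounds hold simultaneously on a single high-probability event whose complement already accounts for the claimed failure probability. Specifically, I would take the intersection of the regularity event $\{\tau(\alpha,t_0,2)\wedge\tau^+(\alpha,t_0,1/2)\geq\hat{t}_0\}$ (Theorem \ref{thm:reg:conti:main}), the event $\{\tau_{\textnormal{in}}^{(1)}\geq\hat{t}_0\}$ already used in Section \ref{subsubsec:doubleint:inner}, the gap events $\{\tau_{\textnormal{gap}}^{(1)}\wedge\tau_{\textnormal{gap}}^{(2)}\geq\hat{t}_0\}$ from Corollaries \ref{cor:ind1:gap num of pts}--\ref{cor:ind1:gap mg}, and $\{\tilde{\tau}_{3,3}^{(3)}\geq\hat{t}_0\}$ from Corollary \ref{cor:ind2:tau63sharp:Kstar}. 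A union bound shows this intersection has conditional probability at least $1-e^{-\beta^2}-r$ given $\mathcal{F}_{t_0}$, matching the claim, so it suffices to verify each of the three bounds pointwise on this event.

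For $\tau_{\textnormal{in}}^{(2)}$ the innermost factor $|S(u)-S_1(u)|$ is bounded by $\alpha^{1-\epsilon}/(\pi_1(u;S)+1)$ from $\tau_3$, and $|F(s)|$ by $\alpha\beta^{C_\circ}\sigma_1(s;\alpha)+3\alpha^{3/2}\beta^{6\theta}$ from the definition of $\tau_{\textnormal{in}}^{(1)}$. Combined with the $J^{(\alpha)}$-decay of Lemma \ref{lem:bound on deterministic J}, the inner $u$-integral is handled by Lemma \ref{lem:ind1:pi1int basicbd} with $\Delta_0=\alpha^{-1}$, $\Delta_1=\alpha^{-1}\beta^{C_\circ}$, $N_0=\beta^5$ (provided by $\tau_7$ and $\tau_8$), and the remaining one-dimensional $s$-integral has main contribution from the regime $t-s=O(\alpha^{-2}\beta^2)$ enforced by the exponential factor $e^{-c\alpha^2(t-s)}$, giving a final bound of order $\alpha^{3/2}\beta^{O(\theta)}=o(\alpha^{3/2-\epsilon})$.

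The treatment of $\tau_{\textnormal{in}}^{(3)}$ is analogous, with the innermost $v$-integral replaced by the pointwise bound $\alpha\beta^{C_\circ}\sigma_1(u;S\triangle\alpha)+\alpha^{7/4-\epsilon}$ from Corollary \ref{cor:ind1:gap mg}. The key change is that $\sigma_1(\cdot;S\triangle\alpha)$ must be integrated with respect to the sparser gap process, whose typical spacing is $\alpha^{-3/2+7\epsilon}$ rather than $\alpha^{-1}$; I would apply Lemma \ref{lem:ind1:pi1int basicbd} with $\Delta_0=\Delta_1=\alpha^{-3/2+7\epsilon}$ and $N_0=\beta^5$, justified by $\tau_{\textnormal{gap}}^{(1)}$. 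The $\alpha^{7/4-\epsilon}$ constant term dominates and contributes $O(\alpha^{7/4-\epsilon}\cdot\alpha^{-1/2}\beta^{O(1)})=o(\alpha^{5/4-\epsilon})$. For $\tau_{\textnormal{in}}^{(4)}$ --- reading $(S(u)-S_1(v))$ as the natural $(S(v)-S_1(v))$ arising from \eqref{eq:doubleint:S1minusalpha decomp} --- Corollary \ref{cor:ind2:tau63sharp:Kstar} replaces the inner integral directly by the single deterministic bound $\alpha^{2-2\epsilon}$, after which the remaining double integral of $|J^{(\alpha)}|\cdot|F(s)|$ is only of order $\beta^{O(1)}$ by the same Lemma \ref{lem:ind1:pi1int basicbd} analysis, giving $o(\alpha^{3/2-\epsilon})$.

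The principal difficulty is careful bookkeeping rather than any new technical ingredient: the several closest-point quantities $\sigma_1(\cdot;S)$, $\sigma_1(\cdot;\alpha)$, and $\sigma_1(\cdot;S\triangle\alpha)$ live at different scales --- the first two at scale $\alpha^{-1}$, the last at the sparser scale $\alpha^{-3/2+O(\epsilon)}$ --- and each must be paired with the correct stopping-time controls when applying Lemma \ref{lem:ind1:pi1int basicbd}. The $J^{(\alpha)}$ exponential decay must likewise be allocated to the correct integration variable so that the effective length of integration is not overcounted, and the $F(s)$-bound (which is itself the union of a $\sigma_1$-term and a constant term) produces two subcases that must be tracked independently in every computation.
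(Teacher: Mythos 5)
Your proposal follows essentially the same route as the paper: condition on the same intersection of high-probability events (regularity from Theorem \ref{thm:reg:conti:main}, $\tau_{\textnormal{in}}^{(1)}$, $\tau_{\textnormal{gap}}^{(1)}\wedge\tau_{\textnormal{gap}}^{(2)}$ from Corollaries \ref{cor:ind1:gap num of pts}--\ref{cor:ind1:gap mg}, and $\tilde{\tau}_{3,3}^{(3)}$ from Corollary \ref{cor:ind2:tau63sharp:Kstar}), bound each inner integral pointwise (for $\tau_{\textnormal{in}}^{(2)}$ you unwind the content of Corollary \ref{cor:ind2:tau63sharp:1} by hand via $\tau_3$ rather than citing it, which is the same calculation), and integrate via Lemma \ref{lem:ind1:pi1int basicbd} with the same parameter choices, matching the $\alpha^{-1}$ scale for $\sigma_1(\cdot;S),\sigma_1(\cdot;\alpha)$ and the sparser $\alpha^{-3/2+O(\epsilon)}$ scale for $\sigma_1(\cdot;S\triangle\alpha)$. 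Your reading of $(S(u)-S_1(v))$ in $\tau_{\textnormal{in}}^{(4)}$ as the intended $(S(v)-S_1(v))$ is also consistent with the invocation of Corollary \ref{cor:ind2:tau63sharp:Kstar}, so the argument is sound.
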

 
 \begin{proof}
 	The proof is very similar to that of Lemma \ref{lem:doubleint:out1}, and hence we concisely explain which lemmas from the previous sections are used to establish the conclusion. 
 	
 	For $\tau_{\textnormal{in}}^{(2)},$ we rely on Corollary \ref{cor:ind2:tau63sharp:1} to estimate the inner integral over $du$, and then combine with the bound from $\tau_{\textnormal{in}}^{(1)}$, using Lemma \ref{lem:ind1:pi1int basicbd} to perform integration over $ds$.
 	
 	For $\tau_{\textnormal{in}}^{(3)}$, recall the definition of $\tau_{\textnormal{gap}}^{(1)}, \tau_{\textnormal{gap}}^{(2)}$ from Corollaries  \ref{cor:ind1:gap num of pts}, \ref{cor:ind1:gap mg}, and note that on $\{\tau_{\textnormal{gap}}^{(1)}\wedge \tau_{\textnormal{gap}}^{(2)} \ge \hat{t}_0 \}$ we have
 	\begin{equation}
\begin{split}
 &\left|\intop_{t_0^\dagger}^s J^{(\alpha)}_{t-s,t-u}  \intop_{t_0^+}^u K^*_\alpha(u-v)d\widetilde{\Pi}_{S-\alpha}(v) du\right|\\
 &\le 
\frac{e^{-c\alpha^2(t-s) }}{\sqrt{t-s+1}} \intop_{t_0^+}^s \frac{1}{\sqrt{t-u+1}} \left( \frac{\alpha\beta^{C_\circ}}{ \sqrt{\pi_1(u;S\triangle \alpha)}}  + \alpha^{\frac{7}{4}-\epsilon} \right)du
\le \frac{\alpha^{\frac{3}{4}-9\epsilon} e^{-c\alpha^2(t-s) }}{\sqrt{t-s+1}} ,
\end{split}
 	\end{equation}
 	where the last inequality follows from applying Lemma \ref{lem:ind1:pi1int basicbd} as before in Lemma \ref{lem:doubleint:out1}. Then the control on $\tau_{\textnormal{in}}^{(3)}$ is obtained by performing the outer integral over $ds$, using the bound on $F(s)$ and Lemma \ref{lem:ind1:pi1int basicbd}.
 	
 	Lastly, for $\tau_{\textnormal{in}}^{(4)}$, we rely on $\tilde{\tau}_{3,3}^{(3)}$ from  Corollary \ref{cor:ind2:tau63sharp:Kstar}, where on $\{ \tilde{\tau}_{3,3}^{(3)} \ge \hat{t}_0 \}$ we have
 	\begin{equation}
\begin{split}
 	\left|\intop_{t_0^\dagger}^s J^{(\alpha)}_{t-s,t-u}  \intop_{t_0^+}^u K^*_\alpha(u-v) (S(u)-S_1(v))dvdu \right| \le \intop_{t_0^\dagger}^s \left|J^{(\alpha)}_{t-s,t-u}\right| \alpha^{2-2\epsilon} du \le \alpha^{2-3\epsilon}.
\end{split}
 	\end{equation}
 	Then, conducting the outer integral over $ds$ similarly as above gives the estimate on $\tau_{\textnormal{in}}^{(4)}$, concluding the proof.
 \end{proof}
 
 As mentioned right before Lemma \ref{lem:doubleint:in1ez}, we now study the remaining integrals, beginning with 
 \begin{equation}
\begin{split}
& I_{\textnormal{in}}^{(5)}(t):= \intop_{t_0^\dagger}^t \intop_{t_0^\dagger}^s J^{(\alpha)}_{t-s,t-u} d\widetilde{\Pi}_{S-\alpha}(u) \cdot \intop_{t_0^+}^s K^*_\alpha(s-x) d\widetilde{\Pi}_\alpha(x)ds ;\\
  &I_{\textnormal{in}}^{(6)}(t):= \intop_{t_0^\dagger}^t \intop_{t_0^\dagger}^s J^{(\alpha)}_{t-s,t-u} d\widetilde{\Pi}_{S-\alpha}(u) \cdot \big[\alpha''-\alpha \big]ds.
\end{split}
 \end{equation} 
 
 \begin{lem}\label{lem:doubleint:in2 mg}
 	Under the setting of Theorem \ref{thm:double int:main}, we have for all $t\in[\acute{t}_0,\hat{t}_0]$ that
 	\begin{equation}
 	\mathbb{E}\left[\left. I_{\textnormal{in}}^{(5)}(t)\, \right| \, \mathcal{F}_{t_0} \right] = O\left(\alpha^{\frac{5}{4}-\epsilon} \right)= 	\mathbb{E}\left[\left. I_{\textnormal{in}}^{(6)}(t)\, \right| \, \mathcal{F}_{t_0} \right] .
 	\end{equation}
 \end{lem}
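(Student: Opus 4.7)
The plan is to exploit the fact that both $d\widetilde{\Pi}_\alpha(x) = d\Pi_\alpha(x) - \alpha\,dx$ and $d\widetilde{\Pi}_{S-\alpha}(u) = d\Pi_S(u) - d\Pi_\alpha(u) - (S(u)-\alpha)\,du$ are compensated jump martingale measures with respect to the natural filtration generated by the underlying planar Poisson process $\Pi$, so that integrating any predictable function against them yields a mean-zero martingale. The two claimed bounds follow from a direct use of this martingale property for $I_{\textnormal{in}}^{(6)}(t)$, and from It\^o's isometry together with an explicit computation of the predictable cross-variation for $I_{\textnormal{in}}^{(5)}(t)$.

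For $I_{\textnormal{in}}^{(6)}(t)$: since $t_0^\dagger > t_0^+$, the quantity $\alpha'' - \alpha$ is $\mathcal{F}_{t_0^+}$-measurable. Conditioning on $\mathcal{F}_{t_0^+}$ and applying Fubini, I would rewrite
\[
I_{\textnormal{in}}^{(6)}(t) = [\alpha'' - \alpha] \intop_{t_0^\dagger}^{t} \left( \intop_{u}^{t} J^{(\alpha)}_{t-s, t-u} \, ds \right) d\widetilde{\Pi}_{S-\alpha}(u),
\]
where the inner $ds$-integral is a deterministic function of $u$. Hence the outer integral against the mean-zero martingale $\widetilde{\Pi}_{S-\alpha}$ has vanishing conditional expectation given $\mathcal{F}_{t_0^+}$, and by the tower property $\mathbb{E}[I_{\textnormal{in}}^{(6)}(t) \mid \mathcal{F}_{t_0}] = 0$, trivially satisfying the claimed bound.

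For $I_{\textnormal{in}}^{(5)}(t)$ both factors in the product inside the outer $ds$-integral are martingale integrals, and the plan is to invoke It\^o's isometry for jump martingales to express $\mathbb{E}[A(s)B(s) \mid \mathcal{F}_{t_0^+}]$, with $A(s) = \intop_{t_0^\dagger}^{s} J^{(\alpha)}_{t-s,t-u} d\widetilde{\Pi}_{S-\alpha}(u)$ and $B(s) = \intop_{t_0^+}^{s} K^*_\alpha(s-x) d\widetilde{\Pi}_\alpha(x)$, via the predictable cross-variation. A direct enumeration of joint jumps of $\Pi_\alpha$ and $\Pi_S - \Pi_\alpha$ shows that when $S(v) \ge \alpha$ the signed measure $\Pi_S - \Pi_\alpha$ places only $+1$ jumps above height $\alpha$ (disjoint from $\Pi_\alpha$), while when $S(v) < \alpha$ joint jumps occur at $\Pi_{\alpha\setminus S}$ at rate $\alpha-S(v)$ with opposite signs, giving $d\langle \widetilde{\Pi}_\alpha, \widetilde{\Pi}_{S-\alpha}\rangle(v) = -(\alpha - S(v))_+\,dv$. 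Taking conditional expectation and using Fubini then yields
\[
\mathbb{E}[I_{\textnormal{in}}^{(5)}(t) \mid \mathcal{F}_{t_0}] = -\intop_{t_0^\dagger}^{t} \intop_{t_0^\dagger}^{s} J^{(\alpha)}_{t-s,t-u} K^*_\alpha(s-u) \, \mathbb{E}\big[(\alpha - S(u))_+ \,\big|\, \mathcal{F}_{t_0}\big] \, du\, ds.
\]

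It then remains to bound the right-hand side. By Proposition \ref{prop:reg:lbd of speed}, on an event of $\mathcal{F}_{t_0}$-conditional probability at least $1 - r - e^{-\beta^2}$ we have $(\alpha - S(u))_+ \le \alpha^{3/2-\epsilon}$ for every $u \in [t_0^+, \hat{t}_0]$, while on the complement the trivial bound $(\alpha - S(u))_+ \le \alpha$ together with the exponentially small probability contributes at most $O(\alpha^{100})$. Thus $\mathbb{E}[(\alpha - S(u))_+ \mid \mathcal{F}_{t_0}] \le 2\alpha^{3/2-\epsilon}$ uniformly in $u$. After the change of variables $x = t-s$, $y = t-u$ and applying Lemmas \ref{lem:estimate on K:intro}, \ref{lem:estimat for K tilde:intro}, and \ref{lem:bound on deterministic J}, the remaining deterministic double integral is bounded by a polylogarithmic factor, thanks to the exponential damping $e^{-c\alpha^2 y}$ in $J^{(\alpha)}$ controlling the $y$-integration, giving $|\mathbb{E}[I_{\textnormal{in}}^{(5)}(t) \mid \mathcal{F}_{t_0}]| \le C\alpha^{3/2-\epsilon}\beta^{O(1)}$, which is well within $O(\alpha^{5/4-\epsilon})$. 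The main obstacle will be the bookkeeping in the deterministic double integral to ensure that no polylogarithmic factor erodes the $1/4$-power gap between $3/2$ and $5/4$, but this is straightforward given the exponential decay of $J^{(\alpha)}$ in its second argument.
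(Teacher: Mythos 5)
Your proof is correct and takes a genuinely different, arguably cleaner, route for $I_{\textnormal{in}}^{(5)}$, while $I_{\textnormal{in}}^{(6)}$ is handled exactly as in the paper (both note it is a mean-zero integral of a deterministic kernel against $\widetilde{\Pi}_{S-\alpha}$ after the $\mathcal{F}_{t_0^+}$-measurable prefactor $\alpha''-\alpha$ is fixed). For $I_{\textnormal{in}}^{(5)}$ the paper does \emph{not} compute the cross-variation directly: it writes $\widetilde{\Pi}_\alpha = \widetilde{\Pi}_{\underline{\alpha}} + \widetilde{\Pi}_{\triangle\alpha}$ with $\underline{\alpha}=\alpha-\alpha^{3/2-\epsilon}$, argues that the $\widetilde{\Pi}_{\underline{\alpha}}$ piece has exactly zero expectation against $\widetilde{\Pi}_{S-\alpha}$ because the two point processes are supported on disjoint strips whenever $S\ge\underline\alpha$ (enforced via $\tau_{\textnormal{lo}}$), and then bounds the $\widetilde{\Pi}_{\triangle\alpha}$ remainder pathwise with high probability through a stopping time $\tau_{\textnormal{in}}^{(5)}$, invoking Corollaries \ref{cor:ind1:gap mg}, \ref{cor:ind1:gap mg 2} and Lemma \ref{lem:ind1:pi1int basicbd}, and finally converts to an expectation bound via the truncation trick of Lemma \ref{lem:doubleint:out}. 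Your route instead computes $d\langle\widetilde{\Pi}_\alpha,\widetilde{\Pi}_{S-\alpha}\rangle(v)=-(\alpha-S(v))_+\,dv$ (correct, by enumerating the shared jumps in the strip $\{S(v)<y\le\alpha\}$) and uses It\^o isometry at each fixed $s$ to get a closed-form expression for the conditional mean; Proposition \ref{prop:reg:lbd of speed} is then used to bound $\mathbb{E}[(\alpha-S(u))_+\,|\,\mathcal{F}_{t_0}]\le 2\alpha^{3/2-\epsilon}$ uniformly, and the remaining deterministic double integral is $\beta^{O(1)}$ by the decay of $J^{(\alpha)}$ and $K^*_\alpha$. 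Both arguments hinge on the same speed lower bound, but yours sidesteps the $\underline\alpha/\triangle\alpha$ decomposition, the auxiliary stopping time, and the truncation step, and it even yields the slightly stronger exponent $\alpha^{3/2-\epsilon}\beta^{O(1)}$; the only technical care to add when writing it up is to freeze $s$, view $A$ and $B$ as martingales in the integration variable $u$ (the kernels are deterministic for fixed $s,t$), and split off the $\mathcal{F}_{t_0^\dagger}$-measurable portion $\int_{t_0^+}^{t_0^\dagger}K^*_\alpha(s-x)\,d\widetilde{\Pi}_\alpha(x)$ of $B(s)$, whose contribution vanishes by the tower property.
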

 
 Although these integrals should have a negligible size compared to the leading order,  one can see that  the same argument as the previous analysis does not give the correct estimate on $I_{\textnormal{in}}^{(5)}$. To overcome this issue, we remind ourselves  that our actual interest  is to control the expected value of this quantity.
 
 \begin{proof}[Proof of Lemma \ref{lem:doubleint:in2 mg}]
 We begin with investigating $I_{\textnormal{in}}^{(5)}$. Define $\underline{\alpha}:=\alpha - \alpha^{\frac{3}{2}-\epsilon},$ and recall from Proposition \ref{prop:reg:lbd of speed} that $\tau_{\textnormal{lo}}(\alpha,t_0) \ge \hat{t}_0$ with high probability, that is, $S(t)$ is likely to stay above from $\underline{\alpha}.$ Thus, writing $\tau_{\textnormal{lo}}=\tau_{\textnormal{lo}}(\alpha,t_0)$, observe that
 \begin{equation}
 \mathbb{E}\left[\left.
 \intop_{t_0^\dagger}^{t\wedge \tau_{\textnormal{lo}}} \left\{ \intop_{t_0^\dagger}^s J^{(\alpha)}_{t-s,t-u} d\widetilde{\Pi}_{S-\alpha}(u) \cdot \intop_{t_0^+}^s K^*_\alpha(s-x) d\widetilde{\Pi}_{\underline{ \alpha}}(x) \right\}ds
 \,\right|\, \mathcal{F}_{t_0} \right] =0,
 \end{equation}
 since $d\widetilde{\Pi}_{S-\alpha}$ and $d\widetilde{\Pi}_{\underline{\alpha}}$ define martingales and depend on disjoint set of points. Thus, under extra conditioning on $\Pi_{\underline{\alpha}}[t_0,\hat{t}_0]$, we can see that  the above will always be zero.
 Therefore, relying on Proposition \ref{prop:reg:lbd of speed} and the argument from Lemma~\ref{lem:doubleint:out} gives that for all $t\in[\acute{t}_0,\hat{t}_0]$,
 \begin{equation}
  \mathbb{E}\left[\left.
  \intop_{t_0^\dagger}^{t}\left\{ \intop_{t_0^\dagger}^s J^{(\alpha)}_{t-s,t-u} d\widetilde{\Pi}_{S-\alpha}(u) \cdot \intop_{t_0^+}^s K^*_\alpha(s-x) d\widetilde{\Pi}_{\underline{ \alpha}}(x)\right\} ds
  \,\right|\, \mathcal{F}_{t_0} \right] =O(\alpha^{50}).
 \end{equation}
 
 Writing $d\widetilde{\Pi}_{\triangle \alpha} = d\widetilde{\Pi}_\alpha - d\widetilde{\Pi}_{\underline{\alpha}}$, define the stopping time
 \begin{equation}
 \tau_{\textnormal{in}}^{(5)} :=\inf \left\{
 t \ge \acute{t}_0: \left| \intop_{t_0^\dagger}^t \left\{\intop_{t_0^\dagger}^s J^{(\alpha)}_{t-s,t-u} d\widetilde{\Pi}_{S-\alpha}(u) \cdot \intop_{t_0^+}^s K^*_\alpha(s-x) d\widetilde{\Pi}_{\triangle\alpha}(x)\right\}ds \right| \ge \alpha^{\frac{5}{4}-\epsilon}
  \right\}.
 \end{equation}
 To conclude the proof, it suffices to show that
 \begin{equation}
 \PP \left(\left. \tau_{\textnormal{in}}^{(5)} <\hat{t}_0 \, \right|\, \mathcal{F}_{t_0} \right) \le \exp\left(-\beta_0^2 \right) +r.
 \end{equation}
 Since this estimate follows similarly as the proof of Lemmas \ref{lem:doubleint:out1} and \ref{lem:doubleint:in1ez},  we give a brief explanation on its proof.  Since $J^{(\alpha)}_{t-s,t-u}$ satisfies a similar bound as $K^*_\alpha$ (Lemma \ref{lem:estimat for K tilde:intro}), analogous argument as Corollary \ref{cor:ind1:gap mg} gives an estimate the first part of the integral,  written as
 \begin{equation}
\begin{split}
&\tau_{\textnormal{in}}^{(6)}:= \inf \left\{ t\ge \acute{t}_0: \, \exists s\in[t_0^\dagger,t] \textnormal{ s.t. } \left| \intop_{t_0^\dagger}^s J^{(\alpha)}_{t-s,t-u} d\widetilde{\Pi}_{S-\alpha} (u)\right| \ge \frac{\beta_0^{C_\circ} }{t-s+1} + \frac{\alpha^{\frac{3}{4}-\epsilon}}{\sqrt{t-s+1}}\right\};\\
& \PP \left(\left.\tau_{\textnormal{in}}^{(6)} < \hat{t}_0 \, \right|\, \mathcal{F}_{t_0} \right) \le \exp\left(-\beta^3 \right)+r.
\end{split}
 \end{equation}
(Note that $\sigma_1(t;S\triangle \alpha)$ in Corollary \ref{cor:ind1:gap mg} becomes $(t-s+\pi_1(s)+1)^{-1/2} \le (t-s+1)^{-1/2}$ in this case.)
 The second part of the integral is controlled by Corollary \ref{cor:ind1:gap mg 2}, and hence combining the two bounds with an application of Lemma \ref{lem:ind1:pi1int basicbd} concludes the proof, similarly as the previous lemmas. 
 
  Finally, we note that $I_{\textnormal{in}}^{(6)}(t)$ is a martingale (note that $\alpha''$ is $\mathcal{F}_{t_0^+}$-measurable while the integral starts from $t_0^\dagger$) and hence its expectation is zero.
\end{proof}
 
 Now we are left with the two final integrals, which are 
 \begin{equation}\label{eq:doubleint:in:last piece}
 \begin{split}
& I_{\textnormal{in}}^{(7)} (t):= \intop_{t_0^\dagger}^t \left\{\intop_{t_0^\dagger}^s J^{(\alpha)}_{t-s,t-u} du \cdot \intop_{t_0^+}^s K^*_\alpha(s-x)d\widetilde{\Pi}_\alpha(x)\right\} (\alpha''-\alpha) ds;\\
 & I_{\textnormal{in}}^{(8)} (t):= \intop_{t_0^\dagger}^t \intop_{t_0^\dagger}^s J^{(\alpha)}_{t-s,t-u}  (\alpha''-\alpha)^2du ds = \intop_0^{t-t_0^\dagger} \intop_0^s J_{u,s}^{(\alpha)} (\alpha''-\alpha)^2duds.
 \end{split}
 \end{equation}
 Note that $I_{\textnormal{in}}^{(7)}(t)$ is a  martingale, since $\alpha''$ is $\mathcal{F}_{t_0^+}$-measurable. Thus, we have $\mathbb{E} [I^{(7)}_{\textnormal{in}}(t) | \mathcal{F}_{t_0} ]=0$. On the other hand, $I_{\textnormal{in}}^{(8)}(t),$ which is a deterministic integral, is bounded from  \eqref{eq:I_4}: $I_{\textnormal{in}}^{(8)}(t) = O(\alpha^{2-\epsilon}) $ for any $t\in[\acute{t}_0,\hat{t}_0]$.
 
 Combining all the analysis done in this subsection, we conclude the proof of Proposition \ref{prop:double int:main approx}.
 \begin{proof}[Proof of Proposition \ref{prop:double int:main approx}]
 	In \eqref{eq:doubleint:innerint decomp} and the discussions below, we have seen that $I_{\textnormal{out}}(t)$ in Lemma \ref{lem:doubleint:out} can be decomposed into 
 	\begin{equation}
 	I_1(t), \, I_2(t), \,\, \left\{\textnormal{integrals in } \tau_{\textnormal{in}}^{(2)},\,  \tau_{\textnormal{in}}^{(3)},\,  \tau_{\textnormal{in}}^{(4)} \right\}, \ I_{\textnormal{in}}^{(5)}(t), \, I_{\textnormal{in}}^{(6)}(t), \, I_{\textnormal{in}}^{(7)}(t),  \textnormal{ and } I_{\textnormal{in}}^{(8)}(t),
 	\end{equation}
 	up to an $O(\alpha^{50})$ error coming from \eqref{eq:doubleint:errorbasic}. 
 	Then, Lemmas \ref{lem:doubleint:in1ez}, \ref{lem:doubleint:in2 mg} and \eqref{eq:doubleint:in:last piece} tell us that the conditional expectations given $\mathcal{F}_{t_0}$ of all of the above integrals except $I_1(t)$ and $I_2(t)$ are of order $O(\alpha^{\frac{5}{4}-\epsilon})$. This implies
 	\begin{equation}
 	\mathbb{E} \left[I_{\textnormal{out}}(t) \,|\,\mathcal{F}_{t_0} \right] = \mathbb{E} \left[I_1(t)+I_2(t) \,| \,\mathcal{F}_{t_0} \right] + O\left(\alpha^{\frac{5}{4}-\epsilon}\right),
 	\end{equation}
 	which concludes the proof combined with Lemma \ref{lem:doubleint:out}.
 \end{proof}

 \subsection{The analysis on the increment}\label{subsec:increment proof}
 
 We give a formal statement and proof of Theorem \ref{thm:speed:increment:informal}, thus verifying the assumptions of Theorem 4.1 are indeed accurate.
 
 \begin{thm}\label{thm:increment:formal}
 	Let $\alpha_0,t_0>0$, set $t_0^-, t_0^+,\acute{t}_0$ and $\hat{t}_0$ as \eqref{eq:def:t0t1} and \eqref{eq:def:t0t11} in terms of $\alpha_0$, and let $r=e^{-\beta_0^{3/2}}$. Also, let $t_1$ be any number satisfying \eqref{eq:t1 regime},  set $t_1^-:= t_1-\alpha_0^{-2}\beta_0^\theta$. Furthermore, define
 	\begin{equation}
 	\alpha_1 := \mathcal{L}(t_1^-; \Pi_S[t_0^-,t_1^-], \alpha_0),
 	\end{equation}
 	and $\alpha_0', \alpha_1'$ as \eqref{eq:def:alpha0prime}.
 	Then, for any $\Pi_S(-\infty,t_0]$ that is $(\alpha_0,r;[t_0])$-sharp-regular, we have
 	\begin{equation}
 	\begin{split}
 	\mathbb{E}\left[\alpha_1'-\alpha_0'\, | \, \Pi_S(-\infty,t_0] \right] &= o\left(\alpha_0^4\beta_0^{-3} \right) \cdot (t_1 -t_0);\\
 	\var \left[ \alpha_1'-\alpha_0'\, | \, \Pi_S(-\infty,t_0] \right] &= (1+o(\alpha_0^{\frac{1}{3}})) 4\alpha_0^5(t_1-t_0).
 	\end{split}
 	\end{equation}
 \end{thm}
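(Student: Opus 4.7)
\medskip

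\noindent \textbf{Proof proposal.} My plan is to start from the telescoping decomposition
\begin{equation}
\alpha_1' - \alpha_0' = \bigl(\alpha_1' - \tilde{\alpha}_1\bigr) + \bigl(\tilde{\alpha}_1 - \alpha_0'\bigr), \qquad \tilde{\alpha}_1 := \mathcal{L}\bigl(t_1;\Pi_S[t_0^-,t_1],\alpha_0\bigr).
\end{equation}
By Proposition~\ref{prop:reg:newrates}, the first piece is bounded by $2\alpha_0^{2}\beta_0^{8\theta+1}$ with probability at least $1 - r - e^{-\beta_0^{2}}$. Since the trivial deterministic bound $|\alpha_1'-\tilde\alpha_1| \le O(\alpha_0)$ combined with $r = e^{-\beta_0^{3/2}}$ makes the tail contribution negligible, the first piece contributes $O(\alpha_0^{4}\beta_0^{16\theta+2})$ to the variance and $O(\alpha_0^{2}\beta_0^{8\theta+1})$ in absolute value to the mean. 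A straightforward Cauchy--Schwarz estimate against the martingale piece of $\tilde{\alpha}_1 - \alpha_0'$ (computed below) shows the cross-covariance is likewise negligible, so it suffices to analyze $\tilde{\alpha}_1 - \alpha_0'$.

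For this, the main tool is the integral identity obtained by taking $t\to\infty$ in \eqref{eq:integralform:branching}, which I recall yields
\begin{equation}
\tilde{\alpha}_1 - \alpha_0' = K^*_{\alpha_0}\int_{t_0}^{t_1} d\widetilde{\Pi}_S(s) + K^*_{\alpha_0}\int_{t_0}^{t_1} \bigl(S(s) - S_1(s)\bigr)\,ds + O(\alpha_0^{50}),
\end{equation}
where the $O(\alpha_0^{50})$ corresponds to modifying the weight at the boundary near $t_0^-$. The martingale part is a true $\mathcal{F}$-martingale with predictable quadratic variation $(K^*_{\alpha_0})^{2}\int_{t_0}^{t_1} S(s)\,ds$. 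Using regularity and Lemma~\ref{lem:reg:errorint}(1) to show that $\mathbb{E}[\int_{t_0}^{t_1} S(s)\,ds \,|\, \mathcal{F}_{t_0}] = \alpha_0(t_1-t_0)(1+o(\alpha_0^{1/3}))$ (the tail events have probability $r = e^{-\beta_0^{3/2}}$ and contribute an additive error smaller than any polynomial in $\alpha_0$), and expanding $(K^*_{\alpha_0})^{2} = 4\alpha_0^{4}(1+O(\alpha_0))$, gives the variance contribution $4\alpha_0^{5}(t_1-t_0)(1+o(\alpha_0^{1/3}))$. The drift piece contributes negligibly to the variance: Lemma~\ref{lem:reg:errorint}(3) gives $|\int_{t_0}^{t_1}(S-S_1)\,ds| \le \alpha_0^{-\epsilon}$ with high probability, so its variance is at most $O(\alpha_0^{4-2\epsilon})$, dwarfed by $\alpha_0^{5}(t_1-t_0) \gtrsim \alpha_0^{3}\beta_0^{9\theta}$.

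The mean computation is the most delicate step and is where Theorem~\ref{thm:double int:main} enters essentially. The martingale piece has mean zero, so I need
\begin{equation}
K^*_{\alpha_0}\int_{t_0}^{t_1} \mathbb{E}\bigl[S(s) - S_1(s) \,\big|\, \mathcal{F}_{t_0}\bigr]\,ds = o\bigl(\alpha_0^{4}\beta_0^{-3}\bigr)(t_1-t_0).
\end{equation}
The plan is to apply the second-order expansion from Proposition~\ref{prop:fixed perturbed:error}, writing
\begin{equation}
S(s) - S_1(s) = \frac{2\alpha_0^{2}}{(1+2\alpha_0)^{2}} - \frac{4\alpha_0+4\alpha_0^{2}}{(1+2\alpha_0)^{2}}\, S_1(s) + \mathcal{J}(s) + \bigl[S(s) - S_2(s)\bigr].
\end{equation}
Taking conditional expectation, the last bracket contributes $o(\alpha_0^{2})$ to the integrand by Lemma~\ref{lem:reg:errorint}(4), Theorem~\ref{thm:double int:main} gives $\mathbb{E}[\mathcal{J}(s)\,|\,\mathcal{F}_{t_0}] = 2\alpha_0^{2}(1+o(\alpha_0^{1/5}))$, and the purely algebraic cancellation
\begin{equation}
\frac{2\alpha_0^{2}}{(1+2\alpha_0)^{2}} - \frac{4\alpha_0+4\alpha_0^{2}}{(1+2\alpha_0)^{2}}\cdot \alpha_0 + 2\alpha_0^{2} = \frac{4\alpha_0^{3}}{1+2\alpha_0}
\end{equation}
leaves only a $O(\alpha_0^{3})$ residue from the constant-in-$S_1$ part. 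The remaining coefficient of $\mathbb{E}[S_1(s)-\alpha_0]$ is handled by bootstrapping the same integral identity applied to $S_1$ itself: $\mathbb{E}[S_1(s)-\alpha_0] = \int_{t_0}^{s} K^*_{\alpha_0}(s-x)\,\mathbb{E}[S(x)-S_1(x)]\,dx + O(\alpha_0^{3/2}\beta_0^{\theta})$, and iterating once shows this too is $O(\alpha_0^{3}\beta_0^{10\theta})$, contributing at most $O(\alpha_0^{4}\beta_0^{10\theta})$ to the integrand. Multiplying by $K^*_{\alpha_0}\asymp\alpha_0^{2}$ and integrating gives a total mean of order $\alpha_0^{5}(t_1-t_0) \cdot \mathrm{polylog}(1/\alpha_0)$, which is $o(\alpha_0^{4}\beta_0^{-3})(t_1-t_0)$ since $\alpha_0\,\beta_0^{3+C}\to 0$ for any fixed $C$. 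The main obstacle will be carefully tracking the tail events where the high-probability bounds from the regularity framework fail: each such event has probability $\le e^{-\beta_0^{3/2}}$, so together with the deterministic bound $|\alpha_1'-\alpha_0'|\le O(\alpha_0)$ (which should follow from the trivial bound $S \le 1/2$ and rough control on the number of absorbed particles over $[t_0,t_1]$) the tail contributions to both mean and variance decay faster than any polynomial in $\alpha_0$.
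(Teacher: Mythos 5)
Your proposal follows essentially the same skeleton as the paper's proof: the telescoping split $\alpha_1'-\alpha_0'=(\alpha_1'-\tilde\alpha_1)+(\tilde\alpha_1-\alpha_0')$, the integral/martingale representation of $\tilde\alpha_1-\alpha_0'$, quadratic variation for the variance, the second-order expansion plus Theorem~\ref{thm:double int:main} for the mean, and the algebraic cancellation of the $\alpha_0^2$ terms. That said, a few of your intermediate claims are not correct as stated and one step is missing.

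First, the ``bootstrapping'' for $\mathbb E[S_1(s)-\alpha_0]$ is wrong: the identity $\mathbb E[S_1(s)-\alpha_0]=\int_{t_0}^sK_{\alpha_0}^*(s-x)\mathbb E[S(x)-S_1(x)]dx+O(\alpha_0^{3/2}\beta_0^\theta)$ has an additive error that does \emph{not} shrink upon iteration, because it comes from $|\alpha_0'-\alpha_0|\le\alpha_0^{3/2}\beta_0^\theta$ and is independent of the bracketed integral; so the conclusion ``this too is $O(\alpha_0^3\beta_0^{10\theta})$'' is false. The honest pointwise bound is $O(\alpha_0^{3/2}\beta_0^\theta)$, which luckily still gives a negligible contribution after multiplying by the coefficient $\asymp\alpha_0$, the prefactor $K_{\alpha_0}^*\asymp\alpha_0^2$ and $(t_1-t_0)$, but your claimed rate is unjustified. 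The paper instead directly invokes the event $\mathcal B_2$ (equivalently Lemma~\ref{lem:reg:errorint}(2)) to bound the integral $\int_{\acute t_0}^{t_1}|S_1(s)-\alpha_0|\,ds\le\alpha_0^{-\frac12-\epsilon}$, which is both simpler and sharper because it exploits averaging over $s$ instead of a pointwise bound.

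Second, you never address the interval $[t_0,\acute t_0]$. Theorem~\ref{thm:double int:main} and Lemma~\ref{lem:reg:errorint}(4) are stated only for $t\in[\acute t_0,\hat t_0]$ (they require enough separation from $t_0^-$ for the decay of $K_{\alpha_0}$ and $J^{(\alpha_0)}$), so your second-order expansion of $\mathbb E[S(s)-S_1(s)]$ does not apply near $t_0$. The paper separates the mean integral at $\acute t_0$ and controls the $[t_0,\acute t_0]$ piece crudely by the sharp-regularity condition $\tau_3^\sharp\ge\acute t_0$, i.e.\ $\int_{t_0}^{\acute t_0}|S-S_1|\le\beta_0^{4\theta}$, which after multiplication by $K_{\alpha_0}^*$ is $O(\alpha_0^2\beta_0^{4\theta})=o(\alpha_0^2\beta_0^{10\theta-3})$. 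Without this split your computation is formally undefined on $[t_0,\acute t_0]$.

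Third, the ``deterministic bound $|\alpha_1'-\alpha_0'|\le O(\alpha_0)$'' you use for tail events is not available: $\alpha_1'$ is proportional to $K_{\alpha_1}^*\cdot|\Pi_S[t_1^-,t_1]|$ with $\alpha_1$ itself random and $|\Pi_S[t_1^-,t_1]|$ unbounded. What the paper actually does (see Corollary~\ref{cor:increment:formal:goodevent} and the argument around \eqref{eq:doubleint:error:upd rough}) is to use $S\le\tfrac12$ to stochastically dominate the relevant point counts by a rate-$\tfrac12$ Poisson process and couple the rare event $\mathcal B^c$ with Poisson tail events. Your conclusion that the tail contribution is super-polynomially small is correct, but the mechanism is domination, not a deterministic $O(\alpha_0)$ bound.

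None of these affects the high-level plan, which matches the paper's, but as written the mean estimate has a gap at $[t_0,\acute t_0]$ and an erroneous iteration step.
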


 \begin{proof}	
 	Let $\tau =\tau(\alpha_0,t_0,2)$, $\tau^\sharp = \tau^\sharp(\alpha_0,t_0,2)$ be as \eqref{eq:def:tau:induction}, and recall the definition of  $\tilde{\alpha}_1$ from Proposition \ref{prop:reg:newrates}. We define the event
 	\begin{equation}
 	\begin{split}
 	\mathcal{B}_1:=& \Big\{ \tau \ge \hat{t}_0 \Big\} \bigcap \Big\{\tau^\sharp \ge \acute{t}_0 \Big\} \bigcap \Big\{|\alpha_1'-\tilde{\alpha}_1| \le 2\alpha_0^2\beta_0^{8\theta+1} \Big\};\\
 	\mathcal{B}_2:=&\left\{\intop_{\acute{t}_0}^{t_1} S(t)dt = \alpha_0(t_1-\acute{t}_0) + O\left(\alpha_0^{-\frac{1}{2}-\epsilon}\right) =\intop_{\acute{t}_0}^{t_1} S_1(t)dt  \right\};\\
 		\mathcal{B}_3:=&\left\{ \intop_{\acute{t}_0}^{t_1} |S(t)-S_1(t)| dt \le \alpha_0^{-2\epsilon} \right\} \bigcap \left\{ \intop_{\acute{t}_0}^{t_1} |S(t)-S_2(t)| dt \le \alpha_0^{\frac{1}{2}-2\epsilon} \right\} ;\\
 	\mathcal{B}_4:= & \left\{ \intop_{t_0}^{t_1} S(t)dt = \alpha_0(t_1-t_0) + O \left(\alpha_0^{-\frac{1}{2}-\epsilon} \right) \right\};\\	
 	\mathcal{B}_5:=&
 	\left\{ \intop_{t_0}^{t_1}   \sigma_1\sigma_2\sigma_3(t;S) dt \le \alpha_0^{-\frac{1}{2}-\epsilon} \right\} ;\\
 	\mathcal{B}:=& \mathcal{B}_1 \bigcap \mathcal{B}_2 \bigcap \mathcal{B}_3\bigcap \mathcal{B}_4 \bigcap\mathcal{B}_5,
 	\end{split}
 	\end{equation}
 	where $S_1(t)=S_1(t;t_0^-,\alpha_0)$, $S_2(t)=S_2(t;t_0^-,\alpha_0)$ are given as \eqref{eq:def:S1:basic form}, \eqref{eq:def:S2:basic form}.
 	Conditioned on $\Pi_S(-\infty,t_0]$, $\mathcal{B}$ happens with probability at least $1-2e^{-\beta_0^{3/2}}$:
 	\begin{itemize}
 		\item $\mathcal{B}_1$ is obtained from the definition of sharp-regularity, Proposition \ref{prop:reg:newrates}, and Theorem \ref{thm:reg:conti:main}.
 		
 		\item $\mathcal{B}_2, \mathcal{B}_3$ come from Lemma \ref{lem:reg:errorint}, and the  sharp-regularity  along with $\mathcal{B}_2$ implies $\mathcal{B}_4$.
 		
 		\item $ \mathcal{B}_5$ is from Lemma \ref{lem:fixed perturbed:error int:perturbed} (whose assumptions are satisfied by $\tau_{1}, \tau_{4}$, and $ \tau_{5}$).
 	\end{itemize}
 	Hence
 	the contributions to the mean and variance of increments on $\mathcal{B}^c$ are negligible (since $||S|| \le \frac{1}{2}$).
 	
 	We write $\alpha_1' - \alpha_0' = (\alpha_1' - \tilde{\alpha}_1) + (\tilde{\alpha}_1-\alpha_0')$ and similarly as \eqref{eq:Lt diff:basic} and \eqref{eq:ind2:a1vsa0:split}, we express that
 	\begin{equation}\label{eq:increment:mean:0}
 	\tilde{\alpha}_1-\alpha_0' = \intop_{t_0}^{t_1} K^*_{\alpha_0} \{d\Pi_S (t) - S_1(t)dt\}.
 	\end{equation}
From these formulas, we begin with estimating the mean of the increment. Note that from Lemma \ref{lem:ind2:alphaPvsalphaPP},
 	\begin{equation}\label{eq:increment:mean:1}
 	\mathbb{E}[|\alpha_1'-\tilde{\alpha}_1|\,;\, \mathcal{B}  \ | \, \mathcal{F}_{0}] = o(\alpha_0^4 \beta_0^{-3}) \cdot (t_1-t_0).
 	\end{equation}
 Furthermore, we have
 	\begin{equation}\label{eq:increment:mean:2}
\begin{split}
 		\mathbb{E}[\tilde{\alpha}_1-\alpha_0'  \, | \, \mathcal{F}_{0}]
 	= 	K^*_{\alpha_0} \cdot \mathbb{E}\left[\left. \intop_{t_0}^{t_1} \{S(t)-S_1(t) \}  dt \, \right| \, \mathcal{F}_{0}\right]
\end{split}
 	\end{equation}
 	We decompose this integral into two parts, from $t_0$ to $\acute{t}_0$ and from $\acute{t}_0$ to $t_1$. For the first one, observe that the definition of $\mathcal{B}$ gives
 	\begin{equation}\label{eq:increment:mean:3}
 	\begin{split}
 \left|	\mathbb{E}\left[\left.\intop_{t_0}^{\acute{t}_0} \{S(t)-S_1(t) \}dt  \,; \, \mathcal{B}\ \right| \, \mathcal{F}_{0}\right]\right|
 \le
\beta_0^{4\theta} = o(\alpha_0^2\beta_0^{-3}) (t_1-t_0).
 	\end{split}
 	\end{equation}

 	To study the integral from $\acute{t}_0$ to $t_1$ of \eqref{eq:increment:mean:2}, we recall the formula \eqref{eq:def:S2:basic form} to see that
 	\begin{equation}
 	S(t) - S_1(t)= S(t)-S_2(t) + \frac{2\alpha_0^2}{(1+2\alpha_0)^2} - \frac{4\alpha_0+4\alpha_0^2}{(1+2\alpha_0)^2} S_1(t) + \mathcal{J}(t),
 	\end{equation}
 	where $\mathcal{J}(t)$ is defined in the beginning of this section.
 	Thus, we can see that for all $t\in[\acute{t}_0, t_1]$,
 	\begin{equation}
 	\mathbb{E} \left[ \left. \frac{2\alpha_0^2}{(1+2\alpha_0)^2} + \mathcal{J}(t)  \, ; \, \mathcal{B}\, \right| \ \mathcal{F}_0 \right] = (1+ O(\alpha^{\epsilon})) 4\alpha_0^2.
 	\end{equation}
 	 Moreover,  the event $\mathcal{B}$ gives 
 	\begin{equation}
 	\mathbb{E} \left[ \left. \intop_{\acute{t}_0}^{t_1} S_1(t)dt  \, ; \, \mathcal{B}\, \right| \ \mathcal{F}_0 \right]  = \alpha_0 (t_1-\acute{t}_0) + O\left(\alpha_0^{-\frac{1}{2}-\epsilon}\right).
 	\end{equation}
 	In addition, the integral of $|S(t)-S_2(t)|$ is bounded from the definition of $\mathcal{B}$, which is smaller than $\alpha_0^{\frac{1}{2}-2\epsilon}=o(\alpha_0^4\beta_0^{-1})\cdot (t_1-t_0)$. Combining these three estimates, we observe that the terms of order $\alpha_0^2(t_1-t_0)$ cancel out, and hence deduce that
 	\begin{equation}\label{eq:increment:mean:4}
 	\mathbb{E}\left[\left.  \intop_{\acute{t}_0}^{t_1} \{S(t)-S_1(t) \} dt \,;  \ \mathcal{B}\, \right| \ \mathcal{F}_0 \right] = o(\alpha_0^2 \beta_0^{-3})\cdot (t_1-t_0).
 	\end{equation}
 	Therefore, combining \eqref{eq:increment:mean:1}, \eqref{eq:increment:mean:2}, \eqref{eq:increment:mean:3} and \eqref{eq:increment:mean:4} concludes the proof of the first statement.
 	
 	\vspace{2mm}
 	
 	To control the variance, we first write $\alpha_1'-\alpha_0' = K_{\alpha_0}^*\mathcal{M}+\mathcal{D}$, where
 	\begin{equation}
 	\mathcal{M}:=  \intop_{t_0}^{t_1} \{d\Pi_S(t)-S(t)dt\}, \quad \mathcal{D}:= (\alpha_1'-\tilde{\alpha}_1) + K^*_{\alpha_0}\cdot\intop_{t_0}^{t_1}\{S(t)-S_1(t)\} dt,
 	\end{equation}
 	and also define $\overline{\mathcal{D}}:=\mathbb{E} [ \mathcal{D} \, | \, \mathcal{F}_0].$
 	Then, we express that
 	\begin{equation}\label{eq:increment:var:1}
 	\begin{split}
 	\left| \var\left[\left. \alpha_1'-\alpha_0'  \, \right|\,\mathcal{F}_0 \right] -\var\left[\left.  K_{\alpha_0}^*\mathcal{M} \, \right|\,\mathcal{F}_0 \right] \right| 
 	\le
 	 \var[\mathcal{D}\,|\,\mathcal{F}_0]+ 2K^*_{\alpha_0} \cdot \Big\{	\mathbb{E} [ \mathcal{M}^2\,|\, \mathcal{F}_0] \cdot \var[\mathcal{D}\,|\,\mathcal{F}_0]  \Big\}^{1/2},
 	\end{split}
 	\end{equation}
 	where we used Cauchy-Schwarz to deduce the inequality. Furthermore, since $\mathcal{M}$ is a martingale, we have
 	\begin{equation}\label{eq:increment:var:2}
 	\begin{split}
 	\mathbb{E}[\mathcal{M}^2\,|\, \mathcal{F}_0] &= \mathbb{E}\left[\left.\intop_{t_0}^{t_1} S(t)dt \, \right|\,\mathcal{F}_0 \right] = o(\alpha_0^{10}) + \mathbb{E}\left[\left.\intop_{t_0}^{t_1} S(t)dt \, ;\ \mathcal{B}\, \right|\,\mathcal{F}_0 \right] \\
 	&=
 	\alpha_0(t_1-t_0) + O\left(\alpha_0^{-\frac{1}{2}-\epsilon} \right) = \left(1+ o(\alpha_0^{\frac{1}{3}}) \right) \alpha_0(t_1-t_0).
 	\end{split}
 	\end{equation}
 	Moreover, on the event $\mathcal{B}$, we have $\mathcal{D} = O(\alpha_0^{2-2\epsilon})$. Thus, we see that $\var[\mathcal{D}\,|\, \mathcal{F}_0] = O(\alpha_0^{4-4\epsilon})$, and hence
 	\begin{equation}
 	\var[\mathcal{D} |\mathcal{F}_0 ] = o(\alpha_0^{1-5\epsilon}) (K_{\alpha_0}^*)^2 \mathbb{E} [\mathcal{M}^2 | \mathcal{F}_0 ] .
 	\end{equation}
 	Therefore, we combine the above computations \eqref{eq:increment:var:1} and \eqref{eq:increment:var:2} to deduce the second statement which finishes the proof.
 \end{proof}
 
 To conclude this section, we establish a variant of Theorem \ref{thm:increment:formal} which will be useful when applying to a slightly different setting in the next section.
 
 \begin{cor}\label{cor:increment:formal:goodevent}
     Under the setting of Theorem \ref{thm:increment:formal}, let $\mathcal{E}$ be an event that satisfies $\PP(\mathcal{E} | \mathcal{F}_{t_0}) \ge 1- \alpha_0^{5}$, for any $\mathcal{F}_{t_0}=\Pi_S(-\infty,t_0]$ that is $(\alpha_0,r;[t_0])$-sharp-regular. Then, we have
     \begin{equation}
         \var\left[\left.\left(\alpha_1'-\alpha_0'\right) \mathds{1}\{\mathcal{E} \} \, \right|\, \Pi_S(-\infty,t_0] \right] = (1+ o(\alpha_0^{\frac{1}{3}}))4\alpha_0^5(t_1-t_0).
     \end{equation}
 \end{cor}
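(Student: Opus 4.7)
The goal is to deduce the corollary from Theorem~\ref{thm:increment:formal} by showing that inserting the indicator $\mathds{1}_{\mathcal{E}}$ perturbs the variance by an amount negligible compared with the leading term $4\alpha_0^5(t_1-t_0)$. Writing $Y:=\alpha_1'-\alpha_0'$ and $E_1:=\mathbb{E}[Y^2\mathds{1}_{\mathcal{E}^c}\,|\,\mathcal{F}_{t_0}]$, a direct expansion gives
\begin{equation}
\var[Y\mathds{1}_{\mathcal{E}}\,|\,\mathcal{F}_{t_0}] = \var[Y\,|\,\mathcal{F}_{t_0}] - E_1 + 2\,\mathbb{E}[Y\,|\,\mathcal{F}_{t_0}]\,\mathbb{E}[Y\mathds{1}_{\mathcal{E}^c}\,|\,\mathcal{F}_{t_0}] - \bigl(\mathbb{E}[Y\mathds{1}_{\mathcal{E}^c}\,|\,\mathcal{F}_{t_0}]\bigr)^2.
\end{equation}
Cauchy--Schwarz yields $|\mathbb{E}[Y\mathds{1}_{\mathcal{E}^c}\,|\,\mathcal{F}_{t_0}]|^2 \le E_1\cdot\PP(\mathcal{E}^c\,|\,\mathcal{F}_{t_0}) \le E_1\alpha_0^5$, so together with the estimates on $\mathbb{E}[Y\,|\,\mathcal{F}_{t_0}]$ and $\var[Y\,|\,\mathcal{F}_{t_0}]$ from Theorem~\ref{thm:increment:formal}, the whole correction term is absorbed into the $o(\alpha_0^{1/3})\cdot 4\alpha_0^5(t_1-t_0)$ error as soon as $E_1 = o(\alpha_0^{5+1/3}(t_1-t_0))$.

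The key ingredient is a deterministic pointwise bound $|Y|\le C\alpha_0^{3/2-10\epsilon}$ on the intersection $\widetilde{\mathcal{B}}:=\mathcal{B}\cap\mathcal{B}_{\mathrm{growth}}$, where $\mathcal{B}$ is the good event from the proof of Theorem~\ref{thm:increment:formal} and $\mathcal{B}_{\mathrm{growth}}$ is the event from Proposition~\ref{prop:ind1:growth} asserting $\bigl||\Pi_S[t_0,t]|-\alpha_0(t-t_0)\bigr|\le \alpha_0^{-1/2-10\epsilon}$ for all $t\in[\acute{t}_0,\hat{t}_0]$. Indeed, the decomposition $Y=K^*_{\alpha_0}\mathcal{M}+\mathcal{D}$ combined with $K^*_{\alpha_0}=O(\alpha_0^2)$, the pointwise estimate $|\mathcal{M}|\le O(\alpha_0^{-1/2-10\epsilon})$ (obtained by combining $\mathcal{B}_4$ with $\mathcal{B}_{\mathrm{growth}}$), and $|\mathcal{D}|\le O(\alpha_0^{2-2\epsilon})$ (from $\mathcal{B}_1\cap\mathcal{B}_3$) yields the claim. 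Splitting $\mathcal{E}^c=(\mathcal{E}^c\cap\widetilde{\mathcal{B}})\cup\widetilde{\mathcal{B}}^c$, the first piece contributes at most $C\alpha_0^{3-20\epsilon}\cdot\alpha_0^5=O(\alpha_0^{8-20\epsilon})$ to $E_1$, which is comfortably $o(\alpha_0^{5+1/3}(t_1-t_0))$ because $t_1-t_0\ge\alpha_0^{-2}\beta_0^{9\theta}$.

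For the $\widetilde{\mathcal{B}}^c$ contribution I apply Cauchy--Schwarz once more: $\mathbb{E}[Y^2\mathds{1}_{\widetilde{\mathcal{B}}^c}\,|\,\mathcal{F}_{t_0}]\le\sqrt{\mathbb{E}[Y^4\,|\,\mathcal{F}_{t_0}]\,\PP(\widetilde{\mathcal{B}}^c\,|\,\mathcal{F}_{t_0})}$. A crude moment bound $\mathbb{E}[Y^4\,|\,\mathcal{F}_{t_0}]\le C\alpha_0^{-8}\beta_0^{4\theta}$ follows from $|\alpha_j'|\le K^*_{\alpha_j}|\Pi_S[t_j^-,t_j]|$ combined with the stochastic domination of the aggregate particle count by a Poisson process of rate $1/2$, which gives polynomially bounded moments for the particle counts over intervals of length at most $\alpha_0^{-2}\beta_0^{10\theta}$. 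Coupled with $\PP(\widetilde{\mathcal{B}}^c\,|\,\mathcal{F}_{t_0})\le Ce^{-\beta_0^{3/2}}$, inherited from the proofs of Theorem~\ref{thm:increment:formal} and Proposition~\ref{prop:ind1:growth}, this yields $\mathbb{E}[Y^2\mathds{1}_{\widetilde{\mathcal{B}}^c}\,|\,\mathcal{F}_{t_0}]\le C\alpha_0^{-4}\beta_0^{2\theta}e^{-\beta_0^{3/2}/2}$, which is far smaller than any inverse polynomial in $\alpha_0$. Combining the two pieces gives $E_1=o(\alpha_0^{5+1/3}(t_1-t_0))$, and the corollary follows.

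The main obstacle is the extraction of the deterministic pointwise bound $|Y|\le C\alpha_0^{3/2-10\epsilon}$ on $\widetilde{\mathcal{B}}$: the proof of Theorem~\ref{thm:increment:formal} controls $Y$ only through its first two moments, so one must revisit that argument and pair the control of the time-integral $\int_{t_0}^{t_1}S(t)\,dt$ given by $\mathcal{B}_4$ with the matching control on the particle count $|\Pi_S[t_0,t_1]|$ from Proposition~\ref{prop:ind1:growth} in order to bound the martingale $\mathcal{M}$ pointwise. Once this is in place, the merely polynomial control $\PP(\mathcal{E}^c\,|\,\mathcal{F}_{t_0})\le\alpha_0^5$ is adequate to render $\mathcal{E}^c$ invisible at the target order $\alpha_0^5(t_1-t_0)$.
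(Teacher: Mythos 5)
Your proof is correct, and it takes a genuinely different route from the paper's. The paper's argument is shorter and more implicit: it reduces to showing $\var\bigl[(\alpha_1'-\alpha_0')\mathds{1}\{\mathcal{E}^c\}\mid\mathcal{F}_{t_0}\bigr]=o(\alpha_0^4)$ via the covariance expansion, observes the deterministic stochastic dominations $|\alpha_1-\alpha_0'|\preceq K_{\alpha_0}^*|\Pi_{1/2}[t_0^-,t_1^-]|$ and $\alpha_1'\preceq K_{\alpha_1}^*|\Pi_{1/2}[t_1^-,t_1]|$ (both consequences of $S\le\tfrac12$), and then argues that since $\PP(\mathcal{E}^c\mid\mathcal{F}_{t_0})\le\alpha_0^5$, the contribution of $\mathcal{E}^c$ is controlled by coupling it with the upper tail of those Poisson-dominated quantities. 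You instead introduce the intermediate good event $\widetilde{\mathcal{B}}=\mathcal{B}\cap\mathcal{B}_{\mathrm{growth}}$ and extract from it a \emph{deterministic pointwise} bound $|Y|\le C\alpha_0^{3/2-10\epsilon}$ by combining the decomposition $Y=K_{\alpha_0}^*\mathcal{M}+\mathcal{D}$ with the growth estimate of Proposition~\ref{prop:ind1:growth}; on $\mathcal{E}^c\cap\widetilde{\mathcal{B}}$ you use this bound directly, and on $\widetilde{\mathcal{B}}^c$ you invoke Cauchy--Schwarz with a crude fourth-moment bound. Both proofs rest on the same two ingredients — $S\le\tfrac12$ (hence Poisson domination of particle counts) and the regularity events — but your decomposition isolates where the $\alpha_0^5$ bound on $\PP(\mathcal{E}^c)$ actually enters, which is arguably cleaner; the price is the extra auxiliary event and the fourth-moment step.

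One small correction: the claimed bound $\mathbb{E}[Y^4\mid\mathcal{F}_{t_0}]\le C\alpha_0^{-8}\beta_0^{4\theta}$ undercounts the $\beta_0$ factors. Since $\alpha_1'\le K_{\alpha_1}^*|\Pi_S[t_1^-,t_1]|\le\alpha_1\,|\Pi_S[t_1^-,t_1]|$ with $\alpha_1\le K_{\alpha_0}^*|\Pi_S[t_0^-,t_1^-]|$, and both Poisson-dominated counts have fourth moments of order $\alpha_0^{-8}\beta_0^{40\theta}$ and $\alpha_0^{-8}\beta_0^{4\theta}$ respectively, the correct crude bound is $\mathbb{E}[Y^4\mid\mathcal{F}_{t_0}]\lesssim\alpha_0^{-8}\beta_0^{C\theta}$ with a somewhat larger $C$. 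This does not affect the conclusion, because $\PP(\widetilde{\mathcal{B}}^c\mid\mathcal{F}_{t_0})\le Ce^{-\beta_0^{3/2}}$ decays superpolynomially in $\beta_0$, so the Cauchy--Schwarz estimate $\sqrt{\mathbb{E}[Y^4]\,\PP(\widetilde{\mathcal{B}}^c)}$ remains negligible for any polynomial $\beta_0$-correction.
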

 
 \begin{proof}
     For convenience, we write $\mathcal{F}_0= \mathcal{F}_{t_0}=\Pi_S(-\infty,t_0] $. Due to the analogous decomposition as \eqref{eq:increment:var:1}, it suffices to show that
     \begin{equation}
         \var \left[ (\alpha_1'-\alpha_0')\mathds{1}\{\mathcal{E}^c \}\, | \, \mathcal{F}_0 \right] = o(\alpha_0^4),
     \end{equation}
     since we have $ \var \left[ \alpha_1'-\alpha_0' | \mathcal{F}_0 \right]= \Theta(\alpha_0^3\beta_0^{10\theta})$. The proof goes analogously as that of Lemma \ref{lem:doubleint:out}, based on the same idea as \eqref{eq:doubleint:remainder}. Observe from \eqref{eq:ind2:a1vsa0:split} that
     \begin{equation}
\begin{split}
    \alpha_1-\alpha_0' &= \mathcal{L}(t_1^-;\Pi_S[t_0^-,t_1^-],\alpha_0) = K_{\alpha_0}^* \intop_{t_0^-}^{t_1^-} d\Pi_S(x) - S_1(x)dx\\
    &= K_{\alpha_0}^*|\Pi_S[t_0^-,t_1^-]| - K_{\alpha_0}^* \intop_{t_0^-}^{t_1^-} \intop_{y}^{t_1^-} K_{\alpha_0}(x-y) dx d\Pi_S(y).
\end{split}
     \end{equation}
     Since $\intop_y^{t_1^-} K_{\alpha_0}(x-y)dx \in [0,1]$, we can see that $|\alpha_1-\alpha_0'|$ is stochastically dominated by
     \begin{equation}
         |\alpha_1-\alpha_0'| \preceq
         K_{\alpha_0}^* |\Pi_{1/2} [t_0^-,t_1^-]|,
     \end{equation}
     due to the fact that $S\le \frac{1}{2}$ (Proposition \ref{prop:speed:basicdef}). Moreover, we also have
     \begin{equation}
         \begin{split}
             \alpha_1' = K_{\alpha_1}^* \intop_{t_1}^\infty \intop_{t_1^-}^{t_1} K_{\alpha_1}(x-y) d\Pi_S(y) dx \preceq K_{\alpha_1}^* | \Pi_{1/2}[t_1^-,t_1]|.
          \end{split}
     \end{equation}
     Thus, we can control the size of $\alpha_1$ and $\alpha_1'$ on the event $\mathcal{E}^c$ by coupling with the tail events of the Poisson process, which easily gives us the desired estimate. (Note that  $\alpha_0'$ is already close to $\alpha_0$ since the speed is $(\alpha_0,r;[t_0])$-sharp-regular.)
 \end{proof}
 
 \section{Multi-scale analysis and the scaling limit}\label{sec:scalinglimit}

In this section we repeatedly use the induction step theorem in order to prove the main results of this paper. The proof goes as follows. Let $t>0$ be sufficiently large and recall that Theorem~\ref{thm:main theorem 2} says that $t^{-\frac{2}{3}}S(st)$ should behave roughly like the solution of the SDE \eqref{eq:SDE equation in main theorem}. The proof has the following parts:
\begin{enumerate}
	\item 
	We show that in time $\approx \delta  t$ the aggregate is regular and the speed is of order $\approx \delta ^{-\frac{1}{3}}t^{-\frac{1}{3}}$.
	\item 
	We show that $t^{\frac{1}{3}}S(\delta t+st)$ is close to the solution of \eqref{eq:SDE equation in main theorem} with initial condition $Z(0)=\delta ^{-\frac{1}{3}}$.
\end{enumerate}
Theorem~\ref{thm:main theorem 2} follows from part (2) by taking $\delta \to 0$ and integrating the speed to obtain the aggregate size. The proof of the first part is given in Subsection~\ref{sec:stitching} and it includes analysis of $\log t$ many scales of the aggregate speed. We show that in each scale the aggregate speed is likely to decrease and that the time it takes for this to happen is not too long. The second part is given in Subsection~\ref{sec:convergence in distribution} and it is a direct application of a result by Helland \cite{helland1981minimal}. In \cite{helland1981minimal} it is shown that, under some assumptions, if the increments of a sequence of processes have the right conditional expectation and variance then the sequence of processes converges to the solution of the corresponding SDE.

Throughout this section it is convenient to work with a slightly different definition of regular. Recall the definition of $\mathfrak{R}^\sharp (\alpha_0,r,[t_0])$ in Section~\ref{subsec:regoverview:reg}. As usual, we identify a set of points $\Pi $ with the cumulative function $Y_t(s):=|\Pi [t-s,t]|$ and denote by $\Pi _Y$ the set of points corresponding to the step function $Y$. We also write $Y\in \mathfrak{R}^\sharp (\alpha_0,r,[t_0])$ when $\Pi _{Y}\in \mathfrak{R}^\sharp (\alpha_0,r,[t_0])$. For convenience, in the following definition and throughout this section we also switch the roles of $\alpha $ and $\alpha '$ from Section~\ref{sec:reg:intro} and equation \eqref{eq:def:alpha0prime}.

\begin{definition}\label{def:def of regularity prime}
Let $\alpha >0$ sufficiently small and $t>0$. We say that a deterministic step function $Y _0 :\mathbb R _+ \to \mathbb N \cup \{\infty \} $ satisfies $Y _0 \in \mathfrak{R}'(\alpha ,t)$ if there exists $t^{-}$ with 
\begin{equation}
    t -2 \alpha ^{-2} \log ^{\theta } (1/\alpha ) \le t^{-} \le t -\frac{1}{2} \alpha _0^{-2} \log ^{\theta } (1/\alpha )
\end{equation}
and $\alpha '>0$ such that $Y_0 \in  \mathfrak{R} ^\sharp (\alpha ' ,\alpha ^6 ;[t])$ and
\begin{equation}
    \alpha = \mathcal{L}(t; \Pi _{Y_0}[t^{-},t],\alpha '):= \frac{2 (\alpha') ^2 }{1+2 \alpha' }\intop _{t^-} ^{t} \intop _{t}^{\infty} K_{\alpha '} (z-x) dz \, d \Pi _{Y_0}  (x).
\end{equation}
\end{definition}

Using this definition we state the main results of the induction step in the following theorem

\begin{thm}\label{thm:induction step for repeatd use}
Let $\alpha   >0$ sufficiently small, $t >0$ and $\tilde{t}>0$ such that
\begin{equation}
\alpha  ^{-2}\log ^{10\theta-4 } (1/\alpha ) \le \tilde{t}-t\le \alpha ^{-2}\log ^{10\theta } (1/\alpha  ).
\end{equation}
Let $Y _{t} \in \mathfrak{R}'(\alpha  ,t)$ and let $(Y_x ,S(x),X_x)$ be the aggregate with initial condition $(Y _{t},t)$. Then, there is a random variable $\tilde{\alpha }  \in \mathcal F _{\tilde{t}}$ such that the following holds:
\begin{enumerate}
\item
\begin{equation}
| \tilde{ \alpha } -\alpha  |\le \sqrt{\alpha  ^5(\tilde{t}-t )} \log ^{2 \theta } (1/\alpha  )  
\end{equation}
\item  
\begin{equation}
\mathbb P \left( Y_{\tilde{t}} \in \mathfrak{R}'(\tilde{\alpha } ,\tilde{t} ) \right)\ge 1- \alpha ^5
\end{equation}
\item
\begin{equation}
\left| \mathbb E \left(  \tilde{\alpha }  -\alpha   \right) \right| \le \frac{\alpha  ^4 }{\log ^3 (1/\alpha  )}(\tilde{t}-t ).
\end{equation}
\item
\begin{equation}
\mathbb E \left( ( \tilde{ \alpha  } -\alpha  )^2 \right) = 4 \alpha ^5 (\tilde{t}-t )\left(1+O(\alpha  ^{\frac{1}{3} } )\right).  
\end{equation}
\item
\begin{equation}
\mathbb P \left(  \left| X_{\tilde{t} }-X_{t}-\alpha  (\tilde{t}-t) \right|  \le  \alpha  ^{1.4}(\tilde{t}-t ) \right) \ge 1-\alpha  ^5
\end{equation}
\end{enumerate}
\end{thm}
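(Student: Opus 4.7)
The proof will consist of a single application of the induction machinery of Sections~\ref{sec:reg:intro}--\ref{sec:double int}. Writing $\alpha'$ for the background rate produced by $Y_t \in \mathfrak{R}^\sharp(\alpha',\alpha^6;[t])$, note that $\alpha' = \alpha(1+O(\alpha^{1/2}\beta^\theta))$ by the $\mathcal{A}_3$-condition, and that the assumed range $\alpha^{-2}\beta^{10\theta-4}\le \tilde t - t\le \alpha^{-2}\beta^{10\theta}$ comfortably sits inside the single-step window $[\alpha_0^{-2}\beta_0^{9\theta}, \alpha_0^{-2}\beta_0^{10\theta}]$ of Theorem~\ref{thm:induction:main} since $\theta\gg 4$. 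I would match $\alpha_0\leftrightarrow \alpha'$, $t_0\leftrightarrow t$, $t_1\leftrightarrow \tilde t$, so that the outputs of that theorem give the canonical candidates $\tilde\alpha := \alpha_1' = \mathcal{L}(\tilde t;\Pi_S[\tilde t^-,\tilde t],\alpha_1)$ and $\tilde\alpha' := \alpha_1 = \mathcal{L}(\tilde t^-;\Pi_S[t^-,\tilde t^-],\alpha')$ with $\tilde t^- := \tilde t - (\alpha')^{-2}(\beta')^\theta$. By construction $\tilde\alpha=\mathcal{L}(\tilde t;\Pi_S[\tilde t^-,\tilde t],\tilde\alpha')$, as required by Definition~\ref{def:def of regularity prime}, and moreover $\alpha_0' = \mathcal{L}(t;\Pi_S[t^-,t],\alpha') = \alpha$ by our assumption on $Y_t$, so the increment $\tilde\alpha - \alpha$ coincides exactly with the quantity $\alpha_1' - \alpha_0'$ analyzed in Theorem~\ref{thm:increment:formal}.

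\textbf{Parts (2)--(5).} Part~(2) is Theorem~\ref{thm:induction:main}(1): it places $\Pi_S(-\infty,\tilde t]\in\mathfrak{R}^\sharp(\tilde\alpha',e^{-\beta_1^{3/2}};[\tilde t])$ outside an event of probability at most $\alpha^6+e^{-(\beta')^{3/2}}\le\alpha^5$, and since $e^{-\beta_1^{3/2}}\le\tilde\alpha^6$ for small $\alpha$ this is exactly $Y_{\tilde t}\in\mathfrak{R}'(\tilde\alpha,\tilde t)$. Parts~(3) and~(4) read off directly from Theorem~\ref{thm:increment:formal}, with $\alpha'\approx\alpha$ yielding $(1+o(1))$-factor agreement with the claimed asymptotics $o(\alpha^4\beta^{-3})(\tilde t - t)$ and $4\alpha^5(\tilde t - t)(1+o(\alpha^{1/3}))$. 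Part~(5) is a routine consequence of Proposition~\ref{prop:ind1:growth}: the bound $\bigl||\Pi_S[t,\tilde t]|-\alpha'(\tilde t-t)\bigr|\le(\alpha')^{-1/2-10\epsilon}$ together with $|\alpha-\alpha'|\le\alpha^{3/2}\beta^\theta$ gives total error $\alpha^{3/2}\beta^\theta(\tilde t-t)+\alpha^{-1/2-10\epsilon}$, comfortably below $\alpha^{1.4}(\tilde t-t)$ once the lower bound $\tilde t-t\ge\alpha^{-2}\beta^{10\theta-4}$ is used.

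\textbf{Part~(1) and the main obstacle.} The variance estimate alone only yields $|\tilde\alpha-\alpha|\le\sqrt{\alpha^5(\tilde t-t)}$ with constant probability via Chebyshev, far short of the almost-sure bound with a $\log^{2\theta}$ cushion demanded by~(1). The remedy is to work with the explicit martingale representation $\tilde\alpha-\alpha = K_\alpha^*\int_{t}^{\tilde t}d\widetilde\Pi_S(x) + (\textnormal{drift})$ from~\eqref{eq:ind2:a1vsa0:split}: the quadratic variation $\int(K_\alpha^*)^2 S\,dx\le C\alpha^5(\tilde t-t)$ (under $\tau_1$) feeds into the sub-Gaussian bound of Lemma~\ref{lem:concentration of integral}, producing a tail $\exp(-c\log^{4\theta}(1/\alpha))\ll\alpha^5$ at the $\log^{2\theta}$-standard-deviation scale, while Lemma~\ref{lem:ind1:intofSminS1} bounds the drift by $O(\alpha^{2-\epsilon})$. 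Off this exceptional set of probability at most $\alpha^5$ I would redefine $\tilde\alpha:=\alpha$ to make~(1) deterministic; Corollary~\ref{cor:increment:formal:goodevent} is tailor-made so that this truncation costs nothing in~(3) and~(4). The main obstacle throughout is not a single hard argument but disciplined bookkeeping: one must check that every error of order $\alpha^{3/2}\beta^{C}$ arising from the $\alpha\leftrightarrow\alpha'$ mismatch is strictly dominated by the tolerances in~(1),~(3),~(4),~(5), and that the probabilities $\alpha^6$, $e^{-\beta^{3/2}}$, and $\alpha^5$ stack correctly between the hypothesis and conclusion of the iteration.
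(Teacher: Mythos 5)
Your proposal is correct and follows essentially the same scaffold as the paper: the identifications $\alpha'\leftrightarrow\alpha_0$, $t\leftrightarrow t_0$, $\tilde t\leftrightarrow t_1$, the choices $\tilde\alpha=\alpha_1'$ and $\tilde\alpha'=\alpha_1$, parts (2)--(5) read off from Theorems~\ref{thm:induction:main} and~\ref{thm:increment:formal} and Proposition~\ref{prop:ind1:growth}, and the final truncation $\tilde\alpha:=\alpha$ on the bad event preserved in moments by Corollary~\ref{cor:increment:formal:goodevent} all match the paper's argument. (You also correctly invoke the $\mathcal{A}_3$-condition here; the paper's reference to $\mathcal{A}_2$ in that spot is a typo.)

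The one place where you take a genuinely different route is part (1). The paper never re-opens the martingale estimate: it simply writes $|\tilde\alpha-\alpha|\le|\alpha-\alpha'|+|\alpha'-\tilde\alpha'|+|\tilde\alpha'-\tilde\alpha|$, bounds the outer two terms by $2\alpha^{3/2}\log^\theta(1/\alpha)$ from the $\mathcal{A}_3$ condition at $t$ and $\tilde t$, bounds the middle term by $2\alpha^{3/2}\log^{6\theta}(1/\alpha)$ from Theorem~\ref{thm:induction:main}(2), and observes that the lower bound $\tilde t-t\ge\alpha^{-2}\log^{10\theta-4}(1/\alpha)$ makes the sum at most $\sqrt{\alpha^5(\tilde t-t)}\log^{2\theta}(1/\alpha)$ with failure probability $\le 4\alpha^6$. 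You instead propose to re-derive the middle bound from scratch by feeding the quadratic variation into Lemma~\ref{lem:concentration of integral} — a route that works but is redundant, since Theorem~\ref{thm:induction:main}(2) (proved in Lemma~\ref{lem:ind2:alpha1vsalpha0} by precisely this concentration argument) already packages it. Two small slips in your version: the displayed representation $K_\alpha^*\int d\widetilde\Pi_S+\text{drift}$ from~\eqref{eq:ind2:a1vsa0:split} is an identity for $\tilde\alpha'-\alpha=\alpha_1-\alpha_0'$, not for $\tilde\alpha-\alpha$, so you still need the $\mathcal{A}_3$-correction $|\tilde\alpha-\tilde\alpha'|$ separately; and the sub-Gaussian tail at deviation scale $\log^{2\theta}(1/\alpha)$ from Corollary~\ref{cor:concentration of integral} is $\exp(-\log^{2\theta}(1/\alpha))$, not $\exp(-c\log^{4\theta}(1/\alpha))$. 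Neither slip is fatal — the corrected tail is still $\ll\alpha^5$ and the missing correction is available from the $\mathcal{A}_3$ condition at $\tilde t$ — but the paper's chaining argument is tighter and shorter.
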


\begin{proof}
    By Definition~\ref{def:def of regularity prime} there exists $\alpha  '$ such that $Y_{t}\in \mathfrak{R} ^\sharp  (\alpha  ' ,\alpha ^6 ,[t])$ which in particular means by the definition of $\mathcal A _2$ in \eqref{eq:def of the events A1 A2 A3} that $|\alpha  -\alpha  '| \le (\alpha ') ^{\frac{3}{2}} \log ^{\theta } (1/\alpha ') \le 2 \alpha  ^{\frac{3}{2}} \log ^{\theta } (1/\alpha ) $. Thus by Theorem~\ref{thm:induction:main} we have some $\tilde{\alpha }'$ such that  
    \begin{equation}
     \mathbb P \big( Y_{t_1} \in \mathfrak{R} ^\sharp  ( \tilde{\alpha } ' , ( \tilde{ \alpha }') ^7 ,[\tilde{t}]) \big) \ge 1-2\alpha ^6  \quad \text{and} \quad  \mathbb P \big( |\alpha '- \tilde{ \alpha }'| \le 2(\alpha' ) ^{\frac{3}{2}} \log ^{6 \theta }(1/\alpha ') \big) \ge 1-2\alpha  ^6 .
     \end{equation}
    Let
    \begin{equation}
        \tilde{\alpha } :=\mathcal{L}(\tilde{t}; \Pi _{Y_{\tilde{t}}}[\tilde{t}^{-},\tilde{t}], \tilde{\alpha  }').
    \end{equation}
    Using the definition of $\mathcal A _2 $ again we have $|\tilde{ \alpha } -\tilde{ \alpha  }'|\le (\tilde{ \alpha  }')^{\frac{3}{2}} \log ^\theta  (1/\tilde{\alpha }') \le 2\alpha _0^{\frac{3}{2}} \log ^{\theta } (1/\alpha )$ with probability at least $1-4 \alpha ^6 $. Thus we get
    \begin{equation}
        \mathbb P \big(  |\alpha -\tilde{ \alpha } | \le \sqrt{\alpha ^5 (\tilde{t}-t)} \log ^{2 \theta } (1/\alpha  \big) \ge  1-4 \alpha ^6.
     \end{equation}
    We also get that $\mathbb P \big( Y_{\tilde{t}} \in \mathfrak{R} ^\sharp  ( \tilde{\alpha } ' , \tilde{ \alpha }^6 ,[\tilde{t}]) \big) \ge 1-6\alpha ^6$ and therefore by Definition~\ref{def:def of regularity prime}, $\mathbb P ( Y_{\tilde{t}} \in \mathfrak{R} '  ( \tilde{ \alpha }  , \tilde{t}) ) \ge 1-6\alpha ^6$. This finishes the proof of the second part of the theorem. 
    The third and fourth part of the theorem follows from Theorem~\ref{thm:increment:formal}. The last part of the theorem follows from Proposition~\ref{prop:ind1:growth}. 
    
    This finishes the proof of all parts of the theorem except for part one which holds with probability $1-4 \alpha ^6$ instead of deterministically. In order to fix this issue we change the definition of $\tilde{\alpha} $ slightly. On the event where part (1) doesn't hold we change $\tilde{\alpha }$ to be $\alpha$. It is clear that parts (2),(3) and (5) still hold. Part (4) of the theorem still holds by  Corollary~\ref{cor:increment:formal:goodevent}.
\end{proof}

\subsection{Stitching intervals}\label{sec:stitching}

In this section we use Theorem~\ref{thm:induction step for repeatd use} repeatedly to show that at time $t$ the aggregate speed is $\approx t^{-\frac{1}{3}}$. To this end, we stitch a lot of short intervals of time to obtain the right growth in a medium interval of time. Then we stitch a lot of medium intervals of time to obtain the right growth in a long interval of time. Finally, we stitch a lot of long intervals of time to obtain the right growth in the time interval $[0,t]$. See Subsections~\ref{sec:short},\ref{sec:medium} and \ref{sec:long}.

\subsubsection{Stitching short intervals}\label{sec:short}

In this section we build inductively a process that approximates the speed of the aggregate and study the time it takes for this process to get smaller or larger by a factor of $2$. 

For the proof of the main lemma of this subsection we'll need the following martingale concentration result due to Freedman \cite{freedman1975tail}. See also Theorem 18 in \cite{chung2006concentration} for a short proof. 

\begin{claim}\label{claim:bernstein}
	Let $X_0,X_1,\dots ,X_n $ be a martingale with filtration $\mathcal F _0,\dots ,\mathcal F _n$. Suppose that almost surely  for all $1\le i \le n$ we have: 
	\begin{equation}
	\var (X_i \ | \ \mathcal F _{i-1}) \le \sigma _i^2 \quad \text{and} \quad  \left| X_i-X_{i-1} \right| \le M 
	\end{equation}
	Then, 
	\begin{equation}
	\mathbb P \left( |X_n-X_0|\ge \lambda  \right) \le 2\exp \left(-\frac{\lambda ^2 }{\sum _{i=1}^n \sigma _i^2 +\frac{1}{3}M \lambda }\right).
	\end{equation}
\end{claim}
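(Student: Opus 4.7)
The plan is to prove this via the standard exponential-martingale (Bernstein/Freedman) argument; the statement is essentially Theorem~18 of \cite{chung2006concentration} which the authors cite. I will sketch the steps one would carry out.

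First I would establish the conditional moment generating function bound. Write $d_i := X_i - X_{i-1}$, so by hypothesis $|d_i|\le M$, $\mathbb{E}[d_i\,|\,\mathcal{F}_{i-1}]=0$, and $\mathbb{E}[d_i^2\,|\,\mathcal{F}_{i-1}]\le \sigma_i^2$. The elementary inequality
\begin{equation}
e^x - 1 - x \le \frac{x^2/2}{1 - x/3}, \qquad x < 3,
\end{equation}
(verified by comparing Taylor coefficients term-by-term: $x^{k}/k! \le (x^{k}/(k-2)!)\cdot 3^{-(k-2)}/2$ for $k\ge 2$) implies that for every $\theta\in(0,3/M)$,
\begin{equation}
\mathbb{E}\bigl[e^{\theta d_i}\,\bigl|\,\mathcal{F}_{i-1}\bigr] \le 1 + \frac{\theta^2 \sigma_i^2 / 2}{1 - \theta M/3} \le \exp\!\left(\frac{\theta^2 \sigma_i^2 / 2}{1 - \theta M/3}\right).
\end{equation}

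Next, iterating this bound along the filtration (by conditioning and peeling off one factor at a time), one obtains
\begin{equation}
\mathbb{E}\bigl[e^{\theta(X_n - X_0)}\bigr] \le \exp\!\left(\frac{\theta^2 \sum_{i=1}^n \sigma_i^2}{2(1 - \theta M/3)}\right).
\end{equation}
Markov's inequality then gives, for every $\theta \in (0, 3/M)$,
\begin{equation}
\mathbb{P}(X_n - X_0 \ge \lambda) \le \exp\!\left(-\theta \lambda + \frac{\theta^2 V}{2(1 - \theta M/3)}\right), \qquad V := \sum_{i=1}^n \sigma_i^2.
\end{equation}
The natural optimizer is $\theta = \lambda / (V + M\lambda/3)$, which lies in $(0,3/M)$; plugging in yields an exponent of $-\lambda^2 / (2V + 2M\lambda/3)$, which is stronger than the bound in the claim. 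Applying the same argument to the martingale $-X_i$ and taking a union bound then produces the stated two-sided inequality with the factor $2$ in front.

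The only delicate point is choosing the correct quadratic comparison for $e^x - 1 - x$ so that the denominator picks up the $M\lambda/3$ term (rather than $M\lambda$); this is handled by the inequality displayed above, which is the standard tool in Bernstein-type arguments and fits cleanly with the fact that we have separate control on $|d_i|$ and on $\mathbb{E}[d_i^2\,|\,\mathcal{F}_{i-1}]$. Everything else is a routine Chernoff-type optimization, so there is no real obstacle; one simply has to be careful that the supermartingale construction uses the conditional variance bound rather than just the almost-sure bound on $|d_i|$, since using only the latter would give the weaker Azuma inequality and lose the $\sigma_i^2$ scaling that is essential for subsequent applications in the paper.
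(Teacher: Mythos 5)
Your argument is the correct and standard Bernstein/Freedman proof, and since the paper simply cites Freedman~\cite{freedman1975tail} and Chung--Lu~\cite{chung2006concentration} without reproducing the argument, you are essentially filling in the cited references. The Chernoff optimization and the iteration along the filtration are both handled correctly. Two issues, one minor and one substantive:

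\textbf{The elementary inequality.} The term-by-term comparison $x^k/k!\le (x^k/3^{k-2})/2$ establishes $e^x-1-x\le \frac{x^2/2}{1-x/3}$ only for $x\ge 0$; when $x<0$ the series alternates in sign and there is no coefficient-wise comparison. But $\theta d_i$ takes both signs. The standard fix is to note that $g(x):=(e^x-1-x)/(x^2/2)$ is nondecreasing (extended continuously by $g(0)=1$), so $g(\theta d_i)\le g(\theta M)$ whenever $d_i\le M$, and then apply the positive-$x$ estimate to $g(\theta M)$. Your expected-value bound $\mathbb{E}[e^{\theta d_i}\mid\mathcal{F}_{i-1}]\le 1+\tfrac{\theta^2\sigma_i^2/2}{1-\theta M/3}$ is correct, but the intermediate justification you give only covers the nonnegative case.

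\textbf{The direction of the final comparison is backwards.} You correctly derive
\begin{equation}
\mathbb{P}(X_n-X_0\ge\lambda)\le\exp\!\left(-\frac{\lambda^2}{2\sum\sigma_i^2+\tfrac{2}{3}M\lambda}\right),
\end{equation}
which is exactly the bound in Freedman's inequality and in Theorem~18 of~\cite{chung2006concentration}. However, this is \emph{weaker} than the statement in the claim, not stronger: the claim has denominator $\sum\sigma_i^2+\tfrac13 M\lambda$, which is half of yours, so the claimed exponent is twice as negative and the claimed right-hand side is smaller. (A further Chernoff optimization over $\theta$ does not recover the factor of $2$; in the regime $M\lambda\ll V$ the optimal exponent tends to $-\lambda^2/(2V)$, still with the $2$.) The claim as printed appears to have a factor-of-$2$ typo inherited from quoting the reference loosely; what you have actually proved is the correct form, and it is the one that the cited references establish.
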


Let $t_0>0$ and let $\delta _0>0$ sufficiently small. Let $\alpha _0>0$ such that $|\alpha _0-\delta _0| \le \delta _0^{1.1}$. Here $\delta _0$ should be understood as the scale of the speed and $\alpha _0$ as the initial speed. Let $Y _{t_0}  \in \mathfrak{R}'(\alpha _0 ,t_0)$ and let $\Pi _S$ be the aggregate with initial condition $Y _{t_0}$. Let $t_1 > 0 $ such that 
\begin{equation}
\delta _0 ^{-2}\log ^{10\theta -3}(1/\delta _0) \le t_1- t_0 \le \delta _0 ^{-2}\log ^{10\theta -1 }(1/\delta _0)
\end{equation}
and $t_i:=t_0+i(t_1-t_0)$ for any $i\ge 2$. We define the sequence $\alpha _i$ inductively. Suppose that $\alpha _1,\dots ,\alpha _i$ were already defined. Define the stopping times
\begin{equation}
\begin{split}
&\zeta _1:=\min \left\{t_i\ge t_0:\ \alpha _i < \delta _0/2 \right\}\\
&\zeta _2 :=\min \{t_i\ge t_0:\ \alpha _i > 2 \delta _0 \} \\
&\zeta _4:=\min \{t_i \ge t_0: Y_{t_i} \notin \mathfrak{R} '(\alpha _i,t_i) \}\\
& \zeta _5:=\min \{t_{i+1}\ge t_1: \left| X_{t_{i+1}}-X_{t_{i}}-\alpha _{i} (t_1-t_0) \right|  \ge  \alpha _i ^{1.4}(t_1-t_0) \},
\end{split}
\end{equation}
$\zeta _3:=\zeta _4 \wedge \zeta _5$ and $\zeta :=\zeta _1 \wedge \zeta _2 \wedge \zeta _3 $. We note that so far $\zeta $ is not well defined because $\alpha _j$ for $j>i$ was not defined but the event $\{\zeta >t_i\}$ is well defined.  On the event $\{ \zeta >t_i \}$ we have that $Y_{t_i} \in \mathfrak{R} '(\alpha _i,t_i)$ and therefore we can define $\alpha _{i+1} \in \mathcal F _{t_i}$ to be the random variable $\tilde{\alpha }$ from Theorem~\ref{thm:induction step for repeatd use} with $t_i$ and  $t_{i+1}$ as $t$ and $\tilde{t}$ respectively and with $\alpha _i$ as $\alpha $. On the event $\{\zeta \le t_i\}$ we just let $\alpha _{i+1}:=\alpha _i$. Using Theorem~\ref{thm:induction step for repeatd use} we have almost surely on the event $\{\zeta >t_{n}\}$ that $\mathbb P (\zeta _3=t_{n+1} \ | \ \mathcal F _{t_{n}})\le 2\alpha _{n}^5 \le C \delta _0^5$ and moreover
\begin{equation}
\left| \mathbb E ( \alpha _{n+1} -\alpha _n \ | \ \mathcal F _{t_{n}}) \right| \le \frac{(t_1-t_0)\alpha _{n}^4}{\log ^3 (1/\alpha _{n})} \le C\frac{(t_1-t_0)\delta _0^4}{\log ^3  (1/\delta _0)}.
\end{equation}

\begin{lem}\label{lem:stitching short intervals}
	The stopping time $\zeta $ satisfies the following properties 
	\begin{enumerate}
		\item 
		\begin{equation}
		\mathbb P (\zeta =\zeta _3 ) \le C \delta _0  ^3
		\end{equation}
		\item 
		\begin{equation}
		\mathbb P (\zeta =\zeta _1  )=\frac{2}{3}+O(\log ^{-1}(1/\delta _0)), \quad \mathbb P (\zeta =\zeta _2 )=\frac{1}{3}+O(\log ^{-1}(1/\delta _0)) 
		\end{equation}
		\item 
		For all $k \ge 1 $,
		\begin{equation}
		\mathbb P (\zeta \ge t_0 +k \delta _0^{-3} ) \le Ce^{-ck}
		\end{equation}
		\item
		For all $ \epsilon <1$,
		\begin{equation}
		\mathbb P (\zeta \le t_0+ \epsilon \delta _0^{-3} )\le C\delta _0^4 +Ce^{-c \epsilon ^{-1}}
		\end{equation}
	\end{enumerate}
\end{lem}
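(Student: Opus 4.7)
The plan is to treat the sequence $(\alpha_n)_{n \le \zeta}$ as a small perturbation of a discrete martingale $M_n$ whose per-step variance is of order $\delta_0^5(t_1-t_0)$ with bounded increments of size at most $C\delta_0^{3/2}\log^{2\theta}(1/\delta_0)$, and then to read off all four claims from optional stopping together with Freedman's inequality (Claim~\ref{claim:bernstein}). First one defines
$M_n := \alpha_n - \sum_{i=0}^{n-1} \mathbb{E}[\alpha_{i+1}-\alpha_i \mid \mathcal{F}_{t_i}]\mathds{1}\{\zeta > t_i\}$. By Theorem~\ref{thm:induction step for repeatd use} parts (1), (3), (4) and the bound $\alpha_n \in [\delta_0/2, 2\delta_0]$ on $\{\zeta > t_n\}$, the increments of $M$ satisfy $|M_n - M_{n-1}| \le C\delta_0^{3/2}\log^{2\theta}(1/\delta_0)$ and $\var(M_n-M_{n-1}\mid \mathcal{F}_{t_{n-1}}) \asymp \delta_0^5(t_1-t_0)$, while the drift correction accumulates to $|M_n-\alpha_n| \le C n(t_1-t_0)\delta_0^4/\log^3(1/\delta_0) = O(\delta_0/\log^3(1/\delta_0))$ throughout the relevant window $n(t_1-t_0) = O(\delta_0^{-3})$.

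For part (3), applying optional stopping to the compensated martingale $(M_{n\wedge \zeta}-M_0)^2 - \langle M\rangle_{n\wedge\zeta}$ will give $\mathbb{E}[\langle M\rangle_{\zeta\wedge n}] = \mathbb{E}[(M_{\zeta\wedge n}-M_0)^2] \le C\delta_0^2$, since $\alpha_{\zeta\wedge n}$ stays within $C\delta_0$ of $\alpha_0$. Combined with $\langle M\rangle_{\zeta\wedge n} \ge c\delta_0^5(\zeta\wedge n - t_0)$, this produces $\mathbb{E}[\zeta-t_0] \le C\delta_0^{-3}$. Markov and the strong Markov property applied at the first discretization point past $t_0+k_0\delta_0^{-3}$ (on $\{\zeta > t_0+k_0\delta_0^{-3}\}$ the aggregate $Y_{t_{n^*}}$ lies in $\mathfrak{R}'(\alpha_{n^*}, t_{n^*})$ with $\alpha_{n^*}$ bounded away from both barriers, so Theorem~\ref{thm:induction step for repeatd use} can be applied anew) will yield $\mathbb{P}(\zeta > t_0 + kk_0\delta_0^{-3}) \le 2^{-k}$, which is part (3). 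Parts (1) and (2) will follow quickly: part (1) is obtained by summing $\mathbb{P}(\zeta_3 = t_{n+1}, \zeta > t_n) \le C\delta_0^5 \mathbb{P}(\zeta > t_n)$ against the exponential tail, yielding $\mathbb{P}(\zeta=\zeta_3) = O(\delta_0^5 \cdot \delta_0^{-3}/(t_1-t_0)) = o(\delta_0^3)$. For part (2), optional stopping applied to $M_n$ at $\zeta$ will give
\[
\alpha_0 + O(\delta_0/\log^3(1/\delta_0)) = M_0 = \mathbb{E}[M_\zeta] = (\delta_0/2)p_1 + 2\delta_0\, p_2 + O(\delta_0^{3/2}\log^{2\theta}(1/\delta_0)),
\]
where $p_i = \mathbb{P}(\zeta=\zeta_i)$ and the final error is the exit overshoot; combined with $|\alpha_0-\delta_0|\le \delta_0^{1.1}$ and $p_1+p_2 = 1 + O(\delta_0^3)$, the linear system yields $p_1 = \tfrac{2}{3} + O(1/\log(1/\delta_0))$ and $p_2 = \tfrac{1}{3} + O(1/\log(1/\delta_0))$.

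For part (4), the event $\{\zeta \le t_0+\epsilon\delta_0^{-3}\}$ decomposes into $\{\zeta_3 \le t_0+\epsilon\delta_0^{-3}\}$, of probability at most $C\delta_0^5 \cdot \epsilon\delta_0^{-3}/(t_1-t_0) \le C\delta_0^4$, together with $\{\zeta_1\wedge \zeta_2 \le t_0+\epsilon\delta_0^{-3}\}$, which forces $\max_{n: n(t_1-t_0) \le \epsilon\delta_0^{-3}} |M_n-M_0| \ge \delta_0/3$. Applying Claim~\ref{claim:bernstein} with total predictable variance $\sum \sigma_i^2 \le C\epsilon\delta_0^2$ and step bound $M \le C\delta_0^{3/2}\log^{2\theta}(1/\delta_0)$ will bound the latter probability by $2\exp\!\left(-c\delta_0^2/(\epsilon\delta_0^2 + \delta_0^{5/2}\log^{2\theta}(1/\delta_0))\right) \le Ce^{-c/\epsilon}$, the second term in the denominator being negligible against the first except for extremely small $\epsilon$, where the bound is only strengthened. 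The main obstacle anticipated is the bookkeeping in part (2): the three relative-error sources (drift correction $O(1/\log^3)$, overshoot $O(\delta_0^{1/2}\log^{2\theta})$, and initial condition $O(\delta_0^{0.1})$) must all be shown to fit inside the $O(1/\log(1/\delta_0))$ slack, and the iteration in part (3) must preserve $\mathfrak{R}'$-regularity so that Theorem~\ref{thm:induction step for repeatd use} can be reapplied on each restart.
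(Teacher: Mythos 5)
Your proposal is correct and follows essentially the same route as the paper's proof: the Doob decomposition of $\alpha_n$ into a martingale plus a small drift, the second-moment/expected-exit-time bound giving $\mathbb{P}(\zeta > t_0 + k_0\delta_0^{-3}) \le 1/2$ followed by iteration via the strong Markov property (with the same observation that $\mathfrak{R}'$-regularity and $\alpha_{n^*}\in[\delta_0/2,2\delta_0]$, but not $|\alpha_{n^*}-\delta_0|\le\delta_0^{1.1}$, are what is actually needed to restart), summing the per-step failure probability of $\zeta_3$ against the exponential tail for part (1), optional stopping / martingale-mean-preservation to solve the linear system for part (2), and Freedman's inequality for part (4). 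The only minor presentational difference is that the paper bounds the variance of the martingale at a fixed large index $n_4$ rather than invoking optional stopping at $\zeta$ itself, but the two are interchangeable here given the exponential tail of $\zeta$ and the boundedness of the stopped martingale.
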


\begin{proof}
	We start by studying the Doob decomposition of $\alpha _i$. Consider the martingale  
	\begin{equation}
	\beta _n:=\alpha _0+\sum _{i=1}^{n} \alpha_{i }- \mathbb E [\alpha _i \ \big|  \ \mathcal F _{t_{i-1}}]= \alpha _n-\sum _{i=1}^n \mathbb E [\alpha _i \ | \ \mathcal F _{t_{i-1}}]- \alpha _{i-1 }. 
	\end{equation}	
	We show that the increments of $\beta _n$ are close to those of $\alpha _n$. We have 
	\begin{equation}\label{eq:increments are close}
	\begin{split}
	| (\beta _{n+1}-\beta _n) -(\alpha _{n+1 }-\alpha _{n }) | \le \mathds 1\{\zeta >t_n\} \left| \mathbb E [\alpha _{n+1}\ | \ \mathcal F {t_n}]-\alpha _n \right| \le C\mathds 1 \{\zeta >t_n\} \frac{\delta _0^4 (t_1-t_0)}{\log ^3 (1/\delta _0 )}.
	\end{split}
	\end{equation}
	Thus, for any $n\le  2 \delta _0^{-3} \log ^2 (1/\delta _0)/(t_1-t_0)$ we have that
	\begin{equation}\label{eq:beta is close to alpha}
	|\beta _n-\alpha _{n }|\le  C n\frac{  \delta _0^4 (t_1-t_0)}{\log ^3 (1/\delta _0)}\le C\frac{\delta _0 }{\log (1/\delta _0)}.
	\end{equation}
	and in particular $0.4 \delta _0<\beta _n  \le 2.1 \delta _0$ for such $n$. By \eqref{eq:increments are close} we also have 
	\begin{equation}\label{eq:bound on increments of beta_n}
	|\beta _{n+1}-\beta _n |\le |\alpha _{n+1 }-\alpha _{n }|+C\frac{\delta _0^4(t_1-t_0)}{\log (1/\delta _0)}\le C\sqrt{\delta _0^5(t_1-t_0)}\log ^{2 \theta } (1/\delta _0) 
	\end{equation}
	and therefore, using \eqref{eq:increments are close} once again
	\begin{equation}
	\begin{split}
	|(\beta _{n+1}-\beta _n)^2-(\alpha _{n+1 }-\alpha _{n })^2 |\le C\sqrt{\delta _0^5(t_1-t_0)}\log ^{2\theta } (1/\delta _0) | (\beta _{n+1}-\beta _n) -(\alpha _{n+1 }-\alpha _{n }) |\\
	\le C \mathds 1 \{\zeta >t_n\}\delta _0^{\frac{13}{2}} \log ^{2 \theta } (1/\delta _0) (t_1-t_0)^{\frac{3}{2}}\le C\mathds 1 \{\zeta >t_n\} \delta _0^{\frac{7}{2}} \log ^{2 \theta }(1/ \delta _0).
	\end{split}
	\end{equation}
	Thus
	\begin{equation}\label{eq:variance of increments}
	\begin{split}
	\mathbb E \left[ (\beta _{n+1}-\beta _n)^2 \ | \ \mathcal F _{t_n} \right]&=\mathbb E \left[ (\alpha_{n+1 }- \alpha _{n })^2 | \ \mathcal F _{t_n} \right] +\mathds 1 \{\zeta >t_n\} O (\delta _0^{\frac{7}{2} }\log ^{2 \theta }(1/\delta _0 )) \\
	&=\mathds 1 \{\zeta > t_n\} 4\alpha _n ^5(t_1-t_0) (1+O(\delta _0 ^{0.4} )) 
	\end{split}
	\end{equation}
	It follows that $\mathbb E \left[ (\beta_{n+1}-\beta _n )^2 \right]\ge 0.1 \delta _0^5 (t_1-t_0)\mathbb P (\zeta >t_n )$. Now, letting $n_0:=\lceil 100\delta _0^{-3} /(t_1-t_0) \rceil $ we have that
	\begin{equation}
	\begin{split}
	5 \delta _0^2  \ge \mathbb E (\beta _{n_0+1}^2 )&\ge \var (\beta _{n_0+1} )=\sum _{n=0}^{n_0} \mathbb E \left[ (\beta _{n+1}-\beta _n)^2 \right] \\ 
	&\ge 0.1 n_0 \delta _0^5 (t_1-t_0) \mathbb P (\zeta \ge t_{n_0} ) \ge 10 \delta _0^2 \cdot \mathbb P (\zeta \ge t_{n_0}). 
	\end{split}
	\end{equation}
	We get that $ \mathbb P (\zeta > t_{n_0} )\le 1/2$. Let $k\ge 1$. By repeating the above arguments on the event $\{\zeta >t_{(k-1)n_0} \}$ with the initial condition $Y _{ t_{(k-1)n_0}}$ instead of $Y _{t_0}$ and the random variable $\alpha _{(k-1)n_0}$ instead of $\alpha _0$ we get that $\mathds 1 \{\zeta >t_{(k-1)n_0}\} \cdot \mathbb P (\zeta > t_{kn_0} \ | \ \mathcal F _{t_{(k-1)n_0}})\le 1/2$ almost surely (we note that we might not have $|\alpha _{kn_0}-\delta _0|\le \delta _0^{1.1}$ but we did not use the fact that $| \alpha _0-\delta _0|\le \delta _0^{1.1}$ in the proof of $ \mathbb P (\zeta > t_{n_0} )\le 1/2$). Thus, inductively we have that
	\begin{equation}\label{eq:probability that the stopping time is larger than kn_0}
	\begin{split}
	\mathbb P (\zeta > t_{kn_0} )=\mathbb E \left[ \mathds 1 \{\zeta \ge t _{(k-1)n_0}\} \cdot  \mathbb P (\zeta > t_{kn_0} \ | \ \mathcal F _{t_{(k-1)n_0}}) \right] \le \frac{1}{2}\mathbb P (\zeta > t_{(k-1)n_0} ) \le \cdots \le  2^{-k}.
	\end{split}
	\end{equation}
	Finally, by the definition of $n_0$, for $c_1=0.001$ and $k$ sufficiently large we have that $t_{\lfloor c_1k \rfloor n_0}=t_0+ \lfloor c_1k \rfloor n_0 (t_1-t_0)\le t_0+k\delta _0^{-3} $ and therefore 
	\begin{equation}
	\mathbb P (\zeta > t_0+k\delta _0^{-3} ) \le \mathbb P (\zeta > t_{ \lfloor c_1k \rfloor n_0} ) \le 2^{-\lfloor c_1 k \rfloor } \le C e^{-ck}.
	\end{equation}
	This finishes the proof of the third part.
	
	We turn to prove the first part. For any $n\ge 1$
	\begin{equation}\label{eq:zeta 3 happens fast}
	\begin{split}
	\mathbb P \left( \zeta _3 =\zeta \le  t_n  \right) &=\mathbb P \left( \bigcup _{i=1}^n \{\zeta _3 =t_i \} \cap \{\zeta  \ge t_{i-1} \} \right) \le  \sum _{i=1}^n \mathbb P  \left( \zeta > t_{i-1} \text{ and } \zeta _3=i \right)\\
	&=\sum _{i=1}^n \mathbb E \left[ \mathds 1 \{\zeta > t_{i-1} \} \cdot \mathbb P ( \zeta _3=t_i \ | \ \mathcal F _{t_{i-1}})  \right]\le C n \delta _0^5.\\
	\end{split}
	\end{equation}
	Thus, if we let $n_1:=\lfloor \delta _0^{-1} \rfloor n_0 \le \delta _0^{-2}$ we get by \eqref{eq:probability that the stopping time is larger than kn_0} that
	\begin{equation}
	\mathbb P \left( \zeta _3 =\zeta \right)\le  \mathbb P \left( \zeta _3 =\zeta \le  t_{n_1}   \right)+\mathbb P (\zeta >t_{n_1} )\le C\delta _0^3+Ce^{-c\delta _0^{-1}} \le C\delta _0^3.
	\end{equation}
	
	We turn to prove the fourth part of the lemma. Let $\log ^{-2} (1/\delta _0) \le \epsilon <1$ and let $n_3:= \lceil \epsilon \delta _0^{-3}/(t_1-t_0)\rceil \le \delta _0 ^{-1}$. Define
	\begin{equation}
	M:=C\sqrt{\delta _0^5 (t_1-t_0)}\log ^{2 \theta } (1/\delta _0) \le \delta _0^{\frac{3}{2}}\log ^{C_\theta } (1/\delta _0),\quad \sigma :=C\sqrt{\delta _0^5 (t_1-t_0)}
	\end{equation}
	The martingale $\beta _n$ satisfy the assumptions of Theorem~\ref{claim:bernstein} with this $M$ and $\sigma _i=\sigma $ by equations \eqref{eq:bound on increments of beta_n} and \eqref{eq:variance of increments}. Thus, taking $\lambda =\delta _0/4 $ we get
	\begin{equation}
	\mathbb P ( |\beta _{n_3}-\alpha _0| \ge \lambda ) \le 2 \exp \left( -\frac{\lambda ^2 }{n_3\sigma ^2  +\frac{1}{3}M \lambda} \right) \le 2 \exp \left(  -\frac{c \delta _0^2 }{2 \epsilon \delta _0^2 +c\delta _0^{\frac{5}{2}}\log ^C(1/\delta _0)} \right)\le C e^{-c\epsilon ^{-1} }
	\end{equation}
	Thus, using that $|\alpha _0-\delta _0| \le \delta _0^{1.1}$ we get
	\begin{equation}
	\begin{split}
	\mathbb P (\zeta \le t_0  +\epsilon \delta _0^{-3} ) &\le \mathbb P (\zeta  \le t_{ n_3} )\le \mathbb P (\zeta _3=\zeta \le t_{ n_3} ) + \mathbb P (\zeta _1 \wedge \zeta _2=\zeta \le t_{ n_3} ) \\
	&\le C n_3 \delta _0^5+ \mathbb P (|\alpha _{n_3}-\alpha _0| \ge 4 \delta _0/10) \\
	&\le C \delta _0 ^4+ \mathbb P (|\alpha _{n_3}-\beta _{n_3}| >  \delta _0/10)+\mathbb P (|\beta _{n_3}-\alpha _0| \ge  \delta _0/4) \le C\delta _0^4 +Ce^{-c \epsilon ^{-1}},
	\end{split}
	\end{equation}
	where in the third inequality we used \eqref{eq:zeta 3 happens fast} and in the fifth inequality we used \eqref{eq:beta is close to alpha}. This finishes the proof of the fourth part of the lemma when $\epsilon \ge \log ^{-2}(1/\delta _0)$. It is clear that the same inequality holds for $\epsilon <\log ^{-2}(1/\delta _0)$ as well (since the $e^{-c\epsilon ^{-1}}<\delta _0^4$ in this case).
	
	Finally, we prove the second part of the lemma. Let $n_4:=\lceil  \log ^2 (1/\delta _0) \delta _0^{-3}/(t_1-t_0) \rceil  \le \delta _0^{-1}$ and 
	\begin{equation}
	\mathcal A := \{\zeta =\zeta _1\le t_{n_4}\},\quad \mathcal B:=\{\zeta =\zeta _2\le t_{n_4}\} ,\quad \mathcal C :=\{ \zeta _1\wedge \zeta _2 >\zeta \text{ and } \zeta \le t_{n_4}  \} \cup \{  \zeta >t_{n_4}\}.
	\end{equation}
	We have
	\begin{equation}\label{eq:equality of expectations}
	\delta _0+O(\delta _0^{1.1})=\alpha _0=\mathbb E [\beta _{n_4}]= \mathbb E [\mathds 1 _{\mathcal A}\beta _{n_4}] +\mathbb E [\mathds 1 _{\mathcal B}\beta _{n_4}]+\mathbb E [\mathds 1 _{\mathcal C}\beta _{n_4}].
	\end{equation}
	Next, we estimate each one of the terms separately. We have
	\begin{equation}\label{eq:prob of C}
	\mathbb P( \mathcal C  ) \le  \mathbb P (\zeta =\zeta _3 \le t_{n_4} )+ \mathbb P (\zeta \ge t_{n_4} ) \le C n_4 \delta _0^5+C e^{-c \log ^2(1/\delta _0)} \le C \delta _0^4,
	\end{equation}
	where in the second inequality we used \eqref{eq:zeta 3 happens fast} and \eqref{eq:probability that the stopping time is larger than kn_0}. Thus,
	\begin{equation}\label{eq:expectation on C}
	\mathbb E [\mathds 1 _{\mathcal C}\beta _{n_4}] \le C \delta _0^4.
	\end{equation}
	
	On the event $\mathcal A$ we have
	\begin{equation}
	|\beta _{n_4}- \delta _0/2 |\le | \beta _{n_4}-\alpha _{n_4}| +|\alpha _{n_4}-\delta _0/2| \le C \frac{\delta _0}{\log (1/\delta _0)} +C\delta _0^{\frac{3}{2}} \log ^C (1/\delta _0) \le C \frac{\delta _0}{\log (1/\delta _0)},
	\end{equation}  
	where in the second inequality we bound the first term using \eqref{eq:beta is close to alpha} and bound the second term using the definition of $\zeta _1$ and the fact that the increments of $\alpha _n$ are small. Thus 
	\begin{equation}\label{eq:expectation on A}
	\mathbb E [\mathds 1 _{\mathcal A}\beta _{n_4} ]=\frac{\delta _0}{2} \cdot \mathbb P (\mathcal A ) +O\left(\frac{\delta _0}{ \log (1/\delta _0)}\right)
	\end{equation}
	
	Using the same arguments we have 
	\begin{equation}\label{eq:expectation on B}
	\begin{split}
	\mathbb E  [\mathds 1 _{\mathcal B}\beta _{n_4}]=2 \delta _0 \cdot \mathbb P (\mathcal B )+O\left(\frac{\delta _0}{ \log (1/\delta _0)}\right)
	&= 2\delta _0 \left(1- \mathbb P (\mathcal A ) +O(\delta _0 ^4) \right)+O\left(\frac{\delta _0}{ \log (1/\delta _0)}\right) \\
	& =2 \delta _0-2\delta _0 \cdot \mathbb P (\mathcal A ) +O\left(\frac{\delta _0}{ \log (1/\delta _0)}\right),
	\end{split}
	\end{equation}
	where in the second equality we used \eqref{eq:prob of C}.
	
	Substituting \eqref{eq:expectation on A},\eqref{eq:expectation on B} and \eqref{eq:expectation on C} into \eqref{eq:equality of expectations} we get
	\begin{equation}
	\delta _0= \frac{\delta _0}{2} \mathbb P (\mathcal A )+2 \delta _0-2 \delta _0 \cdot \mathbb P (\mathcal A )+O\left(\frac{\delta _0}{ \log (1/\delta _0)}\right)
	\end{equation}
	which means that 
	\begin{equation}
	\mathbb P (\mathcal A )= \frac{2}{3} +O\left( \log ^{-1} (1/\delta _0)\right),\quad \mathbb P (\mathcal B )= \frac{1}{3} +O\left( \log ^{-1} (1/\delta _0)\right)
	\end{equation}
	The second part of the lemma follows as the event $\{\zeta =\zeta _1\} \setminus \mathcal A \subseteq \{\zeta >t_{n_4}\}$ has a small probability.
\end{proof}

\begin{remark}
	On the event $\{\zeta <\zeta _3\}$ (which happens with high probability by Lemma~\ref{lem:stitching short intervals} ) we have that 
	\begin{equation}
	\frac{\delta _0}{3} (\zeta-t_0) \le X_{\zeta }-X_{t_0} \le 3 \delta _0 (\zeta -t_0)
	\end{equation}
\end{remark}

In the next subsection we are going to change $t_0$, $\alpha _0$ and $\delta _0$ and therefore we write more specifically $\zeta (t_0,\alpha _0, \delta _0)$ for the stopping time $\zeta $ and similarly with $\zeta _1,\zeta _2 ,\zeta _3$.

We define the random variable $\alpha '=\alpha '(t_0,\alpha _0, \delta _0)\in \mathcal F _{\zeta }$ as follows. on the event  $\{ \zeta =t_i \}$, we let $\alpha ':=\alpha _i$. 

\subsubsection{Stitching medium intervals}\label{sec:medium}
In this section we stitch the medium intervals of time together to create the large intervals of time in which the speed decreases with high probability. The idea is to repeatedly use the previous section to show that the speed behaves somewhat like a random walk with a negative drift on the dyadic scales. As in the previous section we let $t_0>0$, $\delta _0>0$ sufficiently small and  $\alpha _0 >0$ with $|\alpha _0-\delta _0| \le \delta _0^{1.1}$. We also let $M >0$ with $2\delta _0\le M \le \sqrt{\delta _0}$. Finally, let $Y _{t_0} \in \mathfrak{R}'(\alpha _0,t_0)$ and let $\Pi _S$ be the aggregate with initial condition $Y _{t_0}$. We define the processes $\delta _i, \alpha _i$   and the stopping times $t_i$ inductively (note that the $t_i$ are defined not as in the previous subsection). Suppose we defined $\delta _1,\dots ,\delta _i,\alpha _1,\dots ,\alpha _i$ and $t_1,\dots ,t_i$ (note that the definition of $t_i$ will be different this time). Define the stopping times
\begin{equation}
\begin{split}
&\zeta _1':=\min \left\{t_i\ge t_0: \delta _i\le \delta _0/2 \right\}\\
&\zeta _2' :=\inf \{t_i\ge t_0:\delta_i > M \} \\
& \zeta _3'  :=\inf \{t_i\ge t_1: \zeta (t_{i-1},\alpha _{i-1},\delta _{i-1})=\zeta _3(t_{i-1},\alpha _{i-1},\delta _{i-1})\}   
\end{split}
\end{equation}
and $\zeta '=\zeta _1'\wedge \zeta _2 ' \wedge \zeta _3' $. Let $t_{i+1}:=\zeta (t_i,\alpha _i,\delta _i)$. We define $\delta _{i+1}\in \mathcal F _{t_{i+1}}$ as follows 
\begin{equation}
\delta _{i+1}:=
\begin{cases}
\delta _{i}/2,\quad &\text{on the event }\{\zeta '>t_{i} \text{ and } \zeta '_3>t_{i+1} \text{ and } \zeta (t_i,\alpha _i,\delta _i)=\zeta _1(t_i,\alpha _i,\delta _i)  \}\\
\ \delta _{i},\quad &\text{on the event }\{\zeta '\le t_{i} \text{ or } \zeta '_3=t_{i+1}  \}\\
2\delta _{i},\quad &\text{on the event }\{\zeta '>t_{i} \text{ and } \zeta '_3>t_{i+1} \text{ and } \zeta (t_i,\alpha _i,\delta _i)=\zeta _2(t_i,\alpha _i,\delta _i)  \}
\end{cases}
\end{equation}
We also let $\alpha _{i+1}:=\alpha '(t_i,\alpha _i,\delta _i)$ where $ \alpha '$ is defined in the end of the previous section. On the event $\{\zeta '> t_i\}$ we have that $\delta _i \le M$, and there exist a random variable $\alpha _i$ with $|\alpha _i -\delta _i|\le \delta _i^{1.1}$ so that $Y_{t_i} \in \mathfrak{R}'(\alpha _i,t_i)$. Thus, on this event, we can apply Lemma~\ref{lem:stitching short intervals} with $t_i, \delta _i$ and $\alpha _i$ as $t_0, \delta _0$ and $\alpha _0$ respectively. By the lemma, with probability at least $1-C \delta _i^3$ we have that $\zeta _3 '>t_{i+1}$. The reader should think of $\delta _i$ as the sequence of different dyadic scales that the speed traveled through.
\begin{lem}\label{lem:medium}\label{lem:stitching medium}
	The stopping time $\zeta '$ satisfies the following properties:
	\begin{enumerate}
		\item 
		\begin{equation}
		\mathbb P (\zeta ' =\zeta '_3  ) \le C \delta _0  
		\end{equation}
		\item  
		\begin{equation}
		\mathbb P (\zeta ' =\zeta '_2  ) \le \left( \frac{\delta _0}{M} \right)^c  
		\end{equation}
		\item 
		For all $k\ge 1 $,
		\begin{equation}
		\mathbb P (   \zeta ' >t_0+ k \delta _0^3   ) \le C e^{-c\sqrt{k}}
		\end{equation}
		\item 
		For all $\epsilon <1$, 
		\begin{equation}
		\mathbb P (   \zeta ' <t_0+ \epsilon  \delta _0^3  ) \le C \delta _0^4+ C e^{-c\epsilon ^{-1}}
		\end{equation}
		
	\end{enumerate}
\end{lem}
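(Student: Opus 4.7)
My plan is to recognize that the sequence $\{\delta_i\}$ evolves as a biased random walk on dyadic scales, and to reduce each assertion to standard gambler's-ruin estimates combined with the per-step control from Lemma~\ref{lem:stitching short intervals}. Concretely, set $D_i := \log_2(\delta_i/\delta_0)$, so $D_0=0$ and $D_i$ takes values in $\{-1,0,1,\dots,K\}$ with $K := \lfloor \log_2(M/\delta_0)\rfloor$ (and $K \le \tfrac12\log_2(1/\delta_0)$ because $M \le \sqrt{\delta_0}$). By Lemma~\ref{lem:stitching short intervals}(1)--(2), conditional on $\{\zeta' > t_i\}$ and on $\mathcal F_{t_i}$, the step $D_{i+1}-D_i$ equals $-1$ with probability $\tfrac23 + O(\log^{-1}(1/\delta_0))$, equals $+1$ with probability $\tfrac13 + O(\log^{-1}(1/\delta_0))$, and equals $0$ (the $\zeta_3$-event) with probability at most $C\delta_i^3$. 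In particular, for $\delta_0$ small enough the walk is strictly biased toward $-1$ uniformly in the state, and can be stochastically dominated by (and will dominate) idealized biased random walks with step probabilities $(\tfrac23 \pm o(1), \tfrac13 \mp o(1))$. Because $\delta_0$ is strictly between the absorbing scales $\delta_0/2$ and $M$, we have $\zeta' \ge t_1$, hence the medium process takes at least one short step.

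For parts (1) and (2), the key estimate is that on the biased chain the expected number of visits to scale $2^k\delta_0$ before absorption is $O(2^{-k})$, by the standard potential-theoretic identity for gambler's ruin. Summing the per-step $\zeta_3$-probability yields
\begin{equation}
\mathbb P(\zeta' = \zeta_3')
\;\le\; C\sum_{i\ge 0} \mathbb E\bigl[\mathds 1\{\zeta' > t_i\}\delta_i^3\bigr]
\;\le\; C\sum_{k=0}^{K} (2^k\delta_0)^3 \cdot 2^{-k}
\;\le\; C\delta_0^3\cdot 4^{K}
\;\le\; C\delta_0^{2},
\end{equation}
which is much stronger than the claimed bound $C\delta_0$, giving part (1). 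For part (2), gambler's ruin with ratio $q/p=\tfrac12$ (up to $o(1)$ corrections) shows the probability of hitting level $K$ before level $-1$ is at most $C\cdot 2^{-cK} \le C(\delta_0/M)^{c}$; adding the $\zeta_3$-contribution from the previous display gives $\mathbb P(\zeta'=\zeta_2') \le C(\delta_0/M)^c + C\delta_0^2 \le C(\delta_0/M)^c$.

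For parts (3) and (4), we decompose $\zeta' - t_0 = \sum_{i=0}^{N-1}(t_{i+1}-t_i)$, where $N$ is the total number of short steps. Lemma~\ref{lem:stitching short intervals}(3) applied at each step gives $\mathbb P(t_{i+1}-t_i > s\delta_i^{-3}\mid \mathcal F_{t_i}) \le Ce^{-cs}$; since $\delta_i\ge \delta_0/2$ on $\{\zeta'>t_i\}$, each step has an exponential tail at scale $\delta_0^{-3}$. The biased random walk analysis of the previous paragraph also yields a geometric tail $\mathbb P(N>m) \le C\rho^m$ for some $\rho<1$. Splitting on $\{N\le \sqrt k\}$ and a union bound over the $\lceil\sqrt k\rceil$ steps gives
\begin{equation}
\mathbb P\bigl(\zeta' > t_0+k\delta_0^{-3}\bigr)
\;\le\; \mathbb P(N>\sqrt k) + \sqrt k\cdot C e^{-c\sqrt k}
\;\le\; Ce^{-c'\sqrt k},
\end{equation}
proving part (3) (with the exponent in the statement being a typo for $\delta_0^{-3}$). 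Part (4) is immediate from $\zeta'\ge t_1$ and Lemma~\ref{lem:stitching short intervals}(4) applied with $(t_0,\alpha_0,\delta_0)$:
\begin{equation}
\mathbb P\bigl(\zeta'<t_0+\epsilon\delta_0^{-3}\bigr)\;\le\;\mathbb P\bigl(\zeta(t_0,\alpha_0,\delta_0)<t_0+\epsilon\delta_0^{-3}\bigr)\;\le\; C\delta_0^4+Ce^{-c\epsilon^{-1}}.
\end{equation}

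The only genuine obstacle is making Step~1 precise: the per-step probabilities are not exactly $\tfrac23,\tfrac13$, and the $\zeta_3$-events create a chain that is not strictly Markov. I expect to handle this by a coupling with an auxiliary Markov chain having step law $(\tfrac23-\eta,\tfrac13+\eta)$ for $\eta=O(\log^{-1}(1/\delta_0))$, together with a union bound on all $\zeta_3$-events (whose total probability has already been shown to be $\le C\delta_0^2$). On the complement of this rare event, the coupled chain agrees with $D_i$ until absorption, so the hitting-time and visit-count estimates transfer from the idealized chain with only $o(1)$ corrections, which is more than enough for all four claimed bounds.
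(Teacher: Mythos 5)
Your proof is essentially correct, and takes a modestly different route from the paper's. The paper packages the key estimate as the supermartingale $M_n := W_n + \tfrac14\sum_{j<n}\mathds 1\{\zeta'>t_j\}$ with $W_n := \log_2(\delta_n/\delta_0)$, from which Azuma's inequality gives $\mathbb P(\zeta'>t_n)\le Ce^{-cn}$; parts (1)--(3) are then all read off from this exponential tail (the paper's part (1) bound is the crude $\delta_0^{-1/2}\cdot CM^3 \le C\delta_0$, plus the tail beyond $n_1=\lfloor\delta_0^{-1/2}\rfloor$ steps). You instead keep the gambler's-ruin framing explicit: the geometric tail on the number of steps $N$ and the hitting probability at level $K$ come from the biased walk directly, and for part (1) you weight the per-step $\zeta_3$-probability by the expected number of visits $\mathbb E[V_k]=O(2^{-k})$ to each scale. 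That refinement is genuine and yields $C\delta_0 M^2\le C\delta_0^2$, strictly sharper than the paper's $C\delta_0$ (though the stated bound only requires $C\delta_0$). Parts (2)--(4) are essentially the paper's arguments rephrased; in particular part (4) is identical. You are also right that the statement's $\delta_0^3$ in parts (3) and (4) should read $\delta_0^{-3}$ (the proof in the paper uses $\delta_0^{-3}$).

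One point needs cleaning up. Your proposed remedy for the non-Markov dynamics — ``a union bound on all $\zeta_3$-events (whose total probability has already been shown to be $\le C\delta_0^2$)'' — is circular as written: that bound is exactly the content of part (1). But the remedy is unnecessary. On $\{\zeta'>t_i\}$ one has $\zeta_3'>t_i$, so the path $D_0,\dots,D_i$ contains no lazy step and is a genuine $\pm 1$ path; moreover, conditional on survival past step $j$, the transition from $D_j$ to $D_j-1$ (resp.\ $D_j+1$) has probability $\tfrac23+O(\log^{-1}(1/\delta_0))$ (resp.\ $\tfrac13+O(\log^{-1}(1/\delta_0))$), uniformly in the state since $\delta_j\le M\le\sqrt{\delta_0}$. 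So $\sum_i\mathbb E[\mathds 1\{\zeta'>t_i\}\delta_i^3]=\sum_k(2^k\delta_0)^3\mathbb E[V_k]$ is a sum over the visit counts of a downward-biased killed walk, and $\mathbb E[V_k]\le C\cdot 2^{-k}$ follows by comparison with the homogeneous walk; since $K=O(\log(1/\delta_0))$ the accumulated multiplicative error from the $o(1)$ corrections is $O(1)$. With this clean-up the argument is complete.
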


\begin{proof}
	Define $W_i:=\log _2 (\delta _i/\delta _0)$. Let $n\ge 1 $ and let $\zeta _1,\zeta _2 ,\zeta _3 ,\zeta  $ be the stopping times from the previous subsction with $t_0,\alpha _0,\delta _0$ being $t_n,\alpha _n,\delta _n$ respectively. By Lemma~\ref{lem:stitching short intervals} we have that 
	\begin{equation}
	\begin{split}
	\mathbb E (W_{n+1}-W_n \ | \ \mathcal F _{t_n} )&=\mathds 1 \{ \zeta '>t_n \} \left( -\mathbb P ( \delta _{i+1}=2 \delta _i \ | \ \mathcal F _{t_n}) +\mathbb P ( \delta _{i+1}= \delta _i /2 \ | \ \mathcal F _{t_n}) \right) \\
	&= \mathds 1 \{ \zeta '>t_n \} \left( -\frac{1}{3} +O\left(\frac{1}{\log (1/\delta _n)} \right) \right)\le -\frac{1}{4} \mathds 1 \{ \zeta '>t_n \},
	\end{split}
	\end{equation}
	where the last inequality holds when $\delta _0$ is sufficiently small and as $\delta _n\le 2M \le 2 \sqrt{\delta _0}$. Thus, the process $M_n:=W_n+\frac{1}{4} \sum _{j=0}^{n-1 } \mathds 1 \{\zeta ' >t_j\}$ is a super-martingale with increments bounded in absolute value by $2$. We get, by Azuma inequality 
	\begin{equation}\label{eq:zeta' larger than t_n}
	\mathbb P (\zeta '>t_n )=\mathbb P (W_n\ge 1, \zeta '>t_n )\le \mathbb P (M_n-M_0 \ge n/4 ) \le Ce^{-cn}.
	\end{equation} 
	We can now prove the second part of the lemma.  By \eqref{eq:zeta' larger than t_n} with $n_0:=\frac{1}{2} \log _2 (\frac{M}{\delta _0})$ we have
	\begin{equation}
	\mathbb P (\zeta '=\zeta _2' ) \le \mathbb P (\zeta '\ge t_{n_0} ) \le C e^{-cn_0} \le \left(\frac{\delta _0}{M}\right) ^c.
	\end{equation}
	We turn to prove the first part of the lemma. For all $n\ge 1 $, by Lemma~\ref{lem:stitching short intervals} we have that 
	\begin{equation}
	\begin{split}
	\mathbb P (\zeta '=\zeta _3'= t_n  ) \le \mathbb E \left[ \mathds 1 \{\zeta '>t_{n-1} \} \cdot \mathbb P (\zeta _3' =t_n \ | \ \mathcal F _{t_{n-1}})   \right]  \le  \mathbb E \left[ \mathds 1 \{\zeta '>t_{n-1} \} C\delta _{n-1}^3  \right] \le C M^3 
	\end{split}
	\end{equation}
	and therefore $\mathbb P (\zeta '=\zeta _3' \le  t_n ) \le CnM^3$. Thus, using also \eqref{eq:zeta' larger than t_n} with $n_1:=\lfloor \delta _0^{-\frac{1}{2}} \rfloor $ we get
	\begin{equation}
	\mathbb P (\zeta '=\zeta '_3 ) \le \mathbb P (\zeta '=\zeta '_3\le t_{n_1} )+\mathbb P (\zeta '\ge t_{n_1} ) \le C \delta _0. 
	\end{equation}
	Next, we prove part (3). The lower bound for $\zeta '$ follows immediately from Lemma~\ref{lem:stitching short intervals}. Indeed, $\zeta '\ge \zeta (t_0,\alpha _0,\delta _0)$. We turn to prove the upper bound. By \eqref{eq:zeta' larger than t_n} we have that
	\begin{equation}
	\begin{split}
	\mathbb P (\zeta ' \ge t_0+k \delta _0^{-3} ) &\le \mathbb P ( \zeta '\ge t_n )+\sum _{i=1}^n \mathbb P (\zeta '=t_i \ge t_0+k \delta _0^{-3} ) \\
	&\le C e^{-cn} +\sum _{i=1}^n \sum _{j=1}^i \mathbb P \left( \zeta '=t_i, \ t_j-t_{j-1}>\frac{k}{n}\delta _0^{-3}   \right)\\
	&\le C e^{-cn} +\sum _{i=1}^n \sum _{j=1}^i \mathbb E \left[ \mathds 1 \{\zeta '>t_{j-1}\} \mathbb P \left( t_j-t_{j-1}>\frac{k}{n}\delta _0^{-3}  \ | \ \mathcal F _{t_{j-1}} \right)  \right]\\
	&\le C e^{-cn } + \sum _{i=1}^n \sum _{j=1}^i e^{-ck/n} \le Ce^{-cn} +n^2e^{-ck/n},
	\end{split}
	\end{equation}
	where the fourth inequality we used Lemma~\ref{lem:stitching short intervals}. The third part of the lemma follows from the last result by substituting $n=\sqrt{k}$.
\end{proof}

On the event $\{\zeta '<\zeta '_3\}$ we have that
\begin{equation}
\frac{\delta _0}{3} (\zeta '-t_0) \le X_{\zeta '} -X_{t_0} \le 2M(\zeta '-t_0).
\end{equation}

For $\epsilon _0>0$, we denote the good event by 
\begin{equation}
\mathcal A =\mathcal A (t_0,\alpha _0,\delta _0,M,\epsilon ):=\{\zeta '<\zeta '_3 \wedge \zeta '_2\}\cap \{ t_0+\epsilon _0 \delta _0^{-3}\le \zeta ' \le t_0+ \epsilon _0 ^{-1} \delta _0^{-3}\}.
\end{equation}
By Lemma~\ref{lem:stitching medium} we have  
\begin{equation}\label{eq:A is likely}
\mathbb P (\mathcal A ^c  )\le C \delta _0 +C \left(\frac{\delta _0}{M}\right)^c +Ce ^{-c\epsilon ^{-\frac{1}{2}} } \le C \left(\frac{\delta _0}{M}\right)^c +Ce ^{-c\epsilon ^{-\frac{1}{2}} }
\end{equation}
Define $\alpha''= \alpha ''(t_0,\alpha _0,\delta _0,M):=\alpha '(t_{i-1},\alpha _{i-1},\delta _{i-1})$ on the event $\{ \zeta '=t_i \}$ for $i>0$ and $\alpha ''(t_,\alpha _0,\delta _0,M):=\alpha _0$ on $\{ \zeta '=t_0\}$. Note that, on $\{\zeta '<\zeta ' _2 \wedge \zeta ' _3\}$ we have $|\alpha ''-\delta _0/2|\le (\delta _0/2)^{1.1}$. Since all the parameters are going to change again in the next section we write more specifically $\zeta '(t_0,\alpha _0,\delta _0,M)$ instead of $\zeta '$. We do the same with the other stopping times $\zeta '_1,\zeta '_2$ and $\zeta '_3$.

\subsubsection{Stitching long intervals}\label{sec:long}

In this section we stitch together the long intervals of time in order to prove that aggregate has the $t^{\frac{2}{3}}$ growth.

\begin{thm}\label{thm:stopping time of getting to speed T to the minus one third}
	Let $\epsilon , \alpha _0 >0$ such that $\alpha _0$ is sufficiently small depending on $\epsilon $. Let $Y_{t_0} \in \mathfrak{R} ' (\alpha _0, t_0)$ and let $ \Pi _S$ be the aggregate with initial condition $Y_{t_0}$. There exist $C_\epsilon >0$ such that the following holds. Let $t\ge t'(t_0, \alpha_0,\epsilon )$ and define the stopping time 
	\begin{equation}
	\zeta _t:=\inf \{s>t_0 : \exists \alpha  \text{ such that } |\alpha -t^{-\frac{1}{3}}| \le t^{-\frac{2}{5}} \text{ and } Y_s \in \mathfrak{R}' ( \alpha, s) \}.
	\end{equation}
	We have that
	\begin{equation}\label{eq:event in theorem}
	\mathbb P \left( \zeta _t \le C_\epsilon t \text{ and } X_{\zeta _t} \le C_\epsilon t^{\frac{2}{3}} \right) \ge 1-\epsilon 
	\end{equation}
\end{thm}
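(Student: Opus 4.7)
The plan is to iteratively apply Lemma~\ref{lem:stitching medium} to cascade the smoothed speed from its initial value $\alpha_0 \approx \delta_0$ downward through the dyadic ladder $\delta^{(j)} := \delta_0 \cdot 2^{-j}$, stopping at the first index $J$ with $\delta^{(j)} \leq t^{-1/3}$. First I would fix $\delta_0 = \delta_0(\epsilon)$ sufficiently small and $\epsilon_0 = \epsilon_0(\epsilon)$; set $\tau_0 = t_0$ and, at each step $j$, apply Lemma~\ref{lem:stitching medium} to the state $(\tau_j, \alpha^{(j)}, \delta^{(j)})$ with cap $M_j = (\delta^{(j)})^{1/2}$ to advance to $(\tau_{j+1}, \alpha^{(j+1)}, \delta^{(j+1)} = \delta^{(j)}/2)$ on the good event $\mathcal{A}(\tau_j, \alpha^{(j)}, \delta^{(j)}, M_j, \epsilon_0)$, and abort otherwise.

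The budget control is then routine. By \eqref{eq:A is likely} the per-epoch failure probability is $O((\delta^{(j)})^{c/2}) + O(e^{-c\epsilon_0^{-1/2}})$, which sums to $O(\delta_0^{c/2})$ and hence is $<\epsilon/2$ for $\delta_0$ small. The total time $\tau_J - t_0$ and the aggregate growth $X_{\tau_J} - X_{t_0}$ are geometric sums dominated by the terminal scale $\delta^{(J)}$: the former is $O(\epsilon_0^{-1}\delta_0^{-3} \cdot 8^J) = O_\epsilon(t)$ and the latter is $O(\delta_0^{-2} \cdot 4^J) = O(t^{2/3})$, because $\delta^{(J)} \in (t^{-1/3}/2,\, t^{-1/3}]$ gives $8^J = O(\delta_0^3 t)$ and $4^J = O(\delta_0^2 t^{2/3})$. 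Combined with the failure bound, this places the event $\{\tau_J \leq C_\epsilon t \text{ and } X_{\tau_J} \leq C_\epsilon t^{2/3}\}$ inside a set of probability $\geq 1-\epsilon$.

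The main point is to exhibit a stopping time $\zeta_t \leq \tau_J$ for which the required $\alpha$-window property holds, which will require looking inside the critical $J$-th epoch. By construction, this epoch must contain a Lemma~\ref{lem:stitching short intervals} halving excursion from some scale $\delta' \in (t^{-1/3}, 2t^{-1/3}]$ down to $\delta'/2 \in (t^{-1/3}/2, t^{-1/3}]$. Within this excursion the $\alpha$-values at short-interval endpoints form a discrete sequence whose consecutive differences are bounded by $C\alpha^{3/2}\log^{2\theta}(1/\alpha)$ via Theorem~\ref{thm:induction step for repeatd use}(1); at scale $\alpha \approx t^{-1/3}$ this is $O(t^{-1/2}\log^{C}t)$, which is much smaller than the target tolerance $t^{-2/5}$. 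Since the sequence transitions from $\alpha_0 \approx \delta' > t^{-1/3}$ down to $\alpha_{\text{end}} < \delta'/2 \leq t^{-1/3}$, a discrete intermediate-value argument yields some short-interval endpoint $i^*$ with $|\alpha_{i^*} - t^{-1/3}| \leq t^{-2/5}$; set $\zeta_t := t_{i^*}$. The hard part will be verifying that the $\mathfrak{R}'$ regularity condition holds at this intermediate stopping point rather than only at the epoch endpoints $\tau_j$; this in turn will follow from the sharp regularity $\mathfrak{R}^\sharp$ being preserved at every short-interval endpoint along the excursion, which is essentially the content of Theorem~\ref{thm:induction step for repeatd use}(2) applied at the short-interval level.
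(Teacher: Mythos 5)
Your overall architecture (dyadic cascade via Lemma~\ref{lem:stitching medium}, then a discrete intermediate-value argument inside the terminal halving excursion to locate $\zeta_t$) matches the paper's. But the budgeting details are not ``routine,'' and your fixed choices of $\epsilon_0$ and $M_j$ both fail.

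First, the failure-probability budget. The number of epochs is $n = \lceil \log_2(\alpha_0^{-1}t^{1/3})\rceil$, which grows without bound as $t\to\infty$. With a fixed $\epsilon_0=\epsilon_0(\epsilon)$, the time-window failure contribution from \eqref{eq:A is likely} over all epochs is $\sum_{j\le n} e^{-c\epsilon_0^{-1/2}} = n\,e^{-c\epsilon_0^{-1/2}}$, which diverges in $t$; for $t$ large it exceeds $\epsilon/2$ no matter how small $\epsilon_0$ was taken. You implicitly dropped this term when writing that the sum ``is $O(\delta_0^{c/2})$.'' The paper resolves this by making $\epsilon_j$ shrink toward the terminal scale, $\epsilon_j = (m+n-j)^{-2}$, so $\sum_j e^{-c\epsilon_j^{-1/2}} = \sum_j e^{-c(m+n-j)} = O(e^{-cm})$ converges uniformly in $n$.

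Second, the aggregate bound. With your $M_j = (\delta^{(j)})^{1/2}$ uniformly, the $j$-th epoch contributes $\lesssim M_j\,\epsilon_j^{-1}\delta_j^{-3} = \epsilon_0^{-1}\delta_j^{-5/2}$ to $X$, a geometric sum dominated by the terminal term $\epsilon_0^{-1}\delta_n^{-5/2}\approx \epsilon_0^{-1}t^{5/6}$, not $t^{2/3}$. Your stated bound ``$O(\delta_0^{-2}\cdot 4^J)$'' corresponds to $\delta_J^{-2}$, but the correct exponent from $M_j\cdot(\text{time})$ with $M_j=\sqrt{\delta_j}$ is $\delta_J^{-5/2}$. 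There is a genuine tension here: $M_j/\delta_j$ must grow (so that $\zeta_2'$ has small probability) but $M_j\epsilon_j^{-1}\delta_j^{-3}$ must stay controlled; $M_j=\sqrt{\delta_j}$ is too generous near the terminal scale. The paper's fix is a two-regime $M_j$: $M_j=\sqrt{\delta_j}$ only for $j\le n/2$ (where it barely matters), and $M_j = \delta_j\,2^{m+(n-j)/4}$ for $j>n/2$, so that $M_j/\delta_j = 2^{m+(n-j)/4}$ still diverges away from the terminal scale, while $M_j\epsilon_j^{-1}\delta_j^{-3} = \delta_n^{-2}\,2^m(m+n-j)^2\,2^{-7(n-j)/4}$ is summable and gives $X_{t_n} = O_m(t^{2/3})$. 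You need to rebuild your budgets around such $j$-dependent $(M_j,\epsilon_j)$; otherwise both estimates in \eqref{eq:event in theorem} fail for large $t$.

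Your concluding paragraph (intermediate-value argument to find $\zeta_t$, with $|\alpha_{i+1}-\alpha_i|\le C\alpha^{3/2}\log^{2\theta}(1/\alpha)\ll t^{-2/5}$, and $\mathfrak{R}'$-regularity inherited from $\mathfrak{R}^\sharp$ at short-interval endpoints) is correct and is exactly the paper's closing step.
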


\begin{proof}
	Let $m\ge 1$ sufficiently large independently of $t$ and let $n=n_t:=\lceil \log _2(\alpha _0^{-1} t^{\frac{1}{3}}) \rceil $. Define for all $0\le j \le n$ 
	\begin{equation}
	\delta _j:=\alpha _0 2^{-j}, \quad \quad M_j:=\begin{cases}
	\ \sqrt{\delta _j} ,\quad &j \le \frac{1}{2} n \\
	\delta _j 2^{m+\frac{1}{4}(n-j)}, \quad &j> \frac{1}{2} n
	\end{cases}, \quad \quad \epsilon _j:=(m+n-j)^{-2}.
	\end{equation}
	It is easy to verify that $\frac{1}{2}t^{-\frac{1}{3}} \le \delta _{n}\le  t^{-\frac{1}{3}} $ and that $\delta _j \le M_j \le \sqrt{\delta _j}$ for all $j$ as long as $t$ is sufficiently large.
	
	Define the sequence of stopping times $t_i$ and the sequence of random variables $\alpha _i\in \mathcal F _{t_i}$ inductively by
	\begin{equation}
	t_{i+1}:=\zeta ' (t_i,\alpha _i,\delta _i, M_i),\quad \alpha_{i+1}:=\alpha '' (t_i,\alpha _i,\delta _i, M_i)
	\end{equation} 
	
	Let $\mathcal A _i: =\mathcal A (t_i,\alpha _i,\delta _i,M_i,\epsilon _i)$ and $\mathcal B _j :=\bigcap _{i=1}^j \mathcal A_i$. We start by showing that $\mathbb P (\mathcal B _{n})$ is large. Using \eqref{eq:A is likely} we get
	\begin{equation}\label{eq:prob of bad event}
	\begin{split}
	\mathbb P (\mathcal B _{n} ^c ) \le \mathbb P (\mathcal A _1^c) +\sum _{j=2}^{n} \mathbb P ( \mathcal B _{j-1 } \cap \mathcal A _j ^c )&= \mathbb P (\mathcal A _1^c)+\sum _{j=2}^{n} \mathbb E \left[ \mathds 1 _{\mathcal B _{j-1} }\mathbb P ( \mathcal A _j ^c \ | \ \mathcal F _{t_j-1})  \right]  \\
	&\le C \sum _{j=1}^{n} \left(\frac{\delta _j}{M_j}\right)^c +C \sum _{j=1}^{n} e^{-c \epsilon _j^{-\frac{1}{2}}}. 
	\end{split}
	\end{equation} 
	Let $S_1,S_2$ be the corresponding first and second sums in the right hand side of \eqref{eq:prob of bad event}. We bound the second term using the definition of $\epsilon _j$ by 
	\begin{equation}
	S_2\le C e^{-cm } \sum _{j=1}^{n}   Ce^{-c(n-j)} \le C e^{-cm } \sum _{k=0}^{\infty }e^{-ck} \le Ce^{-cm}.
	\end{equation}
	The first term is bounded as follows 
	\begin{equation}
	S_1 \le C \sum _{j =1}^{n/2} \delta _j ^c+Ce^{-cm} \sum _{j= \lceil n /2 \rceil  } ^{n}  e^{-c(n-j)} \le C \alpha _0^c+C e^{-cm }\sum _{k=1}^{\infty } e^{-ck} \le C \alpha _0^c +C e^{-cm}.
	\end{equation}
	Since $\alpha _0 $ is sufficiently small and $m$ is sufficiently large we get that $\mathbb P (\mathcal B _{n} ) \ge 1-\epsilon $.
	
	Next, we show that on $\mathcal B _{n}$ the event in \eqref{eq:event in theorem} holds. To this end we bound the stopping time $t_{n}$ and the aggregate size $X_{t_{n}}$ and then show that, on $\mathcal B _{n}$ we have that $\zeta _t \le t_{n}$. We have 
	\begin{equation}
	\begin{split}
	t_{n}=t_0+\sum _{j=0}^{n-1} t_{j+1}-t_j \le t_0+\sum _{j=0}^{n} \epsilon _j^{-1} \delta_j^{-3} \le t_0+ \delta _{n}^{-3}\sum _{j=0}^{n} (m+n-j)^2 2^{3(n-j)}\\
	\le t_0+C t \sum _{k=0}^{\infty } (2m+k)^2 2^{-3k} \le t_0+Cmt \le C_{t_0,m}t.
	\end{split}
	\end{equation}
	
	We turn to bound the aggregate size at time $t_n$. We have
	\begin{equation}
	\begin{split}
	X_{t_{n}} \le  \sum _{j=0}^{n-1} X_{t_{j+1}}-X_{t_{j}} &\le C\sum _{j=0}^{n-1 } M_j(t_{j+1}-t_{j}) \le  C\sum _{j=0}^{n} M_j\epsilon _j^{-1}\delta _j^{-3} \\
	&\le C\sum _{j=1}^{n/2} \epsilon _j^{-1} \delta _j^{-3} +C_{m}  \sum _{j=\lceil n/2 \rceil }^{n } (m+n-j)^2\delta _j^{-2} 2^{\frac{1}{4}(n-j)} \\
	&\le C\epsilon _0^{-1} \delta _{\lfloor n/2 \rfloor }^{-3}+C_m \delta _{n}^{-2} \sum _{j=\lceil n/2 \rceil }^{n }(m+n-j)^2 e^{-c(n-j)} \le \\
	&\le C_{m} n^2 \delta _{\lfloor n/2 \rfloor }^{-3}   + C_m \delta _{n}^{-2} \sum _{k=0}^{\infty} (m+k)e^{-ck} \\
	&\le   C_{m} n^2 \delta _{\lfloor n/2 \rfloor }^{-3} + C_m \delta _{n}^{-2} 
	\le C_{m,\alpha _0} t^{\frac{1}{2}} \log ^2 t+C_{m,\alpha _0} t^{\frac{2}{3}} \le C_{m,\alpha _0} t^{\frac{2}{3}}.
	\end{split}
	\end{equation}
	
	Thus, it suffices to prove that on $\mathcal B _{n}$ we have $\zeta _t \le t_n$. To this end, recall that the time interval $[t_0,t_{n}]$ is the union of many short intervals. On $\mathcal B _{n}$ the process is regular in the endpoints of each of these intervals with a parameter $\alpha $ that do not change by more than $\alpha ^{\frac{3}{2}} \log ^C (1/\alpha )$ between consecutive short intervals. Thus, by a discrete mean value theorem the process is regular at some time $t'<t_{n}$ with $\alpha '$ such that $| \alpha '- t^{-\frac{1}{3}}|\le t^{-\frac{2}{5}}$. It follows that $\zeta _t \le t_{n}$ on $\mathcal B _{n}$.
\end{proof}

\subsection{Convergence in distribution}\label{sec:convergence in distribution}

In this section we show how to deduce the convergence to the diffusion from the induction step theorem. To this end we'll use the following theorem due to Helland \cite{helland1981minimal}. The theorem we state here is weaker than the result in \cite{helland1981minimal} but suffices for what we need. 

\subsubsection{The result of Helland}
We start with the basic settings and notations in the paper of Helland \cite{helland1981minimal} . Let $x_0>0$, $\mu :[0,\infty ]\to \mathbb R $ and $\sigma : [0,\infty ] \to [0,\infty ]$. Let $X(t)$ for $t>0$ be a solution to the stochastic differential equation 
\begin{equation}
dX(t)= \mu (X(t)) dt +\sigma (X(t)) d B_t,\quad X(0)=x_0.
\end{equation}
such that almost all sample paths are continuous. Suppose that there are no accessible boundaries. That is $\mathbb P ( \forall t>0 ,\ 0<X(t)<\infty )=1$

For any $n\ge 1$, let $X^{(n)}(t)>0$ be a sequence of processes. Let $\Delta _n \to 0$ and for any $k \ge 0$ let $t_k=t_k^{(n)}:=k \Delta _n$. In what follows $\overset{p}{\longrightarrow }$ denotes convergence in probability. Suppose that:
\begin{enumerate}
	\item 
	$X_n(0)\overset{p}{ \longrightarrow } x_0$
	\item 
	$X_n(t)$ is fixed on the intervals $[t_k, t_{k+1})$
	\item 
	For all $t,\epsilon >0$ we have as $n \to \infty $
	\begin{equation}
	\sum _{\substack{k: \\ t_k \le t}} \mathbb P \left(  |X_n(t_{k+1})-X_n(t_k) | \ge \epsilon \ \big|  \ \mathcal F _{t_k}   \right) \overset{p}{\longrightarrow } 0
	\end{equation}
	\item 
	For all $t>0$ and a compact set $K \subseteq (0,\infty )$  we have as $n \to \infty $
	\begin{equation}
	\sum _{\substack{k: \\ t_k \le t}} \Big| \mathbb E \left(  X_n(t_{k+1})-X_n(t_k)  \ \big|  \ \mathcal F _{t_k}   \right)-\mu (X_n(t_k)) \cdot \Delta _n \Big| \cdot  \mathds 1 \{X_n(t_k) \in K\} \overset{p}{\longrightarrow } 0 
	\end{equation}
	\item 
	For all $t>0$ and a compact set $K \subseteq (0,\infty )$  we have as $n \to \infty $
	\begin{equation}
	\sum _{\substack{k: \\ t_k \le t}} \Big|  \mathbb E \left(  (X_n(t_{k+1})-X_n(t_k) )^2  \ \big|  \ \mathcal F _{t_k}   \right) -\sigma ^2 (X_n(t_k)) \Delta _n \Big| \cdot  \mathds 1 \{X_n(t_k) \in K\} \overset{p}{\longrightarrow } 0 
	\end{equation}
\end{enumerate}

\begin{thm}[Helland]\label{thm:helland}
	Suppose that the assumptions $(1)-(5)$ above hold. Then, 
	\begin{equation}
	(X_n(t))_{t>0} \overset{d}{ \longrightarrow } (X(t))_{t>0}
	\end{equation}
\end{thm}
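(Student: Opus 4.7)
\medskip

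\noindent\textbf{Proposal for the proof of Helland's theorem.}
The natural route is the \emph{martingale problem} method of Stroock--Varadhan: show that the sequence $\{X_n\}$ is tight in a suitable path space, identify every subsequential limit as a solution to the martingale problem associated to the generator $Lf = \mu f' + \tfrac12 \sigma^2 f''$, and invoke uniqueness (which is guaranteed by the hypothesis that the SDE has a unique non-exploding solution with no accessible boundary). Because $\mu$ and $\sigma$ are controlled only on compact subsets $K \subset (0,\infty)$, every step has to be \emph{localised} by the stopping time $\tau^n_K := \inf\{t : X_n(t) \notin K\}$ (and its limit analogue $\tau_K := \inf\{t : X(t) \notin K\}$), with the localisation eventually removed by sending $K \uparrow (0,\infty)$ and using the no-explosion assumption on $X$.

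The plan has four steps. \emph{First}, for each compact $K$ I would prove tightness of the stopped processes $X_n(\cdot \wedge \tau^n_K)$ in the Skorokhod space $D([0,\infty),\mathbb{R})$. Tightness of one-dimensional marginals follows from boundedness on $K$, and the modulus-of-continuity condition (Aldous's criterion) reduces, via hypothesis (5), to showing that the quadratic variation over a short interval $[s,s+\delta]$ is $O(\delta)$ in probability, while hypothesis (3) eliminates the possibility of macroscopic jumps surviving in the limit. \emph{Second}, for any $f \in C^\infty_c((0,\infty))$ I would verify that
\begin{equation}
M^{f,n}_t \;:=\; f(X_n(t)) - f(X_n(0)) - \!\!\!\sum_{k: t_k \le t}\!\!\! \mathbb{E}\!\left[ f(X_n(t_{k+1})) - f(X_n(t_k)) \,\big|\, \mathcal{F}_{t_k}\right]
\end{equation}
is a martingale, and that under the stopped dynamics the compensator is close to $\int_0^{t\wedge\tau^n_K} Lf(X_n(s))\,ds$. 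The Taylor expansion
\begin{equation}
f(X_n(t_{k+1})) - f(X_n(t_k)) \;=\; f'(X_n(t_k))\,\Delta^n_k + \tfrac12 f''(X_n(t_k))\,(\Delta^n_k)^2 + R_k,
\end{equation}
with $\Delta^n_k := X_n(t_{k+1}) - X_n(t_k)$, combined with hypotheses (4) and (5) applied on the support of $f$, shows that the first two conditional moments of $\Delta^n_k$ deliver exactly $\mu(X_n(t_k))\Delta_n$ and $\sigma^2(X_n(t_k))\Delta_n$ up to $o_p(1)$ after summation. The remainder is controlled by $\|f'''\|_\infty$ times $\sum_k \mathbb{E}[|\Delta^n_k|^3 \,|\,\mathcal{F}_{t_k}]$, which in turn is bounded by $\epsilon \cdot \sum_k \mathbb{E}[(\Delta^n_k)^2\,|\,\mathcal{F}_{t_k}] + C\sum_k \mathbb{P}(|\Delta^n_k|>\epsilon\,|\,\mathcal{F}_{t_k})$, vanishing as $n \to \infty$ by (3) and (5) and then $\epsilon \to 0$.

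\emph{Third}, given tightness and the compensator identification, any subsequential limit $X^*$ of $X_n(\cdot \wedge \tau^n_K)$ satisfies that $f(X^*_{t \wedge \tau_K^*}) - f(X^*_0) - \int_0^{t\wedge\tau_K^*} Lf(X^*_s)\,ds$ is a martingale for every $f \in C^\infty_c$, i.e.\ $X^*$ solves the stopped martingale problem. By the assumed uniqueness/well-posedness of the SDE, this limit coincides in law with $X(\cdot\wedge\tau_K)$. \emph{Fourth}, sending $K\uparrow(0,\infty)$ and using that $X$ almost surely never reaches $0$ or $\infty$ in finite time, $\tau_K \to \infty$ a.s.; the analogous statement $\tau^n_K \to \infty$ in probability (uniformly in $n$ large) follows from the same compactness/tightness argument, and one concludes $X_n \overset{d}{\to} X$ on $D([0,\infty),\mathbb{R})$.

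The principal obstacle is the \emph{joint} control of tightness and of the boundary behaviour. The hypotheses supply information only on compacts $K \subset (0,\infty)$, yet the statement claims convergence on all of $(0,\infty)$; making the localisation argument rigorous requires a uniform (in $n$) bound on the probability that $X_n$ leaves $K$ before time $t$, which must be extracted from (4)--(5) together with the no-accessible-boundary hypothesis on the limit $X$ via a Lyapunov-function comparison. Once this localisation is in hand, the martingale-problem identification is routine; the real work is the tightness-and-non-explosion package.
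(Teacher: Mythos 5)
The paper does not prove this statement at all: it is quoted verbatim (in a slightly weakened form, as the text says) from Helland's 1981 paper \cite{helland1981minimal} and used as a black box, so there is no ``paper's proof'' to compare yours against. Your sketch is therefore answering a question the paper does not pose.

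That said, a few remarks on the sketch itself. You have laid out the standard Stroock--Varadhan martingale-problem route: localize to compacts, prove tightness, identify the compensator via a Taylor expansion, and use well-posedness of the SDE to identify the limit. This is a perfectly coherent strategy, but it is not Helland's; the whole point of \cite{helland1981minimal} (``Minimal conditions for weak convergence to a diffusion process on the line'') is to exploit the one-dimensional structure, working with the scale function, a random time change, and the Stone topology rather than the Skorokhod topology on $D([0,\infty),\mathbb{R})$. Helland's approach yields convergence under weaker hypotheses on $\mu$ and $\sigma$ than the general martingale-problem machinery would demand (note that conditions (4)--(5) here are phrased only on compacts $K\subset(0,\infty)$, with no global regularity), and it naturally produces convergence in the Stone topology, which is exactly what the paper needs (see Remark \ref{remark:on bessel} and the subsequent remark on convergence for why Stone convergence is the right notion when the limit is continuous). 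Your step four, where you must upgrade $\tau^n_K \to \infty$ in probability ``uniformly in $n$ large,'' is the part that would need genuine work under only (1)--(5); you flag it, but the Lyapunov-comparison argument you gesture at is precisely what Helland's change-of-time technique is designed to avoid. There is also a minor slip in the remainder estimate: bounding $\sum_k \mathbb{E}[|\Delta^n_k|^3\mid\mathcal F_{t_k}]$ by $\epsilon\sum_k\mathbb{E}[(\Delta^n_k)^2\mid\mathcal F_{t_k}] + C\sum_k\mathbb{P}(|\Delta^n_k|>\epsilon\mid\mathcal F_{t_k})$ requires the increments to be uniformly bounded on the event $\{|\Delta^n_k|>\epsilon\}$, which is available only after localization, so the order of quantifiers matters. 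In short: for the purposes of this paper, the correct move is simply to cite Helland; if you want to reconstruct his argument, read the original, because the route differs materially from the one you have proposed.
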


\begin{remark}\label{remark:on convergence}
	The convergence in Theorem~\ref{thm:helland} is in the sense of the Stone topology defined in \cite{stone1963weak} and discussed in \cite{helland1981minimal}. We will not define this topology. Instead, we just note that:
	\begin{enumerate}
		\item 
		when $X$ is almost surely continuous, the convergence is equivalent to the following two conditions 
		\begin{enumerate}
			\item 
			The finite dimensional distributions of $X_n$ converge weakly to the finite dimensional distributions of $X$.
			\item 
			For all $\epsilon ,T >0$ we have  
			\begin{equation}
			\lim _{\delta \to 0} \limsup _{n\to \infty } \mathbb P \bigg( \sup _{\substack{ 0 \le s,t\le T  \\ |s-t|\le \delta  }} \left|X_n(t)-X_n(s) \right| \ge \epsilon  \bigg) =0
			\end{equation} 
		\end{enumerate}
		\item 
		It can be shown, using Skorhod representation theorem, that if $X$ is almost surely continuous then there is a coupling of the processes $X_n$ and $X$ such that for all $T>0$ almost surely we have $\sup _{t\le T} |X_n(t)-X(t)| \to 0$.
	\end{enumerate}
\end{remark}

\subsubsection{Proof of Theorem~\ref{thm:main theorem 2}}

Throughout this section $X_t$ is the size of the usual aggregate with no initial condition. Let $\epsilon  >0$ sufficiently small, $\alpha _0 >0$ sufficiently small depending on $\epsilon $ and let  $t_0:=\alpha _0^{-20}$. Recall the definition of $\overline{Y} _{t,\alpha }$ given in \eqref{eq:def:sandwichfixed}. Define the event
\begin{equation}\label{eq:def of A_0}
\mathcal A _0 := \{ X_{t_0} \le t_0 \} \cap \{\forall s >0,\ Y_{t_0}(s) \le \overline{Y} _{t_0,\alpha _0} (s)   \}  \cap \{ \overline{Y} _{t_0,\alpha _0} \in \mathfrak{R} (\alpha _0, \alpha _0^8 ;[t_0]) \}.
\end{equation} 
The first event on the right hand side of \eqref{eq:def of A_0} holds with high probability as $S(t)\le \frac{1}{2}$ almost surely and therefore $X_{t_0}\precsim \text{Poisson}(t_0/2)$. The second and third events hold with high probability by Lemma~\ref{lem:aggregate is bounded} and Theorem~\ref{thm:induction:base:main} respectively. Thus $\mathbb P (\mathcal A _0) \ge 1-\epsilon $ as long as $\alpha _0$ is sufficiently small depending on $\epsilon $. Let $( \overline{Y}_t, \overline{S}(t) ,\overline{X} _t)$ be the aggregate with initial condition $\overline{Y} _{t_0,\alpha _0}$. We note that, according to Definition~\ref{def:aggregate with initial condition} we have that $\overline{X} _{t_0}=0$ and by Claim~\ref{claim:aggregate with initial condition} we have almost surely on $\mathcal A _0$ that $X_t \le t_0+\overline{X} _t$ for all $t>t_0$.

We turn to bootstrap the regularity by applying Theorem~\ref{subsec:regoverview:overview}. Let $t_1:=t_0+\alpha _0^{-2} \log ^{10 \theta }(1/\alpha _0)$. Define 
\begin{equation}
    \mathcal A_1 := \{\overline{X}_{t_1} \le  t_0 \} \cap \{  \exists \alpha _1 \text{ with } \overline{Y} _{t_1} \in \mathfrak{R} '(\alpha _1 ,t_1) \}.
\end{equation}
Almost surely on $\mathcal A _0$ we have that $\mathbb P (\mathcal A _1 \ | \ \mathcal F _{t_0} )\ge 1- \epsilon $. Indeed, the first event holds with high probability by the same argument as above. For the second one, by Theorem~\ref{subsec:regoverview:overview}, there is a random variable $\alpha _1'\le 2 \alpha _0$ such that almost surely on $\mathcal A _0$ we have  $\mathbb P ( Y_{t_1}\in \mathfrak{R} ^\sharp (\alpha _1', \alpha _1^8,[t_1] \ | \ \mathcal F _{t_0} )\ge \alpha _0^{10}$ and therefore, by Definition~\ref{def:def of regularity prime} there is some $\alpha _1 \le 3 \alpha _0$ such that almost surely on $\mathcal A _0$, $\mathbb P ( Y_{t_1}\in \mathfrak{R} ' (\alpha _1, t_1 ) \ | \ \mathcal F _{t_0})$. We get that 
\begin{equation}
\mathbb P (\mathcal A _0 \cap \mathcal A _1) = \mathbb E (\mathds 1 _{\mathcal A _0} \mathbb P (\mathcal A _1 \ | \ \mathcal F _{t_0})) \ge (1-\epsilon ) \mathbb P (\mathcal A _0) \ge (1-\epsilon )^2 \ge 1-2 \epsilon .
\end{equation}

Next let $\delta >0$ sufficiently small such that $\delta^{\frac{2}{3}} C _\epsilon <\epsilon $ where $C_\epsilon $ is from Theorem~\ref{thm:stopping time of getting to speed T to the minus one third}. Finally, let $t>0$ sufficiently large (depending on all other parameters) such that $\delta t \ge t'(t_1, \alpha _1 ,\epsilon )$ where $t'$ is from Theorem~\ref{thm:stopping time of getting to speed T to the minus one third}. Let $t_2$ and $\alpha _2$ be the stopping time and random variable from Theorem~\ref{thm:stopping time of getting to speed T to the minus one third} with $\delta t$ instead of $t$. Define the events
\begin{equation}
\mathcal A _2 :=\{ t_2 \le \epsilon t  \} \cap \{\overline{X}_{t_2} \le \epsilon t ^{\frac{2}{3}}\}.
\end{equation}
and $\mathcal A := \mathcal A _0 \cap \mathcal A _1 \cap \mathcal A _2$. The random variable $\alpha _2$ is defined on $\mathcal A _2 $ and satisfies $|\alpha _2 -(\delta t )^{-\frac{1}{3}}| \le C _\delta t^{-\frac{2}{5}}$. By Theorem~\ref{thm:stopping time of getting to speed T to the minus one third} and the choice of $\delta $ almost surely on the event $\mathcal A _0 \cap \mathcal A _1$ we have $\mathbb P (\mathcal A _2  \ | \ \mathcal F _{t_1}) \ge 1-\epsilon  $. Thus we get $\mathbb P (\mathcal A )\ge 1-3 \epsilon $.

We let $ (\tilde{Y } ,\tilde{S}, \tilde{X})$ be the aggregate $(\overline{Y },\overline{S} , \overline{X })$ condition on the event $\mathcal A $. Starting with $t_2$ and $\alpha _2$, We define the sequence $t_i$ and the random variables $\alpha _i$ like in Section~\ref{sec:short}, with a minor difference. Instead of stopping the process when the speed changes by a factor of $2$, we stop the process when the speed changes by a factor of $\log (1/\alpha _2)$. More precisely, we let $t_3:=t_2+\alpha _2^{-2} \log ^{10 \theta -2 }(1/\alpha _2)$ and $t_{i+1}:=t_2+(i-1)(t_3-t_2)$. We define the sequence $\alpha _i$ inductively. Let
\begin{equation}
\begin{split}
&\zeta_1:=\min \left\{t_i\ge t_2:\ \alpha _i < \alpha _2 \log ^{-1} (1/\alpha _2) \right\} \\
&\zeta _2 :=\min \{t_i\ge t_2:\ \alpha _i >  \alpha _2 \log (1/\alpha _2) \} \\
&\zeta _3:=\min \{t_i \ge t_2: \tilde{Y}_{t_i} \notin \mathfrak{R} '(\alpha _i,t_i)  \}\\
& \zeta _4:=\min \{t_{i+1}\ge t_3: \left| X_{t_{i+1}}-X_{t_{i}}-\alpha_{i} (t_1-t_0) \right|  \ge  \alpha _{i} ^{1.4}(t_1-t_0) \},
\end{split}
\end{equation}
We let $\zeta :=  \zeta _1 \wedge \zeta _2 \wedge \zeta _3 \wedge \zeta _4$. On $\{\zeta >t_i\}$ we let $\alpha _{i+1}$ be the random variable $\alpha _1 $ from the Theorem~\ref{thm:induction step for repeatd use} and on $\{\zeta \le t_i\}$ we let $\alpha _{i+1}:=\alpha _i$. 

\begin{claim}\label{claim:zeta is large}
	We have that $\mathbb P (\zeta \le t_2+ \alpha _2^{-3}  \log (1/\alpha _2))\le  C \log ^{-c}(1/\alpha _2)$
\end{claim}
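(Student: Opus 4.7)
The strategy is to decompose $\zeta = \zeta_1 \wedge \zeta_2 \wedge \zeta_3 \wedge \zeta_4$, control the failure stopping times $\zeta_3,\zeta_4$ by a union bound, and handle the two threshold-crossing events $\zeta_1,\zeta_2$ via Freedman's inequality (Claim~\ref{claim:bernstein}) applied to two different Doob decompositions. On the event $\{\zeta > t_i\}$, parts (2) and (5) of Theorem~\ref{thm:induction step for repeatd use} give $\PP(\zeta_3 \wedge \zeta_4 = t_{i+1} \mid \mathcal F_{t_i}) \le 2\alpha_i^5 \le C\alpha_2^5$. Summing over the $n \asymp \alpha_2^{-1}\log^{3-10\theta}(1/\alpha_2)$ steps in $[t_2,\, t_2+\alpha_2^{-3}\log(1/\alpha_2)]$ contributes a total of at most $C\alpha_2^4$, which is much smaller than $\log^{-c}(1/\alpha_2)$.

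For $\zeta_2$ (the upper $\alpha$-threshold) I would repeat the Doob-decomposition analysis of Lemma~\ref{lem:stitching short intervals}: parts (1), (3), (4) of Theorem~\ref{thm:induction step for repeatd use} produce a martingale $\beta_i$ with total conditional variance $\sum\sigma_i^2 \le 4\alpha_2^2\log(1/\alpha_2)$, step size $M \le C\alpha_2^{3/2}\log^{C\theta}(1/\alpha_2)$, and compensator bounded by $C\alpha_2/\log^2(1/\alpha_2)$. The distance to the upper threshold is $\lambda_2 \asymp \alpha_2\log(1/\alpha_2)$, which is roughly $\sqrt{\log(1/\alpha_2)}$ standard deviations away, so Freedman yields $\PP(\zeta_2 \le t_2+T) \le \exp(-c\log(1/\alpha_2)) = \alpha_2^c$, well within the claimed bound.

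The genuine difficulty is $\zeta_1$: in $\beta_i$-space the lower threshold is only $1/\sqrt{\log(1/\alpha_2)}$ standard deviations away, so direct Freedman yields only $\exp(-c/\log(1/\alpha_2))$, which is of order $1$. To overcome this, I would change variables to $V_i := \alpha_i^{-3/2}/3$, motivated by Remark~\ref{remark:on bessel} and Theorem~\ref{thm:main theorem 2}, under which the limiting diffusion for $\alpha_i$ transforms into the Bessel-$\tfrac{8}{3}$ process. An It\^o-type expansion combined with parts (3)--(4) of Theorem~\ref{thm:induction step for repeatd use} shows that the martingale part of $V_i$ has conditional variance $(t_{i+1}-t_i)(1+o(1))$ per step, hence total quadratic variation $(1+o(1))T = \alpha_2^{-3}\log(1/\alpha_2)$; the compensator reproduces the Bessel drift $\tfrac{5}{6V_i}(t_{i+1}-t_i)$ up to negligible error. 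The event $\zeta_1 \le t_2+T$ then translates to $V_i \ge V_0\log^{3/2}(1/\alpha_2)$ for some $i$, and since $V_0 \asymp \alpha_2^{-3/2}$, the required excursion is $V_0\log^{3/2}(1/\alpha_2) \asymp \sqrt{T}\cdot \log(1/\alpha_2)$, i.e.\ $\log(1/\alpha_2)/3$ standard deviations away. A second application of Freedman then yields $\PP(\zeta_1 \le t_2+T,\ \zeta \ne \zeta_3\wedge\zeta_4) \le \exp(-c\log^2(1/\alpha_2))$, much smaller than $\log^{-c}(1/\alpha_2)$.

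The main technical obstacle is to verify that in both applications of Freedman's inequality the step-size term $M\lambda/3$ in the denominator is dominated by the quadratic-variation term $\sum\sigma_i^2$, so that the exponent behaves like $\lambda^2/(2\sum\sigma_i^2)$. For the $V_i$ decomposition this amounts to checking that the Bessel step-size $C\sqrt{t_3-t_2}\log^{2\theta}(1/\alpha_2)$ times $\lambda = V_0\log^{3/2}(1/\alpha_2)$ is negligible compared to $T$, which reduces to a direct inequality in the parameters involving a large power of $\log(1/\alpha_2)$ beaten by a positive power of $\alpha_2$. The argument is then straightforward but tedious algebra.
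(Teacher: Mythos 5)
Your approach is genuinely different from the paper's. The paper reuses the multi-scale random-walk machinery already built: viewing $W_i = \log_2(\delta_i/\alpha_2)$ as a random walk with downward drift (as in Lemma~\ref{lem:stitching medium}), it argues (i) $W_i$ hits $-\log_2\log(1/\alpha_2)+2$ before $+\log_2\log(1/\alpha_2)$ with probability $1-\log^{-c}(1/\alpha_2)$, and (ii) once the scale $\delta_i$ has dropped to $4\alpha_2/\log(1/\alpha_2)$, a single further scale change already takes time $\gtrsim\delta_i^{-3}\log^{-2}(1/\alpha_2)\gtrsim\alpha_2^{-3}\log(1/\alpha_2)$ by Lemma~\ref{lem:stitching short intervals}(4). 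Your route is a direct Doob--Freedman estimate on $\alpha_i$ (and on a Bessel change of variable $V_i=\tfrac13\alpha_i^{-3/2}$), which in principle avoids a second multi-scale induction.

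However, there is a real gap in the execution, and it is not where you say it is. You flag the step-size term $M\lambda/3$ in Freedman as the main obstacle, but that term is fine in both your decompositions (it contributes a factor $\alpha_2^{1/2}\,\textnormal{polylog}(1/\alpha_2)$ relative to $\sum\sigma_i^2$). The genuine obstacle is that Freedman requires \emph{worst-case} bounds on the conditional variance and the compensator, and these blow up at the far end of the allowed range of $\alpha_i$. Concretely: for $\zeta_2$, the per-step variance from Theorem~\ref{thm:induction step for repeatd use}(4) is $4\alpha_i^5(t_3-t_2)$, and since $\alpha_i$ can run up to $\alpha_2\log(1/\alpha_2)$ before $\zeta_2$ fires, the honest quadratic-variation bound over time $T=\alpha_2^{-3}\log(1/\alpha_2)$ is $4\alpha_2^2\log^6(1/\alpha_2)$, not $4\alpha_2^2\log(1/\alpha_2)$ as you state; with that the Freedman exponent $\lambda^2/(2\sum\sigma_i^2)$ degrades to $\asymp\log^{-4}(1/\alpha_2)$, giving no decay at all. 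For $\zeta_1$, the compensator of $V_i$ is $\asymp\tfrac{5}{6V_i}(t_3-t_2)$ per step; bounding $V_i\ge V_0\log^{-3/2}(1/\alpha_2)$ (which is all that $\zeta_2$ guarantees), the total drift accumulated over time $T$ is $\asymp\alpha_2^{-3/2}\log^{5/2}(1/\alpha_2)$, which \emph{exceeds} the threshold distance $V_0\log^{3/2}(1/\alpha_2)\asymp\alpha_2^{-3/2}\log^{3/2}(1/\alpha_2)$ by a factor $\log(1/\alpha_2)$. So you cannot simply subtract the compensator and apply Freedman to the martingale part; you would first need an occupation-time estimate showing $\alpha_i$ (equivalently $V_i$) does not spend too long near the bad end of the window, which is precisely the kind of control that the paper's scale-by-scale argument supplies for free.

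A partial fix for the $\zeta_1$ side is available within your framework: instead of $V_i=\tfrac13\alpha_i^{-3/2}$, use the \emph{squared} Bessel coordinate $U_i:=V_i^2=\tfrac19\alpha_i^{-3}$. An It\^o/Taylor expansion using Theorem~\ref{thm:induction step for repeatd use}(3)--(4) shows the compensator of $U_i$ is $\tfrac83(t_3-t_2)(1+o(1))$ per step, \emph{independent of the current value}; its total contribution $\tfrac83T=24U_0\log(1/\alpha_2)$ is then a $\log^{-2}(1/\alpha_2)$ fraction of the threshold distance $U_0(\log^3(1/\alpha_2)-1)$, and the (path-dependent but now stopping-time-bounded) quadratic variation $\le 4U_0\log^3(1/\alpha_2)\cdot T$ gives a Freedman exponent $\asymp\log^2(1/\alpha_2)$. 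This rescues $\zeta_1$, but the $\zeta_2$ direction still needs a different idea — e.g.\ an approximate-Doob/Ville inequality applied to $\alpha_i$ directly, with a separate argument that the drift contribution $\sum|\mathbb E(\alpha_{i+1}-\alpha_i\mid\mathcal F_{t_i})|$ is $o(\alpha_2\log(1/\alpha_2))$ rather than $\alpha_2\log^2(1/\alpha_2)$ (its crude worst-case value). In short, the Bessel change of variables is the right intuition, but making it rigorous requires more than ``straightforward but tedious algebra''; the paper's reuse of Lemmas~\ref{lem:stitching short intervals} and~\ref{lem:stitching medium} neatly avoids these worst-case issues.
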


We'll not give all the details of the proof as it is similar to some other results in Sections~\ref{sec:short} and \ref{sec:medium}.

\begin{proof}[Sketch of Proof]
	We define the sequence of scales $\delta _2,\delta _3,\dots $ as in Section~\ref{sec:medium} with $\delta _2:=\alpha _2$ and $\delta _i \in \{\alpha _2 2^k \ | \ k\in \mathbb Z  \}$. Unlike Section~\ref{sec:medium} we do not change the steps of time every time we cross a scale. Let $W_i :=\log _2 (\delta _i/\alpha _2)$ as in the proof of Lemma~\ref{lem:stitching medium}. Since Lemma~\ref{lem:stitching short intervals} holds even when the steps of time are larger or smaller by a $\log $ factor, we still have that $W_i$ is roughly a random walk with a downward drift. We let $n_0:=\log \log (1/\alpha _2)$. By the martingale argument in the proof of Lemma~\ref{lem:stitching medium} the sequence $\delta _i$ will decrease to $4 \alpha _2/ \log (1/\alpha _2)$ before hitting $\zeta $. Once the speed is so small, the time it will take for the speed to change its scale by a factor of $2$ will be at least $\alpha _2^{-3} \log (1/\alpha _2)$ with high probability by Lemma~\ref{lem:stitching short intervals}. 
\end{proof}

We let $A(t)$ be the continuation of $\alpha _i$ that is $A(t)$ is defined by $A(t):=\alpha _i$ for any $t_i\le t <t_{i+1}$. We also define the rescaled process 
\begin{equation}
V_t(s):= t^{\frac{1}{3}} A(t_2+ s t)
\end{equation}

\begin{lem}\label{lem:speed to sde}
	We have that
	\begin{equation}
	\left( V_t(s) \right) _{s>0} \overset{d}{\longrightarrow } \left(Z_\delta (s)\right)_{s>0}, \quad t \to \infty 
	\end{equation}
	where $Z_\delta $ is the solution to $dZ=2Z^{\frac{5}{2}}dB_t$ with $Z_\delta (0)=\delta ^{-\frac{1}{3}}$. The convergence is in the sense of Remark~\ref{remark:on convergence}.
\end{lem}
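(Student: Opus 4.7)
The plan is to apply Helland's Theorem \ref{thm:helland} to the sequence $X_n(s) := V_t(s)$ (with $t = t(n) \to \infty$), the target SDE $dZ = 2Z^{5/2}\,dB_t$ with $\mu \equiv 0$ and $\sigma(x) = 2 x^{5/2}$, discretization $\Delta_n := (t_3 - t_2)/t = \alpha_2^{-2} \log^{10\theta-2}(1/\alpha_2) / t$, and initial point $x_0 = \delta^{-1/3}$. Since $\alpha_2$ satisfies $|\alpha_2 - (\delta t)^{-1/3}| \le C_\delta t^{-2/5}$, hypothesis (1) holds: $V_t(0) = t^{1/3} \alpha_2 \to \delta^{-1/3}$ in probability. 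Hypothesis (2) is immediate from the definition $A(t) = \alpha_i$ on $[t_i, t_{i+1})$. For hypothesis (3), part (1) of Theorem \ref{thm:induction step for repeatd use} gives the deterministic bound $|V_t(s_{k+1}) - V_t(s_k)| \le t^{1/3} \sqrt{\alpha_i^5 (t_3-t_2)} \log^{2\theta}(1/\alpha_i)$, and plugging in $\alpha_i \asymp t^{-1/3}$ and $t_3 - t_2 \asymp t^{2/3} \log^{10\theta-2}$ yields a bound of order $t^{-1/6} \log^{C}(t)$, which goes to zero; thus every single jump is eventually smaller than any fixed $\epsilon$, so (3) holds trivially.

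For hypotheses (4) and (5), fix a compact $K \subset (0,\infty)$. On the event $\{V_t(s_k) \in K\}$ we have $\alpha_i \asymp t^{-1/3}$, in particular $\alpha_i \ll 1$, so parts (3) and (4) of Theorem \ref{thm:induction step for repeatd use} apply:
\begin{equation}
\bigl|\mathbb{E}[\alpha_{i+1} - \alpha_i \mid \mathcal{F}_{t_i}]\bigr| \le \tfrac{\alpha_i^4}{\log^3(1/\alpha_i)}(t_3 - t_2), \qquad \mathbb{E}[(\alpha_{i+1} - \alpha_i)^2 \mid \mathcal{F}_{t_i}] = 4\alpha_i^5 (t_3 - t_2)\bigl(1 + O(\alpha_i^{1/3})\bigr).
\end{equation}
Multiplying by $t^{1/3}$ and $t^{2/3}$ respectively, and using $\Delta_n = (t_3 - t_2)/t$, we obtain
\begin{equation}
\bigl|\mathbb{E}[V_t(s_{k+1}) - V_t(s_k)\mid \mathcal{F}_{t_i}]\bigr| \le \tfrac{V_t(s_k)^4}{t^{1/3}\log^3(1/\alpha_i)}\, \Delta_n, \quad \mathbb{E}[(V_t(s_{k+1}) - V_t(s_k))^2 \mid \mathcal{F}_{t_i}] = 4 V_t(s_k)^5 \Delta_n \bigl(1 + O(\alpha_i^{1/3})\bigr).
\end{equation}
Summing over $s_k \le s$ (at most $s/\Delta_n$ terms), the drift sum is $O(s \cdot (\sup_K x)^4 / (t^{1/3}\log^3 t)) \to 0$, matching $\mu \equiv 0$ as required by (4); the variance sum equals $\sum_k 4 V_t(s_k)^5 \Delta_n (1 + o(1))$, which matches $\sum_k \sigma^2(V_t(s_k))\Delta_n$ up to a vanishing relative error, giving (5).

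The remaining point is to control the bootstrap stopping time $\zeta$ and the target SDE's boundary behaviour. By Claim \ref{claim:zeta is large}, $\mathbb{P}(\zeta \le t_2 + \alpha_2^{-3}\log(1/\alpha_2)) = O(\log^{-c}(1/\alpha_2))$, and since $\alpha_2^{-3}\log(1/\alpha_2)/t \to \infty$, with probability tending to one $\zeta$ exceeds any fixed rescaled time $s$, so the process $V_t$ equals the genuine inductive speed on $[0,s]$ and the hypotheses above are verified on the event we care about; the $o(1)$ probability complement contributes negligibly to any finite-dimensional distribution. Finally, the SDE $dZ = 2 Z^{5/2} dB_t$ with $Z(0) = \delta^{-1/3}$ has no accessible boundary: via It\^o's formula it is a power of a Bessel process of dimension $8/3 \in (2, \infty)$ with finite positive initial value (Remark \ref{remark:on bessel}), so neither $0$ nor $\infty$ is hit in finite time, and almost all sample paths are continuous. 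All assumptions of Theorem \ref{thm:helland} are met, and the conclusion $V_t \Rightarrow Z_\delta$ in Stone topology follows.

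The main obstacle is the interplay between the stopping time $\zeta$, the compact set $K$ in Helland's conditions, and the possibly large deviations of $\alpha_i$ within one mesoscopic block: one must check that excursions outside $K$ are suppressed uniformly in $t$, which is where the tight control $|\alpha_i - \alpha_2| \le \alpha_2 \log(1/\alpha_2)$ up to $\zeta$ (built into the definition of $\zeta_1, \zeta_2$) and Claim \ref{claim:zeta is large} are jointly needed.
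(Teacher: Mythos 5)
Your proof is correct and matches the paper's argument: apply Helland's theorem, using parts (1), (3), (4) of Theorem~\ref{thm:induction step for repeatd use} to verify conditions (3)--(5), and Claim~\ref{claim:zeta is large} to absorb the event $\{\zeta \le t_{k+2}\}$ where those bounds are no longer available. One small arithmetic slip: in your displayed drift bound the $t^{1/3}$ should not appear in the denominator, since $V_t(s_k)^4 = t^{4/3}\alpha_i^4$ and $(t_3-t_2) = t\Delta_n$ together already absorb all powers of $t$, giving $|\mathbb{E}[V_t(s_{k+1})-V_t(s_k)\mid\mathcal{F}_{t_i}]| \le V_t(s_k)^4\Delta_n/\log^3(1/\alpha_i)$; the sum is then $O(s/\log^3 t)\to 0$, so the conclusion is unchanged.
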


\begin{proof}
	We check all the conditions of Theorem~\ref{thm:helland} with $\mu =0$, $\sigma (x)=2x^{\frac{5}{2}}$ and $x_0:=\delta ^{-\frac{1}{3}}$. Let
	\begin{equation}
	\Delta _t:=\frac{(t_3-t_2)}{t}  \le C_\delta  t^{-\frac{1}{3}} \log ^Ct \to  0, \quad t \to \infty 
	\end{equation}
	Finally, we let $s_k:=k \Delta _t$ so that $V_t(s_k)=A(t_{k+2})=\alpha _{k+2}$. The first condition holds since 
	\begin{equation}
	V_t(0)=t^{\frac{1}{3}} A(t_2)= t^{\frac{1}{3}} \alpha _2  \to \delta ^{-\frac{1}{3}}.
	\end{equation}
	It is clear that the second condition holds. Next, we check the third condition. For sufficiently large $t$ we have that
	\begin{equation}
	\mathbb P ( | V_t(s_{k+1})-V_t(s_k) |\ge \epsilon ' \ | \ \mathcal F _{t_k}) \le \mathbb P(  | \alpha _{k+3}-\alpha _{k+2} | \ge \epsilon ' t^{-\frac{1}{3}} \ | \  \mathcal F _{t_k} ) =0
	\end{equation}
	so in particular the third condition holds.
	We turn to prove the fourth condition. Let $s>0$ and $K \subseteq (0,\infty )$ a compact set. We have that 
	\begin{equation}
	\begin{split}
	\sum _{\substack{k: \\ s_k \le s}} \left| \mathbb E \left[  V_t(s_{k+1})-V_t(s_{k})  \ \big|  \ \mathcal F _{t_{k+1}}   \right] \right|  \mathds 1 \{V_t(t_k) \in K \} =t^{\frac{1}{3}} \sum _{\substack{k: \\ s_k \le s}}  \left| \mathbb E \left[ \alpha _{k+3}-\alpha _{k+2}  \ \big|  \ \mathcal F _{t_{k+1}}   \right] \right|  \mathds 1 \{ t^{\frac{1}{3}}\alpha _{k+2} \in K\} \\
	\le C  t^{\frac{1}{3}} \sum _{\substack{k: \\ t_k \le st}} \frac{(t_{k+3}-t_{k+2}) \alpha _{k+2}^4}{\log ^3(1/\alpha _{k+2})} \mathds 1 \{t^{\frac{1}{3}} \alpha _{k+2} \in K\} \le C _{K,\delta } t^{\frac{1}{3}}  \frac{\alpha _2 ^4 }{\log ^3 (1/\alpha _2 )} \sum _{\substack{k: \\ t_k \le st}} (t_{k+2}-t_{k+1}) \le \frac{C_{K,\delta }}{\log ^3 t} \to 0
	\end{split}
	\end{equation}
	
	Lastly, we check the fifth condition 
	\begin{equation}
	\begin{split}
	&\sum _{\substack{k: \\ s_k \le s}}  \left| \mathbb E \left(  (V_t(s_{k+1})-V_t(s_k) )^2  \ \big|  \ \mathcal F _{t_{k+1}}   \right) -4 V_t(t_k) ^5 \Delta _t \right|\mathds 1 \{V_t(t_k) \in K \} \\
	&=t^{\frac{2}{3}}\sum _{\substack{k: \\ s_k \le s}} \left| \mathbb E \left(  (\alpha _{k+3}-\alpha _{k+2} )^2  \ \big|  \ \mathcal F _{t_{k+2}}   \right) - 4 \alpha _{k+2} ^5 (t_3 -t_2) \right| \mathds 1 \{V_t(t_k) \in K \} \\
	&\le  C t^{\frac{2}{3}}\sum _{\substack{k: \\ s_k \le s}} \alpha _{k+2}^{5+c} (t_1-t_0)   \mathds 1 \{V_t(t_k) \in K, \ \zeta > t_{k+2} \}	+Ct^{\frac{2}{3}}\sum _{\substack{k: \\ s_k \le s}}   \alpha _{k+2} ^5 (t_3 -t_2) \mathds 1 \{V_t(t_k) \in K , \ \zeta \le t_{k+2}\}\\
	&\le C t^{\frac{5}{3}}\alpha _2 ^{5+c} +Ct^{\frac{5}{3}}\alpha _1^{5} \mathds 1 \{ \zeta \le t_2 +\alpha _2^{-3} \log (1/\alpha _2) \} \overset{p}{\longrightarrow } 0.
	\end{split}
	\end{equation}
	Where the convergence follows from Claim~\ref{claim:zeta is large}. The result now follows from Theorem~\ref{thm:helland}.
\end{proof} 

\begin{cor}\label{cor:integrate both sides}
	We have the following convergence in finite dimensional distributions 
	\begin{equation}
	t^{-\frac{2}{3}} \left( \tilde{X}_{t_2+st}-\tilde{X}_{t_2} \right) \overset{d}{\longrightarrow } \intop _0^s Z_\delta (x)dx
	\end{equation}
\end{cor}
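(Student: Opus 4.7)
The plan is to integrate the process-level convergence of Lemma \ref{lem:speed to sde} and then show that the integral of the step process $A(\cdot)$ differs from the actual aggregate increment by $o(t^{2/3})$ with high probability.

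First I would invoke Remark \ref{remark:on convergence}(2) to obtain, via Skorohod representation, a coupling under which $V_t \to Z_\delta$ uniformly on every compact subinterval of $[0,\infty)$, almost surely. Since $Z_\delta$ is a continuous diffusion with no accessible boundaries, it is almost surely bounded on $[0,s]$, so the uniform convergence of the step functions $V_t$ forces
\begin{equation}
\intop_0^s V_t(u)\,du \ \longrightarrow \ \intop_0^s Z_\delta(u)\,du \quad \text{a.s.}
\end{equation}
A change of variables $v = t_2 + ut$ rewrites the left-hand side as $t^{-2/3}\intop_{t_2}^{t_2+st} A(v)\,dv$.

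Next I would relate $\intop_{t_2}^{t_2+st} A(v)\,dv$ to $\tilde{X}_{t_2+st}-\tilde{X}_{t_2}$. Since $A$ is piecewise constant equal to $\alpha_i$ on $[t_i,t_{i+1})$, and the stopping time $\zeta_4$ forces $|(\tilde{X}_{t_{i+1}}-\tilde{X}_{t_i}) - \alpha_i(t_{i+1}-t_i)| \le \alpha_i^{1.4}(t_{i+1}-t_i)$ on $\{\zeta > t_{i+1}\}$, telescoping over those $i$ with $t_i \le t_2+st$ gives
\begin{equation}
\Bigl| (\tilde{X}_{t_2+st} - \tilde{X}_{t_2}) - \intop_{t_2}^{t_2+st} A(v)\,dv \Bigr| \le \sum_{i:\, t_i \le t_2+st} \alpha_i^{1.4}(t_{i+1}-t_i) + O\bigl(\alpha_2\log(1/\alpha_2)\cdot(t_3-t_2)\bigr)
\end{equation}
on $\{\zeta > t_2+st\}$, the boundary term accounting for the partial interval at the end. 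On $\{\zeta_1 \wedge \zeta_2 > t_2+st\}$ (implied by $\{\zeta > t_2+st\}$), each $\alpha_i \le \alpha_2\log(1/\alpha_2) = O(t^{-1/3}\log t)$, so a routine computation (the number of steps is $\asymp st\alpha_2^2\log^{-(10\theta-2)}$ and each contributes $O(\alpha_2^{1.4}\log^{O(1)}\cdot\alpha_2^{-2}\log^{10\theta-2})$) bounds the error by $C s t^{0.533}(\log t)^{O(1)}$, which after dividing by $t^{2/3}$ tends to $0$.

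The main obstacle will be confirming that $\{\zeta > t_2+st\}$ holds with probability tending to $1$. Here one uses that $\alpha_2^{-3}\log(1/\alpha_2) \asymp \delta t \log t$, which exceeds $st$ for all sufficiently large $t$ when $s,\delta$ are fixed, so that Claim \ref{claim:zeta is large} gives $\PP(\zeta \le t_2 + st) \le C\log^{-c}(1/\alpha_2) \to 0$. Combining the three steps and applying Slutsky's theorem produces convergence in probability of $t^{-2/3}(\tilde{X}_{t_2+st}-\tilde{X}_{t_2})$ to $\intop_0^s Z_\delta(u)\,du$ for each fixed $s$; the joint convergence across any finite collection $s_1<\cdots<s_k$ follows from the same argument applied simultaneously, since the Skorohod coupling yields joint a.s. convergence of the integrals and the error bounds above are uniform in $s$ on any compact interval.
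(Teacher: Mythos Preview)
Your proposal is correct and follows essentially the same approach as the paper: both use the Skorohod coupling from Remark \ref{remark:on convergence}(2) to pass from process convergence to convergence of the integrals, both telescope using the $\zeta_4$-control to bound the discrepancy between $\tilde{X}_{t_2+st}-\tilde{X}_{t_2}$ and $\int_{t_2}^{t_2+st}A(v)\,dv$, and both rely on Claim \ref{claim:zeta is large} to ensure the good event has probability tending to one. Your error bound $O_s(t^{0.533}\log^{O(1)} t)$ is slightly sharper than the paper's $O_s(t^{3/5})$, but the argument is the same.
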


\begin{proof}
	On the event $\{\zeta > t_2 + \alpha _2^{-3} \log (1/\alpha _2)\}$ we have for all $k \ge 2 $ with $t_k \le st$ that 
	\begin{equation}
	\left| \tilde{X} _{t_{k+1}}-\tilde{X}_{t_k} -\alpha _k (t_{k+1}-t_k) \right| \le \alpha _k^{1.4} (t_3-t_2) \le \alpha _2^{1.3}(t_3-t_2)\le t^{-\frac{2}{5}}(t_3-t_2).
	\end{equation}
	Thus
	\begin{equation} \label{eq:aggregate to integral}
	\left| \tilde{ X} _{t_2+st}-\tilde{X}_{t_2 } - \intop _{t_2}^{t_2+st} A(t')dt'\right| \le C_st^{\frac{3}{5}}
	\end{equation}
	Next, we have that 
	\begin{equation}\label{eq:integral to sde}
	t^{-\frac{2}{3}}\intop _{t_2}^{t_2+st} A(t')dt'=t^{\frac{1}{3}} \intop _0^s A(t_2+xt)dx =\intop _0^s V_t(x)dx \overset{d}{\longrightarrow } \intop _0^s Z_\delta (x)dx,
	\end{equation}
	where the convergence is in finite dimensional distributions and it follows from Lemma~\ref{lem:speed to sde} and by part (2) of Remark~\ref{remark:on convergence}. Indeed, there is a coupling of the processes such that the convergence is almost sure and uniform on compact subsets of $(0,\infty )$ and therefore the integrals of the processes converge as well. The statement of the corollary follows from \eqref{eq:aggregate to integral} and \eqref{eq:integral to sde}.
\end{proof}

We need one last claim for the proof of Theorem~\ref{thm:main theorem 2}.

\begin{claim}\label{claim:convegence of integrals of sdes}
	Let $Z$ and $Z_M$ for $M \ge 1$, be the solutions of $dZ(t)=2 Z(t)^{\frac{5}{2}}d B _t$ with $Z(0)=\infty $ and $Z_M(0)=M$. We have 
	\begin{equation}
	\left(\int _0^s Z_M(x) dx\right)_{s\ge 0} \overset{d}{\longrightarrow } \left(\int _0^s Z(x) dx\right)_{s\ge 0},
	\end{equation} 
	as $M \to \infty $, where the convergence is in finite dimensional distributions.
\end{claim}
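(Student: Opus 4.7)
My plan is to reduce everything to the Bessel representation mentioned in Remark~\ref{remark:on bessel}. Setting $V_t := \frac{1}{3} Z_t^{-3/2}$, an application of It\^o's formula shows that $V$ satisfies the $(8/3)$-Bessel SDE $dV_t = \frac{5}{6} \frac{dt}{V_t} + dB_t$, and the correspondence between initial conditions is $V_M(0) = \frac{1}{3} M^{-3/2}$ for $Z_M(0) = M$, while the ``$Z(0)=\infty$'' solution corresponds to the Bessel process $V$ started from $V(0)=0$. As $M\to\infty$, $V_M(0)\to 0 = V(0)$. The task then becomes to show $\int_0^s (3V_M(x))^{-2/3}dx \to \int_0^s (3V(x))^{-2/3}dx$.

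The next step is to couple all $V_M$ and $V$ to a single Brownian motion $B$. Away from $0$ the Bessel SDE has locally Lipschitz coefficients, so by the standard pathwise comparison principle for one-dimensional SDEs with monotone coefficients (e.g.\ Ikeda--Watanabe), the ordering $V_{M_1}(0) \le V_{M_2}(0)$ is preserved pathwise. In particular $V(t) \le V_M(t)$ for all $t\ge 0$, which translates to
\begin{equation}
Z_M(x) \le Z(x), \qquad \forall\, x \ge 0, \ \forall\, M.
\end{equation}
Moreover, continuity of Bessel flows in the initial data gives $V_M \to V$ uniformly on $[0,s]$ almost surely, and hence $Z_M(x)\to Z(x)$ for every $x>0$ almost surely.

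The main point to verify is that $\int_0^s Z(x)\,dx < \infty$ almost surely, so that dominated convergence applies. For this I use the scaling $V(t)\stackrel{d}{=} \sqrt{t}\, V(1)$, where $V(1)$ is the $(8/3)$-Bessel at time $1$ with density proportional to $v^{5/3}e^{-v^2/2}$ on $(0,\infty)$. Then
\begin{equation}
\mathbb{E}[Z(x)] = (3)^{-2/3}\, x^{-1/3}\, \mathbb{E}[V(1)^{-2/3}]
= C\, x^{-1/3} \int_0^\infty v^{-2/3} v^{5/3} e^{-v^2/2}\,dv,
\end{equation}
which is finite, so $\mathbb{E}\!\left[\int_0^s Z(x)\,dx\right] \le C\int_0^s x^{-1/3} dx < \infty$. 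Consequently $\int_0^s Z(x)\,dx$ is finite almost surely, and the dominated convergence theorem gives $\int_0^s Z_M(x)\,dx\to \int_0^s Z(x)\,dx$ almost surely under the coupling. Since this coupling is simultaneous across $s$, we obtain convergence in finite-dimensional distributions, completing the proof.

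The subtle point I expect to be the main obstacle is invoking pathwise comparison and continuous dependence on initial conditions for the Bessel SDE in a neighbourhood of $V=0$, where the drift is singular. This is a classical but non-trivial property (it relies on the fact that $(8/3)$-Bessel does not hit $0$ in positive time, so trajectories with $V_M(0)>0$ only encounter the singularity in the limit). All other ingredients—density formula, scaling, dominated convergence—are elementary once the coupling and ordering are established.
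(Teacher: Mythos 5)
Your proposal is correct and follows essentially the same route as the paper: pass to the $\tfrac{8}{3}$-Bessel representation of Remark~\ref{remark:on bessel}, couple $V_M$ and $V$ to a single Brownian motion, and exploit the resulting pathwise ordering to compare the $Z$-integrals. The one place where you are slightly heavier-handed than necessary is the coupling estimate: rather than invoking an abstract comparison theorem for SDEs with singular drift and then separately appealing to continuity of Bessel flows, the paper observes directly that under the common-BM coupling the difference $V_a(x)-V(x)$ has zero diffusion and drift $\tfrac{5}{6}\bigl(\tfrac{1}{V_a}-\tfrac{1}{V}\bigr)\le 0$ whenever $V_a\ge V$, so $V_a-V$ is nonnegative and decreasing, giving the clean bound $0\le V_a(x)-V(x)\le a$ for all $x$; this bypasses any delicacy at the singular origin. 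On the other hand, you are more careful than the paper in the last step. The paper reduces the claim to ``$\int V_a\to\int V$,'' but the map $V\mapsto(3V)^{-2/3}$ is nonlinear, so convergence of $\int V_a$ alone is not literally what is needed; what actually does the work is the \emph{uniform} bound $|V_a-V|\le a$ together with the monotonicity $Z_M=(3V_a)^{-2/3}\nearrow (3V)^{-2/3}=Z$ (so that monotone convergence, or your dominated convergence plus the finiteness check $\mathbb{E}[Z(x)]\asymp x^{-1/3}$, applies). Your explicit verification that $\int_0^s Z(x)\,dx<\infty$ a.s.\ via the Bessel scaling and the density of $V(1)$ is a useful supplement that the paper leaves implicit.
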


\begin{proof}
    Let $V_a (x)$ and $V(x)$ be a $\frac{8}{3}$-Bessel processes with initial condition $V_a (0)=a $ and $V(0)=0$ By Remark~\ref{remark:on bessel} it suffices to show that
    \begin{equation}\label{eq:convergence of Bessel}
	\left(\int _0^s V_a (x) dx\right)_{s\ge 0} \overset{d}{\longrightarrow } \left(\int _0^s V(x) dx\right)_{s\ge 0},
	\end{equation} 
	as $a \to 0$. To this end we couple the two processes. Recall that $V_a $ and $V$ are solutions to the equation
	\begin{equation}
	    dV(x)=dB_x +\frac{5}{6}\frac{dx}{V(x)}.
	\end{equation}
	Suppose that both $V_a$ and $V$ are driven by the same Brownian motion. The drift term is larger as long as the process is smaller and therefore, under this coupling, $V_a (x)-V(x)$ is positive and decreasing. Thus $|V_a (x)-V(x)|\le a $ for all $x>0$. Therefore, for all $s>0$
	\begin{equation}
	    \left| \intop _0^s V_a(x)dx - \intop _0^s  V(x)dx   \right| \le \intop _0^s |V_a(x) -V(x)|dx\le as\to 0 ,\quad \text{a.s.},
	\end{equation}
	as $a\to 0$. Equation \eqref{eq:convergence of Bessel} follows from this.
\end{proof}

\begin{proof}[Proof of Theorem~\ref{thm:main theorem 2}]
	On the event $\mathcal A $ we have almost surely 
	\begin{equation}\label{eq:bound onX}
	X_t \le X_{t_0}+\overline{X} _t \le t_0+\overline{X}_{t_2}+\overline{X}_{t_2+t}-\overline{X}_{t_2}\le 2 \epsilon t^{\frac{2}{3}}+ \overline{X}_{t_2+t}-\overline{X}_{t_2},
	\end{equation}
	where the last inequality holds for sufficiently large $t$. Thus, for all $z>0$ we have  
	\begin{equation}
	\begin{split}
	\liminf _{t\to \infty } \mathbb P (X_t \le zt^{\frac{2}{3}} )  &\ge \liminf _{t\to \infty } \mathbb P (X_t \le zt^{\frac{2}{3}} , \mathcal A ) \\
	&\ge \liminf _{t\to \infty } \mathbb P (\overline{X}_{t_2+t}-\overline{X}_{t_2} \le (z-2 \epsilon )t^{\frac{2}{3}} , \mathcal A ) \\
	&\ge \liminf _{t\to \infty } \mathbb P ( \mathcal A )\cdot \mathbb P (\overline{X}_{t_2+t}-\overline{X}_{t_2} \le (z-2 \epsilon ) t^{\frac{2}{3}} \ | \ \mathcal A )\\
	&\ge (1-2 \epsilon ) \cdot \liminf _{t\to \infty } \mathbb P (\tilde{X}_{t_2+t}-\tilde{X}_{t_2} \le (z-2 \epsilon ) t^{\frac{2}{3}})  \\
	&\ge (1-2 \epsilon ) \cdot  \mathbb P \Big( \int _0^1 Z_\delta (x) dx \le (z-2\epsilon ) \Big) ,
	\end{split}
	\end{equation}
	where in the last inequality we used Corollary~\ref{cor:integrate both sides}.	Recall that the following inequality holds for all $\epsilon, \delta $ as long as $\delta $ is sufficiently small depending on $\epsilon $. Taking $\delta $ to zero and using Claim~\ref{claim:convegence of integrals of sdes} we get
	\begin{equation}
	\liminf _{t\to \infty } \mathbb P (X_t \le zt^{\frac{2}{3}} ) \ge (1-2 \epsilon ) \cdot  \mathbb P \Big( \int _0^1 Z (x) dx \le (z-2\epsilon ) \Big) ,
	\end{equation}
	where $Z$ is the solution to the same SDE with $Z(0)=\infty$. Next, taking $\epsilon $ to zero we get
	\begin{equation}
	\liminf _{t\to \infty } \mathbb P (X_t \le zt^{\frac{2}{3}} ) \ge   \mathbb P \Big( \int _0^1 Z (x) dx \le z \Big) .
	\end{equation}
	By the same arguments for all $z_1 ,\dots ,z_k>0$ we have 
	\begin{equation}
	\liminf _{t\to \infty } \mathbb P (X_{s_1t} \le z_1t^{\frac{2}{3}}, \dots , X_{s_kt} \le z_kt^{\frac{2}{3}} ) \ge \mathbb P \Big( \int _0^{s_1} Z (x) \le z_1, \dots , \int _0^{s_k} Z (x) \le z_k \Big) .
	\end{equation}
	
	We turn to prove a matching upper bound on the last probability. As the arguments are similar we will not give all the details. To this end, we change the definition of $\mathcal A _0$ slightly. We take a sufficiently small $\alpha _0$ and $t_0=100 \alpha _0^{-2}$ and use the same definition as in \eqref{eq:def of A_0} with $\underline{Y}_{t_0,\alpha _0}$ instead of $\overline{Y}_{t_0,\alpha _0}$. We also let $( \underline{Y}_t, \underline{S}(t) ,\underline{X} _t)$ be the aggregate with initial condition $\underline{Y}_{t_0,\alpha _0}$ and work with $( \underline{Y}_t, \underline{S}(t),\underline{X} _t)$ instead of $( \overline{Y}_t ,\overline{S}(t),\overline{X} _t)$. The rest of the definitions such as $\mathcal A_1 , \mathcal A _2, \mathcal A $ and $\tilde{X}$ stay the same. As in \eqref{eq:bound onX} we have on $\mathcal A $ that
	\begin{equation}
	X_t\ge \underline{X} _t-\underline{X}_{t_2}\ge \underline{X}_{t_2+(1-2\epsilon )t}-\underline{X}_{t_2}.
	\end{equation}  
	Thus
	\begin{equation}
	\begin{split}
	\limsup _{t\to \infty } \mathbb P (X_t \le zt^{\frac{2}{3}} )  &\le \limsup _{t \to \infty } \mathbb P (\mathcal A^c) +\limsup _{t\to \infty } \mathbb P (\overline{X}_{t_2+(1-2\epsilon )t}-\overline{X}_{t_2} \le z t^{\frac{2}{3}} , \mathcal A ) \\
	&\le 3 \epsilon  + \limsup _{t\to \infty } \mathbb P (\tilde{X}_{t_2+(1-2\epsilon )t}-\tilde{X}_{t_2} \le z t^{\frac{2}{3}})  \\
	&\le 3 \epsilon + \cdot  \mathbb P \Big( \int _0^{1-2\epsilon } Z_\delta (x) dx \le z \Big) .
	\end{split}
	\end{equation}
	Taking $\delta \to 0$ and then $\epsilon \to 0$ we get
	\begin{equation}
	\limsup _{t\to \infty } \mathbb P (X_t \le zt^{\frac{2}{3}} ) \le   \mathbb P \Big( \int _0^1 Z (x) dx \le z \Big) .
	\end{equation}
	By the same arguments 
	\begin{equation}
	\limsup _{t\to \infty } \mathbb P (X_{s_1t} \le z_1t^{\frac{2}{3}}, \dots , X_{s_kt} \le z_kt^{\frac{2}{3}} ) \le \mathbb P \Big( \int _0^{s_1} Z (x) \le z_1, \dots , \int _0^{s_k} Z (x) \le z_k \Big) .
	\end{equation}
	Thus
	\begin{equation}
	\lim _{t\to \infty } \mathbb P (X_{s_1t} \le z_1t^{\frac{2}{3}}, \dots , X_{s_kt} \le z_kt^{\frac{2}{3}} ) = \mathbb P \Big( \int _0^{s_1} Z (x) \le z_1, \dots , \int _0^{s_k} Z (x) \le z_k \Big) .
	\end{equation}
	This finishes the proof of Theorem~\ref{thm:main theorem 2}.
\end{proof}

	\section*{Acknowledgements}
	We are deeply grateful to Vladas Sidoravicius who, with great enthusiasm, introduced us to the MDLA model and wish we could have shared these results with him.
	DE thanks Ryan Alweiss, Ofir Gorodetsky, Oren Yakir, Sang Woo Ryoo, Elad Zelingher and Jonathan Zung for useful discussions. DN is supported by a Samsung Scholarship. AS thanks Amir Dembo for his thoughtful discussions on the mildly supercritical case.  AS is supported by  NSF grants DMS-1352013 and DMS-1855527, Simons Investigator grant and a MacArthur Fellowship.
	\bibliographystyle{amsplain}
	\bibliography{biblicomplete}
	
	\newpage

 \appendix

\section{The renewal process, Fourier transform and singularity analysis}\label{sec:fourier and renewal}

In this section we study the asymptotic behavior of functions $K$ and the density of the renewal process $K^*$ as $\alpha \to 0$. We also estimate a key integral that appears in the drift term of the speed increment from Section \ref{sec:double int}. The main tools we use are Fourier transforms and complex analysis.

\subsection{Fourier transform of $\tau $ and $K$}

Let $0< \alpha <1$. Let $Y(t)$ be a rate $\alpha $ Poisson process and $W(t)$ be a continuous time random walk with $W(0)=0$. Let $\tau := \inf \{t>0 : W(t)>Y(t)\}$. Recall that $K=K_\alpha $ is given by $K(t)=\alpha (1+2\alpha ) \mathbb P (t < \tau <\infty )$. We also let $X$ be a random variable with density~$K$.

\begin{lem}
	For all $s\in \mathbb C $ with $\text{Re} (s)<0$ we have
	\begin{enumerate}
		\item 
		\begin{equation}
		\mathbb E \left[ e^{s\tau }\mathds{1}_{\{\tau <\infty\}}\right]= \frac{1+\alpha -s - \sqrt{(1+\alpha -s )^2 -(1+2 \alpha )}}{1+2 \alpha }
		\end{equation}
		\item 
		\begin{equation}
		\mathbb E [e^{sX}]= \intop _0^{\infty } e^{st}K(t)dt=\frac{\alpha }{s} \left( \alpha -s - \sqrt{(1+\alpha -s )^2 -(1+2 \alpha )}\right).
		\end{equation}
	\end{enumerate}
\end{lem}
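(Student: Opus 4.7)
Plan of proof. The central object is the auxiliary process $U(t) := Y(t) - W(t) + 1$, which under $\PP_\alpha$ is a continuous-time birth-death chain on $\mathbb{Z}$ starting at $U(0)=1$, with upward jumps of rate $\tfrac12+\alpha$ (coming from $Y$-arrivals and leftward steps of $W$) and downward jumps of rate $\tfrac12$ (from rightward steps of $W$). Since $U$ has unit jumps and $\tau=\inf\{t>0:W(t)>Y(t)\}=\inf\{t>0:U(t)=0\}$, part (1) reduces to a standard first-passage calculation. I would set $\phi(s):=\mathbb{E}[e^{s\tau}\mathds{1}_{\{\tau<\infty\}}]$ and analyze the first jump of $U$. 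The first jump occurs at an independent $\mathrm{Exp}(1+\alpha)$ time $T_1$, after which either $U$ hits $0$ directly (probability $\tfrac{1/2}{1+\alpha}$) or moves to $2$ (probability $\tfrac{1/2+\alpha}{1+\alpha}$), from which the remaining hitting time of $0$ is the independent sum of two copies of $\tau$ by the strong Markov property and the spatial homogeneity of $U$.

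This yields the functional equation
\begin{equation}
\phi(s) \;=\; \frac{1+\alpha}{1+\alpha-s}\left[\frac{1/2}{1+\alpha}+\frac{1/2+\alpha}{1+\alpha}\,\phi(s)^2\right],
\end{equation}
valid for $\mathrm{Re}(s)<1+\alpha$, which rearranges to the quadratic
\begin{equation}
\bigl(\tfrac12+\alpha\bigr)\phi(s)^2-(1+\alpha-s)\phi(s)+\tfrac12=0.
\end{equation}
The quadratic formula gives the two candidate roots $\bigl((1+\alpha-s)\pm\sqrt{(1+\alpha-s)^2-(1+2\alpha)}\bigr)/(1+2\alpha)$. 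The correct branch is the one with the minus sign: at $s=0$ the radicand equals $\alpha^2$, so the two candidates are $1/(1+2\alpha)$ and $1$; by the optimal stopping computation $\PP_\alpha(\tau<\infty)=1/(1+2\alpha)$ (this is \eqref{eq:speed:Ut mg conv} applied to the martingale $\xi^{U_s}$), forcing the minus sign. For general $s$ with $\mathrm{Re}(s)<0$ one then fixes the branch of the square root by analytic continuation from $s=0$; because $|\phi(s)|\le 1$ for $\mathrm{Re}(s)<0$ while the plus-sign root is unbounded, continuity of $\phi$ selects the same branch throughout the half-plane. This establishes part (1).

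For part (2), the relation $K(t)=\alpha(1+2\alpha)\,\PP_\alpha(t<\tau<\infty)$ and Fubini give
\begin{equation}
\intop_0^\infty e^{st}K(t)\,dt \;=\; \alpha(1+2\alpha)\intop_0^\infty f_\tau(u)\intop_0^u e^{st}\,dt\,du \;=\;\frac{\alpha(1+2\alpha)}{s}\bigl[\phi(s)-\phi(0)\bigr],
\end{equation}
where $f_\tau$ denotes the (sub-)density of $\tau$ on $\{\tau<\infty\}$ and the interchange is justified by $\mathrm{Re}(s)<0$ together with $\int f_\tau \le 1$. Substituting the formula for $\phi(s)$ from part (1), the $(1+2\alpha)$ factors cancel and the constant term simplifies as $(1+\alpha-s)-1=\alpha-s$, producing exactly the claimed expression.

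The only genuine subtlety is the branch choice for the square root, so the main thing to check carefully is that the minus-sign branch selected at $s=0$ extends analytically to all of $\{\mathrm{Re}(s)<0\}$ without crossing a branch cut; I would handle this by noting that the discriminant $(1+\alpha-s)^2-(1+2\alpha)$ vanishes only at $s=\alpha$ and $s=2+\alpha$, both of which lie in the right half-plane, so the left half-plane is simply connected in the complement of the critical set and the principal branch is unambiguous. The remaining manipulations (first-step analysis, Fubini, algebraic simplification) are routine.
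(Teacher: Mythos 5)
Your proof is correct, and it reaches the result by a genuinely different route than the paper. Where you use first-step analysis on the auxiliary walk $U(t)=Y(t)-W(t)+1$ to derive the quadratic $(1+2\alpha)\phi^2-2(1+\alpha-s)\phi+1=0$ directly (conditioning on the first jump and using the strong Markov property and spatial homogeneity to decompose the hitting time from level $2$ as a sum of two i.i.d.\ copies of $\tau$), the paper instead forms the exponential martingale $Z_t=\theta^{U_t}e^{s(\theta)t}$ with $s(\theta):=-\alpha(\theta-1)-\tfrac{\theta+\theta^{-1}}{2}+1$, applies optional stopping to identify $\phi(s)=\theta(s)$, and then inverts $s(\theta)$ — which of course produces the very same quadratic. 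The martingale route makes the root selection automatic (one restricts to $0<\theta<\theta_0=(1+2\alpha)^{-1}$, which is the range on which $s(\theta)<0$), whereas your route requires the separate argument you give — matching $\phi(0)=\PP_\alpha(\tau<\infty)=1/(1+2\alpha)$ and observing that the $+$-branch is unbounded on $\{\mathrm{Re}(s)<0\}$ while $|\phi(s)|\le 1$ — which is also sound. Your part (2) is the same computation as the paper's.

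One small numerical slip in your branch-cut discussion: the discriminant $(1+\alpha-s)^2-(1+2\alpha)$ vanishes at $s=1+\alpha\mp\sqrt{1+2\alpha}$, not at $s=\alpha$ and $s=2+\alpha$ (for $\alpha$ small these are roughly $\alpha^2/2$ and $2+2\alpha$). Fortunately the inequality $(1+\alpha)^2>1+2\alpha$ holds for every $\alpha>0$, so both branch points still lie in the open right half-plane and your conclusion — that the left half-plane avoids the branch cut, making the choice of square root unambiguous there — is unaffected. You may also find it cleaner to dispense with the analytic-continuation language entirely and rely solely on the boundedness argument: for $\mathrm{Re}(s)<0$ one has $|\phi(s)|\le\PP_\alpha(\tau<\infty)<1$, while the $+$-root grows linearly in $|s|$, so the $-$-root is forced pointwise on the whole half-plane.
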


\begin{proof}
	Let $U_t:=Y(t)-W(t)$. Using that $Y(t)\sim \text{Poisson}(\alpha t)$ and that the number of steps taken by the random walk up to time $t$ is distributed $\text{Poisson}(t)$ we get that for any $\theta >0$  
	\begin{equation}
	\begin{split}
	\mathbb E [\theta ^{U_t}]=\mathbb \mathbb E [\theta ^{Y(t)}]E[\theta ^{W(t)}]&= \Big( e^{-\alpha t} \sum _{k=0}^{\infty } \frac{(\alpha t)^k}{k!} \theta ^{k} \Big)\Big( e^{- t} \sum _{k=0}^{\infty } \frac{ t^k}{k!} \Big( \frac{\theta +\theta ^{-1}}{2} \Big) ^{k} \Big)\\
	&= \exp \left( t\left( \alpha (\theta -1) + \frac{\theta +\theta ^{-1}}{2}-1\right) \right).
	\end{split}
	\end{equation}
	Thus, if we let 
	\[s=s(\theta ):=-\alpha (\theta -1)-\frac{\theta +\theta ^{ -1}}{2}+1\]
	we get that the process $Z_t:=\theta ^{U_t} e^{st}$ is a martingale. Note that for $\theta _0:=1/(1+2 \alpha )$ we have $s(\theta _0)=0$ and that $s(\theta )<0$ for $\theta <\theta _0$. Thus for $0< \theta <\theta _0$ we have $Z_t \to 0$ almost surely and therefore $Z_{t\wedge \tau } \to  \theta ^{-1} e^{s\tau }\mathds{1}_{\{\tau <\infty\}} $. Using the bounded convergence theorem and inverting the function $s(\theta )$ we obtain
	\begin{equation}
	\mathbb E \left[ e^{s\tau }\mathds{1}_{\{\tau <\infty\}}\right]=\theta (s) \cdot \mathbb E (Z_0)=\theta (s)=\frac{1+\alpha -s - \sqrt{(1+\alpha -s )^2 -(1+2 \alpha )}}{1+2 \alpha }.
	\end{equation} 
	We proved the last equality for all $s<0$ however, since both sides are analytic functions in $\{\text{Re} (s)<0 \}$, by the uniqueness theorem the equality holds for all $s$ in this domain.
	
	We turn to prove the second part. Using the definition of $K$ and part (1) we obtain
	\begin{equation}
	\begin{split}
	\mathbb E e^{sX}& = \intop _{0}^{\infty} e^{st} K(t)dt=\alpha (1+2 \alpha ) \intop _0^{\infty} e^{st}\cdot  \mathbb P (t \le \tau < \infty)dt\\
	&=\alpha (1+2 \alpha )  \mathbb E \left[ \intop _0^{\infty} e^{st} \mathds{1}_{\{\tau <\infty\}} \mathds{1}_{\{\tau  \ge t\}} dt \right]=\alpha (1+2 \alpha )  \mathbb E \left[ \mathds{1}_{\{\tau <\infty\}} \intop _0^{\tau } e^{st}  dt \right]\\
	&= \frac{\alpha (1+2 \alpha )}{s}  \left(\mathbb E \left[e^{s\tau }\mathds{1}_{\{\tau <\infty\}} \right] -\mathbb P (\tau <\infty )\right)= \frac{\alpha }{s} \left( \alpha -s - \sqrt{(1+\alpha -s )^2 -(1+2 \alpha )}\right)
	\end{split}
	\end{equation}
	as needed.
\end{proof}

In the next corollary we compute the moments of the random variable $X$.

\begin{cor}\label{cor:moments of K}
	We have that 
	\begin{equation}
	\begin{split}
	&\intop _0^\infty u K(u)du=\frac{1+2 \alpha }{2 \alpha ^2 }=\frac{1}{2}\alpha ^{-2}+O(\alpha ^{-1}),\quad \intop _0^\infty u^2 K(u)du=\frac{(1+\alpha )(1+2 \alpha )}{\alpha ^4}=\alpha ^{-4}+O(\alpha ^{-3}) \\ 
	&\intop _0^\infty u^2 K'(u)=-\frac{1+2 \alpha }{\alpha ^2 }=-\alpha ^{-2}+O(\alpha ^{-1}),\quad \intop _0^\infty uK'(u)du=-1
	\end{split}
	\end{equation}
\end{cor}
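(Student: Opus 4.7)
The plan is to read off the moments directly from the moment generating function established in the preceding lemma, and then to obtain the two derivative integrals by integration by parts. First I would simplify the radicand in part (2) of the lemma by expanding
\begin{equation}
(1+\alpha-s)^2 - (1+2\alpha) = \alpha^2 - 2(1+\alpha)s + s^2,
\end{equation}
so that the moment generating function takes the clean form $M(s) = \alpha f(s)/s$ with $f(s) := \alpha - s - g(s)$ and $g(s) := \sqrt{\alpha^2 - 2(1+\alpha)s + s^2}$. Since $f(0)=0$, I can write $M(s) = \alpha\bigl[f'(0) + \tfrac{1}{2}f''(0)s + \tfrac{1}{6}f'''(0)s^2 + \cdots\bigr]$, which identifies
\begin{equation}
\mathbb E[X] = \tfrac{\alpha}{2}f''(0), \qquad \mathbb E[X^2] = \tfrac{\alpha}{3}f'''(0).
\end{equation}

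Next I would compute the derivatives of $g$ by differentiating the identity $g^2 = \alpha^2 - 2(1+\alpha)s + s^2$, which yields $g'=(s-(1+\alpha))/g$ and then, by the quotient rule, $g''=1/g - (s-(1+\alpha))^2/g^3$ and $g''' = -3(s-(1+\alpha))/g^3 + 3(s-(1+\alpha))^3/g^5$. Evaluating at $s=0$ (where $g(0)=\alpha$ and $s-(1+\alpha)=-(1+\alpha)$) and using $f^{(k)} = -g^{(k)}$ for $k\geq 2$ gives
\begin{equation}
f''(0) = \frac{(1+\alpha)^2-\alpha^2}{\alpha^3} = \frac{1+2\alpha}{\alpha^3}, \qquad f'''(0) = \frac{3(1+\alpha)[(1+\alpha)^2-\alpha^2]}{\alpha^5} = \frac{3(1+\alpha)(1+2\alpha)}{\alpha^5},
\end{equation}
which immediately yield the first two stated identities $\int_0^\infty uK(u)du = (1+2\alpha)/(2\alpha^2)$ and $\int_0^\infty u^2K(u)du = (1+\alpha)(1+2\alpha)/\alpha^4$.

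Finally, for the two integrals against $K'$, I would integrate by parts. Lemma~\ref{lem:estimate on K:intro} gives $K(t)\le C\alpha(t+1)^{-1/2} e^{-c\alpha^2 t}$, so $uK(u)$ and $u^2K(u)$ both vanish at $0$ and $\infty$, and Corollary~\ref{cor:speed:integralofH} (with the normalization given in Definition~\ref{def:Kalpha:main}) ensures $\int_0^\infty K(u)du = 1$. Therefore
\begin{equation}
\intop_0^\infty u K'(u)du = -\intop_0^\infty K(u)du = -1, \qquad \intop_0^\infty u^2 K'(u)du = -2\intop_0^\infty uK(u)du = -\frac{1+2\alpha}{\alpha^2}.
\end{equation}
There is no serious obstacle here; the only mildly delicate point is justifying the vanishing of boundary terms at infinity, which is handled by the exponential decay bound already in hand.
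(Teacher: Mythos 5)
Your proposal is correct and follows essentially the same route as the paper's proof: extract the first two moments by differentiating the moment generating function of $X$ from Lemma (2) at $s=0$ (you just carry out the computation explicitly via the auxiliary functions $f,g$ and use $\int_0^\infty K = 1$ to pin down the constant term), and obtain the last two by integration by parts with the exponential decay from Lemma~\ref{lem:estimate on K:intro} killing the boundary terms. The paper states this in one line; your version simply supplies the algebra.
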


\begin{proof}
	the first two identities follows from differentiating at $s=0$ both sides of the equation
	\begin{equation}
	\intop _{0}^{\infty} e^{st} K(t)dt=\frac{\alpha }{s} \left( \alpha -s - \sqrt{(1+\alpha -s )^2 -(1+2 \alpha )}\right).
	\end{equation}
	the last two identities follows from integration by parts.
\end{proof}

Next, we find the behavior of the Fourier transform in different regions of the complex plain. For $s>0$ define
\begin{equation}
\varphi (s):= \mathbb E e^{-s X}=-\frac{\alpha }{s} \left(\alpha +s -\sqrt{(1+\alpha +s)^2-(1+2 \alpha ) }  \right).
\end{equation}
We think of $\varphi $ as a complex function defined on a certain domain $\Omega \subseteq \mathbb C $. See part (1) of Claim~\ref{claim:basic properties of phi} for more details.

Define also 
\begin{equation}
\tilde{\varphi }(s):=-\frac{\alpha }{s} (\alpha -\sqrt{\alpha ^2 +2 s}),
\end{equation}
We'll see that $\varphi $ can be approximated by $\tilde{\varphi}$ for an appropriate range of $s$.

\begin{claim}\label{claim:basic properties of phi}
	The function $\varphi$ satisfies the following properties
	\begin{enumerate}
		\item 
		The functions $\varphi $ and $\tilde{\varphi}$ extend to analytic and bounded functions in 
		\begin{equation}
		\Omega :=\mathbb C \setminus \{ s \in \mathbb R : s<-1 -\alpha +\sqrt{1+2\alpha }\}.
		\end{equation}
		\item 
		We have that $|\varphi(s)|\le C \alpha  /|s|$ for sufficiently large $|s|$ (independently of $0< \alpha <1$).
		\item 
		For any $s \in \Omega $ with $|s|\ge \frac{1}{5} \alpha ^2 $ we have
		\begin{equation}
		|\tilde{\varphi}(s)|, |\varphi (s)|\le  \frac{C \alpha }{\sqrt{|s|}} 
		\end{equation} 
		\item 
		For any  $s \in \Omega $ with  $ |s|\ge \frac{1}{5} \alpha ^2 $ and $|\alpha ^2 +2 s |\ge \frac{1}{2} |s| $ we have 
		\begin{equation}\label{eq:bound on phi and tilde phi}
		|\varphi (s)-\tilde{\varphi }(s)| \le C  \alpha 
		\end{equation}
		\item 
		For any  $s\in \Omega $ with $ |s|\ge \frac{1}{5} \alpha ^2  $ and $|\alpha ^2 +2s|\ge \frac{1}{2} |s| $ we have 
		\begin{equation}
		|1-\varphi(s)|\ge c,\quad  |1-\tilde{\varphi}(s)|\ge c
		\end{equation}
	\end{enumerate}
\end{claim}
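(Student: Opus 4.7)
The core strategy is rationalization: using the identity $(1+\alpha+s)^2-(1+2\alpha)=(\alpha+s)^2+2s$ and multiplying by the conjugate surd, one rewrites
\begin{equation}
\varphi(s) = \frac{2\alpha}{(\alpha+s)+\sqrt{(\alpha+s)^2+2s}}, \qquad
\tilde\varphi(s) = \frac{2\alpha}{\alpha+\sqrt{\alpha^2+2s}},
\end{equation}
with principal branches chosen so that the square roots have non-negative real part. These forms make the apparent poles at $s=0$ manifestly removable ($\varphi(0)=\tilde\varphi(0)=1$), display the branch cuts explicitly, and put the denominators in a form whose magnitudes admit clean lower bounds.

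For part (1), the branch points of $\sqrt{(\alpha+s)^2+2s}$ are precisely $s=-1-\alpha\pm\sqrt{1+2\alpha}$; the standard cut joining them lies inside $\Omega^c$, and similarly the lone branch point $-\alpha^2/2$ of $\tilde\varphi$ lies inside $\Omega^c$. Part (2) is immediate, since as $|s|\to\infty$ the denominator of $\varphi$ grows like $2s$. For part (3), the key observation is that if $\sqrt{\alpha^2+2s}=a+bi$ with $a\ge 0$, then $|\alpha+\sqrt{\alpha^2+2s}|^2=(\alpha+a)^2+b^2\ge\alpha^2+(a^2+b^2)=\alpha^2+|\alpha^2+2s|$, yielding $|\tilde\varphi(s)|\le 2\alpha/\sqrt{\alpha^2+|\alpha^2+2s|}$; a short case split according to whether $|s|$ is of order $\alpha^2$ or much larger converts this into the claimed $C\alpha/\sqrt{|s|}$ bound. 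The argument for $\varphi$ is parallel, with $\alpha$ replaced by $\alpha+s$ in places and a small extra check when $\mathrm{Re}(\alpha+s)<0$.

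For part (4), the common-denominator difference
\begin{equation}
\varphi(s)-\tilde\varphi(s) = \frac{2\alpha\bigl[\sqrt{\alpha^2+2s}-\sqrt{(\alpha+s)^2+2s}-s\bigr]}{\bigl((\alpha+s)+\sqrt{(\alpha+s)^2+2s}\bigr)\bigl(\alpha+\sqrt{\alpha^2+2s}\bigr)}
\end{equation}
has denominator at least $c|s|$ by part (3) and the hypothesis $|\alpha^2+2s|\ge|s|/2$, while a further rationalization $\sqrt{\alpha^2+2s}-\sqrt{(\alpha+s)^2+2s}=-s(2\alpha+s)/(\sqrt{\alpha^2+2s}+\sqrt{(\alpha+s)^2+2s})$ bounds the bracket in the numerator by $O(|s|)$ under the same hypothesis, giving $|\varphi-\tilde\varphi|\le C\alpha$ after cancellation. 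Part (5) for $\tilde\varphi$ follows from the direct identity $1-\tilde\varphi(s)=2s/(\alpha+\sqrt{\alpha^2+2s})^2$ combined with the triangle bound $|\alpha+\sqrt{\alpha^2+2s}|^2\le 2\alpha^2+2|\alpha^2+2s|\le C|s|$ (using $|s|\ge\alpha^2/5$), whence $|1-\tilde\varphi(s)|\ge c$; for $\varphi$ one writes $|1-\varphi|\ge|1-\tilde\varphi|-|\varphi-\tilde\varphi|\ge c-C\alpha$ and invokes part (4).

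The main obstacle is keeping the constant in part (4) uniform throughout the region $\{|s|\ge\alpha^2/5,\ |\alpha^2+2s|\ge|s|/2\}$, which spans both the small-$|s|$ regime (where $\varphi-\tilde\varphi$ is a delicate cancellation of comparable quantities) and the large-$|s|$ regime (where $\varphi\sim\alpha/|s|$ decays strictly faster than $\tilde\varphi\sim\alpha/\sqrt{|s|}$). Arranging the algebra so that all error terms are absorbed into a single $C\alpha$ bound without hidden dependence on the branch of $\sqrt{(\alpha+s)^2+2s}$ near the cut will require a careful case split, which is the most technical piece of the argument.
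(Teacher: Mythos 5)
Your rationalized forms $\varphi(s)=\tfrac{2\alpha}{(\alpha+s)+\sqrt{(\alpha+s)^2+2s}}$ and $\tilde\varphi(s)=\tfrac{2\alpha}{\alpha+\sqrt{\alpha^2+2s}}$ are correct and give a genuinely different, more algebraic route than the paper's; in particular the identity $1-\tilde\varphi(s)=2s/(\alpha+\sqrt{\alpha^2+2s})^2$ makes part~(5) essentially a one-liner (and doesn't even use the second hypothesis), whereas the paper relies on a rescaling/uniqueness argument. However, there is a genuine gap in your part~(4). You assert that the bracket $\sqrt{\alpha^2+2s}-\sqrt{(\alpha+s)^2+2s}-s$ is $O(|s|)$ ``under the same hypothesis,'' via the rationalization $\sqrt{\alpha^2+2s}-\sqrt{(\alpha+s)^2+2s}=-s(2\alpha+s)/\bigl(\sqrt{\alpha^2+2s}+\sqrt{(\alpha+s)^2+2s}\bigr)$. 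But the hypothesis $|\alpha^2+2s|\ge|s|/2$ alone does not give a lower bound on the sum $\sqrt{\alpha^2+2s}+\sqrt{(\alpha+s)^2+2s}$: both square roots lie in the closed right half-plane, and two such numbers can nearly cancel if both arguments approach the negative real axis from opposite sides. The missing ingredient is precisely the observation the paper makes to show its interpolating segment $I$ avoids $\mathbb{R}_{\le 0}$: one computes $\mathrm{Im}\bigl((\alpha+s)^2+2s\bigr)=2\,\mathrm{Im}(s)\,(1+\alpha+\mathrm{Re}(s))$ and $\mathrm{Im}(\alpha^2+2s)=2\,\mathrm{Im}(s)$, so when $\mathrm{Re}(1+\alpha+s)>0$ (guaranteed once you restrict to $|s|\le 1/5$, the regime not already covered by part~(3)) the two arguments have imaginary parts of the same sign, hence the two square roots lie in the same closed quarter-plane, which yields $|\sqrt{\alpha^2+2s}+\sqrt{(\alpha+s)^2+2s}|\ge\max\bigl(|\sqrt{\alpha^2+2s}|,\,|\sqrt{(\alpha+s)^2+2s}|\bigr)\ge\sqrt{|s|/2}$ and closes the estimate. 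You flag that ``a careful case split'' is required, but without stating this specific sign-coherence fact the central step of part~(4) would fail.

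A smaller issue: for the $\varphi$ half of part~(3), lower-bounding the rationalized denominator $|(\alpha+s)+\sqrt{(\alpha+s)^2+2s}|$ is not merely ``parallel'' to the $\tilde\varphi$ case, since when $\mathrm{Re}(\alpha+s)<0$ the term $\alpha+s$ can partially cancel the square root. For that half, the paper's un-rationalized triangle-inequality estimate $|\varphi(s)|\le\tfrac{\alpha}{|s|}\bigl(\alpha+|s|+C\sqrt{|s|}\bigr)$ is actually simpler and more robust, and you may find it easier to just use that directly rather than try to push the rationalized form through.
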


\begin{proof}
	The proof of the first part is standard and follows from 
	\begin{equation}\label{eq:square root}
	\sqrt{(1+\alpha +s)^2-(1+2 \alpha )}=\sqrt{(s+1+\alpha +\sqrt{1+2\alpha })}\sqrt{(s+1+\alpha -\sqrt{1+2\alpha })}
	\end{equation}
	where this equality holds when $s>0$ and the right hand side is analytic in $\Omega $ if we let $\sqrt{\ \cdot \ }$ be the standard square root defined on $\mathbb C \setminus \mathbb R _{\le 0}$ (note that in fact $\varphi$ is analytic in a larger domain but we will not use it). We turn to prove that $\tilde{\varphi}$ is analytic in $\Omega $. In the definition of $\tilde{\varphi}$ we let $\sqrt{\ \cdot \ }$ be the standard square root and so $\tilde{\varphi}$ is analytic in 
	\begin{equation}
	\mathbb C \setminus \{ s \in \mathbb R : s<- \frac{1}{2} \alpha ^2 \}
	\end{equation}
	which contains $\Omega$.
	
	The second part follows as the right hand side of \eqref{eq:square root} behaves like $s+O(1)$ as $|s|\to \infty$ uniformly in $0<\alpha <1$.
	
	We turn to prove the third part. Suppose that $\frac{1}{5} \alpha ^2 \le |s|\le C$ for some $C>0$. We have 
	\begin{equation}
	| \varphi (s)| \le \frac{\alpha }{|s|} (\alpha +|s|+|\sqrt{\alpha ^2 +2s+2 \alpha s+s^2 }|)\le \frac{\alpha }{|s|}(\alpha +|s|+C \sqrt{|s| }) \le \frac{C\alpha }{\sqrt{|s|}}.
	\end{equation}
	The bound for any $s\in \Omega $ follows from the last estimate together with the second part of the claim. By the same arguments we get the same bound for $\tilde{\varphi}$.
	
	We turn to prove part (4). When $s \in \Omega $ and $|s|\ge \frac{1}{5}$, by part (3) of the claim we have that $|\tilde{\varphi}(s)|,|\varphi(s)|\le C \alpha $ and \eqref{eq:bound on phi and tilde phi} follows. Let $s \in \Omega $ with  $\frac{1}{5} \alpha ^2 \le |s|\le \frac{1}{5} $ and $|\alpha ^2 +2s |\ge \frac{1}{2} |s|$. In this case we have   
	\begin{equation}
	\varphi (s)=-\frac{\alpha }{s}(\alpha +s -\sqrt{\alpha ^2 +2 s +2\alpha s +s^2}),
	\end{equation}
	where the square root is the standard square root (note that when $\re (s) \le -1-\alpha $, the square root is no longer the standard one). 
	Let $I$ be the line segment connecting $\alpha ^2 +2 s+2 \alpha s+s^2$ and $\alpha ^2 +2 s$. For any $z \in I$ we have 
	\begin{equation}\label{eq:bound in interval}
	|z|\ge |\alpha ^2 +2s|-|2 \alpha s +s^2 | \ge \frac{1}{2}|s|-\frac{1}{4}|s|=\frac{1}{4}|s|. 
	\end{equation}
	Next, we claim that $I \subseteq \mathbb C \setminus \mathbb R _{\le 0}$. indeed, $\left| \im (2 \alpha s +s^2)\right| \le |\im (s)|$ and therefore if $\im (s)\neq 0$ then $I$ is contained entirely inside the upper or lower half planes. If $\im (s)=0$  and $I\nsubseteq \mathbb C \setminus \mathbb R _{\le 0 }$ then, since $s\in \Omega $ we have that $0\in I$ contradicting \eqref{eq:bound in interval}. Thus, using \eqref{eq:bound in interval} again we obtain  
	\begin{equation}
	\left| \sqrt{\alpha ^2 +2s+2 \alpha s +s^2 }-\sqrt{\alpha ^2 +2s} \right| \le \intop _{I } \frac{1}{2\sqrt{|z|}}|dz| \le C \frac{\alpha |s|+|s|^2 }{\sqrt{|s|}} \le C  (\alpha \sqrt{|s|}+|s|^{\frac{3}{2}}). 
	\end{equation}
	We get that 
	\begin{equation}
	|\tilde{\varphi}(s)-\varphi (s)|\le  C  \frac{\alpha }{|s|} (|s|+\alpha \sqrt{|s|}) \le C \left( \alpha + \frac{\alpha ^2 }{\sqrt{|s|}} \right) \le C \alpha.
	\end{equation}
	
	Finally, we prove the last part of the claim. The  inequality in the case $|s|\ge C \alpha ^2  $ follows from part (3) of the claim. Thus suppose that $\frac{1}{5} \alpha ^2 \le |s|\le C \alpha ^2 $. $z=0$ is the unique solution to the equation $(1-\sqrt{1+2z})/z=1$ in $\mathbb C \setminus \mathbb R _{\le 0}$ and therefore if we let $z:=\alpha ^{-2}s$ we have that $|z|\ge \frac{1}{5} $ and  
	\begin{equation}
	\left|\tilde{\varphi} (s)-1\right|=\left| 1-\frac{1-\sqrt{1+2 z}}{z}\right| \ge c. 
	\end{equation}
	The same inequality holds when we replace $\tilde{\varphi}$ with $\varphi$ by part (4) of the claim.
\end{proof}

Throughout this section we'll use the complex contours $\gamma =\gamma ^{(t)}:=\gamma _1+\gamma _2+\gamma _3$ and $\delta :=\delta _1 +\delta _2 +\delta _3$ where
\begin{equation}
\begin{split}
\gamma _1(x):=x-\frac{i}{t},\quad x \in (-\infty ,0],\quad \quad \quad \quad \quad   &\delta _1(x):=x-i \alpha ^2 ,\quad x \in \left( -\infty ,\frac{1}{4}\alpha ^2 \right]\\
\gamma _2(x):=\frac{1}{t}e^{ix},\quad x \in \left[ -\frac{\pi }{2} ,\frac{\pi }{2}\right], \quad \quad \quad \quad \quad &\delta _2(x):=-\frac{1}{4}\alpha ^2 +ix,\quad x \in \left[ -\alpha ^2  ,\alpha ^2 \right] \\
\gamma _3(x):=-x+\frac{i}{t},\quad x \in [0,\infty ),\quad \quad \quad \quad \quad &\delta _3(x):=-x+\alpha ^2 i,\quad x \in \left[ \frac{1}{4}\alpha ^2 ,\infty \right). \\
\end{split}
\end{equation} 
See Figure \ref{critical1Dfigure}.

\subsection{Estimates for $K$}

\begin{figure}[!tbp]
	\centering
	\begin{minipage}[b]{0.49\textwidth}
		\begin{tikzpicture}
		\begin{axis}[ticks=none,axis lines=middle,xmin=-3,xmax=1,ymin=-2,ymax=2,axis equal image]
		\addplot[domain=0:-2.5]{0.5};
		\addplot[dash pattern=on 1pt off 2pt,domain=-2.5:-3]{0.5};
		\addplot[dash pattern=on 1pt off 2pt,domain=-2.5:-3]{-0.5};
		\addplot[domain=0:-2.5]{-0.5};
		\draw [-,decorate,decoration={snake,amplitude=0.7mm,segment length=1mm,post length=1mm}] (-0.25,0) -- (-3.5,0) node [pos=0.66,above]{} ;
		\draw [samples =70, domain=-90:90] plot({0.5*cos(\x)},{0.5*sin(\x)});
		\node[text width=0cm] at (-1.2,0.7)
		{$\gamma ^{(t)} $};
		\draw[->,thick] (-0.7,0.5) -- (-0.7-0.001,0.5) node [pos=0.66,above]{} ;
		\draw[->,thick] (-0.7,-0.5) -- (-0.7+0.001,-0.5) node [pos=0.66,above]{} ;
		\node at (0,0)[circle,fill,inner sep=1.5pt]{};
		\end{axis}
		\end{tikzpicture}
	\end{minipage}
	\begin{minipage}[b]{0.29\textwidth}
		\begin{tikzpicture}
		\begin{axis}[ticks=none,axis lines=middle,xmin=-3,xmax=1,ymin=-2,ymax=2,axis equal image]
		\addplot[domain=-0.5:-2.5]{1};
		\addplot[dash pattern=on 1pt off 2pt,domain=-2.5:-3]{1};
		\addplot[dash pattern=on 1pt off 2pt,domain=-2.5:-3]{-1};
		\addplot[domain=-0.5:-2.5]{-1};
		\draw [-,decorate,decoration={snake,amplitude=0.7mm,segment length=1mm,post length=1mm}] (-1,0) -- (-3.5,0) node [pos=0.66,above]{} ;
		\draw [-] (-0.5,-1) -- (-0.5,1) node [pos=0.66,above]{} ;
		\node[text width=0cm] at (-2,-0.85)
		{$\delta _1 $};
		\node[text width=0cm] at (-0.8,0.4)
		{$\delta _2 $};
		\node[text width=0cm] at (-2,0.8)
		{$\delta _3 $};
		\draw[->,thick] (-1.5,1) -- (-1.5-0.001,1) node [pos=0.66,above]{} ;
		\draw[->,thick] (-1.5,-1) -- (-1.5+0.001,-1) node [pos=0.66,above]{} ;
		\draw[->,thick] (-0.5,0.2) -- (-0.5,0.2+0.001) node [pos=0.66,above]{} ;
		\node at (0,0)[circle,fill,inner sep=1.5pt]{};
		\end{axis}
		\end{tikzpicture}
	\end{minipage}
	\caption{The contour $\gamma ^{(t)} $ and the contour $\delta =\delta _1+\delta _2 +\delta _3$. }
	\label{critical1Dfigure}
\end{figure}
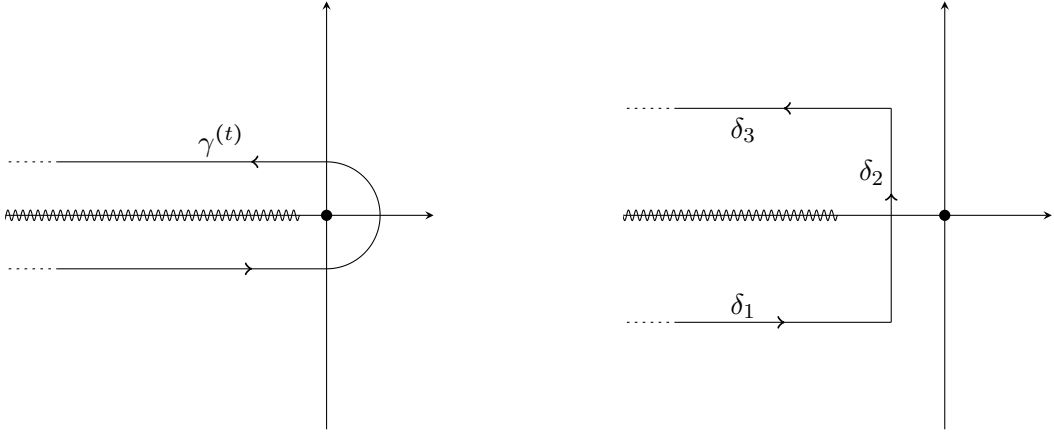

\begin{lem}\label{lem:estimate on K}
	For any $t>0$ we have 
	\begin{equation}
	K(t) \le \frac{C\alpha }{\sqrt{t+1}}e^{-c \alpha ^2 t}.
	\end{equation}
\end{lem}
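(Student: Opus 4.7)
The plan is to prove the bound via inverse Laplace transform, deforming the Bromwich contour into the keyhole contour $\delta$ introduced just before the lemma. Since $\varphi(s) = \int_0^\infty e^{-st} K(t)\, dt$ converges for $s$ with positive real part, Fourier inversion gives
$$K(t) = \frac{1}{2\pi i}\int_{c - i\infty}^{c + i\infty} e^{st}\varphi(s)\, ds, \qquad c > 0.$$
I would first shift this contour leftward to $\delta$. The branch point $-1-\alpha+\sqrt{1+2\alpha} = -\alpha^2/2 + O(\alpha^3)$ lies strictly to the left of $-\alpha^2/4$, so $\varphi$ is analytic in the region swept between the two contours (part (1) of Claim \ref{claim:basic properties of phi}), and part (2) gives the $|\varphi(s)| \le C\alpha/|s|$ decay needed to kill the horizontal joining arcs as $|\mathrm{Im}(s)| \to \infty$. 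Cauchy's theorem then yields $K(t) = \frac{1}{2\pi i}\int_\delta e^{st}\varphi(s)\, ds$.

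The factor $e^{-c\alpha^2 t}$ now appears automatically, since the real part of $s$ is at most $-\alpha^2/4$ everywhere on $\delta$. To produce the $\alpha/\sqrt{t+1}$ factor I would estimate the three pieces of $\delta$ separately. On the vertical segment $\delta_2$ of length $2\alpha^2$, both $|s|$ and $|\alpha^2 + 2s|$ are comparable to $\alpha^2$, so part (3) of the Claim yields $|\varphi(s)| \le C$, producing a contribution of order $\alpha^2 e^{-\alpha^2 t/4}$. On the horizontal rays $\delta_1, \delta_3$ parametrized by $s = -x \pm i\alpha^2$ with $x \ge \alpha^2/4$, the hypotheses of parts (3)--(4) of the Claim are routine to verify, giving $|\varphi(s)| \le C\alpha/\sqrt{x}$; the resulting integral is bounded by
$$\int_{\alpha^2/4}^\infty \frac{C\alpha}{\sqrt{x}}\, e^{-xt}\, dx \;\le\; e^{-\alpha^2 t/8} \int_0^\infty \frac{C\alpha}{\sqrt{x}}\, e^{-xt/2}\, dx \;\le\; \frac{C\alpha\, e^{-\alpha^2 t/8}}{\sqrt{t}}.$$
Combining the three pieces gives $K(t) \le C\alpha\, e^{-c\alpha^2 t}/\sqrt{t}$ for $t \ge 1$; for $t \le 1$ the trivial bound $K(t) \le \alpha(1+2\alpha) \le C\alpha \le C\alpha/\sqrt{t+1}$ takes over, which together yield the claimed estimate.

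The main technical care will be in verifying the hypothesis $|\alpha^2 + 2s| \ge |s|/2$ of Claim \ref{claim:basic properties of phi} uniformly along $\delta$ (away from the corners $-\alpha^2/4 \pm i\alpha^2$ it is straightforward, and at the corners themselves both quantities are $\Theta(\alpha^2)$), together with a clean justification of the contour deformation at infinity; both reduce to the explicit estimates already in the Claim, so I do not expect any genuine obstacle.
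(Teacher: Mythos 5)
Your proof is essentially correct and takes the same overall route as the paper: contour deformation of the inverse Laplace transform using the keyhole contour $\delta$, together with the estimates of Claim~\ref{claim:basic properties of phi}. Two differences are worth noting. First, you deform directly to $\delta$ and use it for all $t$; the paper instead uses $\gamma^{(t)}$ for $t\le \alpha^{-2}$ and $\delta$ for $t\ge \alpha^{-2}$. Your uniform use of $\delta$ is in fact a minor simplification, and your check that the $\delta_2$ contribution $O(\alpha^2 e^{-\alpha^2 t/4})$ and the $\delta_1,\delta_3$ contribution $O(\alpha e^{-\alpha^2 t/8}/\sqrt{t})$ are both $\lesssim \alpha e^{-c\alpha^2 t}/\sqrt{t+1}$ goes through for every $t\ge 1$. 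Second, and more substantively, you invoke the Bromwich inversion formula for $K$ directly, but by part~(2) of the Claim $\varphi(s)$ only decays like $\alpha/|s|$ on vertical lines, so $\int_{c+i\mathbb{R}}|\varphi(s)|\,|ds|$ diverges; the Bromwich integral for $K$ is only a conditionally convergent principal value. This can be justified (e.g.\ $K$ is continuous and decreasing, and the rectangular contour argument you sketch goes through since the horizontal joining segments vanish in the $R\to\infty$ limit), but the step is not quite as routine as ``Fourier inversion gives.'' The paper sidesteps this entirely by first inverting $\varphi(s)/s$, the Laplace transform of $\int_0^t K$, which \emph{is} absolutely integrable on vertical lines, deforming that contour, and then differentiating under the integral sign once absolute convergence on $\gamma^{(1)}$ is in hand. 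If you want a fully rigorous version of your more direct route, you should either insert a brief argument for the validity of the principal-value inversion, or adopt the paper's $\varphi(s)/s$ trick. One small bookkeeping slip: on $\delta_2$ you only need part~(3) of Claim~\ref{claim:basic properties of phi} (which yields $|\varphi(s)|\le C\alpha/\sqrt{|s|}\le C$ since $|s|\asymp\alpha^2$ there); the hypothesis $|\alpha^2+2s|\ge |s|/2$ and part~(4) are not needed on that piece.
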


\begin{proof} 
	Consider the function $g(t):=\int _{0}^t K(x) dx$. It is easy to check that the Laplace transform of $g$ is $\varphi (s)/s$. Thus, using the Laplace inversion theorem and the fact that $\varphi (s)/s$ is integrable on $1+i\mathbb R $ by part (2) of Claim~\ref{claim:basic properties of phi}, we get that 
	\begin{equation}
	\intop _{0}^t K(x) dx=\frac{1}{2 \pi i}\intop _{1+i\mathbb R} \frac{e^{ts}}{s} \varphi (s) ds=\frac{1}{2 \pi i}\intop _{\gamma ^{(1)}} \frac{e^{ts}}{s} \varphi (s) ds,
	\end{equation}
	where in the last equality we deformed the contour using again the fact that the function inside the integral decays sufficiently fast by part (2) of Claim~\ref{claim:basic properties of phi}. 
	
	Using the dominated convergence theorem one can differentiate under the integral sign to obtain 
	\begin{equation}
	K(t) =\frac{1}{2 \pi i}\intop _{\gamma ^{(1)} } e^{ts} \varphi (s) ds.
	\end{equation}
	
	When $t\le \alpha ^{-2}$ we deform the contour again to $\gamma ^{(t)}$ and use part (3) of Claim~\ref{claim:basic properties of phi} to obtain 
	\begin{equation}
	K(t)\le C \alpha \intop _{\gamma ^{(t)} } |s|^{-\frac{1}{2}} e^{t \re (s)} \le \frac{C\alpha }{\sqrt{t}} \intop _{\gamma ^{(1)}} |\omega |^{-\frac{1}{2}}e^{\re (\omega )} \le \frac{C\alpha }{\sqrt{t}} \le \frac{C\alpha }{\sqrt{t}} e^{-c \alpha ^2 t },
	\end{equation}
	where in the second inequality we changed variables by $\omega =ts$.
	
	Suppose next that $t\ge \alpha ^{-2}$. In this case we use the contour $\delta $ instead of $\gamma$. By part (3) of Claim~\ref{claim:basic properties of phi} and using the fact that $\re (s)<-c\alpha ^2 $ on $\delta $ we get 
	\begin{equation}
	\begin{split}
	K(t) &\le C \alpha \intop _{\delta }|s|^{-\frac{1}{2}}e^{t\re (s) }\le C\alpha  e^{-c \alpha ^2 t} \intop _{\delta }|s|^{-\frac{1}{2}}e^{\frac{1}{2}t\re (s) }\le \frac{C\alpha}{\sqrt{t}}  e^{-c \alpha ^2 t} \intop _{\delta '}|\omega |^{-\frac{1}{2}}e^{\frac{1}{2}\re (\omega ) }\\
	&= \frac{C\alpha}{\sqrt{t}}  e^{-c \alpha ^2 t} \intop _{\delta _1'+\delta _3' }|\omega |^{-\frac{1}{2}}e^{\frac{1}{2}\re (\omega ) } + \frac{C\alpha}{\sqrt{t}}  e^{-c \alpha ^2 t} \intop _{\delta _2' }|\omega |^{-\frac{1}{2}} \le \frac{C\alpha}{\sqrt{t}}  e^{-c \alpha ^2 t} +\sqrt{t}\alpha ^3  e^{-c \alpha ^2 t} \le \frac{C \alpha }{\sqrt{t}} e^{-c \alpha ^2 t},
	\end{split}
	\end{equation} 
	where $\delta '=\delta _1'+\delta _2'+\delta _3'$ and where $\delta ', \delta _1', \delta _2 ' , \delta _3 '$ are the image under the change of variables $\omega = t s$ of the contours $\delta ,\delta _1 , \delta _2 , \delta _3 $ respectively. Note that in the last inequality we changed the value of $c$ slightly and used that $t \ge \alpha ^{-2}$. 
\end{proof}

\subsection{Estimates for the renewal process}

Define the functions
\begin{equation}
K^*(t):=\sum _{n=1}^{\infty } K^{*n}(t),\quad \tilde{K}=K^*-\frac{2\alpha ^2}{1+2 \alpha }, 
\end{equation}
where $K^{*n}$ is the $n$-fold convolution of $K$ with itself. The function $K^*$ is the density of the renewal process with inter-arrival times that have density $K$.

\begin{lem}\label{lem:estimat for K tilde}
	For any $t>0$ we have 
	\begin{equation}\label{eq:what we need for K tilde}
	|\tilde{K}(t)| \le \frac{C\alpha }{\sqrt{t+1}} e^{-c \alpha ^2 t }.
	\end{equation}
\end{lem}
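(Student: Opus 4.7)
The plan is to express $\tilde{K}(t)$ as an inverse Laplace transform, identify the constant $\frac{2\alpha^2}{1+2\alpha}$ as the residue at the pole $s=0$, and then estimate the remaining contour integral by deforming onto the same contours used in the proof of Lemma~\ref{lem:estimate on K}. Since $K$ is a probability density, for $\re s > 0$ we have $|\varphi(s)|<1$, so the renewal density has Laplace transform $\mathcal{L}[K^*](s) = \sum_{n\ge 1}\varphi(s)^n = \varphi(s)/(1-\varphi(s))$. By Corollary~\ref{cor:moments of K} we have $\varphi(0)=1$ and $-\varphi'(0) = \mathbb{E}X = (1+2\alpha)/(2\alpha^2)$, so this Laplace transform has a simple pole at $s=0$ with residue $\frac{2\alpha^2}{1+2\alpha}$. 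Setting
\begin{equation}
H(s) := \frac{\varphi(s)}{1-\varphi(s)} - \frac{2\alpha^2}{(1+2\alpha)s},
\end{equation}
$H$ extends holomorphically across $s=0$, and formal subtraction of $\frac{2\alpha^2}{1+2\alpha}$ from $K^*(t)$ corresponds to subtracting $\frac{2\alpha^2}{(1+2\alpha)s}$ from the transform. Thus I would first justify that
\begin{equation}
\tilde K(t) = \frac{1}{2\pi i}\int_{c+i\mathbb{R}} e^{ts}H(s)\, ds, \qquad c>0,
\end{equation}
via the standard Laplace inversion applied to $\int_0^t K^*(x)dx - \frac{2\alpha^2 t}{1+2\alpha}$ and then differentiating, exactly as in the opening paragraph of the proof of Lemma~\ref{lem:estimate on K}.

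Next I would deform the contour depending on the scale of $t$: for $t\le \alpha^{-2}$ onto $\gamma^{(t)}$, and for $t\ge \alpha^{-2}$ onto $\delta$. The required analyticity of $H$ on the region swept out amounts to $1-\varphi(s)\neq 0$ there, and more precisely to a quantitative lower bound $|1-\varphi(s)|\ge c$. This is exactly the content of part~(5) of Claim~\ref{claim:basic properties of phi}, valid whenever $|s|\ge \alpha^2/5$ and $|\alpha^2+2s|\ge |s|/2$, both of which are satisfied pointwise along $\gamma^{(t)}$ (for $t\le\alpha^{-2}$) and along $\delta$ (the latter requires a short explicit verification, e.g.\ parameterising $\delta_1$ as $s=x-i\alpha^2$ and checking that the ratio $|\alpha^2+2s|/|s|$ never drops below $1/2$). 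Combined with parts~(2)-(3) of the same claim, which yield $|\varphi(s)|\le C\alpha/\sqrt{|s|}$ and $|\varphi(s)|\le C\alpha/|s|$ at infinity, this gives the uniform bound $|H(s)|\le C\alpha/\sqrt{|s|}$ on each of the deformed contours and sufficient decay at $\infty$ to justify the contour shift.

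Finally, the estimation proceeds exactly as in Lemma~\ref{lem:estimate on K}. When $t\le\alpha^{-2}$ and we are on $\gamma^{(t)}$, the change of variables $\omega = ts$ gives
\begin{equation}
|\tilde K(t)|\le C\alpha\int_{\gamma^{(t)}}|s|^{-1/2}e^{t\re s}\,|ds| = \frac{C\alpha}{\sqrt{t}}\int_{\gamma^{(1)}}|\omega|^{-1/2}e^{\re\omega}\,|d\omega| \le \frac{C\alpha}{\sqrt{t}}e^{-c\alpha^2 t},
\end{equation}
the final exponential factor being free since $\alpha^2 t\le 1$. When $t\ge \alpha^{-2}$ and we are on $\delta$, the extra feature is $\re s \le -c\alpha^2$ throughout the contour, so splitting the integral over $\delta_1+\delta_3$ (where the decay of $|s|^{-1/2}$ is absolutely integrable after the rescaling $\omega = ts$) and $\delta_2$ (a compact piece of length $\Theta(\alpha^2)$) yields
\begin{equation}
|\tilde K(t)|\le \frac{C\alpha}{\sqrt{t}}e^{-c\alpha^2 t} + C\sqrt{t}\,\alpha^3 e^{-c\alpha^2 t}\le \frac{C\alpha}{\sqrt{t}}e^{-c\alpha^2 t},
\end{equation}
after slightly enlarging $c$. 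The two regimes combined give \eqref{eq:what we need for K tilde}.

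The main obstacle I expect is the subtraction step and the contour analysis near the origin: one must verify that the only pole of $\varphi/(1-\varphi)$ inside the deformation region is at $s=0$, with the correct residue, and that the analytic continuation $H$ genuinely represents the Laplace transform of $\tilde K$ (as opposed to $K^*$, which is not integrable). Everything downstream is a direct analogue of the proof of Lemma~\ref{lem:estimate on K} once the bound $|H(s)|\le C\alpha/\sqrt{|s|}$ is in hand.
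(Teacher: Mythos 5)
Your proposal is correct and follows essentially the same route as the paper, with one cosmetic repackaging: you subtract the pole of $\varphi/(1-\varphi)$ at $s=0$ up front to form a single holomorphic integrand $H(s)$, whereas the paper treats the two regimes asymmetrically — for $t\le\alpha^{-2}$ it bounds $|\tilde K(t)|\le 2\alpha^2+K^*(t)$ by the triangle inequality and estimates $K^*(t)$ on $\gamma^{(t)}$ directly, and for $t\ge\alpha^{-2}$ it deforms across $s=0$ and computes the residue $\frac{2\alpha^2}{1+2\alpha}$, which exactly cancels the subtracted constant. Your version handles both regimes uniformly (no pole is crossed since $H$ is regular at the origin), at the small cost of having to verify $|2\alpha^2/|s||\le C\alpha/\sqrt{|s|}$ on the contours — which holds since $|s|\gtrsim\alpha^2$ on both $\gamma^{(t)}$ (for $t\le\alpha^{-2}$) and $\delta$. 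The subsequent contour estimates are identical to the paper's for Lemma~\ref{lem:estimate on K}, so the argument goes through.
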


\begin{proof}
	By the same arguments as in the proof of Lemma~\ref{lem:estimate on K} we have for any $n\ge 1$
	\begin{equation}
	K^{*n}(t)=\frac{1}{2 \pi i } \intop _{\gamma ^{(1)}} e^{ts} \varphi(s)^n ds.
	\end{equation}
	Thus,
	\begin{equation}\label{eq:integral form of K*}
	K^*(t)=\sum _{n=1}^{\infty } K^{*n} (t) =\frac{1}{2 \pi i } \intop _{\gamma ^{(1)}} e^{ts} \frac{\varphi (s)}{1-\varphi(s )} ds.
	\end{equation}
	Indeed, by part (3) of Claim~\ref{claim:basic properties of phi} we have that $|\varphi(s)|\le C \alpha \le 1/2$ on the contour $\gamma ^{(1)}$ and therefore the sum $\sum _{n=1}^{\infty } \varphi (s)^n$ converges absolutely and we can change the order of summation and integration.
	Note that the function we integrate in \eqref{eq:integral form of K*} is meromorphic in $\Omega $ with a single pole at $s=0$. Indeed, it is easy to see that $\varphi (s)=1$ if and only if $s=0$. 
	
	Suppose that $t\le \alpha ^{-2}$. Deforming $\gamma ^{(1)}$ to $\gamma ^{(t)}$ and using part (5) of Claim~\ref{claim:basic properties of phi} we obtain
	\begin{equation}\label{eq:K tilde when t is small}
	|\tilde{ K}(t)| \le 2 \alpha ^2 + K^*(t) \le 2\alpha ^2 + C\intop _{\gamma ^{(t)}}e^{t\re (s)} |\varphi(s)| \le 2 \alpha ^2 +\frac{C \alpha }{\sqrt{t}} e^{-c \alpha ^2 t} \le \frac{C \alpha }{\sqrt{t}} e^{-c \alpha ^2 t},
	\end{equation}
	where in the third inequality we repeat the same arguments as in the proof of Lemma~\ref{lem:estimate on K}.
	
	Suppose next that $t \ge \alpha ^{-2}$. In this case we use the contour $\delta $. In order to deform $\gamma ^{(1)}$ to $\delta $ we need to compute the residue at $s=0$. We have 
	\begin{equation}
	\res \left( \frac{e^{ts} \varphi(s) }{1-\varphi(s)} , 0\right)=\varphi(0) \lim _{s\to 0} \frac{s}{\varphi(0)-\varphi(s)}= -\frac{1}{\varphi'(0)}=\frac{2\alpha ^2 }{1+ 2\alpha }.
	\end{equation}
	Thus
	\begin{equation}
	K^*(t)= \frac{2 \alpha ^2 }{1+ 2 \alpha }+\frac{1}{2 \pi i } \intop _{\delta } e^{ts} \frac{\varphi (s)}{1-\varphi(s )} ds.
	\end{equation}
	Therefore, using part (5) of Claim~\ref{claim:basic properties of phi} and the same bounds as in the proof of Lemma~\ref{lem:estimate on K} we get
	\begin{equation}\label{eq:K tilde when t is large}
	|\tilde{ K}(t)| \le C \intop _{\delta } e^{t\re (s)} |\varphi(s)| \le \frac{C \alpha }{\sqrt{t}} e^{-c \alpha ^2 t}.
	\end{equation}
	This finishes the proof of \eqref{eq:what we need for K tilde} for all values of $t$.
\end{proof}

The following is an immediate corollary
\begin{cor}\label{cor:bound on Kstar}
	For any $t>0$ we have that 
	\begin{equation}
	K^*(t)\le \frac{C \alpha }{\sqrt{t+1}}+2 \alpha ^2 
	\end{equation}
\end{cor}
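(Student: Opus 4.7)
The statement is a direct consequence of Lemma \ref{lem:estimat for K tilde} combined with the definition of $\tilde{K}$. Recall that $\tilde{K}(t) = K^*(t) - \frac{2\alpha^2}{1+2\alpha}$, so by the triangle inequality
\begin{equation}
K^*(t) \;\le\; \bigl|\tilde K(t)\bigr| \;+\; \frac{2\alpha^2}{1+2\alpha}.
\end{equation}
The plan is simply to insert the bound from Lemma \ref{lem:estimat for K tilde} into the first term and to bound the second term crudely by $2\alpha^2$.

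More concretely, the first step is to apply Lemma \ref{lem:estimat for K tilde}, which gives $|\tilde K(t)| \le \frac{C\alpha}{\sqrt{t+1}} e^{-c\alpha^2 t}$. Since the exponential factor $e^{-c\alpha^2 t}$ lies in $(0,1]$, we can drop it to get the slightly weaker bound $|\tilde K(t)|\le \frac{C\alpha}{\sqrt{t+1}}$. The second step is to note that $\frac{2\alpha^2}{1+2\alpha} \le 2\alpha^2$ for all $\alpha>0$. Combining these two observations yields the desired inequality $K^*(t)\le \frac{C\alpha}{\sqrt{t+1}} + 2\alpha^2$.

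There is no real obstacle here; the whole content of the corollary is already packaged inside Lemma \ref{lem:estimat for K tilde}, and only the elementary triangle inequality plus two trivial one-sided estimates are needed to convert that result on $\tilde K$ into a bound on $K^*$ itself. The slight loss (dropping the exponential decay $e^{-c\alpha^2 t}$ and replacing $\frac{2\alpha^2}{1+2\alpha}$ by $2\alpha^2$) is harmless for the intended application, which only requires an upper bound uniform in $t$.
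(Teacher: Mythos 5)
Your proof is correct and follows exactly the route the paper intends: the paper presents this as an immediate corollary of Lemma \ref{lem:estimat for K tilde}, and your triangle-inequality argument with the two trivial simplifications (dropping $e^{-c\alpha^2 t}\le 1$ and $\frac{2\alpha^2}{1+2\alpha}\le 2\alpha^2$) is precisely the intended derivation.
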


Repeating exactly the same arguments as in the proof of Lemma~\ref{lem:estimate on K} and Lemma~\ref{lem:estimat for K tilde} we obtain

\begin{cor}\label{cor:bound on K'}
	We have 
	\begin{equation}
	| K'(t) | \le \frac{C\alpha }{t^{\frac{3}{2}}}e^{-c \alpha ^2 t}, \quad |\tilde{ K}'(t)|=|(K^*)'(t)| \le \frac{C\alpha }{t^{\frac{3}{2}}}e^{-c \alpha ^2 t}
	\end{equation}
\end{cor}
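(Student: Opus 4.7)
\medskip

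\noindent\textbf{Proof plan for Corollary~\ref{cor:bound on K'}.} The strategy is to mirror the proofs of Lemma~\ref{lem:estimate on K} and Lemma~\ref{lem:estimat for K tilde}, inserting one extra factor of $s$ in each integrand before carrying out the same contour deformation. Concretely, starting from the Laplace inversion identities
\begin{equation}
K(t) \;=\; \frac{1}{2\pi i}\intop_{\gamma^{(1)}} e^{ts}\,\varphi(s)\,ds, \qquad K^*(t) \;=\; \frac{1}{2\pi i}\intop_{\gamma^{(1)}} e^{ts}\,\frac{\varphi(s)}{1-\varphi(s)}\,ds,
\end{equation}
the integrands decay like $|s|^{-1/2}e^{t\,\mathrm{Re}(s)}$ on the contour (up to the factor $C\alpha$), which already gives absolute convergence. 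Multiplying by $|s|$ leaves $|s|^{1/2}e^{t\,\mathrm{Re}(s)}$, which is still integrable along $\gamma^{(1)}$ (and later along $\delta$), so dominated convergence legitimately lets one differentiate under the integral to obtain
\begin{equation}
K'(t)=\frac{1}{2\pi i}\intop_{\gamma^{(1)}} s\,e^{ts}\,\varphi(s)\,ds, \qquad (K^*)'(t)=\frac{1}{2\pi i}\intop_{\gamma^{(1)}} s\,e^{ts}\,\frac{\varphi(s)}{1-\varphi(s)}\,ds.
\end{equation}
Note $\tilde K'=(K^*)'$ because the subtracted constant $\frac{2\alpha^2}{1+2\alpha}$ drops out, so only $(K^*)'$ needs to be treated.

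\medskip

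\noindent The case $t\le \alpha^{-2}$ is handled by deforming $\gamma^{(1)}\leadsto \gamma^{(t)}$ (the deformation is valid for $K^*$ because, by part (5) of Claim~\ref{claim:basic properties of phi}, $1-\varphi$ stays bounded away from zero in the intervening region, and the removable singularity at $s=0$ of $s\varphi/(1-\varphi)$ contributes no residue). Using parts (3) and (5) of Claim~\ref{claim:basic properties of phi}, the integrand is bounded by $C\alpha\,|s|^{1/2}e^{t\,\mathrm{Re}(s)}$, and the change of variable $\omega=ts$ produces
\begin{equation}
|K'(t)|\vee|(K^*)'(t)| \;\le\; \frac{C\alpha}{t^{3/2}}\intop_{\gamma^{(1)}} |\omega|^{1/2} e^{\mathrm{Re}(\omega)}\,|d\omega| \;\le\; \frac{C\alpha}{t^{3/2}}.
\end{equation}
Since $e^{-c\alpha^2 t}$ is bounded below on this range, the bound $\frac{C\alpha}{t^{3/2}}e^{-c\alpha^2 t}$ follows after adjusting $C$.

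\medskip

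\noindent For $t\ge \alpha^{-2}$, deform instead to the contour $\delta$. The extraction trick used in Lemma~\ref{lem:estimat for K tilde}, namely writing $e^{t\,\mathrm{Re}(s)}\le e^{-c\alpha^2 t}\,e^{\frac{1}{2}t\,\mathrm{Re}(s)}$ (valid because $\mathrm{Re}(s)\le-\alpha^2/4$ on $\delta$), combined with the change of variable $\omega=(t/2)s$, yields
\begin{equation}
|K'(t)|\vee|(K^*)'(t)|\;\le\; \frac{C\alpha}{t^{3/2}}\,e^{-c\alpha^2 t}\intop_{\delta'} |\omega|^{1/2}\,e^{\mathrm{Re}(\omega)/2}\,|d\omega|,
\end{equation}
and one then splits the $\delta'$-integral as in the proof of Lemma~\ref{lem:estimat for K tilde}: the horizontal rays contribute an absolute constant, while the vertical segment (of length $O(\alpha^2 t)$ at $\mathrm{Re}(\omega)=-\alpha^2 t/8$) contributes at most $C(\alpha^2 t)^{3/2}e^{-\alpha^2 t/8}$. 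Letting $u=\alpha^2 t\ge 1$, the vertical contribution is controlled by $u^{3/2}e^{-u/8}$, which is uniformly bounded and can be absorbed into $e^{-c\alpha^2 t}$ at the cost of shrinking $c$.

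\medskip

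\noindent The only routine check is that all manipulations above (differentiation under the integral, contour deformation past removable singularities, change of variables) are justified by the uniform decay estimates in Claim~\ref{claim:basic properties of phi}; none of these steps introduces genuine difficulty. The real calculational point, which is not an obstacle but is the only place one must be attentive, is confirming that the extra $|s|^{1/2}$ coming from the $s$-factor does \emph{not} worsen the final rate: on each contour the gain of one power of $t^{-1}$ from the change of variables exactly compensates the lost power of $|s|^{-1}$ in $\varphi$, so the improvement from $1/\sqrt{t+1}$ to $1/t^{3/2}$ is genuine, and the same bound applies simultaneously to $K'$, $(K^*)'$, and $\tilde K'$.
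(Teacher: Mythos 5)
Your proposal is correct and takes exactly the approach the paper intends: the paper's proof consists of the single sentence ``Repeating exactly the same arguments as in the proof of Lemma~\ref{lem:estimate on K} and Lemma~\ref{lem:estimat for K tilde} we obtain,'' and your write-up is precisely the expansion of those arguments with the extra factor of $s$, the contour deformation $\gamma^{(1)}\leadsto\gamma^{(t)}$ or $\delta$, and the change of variables giving the extra power of $t^{-1}$. The only blemish is a cosmetic mismatch between the change of variable ($\omega=(t/2)s$) and the stated integrand $e^{\mathrm{Re}(\omega)/2}$ (you should get $e^{\mathrm{Re}(\omega)}$ with that substitution, or keep $\omega=ts$ and get $e^{\mathrm{Re}(\omega)/2}$), but this does not affect the estimates or the conclusion.
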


\begin{remark}
	We do not prove it but the functions $K=K^{(\alpha )}$ and $\tilde{ K}=\tilde{K}^{(\alpha )}$ have a scaling limit of the form
	\begin{equation}
	\alpha ^{-2}K( \alpha ^{-2} x) \to k_1(x), \quad \alpha ^{-2}\tilde{K}( \alpha ^{-2} x) \to k_2(x), \quad \alpha \to 0.
	\end{equation}
	Moreover the Laplace transform of $k_1$ is $\tilde{\varphi}(\alpha ^2x)$ which is independent of $\alpha $. The following lemma shows that, due to a fortuitous cancellation, the limiting functions $k_1$ and $k_2$ are equal. 
\end{remark}

\begin{lem}\label{lem:K tilde minus K}
	For any $t>0$ we have  
	\begin{equation}
	|\tilde{K}(t)-K(t)| \le \frac{C \alpha }{t+1}. 
	\end{equation}
\end{lem}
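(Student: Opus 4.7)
The plan is to express $\tilde{K}(t) - K(t)$ as a single contour integral and exploit a remarkable algebraic identity that captures the correct leading behavior. Starting from the representation $\tilde{K}(t) = \frac{1}{2\pi i}\intop_\delta e^{ts}\varphi(s)/(1-\varphi(s))\,ds$ established in the proof of Lemma~\ref{lem:estimat for K tilde}, and noting that $\varphi$ is analytic at $0$ so that $K(t) = \frac{1}{2\pi i}\intop_\delta e^{ts}\varphi(s)\,ds$ as well, subtraction yields
\begin{equation}
\tilde{K}(t) - K(t) = \frac{1}{2\pi i}\intop_\delta e^{ts}\frac{\varphi(s)^2}{1-\varphi(s)}\,ds.
\end{equation}

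The crux of the argument is the identity $\tilde{\varphi}(s)^2/(1-\tilde{\varphi}(s)) = 2\alpha^2/s$, valid identically in $s$; this is verified by direct computation from the definition $\tilde{\varphi}(s) = -(\alpha/s)(\alpha-\sqrt{\alpha^2+2s})$ after rationalizing. A residue calculation combined with Bromwich inversion shows that $\intop_\delta e^{ts}/s\,ds = 0$, so we may freely subtract this comparison function to obtain
\begin{equation}
\tilde{K}(t) - K(t) = \frac{1}{2\pi i}\intop_\delta e^{ts}\left[\frac{\varphi(s)^2}{1-\varphi(s)} - \frac{\tilde{\varphi}(s)^2}{1-\tilde{\varphi}(s)}\right]ds.
\end{equation}
Using the factorization
\begin{equation}
\frac{\varphi^2}{1-\varphi} - \frac{\tilde{\varphi}^2}{1-\tilde{\varphi}} = \frac{(\varphi+\tilde{\varphi})(\varphi-\tilde{\varphi})}{1-\varphi} + \frac{\tilde{\varphi}^2(\varphi-\tilde{\varphi})}{(1-\varphi)(1-\tilde{\varphi})}
\end{equation}
together with parts (3)--(5) of Claim~\ref{claim:basic properties of phi} ($|\varphi-\tilde{\varphi}|\le C\alpha$, $|\varphi|,|\tilde{\varphi}|\le C\alpha/\sqrt{|s|}$, $|1-\varphi|,|1-\tilde{\varphi}|\ge c$) gives the pointwise bound
\begin{equation}
\left|\frac{\varphi(s)^2}{1-\varphi(s)} - \frac{\tilde{\varphi}(s)^2}{1-\tilde{\varphi}(s)}\right| \le \frac{C\alpha^2}{\sqrt{|s|}} + \frac{C\alpha^3}{|s|}.
\end{equation}

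What remains is to estimate the contour integral in three regimes. For $t\le 1$ the bound is trivial from Lemmas~\ref{lem:estimate on K} and \ref{lem:estimat for K tilde}, which give $|\tilde{K}(t)-K(t)| \le 2C\alpha \le 4C\alpha/(t+1)$. For $t\in[1,\alpha^{-2}]$ we deform $\delta$ to $\gamma^{(t)}$; the deformation crosses a simple pole at the origin whose residue contributes a term of size $O(\alpha^3)$ that is absorbed into the final bound, while the change of variables $\omega=ts$ reduces the two remaining integrals $\alpha^2\intop_{\gamma^{(t)}}e^{ts}/\sqrt{|s|}\,|ds|$ and $\alpha^3\intop_{\gamma^{(t)}}e^{ts}/|s|\,|ds|$ to $C\alpha^2/\sqrt{t}$ and $C\alpha^3$ respectively, both of which are $\le C\alpha/(t+1)$ precisely because $t\le\alpha^{-2}$. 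For $t\ge\alpha^{-2}$ we integrate directly on $\delta$: the uniform factor $e^{-c\alpha^2 t}$ together with a careful splitting of the horizontal parts of $\delta$ into $|\re s|\le\alpha^2$ and $|\re s|\ge\alpha^2$ (using $\intop_a^\infty e^{-v}/\sqrt{v}\,dv \le Ce^{-a/2}/\sqrt{a}$ for $a\ge 1$) yields contributions of size $C\alpha e^{-c\alpha^2 t}/t$ and $C\alpha^3 e^{-c\alpha^2 t}$, both bounded by $C\alpha/t$ via the elementary inequality $ue^{-cu}\le C$. The main obstacle is spotting the identity $\tilde{\varphi}^2/(1-\tilde{\varphi}) = 2\alpha^2/s$: without it, a direct estimate using only $|\varphi^2/(1-\varphi)|\le C\alpha^2/|s|$ yields at best $C\alpha^2 e^{-c\alpha^2 t}$, which is strictly weaker than $C\alpha/(t+1)$ in the intermediate regime $t\in[\alpha^{-1},\alpha^{-2}]$.
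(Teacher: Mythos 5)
Your proposal is correct, and it follows the same core strategy as the paper: write $\tilde K - K$ as a contour integral over $\delta$, compare the exact transform $\varphi$ with the approximant $\tilde\varphi$, and show that the $\tilde\varphi$-contribution vanishes. The main difference is how the vanishing is established. The paper writes the $\tilde\varphi$-integral as $\int_\delta e^{ts}\tilde\varphi^2/(1-\tilde\varphi)\,ds$, substitutes $y = \sqrt{\alpha^2+2s}$, and observes that the resulting integrand on $i\mathbb R$ is odd. Your closed-form identity $\tilde\varphi(s)^2/(1-\tilde\varphi(s)) = 2\alpha^2/s$ is a cleaner route to the same conclusion: with $q=\sqrt{\alpha^2+2s}$ one has $\tilde\varphi = 2\alpha/(q+\alpha)$ and $1-\tilde\varphi = (q-\alpha)/(q+\alpha)$, giving $\tilde\varphi^2/(1-\tilde\varphi) = 4\alpha^2/(q^2-\alpha^2) = 2\alpha^2/s$; then $\int_\delta e^{ts}/s\,ds = 0$ because $\delta$ lies to the left of the only pole. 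You have in effect made explicit what the paper's substitution implicitly computes. For the error estimate, the paper replaces $\varphi$ by $\tilde\varphi$ in the two representations of $K$ and $\tilde K$ separately, bounds each discrepancy by $C\alpha\int_\delta e^{t\re s}\,|ds| \le C\alpha/t$ using only the crude pointwise bound $|\varphi-\tilde\varphi|\le C\alpha$ from part (4) of Claim~\ref{claim:basic properties of phi}, and never leaves the contour $\delta$. You instead keep everything as one integral, derive the sharper pointwise bound $C\alpha^2/\sqrt{|s|}+C\alpha^3/|s|$ via a factorization, and split the analysis into three time regimes with a deformation to $\gamma^{(t)}$ and a residue bookkeeping (residue $-4\alpha^3/(1+2\alpha) = O(\alpha^3)$) for intermediate $t$. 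Both routes yield the stated bound; the paper's is slightly shorter because the cruder bound already gives $C\alpha/t$ uniformly on $\delta$, so the factorization and the deformation you use, while valid, are not strictly necessary.
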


\begin{proof}
	By the proof of Lemma~\ref{lem:estimate on K} and Lemma~\ref{lem:estimat for K tilde} we have  
	\begin{equation}
	K(t) =\frac{1}{2 \pi i}\intop _{\delta } e^{ts} \varphi (s) ds,\quad \tilde{ K}(t)=\frac{1}{2 \pi i } \intop _{\delta  } e^{ts} \frac{\varphi (s)}{1-\varphi(s )} ds.
	\end{equation}
	In order to prove the lemma we show first that one can replace $\varphi$ with $\tilde{\varphi }$ in the last two integrals. Then we prove that 
	\begin{equation}\label{eq:identity of ontegrals}
	\intop _{\delta } e^{ts} \tilde{\varphi} (s) ds= \intop _{\delta  } e^{ts} \frac{\tilde{\varphi } (s)}{1-\tilde{\varphi }(s )} ds.
	\end{equation}
	
	By part (4) of Claim~\ref{claim:basic properties of phi} we have
	\begin{equation}
	\begin{split}
	\left| \intop _{\delta } e^{ts} \varphi (s) ds-\intop _{\delta } e^{ts} \tilde{\varphi } (s) ds\right| &\le \intop _{\delta } e^{t\re (s)} |\varphi (s)-\tilde{\varphi}(s)| \le C \alpha \intop _{\delta } e^{t\re (s)}\\
	&\le \frac{C\alpha }{t } e^{-c \alpha ^2 t }\intop _{\delta _2'}1+ \frac{C\alpha }{t }  \intop _{\delta _1'+\delta _3 '} e^{ \re (\omega )}\le C \alpha ^3  e^{-c \alpha ^2 t}+\frac{C\alpha }{t} \le \frac{C\alpha }{t}. 
	\end{split}
	\end{equation}
	Similarly we have
	\begin{equation}
	\left| \intop _{\delta  } e^{ts} \frac{\varphi (s)}{1-\varphi(s )} -\intop _{\delta  } e^{ts} \frac{\tilde{\varphi } (s)}{1-\tilde{\varphi }(s )} ds\right|\le \intop _{\delta }e^{t \re (s)} \frac{|\varphi(s)-\tilde{ \varphi}(s)|}{|1-\tilde{\varphi }(s)||1-\varphi(s)|} \le C\alpha \intop _{\delta } e^{t \re (s)}\le \frac{C \alpha }{t}.
	\end{equation}
	
	Next, we prove the identity \eqref{eq:identity of ontegrals}.
	\begin{equation}
	\begin{split}
	\intop _{\delta  } e^{ts} \frac{\tilde{\varphi } (s)}{1-\tilde{\varphi }(s )} ds- \intop _{\delta } e^{ts} \tilde{\varphi} (s) ds =\intop _{\delta  } e^{ts} \frac{\tilde{\varphi } (s)^2}{1-\tilde{\varphi }(s )} ds&= \intop _{\delta } e^{ts} \frac{\alpha ^2 (\alpha - \sqrt{\alpha ^2 +2s} )^2}{s^2 + \alpha s( \alpha - \sqrt{\alpha ^2 +2s} )}\\
	=4 \alpha ^2\intop _{i \mathbb R }e^{t\frac{y^2 -\alpha ^2}{2}} \frac{ (\alpha -y  )^2y}{(y^2 -\alpha ^2)^2+2\alpha (y^2 -\alpha ^2 )(\alpha -y)}dy&=4 \alpha ^2e^{-\frac{t \alpha ^2 }{2}}\intop _{i \mathbb R }e^{\frac{ty^2}{2}} \frac{ y}{(y+\alpha )^2-2\alpha (y+\alpha )}dy\\
	&=4 \alpha ^2e^{-\frac{t \alpha ^2 }{2}}\intop _{i \mathbb R }e^{\frac{ty^2}{2}} \frac{ y}{y^2 -\alpha ^2}dy=0,
	\end{split}
	\end{equation}
	where in the third equality we changed variables by $y=\sqrt{\alpha ^2 +2s}$ and also deformed the contour to the imaginary axis (this is possible as the function is analytic except for two poles at $\alpha $ and $-\alpha $). The last equality holds as we integrate an odd function.
\end{proof}

The last lemma gives a tight bound when $t$ is large. In the following corollary we improve the estimate when $t$ is small.

\begin{cor}\label{cor:K tilde minus K}
	For any $t>0$ we have  
	\begin{equation}
	|\tilde{K}(t)-K(t)| \le C \min \{\alpha ^2 ,\frac{ \alpha }{t+1} \}. 
	\end{equation}
\end{cor}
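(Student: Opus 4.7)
The plan is to bootstrap from Lemma~\ref{lem:K tilde minus K}, which already supplies the bound $\frac{C\alpha}{t+1}$, by establishing a complementary uniform bound $|\tilde K(t)-K(t)|\le C\alpha^{2}$. The minimum of the two then yields the desired estimate: the first bound wins when $t+1\gtrsim \alpha^{-1}$, and the second wins when $t+1\lesssim \alpha^{-1}$. So the only new work is the uniform $\alpha^{2}$ bound.

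To obtain this bound I would avoid the contour deformation of Lemma~\ref{lem:K tilde minus K} (which is naturally suited to large $t$ via the exponential decay gained from pushing past the pole at $s=0$) and instead use the renewal identity. Since $K^{*}=\sum_{n\ge 1}K^{*n}$ with $K^{*1}=K$, rearranging gives
\begin{equation}
K^{*}(t)-K(t)=\sum_{n\ge 2}K^{*n}(t)=(K*K^{*})(t),
\end{equation}
and hence
\begin{equation}
\tilde K(t)-K(t)=(K*K^{*})(t)-\frac{2\alpha^{2}}{1+2\alpha}.
\end{equation}
The constant term is trivially bounded by $2\alpha^{2}$, so the corollary reduces to showing $(K*K^{*})(t)\le C\alpha^{2}$ uniformly in $t$.

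For this I would simply plug in the pointwise bounds $K(s)\le \frac{C\alpha}{\sqrt{s+1}}e^{-c\alpha^{2}s}$ from Lemma~\ref{lem:estimate on K} and $K^{*}(t-s)\le \frac{2\alpha^{2}}{1+2\alpha}+\frac{C\alpha}{\sqrt{t-s+1}}e^{-c\alpha^{2}(t-s)}$ (the latter coming from $\tilde K=K^{*}-\frac{2\alpha^{2}}{1+2\alpha}$ together with Lemma~\ref{lem:estimat for K tilde}). The convolution then splits as
\begin{equation}
(K*K^{*})(t)\le \frac{2C\alpha^{3}}{1+2\alpha}\intop_{0}^{t}\frac{e^{-c\alpha^{2}s}}{\sqrt{s+1}}\,ds
\;+\;C\alpha^{2}e^{-c\alpha^{2}t}\intop_{0}^{t}\frac{ds}{\sqrt{(s+1)(t-s+1)}},
\end{equation}
where I used $e^{-c\alpha^{2}s}e^{-c\alpha^{2}(t-s)}=e^{-c\alpha^{2}t}$ in the second integrand. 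The first integral is $O(\alpha^{-1})$ by the substitution $u=\alpha^{2}s$ (or by splitting at $s=\alpha^{-2}$), contributing $O(\alpha^{2})$. The second integral is bounded by a standard Beta-type integral splitting at $s=t/2$, giving an absolute constant, so that term is also $O(\alpha^{2})$.

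There is no real obstacle; the only conceptual step is spotting the algebraic identity $K^{*}-K=K*K^{*}$, after which the estimate is routine from the already established pointwise bounds on $K$ and $K^{*}$. If one preferred a Laplace-transform derivation, the same identity is reflected in $\widehat{K^{*}-K}=\varphi^{2}/(1-\varphi)$, whose singular part at $s=0$ exactly matches $\frac{2\alpha^{2}/(1+2\alpha)}{s}$, providing a transparent check that the constant subtracted in the definition of $\tilde K$ is the correct one.
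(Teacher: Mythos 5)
Your proof is correct, and it takes a genuinely different route from the paper's. Where you collapse the tail $\sum_{j\ge 2}K^{*j}$ into the single convolution $K*K^*$ and bound it analytically using the already-established pointwise estimates on $K$ and $K^*$ (Lemma~\ref{lem:estimate on K} and Corollary~\ref{cor:bound on Kstar}), the paper instead peels off the $j=2$ term, bounds $K*K(t)\le C\alpha^2$ directly, and then controls $\sum_{j\ge 3}K^{*j}(t)$ for $t\le \alpha^{-2}$ by a probabilistic argument: interpreting $\int_0^t K^{*j}$ as $\mathbb{P}(\sum_{i\le j}X_i\le t)$ for i.i.d.\ $X_i$ with density $K$ and noting that $\mathbb{P}(X_1\le \alpha^{-2})\le 1-c$, so the sum over $j$ is geometric. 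Your approach is more compact and avoids the restriction to $t\le \alpha^{-2}$ by absorbing the tail into $K^*$ directly, whereas the paper's probabilistic argument is perhaps more transparent about why the higher convolutions decay (each factor has positive probability of overshooting $\alpha^{-2}$). Both ultimately reduce to the same Fourier-derived pointwise bounds, and both reach the minimum with Lemma~\ref{lem:K tilde minus K} for the large-$t$ regime. One small simplification available in your argument: since $\int_0^\infty K(s)\,ds = 1$, the first term of your split is bounded by $\frac{2\alpha^2}{1+2\alpha}\int_0^t K \le 2\alpha^2$ without invoking the pointwise bound on $K$ at all.
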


\begin{proof}	
	By Lemma~\ref{lem:estimate on K} we have 
	\begin{equation}\label{eq:convolution of 2}
	\begin{split}
	K*K(t)= \intop _{0}^t K(s)K(t-s) ds &= 2\intop _{0}^{t/2} K(s)K(t-s)ds   \\
	&\le C \intop _{0}^{t/2} \frac{\alpha ^2 }{\sqrt{s} \sqrt{t-s}}dt 
	\le \frac{C\alpha ^2 }{\sqrt{t}} \intop _0 ^{t/2} \frac{1}{\sqrt{s}}ds \le C \alpha ^{2} .
	\end{split}
	\end{equation}	
	Thus, letting $X_j$ be an i.i.d. sequence with density $K$, if $t \le \alpha ^{-2}$ we have
	\begin{equation}
	\begin{split}\label{eq:convolution of 3}
	\sum _{j=3}^{\infty} K^{*j}(t) &= \sum _{j=1}^{\infty} \intop _0 ^t K^{*j}(s)K^{*2}(t-s)ds \le C \alpha ^2 \sum _{j=1}^{\infty} \intop _0 ^t K^{*j}(s)ds \\
	& \le C \alpha ^2 \sum _{j=1}^{\infty} \mathbb P \left( \sum _{i=1}^j X_i \le t \right)  \le C \alpha ^2 \sum _{j=1}^{\infty} \mathbb P \left( \forall i \le j , \  X_i \le  \alpha ^{-2}  \right) \le C \alpha ^2 \sum _{j=1}^{\infty} e^{-cj} \le C \alpha ^2 .
	\end{split}
	\end{equation}
	By \eqref{eq:convolution of 2} and \eqref{eq:convolution of 3} we have for $t \le \alpha ^{-2}$
	\begin{equation}
	|\tilde{ K}(t)-K(t) | \le C \alpha ^2+K*K(t) +\sum _{j=3}^{\infty } K^{*j}(t) \le C \alpha ^2 .
	\end{equation}
	The corollary easily follows from the last bound and Lemma~\ref{lem:K tilde minus K}.
\end{proof}

\subsection{The double integral in the drift term}

In Theorem \ref{thm:double int:main}, we show that the drift term of the aggregate speed can be expressed in terms of a certain integral involving the functions $K^*$ and $J$. In this subsection we show how the Fourier transform estimates we developed in this section can be used in order to estimate this integral.  

\begin{lem}\label{lem:integral computation}
	Let $  \alpha ^{-2} \log ^{2}(1/\alpha ) \le T  \le \alpha ^{-\frac{5}{2}}$ and let 
	\begin{equation}
	I_1:=\intop _0 ^ T \intop _0 ^u J_{s,u}K^* (u-s) ds \, du \quad I_2:=\intop _0^{T }  \intop _0 ^x   \intop _0^u J_{s,u} K^*(x-u)K^* (x-s) ds \, du \,  dx 
	.
	\end{equation}
	We have that $I_1+I_2=2  +O(\sqrt{\alpha })$.
\end{lem}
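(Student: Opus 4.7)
The overall strategy is to express $J_{s,u}^{(\alpha)}$ in closed form, reduce the double and triple integrals $I_1, I_2$ to one-variable integrals via Fubini, and then evaluate the resulting expressions using the Laplace-transform techniques developed in this appendix. First I would derive an explicit formula for $J_{s,u}^{(\alpha)}$ by coupling $T$ and $T_u$: on $\{T < u\}$ one has $T_u = T$, while on $\{T \ge u\}$ the strong Markov property at $T$ (applied to the Markov chain $U = Y - W + 1$, whose time to go down by one from any level is an i.i.d.\ copy of $T$) gives $T_u = T + T'$ for an independent copy $T'$. Integrating against the subprobability density $g$ of $T|_{T<\infty}$ and using $\bar{G}(t) = K(t)/(\alpha(1+2\alpha))$, $g(t) = -K'(t)/(\alpha(1+2\alpha))$, yields
\[
J_{s,u}^{(\alpha)} \;=\; -\tfrac{2K(u)}{(1+2\alpha)^2} \;-\; \tfrac{1}{\alpha^2(1+2\alpha)^2} \int_s^u K'(\tau)\,K(u-\tau)\, d\tau.
\]

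Next, after substituting $u = s + r$ and swapping orders of integration, the inner $s$-integral collapses:
\[
F(r) \,:=\, \int_0^\infty J_{s,s+r}\, ds \,=\, -\tfrac{2\bar K(r)}{(1+2\alpha)^2} + \tfrac{(K*K)(r)}{\alpha^2(1+2\alpha)^2},
\]
with Laplace transform $\hat F(p) = \frac{1}{(1+2\alpha)^2}\big[\frac{\varphi(p)^2}{\alpha^2} - \frac{2(1-\varphi(p))}{p}\big]$. Similarly, $I_1 = \int_0^T K^*(r)\,F_T(r)\,dr$ and a swap of order in $I_2$ gives $I_2 = \int_0^T\!\!\int_0^{T-s} J_{s,s+r}\,H_{T-s-r}(r)\,dr\,ds$, where $H_M(r) := \int_0^M K^*(y) K^*(y+r)\,dy$. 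Decomposing $K^* = \tilde K + K^*_\infty$ with $K^*_\infty = 2\alpha^2/(1+2\alpha)$, the product $K^*(y)K^*(y+r)$ splits into a constant piece $(K^*_\infty)^2$ (linear in $M$ after integrating) and three pieces bounded uniformly in $M$ by $\tilde K$-tail estimates.

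The main computation then evaluates $I_1 + I_2$ as $\alpha \to 0$ through the Laplace-transform values at $p = 0$. Using $\varphi(0) = 1$, $\varphi'(0) = -\mu = -(1+2\alpha)/(2\alpha^2)$, and $\varphi''(0) = \mu_2 = (1+\alpha)(1+2\alpha)/\alpha^4$ from Corollary~\ref{cor:moments of K}, I would show that the contributions from $K^*_\infty \!\int F$ in $I_1$ and from $(K^*_\infty)^2 \!\int (T-u) J$ in $I_2$ combine with the $\tilde K$-cross terms into an expression whose leading value is exactly $2$. Truncation errors are controlled as follows: (i) replacing $F_T$ by $F_\infty$ costs $O(\alpha^{100})$ via the exponential decay in Lemmas~\ref{lem:estimate on K} and~\ref{lem:estimat for K tilde} combined with $T \ge \alpha^{-2}\log^2(1/\alpha)$; (ii) the linear-in-$M$ piece $(K^*_\infty)^2 M$ must be tracked explicitly, and the bound $T \le \alpha^{-5/2}$ is precisely what keeps $(K^*_\infty)^2 T \lesssim \alpha^{3/2}$, within the claimed $O(\sqrt\alpha)$ error.

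The main obstacle will be verifying that the algebraic cancellations between the $K^*_\infty$-pieces and the $\tilde K$-pieces conspire to produce exactly the constant $2$ rather than a polynomial in $\alpha$. A useful cross-check is the scaling limit: under $r = \alpha^{-2}\rho$, the functions $K, K^*$ converge to $k_1(\rho)$ and $k^*(\rho) = k_1(\rho) + 2$ with Laplace transforms $\hat k_1(q) = (\sqrt{1+2q}-1)/q$ and $\hat k^*(q) = (\sqrt{1+2q}+1)/q$, and a direct dimensionless computation of the rescaled double integral should reproduce the limit $2$. Extracting the sub-leading $O(\sqrt\alpha)$ correction rigorously will require combining the Laplace-contour estimates of Appendix~A with Lemma~\ref{lem:K tilde minus K} (the identity $\tilde K - K = O(\alpha/(t+1))$), which is precisely the source of the $\sqrt\alpha$ rather than $\alpha$ in the error term.
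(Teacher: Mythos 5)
Your proposal is correct and follows essentially the same route as the paper: the explicit coupling formula for $J_{s,u}$ (Claim~\ref{claim:formula for J}), the decomposition $K^* = \tilde K + 2\alpha^2/(1+2\alpha)$, the moment identities of Corollary~\ref{cor:moments of K} to extract the constant $2$ from the dominant $(K^*_\infty)^2$-piece (with the upper bound $T\le\alpha^{-5/2}$ controlling the linear-in-$T$ remainder, exactly as you observe), and Corollary~\ref{cor:K tilde minus K} for the residual cancellation. Your intermediate reduction to the one-variable $F(r)=\int_0^\infty J_{s,s+r}\,ds$ and the two-variable $H_M(r)$ is a mild reorganization that the paper instead handles by repeated changes of variables in the multi-dimensional integrals, and the algebraic cancellation you correctly flag as the main obstacle is resolved in the paper via the identities $K^*=K+K*K^*$ and $F=K*\mathds{1}-1$, which collapse the surviving terms to $\alpha^{-2}\int\tilde K(\tilde K - K)=O(\sqrt\alpha)$.
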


We start by showing that the function $J$ can be expressed in terms of $K$
\begin{claim}\label{claim:formula for J}
	We have that
	\begin{equation}
	J_{s,u}=-\frac{1}{\alpha ^2 (1+2 \alpha )^2}\intop _s^u K'(x) K(u-x)dx -\frac{2  }{(1+2 \alpha )^2}K(u).
	\end{equation}
\end{claim}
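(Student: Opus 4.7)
The plan is to reduce the identity to a simple calculation in terms of $h_\alpha(t)=\PP(t<T<\infty)$, using the coupling between $T$ and $T_s$ that already appears in the proof of Proposition~\ref{prop:Jbound:quenched}. Since $K(t)=\alpha(1+2\alpha)h_\alpha(t)$, the claimed formula is equivalent to
\begin{equation}
J_{s,u} \;=\; -\intop_s^u h'_\alpha(x)\,h_\alpha(u-x)\,dx \;-\; \frac{2\alpha}{1+2\alpha}\,h_\alpha(u),
\end{equation}
so it suffices to establish this rewritten identity and then substitute back.

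First I would decompose $J_{s,u}=\PP_\alpha(u<T_s<\infty)-\PP_\alpha(u<T<\infty)$ by conditioning on where the original crossing time $T$ lands. On the event $\{T<s\}$, both probabilities vanish since they require a crossing at time $>u>s$. On $\{T\ge u\}$, the two events agree except on the additional event $\{T_s=\infty\}$, since $T_s\ge T\ge u$ always. On $\{s\le T<u\}$, the first event can still happen (the extra jump at $s$ delays the crossing past $u$) while $\PP(u<T<\infty)$ collects no contribution. Exactly as in the proof of Proposition~\ref{prop:Jbound:quenched} this yields the clean decomposition
\begin{equation}
J_{s,u}=\PP_\alpha(A_{s,u})-\PP_\alpha(B_u),\qquad A_{s,u}=\{s\le T<u,\,u\le T_s<\infty\},\quad B_u=\{u\le T<\infty,\,T_s=\infty\}.
\end{equation}

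Next I would compute each term by the strong Markov property at $T$. At time $T=t<\infty$ the gap $W-Y$ equals $+1$, and for $t>s$ the definition of $T_s$ asks the gap to reach $+2$ at some later time. Starting from gap $1$, this further hitting time is, by stationarity and independence of the increments of both $W$ and $Y$, distributed as an independent copy $T'$ of $T$. Therefore, writing $f(t)=-h'_\alpha(t)$ for the density of $T$ on $\{T<\infty\}$,
\begin{equation}
\PP_\alpha(A_{s,u})=\intop_s^u f(t)\,\PP(T'>u-t,\,T'<\infty)\,dt=-\intop_s^u h'_\alpha(t)\,h_\alpha(u-t)\,dt,
\end{equation}
and similarly
\begin{equation}
\PP_\alpha(B_u)=\intop_u^\infty f(t)\,\PP(T'=\infty)\,dt=\frac{2\alpha}{1+2\alpha}\,h_\alpha(u).
\end{equation}
Combining these two expressions gives the rewritten identity, and converting $h_\alpha\to K$ via $K=\alpha(1+2\alpha)h_\alpha$, $K'=\alpha(1+2\alpha)h'_\alpha$ produces exactly the formula in the claim.

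The only non-routine point is justifying the strong Markov argument at $T$: one must check that at the discrete stopping time $T$ the gap $W(T)-Y(T)$ equals precisely $+1$ (since the two processes are integer-valued and change by $\pm1$ at isolated jump times, $T$ is the first time $W$ jumps up onto $Y$ from one below), and that the post-$T$ random walk/Poisson process pair is distributed as a fresh independent copy, so that the residual waiting time for the gap to reach $+2$ has the same law as $T'$. Once this is made rigorous, the rest of the argument is a substitution.
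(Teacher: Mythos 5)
Your argument is correct and essentially reproduces the paper's own proof. The paper decomposes $J_{s,u}$ in exactly the same way, writing $J_{s,u}=\mathbb P(s<\tau<u,\ u\le\tau+\tau'<\infty)-\mathbb P(u\le\tau<\infty,\ \tau'=\infty)$ for an independent copy $\tau'$ of $\tau$, and then substitutes the density $-\alpha^{-1}K'$ and tail $K/(\alpha(1+2\alpha))$ to get the closed form; your $A_{s,u}$, $B_u$, and the strong-Markov justification at $T$ are the same decomposition spelled out in slightly more detail.
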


\begin{proof}
	Let $\tau '$ be an independent copy of $\tau $ and recall the definition of $J$ in \eqref{eq:def:J:expected val}. As $\tau _s \ge \tau $ we have 
	\begin{equation}
	\begin{split}
	J_{s,u}&=\mathbb P (u \le \tau _s <\infty)-\mathbb P (u\le \tau < \infty )\\
	&=\mathbb P ( \tau <u , \ u \le \tau _s <\infty  )-\mathbb P ( u \le \tau <\infty , \ \tau _s=\infty )\\
	&= \mathbb P ( s< \tau <u , \ u \le \tau +\tau '<\infty  )-\mathbb P ( u \le \tau <\infty , \ \tau '=\infty )\\
	&= -\frac{1}{\alpha ^2 (1+2 \alpha )^2}\intop _s^u K'(x) K(u-x)dx -\frac{2  }{(1+2 \alpha )^2}K(u)
	\end{split}
	\end{equation}
	where in the last equality we used the definition of $K$ and the fact that $-\alpha ^{-1}K'(x)$ is the density of $\tau \mathds 1 \{ \tau <\infty  \}$.
\end{proof}

\begin{cor}\label{cor:bound on deterministic J}
    For all $u>s>0$ we have 
    \begin{equation}
        |J_{s,u}|\le \frac{Ce^{-c \alpha ^2 u}}{\sqrt{u+1}\sqrt{s+1}}.
    \end{equation}
\end{cor}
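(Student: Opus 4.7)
The plan is to feed the explicit formula for $J_{s,u}$ from Claim~\ref{claim:formula for J} into the pointwise bounds on $K$ and $K'$ from Lemmas~\ref{lem:estimate on K}, \ref{lem:estimate on K:intro} and Corollary~\ref{cor:bound on K'}, and then carry out a straightforward split of the convolution integral. Concretely, since
\[
|J_{s,u}| \le \frac{1}{\alpha^{2}(1+2\alpha)^{2}} \intop_{s}^{u} |K'(x)|\, K(u-x)\, dx + \frac{2}{(1+2\alpha)^{2}} K(u),
\]
I would handle the two summands on the right separately.

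For the second summand, the bound $K(u) \le C\alpha (u+1)^{-1/2} e^{-c\alpha^{2} u}$ gives a term of the form $C\alpha (u+1)^{-1/2} e^{-c\alpha^{2}u}$. To reshape it into the claimed $\tfrac{1}{\sqrt{(u+1)(s+1)}}$ form, I would absorb the extra $\alpha\sqrt{s+1}$ factor into a sliver of the exponential decay, using that $u\ge s$ and that $\alpha\sqrt{s+1}\,e^{-c\alpha^{2}s/2}$ is uniformly bounded (it splits naturally into the regimes $s+1\le \alpha^{-2}$, where $\alpha\sqrt{s+1}\le C$ directly, and $s+1\ge\alpha^{-2}$, where the maximum of $\alpha\sqrt{s}\,e^{-c\alpha^{2}s/2}$ in $s$ is a fixed constant).

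For the first summand, plugging in $|K'(x)|\le C\alpha(x+1)^{-3/2} e^{-c\alpha^{2} x}$ and $K(u-x)\le C\alpha(u-x+1)^{-1/2} e^{-c\alpha^{2}(u-x)}$, the two exponentials combine into a single factor $e^{-c\alpha^{2} u}$, and the $\alpha^{-2}$ prefactor is absorbed by the $\alpha^{2}$ coming from the product of the two amplitudes. This reduces the task to bounding the purely deterministic integral
\[
\intop_{s}^{u} \frac{dx}{(x+1)^{3/2}\sqrt{u-x+1}}
\]
by $C/\sqrt{(u+1)(s+1)}$. I would split on whether $s\le u/2$ or $s>u/2$: in the first case I further split the integral at $u/2$, bounding $\sqrt{u-x+1}\ge \sqrt{u/2+1}$ on $[s,u/2]$ and $(x+1)^{3/2}\ge (u/2+1)^{3/2}$ on $[u/2,u]$, which yields the two terms $C/\sqrt{(u+1)(s+1)}$ and $C/(u+1)$ respectively, both dominated by the target since $s+1\le u+1$; in the second case I use $(x+1)^{3/2}\ge(s+1)^{3/2}$ throughout and integrate $(u-x+1)^{-1/2}$, after which the bookkeeping $u-s+1\le 2(s+1)$ and $u+1\le 2(s+1)$ (both consequences of $s>u/2$) gives the required inequality.

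There is really no main obstacle beyond this calculus; the statement is essentially a direct corollary of Claim~\ref{claim:formula for J} together with the Fourier-analytic pointwise bounds on $K$ and $K'$. The only point that requires minor care is making sure the factor of $\alpha$ missing from the target bound is correctly absorbed --- once by the $\alpha^{-2}$ in the formula against the $\alpha\cdot\alpha$ from $|K'|\cdot K$, and once by a piece of the exponential decay when treating the residual $K(u)$ term.
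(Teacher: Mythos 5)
Your proposal is correct and takes essentially the same route as the paper: you apply Claim~\ref{claim:formula for J} together with the pointwise bounds on $|K'|$ and $K$, handle the resulting convolution integral by the same midpoint-split calculation that the paper isolates as Claim~\ref{claim:some claim}, and absorb the leftover $\alpha$ factor from the $K(u)$ term into the exponential exactly as the paper does (implicitly) when it "slightly changes $c$ and $C$." The only cosmetic difference is that you inline the proof of Claim~\ref{claim:some claim} and spell out the two-regime argument for the $\alpha$ absorption, both of which the paper leaves compressed.
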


For the proof of the corollary we will need the following simple bound.

\begin{claim}\label{claim:some claim}
    For all $s<u$ we have 
    \begin{equation}
         \intop _s^u \frac{1}{(x+1)^{\frac{3}{2}} (u-x+1)^{\frac{1}{2}}}dx \le \frac{C}{ \sqrt{s+1}\sqrt{u+1} }.
    \end{equation}
\end{claim}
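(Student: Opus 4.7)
The plan is to bound the integral by a case split on the relative sizes of $s$ and $u$, with cutoff at $u=2s$. In both cases the estimate reduces to one-variable Beta-type integrals.

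First I would handle the ``thin'' case $u\leq 2s$. Here $s+1\geq (u+1)/2$, so for every $x\in[s,u]$ we have $(x+1)^{3/2}\geq (s+1)^{3/2}\geq c(u+1)^{3/2}$, which pulls out of the integral. What remains is
\[
\intop_s^u\frac{dx}{\sqrt{u-x+1}}\leq 2\sqrt{u-s+1}\leq 2\sqrt{s+1},
\]
since $u-s\leq s$. Combining these gives a bound of $C\sqrt{s+1}/(u+1)^{3/2}$, and because $s+1$ and $u+1$ are comparable in this regime, this is dominated by $C/\bigl(\sqrt{s+1}\sqrt{u+1}\bigr)$.

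Next, in the ``spread'' case $u>2s$, I would split the integral at the midpoint $m:=(s+u)/2$. On $[s,m]$ use $u-x+1\geq (u-s)/2+1\geq c(u+1)$ (valid since $u-s\geq u/2$) to factor out $1/\sqrt{u+1}$, leaving $\intop_s^m (x+1)^{-3/2}\,dx\leq 2/\sqrt{s+1}$. On $[m,u]$ use $(x+1)^{3/2}\geq (m+1)^{3/2}\geq c(u+1)^{3/2}$ to factor, leaving $\intop_m^u (u-x+1)^{-1/2}\,dx\leq 2\sqrt{(u-s)/2+1}\leq 2\sqrt{u+1}$; this contributes at most $C/(u+1)$, and since $s+1\leq u+1$ this is also $\leq C/\bigl(\sqrt{s+1}\sqrt{u+1}\bigr)$.

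There is no real obstacle here; the entire content is a careful split-and-bound argument, and the slight subtlety is simply making sure that in each sub-interval the factor being pulled out of the integral is genuinely comparable to the corresponding term in the claimed upper bound, which the case split on $u$ versus $2s$ guarantees.
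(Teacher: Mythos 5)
Your proof is correct and follows essentially the same strategy as the paper: split on whether $u\le 2s$ or $u>2s$, and in the latter case break the integral into two pieces, on each of which one of the two singular factors is comparable to a power of $u+1$. The only cosmetic difference is that the paper splits at $u/2$ rather than at the midpoint $(s+u)/2$, which is immaterial.
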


\begin{proof}
    Suppose first that $u>2s$. In this case
\begin{equation}
    I_{s,u}\le \intop _s^{u/2} \frac{C}{(x+1)^{\frac{3}{2}}(u+1)^{\frac{1}{2}}}dx +\intop _{u/2}^u \frac{C}{(u+1)^{\frac{3}{2}}(u-x)^{\frac{1}{2}}}dx\le \frac{C}{\sqrt{s+1}\sqrt{u+1}}+\frac{C}{u+1} \le \frac{C}{\sqrt{s+1}\sqrt{u+1}}.
\end{equation}
In the case $u\le 2s$ we have 
\begin{equation}
    I_{s,u}\le  \intop _{s}^u \frac{1}{ (s+1)^{\frac{3}{2}}(u-x)^{\frac{1}{2}}}dx \le \frac{\sqrt{u}}{(s+1)^{\frac{3}{2}}} \le  \frac{C}{\sqrt{s+1}\sqrt{u+1}},
\end{equation}
as needed.
\end{proof}

\begin{proof}[Proof of Corollary~\ref{cor:bound on deterministic J}]
By Claim~\ref{claim:formula for J}, Lemma~\ref{lem:estimate on K} and Corollary~\ref{cor:bound on K'} we have 
\begin{equation}\label{eq:some bound on |J|}
\begin{split}
    |J_{s,u}| \le \alpha ^{-2} \intop _s^u |K'(x)|K(u-x)& dx +2 K(u) 
    \le  \intop _s^u \frac{Ce^{-c \alpha ^2 u }}{(x+1)^{\frac{3}{2}}(u-x+1)^{\frac{1}{2}}}dx +\frac{C\alpha e^{-c \alpha ^2 u}}{ \sqrt{u+1} } \\
    &\le \frac{Ce^{-c \alpha ^2 u}}{\sqrt{u+1}\sqrt{s+1}}+\frac{C\alpha e^{-c \alpha ^2 u}}{ \sqrt{u+1} } \le \frac{Ce^{-c \alpha ^2 u}}{\sqrt{u+1}\sqrt{s+1}},
\end{split}
\end{equation}
where the third inequality is by Claim~\ref{claim:some claim} and the last inequality follows by slightly changing the values of $c$ and $C$.
\end{proof}

\begin{proof}[Proof of Lemma~\ref{lem:integral computation}]
	First, note that by Corollary~\ref{cor:bound on deterministic J} one can change the upper limit $T$ in the definition of $I_1$ to $\infty $ without changing the value of $I_1$ by more that $O(\alpha ^{10})$. Using this observation and the definition of $\tilde{K}$ we obtain
	\begin{equation}\label{eq:I_1}
	I_1= O(\alpha ^{10}) + \intop _0^{\infty } \intop _0^u J_{s,u} \tilde{K}(u-s) ds \, du + \frac{2\alpha ^2 }{1+2 \alpha }\intop _0^{\infty } \intop _0^u J_{s,u} ds \, du.
	\end{equation}
	Denote the last two terms by $I_3$ and $I_4 $ respectively. By Claim~\ref{claim:formula for J} we have
	\begin{equation}\label{eq:I_4}
	\begin{split}
	I_4&=-\frac{2}{(1+2 \alpha )^3 } \intop _0^\infty   \intop _0^u \intop _0^x K'(x)K(u-x)   ds \, dx \, du -\frac{4\alpha ^2 }{(1+2 \alpha )^3 } \intop _0^{\infty } \intop _0^u K(u) ds \, du \\ 
	&=-\frac{2}{(1+2 \alpha )^3 } \intop _0^\infty   \intop _0^u x K'(x)K(u-x)     dx \, du -\frac{4\alpha ^2 }{(1+2 \alpha )^3 } \intop _0^\infty  u K(u)  du\\
	&=-\frac{2}{(1+2 \alpha )^3 } \intop _0^\infty   K(b) db \intop _0^{\infty } a K'(a)     da  -2 +O(\alpha ) =O(\alpha ),
	\end{split}
	\end{equation}
	where in the third equlity we changed variables by $a=x$ and $b=u-x$ and used Corollary~\ref{cor:moments of K}. In the last equality we used Corollary~\ref{cor:moments of K} again.
	
	We turn to estimate $I_3$. Let $F(u):=-1+\int _0^u K(x)dx$ and note that $F$ decays exponentially fast. Using Claim~\ref{claim:formula for J} we get
	\begin{equation}\label{eq:I_3}
	\begin{split}
	\alpha ^2 &(1+2\alpha )^2 I_3= -\intop _0^{\infty } \intop _0^u \intop _0^x K'(x) K(u-x) \tilde{K} (u-s)  -2\alpha ^2 \intop _0^{\infty } \intop _0^u K(u)\tilde{K}(u-s)\\
	&=-\intop _0^{\infty } \intop _0^c \intop _0^\infty  K'(a+b) K(c-b) \tilde{K}(c)  da \, db \, dc -2\alpha ^2 \intop _0^{\infty } \intop _0^\infty  K(a+b)\tilde{K}(b)da \, db \\
	&=\intop _0^{\infty } \tilde{K}(c)  \intop _0^c  K(b) K(c-b)    +2\alpha ^2 \intop _0^{\infty }   F(b)\tilde{K}(b)   =\intop _0^{\infty }   (K*K) \cdot \tilde{K}   +2\alpha ^2   F\cdot \tilde{K},
	\end{split}
	\end{equation}
	where in the second equality we made the change of variablles $a=s,\, b=x-s,\, c=u-s$. Note that the right hand side of \eqref{eq:I_3} is of order $O( \alpha ^2 )$ by Lemma~\ref{lem:estimat for K tilde}, equation \eqref{eq:convolution of 2} and the definition of $F$. Thus, 
	\[I_3=O(\alpha ) +  \intop _0^{\infty }   \alpha ^{-2}(K*K) \cdot \tilde{K}   +2 F\cdot \tilde{K}.\]
	
	Next, we estimate the integral $I_2$. Using the definition of $\tilde{K}$ we obtain
	\begin{equation}\label{eq:I_2}
	\begin{split}
	I_2&= \intop _0^{T }  \intop _0 ^x   \intop _0^u J_{s,u} \left( \tilde{K}(x-u)+\frac{2 \alpha ^2 }{1+2 \alpha } \right)\left( \tilde{K}(x-s)+\frac{2 \alpha ^2 }{1+2 \alpha } \right)\\
	&= \intop _0^{\infty }  \intop _0 ^x   \intop _0^u J_{s,u} \tilde{K}(x-u)\tilde{K} (x-s)+\frac{2 \alpha ^2 }{1+2 \alpha }\intop _0^{\infty }  \intop _0 ^x   \intop _0^u J_{s,u}  \tilde{K}(x-u) \\
	&+\frac{2 \alpha ^2 }{1+2 \alpha }\intop _0^{\infty }  \intop _0 ^x   \intop _0^u J_{s,u} \tilde{K}(x-s) +\frac{4\alpha ^4 }{(1+2 \alpha )^2}\intop _0^{T }  \intop _0 ^x   \intop _0^u J_{s,u}+O(\alpha ^{10}),
	\end{split}
	\end{equation}
	where in the second inequality we also changed the upper limit $T$ to $\infty $ in the first three integrals using the same arguments as in \eqref{eq:I_1}. Note that we could not change the upper limit in the fourth integral as the corresponding infinite integral doesn't converge. 
	We denote the four terms on the right hand side of \eqref{eq:I_2} by $I_5, I_6, I_7$ and $I_8$ respectively. 
	
	We start with $I_5$. Using Claim~\ref{claim:formula for J} we get 
	\begin{equation}\label{eq:I_5}
	\begin{split}
	&\alpha ^2 (1+2 \alpha )^2 I_5 =  \alpha ^2 (1+2 \alpha )^2\intop _0^{\infty }  \intop _0 ^x   \intop _0^u J_{s,u} \tilde{K}(x-u)\tilde{K} (x-s)\\
	&=-\intop _0^\infty \intop _0^x \intop _0^u \intop _0^y  K'(y)K(u-y)\tilde{K}(x-u)\tilde{K}(x-s)-2\alpha ^2 \intop _0^{\infty } \intop _0^x  \intop _0^u K(u)\tilde{K}(x-u)\tilde{K}(x-s)\\
	&=-\intop _0^\infty \intop _0^d  \intop _0^c  \intop _0^\infty   K'(a+b)K(c-b)\tilde{K}(d-c)\tilde{K}(d)-2\alpha ^2 \intop _0^{\infty } \intop _0^c \intop _0^\infty  K(a+b)\tilde{K}(c-b)\tilde{K}(c)\\
	&=\intop _0^\infty \tilde{K}(d) \intop _0^d  \tilde{K}(d-c) \intop _0^c   K(b)K(c-b)+2\alpha ^2 \intop _0^{\infty }\tilde{K}(c) \intop _0^c   F(b)\tilde{K}(c-b)\\
	&=\intop _0^\infty \tilde{K}\cdot  (\tilde{K}*K*K) +2\alpha ^2 \tilde{K} \cdot (F*\tilde{K}),\\
	\end{split}
	\end{equation}
	where third equality we applied the change of variables $a=s,\, b=y-s,\, c=u-s,\, d=x-s$ to the first integral and the change of variables $a=s,\, b=u-s,\, c=x-s$ to the second integral. As before, the right hand side of the last equation is of order $O(\alpha ^2 )$ and therefore 
	\[I_5=O(\alpha ) +\intop _0^\infty \alpha ^{-2} \tilde{K}\cdot  (\tilde{K}*K*K) +2 \tilde{K} \cdot (F*\tilde{K}).\]
	
	We turn to estimat $I _6$. Using Claim~\ref{claim:formula for J} again we get
	\begin{equation}
	\begin{split}
	&\frac{(1+2 \alpha )^3 }{2}I_6=\alpha ^2 (1+2 \alpha )^2 \intop _0^{\infty }  \intop _0 ^x   \intop _0^u J_{s,u} \tilde{K}(x-u)\\
	&=-\intop _0^\infty \intop _0^x \intop _0^u \intop _0^y  K'(y)K(u-y)\tilde{K}(x-u)-2\alpha ^2 \intop _0^{\infty } \intop _0^x  \intop _0^u K(u)\tilde{K}(x-u)\\
	&=-\intop _0^\infty \intop _0^\infty  \intop _0^\infty  \intop _0^\infty   K'(a+b)K(c)\tilde{K}(d)-2\alpha ^2 \intop _0^{\infty } \intop _0^\infty \intop _0^\infty  K(a+b)\tilde{K}(c)\\
	&=\intop _0^\infty \intop _0^\infty  \intop _0^\infty   K(b)K(c)\tilde{K}(d)+2\alpha ^2 \intop _0^{\infty } \intop _0^\infty   F(b)\tilde{K}(c)=\intop _0^\infty  \tilde{K}-\intop _0^\infty  \tilde{K} +O(\alpha )=O(\alpha ), \\
	\end{split}
	\end{equation}
	where in the third equality we changed variables by $a=s,\, b=y-s,\, c=u-y, \, d=x-u$ and in the fifth equality we used that, by Corollary~\ref{cor:moments of K} we have 
	\begin{equation}
	\intop _0^\infty F(x) dx=-\intop _0^\infty \intop _x^\infty K(y)dy dx=-\intop _0^ \infty yK(y)dy = -\frac{1}{2} \alpha ^{-2} +O(\alpha ^{-1}).
	\end{equation}
	
	Next, we estimate $I_7$. We have
	\begin{equation}
	\begin{split}
	\frac{(1+2 \alpha )^3 }{2}I_7&=\alpha ^2 (1+2 \alpha )^2 \intop _0^{\infty }  \intop _0 ^x   \intop _0^u J_{s,u} \tilde{K}(x-s)\\
	&=-\intop _0^\infty \intop _0^x \intop _0^u \intop _0^y  K'(y)K(u-y)\tilde{K}(x-s)-2\alpha ^2 \intop _0^{\infty } \intop _0^x  \intop _0^u K(u)\tilde{K}(x-s)\\
	&=-\intop _0^\infty \intop _0^d  \intop _0^c  \intop _0^\infty    K'(a+b)K(c-b)\tilde{K}(d) -2\alpha ^2 \intop _0^{\infty } \intop _0^c \intop _0^\infty  K(a+b)\tilde{K}(c)\\
	&=\intop _0^\infty \tilde{K}(d) \intop _0^d   \intop _0^c    K(b)K(c-b)+2\alpha ^2 \intop _0^{\infty }\tilde{K}(c) \intop _0^c   F(b)\\
	&=\intop _0^\infty \tilde{K}\cdot (K*K*\mathds{1})+2 \alpha ^2 \tilde{K}\cdot  (F*\mathds{1}),
	\end{split}
	\end{equation}
	where in the third equality we used the same change of variables as before. The right hand side of the last equation is of order $O(1)$ and therefore
	\[I _7 =O(\alpha ) + \intop _0^\infty 2\tilde{K}\cdot (K*K*\mathds{1})+4 \alpha ^2 \tilde{K}\cdot  (F*\mathds{1})\] 
	
	Finally, we estimate $I_8$. We have
	\begin{equation}
	\frac{(1+2 \alpha )^2}{4 \alpha ^4} I_8=\intop _0^{T }  \intop _0 ^x   \intop _0^u J_{s,u}=  \intop _0 ^{\infty }   \intop _0^u \intop _{u}^{T} J_{s,u}=T\intop _0 ^{\infty }   \intop _0^u  J_{s,u}-\intop _0 ^{\infty }   \intop _0^u  uJ_{s,u}.
	\end{equation}
	We denote the last two terms by $I_9$ and $I_{10}$ respectively. We have that $I_9=O( \alpha ^{-\frac{7}{2}})$ by the assumption on $T$ and by the bound on $I_4$ in \eqref{eq:I_4}. 
	
	We turn to estimate $I_{10}$. We have 
	\begin{equation}
	\begin{split}
	\alpha ^2& (1+2 \alpha )^2 I _{10} =-\alpha ^2 (1+2 \alpha )^2 \intop _0^\infty \intop _0 ^u  uJ_{s,u}=\intop _0^\infty \intop _0^u \intop _0^x uK'(x)K(u-x) +2\alpha ^2 \intop _0^{\infty} \intop _0^u u K(u)\\
	&=\intop _0^\infty \intop _0^u  ux K'(x)K(u-x) +2\alpha ^2 \intop _0^{\infty}  u^2 K(u) =\intop _0^\infty \intop _0^\infty  a(a+b) K'(a)K(b) +2 \alpha ^{-2}+O(\alpha ^{-1})\\
	&= \intop _0^\infty  a^2 K'(a)da \intop _0^\infty  K(b)db + \intop _0^\infty  a K'(a)da \intop _0^\infty  bK(b)db +2\alpha ^{-2} +O(\alpha ^{-1})=\frac{1}{2} \alpha ^{-2}+O(\alpha ^{-1}),
	\end{split}
	\end{equation}
	where in the fourth and last equality we used Corollary~\ref{cor:moments of K}. Thus, $I_{10}=\frac{1}{2} \alpha ^{-4} +O (\alpha ^{-3})$ and therefore $I _8 =2+O(\sqrt{\alpha })$.
	
	Adding all the estimates for $I_3,I_4,I_5,I_7$ and $I_8$ we obtain 
	
	\begin{equation}
	\begin{split}
	I_1+I_2&= I_3+I_4+I_5+I_6+I_7+I_8=2 +O(\sqrt{\alpha }) +\intop _0^{\infty }   \alpha ^{-2} (K*K) \cdot \tilde{K}   +2 F\cdot \tilde{K}  \\
	&+\intop _0^\infty \alpha ^{-2}\tilde{K}\cdot  (\tilde{K}*K*K) +2 \tilde{K} \cdot (F*\tilde{K})+ \intop _0^\infty 2\tilde{K}\cdot (K*K*\mathds{1})+4 \alpha ^2 \tilde{K}\cdot (F*\mathds{1}). \\
	\end{split}
	\end{equation}
	Next, using that $\tilde{K}=K^*-2 \alpha ^2+O(\alpha ^3)$ we see that the the third integral is completely cancled by the $\tilde{K}$ inside the brackets of the second integral. Thus,
	\begin{equation}\label{eq:final computation in the integral}
	\begin{split}
	I_1 +I_2&=2+ O(\sqrt{\alpha } )+\alpha ^{-2}\intop _0^\infty \tilde{K}\cdot \left( K^* *K*K+K*K+2 \alpha ^2 (F*K^*+F) \right)\\
	&=2+O(\sqrt{\alpha }) +\alpha ^{-2}\intop _0^\infty \tilde{K}\cdot \left( K*K^* + 2\alpha ^2 (\mathds{1}*K*K^* -\mathds{1}*K^*+\mathds{1}*K-1 ) \right)\\
	&=2 +O(\sqrt{\alpha }) +\alpha ^{-2} \intop _0^\infty \tilde{K}\cdot \left( K*K^* - 2\alpha ^2 ) \right)=2 +O(\sqrt{\alpha }) +\alpha ^{-2} \intop _0^\infty \tilde{K}(\tilde{K}-K),
	\end{split}
	\end{equation}
	where in the secnd equality we udsed that $F=K*\mathds{1}-1$, in the third equality we used that $K^*=K+K*K^*$ and in the last equality we used that $\tilde{K}=K^*-2 \alpha ^2+O(\alpha ^3)$.
	
	Next, we estimate the integral in the right hand side of \eqref{eq:final computation in the integral}. We have 
	\begin{equation}
	\intop _0^\infty \tilde{K}(\tilde{K}-K) \le C\alpha ^2  \intop _0^ {\alpha ^{-1}}  \tilde{ K}(t) dt +C \alpha \intop _{\alpha ^{-1}} ^{\infty } \frac{\tilde{ K} (t)}{t} dt  \le C \alpha ^3 \intop _0^{\alpha ^{-1}} \frac{1}{\sqrt{t}} dt + C \alpha ^2 \intop _{\alpha ^{-1}}^{\infty } \frac{1}{t^{\frac{3}{2}}} dt \le C \alpha ^{\frac{5}{2}},
	\end{equation} 
	where in the first inequality we used Corollary~\ref{cor:K tilde minus K} and in the second inequality we used Lemma~\ref{lem:estimat for K tilde}. Substituting the last bound into \eqref{eq:final computation in the integral} finishes the proof of the lemma.
\end{proof}

 \section{Martingale concentration lemmas}\label{subsec:app:mgconcen}

	We provide the details of the deferred proofs of martingale concentration lemmas. We begin with establishing Lemma \ref{lem:concen of int:conti:forwardtime}. It will be clear from the proof that Corollary \ref{lem:concentrationofint:continuity} follows from the same proof as well.
	
	\begin{proof}[Proof of Lemma \ref{lem:concen of int:conti:forwardtime}]
		Since we have a somewhat weakened assumption of the upper bound on $f_t$, we split the integral by two parts and control them separately.
		Moreover, as the error bound suggest, we use union bound over a discretized interval of $[0,h]$, and then complete the argument by showing a certain type of continuity of the integral. Without loss of generality, we assume that $\tau_0\ge \tau_-$ almost surely (otherwise the integral is zero). 
		
		For each $t\in[0,h]$, we write
		\begin{equation}\label{eq:concenofint:conti:mainsplit}
		\intop _{\tau_-}^{t\wedge \tau_0} f_t(x) d \widetilde{\Pi }_g(x)
		=\intop _{\tau_-}^{(\Delta\vee \tau_-)\wedge \tau_0} f_t(x) d \widetilde{\Pi }_g(x)
		+
		\intop _{(\Delta\vee \tau_-)\wedge \tau_0}^{t\wedge \tau_0} f_t(x) d \widetilde{\Pi }_g(x).
		\end{equation}
		The first integral can be controlled using the definitions of $\tau', \tau''$, namely,
		\begin{equation}
	N\underline{f}_t(p_1) - \textbf{A} \le	\intop _{\tau_-}^{(\Delta\vee \tau_-)\wedge \tau_0} f_t(x) d \widetilde{\Pi }_g(x) \leq N \bar{f}_t(p_1) + \textbf{A}.
		\end{equation}
		On the other hand, the second integral can be controlled using Corollary \ref{cor:concentration of integral}, where we obtain
		\begin{equation}
		\PP\left(\left|\intop _{\Delta \wedge \tau_0}^{t\wedge \tau_0} f_t(x) d \widetilde{\Pi }_g(x) \right|\ge a\sqrt{\textbf{M}} \right) \leq Ce^{-a}.
		\end{equation}
		Then, we define $\mathcal{T}:= \{t\in[0,h]: t=k\delta\Delta, \ k\in\mathbb{Z} \}$, and combine the above two estimates to deduce
		\begin{equation}\label{eq:concentration:conti:unionoverdiscrete}
		\PP\left(\intop _{\tau_-}^{t\wedge\tau_0} f_t(x) d \widetilde{\Pi }_g(x) \le Nf_t(p_1)+\textbf{A}+ a\sqrt{\textbf{M}},\ \forall t\in\mathcal{T} \right) \geq 1- \left(\frac{Ch}{\delta \Delta}\right)e^{-a},
		\end{equation}
		and an analogous bound holds for the lower bound.
		
		Next, for any $t\in[0,h]$, let $t_\delta$ be such that $t_\delta\le t$, $t_\delta \in \mathcal{T}$.Write $t' = (t\wedge \tau_0)\vee \tau_-$, $t_\delta' =( t_\delta \wedge \tau_0) \vee \tau_-$, and express that 
		\begin{equation}\label{eq:concenint:conti:contidiffform}
		\intop _{\tau_-}^{t'} f_t(x) d \widetilde{\Pi }_g(x)
		-\intop _{\tau_-}^{t'_\delta} f_{t_\delta}(x) d \widetilde{\Pi }_g(x) =
		\intop_{t_\delta'}^{t'} f_t(x)d\widetilde{ \Pi}_g(x)
		+ \intop_{\tau_-}^{t_\delta'} (f_t(x)-f_{t_\delta}(x)) d\widetilde{ \Pi}_g(x).
		\end{equation}
		The first integral can again be bounded by using the definitions of $\tau', \tau''$:
		\begin{equation}\label{eq:concentration:conti:diff1}
		 N\underline{f}_t(p_1)-\textbf{A} \le \intop_{t_\delta'}^{t'} f_t(x)d\widetilde{ \Pi}_g(x)  \leq N\bar{f}_t(p_1) + \textbf{A},
		\end{equation}
		where we used the fact that $\bar{f}_t(x)$ (resp. $\underline{f}_t(x)$) is a decreasing (resp. increasing) function. On the other hand, we control the second integral by using
		\begin{equation}\label{eq:fandfdeldiff}
		|f_t(x)- f_{t_\delta}(x)| \leq D(t-t_\delta)\le D\delta\Delta.
		\end{equation}
		Moreover, due to $\tau_0\le \tau'$, the total number of points $|\Pi_g[0,t'_{ \delta}]|$ is bounded by $hN\Delta^{-1}$. Thus,
		\begin{equation}\label{eq:concentration:conti:diff2}
		\left|\intop_{\tau_-}^{t_\delta'} (f_t(x)-f_{t_\delta}(x)) d\widetilde{ \Pi}_g(x)
		\right|
		\leq
		D\delta \Delta ( hN\Delta^{-1} + h\eta)= D\delta ( hN+h\Delta\eta). 
		\end{equation}
		
		Therefore, under the event described inside \eqref{eq:concentration:conti:unionoverdiscrete}, we have by combining \eqref{eq:concentration:conti:diff1} and \eqref{eq:concentration:conti:diff2} that
		\begin{equation}\label{eq:concentration:conti:generalt}
		\begin{split}
		\intop _{\tau_-}^{t'} f_t(x) d \widetilde{\Pi }_g(x) \le&  N\bar{f}_t (p_1) +2\textbf{A} + D\delta (hN+h\Delta \eta)\\
		&+
		N\bar{f}_{t_\delta}(p_1) + a\sqrt{\textbf{M}}, 
		\end{split}
		\end{equation}
		and also the corresponding lower bound.
		Using \eqref{eq:fandfdeldiff}, we can simplify \eqref{eq:concentration:conti:generalt}  to
		\begin{equation}
		\begin{split}
		\intop _{\tau_-}^{t'} f_t(x) d \widetilde{\Pi }_g(x) \le&
		2N \bar{f}_t(p_1) + 2\textbf{A} + D\delta (2N\Delta + hN + h\Delta \eta)+ a\sqrt{\textbf{M}}\\
		\le& 2N \bar{f}_t(p_1) + 3\textbf{A} + a\sqrt{\textbf{M}},
		\end{split}\end{equation}
		where the last inequality is due to the assumption on $\delta$. Again, we have the corresponding bound for the lower bound as well. Therefore, we obtain conclusion by noticing that the above holds deterministically for all $t\in[0,h]$ under the event given in  \eqref{eq:concentration:conti:unionoverdiscrete}. 
	\end{proof}
	
	Note that the proof of Corollary \ref{lem:concentrationofint:continuity} follows from the exact same argument, except that we split the integrals by $[0,t-\Delta]$ and $[t-\Delta, t]$ instead of \eqref{eq:concenofint:conti:mainsplit}, namely,
		\begin{equation}\label{eq:concenofint:conti:mainsplit1}
		\intop _{0}^{t'} f_t(x) d \widetilde{\Pi }_g(x)
		=\intop _{0}^{(t-\Delta)\wedge \tau_0} f_t(x) d \widetilde{\Pi }_g(x)
		+
		\intop _{(t-\Delta)\wedge  \tau_0}^{t\wedge \tau_0} f_t(x) d \widetilde{\Pi }_g(x).
		\end{equation}
The details are omitted due to similarity.

	\section{Negligibility of the third order contributions}\label{subsec:app:Qbound}
	
	In this section, we establish Proposition \ref{prop:Qbound:quenched}.
		The proof is based on a similar approach as Proposition \ref{prop:Jbound:quenched}, but requires more delicate analysis on the coupled events of $W$ and $Y$. Recall the definition \eqref{eq:def:Q:basic form}, and let
		\begin{eqnarray}
		&A_{v,u} := \mathds{1}{\{s\le T_{v,u}<\infty \}}, &A_{v} := \mathds{1}{\{s\le T_v<\infty \}},\\
		&A_u:= \mathds{1}\{s\le T_u <\infty \},  & A_0 := \mathds{1}\{s\le T<\infty\},\\
		&{A}:= A_{v,u}-A_v-A_u +A_0. &
		\end{eqnarray}
		Then, $\textbf{Q}_{v,u,s} = \mathbb{E}_\alpha[{A}\,|\,Y_{\le v} ]$.
		
		 Recalling that $T\le T_u\le T_v \le T_{v,u}$, $A$ can take nonzero values only in the seven cases as illustrated in the Table \ref{table:1} below.

		\begin{table}[h!]
			\centering
			\begin{tabular}{||c || c | c | c | c||} 
				\hline 
				Event & $\in (0,s)$ &  $\in [s,\infty)$ & $=\infty$ &  Value of $A$ \\ [0.5ex] 
				\hline\hline
				$\textbf{A}_1$ & none & $T$ & $T_u,\,T_v,\,T_{v,u}$ & $+1$ \\ [0.3ex]
				\hline
				$\textbf{A}_2$ & none & $T,\,T_u,\,T_v$ & $T_{v,u}$ & -1 \\[0.3ex]
				\hline
				$\textbf{A}_3$ & $T$ & $T_u$ & $T_v,\,T_{v,u}$ & $-1$ \\ [0.3ex]
				\hline
				$\textbf{A}_4$ & $T$ & $T_u,\,T_v$ & $T_{v,u}$ & $-2$ \\ [0.3ex]
				\hline
				$\textbf{A}_5$ & $T,\,T_u$ & $T_v$ & $T_{v,u}$ & $-1$ \\  [0.3ex]
				\hline
				$\textbf{A}_6$ & $T$ & $T_u,\,T_v,\,T_{v,u}$ & none & $-1$\\ [0.3ex]
				\hline
				$\textbf{A}_7$ & $T,\,T_u,\,T_v$ & $T_{v,u}$ & none & $+1$ \\  [0.3ex]
				\hline 
			\end{tabular}
			\caption{The cases that give nonzero values of $A$. Each row describes the regimes that $T,\,T_u,\,T_v$, and $T_{v,u}$ belong to. For instance, the second row denotes that $\textbf{A}_2 = \mathds{1}\{t\le T\leq T_u \leq T_v <\infty = T_{v,u} \}$, and that $A=-1$ on $\textbf{A}_2$.}
			\label{table:1}
		\end{table}
		
		From Table \ref{table:1}, we see that
		\begin{equation}
		A = \mathds{1} \{\textbf{A}_1 \} -\mathds{1} \{\textbf{A}_2 \} -\mathds{1} \{\textbf{A}_3 \} -2\cdot\mathds{1} \{\textbf{A}_4 \} -\mathds{1} \{\textbf{A}_5 \} -\mathds{1} \{\textbf{A}_6 \} +\mathds{1} \{\textbf{A}_7 \}.
		\end{equation}
		Therefore, our strategy is to estimate each $\PP_\alpha(\textbf{A}_i \,|\, \mathcal{F}_v)$. This is done by the following lemmas, and we obtain the conclusion from combining them together.

	The following lemmas provide estimates on $\PP_\alpha (\textbf{A}_i \,|\,\mathcal{F}_v)$. Recall the parameter $\hat{h}$ from \eqref{eq:def:horizon}. Moreover, in the following statements, ``an event holds \textit{with very high probability}'' means that for all sufficiently small $\alpha>0$, it holds with probability at least 
	\begin{equation}
	1-e^{-\alpha^{-c_\epsilon}},
	\end{equation}
	for some constant $c_\epsilon>0$.
	
	\begin{lem} \label{lem:A1andA2}
		We have	$0\leq \mathbb{P}_\alpha(\textbf{A}_1\,|\,\mathcal{F}_v )- \mathbb{P}_\alpha (\textbf{A}_2 \,|\, \mathcal{F}_v) = \frac{4\alpha^2}{ (1+2\alpha)^2} \PP_\alpha (s\leq T <\infty \,|\,\mathcal{F}_v)$. Moreover, with very high probability,
		\begin{equation} \label{eq:A1andA2main}
		\textnormal{for all } 0\leq v<u<s\leq \hat{h}, \quad 
		\mathbb{P}_\alpha(\textbf{A}_1\,|\,\mathcal{F}_v )- \mathbb{P}_\alpha (\textbf{A}_2 \,|\, \mathcal{F}_v)
		\leq  
		\frac{\alpha^{-2 \epsilon }}{\sqrt{(v+1)(u+1)(s+1)} }.
		\end{equation}
	\end{lem}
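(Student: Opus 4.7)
The plan is to rewrite the events $\textbf{A}_1, \textbf{A}_2$ in terms of the integer-valued random walk $U(r) := Y(r) - W(r) + 1$ for which $\xi^{U}$ is a martingale with $\xi = 1/(1+2\alpha)$. The first step is a simple but crucial structural observation: on the event $\{s \le T\}$, we have $U(r) \ge 1$ for all $r < s$, so in particular $U(r) \ge 1$ throughout $(v, u]$. Consequently, the indicator shifts built into $T_u$ and $T_v$ only become relevant for $r > T$, and both $T_u$ and $T_v$ reduce to the first-passage time $\inf\{r > T : U(r) = -1\}$; in particular $T_u = T_v$ on $\{s \le T\}$. Likewise, on $\{s \le T\}$, $T_{v,u}$ equals the first-passage time of $U$ to $-2$ after $T_u$. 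These identifications let me rewrite
\begin{equation*}
\textbf{A}_1 = \{s \le T < \infty,\ T_u = \infty\}, \qquad \textbf{A}_2 = \{s \le T \le T_u < \infty,\ T_{v,u} = \infty\}.
\end{equation*}

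The second step is to factor the conditional probabilities using the strong Markov property of $(W, Y)$ at the stopping times $T$ and $T_u$, combined with the optional stopping calculation already used in the paper (cf.~\eqref{eq:speed:Ut mg conv}). Since $U(T) = 0$, the martingale $\xi^U$ together with dominated convergence gives $\mathbb{P}(T_u < \infty \mid T < \infty) = \xi$ (hitting $-1$ from $0$); similarly $U(T_u) = -1$ gives $\mathbb{P}(T_{v,u} < \infty \mid T_u < \infty) = \xi$. Conditioning on $\mathcal{F}_v$ and using that the post-$v$ increments of $Y$ are independent of $\mathcal{F}_v$, this yields
\begin{equation*}
\mathbb{P}_\alpha(\textbf{A}_1 \mid \mathcal{F}_v) = (1-\xi)\,\mathbb{P}_\alpha(s \le T < \infty \mid \mathcal{F}_v), \qquad \mathbb{P}_\alpha(\textbf{A}_2 \mid \mathcal{F}_v) = \xi(1-\xi)\,\mathbb{P}_\alpha(s \le T < \infty \mid \mathcal{F}_v).
\end{equation*}
Subtracting, the common factor $\mathbb{P}_\alpha(s \le T < \infty \mid \mathcal{F}_v)$ is multiplied by $(1-\xi)^2 = \frac{4\alpha^2}{(1+2\alpha)^2}$, which is the identity asserted in the lemma, and nonnegativity is immediate.

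For the quantitative upper bound \eqref{eq:A1andA2main}, I would invoke Lemma~\ref{lem:bound on H} and the Doob-martingale argument already used in Corollary~\ref{cor:conditioning on less information} to conclude that, with very high probability, $\mathbb{P}_\alpha(s \le T < \infty \mid \mathcal{F}_v) \le \alpha^{-\epsilon/2}/\sqrt{s+1}$ uniformly for $v, s \le \hat{h}$. Combining this with the deterministic bound $4\alpha^2/(1+2\alpha)^2 \le 4\alpha^2$, and using the crude estimate $\sqrt{(v+1)(u+1)} \le \hat{h} \le \alpha^{-2-3\epsilon/2}$ (valid for small $\alpha$ since $\hat{h} = \alpha^{-2}\log^C(1/\alpha)$), produces the required bound after a trivial adjustment of the exponent in $\epsilon$. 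The main step here is just choosing the various $\epsilon$-thresholds to absorb the $\log$ factor inside $\hat{h}$ into $\alpha^{-2\epsilon}$.

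The only genuinely nontrivial point in the argument is the identification $T_u = T_v$ on $\{s \le T\}$: once this is recognized, the whole computation collapses to a clean product of two independent hitting probabilities of a biased walk, and the quantitative bound is then almost automatic from results already proved in the excerpt.
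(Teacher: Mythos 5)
Your proposal is correct and follows essentially the same route as the paper: both identify $T_u=T_v$ on $\{s\le T\}$, factor the conditional probabilities of $\textbf{A}_1$ and $\textbf{A}_2$ into $\PP_\alpha(s\le T<\infty\mid\mathcal{F}_v)$ times products of the hitting probabilities $\xi$ and $1-\xi$ obtained by optional stopping of $\xi^{U}$, and observe that the difference is $(1-\xi)^2=\tfrac{4\alpha^2}{(1+2\alpha)^2}$ times the common factor. The quantitative bound via Lemma~\ref{lem:bound on H} (transferred to $\mathcal{F}_v$ by the Doob-martingale argument of Corollary~\ref{cor:conditioning on less information}), combined with the crude estimate $\sqrt{(v+1)(u+1)}\le \hat h+1$, is also how the paper finishes.
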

	
	\begin{lem} \label{lem:A3}
		$\textbf{A}_3$ is an empty event. In particular,  $\PP_\alpha (\textbf{A}_3\,|\, \mathcal{F}_v) =0 $.
	\end{lem}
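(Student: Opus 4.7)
\textbf{Plan for Lemma~\ref{lem:A3}.} The claim is purely pathwise: I aim to show that the three conditions $T<s$, $s\le T_u<\infty$, and $T_v=\infty$ are mutually incompatible whenever $v<u<s$, which immediately yields $\PP_\alpha(\textbf{A}_3\mid\mathcal{F}_v)=0$ with no probabilistic estimate required.

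The first step is to observe that the defining criterion of $T_u$ coincides with that of $T$ on the interval $(0,u]$, since the indicator $I\{x>u\}$ vanishes there. Consequently $T\le u$ would force $T_u=T<s$, contradicting $T_u\ge s$; hence $\textbf{A}_3$ compels $u<T<s$. In particular, $T>v$ and $W(T)=Y(T)+1$ exactly (the process $Y-W$ performs unit jumps, so at its first downcrossing of $0$ it equals $-1$).

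The second step is to compare the processes $f_v(x):=Y(x)-W(x)+I\{x>v\}$ and $f_u(x):=Y(x)-W(x)+I\{x>u\}$ on the three regimes $(0,v]$, $(v,u]$, and $(u,\infty)$. On $(0,v]$ the criterion $f_v<0$ reduces to $Y-W<0$, i.e.\ the defining condition of $T$, and since $T>v$ no hit occurs. On $(v,u]$ the criterion $f_v<0$ reads $Y-W\le -2$ (using that $Y-W$ is integer-valued), which is strictly stronger than the condition $Y-W\le -1$ defining $T$; since $T>u$ even the weaker inequality fails on $(v,u]$, so the stronger one fails as well. On $(u,\infty)$ both indicators equal $1$, so $f_v\equiv f_u$ throughout this range. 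Combined with $T_u>u$ from Step 1, this forces $T_v=T_u<\infty$, contradicting $T_v=\infty$.

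There is no substantive obstacle here: the argument is a short deterministic case analysis exploiting the integer-valued nature of $Y-W$ and the explicit piecewise form of the indicator perturbations. The only subtlety worth noting is that one must also check that $T_v$ does not occur in the subregime $(u,T]$, but this follows because $Y-W\ge 0$ on $(0,T)$ implies $f_v\ge 1$ there, while $f_v(T)=0$ is not strictly negative.
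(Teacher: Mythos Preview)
Your proof is correct and follows essentially the same idea as the paper's: once $T_u\ge s>u$, the processes $Y^v$ and $Y^u$ coincide on $(u,\infty)$, which forces $T_v=T_u<\infty$ and contradicts $T_v=\infty$. The paper dispatches this in one line (noting $Y^v(t')=Y^u(t')$ for $t'>s$ and implicitly using $T_v\ge T_u$), whereas you give a more explicit three-regime case analysis; your Step~1 (deducing $u<T<s$) is correct but not actually needed, since $T_u\ge s>u$ already suffices to run the comparison on $(u,\infty)$.
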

	
	\begin{lem} \label{lem:A4}
		We have $\PP_\alpha (\textbf{A}_4 \,|\, \mathcal{F}_v) = \frac{2\alpha}{1+2\alpha} \PP_\alpha (T \leq s < T_u <\infty \,|\, \mathcal{F}_v)$. Moreover, with very high probability,
		\begin{equation}
		\textnormal{for all } 0\leq v<u<s\leq \hat{h}, \quad \PP_\alpha (\textbf{A}_4 \,|\, \mathcal{F}_v) \leq 
		\frac{\alpha^{-2 \epsilon }}{\sqrt{(v+1)(u+1)(s+1)} }.
		\end{equation}
	\end{lem}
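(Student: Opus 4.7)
The proof splits into two parts: the identity and the quenched bound.

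\emph{The identity.} I would first observe that on $\textbf{A}_4$ one must have $T > u$: if $T \le u$, then the extra jump at $u$ has no effect before time $u$, so $T_u = T \le u < s$, contradicting $T_u \ge s$. Once $T > u$, the processes $Y + I\{\cdot > v\}$ and $Y + I\{\cdot > u\}$ agree on $(u, \infty)$, and on $(v, u]$ neither is crossed since $W \le Y$ there; hence $T_v = T_u$ and $\textbf{A}_4 = \{T \in (u, s),\, T_u \in [s, \infty),\, T_{v,u} = \infty\}$. At the stopping time $T_u$ one has $W(T_u) = Y(T_u) + 2$, and by the strong Markov property the shifted pair $(W(T_u + \cdot) - W(T_u),\, Y(T_u + \cdot) - Y(T_u))$ is an independent copy of $(W, Y)$. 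Since $v, u < T_u$, the event $\{T_{v,u} = \infty\}$ becomes exactly the event that this fresh pair never crosses, which has probability $1 - \xi = \frac{2\alpha}{1+2\alpha}$ by the optional stopping argument already used in the paper's derivation of $\PP_\alpha(T < \infty) = \xi$. Taking conditional expectation on $\mathcal{F}_v$ yields the claimed identity.

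\emph{The quenched bound.} For the inequality I would bound $\PP_\alpha(T \le s < T_u < \infty \mid \mathcal{F}_v)$ by dropping the $T_u < \infty$ constraint and integrating over $T = x \in (u, s)$. Using the density bound $f_v(x) \le \alpha^{-\epsilon}/(x+1)^{3/2}$ from Corollary \ref{cor:conditioning on less information} together with the tail bound $\PP_\alpha(T_u \ge s \mid T = x, Y) \le \alpha^{-\epsilon}/\sqrt{s-x+1}$ obtained by applying Lemma \ref{lem:bound on H} to the walk shifted at $T = x$, Claim \ref{claim:some claim} gives
\[
\PP_\alpha(T \in (u,s),\, T_u \ge s \mid \mathcal{F}_v) \le \frac{C\alpha^{-2\epsilon}}{\sqrt{(u+1)(s+1)}}.
\]
Multiplied by the $\frac{2\alpha}{1+2\alpha}$ factor from the identity, this yields $\PP_\alpha(\textbf{A}_4 \mid \mathcal{F}_v) \le C\alpha^{1-2\epsilon}/\sqrt{(u+1)(s+1)}$, which already suffices in the regime $v \le C\alpha^{-2}$ since there $\alpha\sqrt{v+1} \le C$.

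\emph{The large-$v$ regime and the main obstacle.} For $v \ge C\alpha^{-2}$ the direct bound above is too weak, and I would instead use $\PP_\alpha(T > v \mid \mathcal{F}_v) \le \alpha^{-\epsilon}/\sqrt{v+1}$ (promoted pathwise from Lemma \ref{lem:bound on H} by a Doob inequality as in Corollary \ref{cor:conditioning on less information}) together with the strong Markov property at time $v$. After conditioning on $\{T > v\}$ the offset $k = Y(v) - W(v)$ is nonnegative, and reapplying the integration above to the shifted first-passage problem produces a bound of the form $\alpha^{-3\epsilon}/(\sqrt{v+1}\,\sqrt{(u-v+1)(s-v+1)})$. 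The main obstacle is that when $v$ is close to $u$ this shifted bound loses a factor relative to the target $\alpha^{-2\epsilon}/\sqrt{(v+1)(u+1)(s+1)}$; the way forward is a careful interpolation between the direct and shifted bounds via a case split on the relative sizes of $v$, $u-v$, and $s-v$, combined with the continuity bookkeeping needed to make all the tail estimates hold simultaneously and uniformly over $0 \le v < u < s \le \hat{h}$.
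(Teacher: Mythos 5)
Your derivation of the identity is correct and matches the paper's: you observe $T > u$ on $\textbf{A}_4$, apply the strong Markov property at $T_u$, and use optional stopping to identify the survival factor $\frac{2\alpha}{1+2\alpha}$. Your derivation of the bound $\PP_\alpha(\textbf{A}_4 \mid \mathcal{F}_v) \le C\alpha^{1-2\epsilon}/\sqrt{(u+1)(s+1)}$ via the density and tail estimates together with Claim~\ref{claim:some claim} is also the right idea, and it is the same content as Proposition~\ref{prop:Jbound:quenched} applied to the event $\{u\le T < s,\ s\le T_u < \infty\}$.

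However, the entire third paragraph (the ``large-$v$ regime and the main obstacle'') is chasing a problem that does not exist. You observe correctly that the bound $\alpha^{1-2\epsilon}/\sqrt{(u+1)(s+1)}$ becomes the target after trading one power of $\alpha$ for $1/\sqrt{v+1}$, and you note this works ``in the regime $v \le C\alpha^{-2}$''. But you then treat $v > C\alpha^{-2}$ as a genuinely new regime requiring a shifted-walk argument with a painful interpolation. The statement of the lemma requires only $0 \le v < u < s \le \hat{h}$, and $\hat{h} = \alpha^{-2}\log^{C_0}(1/\alpha) \le \alpha^{-2-\epsilon}$ once $\alpha$ is small depending on $\epsilon$. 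Hence $v \le \alpha^{-2-\epsilon}$ always holds, so $\alpha \le \alpha^{-\epsilon/2}/\sqrt{v+1}$ uniformly on the entire domain, and
\[
\frac{2\alpha}{1+2\alpha}\cdot \frac{\alpha^{-2\epsilon}}{\sqrt{(u+1)(s+1)}} \le \frac{\alpha^{-3\epsilon}}{\sqrt{(v+1)(u+1)(s+1)}}.
\]
Since $\epsilon$ is arbitrary, this is exactly the claimed inequality. The paper flags precisely this point in a parenthetical ``(which gives in particular, $v\le\alpha^{-2-\epsilon}$)'' and does nothing more. Your proof is salvageable by simply deleting the last paragraph and replacing the phrase ``which already suffices in the regime $v\le C\alpha^{-2}$'' with the observation that $v\le\hat{h}\le\alpha^{-2-\epsilon}$ closes the argument for all admissible $(v,u,s)$.
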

	
	\begin{lem} \label{lem:A5}
		We have $\PP_\alpha (\textbf{A}_5 \,|\,\mathcal{F}_v) \leq \frac{2\alpha}{1+2\alpha}\PP_\alpha (T <s \leq T_v<\infty \,|\,\mathcal{F}_v)$. Moreover, with very high probability,
		\begin{equation}
		\textnormal{for all } 0\leq v<u<s\leq \hat{h}, \quad
		\PP_\alpha (\textbf{A}_5 \,|\,\mathcal{F}_v) \leq
		\frac{\alpha^{-2\epsilon}}{\sqrt{(v+1)(u+1)(s+1)}}.
		\end{equation}
	\end{lem}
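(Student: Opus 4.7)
The proof should follow the same template as Proposition~\ref{prop:Jbound:quenched} (and the earlier lemmas in Appendix~\ref{subsec:app:Qbound} for $\textbf{A}_1,\ldots,\textbf{A}_4$): first reduce $\textbf{A}_5$ to a cleaner "sandwich" event using the geometry of the four stopping times, then peel off a restart factor via the strong Markov property, and finally estimate the remaining crossing probability by integrating the hitting-time density bounds obtained in Lemma~\ref{lem:bound on density} and Corollary~\ref{cor:conditioning on less information}.

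The first step is a structural observation. On $\textbf{A}_5$ we have $T\le T_u<s\le T_v<\infty$ and $v<u<s$. If $T\le v$ then at time $T$ neither bump is active, forcing $T_v=T_u=T<s$, contradicting $T_v\ge s$. If $T>u$ then at $T$ we have $W=Y+1$ while the $T_u$-condition for $s'>u$ is $W>Y+1$, so $T_u>T$ requires a further fresh crossing by $1$; but such a crossing immediately makes $W-Y\ge 2$, which also triggers $T_v$ (whose condition for $s'>v$ is only $W>Y+1$), giving $T_v=T_u<s$, again contradicting $T_v\ge s$. Hence $\textbf{A}_5\subseteq\{T\in(v,u]\}$, and on this event $T_u=T$ automatically. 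Next, observe that $T_v$ is a stopping time and that on $\{T_v<\infty\}$ we have $T_v>u>v$, so at $T_v$ both indicator bumps are already present and $W(T_v)-Y(T_v)=2$; the restarted process $\tilde W(s)=W(T_v+s)-W(T_v)$, $\tilde Y(s)=Y(T_v+s)-Y(T_v)$ is again a simple random walk paired with an independent rate-$\alpha$ Poisson process, and $T_{v,u}-T_v$ has the law of the usual hitting time $T$ for this restarted pair. Thus the strong Markov property yields $\PP_\alpha(T_{v,u}=\infty\mid\mathcal{F}_{T_v},T_v<\infty)=1-\tfrac{1}{1+2\alpha}=\tfrac{2\alpha}{1+2\alpha}$, giving
\begin{equation}
\PP_\alpha(\textbf{A}_5\mid\mathcal{F}_v)\le\tfrac{2\alpha}{1+2\alpha}\,\PP_\alpha\!\left(T\in(v,u],\ s\le T_v<\infty\mid\mathcal{F}_v\right)\le\tfrac{2\alpha}{1+2\alpha}\,\PP_\alpha(T<s\le T_v<\infty\mid\mathcal{F}_v),
\end{equation}
which is the first assertion of the lemma.

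For the quantitative bound, apply the strong Markov property a second time, at $T$: given $T=t\in(v,u]$ the increment $T_v-T$ is distributed as a fresh hitting time $T'$ independent of the past. Integrating over $t$ and first conditioning on the whole of $Y$, we obtain (on the high-probability event $\mathcal{E}$ of Lemma~\ref{lem:bound on density}, after adjusting $Y$ past $t$ as in the proof of Proposition~\ref{prop:Jbound:quenched})
\begin{equation}
\PP_\alpha(T\in(v,u],\ s\le T_v<\infty\mid Y)\le\intop_v^u f_Y(t)\,\PP_\alpha(s-t\le T'<\infty\mid Y_{>t})\,dt\le\intop_v^u\frac{\alpha^{-\epsilon}}{(t+1)^{3/2}}\cdot\frac{\alpha^{-\epsilon}}{\sqrt{s-t+1}}\,dt,
\end{equation}
where the bound on $\PP_\alpha(s-t\le T'<\infty\mid Y_{>t})$ comes from integrating the density estimate of Lemma~\ref{lem:bound on density}, exactly as in \eqref{eq:A_us probability}. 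Extending the upper limit from $u$ to $s$ and invoking Claim~\ref{claim:some claim} bounds the integral by $C\alpha^{-2\epsilon}/\sqrt{(v+1)(s+1)}$, so that $\PP_\alpha(\textbf{A}_5\mid\mathcal{F}_v)\le C\alpha^{1-2\epsilon}/\sqrt{(v+1)(s+1)}$ on the good event. Removing the global-$Y$ conditioning down to $\mathcal{F}_v$ follows from the Doob-type argument in Corollary~\ref{cor:conditioning on less information}, at the cost of a further $\exp(-\alpha^{-c_\epsilon})$ exceptional probability.

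Finally, to recover the advertised form $\alpha^{-2\epsilon}/\sqrt{(v+1)(u+1)(s+1)}$ we use the horizon restriction $u\le\hat h=\alpha^{-2}\log^C(1/\alpha)$, which yields $\alpha\sqrt{u+1}\le\log^{C/2}(1/\alpha)\le\alpha^{-\epsilon}$ for $\alpha$ small. Consequently
\begin{equation}
\frac{\alpha^{1-2\epsilon}}{\sqrt{(v+1)(s+1)}}=\frac{\alpha\sqrt{u+1}\cdot\alpha^{-2\epsilon}}{\sqrt{(v+1)(u+1)(s+1)}}\le\frac{\alpha^{-3\epsilon}}{\sqrt{(v+1)(u+1)(s+1)}},
\end{equation}
and after replacing $\epsilon$ by $\epsilon/3$ we arrive at the stated bound. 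The main obstacle is purely combinatorial: correctly diagnosing that the joint constraint $T_u<s\le T_v$ forces $T\le u$ (so that $T_u=T$ is automatic and no additional factor from the $T_u$ event is needed). Once this is in hand, the rest is a straightforward repackaging of the hitting-time technology already developed in Section~\ref{subsec:fixed:JandQ}.
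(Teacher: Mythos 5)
Your argument is correct and matches the paper's intended proof, which the authors only sketch by saying it ``follows from the same argument'' as Lemma~\ref{lem:A4}: peel off the restart factor $\tfrac{2\alpha}{1+2\alpha}$ by the strong Markov property at $T_v$ (noting $W(T_v)=Y^{v,u}(T_v)$ since $T_v\ge s>u$), drop the $T_u$ constraint, and then estimate the remaining crossing probability $\PP_\alpha(T<s\le T_v<\infty\,|\,\mathcal{F}_v)$ by the density bound of Lemma~\ref{lem:bound on density} combined with the survival bound of Lemma~\ref{lem:bound on H} and Claim~\ref{claim:some claim}, exactly as in \eqref{eq:A_us probability}, lifting the conditioning via Corollary~\ref{cor:conditioning on less information} and converting $\alpha^{1-\epsilon}/\sqrt{(v+1)(s+1)}$ to the three-factor form by the horizon $u\le\hat h$. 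Your extra structural observation that $T\in(v,u]$ forcing $T_u=T$ on $\textbf{A}_5$ (the mirror of the paper's observation for $\textbf{A}_4$ that $T\in(u,s)$ with $T_u=T_v$) is correct and makes the equality version of the strong-Markov step transparent, though the stated inequality can be had without it; the only slip is attributing the bound on $\PP_\alpha(s-t\le T'<\infty)$ to Lemma~\ref{lem:bound on density} rather than Lemma~\ref{lem:bound on H}, which does not affect the argument.
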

	
	\begin{lem} \label{lem:A6andA7}
		With very high probability, we have
		\begin{equation}
		\textnormal{for all }0\le v<u<s\le \hat{h}, \quad 
		|\PP_\alpha(\textbf{A}_6 \,|\, \mathcal{F}_v) - \PP_\alpha (\textbf{A}_7 \,|\, \mathcal{F}_v)| \leq 
		\frac{\alpha^{-2\epsilon}}{ \sqrt{(v+1)(u+1)(s+1)}}.
		\end{equation}
	\end{lem}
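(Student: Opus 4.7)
The approach couples the four stopping times $T, T_u, T_v, T_{v,u}$ via the common random walk $W$ and Poisson process $Y$, and analyzes them through the process $U(s) = Y(s) - W(s) + 1$. Both events are contained in $\{T > v\}$ (otherwise all four stopping times collapse to $T$, contradicting either event), so I would decompose according to where $T$ lands: case (b) $\{v < T \le u\}$ and case (c) $\{u < T < s\}$. Notice that $\textbf{A}_6$ requires $T_u \ge s$ but $T_u = T \le u < s$ on case (b), so $\textbf{A}_6 \cap (\textnormal{b}) = \emptyset$; on the other hand, case (c) yields $T_u = T_v$, so both events depend only on the first passage times $T^{(2)} = T_u$ and $T^{(3)} = T_{v,u}$ of $U$ past levels $-1$ and $-2$ after time $T$.

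For case (b) (which only contributes to $\textbf{A}_7$), I would apply strong Markov at $T$, observe that $\textbf{A}_7$ requires $T_v > u$ (else $T_{v,u} = T_v < s$ would contradict $T_{v,u} \ge s$), and integrate the density bound $f_Y(T) \le \alpha^{-\epsilon}/(T+1)^{3/2}$ from Corollary \ref{cor:conditioning on less information} over $(v, u]$, combined with the tail estimate $\PP(r \le T < \infty) \le \alpha^{-\epsilon}/\sqrt{r+1}$ from Lemma \ref{lem:bound on H}. The inner integral over $T_v - T \in (u - T, s - T)$ gives by Claim \ref{claim:some claim}-type estimates a factor $\lesssim \alpha^{-2\epsilon}/\sqrt{(u-T+1)(s-u+1)}$, and the outer integration against $(T+1)^{-3/2}$ over $(v, u]$ produces the target form $\alpha^{-3\epsilon}/\sqrt{(v+1)(u+1)(s+1)}$ (with the crucial $1/\sqrt{v+1}$ factor arising automatically from integrating the $(T+1)^{-3/2}$ density from $v$).

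For case (c), applying strong Markov at $T$ and at $T^{(2)}$, and letting $\tilde{F}(r) := \PP(T \le r \mid T < \infty)$ and $\tilde{F}_2(r)$ the CDF of two i.i.d.~copies of $T$ conditioned on finiteness, the conditional difference reduces to
\begin{equation}
\PP(\textbf{A}_6 \cap (\textnormal{c}) \mid T) - \PP(\textbf{A}_7 \cap (\textnormal{c}) \mid T) = \xi^2 \bigl[\,1 - 2\tilde{F}(s-T) + \tilde{F}_2(s-T)\,\bigr].
\end{equation}
Writing $1 - 2\tilde{F} + \tilde{F}_2 = (1 - \tilde{F})^2 - (\tilde{F}^2 - \tilde{F}_2)$, using $(1 - \tilde{F}(r))^2 \le C\alpha^{-2\epsilon}/(r+1)$ from Lemma \ref{lem:bound on H}, and bounding $\tilde{F}^2 - \tilde{F}_2 = \PP(T_1 \le r, T_2 \le r, T_1 + T_2 > r)$ via a standard convolution-tail split (the integral $\int_0^r \tilde{f}(\tau) \int_{r-\tau}^r \tilde{f}(x)\,dx\,d\tau$ is dominated by $\alpha^{-2\epsilon}/r$), one obtains the improved estimate $|1 - 2\tilde{F}(r) + \tilde{F}_2(r)| \le C\alpha^{-2\epsilon}/(r+1)$. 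Integrating this against $f_Y(T)$ over $(u, s)$ yields $\lesssim \alpha^{-3\epsilon}/(s\sqrt{u+1})$, which is majorized by $\alpha^{-2\epsilon}/\sqrt{(v+1)(u+1)(s+1)}$ because $v \le s$ (the resulting polylogarithmic loss is absorbed into the slack $\alpha^{-\epsilon}$).

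The main obstacle is exactly the second-order cancellation in $\tilde{H}(r) := 1 - 2\tilde{F}(r) + \tilde{F}_2(r)$: the naive bound $|\tilde{H}(r)| \le 2(1 - \tilde{F}(r)) \lesssim \alpha^{-\epsilon}/\sqrt{r+1}$ is too weak and only yields a case-(c) contribution of order $\alpha^{-2\epsilon}/\sqrt{(u+1)(s+1)}$, missing the $1/\sqrt{v+1}$ factor entirely (since case (c) only sees $v$ through $T > u > v$). The sharp bound $\tilde{H}(r) = O(\alpha^{-2\epsilon}/r)$ requires exploiting the near-convolution structure of the hitting-time distribution, and it is here that the Fourier/Laplace estimates for $K_\alpha$ and $K_\alpha^*$ from Appendix \ref{sec:fourier and renewal} are essential (in particular the decay $|K_\alpha^*(t) - K_\alpha^*| \le C\alpha/\sqrt{t+1}$ from Lemma \ref{lem:estimat for K tilde:intro}, which controls the convolution corrections that make $\tilde{F}_2$ asymptotically behave like $2\tilde{F} - 1$ plus a controlled remainder). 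Finally, to extend the bound uniformly in $(v,u,s) \in [0,\hat{h}]^3$ with very high probability, I would take a union bound over a sufficiently fine dyadic mesh using the continuity in $v,u,s$ of the four hitting-time probabilities (as in the derivative estimates of Lemma \ref{lem:derivative of Q and J}).
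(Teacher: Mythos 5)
Your overall strategy is the same as the paper's: split $\textbf{A}_7$ according to whether $T\le u$ or $T>u$ (the paper's $\textbf{A}_7^2$ and $\textbf{A}_7^1$), bound the former directly via the density/tail estimates, and for the latter exploit the second-order cancellation between $\PP(\textbf{A}_6\,|\,\mathcal F_v)$ and $\PP(\textbf{A}_7^1\,|\,\mathcal F_v)$ so that only an $O\bigl((s-T+1)^{-1}\bigr)$ integrand survives. Your algebraic decomposition
$$1-2\tilde F+\tilde F_2=(1-\tilde F)^2-(\tilde F^2-\tilde F_2)$$
is a legitimate and equivalent way to express the same cancellation that the paper obtains by symmetrizing $h(s-x)h(0)+\int_x^s h(s-y)h'(y-x)\,dy$ around the midpoint $\tfrac{s+x}{2}$; both reduce the integrand to $O\bigl((s-T+1)^{-1}\bigr)$, and the rest is Lemma \ref{lem:bound on I1I2:inA6A7}.

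Two corrections are needed, though. First, in your case (b) the Claim~\ref{claim:some claim}-type bound on the inner $T_v$-integral should read $\lesssim \alpha^{-2\epsilon}/\sqrt{(u-T+1)(s-T+1)}$, not $\sqrt{(u-T+1)(s-u+1)}$: with $s-u$ in place of $s-T$ the subsequent integration against $(T+1)^{-3/2}$ yields only $\alpha^{-3\epsilon}/\sqrt{(v+1)(u+1)(s-u+1)}$, and $(s+1)/(s-u+1)$ is not bounded by any power of $\alpha^{-\epsilon}$ when $u$ is close to $s$, so the chain does not close as written. Keeping the sharper $(s-T+1)^{-1/2}$ factor, the full triple integral is handled by the paper's Lemma \ref{lem:bound on I3:inA71} (essentially iterated applications of the same claim). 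Second, your appeal to Lemma~\ref{lem:estimat for K tilde:intro} for the renewal density $K_\alpha^*$ is a red herring: the bound $(1-\tilde F)^2\lesssim (r+1)^{-1}$ and the estimate $\tilde F^2-\tilde F_2\lesssim (r+1)^{-1}$ require only the density and tail bounds on the single hitting time, i.e.\ Lemma~\ref{lem:estimate on K:intro} and Corollary~\ref{cor:bound on K'} (equivalently Lemma~\ref{lem:bound on density}/Corollary~\ref{cor:conditioning on less information} for the quenched version). No convolution-renewal input is needed in this lemma. Likewise, the density you integrate against should be $f_v$ (conditional on $\mathcal F_v$, via Corollary~\ref{cor:conditioning on less information}), not $f_Y$.
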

	
	Although Lemmas \ref{lem:A1andA2}--\ref{lem:A5} follow from straight-forward generalizations of Proposition \ref{prop:Jbound:quenched}, the proof of Lemma  \ref{lem:A6andA7} requires more work. Indeed, we will later see that the probabilities $\PP_\alpha(\textbf{A}_i\,|\, \mathcal{F}_v),$ $i\in\{1,2,6,7\}$ do not satisfy the estimate \eqref{eq:A1andA2main} individually, but their leading orders cancel out under the setting of Lemmas \ref{lem:A1andA2} and \ref{lem:A6andA7}. Though such cancellation is apparent in Lemma \ref{lem:A1andA2}, a refined analysis is needed in Lemma \ref{lem:A6andA7} to observe such phenomenon.

	In the remaining of this subsection, we present the proofs of Lemmas \ref{lem:A3}--\ref{lem:A5}, and then prove Lemmas \ref{lem:A1andA2} and \ref{lem:A6andA7}. 	Also, for $Y=\{Y(x)\}$, we define 
	$Y^p$ and $Y^{p,q}$ as
	\begin{equation}
	Y^p(x) := Y(x) + \mathds{1}\{x>p \}, \quad Y^{p,q}(x) := Y(x) +\mathds{1} \{x>p \} + \mathds{1} \{ x> q\}.
	\end{equation}
	
	\begin{proof}[Proof of Lemma \ref{lem:A3}]
		Note that $T_u > s$ implies $T_v = T_u$, since $Y^v(t') = Y^u(t')$ for $t'>s$. Therefore, $\{s<T_u <\infty = T_v \}$ described in the definition of $\textbf{A}_3$ is an invalid event.
	\end{proof}
	
	\begin{proof}[Proof of Lemmas \ref{lem:A4} and \ref{lem:A5}]
		We present the proof of Lemma \ref{lem:A4}. Proof of Lemma \ref{lem:A5} follows from the same argument and is left to the reader.
		
		On $\textbf{A}_4$, $T <s\leq T_u$ implies that the random walk $W$ hits $Y$ at $T \in [u,s)$. Then, since $s\leq T_u <\infty$, it hits $Y^{u}$ at $T_u=T_v \in[s,\infty)$. Finally, since $T_{v,u}=\infty$, it never hits $Y^{v,u}$. Therefore,
		\begin{equation}
		\begin{split}
		\PP_\alpha (\textbf{A}_4 \,|\, \mathcal{F}_v ) &= \PP_\alpha (T<s\leq T_u <\infty \,|\,\mathcal{F}_v) \\
		&\quad \times \PP_\alpha (W(t') < Y^{v,u}(t'),  \; \forall t' \geq T_u \,|\, W(T_u)=Y^{v,u}(T_u)-1 )\\
		&= \frac{2\alpha}{1+2\alpha} \PP_\alpha (T<s\leq T_u <\infty \,|\,\mathcal{F}_v) ,
		\end{split}
		\end{equation}
		which proves the first part of the lemma. 
		
		Then, we can use Proposition \ref{prop:Jbound:quenched} to estimate the probability in the RHS of the above. This gives that with very high probability, we have for all $0\leq v<u<s\leq \hat{h}$  (which gives in particular, $v\leq \alpha^{-2-\epsilon}$) that
		\begin{equation}
		\quad \PP_\alpha (\textbf{A}_4 \,|\, \mathcal{F}_v) \leq 
		\frac{\alpha^{1- \epsilon }}{\sqrt{(u+1)(s+1)} }
		\leq 
		\frac{\alpha^{-2 \epsilon }}{\sqrt{(v+1)(u+1)(s+1)} }.
		\end{equation}
	\end{proof}
	
	\begin{proof}[Proof of Lemma \ref{lem:A1andA2}]
		On $\textbf{A}_1 = \{s\leq T <\infty = T_u=T_v=T_{v,u} \}$, the random walk $W$ hits $Y$ at time $T\in[s,\infty)$ but never hits $Y^u$ after that. Thus,
		\begin{equation} \label{eq:A1formula}
		\begin{split}
		\PP_\alpha (\textbf{A}_1 \,|\, \mathcal{F}_v) &= \PP_\alpha(s\leq T <\infty \,|\,\mathcal{F}_v) \\
		&\quad \times \PP_\alpha(W(t') <Y^u(t'), \;\forall t'>T\,|\, W(T)=Y^u(T)-1 )\\
		&=  \PP_\alpha(s\leq T <\infty \,|\,\mathcal{F}_v) \times \frac{2\alpha}{1+2\alpha} .
		\end{split}
		\end{equation}
		On the other hand, on $\textbf{A}_2 = \{s\leq T \leq T_u\leq T_v <\infty =T_{v,u}  \}$, the random walk $W$ hits $Y^u$ as well at $T_u \in(T,\infty)$ after hitting $Y$ at $T\in[s,\infty)$ (One can also see that $T_u=T_v$), but never hits $Y^{v,u}$. Therefore,
		\begin{equation} \label{eq:A2formula}
		\begin{split}
		\PP_\alpha (\textbf{A}_2 \,|\,\mathcal{F}_v) &= \PP_\alpha (s\leq T <\infty \,|\, \mathcal{F}_v)\\
		&\quad  \times
		\{\PP_\alpha(T_{u}<\infty\,|\, T\in (s,\infty) )
		-
		\PP_\alpha( T_{v,u}<\infty \,|\, T\in (s,\infty) )  \}\\
		&= \PP_\alpha (s\leq T<\infty \,|\,\mathcal{F}_v) \times
		\left\{\frac{1}{1+2\alpha} - \frac{1}{(1+2\alpha)^2}  \right\}
		\end{split}
		\end{equation}
		
		Combining \eqref{eq:A1formula} and \eqref{eq:A2formula}, we obtain that
		\begin{equation}
		0\leq \PP_\alpha (\textbf{A}_1 \,|\,\mathcal{F}_v) -\PP_\alpha (\textbf{A}_2 \,|\,\mathcal{F}_v)
		= \frac{4\alpha^2}{(1+2\alpha)^2} \times \PP_\alpha (s\leq T<\infty \,|\,\mathcal{F}_v) .
		\end{equation}
		This proves the first part of Lemma \ref{lem:A1andA2}. Then, \eqref{eq:A1andA2main} follows from the same argument as in the proof of Lemma \ref{lem:A4}, by using Lemma \ref{lem:bound on H}.
	\end{proof}
	
	What remains is to establish Lemma \ref{lem:A6andA7}. To this end, it will be useful to divide $\textbf{A}_7$ into two disjoint events $\textbf{A}_7^1$ and $\textbf{A}_7^2$ as follows:
	\begin{equation} \label{eq:def:A71andA72}
	\begin{split}
	\textbf{A}_7^1 := \{T\geq u \}\cap \textbf{A}_7, \quad \textnormal{and} \quad 
	\textbf{A}_7^2 := \{T<u \} \cap \textbf{A}_7.
	\end{split}
	\end{equation}
	Note that $\textbf{A}_7^1, \textbf{A}_7^2$ are equivalent to the following events:
	\begin{equation}
	\begin{split}
	&\textbf{A}_7^1 = \{u \leq T<T_u = T_v <s\leq T_{v,u}<\infty \};\\
	&\textbf{A}_7^2 = \{v\leq T=T_u <u \leq T_v <s \leq T_{v,u}<\infty \}.
	\end{split}
	\end{equation}
	Then, we bound $|\PP_\alpha(\textbf{A}_6\,|\,\mathcal{F}_v ) - \PP_\alpha(\textbf{A}_7^1 \, |\,\mathcal{F}_v)|$ and $\PP_\alpha (\textbf{A}_7^2\,|\,\mathcal{F}_v)$ as the follows.
	
	\begin{lem} \label{lem:A6andA71}
		With very high probability, we have for all $0\leq v<u<s\leq \hat{h}$ that
		\begin{equation}
		|\PP_\alpha(\textbf{A}_6\,|\,\mathcal{F}_v ) - \PP_\alpha(\textbf{A}_7^1 \, |\,\mathcal{F}_v)|
		\leq \frac{\alpha^{-2\epsilon}}{(s+1)\sqrt{u+1} } 
		.
		\end{equation}
	\end{lem}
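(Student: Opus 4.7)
The plan is to represent both probabilities as integrals over the first hitting time $T$, then to exploit a near-cancellation arising from ballot-problem estimates for the walk $V(t):=Y(t)-W(t)+1$. First observe that both $\textbf{A}_6$ and $\textbf{A}_7^1$ force $T\ge u$: if $T\le u$ then by the definitions of $T,T_u$ we would have $T_u=T<s$, contradicting both $\textbf{A}_6$ (which requires $T_u\ge s$) and $\textbf{A}_7^1$ (which requires $T<T_u$ with $T_u$ in the regime where the extra unit jump at $u$ is active). Setting $V(T)=0$, the event $\textbf{A}_6$ becomes $\{V\ge 0$ on $[T,s]$ and $V$ eventually reaches $-2\}$, while $\textbf{A}_7^1$ becomes $\{V$ touches $-1$ in $(T,s)$, stays $\ge -1$ on $[\tau_{-1},s]$, and eventually reaches $-2\}$. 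Following the approach of Proposition~\ref{prop:Jbound:quenched}, I would condition on the full Poisson trajectory $Y$ and integrate out the first hitting time via its density $f_Y$ from Lemma~\ref{lem:bound on density} and Corollary~\ref{cor:conditioning on less information}, writing
\begin{equation}
\PP_\alpha(\textbf{A}_6\mid Y)=\intop_u^s f_Y(x)\,q_6(x,Y)\,dx,\qquad \PP_\alpha(\textbf{A}_7^1\mid Y)=\intop_u^s f_Y(x)\,q_7(x,Y)\,dx,
\end{equation}
where $q_6,q_7$ are the quenched probabilities of the remaining conditions after time $T=x$, obtained via the strong Markov property.

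The key structural observation is that $\textbf{A}_6$ and $\textbf{A}_7^1$ disjointly cover the parent event $\{V\ge -1$ on $[T,s]$ and $V$ reaches $-2$ eventually$\}$, which differs from the event defining $\textbf{A}_6$ only by the shift of the barrier by one. Applying the strong Markov property at the first passage $\tau_{-1}$ of $V$ across $-1$ (for $\textbf{A}_7^1$), and comparing to the analogous decomposition for $\textbf{A}_6$ at the first passage across $0$ after $s$, yields that $q_6(x,Y)$ and $q_7(x,Y)$ have a common leading contribution of order $\alpha/\sqrt{s-x+1}$ coming from Lemma~\ref{lem:basic random walk estimates} (the random walk part) multiplied by the tail probability $\PP(V\text{ reaches }-2\text{ eventually}\mid V(s)=\cdot,Y_{\ge s})$ controlled by Lemma~\ref{lem:bound on H}. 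The residual difference $|q_6(x,Y)-q_7(x,Y)|$ is bounded by $\alpha^{-\epsilon}/((s-x+1)\sqrt{x+1})$. Plugging into the outer integral, using $f_Y(x)\le \alpha^{-\epsilon}/(x+1)^{3/2}$ from Lemma~\ref{lem:bound on density} and an integral bound analogous to Claim~\ref{claim:some claim} for $\intop_u^s (x+1)^{-3/2}(s-x+1)^{-1}dx$, produces the desired bound $\alpha^{-2\epsilon}/((s+1)\sqrt{u+1})$.

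The main obstacle will be rigorously quantifying the near-cancellation in the quenched setting. The ballot estimates depend on the local density of Poisson points of $Y$ near the barrier, and the shift argument is only approximate because the random walk is not reflected off a homogeneous barrier but rather tracked against a time-inhomogeneous profile given by $Y$. I plan to handle this by coupling the trajectories defining $\textbf{A}_6$ and $\textbf{A}_7^1$ through a first-passage decomposition at $\tau_{-1}$, then separating the mismatch into a random-walk fluctuation term (controlled by Lemma~\ref{lem:basic random walk estimates}) and a term measuring the discrepancy in Poisson arrivals near the barrier (which carries an extra factor of $\alpha$). Finally, to pass from the quenched bound to a statement conditional on $\mathcal F_v$ that holds uniformly over all triples $(v,u,s)$ with very high probability, I would apply Doob's martingale inequality to $\PP_\alpha(\textbf{A}_6\cup \textbf{A}_7^1\mid \mathcal F_u)$ and its refinements, exactly as in the passage from Lemma~\ref{lem:bound on density} to Corollary~\ref{cor:conditioning on less information}.
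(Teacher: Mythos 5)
Your proposal correctly sets up the integral representations via the first-passage time $T$ and the Markov property, and correctly identifies that both $\textbf{A}_6$ and $\textbf{A}_7^1$ force $T\in[u,s)$, that together they partition the event $\{T\in[u,s),\ T_{v,u}\ge s,\ T_{v,u}<\infty\}$ according to whether $T_u\ge s$ or $T_u<s$, and that the two probabilities individually are too large but their difference is small. You also invoke the right quenched inputs (Lemma~\ref{lem:bound on density}, Corollary~\ref{cor:conditioning on less information}, Lemma~\ref{lem:basic random walk estimates}, Claim~\ref{claim:some claim}). In broad outline this matches the paper.

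However, there is a genuine gap at the crux of the argument. You assert that after applying the strong Markov property at $T_v=\tau_{-1}$ (for $\textbf{A}_7^1$) and comparing, the two quenched factors ``have a common leading contribution of order $\alpha/\sqrt{s-x+1}$... The residual difference $|q_6(x,Y)-q_7(x,Y)|$ is bounded by $\alpha^{-\epsilon}/((s-x+1)\sqrt{x+1})$.'' This is exactly the content of the lemma — that the leading orders cancel and the residual gains a full factor of $(s-x+1)^{-1/2}$ — but you offer no mechanism for why that cancellation occurs. In the paper, after conditioning on $T=x$ one has precisely
\begin{equation}
q_6(x)=h(s-x)h(0),\qquad q_7(x)=\intop_x^s f(y-x)\,h(s-y)\,dy=-\intop_x^s h'(y-x)\,h(s-y)\,dy,
\end{equation}
where $h(t')=\PP_\alpha(t'\le T<\infty)$, and the cancellation is exhibited by the (non-obvious) symmetrization identity
\begin{equation}
h(s-x)h(0)-h\Bigl(\tfrac{s-x}{2}\Bigr)^2=\intop_{\frac{s+x}{2}}^{s}\frac{d}{dy}\bigl\{h(s-y)h(y-x)\bigr\}\,dy=-\intop_x^s h(s-y)h'(y-x)\,dy+2\intop_{\frac{s+x}{2}}^{s}h(s-y)h'(y-x)\,dy,
\end{equation}
obtained by the fundamental theorem of calculus and the reflection $y\mapsto s+x-y$. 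This shows $q_6(x)-q_7(x)=h((s-x)/2)^2+2\int_{(s+x)/2}^s h(s-y)h'(y-x)\,dy$, and the extra decay comes from the square $h((s-x)/2)^2\lesssim \alpha^2/(s-x+1)$ — an honest order of magnitude smaller than $h(s-x)h(0)\lesssim \alpha^2/\sqrt{s-x+1}$ — together with the restricted range $[(s+x)/2,s]$ in the second term, which ensures $y-x\gtrsim s-x$ so that $h'(y-x)$ contributes $(s-x)^{-3/2}$ decay. Without something playing the role of this identity, the claimed bound on $|q_6-q_7|$ is not established. A second, smaller issue: your quoted residual bound has a spurious $\sqrt{x+1}$ in the denominator; the $x$-dependence in the paper comes only from the prefactor $f_v(x)\lesssim\alpha^{-\epsilon}(x+1)^{-3/2}$, and the resulting $x$-integral (handled via Lemma~\ref{lem:bound on I1I2:inA6A7}, not Claim~\ref{claim:some claim}) is more delicate than you suggest because both the $(x+1)^{-3/2}$ and $(s-x+1)^{-1}$ factors peak at opposite endpoints.
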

	
	\begin{lem} \label{lem:A72}
		With very high probability, we have for all $0\leq v<u<s\leq \hat{h}$ that
		\begin{equation}
		\PP_\alpha(\textbf{A}_7^2\,|\,\mathcal{F}_v) \leq \frac{\alpha^{-\epsilon}}{\sqrt{(v+1)(u+1)(s+1)}}.
		\end{equation}
	\end{lem}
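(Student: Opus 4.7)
My plan is to extend the coupling/first-hitting-density argument used for Lemma \ref{lem:A4} by making \emph{two} successive applications of the strong Markov property. On the event $\textbf{A}_7^2$ the walk first meets $Y$ at $T\in[v,u)$ (with $T=T_u$ automatic since $T<u$), then after $T$ first meets $Y+1$ at $T_v\in[u,s)$, and finally after $T_v$ first meets $Y+2$ at some $T_{v,u}\in[s,\infty)$. Working on the good event $\mathcal{E}$ from the proof of Proposition \ref{prop:Jbound:quenched}, which enforces the linear-growth bounds on $Y$ needed to apply Lemma \ref{lem:bound on density} and Lemma \ref{lem:bound on H} to every time shift of $Y$, I will first establish the quenched estimate $\PP(\textbf{A}_7^2\mid Y)\le \alpha^{-\epsilon}/\sqrt{(v+1)(u+1)(s+1)}$ and then pass to $\mathcal{F}_v$ via the Doob maximal-inequality step of Corollary \ref{cor:conditioning on less information}.

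To reduce the quenched probability to a deterministic integral, the two Markov applications give
\begin{equation*}
\PP(\textbf{A}_7^2\mid Y)\le \intop_v^u\intop_{u-x}^{s-x} f_Y(x)\cdot f_{Y^{(x)}}(y)\cdot \PP\!\bigl(s-x-y\le \tilde T<\infty \mid Y^{(x+y)}\bigr)\,dy\,dx,
\end{equation*}
where $Y^{(\cdot)}$ denotes the corresponding time-shifted Poisson process and $\tilde T$ the fresh first-hitting time. Lemma \ref{lem:bound on density} (applied to $Y$ and, on $\mathcal{E}$, to its shifts) bounds each density by $\alpha^{-\epsilon}/(\cdot+1)^{3/2}$, and Lemma \ref{lem:bound on H} bounds the tail probability by $\alpha^{-\epsilon}/\sqrt{s-x-y+1}$. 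Everything then reduces to showing
\begin{equation*}
I\ :=\ \intop_v^u\intop_{u-x}^{s-x}\frac{dy\,dx}{(x+1)^{3/2}(y+1)^{3/2}\sqrt{s-x-y+1}}\ \le\ \frac{C\log(s+1)}{\sqrt{(v+1)(u+1)(s+1)}},
\end{equation*}
after which the logarithmic factor is absorbed into $\alpha^{-\epsilon}$ using $s\le \hat h \le \alpha^{-2}\log^{C_0}(1/\alpha)$.

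The integral estimate above is the main obstacle; everything else is routine once it is in hand. The inner $dy$-integral is handled cleanly by Claim \ref{claim:some claim} (with the substitutions $y\leftrightarrow x$, $(s-x)\leftrightarrow u$), giving $C/\sqrt{(u-x+1)(s-x+1)}$. The difficulty is the outer integral $\intop_v^u (x+1)^{-3/2}(u-x+1)^{-1/2}(s-x+1)^{-1/2}dx$: no single factor dominates uniformly, so I will split into cases depending on whether $v\le u/2$ or $v>u/2$, and whether $s\ge 2u$ or $s<2u$. In the regime $v\le u/2$ I split $[v,u]$ at $u/2$, using $(u-x+1)(s-x+1)\asymp(u+1)(s+1)$ on the left half and pulling out $(x+1)^{-3/2}\le C/(u+1)^{3/2}$ on the right half before reapplying Claim \ref{claim:some claim}. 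In the regime $v>u/2$, where $(u+1)\asymp(v+1)$, I pull out $(x+1)^{-3/2}\le (v+1)^{-3/2}$ and compute the remaining $\int_0^{u-v}dw/\sqrt{(w+1)(s-u+w+1)}$ via its explicit antiderivative $2\log(\sqrt{w+1}+\sqrt{s-u+w+1})$, which is where the stray $\log(s+1)$ enters. A final case check compares $(v+1),(u+1),(s+1)$ in each subcase, and the trickiest point is the regime where all three scales are comparable, where one must be careful that the explicit log is genuinely sublogarithmic in $\alpha^{-1}$ rather than a power of $\alpha^{-1}$.
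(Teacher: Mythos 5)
Your proof follows essentially the same route as the paper's: decompose $\PP_\alpha(\textbf{A}_7^2 \mid \mathcal{F}_v)$ via two applications of the strong Markov property into a product of first-hitting densities and a tail probability, bound these using Lemma \ref{lem:bound on density}, Lemma \ref{lem:bound on H} and Corollary \ref{cor:conditioning on less information} on the good event, and then estimate a deterministic double integral by a case split. The one minor inefficiency is that applying Claim \ref{claim:some claim} to the inner $dy$-integral is lossy when $s-u$ is small relative to $u-x$, and pulling $(x+1)^{-3/2}$ out of the outer integral compounds this into the stray $\log(s+1)$; the paper's Lemma \ref{lem:bound on I3:inA71} instead keeps both the $(x+1)^{-3/2}$ and $(y-x+1)^{-3/2}$ weights inside the case split and gets a clean $C/\sqrt{(v+1)(u+1)(s+1)}$ with no logarithm. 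As you observe, this does not matter here since $s\le \hat h\le \alpha^{-2}\log^{C_0}(1/\alpha)$ makes $\log(s+1)\le C\log(1/\alpha)\ll\alpha^{-\epsilon/2}$, so the target bound still holds.
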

	
	\begin{proof}[Proof of Lemma \ref{lem:A6andA7}]
		Lemma \ref{lem:A6andA7} follows straight-forwardly from Lemmas \ref{lem:A6andA71} and \ref{lem:A72}.
	\end{proof}
	
	\begin{proof}[Proof of Lemma \ref{lem:A6andA71}]
		Let $f_v$ denote the probability density function of $T$, conditioned on $\mathcal{F}_v$. Namely,
		\begin{equation}
		f_v(t') =- \frac{\textnormal{d}}{\textnormal{d}t'} 
		\PP_\alpha (t'\leq T \,|\,\mathcal{F}_v ).
		\end{equation}
		Under the notation of Lemma \ref{lem:bound on density}, $f_v = \mathbb{E}_\alpha [f_Y \,|\, \mathcal{F}_v ]$. Similarly, we write
		\begin{equation}
		f(t') = - \frac{\textnormal{d}}{\textnormal{d}t'} 
		\PP_\alpha (t'\leq T ) = \mathbb{E}_\alpha [ f_Y(t')].
		\end{equation}
		For simplicity, we also introduce
		\begin{equation}
		h(t') :=  \PP_\alpha (t' \leq T <\infty ).
		\end{equation}
		Here, note that $f(t')= -h'(t')$.
		
		On $\textbf{A}_6$, we have $u\leq T<s$, since if $T<u$, then $T=T_u$ which is a contradiction. Also, since both $T_u$ and $T_v$ are at least $s$, they are equal. Therefore, we can write
		\begin{equation} \label{eq:A6intform}
		\begin{split}
		\PP_\alpha(\textbf{A}_6 \,|\,\mathcal{F}_v )
		&= \intop_u^s f_v(x)  \PP_\alpha( s\leq T_u <\infty \,|\,T=x)\\
		&\quad\qquad 
		\times \PP_\alpha ( W(t')<Y^{v,u}(t'),\;\forall t'>T_u\,|\, W(T_u)=Y^{v,u}(T_u)-1  ) \;\textnormal{d}x\\
		&=
		\intop_u^s
		f_v(x) h(s-x) \times	\frac{2\alpha}{1+2\alpha} \;\textnormal{d}x	
		=\intop_u^s f_v(x)h(s-x) h(0) \textnormal{d}x,
		\end{split}
		\end{equation}
		where the last equality follows from $h(0) = \PP_\alpha (0\leq T<\infty) = \frac{2\alpha}{1+2\alpha}$.
		
		On the other hand, $\PP_\alpha(\textbf{A}_7^1 \,|\,\mathcal{F}_v)$ can be written as
		\begin{equation} \label{eq:A71intform}
		\begin{split}
		\PP_\alpha(\textbf{A}_7^1 \,|\,\mathcal{F}_v )
		&= \intop_u^s \intop_x^s f_v(x) \PP_\alpha(T_v \in [y,y+\textnormal{d}y) \,|\, T=x )\, \PP_\alpha( s\le T_{v,u}<\infty \,|\, T_v=y ) \;\textnormal{d}x \\
		&=\intop_u^s\intop_x^s f_v(x)f(y-x) h(s-y) \textnormal{d}y\,\textnormal{d}x\\
		&=-\intop_u^s\intop_x^s f_v(x) h(s-y) h'(y-x) \textnormal{d}y\,\textnormal{d}x.
		\end{split}
		\end{equation}
		Using the identity
		\begin{equation}
		\intop_{\frac{s+x}{2}}^s h'(s-y) h(y-x) \textnormal{d} y
		=
		\intop_{x}^{\frac{s+x}{2}} h(s-y) h'(y-x)\textnormal{d}y,
		\end{equation}
		note that
		\begin{equation}
		\begin{split}
		h(s-x) h(0) - h\left(\frac{s-x}{2} \right)^2
		&=
		\intop_{\frac{s+x}{2}}^s \frac{\textnormal{d}}{\textnormal{d}y} \left\{h(s-y) h(y-x) \right\} \textnormal{d}y\\
		&=
		-\intop_x^s h(s-y) h'(y-x) \textnormal{d}y
		+2\intop_{\frac{s+x}{2}}^s h(s-y)h'(y-x) \textnormal{d}y.
		\end{split}
		\end{equation}
		This gives that
		\begin{equation}
		\begin{split}
		\PP_\alpha(\textbf{A}_6\,|\,\mathcal{F}_v) -\PP_\alpha(\textbf{A}_7^1\,|\,\mathcal{F}_v)
		&=
		\intop_u^s f_v(x) h\left(\frac{s-x}{2} \right)^2 \textnormal{d}x
		+
		2\intop_u^s \intop_{\frac{s+x}{2}}^t f_v(x) h(s-y) h'(y-x) \textnormal{d}y\\
		&=: \textbf{I}_1 + \textbf{I}_2.
		\end{split}
		\end{equation}
		Then, our next task is to estimate $\textbf{I}_1$ and $\textbf{I}_2$. Recall Corollary~\ref{cor:conditioning on less information} and the bounds on $h$ and $f=-h'$ (Lemma \ref{lem:estimate on K:intro}, noting that $K_\alpha(t') = \alpha(1+2\alpha) h(t')$). This gives that there exists a constant $C>0$ such that with very high probability,
		\begin{equation} \label{eq:A6andA71 integral form}
		\begin{split}
		&\textbf{I}_1\leq
		\intop_u^s \frac{C\alpha^{-\epsilon}\, \textnormal{d}x}{(x+1)^{3/2} (s-x+1) } ;
		\\
		&\textbf{I}_2 \leq
		\intop_u^s \intop_{\frac{s+x}{2}}^{t} \frac{C \alpha^{-\epsilon}\, \textnormal{d}y\,\textnormal{d}x}{(x+1)^{3/2} (y-x+1)^{3/2}(s-y+1)^{1/2} },
		\end{split}
		\end{equation}
		hold for all $0<u<s\leq \hat{h}$. 
		
		Both integrals in the RHS of \eqref{eq:A6andA71 integral form} can be estimated based on the idea used in Claim \ref{claim:some claim}.   Thus, the result of Lemma \ref{lem:bound on I1I2:inA6A7} tells us that
		\begin{equation}
		\textbf{I}_1+\textbf{I}_2 \leq  \frac{C \alpha^{-\epsilon} \log s}{(s+1)\sqrt{u+1} }
		\leq \frac{\alpha^{-2\epsilon}}{\sqrt{(v+1)(u+1)(s+1)}},
		\end{equation}
		where $C>0$ may take a different value from that in \eqref{eq:A6andA71 integral form} but is still an absolute constant. The last inequality follows from the assumption $s\leq\hat{h}\le  \alpha^{-2-\epsilon}$. 
	\end{proof}
	
	\begin{lem} \label{lem:bound on I1I2:inA6A7}
		There exists an absolute constant $C>0$ such that for any $0\leq u<s$,
		\begin{equation}
		\begin{split}
		&\qquad \qquad \textbf{I}_1':=\intop_u^s \frac{ \textnormal{d}x}{(x+1)^{3/2} (s-x+1) }\leq
		\frac{C \log s}{(s+1)\sqrt{u+1}};
		\\
		&\textbf{I}_2':=\intop_u^s \intop_{\frac{s+x}{2}}^{s} \frac{ \textnormal{d}y\,\textnormal{d}x}{(x+1)^{3/2} (y-x+1)^{3/2}(s-y+1)^{1/2} }
		\leq
		\frac{C \log s}{(s+1)\sqrt{u+1}}.
		\end{split}
		\end{equation} 
	\end{lem}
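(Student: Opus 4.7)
The plan is to prove both bounds by reduction to the same model one-dimensional integral, after first disposing of the inner $y$-integration in $\mathbf{I}_2'$. The key observation is that on the range $y \in \bigl[(s+x)/2,\, s\bigr]$ one automatically has $y - x \geq (s-x)/2$, so $(y-x+1)^{3/2}$ is bounded below by a constant multiple of $(s-x+1)^{3/2}$. Pulling this factor out of the inner integral, what remains is $\int_{(s+x)/2}^{s} (s-y+1)^{-1/2}\, dy$, which after the substitution $z = s-y$ equals $2\sqrt{(s-x)/2+1} - 2 \leq C\sqrt{s-x+1}$. Thus
\begin{equation}
\mathbf{I}_2' \;\leq\; C \intop_u^s \frac{dx}{(x+1)^{3/2}\,(s-x+1)},
\end{equation}
which is precisely (a constant multiple of) $\mathbf{I}_1'$. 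So it suffices to prove the stated bound for $\mathbf{I}_1'$ alone.

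For $\mathbf{I}_1'$, the plan is the standard split of the interval of integration at the midpoint $x = s/2$. On the ``lower'' piece $x \in [u, s/2]$, the factor $s - x + 1$ is comparable to $s+1$ and can be pulled out, leaving $\int_u^{s/2} (x+1)^{-3/2} dx \leq 2/\sqrt{u+1}$; this contributes at most $\frac{C}{(s+1)\sqrt{u+1}}$. On the ``upper'' piece $x \in [s/2, s]$, the factor $(x+1)^{3/2}$ is comparable to $(s+1)^{3/2}$ and can be pulled out, leaving $\int_{s/2}^{s} (s-x+1)^{-1} dx \leq C \log s$; this contributes at most $\frac{C \log s}{(s+1)^{3/2}}$. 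Since $u \leq s$, we have $\frac{1}{(s+1)^{3/2}} \leq \frac{1}{(s+1)\sqrt{u+1}}$, so the upper piece is absorbed into the claimed bound as well.

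Combining the two pieces yields $\mathbf{I}_1' \leq \frac{C \log s}{(s+1)\sqrt{u+1}}$ with an absolute constant, and then the reduction above gives the same bound for $\mathbf{I}_2'$. There is no serious obstacle in this lemma: it is a calculus exercise once the correct splits are identified. The only subtle point is recognizing that the inner $y$-integration in $\mathbf{I}_2'$ produces a $\sqrt{s-x+1}$ factor which exactly cancels one power of $(s-x+1)^{1/2}$ from the prefactor, collapsing the bound to $(s-x+1)^{-1}$ and hence reducing $\mathbf{I}_2'$ to $\mathbf{I}_1'$ up to a constant.
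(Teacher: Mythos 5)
Your argument is correct, and it takes a genuinely different (and cleaner) route than the paper's. The paper proves the $\mathbf{I}_2'$ bound by swapping the order of integration to $\int_{(s+u)/2}^s \int_u^{2y-s}$, then carrying out a two-way case split on $s \lessgtr 2u$, with the second case requiring a further two-fold sub-split (first at $y - \tfrac{s-u}{2}$, then at $\tfrac{1}{2}(s + \tfrac{s+u}{2})$); for $\mathbf{I}_1'$ the paper only remarks that it follows ``by the same approach and is simpler.'' Your observation that on the range $y\in[\tfrac{s+x}{2},s]$ one has $y-x\ge\tfrac{s-x}{2}$, hence $(y-x+1)^{3/2}\ge 2^{-3/2}(s-x+1)^{3/2}$, so the inner $y$-integral produces a $\sqrt{s-x+1}$ that cancels against one of those powers and collapses $\mathbf{I}_2'$ directly to a constant multiple of $\mathbf{I}_1'$, eliminates the paper's case analysis altogether. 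The remaining two-piece split of $\mathbf{I}_1'$ at $x=s/2$ is standard and correct (and you supply the details the paper omits). The one point worth flagging is cosmetic and is present in the paper's statement as well: the right-hand side uses $\log s$, which is negative for $s<1$, so the inequality as literally written fails for very small $s$; one should read $\log s$ as $\log(s+C_\circ)$ or assume $s\ge e$, which is automatic in the paper's application where $s$ is on the order of $\alpha^{-1}$ or larger.
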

	
	\begin{proof}
		We give a proof for the second inequality. The first inequality can be obtained by the same approach and is simpler.
		
		We can rewrite the LHS of the second line by
		\begin{equation} \label{eq:I2int 2nd form:A6A7}
		\textbf{I}_2'=\intop_{\frac{s+u}{2}}^s \intop_u^{2y-s} \frac{\textnormal{d}x \, \textnormal{d}y}{(x+1)^{3/2}(y-x+1)^{3/2} (s-y+1)^{1/2} }.
		\end{equation}
		As done in Claim \ref{claim:some claim}, we divide into two cases, when $t\leq 2s$ and $t>2s$. In what follows, $C>0$ denotes an absolute constant that may take different values at different places.
		
		\vspace{2mm}
		\noindent \textbf{Case 1.} $s\leq 2u$.
		\vspace{2mm}
		
		In this case, we can bound the integral in a rather straight-forward way using the expression \eqref{eq:I2int 2nd form:A6A7}:
		\begin{equation}
		\begin{split}
		\textbf{I}_2' \leq \intop_{ \frac{s+u}{2} }^s \frac{C \,\textnormal{d}y}{(u+1)^{3/2}(s-y+1)}
		\leq \frac{C \log s}{(s+1)^{3/2}} \leq \frac{C \log s}{(s+1)\sqrt{v+1}}.
		\end{split}
		\end{equation}
		
		\vspace{2mm}
		\noindent \textbf{Case 2.} $s>2u$.
		\vspace{2mm}
		
		Noting that $\frac{u+(2y-s)}{2} = y-{\frac{u-s}{2}}$, we divide the inner integral into two parts, from $u$ to $y- \frac{s-u}{2}$ and from $y-\frac{u-s}{2}$ to $2y-s$. Using \eqref{eq:I2int 2nd form:A6A7}, the frist half can be bounded as
		\begin{equation} \label{eq:I2int case2:1st half}
		\begin{split}
		\intop_{\frac{s+u}{2}}^s \intop_u^{y- \frac{s-u}{2}} \frac{\textnormal{d}x \, \textnormal{d}y}{(x+1)^{3/2}(y-x+1)^{3/2} (s-y+1)^{1/2} } &\leq
		\intop_{ \frac{s+u}{2} }^s
		\frac{2^{3/2}\textnormal{d}y}{(u+1)^{1/2} (s-u+2)^{3/2} (s-y+1)^{1/2}}\\
		&\leq \frac{4}{(s-u+2)\sqrt{u+1}} \leq \frac{C}{(s+1)\sqrt{u+1}}.
		\end{split}
		\end{equation}
		
		To control the second half, we first note that
		\begin{equation} 
		\begin{split}
		\intop_{y- \frac{s-u}{2}}^{2y-s} \frac{\textnormal{d}x }{(x+1)^{3/2}(y-x+1)^{3/2} }
		\leq
		\frac{2}{(y-(\frac{s-u}{2})+1)^{3/2}(s-y+1)^{1/2} }.
		\end{split}
		\end{equation}
		Thus,
		\begin{equation} \label{eq:I2int case2:2nd half:1}
		\begin{split}
		\intop_{\frac{s+u}{2}}^s \intop_{y- \frac{s-u}{2}}^{2y-s} \frac{\textnormal{d}x \, \textnormal{d}y}{(x+1)^{3/2}(y-x+1)^{3/2} (s-y+1)^{1/2} }
		\leq
		\intop_{ \frac{s+u}{2} }^s \frac{2\,\textnormal{d}y}{(y-(\frac{s-u}{2})+1)^{3/2}(s-y+1) }.
		\end{split}
		\end{equation}
		Then, we again split the right integral into two parts, from $\frac{s+u}{2}$ to $\frac{1}{2}(s+\frac{s+u}{2})$ and from $\frac{1}{2}(s+\frac{s+u}{2})$ to $s$. This gives that
		\begin{equation} \label{eq:I2int case2:2nd half:2}
		\begin{split}
		&\intop_{ \frac{s+u}{2} }^{\frac{1}{2}(s+\frac{s+u}{2})} \frac{2\,\textnormal{d}y}{(y-(\frac{s-u}{2})+1)^{3/2}(s-y+1) }
		\leq \frac{C}{(s+1)\sqrt{u+1}};\\
		&\intop_{\frac{1}{2}(s+\frac{s+u}{2})}^s \frac{2\,\textnormal{d}y}{(y-(\frac{s-u}{2})+1)^{3/2}(s-y+1) }
		\leq
		\frac{C\log s}{(s+1)^{3/2}}, 
		\end{split}
		\end{equation}
		and we obtain the conclusion for Case 2: $s>2u$ by combining the equations \eqref{eq:I2int case2:1st half}, \eqref{eq:I2int case2:2nd half:1} and \eqref{eq:I2int case2:2nd half:2}.
	\end{proof}
	
	\begin{proof}[Proof of Lemma \ref{lem:A72}]
		Recall the definitions of $f_v, f$ and $h$ introduced in the beginning of the proof of Lemma \ref{lem:A6andA71}. Similarly as \eqref{eq:A6intform} and \eqref{eq:A71intform}, we can write $\PP_\alpha (\textbf{A}_7^2 \,|\,\mathcal{F}_v)$ as
		\begin{equation}
		\begin{split}
		\PP_\alpha (\textbf{A}_7^2 \,|\,\mathcal{F}_v) 
		=
		\intop_v^u f_v(x) \intop_u^s f(y-x) h(s-y) \,\textnormal{d}y\textnormal{d}x. 
		\end{split}
		\end{equation}
		As in the previous proof, recall the estimates  on $f_v$ (Lemma \ref{lem:bound on density}) and  on $f$ and $h$ (Lemma \ref{lem:estimate on K:intro}). Then, we obtain that the following holds with very high probability: 
		\begin{equation}
		\PP_\alpha (\textbf{A}_7^2 \,|\,\mathcal{F}_v) 
		\leq 
		\intop_v^u \intop_u^s \frac{C\alpha^{-\epsilon}\,\textnormal{d}y\textnormal{d}x}{(x+1)^{3/2}(s-x+1)^{3/2} (s-y+1)^{1/2} },
		\end{equation}
		for all $0\leq v<u<s\leq \hat{h}$. . Then, Lemma \ref{lem:bound on I3:inA71} below gives the correct estimate on the double integral and hence we obtain the conclusion.
	\end{proof}
	
	\begin{lem} \label{lem:bound on I3:inA71}
		There exists an absolute constant $C>0$ such that for any $0\leq v< u< s\le \hat{h}$,
		\begin{equation}
		\intop_v^u \intop_u^s\frac{\textnormal{d}y\textnormal{d}x}{(x+1)^{3/2}(t-x+1)^{3/2} (s-y+1)^{1/2} } 
		\leq  \frac{C\,\textnormal{d}y\textnormal{d}x}{\sqrt{(v+1)(u+1)(s+1)} }.
		\end{equation}
	\end{lem}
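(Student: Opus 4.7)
My plan is to treat the iterated integral $I$ by first estimating the inner integral in $y$ for each fixed $x$, then integrating the resulting bound against $(x+1)^{-3/2}\,dx$ over $[v,u]$. Assuming the ``$t$'' in the integrand is a typo for the bound variable $y$ (so that the integrand agrees with what one gets from bounding $f_v(x)\le C(x+1)^{-3/2}$, $f(y-x)\le C(y-x+1)^{-3/2}$, $h(s-y)\le C(s-y+1)^{-1/2}$ in the proof of Lemma~\ref{lem:A72}), I would split the inner integration range at $y=(u+s)/2$. On the lower half, I use $s-y+1\ge (s-u)/2+1$ to extract the square-root factor and integrate $(y-x+1)^{-3/2}$; on the upper half, I use $y-x+1\ge (s-x)/2+1$ to extract $(y-x+1)^{-3/2}$ and integrate $(s-y+1)^{-1/2}$. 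This gives the pointwise bound
\begin{equation}
g(x):=\int_u^s \frac{dy}{(y-x+1)^{3/2}\sqrt{s-y+1}}\;\le\; \frac{C}{\sqrt{(s-u+1)(u-x+1)}}+\frac{C\sqrt{s-u+1}}{(s-x+1)^{3/2}}.
\end{equation}

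For the outer integral I handle the two resulting terms separately, each time splitting $[v,u]$ at $x=u/2$ and using the standard one-variable estimates $\int_v^u (x+1)^{-3/2}(u-x+1)^{-1/2}\,dx\le C(u+1)^{-1/2}(v+1)^{-1/2}$ and $\int_v^u (x+1)^{-3/2}(s-x+1)^{-3/2}\,dx \le C(s+1)^{-3/2}(v+1)^{-1/2}+C(u+1)^{-3/2}(s-u+1)^{-1/2}$ (these are  exactly the kind of bounds already used in Lemma~\ref{lem:bound on I1I2:inA6A7}). The first term contributes $C/\sqrt{(s-u+1)(u+1)(v+1)}$, and the second contributes $C\sqrt{s-u+1}/((s+1)^{3/2}\sqrt{v+1})+C/(u+1)^{3/2}$.

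Finally, I verify the target bound $C/\sqrt{(v+1)(u+1)(s+1)}$ by case analysis on whether $u\le s/2$ or $u>s/2$. When $u\le s/2$, we have $s-u+1\asymp s+1$, so the first piece is comparable to the target, and for the outlier piece $C/(u+1)^{3/2}$ I would refine the contribution from $x\in[u/2,u]$ to the Term~(b) integral by additionally using $s-x+1\ge (s+1)/2$ there (valid since $x\le u\le s/2$), which replaces $C/(u+1)^{3/2}$ by the tighter $C/((s+1)\sqrt{u+1})$; this fits within the target. When $u>s/2$, all of $u+1$, $s-u+1$, $s+1$, and $v+1$ are controlled by each other up to constants, and every piece is readily bounded by the target. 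The case $v>u/2$ is easier because then $v+1\asymp u+1$.

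The main obstacle is that neither of the two summands in the pointwise bound on $g(x)$ is uniformly sharp in the relationship between $u$ and $s$: the first is loose when $s-u+1$ is small (because Part~1 actually covers a narrow window of $y$) and the second is loose when $u\ll s$. The delicate point is that after performing the outer integration, the seemingly bad term $C/(u+1)^{3/2}$ does not in fact appear uniformly: it arises only from a sub-region where a sharper auxiliary estimate (using the position of $x$ inside $[u/2,u]$ relative to $s/2$) is available, and exploiting this sharper estimate is what makes the constants match the claimed bound across all regimes.
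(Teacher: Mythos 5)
Your reading of the typos is correct: the ``$t$'' in the lemma statement should be ``$y$'' (the integrand comes from bounding $f_v(x)f(y-x)h(s-y)$ in the proof of Lemma~\ref{lem:A72}, so the middle factor is $(y-x+1)^{-3/2}$, not $(s-x+1)^{-3/2}$ as the paper's Lemma~\ref{lem:A72} display writes—a typo there as well), and the stray ``$\textnormal{d}y\textnormal{d}x$'' on the right-hand side should be ignored. Your pointwise bound on the inner integral
\[
g(x)\le \frac{C}{\sqrt{(s-u+1)(u-x+1)}}+\frac{C\sqrt{s-u+1}}{(s-x+1)^{3/2}}
\]
is correct, and the treatment of the Term~(b) integral, including the refinement by $s-x+1\ge (s+1)/2$ when $u\le s/2$, is sound.

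The gap is in the case $u>s/2$. You assert that in that regime ``all of $u+1$, $s-u+1$, $s+1$, and $v+1$ are controlled by each other up to constants,'' but that is false: $u>s/2$ gives $u+1\asymp s+1$, but $s-u+1$ can be as small as $1$ while $s+1$ is huge, and $v+1$ can be $1$ while $u+1$ is huge. Consequently, the Term~(a) contribution
\[
\int_v^u \frac{dx}{(x+1)^{3/2}\sqrt{(s-u+1)(u-x+1)}}\lesssim \frac{1}{\sqrt{(s-u+1)(v+1)(u+1)}}
\]
exceeds the target $C/\sqrt{(v+1)(u+1)(s+1)}$ by the factor $\sqrt{(s+1)/(s-u+1)}$, which is unbounded (take, e.g., $v=0$, $u=s-1$, $s\to\infty$: your Term~(a) bound is of order $s^{-1/2}$ while the target is of order $s^{-1}$). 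The root cause is that in deriving Term~(a) you bound $\int_u^{(u+s)/2}(y-x+1)^{-3/2}\,dy$ by $\int_u^\infty(y-x+1)^{-3/2}\,dy=2(u-x+1)^{-1/2}$, which is far too loose when the $y$-window $[u,(u+s)/2]$ is narrow, i.e.\ precisely when $s-u$ is small. In that regime one should instead keep the factor $(s-u)$ from the length of the window (giving $g(x)\lesssim\sqrt{s-u+1}/(u-x+1)^{3/2}$), or swap the order of integration and treat the inner $x$-integral for fixed $y$, splitting at $x=u/2$ and $y=s/2$ as the paper sketches. As written, the proposal does not establish the lemma in the regime $u>s/2$ with $s-u$ small, and the ``everything is comparable'' shortcut cannot be repaired without a sharper bound on $g$ there.
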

	
	\begin{proof}
		This follows from the same argument as Claim \ref{claim:some claim} and Lemma \ref{lem:bound on I1I2:inA6A7}. Namely, one may the following four cases separately: 
		\begin{equation}
		\{u\leq 2v \} \cap \{s\leq 2u \}; \;\; \{u\leq 2v \} \cap \{s < 2u \}; \;\; \{u > 2v \} \cap \{s\leq 2u \}; \;\; \textnormal{and}\;\; \{u> 2v \} \cap \{s > 2u \}.
		\end{equation}
		Then, when $u>2v$ (resp. $s>2u$), split the integral $\int_v^u$ (resp. $\int_u^s$) into two parts, $\int_v^{u/2}$ and $\int_{u/2}^u$ (resp. $\int_u^{s/2}$ and $\int_{s/2}^s$), as done in Claim \ref{claim:some claim}. The rest of the argument goes the same as Lemma \ref{lem:bound on I1I2:inA6A7} and we omit the details.
	\end{proof}

\end{document}